\renewcommand{\l@figure}{\@tocline{0}{3pt plus2pt}{0pt}{3em}{}}
\newtheorem*{rep@theorem}{\rep@title}
\newcommand{\newreptheorem}[2]{%
\newenvironment{rep#1}[1]{%
 \def\rep@title{#2 \ref{##1}}%
 \begin{rep@theorem}}%
 {\end{rep@theorem}}}
\newtheorem*{rep@prop}{\rep@title}
\newcommand{\newrepprop}[2]{%
\newenvironment{rep#1}[1]{%
 \def\rep@title{#2 \ref{##1}}%
 \begin{rep@prop}}%
 {\end{rep@prop}}}
\newtheorem*{rep@cor}{\rep@title}
\newcommand{\newrepcor}[2]{%
\newenvironment{rep#1}[1]{%
 \def\rep@title{#2 \ref{##1}}%
 \begin{rep@cor}}%
 {\end{rep@cor}}}
\theoremstyle{plain} 
        \newtheorem{theorem}{Theorem}[chapter]
        \newtheorem*{theorem*}{Theorem}
        \newtheorem*{conj*}{Conjecture} 
        \newtheorem{lemma}[theorem]{Lemma}
        \newtheorem{prop}[theorem]{Proposition}
        \newtheorem{cor}[theorem]{Corollary}
        \newtheorem{prob}{Problem}
        \newtheorem*{quasi_u_problem*}{The Quasisymmetric Uniformization Problem}
\theoremstyle{definition}
        \newtheorem{definition}[theorem]{Definition}
        \newtheorem{rem}[theorem]{Remark}
         \newtheorem{ex}[theorem]{Example}
\theoremstyle{remark}
\numberwithin{equation}{chapter}
\numberwithin{section}{chapter}
\numberwithin{figure}{chapter}
\providecommand{\defn}[1]{\emph{#1}}
\newcounter{mylistnum}
\newcommand{\dist}{\operatorname{dist}}
\newcommand{\length}{\operatorname{length}}
\newcommand{\diam}  {\operatorname{diam}}
\newcommand{\inte}  {\operatorname{int}}
\newcommand{\id} {\operatorname{id}}
\newcommand{\lcm} {\operatorname{lcm}}
\newcommand{\acts}{\curvearrowright}
\newcommand{\R}{\mathbb{R}}  
\newcommand{\B}{\mathbb{B}}    
\newcommand{\C}{\mathbb{C}}      
\newcommand{\Q}{\mathbb{Q}}  
\newcommand{\N}{\mathbb{N}}      
\newcommand{\Z}{\mathbb{Z}}      
\newcommand{\T}{\mathbb{T}}      
\newcommand{\Sph}{\mathbb{S}}  
\newcommand{\Halb}{\mathbb{H}}   
\newcommand{\Om}{\Omega} 
\newcommand{\CDach}{\widehat{\mathbb{C}}}
\newcommand{\Cdach}{\widehat{\mathbb{C}}}
\newcommand{\D}{\mathbb{D}}      
\newcommand{\Aut}{\operatorname{Aut}}
\newcommand{\Isom}{\operatorname{Isom}}
\newcommand{\imag}{\operatorname{Im}}
\newcommand{\real}{\operatorname{Re}}
\newcommand{\leb}{\mathcal{L}}
\providecommand{\abs}[1]{\lvert#1\rvert}
\providecommand{\norm}[1]{\lVert#1\rVert}
\DeclarePairedDelimiter\ceil{\lceil}{\rceil}
\DeclarePairedDelimiter\floor{\lfloor}{\rfloor}
\renewcommand{\:}{\colon}
\newcommand{\ra}{\rightarrow}
\newcommand{\sub}{\subset}
\newcommand{\eps}{\epsilon}
\newcommand{\de}{\delta}
\newcommand{\ga}{\gamma}
\newcommand{\om}{\omega}
\newcommand{\iu}{\textbf{\textit{i}}}
\newcommand{\Sp}{S^2}
\newcommand{\crit}{\operatorname{crit}}
\newcommand{\post}{\operatorname{post}}
\newcommand{\mesh}{\operatorname{mesh}}
\newcommand{\LLC}{\operatorname{LLC}}
\newcommand{\CC}{\mathcal{C}}
\newcommand{\OC}{\mathcal{O}}
 \newcommand{\DD}{\mathcal{D}}
  \newcommand{\G}{\mathcal{G}}
\newcommand\geo{\partial_\infty}
\newcommand{\X} {\mathbf{X}}
\newcommand{\E} {\mathbf{E}}
\newcommand{\V} {\mathbf{V}}
\newcommand{\XOw} {X^0_{{\tt w}}}
\newcommand{\XOb} {X^0_{{\tt b}}}
\newcommand{\wt}{{\tt w}}
\newcommand{\bt}{{\tt b}}
\newcommand{\supsim}{\sqsupset}
\newcommand{\Gtr}{G_{\text{tr}}}
\newcommand{\Sim}{{\sim}}
\newcommand{\img}{\operatorname{img}}
\newcommand{\supp}{\operatorname{supp}}
\def\mate{\hskip 1pt \bot \hskip -5pt \bot \hskip 1pt}
\begin{document}

\date{today}

\frontmatter
%

\title{Expanding Thurston Maps}

\author{Mario Bonk}
\address{Department of Mathematics, University of California, 
Los Angeles, CA 90095, USA}
\email{mbonk@math.ucla.edu}
\author{Daniel Meyer}
\address{Department of Mathematical Sciences, University of Liverpool
Mathematical Sciences Building,  Liverpool L69 7ZL,  United Kingdom}

\email{dmeyermail@gmail.com}


\date{April 24, 2016}

\keywords{Expanding Thurston map, postcritically-finite rational map, visual
  metric, invariant curve, Markov partition, Latt\`{e}s map, subdivision rules, Cannon's conjecture, tile graph, quasiconformal geometry.}
  \subjclass[2010]{37-02, 37F10, 37F20, 30D05, 30L10.}

  \maketitle
  \tableofcontents

  \listoffigures


%


\chapter*{Preface}
 This book is the result of an intended research paper that grew out of control. 
 A preprint containing 
a substantial part of our investigations was already published on
arXiv in 2010. 
To make its content more accessible, 
we decided to include some additional material. 
These additions more than doubled 
the size of this work as compared with 
the 2010 version  and caused a long delay in its completion.


More than fifteen years ago we became 
both
interested in some basic problems on quasisymmetric
parametrization of $2$-spheres. 
This is related to the dynamics of rational maps---an observation we believe was first made by Rick Kenyon.
During our 
time at the University of Michigan  we  decided to  join forces and to investigate this connection systematically. 

We realized that  for the relevant  rational maps an explicit analytic expression is not so important, but rather a geometric-combinatorial description. 
As this became our preferred
way of looking  at these objects,  it was a 
natural step to consider a more general class of maps that are not necessarily  holomorphic. 
The relevant properties can be condensed into the notion of an {\em expanding Thurston map} which is the topic of this book. We will discuss the underlying ideas more thoroughly in the introduction (Chapter 1).

Part of this work 
overlaps
with studies by other researchers,
notably Ha\"\i ssinsky-Pilgrim \cite{HP}, and Cannon-Floyd-Parry
\cite{CFP07}. We would like to clarify some of the interrelations of our investigations with these works.  Theorem~\ref{thm:main} (in the body of the text)
was announced by the first author during an Invited Address at
the AMS Meeting at Athens, Ohio, in March 2004, where he gave a
short outline of the proof. After the talk he was informed by
Bill Floyd and Walter Parry that related results had been
independently obtained by Cannon-Floyd-Parry (which later
appeared as \cite{CFP07}).

Theorem~\ref{thm:S2vsf}~\ref{item:S2qsphere}  
was previously published 
by Ha\"\i
ssinsky-Pilgrim as part of a more general statement
\cite[Theorem~4.2.11]{HP}. Special cases go back to work by the
second author \cite{Me02} and unpublished joint work by
Bruce~Kleiner and the first author. The current, more general
version 
emerged after a visit of the first author
at the University of Indiana at Bloomington in February 2003.

During this visit the first author explained to Kevin Pilgrim concepts of
quasiconformal geometry and his joint work with
Bruce Kleiner on Cannon's conjecture in geometric group theory.
Kevin Pilgrim in turn pointed out Theorem~\ref{thm:exppromequiv}
and the ideas for its proof to the first author. After this
visit versions of Theorem~\ref{thm:S2vsf}~\ref{item:S2qsphere}  with an outline
for the proof were found independently by Kevin Pilgrim and the
first author. A proof of Theorem~\ref{thm:S2vsf}~\ref{item:S2qsphere}  was discovered soon
afterwards by the authors using ideas from \cite{Me02} (see
\cite{Me08} for an argument along similar lines) in combination
with Theorem~\ref{thm:main}.

We are indebted to many people. Conversations with Bruce Kleiner, Peter Ha\"{\i}ssinsky,
and Kevin Pilgrim have been especially fruitful.  We would also
like to thank Jim Cannon, Bill  Floyd, Lukas Geyer, 
Misha Hlushchanka, Zhiqiang
Li, Dimitrios Ntalampekos, Walter Parry,  Juan Souto, Dennis Sullivan, 
and Mike Zieve  for various useful comments. 
Two anonymous referees provided us with 
valuable feedback. Their considerable efforts were very much
appreciated. 
 
Qian Yin was so kind to let us incorporate parts of her thesis.
We are grateful to Jana Kleineberg for her careful proofreading
and her help with some of the pictures.
We are also happy to acknowledge the patient support of our   editors from the American Mathematical Society, Ed Dunne and  Ina Mette. 

Over the years we received funding from various sources.
Mario Bonk was partially supported by NSF grants DMS 0244421, DMS 0456940,  
DMS 0652915,  DMS 1058283, DMS 1058772,  DMS 1162471, and DMS 1506099.
Daniel Meyer  was partially supported by an NSF
  postdoctoral fellowship, the Deutsche Forschungsgemeinschaft (DFG-ME 4188/1-1), the
  Academy of Finland, projects SA-134757 and SA-118634, and the
  Centre of Excellence in Analysis and Dynamics Research, project No. 271983.

\medskip
\hfill Los Angeles and Liverpool, March, 2017

\ifthenelse{\boolean{singlechapter}}{
%




\newpage
\chapter*{Notation}\label{sec:not} 

We summarize some of the most important notation used in this book 
for easy reference. 

When an object $A$ is defined to be another object $B$, we write $A\coloneqq B$
for emphasis. 

We denote by $\N=\{1,2,\dots\}$ the 
set of natural numbers and by $\N_0=\{0,1, 2, \dots \}$\index{n1@$\N_0$} 
the set of natural numbers
including $0$. We write  $\Z$ for the set of integers, and $\Q$, $\R$, $\C$ for the set 
of  rational, real, and complex numbers, respectively. 
For  $k\in \N$, we let  $\Z_k= \Z/k\Z$
be the cyclic group of order $k$. 

 We also consider
$\widehat{\N}\coloneqq \N\cup\{\infty\}$. 
Given $a,b\in
\widehat{\N}$ we write $a|b$ if 
$a$ divides $b$. This notation is extended to
$\widehat{\N}$-valued  
functions\index{$\norm{Da}$@$a\vert b$}. If $A\sub \widehat{\N}$, then $\lcm(A)\in  \widehat{\N}$
denotes the least common multiple of the numbers in $A$.
 See Section~\ref{sec:orbif-assoc-thurst} for more details. 

The 
\emph{floor}\index{$\norm{D1}$@$\floor{x}, \ceil{x}$} 
of a real number $x$, denoted by $\floor{x}$, is the
largest integer $m\in \Z$ with $m\leq x$. The \emph{ceiling} of a real
number $x$, denoted by $\ceil{x}$, is the smallest integer $m\in \Z$
with $x\leq m$. 

The symbol $\iu$ stands for the imaginary unit
in the complex plane $\C$. 
The real and imaginary part of a complex number $z$ are indicated 
by $\real(z)$ and $\imag(z)$, respectively, and its complex conjugate
by $\overline{z}$. 
The open unit disk in $\C$ is denoted by $\D\coloneqq \{z\in \C:|z|<1\}$,  and the open upper half-plane by $\Halb \coloneqq \{z\in \C:\imag(z)>0\}$.


We let $\CDach\coloneqq \C\cup\{\infty\}$\index{CAA@$\CDach$} be   the Riemann sphere. It carries  the \emph{chordal metric} $\sigma$ given by  formula \eqref{eq:def_chordal} (in the appendix). 
Similarly, we let 
$\widehat{\R}\coloneqq  \R\cup \{\infty\}$.\index{RA@$\widehat{\R}$}  
Here we consider $\widehat{\R}$ as a subset of  $\CDach$, and so 
$\widehat{\R}\sub \CDach$. 

The \emph{Lebesgue measure} on $\R^2$, $\C$, $\CDach$, or $\D$
is denoted by $\leb$. If necessary, we add a subscript here to avoid ambiguities.    More precisely, $\leb=\leb_{\R^2}$ and 
$\leb=\leb_{\C}$ are  the Euclidean area measures on $\R^2$ and
$\C$, $\leb= \leb_{\CDach}$ is the spherical area measure on
$\CDach$, and $\leb=\leb_{\D}$ the hyperbolic area measure on
$\D$ considered as the hyperbolic plane. 

When we consider two objects $A$ and $B$, and there is a natural
identification between them that is clear from the context, we
write 
$A\cong B$.\index{$\cong$} 
For example, $\R^2\cong \C$ if we identify a point $(x,y)\in \R^2$ with $x+ y\iu \in \C$. 

The derivative of a holomorphic function $f$ is  denoted by $f'$
as usual. If $\Omega\subset \CDach$ is an open set and
$f\colon \Omega \to \CDach$ is a holomorphic map, then $f^\sharp$
stands for its  
\emph{spherical derivative} (see
\eqref{eq:defsphericaldf}).  For a  differentiable 
(not necessarily holomorphic)  map, we use   $Df$ to denote its derivative  considered as a linear map between suitable tangent spaces. If these tangent spaces are equipped with norms, then we let $\norm{Df}$ be the operator norm of $Df$. Sometimes we use subscripts here to indicate the norms.  

Two non-negative quantities $a$ and $b$ are said to be \defn{comparable}
if there is a constant $C\ge 1$ (possibly depending on some  ambient para\-meters)   such that
\begin{equation*}
  \frac{1}{C}a\leq b\leq C a.
\end{equation*}
We then write 
$a\asymp b$.\index{$\asymp$}\index{C@$C(\asymp)$}\index{comparable}
The constant $C$ is referred to  as $C(\asymp)$. Similarly, we write
  $a\lesssim b$ or $b\gtrsim a$,  
if there is a constant $C>0$ such that $a\leq C b$, and  refer to
the constant $C$ as $C(\lesssim)$ or
$C(\gtrsim)$.\index{$\lesssim,\gtrsim$}\index{CA@$C(\lesssim)$}
If we want to emphasize the 
parameters $\alpha$, $\beta, \dots$ on which 
$C$ depends, then we write $C(\asymp)=C(\alpha, \beta, \dots)$ etc. 

The cardinality of a set $X$ is denoted by $\#X$ and the identity map on $X$ by $\id_X$. 
If $x_n\in X$ for $n\in \N$ are points in $X$, we  denote the sequence of these points 
  by $\{x_n\}_{n\in\N}$, or just
by $\{x_n\}$ if the index set $\N$ is understood.

  If $f\: X \ra X$ is a map and $n\in \N$, then 
$$f^n\coloneqq \underbrace{f\circ \dots \circ f}_{\text{$n$ factors} } $$ is the $n$-th iterate of $f$. We set $f^0\coloneqq \id_X$ for convenience, but unless otherwise indicated  it is understood that $n\in \N$ if  we speak of an iterate $f^n$ of $f$.    

Let $f\colon X\to Y$ be a map between sets $X$ and $Y$. If
$U\sub  X$, then 
$f|U$\index{$f^\sharp a$@$f\vert U$}
stands for the \emph{restriction} of $f$ to $U$. If $A\sub Y$, then 
$f^{-1}(A)\coloneqq \{x\in X : f(x)\in A\}$ is the preimage of
$A$ in $X$. Similarly,  $f^{-1}(y)\coloneqq
\{x\in X : f(x)=y\}$ is the preimage of a point $y\in Y$. 

If $f\: X\ra X$ is a map, then preimages of a set $A\subset X$ or a point $p\in X$ under the
$n$-th iterate $f^n$ are denoted by $f^{-n}(A)\coloneqq \{x\in X
: f^n(x) \in A\}$ and $f^{-n}(p)\coloneqq \{x\in X : f^n(x)=p\}$, 
respectively. 

Let $(X,d)$ be a metric space, $a\in X$, and $r>0$. 
By
$B_d(a,r)=\{x\in X:d(a,x)< r\}$ we denote the open and by
$\overline B_d(a,r)=\{x\in X: d(a,x)\le r\}$ the 
closed ball of radius $r$ centered at $a$. 
If
$A,B\sub X$, we let $\diam_d(A)$ be the diameter, $\overline A$
be the closure of $A$ in $X$, and
$$ \dist_d(A,B)\coloneqq\inf\{d(x,y): x\in A, y\in B\}$$
be the distance of $A$ and $B$.   
If $p\in X$, we let $\dist_d(p, A)\coloneqq\dist_d(\{p\}, A)$. 
For $\eps>0$,
\index{N5@$\mathcal{N}_\epsilon$}
\begin{equation*}
  \mathcal{N}_{d,\epsilon}(A)
  \coloneqq 
  \{ x\in X: \dist_d(x,A)<\eps\} 
\end{equation*}
is the \emph{open $\eps$-neighborhood} of $A$ with respect to
$d$. If $\gamma\colon [0,1]\to X$ is a path, we denote by
$\length_d(\gamma)$ the length of $\gamma$. Given $Q\geq 0$, we
denote by $\mathcal{H}^Q_d$ the 
\emph{$Q$-dimensional Hausdorff measure}\index{H_Q@$\mathcal{H}^Q_d$}\index{Hausdorff!measure}
on $X$ with respect to $d$. 
We drop the
subscript $d$ in our notation for $B_d(a,r)$, etc., if the metric $d$ is clear
from the context. For the Euclidean metric on $\C$ we sometimes use the subscript $\C$ for emphasis. So, for example, 
$$B_\C(a,r)\coloneqq \{ z\in \C: |z-a|<r\}$$ 
denotes the Euclidean 
ball  of radius $r>0$ centered at $a\in \C$. 

The \emph{Gromov product} of two points $x,y\in X$ with respect to a 
basepoint $p\in X$ in a metric space $X$ is denoted by
$(x\cdot y)_p$ or by $(x\cdot y)$ if the basepoint $p$ is
understood (see Section~\ref{sec:Grhyp}). The \emph{boundary at
  infinity} of a Gromov hyperbolic space $X$ is represented   by $\geo
X$. If a group $G$ acts on a space $X$, then we write $G\acts X$ to indicate this action. 

Often we use the notation $I= [0,1]$. If $X$ and $Y$ are topological spaces,
then a \emph{homotopy} is a continuous map $H\colon X\times
I \to Y$. For $t\in I$, we let  $H_t(\cdot)\coloneqq  H(\cdot, t)$ be the {\em time-$t$ map} of the homotopy.

The symbol  $S^2$ indicates a $2$-sphere, which we think of as a
topological object. Similarly, $T^2$ is a  topological   $2$-torus.
 For a $2$-torus with a Riemann surface structure we write 
 $\T$ (see Section~\ref{sec:applifttorend}).

 Often  $S^2$ (or the Riemann sphere $\CDach$) 
is equipped with certain metrics that induce its  topology. The \emph{visual metric} induced
by an expanding Thurston map $f$ is usually denoted by
$\varrho$ (see Chapter~\ref{cha:visual-metrics}). The \emph{canonical orbifold metric} of 
a rational Thurston map $f$ is indicated by $\omega_f$ (see
Section~\ref{sec:expratThmaps}).

The (topological) \emph{degree} of a branched covering map $f$
between surfaces is denoted by $\deg(f)$ and the \emph{local degree} of $f$ at a point $x$ by $\deg_f(x)$
or $\deg(f,x)$ (see Section~\ref{sec:branched-coverings}). We write $\crit(f)$
for the \emph{set of critical points} of a branched covering map (see
Section~\ref{sec:branched-coverings}), and $\post(f)$ for the set of
\emph{postcritical points} of a Thurston map $f$
(see Section~\ref{sec:defin-thurst-maps}). 

The \emph{ramification function} of a Thurston map $f$ is
denoted by $\alpha_f$ (see Definition~\ref{def:weightf}), and the
\emph{orbifold} associated with  $f$  by $\OC_f$ (see
Definition~\ref{def:orbifold_f}).

For a given Thurston map $f\: S^2\ra S^2$ we usually use the symbol  
$\CC$ to  indicate  a Jordan curve $\CC\subset
S^2$ that satisfies $\post(f) \subset \CC$. 

When we consider objects that are defined in terms of
the $n$-th iterate of a given Thurston map, then we often 
use the upper index ``$n$'' to emphasize this.

For a topological cell  $c$ in a topological  space $\mathcal{X}$ we denote by $\partial c$ the \emph{boundary} of $c$,
and by $\inte(c)$ the \emph{interior} of $c$ (see
Section~\ref{s:celldecomp}). Note that $\partial c$ and
$\inte(c)$ usually do not agree with the boundary or
interior of $c$ as a subset of $\mathcal{X}$.

\emph{Cell decompositions} of a space $\mathcal{X}$ are usually denoted by $\DD$ (see
Chapter~\ref{cha:celldecomp}). Let $n\in \N_0$, $f\: S^2\ra S^2$
be a Thurston map, and $\CC\subset S^2$ be 
 a Jordan curve  with  $\post(f) \subset \CC$. We then write  $\DD^n(f,\CC)$ for  the cell
decomposition of $S^2$ consisting of the {\em cells of level $n$} or {\em $n$-cells}  defined in terms of
$f$ and $\CC$ (see
Definition~\ref{def:DDn}).  The set of corresponding \emph{$n$-tiles} is denoted by $\X^n$, the set of
\emph{$n$-edges} by $\E^n$, and the set of \emph{$n$-vertices} by
$\V^n$ (see Section~\ref{sec:tiles}).  

In this context we often  ``color'' tiles  ``black'' or ``white''. We 
then use the subscripts $\bt$ and $\wt$ to indicate the color (see the end of
Section~\ref{sec:tiles}). For example, the black and white
$0$-tiles are denoted by $\XOb$ and $\XOw$, respectively.

The \emph{$n$-flower} of an $n$-vertex $v$ is denoted by
$W^n(v)$ (see Section~\ref{sec:flowers}). 
The number $D_n=D_n(f,\CC)$ is the minimal number of $n$-tiles
required to join opposite sides (see \eqref{def:dk}). 

The number $m(x,y)=m_{f,\CC}(x,y)$ is defined in
Definition~\ref{def:mxy}. The \emph{expansion factor} of a
visual metric is usually denoted by $\Lambda$ (see
Definition~\ref{def:visual}). 

We write $\Lambda_0(f)$ for the  \emph{combinatorial expansion factor} of a 
Thurston map $f$ (see Proposition~\ref{prop:exp}). 

 The \emph{topological entropy}
of a map $f$ is denoted by $h_{top}(f)$, and the \emph{measure-theoretic entropy} of $f$ with respect to a measure $\mu$ by
$h_{\mu}(f)$. The \emph{measure of maximal entropy} of an expanding Thurston
map $f$ is indicated  by $\nu_f$. See Chapter~\ref{cha:measure} for these concepts.

For a rational Thurston map $f\: \CDach \ra \CDach$ we write $\Omega_f$ for its \emph{canonical orbifold
  measure} (see Section~\ref{sec:expratThmaps}) 
and, if $f$ is also expanding, $\lambda_f$ for 
  the unique probability measure on $\CDach$ 
that is absolutely continuous with respect to Lebesgue measure
 (see Chapter~\ref{cha:rati-thurst-maps-1}).


\mainmatter
%


\chapter{Introduction} 
\label{cha:introduction}

In this work we study the dynamics of Thurston maps under
iteration. A Thurston map is a branched covering map on a
$2$-sphere $S^2$ such that  each of its critical points has a finite orbit.
The most important examples   are given by 
postcritically-finite rational maps on the Riemann sphere $\CDach$.  Most of the time   we will also assume that a
Thurston map is expanding in a suitable sense. For postcritically-finite rational maps 
$f\:\CDach\ra \CDach$ expansion is equivalent to the requirement that $f$ does not have periodic critical points or that its Julia set is equal to $\CDach$. 

These  objects  were first considered by Thurston as topological
mo\-del maps in the context of his celebrated 
characterization  of rational maps (see Theorem~\ref{thm:Thurston}). 
The terminology was introduced by Douady and Hubbard in their
proof of this theorem.

Every expanding Thurston map $f\: S^2 \ra S^2$ gives rise to a type of
fractal geometry on the underlying sphere $S^2$. This geometry is represented by a class of \emph{visual metrics} 
$\varrho$ that are associated with the map. Many  dynamical properties of the map are encoded in the 
geometry of the corresponding {\em visual sphere}, meaning $S^2$ equipped with a visual metric $\varrho$.

For example, we will see that an expanding Thurston map is topologically conjugate to a rational map if and only if 
$(S^2, \varrho)$ is quasisymmetrically equivalent to $\CDach$ (see Section~\ref{sec:QCgeom} for the terminology). 
For us this relation between dynamics and fractal geometry is one of the main motivations for studying expanding Thurston maps. 

In order to define a 
 visual metric for a given 
Thurston map $f\colon S^2\to S^2$, we  will extract some combinatorial data from $f$. For this we consider a cell
decomposition of $S^2$ and its  pull-backs by the iterates
$f^n$. When $f$ is expanding, the diameters of the cells in these
 decompositions shrink to $0$; so we get  discrete approximations of $S^2$ that get
finer with larger level $n$. Given two distinct points  in $S^2$, one can  ask at which level
the cell decompositions will allow us to distinguish them. Our
definition of a visual metric is based on this information. 

The visual sphere $(S^2,\varrho)$ of an expanding Thurston map is fractal in the sense that its  Hausdorff dimension  is typically larger than $2$. With a suitable choice of $\varrho$, the local behavior
of $f$ becomes very simple though.  Namely, there is a number $\Lambda>1$
(the \emph{expansion factor} of $\varrho$) such that $f$
expands $\varrho$ locally by the  factor $\Lambda$ in a
sense that will be made precise. So 
the local
behavior of $f$ on $(S^2,\varrho)$ is simplified at the expense
of a more complicated geometry of  $(S^2,\varrho)$.
This point of view  is in contrast to   the usual setting for complex dynamics, where one studies  the action of a rational map on a {\em smooth} underlying space, namely the  Riemann sphere $\CDach$, considered as a Riemann
surface. 

It is possible to construct  a graph $\mathcal{G}$ that combines  the combinatorial data of the cell decompositions on all levels generated by an expanding Thurston map and its iterates. This graph $\mathcal{G}$ is Gromov hyperbolic and its
boundary at infinity can  naturally be identified with
the underlying sphere $S^2$. Under
this identification a metric is a visual metric for the given map $f$ according to our
definition if and only if it is a visual metric in the sense of 
Gromov hyperbolic
spaces. This fact relates the study  of 
expanding Thurston maps and of Gromov hyperbolic spaces.

There is an intriguing connection of these ideas to 
 \emph{Cannon's
  conjecture} in geometric group theory. Roughly speaking, this conjecture predicts that a group $G$
that shares the topological properties of the fundamental group of a
closed hyperbolic $3$-manifold ``is'' such a fundamental group (see
Section~\ref{sec:Cannconj} for precise statements). In this context one assumes that the group $G$ is Gromov hyperbolic  and that its  boundary at infinity $\partial_\infty G$ is  a $2$-sphere. Here  $\partial_\infty G$ is naturally equipped with a visual metric that 
provides $\partial_\infty G$ with a fractal  geometry.
 Then Cannon's conjecture is 
equivalent to showing that the  fractal sphere $\partial_\infty G$  is quasisymmetrically equivalent to $\CDach$. 

So for both types of dynamical systems, namely expanding Thurston maps and Gromov hyperbolic groups $G$ with $2$-sphere boundary 
$\partial_\infty G$, we are led to the investigation of a fractal geometry
on the underlying $2$-sphere. This analogy can be  viewed as an
example of {\em Sullivan's dictionary} which exhibits similarities in complex 
dynamics and the theory of Kleinian groups.
Common  to  both areas is the desire to characterize conformal  dynamical systems in a wider class of dynamical systems characterized by suitable metric-topological conditions. One should not  push the analogies too far though:  while Cannon's conjecture is generally believed to be true and, accordingly, one expects that the fractal $2$-spheres arising from Gromov hyperbolic groups are always quasisymmetrically equivalent to $\CDach$, this is not always the case  for  Thurston maps, because not every 
Thurston map is equivalent to a rational map. 

After these remarks about some of  the motivations for our investigation,  we now   state some basic
definitions more precisely (more details can be found in Chapter~\ref{cha:thmaps}).
Let $f\:S^2\ra S^2$ be an (orientation-preserving) branched
covering map. As usual, we call a point
$c\in S^2$ a {\em critical point} of $f$ if near $c$ the map $f$
is not a local homeomorphism. A {\em postcritical point} is any
point obtained as an image of a critical point under forward
iteration of $f$. So if we denote by $\crit(f)$ the set of
critical points of $f$ and by $f^n$ the $n$-th iterate of $f$,
then the set of postcritical points of $f$ is given by
\begin{equation*}
  \post(f)\coloneqq \bigcup_{n\geq 1} \{f^n(c):c\in \crit(f)\}. 
\end{equation*}

It is a fundamental fact in complex dynamics that much
information on the dynamics can be deduced from the
structure of the orbits of critical points. A very strong
assumption in this respect is that each such orbit is finite,
i.e., that  $\post(f)$ is a finite set. In this case the
map $f$ is called {\em postcritically-finite}. A {\em Thurston map} is
a (non-homeomorphic) branched covering map $f\colon S^2\to S^2$ that is
postcritically-finite. 

 Thurston maps are abundant and include specific
{\em rational Thurston maps} (i.e., rational maps on $\CDach$
that are postcritically-finite)  such as $f(z)=1-2/z^2$ or
$f(z)=1+(\iu-1)/z^4$. More examples can be found in
Section~\ref{sec:examples-two-tile}, and a list of
examples considered in this book is given in
Section~\ref{sec:examples}.  We will later provide  a general method
for producing Thurston maps (see
Pro\-po\-si\-tion~\ref{prop:rulemapex}); it follows from one of our main
results (Theorem~\ref{thm:main}) that at least some iterate of
every expanding Thurston map can be obtained from this
construction.

We now turn to the discussion of more specific topics in this
introductory chapter. Our main purpose is to give some guidance
for the intuition of the reader. We will present some examples
and discuss the main concepts and results of this work.  Full
details can be found in subsequent chapters.

\section{A Latt\`es map as a first  example} 
\label{sec:Lattes}
\index{Latt\`{e}s map}

Latt\`es
  maps form a  large class of well-understood Thurston maps.
    They are rational maps obtained as quotients of
holomorphic torus endomorphisms.  They were the first known examples of
rational maps whose Julia set is the whole sphere. We will discuss these maps in
more detail in Chapter~\ref{cha:lattes-lattes-type}; results
concerning them will be outlined in
Section~\ref{sec:char-latt-maps}. Note that the terminology is
not uniform and some authors use the term Latt\`es map with a
slightly different meaning.

We will encounter Latt\`{e}s maps quite often in
this book. On the one hand, they
are easy to visualize and construct, and thus often  serve as convenient examples to
illustrate various phenomena. On the other hand, these maps are
quite special and arise in many situations as exceptional cases. 
In order  to introduce   some of the main themes of this work, we
will now consider a specific  Latt\`es map.

 \ifthenelse{\boolean{nofigures}}{}{
\begin{figure} 
  \centering
  \includegraphics[scale=0.5]{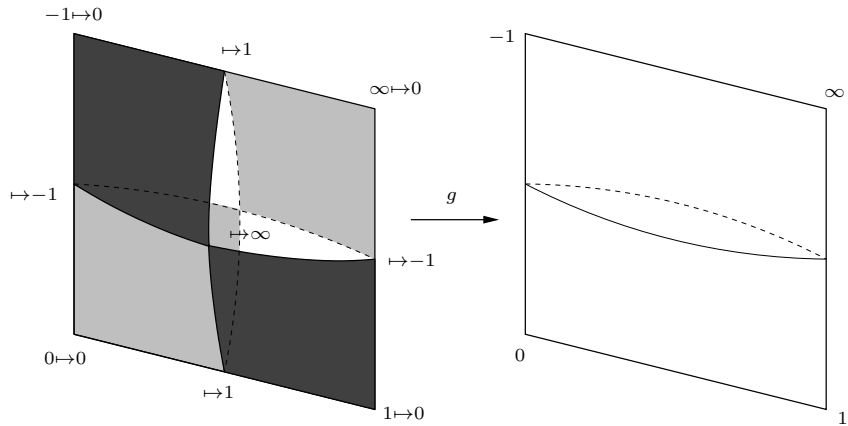}
  \begin{picture}(10,10)
    \put(-122,19){$\scriptstyle 0$}
    \put(0,-5){$\scriptstyle 1$}
    \put(-132,140){$\scriptstyle -1$}
    \put(-5,119){$\scriptstyle \infty$}
    \put(-172,-3){$\scriptstyle 1\mapsto 0$}
    \put(-240,5){$\scriptstyle \mapsto 1$}
    \put(-300,18){$\scriptstyle 0\mapsto 0$}
    \put(-313,80){$\scriptstyle \mapsto -1$}
    \put(-300,148){$\scriptstyle -1\mapsto 0$}
    \put(-233,135){$\scriptstyle \mapsto 1$}
    \put(-177,120){$\scriptstyle \infty \mapsto 0$}
    \put(-170,55){$\scriptstyle \mapsto -1$}
    \put(-148,80){$\scriptstyle g$}
    \put(-230, 65){$\scriptstyle \mapsto \infty$}
  \end{picture}
  \caption{The Latt\`{e}s map $g$.}
  \label{fig:mapg}
 \end{figure}
}

The map is essentially given 
 by Figure~\ref{fig:mapg}. We will explain this picture in detail momentarily, but we will first define the map by a more standard approach. This may be helpful for  readers that 
are   already  familiar with
  Latt\`es maps.

  The square $[0,\frac{1}{2}]^2\sub \R^2\cong \C$ can be
 mapped conformally to the upper half-plane in $\CDach$ such that
the vertices
$0,\frac{1}{2}, \frac{1}{2} + \frac{\iu}{2}, \frac{\iu}{2}$ of
the square are mapped to the points $0,1,\infty,-1$,
respectively. Note that here and in the following a ``conformal
map'' is always bijective.  
By Schwarz reflection we can extend this
to a holomorphic map $\Theta\colon \C\to \CDach$. Up to
postcomposition with a M\"{o}bius transformation, this map is a
classical \defn{Weierstra\ss\ $\wp$-function}; it is doubly-periodic with respect to the lattice 
$\Gamma \coloneqq \Z\oplus \Z\iu$ and induces a double branched
covering map of the torus $\T\coloneqq \C/\Gamma$ to the sphere
$\CDach$.

Consider the map
\begin{equation*}
  A\colon \C\to \C,
  \quad u \mapsto A(u)\coloneqq  2 u.
\end{equation*}
From the properties of the $\wp$-function or directly from the definition of $\Theta$ by the reflection process, one can see that  $\Theta(v)=\Theta(u)$ for $u,v\in \C$ if and only if 
$v=\pm u+\ga $ with $\ga\in \Gamma$. In this case,  $\Theta(2v)=\Theta(2u)$. 
This implies 
 that there is a well-defined and unique holomorphic map $g\colon \CDach\to
\CDach$ such that the diagram
\begin{equation}
  \label{eq:Lattes}
  \xymatrix{
    \C \ar[r]^A \ar[d]_{\Theta} &
    \C \ar[d]^{\Theta}
    \\
    \CDach \ar[r]^g & \CDach
  }
\end{equation}
commutes. The map $g$ obtained in this way is a \emph{Latt\`es
  map}. It is a rational map. 
One can show that it is   given by 
$$g(z)=4\frac{z(1-z^2)}{(1+z^2)^2} \quad \text{ for } z\in \CDach, $$ 
and that  the Julia set of $g$ is the whole
sphere.

More relevant for us than this explicit formula for $g$ is that
one can describe $g$ geometrically as indicated 
in Figure~\ref{fig:mapg}. To explain this, note that  there is an
essentially unique path metric on $\CDach$ obtained as a ``push-forward''
of the Euclidean metric on $\C$
by the map $\Theta$. This metric is in fact
the 
\emph{canonical orbifold metric}\index{canonical orbifold!metric}\index{metric!canonical orbifold}\index{o@$\omega$}\index{orbifold!canonical metric}\index{push-forward!of metric!by orbifold covering map}
of $g$ (see
Section~\ref{sec:expratThmaps} and Section~\ref{sec:orbif-assoc-thurst}). 

Geometrically, the sphere equipped with this metric looks like a
pillow.  In general, a \emph{pillow}\index{pillow} (see Section~\ref{sec:expratThmaps})   is a metric
space $P$ obtained from gluing two identical copies $X_{\tt w}$ and $X_{\tt b}$  of a (simple and  compact)  Euclidean polygon $X\sub \C$
  together along their boundaries. The pillow is equipped with the induced path metric. Under the given  identification,
   $\partial X_{\tt w} \cong \partial X_{\tt b}$ is a Jordan
   curve in the pillow $P$ called its 
{\em equator}.\index{equator}\index{pillow!equator of}  
   
 In our case, the upper and lower half-planes in
$\CDach$ equipped with the canonical orbifold metric are isometric to
copies of the square $S=[0,1/2]^2$.  If we glue two copies
of $S$ together  along their boundaries, then we obtain the pillow
$P$. We color one of these squares, say the one corresponding to the
upper half-plane, white, and the other square black.

The square $S=[0,1/2]^2\subset \R^2\cong \C$ (and each of its
translates by $\frac{1}{2}(m+n\iu)$ where $m,n\in \Z$) can be subdivided
into four squares of side length $1/4$.  If $S'$ is such a square,
then $A(S')$ is a square of side length $1/2$ that is mapped by
$\Theta$ to either the upper or the lower half-plane, meaning to
either the black or the white face of the pillow $P$. It follows
from \eqref{eq:Lattes} that $g$ has a very similar mapping
behavior on $P$.

More precisely, we divide each of the two sides of $P$ (each of the two
isometric copies of $S$ contained in $P$) into four smaller
squares of half the side length, and color the eight small
squares in a checkerboard fashion black and white.
 If we map one
such small white square to the large white square by a Euclidean
similarity (that scales by the factor $2$), then this map extends by reflection to the whole
pillow. There are obviously many different ways to color and map
the small squares.  If we do this in an appropriate way as
indicated in Figure \ref{fig:mapg}, then we obtain the map $g$.




The vertices  where four  small squares intersect  are the critical points of
$g$. They are mapped by $g$ to the set $\{1,\infty,-1\}$, which in turn is mapped to 
$\{0\}$.  The point $0$ is a fixed point of $g$.  So  $g$ is a  postcritically-finite branched 
covering map on the $2$-sphere $P$ 
 with $\post(g)=\{0,1,\infty,-1\}$, and hence a Thurston map.   The postcritical points of $g$ are the vertices of the pillow, which are the conical singularities of our canonical orbifold metric. The extended real line $\CC\coloneqq \widehat{\R}=\R\cup\{\infty\}$ (corresponding to the 
 equator of the pillow) is a Jordan curve that is invariant under $g$ in the sense that $g(\CC)\sub \CC$ and contains   
 the set $\{0,1,\infty,-1\}$ of  postcritical points of $g$.
The set   $g^{-1}(\CC)$ is an embedded graph in the pillow consisting  of all sides  of the small squares on the left hand
side of Figure \ref{fig:mapg} as edges and the points in $g^{-1}(\post(g))$, i.e., the corners of these squares,  as vertices. This graph $g^{-1}(\CC)$ 
determines  the tiling in  this  picture. 

The set $g^{-2}(\CC)$ is obtained by pulling $g^{-1}(\CC)$ back
by the map $g$. Since $g$ restricted to any small square $S'$ is
a homeomorphism onto one of the two large squares $S$ forming the
pillow, in this process $S'$ is subdivided in the same way as $S$
was subdivided by the small squares of side length $1/4$ (i.e.,
$S'$ is subdivided into $4$ squares). It
follows that $g^{-2}(\CC)$ subdivides the pillow into
$4\times 8=32$ squares of side length $1/8$. Proceeding in this
way inductively, we see that the preimage $g^{-n}(\CC)$ of $\CC$ under  the iterate $g^n$ subdivides the pillow
into $2\cdot 4^{n}$ squares of side length $2^{-n-1}$ for $n\in
\N$. 

The complementary components of $g^{-n}(\CC)$ are the interiors
of these squares. In particular, the diameters of these
components tend to $0$ uniformly as $n\to \infty$. This fact will be the basis of our definition 
of   an \emph{expanding}  Thurston map. Accordingly, $g$ is such a map.

 For each $n\in \N$ the set  $g^{-n}(\CC)$ forms an embedded
 graph in the pillow $P$ with the points in $g^{-n}(\post(g))$
 as vertices. This is also meaningful for $n=0$, if we
 interpret  $g^0$ as the identity map on the pillow $P$. Then this graph is just the Jordan curve $\CC$ with the points in $\post(g)$ 
as vertices.

The graph $g^{-n}(\CC)$ is the {$1$-dimensional skeleton} or
{$1$-skeleton} of a \emph{cell decomposition}
$\DD^n =\DD^n(g,\CC)$ of the pillow $P$ generated by $g$ and
$\CC$ (see Chapter~\ref{cha:celldecomp} for the terminology that
we use here and below). The $2$-dimensional cells or {\em tiles}
of the cell decomposition $\DD^n$ are squares of side length
$2^{-n-1}$ and are given by the closures of the complementary
components of $g^{-n}(\CC)$ in $P$. The map $g$ sends  each cell in
$\DD^{n+1}$ homeomorphically to a cell in $\DD^n$ (for all
$n\in \N_0$);  so $g$ is \emph{cellular} for each pair $(\DD^{n+1},\DD^n)$ of cell decompositions.

Since  $\CC$ is $g$-invariant in the sense that
$g(\CC)\subset \CC$, we have $g^{-n}(\CC)\sub
g^{-(n+1)}(\CC)$ for each $n\in \N_0$. 
This inclusion for $1$-skeleta implies that the  cell
decomposition $\DD^{n+1}$ is a {\em refinement} of $\DD^n$. On a more intuitive level, this means that  the tiles in $\DD^n$ are
subdivided by the tiles in $\DD^{n+1}$.
 
The tiles in $\DD^0$ are the two initial squares of
side length $1/2$ forming the pillow, and $\DD^1$ is formed by
squares of side length $1/4$ subdividing these squares. Since we
repeat the same subdivision procedure in the passage from $\DD^n$
to $\DD^{n+1}$, this whole sequence of cell decompositions $\DD^n$
is essentially generated by the initial pair $(\DD^1,\DD^0)$.
This pair $(\DD^1,\DD^0)$ is a {\em cellular Markov
  partition} for $g$ (see Definition~\ref{def:cellular}). The map $g$ sends each cell in $\DD^1$ to a cell in
$\DD^0$. The cellular Markov partition $(\DD^1,\DD^0)$ together
with this information completely determines the map $g$ (up to
conjugation). 
In this sense, the dynamics 
of $g$ is described by
finite combinatorial data.

In fact, one can  turn this process around and  construct $g$  from
this combinatorial data, meaning essentially from the information encoded in 
Figure~\ref{fig:mapg}. A related discussion  can be found in
Section~\ref{sec:int-inv-cell-de}. 

\section{Cell decompositions}
\label{sec:cell-decomposition}

The previous example motivates several concepts for a general
Thur\-ston map $f\: S^2\ra S^2$, in particular the combinatorial description of
$f$ that we will employ. 

We choose a Jordan curve $\CC\sub S^2$ with $\post(f)\sub \CC$,
and consider the preimages $f^{-n}(\CC)$. Then for each
$n\in \N_0$ one obtains an associated cell decomposition
$\DD^n=\DD^n(f,\CC)$ of $S^2$.\index{d00@$\DD^n(f,\CC)$}\index{cell!decomposition} 
Its vertices are the points in
$f^{-n}(\post(f))$, and its $1$-skeleton the set $f^{-n}(\CC)$.
The condition $\post(f)\sub \CC$ ensures that the closure of each
complementary component of $f^{-n}(\CC)$ is a closed Jordan
region. These sets are the $2$-dimensional cells in $\DD^n$. We
call each such set a {\em tile of level} $n$ or an $n$-{\em tile}
(of the cell decomposition). Similarly, we call any point
$v\in f^{-n}(\post(f))$ a \emph{vertex} of level $n$ or an
\emph{$n$-vertex}; then $\{v\}$ is a $0$-dimensional cell in
$\DD^n$. Finally, the closure $e$ of a component of
$f^{-n}(\CC)\setminus f^{-n}(\post(f))$ is called an \emph{edge}
of level $n$ or an \emph{$n$-edge}; then $e$ is a $1$-dimensional
cell of $\DD^n$.  The cells in $\DD^n(f,\CC)$ of any dimension are called the $n$-{\em cells}
for given $f$ and $\CC$. Note that here  $n$ always refers to the level of the cell and not to its dimension. 

The cell decomposition $\DD^0$ contains two tiles (the two closed
Jordan regions in $S^2$ bounded by $\CC$), $k=\#\post(f)$
vertices  (the points  $p\in \post(f)$), and  $k$
edges (the closed arcs into which the points in $\post(f)$ divide $\CC$). We will study 
cell decompositions and their relation to Thurston maps in more detail 
in  Chapter~\ref{cha:celldecomp}. Various examples for the cell decompositions $\DD^n$ generated in this 
way can be found in Figures~\ref{fig:def_visual_metric}, ~\ref{fig:sub_div_tiles},~\ref{fig:tiles_bary}, and~\ref{fig:invC_constr}.

We say that a Thurston map $f\: S^2\ra S^2$ is {\em expanding} if
there exists a Jordan curve $\CC\subset\Sp$ with
$\post(f) \subset \CC$ such that the complementary components of
$f^{-n}(\CC)$ become uniformly small in diameter as
$n\to \infty$. Here $S^2$ is equipped with any metric inducing
the topology on $S^2$. It is easy to see that this condition is
independent of the choice of this base metric. Later we will 
show that it is also independent of the choice of $\CC$ and will give
other characterizations of expansion (see
Chapter~\ref{cha:expansion}, in particular
Proposition~\ref{prop:expequivexp}). A rational Thurston map $f\: \CDach \ra \Cdach$ is expanding precisely  if it does not have periodic critical points or if its Julia set is equal to $\CDach$ (see Proposition~\ref{prop:rationalexpch}).
Note that in general a (non-rational) expanding Thurston map may have 
periodic critical points (see Example~\ref{ex:barycentric}).

Put differently,  a Thurston map $f$ is expanding if and only if the tiles in
$\DD^n=\DD^n(f,\CC)$ shrink to $0$ in diameter uniformly as
$n\to \infty$. This allows us to describe points in $S^2$ by
suitable sequences of tiles. So we can  think of $\DD^n$ as a discrete
approximation of $S^2$ that becomes finer with larger $n$.

 We have seen that for the example $g$ discussed in the
previous section the $n$-tiles become uniformly small in diameter as $n\to \infty$; so we conclude that $g$ is an expanding
Thurston map.

Often the precise choice of the Jordan curve $\CC$ with
$\post(f)\sub \CC$ will play no essential role, meaning that we
may chose any such curve for our considerations.  If the curve
$\CC$ is not $f$-invariant, 
then in general the cell decompositions
$\DD^n$ will not be compatible for different levels $n$.  Without
precise knowledge of the behavior of the map we will then not
have any information on how $(n+1)$-cells and $n$-cells
intersect.  The situation changes when the Jordan curve
$\CC\sub S^2$ with $\post(f)\sub \CC$ is $f$-invariant. We will
discuss existence of such invariant Jordan curves and the
resulting combinatorial description of Thurston maps in
Section~\ref{sec:int-inv-cell-de}.


\section{Fractal spheres} 
\label{sec:int-frac-sph}
 
We  want to motivate other important concepts of our
investigation, in particular the concept of visual metrics. To do
this, we will discuss another Thurston map and an associated
fractal $2$-sphere. As our main purpose here is to provide the
reader with some intuition on the definition of a visual metric
$\varrho$ and on  the fractal nature of the
sphere $(S^2,\varrho)$, we will omit the justification of some
details.

The map arises from a geometric construction that is similar to
the one used to describe the Latt\`es map in Section
\ref{sec:Lattes}. Again we start with a pillow obtained by gluing
together two squares along their boundaries; see the top right of
Figure~\ref{fig:1flap_both}.  This pillow is a polyhedral surface
$\mathcal{S}^0$ homeomorphic to the $2$-sphere.  It carries a
natural cell decomposition $\DD^0$ with the two squares as tiles, the
four sides of the common boundary of the squares as edges, and the four common
corners of the squares as vertices.  To distinguish them from
other topological cells that we will introduce momentarily, we
consider them as cells of level $0$ and accordingly call them
$0$-tiles, $0$-edges, and $0$-vertices.  As in the example of
Section~\ref{sec:Lattes}, we assign colors to the tiles; say the
top square  of $\mathcal{S}^0$ as shown in
Figure~\ref{fig:1flap_both} is white, and the bottom square is
black.

  \ifthenelse{\boolean{nofigures}}{}{



}

To obtain cells on the next level $1$, each of the two squares,
or more precisely $0$-tiles, is divided into four squares of half
the side length. We call these eight smaller squares tiles of
level $1$, or simply $1$-tiles. The edges of these squares are
$1$-edges.  We slit the sphere along one such $1$-edge in the
white $0$-tile and glue in two small squares at the slit, as
indicated on the upper left in  Figure~\ref{fig:1flap_both}. This
gives two additional $1$-tiles and we obtain a polyhedral surface
$\mathcal{S}^1$ homeomorphic to the $2$-sphere.  The surface
$\mathcal{S}^1$ carries a cell decomposition  given by the
topological cells of level $1$ as described.  We color the
$1$-tiles black and white in a checkerboard fashion so that 
 $1$-tiles sharing an edge have different color, as indicated
in Figure~\ref{fig:1flap_both}.

To define a Thurston map based on our construction, we choose an
identification of the polyhedral surface $\mathcal{S}^1$ with
$\mathcal{S}^0$. To do this, we represent the six $1$-tiles that
replaced the white $0$-tile topologically as subsets of this
tile, and similarly the other four $1$-tiles as subsets of the
black $0$-tile. So the $0$-tiles are ``subdivided'' by
$1$-tiles. This is indicated on the lower left in 
Figure~\ref{fig:1flap_both}. Under this identification the cell
decomposition of $\mathcal{S}^1$ gives a cell decomposition $\DD^1$ 
on $\mathcal{S}^0$ that is a refinement of the cell
decomposition $\DD^0$. 

Now we can define a Thurston map as follows.  We map each white
$1$-tile on the polyhedral surface $\mathcal{S}^1$ to the white
$0$-tile in $\mathcal{S}^0$, and each black $1$-tile in
$\mathcal{S}^1$ to the black $0$-tile in $\mathcal{S}^0$ by a
similarity map (preserving orientation). This is a well-defined
and uniquely determined map on $\mathcal{S}^1$ if we do this so
that the $1$-vertices marked by a black or  a white dot on the
upper left in  Figure~\ref{fig:1flap_both} are sent to
$0$-vertices in the upper right of the picture with the same
markings.

\begin{figure}
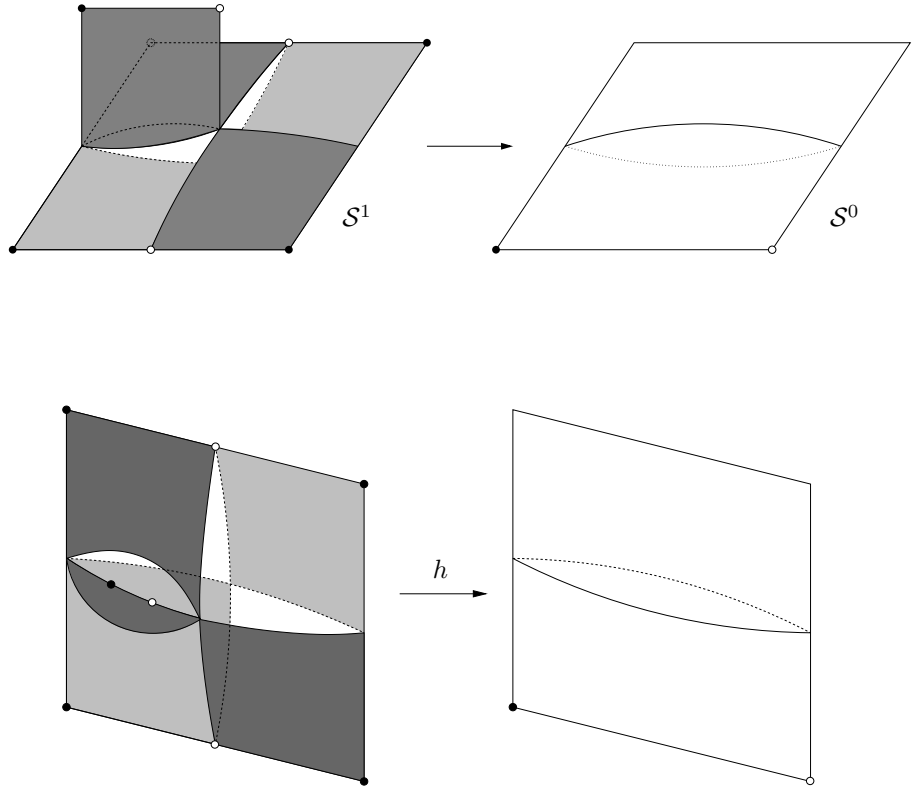

  \centering 
  \begin{overpic} 
    [width=12cm, 
    tics =10]{1flap_both}
    \put(91,62){$\mathcal{S}^0$}
    \put(37,62){$\mathcal{S}^1$}
    \put(47,23){$h$}
  \end{overpic}
  \caption{The map $h$.}
  \label{fig:1flap_both}
\end{figure}

If we identify $\mathcal{S}^1$ with $\mathcal{S}^0$ as discussed,
we get a map $h\: S^2\ra S^2$ on the $2$-sphere
$S^2\coloneqq \mathcal{S}^0$. Since $h$ restricted to each
$1$-tile is a homeomorphism onto a $0$-tile, $h$ is a branched
covering map.  The critical points of $h$ are the $1$-vertices
where at least four $1$-tiles intersect. These critical points
are all mapped to vertices of the pillow, i.e., to
$0$-vertices. All $0$-vertices in turn are mapped to the
$0$-vertex marked black. Thus $h$ is a postcritically-finite branched covering map on $S^2$, i.e.,
a Thurston map. Note that the equator $\CC$ of the pillow is an
$h$-invariant Jordan curve (i.e., $h(\CC)\subset \CC$) and that
the cell decomposition $\DD^1$ on $\mathcal{S}^1\cong \mathcal{S}^0$ is
determined by $h^{-1}(\CC)$. Namely, $h^{-1}(\CC)$ is a
topological graph that gives the $1$-skeleton of this cell
decomposition, and each $1$-tile is the closure of a
complementary component of $h^{-1}(\CC)$.

The relevant information on the map $h$ is contained in the combinatorics of the cell decompositions $\DD^0$ and $\DD^1$ and 
a map $L\: \DD^1\ra \DD^0$ that records how $h$ associates the cells 
in $\DD^1$ with cells in $\DD^0$. This triple $(\DD^1, \DD^0, L)$ forms 
a {\em two-tile subdivision rule} that is {\em realized} by $h$.
We will give precise definitions of these concepts in Chapter~\ref{cha:subdivisions}.  The map $h$ depends on choices and is not uniquely determined, but another map realizing the same 
subdivision rule $(\DD^1, \DD^0, L)$ is {\em Thurston equivalent} to 
$h$ (see Definition~\ref{def:Thequiv} for the terminology).

There is no rational map that realizes this combinatorial
picture as our map $h$. More precisely, $h$ is not Thurston
  equivalent to a rational map, because $h$ has a \emph{Thurston
  obstruction}. This, together with the terminology, will be
explained in Section~\ref{sec:thurst-class-rati}.

We 
will 
now  describe a fractal sphere $\mathcal{S}$ that is
associated with our construction  and gives an alternative way to
view our map $h$.  The sphere $\mathcal{S}$  is obtained  
similarly
 as  the well-known snowflake curve.
We will also  define a metric $\varrho$ on $\mathcal{S}$.

To construct the space $\mathcal{S}$, we do not identify the
surfaces $\mathcal{S}^0$ and $\mathcal{S}^1$. Instead, we consider
the passage from $\mathcal{S}^0$ to $\mathcal{S}^1$ as a
replacement procedure. The white $0$-tile is replaced with the top
part of $\mathcal{S}^1$, consisting of six $1$-tiles, i.e.,
squares of side length $1/2$; we call this top part of
$\mathcal{S}^1$ the \emph{white generator}.  Similarly, the four
$1$-tiles subdividing the black $0$-tile form the \emph{black
  generator}. The polyhedral surface $\mathcal{S}^1$ consisting
of ten squares is the first approximation of the fractal space
$\mathcal{S}$ that we are about to construct by iterating this
procedure.

Namely, the $1$-tiles of the black and white generators are also
colored as indicated in Figure~\ref{fig:1flap_both}. So if we
replace each black or white $1$-tile with a suitably scaled copy of
the black or white generators, then we obtain a polyhedral
surface $\mathcal{S}^2$ glued together from squares of
side length $1/4$ as $2$-tiles.  Here we have to be careful about
how precisely a tile is replaced with  an appropriate generator,
because the generators with their given colorings of tiles are
not symmetric with respect to rotations.  To specify the
replacement rule uniquely, we use the additional markings of some
points. Each generator carries two points corresponding to the
points on $\mathcal{S}^0$ marked black or white. In the
replacement process we require that these points match the
corresponding points with the same markings on $1$-tiles.

\begin{figure}
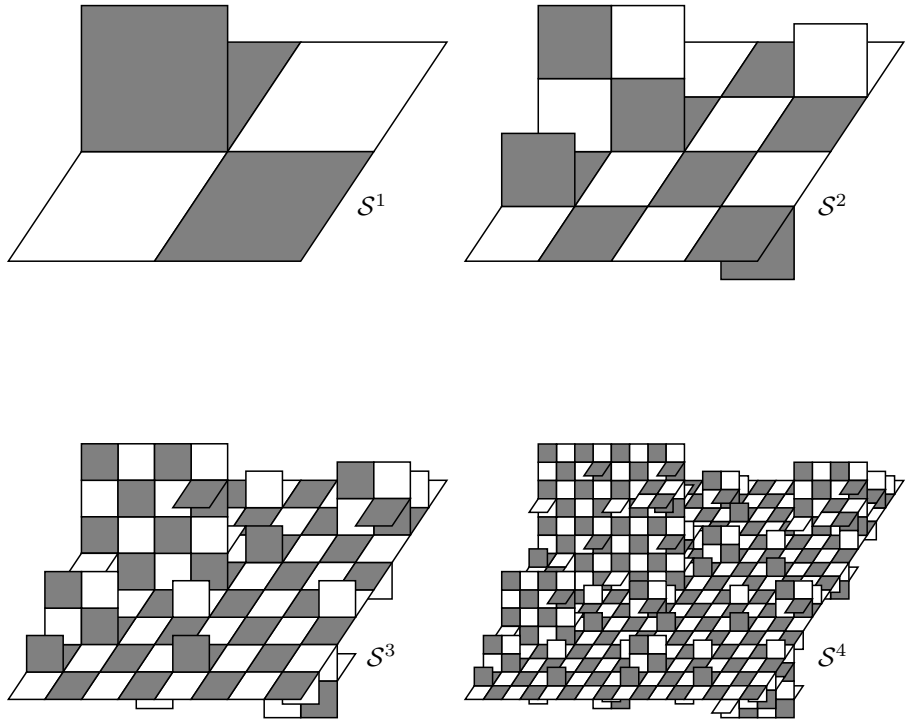

  \centering
  \begin{overpic}
    [width=12cm, 
    tics=10]{1flapall}
    \put(39,56){$\mathcal{S}^1$}
    \put(90,56){$\mathcal{S}^2$}
    \put(40,6){$\mathcal{S}^3$}
    \put(90,6){$\mathcal{S}^4$}
  \end{overpic}
  \caption{Polyhedral surfaces obtained from the replacement rule.}
  \label{fig:intpolysur}
\end{figure}

If we iterate the replacement procedure in this way, we obtain
polyhedral surfaces $\mathcal{S}^n$ for all levels $n\in \N_0$
glued together from squares of side length $1/2^n$.  Each
surface $\mathcal{S}^n$ carries a natural cell decomposition
$\DD^n$ given by these squares as tiles.  Some iterates of this
construction are shown in Figure~\ref{fig:intpolysur}.  The
pictures essentially indicate the gluing pattern of the squares
which give the surfaces. One should view them as abstract
polyhedral surfaces, and not confuse them with the underlying
subsets of $\R^3$ in these pictures.  Each surface
$\mathcal{S}^n$ is a topological $2$-sphere and carries a
piecewise Euclidean path metric $\varrho_n$ with conical
singularities.

One can now extract a self-similar ``fractal'' space
$\mathcal{S}$ as a limit $\mathcal{S}^n\to \mathcal{S}$ for
$n\to \infty$ in several ways. One possibility is to pass to a
Gromov-Hausdorff limit of the sequence
$(\mathcal{S}^n, \varrho_n)$ of metric spaces. We will discuss a
different method that is closer in spirit to our general
definition of a visual metric (see
Chapter~\ref{cha:visual-metrics} and Chapter~\ref{cha:Gromov};
similar considerations appear in Chapter~\ref{cha:combexp}).

Namely, given an $n$-tile $\mathcal{X}^n\subset \mathcal{S}^n$
(which is a square of side length $2^{-n}$), and an $(n+1)$-tile
$\mathcal{X}^{n+1}\subset \mathcal{S}^{n+1}$, we write
$\mathcal{X}^n \supsim \mathcal{X}^{n+1}$ if $\mathcal{X}^{n+1}$
is contained in the scaled copy of a generator that replaced
$\mathcal{X}^n$ in the construction of $\mathcal{S}^{n+1}$ from
$\mathcal{S}^n$. We now consider descending sequences
$\mathcal{X}^0 \supsim \mathcal{X}^1 \supsim \mathcal{X}^2
\supsim \dots$\,.
On an intuitive level the squares in such a sequence should
shrink to a point in our desired limit space $\mathcal{S}$
represented by the sequence.  Here we consider two sequences
$\{\mathcal{X}^n\}$ and $\{\mathcal{Y}^n\}$ as equivalent and
representing the same point if
$\mathcal{X}^n \cap \mathcal{Y}^n\ne \emptyset$ for all
$n\in \N_0$. It is not hard to see that this indeed defines an
equivalence relation for descending sequences. By definition our
limit space $\mathcal{S}$ is now the set of all equivalence
classes. 

For  $x,y\in 
\mathcal{S}$ we set 
\begin{equation}
  \label{eq:def_visual_1}
  \varrho(x,y)\coloneqq \limsup_{n\to \infty} \dist_{\varrho_n} (\mathcal{X}^n,  \mathcal{Y}^n),
\end{equation}
where $\{\mathcal{X}^n\}$ and $\{\mathcal{Y}^n\}$ are sequences representing $x$ and $y$, respectively. Then $\varrho$ is well-defined and one can show that this is a metric on $\mathcal{S}$. 

For $x,y\in \mathcal{S}$, $x\ne y$, we define
\begin{equation}
  \label{eq:intromxy}
  m(x,y) \coloneqq \inf\{n\in \N: \mathcal{X}^n\cap \mathcal{Y}^n=\emptyset\}, 
 \end{equation}
where  the infimum is taken over  all sequences $\{\mathcal{X}^n\}$ and $\{\mathcal{Y}^n\}$ representing $x$ and $y$, respectively.  Then 
\begin{equation}
  \label{eq:def_visual_introsec}
  \varrho(x,y) \asymp 2^{-m(x,y)}
\end{equation}
for $x,y\in \mathcal{S}$, $x\ne y$. This notation (which will be
used frequently) means that there is a constant $C\ge 1$ such that
\begin{equation*}
  \frac{1}{C} \varrho(x,y) 
  \leq 
  2^{-m(x,y)} 
  \leq C \varrho(x,y).
\end{equation*}
We refer to the constant $C$ as $C(\asymp)$ in such inequalities. In the present case,  $C(\asymp)$ does
not depend on $x$, $y$, or $n$. So roughly speaking, the distance of
two distinct points in $\mathcal{S}$ is given in terms of the minimal
level on which two descending sequences representing the points
can distinguish them. 

It is intuitively clear that $(\mathcal{S}, \varrho)$ is a
topological $2$-sphere.  To outline a rigorous proof for this
fact, we return to the Thurston map $h$ defined above. Recall
that $\CC\subset S^2$ is the
$h$-invariant Jordan curve given by the common boundary of the
$0$-tiles. The curve $\CC$ contains the vertices of the pillow,
which are the postcritical points of $h$. 
We consider the 
 cell decompositions $\DD^n(h,\CC)$  as discussed in the
previous section. Note that the $1$-tiles (i.e., the tiles in
$\DD^1(h,\CC)$) are exactly the $1$-tiles in $\mathcal{S}^1$
under the identification $\mathcal{S}^1\cong
\mathcal{S}^0=S^2$ (see the bottom left in Figure~\ref{fig:1flap_both}). 
 
The map $h^n$ sends each $n$-tile to a
$0$-tile homeomorphically, and we can assign colors to $n$-tiles
so that $h^n$ sends an $n$-tile to the $0$-tile of the same
color. In the passage from $\DD^n(h,\CC)$ to $\DD^{n+1}(h,\CC)$
each $n$-tile is subdivided by $(n+1)$-tiles in the same way as
the $0$-tile of the same color is subdivided by $1$-tiles.  From
this it is clear that there is a one-to-one correspondence between
$n$-tiles in $\mathcal{S}^n$ and $n$-tiles for the pair $(h,\CC)$.
Moreover, these tiles realize identical combinatorics. More
precisely, 
\linebreak
we have
\begin{align*}
  \mathcal{X}^{n+1}\supsim \mathcal{X}^n 
  \quad &\Leftrightarrow \quad 
  X^{n+1} \supset X^n, \quad \text{and} 
  \\
  \mathcal{X}^n \cap \mathcal{Y}^n \neq \emptyset
  \quad &\Leftrightarrow \quad 
  {X}^n \cap {Y}^n \neq \emptyset,  
\end{align*}
where the $n$-tiles $\mathcal{X}^n$ and $\mathcal{Y}^n$ in $\mathcal{S}^n$ and the $(n+1)$-tile
$\mathcal{X}^{n+1}$ in $\mathcal{S}^{n+1}$
correspond to the $n$-tiles $X^n$ and $Y^n$ and the $(n+1)$-tile $X^{n+1}$
for $(h,\CC)$, respectively.  

Recall from Section~\ref{sec:cell-decomposition} that $h$ is
expanding if the diameters of $n$-tiles for $(h,\CC)$ tend to $0$
uniformly as $n\to \infty$ (with respect to some fixed base
metric on $S^2$ representing the topology).  In this case one obtains a
well-defined map $\varphi\: \mathcal{S}\ra S^2$ by sending a
point in $\mathcal{S}$ represented by a descending sequence
$\{\mathcal{X}^{n}\}$ to the unique point in the intersection
$\bigcap_{n\in \N_0} X^n$ of the corresponding $n$-tiles for
$(h,\CC)$.  In general, our map $h$ need not be expanding, but we
may assume this if we choose the identification of
$\mathcal{S}^1$ with $\mathcal{S}^0$ carefully (this hinges on
the fact that $h$ is ``combinatorially expanding'' and so the map
can be corrected if necessary to make it expanding; 
see Theorem~\ref{thm:combexp1} for details). 
It is then not hard
to see that $\varphi$ is a homeomorphism, and so $\mathcal{S}$ is
a $2$-sphere.

Though  $(\mathcal{S}, \varrho)$ is a topological
$2$-sphere, it is not a {\em quasisphere}. This means that
this space is not quasisymmetrically equivalent to the standard
$2$-sphere (i.e., the unit sphere in $\R^3$, or equivalently the
Riemann sphere $\CDach$ equipped with the chordal metric;   see
Section~\ref{sec:QCgeom} for the definition of a quasisymmetry). This 
is  closely related to the fact that $h$ is not
(equivalent to) a rational map. One can deduce that $(\mathcal{S}, \varrho)$ is not a quasisphere   from a general
result (see Theorem~\ref{thm:S2vsf}~\ref{item:S2qsphere} mentioned in
the next section), but one can also show this directly (we will
outline an argument in Section~\ref{sec:snowballs}).

The fractal sphere $(\mathcal{S}, \varrho)$ is in a sense   the natural domain for our map $h$; namely, we can conjugate our original map 
$h\: S^2\ra S^2$ by the homeomorphism $\varphi\: \mathcal{S} \ra S^2\ $ to obtain a map on $\mathcal{S}$, also  denoted
 by 
 $h$. 
 We also obtain $n$-tiles in $\mathcal{S}$ corresponding to the 
 $n$-tiles in $S^2$ under the homeomorphism $\varphi$. Roughly speaking, an $n$-tile  in  $\mathcal{S}$ is the part of $\mathcal{S}$
that ``sits on top'' of an $n$-tile in $\mathcal{S}^n$.  
The new map $h\: \mathcal{S}\ra \mathcal{S}$  then  behaves locally like a similarity map: it scales each  $(n+1)$-tile in $\mathcal{S}$ by a factor $2$ and matches it with  the corresponding $n$-tile.

\section{Visual metrics  and the visual sphere}
\label{sec:intvismet-vissph}

After this example we return to the general setting. Let $f\colon S^2\to S^2$ be an expanding Thurston map. We fix a
Jordan curve $\CC\subset S^2$ with $\post(f)\subset S^2$. Then for $n\in \N_0$ we have 
 cell decompositions $\DD^n=\DD^n(f,\CC)$ with $1$-skeleton $f^{-n}(\CC)$ as 
 defined in Section~\ref{sec:cell-decomposition}.

Since $f$ is expanding,  the diameters of $n$-tiles (i.e., tiles in $\DD^n$)
shrink to $0$ uniformly as $n\to \infty$. So if   $x,y\in S^2$ are   distinct points and 
$X^n$ and $Y^n$ are tiles of level $n$ 
with $x\in X^n$ and $y\in Y^n$, 
then $X^n\cap Y^n=\emptyset$ 
 for sufficiently large $n$. This implies that 
 the number\index{m@$m_{f,\CC}$}  
 \begin{align}
  \label{eq:defmx_intro2}
  m(x,y)\coloneqq \max\{n\in \N_0 : {} &\text{there exist
    non-disjoint $n$-tiles}
  \\
  \notag
  &X^n \text{ and } Y^n \text{ with } 
  x\in X^n, y\in Y^n\}   
\end{align}
is finite. 
Similar to
\eqref{eq:intromxy}, it  records the level at which $x$ and $y$ can
be separated by tiles. In Figure~\ref{fig:def_visual_metric}  we have illustrated  this
separation by  tiles in  an example. 
 
Generalizing \eqref{eq:def_visual_introsec}, we consider metrics
$\varrho$ on $S^2$ satisfying
\begin{equation*}
  \varrho(x,y) \asymp \Lambda^{-m(x,y)},
\end{equation*}
for some $\Lambda>1$. We call such a metric a 
\emph{visual metric}\index{visual metric}\index{metric!visual}\index{r@$\varrho$}
for $f$, and $\Lambda$ its {\em expansion factor}. We
will start investigating visual metrics in earnest in
Chapter~\ref{cha:visual-metrics}. 

Visual metrics for a given
Thurston map $f$ are not unique, but two different visual metrics
with the same expansion factor are bi-Lipschitz equivalent. 
They are snowflake equivalent if they have different expansion
factors (see Section~\ref{sec:QCgeom} for this
terminology). Whether a metric is visual does not depend on the
choice of the Jordan curve $\CC$ that was used to define  the quantity \eqref{eq:defmx_intro2} via the cell decompositions $\DD^n(f,\CC)$. Moreover, if $F=f^k$ is an iterate of $f$
(where $k\in \N$), then a metric is visual for $f$ if and only if it
is visual for $F$. These (and other) basic properties of visual metrics can be found 
in Proposition~\ref{prop:visualsummary}. 

If $\sigma$ is  a tile or an edge in the cell decomposition
$\DD^n= \DD^n(f,\CC)$,  then
$\diam_\varrho(\sigma)\asymp \Lambda^{-n}$. In addition, any two
disjoint 
cells $\sigma,\tau\in\DD^n$ satisfy
$\dist_\varrho(\sigma, \tau)\gtrsim \Lambda^{-n}$. This notation
means that there is a constant $C>0$ such that
$C\dist_\varrho(\sigma,\tau) \geq \Lambda^{-n}$. We refer to the
constant $C$ as $C(\gtrsim)$. Equivalently, we write
$\Lambda^{-n}\lesssim \dist_\varrho(\sigma,\tau)$ and refer to the
constant  $C$ as $C(\lesssim)$. Here the constants $C(\asymp)$ and
$C(\gtrsim)$ do not depend on $n$ or the cells involved. In fact,
these two geometric properties characterize visual
metrics (see Proposition~\ref{lem:expoexp}).

For the map $g$ from
Section~\ref{sec:Lattes} the length metric induced by the Euclidean metric on the pillow
$P$ is a visual metric  with expansion factor $\Lambda=2$. 
Similarly, the particular metric $\varrho$ defined in
Section~\ref{sec:int-frac-sph} is a visual metric for $h$ with
expansion factor $\Lambda=2$ (here we identify $\mathcal{S}$ with
$S^2$ by the homeomorphism $\varphi$).  In this case, we obtain
visual metrics with arbitrary expansion factor $1<\Lambda \leq 2$
if we consider a ``snowflaked'' metric $\varrho^\alpha$ 
with suitable $\alpha\in (0,1]$, 
but there is no visual metric for $h$
with $\Lambda >2$.  Indeed, if $\varrho$ is a visual metric with
expansion factor $\Lambda>1$ and $X^n$ is an $n$-tile, then
$\diam_{\varrho}(X^n)\lesssim \Lambda^{-n}$.  
Now it is easy to see
that one can form a connected chain of $n$-tiles with $2^n$
elements that joins two non-adjacent $0$-edges (as
Figure~\ref{fig:intpolysur} suggests, one obtains such a chain by
running along the bottom $0$-edge). Then by the triangle
inequality $2^n\cdot \Lambda^{-n}\gtrsim 1$ for all $n\in \N$,
and so $\Lambda\le 2$.

Let $f\: S^2\ra S^2$ be an expanding Thurston map. Then the
supremum of all $\Lambda>1$ for which there exist visual metrics
with expansion factor $\Lambda$ agrees with the
\emph{combinatorial expansion factor}\index{combinatorial expansion factor}\index{L0@$\Lambda_0$}  
of $f$, denoted by
$\Lambda_0(f)$.  It is computed from data associated with the
cell decompositions $\DD^n(f,\CC)$ determined by the map
$f$ and a  Jordan curve $\CC\sub S^2$   with
$\post(f)\sub \CC$.  For this we consider  the combinatorial quantity
$D_n(f,\CC)$  defined to be the minimal number of tiles in
$\DD^n(f,\CC)$ that are needed to form a connected set joining
opposite sides of $\CC$, 
i.e., 
two non-adjacent $0$-edges 
(the
definition is slightly different in the case $\#\post(f)=3$; see 
Section~\ref{sec:opp}). For the examples discussed in Sections~\ref{sec:Lattes} and~\ref{sec:int-frac-sph} we have 
 $D_n(g,\CC)=2^n$ and  $D_n(h,\CC)=2^n$ for $n\in \N_0$.

In general, the number $D_n(f,\CC)$ depends on $\CC$. For an
expanding Thurston map it grows at an exponential rate as
$n\to \infty$. This growth rate is independent of $\CC$, and 
determined only by $f$. Moreover, the limit 
\begin{equation}\label{intro:combexp} 
  \Lambda_0(f) \coloneqq \lim_{n\to \infty} D_n(f,\CC)^{1/n}
\end{equation} 
exists, satisfies $1<\Lambda_0(f) <\infty$, and is defined to be
the combinatorial expansion factor of $f$ (see
Proposition~\ref{prop:exp}).  It is invariant under
topological conjugacy  and well-behaved under iteration (see
Proposition~\ref{prop:expfacinv}). For our two examples we have 
$\Lambda_0(g) =2$ and $\Lambda_0(h)=2$.

As already mentioned
above, $\Lambda_0(f)$ gives the range of possible expansion
factors of visual metrics for an expanding Thurston map $f$. This is made precise 
in the following theorem. 

\begin{reptheorem}{thm:visexpfactors1} 
  [Visual metrics and their expansion factors] 
Let   $f\:S^2\ra S^2$ be  an expanding Thurston map, and 
  $\Lambda_0(f)\in (1,\infty)$ be its combinatorial expansion
  factor. Then the following statements are true:
  \begin{enumerate}
    
  \item
   If $\Lambda$ is the expansion factor of a visual metric for
   $f$, then $1<\Lambda\le \Lambda_0(f)$.
 
 \item
   Conversely, if $1<\Lambda<\Lambda_0(f)$, then there exists a
   visual metric $\varrho$ for $f$ with expansion factor
   $\Lambda$. Moreover, the visual metric $\varrho$ can be
   chosen to have the following additional property: 
   
   For every
   $x\in 
   S^2$ there exists a neighborhood $U_x$ of $x$ such that
   \begin{equation*} 
     \varrho(f(x), f(y))
     =\Lambda  \varrho(x,y)\text{ for all } y\in U_x.
   \end{equation*} 
 \end{enumerate}
\end{reptheorem} 
(Note that in this introduction we label the results as they
appear in later chapters.) 

In general, one cannot guarantee the existence of a visual metric
with expansion factor $\Lambda=\Lambda_0(f)$ (see
Example~\ref{ex:notattained}). 

The  combinatorial expansion factor
 always satisfies the inequality   $\Lambda_0(f) \leq
\deg(f)^{1/2}$, where $\deg(f)$ is the (topological) degree of $f$ (see Proposition~\ref{prop:macgrdn}). For our   examples $g$ and $h$ from the  previous sections we have 
 $\Lambda_0(g) = 2 =
\deg(g)^{1/2}$ and $\Lambda_0(h) = 2 <
\deg(h)^{1/2} = \sqrt{5}$. The equality for the Latt\`es map $g$ is not a coincidence.  Closely related results will be
discussed in Section~\ref{sec:char-latt-maps}. 

According to 
Theorem~\ref{thm:visexpfactors1}~\ref{item:visexpfactors2}, for each expanding Thurston map $f$ we can find a
visual metric $\varrho$ so that $f$ scales the metric $\varrho$
by a constant factor at each point. The
Latt\`{e}s map $g\:\CDach\ra \CDach$ discussed in
Section~\ref{sec:Lattes}  illustrates this statement: if we
equip $\CDach$ with a suitable visual metric for $g$ (the path
metric on the pillow in Figure~\ref{fig:mapg}), then $g$ behaves like a
piecewise similarity map, where distances are scaled by the
factor $\Lambda=2$.

The space
$\mathcal{S}$ from Section~\ref{sec:int-frac-sph} equipped with the visual metric $\varrho$ in
\eqref{eq:def_visual_1} is a fractal sphere.  It  is self-similar in the sense
that the part of the surface that is ``built on top'' of some
$n$-tile $\mathcal{X}^n$ is similar (i.e., is isometric up to
scaling by the factor $2^n$) to the part of the surface that is
``built on top'' of the white or the black $0$-tile. Similarly, we can find visual metrics for any Thurston map $f$ such
that $f$ scales tiles by a constant factor. Then  the 
metric behavior of the dynamics on tiles becomes very simple,
while the space on which $f$ acts is a fractal sphere and  geometrically more
complicated.

Our choice of the term ``visual metric'' is motivated by the
close relation of this concept to the notion of a visual metric
on the boundary of a Gromov hyperbolic space (see
Section~\ref{sec:Grhyp} for general background; very similar
ideas can be found in \cite{HP}). Namely, if $f\:S^2\ra S^2$ is
an expanding Thurston map and $\CC\sub S^2$ a Jordan curve with
$\post(f)\sub \CC$, then one can define an associated {\em tile
  graph} $\G(f,\CC)$ as follows. Its vertices are given by the
tiles in the cell decompositions $\DD^n(f,\CC)$ on all levels
$n\in \N_0$.  We consider $X^{-1}\coloneqq S^2$ as a tile of level $-1$ and add it as a vertex.  One joins two vertices by an edge if the corresponding
tiles intersect and have levels differing by at most $1$ (see
Chapter~\ref{cha:Gromov}). The graph $\G(f,\CC)$ depends on the
choice of $\CC$, but if $\CC'\sub S^2$ is another Jordan curve
with $\post(f)\sub \CC'$, then $\G(f,\CC)$ and $\G(f,\CC')$ are
rough-isometric (Theorem~\ref{thm:roughisom}); note that this is
much stronger than being quasi-isometric (see
Section~\ref{sec:Grhyp} for the terminology).

The graph $\G(f,\CC)$ is Gromov hyperbolic
(Theorem~\ref{thm:tileGrom}). Its boundary at infinity
$\partial_\infty\G(f,\CC)$ can be identified with $S^2$. 
Under this identification the class of visual metrics in the
sense of Gromov hyperbolic spaces coincides with the class of
visual metrics for $f$ in our sense
(Theorem~\ref{thm:visualGrTh}).  The number $m(x,y)$ defined in
\eqref{eq:defmx_intro2} is the Gromov product of the
 points $x,y\in S^2\cong \partial_\infty\G(f,\CC)$ up to a uniformly bounded  additive
constant (Lemma~\ref{lem:m_Gromov}).
  
If $f\: S^2\ra S^2$ is an expanding Thurston map and $\varrho$ a
visual metric for $f$, then we call the metric space
$(S^2,\varrho)$ the {\em visual sphere} of $f$. For fixed $f$  different visual metrics $ \varrho_1$ and $\varrho_2$ give snowflake equivalent spaces $(S^2,\varrho_1)$ and $(S^2,\varrho_1)$. So an expanding Thurston map determines its visual sphere uniquely up to snowflake equivalence. 
  
Many dynamical properties of $f$ are encoded in the geometry of
its visual sphere.  The following statement is one of the main
results of this work.

%

\begin{reptheorem}{thm:S2vsf}
  [Properties of $f$ and its associated visual sphere]
  Suppose $f\colon S^2\to S^2$ is an expanding Thurston map and $\varrho$ is a
  visual metric for $f$. Then the following statements are true:
  \begin{enumerate}

  \item 
    $(S^2,\varrho)$ is {doubling} if and only if $f$ has {no
      periodic critical points}. 
  
  \item 
    $(S^2,\varrho)$ is quasisymmetrically
    equivalent to $\CDach$ if and only if $f$ is
    topologically conjugate to a {rational map}.
  \item 
    $(S^2,\varrho)$ is {snowflake equivalent} 
    to $\CDach$ if and only if $f$ is topologically conjugate to a
    {Latt\`{e}s map}.  
  \end{enumerate}
\end{reptheorem}
Here it is understood that $\CDach$ is equipped with the chordal
metric.  For the terminology used in the statements
see Section~\ref{sec:QCgeom}.

As we already discussed, part~\ref{item:S2qsphere} of the previous theorem provides an
 analog of 
Cannon's conjecture\index{Cannon's conjecture} in geometric group
theory (see Section~\ref{sec:Cannconj} for a more detailed
discussion). According to this conjecture every Gromov hyperbolic
group $G$ whose boundary at infinity $\partial_\infty G$ is a
$2$-sphere should arise from some standard situation in
hyperbolic geometry. The conjecture is equivalent to showing that
$\partial_\infty G$ equipped with a visual metric (in the sense
of Gromov hyperbolic spaces) is quasisymmetrically equivalent to
$\CDach$. One of the reasons why Cannon's
conjecture is still open may be the lack of non-trivial examples
that guide the intuition (see the  paper \cite{BouK} though,
which in a sense addresses this issue). All examples come from
fundamental groups $G$ of compact hyperbolic manifolds
where one already has a natural identification of
$\partial_\infty G$ with $\CDach$; according to Cannon's
conjecture there are no other examples. In contrast, the visual
spheres of expanding Thurston maps provide a rich supply of
metric $2$-spheres that sometimes are and sometimes are not
quasisymmetrically equivalent to $\CDach$ (see
Section~\ref{sec:snowballs}).
  
The proof of one of the implications in
Theorem~\ref{thm:S2vsf}~\ref{item:S2qsphere} (the ``only if''
part) 
uses some well-known ingredients.  Namely, if $(S^2, \varrho)$ is
quasisymmetrically equivalent to the standard sphere $\CDach$,
then one can conjugate $f$ to a map $g$ on $\CDach$. Since the
map $f$ dilates distances with respect to a suitable visual
metric by a fixed factor (see
Theorem~\ref{thm:visexpfactors1}~\ref{item:visexpfactors2} mentioned above), the map
$g$ is uniformly quasiregular (see  Section~\ref{sec:QCgeom} for the terminology).  Hence $g$, and therefore also
$f$, are conjugate to a rational map by a standard theorem.
 
The converse direction (the ``if'' part) is harder to
establish. If $f$ is conjugate to a rational map, then we may
assume without loss of generality that $f$ is a rational expanding
 Thurston map on $\CDach$ to begin with. If $\varrho$ is
a visual metric for $f$, then one shows that the identity map
from $(\CDach, \varrho)$ to $(\CDach, \sigma)$ is a
quasisymmetry, where $\sigma$ is the chordal metric.  This
follows from a careful analysis of the geometry of the tiles in
the cell decompositions $\DD^n(f,\CC)$ with respect to the metric
$\sigma$ (see Proposition~\ref{prop:chordalm}). For example,
while it is fairly obvious from the definitions that adjacent
tiles in $\DD^n(f,\CC)$ have comparable diameter with respect to
a visual metric $\varrho$ (with uniform constants independent of
the level $n$), the same assertion is also true for the chordal
metric $\sigma$.  Our proof of this and related statements is
based on Koebe's distortion theorem and the fact that if $f$ has
no periodic critical points, then in the cell decompositions
$\DD^n(f,\CC)$ we see locally only finitely many different
combinatorial types.

Thurston studied the question when a given Thurston map is
represented by a conformal dynamical system from a point of view 
different from the one suggested by
Theorem~\ref{thm:S2vsf}~\ref{item:S2qsphere} (see
Section~\ref{sec:thurst-class-rati} for a short overview).  He
asked when a Thurston map $f\:S^2\ra S^2$ is in a suitable sense
 {\em (Thurston) equivalent} (see Definition~\ref{def:Thequiv}) to
a rational map and obtained a necessary and sufficient condition
(see 
\cite{DH}). For expanding Thurston maps his notion of equivalence
actually means the same as topological conjugacy of the maps
(Theorem~\ref{thm:exppromequiv}).

The proof of part~\ref{item:S2qsphere} of Theorem~\ref{thm:S2vsf} does not use Thurston's theorem. Indeed,  none of our statements relies on this, and so  our methods
possibly provide a different approach  for its proof.   

It is not clear how useful  Theorem~\ref{thm:S2vsf}~\ref{item:S2qsphere} 
is for deciding whether an explicitly  given expanding Thurston map is
topologically conjugate to  a rational map. 
It is likely that our techniques  can be used to formulate a more efficient criterion, but 
we will not pursue this further here.

\section{Invariant curves} 
\label{sec:int-inv-cell-de}

The Jordan curve $\CC$ chosen in
Section~\ref{sec:Lattes} is invariant for the map $g$ in the
sense that $g(\CC)\subset \CC$. In this case, the cell decomposition $\DD^{n+1}(g,\CC)$ is a
refinement of $\DD^n(g,\CC)$ for each $n\in \N_0$. We have a similar situation 
for the Jordan curve $\CC$ and the map $h$ in 
 Section~\ref{sec:int-frac-sph}. 

 Some of our main results are about the
existence and uniqueness of such invariant Jordan curves $\CC$.
In particular, we will show that they exist for    sufficiently high iterates  of  {\em every} expanding Thurston map.

\begin{reptheorem}{thm:main}
  [High iterates have invariant curves]
  Let $f\colon S^2\to S^2$ be an expanding Thurston map, and
  $\CC\sub S^2$ be a Jordan curve with $\post(f)\sub \CC$.  
  Then
  for each 
  sufficiently 
  large $n\in \N$ there exists a Jordan
  curve $\widetilde{\CC}\sub S^2$ that is invariant for $f^n$ and
  isotopic to $\CC$ rel.\ $\post(f)$.
\end{reptheorem}

A discussion of isotopies and related terminology can be found in
Section~\ref{sec:thurston-equivalence}. 
Since $\widetilde{\CC}$ is
isotopic to $\CC$ rel.\  $\post(f)$, it will also contain the set
$\post(f)$.

In Example~\ref{ex:noinvCC} we exhibit an  expanding
Thurston map $f\colon S^2\to S^2$ that has no $f$-invariant Jordan
curve $\widetilde{\CC}\subset S^2$ with
$\post(f)\subset \widetilde{\CC}$. This shows  that in general
it is necessary to pass to  an iterate in
Theorem~\ref{thm:main}.

If a curve $\widetilde{\CC}$ is invariant for some iterate $f^n$,
then one cannot expect it to be invariant for some other iterate
$f^k$ unless $k$ is a multiple of $n$ (see
Remark~\ref{rem:ndep}). So typically the curve
$\widetilde{\CC}$ in the previous theorem will depend on $n$.

The proof of Theorem~\ref{thm:main} is based on a necessary and
sufficient criterion for  the existence of $f$-invariant curves
given in Theorem~\ref{thm:exinvcurvef}. An outline of the proof
of this latter theorem is presented in Example~\ref{ex:invC}
(see  Figure~\ref{fig:invC_constr} for an illustration).  

One can actually formulate a related criterion for the existence
of an invariant curve in a given isotopy class rel.\ $\post(f)$
or rel.\ $f^{-1}(\post(f))$.
Moreover, if an $f$-invariant
Jordan curve $\widetilde \CC$ exists, then it is the Hausdorff
limit of a sequence of Jordan curves $\CC^n$ that can be
obtained from a simple iterative procedure
(see Remark~\ref{rem:expcheck}~(iii)) and Proposition~\ref{prop:invCit}).

Our existence results are complemented by the following
uniqueness statement for invariant Jordan curves.

\begin{reptheorem}{thm:uniqc}
  [Uniqueness of invariant curves]
  Let $f\: S^2\ra S^2$ be an expanding Thurston map, and 
  $\CC, \CC'\sub S^2 $ be $f$-invariant Jordan curves  that both
  contain the set $\post(f)$.  Then $\CC=\CC'$ if and only if
  $\CC$ and $\CC'$ are isotopic rel.\ $f^{-1}(\post(f))$.
\end{reptheorem}

As a consequence one can prove that if $\#\post(f)=3$, then there
are at most finitely many $f$-invariant Jordan curves
$\CC\sub S^2$ with $\post(f)\sub \CC$ 
(Corollary~\ref{cor:finitepost3}). This is also true if $f$ is
rational  and has a hyperbolic orbifold (see Theorem~\ref{thm:rat_finitelyC}).  
 In general, a
Thurston map $f$ can have infinitely many such invariant curves
(Example~\ref{ex:infty_C}), but there are at most finitely many
in a given isotopy class rel.\ $\post(f)$ 
(Corollary~\ref{cor:finiterelP}).

Let $f\colon S^2\to S^2$ be a Thurston map and suppose
$\CC\subset S^2$ is an $f$-invariant Jordan curve with
$\post(f)\subset \CC$. We consider the cell decompositions
$\DD^n=\DD^n(f,\CC)$ as discussed in
Section~\ref{sec:cell-decomposition}.  The $f$-invariance of
$\CC$ implies that $f^{n+1}(f^{-n}(\CC)) \subset \CC$, or
equivalently that $f^{-n}(\CC) \subset f^{-(n+1)}(\CC)$ for all
$n\in \N_0$. Since $n$-tiles and $(n+1)$-tiles were defined to be
the closures of the complementary components of $f^{-n}(\CC)$ and
$f^{-(n+1)}(\CC)$, respectively, each $(n+1)$-tile is contained
in an $n$-tile. Similarly, every cell in $\DD^{n+1}$ is contained
in a cell in $\DD^n$. More precisely, $\DD^{n+1}$ is a refinement
of $\DD^n$ (see Definition~\ref{def:ref} and
Proposition~\ref{prop:invmarkov}).  In particular, $\DD^1$
refines $\DD^0$.

One can essentially recover the  Thurston map $f$ from the 
cell decomposition $\DD^0$ and its refinement $\DD^1$ if one specifies  some  
additional data. In the example in Figure~\ref{fig:mapg} we
labeled the vertices in domain and range to indicate their correspondence under the map. Similarly, in the general case this additional
information is provided by a  \emph{labeling},
which is a map $L\colon \DD^1\to \DD^0$ (see
Section~\ref{sec:labelings}).

The triple $(\DD^1,\DD^0,L)$ records how  $0$-cells are subdivided by 
$1$-cells and how $1$-cells are mapped to $0$-cells. 
We call such triples $(\DD^1,\DD^0,L)$ 
{\em two-tile subdivision rules}\index{two-tile subdivision rule}\index{subdivision} 
(see Definition~\ref{def:subdivcomb}), because $\DD^0$ 
contains two tiles (namely the two closed Jordan regions bounded by $\CC$). 
Every Thurston map $f$ with an $f$-invariant Jordan curve 
$\CC\subset S^2$ with $\post(f) \subset \CC$ gives rise to a two-tile subdivision rule
(see Proposition~\ref{prop:ThmapSub}). 

Note that $L$ is a map between finite sets. This means that the
information encoded in $(\DD^1,\DD^0,L)$ is given in terms of
finite data. So  a two-tile subdivision rule can be considered as a 
combinatorial object. 

Conversely, every two-tile subdivision rule can be {\em realized}
by a Thurston map (see Proposition~\ref{prop:rulemapex}). It is unique up to 
Thurston equivalence. 
In this way,  two-tile subdivision rules give simple
combinatorial models for  Thurston maps.
There is no obvious difference between the two-tile
subdivision rules realized by rational maps and the ones that are
not. This is another motivation for  investigating  general Thurston maps.

The map $g$ in Section~\ref{sec:Lattes} and the map
 $h$ in 
Section~\ref{sec:int-frac-sph}  were constructed from 
Figure~\ref{fig:mapg} and Figure~\ref{fig:1flap_both}, respectively.  This really means that the pictures represent  two-tile subdivision rules, and the
maps  realize these subdivision rules according to
Proposition~\ref{prop:rulemapex}. This is our preferred
way to define Thurston maps. To be able to discuss 
examples, we will use this way of constructing
Thurston maps in an informal way even before we provide the theoretical foundations in Chapter~\ref{cha:subdivisions}. 

Our concept of a two-tile subdivision rule is closely related to   the general \emph{subdivision rules} that have been studied extensively by
Cannon, Floyd, and Parry (see for example \cite{CFP01}). 

The main consequence of Theorem~\ref{thm:main} is that we have a
combinatorial description by a two-tile subdivision rule for
sufficiently high iterates $F=f^n$ of every expanding Thurston
map $f$.

\begin{repcor}{cor:subdivnlarge} 
  [Thurston maps and subdivision rules]
  Let $f\: S^2 \ra S^2$ be an expanding Thurston map.  
  Then for
  each sufficiently large $n\in \N$ there exists a two-tile subdivision
  rule that is realized by $F=f^n$.
\end{repcor} 

In particular, we
obtain a cellular Markov partition for $F$. There are several other
approaches to  providing combinatorial
models for   certain classes of maps. For example, a postcritically-finite polynomial can  be
described by its Hubbard tree (see \cite{DH84}) or a rational map
with three critical values  by a dessin d'enfant (see 
\cite{Gro97} and \cite{LanZvo}). 
 A very general
setting that allows one to address similar questions is the
recently developed theory of self-similar group actions. In this context one investigates 
 algebraic 
objects such as 
the iterated monodromy group and the biset (or
bimodule) defined for a Thurston map (see \cite{Ne}, in particular Chapter~6). 

Our approach is more geometric and adapted to Thurston maps. One
of its main features 
is that we have  good geometric control for the cells in the 
decompositions $\DD^n=\DD^n(f,\CC)$ if $\CC$ is
$f$-invariant. In particular,  with respect to any visual metric the curve $ \CC$ is
actually a quasicircle 
(Theorem~\ref{thm:Cquasicircle}) and   
 the boundaries of the
tiles in $\DD^n$ are quasicircles with uniform parameters
independent of the level 
$n$ (Proposition~\ref{prop:arc}). 
For 
a rational expanding Thurston map  $f\colon \CDach \to \CDach$  the  tiles in $\DD^n$ are in
fact uniform quasidisks with respect to the chordal
metric $\sigma$ on $\CDach$ (Theorem~\ref{thm:main01}~\ref{item:frat_Markov}).

\section{Miscellaneous results} \label{sec:infures} 

In this section we collect various noteworthy results that may be  useful for the orientation of the reader.

%
%

The concept of {\em Thurston equivalence} for Thurston maps  was already mentioned before. We record its slightly technical definition in Section~\ref{sec:thurston-equivalence}. At first sight the concept does not seem to be adapted to the 
dynamics under iteration. However,  in Theorem~\ref{thm:exppromequiv} we will  
prove the important fact that two expanding Thurston maps are Thurston
equivalent if and only if they are topologically conjugate.

In Chapter~\ref{cha:symdym} we make a brief excursion to symbolic dynamics. The properties of visual metrics are essential for  proving the
following statement (see Chapter~\ref{cha:symdym} for the
relevant definitions).
 
\begin{reptheorem}{thm:expThfactor}
  Let $f\: S^2\ra S^2$ be an expanding Thurston map. Then $f$ is
  a factor of the left-shift $\Sigma\: J^\om\ra J^\om $ on the
  space $J^\om$ of all sequences in a finite set $J$ of
  cardinality $\#J=\deg(f)$.
\end{reptheorem}

The proof of this theorem does not use  invariant
Jordan curves for $f$ or its iterates;  so in a sense  it is independent of
Theorem~\ref{thm:main} mentioned above. It can be used  
to obtain another  Markov partition for $f$, but we have very little control for  the
geometric shape of the ``tiles''.

An immediate consequence of this theorem and its proof is the
fact that the periodic points of an expanding Thurston map
$f\: S^2\ra S^2$ form a  dense subset of $S^2$
(Corollary~\ref{cor:perdense}).

%

In Chapter~\ref{cha:quotiens} we investigate equivalence
relations $\sim$ on the sphere $S^2$, and the question when a
Thurston map $f\colon S^2\to S^2$ descends to a Thurston map on the quotient space 
$S^2/\Sim$. Here we assume that  $\sim$ is of  \emph{Moore-type}
(see Definition~\ref{def:eq_Moore_type}), which 
implies that 
$S^2/\Sim$ is a $2$-sphere. 
Under this assumption the relevant condition is that  $\sim$ is \emph{strongly invariant} for $f$ in the sense that $f$ maps each 
equivalence class onto  another equivalence class (see
Definition~\ref{def:sim_strongly-inv} and
Lemma~\ref{lem:eq_strong_inv}). We will prove that  for 
a given  equivalence relation $\sim$ of {Moore-type} on
$S^2$  a Thurston map $f\: S^2\ra S^2$
descends to a Thurston map if and only if $\sim$ is strongly
invariant for $f$  (see
Theorem~\ref{thm:f_descends_branched_cover} and
Corollary~\ref{cor:f_descends_Thurston}).

%
%

Often it is desirable to promote a given Thurston map $f$ that is
not expanding to an expanding one. More precisely, we want to find an expanding Thurston map $\widetilde{f}$
that is Thurston equivalent to $f$. In general, the existence of 
$\widetilde{f}$ is not guaranteed. However, if $f$ is
\emph{combinatorially expanding} (see
Definition~\ref{def:combexp}) such a map $\widetilde{f}$ does
exist. Roughly speaking, it is constructed by defining an
equivalence relation that collapses the sets where $f$ fails to
be expanding to points (see Chapter~\ref{cha:combexp}).

We will also investigate some measure-theoretic aspects of
expanding Thurston maps. Each such map has a natural measure
adapted to its dynamics.

\begin{reptheorem}{thm:maxentr0}
Let $f\: S^2 \ra S^2$ be an expanding Thurston map. Then there exists a unique measure $\nu_f$ 
of maximal entropy for $f$. The map $f$ is mixing for $\nu_f$. 
\end{reptheorem}

This theorem follows from results due  to Ha\"\i ssinsky-Pilgrim \cite[Theorem~3.4.1]{HP}.  We will present  a different proof 
and give an explicit description of $\nu_f$ in terms of the 
cell decompositions $\DD^n(F,\CC)$, where $F=f^n$ is a suitable iterate and $\CC$ is an invariant curve as in Theorem~\ref{thm:main}. 
In particular,  $\nu_f=\nu_F$ assigns equal mass to all tiles in the cell decompositions $\DD^n(F,\CC)$ of a given ``color'' (see Proposition~\ref{prop:exmeasure} and Theorem~\ref{thm:nuF}). 

The measure $\nu_f$ can be used to  study   the topological and 
measure-theoretic dynamics of $f$ under iteration. For example, we will see that  $h_{top}(f)=\log(\deg (f))$, where $h_{top}(f)$ is the topological entropy and $\deg (f)$ the topological degree of $f$ (Corollary~\ref{cor:topent}).

If $\mu$ is   a Borel  measure  on a metric space $(X,d)$, then we call the metric measure space $(X,d, \mu)$   {\em Ahlfors $Q$-regular}\index{Ahlfors regular} for $Q>0$  if  $$\mu(\overline B_d(x,r))\asymp r^Q$$  for each closed ball 
      $\overline B_d(x,r)$ in $X$ whose radius $r$ does not exceed the diameter of the space.  
If  an expanding Thurston map  has no periodic critical points, then   its  visual sphere together with its measure of maximal entropy  has this property.

\begin{repprop}{prop:Ahlforsreg} 
  Let $f\: S^2\ra S^2$ be an expanding
  Thurston map without periodic critical points, $\varrho$ be a visual
  metric for $f$ with expansion factor $\Lambda>1$, and  $\nu_f$ be 
  the measure of maximal entropy of $f$.
   Then   the metric measure space    $(S^2, \varrho, \nu_f)$ is Ahlfors $Q$-regular
    with
    \begin{equation*}
      Q\coloneqq \frac{\log(\deg(f))}{\log(\Lambda)}.
    \end{equation*}
 In particular, $(S^2,\varrho)$ has Hausdorff dimension $Q$ and 
 \begin{equation*}
   0<\mathcal{H}_\varrho^Q(S^2)<\infty.
 \end{equation*}
 \end{repprop}
Here $\mathcal{H}_\varrho^Q$ denotes Hausdorff $Q$-measure 
on the metric space $(S^2, \varrho)$. 

In Chapter~\ref{cha:rati-thurst-maps-1} we delve into a deeper analysis of measure-theoretic properties of rational 
expanding Thurston maps. 
We denote by $\leb_{\CDach}$ Lebesgue measure on $\CDach$,
normalized such that $\leb_{\CDach}(\CDach)=1$. 
Then the following (well-known) statement is true. 

\begin{reptheorem}{thm:ergodic_for_f}
  Let $f\colon \CDach\to \CDach$ be a rational expanding
  Thurston map. Then Lebesgue measure $\leb_{\CDach}$ is ergodic
  for $f$. 
\end{reptheorem}

Note that $\leb_{\CDach}$ is essentially never $f$-invariant, but  ergodicity is interpreted as for $f$-invariant measures (see the discussion in Section~\ref{sec:reviewmdyn}): if $A\sub \CDach$ is a Borel set with $f^{-1}(A)=A$, then 
 $\leb_{\CDach}(A)=0$ or  
 $\leb_{\CDach}(A)=1$.

In our context one can actually find an $f$-invariant measure that is absolutely continuous with respect to Lebesgue measure.

\begin{reptheorem}{thm:ex_inv_abs_L}
  Let $f\colon \CDach\to \CDach$ be a rational expanding Thurston
  map. Then there exists a unique $f$-invariant (Borel) probability
  measure $\lambda_f$ on $\CDach$ that is absolutely continuous
  with respect to Lebesgue measure $\leb_{\CDach}$. This measure
  has the form $d\lambda_f=\rho\, d\leb_{\CDach}$, where $\rho$
  is a positive continuous function on
      $\CDach\setminus \post(f)$. Moreover, the measure $\lambda_f$
  is ergodic for $f$.
\end{reptheorem}
Again this statement is essentially well known. We will 
prove it by interpreting the existence of $\lambda_f$ as a fixed point problem for a suitable  {\em Ruelle operator}. This is a standard technique in ergodic theory reviewed in Chapter~\ref{cha:rati-thurst-maps-1}.

\section{Characterizations of Latt\`{e}s maps}
\label{sec:char-latt-maps}

Latt\`{e}s maps form another major theme  in this
book. One  may define such a map as a rational expanding Thurston
map with a parabolic orbifold (see
Section~\ref{sec:orbif-assoc-thurst} for the
terminology). Equivalently, they are characterized as quotients of
holomorphic torus endomorphisms, or as quotients of holomorphic automorphisms on the complex plane $\C$ by a crystallographic
group (see Theorem~\ref{thm:Lattesstruc}). While these latter
descriptions are more technical to state, they contain more
information and allow us to construct all Latt\`{e}s maps
explicitly.

In Section~\ref{sec:Lattes} the Latt\`{e}s map $g$ was
constructed from maps $A\colon \C\to \C$ and
$\Theta\colon \C\to \CDach$. For these maps we have 
$g\circ \Theta = \Theta \circ A$, and so we obtain  a commutative diagram as in 
\eqref{eq:Lattes}. The push-forward of the Euclidean
metric on $\C$ by $\Theta$ is the 
{\em canonical orbifold metric}\index{canonical orbifold!metric}\index{metric!canonical orbifold}\index{o@$\omega$}\index{orbifold!canonical metric}\index{push-forward!of metric!by orbifold covering map}
$\omega$ of $g$.  In this example, it  is the path metric on the pillow in
Figure~\ref{fig:mapg}. Moreover, $\omega$ is a visual
metric for $g$. This is characteristic for Latt\`{e}s maps.

 To make this
 precise,  we will introduce some terminology in an
informal way.
Each  rational Thurston map $f\colon \CDach \to \CDach$ has
an associated orbifold $\OC_f$ (see
Section~\ref{sec:orbif-assoc-thurst}), for which there is in turn
a universal orbifold covering map $\Theta\colon X\to \CDach$ (see
Section~\ref{sec:orbifolds-coverings}). Here $X=\C$ or $X= \D$
depending on whether $f$ has a parabolic or hyperbolic
orbifold. The map $\Theta$ is holomorphic. The {\em canonical orbifold
metric} $\omega$ of $f$ is the push-forward of the Euclidean
metric (if $X=\C$) or hyperbolic metric (if $X=\D$) by $\Theta$ (see Section~\ref{sec:expratThmaps}). The
metric $\omega$ is a conformal metric on $\CDach$ (see Section~\ref{sec:metrspterm} for the terminology), 
and  closely related to the  chordal metric
$\sigma$ on $\CDach$. In fact,  
if  $f$ does not have periodic critical points, the metric space 
$(\CDach,\omega)$ is bi-Lipschitz equivalent to
$(\CDach, \sigma)$ (see Lemma~\ref{lem:om_chordal}). Note that all  this is tied to a  holomorphic setting, and so we cannot define such a canonical metric $\omega$
unless the Thurston map $f$ is rational. 

\begin{repprop}{prop:orbivispara}  
  [Canonical orbifold metric as visual metric] 
  Let $f\: \CDach\ra \CDach$ be a rational Thurston map without
  periodic critical points, and $\om$ be the canonical orbifold
  metric for $f$. Then $\om$ is a visual metric for $f$ if and
  only if $f$ is a Latt\`{e}s map.
\end{repprop}

In Theorem~\ref{thm:S2vsf}~\ref{item:S2Lattes} we have already
encountered  another (much deeper) characterization of Latt\`{e}s maps in
terms of visual metrics. 

It is not hard to see that for a Latt\`es map $f$ the space $(\CDach,\omega)$ is Ahlfors $2$-regular.
The existence of a visual metric with this property again characterizes Latt\`{e}s maps.

\begin{reptheorem}{thm:visual_2Ahlfors_Lattes}
  Let $f\colon S^2 \to S^2$ be an expanding Thurston map. Then
  $f$ is topologically conjugate to a Latt\`{e}s map if and only
  if there is a visual metric $\varrho$ for $f$ such that
  $(S^2, \varrho)$ is Ahlfors $2$-regular.
\end{reptheorem}

Together with
Proposition~\ref{prop:Ahlforsreg} (mentioned
above) the previous theorem implies  that an expanding Thurston map without periodic critical
points is topologically conjugate to a Latt\`{e}s map if and only
if there is a visual metric with expansion factor
$\Lambda=\deg(f)^{1/2}$ (see Corollary~\ref{cor:Lattes_L_degf}).

Recall from Theorem~\ref{thm:visexpfactors1} that for an
expanding Thurston map $f$ the supremum of all expansion factors
of visual metrics is given by the combinatorial expansion factor
$\Lambda_0(f)$. This number was defined in \eqref{intro:combexp}
as the limit of $D_n(f,\CC)^{1/n}$ as $n\to \infty$. Here
$D_n(f,\CC)$ is the minimal number of $n$-tiles that are needed
to form a connected set joining opposite sides of $\CC$.  We will
show in Proposition~\ref{prop:macgrdn} that
$D_n(f,\CC)\lesssim \deg (f)^{n/2}$.  Latt\`{e}s maps are
precisely those expanding Thurston maps for which this maximal
growth rate for $D_n(f,\CC)$ is attained.

\begin{reptheorem}{thm:Qianthm}
  Let $f\: S^2\ra S^2$ be an expanding Thurston map.  Then $f$
  is topologically conjugate to a Latt\`es map if and only if
  the following conditions are true:
 
  \begin{enumerate}
  \item  
    $f$ has no periodic critical points.
 
  \item   
    There exists $c>0$,  and a Jordan curve  $\CC\sub S^2$ 
  with $\post(f)\sub \CC$ such that for  all $n\in \N_0$ we have 
  \begin{equation*} 
  D_n(f,\CC)\ge c \deg (f)^{n/2}.
  \end{equation*} 
  \end{enumerate} 
  \end{reptheorem}
  This theorem is due to Qian Yin \cite{Qian15}.
It is remarkable that one can  characterize the conformal dynamical systems given by  iteration of Latt\`es maps   in terms of  essentially combinatorial data. 

An immediate consequence of the maximal growth rate of $D_n(f,\CC)$ is the inequality 
$\Lambda_0(f)\le \deg (f)^{1/2}$ for each expanding Thurston map. Here  equality is attained for Latt\`es
maps, and so one might expect that this property again
characterizes Latt\`es maps similar to Theorem~\ref{thm:Qianthm}.  However, there are   Thurston maps $f$ 
 satisfying
$\Lambda_0(f)= \deg(f)^{1/2}$ that are not topologically conjugate
to a Latt\`{e}s map (see Example~\ref{ex:notattained}).

The proof of Theorem~\ref{thm:Qianthm} relies on
Theorem~\ref{thm:visual_2Ahlfors_Lattes}, which in turn depends
on yet another characterization of Latt\`es maps.  For this we 
compare the Lebesgue measure
$\leb_{\CDach}$ on $\CDach$ with  the measure of maximal entropy
$\nu_f$ for a given rational expanding Thurston map $f\colon \CDach \to \CDach$. 
 As the
following result shows,  $\nu_f$ and $\leb_{\CDach}$  lie in different measure classes 
 unless $f$ is a Latt\`{e}s map. 
  
\begin{reptheorem}
  {thm:abscontimpLattes} 
  Let $f\: \CDach \ra \CDach$ be a rational expanding Thurston
  map. Then its measure of maximal entropy $\nu_f$ is
  absolutely continuous with respect to Lebesgue measure
  $\leb_{\CDach}$ if and only if $f$ is a Latt\`es map.
\end{reptheorem} 
  
This is a special case of a more general theorem due to 
Zdunik \cite{Zd} which gives a similar characterization of
Latt\`es maps among all, not only postcritically-finite, rational
maps.  
Crucial in the proof of 
Theorem~\ref{thm:abscontimpLattes} is the existence of the 
$f$-invariant measure $\lambda_f$ on $\CDach$ that is absolutely
continuous with respect to Lebesgue measure $\leb_{\CDach}$ (see
Theorem~\ref{thm:ex_inv_abs_L} mentioned above). 
In fact, for a Latt\`{e}s map $f$
the measures $\lambda_f$, $\nu_f$, 
and the {\em canonical orbifold measure} $\Omega_f$ (see Section~\ref{sec:expratThmaps}) all agree 
(Theorem~\ref{thm:Lattes_can_orb_meas}). In the example $g$ from
Section~\ref{sec:Lattes} these measures are given by the Euclidean
area measure on the pillow in Figure~\ref{fig:mapg} (normalized
to be a probability measure).

\section{Outline of the presentation} 
\label{sec:outline-presentation}
  
Our work is an introduction to the subject. We hope that it will
stimulate  more research in the area and will serve as a foundation for future
investigations. Therefore, we kept our presentation elementary,
as self-contained as possible, and rather detailed.

For the most part, the prerequisites for the reader are modest and include some
basic knowledge of complex analysis and topology, in particular
plane topology and the topology of surfaces.  A background in
complex dynamics is helpful, but not absolutely necessary. In
later chapters our demands on the reader are more
substantial. In particular, in Chapter~\ref{cha:measure} and
Chapter~\ref{cha:rati-thurst-maps-1} we require some concepts
and results from topological and measure-theoretic dynamics, but
we will state and review the necessary facts.

When  writing this  book, we were   faced with 
conflicting objectives. On the one hand, it  was desirable to present the material in linear  order, meaning that we  should only use results in an argument 
 that have been discussed or established before. 
On the other hand, a too rigid implementation of this idea would  have resulted in long detours that might have distracted  the reader from the subject
at hand. Moreover,  we often had to invoke results that  are
``well known'',  but difficult to find in the required form in the literature. For this reason we have included an
appendix, where such results are collected. We use and refer to
the appendix throughout the text. 

We will now give an outline of how of this book is organized and
will briefly discuss some main concepts and ideas.

In the last section of this introduction the reader can
find a list of all examples of Thurston maps that we consider in
this work (Section~\ref{sec:examples}).

In Chapter~\ref{cha:thmaps} we turn to Thurston maps, the main
object of our investigation. We first review branched covering
maps in Section~\ref{sec:branched-coverings} and then define
Thurston maps in Section~\ref{sec:defin-thurst-maps}.  Our notion
of \emph{expansion} is introduced in
Section~\ref{sec:expansion-1} (see Definition~\ref{def:exp}). In
this section we also give a characterization when a rational
Thurston map is expanding.  While the concept of expansion is
discussed more systematically later (in
Chapter~\ref{cha:expansion}), readers familiar with complex
dynamics may find it helpful to get some perspective early
on. The notion of \emph{(Thurston) equivalence} for Thurston maps
is discussed in Section~\ref{sec:thurston-equivalence}.
  
Every Thurston map $f$ has  an associated \emph{orbifold}
$\mathcal{O}_f$, defined in terms of the {\em ramification
  function} of $f$. This orbifold can be parabolic in some
exceptional cases (including Latt\`es maps) and is hyperbolic
otherwise (see Section~\ref{sec:orbif-assoc-thurst}). For
rational Thurston maps the \emph{universal orbifold covering
  map} induces a natural metric (the \emph{canonical orbifold
  metric}) and a natural measure (the \emph{canonical
  orbifold measure}). As we did not want to overburden  the reader with
technicalities at this early stage, we delegated a  detailed
discussion of these topics  to the appendix 
(see Sections~\ref{sec:orbifolds-coverings} and~\ref{sec:expratThmaps}).
 
As already mentioned, Thurston gave a characterization 
when a Thurston map is 
equivalent to a
rational map. We will review this without proofs in
Section~\ref{sec:thurst-class-rati}.  This material is not
really essential for the rest of the book, but we included this
discussion for general background.
 
In Chapter~\ref{cha:lattes-lattes-type} we discuss a large class
of Thurston maps, namely Latt\`es maps and a related class that
we call Latt\`es-type maps. Latt\`es-type maps are quotients of
torus endomorphisms, but in contrast to the Latt\`es case we do
not require the endomorphism to be  holomorphic. We will classify
Latt\`{e}s maps, which is surprisingly involved, and will discuss many examples.

In Chapter~\ref{cha:QCRoughGeo} we collect  facts from quasiconformal geometry (Section~\ref{sec:QCgeom}) and from the theory of Gromov hyperbolic spaces (Section~\ref{sec:Grhyp}) that will be relevant later on. 
We then turn to Cannon's conjecture in geometric group theory (Section~\ref{sec:Cannconj}). As we already remarked 
earlier, this conjecture  gives an intriguing analog to some of the main themes of our study of expanding Thurston maps. We illustrate this with an explicit description of some examples of fractal $2$-spheres that arise as visual spheres of expanding Thurston maps (Section~\ref{sec:snowballs}). We included Sections~\ref{sec:Cannconj} and~\ref{sec:snowballs}  mainly  to give some motivating background  for our investigation. 

This starts in earnest in Chapter~\ref{cha:celldecomp}, the
technical core of our combinatorial approach. Here we discuss
cell decompositions and their relation to Thurston maps. In
Section~\ref{s:celldecomp} we collect some general (well-known)
facts about cell decompositions, including the definition of a cell decomposition and  related concepts such as refinements and cellular maps. 
In Section~\ref{sec:2spherecd} we specialize to cell
decompositions on $2$-spheres.

In Section~\ref{sec:tiles} we consider cell decompositions
induced by a Thurston map $f$. Here we define cell decompositions
$\DD^n=\DD^n(f,\CC)$ for each level $n\in\N_0$ from a Jordan curve
$\CC\subset S^2$ with $\post(f) \subset \CC$ as outlined in
Section~\ref{sec:cell-decomposition}. These cell decompositions
are our most important technical tool for studying Thurston
maps. Their properties are summarized in
Proposition~\ref{prop:celldecomp}.

Given such a sequence $\DD^n$ of cell decompositions induced by
(the iterates of) a Thurston map $f$, we may \emph{label} the
cells in $\DD^n$  to record to which cells in
$\DD^0$ they are mapped by $f^n$. This is explained  in Section~\ref{sec:labelings}. By 
using two cell
decompositions $\DD^0$ and $\DD^1$ and a labeling (satisfying some additional assumptions), it is possible to construct a
Thurston map $f$ that {\em realizes} this  data in a
suitable way (see Proposition~\ref{prop:thurstonex}). Roughly
speaking,  this means that we may construct Thurston maps in a
geometric fashion, very similar to the example  indicated in
Figure~\ref{fig:mapg}.

In Section~\ref{sec:flowers} we introduce the concept of an {\em
  $n$-flower} $W^n(p)$ of a vertex $p$ in the cell decomposition
$\DD^n$. The set $W^n(p)$ is formed
by the interiors of all cells in $\DD^n$ that meet $p$ (see
Definition~\ref{def:flower} and Lemma~\ref{lem:flowerprop}). An
important fact is that while in general a component of the
preimage $f^{-n}(K)$ of a small connected set $K$ will not be
contained in an $n$-tile (i.e., a $2$-dimensional cell in
$\DD^n$), it is always contained in an $n$-flower
(Lemma~\ref{lem:preimsmall}).

 In Section~\ref{sec:opp} we give a precise definition  for  a 
connected set to {\em join opposite sides} 
of $\CC$.
In addition, we define the quantity
$D_n=D_n(f,\CC)$ that measures the combinatorial expansion rate
of a Thurston map. It is given as the minimal number of $n$-tiles
needed to form a connected set joining opposite sides of $\CC$
(see \eqref{def:dk}). 

In Chapter~\ref{cha:expansion} we revisit our notion of
expansion. The main result here is
Proposition~\ref{prop:expequivexp}, which gives several
equivalent conditions for a Thurston map to be expanding. In
particular,  it follows that expansion is a topological condition,
and does not depend on the choice of the metric on $S^2$ used in the
definition. Section~\ref{sec:further-results} collects some
additional  results about expansion, and  Section~\ref{sec:Latttypeexp}
provides a simple criterion when a Latt\`{e}s-type map is
expanding. 
 
In Chapter~\ref{cha:thurston-maps-23} we consider Thurston maps
with two or three  postcritical points. Every such map is
equivalent to a rational map. In
fact, every Thurston map $f$ with $\#\post(f)=2$ is equivalent
to $z\mapsto z^n$ for $n\in \Z\setminus\{-1,0,1\}$ 
 (see Theorem~\ref{thm:3postrat} and Proposition~\ref{prop:post2}). In
Section~\ref{sec:parab-thurst-polyn} we consider Thurston maps
with an  associated parabolic orbifold  of signature $(\infty,\infty)$
or $(2,2,\infty)$. 
 Such a map is equivalent to $z\mapsto z^n$ in the
case $(\infty,\infty)$, or to a \emph{Chebyshev polynomial} in
the case $(2,2,\infty)$ (up to a sign). This completes  the
classification of Thurston maps with parabolic orbifold begun in
Chapter~\ref{cha:lattes-lattes-type}.

In Chapter~\ref{cha:visual-metrics} we introduce \emph{visual
  metrics} for expanding Thurston maps, one of our
central concepts, as outlined in
Section~\ref{sec:intvismet-vissph}.  Basic properties of visual metrics are listed in
Proposition~\ref{prop:visualsummary}. A characterization (already
mentioned in Section~\ref{sec:intvismet-vissph}) is given in
Proposition~\ref{lem:expoexp}.  

If an expanding Thurston map $f$ is a rational map on the Riemann sphere $\CDach$, then   one would like to know whether some natural metrics on $\CDach$ are  visual metrics for $f$. 
The chordal metric on $\CDach$ is
never a visual metric. The canonical orbifold metric of $f$ is visual if and
only if $f$ is a Latt\`{e}s map. This is discussed in
Section~\ref{sec:orbifold_visual}.

In Chapter~\ref{cha:symdym} we briefly  turn to  symbolic
dynamics and show that 
every expanding Thurston map is a factor 
of a shift operator (see 
Theorem~\ref{thm:expThfactor}). 

In Chapter~\ref{cha:Gromov} we connect our development of
expanding Thurston maps to the theory of \emph{Gromov hyperbolic
spaces}. We define the \emph{tile graph}
$\mathcal{G}=\mathcal{G}(f,\CC)$ associated with an expanding
Thurston map $f\: S^2\ra S^2$ and a Jordan curve $\CC\sub S^2$
with $\post(f)\sub S^2$. The graph $\mathcal{G}(f,\CC)$ only
depends on $\CC$ up to rough-isometry
(Theorem~\ref{thm:roughisom}). We show that it is Gromov
hyperbolic (Theorem~\ref{thm:tileGrom}) and that its boundary at
infinity $\partial_\infty\mathcal{G}$ can be identified with
$S^2$ (Theorem~\ref{thm:visualGrTh}). Under this identification
a metric $\varrho$ on $S^2\cong \partial_\infty\mathcal{G}$ is 
visual in the sense of Gromov hyperbolic spaces 
if and only if it is visual for $f$ as defined in
Chapter~\ref{cha:visual-metrics} (see
Theorem~\ref{thm:visualGrTh}). This is the reason why we chose 
the  term ``visual'' for the metrics  associated with a Thurston map. 

In Chapter~\ref{cha:iso} we consider isotopies on $S^2$
and their lifts by Thurston maps.  We show that if two expanding
Thurston maps are Thurston equivalent, then they are in fact
topologically conjugate (Theorem~\ref{thm:exppromequiv}).
We also prove some results on isotopies of Jordan curves
(Section~\ref{sec:isotopy-rel.-postf}). The subsequent
Section~\ref{sec:graphs} contains some auxiliary statements on
graphs.  The main result is the important, but rather technical
Lemma~\ref{lem:isorelP},  which gives a sufficient criterion
when a Jordan curve can be isotoped into the $1$-skeleton of a
given cell decomposition of a $2$-sphere.

In Chapter~\ref{cha:subdivisions} we study the cell
decompositions $\DD^n=\DD^n(f,\CC)$ under the additional
assumption that $\CC$ is $f$-invariant. Then $\DD^{n+k}$
is a refinement of $\DD^n$ for all $n,k\in \N_0$; so each cell in any of the cell decompositions
$\DD^n$ is ``subdivided'' 
by cells of higher levels. 
Moreover,
the pair $(\DD^{n+k}, \DD^n)$ is a cellular Markov partition for
$f^k$ (Proposition~\ref{prop:invmarkov}). In this case the
Thurston map $f$ can be described by a {\em two-tile subdivision
  rule} (Definition~\ref{def:subdivcomb}) as discussed in
Section~\ref{sec:subdivisions}. Conversely, we may construct a
Thurston map from a two-tile subdivision rule by
Proposition~\ref{prop:rulemapex}. This is the main result in
this chapter. This way to construct Thurston maps from a
combinatorial viewpoint is illustrated in
Section~\ref{sec:examples-two-tile}, where we consider many
examples of Thurston maps given in this form.


In Chapter~\ref{cha:quotiens} we study  the question when a
Thurston map $f\: S^2\ra S^2$ descends to a Thurston map on the quotient space
$S^2/\Sim$ obtained from an equivalence relation $\sim$ on $S^2$. For this, we first review closed equivalence
relations and Moore's theorem in Section~\ref{sec:clos-equiv-relat}. We also require 
some auxiliary results on the
mapping behavior of branched covering maps discussed in
Section~\ref{sec:toppropbrcov}. We  prove
  the main result of this chapter in Section~\ref{sec:quot-thurst-maps}:   a Thurston map $f\: S^2\ra S^2$ descends to a Thurston
map on the quotient $S^2/\Sim$ obtained from an equivalence relation $\sim$ on $S^2$ of Moore-type
(see Definition~\ref{def:eq_Moore_type}) if and only if $\sim$ is
strongly $f$-invariant (see
Definition~\ref{def:sim_strongly-inv},
Theorem~\ref{thm:f_descends_branched_cover}, and
Corollary~\ref{cor:f_descends_Thurston}). 

Can a given  two-tile subdivision rule  be realized by an {\em expanding} Thurston map? 
This question is addressed in  Chapter~\ref{cha:combexp}. A
necessary condition is that the subdivision rule is
\emph{combinatorially expanding} (see
Definition~\ref{def:combexprule} and
Defi\-nition~\ref{def:combexp}).  We show 
that every combinatorially expanding Thurston map
is equivalent to an expanding Thurs\-ton map
(Proposition~\ref{prop:combexp} and Theorem~\ref{thm:combexp1}).

To prove this statement, we ``correct'' a Thurston map $f\: S^2\ra S^2$  that
realizes a combinatorially expanding two-tile subdivision rule so that the map becomes
expanding. On an intuitive level,  it is very plausible that this should be possible (see the discussion at the beginning of Chapter~\ref{cha:combexp}), but a
rigorous implementation is somewhat cumbersome. 
For this we  define an equivalence relation $\sim$ on $S^2$ that essentially
collapses  components where the map fails to be expanding to
single points. We  show that $\sim$ is of Moore-type and will obtain a suitable Thurston map on the quotient $S^2/\Sim$.




Existence and uniqueness results for \emph{invariant Jordan
  curves} are proved in Chapter~\ref{cha:constructc}. It is 
one of the central chapters  of the present work. Here we
establish Theorems~\ref{thm:main}, \ref{thm:exinvcurvef}, and
\ref{thm:uniqc}, and Corollary~\ref{cor:subdivnlarge} about
existence and uniqueness of invariant curves mentioned in
Section~\ref{sec:int-inv-cell-de}.  One can obtain invariant
curves from an iterative procedure discussed in detail in
Section~\ref{subsec:ittproc}.

In  Section~\ref{sec:cc-quasicircle} we prove that
a Jordan curve $\CC$ is a  \emph{quasicircle} if it is  invariant for an expanding Thurston map
$f$ 
(Theorem~\ref{thm:Cquasicircle}).  If the  cell
decompositions $\DD^n(f,\CC)$, $n\in \N_0$, are obtained from
such an $f$-invariant Jordan curve $\CC$, then the edges in
these cell decompositions are uniform quasiarcs and the
boundaries 
of tiles are uniform quasicircles
(Proposition~\ref{prop:arc}). The underlying metric in all these
statements is any visual metric for $f$.


In Chapter~\ref{cha:combexpfac} we revisit visual metrics. We
introduce the {combinatorial expansion factor} $\Lambda_0(f)$ and
prove Theorem~\ref{thm:visexpfactors1} (see the outline in
Section~\ref{sec:intvismet-vissph}).
In its proof we use the invariant curves constructed in
Chapter~\ref{cha:constructc} to obtain particularly nice
visual metrics for a given expanding Thurston map $f$. They have the property that the map $f$ expands
distances locally by the constant factor $\Lambda$ (see
\eqref{simmetric}).  
   
Chapter~\ref{cha:measure} is devoted to the measure-theoretic
dynamics of an expanding Thurston map $f$.  The main result is
Theorem~\ref{thm:maxentr0} about existence and uniqueness of a
measure of maximal entropy $\nu_f$ for $f$. For the convenience of the reader we review
(mostly standard) material from measure-theoretic dynamics in
Section~\ref{sec:reviewmdyn}. 

The geometry of the visual sphere $(S^2, \varrho)$ of an expanding Thurston map
$f\: S^2\ra S^2$ 
is explored in Chapter~\ref{cha:geom-visu-sphere},
another central part of our work. Here  we show  the important
 fact---already discussed in Section~\ref{sec:intvismet-vissph}---that $f$ is
conjugate to a rational map if and only if $(S^2, \varrho)$ is
quasisymmetrically equivalent to the standard $2$-sphere
 (see Theorem~\ref{thm:S2vsf}~\ref{item:S2qsphere}). In addition, we prove linear
local connectivity  of the visual
sphere (Proposition~\ref{prop:annLLC}), as well as its Ahlfors
regularity in the absence of periodic
critical points of the map (Proposition~\ref{prop:Ahlforsreg}). 


In Chapter~\ref{cha:rati-thurst-maps-1} we study 
measure-theoretic properties of rational expanding Thurston maps
$f\: \CDach\ra \CDach$. We first construct a measure $\lambda_f$
on $\CDach$ that is $f$-invariant and absolutely continuous with
respect to Lebesgue measure on $\CDach$
(Theorem~\ref{thm:ex_inv_abs_L}).  This allows us to apply
methods from ergodic theory.
 As a consequence  we recover (a weak form of)
Zdunik's result that the measure of maximal entropy of $f$ is
absolutely continuous with respect to Lebesgue measure if and
only if $f$ is a Latt\`es map
(Theorem~\ref{thm:abscontimpLattes}).

This in turn allows us to finish the discussion of the visual
sphere of an expanding Thurston map $f$ begun in
Chapter~\ref{cha:geom-visu-sphere}. Specifically, in
Section~\ref{sec:lyapunov-exponent-mu} we prove that the visual sphere of
such a map $f$ is snowflake equivalent to the standard
$2$-sphere if and only if $f$ is topologically conjugate to a
Latt\`{e}s map (Theorem~\ref{thm:S2vsf}~\ref{item:S2Lattes}).

Chapter~\ref{cha:latt-maps-comb} gives
another application of (Zdunik's)
Theorem~\ref{thm:abscontimpLattes}.
By using this theorem we  show that Latt\`es maps can be characterized 
 in terms of their
combinatorial expansion behavior (see Theorem~\ref{thm:Qianthm} 
mentioned in Section~\ref{sec:infures}).




Some further
developments and future perspectives are presented in 
Chapter~\ref{ch:outlook}. Here  we discuss some recent
related work and  open problems.

The appendix is devoted to several  subjects whose inclusion in the main text would have been too distracting. For example, in one of its sections 
we establish a useful variant of Janiszewski's theorem in plane topology, whose proof is a bit technical.  
We also review some fairly 
  standard material about  conformal metrics, Koebe's
distortion theorem, orientation on surfaces, covering
maps, lattices and tori, and  quotient spaces. 
We discuss these topics so that we can refer to them in the main text and 
to make 
this work as self-contained as possible.   

The appendix also contains fairly lengthy sections on  
branched
covering maps, orbifolds, and the canonical orbifold metric. Though the expert will find no surprises here, it is hard to track down this material in an accessible form
in the literature.

\section{List of examples for Thurston maps}
\label{sec:examples}

Throughout the book we consider many examples of Thurston maps
in order to illustrate various phenomena. We list them here with a short
description for the reader's convenience.  The relevant terms used in these
descriptions are defined in later chapters.
Often the maps in our examples are Latt\`{e}s maps. While Latt\`{e}s maps sometimes have special properties  compared  to general Thurston maps, they 
 often provide convenient  examples  with   generic behavior. 
  
\smallskip
A Latt\`{e}s map is discussed  in Section~\ref{sec:Lattes}. In
the terminology of Chapter~\ref{cha:lattes-lattes-type} it is a
flexible Latt\`{e}s map.

In Section~\ref{sec:int-frac-sph} we consider an expanding
Thurston map $h$ that ``generates'' a fractal sphere
$\mathcal{S}$. The map $h$ is not topologically conjugate (or
Thurston equivalent) to a rational map. This is examined in
Example~\ref{ex:obstructedThmap}. Closely related is the fact
that the 
fractal sphere $\mathcal{S}$ is not quasisymmetrically equivalent to the
standard sphere 
(see Example~\ref{ex:non-quasisphere}). 


The examples $f(z)= 1- 2/z^2$ and $g(z) = \frac{\iu}{2}(z+1/z)$
are  used in Section~\ref{sec:defin-thurst-maps} to
explain the concept of  a ramification portrait. Both are in fact
Latt\`{e}s maps.

In Example~\ref{ex:tringflP} our main purpose is to familiarize the reader with the concept of 
Thurston equivalence. To this end we consider two Thurston maps
$f$ and $g$ and show that they are Thurston equivalent. While the map
$g$ is given in a combinatorial fashion and realizes a certain two-tile subdivision rule,
the map $f$ is rational and given by an explicit  formula.

A general construction for Latt\`es-type maps with signature
$(2,2,2,2)$ is presented in Example~\ref{ex:lattes_type}.

In Section~\ref{sec:examples-lattes-maps} we look at several 
Latt\`{e}s maps. 
In Example~\ref{ex:Lattes244} the map is $f(z)=1-2/z^2$, which is a Latt\`{e}s map with
orbifold signature $(2,4,4)$. Similarly, 
in Example~\ref{ex:Lattes333} and Example~\ref{ex:lattes236}
we have Latt\`{e}s maps with orbifold signatures $(3,3,3)$ and $(2,3,6)$,
respectively. 

Certain types of Latt\`es maps are called \emph{flexible}; see
Definition~\ref{def:flex_Lattes} and \eqref{eq:flex_Lattes} for an example.  They have orbifold signature $(2,2,2,2)$,
but not all Latt\`{e}s maps with this signature are
flexible. Such a non-flexible Latt\`es map with signature
$(2,2,2,2)$ is given in Example~\ref{ex:Lattes_Milnor}.

The Thurston map in Example~\ref{ex:no_per_crit_not_exp} has 
 a Levy cycle. 

In Example~\ref{ex:non-expanding-lattes} we present a Thurston
map that is eventually onto, but not expanding. 

In Section~\ref{sec:parab-thurst-polyn} we consider Thurston
maps with signatures $(\infty,\infty)$ and 
$(2,2,\infty)$. They are  equivalent to
$z\mapsto z^n$ (where $n\in \Z\setminus\{-1,0,1\}$) and to
Chebyshev polynomials (up to sign), respectively. 

The example $f(z)= \iu ({z^4-\iu})/({z^4+\iu})$ is used in
Figure~\ref{fig:def_visual_metric} to illustrate the definition
of a visual metric. 

In Example~\ref{ex:f2flaps} we show some cell decompositions 
 generated by an $f$-invariant curve for the map $f(z) =
1-2/z^4$. 

In Example~\ref{ex:subdiv_diff_labels} we consider two maps that
realize two-tile subdivision rules that only differ by the
labeling. 

Several examples of two-tile subdivision rules and the Thurston
maps realizing them are discussed  in
Section~\ref{sec:examples-two-tile}. 
The map in Example~\ref{ex:z2-1} is $f_1(z)=z^2-1$; it realizes
a two-tile subdivision rule that is not combinatorially
expanding.

The  two maps $f_2$ and $\widetilde{f}_2$ in Example~\ref{ex:barycentric} both 
realize the barycentric subdivision rule. The map $f_2$ is a
rational map, but it is not expanding (i.e., its Julia set is not the whole
Riemann sphere $\CDach$). However, the map $\widetilde{f}_2$   is
expanding. It  is an example of an expanding Thurston map with
periodic critical points. 

The map $f_3$ in Example~\ref{ex:obstructed_map} is the map $h$
considered in Section~\ref{sec:int-frac-sph}. It realizes a
certain two-tile subdivision rule and is an obstructed map. This
means that $f_3$ is not Thurston equivalent to a rational map.

The map $f_4$ in Example~\ref{ex:2x3} is a Latt\`{e}s-type map
that is again not Thurston equivalent to a rational map. While it is 
somewhat easier to define than the map $f_3$ in Example
\ref{ex:obstructed_map}, it is less generic, since $f_4$ has a
parabolic orbifold, while $f_3$ has a hyperbolic orbifold. The
map $f_4$ realizes the $2$-by-$3$ subdivision rule. If we equip the underlying
sphere $S^2$ with 
 a suitable visual metric for $f_4$, then  $S^2$ consists 
of two copies of  Rickman's rug.

Example~\ref{ex:R_mario3} provides a whole class of Thurston maps.
One of them  is  $f_5(z)=1-2/z^2$ which
realizes a simple two-tile subdivision rule. By ``adding flaps''
we obtain the other maps. All these maps are rational; in fact,
they are given by an explicit formula, which makes them easy to
understand and visualize.

We discuss a general method for  constructing   Thurston maps
from tilings of the Euclidean or the hyperbolic plane in
Example~\ref{ex:maps_from_tilings}. One can use this to find
Thurston maps 
with arbitrarily large sets 
of postcritical points.  

In Example~\ref{ex:ftilde_not_Thurston} we consider an
equivalence relation of Moore-type on $\CDach$ such that the
map $z\mapsto z^2$ does not descend to a branched covering
map. In contrast, the equivalence relation  on $\CDach$ in Example~\ref{ex:quotient_not_Moore} is not of Moore-type, but  
 $z\mapsto z^2$ descends to a Thurston map on the quotient. 

Example~\ref{ex:extra_vertex_D0} shows
why the additional condition  $\post(f)=\V^0$ in
Theorem~\ref{thm:combexp2} is necessary.  

The Thurston map $f$ in Example~\ref{ex:exp_notcexp}  is not combinatorially expanding, yet Thurston equivalent
to an expanding Thurston map $g$. This shows that the sufficient
condition in Proposition~\ref{prop:combexp} is not necessary.

The map $f$ in Example~\ref{ex:invC} is the same as in
Example~\ref{ex:tringflP}. We use it  to outline  the main ideas of 
Chapter~\ref{cha:constructc}. In particular, we show  how to construct 
an $f$-invariant curve $\widetilde{\CC}$ with
$\post(f)\subset \widetilde{\CC}$ (see
Figure~\ref{fig:invC_constr}).

In Example~\ref{ex:infty_C} we return to  the Latt\`{e}s map $g$
from Section~\ref{sec:Lattes} and prove that it has infinitely many distinct
$g$-invariant curves $\CC$ with $\post(g)\subset \CC$.

In Example~\ref{ex:noinvCC} we exhibit  an expanding Thurston
map $f$ for which no $f$-invariant Jordan curve $\CC$ with
$\post(f)\subset \CC$ exists.

 Remark~\ref{rem:ndep} justifies why   the $f^n$-invariant curve 
$\widetilde{\CC}$ given by Theorem~\ref{thm:main} will in general
depend on $n$. 

In Example~\ref{ex:Cit} we use another Latt\`{e}s map to
illustrate an iterative construction of invariant curves (see
Figure~\ref{fig:Cit}). The invariant curve obtained is quite
``fractal'' (its Hausdorff dimension is $>1$).

Example \ref{ex:Cinv_notcexp} shows what can happen if one of
the necessary conditions in the iterative procedure for
producing invariant curve is violated. Namely, the limiting
object $\widetilde{\CC}$ is not a Jordan curve anymore. The
map here is again a Latt\`{e}s
map.

The map in Example~\ref{ex:rect} (yet another  Latt\`{e}s map) has a 
non-trivial (in particular non-smooth) invariant curve that is
rectifiable.

In Example~\ref{ex:notattained} we exhibit an expanding Thurston
map $f$ that has no visual metric with an expansion factor
$\Lambda$ equal to its combinatorial expansion factor
$\Lambda_0(f) $. Therefore, statement 
\ref{item:visexpfactors2} in Theorem~\ref{thm:visexpfactors1}
cannot be improved in general.

In Example~\ref{ex:ratnotenf} we revisit the two maps from
Example~\ref{ex:barycentric} that realize the barycentric
subdivision rule; these maps show that in
Theorem~\ref{thm:S2vsf}~\ref{item:S2qsphere} we cannot replace
``topologically conjugate to a rational map'' with ``Thurston
equivalent to a rational map''.

\ifthenelse{\boolean{singlechapter}}{

%
%

%

\chapter{Thurston maps} 
\label{cha:thmaps}

In this chapter  we set the stage for our   subsequent developments.
 We will first give a brief review of branched covering 
 maps in Section~\ref{sec:branched-coverings}.
 Then we define
Thurston maps (Section~\ref{sec:defin-thurst-maps}), and what it means for such a map to be expanding (Section~\ref{sec:expansion-1}).   Thurston equivalence is discussed in
Section~\ref{sec:thurston-equivalence}.   As we will see in  Example~\ref{ex:tringflP},
this concept is very useful for clarifying the relation between   maps with   similar dynamical behavior. 

Section~\ref{sec:orbif-assoc-thurst} is devoted to the ramification function and the orbifold associated with  a Thurston map. 
   At the end of this section we will also summarize 
  some facts about the canonical orbifold metric of an orbifold, but we reserved  a more detailed discussion of this topic for  the appendix (see Section~\ref{sec:expratThmaps}).  
  We conclude the chapter with a  discussion of  Thurston's characterization
of rational maps among Thurston maps (see Section~\ref{sec:thurst-class-rati}).

\section{Branched covering maps}
\label{sec:branched-coverings}

 Branched covering maps are  modeled on (non-constant)
 holomorphic maps between Riemann surfaces. As our immediate purpose in this section is to
 prepare the definition of a Thurston map,  we will discuss only some basic facts on this topic 
 and relegate more technical aspects to the appendix (see Section~\ref{sec:appbracovmap} in particular). 
 
We call a $2$-dimensional connected manifold (without boundary) a {\em surface}.  All surfaces that we consider 
 will be  orientable, and we will assume that some fixed orientation has been chosen on such 
 a surface (for a review of orientation and related concepts see Section~\ref{sec:orient}). A surface  is called a {\em topological disk} if it is homeomorphic to the open unit disk $\D=\{z\in \C:|z|<1\}$ in the complex plane. 
   
In the following, $X$ and $Y$ are  two compact and connected oriented  surfaces.  If $f\:X\ra Y$ is  a continuous and surjective map,  then $f$ is
called a 
\defn{branched covering map}\index{branched covering map}\index{map!branched covering}
if
we can write it locally  as the map $z\mapsto z^d$  for some $d\in \N$
in  orientation-preserving homeomorphic  coordinates in
domain and target. More precisely, we require that for each point $p\in
X$ 
there exists $d\in \N$, 
topological disks  $U\subset X$ and $V\sub Y$ 
with $p\in U$ and $q\coloneqq f(p)\in V$, and orientation-preserving homeomorphisms  $\varphi\:U\ra \D$
and $\psi\:V\ra \D$ with $\varphi(p)=0$ and $\psi(q)=0$ such that  
\begin{equation}\label{eq:localpower}
 (\psi \circ f\circ \varphi^{-1})(z)=z^d
 \end{equation} 
for all $z\in \D$. This means that the following diagram commutes: 
\begin{equation*}
  \xymatrix{
    p\in U\subset X \ar[r]^f \ar[d]_\varphi & q\in V\subset
    Y\ar[d]^\psi
    \\
    0\in \D \ar[r]^{z\mapsto z^d} & 0\in \D\rlap{.}
    }
\end{equation*}

For the concept of a branched covering map between non-compact surfaces
see Definition~\ref{def:brcovmap}. If $f\: X\ra Y$ is a branched covering map, then one can find conformal structures on the surfaces $X$ and $Y$ so that $f$ becomes a holomorphic map (see Lemma~\ref{lem:pbackcostr}). In this way, one can
often  derive statements for branched covering maps (as in the ensuing discussion) from analogous statements for holomorphic maps. 

The integer $d\geq 1$ in \eqref{eq:localpower} is uniquely
determined by $f$ and $p$, and called the 
\defn{local degree}\index{local degree}\index{deg@$\deg_f(p), \deg(f,z)$} 
of the
map $f$ at $p$, interchangeably denoted by $\deg_f(p)$ or
$\deg(f,p)$ (depending on whether our emphasis is on the point
$p$ or the map $f$).
A point $c\in X$ with        
$\deg_f(c)\geq 2$ is called a
\defn{critical point}\index{critical!point} of $f$, and  a point in $Y$ that has a critical
point as a preimage a {\em critical value}.\index{critical!value} The
set of all critical points of $f$ is denoted by
$\crit(f)$.\index{crit@$\crit(f)$} Obviously, if $f$ is a branched 
covering map  on $X$, then  $\crit(f)$ is {\em discrete in} $X$, i.e., it has no limit 
points in $X$.  
 Hence $\crit(f)$ is a finite set, because $X$ is assumed to be
compact. Moreover, $f$ is  {\em open} (images of open sets are open),   and  {\em
  finite-to-one} (every 
point in $Y$ has finitely many preimages under $f$). Actually, if
$d=\deg(f)$ is the topological degree of $f$ (see Section~\ref{sec:orient}), then 
\begin{equation}
  \label{eq:sum_deg}
  \sum_{p\in f^{-1}(q)}\deg_f(p)=d
\end{equation}
for every $q\in Y$ (see \cite[Section~2.2]{Ha}). In particular, if $q$
is not a critical value of $f$, then $q$ has precisely $d$
preimages.

We denote by $\chi(X)$ the 
{\em Euler characteristic}\index{Euler characteristic} 
of a compact surface $X$ (see, for example,
\cite[Theorem~2.44]{Ha}). If $X=S^2$ is a $2$-sphere, then
$\chi(S^2)=2$. 
Another case relevant for us is when
$X=T^2$
is a $2$-dimensional torus, in which case $\chi(T^2)=0$.  The
degrees of a branched covering map $f\: X\ra Y$ at critical
points are related to the Euler characteristics of the surfaces
by the 
\emph{Riemann-Hurwitz formula}\index{Riemann-Hurwitz formula} 
(see, for example, \cite[Theorem~A3.4]{HuTeich});
namely,
\begin{equation} 
  \label{eq:Riemann-Hurwitz}
  \chi(X)+\sum_{c\in \crit(f)} (\deg_f(c)-1)= \deg(f)\cdot \chi(Y). 
\end{equation}

Suppose $Z$ is another compact and connected oriented surface, 
and $f\colon X\ra Y$ 
and $g\: Y\ra Z$ are
 branched covering maps. Then 
$g\circ f$ is also a branched covering map 
(see Lemma~\ref{lem:2_3_branched}~\ref{item:2_out3_1}) and
\begin{equation}\label{eq: localdegreemult} 
\deg(g\circ f, x)= \deg(g, f(x))\cdot \deg(f,x)
\end{equation} 
for all $x\in X$ (see Lemma~\ref{lem:locdeg}). In the following, we will often use relation \eqref{eq: localdegreemult}
without mentioning it explicitly.  By counting the number  of preimages of a 
point that is not a critical value of $g\circ f$ we see that 
 $$ \deg(g\circ f)= \deg(g)\cdot \deg(f). $$

\section{Definition of Thurston maps}
\label{sec:defin-thurst-maps}

We will be mostly interested in the case of branched covering maps on 
a $2$-sphere $S^2$.\index{S2@$S^2$}
We use the notation $S^2$ for such a sphere, because we think of
it  as a purely topological object  that should be distinguished from 
the Riemann sphere $\CDach$ or other $2$-spheres with  additional
structure (such as a  Riemann surface structure); in other words,
$S^2$ indicates a topological space homeomorphic to $\CDach$. We
call the topology on $S^2$ the 
{\em given} topology.\index{given topology on $S^2$}
Sometimes it is convenient to express topological notions on $S^2$ in metric terms. Then we choose a {\em base metric} on $S^2$ that induces the given  topology on $S^2$. Such a base metric can be obtained, for example, by pulling back the   chordal metric on $\CDach$ by a homeomorphism 
from $S^2$  onto $\CDach$. 

 Branched covering maps on $S^2$   are  topologically not very different from 
rational maps on the Riemann sphere $\CDach$, 
because 
one can show that 
whenever
$f\: S^2\ra S^2$ is  a branched covering map, 
there exist 
homeo\-morphisms $\psi\: S^2\ra \CDach$ and $\varphi\: S^2\ra \CDach$ such 
that $R= \psi\circ f\circ \varphi^{-1}$ is a rational map on $\CDach$ (see 
Corollary~\ref{cor:brcovratup} for a slightly stronger statement). In other 
words, these maps are not only locally modeled on holomorphic maps as in 
\eqref{eq:localpower}, but even globally. In general though, a 
branched covering map $f\colon S^2\to S^2$  is not topologically conjugate (see Section~\ref{sec:thurston-equivalence})
to a rational map, i.e., the homeomorphisms $\varphi$ and $\psi$ above will be different.  So these maps may exhibit behavior under iteration different from rational maps.

For $n\in \N$ we
 denote   the $n$-th iterate of $f$ by  $f^n$. We set  $f^0\coloneqq \id_{S^2}$, but when we refer to an iterate $f^n$ of $f$, then it is  understood that $n\in \N$.  If $f$ is a branched covering map on
$S^2$, then each iterate  $f^n$ is a also a branched covering map and we have
$\deg(f^n)=\deg(f)^n$.
  
A {\em postcritical point}\index{postcritical point} of $f$ 
 is a point $p\in S^2$ of the form $p=f^n(c)$ with $n\in \N$ and $c\in \crit(f)$. So the set of postcritical points of $f$ is given by\index{post@$\post(f)$}   
  \begin{equation*}  
    \post(f)\coloneqq \bigcup_{n\ge 1} \{f^n(c):c\in \crit(f)\}.
  \end{equation*}
  If the cardinality $\#\post(f)$ is finite,
  then $f$ is called {\em
    postcritically-finite}.\index{postcritically-finite} This is
  equivalent to the requirement that the \emph{orbit}\index{orbit}
  $\{f^n(c) : n\in \N_0\}$ of each critical point $c$ of $f$ is
  finite.

 For $n\in \N$ we have 
  \begin{equation}\label{eq:critpfn}
   \crit(f^n)=\crit(f)\cup f^{-1}(\crit(f))\cup \dots \cup f^{-(n-1)}(\crit (f)).
   \end{equation} 
This implies that 
 $\post (f^n)=\post (f)$ and 
  \begin{equation}\label{eq:critpfn2}
  \post(f)=\bigcup_{n\in \N} f^n( \crit(f^n)). 
   \end{equation} 
   
 So  $\post(f)$ is equal to the union of the critical values of
 all iterates $f^n$, for $n\in \N$. 
  It follows that  $f^n \: S^2 \setminus f^{-n}(\post(f)) \to S^2 \setminus
\post(f)$  is a covering map  (see Lemma~\ref{lem:brcovcov}; we give a review of covering maps in    Section~\ref{sec:covmaps}). This implies that away from $\post(f)$ all ``branches  of the inverse of $f^n$'' are defined; more precisely, if $U\sub S^2\setminus \post(f)$ is a path-connected and  simply connected  set, $q\in U$, and $p\in S^2$ a point with $f^n(p)=q$, then there exists 
 a unique continuous map $g\: U\ra S^2$ with 
 $g(q)=p$ and $f^n\circ g=\id_U$ (this easily follows from Lemma~\ref{lem:liftsofcov}).   We refer to such a right inverse of $f^n$ informally as a ``branch of $f^{-n}$''.

We can now record the definition of  the main object of investigation in this work.

\begin{definition}[Thurston maps]\label{def:f}
  A \defn{Thurston map}\index{Thurston map} is a branched covering map
  $f\: S^2\ra S^2$ on a $2$-sphere $S^2$ with $\deg(f)\ge 2$ and a
  finite set of postcritical points.   
\end{definition}

Note that for a branched covering map $f\: S^2\ra S^2$ the condition $\deg(f)\ge 2$ is equivalent to the requirement that $f$ is not a homeomorphism.

Away from its finitely many critical points, a Thurston map is an 
orientation-preserving local homeomorphism on the oriented sphere $S^2$
(see the end of Section~\ref{sec:orient} for the relevant terminology here). 
Since $\post(f^n)=\post(f)$, each iterate $f^n$, $n\in \N$, of a Thurston map is also a Thurston map. 

If $S^2=\CDach$ is the Riemann sphere and $f\: \CDach\ra \CDach$ is a non-constant holomorphic map, then $f$ is a rational map  (and can be represented as the quotient of two polynomials).  If, in addition, $f$ is a Thurston map (i.e., it is postcritically-finite and 
satisfies $\deg(f)\ge 2$), then we call $f$ 
 a {\em rational Thurston map}.\index{Thurston map!rational}  Note that if $f=P/Q$, where $P,Q\ne0$  are polynomials without common zero, then
  $\deg(f)=\max\{\deg(P), \deg(Q)\}$. Here  the degree $\deg(P)$ of a polynomial $P\ne 0$ 
  is equal to $n\in \N_0$ if $ P(z)=a_nz^n+\dots +a_0$ with  $a_0,\dots, a_n\in \C$ and 
  $a_n\ne 0$. This agrees with the topological degree of $P$ considered as a  map  $P\: \CDach\ra \CDach$.

There are no Thurston maps with $\#\post(f)\in \{0,1\}$ (see Corollary~\ref{cor:post012}), and all Thurston maps with $\#\post(f)=2$ are
Thurston equivalent (see Section~\ref{sec:thurston-equivalence}) to a
map $z\mapsto z^k$, $k\in \Z\setminus\{-1,0,1\}$, on the Riemann
sphere (see Proposition~\ref{prop:post2}). Postcritically-finite
rational maps give a large class of Thurston
maps.  Examples include $P(z)= z^2 -1$ (commonly known as the
\emph{Basilica map}, since its Julia set supposedly resembles St
Mark's Basilica reflected in  water), $f(z)= 1- 2/z^2$, or $g(z)=
\iu/2(z+ 1/z)$. Many other examples can be found throughout this work (see also \cite{BBLPP}).

 Since the orbit of each critical point of a Thurston map $f$ is
 finite, it is often convenient to represent such  orbits  by the
 \emph{ramification portrait}\index{ramification!portrait}. 
This is a directed graph,
 where the vertex set $V$ is the union of the orbits of all critical
 points. For $p,q\in V$ there is a directed  edge from $p$ to $q$ if and only
 if $f(p)=q$. Moreover, if  $p$  is a critical point with
 $\deg_f(p)=d$ we label this  edge by ``$d:1$''. 

For example, the map $f(z)= 1-2/z^2$ has the  ramification portrait
\begin{equation*}
  \xymatrix @R=1pt{
    0 \ar[r]^{2:1} & \infty  \ar[r]^{2:1} & 1 \ar[r] & -1 \ar@(r,u)[]
    }
\end{equation*}
and for   $g(z) = \frac{\iu}{2}(z+ 1/z)$ we obtain: 
\begin{equation*}
  \xymatrix @R=1pt{
    1 \ar[r]^{2 : 1} & \iu \ar[dr]  & &
    \\
    &  & 0\ar[r] & \infty\rlap{.} \ar@(r,u)[]
    \\
    -1 \ar[r]^{2 : 1} & -\iu \ar[ur] & & 
  }
\end{equation*}

\section{Definition of expansion}
\label{sec:expansion-1}

Let $f\:S^2\ra S^2$ be a Thurston map and  $\CC$ be a {\em Jordan  curve} in $S^2$ (i.e., a set homeomorphic to the unit  circle in $\R^2$) 
 with $\post(f)\sub \CC$.
We  fix a base metric $d$ on $S^2$ that induces the given  topology on $S^2$. 
For $n \in \N$ we denote by $\mesh(f,n,\CC)$\index{mesh@$\mesh$} the
supremum  of the  diameters of all connected components of the set
$f^{-n}(S^2\setminus \CC)=S^2\setminus f^{-n}(\CC)$.

\begin{definition}[Expansion]
  \label{def:exp}
  A Thurston map $f\: S^2\ra S^2$ is called
  \defn{expanding}\index{expanding|textbf}\index{Thurston map!expanding|textbf} 
  if there exists a Jordan curve $\CC$ in $S^2$
  with $\post(f)\sub \CC$ and  
  \begin{equation} \label{def:expanding}
    \lim_{n\to \infty}   \mesh(f,n,\CC) =0. 
  \end{equation}
\end{definition} 

We will study the concept of expansion in more detail in Chapter~\ref{cha:expansion} after we have built up some methods for a  systematic investigation. For the moment we summarize some main facts related to  this concept. 

The  set $f^{-n}(S^2\setminus \CC)$ has actually only
  finitely many components;  so the supremum in the definition of
  $\mesh(f,n,\CC)$ is a maximum (see
  Proposition~\ref{prop:celldecomp}~\ref{item:nedgesC}).
  We will see in Lemma~\ref{lem:exp_ind_C}  that if condition 
  \eqref{def:expanding}  is satisfied for one Jordan curve
  $\CC\subset S^2$ with $\post(f) \subset \CC$,
then it actually holds  for every such curve. So 
  expansion is a property of the map $f$ alone.
 Moreover,  it  is really a topological
  property, since  it is independent of the choice of the base metric $d$ 
  on $S^2$ (as long as $d$  induces the given  topology of $S^2$).
  Our notion  of expansion for a  Thurston map is equivalent to 
  a similar concept of expansion  introduced by Ha\"\i ssinky-Pilgrim (see 
  \cite[Section~2.2] {HP} and Proposition~\ref{prop:expequivexp}).

  It is immediate that expansion is preserved under topological conjugacy, i.e., if
  $f$ and $g$ are topologically conjugate Thurston maps, then $f$ is
  expanding if and only if $g$ is expanding.  On the other hand, expansion is not preserved under Thurston
  equivalence (see the next section for the terminology, and 
  Example~\ref{ex:barycentric}).  
 A related fact is that if   two
  expanding Thurston maps are Thurston equivalent, then they  are  actually   topologically conjugate (see Theorem~\ref{thm:exppromequiv}).  
  
   Expansion is compatible with iteration of the map. Namely, if $f\colon
    S^2\to S^2$ is a Thurston map, and $F=f^n$, $n\in \N$,  is an iterate, then $f$
    is expanding if and only if $F$ is expanding
    (Lemma~\ref{lem:Thiterates}). 
 
A map $f\colon S^2\to S^2$ is called 
\emph{eventually onto},\index{eventually onto} 
if for any non-empty open set $U\subset S^2$ there is an iterate $f^n$
  such that $f^n(U)=S^2$. Every expanding Thurston map is eventually
  onto (Lemma~\ref{lem:event_onto}). The   converse does not hold:  there are Thurston maps
  that are eventually onto, but not expanding (see
  Example~\ref{ex:non-expanding-lattes}).
  
   If $f\: S^2\ra S^2$ is a branched covering map, then a point $p\in
   S^2$ is called 
{\em periodic}\index{periodic!point} 
   if  
 $f^n(p)=p$ for some $n\in \N$. The smallest $n$ for which this is true
is called the {\em period}   of the periodic point. 
  The point $p$ is called  
 {\em preperiodic}\index{preperiodic!point} if there exists $k\in \N_0$ such that $q=f^k(p)$ is periodic.  
 Finally, a    
 \emph{periodic critical point}\index{periodic!critical point}\index{critical!point!periodic} 
 is
  a periodic point $c\in \crit(f)$.  
  
 The following statement gives 
 a criterion when a rational Thurston map is
expanding.

\begin{prop} \label{prop:rationalexpch}
Let $R\:\CDach\ra \CDach$ be a rational 
Thurston map. 
\index{Thurston map!rational} 
Then the following conditions are equivalent:

\begin{enumerate} 

\item $R$ is expanding.
  \label{item:rationalexpch1}
  \index{expanding} 
  \index{Thurston map!expanding}
\item
  \label{item:rationalexpch2}
  The Julia set of $R$ is equal to $\CDach$. 

\item
  \label{item:rationalexpch3}
  $R$ has no periodic critical points. 
\end{enumerate}

\end{prop}

An immediate consequence of this proposition is that no
postcriti\-cal\-ly-finite polynomial $P\: \CDach \ra \CDach$
(with $\deg(P)\ge 2)$ can be expanding. Indeed, in this case
$\infty\in \CDach$ is both a critical point and a fixed point of
$P$. So condition \ref{item:rationalexpch3} is violated. For a
related fact see Lemma~\ref{lem:poly_not_exp}.

In general,  expanding Thurston maps may have 
periodic critical points (see Example~\ref{ex:barycentric}). On the
other hand, there are Thurston maps that do not have 
periodic critical points, yet are not Thurston equivalent to any expanding
map (see Example~\ref{ex:no_per_crit_not_exp}).

Our proof of Proposition~ \ref{prop:rationalexpch}    relies on  facts 
(in particular, on Lem\-ma~\ref{lem:event_onto},  Lemma~\ref{lem:length_exp}, and 
Proposition~\ref{lem:R_orbimetric}) that we will establish    later.  
We will  also  use some basic concepts from complex
dynamics, which can be found in \cite{CG} and \cite{Mi}, for example.   

\begin{proof}[Proof of Proposition~\ref{prop:rationalexpch}] 
  We will show the 
  chain of implications
  \ref{item:rationalexpch1} $\Rightarrow$ \ref{item:rationalexpch2}
$\Rightarrow$ \ref{item:rationalexpch3}
$\Rightarrow$ \ref{item:rationalexpch1}. 


\smallskip
\ref{item:rationalexpch1}
$\Rightarrow$
\ref{item:rationalexpch2}
Let $R\colon \CDach\to \CDach$ be a rational expanding Thurston map. Then its Julia set 
$\mathcal{J}\sub \CDach$ is non-empty. Let $\mathcal{F}= \CDach
\setminus \mathcal{J}$ be the Fatou set of $R$. By 
Lemma~\ref{lem:event_onto} the map  $R$ is eventually onto. So if we assume 
$\mathcal{F}\neq \emptyset$, then, since $\mathcal{F}$ is open,  this implies that $R^n(\mathcal{F})=
\CDach$ for a sufficiently high  iterate $R^n$.   
Now  $\mathcal{F}$ is invariant for $R$; so  this means that $\mathcal{F}=\CDach$ and 
$\mathcal{J}=\emptyset$. This is a contradiction.

\medskip
\ref{item:rationalexpch2}
$\Rightarrow$ 
\ref{item:rationalexpch3}
If the Julia set of $R$ is equal to $\CDach$, then its Fatou set
is empty. This implies that $R$ cannot have periodic critical
points, because a periodic critical point of a rational map is
part of a super-attracting cycle and belongs to the Fatou set.

\medskip
\ref{item:rationalexpch3}
$\Rightarrow$
\ref{item:rationalexpch1}
Suppose $R$ has no periodic critical points. Then there exists a
geodesic metric $\om$ on $\CDach$ (the canonical orbifold metric
of $f$; see Section~\ref{sec:orbif-assoc-thurst} and Section~\ref{sec:expratThmaps}) such that $R$ expands
the $\om$-length of each path in $\CDach$ by a fixed factor
$\rho>1$ (Proposition~\ref{lem:R_orbimetric}). This implies that
$R$ is expanding (Lemma~\ref{lem:length_exp}).
\end{proof}

 


\section{Thurston equivalence}
\label{sec:thurston-equivalence}

Suppose  $f\colon S^2\to S^2$ and $g\colon
\widehat{S}^2 \to \widehat{S}^2$ are two Thurston maps. Here $\widehat S^2$ is another
$2$-sphere. Often $S^2=\widehat S^2$, but sometimes  it is important
to distinguish the spheres on which the Thurston maps are defined.  
We call the maps $f$ and $g$ \defn{topologically
  conjugate}\index{topologically conjugate|textbf}
\index{conjugacy} 
 if there exists a
homeomorphism $h\: S^2\ra \widehat S^2 $ such that $h\circ f = g\circ
h$. This defines a notion of equivalence for  Thurston maps. Topologically conjugate maps have essentially the same dynamics under iteration up to ``change of coordinates''. 

It is often convenient to consider a weaker notion of equivalence for
Thurston maps. To define it, we first recall the definition of   homotopies and isotopies between spaces. 
Let $I=[0,1]$, and $X$, $Y$  be  topological  spaces. A {\em
  homotopy  between $X$ and $Y$}\index{homotopy}
 is a continuous map $H\: X\times I \ra Y$. We define  $H_t\coloneqq H(\cdot, t)\: X\ra Y$ for $t\in I$. The map $H_t$  is  called the {\em time-$t$ map} of the homotopy.  A homotopy 
$H\:  X\times I \ra Y$ is called an 
{\em isotopy}\index{isotopy}  
if 
$H_t$ is a homeomorphism of $X$ onto $Y$ for each $t\in I$. If $X=Y$, then 
$H$ is called a homotopy (or isotopy) {\em on} $X$. 

Let $A\sub X$. If  $H\: X\times I \ra Y$ is   a homotopy,
then 
we say that  $H$ is a homotopy  
{\em relative}\index{homotopy!rel.\ $A$}
to $A$ (abbreviated ``$H$ is a homotopy  rel.\ $A$'') if 
$H_t(a)=H_0(a)$ for all   $a\in A$ and  $t\in I$. So this means
that the image of each point in $A$ remains fixed during the homotopy.
Similarly, we speak of 
{\em isotopies rel.\ $A$}.\index{isotopy!rel.\ $A$}  
Two homeomorphisms $h_0, h_1\: X\ra Y$ are called  {\em  isotopic
  rel.~$A$} if there exists an isotopy $H\: X\times I \ra Y$
rel.~$A$ with $H_0=h_0$ and  $H_1=h_1$.

 Let $B$ and $C$ be subsets of $X$. We say that
$B$ is 
{\em isotopic to $C$ rel.\ $A$},\index{isotopy!rel.\ $A$} 
or {\em $B$ can be isotoped (or  deformed)  into $C$ rel.\ $A$}, if there
exists an isotopy $H\: X\times I\ra X$ rel.\ $A$ with $H_0=\id_X$
and $H_1(B)=C$.  Note that this notion depends on the ambient
space $X$ containing the sets $A$, $B$, $C$.

\begin{definition}[Thurston equivalence]\label{def:Thequiv}
Let $f\: S^2\ra S^2$ and $g\:\widehat S^2\ra \widehat S^2$
be Thurston maps. 
Then they are called \defn{(Thurs\-ton)  
  equivalent}\index{Thurston!equivalent|textbf} if there exist homeo\-morphisms 
  $h_0,h_1\:S^2\ra \widehat S^2 $ that are  isotopic rel.\ $\post(f)$  and satisfy
 $    h_0\circ f = g\circ h_1$.
 \end{definition} 
In this case, the following  diagram commutes: 
\begin{equation}\label{Thequiv1} 
  \xymatrix{
    S^2 \ar[r]^{h_1} \ar[d]_f & \widehat S^2 \ar[d]^g
    \\
    S^2 \ar[r]^{h_0} & \widehat  S^2\rlap{.}
  }
\end{equation}

Our $2$-spheres $S^2$ and $\widehat S^2$ are assumed to be
oriented.  Since the homeomorphisms $h_0$ and $h_1$ are isotopic,
they are either both orien\-tation-preserving or
orientation-reversing.  One defines Thur\-ston
equi\-valence  sometimes differently by insisting on the homeomorphisms $h_0$
and $h_1$ in Definition~\ref{def:Thequiv} being
orientation-preserving. In this case, we say that $f$ and $g$ are
{\em orientation-preserving Thurston equivalent}.
\index{Thurston!equivalent!orientation-preserving}
\index{orientation!preserving!Thurston equivalent}

This is a stronger notion of Thurston equivalence.  For example,
according to our definition, a rational Thurston map $R$ on
$\CDach$ is Thurston equivalent to the map
$z\mapsto \overline{R(\overline{z})}$ (we use
$h_0(z)=h_1(z)=\overline z$ in Definition~\ref{def:Thequiv}), but
these maps are in general not orientation-preserving Thurston
equivalent.
 

  If two Thurston maps are topologically conjugate, then they are
  Thurston equivalent. However, 
 they will not be
  orientation-preserving Thurston equivalent  in general.
 This is the main reason why we use  our more general concept of Thurston equivalence.

  \begin{lemma}
    \label{lem:T-eq_crit_post}
    Let $f\:S^2\ra S^2$ be a Thurston map, $g\: \widehat S^2\ra \widehat S^2 $ be  a continuous map, and $h_0,h_1\: S^2\ra  \widehat S^2$ be homeomorphisms 
    that are isotopic rel.\ $\post(f)$ and satisfy
   $ h_0\circ f=g\circ h_1$. Then $g$ is also a
     Thurston map  and we have 
    \begin{align*}
      \crit(g)&=h_1(\crit(f)), 
      \\
      \notag
      \post(g)&=h_0(\post(f))=h_1(\post(f)).
  \end{align*}
  \end{lemma}
   Note that we again have  the diagram~\eqref{Thequiv1}. So 
   under the given assumptions,   $f$ and $g$ are Thurston equivalent. 
  If we already know that $g$ is a Thurston map (which was part of the conclusion of the lemma), then,  as the proof will show,  the statements  about $\crit(g)$ and $\post(g)$ are valid under the weaker assumptions that $h_0$ and $h_1$ are homeomorphisms satisfying $h_0\circ f=g\circ h_1$ and  
$h_0|\post(f)=h_1|\post(f)$, but are not necessarily isotopic rel.\ $\post(f)$. 
 
  \begin{proof}  Since $h_0$ and $h_1$ are isotopic, they both preserve or both reverse orientation.  
So it is clear that $g=h_0\circ f\circ h_1^{-1}$  is a branched covering map with   $\crit(g)=h_1(\crit(f))$. 
    Thus 
    $$g(\crit(g))=
    (g\circ h_1)(\crit(f)) = (h_0 \circ f)(\crit(f)).$$ 
  Since  $h_0|\post(f)=h_1|\post(f)$ and $f^n(\crit(f))\sub\post (f)$
    for all $n\in \N$,  we  inductively derive 
    $$g^n(\crit(g))=h_0(f^n(\crit(f)))=h_1(f^n(\crit(f)))$$ for all $n\in \N$. Hence 
    \begin{align*}
      \post(g)&=\bigcup_{n\in \N}g^n(\crit(g))=\bigcup_{n\in \N}
      h_0(f^n(\crit(f)))
      \\
      &=h_0(\post(f))=h_1(\post(f)).
    \end{align*}
 In particular,    this shows that $\post(g)$ is finite. Since 
 $\deg(g)=\deg(f)\ge 2$, it follows that  $g$ is a Thurston map. 
\end{proof}  

The previous lemma implies that the roles  of the maps $f$ and $g$ in Definition~\ref{def:Thequiv} are symmetric. Indeed, 
suppose  $H\colon S^2\times I\ra \widehat{S}^2$ is an  isotopy rel.\ $\post(f)$
with $H_0=h_0$ and $H_1=h_1$. Define  $K\colon \widehat{S}^2 \times I \ra
S^2$ as  $K(\hat x, t) = (H_t)^{-1}(\hat{x})$ for $\hat{x}\in \widehat{S}^2$, $t\in I$. 
The map $(x,t)\mapsto (H(x,t),t)$ is a continuous bijection between  the compact Hausdorff spaces $S^2\times I$ and $\widehat{S}^2\times I$, and hence a homeomorphism. Its inverse is given by $(\hat x, t)\mapsto (K(\hat x, t),t)$. This implies that $K$ is continuous, and so  obviously  an isotopy rel.\ $h_0(\post(f))=h_1(\post(f))=\post(g)$.
Since $k_0\coloneqq h_0^{-1}=K_0$ and $k_1\coloneqq h_1^{-1}=K_1$, the homeomorphisms $k_0$ and $k_1$ are isotopic rel.\ $\post(g)$.  We also have 
$$k_0\circ g= h_0^{-1}\circ g=  f\circ h_1^{-1}=  f \circ k_1,  $$ and so the conditions in Definition~\ref{def:Thequiv} are also satisfied if we interchange  $f$ and $g$.

It is also clear  that if $f,g,h$
are Thurston maps, $f$ is (Thurston) equivalent to $g$, and $g$ is
equivalent to $h$, then $f$ is equivalent to $h$. Thus,  Thurston
equivalence leads to a notion of  equivalence for Thurston maps. 

If two Thurston maps $f$ and $g$ are equivalent,  then for each 
$n\in \N$ the iterates  $f^n$ and $g^n$ are also equivalent. To show this
statement, one needs to lift the isotopy rel.\ $\post(f)$ between
the homeomorphisms $h_0$ and $h_1$ as in 
Definition~\ref{def:Thequiv}. We postpone the proof to Chapter~\ref{cha:iso}, where such lifts are considered (see Corollary~\ref{cor:fg_eq_fngn_eq}).

  \begin{ex} \label{ex:tringflP}  Up to  Thurston equivalence,  one can often 
    construct combinatorial models of maps that are given in some other specific way, for example by an analytic formula. To illustrate this, we consider the map 
     $f\colon
  \CDach\to\CDach$ given by
\begin{equation} \label{ex:tringflapsform} 
 f(z)= 1+ (\omega- 1)/z^3,
 \end{equation} 
  where $\omega= e^{4\pi \iu/3}$.  
 Then  $\crit(f)=\{0,\infty\}$, and  $f$ 
    has the ramification
portrait
\begin{equation}
  \label{eq:crit_port_tria_1flap}
  \xymatrix @R=1pt{
    0 \ar[r]^{3:1} & \infty \ar[r]^{3:1} & 1\ar[r] &  \omega\rlap{.} \ar@(r,u)[] 
  }
\end{equation}
So $\post(f)=\{1, \omega, \infty\}$, and $f$ is a Thurston map.   

\ifthenelse{\boolean{nofigures}}{}{  
  \begin{figure}
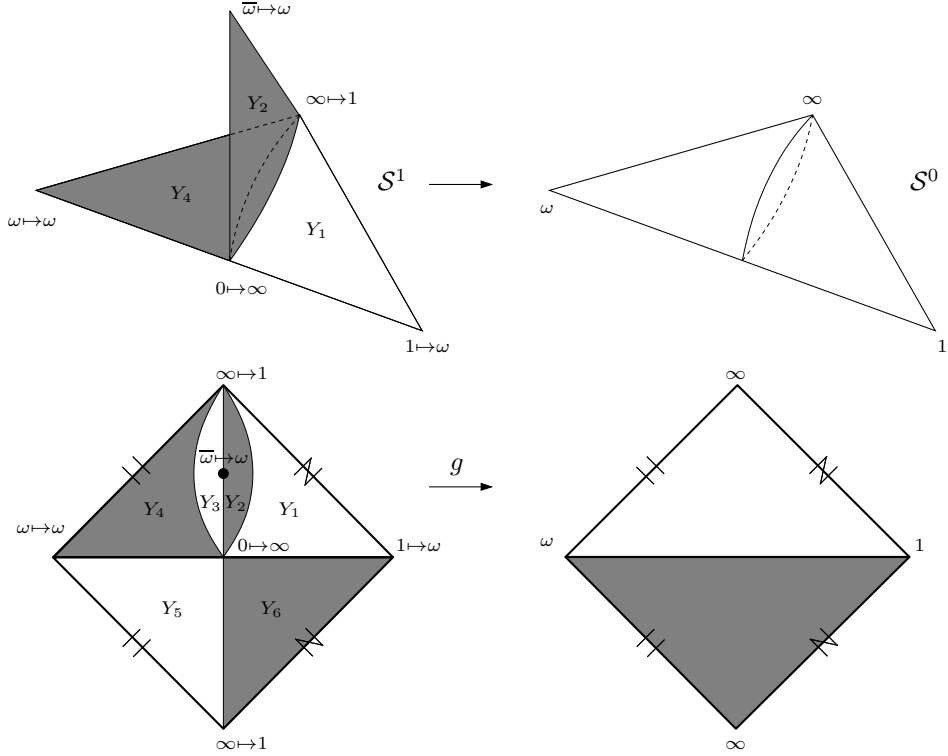

    \centering
    \begin{overpic}
      [width=12cm, 
      tics=20]{triag_1flap.eps}
      \put(97,60){$\mathcal{S}^0$}
      \put(38,60){$\mathcal{S}^1$}
      \put(56,57.5){$\scriptstyle{\omega}$}
      \put(100,42){$\scriptstyle{1}$}
      \put(85,69.5){$\scriptstyle{\infty}$}
      \put(41,42){$\scriptstyle{1\mapsto\omega}$}
      \put(30,69.5){$\scriptstyle{\infty\mapsto 1}$}
      \put(23,79.5){$\scriptstyle{\overline{\omega}\mapsto \omega}$}
      \put(20,48.5){$\scriptstyle{0\mapsto \infty}$}
      \put(-3,56){$\scriptstyle{\omega\mapsto \omega}$}
      \put(46,29){${g}$}
      \put(56,20.5){$\scriptstyle{\omega}$}
      \put(97.5,20){$\scriptstyle{1}$}
      \put(76.5,39){$\scriptstyle{\infty}$}
      \put(76.5,-2){$\scriptstyle{\infty}$}
      \put(40,20){$\scriptstyle{1\mapsto\omega}$}
      \put(20,-2){$\scriptstyle{\infty\mapsto 1}$}
      \put(20,39){$\scriptstyle{\infty\mapsto 1}$}
      \put(18.3,29.8){$\scriptstyle{\overline{\omega}\mapsto \omega}$}
      \put(-2,22){$\scriptstyle{{\omega}\mapsto \omega}$}
      \put(22.5,20){$\scriptstyle{0\mapsto \infty}$}
      %
      \put(27,24){$\scriptstyle{Y_1}$}
      \put(21,24.7){$\scriptstyle{Y_2}$}
      \put(18.2,24.7){$\scriptstyle{Y_3}$}
      \put(12,24){$\scriptstyle{Y_4}$}
      \put(14,13){$\scriptstyle{Y_5}$}
      \put(25,13){$\scriptstyle{Y_6}$}
      %
      %
      \put(30,55){$\scriptstyle{Y_1}$}
      \put(23.5,69){$\scriptstyle{Y_2}$}
      \put(15,59){$\scriptstyle{Y_4}$}
      %
    \end{overpic}
    \caption{The map $g$.}
    \label{fig:triangle_flap0}
  \end{figure}}

  We   now construct  a related   Thurston map $g\colon S^2\to S^2$  in a similar fashion as the map $h$ in
    Section~\ref{sec:int-frac-sph}.  For this 
   let $T$ be a right-angled   isosceles 
    Euclidean triangle whose hypotenuse has length $1$. The  angles of $T$ are then $\pi/2, \pi/4,
    \pi/4$. We also consider a triangle $T'$ that is similar to $T$ by the scaling factor $\sqrt{2}$. 
    We glue two copies of  $T$ together along their boundaries to form a
    pillow $\mathcal{S}^0$ (see Section~\ref{sec:expratThmaps}),  which  is a
    topological $2$-sphere. These  two triangles  are called the $0$-tiles.
    As before, we color one of them  white,  and the other black. 



    We divide each of the two $0$-tiles by the perpendicular
    bisector of the hypotenuse into two triangles similar to $T$
    and isometric to $T'$.  We slit the pillow open along one
    such bisector and glue two copies of $T'$ into the slit as
    indicated on the left in  Figure~\ref{fig:triangle_flap0}.
    This results in a polyhedral surface $\mathcal{S}^1$
    consisting of six triangles, each isometric to $T'$.  These
    six small triangles are called the $1$-tiles. They are
    colored in a checkerboard fashion black and white so that
    triangles sharing an edge have different colors, as indicated
    in the picture. Note that there are two points (labeled
    ``$0\mapsto \infty$'' and ``$\infty\mapsto 1$'') in which all
    $1$-tiles intersect.

    Each small triangle in $\mathcal{S}^1$ is now mapped by a
    similarity to the triangle in $\mathcal{S}^0$ of the same
    color. This  determines a unique map  $\mathcal{S}^1\to \mathcal{S}^0$ if the vertices of the $1$-tiles are mapped as indicated  at the top in  Figure~\ref{fig:triangle_flap0}.  
    
    We identify $\mathcal{S}^1$ with $\mathcal{S}^0=S^2$ such that
    the four triangles shown on the top left in
    Figure~\ref{fig:triangle_flap0} are identified with the white
    triangle in $\mathcal{S}^0$, and the other two small triangles in
    $\mathcal{S}^1$ are identified with the black triangle in
    $\mathcal{S}^0$. Here we   require that this identification  
 respects   the natural identification  of     the three common corners of  the two copies of $T$ (labeled $\om$, $1$, $\infty$ in the picture) which are  contained  in both $\mathcal{S}^0$ and $\mathcal{S}^1$.   
    This yields a Thurston map $g\colon S^2\to S^2$, indicated at  the
    bottom in Figure~\ref{fig:triangle_flap0}. Here we have cut the
    pillow $\mathcal{S}^0$ along the two legs of the two
    triangles. The two pairs of legs marked with the same symbol have
    to be identified, i.e., glued together, to form the pillow. 

    Note that $g$ depends on how precisely $\mathcal{S}^1$ is
    identified with $\mathcal{S}^0$. A different identification
    yields another Thurston map $\widetilde{g}$, but it is easy
    to see that $\widetilde{g}$ is always Thurston equivalent to
    $g$. Thus the notion of Thurston equivalence allows us some
    latitude in specifying the precise identification of
    $\mathcal{S}^1$ with $\mathcal{S}^0=S^2$.
    
    Our main point here is the following statement:
    {\em  
  The map $f$ as defined in \eqref{ex:tringflapsform} and the map $g$ constructed above are Thurston
  equivalent.}  As our purpose  is to give  the reader some intuition for the concept of Thurston equivalence, we will only outline a proof omitting some details.

Note that $f(z)= \tau(z^3)$, where $\tau(\zeta)= 1+ (\omega-1)/\zeta$
is a M\"{o}bius transformation that maps the upper half-plane to the
half-plane above the line through the points $\omega$ and $1$ (indeed,
$\tau$ maps $0,1,\infty$ to $\infty, \omega,1$, respectively).

    Let $\CC\subset \CDach$ be the circle through $\omega,1,\infty$ (i.e.,
    the extended line through $1$ and $\omega$). Then $\CC$ contains all
    postcritical points of $f$. The closures of the two components of
    $\CDach \setminus \CC$ are called the $0$-tiles (of $f$).
    The $0$-tile containing $0\in \CDach$ (i.e., the half-plane above
    the line through $1$ and $\omega$) is colored white, the other $0$-tile
    is colored black.

  Since $f(z)=\tau(z^3)$,   we have 
    $f^{-1}(\CC)= \bigcup_{k\in \{1,\dots, 6\}} R_k$, where 
    $$R_k=\{re^{\iu k\pi/3} : 0\leq r\leq \infty\}$$ for $k\in \Z$
    is the ray from $0$ through the sixth root of unity $e^{\iu
      k\pi /3}$ (note that $R_{0}= R_{6}$). These rays divide $\CDach$
    into open sectors that are the complementary  
      components of $\CDach \setminus f^{-1}(\CC)$. For $k=1, \dots, 6$ let
      \begin{equation*}
  X_k \coloneqq  \{r e^{\iu t} : 0\leq r \le  \infty,\,  (k-1)\pi/3 \leq t \le 
  k\pi/3\}.
\end{equation*} be the closure of the sector bounded by $R_{k-1}$ and $R_k$. 
    We call these sets the $1$-tiles. Note that $f$ maps each $1$-tile $X_k$ 
        homeomorphically to a $0$-tile $X^0\subset\CDach$. We color $X_k$
    white if $X^0$ is white, and black otherwise. Then the $1$-tile $X_k$, $k=1, \dots , 6$,  is colored black or white depending on whether $k$ is even or odd. 

   In order to show that $f$ and $g$ are Thurston equivalent, we need two homeomorphisms $h_0$ and $h_1$ as in Definition~\ref{def:Thequiv}. The  homeomorphism $h_0 \colon \CDach \to S^2=\mathcal{S}^0 $ is defined  as follows. We map 
    the  white and  black $0$-tiles in $\CDach$ homeomorphically 
    to the white and black $0$-tile in $\mathcal{S}^0$, respectively. We can do this so that  the points
    $\omega, 1, \infty\in \CDach$ are mapped to the points labeled
    $\omega,1, \infty\in S^2$ on the top  right  in 
    Figure~\ref{fig:triangle_flap0}, and so that the homeomorphisms on these two $0$-tiles match along the common boundary.   

The homeomorphism $h_1\colon \CDach \to \mathcal{S}^1$ is constructed similarly by mapping $1$-tiles in $\CDach$ homeomorphically to corresponding $1$-tiles in $\mathcal{S}^1$. We  will then view  $h_1$ as a map to the domain  $S^2=\mathcal{S}^0$ of $g$ by using   the given identification of $\mathcal{S}^1$ with  
$\mathcal{S}^0$  

 For the precise definition of $h_1$, we denote  the $1$-tiles 
 in $\mathcal{S}^1$ by  $Y_1, \dots , Y_6$  so that they follow in positive cyclic order 
around the point labeled ``$0\mapsto \infty$'' on the  left in
Figure~\ref{fig:triangle_flap0} and such that $Y_1$  is the  white
$1$-tile containing the point labeled ``$1\mapsto
\omega$''.  
With our labeling the  $1$-tile $X_k$   
     in $\CDach$ has the same color as the $1$-tile $Y_k$   in $\mathcal{S}^1$.
     So for fixed  $k=1, \dots, 6$, the map  
   $f|X_k\colon X_k\to X^0$ is a homeomorphism of $X_k$ onto a $0$-tile $X^0$ in $\CDach$, and 
    $g|Y_k \colon Y_{k}\to Y^0$ is a  homeomorphism of $Y_k$ onto a $0$-tile 
    $Y^0$ in $\mathcal{S}^0$, where  the tiles  $X_k,Y_k,X^0, Y^0$ all have the same color. In particular, $h_0$ maps $X^0$ homeomorphically onto $Y^0$, and we can define    $$h_1|X_k \coloneqq   (g|Y_k)^{-1} \circ h_0 \circ (f|X_k).$$ 
    This is a homeomorphism from $X_k$ onto $Y_k$. 
 It is then  
    straightforward to check that these partial homeomorphisms  on the $1$-tiles 
    $X_k$  of $\CDach$ paste together to a well-defined homeomorphism
   $h_1\colon \CDach \to
    \mathcal{S}^1\cong S^2$. Moreover,  it
    follows 
    directly from the definition of $h_1$ that $h_0\circ f = g\circ h_1$,  and so we get a  commutative diagram as in  
    \eqref{Thequiv1}. 

      It remains to argue  that the homeomorphisms $h_0$ and $h_1$ are 
      isotopic rel.\
    $\post(f)$. Note that $h_0$ and $h_1$ are
orientation-preserving and agree on the set $\post(f)=\{\omega,1, \infty\}$.
Moreover,  $h_0$ maps $\CC$ (the extended line through $\omega$ and $1$) and $h_1$ maps $R_0 \cup R_4$ to the  equator of the pillow. So  basically we need to deform $\CC$ to $R_0 \cup
R_4$ while keeping $\post(f)=\{\omega,1, \infty\} = \CC \cap (R_0 \cup R_4)$
fixed to obtain the desired isotopy between $h_0$ and $h_1$. It is intuitively clear that this is possible. Since $\post(f)=3$ the existence of the desired isotopy actually follows from a general fact (see Lemma~\ref{lem:homeo}). \end{ex}

\section{The orbifold associated with  a Thurston map}
\label{sec:orbif-assoc-thurst}

An  orbifold\index{orbifold} is a space that is locally represented  as a quotient  of a model   space  by a  
 group action (see
\cite[Chapter~13]{Th}).  In our present context we are only interested in the $2$-dimensional 
case where the group actions are given by cyclic groups near  points on a surface. Then  all the relevant information is given by  a pair 
$(S, \alpha)$, where 
$S$ is a surface and $\alpha\colon S\to \widehat{\N}\coloneqq \N\cup\{\infty\}$ is a map such that the set
of points $p\in S$ with $\alpha(p)\neq 1$ is a 
discrete set in $S$, i.e., it has no limit points in $S$. 
We call such a function $\alpha$ a 
 {\em ramification function}\index{ramification!function} on $S$, and  the pair $(S, \alpha)$ an {\em orbifold}.\index{orbifold}  Each 
 number $\alpha(p)$ should be thought of as the order of an associated cyclic group  (see Section~\ref{sec:orbifolds-coverings} and in particular 
 Proposition~\ref{prop:prop_deck_trafo}~\ref{item:pi1_O_2}). 
   The  
  set $\supp(\alpha)\coloneqq \{ p\in S: \alpha(p)\ge 2\}$
  is  the 
  {\em support}\index{ramification!function!support of}\index{supp@$\supp$}  
  of $\alpha$. 
It is discrete 
in  $S$;  so  if $S$ is compact, then 
  $\supp(\alpha)$ is a finite set. 

Orbifolds are useful for the study of branched covering maps, in particular Thurston maps. In this case, the underlying surface of the orbifold is a topological $2$-sphere. Every Thurston map 
$f\: S^2 \ra S^2$ has an associated orbifold $\mathcal{O}_f=(S^2, \alpha_f)$, where  the orbifold data are 
determined  by the {\em ramification function} $\alpha_f\: S^2 \ra \widehat{\N}$ of $f$. 
We will first define 
$\alpha_f$ and  discuss  the  main properties of this function before we turn our attention  to $\mathcal{O}_f$. 

For a general Thurston map $f$ we consider its orbifold  $\mathcal{O}_f$ as a purely topological object that encodes some ramification data of $f$. In particular, the orbifold $\mathcal{O}_f$ 
does not carry  any canonical  
 geometric structure associated with $f$. This is different for {\em rational} Thurston maps $f$ defined 
 on the Riemann sphere $\CDach$. Here the additional conformal structure on $\CDach$  can be used to define the {\em canonical orbifold metric} of $f$. We will discuss this briefly at the end of this section,  and in more detail in Section~\ref{sec:expratThmaps}.

We use the notation 
$\widehat\N \coloneqq  \N\cup\{\infty\}$,\index{n2@$\widehat{\N}$}\index{$\norm{Da}$@$a\vert b$} 
and 
extend the usual order relations $<, \le , >, \ge$
 on $\N$ in the obvious way  to  $\widehat\N$.  So $a<\infty$ for $a\in \N$,
$a\le \infty$ for $a\in  \widehat\N$,  etc.   We also extend multiplication of natural numbers to $\widehat \N$ by setting $a\cdot \infty=\infty \cdot a=\infty$ for $a\in \widehat \N$. For $a,b\in \widehat \N$  we say that {\em $a$ divides $b$}, written $a| b$, if there exists $k\in \widehat \N$ such that $b=ak$. So the relation $a|b$ is 
an extension of the usual divisor relation in $\N$ with  the additional convention that every value in $\N\cup\{\infty\}$ divides $\infty$. 


Suppose $A\sub \widehat\N$ is arbitrary. Then there exists a unique  $L\in \widehat\N$,
called the {\em least common multiple} of the elements of  $A$ and denoted by 
$\lcm(A)$,\index{lcm@$\lcm$} 
with the following properties:  $a| L$ for all $a\in A$, and if
$L'\in \widehat\N$ is such that $a| L'$ for all $a\in A$, then
$L|L'$. 
It is easy to see that if $A\sub \N$ is a finite  set of natural numbers, then  $\lcm(A)\in \N$ is the 
least common multiple of the numbers in $A$ in the usual sense, and  $\lcm(A)=\infty$ otherwise.

If  $\alpha, \beta\: S^2\ra \widehat \N$ are  functions on a $2$-sphere $S^2$, then  we write 
$\alpha\leq\beta$ and $\alpha|\beta$ if $\alpha(p)\leq \beta(p)$ and $\alpha(p)|\beta(p)$ for all 
$p\in S^2$, respectively. 


\begin{definition}[Ramification function of a Thurston map]
  \label{def:weightf} Let $f\colon S^2 \to S^2$ be a Thurston map. 
  Then
  its \emph{ramification function}\index{ramification!function}\index{a f@$\alpha_f$} 
  is the map 
  $\alpha_f\: S^2 \ra \widehat \N$ defined for $p\in S^2$ as  
   \begin{equation*}\label{eq:deframffunc} 
      \alpha_f(p)=\lcm\{\deg(f^n,q) : q\in S^2,\,  n\in \N, \text{ and } f^n(q)=p \}.
    \end{equation*}
 \end{definition}

 We will see momentarily (see the remark after
 Proposition~\ref{prop:otherramprops}) that $\alpha_f$ is indeed
 a ramification function on the surface $S^2$. It admits the following characterization.
 
 \begin{prop}
  \label{prop:weightf} Let $f\: S^2\ra S^2$ be a Thurston map with  its associated ramification function $\alpha_f\: S^2 \ra \widehat \N$. Then we have:  
  \begin{enumerate}
  \item 
    \label{item:weight1} $\deg(f, q)\alpha_f(q)\,|\, \alpha_f(p)$, whenever $p,q\in S^2$ and 
    $f(q)=p$. 
    
   \item 
    \label{item:weight2} If $\beta\: S^2 \ra \widehat \N$ is any function such that 
    $\deg(f, q)\, \beta(q)\,|\,  \beta(p)$ whenever $p,q\in S^2$ and 
    $f(q)=p$, then $\alpha_f|\beta$. 
  \end{enumerate}
  Moreover, $\alpha_f$ is the unique function with the properties \ref{item:weight1}
  and \ref{item:weight2}. 
\end{prop}

Before we turn to the proof of this proposition, we point out a fact that follows from repeated application of \ref{item:weight1}. Namely, suppose that $p,q\in S^2$, $n\in \N$, and $f^n(q)=p$. 
 Let $q_k \coloneqq  f^k(q)$ for $k=0,\dots,n$. Then  
 $$ \deg(f^n, q)=\deg(f, q_0)\deg(f, q_1)\cdots \deg(f,q_{n-1})$$
 as follows from  \eqref{eq: localdegreemult}. 
 Moreover, $$\alpha_f(q_k) \deg(f, q_k)\,|\, \alpha_f(q_{k+1})$$ for $k=0, \dots, n-1$. Since 
 $q_0=q$ and  $q_n=p$,
it follows that 
$\deg(f^n, q) \alpha_f(q)\,|\,\alpha_f(p)$.

 \begin{proof}
   [Proof of Proposition~\ref{prop:weightf}] Let $f\colon S^2 \to S^2$ be a  Thurston map and $\alpha_f$ its 
ramification function  as given in Definition~\ref{def:weightf}. 

To 
 establish \ref{item:weight1} for $\alpha_f$, suppose $p,q\in S^2$
   and $f(q)=p$.  If $\alpha_f(p)=\infty$, then $\deg(f, q)
   \alpha_f(q)\,|\,\alpha_f(p)$, and there is nothing to prove. So we may
   assume that $\alpha_f(p)\in \N$.

Suppose $q'\in S^2$ is a point 
with $f^n(q')=q$ for some $n\in \N$. 
Then $f^{n+1}(q')=p$, and 
\begin{equation*}
  \deg(f^{n+1}, q')= \deg(f,q)\deg(f^n, q');   
\end{equation*}
so $\deg(f,q)\deg(f^n, q')\,|\,\alpha_f(p)$ by definition of
$\alpha_f(p)$. 
It follows that $\alpha_f(p)/\deg(f,q)$ is a natural number
that has  
$\deg(f^n, q')$ as a divisor. This implies that 
the least common multiple $\alpha_f(q)$ of all such numbers  $\deg(f^n, q')$ divides $\alpha_f(p)/\deg(f,q)$. We conclude  that 
$$\deg(f, q)
  \alpha_f(q)\,|\,\alpha_f(p), $$ and  \ref{item:weight1} follows.

Let $\beta\: S^2\ra \widehat \N$ be another function with the property \ref{item:weight1}.
Then by repeated use  of this property we see  that 
$\deg(f^n, q) \beta_f(q)\,|\, \beta(p)$ whenever $p,q\in S^2$, $n\in \N$, and $f^n(q)=p$. 
In particular, $\deg(f^n, q)| \beta(p)$. As this is true for all $q\in S^2$ with $f^n(q)=p$ for some 
$n\in \N$, we conclude $\alpha_f(p)|\beta(p)$ for all $p\in S^2$. This establishes the desired property \ref{item:weight2} of $\alpha_f$.

The uniqueness of the function $\alpha_f$ with the  properties \ref{item:weight1} and \ref{item:weight2}
is clear. 
\end{proof} 

To state other important  properties of the ramification
function, we first 
need a definition. A 
{\em critical cycle}\index{critical!cycle}
 $C$ of a Thurston map $f\:
S^2\ra S^2$ is the   orbit of a periodic critical point of $f$; so then
there exists $c\in \crit(f)$,  and $n\in \N$ with $f^n(c)=c$ such that  
$ C=\{ f^k(c): k=0, \dots, n-1\}$.

\begin{prop}
  \label{prop:otherramprops}
  Let $f\: S^2\ra S^2$ be a Thurston map with its associated ramification function $\alpha_f\: S^2 \ra
  \widehat \N$. Then for $p\in
  S^2$ we have:
  
  \begin{enumerate}
  \item 
    \label{item:rami_postf}
    $\alpha_f(p)\ge 2$ if and only if $p\in \post(f)$.  
    
  \item 
    \label{item:rami_infty}
    $\alpha_f(p)=\infty$ if and only if $p$ is contained in a critical
     cycle of $f$.    
   \end{enumerate}
 \end{prop}

In particular, this implies that $\supp(\alpha_f)=\{p\in S^2:\alpha_f(p)\ge 2\}=\post(f)$ is a finite set and so
$\alpha_f$ is really a ramification function on the surface $S^2$ (as defined in the beginning of this section).  

\begin{proof} \ref{item:rami_postf}   
  If $p\in S^2\setminus \post(f)$, then $\deg(f^n, q)=1$ whenever
  $q\in S^2$, $n\in \N$, and $f^n(q)=p$, because none of the
  points $f^k(q)$, $k=0, \dots, n-1$, can be a critical point of
  $f$. Hence $\alpha_f(p)=1$ by definition of $\alpha_f$.
 
  If $p\in \post(f)$, then there exist $c\in \crit(f)$ and
  $n\in \N$ such that $f^n(c)=p$.  Then $\deg(f,c)\ge 2$ which
  implies that $\deg(f^n, c)\ge 2$. By definition of $\alpha_f$
  we have $\deg(f^n,c)|\alpha_f(p)$; so $\alpha_f(p)\ge 2$.

  \smallskip
  \ref{item:rami_infty}
  If $p$ is contained in a critical cycle, then there exists a
  periodic critical point $c$ of $f$ such that $f^n(c)=p$ for
  some $n\in \N_0$. There exists $k\in \N$ such that $f^k(c)=c$,
  and so $f^{n+km}(c)=p$ for all $m\in \N$. Then the numbers
  $\deg(f^{n+km}, c)\ge \deg(f,c)^m\ge 2^m $ divide $\alpha_f(p)$
  for all $m\in \N$. This is only possible if
  $\alpha_f(p)=\infty$.

Conversely, suppose $p\in S^2$ and $\alpha_f(p)=\infty$. Then by definition of $\alpha_f$ the set 
$\{ \deg(f^n, q): q\in S^2, \, n\in \N, f^n(q)=p\}$ is unbounded. In particular, there exist
$q\in \N$ and  $n\in \N$ with $f^n(q)=p$ such that 
$\deg(f^n, q)>M$, where 
$$M \coloneqq  \prod_{c\in \crit(f)}\deg(f, c)\in \N. $$
Let $q_k=f^k(q)$ for $k=0, \dots, n-1$.
Then  
$$M< \deg(f^n, q)= \prod_{k=0}^{n-1} \deg(f, q_{k}). $$
Since  $\deg(f, q_{k})>1$ only if $q_k$ is a critical point and since $ \deg(f^n, q)> M$, there exists 
a critical point $c$ of $f$ that appears at least twice in the list $q_0, \dots , q_{n-1}$. Then $c$ is periodic and $p$ belongs to the orbit of $c$. Hence $p$ is an element of a critical cycle of $f$. 
\end{proof}

We can now define the orbifold of a Thurston map from the  point of view explained in the beginning of this  section. 
  
  \begin{definition}[Orbifold of a Thurston map]
  \label{def:orbifold_f}
  Let $f\colon S^2 \to S^2$ be a Thurston map. The
  \emph{orbifold}\index{orbifold}\index{Thurston map!orbifold of}\index{orbifold!of Thurston map}\index{O f@$\OC_f$} 
  associated with $f$ is the pair 
  $\OC_f\coloneqq  (S^2, \alpha_f)$, where $\alpha_f\colon S^2 \to \widehat \N$ is the ramification
  function of $f$.
  \end{definition}
   If $\mathcal{O}=(S^2, \alpha)$ is an orbifold, then 
  the 
\emph{Euler characteristic}\index{orbifold!Euler characteristic}\index{Euler characteristic!of orbifold}
of  $\OC$ is defined as
  \begin{equation}\label{def: Eulerchar}  
    \chi(\OC)= 2 - \sum_{p\in S^2} \left(1 -
      \frac{1}{\alpha(p)}\right). 
  \end{equation}
 Here and elsewhere we use the convention that $a/\infty=0$ for $a\in \N$. The sum in \eqref{def: Eulerchar} is really a finite sum, where only the points in $\supp(\alpha)$ give a  non-zero contribution, but it is convenient to write 
it  (and similar sums below) in this form.  A geometric interpretation of $ \chi(\OC)$ is given in Section~\ref{sec:orbifolds-coverings}.

   We call $\OC$ 
\emph{parabolic}\index{orbifold!parabolic|textbf}\index{parabolic!orbifold|textbf}\index{Thurston map!parabolic} 
if  $\chi(\OC)=0$, and 
\emph{hyperbolic}\index{orbifold!hyperbolic|textbf}\index{hyperbolic!orbifold|textbf}\index{Thurston map!hyperbolic} 
if $\chi(\OC)<0$.  The orbifold $\OC_f$ of a Thurston map    $f$  is always parabolic or hyperbolic.
 In order to show this,  we first need a lemma.
 
 \begin{lemma}\label{lem:orbcov} Let $\alpha, \alpha'\:S^2 \ra \widehat {\N}$ be ramification functions on a $2$-sphere $S^2$, and let $f\: S^2\ra S^2$ be a branched covering map satisfying $\deg_f(p)\cdot \alpha'(p)=\alpha(f(p))$
 for all $p\in S^2$. Then for the orbifolds $\mathcal{O}'=(S^2, \alpha')$ and $\mathcal{O}=(S^2, \alpha)$  we have $\chi(\mathcal{O}')=\deg(f)\cdot\chi(\mathcal{O})$.  
\end{lemma} 

This statement 
is an orbifold version of the  
identity 
$\chi(X)=\deg(f) \cdot  \chi(Y)$ for covering maps $f\: X\ra Y$ between surfaces (or more general spaces) $X$ and $Y$.  For compact surfaces this is a special case of the Riemann-Hurwitz formula 
\eqref{eq:Riemann-Hurwitz}.

\begin{proof} 

Let $d\coloneqq  \deg(f)$. 
Then for each point $q\in S^2$ we have 
$$\sum_{p\in f^{-1}(q)}\deg_f(p)=d. $$ 
So  the Riemann-Hurwitz formula
\eqref{eq:Riemann-Hurwitz} implies that  
\begin{align*} 2-\chi(\mathcal{O}') &=\sum_{p\in S^2}
 \left(1 -\frac{1}{\alpha'(p)}\right) =  \sum_{p\in S^2} \left(1 - \frac{\deg_f(p)} {\alpha(f(p))}\right) \\
 &= \sum_{p\in S^2} (1-\deg_f(p))+ \sum_{p\in S^2} \left(\deg_f(p) - \frac{\deg_f(p)} {\alpha(f(p))}\right) \\ 
 &= 2-2d+\sum_{q\in S^2} \sum_{p\in f^{-1}(q)} \left(\deg_f(p) - \frac{\deg_f(p)} {\alpha(f(p))}\right)\\
&= 2-2d+ d \sum_{q\in S^2} \left(1 - \frac{1}{\alpha(q)}\right)\\
      &= 2-2d+d (2-\chi(\mathcal{O})) = 2-d \chi(\mathcal{O}). 
         \end{align*}
         Note that each sum in this computation actually  has 
         only finitely many non-zero terms. 
         The claim follows. 
\end{proof}

\begin{prop} 
  \label{prop:chiO_f_nonneg}
  Let $f\: S^2\ra S^2$ be a Thurston map,  and $\alpha_f$ be its associated
  ramification  function. Then  
  \begin{equation}\label{eq:parahyp}
   \chi(\mathcal{O}_f)=2- \sum_{p\in S^2} \biggl(1-\frac{1}{\alpha_f(p)}\biggr) \le 0.
  \end{equation}
 Here we have equality if and only if $\deg_f(p)\cdot \alpha_f(p)=\alpha_f(f(p))$ for all $p\in S^2$. 
\end{prop}

\begin{proof} 
For $p\in S^2$ we define 
$$ \alpha'(p)=
\left\{ \begin{array} {cl} \alpha_f(f(p))/\deg_f(p) &\text{if   $\alpha_f(f(p))<\infty$,} \\
\infty&
 \text{if   $\alpha_f(f(p))=\infty$.}  \end{array}
 \right.
 $$
 By Proposition~\ref{prop:weightf} the function $\alpha'$ takes values in
 $\widehat {\N}$ and we have $\alpha'\ge \alpha_f$. Moreover, 
 $\{p\in S^2:  \alpha'(p)\ge 2\} \sub f^{-1}(\post(f))$ is a finite set, and so $\alpha'$ is a ramification function on $S^2$. It satisfies 
 $\deg_f(p)\cdot \alpha'(p)= \alpha_f(f(p))$ for all $p\in S^2$. 
 Hence by Lemma~\ref{lem:orbcov} we have $\chi(\mathcal{O}')=d \chi(\mathcal{O}_f)$, where $d=\deg(f)\ge 2$ and $\mathcal{O}'=(S^2, \alpha')$. 
 
 On the other hand, the fact that $\alpha'\ge \alpha_f$ and the definition of the Euler characteristic of an orbifold imply that $\chi(\mathcal{O}')\le \chi(\mathcal{O}_f)$. Hence 
 $$(d-1)\chi(\mathcal{O}_f)= \chi(\mathcal{O}')-\chi(\mathcal{O}_f)\le 0. $$
 We conclude that   $\chi(\mathcal{O}_f)\le 0$. Here we have equality if and only if 
 $\chi(\mathcal{O}')=\chi(\mathcal{O}_f)$ which in view of $\alpha'\ge \alpha_f$  is in turn  equivalent to $\alpha'=\alpha_f$.  This last condition is the same as the requirement that  
 $\deg_f(p)\cdot \alpha_f(p)=\alpha_f(f(p))$ for all $p\in S^2$. The statement follows.  
 \end{proof}

By the previous proposition  the orbifold of a Thurston map $f\: S^2\ra S^2$  is parabolic or hyperbolic. 

Let $\mathcal{O}= (S^2, \alpha)$ be an orbifold. If we  label the finitely many points $p_1, \dots, p_k$ 
in $\supp(\alpha)$ so that 
$2\le \alpha(p_1)\le \dots \le \alpha(p_k)$, then the $k$-tuple $$(\alpha(p_1), \dots,  \alpha(p_k))$$ is called the \emph{signature}\index{orbifold!signature}\index{signature} of
  $\OC.$ The 
{\em signature of a Thurston map}\index{Thurston map!signature} 
  $f\: S^2\ra S^2$ is the signature of its orbifold 
  $\mathcal{O}_f=(S^2, \alpha_f)$. 
   Note  that  in this case the  support of the ramification
   function $\alpha_f$ consists  precisely of the points in
   $\post(f)$ (see  Proposition~\ref{prop:otherramprops}), and so
   the signature of $f$ is determined by the restriction of
   $\alpha_f$ to $\post(f)$.  
        
   The Latt\`{e}s  map $g$ from Section~\ref{sec:Lattes} has
  signature 
$(2,2,2,2)$, and accordingly its associated  orbifold $\OC_g$ is parabolic.

We  record the following immediate consequence 
of Proposition~\ref{eq:parahyp}. 
\begin{cor}
  \label{cor:post012}
 If $f\colon S^2\to S^2$ is a Thurston map, then $\#\post(f) \geq
 2$.  
 Moreover,  $\#\post(f) =2$ if and only if
  the signature of $f$ is $(\infty,\infty)$.
\end{cor}

\begin{proof}
 If   $\#\post(f) \in \{0,1\}$,  then for the  orbifold $\mathcal{O}_f$ of $f$ we have  $\chi(\mathcal{O}_f) >0$ by \eqref{def:
    Eulerchar} and 
    Proposition~\ref{prop:otherramprops}~\ref{item:rami_postf}. This  contradicts
  Proposition~\ref{prop:chiO_f_nonneg}. 
  
  Similarly, since $\chi(\mathcal{O}_f)\leq 0$, we have  $\#\post(f)
  =2$   if and only if the
  signature of $f$ is $(\infty,\infty)$. 
\end{proof}

We will see later that when $\#\post(f)=2$, the map $f$ is
 Thurston equivalent to the map given by $z\mapsto z^n$ on
$\CDach$, where $n\in \Z\setminus\{-1,0,1\}$ 
  (Proposition~\ref{prop:post2};  see also
  Lemma~\ref{lem:postf-2}).

Parabolicity of the orbifold of a Thurston map admits  various cha\-racterizations. 

\begin{prop}[Thurston maps with parabolic orbifold]
  \label{prop:parabolicOf}  
  \index{orbifold!parabolic} 
  \index{Thurston map!parabolic}
  \index{parabolic!orbifold}


  Let $f\colon S^2 \to S^2$ be a Thurston map. Then the  following
  conditions are equivalent:
  \begin{enumerate}
  \item 
    \label{item:Of_para1}
    $\OC_f$ is parabolic. 
    
  \item
    \label{item:Of_para2}
    The signature of $\OC_f$ is 
    $$(\infty,\infty),\, (2,2,
    \infty),\, (2,2,2,2),\, (2,4,4),\,(3,3,3),\,  \text{or } (2,3,6). $$ 
    
  \item
    \label{item:Of_para3}
    The ramification function $\alpha_f\colon S^2 \to \widehat \N$ satisfies  
    $$\deg_f(p)\cdot \alpha_f(p)=\alpha_f(f(p))$$
    for all   $p\in S^2$.   \end{enumerate}
\end{prop}

Rational Thurston maps with parabolic orbifolds are
investigated in Chapter~\ref{cha:lattes-lattes-type} and
Section~\ref{sec:parab-thurst-polyn}. Another characterization of
Thurston maps with parabolic 
orbi\-folds 
is given in 
Lemma~\ref{lem:parabolic_characterization}.

\begin{proof} \ref{item:Of_para1} $\Leftrightarrow$ \ref{item:Of_para2} If $\OC_f$ has one of the signatures listed in \ref{item:Of_para2}, then $\chi(\OC_f)=0$, and so $\OC_f$ is parabolic. 
Conversely, if $\chi(\OC_f)=0$ then one first notes that $f$ can have at most four postcritical points.
 Exhausting  all combinatorial possibilities, we are led  to the signatures in  \ref{item:Of_para2}.

  \smallskip
 \ref{item:Of_para1} $\Leftrightarrow$   \ref{item:Of_para3}  This immediately follows from the second part of Proposition~\ref{prop:chiO_f_nonneg}. 
  \end{proof}

It is an elementary fact that  the signatures of equivalent Thurston maps are  the same. 

\begin{prop}\label{prop:Thequivsamesig}
  Two Thurston maps that are Thurston equivalent have the same
  signatures.  
\end{prop} 

\begin{proof} 
Let $f \colon S^2\ra S^2$ 
and $g\:\widehat S^2 \ra \widehat S^2$ be two Thurston maps
on $2$-spheres $S^2$ and $\widehat S^2$, and suppose they are Thurston equivalent. Then there exist homeomorphisms $h_0,h_1\: S^2 \ra \widehat S^2$ as in Definition~\ref{def:Thequiv}.  

Let $\alpha_f\: S^2\ra \widehat \N$ and $\alpha_g\: \widehat S^2\ra \widehat \N$  be the ramification 
functions of $f$ and $g$, respectively. The claim will follow if we can show that 
$\alpha_f=\alpha_g \circ h_0$. 

To establish this identity,    
define $\nu \coloneqq \alpha_g \circ h_0$. Since $\alpha_g$ is supported on $\post(g)$, $h_0|\post(f)=h_1|\post(f)$, and $h_0(\post(f))=h_1(\post(f))=\post(g)$ (see Lemma~\ref{lem:T-eq_crit_post}), we have 
$\nu =\alpha_g \circ h_0=\alpha_g\circ h_1$. 

Now let  $p\in S^2$ be  arbitrary, and define $\widehat p\coloneqq h_1(p)$. By what we have seen, 
$\nu(p)=\alpha_g(h_1(p))=\alpha_g(\widehat p\,)$; moreover, the relation $h_0\circ f=g\circ h_1$ implies that $\deg(f,p)=\deg(g, \widehat p\,)$ and $h_0(f(p))=g(\widehat p\,)$. 
Hence 
$$ \nu(p)\cdot \deg(f,p)= \alpha_g(\widehat p\,)\cdot \deg(g,\widehat p\,)
$$ 
divides 
$$ \alpha_g(g(\widehat p\,))=  \alpha_g(h_0(f(p)))= \nu(f(p)).$$
 Proposition~\ref{prop:weightf}~\ref{item:weight2} implies  that $\alpha_f$ divides $\nu =\alpha_g \circ h_0$. If we reverse the roles of 
$f$ and $g$, then a similar argument shows that $ \alpha_g|\alpha_f\circ h_0^{-1}$, or equivalently, $\alpha_g\circ h_0| \alpha_f$. So    $\alpha_f= \alpha_g\circ h_0$ as desired.
\end{proof} 

The ramification function and hence  the signature of a Thurston map do not change if we pass to any of its iterates.

\begin{prop}
  \label{prop:itsamesig}
  \index{Thurston map!iterate of}
  \index{iterate of Thurston map}
  \index{F f@$F=f^n$}
Let $f\: S^2 \ra S^2$ be a Thurston map. Then $\alpha_f=\alpha_{f^n}$ for each $n\in \N$. 
\end{prop} 

\begin{proof} Fix $n\in \N$, and let $F=f^n$. If $p\in S^2$, $k\in \N$,  and $q\in F^{-k}(p)$, then $p=F^k(q)=f^{nk}(q)$, and so 
$$\deg(F^k, q)=\deg(f^{nk}, q)|\alpha_f(p) $$
by definition of $\alpha_f$ (see Definition~\ref{def:weightf}). Since this is true for all $k\in \N$ and $q\in F^{-k}(p)$, this in turn implies $\alpha_F(p)|\alpha_f(p)$ by definition of $\alpha_F$;  so $\alpha_F|\alpha_f$. 

On the other hand, suppose  $p\in S^2$, and let $k\in \N$ and $q\in f^{-k}(p)$  be arbitrary. 
Then there exist $l,m\in \N$ such that $F^m=f^k\circ f^l$. We can find a point $q'\in S^2$ such that 
$f^l(q')=q$. Then 
$$ \deg(F^m, q')=\deg(f^k, q)\cdot \deg(f^l, q'), $$   
and so $\deg(f^k, q)| \deg(F^m, q')$. Since $F^m(q')=f^k(q)=p$, we have $\deg(F^m, q')|\alpha_F(p)$ by definition of $\alpha_F$, which  implies  $\deg(f^k, q)| \alpha_F(p)$. Since 
$k\in \N$ and $q\in f^{-k}(p)$ were arbitrary, we have  $\alpha_f(p) |\alpha_F(p)$ by definition of $\alpha_f$. We conclude that $\alpha_f|\alpha_F$;  but    we have seen above that $\alpha_F|\alpha_f$, and so  $\alpha_f=\alpha_F$ as desired. \end{proof} 

We finish  this section with a brief discussion of the {\em canonical orbifold metric} associated with  an orbifold $\mathcal{O}=(\CDach, \alpha)$ whose underlying surface is the Riemann sphere
$\CDach$. Here we assume that 
$\mathcal{O}$ is parabolic or hyperbolic, the only cases relevant for Thurston maps. 
With  the ramification function  $\alpha$ understood, we use the notation 
$$\CDach_0\coloneqq \CDach\setminus \{ z\in \CDach: \alpha(z)=\infty\}. $$ 
So  $\CDach_0$ is the Riemann sphere with each  
{\em puncture}\index{puncture}\index{orbifold!puncture of}     
of the orbifold $\mathcal{O}=(\CDach, \alpha)$ (i.e.,  a point   $p\in\CDach$ with $\alpha(p)=\infty$) removed. 

We set $X=\C$ or $X=\D$ depending on whether  $\mathcal{O}$ is parabolic or hyperbolic.
Then there exists a holomorphic branched covering map $\Theta\: X\ra \Cdach_0$ 
such that 
$$ \deg(\Theta, z)=\alpha(\Theta(z))$$ 
for each $z\in X$. The map $\Theta$ is unique up to a precomposition with a biholomorphism of $X$ and called the {\em  universal covering map} of the orbifold $\mathcal{O}$. 
These facts are discussed in detail in Section~\ref{sec:orbifolds-coverings}. 

We equip $X$ with its natural metric $d_0$, namely the Euclidean
metric if $X=\C$ and the hyperbolic metric if $X=\D$. Then one
can show that 
there exists a  metric $\om$ on $\CDach_0$, 
called
the {\em canonical  orbifold metric}\index{canonical orbifold!metric}\index{metric!canonical orbifold}\index{o@$\omega$}\index{orbifold!canonical metric}\index{push-forward!of metric!by orbifold covering map}
 of $\mathcal{O}$, which is given by   
$$ \om(p,q)=\inf\{d_0(z,w): z\in \Theta^{-1}(p), \, w\in \Theta^{-1}(q)\}$$
for $p,q\in \CDach_0$. Since the  universal covering map $\Theta$ of $\mathcal{O}$ is essentially 
unique, the metric $\om$ is uniquely determined if $\mathcal{O}$ is hyperbolic and 
uniquely determined up to a scaling factor if $\mathcal{O}$ is parabolic. Note that this conclusion strongly relies on holomorphicity of the maps involved; namely, it follows  from the fact that a biholomorphism of $\D$ preserves the hyperbolic metric and that a  biholomorphism of $\C$
is a Euclidean similarity and so scales Euclidean distances by a fixed factor.

 If we equip $X$ with the metric $d_0$ and $\CDach_0$ with the
 metric $\om$, then $\Theta$ is a 
path isometry\index{path!isometry}
in the sense that 
$$ \length_{\om}(\Theta\circ \beta)=\length_{d_0}(\beta)$$ 
for all paths $\beta$ in $X$. This property characterizes the metric $\om$; so roughly speaking, one can say that the canonical orbifold metric $\om$ of $\mathcal{O}$ is obtained by pushing forward the natural metric $d_0$ on $X$ by the   universal covering map $\Theta$ to $\CDach_0$.

 A point $p\in \CDach$ with $2\le\alpha(p)<\infty$ is called a
 {\em conical singularity} or 
{\em cone point}\index{cone!point}\index{cone!point}\index{orbifold!cone point of}\index{conical singularity}\index{singularity!conical} 
of the orbifold $(\CDach, \alpha)$. At such a point, $(\CDach_0, \om)$ is locally isometric to a  (Euclidean or hyperbolic) cone with cone angle $2\pi/\alpha(p)$ at $p$. At all other points,  $(\CDach_0, \om)$ is locally isometric to the model space $X$. 

If $f\: \CDach \ra \CDach$ is a rational Thurston map, then its orbifold $\mathcal{O}_f=(\Cdach, \alpha_f)$ is parabolic or hyperbolic and so the preceding discussion applies. We call the metric $\om=\om_f$ for the orbifold $\mathcal{O}_f$, the {\em canonical orbifold metric}
of $f$. Note that in the parabolic case, it is only unique up to scaling, but often this ambiguity does not matter. Since the essential uniqueness of $\om$ strongly relies on the holomorphicity assumption, one cannot define a similar canonical metric for a general Thurston map $f\: S^2\ra S^2$ defined on a topological $2$-sphere $S^2$ with no conformal  structure (note though that sometimes 
it is useful to identify $S^2$ with $\CDach$ and pick a
suitable orbifold metric on $\CDach$; see the proof of Proposition~\ref{prop:expLattType}, for example). 

 For a detailed discussion of the universal orbifold metric (and also the definition of a natural associated measure, the {\em canonical orbifold measure}), see Section~\ref{sec:expratThmaps}. 

\section{Thurston's characterization of rational maps}
\label{sec:thurst-class-rati}

Thurston's  criterion when a Thurston map is 
equivalent to a rational map is an important theorem in complex dynamics.
To formulate this statement, we need  some definitions.

\pagebreak

\renewcommand{\labelitemi}{$\ast$}
\begin{definition}[Invariant multicurves]
  \label{def:multicurve}
  Let $f\colon S^2\to S^2$ be a Thurston map.
  
  \begin{enumerate}
  
  \item 
    \label{item:multi_peri}
    A Jordan curve   $\gamma\subset S^2\setminus\post(f)$ is called 
\emph{non-peripheral}\index{multicurve!non-peripheral}\index{non-peripheral}
    if each
    of the two components of $S^2\setminus \gamma$ contains at least two points
    from $\post(f)$, and is called  
    \emph{peripheral}\index{peripheral}
    otherwise.
  
  \item 
    \label{item:def_multicurve}
    A 
    \emph{multicurve}\index{multicurve}
     is a non-empty finite set of non-peripheral Jordan curves in
    $S^2\setminus \post(f)$ that are pairwise disjoint and
    pairwise non-isotopic rel.~$\post(f)$.

  \item 
    \label{item:def_inv_multi}
    A multicurve $\Gamma$ is called 
    {\em $f$-invariant}\index{multicurve!invariant}\index{invariant!multicurve} 
    (or simply {\em invariant} if $f$ is understood)
    if each non-peripheral component of the preimage
    $f^{-1}(\gamma)$ of a curve $\ga\in \Gamma$ is isotopic rel.\
    $\post(f)$ to a curve $\ga'\in \Gamma$.
  \end{enumerate}
\end{definition}

Note that if $\ga \sub S^2$ is a Jordan curve, then by the
Sch\"onflies theorem $S^2\setminus \ga$ has precisely two
components, each of which is a topological disk.
 
If $\#\post(f)\le 3$ every Jordan curve in $S^2\setminus \post(f)$
is peripheral; so there are no multicurves in this case.  If
$\#\post(f)=4$ then every multicurve consists of a single Jordan
curve.

Recall (see Section~\ref{sec:thurston-equivalence}) that we call 
that two Jordan curves
$ \ga,\ga' \sub S^2$   
isotopic rel.~$\post(f)$ if there exists an isotopy
$H\: S^2\times I\ra S^2$ rel.\ $\post(f)$ such that
$H_0=\id_{S^2}$ and $H_1(\ga)=\ga'$. 
In \ref{item:def_inv_multi} we  implicitly used
that if $\ga \sub S^2\setminus \post(f)$ is a Jordan curve, then
each component $\sigma$ of $f^{-1}(\ga)$ is also a Jordan curve
in $S^2\setminus \post(f)$. Essentially, this follows from the
fact that a suitable branch of $f^{-1}$ gives a local
homeomorphism of $\ga$ onto $\sigma$.

Two Jordan curves $\gamma, \gamma'\sub  S^2\setminus \post(f)$
are isotopic rel.\ $\post(f)$ if and only if $\gamma$ and $\gamma'$ are
homotopic in $S^2\setminus \post(f)$. This means that there is
a homotopy $K\colon S^2\setminus\post(f) \times I \to
S^2\setminus\post(f)$ such that $K_0=\id_{S^2\setminus \post(f)}$
and $K_1(\gamma)= \gamma'$ (see \cite{Ep66} for a proof of this
fact).

Suppose  $\Gamma=\{\gamma_1, \dots , \gamma_n\}$ is  an invariant multicurve
for a given  Thurston map $f\: S^2 \ra S^2$. 
 Then  one can  associate an $(n\times n)$-matrix with $f$ and $\Gamma$ as follows. 
Fix $i,j\in \{1,\dots , n\}$, and let $\sigma_1,
\dots , \sigma_k$ be the components of $f^{-1}(\gamma_j)$ that are
isotopic to $\gamma_i$ rel.\ $\post(f)$ (here $k=k(i,j)\in \N_0$). Then each  set $\sigma_l$ is a Jordan curve in $S^2\setminus \post(f)$, and $f|\sigma_l$ is a covering map 
of $\sigma_l$ onto  $\gamma_j$. Let
\begin{equation*}
  d_{i,j,l}\coloneqq  \deg(f|\sigma_l)
\end{equation*}
be the (unsigned) topological degree of this map (in this case, this is just the number of preimages of each point $p\in \gamma_j$ under the map $f|\sigma_l$).   Then the 
\emph{Thurston matrix}
\index{Thurston!matrix}  
$A= A(f,\Gamma)= (a_{ij})$ is the matrix  with non-negative entries
\begin{equation*}
  a_{ij} = \sum_{l=1}^{k(i,j)} \frac{1}{d_{i,j,l}}
\end{equation*}
for  $i,j\in \{1,\dots, n\}$; if $k(i,j)=0$, then the sum  is interpreted as the empty sum, in which case $a_{ij}=0$.  

A \emph{Thurston obstruction}
\index{Thurston!obstruction} 
for a
Thurston map $f$ is an invariant multicurve $\Gamma$ such that the
spectral radius (which is the largest eigenvalue by the
Perron-Frobenius theorem) of the Thurston matrix $A(f, \Gamma)$
is $\geq 1$.

With these definitions Thurston's criterion can be formulated as follows.

\begin{theorem}[Thurston's characterization  of rational maps]
  \label{thm:Thurston}
  Let $f\colon S^2\to S^2$ be a Thurston map with a hyperbolic
  orbifold. Then $f$ is Thurston equivalent to a rational map if and
  only if there exists no Thurston obstruction for $f$. 
\end{theorem}
\index{Thurston map!rational}
The proof can be found in \cite{DH}, see also \cite[Theo\-rem 10.1.14]{HuTeich2}.  
We will not  use this theorem in any essential way, and included its statement for general background and to put our work into context.  

A Thurston map $f$  with a parabolic orbifold is  not covered  by Theorem~\ref{thm:Thurston}. In this case, the map   has at most four postcritical points. If $\#\post(f) \le 3$, then $f$ is 
always equivalent to a rational map (see  Proposition~\ref{prop:post2} and Theorem~\ref{thm:3postrat}~\ref{item:3post_rat}). If $\#\post(f) =4$ and $f$ has a  parabolic orbifold, then the signature  of $f$ is $(2,2,2,2)$. A criterion when such a map is  equivalent  to a rational map can be derived  
from Proposition~\ref{prop:noperparaLTM} in combination with 
Theorem~\ref{thm:Thurston_para}. 

In Theorem~\ref{thm:S2vsf}~\ref{item:S2qsphere} we give a criterion for an expanding Thurston
map to be topologically conjugate to a rational map. The proof does not use Thurston's theorem.

 \begin{figure}
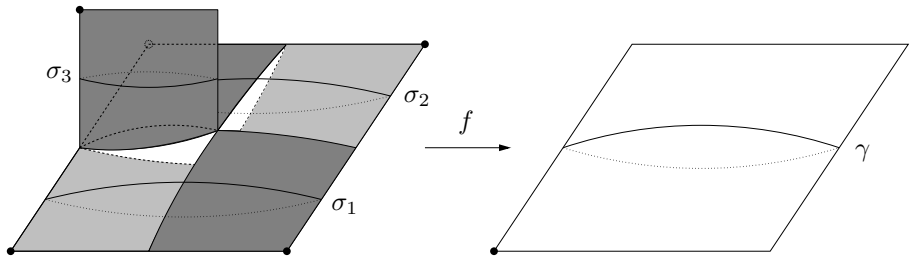

  \centering
  \begin{overpic}
    [width=12cm, 
    tics=10]{1flap_map}
    \put(50,14){${f}$}
    \put(94,11){$\gamma$}
    \put(36,5){$\sigma_1$}
    \put(44,17){$\sigma_2$}
    \put(4.4,19.6){$\sigma_3$}
  \end{overpic}
  \caption{An obstructed map.}
  \label{fig:obstr_map}
\end{figure}

\begin{ex}\label{ex:obstructedThmap}
To illustrate Theorem~\ref{thm:Thurston} and the concepts we introduced for its formulation,  we consider the Thurston map $f=h$ constructed in 
Section~\ref{sec:int-frac-sph}.    

 Recall that
$\#\post(f)=4$, and that the postcritical points of $f$ are given by the vertices  of the pillow on the right in Figure~\ref{fig:obstr_map}.  The signature of the orbifold $\mathcal{O}_f$ of $f$ is $(2,6,6,6)$, and  so $\mathcal{O}_f$ is hyperbolic. 

Let $\gamma$ be the Jordan 
curve indicated on the 
right in  Figure~\ref{fig:obstr_map}. The preimage $f^{-1}(\ga)$ of $\ga$ 
has three components $\sigma_1$, $\sigma_2$, $\sigma_3$ indicated on  the
left in Figure~\ref{fig:obstr_map}. The Jordan curves $\sigma_1$ and 
$\sigma_2$ are non-peripheral, while $\sigma_3$ is peripheral.  Both curves
$\sigma_1$ and $\sigma_2$ are isotopic to $\gamma$ rel.\ $\post(f)$. Thus
$\Gamma=\{\gamma\}$ is an invariant multicurve.

The degree of 
$f|\sigma_l\colon \sigma_{l} \ra \gamma$ is $2$ for $l=1,2$;  so  the
Thurston matrix $A(f,\Gamma)$, which is a $1\times 1$-matrix, has the  single entry  $1/2+1/2=1$.

It follows that the spectral radius of  $A(f,\Gamma)$ is equal to $1$. Hence $\Gamma$ is a  Thurston obstruction for   $f$,  and $f$ is not
Thurston equivalent to a rational map by Thurston's criterion. 
\end{ex}  

Theorem~\ref{thm:Thurston} can be interpreted as a condition  for the existence of a Thurston equivalence. It is complemented by the following statement which is essentially a 
 uniqueness statement  for Thurston equivalences.
 
\begin{theorem}[Thurston's uniqueness theorem]
  \label{thm:Thurstonuniqness}
  Let $f,g\: \CDach \ra \CDach$ be two rational Thurston maps with  hyperbolic orbifolds, and suppose 
  that there are two orientation-preserving homeomorphisms $h_0, h_1\: \CDach \ra \CDach$ that are isotopic rel.~$\post(f)$ and satisfy 
  $h_0\circ f=g\circ h_1$. Then there exists a conformal homeomorphism    
  $\varphi\: \CDach \ra \CDach$ that is isotopic to $h_0$ and $h_1$ 
  rel.~$\post(f)$   and satisfies $\varphi\circ f= g\circ \varphi$.  
  \end{theorem}
A conformal homeomorphism on $\CDach$ is of course a M\"obius transformation. So in particular,  if two rational Thurston maps with hyperbolic orbifolds  are 
orientation-preserving Thurston equivalent (see the discussion after Definition~\ref{def:Thequiv}), then they are conjugate by a M\"obius transformation.

Theorem~\ref{thm:Thurstonuniqness} is contained in  \cite{DH}; there it is not formulated explicitly, 
but it can be easily derived from the considerations in this paper.

\ifthenelse{\boolean{singlechapter}}{

%


\chapter{Latt\`{e}s maps}
\label{cha:lattes-lattes-type}

 A \emph{Latt\`{e}s map}\index{Latt\`{e}s map} is a
rational Thurston map that is        
expanding and has a parabolic orbifold. There are  other equivalent ways to characterize these maps. For example,   a Latt\`{e}s map is a quotient of a
holomorphic automorphism of the
complex plane by the action of a crystallographic group or a quotient of a holomorphic endomorphism on  a complex
torus. We will explain this more precisely below.  

These maps play a special role in the theory. On the one hand, they are
very easy to construct and visualize, and  provide a convenient class of
examples (one was given in Section~\ref{sec:Lattes}).
 On the other hand, they often show exceptional behavior compared 
to generic rational
Thurston maps (that are expanding). This  is already apparent in Thurston's
characterization of rational maps (Theorem~\ref{thm:Thurston}). In general, Latt\`es maps are distinguished  among  typical   rational Thurston maps in terms 
of metric geometry
(Theorem~\ref{thm:S2vsf}~\ref{item:S2Lattes}), by their
measure-theoretic properties
(Theorem~\ref{thm:abscontimpLattes}), 
or by their 
``combinatorial expansion rate'' (Theorem~\ref{thm:Qianthm}). 
These statements  are among the main results of this work and so 
we will take a closer look at these maps. 
  We will also define the related class of 
\emph{Latt\`{e}s-type} maps. These are
Thurston maps with a parabolic orbifold and no periodic critical
points, but they are not necessarily (equivalent to) rational maps. 

Some aspects of a thorough treatment  of the underlying theory are
rather technical. As we do not want to overburden the reader with details at this point,  we will rely on various results that are more fully developed in the appendix. 

To motivate our definition  of Latt\`es maps in terms  of three equivalent conditions, we will now consider a specific example. For precise definitions of the 
 terminology in the ensuing discussion we refer to the beginning of Section~\ref{sec:cryst-groups-latt}. 
 
  Let $f$ be the map from Section~\ref{sec:Lattes} (there denoted by $g$). Then   $f$ is a rational  Thurston 
 map that is expanding, or equivalently, has no periodic critical points. Its orbifold has signature $(2,2,2,2)$, and is hence parabolic.

The map $f$ is   a quotient of the automorphism 
$A\colon \C\to \C$, $z\mapsto A(z)=2z$,  on $\C$ 
 by a holomorphic map
$\Theta\: \C \ra \CDach$ in the sense that the  following  diagram commutes:
\begin{equation}
  \label{eq:def_lattes2}
  \xymatrix{
    \C \ar[r]^{A}\ar[d]_{\Theta} & \C\ar[d]^{\Theta}
    \\
    \CDach \ar[r]^{f} & \CDach\rlap{.}
    }
\end{equation}
  Here $\Theta$ is essentially  a Weierstrass $\wp$-function  for the lattice $\Gamma= \Z\oplus \Z\iu$
  (see Section~\ref{sec:paraorbcov} for the definition of $\wp$ and a related discussion).  Note that 
 $\Theta(z) =\Theta(w)$ for  $z,w\in \C$ if and only if
$w= \pm z + m + n\iu$ with  $m,n\in \Z$. This last condition can most conveniently be expressed in terms of an action of a {\em crystallographic group} of orientation-preserving isometries on $\C$ (see Section~\ref{sec:cryst-groups-latt}).

Indeed, let $G$ be the group of all 
maps $g\colon\C\to \C$ of the form
\begin{equation*}
  g(z) = \pm z + m + n\iu,
\end{equation*}
where  $m,n\in \Z$. Then  $G$ is  a 
 crystallographic group and the map  $\Theta$ is 
{\em induced}\index{map!induced by group action}\index{induced by group action} 
by $G$ in the sense that 
$\Theta(z) =\Theta(w)$ for  $z,w\in \C$ if and only if there exists $g\in G$ such that 
$w= g(z)$ (related concepts and facts are discussed in more detail in Section~\ref{sec:appquotmaps}).  This implies that 
the  quotient $\C/G$ can be identified with $\CDach$, and the map $\Theta$ with the quotient map $\C\ra \C/G$ (see 
Corollary~\ref{cor:groupquot}).   

The property that allows us to pass to a quotient $f$ in \eqref{eq:def_lattes2}
 is that the map  
 $z\mapsto A(z)=2z$ is   
\emph{$G$-equivariant} 
(see Lemma~\ref{lem:f_groupdescend}). This means 
 that $A$ maps  points that are in the same $G$-orbit to
points that are also in the same $G$-orbit, or equivalently that
\index{G-equivariant@$G$-equivariant}  
\index{equivariant}
\index{group action!map equivariant under}
\begin{equation}
  \label{eq:equivariant}
  A \circ g \circ A^{-1}\in G \text{ for all } g\in G. 
\end{equation}

The translations in $G$ form a subgroup $\Gtr$ 
 isomorphic (as a group)
 to $\Z^{2}\cong\Z\oplus \Z\iu$. 
 The
quotient $\C/\Gtr$ is naturally a {\em complex torus} $\T$, i.e., a Riemann
surface whose underlying $2$-manifold is a $2$-dimensional torus (for more on tori see Section~\ref{sec:applifttorend}). The
maps $A\colon \C\to \C$ and  $\Theta\colon \C \to \CDach$ descend to
$\T$, and we obtain  
 holomorphic maps $\overline{A} \colon \T \to \T$ and 
$\overline{\Theta} \colon \T \to \CDach$ such that $f\circ
\overline{\Theta} = \overline{\Theta} \circ \overline{A}$. So we have the  following commutative  diagram:
\begin{equation}
  \label{eq:def_lattes3}
  \xymatrix{
    \T\ar[r]^{\overline{A}} \ar[d]_{\overline{\Theta}} 
    & \T \ar[d]^{\overline{\Theta}}
    \\
    \CDach \ar[r]^{f} & \CDach\rlap{.}
  }
\end{equation}

We call a non-constant  holomorphic map $\overline A\: \T \ra \T$  on a complex torus $\T$ a {\em holomorphic torus endomorphism}. 
The Riemann-Hurwitz formula \eqref{eq:Riemann-Hurwitz}
implies that such a map $\overline{A}$ has no critical  points and is hence a 
 covering map (in the usual topological sense; see Section~\ref{sec:covmaps}).  
\index{torus!endomorphism!holomorphic}
\index{holomorphic torus endomorphism}

The relations  of our   example $f$ to crystallographic groups or to holomorphic torus endomorphisms hold for a more general class of rational maps, called {\em Latt\`es maps},
as the following statement shows.

\begin{theorem}[Characterization of Latt\`es maps]\label{thm:Lattesstruc}  Let $f\: \CDach \ra \CDach$ be
  a map. Then the following conditions are equivalent:  

\begin{enumerate}
 
 \item \label{item:Lattessrucii} $f$ is a rational Thurston map that
   has a parabolic orbifold and no  periodic critical points. 
  \index{parabolic!orbifold} 
  \index{orbifold!parabolic} 
  \index{Thurston map!parabolic}
  \index{Thurston map!rational}
    
 \item \label{item:Lattessruciii} There exists a crystallographic
   group $G$, 
   a $G$-equivariant  holomorphic  map $A\: \C\ra \C$ of the form 
   $A(z)=\alpha z+\beta$, where $\alpha, \beta\in \C$, $\abs{\alpha}>
   1$, and a holomorphic map $\Theta\: \C\ra \CDach$ induced by 
   $G$ such that $f\circ \Theta=\Theta\circ A$. 
   
 \item 
   \label{item:Lattesruciv}
   There exists  a complex torus $\T$, a holomorphic torus endomorphism
   $\overline{A} \colon \T\to \T$ with $\deg (\overline{A}) >1$, and a  non-constant 
   holomorphic map $\overline{\Theta} \colon \T \to \CDach$ such that
   $f\circ \overline{\Theta} = \overline{\Theta} \circ \overline{A}$. 
 \end{enumerate} 
\end{theorem}

So in \ref{item:Lattessruciii} the map $f$ is given as in \eqref{eq:def_lattes2}, and in 
\ref{item:Lattesruciv} as in \eqref{eq:def_lattes3}. We will see that we have $\deg(f)=\deg(\overline A)=|\alpha|^2>1$ (Lemma~\ref{lem:deglatttype}). 

As we already indicated, the previous theorem  motivates   the following definition. 
\begin{definition} [Latt\`es maps]\label{def:Lattes} 
  A map  $f\:\CDach \ra \CDach$ is called a 
  {\em Latt\`es map}\index{Latt\`{e}s map|textbf} 
  if it satisfies one of the  conditions (and hence every condition)  in  
  Theorem~\ref{thm:Lattesstruc}.  
\end{definition} 

The terminology is not uniform in the literature and some authors use the 
term ``Latt\`es map'' with a slightly different meaning (see the discussion in Section~\ref{sec:examples-lattes-maps}). Latt\`es maps became more widely known through Latt\`es paper \cite{La}, but they had been studied  about  half a century earlier by Schroeder, for example. See \cite{Mi} for more on the history of these maps.

Theorem~\ref{thm:Lattesstruc} is well known (see, for example, \cite{Mi}). We will
prove it  in Sections~\ref{sec:cryst-groups-latt} and~\ref{sec:quot-torus-endom}. The map $A$  in \ref{item:Lattessruciii} is
subject to strong further restrictions. See
Proposition~\ref{prop:alph_beta_G} (or \cite{Mi} and
\cite[Appendix]{DH84}) for more details.

  By Proposition~\ref{prop:rationalexpch}, condition
  \ref{item:Lattessrucii} 
 can equivalently be expressed as:
  \begin{enumerate}
  \item [(i')] \textit{$f$ is a rational  Thurston map that has a parabolic
    orbi\-fold and  is expanding. } 
  \end{enumerate}
  This is how we introduced Latt\`es maps in the beginning of the chapter. 
  
  The most convenient way to construct Latt\`es maps is based on 
  condition~\ref{item:Lattessruciii} in  Theorem~\ref{thm:Lattesstruc}. One starts with a crystallographic group $G$ not isomorphic to $\Z^2$ and an $G$-equivariant map $A$ as in this statement. Then there exists a holomorphic branched covering map $\Theta\: \C \ra \CDach$ induced by $G$; it is unique up to postcomposition 
  with a 
  M\"obius transformation (see Proposition~\ref{prop:G_yields_T}).  
  The existence of a Latt\`es map $f$ as in \eqref{eq:def_lattes2} then follows from the $G$-equivariance of $A$ (see Lemma~\ref{lem:f_groupdescend}).   
  
If  $\alpha_{f}\colon \CDach \to \widehat{\N}$ is the   ramification
  function  of $f$ (see Definition~\ref{def:weightf}), then   
  $\Theta \colon \C \to \CDach$  is a holomorphic branched covering map such that 
  $\deg(\Theta, z)= \alpha_{f}(\Theta(z))$ for all $z\in \C$ 
  (see Corollary~\ref{cor:orbofLattty}). Therefore, $\Theta$ is in fact
  the 
\emph{universal orbifold covering map}\index{universal orbifold covering map}\index{orbifold!universal covering map} 
of $\OC_{f}=(\CDach, \alpha_f)$ (see
  Theorem~\ref{thm:orbunifparacas} and Section~\ref{sec:orbifolds-coverings}).

It is quite natural to consider more general maps $f$ as in  \eqref{eq:def_lattes2} or 
as in \eqref{eq:def_lattes3},  where the  maps involved are
branched covering maps, but not necessarily holomorphic. To state this more precisely, we first  recall some terminology. 

 As usual, we call a map  $A\: \R^2\ra \R^2$ {\em affine}, if it has  the form 
\begin{equation}
  \label{eq:affine} 
  A(u)=L_A(u)+u_0, \quad u\in \R^2,
\end{equation} 
 where $L_A\: \R^2\ra \R^2$ is $\R$-linear and $u_0\in \R^2$. We call
 $L_A$ the {\em linear part} of $A$.

 Let $G$ be a crystallographic group  not isomorphic to
 $\Z^{2}$. Then one can show that the quotient $\R^2/G$ is homeomorphic to a $2$-sphere $S^2$ and the quotient map $\Theta\colon \R^{2} \to S^{2}\cong \R^2/G$ is  a branched covering map 
 induced by $G$. 
 %
 If, in addition, $A\colon \R^{2} \ra \R^{2}$ is an affine map
 that is $G$-equivariant and whose linear part $L_{A}$ satisfies 
 $\det (L_{A}) > 1$, then 
 there is a Thurston 
 map $f\colon S^{2}\to S^{2}$ such that the diagram
 \begin{equation}
   \label{eq:def_lattes_type}
   \xymatrix{
     \R^{2} \ar[r]^{A} \ar[d]_{\Theta} & \R^{2} \ar[d]^{\Theta}
     \\
     S^{2} \ar[r]^{f} & S^{2}
   }
 \end{equation}
 commutes (see the beginning of Section~\ref{sec:lattes-type-maps}). This is the basis of the following definition.

\begin{definition}[Latt\`es-type maps]
  \label{def:Lattestype}
  \index{Latt\`{e}s-type map} 
  Let $f\:S^2\ra S^2$ be a map
  such that there exists a crystallographic group $G$, an affine
  map $A\: \R^2\ra \R^2$ with $\det(L_A)>1$ that is
  $G$-equivariant, and a branched covering  map $\Theta\: \R^2\ra
  S^2$ induced by $G$ such that  $f\circ \Theta=\Theta\circ
  A$. Then $f$ is called a  {\em Latt\`es-type map}.
\end{definition} 
 So  Latt\`es-type maps are given 
  as in
  \eqref{eq:def_lattes_type}, where $A$ is affine. It  is clear that every Latt\`es map belongs to this class. For the  degree of a Latt\`es-type map $f$  we have  $\deg(f)=\det(L_A)$ 
(see Lemma~\ref{lem:deglatttype}). The requirement $\det(L_A)>1$ guarantees 
 the condition $\deg(f)\ge 2$ which is part of our definition of a Thurston map. One can also consider maps as in Definition~\ref{def:Lattestype} with  $\det(L_A)=1$ or 
 $\det(L_A)<0$. This gives   homeomorphisms and orientation-reversing maps, respectively.  According to our definition, no  such map  is  a Thurston map. 

Before we discuss some other properties of Latt\`es-type maps, we will first define  another   generalization of Latt\`{e}s maps based on
\eqref{eq:def_lattes3}. 
 We denote by
$T^{2}$ a $2$-dimensional torus, now considered as a purely 
topological object with no conformal structure. If  $\overline A\: T^{2} \ra T^{2}$ is  a branched
covering map, then again by the Riemann-Hurwitz formula \eqref{eq:Riemann-Hurwitz} the map
 $\overline A$ cannot have any critical points and 
must be an  orientation-preserving covering map. We call such a
map $\overline A$ a 
{\em (topological) torus endomorphism}.\index{torus!endomorphism} 
So a continuous map $\overline A\: T^{2}\ra T^{2}$ is
a torus endomorphism precisely if it is an orientation-preserving
local homeomorphism.

\begin{definition}[Quotients of  torus endomorphisms]
  \label{def:quotient_torus}
  Let $f\colon S^2\to S^2$ be a map on a $2$-sphere $S^2$
  such that there exists a torus endomorphism
  $\overline A\: T^2\ra T^2$ with $\deg(\overline A)\ge 2$, and a
  branched covering map $\overline \Theta\: T^2\ra S^2$ such that
  $f\circ \overline \Theta= \overline\Theta\circ \overline
  A$.
  Then $f$ is called a 
  {\em quotient of a torus endomorphism}.\index{quotient!of torus endomorphism}\index{torus!endomorphism!quotient of}
\end{definition}
In this case,  we have a commutative diagram of the form 
 \begin{equation}
  \label{eq:Lattesdef}
  \xymatrix{
    T^2 \ar[r]^{\overline A} \ar[d]_{\overline\Theta} &
    T^2 \ar[d]^{\overline\Theta}
    \\
    S^2 \ar[r]^f & S^2\rlap{.} 
  }
\end{equation}
Properties of quotients of torus endomorphisms are recorded in Lem\-ma~\ref{lem:simobserv}. In particular, every such map  is a Thurston map without
periodic critical points. 
Latt\`es-type maps are in this class.  

\begin{prop} \label{prop:immediate} Every Latt\`es-type map $f\:S^2\ra S^2$ is a quotient of a torus endomorphism and hence a Thurston map. It has a parabolic orbifold and no periodic critical points. 
\end{prop}

This implies that the orbifold of every  Latt\`{e}s-type map  has   one of the signatures
$(2,2,2,2)$, $(2,4,4)$, $(3,3,3)$, or $(2,3,6)$. The last three
signatures do not lead to genuinely new maps, as each Latt\`es-type
map whose orbifold has  such a signature is topologically conjugate to a Latt\`es map
 (Proposition~\ref{prop:rigidlattestype}). The
most interesting case is signature $(2,2,2,2)$. More details on
these  maps can be found in Example~\ref{ex:lattes_type} (see also 
Proposition~\ref{prop:2222} and Theorem~\ref{thm:Thurston_para}). These maps  include
\emph{flexible Latt\`{e}s maps} (see Definition~\ref{def:flex_Lattes}
and the discussion that follows there). 
The last statement in Proposition~\ref{prop:immediate} essentially  
characterizes Latt\`es-type maps among Thurston maps. 

\begin{prop}\label{prop:noperparaLTM}
Let $f\: S^2\ra S^2$ be a Thurston map.  Then $f$ is Thurston equivalent to a Latt\`es-type map if and only if $f$  has   a parabolic orbifold and no periodic critical points. 
\end{prop} 

If $f$ has a parabolic orbifold $\mathcal{O}_f$, but also  periodic critical points, then 
the signature of $\mathcal{O}_f$ is $(\infty, \infty)$ or $(2,\infty, \infty)$. 
It is easy to classify these maps up to Thurston equivalence as well (see Theorem~\ref{thm:para_Th_poly}).

Each Latt\`{e}s map is  expanding, but this is not always true for 
a  Latt\`{e}s-type map  (see
Example~\ref{ex:non-expanding-lattes}). One can state  a
simple criterion though  when this is  the case. Namely,  a Latt\`{e}s-type map
 is expanding 
if and only if the two (possibly complex) eigenvalues $\lambda_1$ and
$\lambda_2$ of the linear part $L_A$ of the affine map $A$ in \eqref{eq:def_lattes_type} satisfy
$|\lambda_1|, |\lambda_2|>1$ (Proposition~\ref{prop:expLattType}).

Every Latt\`{e}s map is a Latt\`{e}s-type map, and every
Latt\`{e}s-type map is a quotient of a torus endomorphism. 
On the other hand, not every Latt\`{e}s-type map is (conjugate to) a Latt\`{e}s map. It is a natural question whether every quotient of  
a torus endomorphism $f$ is Thurston equivalent
to a Latt\`{e}s-type map. One can show that this is true if $f$
is expanding (in this case, $f$ is even conjugate to a Latt\`{e}s-type map), but we have been unable to answer this question in full generality.  

Our presentation in this chapter is as follows. In 
Section~\ref{sec:cryst-groups-latt} we review 
crystallographic groups. We  also formulate two important existence and uniqueness statements for maps 
 related to crystallographic groups  or  parabolic orbifolds
(Proposition~\ref{prop:G_yields_T} and
Theorem~\ref{thm:orbunifparacas}), but  we postpone the proofs of
these facts to Section~\ref{sec:paraorbcov}. 
We then prove the implications \ref{item:Lattessruciii} $\Rightarrow$ \ref{item:Lattesruciv} and 
\ref{item:Lattessrucii} $\Rightarrow$ \ref{item:Lattessruciii} in
Theorem~\ref{thm:Lattesstruc}.
The final implication 
\ref{item:Lattesruciv} $\Rightarrow$ \ref{item:Lattessrucii}
is established  in Section~\ref{sec:quot-torus-endom}  after we discussed  some relevant  facts  about   quotients of torus endomorphisms.
 
In Section~\ref{sec:clas-latt-maps} we analyze  the restrictions on $\alpha$ and $\beta$ for the map 
$A(z)=\alpha z+\beta$ in Theorem~\ref{thm:Lattesstruc} in  detail. 
This is mostly for potential  future reference and can be omitted at first reading. Section~\ref{sec:lattes-type-maps} is devoted to 
 Latt\`es-type maps and their properties.  Here  we justify Proposition~\ref{prop:immediate} and 
 Proposition~\ref{prop:noperparaLTM}. The proof of this last statement is rather involved and uses some facts about mapping class groups that we will only cite from the literature, but not discuss in detail. As we will not use 
 Proposition~\ref{prop:noperparaLTM}  later, its  proof can safely be skipped.  
 
 We revisit  
crystallographic groups and parabolic orbifolds in Section~\ref{sec:paraorbcov}. Here we give proofs of  Proposition~\ref{prop:G_yields_T} and Theorem~\ref{thm:orbunifparacas}. We will  emphasize a geometric point of view. This will  help us in the discussion of some explicit Latt\`es
maps in Section~\ref{sec:examples-lattes-maps}.

\section{Crystallographic groups and Latt\`{e}s maps}
\label{sec:cryst-groups-latt}

In this section we  focus on maps as  in  statement \ref{item:Lattessruciii} of Theorem~\ref{thm:Lattesstruc}. We first review some facts  related to crystallographic groups. For a more detailed discussion  related to group actions and quotient spaces see
Section~\ref{sec:appquotmaps}. 

We use the notation 
$$\Aut(\C) =\{ z\in \C\mapsto \alpha z +\beta : \alpha,\beta\in
\C,\,  \alpha\neq 0\}$$
for\index{Aut(C)@$\Aut(\C)$} the group of all holomorphic automorphisms  of $\C$  and 
$$\Isom(\C) = \{z\in \C\mapsto \alpha z +\beta : \alpha,\beta\in
\C,\,  \abs{\alpha}=1\}\subset \Aut(\C)$$
for\index{Isom(C)@$\Isom(\C)$} the group of all
orientation-preserving isometries of $\C$ (equipped with the Euclidean
metric).

Let $G$ be a group of homeomorphisms acting on $\C$.  
If $z\in \C$, then we denote by $G_z\coloneqq \{g\in G: g(z)=z\}$
its {\em stabilizer subgroup}\index{stabilizer} 
and by $Gz\coloneqq \{g(z): g\in G\}$ its {\em orbit under $G$} or {\em $G$-orbit}.\index{orbit} 
The group $G$ induces a  natural equivalence relation on $\C$ whose 
equivalence classes are given by the $G$-orbits. 
The corresponding quotient space is denoted by $\C/G$. 
 
The group $G$ acts 
\emph{properly discontinuously}\index{properly discontinuous}\index{group action!properly discontinuous}
on $\C$ if  for each compact set $K\subset \C$ there are only
finitely many maps $g\in G$ with  $g(K) \cap K\neq \emptyset$. 
Then the stabilizer $G_z$ is finite for each $z\in \C$. 
 The group $G$ acts 
\emph{cocompactly}\index{group action!cocompact}\index{cocompact}
on $\C$ if there exists a compact set $K\sub \C$ such that the sets $g(K)$, $g\in G$, cover $\C$.  In this case, $\C/G$ is compact. 

We call  $G$  a 
{\em (planar) crystallographic group}\index{crystallographic group} if each
element $g\in G$ is an orientation-preserving isometry on 
$\C$  and if the
action of $G$ on $\C$ is properly discontinuous and cocompact. Note that this definition of a crystallographic group is more restrictive than usual, since we require that 
the isometries in $G$ are  orientation-preserving.

 We say that two crystallographic groups $G$ and $\widetilde G$ are {\em conjugate} if there exists $h\in \Aut(\C)$ such that 
$$\widetilde G= h\circ G \circ h^{-1}\coloneqq \{h\circ g\circ h^{-1}: g\in G\}. $$ 
The  following statement gives a
classification of crystallographic groups up to conjugacy.  

\begin{theorem}[Classification of crystallographic groups]
  \label{thm:G_signature}
  \index{crystallographic group}
  \index{planar crystallographic group}
  Let  $G\subset\Isom(\C)$ be a planar crystallographic 
  group. Then  $G$
  is conjugate  to one of the following groups
  $\widetilde{G}$ consisting of all $g\in \Isom(\C)$ of the form
  
   \begin{enumerate}
   \item[\upshape(torus)]
    \label{item:Gtorus} 
       $\quad \quad \quad\displaystyle z\mapsto g(z) = z + m +
    n\tau,\quad m, n\in \Z$; 

  \item[\upshape(2222)]
    \label{item:G2222}
     $\quad \quad \quad z\mapsto g(z) = \pm z + m + n\tau, \quad m,
    n\in \Z$;  
    
  \item[\upshape(244)]
    \label{item:G244}
     $\quad \quad \quad z\mapsto g(z)= \iu^kz+ m +n\iu,\quad m, n\in
    \Z, \,  k=0,1,2,3$; 
    
  \item[\upshape(333)]
    \label{item:G333}
    $\quad \quad \quad z\mapsto g(z)= \omega^{2k}z+ m + n\omega,\quad
    m,n\in \Z, \,  k=0,1,2$;  

  \item[\upshape(236)]
    \label{item:G236}
     $\quad \quad \quad z\mapsto g(z)= \omega^{k}z+ m +n\omega, \quad
    m,n\in \Z, \, k=0,\dots, 5$.
  \end{enumerate}
  
  Here $\tau\in \C$ is a fixed
   number with  $\imag(\tau)>0$ in the first two cases and $\omega= e^{\iu \pi/3}$ in the last two cases. 
\end{theorem}

This classification of (planar) crystallographic groups is
classical.  Proofs can be found in \cite{Be} and \cite{Ar}; see also
\cite{We}.  

We used Conway's orbifold notation for planar crystallographic
groups, see \cite{Con} (with one difference: Conway denotes the
(torus) case by $(\circ)$). In Theorem~\ref{thm:G_signature}
the group $G$ determines the type of the conjugate group
$\widetilde G$ uniquely.  Accordingly, we speak of a
crystallographic group $G$ of type $(2222)$, etc., if
$\widetilde G$ belongs to the corresponding class.  This
terminology is explained by the fact that the quotient space
$\C/G$ is a torus in the first case of the theorem. In the other cases,  
$\C/G$ is homeomorphic to $\CDach$, and  $G$ induces a natural ramification function
$\alpha$ on $\CDach=\C/G$ so that the orbifold
$(\CDach, \alpha)$ has a signature as indicated by the  type of $G$ (see
the discussion below and 
Section~\ref{sec:paraorbcov} for more details).

  \begin{figure}[p]
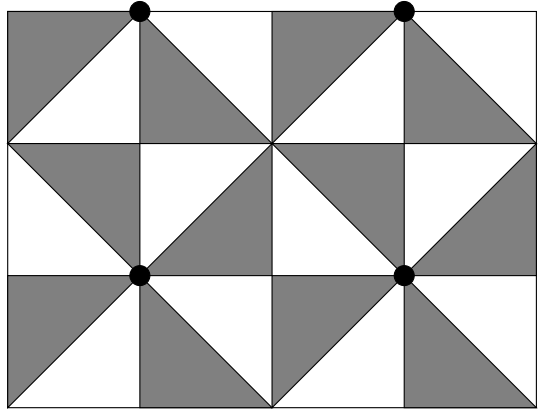

  \centering
  \begin{overpic}
    [width=7.1cm, tics=20,
    ]
    {244_univ_orb}
  \end{overpic}
  \caption{Invariant tiling for type  $(244)$.} 
  \label{fig:244_univ_orb}
\end{figure}

\begin{figure}[p]
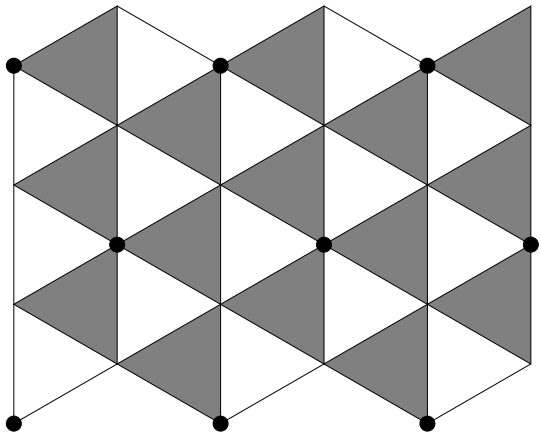

  \centering
  \begin{overpic}
    [width=7.1cm, tics=20,
    ]
    {333_univ_orb}
  \end{overpic}
  \caption{Invariant tiling for type $(333)$.} 
  \label{fig:333_univ_orb}
\end{figure}

\begin{figure}[p]
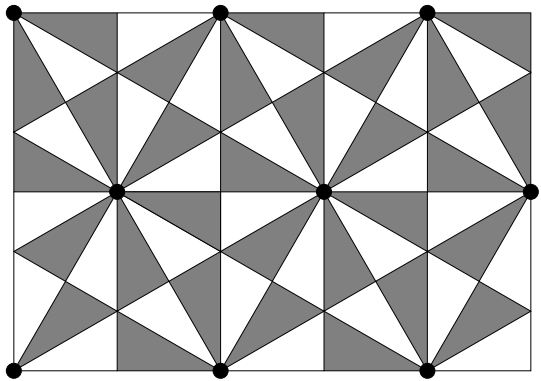

  \centering
  \begin{overpic}
    [width=7.1cm, tics=20,
    ]
    {236_univ_orb}
  \end{overpic}
  \caption{Invariant tiling for type $(236)$.} 
  \label{fig:236_univ_orb}
\end{figure}

\begin{rem}
  \label{rem:crys_semi}   A crystallographic group $G$ is isomorphic as a group  to its conjugate 
  $\widetilde{G}$. This implies that  if $G$ is of (torus) type, then $G$ is isomorphic  to  $\Z^2$. 
 In the other cases,  $G$ is
  isomorphic to a semidirect product $\Z^2\rtimes \Z_k$ of $\Z^2$ and a cyclic group 
  $\Z_k=\Z/k\Z$. Here $k=2,4,3,6$ if ${G}$ is of type
  $(2222)$, $(244)$, $(333)$, or $(236)$,  respectively. To see this, one considers 
  the isomorphic group $\widetilde G$ and identifies   $\Z^2$ with the lattice $\Gamma=\Z\oplus \Z\tau$, where
  $\tau\in \C$ satisfies $\imag (\tau)>0$ in  case $(2222)$,
  $\tau=\iu$ in  case $(244)$, and $\tau= \omega$ in 
  cases $(333)$ and $(236)$. In addition, we identify $\Z_k$ with
  the multiplicative group consisting of the  $k$-th  roots of unity. Then  each element in 
  $\Z_k$ acts by multiplication as an automorphism on $\Gamma\cong \Z^2$ and one derives the isomorphism
   $G \cong \widetilde G \cong \Z^2\rtimes \Z_k$. 
 All this  is standard and well known, and  we skip the details. 
\end{rem}

If $G$ is a crystallographic group, we denote by 
${G}_{\text{tr}}$\index{Gaa@${G}_{\text{tr}}$}
the subgroup consisting of all translations in $G$, i.e., ${G}_{\text{tr}}$ consists of all maps $g\in G$ of the form $z\mapsto g(z)=z+\ga$ with $\gamma\in \C$.  Theorem~\ref{thm:G_signature} implies that 
${G}_{\text{tr}}$ is a normal subgroup of finite index in $G$, and that 
there is a  lattice $\Gamma\sub \C$ such that 
${G}_{\text{tr}}=\{z\mapsto z+\ga: \ga\in \Gamma\}$. This lattice $\Gamma$ has {\em rank}
$2$ in the sense that it spans $\R^2\cong\C$ (see Section~\ref{sec:applifttorend} for more discussion).  We call it 
the {\em underlying lattice} of the crystallographic group $G$.  
For the group $\widetilde{G}$ as in Theorem~\ref{thm:G_signature} it 
 is equal to $\Z\oplus  \Z \tau $ in  cases
(torus) and $(2222)$,
to $\Z \oplus \Z   \iu $ 
in case
$(244)$,
and to $\Z\oplus  \Z  \omega $ in cases
$(333)$ and $(236)$.
A crystallographic group $G$ is of  (torus) type if and only if $G={G}_{\text{tr}}$, or, equivalently, if and only if $G$ is isomorphic to  $\Z^2$.

Crystallographic groups  ${G}$  of type
$(244)$, $(333)$, and $(236)$
 are represented in   Figure~\ref{fig:244_univ_orb},
Figure~\ref{fig:333_univ_orb}, and
Figure~\ref{fig:236_univ_orb}, respectively. 
For each type   $(abc)$ we see (part of) a tiling of $\C$ given by  isometric copies of Euclidean triangles with angles $\pi/a,\pi/b, \pi/c$.  
The corresponding group
${G}$ consists of all orientation-preserving isometries of the plane $\C$ that keep the
tiling invariant. This means that each group element $g\in {G}$ maps
each triangle in the tiling to another one of 
the same color. Moreover,  $g$ maps each point marked by a black dot to
another such point. If we assume that $0\in \C$ is one of these points, 
then  all points marked by a black dot  form the 
underlying lattice $\Gamma$ of ${G}$, i.e.,
the orbit of $0\in \C$ by the group of translations
${G}_{\text{tr}}\subset {G}$. 
Actually, the full 
${G}$-orbit of $0$ 
  is  then equal to  its
${G}_{\text{tr}}$-orbit $\Gamma$.

Suppose $G$ is a crystallographic group and   $z,w\in\C$ are  contained in the same $G$-orbit. Then  
 their stabilizers $G_z$ and $G_w$ have the same order
$\#{G_z}=\#{G_w}$, since they are conjugate subgroups of $G$. 
Each  non-trivial stabilizer $G_z$ is cyclic and of order $2$, $3$, $4$, or $6$,  as can be seen from Theorem~\ref{thm:G_signature} (actually, an independent proof of this  fact 
is one of the main steps in the  proof of Theorem~\ref{thm:G_signature}). The numbers in the label for the type of a group indicate the orders of non-trivial stabilizers
in  distinct orbits. For example,   a  crystallographic group of type $(236)$ has
three distinguished orbits consisting of points with non-trivial stabilizers
of order $2$, $3$, or $6$.  Note that every $g\in G_z$ is a rotation around $z$, since it is
an orientation-preserving isometry that fixes $z$.

Crystallographic groups are closely related to parabolic
orbifolds $(\CDach, \alpha)$ with a finite ramification function 
$\alpha\: \CDach\ra \N$ (so  $\alpha$ satisfies  $\alpha(p)<\infty$ for
$p\in \CDach$). These orbifolds have one of the signatures
$(2,2,2,2)$, $(2,4,4)$, $(3,3,3)$, or $(2,3,6)$.  This
corresponds precisely to the types of crystallographic groups
that are not of (torus) type, i.e., not isomorphic to $\Z^2$. We will  formulate two related   statements for immediate use and easy reference (Proposition~\ref{prop:G_yields_T} and Theorem~~\ref{thm:orbunifparacas}), but discuss 
 their proofs only later  in   Section~\ref{sec:paraorbcov} (throughout we also rely on material discussed in the appendix). 

We consider a crystallographic group
$G$ not isomorphic to $\Z^2$. Let
$\C/G$ be the quotient space. 
The quotient map  $\Theta_G\: 
\C\ra \C/G$ sends a point $z\in \C$ to 
its $G$-orbit $Gz$, considered as an element of $\C/G$ (see Section~\ref{sec:appquotmaps} for some general facts related to this). The quotient $\C/G$ is a topological $2$-sphere (see the discussion below). Moreover, one can equip 
$\C/G$ with natural geometric and conformal structures
so that 
the map  $\Theta_G\: \C \ra \C/G$  is holomorphic and one has a parabolic orbifold associated with $G$ (see  Section~\ref{sec:paraorbcov}). 

In our context it is convenient to allow   a more flexible setup, where we are not tied to the quotient space and the quotient map.   
So let $\Theta\colon \C\to S^2$ be  a continuous map into   a topological $2$-sphere $S^2$. We say that
 $\Theta$ is \emph{induced} by $G$ if it has the 
 following property: $\Theta(z)=\Theta(w)$ for 
 $z,w\in \C$ if and only if there exists $g\in G$ with $w=g(z)$.
The quotient map $\Theta=\Theta_G$ satisfies  this condition. We will see momentarily that  there is a close relation between the quotient map $\Theta_G$  and an arbitrary map $\Theta$ induced by $G$.
 The relevant facts  are summarized in the following statement. 
 
\begin{prop}
  \label{prop:G_yields_T}
  \index{map!induced by group action}
  \index{induced by group action}
  Let $G$ be a crystallographic group not isomorphic to $\Z^2$.  Then there exists a
  holomorphic branched covering map $\Theta\colon \C \to \CDach$ that is induced by
  $G$.  Associated with $\Theta$  is a  unique finite ramification function
  $\alpha\: \Cdach\ra \N$ such that  
    \begin{equation}
      \label{eq:degT_orderG}
     \alpha(\Theta(z))= \deg(\Theta,z) = \#G_z \text{ for $z\in \C$.}
  \end{equation}
The orbifold $(\Cdach,\alpha)$ is parabolic.  
  
  If $\widetilde \Theta\: \C \ra S^2$ is another continuous map induced by $G$, then there exists a unique homeomorphism 
  $\varphi\: \CDach \ra S^2$ such that $\widetilde \Theta= 
  \varphi \circ 
  \Theta$. If here  $S^2=\CDach$ and $\widetilde \Theta$ is holomorphic, then  $\varphi$ is a M\"{o}bius
  transformation.  
\end{prop}

For the notion of a branched covering map that applies here  see Definition~\ref{def:brcovmap}.
In  Section~\ref{sec:paraorbcov} we will present  an  explicit, somewhat lengthy, geometric construction of $\Theta$.

%
%

A consequence of Proposition~\ref{prop:G_yields_T} is   that every continuous map $\widetilde \Theta\: \C \ra S^2$ induced by $G$ is a branched covering map. In particular,  $ \widetilde \Theta$ is surjective. 
If we combine this with the fact that  $\widetilde \Theta$ is  induced by $G$, then we  can easily see that the map $Gz \in \C/G \mapsto \widetilde \Theta(z)\in S^2$ is a bijection between $\C/G$ and $S^2$.
 Corollary~\ref{cor:groupquot}~\ref{item:groupquot1}   implies that  this  map is actually 
 a homeomorphism between these spaces.  
It allows us 
to identify  $\C/G$ and  $S^2$.  Under this identification,   
  $\widetilde\Theta$   corresponds to the quotient map 
  $\Theta_G$. This also shows that  if a crystallographic group $G$ is not isomorphic
  to $\Z^2$, then the quotient space $\C/G$ is indeed a $2$-sphere. In
  Section~\ref{sec:paraorbcov} we will provide a more  explicit  geometric argument to justify this fact. If $G$ is isomorphic to $\Z^2$, then $\C/G$ is clearly a  ($2$-dimensional) torus.   
  
 Relation  \eqref{eq:degT_orderG} together with  the fact that 
  $Gz\in \C/G\mapsto \Theta(z)\in \CDach$ is a 
bijection implies that the orbifold 
$(\CDach, \alpha)$ in Proposition~\ref{prop:G_yields_T} has a signature that corresponds to  the type of $G$.    
 So if $G$ has type $(244)$, for example, then 
 $(\CDach, \alpha)$ has signature $(2,4,4)$.

Instead of starting with a crystallographic group and obtaining  an associated parabolic orbifold as in Proposition~\ref{prop:G_yields_T}, one can also reverse this process. 
This is based on the existence of the universal orbifold covering  map which is discussed in detail in the appendix (see
Section~\ref{sec:orbifolds-coverings}).  For the present purpose
we will formulate a relevant special case explicitly.  First, we
recall some terminology.

If  $\Theta\: \C \ra \CDach$  is a   branched covering map, then 
a 
{\em deck transformation}\index{deck transformation} 
of $\Theta$ is a homeomorphism $g\: \C \ra \C$ 
such that $\Theta\circ g=\Theta$. If $\Theta$ is holomorphic, then 
this is also true for each   deck transformation $g$ of $\Theta$ (see the last part of Lemma~\ref{lem:2_3_branched}) and  so $g\in \Aut(\C)$. 
The  deck transformations of $\Theta$ form a group $G$.  

We say that  $\Theta\: \C \ra \CDach$ is a 
{\em regular} branched covering map\index{branched covering map!regular}\index{regular branched covering map}  
if its deck transformations act transitively on the fibers of  $\Theta$; this means that if $z,w\in \C$ and $\Theta(z)=\Theta(w)$, then there exists $g\in G$ such that $w=g(z)$. Note that $\Theta$ is regular if and only if it is induced by its group of deck transformations $G$.

\begin{theorem}\label{thm:orbunifparacas} Let $(\CDach, \alpha)$ be a parabolic orbifold with a finite ramification function 
$\alpha\: \CDach\ra \N$. Then there exists 
a   holomorphic branched covering map $\Theta\: \C \ra \CDach$ such 
that \begin{equation}
 \label{item:paraorb1}
 \text{ $\deg(\Theta,z)=\alpha(\Theta(z))$ for $z\in \C$. }
 \end{equation} 
 The branched covering map $\Theta$ is regular and its deck transformation group $G$ is a crystallographic group with  
 \begin{equation}
      \label{eq:degT_orderG2}
     \alpha(\Theta(z))= \deg(\Theta,z) = \#G_z \text{ for  $z\in \C$.}
  \end{equation}
 
 Moreover,  if  $\widetilde \Theta\:  \C \ra \CDach$ is another holomorphic map satisfying 
 \eqref{item:paraorb1},  then there exists $h\in \Aut(\C)$ such that 
$\widetilde \Theta=\Theta\circ h$.  
\end{theorem} 

The essentially unique map $\Theta$  is the {\em universal orbifold covering map} of the orbifold  $(\CDach, \alpha)$ 
(see Section~\ref{sec:orbifolds-coverings}). 

 We will  present the proof for  the existence  of $\Theta$ in  Section~\ref{sec:paraorbcov}. We also refer the reader to the appendix, where more general facts  are discussed from which 
 Theorem~\ref{thm:orbunifparacas} can be derived. More specifically, the existence of  $\Theta$ 
 is a special case of Theorem~\ref{thm:univ_orbi_cover}. The uniqueness statement for $\Theta$ follows from Corollary~\ref{cor:unique_univ_orbi}
and Remark~\ref{rem:holoorbset}. Finally, the statement about the deck transformation group of $\Theta$ follows from 
Proposition~\ref{prop:prop_deck_trafo}.

Since  $\Theta\: \C \ra \CDach$  is regular, it is  induced by the crystallographic 
group $G$ given by its deck transformations.
 In particular, $\C/G$ is a topological $2$-sphere
 (this follows from Corollary~\ref{cor:groupquot}~\ref{item:groupquot1})
  and so $G$ is not  isomorphic to $\Z^2$ (in which case $\C/G$ is a torus).  
  
  The relation   \eqref{eq:degT_orderG2}  again implies that the crystallographic group $G$ arising 
in Theorem~\ref{thm:orbunifparacas} has a type corresponding to  the signature of the orbifold $(\CDach, \alpha)$.

If $\Theta$ is as in Proposition~\ref{prop:G_yields_T},  it is obviously  the universal orbifold covering map of its associated 
orbifold $(\CDach, \alpha)$. So in a sense, this proposition and  Theorem~~\ref{thm:orbunifparacas} tell the same story from different perspectives. In Proposition~\ref{prop:G_yields_T}
the crystallographic group $G$ is given, and from the map $\Theta$ in this proposition we obtain a
ramification function $\alpha$  on $\CDach$ whose associated  orbifold  has  a unique signature corresponding to the type of $G$.  Here $\Theta$ 
and    $\alpha$ are  only determined up to post- or  precomposition with a M\"obius transformation, respectively.  In  
Theorem~\ref{thm:orbunifparacas} the ramification function $\alpha$  is fixed, while $G$ is only unique up to conjugation by an element in $\Aut(\C)$.

As we will see in Section~\ref{sec:paraorbcov}, one can use this relation between these statements and  reduce  the existence 
proof of  the orbifold covering map 
 $\Theta$  in 
 Theorem~\ref{thm:orbunifparacas} to Proposition~\ref{prop:G_yields_T} by a suitable choice of the crystallographic group $G$.

 We are now ready to prove one of the implications of
 Theorem~\ref{thm:Lattesstruc}.

\begin{proof}[Proof of \ref{item:Lattessruciii} 
  $\Rightarrow$ \ref{item:Lattesruciv} in 
  Theorem~\ref{thm:Lattesstruc}]
  \label{proof_ii_iii} Assume $f\: \CDach \ra \CDach$ is  a map as in  \ref{item:Lattessruciii}.  Then there exists a 
  crystallographic group $G$, a $G$-equivariant map $A\colon\C\to
  \C$ of the form $A(z)=\alpha z+\beta$ where $\alpha,\beta\in
  \C$, $|\alpha|>1$, and  a holomorphic map $\Theta\colon \C\to \CDach$  induced by
  $G$ such that $f\circ \Theta=
  \Theta\circ A$.
   Note that then  $\C/G$ is homeomorphic to 
  $\CDach$ and so $G$ is not isomorphic to $\Z^2$.  It follows   from Proposition~\ref{prop:G_yields_T} that $\Theta$ is a branched covering map.

  Let $\Gtr\sub G$ be the normal subgroup of translations in $G$.
  Consider the quotient space $\T\coloneqq \C/\Gtr$ and the
  quotient map $\pi \colon \C\to \T=\C/\Gtr$. Then $\T$ is a
  topological torus and $\pi$ is a covering map. Moreover, there
  is a natural conformal structure on $\T$ so that $\T$ is a
  complex torus and $\pi$ is holomorphic (see
  Section~\ref{sec:applifttorend} for more details).

Let  $g\in \Gtr$ with $g\ne \id_{\C}$ be arbitrary.
Since  $A$ is $G$-equivariant, we have $\widetilde g\coloneqq A\circ g \circ A^{-1}\in G$. So the map 
$\widetilde g$ is an orientation-preserving isometry on $\C$. Moreover, this map has no fixed points, because this is true for its  conjugate map $g$, which is a translation.  
Hence 
$\widetilde g\in G$  is also a translation, i.e., $\widetilde g\in  \Gtr$. 
This shows that $A\circ g \circ A^{-1}\in \Gtr$ whenever $g\in \Gtr$. 

We conclude that $A$ is $\Gtr$-equivariant and so  descends to the quotient $\T=\C/\Gtr$
(see Lemma~\ref{lem:f_groupdescend}). More explicitly, if we set 
\begin{equation*}
  \overline{A}(\pi(z)) \coloneqq  \pi(A(z))
\end{equation*}
for  $z\in \C$, then $\overline{A} \: \T \ra \T$ is a well-defined non-constant continuous map 
 satisfying 
$\overline{A}\circ \pi = \pi \circ A$.  Since $\pi$ and $\pi\circ A$ are holomorphic, the map $\overline{A}$ is  holomorphic as well, because locally  $\overline{A}$ can be written as $\pi \circ A \circ  \pi^{-1}$ for a suitable (holomorphic) branch of  
$\pi^{-1}$ (alternatively, one can apply 
Lemma~\ref{lem:2_3_branched}). It follows that  $\overline A$ is a holomorphic torus endomorphism.

Similarly, the map $\Theta$  descends to $\T$. Indeed, suppose 
$z,w\in \C$ and $\pi(z)=\pi(w)$. Then there exists $g\in \Gtr$ such that $w=g(z)$. Since $\Theta$ is induced by $G\supset 
\Gtr$, we then have $\Theta(z)=\Theta(w)$. So if we set  
\begin{equation*}
  \overline{\Theta}(\pi(z)) \coloneqq  \Theta(z)
\end{equation*}
for  $z\in \C$, then we get a well-defined map  $\overline{\Theta}\colon \T\to \CDach$   such that    $ \overline{\Theta}\circ \pi= \Theta$. This last relation implies that $\overline \Theta$ is  non-constant  and holomorphic, and a branched covering map (see 
Lemma~\ref{lem:2_3_branched}~\ref{item:2_out_3_2}). 

Note that 
\begin{equation*}
  f \circ \overline \Theta \circ \pi
  = 
  f \circ  \Theta
  =
  \Theta\circ A 
  = 
  \overline{\Theta}\circ \pi \circ A
  =
  \overline{\Theta}\circ \overline{A} \circ \pi.
\end{equation*}
Since $\pi\: \C \ra \T $ is surjective, it follows  that  $f \circ \overline \Theta
=\overline \Theta \circ \overline{A}$.

The holomorphic maps considered, and their relations, can be summarized in the following commutative diagram: 
\begin{equation}
  \label{eq:holom_qu_diagram}
  \xymatrix{
    \C \ar[r]^{A} \ar[d]_{\pi} \ar@/_2pc/[dd]_{\Theta} 
    & \C \ar[d]^{\pi}\ar@/^2pc/[dd]^{\Theta}
    \\
    \T \ar[r]^{\overline{A}}\ar[d]_{\overline{\Theta}} 
    & \T\ar[d]^{\overline{\Theta}}
    \\
    \CDach \ar[r]^{f} & \CDach\rlap{.}
  }
\end{equation}
This shows that $f$ is a quotient of a holomorphic torus endomorphism. 
If we have maps as in  \eqref{eq:holom_qu_diagram}, then  
\begin{equation}
  \label{eq:degf_a2}
  \deg(f)=\deg(\overline{A})=\abs{\alpha}^2
\end{equation}
(recall that $A(z)=\alpha z +\beta$). 
We postpone the justification of this to  Lemma~\ref{lem:deglatttype}, 
  where we will establish  a more general fact (not relying on  Theorem~\ref{thm:Lattesstruc}, of course). 
In particular, $\deg(\overline{A})=\abs{\alpha}^2>1$, and so $\deg(\overline{A})\ge 2$. 
It follows that $f$ is indeed a map as in \ref{item:Lattesruciv}. 
\end{proof}

\begin{rem}
  The action of the crystallographic group $G$ descends to
the complex torus  $\T=\C/\Gtr$;  more precisely, the  group 
 $\overline{G}=G/\Gtr$ acts naturally on
  $\T$. It easily follows from Theorem~\ref{thm:G_signature} that $\overline{G}$ is  a cyclic group. 
   Accordingly, condition \ref{item:Lattesruciv} in 
  Theorem~\ref{thm:Lattesstruc} 
can  be formulated similarly 
to
  condition~\ref{item:Lattessruciii}  in terms of a  cyclic group action 
   on $\T$. This is discussed in \cite{Mi} in more detail. We  do not  pursue this point of view here, since the underlying  geometry
   is not as easy  to visualize as  for crystallographic groups. 
\end{rem}

\begin{proof}[Proof of \ref{item:Lattessrucii} 
  $\Rightarrow$ \ref{item:Lattessruciii} in 
  Theorem~\ref{thm:Lattesstruc}] Let $f\: \CDach\ra \CDach$  be a map as in \ref{item:Lattessrucii}, i.e., a rational  Thurston map with  a parabolic orbifold  $\mathcal{O}_f=(\CDach, \alpha_f)$ and no periodic critical points. By Proposition~\ref{prop:otherramprops}~\ref{item:rami_infty}
   we then have $\alpha_f(p)<\infty$ for  $p\in \CDach$. 
   
   Let $\Theta\colon \C\to \CDach$ be
  the (holomorphic) universal orbifold covering map of the orbifold $\mathcal{O}_f$ (see
  Theorem~\ref{thm:orbunifparacas}) and  $G$ be the group of deck
  transformations of $\Theta$. Then   $G$ is a crystallographic group and $\Theta$ is induced 
  by $G$.

We have to find an automorphism  $A\colon \C\ra \C $ such that
$f\circ\Theta= \Theta\circ A$. For this we consider the  holomorphic map
$f\circ \Theta\colon \C \ra \CDach$. Since $f\: \Cdach \ra \Cdach$  and 
$\Theta\: \C \ra \Cdach$ are branched covering maps, $f\circ \Theta\: \C \ra \Cdach$ is a branched covering map as well (see Lemma~\ref{lem:2_3_branched}~\ref{item:2_out3_1}). 

Since $\mathcal{O}_f=(\CDach, \alpha_f)$ is parabolic, by
Proposition~\ref{prop:parabolicOf} we have  
\begin{equation*}
  \label{eq:degparacond}
  \deg(f,p)\cdot \alpha_f(p)=
  \alpha_f(f(p))
\end{equation*} for  all $p\in \CDach$. For  each   $z\in \C $ we have  $\deg(\Theta, z)=\alpha_f(\Theta(z))$, and so 
\begin{align*}
  \deg(f\circ\Theta,z)& = 
  \deg(f,\Theta(z))\cdot \deg(\Theta,z) \\
  &=\deg(f,\Theta(z))\cdot 
 \alpha_f(\Theta(z)) =\alpha_f\big(f(\Theta(z))\big).
\end{align*}

This shows that  $f\circ \Theta$ is another
universal orbifold covering map of $\OC_f$. The essential uniqueness   of the universal
orbifold cover (see Theorem~\ref{thm:orbunifparacas}) implies that
there is an automorphism  $A\colon\C \ra \C$ satisfying 
$f\circ \Theta = \Theta \circ A$. In other words, we have a commutative diagram as in \eqref{eq:def_lattes2}. Since $\Theta$ is induced by $G$, it follows that the map $A$ is $G$-equivariant (see Lemma~\ref{lem:f_groupdescend}).

  The map  $A\in \Aut(\C)$ is necessarily of the form $z\mapsto A(z)=\alpha z + \beta$, where $\alpha,\beta\in \C$, $\alpha\ne 0$. 
  As in \eqref{eq:degf_a2}, we have 
  $\deg(f)= \abs{\alpha}^2\geq 2$, and so  $\abs{\alpha}>1$.
  It follows that $f$ is as in  \ref{item:Lattessruciii}. 
\end{proof}

\section{Quotients of torus endomorphisms and parabolicity} 
\label{sec:quot-torus-endom}
\index{quotient!of torus endomorphism}
\index{torus!endomorphism!quotient of}

We now prepare the proof of the implication
\ref{item:Lattesruciv} $\Rightarrow$~\ref{item:Lattessrucii} in
Theorem~\ref{thm:Lattesstruc}. As we will see, a map $f$ as in
Theorem~\ref{thm:Lattesstruc}~\ref{item:Lattesruciv} is indeed a
Thurston map. The main difficulty  is to show that $f$ has a
parabolic orbifold.
To address this, we will first establish 
  some general statements for  quotients of
endomorphisms on a torus $T^2$ (see  Definition~\ref{def:quotient_torus}).

\begin{lemma}[Properties of quotients of torus endomorphisms]
  \label{lem:simobserv} 
  Let $f\: S^2\ra S^2$ be a quotient
  of a torus endomorphism, and $\overline \Theta\: T^2\ra S^2$ and 
  $\overline{A}\colon T^2\to T^2$ with $\deg(\overline A)\ge 2$ be as in
 Definition~\ref{def:quotient_torus}. Then the following statements are true:

\begin{enumerate}

 \item \label{item:simobi}
 The map $f$ is a Thurston map without periodic critical points,
 and it satisfies $\deg(f)=\deg(\overline A)\ge 2$. 
 
 \item \label{item:simobii}
   The set $\post(f)$ is equal to the set of critical values of
   $\overline \Theta$, i.e., 
   \begin{equation*}
     \post(f) = \overline{\Theta}(\crit(\overline{\Theta})).     
   \end{equation*}

 \item 
   \label{item:simobiii}
   The ramification function of $f$ is given by 
\begin{equation*} 
\alpha_f(p)=\lcm\{ \deg(\overline \Theta, x): x\in \overline \Theta^{-1}(p)\}
\end{equation*} 
for  $p\in S^2$. 

\end{enumerate}
\end{lemma}

\begin{proof} 
  Let $f$, $\overline \Theta$, and $\overline A$ be as in the
  statement. Then $\overline A$ and $\overline{\Theta}$ are
  branched covering maps. In particular,  $\overline{\Theta}$ is surjective
  and open. Since
  $f\circ \overline{\Theta} =\overline \Theta\circ \overline A$, it  follows from Lemma~\ref{lem:f_desc_cont} that
  $f$ is continuous.  
 Then 
  Lemma~\ref{lem:2_3_branched}~\ref{item:2_out3_1} and
  \ref{item:2_out_3_2} 
  imply that  $f$ is actually  a branched covering map. 
  
  Note that    
  \begin{equation*}
    \deg(f)\cdot \deg(\overline \Theta)
    =\deg(f \circ \overline\Theta)
    =\deg(\overline \Theta\circ \overline A)
    =\deg(\overline\Theta)\cdot \deg(\overline A), 
  \end{equation*}
  and so
  \begin{equation*}
   \deg(f)= \deg(\overline A)\ge 2, 
  \end{equation*} 
  as claimed.
To show that $f$ is a Thurston map without periodic critical points, we first establish \ref{item:simobii} and \ref{item:simobiii}.

\smallskip 
\ref{item:simobii} 
Let
$V_{\overline \Theta}\coloneqq\overline \Theta(\crit(\overline \Theta))$ denote the set of critical values of
$\overline \Theta$. Since $T^2$ is compact and
the set of critical points of $\overline \Theta$ 
has no limit point 
in $T^2$, there are only finitely many critical points of
$\overline \Theta$. Hence the set $V_{\overline \Theta}$ is also  finite. We will first  prove
that $\post(f)\sub  V_{\overline{\Theta}}$.

Let  $p\in \post(f)$ be arbitrary. Then by  \eqref{eq:critpfn2}
the point $p$ is a critical value of some iterate of $f$. So   there exist $n\in \N$ and $q\in S^2$ with $\deg(f^n, q)\ge 2$ and $f^n(q)=p$. As a branched covering map, $\overline\Theta$ is surjective, and so we can find  $x\in T^2$ with $\overline \Theta(x)=q$.

Recall that by the Riemann-Hurwitz formula \eqref{eq:Riemann-Hurwitz}
the map $\overline{A}$ cannot have critical points, and hence is
locally injective. In particular,  $\deg(\overline{A}^n,x)=1$.   Since  $f\circ
\overline{\Theta}= \overline{\Theta}\circ \overline{A}$, we have 
 $f^n\circ
\overline{\Theta} = \overline{\Theta}\circ \overline{A}^n$. It follows that 
\begin{align*}
\deg(\overline{\Theta},\overline{A}^n(x))&=
 \deg(\overline{\Theta},\overline{A}^n(x))\cdot \deg(\overline{A}^n,x)\\
 &=\deg(\overline{\Theta}\circ \overline{A}^n,x)= \deg(f^n\circ \overline{\Theta},x)\\
  &= \deg(f^n,q)\cdot \deg(\overline{\Theta},x)\ge 2. 
  \end{align*}
  Thus  $\overline{A}^n(x)$ is a
critical point of $\overline{\Theta}$. So we have 
\begin{equation*}
 p=f^n(q)= (f^n\circ \overline{\Theta})(x) =(\overline{\Theta}\circ \overline{A}^n)(x)\in V_{\overline{\Theta}}. 
\end{equation*}
The desired inclusion $\post(f)\sub V_{\overline\Theta}$
follows.

To show  that actually $\post(f)= V_{\overline \Theta}$, we argue by contradiction and assume 
that there  exists  a point $p\in V_{\overline \Theta}\setminus \post(f)$. 
Then   by  \eqref{eq:critpfn2} the point  $p$ is not a critical value of any iterate $f^n$ of $f$.
Since $p$ is a critical value of $\overline \Theta$, the set $\overline \Theta^{-1}(p)$ contains a critical point $c$ of $\overline \Theta$. This implies that for each $n\in \N$, the set 
$\overline{A}^{-n}(c)$ consists of critical points of $\overline \Theta$. Indeed, if 
$a\in \overline A^{-n}(c)$, then 
$$f^n(\overline \Theta(a))=\overline \Theta(\overline A^n(a))=\overline \Theta(c)=p, $$ and so $\deg(f^n, \overline \Theta(a))=1$; moreover, $\overline A^n(a)=c$, and so 
\begin{align*} \deg(\overline \Theta, a)&=\deg(f^n, \overline \Theta(a))\cdot \deg(\overline \Theta, a)\\ 
&=\deg(f^n\circ \overline \Theta, a)=\deg(\overline \Theta\circ  \overline A^n, a)\\
&=\deg(\overline\Theta, \overline A^n(a))\cdot  \deg(\overline A^n, a)=\deg(\overline\Theta, c)\ge 2. \end{align*}
Since $\overline A$ is a covering map with $\deg(\overline A)\ge 2$, we have 
$$ \#\overline A^{-n}(c)=\deg(\overline A)^n\ge 2^n,$$
and so  there must be at least $2^n$ distinct  critical points of $\overline \Theta$. Since $n\in \N$ was arbitrary, and the number of critical points of $\overline \Theta$ is finite, this is a contradiction showing $\post(f)=V_{\overline \Theta}$.

Since  $\post(f)=V_{\overline\Theta}$ is
a finite set,  we conclude that $f$ is a Thurston map.   

\smallskip 
\ref{item:simobiii} 
We define 
$$ \nu(p)\coloneqq\lcm\{ \deg(\overline \Theta, x): x\in \overline \Theta^{-1}(p)\}$$ 
for $p\in S^2$. We claim that the function $\nu$ is equal to the
ramification function $\alpha_f$ of $f$. To prove this claim,  we will
show that $\nu$ has the  characterizing properties \ref{item:weight1}  and 
\ref{item:weight2} of $\alpha_f$ in 
Proposition~\ref{prop:weightf}. 

To see this,  let  $p\in S^2$ be   arbitrary, and $x\in \overline \Theta^{-1}(p)$.
If $y\coloneqq \overline A(x)$,  then 
$$\overline \Theta(y)=\overline \Theta(\overline A(x))=f(\overline \Theta(x))=f(p).$$
Hence $y\in \overline \Theta^{-1}(f(p))$, and  
\begin{align*} \deg(f,p)\cdot 
 \deg(\overline\Theta, x)&  = \deg(f\circ \overline \Theta, x)= \deg(\overline \Theta\circ \overline A, x)\\&=  \deg(\overline \Theta, \overline A(x))\cdot \deg( \overline A, x)= \deg(\overline \Theta, y). 
\end{align*} 
This shows that $\deg(f,p)\cdot 
 \deg(\overline\Theta, x)$  divides $\nu(f(p))$. Since this is true for all $x\in  \overline \Theta^{-1}(p)$ we conclude that 
$ \deg(f,p)\cdot \nu(p)$ divides $\nu(f(p))$.  So  the function $\nu$ satisfies condition \ref{item:weight1} in 
Proposition~\ref{prop:weightf}.

Now suppose  $\beta\colon S^2\to \widehat{\N}$ is another  function such that
$\deg(f,p)\cdot \beta(p)$ divides $\beta(f(p))$ for each $p\in S^2$. 
Then  $\deg(f^n,q)\cdot \beta(q)$ divides $\beta(f^n(q))$ for all 
$q\in S^2$ and $n\in \N$ (see the remarks after
Proposition~\ref{prop:weightf}).

Let $p\in S^2$ and $x\in \overline \Theta^{-1}(p)$ be
arbitrary. Then $\#\overline A^{-n}(x)=\deg(\overline A)^n\ge 2^n$. Since
there are only finitely many critical points of $\overline{\Theta}$,
there exist $n\in \N$ and $y\in \overline A^{-n}(x)$ such that
$y\notin \crit(\overline{\Theta})$. Let  
$q \coloneqq \overline \Theta (y)$. Then
\begin{equation*}
  f^n(q)=f^n(\overline\Theta(y))=\overline \Theta (\overline
A^n(y))=\overline \Theta (x)=p, 
\end{equation*}
and  
\begin{align*} 
  \deg(\overline\Theta, x)
  &=\deg(\overline\Theta, x)\cdot \deg(\overline A^n, y) =\deg(\overline \Theta\circ \overline A^n, y)\\
  &= \deg(f^n\circ \overline \Theta, y)= \deg(f^n, q)\cdot
  \deg(\overline \Theta, y)\\
  &=\deg(f^n, q).
\end{align*} 
Clearly, $\deg(\overline\Theta, x)=\deg(f^n,q)$ divides $\deg(f^n,q)\cdot\beta(q)$, which in turn
divides $\beta(p)=\beta(f^n(q))$ by the remark above.   
Hence $\deg(\overline\Theta,
x)|\beta(p)$ for all $x\in \overline \Theta^{-1}(p)$. By definition
of $\nu$ this implies that $\nu(p)| \beta(p)$ for $p\in
S^2$. This means that $\nu$ satisfies  condition \ref{item:weight2} in 
Proposition~\ref{prop:weightf}. 

{}From the uniqueness property of $\alpha_f$ given 
by Proposition~\ref{prop:weightf} we conclude  $\nu =\alpha_f$ as desired. 

\smallskip
\ref{item:simobi}
We have already seen that $f$ is a Thurston map with $\deg(f)=\deg(\overline A)$. From 
\ref{item:simobiii} it follows that $\alpha_f(p) < \infty$ for all
$p\in S^2$. Thus $f$ has no  periodic critical points (see
Proposition~\ref{prop:otherramprops}~\ref{item:rami_infty}). 
\end{proof}

\begin{lemma}[Criterion for  parabolicity]
  \label{lem:paratorusquot} 
Let   $f\: S^2\ra S^2$ be   a quotient
  of a torus endomorphism and $\overline \Theta\: T^2\ra S^2$ be   as in
 Definition~\ref{def:quotient_torus}. Then   $f$ has a  parabolic orbifold if  
and only if   \begin{equation}\label {item:simobiv}
\deg(\overline \Theta, x)=\deg(\overline \Theta, y)\end{equation} 
for all $x,y\in T^2$ with $\overline \Theta(x)=
\overline \Theta(y)$. 
\end{lemma}

We do not know whether  condition  \eqref{item:simobiv} is always true, or equivalently, whether 
 every quotient of a torus endomorphism  has 
 a parabolic orbifold. One can  show this under the additional assumption that the map  is expanding. The proof is rather involved, and so we will not discuss it.

\begin{proof}  As in
 Definition~\ref{def:quotient_torus},  
let   $\overline{A}\colon T^2\to T^2$ be a torus endomorphism 
with  $\deg(\overline A)\ge 2$ for our given maps $f$ and $\overline \Theta$.

Suppose first that $\deg(\overline \Theta, x)=
\deg(\overline \Theta, y)$, whenever $x,y\in T^2$ and $\Theta(x)=\Theta(y)$. This implies   that  for arbitrary $p\in S^2$ the local degree of $\overline \Theta$ is the same   for each point in   $\overline \Theta^{-1}(p)$. Then by Lemma~\ref{lem:simobserv}~\ref{item:simobiii}  we have  $\alpha_f(p)=\deg(\overline \Theta, x)$, whenever $x\in \overline \Theta^{-1}(p)$. If $y\coloneqq \overline A(x)$, then $\overline \Theta(y)=\overline \Theta(\overline A(x))=f(\overline \Theta(x))= f(p)$, and so 
\begin{align*}
 \alpha_f(f(p))&=\deg(\overline \Theta, y)=\deg(\overline \Theta, y)\cdot \deg(\overline A, x)\\
 &= \deg(\overline \Theta\circ \overline A, x)=  \deg(f \circ \overline \Theta, x)\\
 &= \deg(f, p)\cdot \deg(\overline \Theta, x)=  \deg(f, p)\cdot\alpha_f(p). 
 \end{align*} It follows that  $\mathcal{O}_f=(S^2,
 \alpha_f)$   is parabolic by the condition in  Proposition~\ref{prop:parabolicOf}~\ref{item:Of_para3}. 
 
 Conversely, suppose that $f$ has a parabolic orbifold. 
  We claim  that the local degree of $\overline \Theta$ is constant in each fiber over a point in $S^2$.  For this it 
 suffices to show that if $p\in S^2$ and $x\in \overline \Theta^{-1}(p)$, then $\deg(\overline \Theta, x)=\alpha_f(p)$. 
 
 Note that the set $\overline \Theta^{-1}(\post(f))$ is finite. So by 
 picking $n\in \N$ large enough,
  we can find a point $y\in \overline A^{-n}(x)$ with $q\coloneqq
  \overline \Theta(y)\notin \post(f)$. Then $\alpha_f(q)=1$ and
   $\deg(\overline \Theta, y)=1$ by Lemma~\ref{lem:simobserv}~\ref{item:simobii}. We also have 
  $$f^n(q)=f^n(\overline \Theta(y))=\overline \Theta(\overline A^n(y))=\overline \Theta(x)=p,  $$
 and 
 \begin{align*}
   \deg(\overline \Theta, x)
   &=   \deg(\overline \Theta, x) \cdot \deg(\overline A^n, y)  
   \\
   &=  \deg(\overline \Theta \circ \overline A^n, y)
     = 
     \deg(f^n \circ \overline \Theta, y)
   \\
   &= \deg(f^n, q)\cdot \deg (\overline \Theta, y)
     = 
     \deg(f^n, q). 
  \end{align*} 
  The parabolicity  of $\mathcal{O}_f$ implies that 
  $$\alpha_f(p)=\alpha_f(f^n(q))=\alpha_f(q)\cdot \deg(f^n, q)=
  \deg(f^n, q). $$
  We conclude that 
  $$  \deg(\overline \Theta, x)=  \deg(f^n, q)=\alpha_f(p)$$
  as desired. 
\end{proof}

To complete the proof of Theorem~\ref{thm:Lattesstruc}, and to
establish the remaining implication \ref{item:Lattesruciv}
$\Rightarrow$~\ref{item:Lattessrucii}, let
$f\colon \CDach \to \CDach$ be given as in
\ref{item:Lattesruciv} with corresponding maps
$\overline{A}\colon \T\to \T$ and
$\overline{\Theta}\colon \T\to \CDach$ that  are holomorphic and
defined on a complex torus $\T$. Then $f$ is a quotient of a
torus endomorphism (see Definition~\ref{def:quotient_torus}),
and hence a Thurston map without periodic critical points by
Lemma~\ref{lem:simobserv}~\ref{item:simobi}. The equation
$f\circ \overline{\Theta} = \overline{\Theta} \circ
\overline{A}$ now implies
that $f$ is a holomorphic map and hence a rational map on
$\CDach$ (see Lemma~\ref{lem:2_3_branched}).

The universal cover of $\T$ (as  a Riemann surface) 
is $\C$ and so there exists a holomorphic covering map $\pi\: \C \ra  \T$. Actually, we can identify $\T$ with a quotient space $\C/\Gamma$, where $\Gamma$ is a suitable rank-$2$ lattice.
Under such an identification $\T \cong \C/\Gamma$, the map 
 $\pi\: \C \ra \C/\Gamma\cong \T$ is just the quotient map.

The map $\overline{A}$ can be lifted by $\pi$ to a homeomorphism $A\colon \C\to
\C$  such that
$\overline{A}\circ\pi = \pi\circ A$ (see 
Section~\ref{sec:applifttorend} and in particular
Lemma~\ref{lem:torilifts}~\ref{item:tori2}). The map  $A$ is holomorphic, because locally it can be written as $A=\pi^{-1}\circ \overline A \circ \pi$ for a suitable holomorphic branch of $\pi^{-1}$. Hence $A$ has to be  of the form $A(z)=\alpha z+\beta$
with $\alpha, \beta\in \C$, $\alpha\ne 0$.  Then 
we have the following commutative diagram:

\begin{equation}
  \label{eq:Athetafpi23}
  \xymatrix{ \C \ar[r]^{A} \ar[d]_{\pi } &
    \C \ar[d]^{\pi }\\
    \T \ar[r]^{\overline A} \ar[d]_{\overline\Theta} &
    \T \ar[d]^{\overline\Theta}
    \\
    \CDach \ar[r]^f & \CDach\rlap{.} 
  }
\end{equation}

We are now ready to show the implication \ref{item:Lattesruciv}
$\Rightarrow$ \ref{item:Lattessrucii} of
Theorem~\ref{thm:Lattesstruc}. As we will see,  the proof strongly relies on
the fact that the involved maps are holomorphic.

\begin{proof}[Proof of \ref{item:Lattesruciv}
  $\Rightarrow$~\ref{item:Lattessrucii} in 
  Theorem~\ref{thm:Lattesstruc}]
  Suppose $f\: \CDach \ra \CDach$ is a map as in statement
  \ref{item:Lattesruciv} of Theorem~\ref{thm:Lattesstruc}. Then
  there is a complex torus $\T$, a holomorphic torus endomorphism
  $\overline{A}\colon \T\to \T$ with $\deg(\overline A)>1$, and a non-constant holomorphic map
  $\overline{\Theta}\colon \T\to \CDach$ such that
  $f\circ\overline{\Theta} = \overline{\Theta}\circ
  \overline{A}$.
  As we discussed, $f$ is also a holomorphic map and hence a
  rational map on $\CDach$.  Moreover, $f$ is a quotient of a
  torus endomorphism and so 
  a Thurston map without periodic critical points.

  It remains to show that the orbifold of $f$ is parabolic. By   Lem\-ma~\ref{lem:paratorusquot} it is
  enough to prove that the local degree 
  of $\overline \Theta$ is constant in each fiber $\overline
  \Theta^{-1}(p)$, $p\in \CDach$.  We argue by contradiction and
  assume that there exist $p\in \CDach$ and $x,y\in \overline
  \Theta^{-1}(p)$ with
  \begin{equation}
    \label{eq:degdifftheta}
    \deg(\overline \Theta, x)\ne \deg(\overline \Theta,y). 
  \end{equation}
  In particular, one of these degrees must be $\ge 2$; so $p$ is a
  critical value of $\overline \Theta$ and hence belongs to $\post(f)$
  by Lemma~\ref{lem:simobserv}~\ref{item:simobii}.

  For all $n\in \N$ we have $\overline A^n(x), \overline A^n(y)\in
  \overline \Theta^{-1}(f^n(p))$, and, since $\overline A$ does not
  have critical points,
  \begin{align*}  
    \deg(\overline \Theta, \overline A^n(x))& = \deg(\overline \Theta,
    \overline A^n(x))\cdot \deg(\overline A^n, x)= \deg(\overline
    \Theta\circ \overline A^n, x)\\ &=\deg(f^n \circ \overline
    \Theta,x)= \deg(f^n,p) \cdot \deg(\overline \Theta, x)\\ & \ne
    \deg(f^n,p) \cdot \deg(\overline \Theta, y) = \deg(\overline
    \Theta, \overline A^n(y)).
\end{align*} 
In other words, the iterates  $\overline A^n(x)$ and $\overline
A^n(y)$ lie in the fiber over the point $f^n(p)\in \post(f)$ and
$\overline \Theta$ has different local degrees at these points. Since
there are only finitely many points in $\post(f)$, and each fiber
$\overline \Theta^{-1}(p)$ contains only finitely many points, the
points $x$ and $y$ must be preperiodic under iteration of $\overline
A$, and $p$ must be preperiodic under iteration of $f$. This implies
that in \eqref{eq:degdifftheta} we may in addition assume that $x$ and
$y$ are periodic points for $\overline A$, and that $p$ is a periodic
point for $f$.  Moreover, by replacing the maps $\overline A $ and $f$
with  suitable iterates, we are further reduced to the case where $x$ and
$y$ are fixed points of $\overline A$, and $p$ is a fixed point of
$f$.

If we introduce a suitable holomorphic coordinate $w$  near $p$ 
such that $w=0$ corresponds to  $p$, then $f$ has a local 
power series representation of the form 
$$ f(w)=\lambda w^d+\dots, $$
where $\lambda\ne 0$ and $d\in \N$.  Since $f$ has no periodic critical points by 
Lemma~\ref{lem:simobserv}~\ref{item:simobi}, we actually have  $d=1$, and so 
$$ f(w)=\lambda w+\dots \,.$$

As we discussed before the proof,  the
map $\overline A$ lifts to a map $A\: \C\ra \C$ of the form
$A(z)=\alpha z+\beta$ for $z\in \C$, where $\alpha, \beta\in \C$,
$\alpha\ne 0$,  such that we have a commutative diagram as    in \eqref{eq:Athetafpi23}.
 This implies that by introducing a  suitable
holomorphic coordinate $u$  near  $x$ such that $u=0$ corresponds to
$x$, the maps $\overline A$ and $\overline \Theta$  can be given 
the forms  
$ \overline A(u)=\alpha u$ and 
$$ w=\overline \Theta(u)= b u^k+\dots$$
near $u=0$, where $b\ne 0$ and $k=\deg(\overline \Theta, x)$.  Similarly, by using a suitable  holomorphic coordinate $v$  near $y$ such that $v=0$ corresponds to $y$, we can write 
$ \overline A(v)=\alpha v$ and 
$$ w=\overline \Theta(v)= c v^n+\dots $$ 
near $v=0$, where $c\ne 0$ and $n=\deg(\overline \Theta, y)$.  
Since $f\circ \overline \Theta=\overline \Theta \circ   \overline A$ near $u=0$,  we obtain 
$$ 
f(\overline \Theta(u))= \lambda b u^{k}+\dots= \overline \Theta( \overline A(u))= b \alpha^k u^k+\dots\,.
$$ 
In particular,  $\lambda= \alpha^k$. Similarly, by considering the relation 
$ f(\overline \Theta(v)) =\overline \Theta( \overline A(v))$ near $v=0$, we obtain $\lambda= \alpha^n$. We conclude that 
$ \alpha^k=\lambda=\alpha^n$. 
Now 
$ 2\le \deg(f)=\deg (\overline A)=|\alpha|^2$ by \eqref{eq:degf_a2}, and so $|\alpha|>1$; but then $ \alpha^k=\alpha^n$ implies  $k=n$. 
This contradicts our assumption that 
$ k= \deg(\overline \Theta, x)\ne \deg(\overline \Theta, y)=n. $ 
 \end{proof}

This finishes the proof of Theorem~\ref{thm:Lattesstruc}. 

\section{Classifying Latt\`{e}s maps}
\label{sec:clas-latt-maps}
\index{Latt\`{e}s map!classification}

Theorem~\ref{thm:Lattesstruc} allows us  to  explicitly construct
 each  Latt\`{e}s map as a quotient of  a holomorphic automorphism  $A\colon \C\to \C$ by  a crystallographic group $G$. For such a map $A$ to pass to the quotient $\C/G$ it has to be $G$-equivariant. In this section 
we study  this   condition on $A$ and some related questions in more detail. 
Our results essentially provide a   classification of all Latt\`es maps.

Let $G$ be a crystallographic group not isomorphic to $\Z^2$, and let 
$\Theta\colon \C\to \CDach$ be a holomorphic map induced by $G$
as provided by Proposition~\ref{prop:G_yields_T}. Let $A\colon \C\to \C$
be a map of the form $A(z)= \alpha z + \beta$, where
$\alpha,\beta\in \C$ with $\abs{\alpha}>1$. Then by Lemma~\ref{lem:f_groupdescend} there is a
(unique) map
$f\colon \CDach \to \CDach$ that satisfies
$f\circ \Theta = \Theta \circ A$  if
and only if $A$ is $G$-equivariant. 
In this case,   $f$ is a Latt\`{e}s map
by condition~\ref{item:Lattessruciii} in Theorem~\ref{thm:Lattesstruc}.  
\index{G-equivariant@$G$-equivariant}  
\index{equivariant}
\index{group action!map equivariant under}

Recall from Proposition~\ref{prop:G_yields_T} that for a given
crystallographic group $G$, the map $\Theta$ is unique up  to
postcomposition with a M\"{o}bius transformation. So suppose 
$\widetilde{\Theta} = \varphi\circ \Theta$ is  another
(holomorphic) map induced by $G$, where
$\varphi\colon \CDach \to \CDach$ is a M\"{o}bius
transformation. Then if  $f\circ \Theta = \Theta \circ A$ it is
immediate to check that
$\widetilde{f}\coloneqq \varphi\circ f \circ \varphi^{-1}$ is the
unique map that satisfies
$\widetilde{f}\circ \widetilde{\Theta} = \widetilde{\Theta} \circ
A$.
Thus the Latt\`{e}s map induced by a map $A$ that is equivariant
for a given crystallographic group $G$ is unique up to
conjugation by a M\"{o}bius transformation.

For a given Latt\`{e}s map $f\colon \CDach \to \CDach$, the map
$A\colon\C\to \C$ and the group $G$ provided by
Theorem~\ref{thm:Lattesstruc}~\ref{item:Lattessruciii} are not
unique. Indeed, we can conjugate $G$ and $A$ by an arbitrary map
$h\in \Aut(\C)$.  Then
$\widetilde G= \{ h^{-1}\circ g \circ h : g\in G\}$ is also a
crystallographic group. If
$\widetilde{A}= h^{-1} \circ A \circ h$ and
$\widetilde{\Theta} = \Theta\circ h$, then we obtain the same map
$f$ in Theorem~\ref{thm:Lattesstruc}~\ref{item:Lattessruciii}, if
we replace $G,\Theta, A$ with
$\widetilde G, \widetilde\Theta, \widetilde A$, respectively. The
situation is illustrated in the following commutative diagram:
\begin{equation}
  \label{eq:LattesGnotunique}  
  \xymatrix{
    \C \ar[r]^{\widetilde{A}} \ar[d]_h \ar@/_2pc/[dd]_{\widetilde{\Theta}} 
    & \C \ar[d]^h\ar@/^2pc/[dd]^{\widetilde{\Theta}}
    \\
    \C \ar[r]^{A}\ar[d]_{\Theta} & \C\ar[d]^{\Theta}
    \\
    \CDach \ar[r]^{f} & \CDach\rlap{.}
    }
\end{equation}

By using such a conjugation,  in 
Theorem~\ref{thm:Lattesstruc}~\ref{item:Lattessruciii}
 we can always assume 
 that the group $G$ is one of the groups
$\widetilde G$ listed in Theorem~\ref{thm:G_signature}.


The requirement that the map $A(z)=\alpha z+ \beta$ is $\widetilde G$-equivariant 
can then be explicitly analyzed and  puts strong restrictions on $\alpha$
and $\beta$ as the following proposition shows.

\begin{prop}
  \label{prop:alph_beta_G}
  Let $G=\widetilde G$ be a crystallographic group as in Theorem~\ref{thm:G_signature} not isomorphic
  to $\Z^2$. Let
  $\Gamma=\Z\oplus \Z\tau$ be the underlying lattice, where
  $\tau\in \C$ with $\imag(\tau)>0$ in the case $(2222)$,
  $\tau=\iu$ in the case $(244)$, and $\tau=\omega=e^{\pi \iu/3}$
  in the cases $(333)$ and $(236)$. Then $A\colon \C\to\C$ given
  by $A(z)=\alpha z+ \beta$ (where $\alpha, \beta\in \C$,
  $\alpha\neq 0$) is $G$-equivariant if and only if
  \index{equivariant}\index{G-equivariant@$G$-equivariant}\index{group action!map equivariant under}
   \begin{equation*}    
    \left. 
      \begin{array}{rl} 
        \alpha,\alpha  \tau,&\!\!\! \!         2\beta
        \\ 
        \alpha,
        & \!\!\! \!     (1+\iu)\beta
        \\
        \alpha,
        & \!\!\! \!     (1+\omega)\beta
        \\
        \alpha,
        & \!\!\! \!    \beta
      \end{array}
       \right\}
    \text{are elements of $\Gamma$ when $G$ is of type }
    \left\{\!
      \begin{array}{c} 
        \textnormal{(2222)},
        \\   
        \textnormal{(244)}, 
        \\ 
        \textnormal{(333)},
        \\
        \textnormal{(236).}
      \end{array} 
    \right.
  \end{equation*}
  \end{prop}

As we will see in the proof,  the condition on $\alpha$ is equivalent to the
requirement that $\alpha\Gamma \subset \Gamma$.
According to this proposition, we can always choose
$\alpha\in \Z\setminus\{0\}$ and $\beta=0$ for any lattice
$\Gamma$. This and
Theorem~\ref{thm:Lattesstruc}~\ref{item:Lattessruciii} imply that
Latt\`{e}s maps exist for all signatures $(2,2,2,2)$, $(2,4,4)$,
$(3,3,3)$, and $(2,3,6)$.  We will discuss more explicit examples
for each of these signatures later in
Section~\ref{sec:examples-lattes-maps}.

\begin{proof}
  Recall from \eqref{eq:equivariant} that a map
  $z\mapsto A(z)=\alpha z+\beta$ as in the statement is
  $G$-equivariant if and only if $A\circ g\circ A^{-1}\in G$ for
  all $g\in G$.
 
  The maps $g\in G$ have the form $g(z)=\lambda z+ \gamma$, where
  $\gamma\in \Gamma$ and $\lambda$ is a root of unity depending
  on the type of $G$.  An elementary computation shows that
  \begin{equation*}
    (A\circ g\circ A^{-1})(z)
    = 
    \lambda z + \alpha \gamma + (1-\lambda )\beta.
  \end{equation*}
  Using this first for $\gamma=0\in \Gamma$, we see that
  \eqref{eq:equivariant} can only be valid if
  \begin{equation}
    \label{eq:betainG}
    (1-\lambda)\beta\in \Gamma, 
  \end{equation}
  for the appropriate roots of unity $\lambda$; in addition, it
  is necessary that
  \begin{equation}
    \label{eq:alphainG}
    \alpha \gamma\in \Gamma  \text{ for all $\gamma\in \Gamma$.}
  \end{equation}

  Conversely, if \eqref{eq:betainG}
  and \eqref{eq:alphainG} are true, $A$ satisfies
  \eqref{eq:equivariant}.  
  Thus $A$ is
  $G$-equivariant if and only if $\alpha$  and $\beta$ satisfy
  \eqref{eq:betainG} and \eqref{eq:alphainG}.

  Note that \eqref{eq:alphainG} is equivalent to the condition
  that $\alpha\Gamma\subset \Gamma$. Since $1$ and $\tau$
  generate the lattice $\Gamma$, this in turn is the same as the
  requirement that 
  \begin{equation}
    \label{eq:eq:alphainG2}
    \alpha\in \Gamma \text{ and } \alpha \tau\in \Gamma.
  \end{equation} 
 
  If $G$ is of type $(244)$,
  $(333)$, or  $(236)$, then we can omit the second condition here. Indeed, in these cases  $\tau\Gamma= \Gamma$, and so  $\alpha\in
  \Gamma$ if and only if  $\alpha \tau \in \Gamma$. 

 This discussion shows that  $A$ is $G$-equivariant if and only if $\alpha$ 
 satisfies the conditions as in 
  the statement of the proposition, and $\beta$
  satisfies~\eqref{eq:betainG}.    
  To analyze this latter condition further, we consider several cases
  depending on the type of $G$.  

\smallskip 
If $G$ is of type $(2222)$, then 
we have $\lambda=\pm 1$. Thus
  \eqref{eq:betainG} is true if and only if $2\beta\in \Gamma$. 
  
If $G$ is of type  (244), then  we have $\Gamma=\Z\oplus \Z\iu$ and 
 $\lambda= 1,\iu, -1, -\iu$. Thus
  \eqref{eq:betainG} implies that
     $(1+\iu)\beta\in \Gamma$. 

Conversely, suppose this last condition   is true.  Note that $-\iu\Gamma=\Gamma$ and 
 $(1-\iu)\Gamma\subset \Gamma$.  So we conclude   that  $-\iu(1+\iu)\beta= (1-\iu)\beta\in \Gamma$  and 
  $(1-\iu)(1+\iu)\beta= 2\beta\in \Gamma$. Therefore,  \eqref{eq:betainG}
  holds for  $\lambda=1,\iu, -1, -\iu$.

  If $G$ is of type  (333), then  we have $\Gamma=\Z\oplus \Z\omega$
   and
   $\lambda=1,
  \omega^2,\omega^4$ (recall that $\omega=e^{\pi\iu/3}$). So
  \eqref{eq:betainG} implies that
  $(1-\omega^4)\beta=(1+\omega)\beta\in \Gamma$. 

Conversely, if this last condition is true, then using 
  $\omega\Gamma=\Gamma$ we obtain  
     $\omega^5(1+\omega)\beta=
  (1-\omega^2)\beta\in \Gamma$;  so 
   \eqref{eq:betainG} holds for
  $\lambda=\omega^j$ where $j=0,2,4$.

  Finally, if $G$ is of type (236), then
  $\Gamma=\Z\oplus \Z\omega$ and $\lambda=\omega^j$ where
  $j=0,\dots,5$. Thus \eqref{eq:betainG} implies
  $(1-\omega^5)\beta=(1+\omega^2)\beta\in\Gamma$. Note that
  $1+\omega^2=\omega$, and so  we obtain $\omega\beta\in
  \Gamma$. Since $\omega\Gamma =\Gamma$, this shows that
  $\beta\in \Gamma$. 
  
  Conversely, assume that $\beta\in \Gamma$. Using
  $\omega\Gamma= \Gamma$ again, we conclude that
  $\omega^j\beta\in \Gamma$ for  $j=0,\dots,5$. This implies
  $\beta -\omega^j\beta = (1-\omega^j)\beta\in \Gamma$. Thus
  \eqref{eq:betainG} is satisfied for all $\lambda=\omega^j$
  with  $j=0,\dots,5$.
  
  The claim follows.
\end{proof}

Suppose a Latt\`{e}s map $f\colon \CDach\to \CDach$ is given as
in Theorem~\ref{thm:Lattesstruc}~\ref{item:Lattessruciii}. Then
$f\circ \Theta= \Theta\circ A$, where $\Theta\colon \C\to \CDach$
is induced by a crystallographic group $G$ (not isomorphic to
$\Z^2$) and $A(z) =\alpha z + \beta$ (with $\alpha,\beta\in \C$
and $\abs{\alpha}>1$) is $G$-equivariant. As we discussed, here
we can always assume that $G=\widetilde{G}$ is as in
Theorem~\ref{thm:G_signature}.

One can make another reduction. Namely, we can replace $A$ with
any map $\widetilde A=g\circ A$, where $g\in G$. Indeed, we then
have $\Theta \circ \widetilde A= \Theta \circ A = f\circ \Theta$,
because $\Theta$ is induced by $G$ and so $\Theta\circ
g=\Theta$.
In particular, $\widetilde A$ is also $G$-equivariant and induces
the same Latt\`{e}s map $f$.

One can use this remark to substantially  restrict  the values of the coefficient 
$\beta$ of the map $A$. 

\begin{prop}
  \label{prop:alpha_beta2}
  Let $f\colon \CDach \to \CDach$ be a Latt\`{e}s map as above obtained
  from a map $A\colon \C\to\C$ given by $A(z) = \alpha z +
  \beta$, a crystallographic group $G=\widetilde G$  as in
Theorem~\ref{thm:G_signature}, and a holomorphic map
  $\Theta\colon \C\to \CDach$ induced by $G$. 
  Then by postcomposing $A$ with a suitable
  translation $g\in G_{\textnormal{tr}}\subset G$ we
  can always assume that $\beta$ has one of the following forms: 
  \begin{equation*}    
    \left. 
      \begin{array}{l}
        \beta\in \{0, \tfrac 12, \tfrac 12 \tau , \tfrac 12 (1 + \tau)\}
        \\ \rule{0pt}{2.6ex}    
        \beta\in \{0,\tfrac 12(1+\iu)\}
        \\ \rule{0pt}{2.6ex}    
        \beta\in \{0, \tfrac{1}{3} + \tfrac{1}{3}\omega, 
        \tfrac{2}{3} + \tfrac{2}{3}\omega \}
        \\ \rule{0pt}{2.6ex}    
        \beta=0
      \end{array}
    \right\}
    \text{ when $G$ is of type }
    \left\{\!
      \begin{array}{c} 
        \textnormal{(2222)},
        \\   \rule{0pt}{2.6ex}    
        \textnormal{(244)}, 
        \\ \rule{0pt}{2.6ex}    
        \textnormal{(333)},
        \\ \rule{0pt}{2.6ex}    
        \textnormal{(236).}
      \end{array} 
    \right.
  \end{equation*}
\end{prop}

\begin{proof} As before, we denote by $\Gamma$ the underlying lattice of  $G$.
 Let $\sim$ be the equivalence relation on $\C$ defined by
  $z\sim w$ if and only if $w-z\in \Gamma$ for $z,w\in \C$.
This is  the equivalence relation induced by the action of the
  subgroup of all translations $\Gtr\subset G$. As we have seen, if $g(z)=z+\gamma$ with $\gamma\in \Gamma$, then  $g\in \Gtr$ and we can replace the $G$-equivariant map 
  $A(z)=\alpha z+\beta$ with 
  $$ \widetilde A(z) =(g\circ A)(z)=\alpha z+ (\beta+\gamma).$$  
 This means that we can change $\beta$ to any element $\beta'$ with $\beta'\sim\beta$ without affecting the Latt\`es map $f$.  We now analyze this in combination with the condition  on $\beta$ in Proposition \ref{prop:alph_beta_G}  for the different types of $G$.

  If $G$ is of type $(2222)$, then $2\beta\in \Gamma$
  by Proposition~\ref{prop:alph_beta_G} or equivalently, $\beta\in \frac 12 \Gamma$. As we can replace $\beta$ with  any $\beta'$ satisfying $\beta'\sim \beta$, we  may assume that  
  \begin{equation}\label{eq:betared} 
    \beta = \tfrac12 (k +\ell \tau),\quad \text{where $k, \ell \in \{0,1\}$}. 
  \end{equation}
  So $\beta$ has the desired form. 

  If  $G$ is of type $(244)$, then  by 
  Proposition~\ref{prop:alph_beta_G} the relevant condition 
  is   $(1+\iu)\beta \in \Gamma$. This implies that 
  $$ \beta\in (1+\iu)^{-1} \Gamma=\tfrac12 (1-\iu)\Gamma\sub \tfrac12 \Gamma. $$  So again we may assume that $\beta$ 
 is  as in \eqref{eq:betared} with $\tau=\iu$.  For such $\beta$ we have 
  $$(1+\iu)\beta = \tfrac12 ( (k-\ell)+ (k+\ell)\iu) \in \Gamma$$
  precisely if $k=\ell\in \{0,1\}$. The statement follows in this   
  case.

  If $G$ is of type $(236)$, then by
  Proposition~\ref{prop:alph_beta_G} the condition on $\beta$ is
  $\beta\in \Gamma$, or equivalently $\beta\sim 0$. This means
  that we can always take $\beta=0$ in this case.
   
  Finally, if $G$ is of type $(333)$, then by
  Proposition~\ref{prop:alph_beta_G} the relevant condition is
  $(1+ \omega)\beta \in \Gamma$, where $\omega= e^{\pi\iu/3}$ and
  $\Gamma= \Z\oplus \Z\omega$.  Since $\omega\Gamma=\Gamma$ and
  $(1+\omega)(1+\omega^5)=3$, this implies
  $$ \beta \in (1+\omega)^{-1}\Gamma=\tfrac13 (1+\omega^5)\Gamma\sub \tfrac13 \Gamma.$$ 
  Hence we may assume that $\beta$ has the form
  \begin{equation*}
   \beta 
   = 
   \tfrac13 (k +\ell \omega),
   \quad 
   \text{where $k,\ell\in \{0,1,2\}$}.   
  \end{equation*}
  If we use $\omega^2=\omega-1$, we see that for such $\beta$ we
  have
  \begin{equation*}
    (1+\omega)\beta
    = 
    \tfrac13 ((k-\ell)+(k+2\ell)\omega)\in \Gamma
  \end{equation*}
  precisely if $k=\ell\in \{0,1,2\}$. Again $\beta$ can be given
  the desired form.
\end{proof}

\section{Latt\`{e}s-type maps}
\label{sec:lattes-type-maps}
\index{Latt\`{e}s-type map}

We now consider Latt\`{e}s-type maps $f\: S^2 \ra S^2$ as in  
Definition~\ref{def:Lattestype}. 
If $f$ is such a map,  then there exists a crystallographic group $G$ acting on $\R^2\cong \C$,
a $G$-equivariant  affine map $A\: \R^2 \ra \R^2$,  and a branched covering map 
$\Theta\:\R^2 \ra S^2$ induced by $G$ such that $f\circ
A=\Theta\circ A$. Then  $f$ is
continuous (see Lemma~\ref{lem:f_desc_cont}). 
Since $\Theta$ is induced by $G$, the quotient space $\R^2/G$ is homeomorphic to $S^2$ which implies that $G$ is not isomorphic to $\Z^2$.  

In explicit constructions of Latt\`{e}s-type maps one usually
turns this around and starts with a crystallographic group $G$
not isomorphic to $\Z^2$ and an $G$-equivariant affine map
$A\: \R^2\ra \R^2$. Then $S^2=\R^2/G$ is a topological
$2$-sphere and the quotient map
$\Theta\: \R^2\ra \R^2/G\cong S^2 $ is a branched covering map
induced by $G$. The $G$-equivariance of $A$ ensures that this
map descends to the quotient $\R^2/G\cong S^2$ and so there
exists a continuous map $f\: S^2\ra S^2$ such that
$f\circ A=\Theta\circ A$ (see Lemma~\ref{lem:f_groupdescend}). As the considerations below will show,
the additional condition that the linear part $L_A$ of $A$ (see
\eqref{eq:affine}) satisfies $\det(L_A)>1$ ensures that $f$ is a
Thurston map.

Indeed,  let $\Gtr$ be the subgroup of translations in $G$. We know that then 
$T^2=\R^2/\Gtr$ is a (topological) $2$-torus. We denote by  $\pi\: \R^2 \ra T^2=\R^2/\Gtr$   the quotient map.

The argument in the proof of the implication 
\ref{item:Lattessruciii} 
  $\Rightarrow$ \ref{item:Lattesruciv} in 
  Theorem~\ref{thm:Lattesstruc} (see Section~\ref{sec:cryst-groups-latt})  shows that  $A$ and $\Theta$ descend to maps 
  $\overline A$ and  $\overline \Theta$ on $T^2$. In  this proof the maps were assumed to be holomorphic, but this played no role in the existence proof for  
 $\overline A$ and  $\overline \Theta$. 
  So we obtain continuous maps $\overline A\: T^2\ra T^2$ and $\overline \Theta\: T^2\ra S^2$ such that 
  $\overline A \circ \pi= \pi \circ A$ and $\Theta=\overline \Theta \circ \pi$. Note that as a composition of the covering map $\pi
  \: \R^2 \ra T$ with the homeomorphism $A\: \R^2 \ra \R^2$, the map $ \pi \circ A\: \R^2 \ra T^2$ is  a covering map. 
This combined with 
the last relations implies  that  $\overline A$ and   $\overline
\Theta$ are branched covering maps (see
Lemma~\ref{lem:2_3_branched}~\ref{item:2_out_3_2}).
As we discussed, it follows that $\overline A$ is a (topological) torus endomorphism. 

Similar to \eqref{eq:holom_qu_diagram}, one can summarize the relations of these maps in the following commutative diagram: 

\begin{equation}
  \label{eq:Athetafpi34}
  \xymatrix{
    \R^2 \ar[r]^{A} \ar[d]_{\pi} \ar@/_2pc/[dd]_{\Theta} 
    & \R^2 \ar[d]^{\pi}\ar@/^2pc/[dd]^{\Theta}
    \\
    T^{\smash{2}} \ar[r]^{\overline{A}}\ar[d]_{\overline{\Theta}}  & T^{\smash{2}}\ar[d]^{\overline{\Theta}}
    \\
    S^2 \ar[r]^{f} & S^2\rlap{.}
  }
\end{equation}

Since $\overline \Theta$ and $\overline \Theta \circ \overline A$
are branched covering maps, and $f\circ \overline
\Theta=\overline \Theta\circ \overline A$, the map $f$ is also a branched covering map (see   Lemma~\ref{lem:2_3_branched}~\ref{item:2_out3_1} and~\ref{item:2_out_3_2}).   

It is easy to see that   $\deg(f)=\deg(\overline A)$ (see the beginning of the proof of Lemma~\ref{lem:simobserv}), but 
the degree of $f$  can also  be computed from $A$. 

\begin{lemma}\label{lem:deglatttype} 
Let $f\: S^2\ra S^2$ be a Latt\`es-type map, and suppose  
 $A\: \R^2\ra \R^2$ is  an affine map and $\overline A$  a torus endomorphism as in \eqref{eq:Athetafpi34}. Let $L_A$ be the linear part of $A$. 
 Then $\deg(f)=\deg(\overline A) =\det(L_A)$.  

In particular, if $f\: \CDach \ra \CDach$ is a Latt\`es map and $A(z)=\alpha z+\beta$ is as in Theorem~\ref{thm:Lattesstruc}~\ref{item:Lattessruciii}, then $\deg(f)=|\alpha|^2$.
\end{lemma} 

\begin{proof}
  Let $f\:S^2\ra S^2$ be a Latt\`es-type map, and suppose $G$ is
  a crystallographic group and $A$ an affine map as in
  Definition~\ref{def:Lattestype}. Then we have a commutative
  diagram as in \eqref{eq:Athetafpi34} and we know that
  $\deg(f)=\deg(\overline A)$. So we have to verify that
  $\deg(\overline A)=\det(L_A)$. Essentially, this follows from  standard facts about  degrees of torus endomorphisms as discussed in more detail in  Section~\ref{sec:applifttorend}.

  Indeed, let $\Gamma\sub \R^2\cong \C$ be  the underlying lattice of $G$.
  Then $G_{\text{tr}}$ consists of all translations of the form $u\in \R^2\mapsto 
  \tau_\gamma(u)\coloneqq u+\gamma$, where $\gamma\in \Gamma$. 
  Accordingly, we can identify the torus
  $T^2=\R^2/G_{\text{tr}}$ with the quotient $\R^2/\Gamma$ and can think of the lattice $\Gamma$ as representing 
  the fundamental group of $T^2$ (see the discussion after Lemma~\ref{lem:torilifts}). 

 Now  an elementary computation shows that 
  \begin{equation*}
    A\circ \tau_\ga\circ A^{-1} = \tau_{L_A(\ga)} 
  \end{equation*}
  for each $\gamma\in \R^2$, and in particular for each $\gamma\in \Gamma$. Since $A$ is a lift of $\overline A$ to $\R^2$, it follows that  $L_A$ is 
  the unique map induced by $\overline A$ on the fundamental group $\Gamma$ of $T^2$
  (see Lemma~\ref{lem:torilifts}~\ref{item:tori3}).
  Now  Lemma~\ref{lem:torilifts}~\ref{item:tori4} implies  
    $\deg(\overline{A})= \det(L_A)$ as desired. 

%
 
%

If $f$ is a Latt\`es map and $A(z)=\alpha z+\beta $ 
as in Theorem~\ref{thm:Lattesstruc}~\ref{item:Lattessruciii}, 
then in complex notation $L_A(z)=\alpha z$ for $z\in \C$. 
For the determinant of $L_A$ considered as a $\R$-linear map, we have $\det(L_A)=|\alpha|^2$. So it  follows from the first part of the proof that $\deg(f)=\det(L_A)=|\alpha|^2$ as claimed.  
 \end{proof}

 \begin{proof} [Proof of Proposition~\ref{prop:immediate}]
 Let  $f\: S^2\ra S^2$ be  a Latt\`es-type map, and $G$, $A$, and $\Theta$   be as  in Definition~\ref{def:Lattestype}. Then we have a diagram as in 
 \eqref{eq:Athetafpi34}. Here $\det(L_A)>1$ by assumption which by 
 Lemma~\ref{lem:deglatttype}  translates into 
 $\deg(f)=\deg(\overline A)=\det(L_A)\ge 2$. We conclude that 
 $f$ is a quotient of a torus endomorphism 
 (see Definition~\ref{def:quotient_torus}). 
 
 So we can apply  
Lemma~\ref{lem:simobserv} and it follows that $f$ is a Thurston map 
without periodic critical points. It remains to show that $f$ has a parabolic orbifold.

For this we verify the criterion in Lemma~\ref{lem:paratorusquot} with the branched 
covering map $\overline \Theta\: T^2\ra S^2$ as provided by  \eqref{eq:Athetafpi34}. So suppose $x,y\in T^2$ and 
$\overline \Theta(x)=\overline \Theta(y)$. Since $\pi\: \R^2\ra T^2$ is
surjective, there exist $u,v\in \R^2$ with 
$\pi(u)=x$ and $\pi(v)=y$. Then 
$$ \Theta(u)=(\overline \Theta \circ \pi)(u)=
\overline \Theta (x)= \overline \Theta (y)= (\overline \Theta \circ \pi)(v)=
 \Theta(v). $$ 
 Since $\Theta$ is induced by the crystallographic group $G$, there exists $g\in G$ with $v=g(u)$. Now $\Theta=\Theta \circ g$ and $$\deg(\pi, u)=\deg(\pi, v)=\deg(g, u)=1. $$ We conclude that  
 \begin{align*}
\deg(\overline \Theta , y)&=\deg(\overline \Theta , y)\cdot \deg(\pi, v)=
\deg(\Theta, v)\\
&=   \deg(\Theta, v)\cdot \deg(g, u) = \deg(\Theta, u) \\
&= \deg(\overline \Theta , x)\cdot \deg(\pi, u)=\deg(\overline \Theta , x)
  \end{align*}
  as desired. 
 \end{proof} 
 
 \begin{cor} \label{cor:orbofLattty}
 Let $f\: S^2\ra S^2$ be a Latt\`es-type map with a 
 crystallographic group $G$ and a branched covering map $\Theta\: \R^2\ra S^2$ induced by $G$ as in Definition~\ref{def:Lattestype}, 
 and let $\mathcal{O}_f=(S^2, \alpha_f)$ be the  associated orbifold of $f$. Then 
 for $u\in \R^2$ we have $$ \alpha_f(\Theta(u))=\deg(\Theta,u)=\#G_u.$$ 
   \end{cor}
 
 \begin{proof} Let $u\in \R^2$. Then  $\deg(\Theta, u)=\#G_u$ for $u\in \R^2$  as follows from the second equality in \eqref{eq:degT_orderG} and the uniqueness statement in Proposition~\ref{prop:G_yields_T} (it is also easy to see this directly). 
 
 If we use the notation as in  the previous proof, then the considerations there show  that  if $p\coloneqq \Theta(u)$, then the degree of $\overline \Theta$ in each point of  the fiber
  $ \overline \Theta^{-1}(p)$ is the same and is equal to 
   $\deg(\Theta, u)$. So by Lemma~\ref{lem:simobserv}~\ref{item:simobiii} we also have $\alpha_f(\Theta(u))=\deg(\Theta, u)$.  
    \end{proof} 
  
We know that the  type of a crystallographic group $G$ is determined by the orders of the point stabilizers $\#G_u$, $u\in \R^2$. Moreover, if $\Theta\: \R^2\ra S^2$ is induced by $G$, then the  map $Gu\in \R^2/G\mapsto \Theta(u)
\in S^2$ is a bijection (it is actually a homeomorphism; see the discussion after Proposition~\ref{prop:G_yields_T}). So it  follows from the corollary 
 that the  signature of 
  $\mathcal{O}_f$  corresponds to the type of the crystallographic
   group $G$. For example, if $G$ is of type $(2222)$, then the signature of  	$\mathcal{O}_f$ is $(2,2,2,2)$. 
   
   Of course, the corollary also applies when  $f\: \CDach\ra \CDach$   is a Latt\`es map and $\Theta\: \C\ra \CDach$ is a holomorphic map induced by $G$. Then 
   the statement shows that $\alpha_f=\alpha$, where $\alpha$ is as in Proposition~\ref{prop:G_yields_T}, and that   $\Theta$ is the (holomorphic) universal orbifold 
   covering map of $(\CDach, \alpha_f)$.

Since a Latt\`{e}s-type map has parabolic orbifold and no periodic critical points, we know by
Proposition~\ref{prop:parabolicOf} that  the orbifold of each  such map   has one of
the signatures $(2,2,2,2)$, $(2,4,4)$, $(3,3,3)$, or
 $(2,3,6)$ (this also follows from Corollary~\ref{cor:orbofLattty}).  The last three cases  lead to nothing new and  essentially give   Latt\`es maps.   
 
 \begin{prop} \label{prop:rigidlattestype}
  Let $f\: S^2 \ra S^2$ be a Latt\`es-type map with orbifold signature
  $(2,4,4)$, $(3,3,3)$, or $(2,3,6)$. Then $f$ is topologically
  conjugate to a Latt\`es map. 
  \end{prop}
 
 To prove this statement we need a lemma that gives a criterion
 when an $\R$-linear map $L\:\C\ra \C$   is $\C$-linear.  Here
 the  $\R$-linearity or  $\C$-linearity for $L$ 
 of course means that $L(z+w)=L(z)+L(w)$ and $L(\lambda
 z)=\lambda L(z)$ for all $z,w\in \C$ and all $\lambda\in \R$ or
 all $\lambda\in \C$, respectively.   
 
   \begin{lemma} \label{lem:RlinClin} Let $L\: \C\ra \C$ be an $\R$-linear map with 
  $\det(L)>0$. Suppose there exist $\zeta\in \C\setminus \R$ and $\eta \in \C$ with $L(\zeta z)=\eta L(z)$ for all $z\in \C$. Then $L$ is $\C$-linear.
  \end{lemma}

  \begin{proof} Since $L$ is $\R$-linear, there exist unique numbers $a,b\in \C$ such that $L(z)=az+b \overline z$ for $z\in \C$. Then the determinant of $L$ (as an $\R$-linear map) is given by $\det(L)=|a|^2-|b|^2>0$. It follows that  $a\ne 0$. 
  
  Now for all $z\in \C$ we have 
  $$ L(\zeta z)=\zeta a z +\overline \zeta b \overline z=\eta L(z)=\eta az +\eta b \overline z, $$ and so 
  $$ \zeta a=\eta a \quad \text{and} \quad \overline \zeta b= \eta b. $$
  Since $a\ne 0$, the first equation implies  $\zeta=\eta$. Then the second equation combined with the fact that $\zeta\notin \R$ gives  $b=0$. 
  Hence $L(z)=az$ for $z\in \C$. This shows that $L$ is $\C$-linear. 
  \end{proof}

  \begin{proof}[Proof of Proposition~\ref{prop:rigidlattestype}]
    We know that there exists a crystallographic group $G$, a branched covering map $\Theta\: \R^2 \ra S^2$ induced by $G$, and  a $G$-equivariant affine homeomorphism
    $A\:\R^2 \ra \R^2$ with $\det(L_A)>1$ such that $f$ arises as
    in \eqref{eq:def_lattes_type}.  By conjugation with a
    suitable map in $\Aut(\C)$, we may assume that $G$ is one of
    the groups $\widetilde G$ listed in
    Theorem~\ref{thm:G_signature} (see the discussion preceding
    \eqref{eq:LattesGnotunique}).  Then $G$  is not isomorphic to $\Z^2$, because 
    $\C/G\cong S^2$.

     It follows from 
    Proposition~\ref{prop:G_yields_T} that we can find a
    homeomorphism $\varphi\: S^2 \ra \CDach$ such that
    $\varphi\circ \Theta\: \R^2 \cong \C \ra \CDach$ is a
    holomorphic map. So if we replace the original map $\Theta$ with  $\varphi\circ \Theta$ and $f$ with its conjugate 
    $\varphi \circ f \circ \varphi^{-1}$, then we are further reduced to the case that $S^2=\CDach$ and that $\Theta$ is holomorphic (a related argument was given in the beginning of Section~\ref{sec:clas-latt-maps}).  
    It is enough to show that then   $f\: \CDach \ra \CDach$ is a Latt\`es map.
    
   By assumption the signature of the orbifold of $f$ is $(2,4,4)$,
    $(3,3,3)$, or $(2,3,6)$. By Corollary~\ref{cor:orbofLattty}
    this means that $G$ is a crystallographic group $\widetilde G$ of type
    $(244)$, $(333)$, or $(236)$ in Theorem~\ref{thm:G_signature}.  In these cases, $G$ contains a rotation $g_0$
    of the form $z\in \C\mapsto g_0(z)=\zeta z$, where
    $\zeta=e^{2\pi \iu/n}$ is a
    primitive $n$-th root of unity with $n\in \{3,4,6\}$.  In
    particular, $\zeta \in \C\setminus \R$. Since the homeomorphism $A$ passes to the
    quotient $\widehat \C\cong \C/G$, this map is $G$-equivariant
     (Lemma~\ref{lem:f_groupdescend})
    and so there exists 
     $g_1\in G$ 
    such that
    \begin{equation} 
      \label{eq:AghA} 
      A\circ g_0= g_1\circ A. 
    \end{equation} 
    Let $L=L_A$ be the linear part of $A$. This is
    an $\R$-linear map satisfying $\det(L)>0$. Since $G$ consists of
    orientation-preserving isometries, the linear part of $g_1$ is $\C$-linear, as for
    every map in $G$. Comparing linear parts of the
    maps in \eqref{eq:AghA}, we see that there exists $\eta \in \C$,
    $|\eta|=1$, such that
    $$ L(\zeta z)=\eta L(z)$$ 
    for all $z\in \C$. This shows that $L$ satisfies the hypotheses of
    Lemma~\ref{lem:RlinClin} and we conclude that $L$ is $\C$-linear. 
    Hence $A$ can be written in the form $A(z)=\alpha z+\beta$ for $z\in \C$, where
    $\alpha, \beta\in \C$, $\alpha\ne 0$. In particular, $A$ is holomorphic, and it follows 
    that $f$ is indeed a Latt\`es map.
\end{proof}

By Proposition~\ref{prop:rigidlattestype} only Latt\`{e}s-type
maps whose orbifolds have signature $(2,2,2,2)$ give genuinely
new maps beyond Latt\`es maps.  We summarize some facts about
these maps in the following discussion. For specific maps see
Examples~\ref{ex:non-expanding-lattes} and~\ref{ex:notattained}.
 
\begin{ex}[Latt\`{e}s-type maps with signature
  $(2,2,2,2)$]
  \label{ex:lattes_type}
  We know that each such Latt\`{e}s-type map arises from a
  crystallographic group $G$ of type $(2222)$ (see
  Corollary~\ref{cor:orbofLattty}). In this case, it is elementary
  to check that the isometries in $G$ remain isometries on $\R^2$
  not only if we conjugate them by an isometry on $\R^2$, but
  even if we conjugate them by an affine homeomorphism
  $h\colon \R^2\to \R^2$. It follows that the class of
  crystallographic groups $G$ of type $(2222)$ is preserved under
  conjugation by such a homeomorphism $h$.  Similarly, the class
  of orientation-preserving affine homeomorphisms
  $A\colon \R^2\to \R^2$ is preserved under conjugation by
  $h$. By the discussion preceding \eqref{eq:LattesGnotunique},
  we may therefore assume in the construction of Latt\`es-type
  maps with orbifold signature $(2,2,2,2)$ that the underlying
  lattice $\Gamma$ of $G$ is equal to the integer lattice
  $\Gamma=\Z^2$ and that the group $G$ consists of all isometries
  $g\: \R^2 \ra \R^2$ of the form
  \begin{equation}
    \label{eq:specisom} 
    u\in \R^2\mapsto g(u)=\pm u+\ga, 
    \quad 
    \text{where  $\ga \in \Gamma=\Z^2$.}
  \end{equation} 

  The quotient space $\R^2/G$ is a $2$-sphere $S^2$. Indeed,  one can
  identify $\R^2/G$ with a pillow $\Delta$ (see Section~\ref{sec:expratThmaps}) in the following way  (the ensuing discussion is closely related to the  more general considerations in Section~\ref{sec:paraorbcov}). Let
$$R\coloneqq [0,1]\times [0,1/2], $$ 
$$S\coloneqq [0,1/2]\times [0,1/2], \quad \text{and} \quad
S'\coloneqq [1/2,1]\times [0,1/2]. $$ Then $R=S\cup S'$ is a fundamental
domain (see Section~\ref{sec:appquotmaps}) for the action of $G$ on $\R^2$, i.e., $R$ contains a
representative from every orbit, and this representative is unique in
$R$ if it lies in the interior of $R$. So $\R^2/G$ is obtained from
$R$ by identifying certain points on the boundary of $R$. For this one
folds the rectangle $R$ along the line $\ell=\{(x,y)\in \R^2: x=1/2\}$
and identifies corresponding points on the boundaries of the two
squares $S$ and $S'$ under this folding operation; so for example the point
$(0, t)\in\R^2 $ is identified with $(1,t)\in \R^2$ for $t\in [0,1/2]$. The pillow $\Delta$
 obtained in this way is our quotient space $\R^2/G$.
 
 Let $\Theta$ be the map that sends the square $S\sub \R^2$ by the identity map
 to $S\sub \Delta$. This maps extends by successive reflections
 in a natural way  to a continuous  map $\Theta\: \R^2 \ra \Delta$. If $g\in G$, then   
  $\Theta$  maps $g(S)$ by an isometry  to one side of $\Delta$, and $g(S')$
 to the other side of $\Delta$. Note that $\Theta$ is the same map as in Section~\ref{sec:Lattes} and corresponds to the quotient map $\R^2\ra \R^2/G$  under the identification $ \Delta\cong  \R^2/G$.

The map $\Theta$ is induced by $G$, and from the geometric description one easily sees that  $\Theta\: \R^2\ra  \Delta\cong \R^2/G$ is a branched covering map (this also follows from Proposition~\ref{prop:G_yields_T}). Its critical points are the corners of the squares 
 $g(S)$ and $g(S')$, $g\in G$, i.e., the points in 
 $\frac12 \Z^2$. We have  $\deg(\Theta,u)=\#G_u=2$
 for $u\in \frac12 \Z^2$ and $\deg(\Theta,u)=\#G_u=1$ for $u\in \R^2 \setminus \frac12 \Z^2$. Note that the set 
 $\Theta(\frac12 \Z^2)$ consists precisely of the four corners of $\Delta$. We define a ramification function $\alpha$ on $\Delta$ that assigns the value $2$ to each of these corners, and $1$ to all other points of $\Delta$. In this way we obtain 
an orbifold   $(\Delta, \alpha)$  with signature $(2,2,2,2)$. If we use some conformal identification $\Delta \cong \CDach$ (as discussed in Section~\ref{sec:Lattes}), then  
 $\Theta\: \R^2\cong \C  \ra \Delta \cong\CDach $ is the (holomorphic)  universal orbifold covering map of this orbifold (see Theorem~\ref{thm:orbunifparacas}). 

Now let $A(u)=L_A(u)+u_0$, $u\in \R^2$,  be an 
orientation-preserving affine homeomorphism. Then $A$ induces a map on the
quotient $\R^2/G$ if and only if $A$ is $G$-equivariant, or equivalently 
if $A\circ g\circ A^{-1}\in G$ for each $g\in G$ (see Lemma~\ref{lem:f_groupdescend}). 
Exactly as in Proposition~\ref{prop:alph_beta_G}, this is the
case if and only if $2u_0\in \Gamma$ and  $L_A(\Gamma)\sub
\Gamma$.  

Since $\Gamma=\Z^2$ the latter condition is true precisely if the matrix representing $L_A$ with
respect to the standard basis in $\R^2$ has integer coefficients.  So we
conclude that the homeomorphism $A$ induces a map on $S^2=\R^2/G$ precisely if $A$ has
the form
\begin{equation}\label{eq:Lattestype2222} 
  A (u)  
= \left(\begin{array}{cc} a &b\\ c& d  \end{array} \right)\left(\begin{array}{c} x\\ y  \end{array} \right) +\frac 12 
\left(\begin{array}{c} x_0\\ y_0  \end{array} \right)  \text{ for } u= \left(\begin{array}{c} x\\ y  \end{array} \right)
\in \R^2, 
\end{equation} 
where $a,b,c,d,x_0,y_0\in \Z$ and $\det(L_A)=ad-bc\ge 1$.
Here the last inequality follows from our assumption that $A$ is orien\-tation-preserving. 

Let $f\: S^2\ra S^2$ be the branched covering   map 
induced by $A$ as in \eqref{eq:Lattestype2222}.  
If $\det(L_A)=1$, then $A^{-1}$ also is of the form \eqref{eq:Lattestype2222}, and so $f$ has 
a continuous  inverse induced  by $A^{-1}$. In this case $f\: S^2 \ra S^2$ is  a homeomorphism.

If $\det(L_A)\ge 2$, then $\deg(f)=\det(L_A)\ge 2$ by Lemma~\ref{lem:deglatttype}. 
Then $f$  is a  Latt\`es-type map, and we know that in this case   the orbifold of $f$ has   signature $(2,2,2,2)$ (see 
Corollary~\ref{cor:orbofLattty}).

In the construction above, we can also  use some other branched covering map $\Theta\: \R^2\ra S^2$ induced by $G$. 
As follows from the uniqueness part in Proposition~\ref{prop:G_yields_T}, this  leads to the same class of Latt\`es-type maps up to topological conjugacy.  
\end{ex}

By  the previous discussion we established   the following statement.
\begin{prop}\label{prop:2222} Let $G$ be the group
consisting  of all iso\-me\-tries  $g\: \R^2 \ra \R^2$ of the form 
\eqref{eq:specisom}, 
and 
$A\:\R^2\ra \R^2$  be an affine orien\-tation-preserving homeomorphism    as in 
\eqref{eq:Lattestype2222}.

Then $A$ descends to a map $f\: S^2\ra S^2$ on the quotient $\R^2/G \cong S^2$, i.e., if $\Theta\: \R^2\ra \R^2/G\cong S^2$ is the quotient map, then $\Theta\circ A=f\circ \Theta$.
 
 If $ad-bc=1$, then $f$ is a homeo\-morphism that is orientation-preserving. If $ad-bc\ge 2$, then $f$ is a 
 Latt\`es-type map whose orbifold has signature $(2,2,2,2)$. 

Moreover, every  Latt\`es-type map with  orbifold  signature $(2,2,2,2)$ is topologically conjugate to a map $f$ obtained in this way. 
\end{prop}

An obvious question is when a Latt\`es-type map $f$ is Thurston equivalent to a rational map. This is always true if the signature of $\mathcal{O}_f$  is equal to   $(2,4,4)$, $(3,3,3)$, or $(2,3,6)$, because then $f$ is even conjugate to a Latt\`es map (Proposition~\ref{prop:rigidlattestype}). 

If  $\mathcal{O}_f$  has signature $(2,2,2,2)$, the question is
answered by the following statement which can be seen as
complementary to Thurston's characterization of rational Thurston maps with hyperbolic orbifold
(Theorem~\ref{thm:Thurston}). 

\begin{theorem}
[Rationality of Latt\`es-type maps]
  \label{thm:Thurston_para}
  \index{Thurston!obstruction}
  Let $f\colon S^2\to S^2$ be a Latt\`{e}s-type map with orbifold signature
  $(2,2,2,2)$ and  $A\colon
  \R^2\to \R^2$ be an affine map as in Definition~\ref{def:Lattestype}  with linear part $L_A$. Then $f$ is Thurston
  equivalent to a rational map if and only if $L_A$ is a real multiple of
  the identity map on $\R^2$  or the eigenvalues of $L_A$ belong to  $\C\setminus \R$. 
\end{theorem}
We will not prove this  here, but refer to 
\cite[Proposition~9.7]{DH} for an essentially equivalent statement. 

In the final part of  this section we provide the  proof of Proposition~\ref{prop:noperparaLTM} that 
characterizes  Latt\`es-type maps up to 
Thurston equivalence.

  \begin{proof}[Proof of Proposition~\ref{prop:noperparaLTM}.] 
  Suppose first that $f$ is a Thurston map that is Thurston equivalent to 
  a   Latt\`es-type map $g$. Then $f$ and $g$ have the same orbifold signature
  (Proposition~\ref{prop:Thequivsamesig}). Since $g$ has a parabolic orbifold and no periodic critical points by Proposition~\ref{prop:immediate}, the same is true for the map $f$ as follows from Propositions~\ref{prop:parabolicOf} and ~\ref{prop:otherramprops}~\ref{item:rami_infty}.

  For the converse direction, suppose that $f$ is a Thurston map with parabolic orbifold $\mathcal{O}_f$ and no periodic critical points. We know that then the signature of 
  $\mathcal{O}_f$ is $(2,2,2,2)$, $(2,4,4)$, $(2,3,6)$, or $(3,3,3)$. 
  
  In the last three cases the map has precisely three 
  postcritical points. As we will see later (Theorem~\ref{thm:3postrat}), every 
  Thurston map $f$ with three postcritical points is Thurston equivalent to a rational map $R$. Then the signatures 
  of the orbifolds of $f$ and $R$ are the same 
  (Proposition~\ref{prop:Thequivsamesig}), and so $R$ is a  Thurston map with a parabolic orbifold $\mathcal{O}_R$ and no periodic critical points. By 
  Definition~\ref{def:Lattes} the map $R$ is a Latt\`es map, and the statement follows in this case.   
   
  So we are left with the case where  $\mathcal{O}_f$ has signature $(2,2,2,2)$. We may assume that $f$ is defined on $\CDach$. Let $\alpha=\alpha_f$ be the ramification function of $f$,  and $\Theta\: \C\cong \R^2\ra  \CDach$ be the holomorphic  universal orbifold covering map of $\mathcal{O}_f=(\CDach,\alpha)$ as provided by Theorem~\ref{thm:orbunifparacas}.  
  The group $G$ of deck transformations of $\Theta$ is a
  crystallographic group of  type $(2222)$.  
  
 Replacing $\Theta$ with $\Theta\circ h$  for suitable $h\in \Aut(\C)$
 if necessary (which changes the deck transformation group to $h^{-1}\circ G \circ h$), we may assume that $G$ is equal to a group $\widetilde G$ of type $(2222)$ as  in Theorem~\ref{thm:G_signature}. If  $\Gamma$ is  the underlying lattice of $G=\widetilde G$, then 
  $G$ consists precisely of the isometries on $\R^2$ of the form 
  $u\mapsto \pm u+\ga$, $\ga\in \Gamma$.  We also consider the torus 
   $T^2=\R^2/\Gtr=\R^2/\Gamma$ and the quotient map $\pi: \R^2\ra \R^2/\Gamma$.
  
   We now repeat part of the arguments for the proof of the  
  implications  \ref{item:Lattessrucii} 
  $\Rightarrow$ \ref{item:Lattessruciii} and \ref{item:Lattessruciii} 
  $\Rightarrow$ \ref{item:Lattesruciv}  in 
  Theorem~\ref{thm:Lattesstruc}.
 The difference is that we do not have holomorphicity of the maps involved, but this  is mostly inessential.  First, the parabolicity of $\mathcal{O}_f$ in combination with the uniqueness part of Theorem~\ref{thm:orbunifparacas} implies that there exists an orientation-preserving  homeomorphism $A\: \R^2\ra \R^2$ such that $f\circ \Theta =\Theta \circ A$. 
Again $A$ is $G$ and $\Gtr$-equivariant. This allows us to push the map 
$A$ to $T^2$, and we obtain 
 a diagram as in \eqref{eq:Athetafpi34} (with $S^2=\CDach$), where 
 $\overline \Theta\: T^2\ra \CDach$ is a branched covering map and $\overline A\: T^2\ra T^2$ is a torus endomorphism. 
 
 In particular, $f$ is a quotient of the torus endomorphism $\overline A$ 
 and so by Lemma~\ref{lem:simobserv}~\ref{item:simobii} the set $\post(f)$ is equal to the set of 
 critical values of $\overline \Theta$. Since $\Theta =\overline \Theta\circ\pi$ and $\pi$ is a covering map, the set of critical values of  $\Theta$ and 
$ \overline \Theta$ are the same. Now 
$$ \crit(\Theta)=\{ u\in \R^2: \#G_u=2\}=\tfrac12 \Gamma$$ and it follows that 
\begin{equation} \label {eq:postfGa}
\post(f)=\Theta\big(\tfrac12 \Gamma\big).
\end{equation} 

By Lemma~\ref{lem:torilifts}~\ref{item:tori3} there exists a linear map   $L\: \R^2\ra \R^2$ (essentially 
the map induced by $\overline A$ on the fundamental group $\Gamma$ of $T^2$)  with $L(\Gamma)\sub \Gamma$ 
such that 
\begin{equation} \label{eq:AtauLrel} 
A\circ \tau_\ga \circ A^{-1} =\tau_{L(\ga)}
\end{equation}
for $\ga\in \Gamma$, where $\tau_\gamma$ denotes the translation $u\in \R^2\mapsto \tau_\ga(u)\coloneqq u+\ga$. This  relation can be rewritten as
\begin{equation} \label{eq:AtauLrel2} 
A(u+\ga)=A(u)+L(\ga) \quad\text{for each $u\in \R^2$, $\ga \in \Gamma$.} 
\end{equation}

By  Lemma~\ref{lem:torilifts}~\ref{item:tori4} we have $\det(L)=\deg(\overline A)=\deg(f)\ge 2$, and so $L$ is an 
orientation-preserving linear homeomorphism on $\R^2$. 

The $G$-equivariance of $A$ also implies that for suitable sign and $\ga_0\in \Gamma$ we have
$ A(-u)=\pm A(u)+\gamma_0$ for each $u\in \R^2$. Here we necessarily have the negative sign on the right hand side; otherwise, by setting $u=0$ we obtain $\ga_0=0$ and $A(u)=A(-u)$ for $u\in \R^2$. This is impossible, because
$A$ is a homeomorphism and hence injective. 

It follows that $ A(-u)= -A(u)+\gamma_0$ for $u\in \R^2$;  setting $u=0$ we see that 
$\gamma _0= 2A(0)$. 
We conclude that 
\begin{equation} \label{eq:AtauLrel3} 
A(0)\in \tfrac12 \Gamma 
\end{equation}
and 
\begin{equation} \label{eq:AtauLrel4}
A(-u)=-A(u)+2A(0) \text{ for each $u\in \R^2$.} 
\end{equation}

Now we define 
$$ \widetilde A(u)=L(u)+A(0)$$ 
for $u\in \R^2$. Then  $ \widetilde A$ is an orientation-preserving affine 
homeomorphism whose linear part $L$ satisfies 
$\det(L)\ge 2$. Moreover, the map  $\widetilde A$ satisfies  relations as in 
\eqref{eq:AtauLrel2} and \eqref{eq:AtauLrel4}. This together with the fact 
$\widetilde A(0)=A(0)\in \tfrac12 \Gamma $ implies that $ \widetilde A$ is 
$G$-equivariant. It follows that there exists a Latt\`es-type map $g\: \CDach 
\ra \CDach$ with $g\circ \Theta= \Theta\circ \widetilde A$. 

We claim that $f$ and $g$ are Thurston equivalent. To see this, consider 
the orientation-preserving homeomorphism $B\coloneqq \widetilde A^{-1}\circ A$
on $\R^2$.
Note that $\widetilde A^{-1}(u)=L^{-1}(u-A(0))$ for $u\in \R^2$. 
This and the relations \eqref{eq:AtauLrel2} and \eqref{eq:AtauLrel4} imply that
\begin{equation}\label{eq:Beqvar} 
B(\pm u+\ga)=\pm B(u)+\ga \text{ for each $u\in \R^2$, $\ga \in \Gamma$.} 
\end{equation}
  In particular, $B$ is $G$-equivariant and so there exists 
a continuous map $h\: \CDach\ra \CDach$ such that 
\begin{equation}\label{eq:hBrel}
h\circ \Theta =\Theta\circ B.
\end{equation} 

Note that 
$$ g\circ h\circ \Theta= g \circ \Theta\circ B= \Theta \circ \widetilde A \circ B=
\Theta\circ A= f\circ \Theta. $$
Since $\Theta\: \R^2\ra \CDach$ is surjective, we conclude that 
$ g\circ h=f. $
So the Thurston equivalence of $f$ and $g$ will follow if we can show that 
$h$ is a homeomorphism that is isotopic to  $\id_{\CDach}$ rel.~$\post(f)$.

It is easy to see that the inverse map $B^{-1}$ satisfies a similar relation as in \eqref{eq:Beqvar}. Hence $B^{-1}$ is also $G$-equivariant and descends to a continuous map on $\CDach$. This map is an inverse map for $h$, and it follows that $h$ is a homeomorphism. Since $B$ is orientation-preserving, the same is true for $h$ (this easily follows from 
\eqref{eq:hBrel} and Lemma~\ref{lem:23orient}). 

Using  \eqref{eq:Beqvar} with the negative sign and $u=\frac12 \gamma$, 
one sees that   
$$ B(\tfrac 12 \ga)=\tfrac 12\ga \text{ for each $\ga \in \Gamma$.} $$ 
If we combine this  with \eqref{eq:postfGa} and \eqref{eq:hBrel}, we conclude  that $h$ fixes each of the four  points in $\post(f)$. 

It remains to show that $h$ is isotopic to $\id_{\CDach}$ rel.~$\post(f)$. There is no easy self-contained argument for this and we have to invoke 
some facts about the mapping class group  of a sphere with four punctures 
(as given by the points in $\post(f)$) and its relation to the mapping class group of a torus (for the definition of the mapping class group and the facts needed see  
 \cite[Sections~2.1~and~2.2]{FM}). 

We have a well-defined torus involution $\overline I\: T^2 \ra T^2$ 
induced by the map $u\in \R^2\mapsto I(u)=-u$. Then $\pi\circ I=\overline I\circ \pi$ and $\overline I$ is the generator of a cyclic group $\overline G$ 
of order $2$ acting on $T^2$. The map $\overline \Theta\: T^2 \ra \CDach$ corresponds to the 
quotient map $T^2\ra T^2/\overline G\cong \CDach$. Moreover, $\overline I$ has 
four fixed points given by the images of the points in $\frac 12 \Gamma$ 
under the quotient map $\pi\: \R^2\ra T^2\cong\R^2/\Gamma$. 
These fixed points of $\overline I$ in turn are mapped by $\overline \Theta$ to the points in the set $P\coloneqq \post(f)$ by $\overline \Theta$ as follows from \eqref{eq:postfGa}.

It is now a general fact that an orientation-preserving 
 homeomorphism on $\CDach $ fixing the points in  
$P$ (i.e., the images of the fixed points of  $\overline I$ under the projection map $\overline \Theta$) is isotopic  to the identity
rel.~$P$  if it has a lift to $T^2$ by $\overline \Theta$ that induces the identity map on the fundamental group of $T^2$ (this is essentially shown in \cite[Proof of Proposition~2.7, pp.~59--60]{FM}).

 In our situation, 
such a lift to $T^2$ of the homeomorphism $h$ can easily be obtained. Namely, 
it follows from  \eqref{eq:Beqvar} 
that there exists a homeomorphism 
$\overline B\: 
T^2\ra T^2$ 
with
$\pi\circ B=\overline B\circ \pi$. Then 
 by definition  $B$ is a lift of $\overline B$ by $\pi$. Now
 the translations $u\mapsto u+\ga$, $\gamma\in \Gamma$, form the group of deck transformations  of $\pi$ which represents the fundamental group of $T^2$. Then  it follows from \eqref{eq:Beqvar} that the induced map of $\overline B$ on the fundamental group is the identity (see the remarks after Lemma~\ref{lem:torilifts} for a related discussion). Finally, $\overline B$ is a lift of $h$ 
 by $\overline \Theta$, because we have 
 $$ \overline \Theta\circ \overline B\circ \pi =  \overline \Theta\circ \pi \circ B 
 =\Theta \circ B=h\circ \Theta= h\circ\overline  \Theta\circ \pi, $$
 and so 
 $$ \overline \Theta\circ \overline B=h\circ\overline  \Theta. $$
 This shows  that 
 $h$ is indeed isotopic to $\id_{\CDach}$ rel.~$P=\post(f)$.
  \end{proof}

\section{Covers of parabolic orbifolds}
\label{sec:paraorbcov}

In this section we provide  the proofs of 
Proposition~\ref{prop:G_yields_T} and Theorem~\ref{thm:orbunifparacas}.  The main point   is to prove existence of the map $\Theta$ 
 in these statements.   We will do this in an explicit geometric way that is also useful for visualizing  examples of 
Latt\`es maps (see Section~\ref{sec:examples-lattes-maps}).

We start with  a given crystallographic group $G$ 
not isomorphic to $\Z^2$.
We first consider the types $(244)$, $(333)$, and $(236)$. The type $(2222)$  is different  and will be treated  later. So let  $G$ be of type (244). Our goal is to find a holomorphic map 
$\Theta\: \C \ra \CDach$ induced by $G$.  We explain the construction in detail only  in this case. For the types 
 $(333)$ and $(236)$ the considerations are 
completely analogous and we will skip the details.

The group   $G$ has an invariant tiling  made out of 
isometric  copies of a 
right-angled Euclidean triangle with angles $\pi/2$, $\pi/4$, $\pi/4$ as shown in Figure~\ref{fig:244_univ_orb}. The triangles are  colored black or white. The 
union of a white and  a black triangle with a common edge forms
a fundamental domain for the action of $G$ on $\C$. 
Let $T$ be one of the white triangles in this tiling.   We
glue an isometric copy $T_\wt$ of $T$, colored white, with another
isometric copy $T_\bt$ of $T$, colored black, along their boundaries
to form a pillow $\Delta$ (see Section~\ref{sec:expratThmaps}). 
 Then $\Delta$ is a topological
$2$-sphere and can be identified with  the quotient space $\C/G$.  
The identification $T\cong T_w$ induces an  orientation on  $\Delta$ 
(see Section~\ref{sec:expratThmaps}).
 We  equip
$\Delta$ with the unique path metric   that  restricts to the Euclidean metric
on the two copies of $T$ (see Figures~\ref{fig:lattes244a},
~\ref{fig:Lattes333}, and~\ref{fig:lattes236}
for an illustration of $\Delta$ for the different types of $G$ considered).

 One can now define 
a map $\Theta_\Delta\colon \C \to \Delta$
 as follows. The map $\Theta_\Delta$ sends each
white triangle $T\subset \C$ from the tiling as represented in 
Figure~\ref{fig:244_univ_orb} to (the white triangle) $T_{\wt}\subset \Delta$  and each black triangle $T\subset \Delta$ to (the black triangle) $T_{\bt}\subset
\Delta$ by an orientation-preserving isometry. 
   Then  
$\Theta_\Delta\colon \C \to \Delta$ is a well-defined continuous map.
  
Note that if $G$ is of type $(333)$, then a similar  construction does not lead to a well-defined map due to a rotational ambiguity which is not present for the types $(244)$ and $(236)$. In this case, one has to single out  
one of the common vertices $v$ of $T_\wt$ and $T_\bt$ and impose the additional requirement  that the piecewise isometry $\Theta_\Delta$ sends the points marked by a black dot in Figure~\ref{fig:333_univ_orb} to $v$.   

It is clear from the definition 
of $\Theta_\Delta$ that $\Theta_\Delta=\Theta_\Delta\circ g$ for each $g\in G$. On the other hand, if $\Theta_\Delta(z)=\Theta_\Delta(w)$ for $z,w\in \C$, then we may pick two triangles $T$ and $T'$ of  the same color in the tiling as represented by Figure~\ref{fig:244_univ_orb} such that $z\in T$ and $w\in T'$.
There is a unique element $g\in G$ with $g(T)=T'$. Then $g(z),w\in T'$ and 
$$\Theta_\Delta(g(z))=\Theta_\Delta(z)=\Theta_\Delta(w). $$
Since $\Theta_\Delta$ is an isometry on $T'$ and hence injective on $T'$, it follows that $w=g(z)$. This shows that $\Theta_\Delta(w)=\Theta_\Delta(z)$ for $z,w\in \C$ if and only if there exists $g\in G$ such that $w=g(z)$. Hence $\Theta_\Delta$ is induced by $G$.

The  $2$-sphere $\Delta$ is a polyhedral surface equipped with a locally Euclidean metric with three conical
singularities (as shown in Figure~\ref{fig:lattes244a}). In particular, $\Delta$ carries a natural conformal structure,
with respect to which it is conformally equivalent to $\CDach$. Moreover, 
$\Theta_\Delta\: \C \ra \Delta$ is a continuous map that is a local isometry near each point 
in $\C$ that is not a  preimage of one of the three cone points of $\Delta$. Hence 
$\Theta_\Delta\: \C \ra \Delta$ is a holomorphic map (all this is explained in greater generality in Section~\ref{sec:expratThmaps}). 

It follows from the uniformization theorem that  we  can find   a conformal
map $\varphi \colon \Delta \to \CDach$ that sends  the vertices of the pillow
to the points $0$, $1$, $\infty$ in $\CDach$. In fact, one can construct $\varphi$ 
quite explicitly by first  mapping $T_{\wt}$
conformally to the upper half-plane, and $T_{\bt}$  to the
lower half-plane such that the vertices of the triangles are sent to $0$, $1$, $\infty$.
If we define
 $\Theta = \varphi \circ\Theta_\Delta \colon \C\to
\CDach$, then  $\Theta$ is a holomorphic map 
 induced by $G$. 
 
 We will verify the other properties of $\Theta$ as specified in
 Proposition~\ref{prop:G_yields_T} later in this section  by a
 general argument. It  applies to  all types of $G$ and does not
 use our specific construction. 
It is still illuminating   
to see   how
 these properties can be extracted from the tiling in  Figure~\ref{fig:244_univ_orb}, at least on an intuitive level. First, it is clear that $\Theta_\Delta$, and hence also 
   $\Theta$, is a branched covering map: if $q\in \Delta$ is arbitrary, then an   open ball $V\sub \Delta$ centered at $q$ of small enough radius $\eps>0$    is evenly covered by $\Theta_\Delta$ (see Definition~\ref{def:brcovmap}). Each component $U$ of  $\Theta_\Delta^{-1}(V)$ is a Euclidean disk of radius 
   $\eps>0$ centered at a point  $z\in \Theta^{-1}_\Delta(q)$. Each 
   point $q'\in V$ with $q'\ne q$ has  precisely $d$ distinct  preimages  in $U$, where 
   $d=\#G_z$. So $\Theta_\Delta$ is $d$-to-$1$ near $z$. 
   Hence $\deg(\Theta_\Delta,z)=\deg(\Theta,z)=\#G_z$ for $z\in \C$. Since $\#G_z$ is the same 
   for each point $z$ in a given $G$-orbit and 
   $\Theta_\Delta$ is induced by $G$, we can define a ramification function 
   $\alpha_\Delta\: \Delta \ra \N$ by setting $\alpha_\Delta(p)=\#G_z$ for $p\in \Delta$, where 
   we pick any point $z\in \Theta^{-1}_\Delta(p)=Gz$.  Then   $\alpha_\Delta(p)=2$ if $p$ is the corner of $\Delta$ corresponding to the common vertex of $T_{\tt w}$ and $T_{\tt b}$ with angle $\pi/2$, $\alpha_\Delta(p)=4$ for the other two corners $p$ of $\Delta$, and 
   $\alpha_\Delta(p)=1$ for all other points $p\in \Delta$. So if we set $\alpha\coloneqq 
   \alpha_\Delta\circ \varphi^{-1}$, then $\alpha$ is a finite ramification function on $\CDach$  satisfying 
   \eqref{eq:degT_orderG}.   The orbifold $(\CDach,\alpha)$ is parabolic and has conical singularities at 
 $0$, $1$, $\infty$. Its signature is $(2,4,4)$ corresponding to the type of $G$. 
 
By \eqref{eq:degT_orderG} the holomorphic branched covering  map $\Theta\: \C \ra \CDach$ is the universal orbifold covering map  of $(\CDach, \alpha)$.  As was briefly discussed in
Section~\ref{sec:orbif-assoc-thurst}, we can push forward the Euclidean metric $d_0$ by $\Theta$ and obtain  the 
canonical  orbifold  metric 
$\om$\index{canonical orbifold!metric}\index{metric!canonical orbifold}\index{o@$\omega$}\index{orbifold!canonical metric}  
on $\CDach$ (see Section~\ref{sec:expratThmaps}  for more details). If we equip  $\CDach$ with this metric, then 
$\varphi$ is in fact an isometry.  So $(\CDach, \om)$ and the pillow 
$\Delta$ are isometric, and one can view $\Delta$ as a geometric realization of the orbifold $(\C, \alpha)$.   In particular,   $(\CDach, \om)$
is locally isometric to $\C$, except at the conical singularities 
of the orbifold $(\CDach, \alpha)$ (i.e., the points $p\in \CDach$ with $\alpha(p)\geq 2$),  where
$(\CDach,\omega)$ is locally isometric to a Euclidean cone of
angle $2\pi/\alpha(p)$.

If $(\CDach, \widetilde \alpha)$ is an arbitrary orbifold with signature $(2,4,4)$, then we can 
find a M\"obius transformation $\psi \: \CDach \ra \CDach$ that matches up the three cone points of $(\CDach, \widetilde \alpha)$ and $(\CDach,\alpha)$ such that $\widetilde \alpha 
\circ  \psi
= \alpha$. Then $\psi\circ \Theta\: \C \ra \Cdach$ is the (holomorphic) universal orbifold covering  map of $(\CDach, \widetilde \alpha)$. The geometric pictures remains the same: if we equip    $(\CDach, \widetilde \alpha)$ with its (possibly rescaled) universal orbifold metric $\widetilde \om$, then 
$(\CDach, \widetilde \om)$ is isometric to the pillow $\Delta$.

We now turn to crystallographic groups $G$ of type $(2222)$. In order to construct a holomorphic map $\Theta\: \C \ra \CDach$ induced by
$G$, we may assume that   $G$  consists of all isometries on $\C$ of the form $z\mapsto \pm z+\ga$, where $\ga \in \Gamma$. Here $\Gamma$ 
is  the  underlying  (rank-$2$) lattice $\Gamma\sub \C$ of $G$. 
 For  a general  group $G$ of type $(2222)$  one considers $\Theta\circ h$ with suitable $h\in \Aut(\C)$ to obtain a map induced by $G$ (this reduction is based on Theorem~\ref{thm:G_signature}
 and related to  the remarks at the beginning of Section~\ref{sec:clas-latt-maps}). 
  
Holomorphic maps  $\Theta\: \C \ra \CDach$  
induced by a group $G$ of this special form can be obtained from the 
Weierstra\ss\ $\wp$-function (see \cite[Section~7.3]{A} for
general background).
Recall that  the Weierstra\ss\ $\wp$-function for a given lattice  $\Gamma\sub \C$ 
is defined as 
\index{Weierstrass p-function@Weierstra\ss\ $\wp$-function}
\index{p@$\wp$}
\begin{equation}\label{eq:defwp} 
 \wp(z; \Gamma)=\frac{1}{z^2}+\sum_{\ga\in \Gamma\setminus\{0\}}
\biggl(\frac {1}{(z-\ga)^2}-\frac {1}{\ga^2}\biggr) \quad \text{for $z\in \C$}.
\end{equation} 
Then $\wp=\wp(\cdot\,; \Gamma)$ is an even  meromorphic function on $\C$
with the period lattice $\Gamma$. 
The function  $\wp$ satisfies  the differential equation 
\begin{equation}\label{eq:wpdiff} 
 (\wp')^2=4(\wp-e_1)(\wp-e_2)(\wp-e_3). 
 \end{equation} 
The numbers  $e_1,e_3,e_3\in \C$ here are  three distinct values (depending on $\Gamma$) with 
\begin{equation}\label{eq:sume=0}
e_1+e_2+e_3=0.
\end{equation} 
The critical values of 
 $\wp\: \C \ra \CDach$ are $e_1$, $e_2$, $e_3$, $\infty$. 
 Actually, $\wp\: \C \ra \CDach$  is a holomorphic   map satisfying 
\begin{equation}\label{eq:wpdeg}
\deg(\wp, z)=
\left\{ \begin{array} {cl} 2& \text{if $\wp(z)\in \{e_1, e_2, e_3,\infty\} $,} \
\\ 1 & \text{otherwise.}  \end{array}
 \right.
\end{equation}

If $G$ is the crystallographic group corresponding to $\Gamma$ as above, then $\wp$ is induced by $G$. Indeed, $\wp(w)=\wp(z)$ if and only if 
$w=\pm z+\ga $ for some $\ga\in \Gamma$
(the ``only if" implication  can be derived from the familiar  fact that $\wp$ descends to a holomorphic map 
$\overline \wp\: \T\ra \CDach$  on the torus $\T=\C/\Gamma$ with $\deg(\overline \wp)=2$).  

As is well known and classical, one can reverse this procedure
and start with the differential equation \eqref{eq:wpdiff}: if
three distinct values $e_1$, $e_2$, $e_3\in\C$ with
\eqref{eq:sume=0} are given, then there exists a (unique)
lattice $\Gamma$ such that the corresponding function
$\wp=\wp(\cdot\,; \Gamma)$ satisfies \eqref{eq:wpdiff}. See
\cite[Sections~7.3.3 and 7.3.4]{A}.

The general argument in the proof of Proposition~\ref{prop:G_yields_T} will show that $\wp\: \C \ra \CDach$ is a  branched covering map, because $\wp$  is induced by $G$. It then  follows from  \eqref{eq:wpdeg} 
that $\wp$ is the universal orbifold covering map 
of the orbifold $(\CDach, \alpha)$, where 
$\alpha(p)=2$ for  $p\in  \{e_1, e_2, e_3,\infty\}$ and 
$\alpha(p)=1$ for $p\in \CDach \setminus \{e_1, e_2, e_3,\infty\}$. 
This implies that the universal orbifold covering map $\Theta$
of any orbifold  $(\CDach, \alpha)$ with signature $(2,2,2,2)$ can always be obtained from a Weierstra\ss\ $\wp$-function followed by a suitable M\"obius transformation. Indeed, if $p_1, \dots, p_4\in \CDach$  are the four distinct points with $\alpha(p_k)=2$ for $k=1, \dots, 4$, then there exists a M\"obius transformation $\psi$ on $\CDach$ such that $\psi(\{p_1, \dots, p_4\})=\{e_1, e_2, e_3, \infty\}$, 
where $e_1, e_2, e_3\in \C$ are three distinct points satisfying    \eqref{eq:sume=0}
(first map $p_4$ to $\infty$ by a M\"obius transformation and then apply a suitable translation). 
Then we can find  a lattice $\Gamma$ such that the corresponding 
function $\wp=\wp(\cdot\,; \Gamma)$ satisfies \eqref{eq:wpdiff} with these values $e_1, e_2,e_3$. If we define $\Theta= \psi^{-1}\circ \wp(\cdot\,; \Gamma)$, then $\Theta$  is the universal 
orbifold covering map of the given orbifold   $(\CDach, \alpha)$.  

Actually, one can make one more reduction here. Namely, in the last statement 
we  may assume that the lattice  has the form $\Gamma =\Z \oplus\Z\tau$  with $\tau \in \C$ and $\imag(\tau)>0$. This  follows  from the homogeneity property 
$$\wp(\lambda z; \lambda \Gamma)= \frac1{\lambda^2} \wp(z; \Gamma), \quad z\in \C, \, \lambda\in \C\setminus\{0\}, 
$$ 
of the $\wp$-function.

We will now describe a more   geometric construction for the maps $\Theta$ and their associated orbifolds that arise here     similar  to the one given for the crystallographic groups of type 
(244).  For this we fix $\tau \in \C$ with $\imag(\tau)>0$, and consider the crystallographic group $G$ of type $(2222)$ given by all isometries $z\mapsto \pm z+\ga$, where $\ga \in \Gamma\coloneqq \Z\oplus \Z\tau$. Not all crystallographic groups of type $(2222)$ have this form, but we restrict ourselves to the special case for simplicity.   One can easily 
adjust the ensuing discussion to the general case by
essentially precomposing all relevant maps on $\C$ with  a suitable element $h\in \Aut(\C)$.  
 Note that by the reduction discussed above we still get the same class of orbifolds from this restricted class of groups. 

A fundamental domain for $G$ is 
 the Euclidean triangle $T \subset \C$ with vertices $0,1, \tau$. To obtain the quotient space $\C/G$ from $T$, 
we divide $T$ into four similar triangles by connecting the midpoints of
each side and  fold $T$ along the edges of these smaller triangles. In this way,
we can build a  tetrahedron $\Delta$ in Euclidean $3$-space as
indicated in Figure~\ref{fig:tetrahedron} (the two halves of each side
are identified). Here we allow the degenerate case that $\Delta$ is a
pillow. This happens precisely when $T$ has a right angle. 

 We equip  $\Delta$ with the path metric induced by the
Euclidean metric on $T$, and the orientation 
induced by the orientation on  $T\subset \C$. 
Then $\Delta$ is homeomorphic to a  $2$-sphere. 
It is   a polyhedral surface 
 with four conical singularities and carries a natural conformal structure.

 \begin{figure}
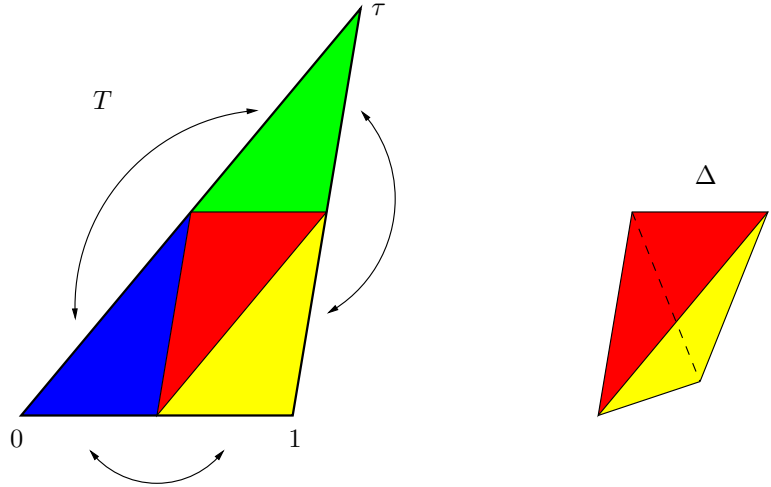

    \centering
    \begin{overpic}
      [width=10cm, tics = 20,
      ]
      {tetra_color}
      \put(-1,5){$0$}
      \put(36,5){$1$}
      \put(47,62.5){$\tau$}
      \put(10,50){$T$}
      \put(90,40){$\Delta$}
    \end{overpic}
    \caption{Folding a tetrahedron from a triangle.}
    \label{fig:tetrahedron}
  \end{figure}

  \begin{figure}
    \centering
    \begin{overpic}
      [width=10cm, tics =20,
      ]
      {wp2222}
      \put(13,27){$P$}
      \put(32,44){$\tau$}
      \put(32,-2){$1$}
      \put(-3,-2){$0$}
      \put(90,38){$\Delta$}
    \end{overpic}
    \caption{Construction of $\Theta = \wp$. }
    \label{fig:constr_wp}
  \end{figure}

  The parallelogram $P\subset \C$ spanned by $1$ and $\tau$ is a
  fundamental domain for the group of translations $\Gtr\sub G$ given by all
  maps of the form $z\mapsto z+\ga$, where
  $\ga\in \Gamma=\Z \oplus  \Z \tau$. We can divide $P$ into two
  triangles that are isometric to $T$, and into $8$ smaller triangles
  similar to $T$ by the scaling factor $2$ (see
  Figure~\ref{fig:constr_wp}).

  As indicated in Figure~\ref{fig:constr_wp}, there is a map
  $\Theta_\Delta\: P\to \Delta$ such that each small triangle in
  $P$ is mapped isometrically to a face of $\Delta$. This map extends
  naturally to a continuous map $\Theta_\Delta\colon\C\to \Delta$ that
  respects the lattice translations in the sense that  $\Theta_\Delta \circ
  g= \Theta_\Delta$ for all $g\in \Gtr$. Note that then
  $\Theta_\Delta(z)=\Theta_\Delta(-z)$ for $z\in \C$, and it
  easily follows that $\Theta_\Delta$ is induced by $G$. In particular,  we
  can identify $\C/G$ with the tetrahedron $\Delta$ (this follows from Corollary~\ref{cor:groupquot}~\ref{item:groupquot2}).  

By the  uniformization theorem   we can   find a conformal map   $\varphi\colon \Delta\to \CDach$. 
If $\varphi$ is such a map, then
  $\Theta=\varphi\circ\Theta_\Delta \colon \C\to\CDach$ is a
  holomorphic map  induced by $G$. This (or a direct geometric argument) implies that $\Theta$ and $\Theta_\Delta$ are branched covering maps. If we choose a suitably normalized
  map  $\varphi=\varphi_0$ here, 
  then $\Theta=\varphi_0\circ \Theta_\Delta=\wp(\cdot\,; \Gamma)$.

  If we assign to each of the 
four conical singularities of $\Delta$ (where the cone angle is $\pi)$ the value $2$ for a ramification function $\alpha_\Delta$ on
$\Delta$ and the value $1$ to all other points,  then $(\Delta, \alpha_\Delta)$ is an orbifold with signature $(2,2,2,2)$ and we obtain the relation \eqref{eq:degT_orderG} (for $\alpha_\Delta$ and $\Theta_\Delta$). So if  we define $\alpha=\alpha_\Delta\circ \varphi^{-1}$, then $(\CDach, \alpha)$  
is an orbifold with the same signature as   $(\Delta, \alpha_\Delta)$ and \eqref{eq:degT_orderG} holds. Then 
$\Theta$ is the universal orbifold covering map of $(\CDach, \alpha)$. If we equip 
$\CDach$ with the corresponding universal orbifold metric $\om$, then $(\CDach, \om)$ is isometric to $\Delta$ (possibly up to scaling). \index{canonical orbifold!metric}\index{metric!canonical orbifold}\index{o@$\omega$}\index{orbifold!canonical metric} 
 
  It follows from our earlier analytic discussion that if we set   $\varphi=\psi\circ \varphi_0$ for  a suitable M\"obius transformation $\psi$ and choose  
     $\tau\in \C$ with $\imag(\tau)>0$ appropriately,  then 
     the   universal orbifold covering map $\Theta$ of any orbifold $(\CDach, \alpha)$ with  signature $(2,2,2,2)$ can be written in the form 
    $\Theta=\varphi\circ \Theta_\Delta$.

We can now give the  proofs of Proposition~\ref{prop:G_yields_T} and Theorem~\ref{thm:orbunifparacas}.

\begin{proof}[Proof of Proposition~\ref{prop:G_yields_T}] 
Let  $G$ be  a crystallographic group  not isomorphic to $\Z^2$. Then $G$ is of type 
$(244)$, $(333)$, $(236)$, or $(2222)$. We have seen earlier 
in this section how to construct a holomorphic map $\Theta\: \C \ra \CDach$ induced by $G$   for each of these types. 
For logical clarity we will not use any other facts from our previous  discussion.


Since $G$ acts cocompactly on $\C$ and $\Theta$ is induced  by $G$,
 the image $\Theta(\C)$ is compact. This implies that $\Theta$  is surjective, because 
$\Theta(\C)$ is also open in $\CDach$, and so $\Theta(\C)=\CDach$.

  In order to see  the second identity in \eqref{eq:degT_orderG}, let
  $z_0\in \C$ be arbitrary  and $d\coloneqq \#G_{z_0}$.  We have to show that 
  $\Theta$ is locally $d$-to-$1$ near $z_0$.
  Since $G$ acts properly discontinuously on $\C$, the orbit $Gz_0$ is a discrete set in  
  $\C$. This implies that each isometry  $g\in G\setminus G_{z_0}$ moves points 
  near $z_0$ a definite distance away from $z_0$.  So if we choose the radius $\eps>0$ of the Euclidean disk $B \coloneqq 
B_\C(z_0,\eps)$ sufficiently small, then $B\cap g(B)\ne \emptyset$ for some $g\in G$ if and only if $g\in G_{z_0}$.  It follows that 
\begin{equation}\label{eq:ghBGz0}
\text{$g(B)\cap h(B)\ne \emptyset$ for $g,h\in G$ if and only if $
h^{-1}\circ g\in G_{z_0}$.}
\end{equation}

Now if $u,v\in B$ and $\Theta(u)=\Theta(v)$, then there exists $g\in G$ with $v=g(u)$, because $\Theta$ is induced by $G$. In particular, $v\in B\cap g(B)$, and so 
$g\in G_{z_0}$ by \eqref{eq:ghBGz0}.   We conclude that two points in $B$ have the same image under $\Theta$ if and only if they belong to the same $G_{z_0}$-orbit. Now $G_{z_0}$ is a cyclic group of order $d$ consisting of rotations around $z_0$. Hence the orbit of each point $z\in B\setminus \{z_0\}$ consists of precisely $d$ points in $B$. This shows that $\Theta$ is indeed  locally $d$-to-$1$ near $z_0$, and so $\deg(\Theta, z_0)=\#G_{z_0}$ as desired (for a similar argument in greater generality see the proof of Proposition~\ref{prop:prop_deck_trafo}~\ref{item:pi1_O_2}). 

To  see that 
$\Theta\: \C \ra \CDach$ is a branched covering map, let $z_0\in \C$ be  arbitrary. We choose $B=B_\C(z_0, \eps)$
as before so that \eqref{eq:ghBGz0} holds.  Since $\Theta$ is surjective, it is enough to show that $w_0\coloneqq \Theta(z_0)$ has a neighborhood that is evenly 
covered by 
$\Theta$ as in Definition~\ref{def:brcovmap}. 

 The invariance property of $\Theta$ with respect to the  cyclic rotation group $G_{z_0}$ on $B$  implies that 
 $$ \Theta(z)=f((z-z_0)^d/\eps^d)\quad  \text{for $z\in B$}, $$ 
 where $f\: \D \ra \CDach$ is an injective holomorphic map with $f(0)=w_0$.  
 
 In particular, $V\coloneqq \Theta(B)=f(\D)$ is a  topological
 disk containing $w_0$.  If we define  $\varphi\: B\ra \D$ by $\varphi(z)=(z-z_0)/\eps $ for $z\in B$ and 
 set $\psi \coloneqq f^{-1}\:  V\ra \D$, then $\varphi$ and $\psi$ are orientation-preserving homeomorphisms with $\varphi(z_0)=0$, $\psi(w_0)=0$, and 
\begin{equation} \label{eq:repthzd} 
 (\psi\circ \Theta\circ \varphi^{-1})(u)=u^d
 \end{equation}  for all $u\in \D$. 
 
   Since $\Theta$ is induced by 
$G$, we have by \eqref{eq:ghBGz0} that
$$ \Theta^{-1}(V)=\bigcup_{g\in G}g(B)=\bigcup_{g\in I}g(B),$$
where $I\sub G$ is a set that contains precisely one element from
each left coset of $G_{z_0}$ in $G$. Note that by
\eqref{eq:ghBGz0} this means that the sets in the latter union are pairwise disjoint.
For  each set $g(B)$ with $g\in I$, we have a relation as in \eqref{eq:repthzd} if we replace 
$\varphi\: B\ra \D$ with  $\varphi\circ g^{-1}\: g(B)\ra \D$. This shows that $V$ is evenly covered by $\Theta$. It follows that $\Theta$ is indeed a  branched covering map.

Since $\Theta$ is surjective and induced by $G$, there is a bijection between the set of 
$G$-orbits (i.e., $\C/G$) 
and points in $\CDach$ given by 
$Gz\mapsto \Theta(z)$. As we know, for points $z\in \C$ in a given $G$-orbit, the cardinality
$\#G_z$ is independent of $z$. By using the bijection between $\C/G$ and $\CDach$ we can push 
the well-defined function $Gz\mapsto \#G_z$ over  to a function 
$\alpha \: \CDach\ra \N$ satisfying  $\alpha(\Theta(z))=\#G_z$ for $z\in \C$. If we combine this with what we have seen above, then \eqref{eq:degT_orderG} follows.  
It is also  clear that $\alpha $ is a finite ramification function and that $(\Cdach, \alpha)$ is a parabolic orbifold with a signature corresponding to the type of $G$. This ramification function 
$\alpha$ is uniquely determined as follows from \eqref{eq:degT_orderG} and the surjectivity of $\Theta$.

It remains to show the uniqueness statement for $\Theta$. To this
  end, suppose $\widetilde{\Theta}\colon \C\to S^2$ is another  
  continuous map  induced by $G$.
  Since $G$ acts cocompactly on $\C$, we can apply 
   Corollary~\ref{cor:groupquot}~\ref{item:groupquot2} and conclude that there exist  unique homeomorphisms $\varphi_1\: \C/G \ra \Theta(\C)=\CDach$ and 
  $\varphi_2\: \C/G \ra \widetilde{\Theta}(\C)\sub S^2 $  
 such that $\Theta= \varphi_1 \circ \Theta_G$ and $\widetilde \Theta= \varphi_2 \circ  \Theta_G$. Here $ \Theta_G\: \C \ra \C/G$ is the quotient map. 
 It follows that $\C/G$ and $\widetilde{\Theta}(\C)$ are homeomorphic to $\CDach$. On the other hand,   $\widetilde{\Theta}(\C)\sub S^2$ which is only possible if $\widetilde{\Theta}(\C)= S^2$. So $\varphi_2$ is actually a homeomorphism from  $\C/G$ onto  $S^2$. Then $\varphi\coloneqq \varphi_2\circ \varphi_1^{-1}$ is a homeomorphism from $\CDach$ onto $S^2$ with $\widetilde \Theta = \varphi \circ \Theta$. The uniqueness of $\varphi$ immediately follows from the uniqueness of  $\varphi_2$. 
 
Since $\Theta\: \C \ra \CDach$ is a branched covering map, 
$\widetilde \Theta = \varphi \circ \Theta$ is also a branched covering map. Hence if $S^2=\CDach$ and $\widetilde \Theta$ is holomorphic, then $ \varphi\: \CDach \ra S^2=\CDach$ is  holomorphic by Lemma~\ref{lem:2_3_branched}. It follows that in this case the
homeomorphism $\varphi$ is a M\"obius transformation.   
\end{proof}

\begin{proof}[Proof of Theorem~\ref{thm:orbunifparacas}]
The existence of the  universal orbifold covering map $\Theta$ easily follows from our earlier considerations in this section.   
Namely, suppose  $\mathcal{O}=(\CDach, \alpha)$ is a parabolic orbifold with  
finite ramification function $\alpha\: \Cdach \ra \N$.
Then the  signature of  $\mathcal{O}$ is  $(2,4,4)$, $(3,3,3)$,  $(2,3,6)$, or $(2,2,2,2)$. If $\mathcal{O}$ has only three conical singularities,   we pick a crystallographic group $G$ of a corresponding type  and consider the map $\Theta$  induced by $G$ as provided by Proposition~\ref{prop:G_yields_T}. As follows from \eqref{eq:degT_orderG}, the map 
 $\Theta$ has three critical values. In the fibers over these values the local degree of $\Theta$ is constant and corresponds to the different values $a$, $b$, or $c$ giving  the type $(abc)$ of $G$. 
One can  postcompose this map with a M\"obius transformation so that these three critical values match the three cone points  of the orbifold. In this way, one  obtains a new holomorphic branched covering map $\Theta\: \C \ra \Cdach$ that satisfies 
 \eqref{item:paraorb1}. For type $(244)$ this was discussed in detail at the beginning of this section. 
 
 If $(\CDach, \alpha)$ has signature $(2,2,2,2)$,  then $\Theta$ is a 
 Weierstra\ss\ $\wp$-function  followed by a suitable M\"obius transformation.
We have seen  earlier in this section how to choose the period lattice  $\Gamma$ 
of the $\wp$-function  and the M\"obius transformation so that the four critical values of $\Theta$ match the four cone points  of $\mathcal{O}$. 

As we already remarked, 
the  statements about uniqueness of  $\Theta$ and  its  deck transformation group  follow from more general facts developed in the appendix (see Corollary~\ref{cor:unique_univ_orbi}, Remark~\ref{rem:holoorbset}, and 
Proposition~\ref{prop:prop_deck_trafo}). 
\end{proof}

\section{Examples of Latt\`{e}s  maps}
\label{sec:examples-lattes-maps}

In this section we present some examples of Latt\`{e}s maps
based on Theorem~\ref{thm:Lattesstruc}~\ref{item:Lattessruciii}.

\begin{ex}[A Latt\`{e}s map with orbifold signature $(2,4,4)$]
  \label{ex:Lattes244}
 To  exhibit  such a Latt\`{e}s map, we use the considerations in the beginning of Section~\ref{sec:paraorbcov}. 
  Let $\Delta$  again be the pillow obtained from gluing together two isometric  triangles
  $T_{\wt}$ and $T_{\bt}$ with angles $\pi/2$, $\pi/4$, $\pi/4$. We call $T_\wt\sub \Delta$ the white $0$-tile,  and 
  $T_\bt\subset \Delta$ the black $0$-tile. 

  We  divide $T_\wt$ along the perpendicular bisector of its
  hypotenuse into two triangles $T_1$ and $T_2$ that are similar to
  $T_\wt$ by the scaling factor $\sqrt{2}$. In the same way  we divide  $T_\bt$ into two similar triangles. These four small
  triangles are called $1$-tiles; we color them black and white
  in a checkerboard fashion so that two $1$-tiles with a common 
  edge have distinct colors as indicated on the left in Figure~\ref{fig:lattes244a}. Note that the triangle with the lighter gray shading is on the back of the triangular pillow. 

  \ifthenelse{\boolean{nofigures}}{}{  
  \begin{figure}
    \centering
    \begin{overpic}
      [width=11cm, 
      tics=20]{lattes244.eps}
      \put(53,14){$\scriptstyle{-1}$}
      \put(101,-1){$\scriptstyle{1}$}
      \put(85,30){$\scriptstyle{\infty}$}
      \put(77,5){$\scriptstyle{0}$}
      \put(39,-3){$\scriptstyle{1\mapsto -1}$}
      \put(25,30){$\scriptstyle{\infty\mapsto 1}$}
      \put(18,4){$\scriptstyle{0\mapsto \infty}$}
      \put(-5,11){$\scriptstyle{-1\mapsto -1}$}
      \put(45,18){$\scriptstyle{g}$}
    \end{overpic}
    \caption{A Latt\`{e}s map with orbifold signature $(2,4,4)$.}
    \label{fig:lattes244a}
  \end{figure}
  }

  A map $g\colon \Delta\to \Delta$ is now given as
  follows. We send each white $1$-tile to $T_\wt$ and  each black $1$-tile
  to $T_\bt$ by a similarity as indicated in Figure~\ref{fig:lattes244a}. Then  $g$ is  a branched covering
  map. The points labeled $0$ and $\infty$ are  its critical
  points and we have the following ramification portrait:  
    \begin{equation}
    \label{eq:crit_port_Lattes244}
    \xymatrix @R=1pt{
      0 \ar[r]^{2:1} & \infty \ar[r]^{2:1} & 1\ar[r] &  -1\rlap{.} \ar@(r,u)[] 
    }
  \end{equation}
  Thus $g$ is a Thurston map with $\post(g)=\{-1,1,\infty\}$ and orbifold 
  signature $(2,4,4)$.  Note that $g$ is given as $z\mapsto (1+\iu)z$ in
  suitable Euclidean coordinates (i.e., if we identify $T_\wt$ and $T_\bt$ with a
  triangle $T\subset \C$ in the obvious way, so that the fixed point $-1\in
  \Delta$ is identified with $0\in \C$). In particular, 
   $g\colon
  \Delta\to \Delta$ is holomorphic with respect to the given conformal
  structure on $\Delta$.  
  
 Let $\varphi\colon \Delta\to
  \CDach$ be the  conformal map as considered in Section~\ref{sec:paraorbcov}. Then   $f\colon \CDach\to
  \CDach$ given by $f\coloneqq  \varphi\circ g\circ \varphi^{-1}$ is
  a rational Thurston map. 
   It  is  actually a Latt\`{e}s map by
  Theorem~\ref{thm:Lattesstruc}, because  its orbifold signature  is $(2,4,4)$. If the 
  conformal conjugacy $\varphi$ is chosen such that it sends  the
  points labeled $-1,1,\infty\in \Delta$ to $-1,1,\infty\in\CDach$, respectively, then it is not  hard to check that  $f(z) = 1-2/z^2$.
     
The map  $f$ can also be represented as in
  Theorem~\ref{thm:Lattesstruc}~\ref{item:Lattessruciii}. Namely,  $f$ is
  the quotient of the map $A\colon z\mapsto (1+\iu)z$ by the
  crystallographic group $\widetilde G$ of type (244) in
  Theorem~\ref{thm:G_signature}. 
\end{ex}

\begin{figure}
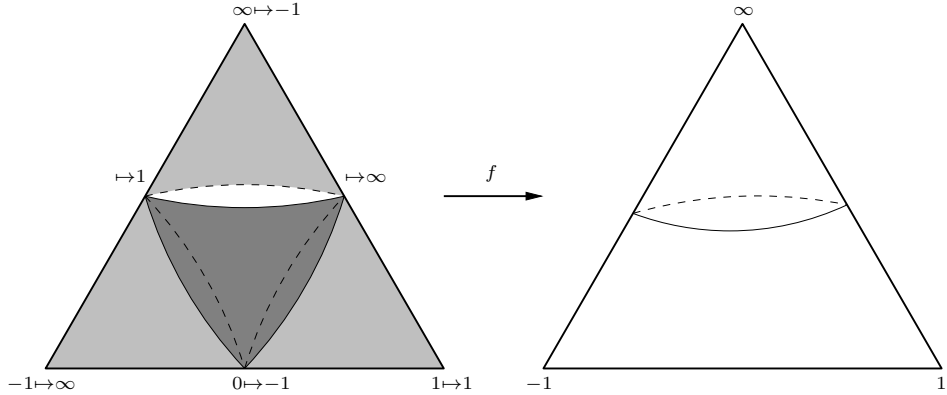

  \centering
  \begin{overpic}
    [width=12cm, tics=10,
    ]
    {lattes333}
    \put(-4,-2){$\scriptstyle -1\mapsto \infty$}
    \put(43,-2){$\scriptstyle 1\mapsto 1$}
    \put(21,-2){$\scriptstyle 0\mapsto -1$}
    \put(8,21){$\scriptstyle \mapsto 1$}
    \put(33.5,21){$\scriptstyle \mapsto \infty$}
    \put(21,39.5){$\scriptstyle \infty\mapsto -1$}
    \put(49,21.5){$\scriptstyle f$}
    \put(53.5,-2){$\scriptstyle -1$}
    \put(99,-2){$\scriptstyle 1$}
    \put(76.5,39.5){$\scriptstyle \infty$}
  \end{overpic}
  \caption{A Latt\`{e}s map with orbifold signature $(3,3,3)$. }
  \label{fig:Lattes333}
\end{figure}

\begin{ex}
  \label{ex:Lattes333}
  In Figure~\ref{fig:Lattes333} we illustrate a Latt\`{e}s map $f$ with
  orbifold signature $(3,3,3)$. The map given here is obtained as a quotient of
  the map $A(z)=2z$ by the crystallographic group $\widetilde G$  of type (333) in
  Theorem~\ref{thm:G_signature}. The Riemann sphere
  $\CDach$ equipped with the 
canonical orbifold metric\index{canonical orbifold!metric}\index{metric!canonical orbifold}\index{o@$\omega$}\index{orbifold!canonical metric} 
is isometric to
  a pillow obtained by gluing together two equilateral triangles
  along their boundaries. The map $f$
  has   in fact  the form 
  \begin{equation*}
    f(z)= 1-2\frac{(z-1)(z+3)^3}{(z+1)(z-3)^3}.
  \end{equation*}
\end{ex}

\begin{figure}
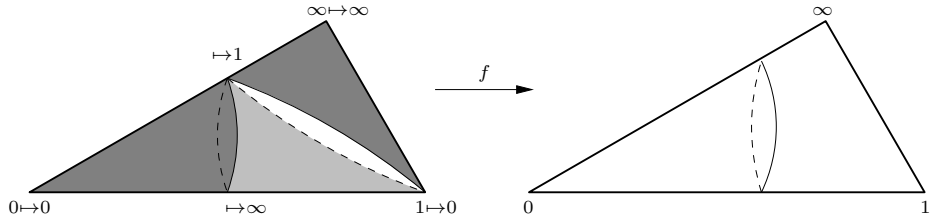

  \centering
  \begin{overpic}
    [width=12cm, tics=10,
    ]
    {lattes236}
    \put(-2,-2){$\scriptstyle 0\mapsto 0$}
    \put(22,-2){$\scriptstyle \mapsto \infty$}
    \put(43,-2){$\scriptstyle 1\mapsto 0$}
    \put(31,20){$\scriptstyle \infty\mapsto \infty$}
    \put(20.5,15){$\scriptstyle \mapsto 1$}
    \put(50,13){$\scriptstyle f$}
    \put(55,-2){$\scriptstyle 0$}
    \put(99,-2){$\scriptstyle 1$}
    \put(87,20){$\scriptstyle \infty$}
  \end{overpic}
  \caption{A Latt\`{e}s map with orbifold signature $(2,3,6)$.}
  \label{fig:lattes236}
\end{figure}

\begin{ex}
  \label{ex:lattes236}
   A Latt\`{e}s map $f$ with orbifold signature
  $(2,3,6)$ is illustrated in  Figure~\ref{fig:lattes236}. It is obtained as the quotient of the map
  $A(z)=\frac 12 (3 + \iu\sqrt 3)z$ by a
  crystallographic group $\widetilde G$ 
  of type (236) in Theorem~\ref{thm:G_signature}. The map $f$ is given by 
  \begin{equation*}
    f(z)=1-\frac{(3z +1)^3}{(9z-1)^2}. 
  \end{equation*}
\end{ex}

Note that the rational maps $f$ discussed in the previous  three examples all  have
the property that the extended real line
$\CC\coloneqq \widehat{\R}=\R\cup\{\infty \}\subset \CDach$ contains
$\post(f)$ and is $f$-invariant in the sense that
$f(\CC) \subset \CC$. This leads to a simple description of the maps in geometric terms. 

In Chapter~\ref{cha:subdivisions} we will consider more general
Thurston maps with such invariant Jordan curves, and in
Chapter~\ref{cha:constructc} we will see that such an invariant
Jordan curve exists for each sufficiently high iterate of an
expanding Thurston map. This means that we obtain similar
geometric descriptions in much greater generality.

 Latt\`es maps with orbifold signature $(2,2,2,2)$ are obtained from $G$-equi\-va\-riant maps $A$ as in Theorem~\ref{thm:Lattesstruc}~\ref{item:Lattessruciii}. 
  The conditions on
$A$ specified in Proposition~\ref{prop:alph_beta_G} are quite restrictive. For 
generic $\tau\in \C$ with $\imag (\tau)>0$ there are no $\alpha\in
\C\setminus\Z$ with $\alpha\Gamma\subset \Gamma$. More
precisely, such  $\alpha\in \C\setminus\Z$ exists if and only if the
lattice allows so-called complex multiplication and $\alpha$ is an
integer in an imaginary quadratic field. 
On the other hand, if $\alpha\in \Z\setminus \{0\}$, then $\alpha\Gamma\sub \Gamma$ for each lattice $\Gamma$.  This leads to a class of Latt\`es maps given  by the following definition. 

\begin{definition}[Flexible Latt\`{e}s map]
  \label{def:flex_Lattes}
  \index{Latt\`{e}s map!flexible} \index{flexible Latt\`{e}s map} A
  Latt\`{e}s map $f\colon \CDach \to \CDach$ is called
  \emph{flexible} if its orbifold  has signature $(2,2,2,2)$ and can be
  represented as in
  Theorem~\ref{thm:Lattesstruc}~\ref{item:Lattessruciii} with a crystallographic group $G=\widetilde G$ of type $(2222)$ as in Theorem~\ref{thm:G_signature} and a map 
  $A\colon \C\to \C$ of the form $A(z)=\alpha z+ \beta$ with 
  $\alpha\in \Z\setminus \{-1,0,1\}$ and $2\beta\in \Gamma$, where $\Gamma$ is the underlying lattice of  $G$.
\end{definition}
Note that we have to rule out $\alpha=\pm 1$ due to the requirement that $\deg(f)
=|\alpha|^2\ge 2$ for a Thurston map $f$. 

The term ``flexible'' derives from the fact that by deforming
the underlying lattice $\Gamma$ (and possibly the parameter
$\beta$ of $A$) of a flexible Latt\`es map, one obtains a family
of such maps depending on one complex parameter (for example,
the parameter $\tau$ in the representation of the lattice
$\Gamma=\Z\oplus \Z \tau$). It is not hard to see that this
leads to maps that are topologically conjugate, but in general
not conjugate by a M\"obius transformation on $\CDach$ (see the
discussion below). An  explicit example of such a family  (with $A(z)=2z$ and $\beta=0$) is given by 
\begin{equation}
  \label{eq:flex_Lattes}
  f(z) = \frac{4z(1-z)(1-k^2z)}{(1-k^2z^2)^2},
\end{equation}
where $k\in \C\setminus\{0,1,-1\}$. The map $g$ discussed in Section~\ref{sec:Lattes} corresponds to $k=\iu$.

Flexible Latt\`{e}s maps are exceptional in
many respects. For example, Thurston's uniqueness theorem (see
Theorem~\ref{thm:Thurstonuniqness}) fails for these maps. They
also carry invariant line fields (see \cite[Chapter 3.5]{McM}).
According to a well-known conjecture they are the only
rational maps with this property. Its validity would imply the
``density of hyperbolic rational maps'', which is possibly the
most famous open problem in complex dynamics (see \cite{MSS} and
\cite{McM_2} for an overview).

Flexible
  Latt\`{e}s maps can be considered as generic Latt\`{e}s
maps, because they are the only type of Latt\`es maps   that can be defined 
for  arbitrary lattices
$\Gamma=\Z\oplus  \Z \tau$  or, equivalently, can be obtained as quotients  of maps on 
arbitrary complex tori $\T$. This is the reason why
some authors use the term ``Latt\`{e}s map'' only for maps with orbifold 
signature $(2,2,2,2)$. For example, Milnor uses this more restrictive definition
in the second edition (and earlier editions), while he uses the
definition used here in the third edition of \cite{Mi}.

To discuss a geometric description of flexible Latt\`es maps,   suppose $f$ is 
such a map
as in Definition~\ref{def:flex_Lattes} obtained from a  map   $A\colon \C\to\C$  of the form
$A(z)=n z+ \beta$ with  $n\in \Z\setminus\{-1,0,1\}$ and $2\beta\in
\Gamma=\Z\oplus \Z\tau$. We get the same map $f$ if we
replace $A$ with  $g\circ A$, where $g\in G$.  This allows
us to assume that $A$ has the form $A(z)=nz+\beta$, where $n\in \N$,
$n\ge 2$, and $\beta\in \{0, 1/2,\tau/2, (\tau+1)/2\}$ (see Proposition~\ref{prop:alpha_beta2}). Note that the map $\Theta_\Delta$ considered in Section~\ref{sec:paraorbcov}
 sends 
the four points $0, 1/2,\tau/2, (\tau+1)/2$ to the four
vertices of the tetrahedron $\Delta$ obtained from a fundamental domain of $G$.

The triangle $T$ from Figure~\ref{fig:tetrahedron} can be divided into $n^2$ triangles that are similar
to $T$ by the scaling factor $n$. For this  we divide  each side of
$T$ into $n$ edges of the same length and draw the line segments
parallel to the sides of $T$ through the endpoints of these edges.
The tetrahedron $\Delta$ has four  faces, denoted  by   $T_1, T_2,T_3
,T_4$. Each of them is similar to $T$, and so we can divide each face of
$\Delta$ in the same way into $n^2$ triangles similar to $T$. In the
following, we will call them the {\em small} triangles in
$\Delta$. 
 
\begin{figure}
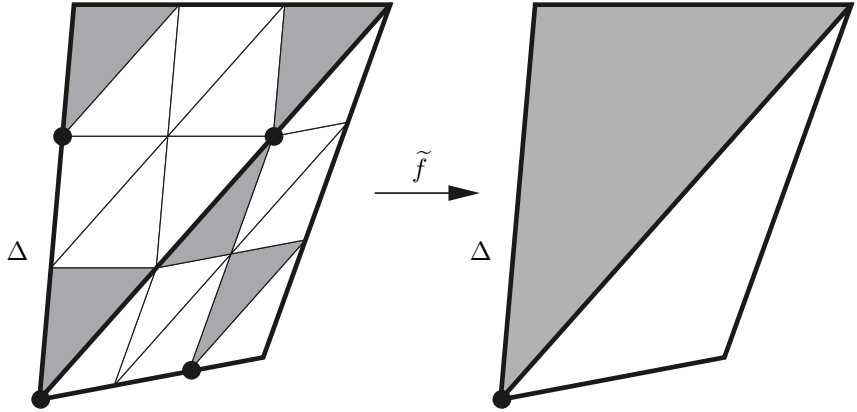

  \centering
  \begin{overpic}
    [width=11cm, tics=20,
    ]
    {flex_Lattes-NEW.eps}
    \put(46,28){$\widetilde{f}$}
    \put(-3,18){$\Delta$}
    \put(53,18){$\Delta$}
  \end{overpic}
  \caption{A Euclidean model for a flexible Latt\`{e}s map.}
  \label{fig:flex_Lattes}
\end{figure}

We now construct a map $\widetilde{f}\colon \Delta\to\Delta$ as
follows. We consider a small triangle $S$ that contains one of the four
vertices of $\Delta$ and  send  $S$  to one of the faces $T'$ of $\Delta$  by an orientation-preserving similarity
that scales by the factor $n$.  For given $S$ and $T'$ there is only one such map unless the base  triangle $T$, and hence also $S$ and $T'$, are 
 equilateral. In this case,  there are three such maps, only one of which will lead to a flexible
Latt\`{e}s map. We will ignore this special case in order to 
keep our  discussion simple. 

The similarity between $S$ and $T'$  can be uniquely extended to
a continuous  map $\widetilde{f}$ on $\Delta$ that sends  each small
triangle to one of the triangles $T_j$, $j\in \{1,2,3,4\}$,   by an orientation-preserving similarity with  scaling factor  $n$. It is clear that  $\widetilde{f}$
is a branched covering map. Its critical points are the vertices of
the small triangles with the exception of  the four vertices of $\Delta$. The map 
$\widetilde{f}$ sends each vertex
of a small triangle  to one of
the vertices of $\Delta$. It follows that the postcritical points of
$\widetilde{f}$ are the four vertices of $\Delta$ and $\widetilde{f}$
is postcritically-finite. Since the local degree at each critical point is
$2$, it is easy to see that  the orbifold signature of $\widetilde{f}$ is $(2,2,2,2)$. 

If we conjugate $\widetilde{f}$ by a uniformizing  map
$\varphi\: \Delta\ra \CDach$, then we obtain a flexible Latt\`{e}s 
map
 $f\coloneqq \varphi\circ \widetilde{f}\circ
\varphi^{-1}\colon\CDach \to 
\CDach$.    Conversely,  each flexible Latt\`{e}s map can be  obtained in  this
form. One such map (with $n=3$) is illustrated in
Figure~\ref{fig:flex_Lattes} (note that here the tetrahedron
$\Delta$ is embedded in $\R^3$). 

If we use  exactly the same construction as  just described with  another  parameter  $\tau'\in \C$, $\imag(\tau')>0$, then  we obtain a different tetrahedron $\Delta'$.
This leads to a  
different flexible Latt\`es map $g\colon \CDach \to \CDach$. It is easy to see that 
the  $f$ and $g$ are always 
topologically conjugate, but in general not  conjugate by a M\"{o}bius
transformation. So we obtain a $1$-parameter family of Latt\`es  maps with similar dynamics. 

\begin{figure}
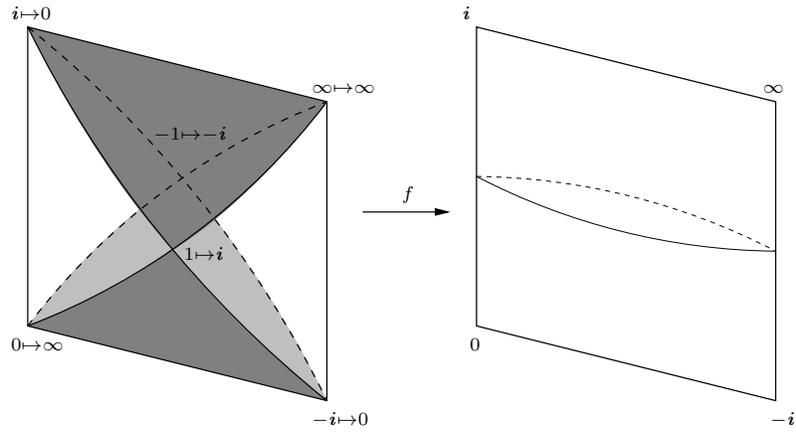

  \centering
  \begin{overpic}
    [width=10cm, tics=20, 
    ]
    {Lattes_Milnor2.eps}
    \put(-2,7){$\scriptstyle 0\mapsto \infty$}
    \put(-2,51){$\scriptstyle \iu\mapsto 0$}
    \put(38,41.2){$\scriptstyle \infty\mapsto \infty$}
    \put(38,-3){$\scriptstyle -\iu\mapsto 0$}
    \put(21,19){$\scriptstyle 1\mapsto \iu$}
    \put(17,35){$\scriptstyle -1\mapsto -\iu$}
    \put(50,27){$\scriptstyle f$}
    \put(59,7){$\scriptstyle 0$}
    \put(58,51){$\scriptstyle \iu$}
    \put(98,41.2){$\scriptstyle \infty$}
    \put(99,-3){$\scriptstyle -\iu$}
  \end{overpic}
  \caption{The map $f$ in  Example~\ref{ex:Lattes_Milnor}.}
  \label{fig:Lattes_Milnor}
\end{figure}

\begin{ex}
  \label{ex:Lattes_Milnor} Not 
   all Latt\`{e}s maps whose  orbifolds have signature
  $(2,2,2,2)$ are flexible Latt\`{e}s maps. For example,  suppose 
  $G$ is the crystallographic group consisting of the maps 
  $g(z)=\pm z + \gamma$,  where $\gamma\in \Gamma\coloneqq \Z\oplus \Z\iu$. 
 Then  $A(z)=(1-\iu)z$ is $G$-equivariant and so   there is a Latte\`s map $f$
  that arises as the quotient of $A$ by $G$ as in 
  Theorem~\ref{thm:Lattesstruc}~\ref{item:Lattessruciii}.  
 Then the  map 
   $f$ has  orbifold signature $(2,2,2,2)$. In fact, 
  $f$ is (up to conjugation by a M\"{o}bius transformation) given by
  $f(z)= \frac{\iu}{2}(z+1/z)$.  Since $\deg(f)=2$, this map is not a flexible Latt\`es map, because the degree of each such map is the square of an integer.
  A geometric model for $f$  is shown in Figure~\ref{fig:Lattes_Milnor}.    
A detailed discussion   of this map can be found in  \cite{Mi04}. 
 \end{ex}

\ifthenelse{\boolean{singlechapter}}{

%
%


\chapter{Quasiconformal and rough geometry}\label{cha:QCRoughGeo}

 The immediate purpose of this   chapter is to provide a framework for  discussing  Cannon's conjecture in geometric group theory and related questions. This will give  some motivating background for our study of expanding Thurston maps. 
 The chapter   will  also  serve as a reference for  some geometric  terminology and facts that we will use throughout   this work.

\section{Quasiconformal geometry} \label{sec:QCgeom}

  In this section we record  some material   related to quasiconformal geo\-metry and the analysis on metric spaces
(see \cite{He} for an exposition  of this subject).

Let $(X,d_X)$ and $(Y,d_Y)$ be metric spaces, and $f\:X\ra Y$ be
a homeomorphism.  Then $f$ is called 
{\em  bi-Lipschitz}\index{bi-Lipschitz} if there exists a constant $L\ge 1$ such that 
$$ \frac 1L d_X(u,v)\le  d_Y(f(u), f(v))\le L d_X(u,v)$$
for all $u,v\in X$. If there exist $\alpha>0$ and $L\ge 1$ such that  
\begin{equation}
  \label{eq:def_snowflake_eq}
   \frac 1L d_X(u,v)^\alpha \le  d_Y(f(u), f(v))\le L d_X(u,v)^\alpha  
\end{equation}
for all $u,v\in X$, then $f$ is called a 
{\em snowflake homeomorphism}\index{snowflake homeomorphism}. 
The map $f$ is  called a {\em quasisymmetric homeomorphism} or a {\em quasisymmetry}\index{quasisymmetry} if there exists 
a homeomorphism  $\eta\:[0,\infty)\ra [0,\infty)$  such that 
\begin{equation}\label{qsdef}
\frac{d_Y(f(u),f(v))}{d_Y(f(u),f(w))}
\leq \eta\left (\frac{d_X(u,v)}{d_X(u,w)}\right)
\end{equation}
for all $u,v,w\in X$ with  $u\ne w$. If we want to emphasize $\eta$
here, then we speak of an {\em $\eta$-quasisymmetry}.

The inverse of a bi-Lipschitz homeomorphism  and the composition
of two such  maps (when it is defined) 
are again a bi-Lipschitz homeo\-morphisms. 
Similarly, the classes of snowflake homeomorphisms and quasisymmetries  are closed under taking inverses or compositions. 
This implies that all these classes of maps lead to a corresponding notion of equivalence for metric spaces. 
So $X$ and $Y$ are called 
{\em bi-Lipschitz}
, {\em snowflake}\index{snowflake equivalent|textbf}, or {\em quasisymmetrically equivalent}
if there exists a homeomorphism between $X$ and $Y$ that is of the corresponding type. In the same way, we call two 
metrics  
 $d$ and $d'$  on a space $X$ {\em bi-Lipschitz}, {\em snowflake}, or {\em quasisymmetrically equivalent}  if the identity map from $(X,d)$ to $(X,d')$ is a homeomorphism with  the corresponding property.  Since every bi-Lipschitz homeomorphism is also a snowflake homeomorphism and every snowflake homeomorphism
is a quasisymmetry, this leads to correspondingly weaker notions of equivalence. 

\index{gauge}
A collection of metrics on a given space $X$ that are mutually  quasisymmetrically equivalent is called a  {\em quasisymmetric gauge} on $X$. One defines a  {\em bi-Lipschitz} or  {\em snowflake gauge}  on $X$ similarly (see \cite[Chapter~15]{He} for  related terminology and further discussion).  

As we will later see in Chapter~\ref{cha:visual-metrics}, each expanding Thurston map $f\:S^2\ra S^2$ induces a natural snowflake gauge on its underlying $2$-sphere $S^2$.  This is related to the   fact  that if $X$ is a  Gromov hyperbolic space, then there is a natural snowflake gauge on its boundary at infinity $\geo X$
(see Section~\ref{sec:Grhyp} below). In both cases we will later use the term {\em visual metric} for a metric in the natural snowflake gauge. A visual metric $\varrho$ gives the underlying  $2$-sphere $S^2$ of an 
expanding Thurston map $f\:S^2\ra S^2$ some geometric structure related to the dynamics of the map. 
 An important question in this context is whether this  sphere $(S^2,\varrho)$ is quasisymmetrically equivalent to the standard $2$-sphere, i.e., the Riemann sphere equipped with the chordal metric (see 
 Theorem~\ref{thm:S2vsf}~\ref{item:S2qsphere}). 
This is an instance  of a more general problem that can be formulated 
as follows.

\begin{quasi_u_problem*}
  \index{quasisymmetric uniformization problem}
  Suppose $X$ is a metric space homeomorphic 
  to some ``standard'' metric space $Y$. 
  When is $X$ quasisymmetrically equivalent to $Y$?
\end{quasi_u_problem*}

This problem is very similar to questions in classical uniformization theory where one asks whether a given region in $\CDach$ or a given 
Riemann surface is conformally 
equivalent to a ``standard'' region or Riemann surface. 
The quasisymmetric uniformization problem can be seen as a metric space version 
of this  where the class of conformal maps is replaced with  
the  class of quasisymmetric
homeomorphisms.

 It  depends 
on the context  how the term ``standard'' is precisely 
interpreted. A satisfactory answer to the quasisymmetric uniformization problem for given $Y$ is essentially equivalent to cha\-racterizing $Y$ up to quasisymmetric equivalence.   We will see 
that this is related to Cannon's conjecture  in 
geometric group theory. 
One can also pose a similar problem  for other classes of maps such as   bi-Lipschitz or snowflake maps. 

The prime instance for a quasisymmetric uniformization result is a 
theorem due to Tukia and 
V\"ais\"al\"a   that characterizes {\em 
quasicircles} and {\em quasiarcs}.  
 In order to state this result, we first have to discuss some terminology.

 The {\em standard unit circle} $\Sph^1$ is the  unit circle in
 $\R^2$ equipped with (the restriction of) the  Euclidean metric. A {\em metric circle}, i.e., a metric space homeomorphic to $\Sph^1$,  is 
called a 
{\em quasicircle}\index{quasicircle}
if it is quasisymmetrically equivalent to $\Sph^1$.
Similarly, a {\em metric arc}   (a metric space homeomorphic to the
unit interval $[0,1]\sub \R$) is called a 
{\em quasiarc}\index{quasiarc}
if it is quasisymmetrically equivalent to $[0,1]$ (equipped with
the Euclidean metric).  

A \emph{continuum}\index{continuum}  is a compact and 
connected topological space.  If a continuum   contains two
distinct points, then  it is called 
{\em non-degenerate}.\index{continuum!non-degenerate}\index{non-degenerate continuum}  
If  $x$ and $y$ are points in a continuum $X$, then we say that $X$ 
 \emph{connects} or  \emph{joins} $x$ and $y$. 

Let $(X,d)$ be a metric space.  Then $X$ is called  
{\em doubling}\index{doubling}\index{metric!doubling}
if there exists a number $N\in \N$ such that every 
ball in $X$ of radius $R>0$ can be covered by 
$N$ balls of radius $R/2$. 
We say that $X$ is of 
{\em bounded turning}\index{bounded turning}\index{metric!bounded turning}
if there  is a constant
  $K\ge 1$ such that for all points $x,y\in X$
  there
  exists a continuum $\ga \sub X$ with $x,y\in \ga$ 
  and
  \begin{equation} \label{eq:bddturn} 
   \diam_d(\ga)\le Kd(x,y).
  \end{equation}
  
  If $X$ is a metric circle, then it is of bounded turning precisely when for all $x,y\in X$ the last inequality is  true for one of the (possibly degenerate) subarcs $\ga\sub X$ with endpoints $x$ and $y$. Similarly, if $X$ is  a metric arc, then it is of bounded turning precisely when for all $x,y\in X$ inequality  \eqref{eq:bddturn}  is  true for the  subarc $\ga$  of $X$
  with endpoints $x$ and $y$.

The Tukia-V\"ais\"al\"a theorem \cite[Theorem~4.9, p.~113]{TV} can now be formulated as follows. 

\begin{theorem} \label{thm:TV} Let $X$ be a metric circle or a metric arc. Then $X$ is a quasicircle or a quasiarc, respectively, 
 if and 
only if $X$ is  doubling and of bounded turning. 
\end{theorem}

A metric space $X$ is called 
{\em linearly locally connected}\index{linearly locally connected (LLC)} 
if there
exists a constant $\lambda\ge 1$ satisfying the following
conditions: if $B(a,r)$ is an open ball in $X$ and
$x,y\in B(a,r)$, 
then there exists a continuum in
$B(a,\lambda r)$ connecting $x$ and $y$.  Moreover, if
$x,y\in X\setminus B(a,r)$, 
then there exists a
continuum in $ X\setminus B(a,r/\lambda)$ connecting $x$ and $y$.

It is easy to see  that a metric circle  is of bounded turning  if and only if it is linearly locally connected. This gives an alternative version of part of the Tukia-V\"ais\"al\"a theorem: {\em a metric circle is a quasicircle if and only if it is doubling and linearly locally connected}. 
 
 Every subset of $\CDach$  (equipped with the restriction of the 
 chordal metric $\sigma$) is doubling; this implies that a Jordan curve $J\sub \CDach$ is a 
 quasicircle if and only if it is of bounded turning. A similar remark applies to arcs $\alpha\sub \CDach$. 

A quasisymmetric characterization of the standard 
$1/3$-Cantor set can be found in \cite[Proposition~15.11]{DS}. 
Work by Semmes \cite{Se0, Se1} shows that the quasisymmetric characterization 
of $\R^n$ or  the standard sphere $\Sph^n$
 for $n\ge  3$  is  a problem that seems to be 
 beyond reach at the moment.
 The intermediate 
case $n=2$ is particularly interesting.  
To formulate a specific result in this direction we need one more definition.

 Let $(X,d)$  be a 
 locally compact metric space, and $\mu$  a Borel measure on $X$. Then  the metric measure space $(X,d,\mu)$   is called   \defn{Ahlfors}
$Q$\defn{-regular},\index{Ahlfors regular|textbf}   
where $Q>0$,  if  
\begin{equation}\label{eq:Ahlreg}
 \frac 1C R^Q\le  \mu(\overline B(x,R))\le  C R^Q
 \end{equation}  
for all closed balls $\overline B(x,R)$ with $x\in X$ and $0<R\leq \diam (X)$, where $C\ge 1$ is independent of the ball. 
If  $(X,d)$ is understood, we also say that $\mu$ is Ahlfors $Q$-regular.\index{H_Q@$\mathcal{H}^Q_d$}\index{Hausdorff!measure}

 If this condition is satisfied, then the $Q$-dimensional Hausdorff measure $\mathcal{H}^Q$ actually satisfies an  inequality as in \eqref{eq:Ahlreg}, and $\mu$ and  $\mathcal{H}^Q$ are comparable and in particular mutually absolutely continuous with respect to each other (for the definition of Hausdorff $Q$-measure $\mathcal{H}^Q$ see \cite[Section~11.2]{Fo99}, for example). If we want to emphasize the underlying metric $d$, we use the notation   $\mathcal{H}_d^Q$ for the Hausdorff $Q$-measure. 
 Note that  every Ahlfors regular measure $\mu$ on $(X,d)$ is a 
{\em doubling measure},\index{doubling!measure}\index{measure!doubling} 
i.e., there exists a constant $C\ge 1$ such that 
$$ \mu(\overline B(x,2R))\le C  \mu(\overline B(x,R))$$ whenever $x\in X$ and $R>0$.

A
 locally compact metric space $(X,d)$  is called  \defn{Ahlfors}
$Q$\defn{-regular}  
for  $Q>0$  if  the $Q$-dimensional Hausdorff measure $\mathcal{H}^Q$ has this property. Since every Ahlfors regular space admits a doubling measure, it is doubling (as a metric space) (see \cite[B.3.4 Lemma, p.~412]{Se4} for this last implication).

For a metric $2$-sphere  $X$ to be quasisymmetrically 
equivalent to the standard $2$-sphere $(\CDach, \sigma)$  it is 
necessary that $X$ is linearly locally connected. 
This  alone is not sufficient, but will be if we add Ahlfors $2$-regularity as an assumption  \cite[Theorem~1.1]{BK}. 

\begin{theorem}\label{mainthm}
 Suppose $X$ is a metric space homeomorphic to $\CDach$. If 
$X$ is linearly locally connected  and  Ahlfors $2$-regular, then $X$ is quasisymmetrically equivalent
to $(\CDach, \sigma)$.
\end{theorem}

A similar result for other simply connected surfaces 
was obtained 
by K.~Wildrick \cite{Wi}.

The assumption of Ahlfors regularity for  some
exponent $Q\ge 2$ 
 is quite natural, because it is satisfied in many interesting cases: for example, for boundaries of Gromov hyperbolic groups (see 
\cite{Co}) or  for $2$-spheres equipped with visual metrics 
of expanding Thurston maps 
(under some mild extra conditions; see Proposition~\ref{prop:Ahlforsreg}).  
There are 
metric $2$-spheres $X$ though that are linearly locally connected  and $Q$-regular with $Q>2$,  but are not
quasisymmetrically equivalent  to $(\CDach, \sigma)$
\cite{Va2}.  

 We will now prove  a fact that will be useful in
 Chapter~\ref{cha:latt-maps-comb}, where Theorem~\ref{mainthm} is
 applied to give a characterization of Latt\`es maps.

 \begin{prop} \label{prop:abscont2reg}
Let $d$ be a metric on $\CDach$ that is quasisymmetrically equivalent 
to the chordal metric $\sigma$. Let $\mu$ be a Borel measure on 
$\CDach$ such  that $(\CDach, d, \mu)$ is Ahlfors $2$-regular.
Then $\mu$ 
and Lebesgue measure 
$\leb$ on $\CDach$ are 
absolutely continuous with respect to each other. 
\end{prop}

\begin{proof} There is no simple justification of this fact. Accordingly, we have to rely 
on concepts and statements that we will not explain in detail, but rather refer to the literature.  

The quasisymmetric equivalence of $\sigma$ and $d$ implies that a metric
ball with respect to $\sigma$ roughly looks like a metric ball with respect to $d$ (typically with quite different radius); more precisely, 
there exists a constant $\lambda\ge 1$ such that for all 
$p\in \CDach$ and all $R>0$ there exists $R'>0$ such that 
$$ B_d(p, R'/\lambda)\sub B_\sigma(p, R) \sub B_d(p, \lambda R'). $$

One can deduce from this and the Ahlfors $2$-regularity  of $(\CDach, d, \mu)$ that $\mu$ is a  doubling measure on 
$(\CDach, \sigma)$.  Actually,  $\mu$ is a  
{\em metric doubling measure}\index{metric!doubling!measure}\index{doubling!measure!metric}
 on $(\CDach, \sigma)$: if $x,y \in \CDach$ are arbitrary, $R\coloneqq \sigma(x,y)$, and we set 
$$d'(x,y)\coloneqq  \mu(B_\sigma(x, R)\cup B_\sigma(y,R))^{1/2},$$
then $d'$ is comparable to a metric (in fact to $d$, meaning that
$d'\asymp d$). See \cite[Chapter 14]{He} for the terminology employed here.

It is known that a metric doubling measure $\mu$ on $(\CDach, \sigma)$  is  absolutely continuous with respect to Lebesgue measure $\leb$ on $\CDach$ with a Radon-Nikodym derivative $w$  that is positive almost everywhere (in fact $w$ is a so-called $A_\infty$-weight; see \cite{Se-1}). Then $d\mu=w\, d\leb$.   Since $w$ does not  vanish on a set of positive Lebesgue measure, this  implies that 
$\leb$ is also absolutely continuous with respect to $\mu$. 
\end{proof}

A homeomorphism $f\: X\ra Y$ between metric  spaces
$(X,d_X)$ and $(Y,d_Y)$ is called 
{\em weakly quasisymmetric}\index{weak quasisymmetry}\index{quasisymmetry!weak}
 if there exists a constant $H\ge 1$ such that for all $u,v,w \in X$ the following implication holds:
$$ d_X(u,v)\le d_X(u,w) \Rightarrow  
d_Y(f(u),f(v))\le H d_Y(f(u),f(w)).
$$
Under mild extra assumptions on the spaces, a weak quasisymmetry
is in fact a quasisymmetry (\cite[Theorem~10.19]{He}).

\begin{prop} \label{prop:wkqs}
Let $(X,d_X)$ and $(Y,d_Y)$ be metric spaces, and $f\:X\ra Y$ be
a weakly quasisymmetric
homeomorphism. If $X$ and $Y$ are connected and doubling, then $f$ is
a  quasisymmetry.   
\end{prop}

This proposition is very useful if one wants to establish that a given map is a quasisymmetry. 

\index{quasiregular}
Let $U$ and $V$ be open regions  in 
$\CDach$. A continuous map  $f\: U\ra V$  is called 
{\em quasiregular} if $f$ belongs to the Sobolev space 
$W^{1,2}_{loc}$ (i.e., $f$ has weak partial derivatives in $L^2_{loc}$ on $U$)
and if there exists $K\ge 1$ such that 
$$ \Vert Df(p) \Vert_\sigma^2 \le K \det (Df(p))$$ 
for almost every $p\in \CDach$. Here $Df(p)$ denotes the (formal) derivative of $f$ at $p$ considered as a linear map between tangent spaces and   $\Vert Df(p) \Vert_\sigma$ denotes the operator norm 
of $Df(p)$ with respect to the spherical metric (see  \eqref{eq:sphder2}). If, in addition, $f$ is a homeomorphism between 
$U$ and $V$, then $f$ is called 
{\em quasiconformal}.\index{quasiconformal} 
If we want to emphasize the parameter $K$, then we call such a map $K$-{\em quasiconformal} or $K$-{\em quasiregular}. 
For more background on quasiconformal and quasiregular maps see 
\cite{Va} and \cite{Ri}. 

If one considers a family $\mathcal{F}$  of maps in one of the
classes that we discussed, then it is often important to know
whether the distortion properties of the maps are controlled by
the same parameters. If this is the case, we say that
$\mathcal{F}$ is a 
{\em uniform}\index{uniform} 
family of maps in the given class. For example, a family 
$\mathcal{F}$ of homeomorphisms is said to consist of 
{\em uniform quasisymmetries}\index{uniformly!quasisymmetric}\index{quasisymmetry!uniform} 
if there exists a homeomorphism $\eta\: [0,\infty)\ra [0,\infty)$ such that each map in $\mathcal{F}$ is an $\eta$-quasisymmetry. Similarly, we say that the maps in $\mathcal{F}$
are 
{\em uniformly quasiregular}\index{uniformly!quasiregular}\index{quasiregular!uniformly} 
if there exists $K\ge 1$ such that 
each $f\in \mathcal{F}$ is $K$-quasiregular, etc. 

We conclude this section with a discussion of Hausdorff distance and Hausdorff convergence. Let $(X,d)$ be a  metric space. If $A,B\sub X$ are subsets of
$X$, then their {\em Hausdorff distance}
\index{Hausdorff!distance}  
is defined as  
\begin{equation}
  \label{eq:def_Hausdorffd}
  \dist^H_d(A,B)
  \coloneqq 
  \inf\{\delta>0: A\sub \mathcal{N}_{\delta} (B)
  \text{ and } 
  B \sub \mathcal{N}_{\delta} (A)\} \in [0,\infty].   
\end{equation}
Here  for $\delta>0$,
$$\mathcal{N}_\delta(M) = \mathcal{N}_{d,\delta}(M)\coloneqq \{x\in X: \dist_d(x,M)<\delta\}$$ denotes
the $\delta$-neighborhood of a set $M\sub X$. 
If   $A$ and $A_n$ for $n\in \N$ are non-empty closed subsets of $X$, then  we say
that $A_n\to A$  {\em in the sense of Hausdorff
  convergence}\index{Hausdorff!convergence} or $A_n$ {\em Hausdorff converges}  to $A$ as $n\to \infty$   if  
$$ \lim_{n\to \infty}\dist^H_d(A_n,A)=0.$$
Note that in this case a point $x\in X$ lies in $A$ if and only
if there exists a sequence $\{x_n\}$ of points in $X$ such that
$x_n\in A_n$ for $n\in \N$ and $x_n\to x$ as $n\to \infty$. 

It is known that if $X$ is compact, then the 
space of all non-empty closed subsets of  $X$ equipped with the
Hausdorff distance is a complete metric space (see \cite[Section~7.3.1]{BBI}).

%
%
%

\section{Gromov hyperbolicity} \label{sec:Grhyp} 
 
 In this section we 
review some standard material on Gromov hyperbolic spaces. For general background on this topic see   \cite{BuS, gh, Gr}. 

Let $(X,d)$ be a metric space. Then for $p,x,y\in X$ the quantity 
\begin{equation}
  \label{eq:def_Grpr}
  (x\cdot y)_p 
  \coloneqq  
  \frac12 \big(d(x,p)+d(y,p) - d(x,y)\big)
\end{equation}
is called the 
{\em Gromov product}\index{Gromov!product|textbf}\index{$3$@$(x\cdot
  y)_p$}
 of $x$ and $y$ with respect to the basepoint $p$. 
The space   $X$ is called {\em $\delta$-hyperbolic} for  $\delta\ge 0$, 
if the inequality 
\begin{equation}\label{def:Grprod}
(x\cdot z)_p \ge  \min\{ (x\cdot y)_p, (y\cdot z)_p\}-\delta
\end{equation} 
holds for all $x,y,z,p\in X$.  
If this condition is true for some basepoint $p\in X$ (and all
$x,y,z\in X$), then it is actually true for all basepoints if one
changes the constant $\delta$ to $2\delta$.  We say that $X$ is 
{\em Gromov hyperbolic}\index{Gromov!hyperbolic}\index{hyperbolic!Gromov}
if $X$ is $\delta$-hyperbolic for some $\delta\ge
0$.  

The space $X$ is called {\em geodesic}
if  any two points in $X$ can be
joined  by a path whose length is equal to the distance of the points. For a geodesic metric space Gromov hyperbolicity  
is equivalent to a thinness condition for geodesic triangles \cite[Chapter~2]{gh}.

Roughly speaking, the  Gromov hyperbolicity   of a space requires it to be    
``negatively curved'' on large scales. 
Examples for  such spaces are simplicial trees,  
or Cartan-Hadamard manifolds with a negative upper curvature bound such as  (real) hyperbolic $n$-space
${\mathbb H}^n$, $n\ge 2$. 

Let  $(X,d_X)$ and $(Y,d_Y)$ be  metric spaces. A map
$f\:X\rightarrow Y$ is called a 
{\em quasi-isometry}\index{quasi-isometry}
if there exist constants $\lambda\ge 1$ and $k\ge 0$ 
such that 
\begin{equation} \label {eq:qisom1}
\frac 1\lambda  d_X(u,v)  -  k  \le d_Y( f(u),f(v)) 
\le \lambda d_X(u,v)  +  k 
\end{equation} 
for all $u,v\in X$ and if 
\begin{equation}\label{eq:cobdd} 
\inf_{x\in X}d_Y(f(x),y)\le k
\end{equation}
for all $y\in Y$. If $\lambda=1$, then we call $f$ a {\em rough-isometry}.\index{rough-isometry} 
The spaces $X$ and $Y$ are called {\em quasi-isometric} or {\em rough-isometric} if there exists a map  $f\: X \ra Y$ that is a quasi-isometry or a rough-isometry, respectively. In coarse geometry  one often considers two metric spaces the same if they are quasi-isometric or rough-isometric.

Quasi-isometries form  a natural class 
of maps in the theory of Gromov hyperbolic spaces. 
For example, Gromov hyperbolicity of geodesic metric spaces is invariant under quasi-isometries \cite[Chapter~5]{gh}. 

A subset $A$ of a  metric space $(X,d)$ is called 
{\em cobounded}\index{cobounded} 
if there exists a constant $k\ge 0$ such that for every $x\in X$ there exists $a\in A$ with $d(a,x)\le k$. Then   every point in $X$ lies within uniformly bounded distance of the set $A$.
With this terminology,  condition \eqref{eq:cobdd} says that
the map $f$ has cobounded image in $Y$. If $A$ is cobounded in $X$, then $X$ is Gromov hyperbolic if and only if $A$ (equipped with the restriction of the ambient metric) is Gromov hyperbolic. 

With  each Gromov hyperbolic space $X$ one can associate 
a {\em boundary at infinity}\index{boundary at infinity}
$\geo X$\index{d -X@$\geo X$} 
as follows. We fix a basepoint $p\in
 X$, and consider sequences of points $\{x_i\}$ in $X$ 
{\em converging to infinity}\index{convergence to infinity}
in the sense that 
 \begin{equation}
   \label{eq:defxi_infty}
   \lim_{i,j\to\infty} (x_i\cdot x_j)_p=\infty.
 \end{equation}
We declare two such sequences $\{x_i\}$ and 
$\{y_i\}$ in $X$ as {\em equivalent}
if
\begin{equation}
  \label{eq:defxy_eqiv}
  \lim_{i\to\infty} (x_i\cdot y_i)_p=\infty.
\end{equation}
Then    $\geo X$  is defined as the 
set of equivalence classes of sequences 
converging to infinity. It is easy to see that the choice of the basepoint $p$ does not matter here. Moreover, if $A$ is cobounded in $X$, then one can represent each equivalence class by a sequence in $A$, and so we have a natural identification 
$\geo X\cong\geo A$.

A metric space $X$ is called 
{\em proper}\index{proper!metric space} 
 if every closed ball in $X$ is compact. If a  Gromov hyperbolic space $X$ is proper and geodesic, then 
there is an equivalent definition of $\geo X$ as 
the set of equivalence classes of geodesic rays 
emanating from the basepoint $p$.  
One declares two such rays as equivalent if they stay within bounded Hausdorff distance. Intuitively, 
a ray represents its ``endpoint''
in $\geo X$ (see \cite[Section~2.4.2]{BuS} or
\cite[Section~III.H.3] {BH} for details).

The Gromov product on a Gromov hyperbolic space $X$ has a natural extension to the boundary $\partial_\infty X$. Namely, if $p\in X$ and $a,b\in \geo X$, we set
\begin{equation}\label{def:Grprodinfty}
 (a\cdot b)_p\coloneqq  \inf \big\{\liminf_{i\to \infty} (x_i\cdot y_i)_p: \{ x_i\}\in a, \, \{y_i\}\in b\big\}. 
\end{equation}
Note that by definition a point in $\partial_\infty X$  is an equivalence class
(i.e., a set) 
 of sequences in $X$. So in \eqref{def:Grprodinfty}  it makes sense to take the infimum   over all sequences 
 $\{ x_i\}$ and $\{y_i\}$ that represent (i.e., are contained in) $a$ and $b$, respectively. We have $(a\cdot b)_p\in[0,\infty]$, where $(a\cdot b)_p=\infty$ 
 if and only if $a=b$. 
 
  In   \eqref{def:Grprodinfty} one can actually use any sequences representing the points $a$ and $b$ to determine $(a\cdot b)_p$ (up to an irrelevant additive constant). 
 Namely,  there exists a constant $k\ge 0$ independent of $a$ and $b$ such that 
  for all $\{ x_i\}\in a$  and $ \{y_i\} \in b$ we have 
  \begin{equation}\label{def:Grprodinfty2}
 \liminf_{i\to \infty} (x_i\cdot y_i)_p-k \le (a\cdot b)_p\le   \liminf_{i\to \infty} (x_i\cdot y_i)_p.
\end{equation}

The boundary $\geo X$ is  equipped with a natural class of   visual metrics. By definition 
a metric $ \varrho$ on $\geo X$ is called {\em visual}
\index{visual metric!on $\geo X$}  
if there exists a basepoint $p\in X$, and constants $C\ge 1$ and $\Lambda>1$ 
such that 
\begin{equation} \label{vismetric}
\frac{1}{C} \Lambda^{-(a \cdot b)_p}  \le
\varrho(a,b)  \le C  \Lambda^{-(a \cdot b)_p}
\end{equation}
for all $a,b\in \geo X$ (here we use   the convention that
$\Lambda^{-\infty}=0$). We call $\Lambda$ the \emph{visual
  parameter} of $\varrho$.  
 If  $X$ is  $\delta$-hyperbolic, then there exists a
visual metric $\varrho$  for each visual
parameter $\Lambda>1$ sufficiently close to $1$. 

Later we will also define a notion of a visual metric for an expanding
Thurston map.  So then  we have two notions of visual metrics---one for expanding
Thurston maps and one for boundaries of Gromov hyperbolic spaces. 
 For clarity we will sometimes use the phrases {\em visual metric
   in the sense of Thurston maps} and {\em visual metric in the
   sense of Gromov hyperbolic spaces}  to distinguish between
 these two notions. 
We will see that with each expanding Thurston map $f\: S^2\ra S^2$ one can associate a Gromov hyperbolic space 
$\G$ such that $\partial_\infty \G$ can be identified with $S^2$ and such that 
the class of visual metrics for $f$ (in the sense of Thurston maps) 
is exactly the same as the class of visual metrics on
$\partial_\infty \G\cong S^2$ (in the sense of Gromov hyperbolic
spaces); see Theorem~\ref{thm:visualGrTh}. This fact was actually the reason for our choice of the term ``visual metric'' for expanding Thurston maps.

We can think of the boundary at infinity of a Gromov hyperbolic
space $\geo X$ as a metric space if we equip it with a fixed
visual metric.  If $\varrho_1$ and $\varrho_2$ are two visual
metrics on $\geo X$, then the identity map on $\geo X$ is a snowflake
equivalence between $(\geo X, \varrho_1)$ and
$(\geo X, \varrho_2)$. So the visual metrics form a snowflake
gauge on $\geo X$. In particular, $\geo X$ carries a well-defined
topology induced by any visual metric, and the ambiguity of the
visual metric is irrelevant if one wants to speak of snowflake or
quasisymmetric maps on $\geo X$. One should consider the space
$ \geo X$ equipped with such a visual metric $\varrho$ as being
very ``fractal''. For example, assume there exists a visual metric
$\varrho_0$ with visual parameter $\Lambda_0>1$. Then for every
visual metric $\varrho$ with a visual parameter $\Lambda$ that
satisfies $1< \Lambda< \Lambda_0$, the space $(\geo X, \varrho)$
does not contain any non-constant rectifiable curves.

The following fact links the theory of Gromov hyperbolic spaces   to 
quasisymmetric maps (see \cite{BS} for more on this subject). 
\begin{prop} \label{prop:qisomqs}
Let $X$ and $Y$ be proper and geodesic Gromov hyperbolic spaces. 
Then every quasi-isometry $f\: X\ra Y$  induces  
a natural quasisymmetric  boundary map $\tilde f\:\geo X \ra \geo Y$.
\end{prop} 
The boundary map $\tilde f$ is defined 
by assigning to a point $a\in \geo X$ represented by the sequence $\{x_i\}$ the point $b\in \geo Y$ represented by the sequence $\{f(x_i)\}$.  

This statement  lies at the heart of Mostow's proof 
for  rigidity of rank-one symmetric spaces \cite{Mo}. 
The point is that a quasi-isometry may locally 
exhibit very irregular behavior, but  gives rise to a quasisymmetric boundary map that can be analyzed by analytic tools. 

\section{Gromov hyperbolic groups and Cannon's conjecture}
\label{sec:Cannconj}

 The theory of Gromov hyperbolic spaces can be used to define 
a  class of 
discrete groups. Here one adopts a geometric point of view by studying the Cayley graph of the group.  We 
review some standard definitions related to this,   
but will not attempt an in-depth treatment of the subject (for more details, see \cite{gh}). We will just  develop the necessary background to state and discuss Cannon's conjecture that served as one of our motivations for  studying expanding Thurston maps. 

Let $G$ be a finitely generated group, and $S$ a finite set of
generators of $G$ that is {\em symmetric}, i.e., if $s$ is in
$S$, then its inverse $s^{-1}$ is also in $S$.  The 
{\em Cayley graph}\index{Cayley graph}
$\mathcal{G}(G, S)$ of $G$ with respect to $S$ is now
defined as follows: 
the group elements are the vertices of $\mathcal{G}(G, S)$,
and one joins two vertices given by  $g,h\in G$ 
by an edge if there exists 
$s\in S$ such that $g=hs$ (here we use the common
convention that juxtaposition of group elements means their
composition in the group).  Since $S$ is symmetric, this ``edge
relation'' for vertices is also symmetric, and so we consider
edges as undirected. If we identify each edge in
$\mathcal{G}(G, S)$ with a closed interval of length $1$, then
$\mathcal{G}(G, S)$ becomes a cell complex, where singleton sets
consisting of group elements are the cells of dimension $0$ and
the edges are the cells of dimension $1$. The graph
$\mathcal{G}(G, S)$ is connected and carries a unique path metric
so that each edge is isometric to the unit interval $[0,1]$.  In
the following, we always consider $\mathcal{G}(G, S)$ as a metric
space equipped with this path metric. Then  $\mathcal{G}(G, S)$ is proper and geodesic.

The group $G$ is called {\em Gromov
  hyperbolic}\index{Gromov!hyperbolic!group}  
if the metric space $\G(G,S)$ is Gromov hyperbolic for some (finite and symmetric)
set  $S$ of generators of $G$.
If this is the case, then 
$\G(G,S')$ is Gromov hyperbolic for all  generating 
sets $S'$. This essentially follows from the fact that 
$\G(G,S)$ and $\G(G,S')$ are quasi-isometric.  

Examples of Gromov hyperbolic groups are free groups, fundamental groups of compact negatively curved manifolds, or 
small cancellation groups. 

If $G$ is a Gromov hyperbolic group, then one defines its boundary at
infinity as $\partial_\infty G=\partial_\infty \G(G,S)$. 
\index{d -X@$\geo G$} 
A priori this depends on the choice of the generating set $S$, but if $S'$ is another   generating set,  then there is a natural identification 
 $\partial_\infty \mathcal{G}(G,S')\cong\partial_\infty \mathcal{G}(G,S)$; namely, 
 since $G$ is cobounded in $\G(G,S)$  and $\G(G,S')$,  one can represent points in the  boundaries of both spaces by equivalence classes of sequences  in $G$ converging to infinity where the equivalence relation  is independent of the generating set.  So $\partial_\infty G$ is well-defined. 
 
One has to be careful though when one considers visual metrics. If $\varrho$ is   a visual metric on $\partial_\infty \mathcal{G}(G,S)$, then in general $\varrho$ will not be a  visual metric  on $\partial_\infty \mathcal{G}(G,S')$; but if $\varrho'$ is a visual metric on $\partial_\infty \mathcal{G}(G,S')$, then 
 $\varrho$ and $\varrho'$ are  quasisymmetrically equivalent. In other words,   
the identity map between $(\partial_\infty  \mathcal{G}(G,S), \varrho)$ and $(\partial_\infty  \mathcal{G}(G,S'), \varrho')$ (given by the natural identification of these spaces as discussed)  is a quasisymmetry.  This follows from  Proposition~\ref{prop:qisomqs} and  the fact that $\mathcal{G}(G,S)$ and $\mathcal{G}(G,S')$
are quasi-isometric.   So $\partial_\infty  G$ carries a natural quasisymmetric  gauge. 

If we equip 
 $\partial_\infty  G$ with any of these visual metrics $\varrho$, then we can unambiguously speak of quasisymmetric  maps 
 on $\partial_\infty G$.  Another consequence of this is that  $\partial_\infty G$ carries a unique topology induced by any visual metric $\varrho$ on 
 $\partial_\infty G\cong \geo\mathcal{G}(G,S)$. 

 Letting a group element $g\in G$ act on the vertices 
of $\G(G,S)$ by left-translation, we get a natural action $G\acts
\G(G,S)$. This action is 
{\em geometric},\index{group action!geometric}\index{geometric group action}
i.e., it is isometric, properly discontinuous, and cocompact. To get a better understanding of the properties of a group, one often wants to find a ``better'' space 
than $ \G(G,S)$ on which $G$ admits a geometric action
(for a systematic exploration of this point of view see \cite{Kl}).

A related   question is how the topological structure of the boundary $\geo G$ of a Gromov hyperbolic group determines its algebraic structure.  Since the Cayley graph of $G$ and any of its subgroups of finite index are quasi-isometric and hence indistinguishable from the perspective 
of coarse geometry, one is mostly interested in {\em virtual properties} 
of $G$, i.e., algebraic properties that are true for some subgroup 
of finite index. 

The spaces $\geo G$ form a very restricted class.  For example, if 
$G$ is {\em non-elementary} (meaning that $\#\geo G\ge 3)$ and 
has topological dimension $0$, then $\geo G$ is homeomorphic to a Cantor set. Moreover, in this case $G$ is {\em virtually isomorphic} 
to a free group (i.e., some finite-index subgroup of $G$ is free). 

If  $\geo G$  is homeomorphic to a circle, then $G$ is {\em virtually 
Fuchsian}; so  $G$ is virtually isomorphic to a fundamental group of a compact hyperbolic surface, or equivalently, there is a geometric  action of 
$G$ on hyperbolic $2$-space $\Halb^2$ (see \cite{KB} for an overview on this subject).

If  $\geo G$  has no local cut points and has topological dimension one, then $\geo G$ is a Menger curve or a 
Sierpi\'nski carpet. Moreover, a conjecture due to Kapovich and Kleiner \cite{KK} predicts that in the latter case, $G$ admits a geometric action on a convex subset of $\Halb^3$ with non-empty totally geodesic boundary.   

This conjecture is related to (and implied by) another conjecture, 
due to Cannon (see \cite[p.~232]{Ca94}).

\begin{conj*}
[Cannon's conjecture.\ Version I]
\index{Cannon's conjecture|textbf}
Let $G$ be a Gromov hyperbolic group  and suppose $\geo G$ is homeomorphic to 
$\CDach$. Then there exists a geometric  action of $G$ on hyperbolic $3$-space $\Halb^3$. 
\end{conj*}

If this were true, then $G$ would be virtually isomorphic to the fundamental group of a compact hyperbolic $3$-manifold. 

In higher dimensions a corresponding statement is false; 
there are Gromov hyperbolic groups $G$ with $\geo G$ homeomorphic to an 
$n$-sphere, $n\ge 3$, that do not admit geometric actions on $\Halb^{n+1}$. One can obtain such examples as fundamental groups
of {\em Gromov-Thurston manifolds} \cite{GT}. These are  negatively-curved  closed manifolds that do not carry a hyperbolic metric, and exist in dimension $n+1\ge 4$.

Cannon's conjecture can be reformulated in equivalent form as a quasisymmetric uniformization problem (see \cite{Bo} for more discussion). 

\begin{conj*}[Cannon's conjecture.\ Version II]
  \index{Cannon's conjecture} 
  Let $G$ be a Gromov hyperbolic
  group and suppose $\geo G$ is homeomorphic to $\CDach$. Then
  $\geo G$ equipped with a visual metric is quasisymmetrically
  equivalent to  $(\CDach,\sigma)$.
\end{conj*}

In view of this formulation of the conjecture it is very
interesting to study the quasisymmetric uniformization problem
for metric $2$-spheres in general and ask for general conditions
under which such a sphere is quasisymmetrically equivalent to
the standard $2$-sphere $(\CDach,\sigma)$. Here the conditions should be
similar to those that one can establish for boundaries of Gromov
hyperbolic groups.

In all known examples where $\partial_\infty G$ is a
$2$-sphere, $G$ is (essentially) the fundamental group of a
hyperbolic manifold and $\geo G$ can naturally be identified
with $\geo \Halb^3$ which is the standard $2$-sphere. So in these
cases, no uniformization problem arises. Cannon's conjecture
predicts that there are no other examples.  One of the
difficulties in making progress on Cannon's conjecture is this
lack of non-trivial examples that may guide the intuition.

In contrast, the theory of Thurston maps provides 
a large class of self-similar fractal $2$-spheres that sometimes are,
and sometimes are not, quasisymmetrically equivalent to the standard $2$-sphere. By analyzing these examples, one may 
hope to discover some general features  that could  be relevant for  the solution of Cannon's conjecture.

\section{Quasispheres}
\label{sec:snowballs}

A metric space  quasisymmetrically equivalent to
 $(\CDach, \sigma)$  is called a
\emph{quasisphere}.\index{quasisphere} In view of the previous discussion of Cannon's conjecture and the characterization of rational Thurston maps as given  by Theorem~\ref{thm:S2vsf}~\ref{item:S2qsphere}, we now want to discuss two examples that may guide the  reader's intuition. As this is our main purpose here, we will skip the justification of most details. 

\begin{ex}  \label{ex:snowballs-1}

A \emph{snowball}\index{snowball} is a compact set in $\R^3$ constructed  in a similar  way
as  the set in the plane bounded by the classical von Koch snowflake
curve. The 
boundary of a snowball is a \emph{snowsphere}\index{snowsphere}
$\mathcal{S}$.  In many cases this is a quasisphere. 

The 
simplest example is 
obtained as follows (the  general construction can be found in 
\cite{Me08}). We start with the unit cube $[0,1]^3\subset
\R^3$ as the $0$-th approximation $\mathcal{B}^0$  of the
snowball. The boundary of  $\mathcal{B}^0$ is a polyhedral surface $\mathcal{S}^0$ consisting of six copies of the unit square $[0,1]^2\sub \R^2$ as faces. 
We divide each of these  six faces   into $5\times 5$ squares of side length
$1/5$ (or \emph{$1/5$-squares}).  On the 
$1/5$-square in the middle of each face we place a    cube  that  has  side length $1/5$ and 
sticks out of $\mathcal{B}^0$.     This results in a set $\mathcal{B}^1\supset \mathcal{B}^0$. The boundary of  $\mathcal{B}^1$ is a polyhedral surface 
$\mathcal{S}^1$
consisting of $6\times 29$ $1/5$-squares.  The 
procedure is now iterated;  namely,  each $1/5$-square is divided into
$5\times 5$ squares of side length $1/25$, on each middle square we
put a cube of side length $1/25$, and so on. We obtain an increasing sequence
$\mathcal{B}^0\sub \mathcal{B}^1\sub \dots $ of compact sets in $\R^3$. Their  union is
the snowball $\mathcal{B}$ with the snowsphere $\mathcal{S}\coloneqq \partial \mathcal{B}$ as its boundary.
One can show that   $\mathcal{S}$ is  indeed a  $2$-sphere. 
For each $n\in \N_0$ the boundary of  $\mathcal{B}^n$ is a polyhedral surface $\mathcal{S}^n$ that consists 
of $1/5^n$-squares.  
 The  surface $\mathcal{S}^n$ gives an   approximation of the
 snowsphere $\mathcal{S}$ 
that becomes increasingly better 
as $n\to \infty$.

One can also give another  construction of   $\mathcal{S}$
by a replacement procedure very similar to the one
in Section~\ref{sec:int-frac-sph}. For this we let the {\em generator} of the
snowball be  the polyhedral surface shown in
Figure~\ref{fig:gen_snow1}. The approximation $\mathcal{S}^{n+1}$ is then obtained from 
$\mathcal{S}^{n}$
 by replacing each $5^{-n}$-square of $\mathcal{S}^n$ with  a
scaled copy of the generator. If $X^n$ is one of the  $5^{-n}$-squares from which
$\mathcal{S}^n$ is built, and $X^{n+1}$ is a $5^{-(n+1)}$-square in the
scaled copy of the generator that replaces $X^n$,  we write $X^{n}
\supsim X^{n+1}$. 

\begin{figure}
  \centering
  \includegraphics{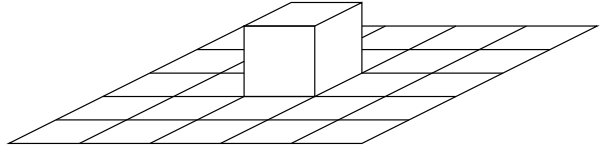}
  \caption{The generator of the snowsphere $\mathcal{S}$.}
  \label{fig:gen_snow1}
\end{figure}

The snowsphere $\mathcal{S}$ 
  inherits the Euclidean metric from $\R^3$. It is not hard to 
see that if  $x,y\in \mathcal{S}$,  then there exists a  rectifiable
path  $\gamma\subset \mathcal{S}$ joining $x$ and $y$ whose length is comparable 
to $|x-y|$. If we define $d(x,y)$ for $x,y\in \mathcal{S}$ as the infimum of the lengths of such paths, then we  get a length metric $d$ on $\mathcal{S}$  that  is bi-Lipschitz equivalent to the Euclidean metric on $\mathcal{S}\sub \R^3$ (see
\cite{Me02} and \cite{Me08}). 

Similarly to Section~\ref{sec:int-frac-sph}, we can estimate the
Euclidean metric on $\mathcal{S}$ in an intrinsic way. For this
we note that for each 
point $x\in \mathcal{S}$ there exist sequences
$X^0\supsim X^1\supsim \dots $ of $1/5^n$-squares $X^n$ such that
$\{ X^n\}$ Hausdorff converges to $\{x\}$ in $\R^3$.  Now for
$x,y\in \mathcal{S}$, $x\ne y$, we define (compare with
\eqref{eq:intromxy})
\begin{align}
  \label{eq:mdef_first}
  m(x,y): =\inf  \min\{ n\in \N_0 : X^n \cap Y^n=\emptyset \},
\end{align}
where the infimum is taken over all such  sequences $\{X^n\}$ for $x$ and $\{Y^n\}$ for $y$. 
Then  \begin{equation}
  \label{eq:visual_snow}
  \abs{x-y} \asymp 5^{-m(x,y)},
\end{equation}
where $C(\asymp)$ is independent of $x$ and $y$. So up to a multiplicative constant, the  Euclidean metric on $\mathcal{S}$ can  be recovered from the  combinatorics of the sets $X^n$. 

It is a small step from here to the theory of Gromov hyperbolic spaces. We   construct a graph $\G$ as follows (see Chapter~\ref{cha:Gromov} for very similar considerations). The set of
vertices of $\G$ is the set of   $1/5^n$-squares $X^n$ for all $n\in
\N_0$. It is  convenient to add another vertex $X^{-1}$.   We
declare that $X^{-1} \supsim X^0  $ for any $1/5^0$-square $X^0$. Then  each vertex 
of $\G$ as represented by $X^n$ has an attached level $n\in \N_0\cup \{-1\}$.

The set
of (undirected) edges of $\G$ is now given as follows. We connect two  distinct vertices by an edge if they have the same level $n$ and are represented by two $1/5^n$-squares $X^n$ and $Y^n$ with 
$X^n\cap Y^n \neq \emptyset$. Moreover, we join two vertices represented  by 
$X^n$ and $X^{n+1}$ if 
  $X^{n}\supsim X^{n+1}$. 
There are no other edges in $\G$. 

If we identify each edge with a copy of the unit interval $[0,1]$, then 
$\G$ carries a natural path metric (corresponding to combinatorial distance in 
$\mathcal{G}$ on the set of vertices). 
It can be shown that with this path metric $\G$ is a
Gromov hyperbolic metric space.  

There is a natural identification of $\mathcal{S}$ with the boundary at
 infinity $\geo \mathcal{G}$. Namely, if $x\in \mathcal{S}$, then we choose a sequence
$X^0\supsim X^1\supsim  \dots $ of $1/5^n$-squares  such that  $\{ X^n\}$ 
Hausdorff converges to $\{x\}$ in $\R^3$. Then  $\{ X^n\}$,  now considered as a sequence of vertices in 
$\mathcal{G}$,  converges to  infinity (see Section~\ref{sec:Grhyp}). Sending a point   $x$ to the equivalence class 
of $\{X^n\}$ (considered as a point on  $\geo \mathcal{G}$), we get  a bijection between $\mathcal{S}$ and $\geo \mathcal{G}$  that  we use to identify these two sets. 

We choose the basepoint $p=X^{-1}$ in $\mathcal{G}$.  Then for
the Gromov product 
$(x\cdot y)_p$\index{Gromov!product}
of two points $x,y\in \mathcal{S}=\geo \mathcal{G}$ we have 
\begin{equation*}
  m(x,y) - c \leq (x\cdot y)_p \le m(x,y) + c,  
\end{equation*}
where $c\ge 0$ is a constant independent of $x$ and $y$. 
{}From this and \eqref{eq:visual_snow} it follows that the Euclidean metric on $\mathcal{S}=\geo \mathcal{G}$
is a visual metric in the sense of Gromov hyperbolic spaces; indeed, it satisfies 
\eqref{vismetric} with 
$\Lambda=5$. 
 It is easy to see 
that there are no visual metrics with $\Lambda>5$. 

The snowsphere $\mathcal{S}$ (equipped with the metric inherited from $\R^3$) is
 a quasisphere. This was shown in \cite{Me02} (see also \cite{Me08}
and \cite{Me09a}). For the proof one  constructs  a  rational
Thurston map (in a non-obvious way)  leading to  sets that mirror  the combinatorics of the sets $X^n$ 
related to  $\mathcal{S}$. One can use this to  show  directly that $\mathcal{S}$
is a
quasisphere, 
 or   one invokes the general criterion given by Theorem~\ref{thm:S2vsf}~\ref{item:S2qsphere}. 
\end{ex}

\begin{ex}[A non-quasisphere]
\label{ex:non-quasisphere}
Our second example is the sphere $\mathcal{S}$ that was discussed
in Section~\ref{sec:int-frac-sph}. It is equipped with the metric
$\varrho$ defined in \eqref{eq:def_visual_1}.
As we already remarked, it follows 
from the fact that the associated Thurston map $h$ has a Thurston obstruction in combination with Theorem~\ref{thm:S2vsf}~\ref{item:S2qsphere} that $\mathcal{S}$ is not a quasiphere.
Here we want to outline a direct argument for this statement (it emerged in discussions with B.~Kleiner). We use the notation from 
Section~\ref{sec:int-frac-sph}.

 Consider the top part of $\mathcal{S}$. It is given by all equivalence classes of sequences 
 $\mathcal{X}^0\supsim \mathcal{X}^1 \supsim \dots$, where 
$\mathcal{X}^0$ is the top white $0$-tile of $\mathcal{S}^0$.  
{}From this top part we remove  all the ``flaps'' that  were successively added 
to $\mathcal{X}^0$ in the construction of $\mathcal{S}$. What remains is a subset $Z\sub \mathcal{S}$
that looks like the unit square $U=[0,1]^2$ with countably many slits
 (see Figure~\ref{fig:slit_carpet}; related to $Z$ are the ``slit carpets'' considered in  \cite{Mer10}). 
These slits are all parallel to one of the sides of $U$, say to $[0,1]\times \{0\}$. Each point 
$z\in Z$ corresponds to a unique point in $U$. This gives a surjective map $\pi\: Z\ra U$. 
If a point  $p\in U$ is an interior point of one of the slits, then there  are two points in $Z$ (one for each side of the slit) that map to $p$. For all other points $p\in U$ we have $\#\pi^{-1}(p)=1$.

\begin{figure}
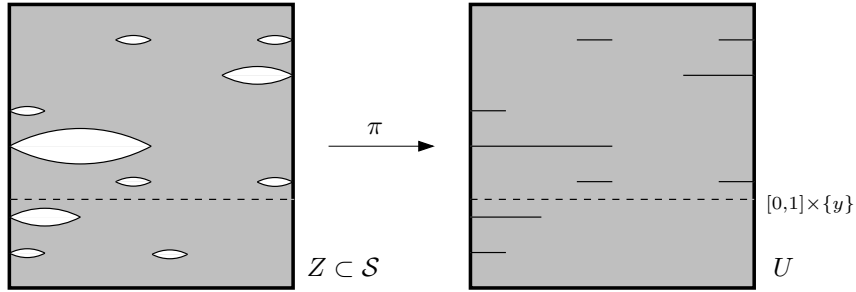

 \centering
 \begin{overpic}
   [width=10cm, tics=20, 
   ]{slit_carpet}
   \put(48,21){$\pi$}
   \put(102,2){$U$}
   \put(40,2){$Z\subset \mathcal{S}$}
   \put(101,11){$\scriptstyle [0,1]\times\{y\}$}
 \end{overpic}
 \caption{The set $Z$.}
 \label{fig:slit_carpet}
\end{figure}

 We equip $Z$ with the metric $\varrho$ and $U$ with the Euclidean metric. Then the map $\pi\: 
 Z\ra U$ is Lipschitz. Actually, $\pi$ is {\em David-Semmes
   regular} (as defined in \cite[Chapter 12]{DS}). For $\pi$ this means 
that in addition to being  Lipschitz,    there exists a number $N\in \N$ such that the preimage $\pi^{-1}(B(p,r))$ 
 of each Euclidean  ball $B(p,r)$ in $U$ can be covered by $N$ balls in $Z$ of the same radius $r$. 
 
At least on an intuitive level, one can see that $\pi$ has this last property as follows. If no slit cuts 
through 
$B(p,r)$, then $\pi^{-1}(B(p,r))$
is contained in a ball in $Z$ whose radius is not much larger, and hence comparable to $r$. 
If a slit cuts through  $B(p,r)$, then $\pi^{-1}(B(p,r))$ is split into two parts each of which 
 is contained in a ball in $Z$  with radius  comparable to $r$. This implies that in  any case, 
 $\pi^{-1}(B(p,r))$ can be covered by a controlled number of  balls in $Z$ with radius $r$.

 Since $U$ is Ahlfors $2$-regular, and $\pi$ is David-Semmes regular, $Z$ is  Ahlfors $2$-regular
 as well (\cite[Lemma~12.5]{DS}). 
 
Let $\Gamma$ be the set of all paths in $Z$ that project under $\pi$ to a line segment that has the form  $[0,1]\times \{y\}$, $y\in [0,1]$,  and does not contain a slit. Restricted to a path $\ga\in \Gamma$,  the
 map $\pi$ is bi-Lipschitz with a uniform constant independent of $\ga$. This implies that 
 the $2$-modulus of $\Gamma$ in $Z$ (see \cite[Section~7.3]{He} for the definition of the modulus of a path family) cannot be much smaller 
 than the $2$-modulus of $\pi(\Gamma)$ and is hence positive. Together with the Ahlfors $2$-regularity of $Z$ this implies that an image of $Z$ under any quasisymmetric homeomorphism has Hausdorff
 dimension $\ge 2$ (\cite[Theorem~15.10]{He}). 
 
 One other property of $Z$ will be  important. Namely, $Z$ is a
 {\em porous}\index{porous} 
subset of $\mathcal{S}$. This 
 means that there exists a constant $c\in (0,1)$ with the following property: if $a\in Z$  and  $r>0$ with 
 $r\le \diam(Z)$ are arbitrary, then there exists $x\in B(a,r)$ with $B(x,cr)\cap Z=\emptyset$.  
 So the set $B(a,r)\cap Z$ has the  ``hole''  $B(x,cr)$ of comparable size. 
 
 The porosity of $Z$ follows from the fact that each ball $B(a,r)$ in $\mathcal{S}$ with $r\le \diam (\mathcal{S})$ contains a flap of size comparable to $r$. Since in the construction of $Z$ we removed all flaps from the top side of  $\mathcal{S}$, this means that all sufficiently small balls centered in $Z$ contain a hole of about the same size. 
 
 Now we can see that $\mathcal{S}$ is not a quasisphere as
 follows. We argue by contradiction and assume that there exists
 a quasisymmetry $\varphi$ of $\mathcal{S}$ onto $\CDach$
 (equipped with the chordal metric). From what we have discussed
 above, it then follows that the Hausdorff dimension of
 $\varphi(Z)$ is $\ge 2$.  On the other hand, images of porous
 sets under quasisymmetries are porous (see \cite[Theorem
 4.2]{Va1} for a proof in $\R^n$; it easily generalizes to a
 metric space setting). Hence $\varphi(Z)$ is a porous subset of
 $\CDach$. A porous subset of an Ahlfors $Q$-regular space,
 $Q>0$, has Hausdorff dimension $<Q$ (\cite[Lemma~5.8]{DS}). It
 follows that the Hausdorff dimension of $\varphi(Z)$ is
 $<2$. This is a contradiction, and so $\mathcal{S}$ cannot be a
 quasisphere.
\end{ex}

 \ifthenelse{\boolean{singlechapter}}{

%
%


\chapter{Cell decompositions}
\label{cha:celldecomp}

In this chapter we discuss some technical, but very crucial
aspects of our work, namely cell decompositions and their
relation to Thurston maps. 
Since this  is the basis of our approach to the investigation of Thurston maps, we collected  all the relevant facts in one place with a detailed  presentation.

Accordingly, this chapter is quite long and covers a mix of 
concepts and results that are fairly standard (Sections~\ref{s:celldecomp} and \ref{sec:2spherecd}),  are important right away for our further developments (Sections~\ref{sec:tiles},~\ref{sec:flowers}, and \ref{sec:opp}),
 or are not needed until much later (Sections~\ref{sec:labelings} and~\ref{sec:mapsfromdecomp} are not used before Chapter~\ref{cha:subdivisions}). 
For this reason, the reader may  not  want to peruse 
 this chapter in  linear order 
and could just skim 
through some of its parts, in  particular at first reading.
 
%

%
%

We start with a general review of cell decompositions for
arbitrary spaces (Section~\ref{s:celldecomp}). The reader
already acquainted with these concepts  
is encouraged to look
at
Definition~\ref{def:celldecomp} to get familiar with our
terminology and notation. We also define refinements of cell
decompositions and  cellular Markov partitions for a map
(see Definition~\ref{def:ref} and
Definition~\ref{def:cellular}). 
For the most part this is in preparation
of Chapter~\ref{cha:subdivisions}.

In Section~\ref{sec:2spherecd} we
specialize to cell decompositions of $2$-spheres. The most important facts about them are recorded in Lemma~\ref{lem:specprop}. 
 We also consider isomorphisms of cell complexes
(Definition~\ref{def:compiso}) and the homeomorphisms that are
induced by them (Lemma~\ref{lem:isocellhomeo}).
     
In Section~\ref{sec:tiles} we consider cell decompositions of a 
$2$-sphere $S^2$ induced by a Thurston map $f\:S^2\ra S^2$. For this we consider 
a Jordan
curve $\CC\sub S^2$   with $\post(f)\sub \CC$ and an associated cell decomposition
$\DD^0(f,\CC)$ of $S^2$. Its cells are given by the points  
 in $\post(f)$ as vertices, the closed arcs into which the points in $\post(f)$
divide $\CC$, and the closures of the two components of
$S^2\setminus \CC$. Pulling this decomposition back by $f^n$, we
obtain a cell decomposition $\DD^n=\DD^n(f,\CC)$ of $S^2$ for each $n\in \N_0$ (see Corollary~\ref{cor:pullback_D0} and
Definition~\ref{def:DDn}). These cell decompositions $\DD^n$  are our most important tool
for studying Thurston maps.  The properties of
 $\DD^n$ are collected in Proposition~\ref{prop:celldecomp}. This is an elementary, but 
   central result of this chapter and will be  used throughout this work. 

The  $2$-dimensional {cells} or {tiles}
in $\DD^0(f,\CC)$ are the  two closed Jordan regions in $S^2$ bounded by
$\CC$. If we assign the colors black and white to them, then we can pull this coloring back by $f^n$ and obtain 
colors for the tiles in $\DD^n$ (see
Lemma~\ref{lem:colortiles}). This is closely  related to  the more general notion of a
\emph{labeling} considered in Section~\ref{sec:labelings}. For the cell decompositions 
$\DD^0=\DD^0(f,\CC)$ and $\DD^1=\DD^1(f,\CC)$ as in  Definition~\ref{def:DDn}, this is simply the map
$L\colon \DD^1\to \DD^0$ given by $L(c) =f(c)$ for 
$c\in \DD^1$. We will turn this around in
Section~\ref{sec:mapsfromdecomp}, and construct Thurston maps
from 
 cell decompositions $\DD^1$ and $\DD^0$ and such a labeling
$L$ (see Proposition~\ref{prop:thurstonex}). These notions will be
revisited later in Chapter~\ref{cha:subdivisions}, where we will assume
in addition  that $\DD^1$ is a refinement of $\DD^0$. This will
allow us to define and describe  Thurston maps by finite
combinatorial data. The reader may safely skip 
Sections~\ref{sec:labelings} and~\ref{sec:mapsfromdecomp}  until this material is needed in 
Chapter~\ref{cha:subdivisions}.

In Section~\ref{sec:flowers} we consider \emph{flowers}. These
are simply connected neighborhoods of vertices in $\DD^n=\DD^n(f,\CC)$ (see
Definition~\ref{def:flower}). Their properties are summarized in Lemma~\ref{lem:flowerprop}. They behave well under the maps 
$f^k$ (see Lemma~\ref{lem:mapflowers}). We also 
introduce \emph{edge-flowers} as  neighborhoods of edges in
 $\DD^n$ (see Definition~\ref{def:edgeflower}
and Lemma~\ref{lem:edgeflower}). 

In the last section (Section~\ref{sec:opp}) we introduce the notion of
\emph{joining opposite sides} of the Jordan curve $\CC$. The most important result here is
Lemma~\ref{lem:maptotop}, which says that if a connected set $K$ 
joins two disjoint cells in $\DD^n(f,\CC)$, then $f^n(K)$ joins opposite
sides of $\CC$. This fact  will be of major
importance when we construct visual metrics in
Chapter~\ref{cha:visual-metrics}. 

\section{Cell decompositions in general}\label{s:celldecomp}

Here   we  review some facts about cell decompositions of arbitrary spaces. Most  of this material is well known
(see, for example, \cite[Chapter~1]{CF67}).  
  For the purpose of the  present work we could have restricted ourselves to cell decompositions of subsets of a  $2$-sphere,   but it is more transparent to discuss the topic  in greater generality.

In this section $\mathcal{X}$ will always be a locally compact Hausdorff space.  A {\em 
(compact topological) cell $c$ of dimension
$n=\dim(c)\in \N$}\index{cell}\index{cell!dimension of}\index{dimension of cell} 
in $\mathcal{X}$ is a set $c\sub \mathcal{X}$ that  is   homeomorphic to the closed unit ball $\overline \B^n$  in $\R^n$.  We denote by $\partial c$ the set of 
points corresponding to $\partial \overline \B^n$ under such a homeomorphism between $c$ and $\overline \B^n$. This is  independent of the homeomorphism chosen, and the   set  $\partial c$ is  well-defined. We call  $\partial  c$ the   {\em boundary}  and  $\inte(c)=c\setminus \partial c$ the {\em  interior}   of $c$.  Note that   boundary and  interior 
of $c$ in this sense will in general not agree with 
the boundary and interior  of $c$  regarded as a subset of the topological space $\mathcal{X}$. 
A {\em cell of dimension $0$} in  $\mathcal{X}$ is a set $c\sub \mathcal{X}$ consisting of a single point. We set $\partial c=\emptyset$ and $\inte(c)=c$ in this case. 

\begin{definition}[Cell decompositions]\label{def:celldecomp}
Suppose  that $\DD$ is a  collection of cells in a locally compact Hausdorff space $\mathcal{X}$.  We say that $\DD$ is a  
{\em cell decomposition}\index{cell!decomposition} of $\mathcal{X}$ 
provided the following conditions are satisfied: 

\begin{enumerate}
  
\item
\label{item:def_cell1}
  The union of all cells in $\DD$ is equal to $\mathcal{X}$.

\item
\label{item:def_cell2}
  We have $\inte(\sigma)\cap \inte(\tau)= \emptyset$,  whenever $\sigma,\tau\in \DD$, $\sigma\ne \tau$.

\item
\label{item:def_cell3}
  If $\tau\in \DD$, then $\partial \tau$ is a union of cells in $\DD$.

\item
\label{item:def_cell4}
  Every point  in $\mathcal{X}$ has a neighborhood that meets only finitely many cells in $\DD$. 

\end{enumerate}
\end{definition}

Note that in the literature one often uses the term {\em regular} 
for cell decompositions as in 
Definition~\ref{def:celldecomp} in order to distinguish them from more 
general notions of cell decompositions (as in the theory of CW-complexes, for example).

If $\DD$ is a collection of cells in some ambient space
$\mathcal{X}$, then we call $\DD$ a 
{\em cell complex}\index{cell!complex} if $\DD$ is a cell decomposition of the underlying set 
$$|\DD|\coloneqq \bigcup\{c:c\in \DD\}. $$

Suppose $\DD$ is a cell decomposition of $\mathcal{X}$. 
By \ref{item:def_cell4}, every compact subset of $\mathcal{X}$ can only meet finitely
many cells in  $\DD$.  In particular, if $\mathcal{X}$ is compact, then $\DD$ consists of only finitely many cells.  Moreover,  for each $\tau \in \DD$, the set  
$\partial \tau$ is compact and hence equal  to a finite union of cells in $\DD$. 
It follows from basic  dimension theory that if 
\ $\dim(\tau)=n$, 
then $\partial \tau$ is equal to  a  union of cells in $\DD$ that have dimension $n-1$. 
 
The 
union  $\mathcal{X}^n$ of all cells in $\DD$ of dimension $\le n$ is called the
$n$-{\em skeleton}\index{n9@$n$-!skeleton}\index{skeleton} of the cell decomposition.
It is useful to set $\mathcal{X}^{-1}=\emptyset$.  
It follows from property
\ref{item:def_cell4} of
a cell decomposition that 
$\mathcal{X}^n$ is a closed subset of $\mathcal{X}$ for each $n\in \N_0$. 
By the last remark in the previous paragraph, we have  $\partial \tau\sub \mathcal{X}^{n-1}$  for each $\tau \in \DD$ with $\dim(\tau)=n$.

\begin{lemma} \label{lem:uniondisjint}
 Let $\DD$ be a cell decomposition of  $\mathcal{X}$. Then for each $n\in \N_0$ the $n$-skeleton $\mathcal{X}^n$ is equal to the disjoint union of the sets $\inte(c)$, $c\in \DD$, $\dim(c)\le n$.  
The space $\mathcal{X}$ is equal to the disjoint union of the sets
$\inte(c)$, $c\in \DD$. Similarly, every cell $\tau\in \DD$ is the
disjoint union of the sets $\inte(c)$, where $c\in \DD$ and
$c\subset\tau$. 
\end{lemma} 

So in particular, the interiors of the cells in a cell decomposition  partition the space $\mathcal{X}$. This is of prime importance and will be used frequently throughout this work.  

\begin{proof} We show the first statement by induction on $n\in \N_0$. 
Since $\inte(c)=c$ for each cell $c$ in $\DD$ of dimension $0$, it is clear that $\mathcal{X}^0$ is the disjoint union of the interiors of all cells $c\in \DD$ with $\dim(c)=0$. 

Suppose that the first statement is true for $\mathcal{X}^n$, and let $p\in
\mathcal{X}^{n+1}$ be arbitrary.  If $p\in \mathcal{X}^n$, then $p$ is
contained in the interior of a cell $c\in \DD$ with $\dim(c)\le n$ by
induction hypothesis. In the other case, $p\in
\mathcal{X}^{n+1}\setminus \mathcal{X}^{n}$, and so there exists $c\in
\DD$ with $\dim(c)=n+1$ and $p\in c$. Since $\partial c\sub
\mathcal{X}^n$, it follows that $p\in c\setminus \partial
c=\inte(c)$. So $\mathcal{X}^{n+1}$ is the union of the interiors of
all cells $c$ in $\DD$ with $\dim(c)\le n+1$. This union is disjoint,
because distinct cells in a cell decomposition have disjoint
interiors.

The second statement follows from the first, and the obvious fact that
$\mathcal{X}=\bigcup_{n\in \N_0}\mathcal{X}^n$.    

To see the last statement, let $\DD_\tau\coloneqq \{c \in \DD :
c\subset \tau\}$. Then it is clear  that $\DD_\tau$ is a cell
decomposition of (the compact Hausdorff space) $\tau$. So the claim follows from the  previous
statement. 
\end{proof}

The lemma implies 
that if $\tau\in \DD$ and $\dim(\tau)=n$, 
 then  each point $p\in \inte(\tau)$ is an interior point of
 $\tau$  regarded as a subset 
of the topological space $\mathcal{X}^n$. 
 Indeed, we can choose a neighborhood $U$ of $p$ such that 
  $U\cap \sigma=\emptyset$ whenever $\sigma\in \DD$ and $p \not\in \sigma$. Then $U\cap \mathcal{X}^{n-1}=\emptyset $ and so $U\cap \mathcal{X}^n\sub \inte(\tau)$ as follows from the lemma;  hence $p$ is an interior point of $\inte(\tau)$ in $\mathcal{X}^n$.

\begin{lemma}\label{lem:celldecompint} Let $\DD$ be a cell decomposition of  $\mathcal{X}$.

\begin{enumerate}
\item
\label{item:cell_decomp1}
If  $\sigma$ and $\tau$ are  two distinct cells in $\DD$  with $\sigma\cap \tau\ne \emptyset$, then one of the following statements holds:  $\sigma\sub \partial \tau$, $\tau \sub \partial \sigma$, or 
$\sigma\cap \tau =\partial \sigma \cap \partial\tau$  and this intersection 
 consists of cells in $\DD$ of   dimension strictly less than $\min\{\dim(\sigma), \dim(\tau)\}$.

\item
  \label{item:cell_decomp2}
  If $\sigma,\tau_1, \dots, \tau_n$ are cells in $\DD$ and $\inte(\sigma)\cap (\tau_1\cup \dots \cup \tau_n)\ne \emptyset$, then $\sigma\sub \tau_i$ for some $i\in\{1,\dots, n\}$. 
\end{enumerate}  
\end{lemma}

\begin{proof} $\,\!$
\ref{item:cell_decomp1} 
  We may assume that $l\coloneqq \dim(\sigma)\le
m\coloneqq \dim (\tau)$, and prove the statement by induction on $m$. 
The case $m=0$ is vacuous and hence trivial.  Assume that the statement is true  whenever both cells have dimension   $<m$. 
If $l=m$ then by definition of a cell decomposition $\inte(\sigma)$ is disjoint from $\tau \sub \inte(\tau) \cup \mathcal{X}^{m-1}$, and similarly 
$\inte (\tau)\cap \sigma=\emptyset$. Hence 
$\sigma\cap \tau = \partial \sigma \cap \partial \tau $. Moreover, both sets $\partial \sigma$ and $\partial \tau$ consist of finitely many cells in $\DD$ of dimension $\le m-1$. Applying the 
induction hypothesis to pairs of these cells, we see that 
$\partial \sigma  \cap  \partial \tau$ consists of cells of dimension $<m$ as desired. 

If $l<m$, then $\sigma \sub \mathcal{X}^{m-1}$ and so $\sigma \cap \inte (\tau)=\emptyset$. This shows that $\sigma\cap \tau=\sigma\cap \partial \tau$. Moreover, we have   $\partial \tau=c_1\cup \dots \cup c_s$, where $c_1, \dots , c_s$ are cells of dimension $m-1$.  
So we can apply the induction hypothesis to the pairs 
$(\sigma, c_i)$.  If $\sigma=c_i$ or $\sigma \sub \partial c_i$ for some $i$, then $\sigma \sub \partial \tau$; we cannot have $c_i\sub \partial \sigma$, because  $c_i$ has dimension $m-1$, and $\partial \sigma$ is a set of topological  dimension $<m-1$. So  if none of the first  possibilities occurs, then  $\sigma \cap c_i=\emptyset $,  or $\sigma \cap c_i=\partial \sigma\cap\partial c_i$ and this set consists of 
cells of dimension $<l$ (by induction hypothesis) contained in
 $\partial c_i\sub c_i\sub \partial \tau$ for all $i$. In this  case $\sigma\cap \tau=\partial \sigma \cap \partial \tau$, and this set  consists of cells of dimension $<l$ as desired. The claim follows. 

\smallskip
\ref{item:cell_decomp2}
There exists $i\in \{1,\dots, n\}$ with $\inte(\sigma)\cap \tau_i\ne \emptyset$. By the alternatives in \ref{item:cell_decomp1} we then must have $\sigma=\tau_i$ or $\sigma\sub \partial\tau_i$. Hence 
$\sigma\sub \tau_i$. 
\end{proof}

\begin{lemma}\label{lem:conncomp} Let  $A\sub \mathcal{X}$  be a   closed set, and $U\sub \mathcal{X}\setminus A$ be a non-empty open and connected  set. If $\partial U\sub A$, then $U$ is a connected component of $\mathcal{X}\setminus A$. 
\end{lemma}

\begin{proof} Since $U$ is a non-empty connected set in the complement of $A$, this set is contained in a unique connected component $V$ of $\mathcal{X}\setminus A$. 
Since $\partial U\sub A\sub \mathcal{X}\setminus V$, we have $V \cap \overline U=V\cap U=U$ showing that $U$ is relatively open and closed in $V$.
Since $U\ne \emptyset$ and $V$ is connected, it follows that $U=V$ as desired. \end{proof}

\begin{lemma} \label{lem:opencells}
Let  $\DD$ be a cell decomposition of $\mathcal{X}$ with
$n$-skeleton $\mathcal{X}^n$, where $n\in \{-1\}\cup\N_0$. 
Then for each $n\in \N_0$ the non-empty connected components of $\mathcal{X}^{n}\setminus \mathcal{X}^{n-1}$ are precisely the sets $\inte(\tau)$, where $\tau\in \DD$ and  $\dim(\tau)=n$.\end{lemma} 

\begin{proof} Let $\tau$ be a cell in $\DD$ with $\dim(\tau)=n$. 
Then  
$\inte(\tau)$ is a connected set contained in $\mathcal{X}^n\setminus \mathcal{X}^{n-1}$ that is relatively open with respect to $\mathcal{X}^n$. Its relative boundary is a subset of $\partial \tau$ and hence contained in the closed set $\mathcal{X}^{n-1}$. It follows by Lemma~\ref{lem:conncomp} that  $\inte(\tau)$ is equal  to a   component $V$ of $\mathcal{X}^n\setminus \mathcal{X}^{n-1}$.

Conversely, suppose that $V$ is a non-empty  connected component of
$\mathcal{X}^n\setminus \mathcal{X}^{n-1}$. Pick a point $p\in V$. Then $p$ lies 
in the interior of a unique cell $\tau\in\DD$ with $\dim(\tau)=n$. It follows from the first part of the proof that  $V=\inte(\tau)$. 
\end{proof}

\begin{definition}[Refinements]\label{def:ref}
Let $\DD'$ and $\DD$ be two cell decompositions of the  space $\mathcal{X}$. We
say that $\DD'$ is a {\em
  refinement}\index{cell!decomposition!refinement}\index{refinement of
  cell decomposition} of $\DD$ if
the following two conditions are satisfied:
\begin{enumerate}

\item
\label{item:ref_1}
  For every cell $\sigma\in \DD'$ there exists  a cell 
$\tau\in \DD$ with $\sigma\sub \tau$.

\item
\label{item:ref_2}
  Every cell $\tau\in \DD$ is the union of all cells 
$\sigma\in \DD'$ with $\sigma\sub \tau$. 

\end{enumerate}
\end{definition}

It is easy to see that if $\DD'$ is a refinement  of $\DD$ and  $\tau \in \DD$, then the cells $\sigma\in \DD'$ with $\sigma\sub \tau$ form a cell decomposition of $\tau$. Moreover, every cell $\sigma\in \DD'$ arises in this way from some $\tau \in \DD$. So roughly speaking, the  refinement  $\DD'$ of the  cell decomposition $\DD$  is obtained by decomposing each cell in $\DD$  into smaller cells. We informally refer to this process  as {\em subdividing} the cells in $\DD$ by  the smaller cells in $\DD'$.

\begin{lemma}\label{lem:mincell} Let $\DD'$ and $\DD$ be two cell decompositions of  $\mathcal{X}$, and $\DD'$ be a refinement of $\DD$. 
Then for every cell $\sigma\in \DD'$ there exists a minimal 
cell $\tau \in \DD$ with $\sigma\sub \tau$, i.e., if $\widetilde\tau\in \DD$ is another  cell with $\sigma\sub \widetilde\tau$, then $\tau \sub \widetilde\tau$. Moreover, $\tau$ is the unique cell in $\DD$ with $\inte(\sigma)\sub \inte (\tau)$. 
\end{lemma}

\begin{proof} First note that if $\sigma\in \DD'$, $\tau_1, \dots, \tau_n\in \DD$ and 
$$\inte(\sigma)\cap( \tau_1\cup \dots \cup \tau_n)\ne \emptyset, $$ then  $\sigma\sub \tau_i$ for some $i\in \{1, \dots, n\}$. Indeed, by  definition of a refinement  the union of all cells in  $\DD'$ contained in some $\tau_i$  covers $\tau_1\cup \dots \cup \tau_n$. Hence this union  meets $\inte(\sigma)$. It follows from Lemma~\ref{lem:celldecompint}~\ref{item:cell_decomp2} that $\sigma$ is contained in one of these cells from $\DD'$ and hence in one of the cells $\tau_i$. 

Now if $\sigma\in \DD'$ is arbitrary, then $\sigma$ is contained in some cell of  $\DD$ by definition of a refinement, and hence in a cell $\tau\in \DD$ of minimal dimension.
Then $\tau$ is minimal among all cells in $\DD$ containing $\sigma$. Indeed, let $\widetilde\tau\ne \tau$ be another cell in $\DD$ containing $\sigma$. We want to show that $\tau\sub\widetilde \tau$.
 
   One of the alternatives in Lemma~\ref{lem:celldecompint}~\ref{item:cell_decomp1} occurs for $\tau$ and $\widetilde \tau$. 
   If
$\tau\subset \partial \widetilde\tau \subset \widetilde\tau$ we are done. The second
alternative, $\widetilde\tau\subset \partial \tau$,  is impossible, since $\tau $ has  minimal dimension
among all cells containing $\sigma$. 
 The third alternative leads to  $\sigma\sub \tau\cap\widetilde\tau=\partial \tau\cap \partial \widetilde\tau$, where the latter intersection 
consists of cells in $\DD$ of dimension 
$<\dim(\tau)$. By the first part of the proof $\sigma$ is contained in one of these cells, again contradicting the definition of $\tau$. Hence $\tau$ is minimal. 

We have $\inte(\sigma)\sub \inte(\tau)$; for otherwise $\inte(\sigma)$ meets $\partial \tau$ which is a union of cells in $\DD$. Then $\sigma$ would be contained in one of these cells by the first part of the proof. This contradicts the minimality of $\tau$.

Finally, it is clear that $\tau\in \DD$ is the unique cell with 
$\inte(\sigma)\sub\inte( \tau)$, because distinct cells in a cell decomposition have disjoint interiors. 
\end{proof}

\begin{definition}[Cellular maps and cellular Markov partitions]\label{def:cellular}
Let $\DD'$ and $\DD$ be two cell decompositions of  $\mathcal{X}$, and $f\: \mathcal{X}\ra
\mathcal{X}$  be a continuous map. We say that $f$ is  {\em
  cellular}\index{cellular!map}\index{map!cellular} for  $(\DD', \DD)$ if the following  condition is   satisfied: if  $\sigma\in \DD'$ is arbitrary, then $f(\sigma)$ is a
  cell in $\DD$ and 
$f|\sigma$ is a homeomorphism of $\sigma$ onto $f(\sigma)$.

If $f$ is cellular for $(\DD',\DD)$ and $\DD'$ is a refinement of $\DD$, then the pair $(\DD', \DD)$ is  called a {\em cellular Markov partition}\index{cellular!Markov partition} for $f$. 
\end{definition} 

 Cellular Markov
partitions will become important only later starting in Chapter~\ref{cha:subdivisions}. Since
almost all of our  examples of Thurston maps  are
in fact constructed from cellular Markov partitions, we chose to
introduce this notion already here.

\section{Cell decompositions of $2$-spheres}
\label{sec:2spherecd}

 We now turn to   cell decompositions 
of $2$-spheres\index{cell!decomposition! 
of $2$-sphere}. We first review some standard concepts and  results from plane topology (see \cite{Moi} for general background and more details).

Let $S^2$ be a $2$-sphere. An 
{\em arc}\index{arc}
$\alpha$ in $S^2$ is  a
homeomorphic image of the  unit interval $[0,1]$. The points
corresponding to $0$ and $1$ under such a homeomorphism
are called the {\em endpoints} of $\alpha$. They are the unique points
$p\in \alpha$ such that $\alpha\setminus \{p\}$ is connected.  If $p$
is an {\em interior point} of $\alpha$, i.e., a point in $\alpha$
distinct from the endpoints, then there exist arbitrarily small connected open
neighborhoods $W\sub S^2$ of $p$ such that  
$W\setminus \alpha$ has precisely two open connected components $U$
and $V$.  
 
A 
{\em closed Jordan region}\index{Jordan region} 
$X$ in $S^2$ is      
a homeomorphic image of the closed unit disk $\overline \D$. The
boundary $\partial X$ of a closed Jordan region $X\sub S^2$ is a
{\em Jordan curve},\index{Jordan curve}
i.e., the homeomorphic image of the unit circle $\partial \D$. 
 If $J\sub S^2$ is a Jordan curve, then by the 
Sch\"onflies theorem\index{Sch\"onflies theorem}  
there exists a homeomorphism $\varphi\: S^2 \ra \CDach$ such that $\varphi(J)=\partial \D$. In particular,
the set $S^2\setminus J$ has two connected components, both homeomorphic to $\D$.  Note that arcs and closed Jordan regions are cells of dimension $1$ and $2$, respectively. 

Let  $\DD$  be  a cell decomposition of $S^2$. Since the topological dimension of $S^2$ is equal to $2$,  no cell in $\DD$ can have  dimension $>2$. 
We call the $2$-dimensional cells in $\DD$ the  
{\em tiles},\index{tile} 
and  the $1$-dimensional cells in $\DD$ the 
{\em edges}\index{edge} 
of $\DD$. The 
{\em vertices}\index{vertex} 
of $\DD$ are the points $v\in S^2$ such that $\{v\}$ is a cell in $\DD$ of dimension $0$. So there is a somewhat subtle distinction between  vertices and cells of dimension $0$: a vertex is an element of $S^2$, while a cell of dimension $0$ is a subset of $S^2$ with one element.

 If $c$ is a cell in $\DD$, we denote by $\partial c$  the boundary and by 
$\inte(c)$ the interior of $c$ as introduced in the beginning of Section~\ref{s:celldecomp}. Note that  
for edges and $0$-cells $c$ this is different from the boundary and the interior of $c$ as a subset of the topological space $S^2$.

We now summarize some facts related to orientation. See Section~\ref{sec:orient} for a more detailed discussion.

We always assume that the sphere $S^2$ is {\em oriented}, i.e., one of the two generators 
of the singular homology group $H_2(S^2)\cong \Z$ (with coefficients in $\Z$) has been chosen as  the {\em fundamental class} of $S^2$.

The orientation on $S^2$ induces an orientation on every Jordan region $X\sub S^2$ which in turn 
induces an orientation on $\partial X$ and  on  every arc $\alpha \sub \partial X$. 
Here   an orientation of an arc is just a selection of one of the endpoints as the {\em initial point} and  the other endpoint as  the {\em terminal point}.
Let  $X\sub S^2$ be  a Jordan region in the oriented $2$-sphere $S^2$ equipped with the induced orientation.  If  $\alpha\sub \partial X$ is an arc with a given orientation, then we say that $X$ lies {\em to the left} or {\em to the right} of $\alpha$ depending 
on whether the orientation on $\alpha$ induced by the orientation of $X$ agrees with the given orientation on $\alpha$ or not. 
Similarly, we say that with a given orientation of $\partial X$ the Jordan region $X$ lies to the left or right of $\partial X$. 

To describe orientations, one can also use the notion of a
flag. By definition a {\em flag}\index{flag}    
in $S^2$ is a triple $(c_0,c_1, c_2)$, where $c_i$ is an $i$-dimensional cell for $i=0,1,2$,  $c_0\sub \partial c_1$, and $c_1\sub \partial c_2$.  So a flag in $S^2$ is a closed Jordan region  $c_2$ with an arc $c_1$ contained   in its boundary, where the point in $c_0$ is a distinguished  endpoint of $c_1$. We  orient  the arc $c_1$  so that the 
point in $c_0$ is the initial point in $c_1$.  The flag is called {\em positively-} or {\em negatively-oriented} (for the given orientation on $S^2$) depending on whether $c_2$ lies to the left or to the right of the oriented arc  $c_1$. 

A positively-oriented  flag determines the orientation on $S^2$ uniquely. The standard orientation on $\CDach$ is the one for which  the {\em standard flag} 
$(c_0, c_1, c_2)$ is positively-oriented, where 
$c_0=\{0\}$, $c_1=[0,1]\sub \R$, and 
$$c_2=\{z\in \C: 0\le \real(z)\le 1,\ 0\le 
\imag(z) \le \real(z)\}. $$    

Since edges and tiles 
in a cell decomposition $\DD$ of $S^2$ are arcs and closed Jordan regions, respectively, it makes sense to speak of oriented edges and tiles in 
$\DD$. A {\em flag in $\DD$} is a flag  $(c_0,c_1, c_2)$, where $c_0,c_1,c_2$ are cells in $\DD$. If $c_i$ are  $i$-dimensional cells in $\DD$ for $i=0,1,2$, then $(c_0,c_1,c_2)$ is  a flag in $\DD$ if and only if $c_0\sub c_1\sub c_2$.

After these preliminary remarks, we now turn to cell decompositions of a $2$-sphere $S^2$. They   have special  properties  summarized in the next lemma.  

\begin{lemma}\label{lem:specprop}
Let $\DD$ be a cell decomposition of $S^2$. Then it has the following properties:


\begin{enumerate}

\item
\label{item:prop_cell1}
  There are only finitely many cells in $\DD$.

\item
\label{item:prop_cell2}
  The tiles in $\DD$ cover $S^2$. 

\item
\label{item:prop_cell3}
  Let $X$ be a tile in  $\DD$.  Then  there exists a number $k\in \N$, $k\ge 2$, such that $X$ contains precisely $k$ edges $e_1, \dots, e_k$ and $k$ vertices
$v_1, \dots, v_k$ in $\DD$. Moreover,  these edges and vertices lie on the boundary  $\partial X$ of $X$,  and we have
$$\partial X=e_1\cup \dots \cup e_k. $$  
The indexing of these vertices  and edges can be chosen 
such that $v_j\in  \partial e_j\cap \partial e_{j+1}$
for $j=1, \dots, k$ (where $e_{k+1}\coloneqq e_1$).   

\item
\label{item:prop_cell4}
  Every edge $e\in \DD$ is contained in the boundary of  precisely two tiles $\DD$.  If $X$ and $Y$ are these tiles, then  $\inte(X)\cup \inte(e)\cup \inte(Y)$ is a simply connected region.  

\item
\label{item:prop_cell5}
  Let $v$ be a vertex of $\DD$. Then there exists a number $d\in \N$, $d\ge 2$,  such that  $v$ is contained in precisely $d$ tiles 
$X_1, \dots , X_{d}$, and $d$ edges $e_1, \dots, e_{d}$ in $\DD$.
We have $v\in \partial X_j$ and $v\in \partial e_j$ for each $j=1, \dots, d$.  Moreover, the indexing of these tiles and edges can be chosen 
such that $e_j\sub \partial X_j\cap \partial X_{j+1}$
for $j=1, \dots, d$ (where $X_{d+1}\coloneqq X_1$).  

\item
\label{item:prop_cell6}
  The $1$-skeleton of $\DD$ is connected and equal to the union of all edges in $\DD$. 

\end{enumerate}
\end{lemma}
 
Statement  \ref{item:prop_cell3} actually holds for all tiles  (i.e., $2$-dimensional cells)  in each   cell decomposition of a locally compact space.
If the boundary of a tile $X$ is subdivided into vertices and edges as
in \ref{item:prop_cell3}, we say that $X$ is a {\em (topological) $k$-gon}. 

If the edge $e$  and the tiles $X$ and $Y$ are as in \ref{item:prop_cell4}, then there exists a unique orientation of $e$ such that $X$ lies to the left and $Y$ to the right of $e$.

We say that the cells $\{v\}, e_1, \dots, e_d, X_1, \dots, X_d$  as in \ref{item:prop_cell5} form the {\em cycle}\index{cycle of vertex}  
of the vertex $v$ and call $d$ the {\em  length}  of the
cycle.\index{length!of cycle}  
We  refer to  $X_1, \dots, X_d$ as the tiles and to  $e_1, \dots, e_d$ as the   
 edges of the cycle (see Figure~\ref{fig:cycle} for an illustration). 

\ifthenelse{\boolean{nofigures}}{}{ 
  \begin{figure}
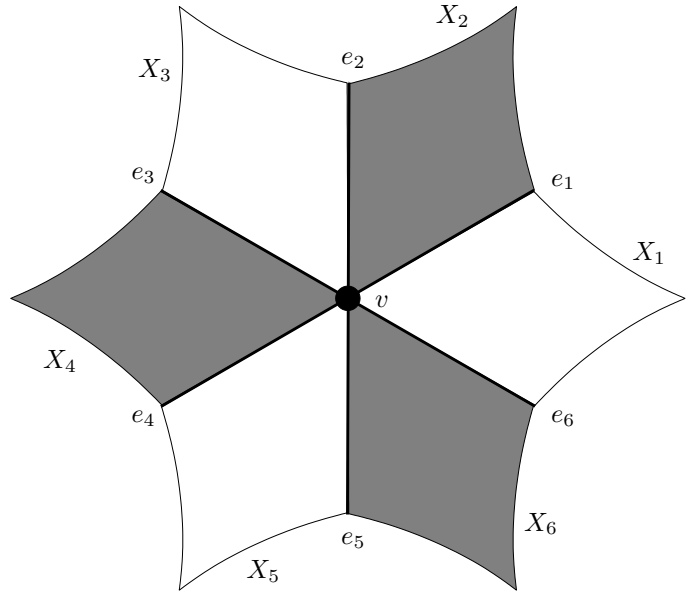

    \centering
    \begin{overpic}
      [width=9cm, 
      tics =20]{cycle.eps}
      \put(92,49){$X_1$}
      \put(63,84){$X_2$}
      \put(19,76){$X_3$}
      \put(5,33){$X_4$}
      \put(35,2){$X_5$}
      \put(76,9){$X_6$}
      \put(80,60){$e_1$}
      \put(49,78){$e_2$}
      \put(18,61){$e_3$}
      \put(18,25){$e_4$}
      \put(49,7){$e_5$}
      \put(80,25){$e_6$}
      \put(54,42){$v$}
    \end{overpic}
    \caption{The cycle of a vertex $v$.}
    \label{fig:cycle}
  \end{figure}
}

\begin{proof} \ref{item:prop_cell1}  This follows from the compactness of $S^2$ and the fact that every point in $S^2$ has a neighborhood that meets only finitely many cells in $\DD$ (see Definition~\ref{def:celldecomp}~\ref{item:def_cell4}). 

\smallskip
\ref{item:prop_cell2} The set consisting of all vertices and the union of all edges 
has empty interior (in the topological sense) by \ref{item:prop_cell1} and Baire's theorem. Hence the union of all tiles is a dense set in $S^2$. Since this union is also closed by \ref{item:prop_cell1}, it is all of $S^2$. 

\smallskip
\ref{item:prop_cell3} Let $X$ be a tile in $\DD$. Then $\inte(X)$ does not meet any edge or vertex, and  $\partial X$ is a union of 
edges and  vertices. Since there are only finitely many vertices, $\partial X$ must contain an edge, and hence at least two vertices.  

Suppose   $v_1, \dots, v_k$, $k\ge 2$,  are  all the 
vertices on $\partial X$.  Since $\partial X$ is a Jordan curve,   we can choose the indexing of these vertices so that $\partial X$ is a union of arcs $\alpha_j$ with pairwise disjoint interior 
such that $\alpha_j$ has the endpoints  $v_j$ and $v_{j+1}$ for $j=1, \dots, k$, where $v_{k+1}=v_1$. Then for each $j=1, \dots, k$ the set $\inte(\alpha_j)$ is  connected and lies  in the $1$-skeleton of the cell decomposition $\DD$. It is disjoint from the $0$-skeleton and has  boundary contained in the $0$-skeleton. It follows from Lemma~\ref{lem:conncomp} and Lemma~\ref{lem:opencells} that there exists an edge
$e_j$ in $\DD$ with $\inte(e_j)=\inte(\alpha_j)$. Hence 
$\alpha_j=e_j$, and so $\alpha_j$  is an edge in $\DD$. 
It is clear that $\partial X$ does not contain other edges in $\DD$. The statement follows. 

\smallskip
\ref{item:prop_cell4} Let $e$ be an edge in $\DD$. Pick
$p\in\inte(e)$. By \ref{item:prop_cell2} the point $p$ is contained in some tile $X$ in $\DD$. 
By Lemma~\ref{lem:celldecompint}~\ref{item:cell_decomp2} we have $e\sub X$. 
On the other hand, $\inte(X)$ is disjoint from each edge and so $e\sub \partial X$.  It follows from the Sch\"onflies  theorem 
that the set $X$ does not contain a  neighborhood of $p$. 
Hence every neighborhood of $p$ must meet tiles distinct from $X$. Since there are only finitely many tiles, it follows that 
there exists a tile $Y$ distinct from $X$ with $p\in Y$.  By the same reasoning as before, we have  $e\sub \partial Y$. 
 
Let $q\in \inte(e)$ be arbitrary. Then there exists a small open and connected neighborhood $W$ of $q$ such that $W\setminus \inte(e)$ consists of two connected components $U$ and $V$.  If $W$ is small enough, then $U$ and $V$ 
do not  meet $\partial X$. Since $q\in\overline {\inte(X)}$,  one of the sets, say $U$, meets  $\inte(X)$, and so $U\sub \inte(X)$.    We can also assume that the set $W$ is small enough so that it does not meet $\partial Y$ either. By the same reasoning, $U$ or $V$ must be contained in $\inte(Y)$, and, since $\inte(X)\cap \inte(Y)=\emptyset$, we have $V\sub \inte(Y)$. 
Hence  
$M\coloneqq \inte(X)\cup \inte(e)\cup \inte(Y)$ contains the connected 
neighborhood $W\sub U\cup \inte(e)\cup V$  of $q$. Since $q\in \inte(e)$ was arbitrary, this implies that $M$ is open.  
 The sets $\inte(X)$,  $\inte(e)$, $\inte(Y)$ are connected, 
and their union $M$ contains a connected neighborhood of each point in $\inte(e)$. It follows that 
 $M$ is  connected. So $M$ is a region.
 
To see that $M$ is simply connected, first note that the Sch\"onflies theorem implies 
there exists a homotopy on $\inte(X) \cup \inte(e)$ that deforms this set into $\inte(e)$ and 
keeps the points in $\inte(e)$ fixed during the homotopy. If we combine this  homotopy  with 
a similar homotopy on  $\inte(Y) \cup \inte(e)$, then we see that $M$ is homotopic to 
$\inte(e)$, and hence also to a point. So $M$ is simply connected.

Suppose that  $Z$ is another tile in $\DD$  with $e\sub \partial Z$. 
Since  
$X\cup Y$ contains an open neighborhood for  $p$,  there exists a point $x\in \inte(Z)$ near $p$ with $x\in X\cup Y$, say $x\in X$.  Since the interior of a 
tile is disjoint from all other cells, we conclude $X=Z$.
This shows the  uniqueness of $X$ and $Y$. 

\smallskip
\ref{item:prop_cell5} Let $v$ be a vertex of $\DD$. If an edge $e$ in $\DD$ contains $v$, then $v$ is an endpoint of $e$ and we orient $e$ so that $v$ is the initial point of $e$. By \ref{item:prop_cell2} there exists a tile $X_1$ in $\DD$ with $v\in X_1$. Then $v\in \partial X_1$, 
and so  by \ref{item:prop_cell3} there exist two edges in $\partial X_1$ that contain $v$. For one of these oriented edges, which we denote  by  $e_1$, the tile $X_1$ will lie on the right  of $e_1$.  Then $v\in e_1\sub \partial X_1$ and  $X_1$ will lie on the left of the other oriented edge.  

By \ref{item:prop_cell4} there exists a unique tile $X_2\ne X_1$ 
with $e_1\sub \partial X_2$. Then $X_2$ will lie on the left of $e_1$.  By \ref{item:prop_cell3} there exists a unique  edge $e_2\sub  \partial X_2$ distinct from $e_1$ with $v\in e_2$. The tile $X_2$ will lie on the right of $e_2$. 
We can continue in this manner to obtain   tiles  
 $X_1, X_2, \dots$  and edges   $e_1, e_2, \dots$ that
  contain $v$ and satisfy $X_j\ne X_{j+1}$, $e_j\ne e_{j+1}$, and $e_j\sub \partial X_j\cap \partial X_{j+1}$ for all $j\in \N$.  Moreover, $X_j$ will lie on the right and $X_{j+1}$ on the left of the oriented edge $e_j$.  Since 
there are only finitely many tiles, there exists a smallest number $d\in \N$ such that the tiles $X_1, \dots, X_d$ are all distinct  and $X_{d+1}$ is equal to one of the tiles $X_1, \dots, X_d$. Since $X_1\ne X_2$, we have $d\ge 2$.

In addition, $X_{d+1}=X_1$.  To see this, we argue by contradiction and assume that $X_{d+1}$ is equal to one 
of the tiles $X_2, \dots, X_{d}$ 
say $X_{d+1}=X_j$. Note that $X_d\ne X_{d+1}$, so 
$2\le j\le d-1$. Then $e\coloneqq e_{d}$ is  an edge with $v\in e$ that is  contained in  $\partial X_d$ and in 
$\partial X_{d+1}=\partial X_{j}$. Hence $e=e_{j-1}$  or $e=e_{j}$. 
Since $X_{d+1}=X_j$ lies on the left of $e=e_d$, we must have 
$e=e_{j-1}$. Then $e$ is contained in the boundary of the three distinct tiles  $X_{j-1}, X_j, X_d$ which  is impossible by \ref{item:prop_cell4}. So indeed $X_{d+1}=X_1$. 
 
 By a similar reasoning we can show that the edges $e_1, \dots, e_d$ are all distinct. Indeed, suppose $e\coloneqq e_j=e_k$, where $1\le j<k\le d$. Then $k>j+1$ and  $e$ is contained in the boundary
 of the three distinct  tiles $X_{j}, X_{j+1}, X_k$ which is again absurd. 
 
 To show that there are no other edges and tiles containing 
 $v$ note that by \ref{item:prop_cell4} the set 
 $$U=\inte(X_1)\cup \inte(e_1)\cup \inte(X_2)\cup \dots \cup \inte(e_d)\cup \inte(X_{d+1})$$ 
 is open. Moreover, its boundary $\partial U$ consists 
 of the point $v$ and a closed set 
 $$A\sub  \bigcup_{j=1}^d \partial X_j $$
 disjoint from $\{v\}$. Hence $v$ is an isolated boundary point of $U$ which implies that $W=U\cup\{v\}$ is an open neighborhood of $v$. 
 
If $c$ is an arbitrary  cell in $\DD$  with $v\in c$ and $c\ne \{v\}$, then $v\in  \overline{\inte(c)}$. This  implies that $\inte(c)$ meets $U$. Since interiors of distinct cells in $\DD$ are disjoint, this is only possible if $c$ is equal to one of the edges $e_1, \dots, e_d$ or one of the tiles $X_1, \dots, X_d$.  The statement follows. 

\smallskip
\ref{item:prop_cell6} By \ref{item:prop_cell5} every vertex is contained in an edge. Hence the $1$-skeleton $E$ of $\DD$ is equal to the union of all edges in $\DD$. To show that $E$  is connected, let 
$x,y\in E$ be arbitrary. Since the tiles in $\DD$ cover $S^2$, there exist 
tiles $X$ and $Y$ with $x\in X$ and $y\in Y$. The interior of each tile is disjoint from the $1$-skeleton $E$, and so  $x\in \partial X$ and $y\in \partial Y$.  
Since $S^2$ is connected, there exist tiles $X_1, \dots, X_N$ in $\DD$  such that 
$X_1=X$, $X_N=Y$, and 
$X_i\cap X_{i+1}\ne \emptyset$ for $i=1, \dots, N-1$. 
The interior of a tile meets no other tile. Hence $\partial X_i\cap \partial X_{i+1}\ne \emptyset$ for  $i=1, \dots, N-1$. Since each set $\partial X_i$ is connected, it follows that 
$$K=\partial X_1\cup \dots \cup \partial X_N$$ is a connected subset of $E$
containing $x$ and $y$. 
This shows that  $E$  is connected. 
 \end{proof}
 
Let $d\in \N$, $d\ge 2$, and the tiles $X_j$ and edges $e_j$
for $j\in \N$  
be as defined in the proof of statement \ref{item:prop_cell5} of the previous lemma. Then we showed that $X_{d+1}=X_1$, but it is useful to point out that actually $X_j=X_{d+j}$  and $e_{j} =e_{d+j}$ for all $j\in \N$. 

Indeed we have seen that $X_{d+1}=X_1$. Moreover, 
$e_1, e_{d}, e_{d+1}$ are edges in $\DD$ that contain $v$ and are contained in the boundary of the tile $X_1=X_{d+1}$. Since there are only two such edges, $e_1\ne e_d$,  and $ e_{d}\ne e_{d+1}$,  we conclude that $e_{d+1}=e_1$.  Then $e_1=e_{d+1}$ is an edge contained in the boundary of the tiles $X_1, X_2, X_{d+2}$. Since there are precisely two tiles containing an edge in its boundary,  $X_1\ne X_2$, and $X_1=X_{d+1}\ne X_{d+2}$, it follows that $X_{d+2}=X_2$. 

If we continue in this manner, shifting all indices by $1$ in each step, we see that $e_{d+2}=e_2$, $X_{d+3}=X_3$, 
etc., as claimed.  

If we  orient 
$e_j$ so that $v$ is the initial point of $e_j$, then $X_{j+1}$ lies to  the left and $X_j$ lies to the right of $e_j$. So for each  $j\in \N$ 
the flag $(\{v\}, e_j, X_{j+1})$ in $\DD$ is  positively-oriented, and the flag 
$(\{v\}, e_j, X_{j})$ is negatively-oriented.  

 We will now discuss how cell decompositions   can be used to define
 homeomorphisms and isotopies of the underlying spaces.  We start with
 a general definition.  
 
 \begin{definition}[Isomorphisms of cell complexes]
   \label{def:compiso}
   \index{cell!complex!isomorphism}
   \index{isomorphism!of cell complexes}
    Let  $\DD$ and $\widetilde{\DD}$ be   cell complexes. A bijection 
 $\phi\: \DD\ra \widetilde{\DD}$ is called an {\em isomorphism (of
   cell complexes)} if the following conditions are satisfied: 
 \begin{enumerate}
 
 \item
   \label{item:compiso1}
   $\dim (\phi(\tau))=\dim (\tau)$ for all $\tau\in \DD$.

 \item
   \label{item:compiso2}
   If  $\sigma,\tau \in \DD$, then $\sigma\sub \tau$ if and only if 
 $\phi(\sigma)\sub \phi(\tau)$.
\end{enumerate}
\end{definition}
 
Let $h\colon \mathcal{X}\to \widetilde{\mathcal{X}}$ be a
homeomorphism between two locally compact Hausdorff spaces
$\mathcal{X}$ and $\widetilde{\mathcal{X}}$, and suppose $\DD$ is a
cell decomposition of $\mathcal{X}$. Then it is easy to see that
$\widetilde{\DD}\coloneqq  \{h(c) \subset \widetilde{\mathcal{X}} : c\in
\DD\}$ is a cell decomposition of $\widetilde{\mathcal{X}}$ and $\phi\colon
\DD\to \widetilde{\DD}$ given by $\phi(c) = h(c)$ for $c\in \DD$ is an
isomorphism. We will see that this procedure can be reversed and one
can construct a homeomorphism from a given cell complex
isomorphism. For simplicity we will restrict ourselves to the case of
$2$-spheres.  As a preparation for the proof of the corresponding
Lemma~\ref{lem:isocellhomeo}, we first record some facts about
homeomorphisms and isotopies on subsets of $2$-spheres.
 
If $\alpha$ is an arc, then every homeomorphism $\varphi\: \alpha \ra
\alpha$ that fixes the endpoints of $ \alpha$ is isotopic to the
identity rel.~$\partial \alpha$. Indeed, we may assume that $\alpha$
is equal to the unit interval $I=[0,1]$.  Then $\varphi(0)=0$,
$\varphi(1)=1$, and $\varphi$ is strictly increasing on $[0,1]$.
Define $H\: I\times I\ra I$ by
 $$H(s,t)=(1-t)\varphi(s)+ts$$ for $s,t\in I$. Then  $H_t(0)=0$,  $H_t(1)=1$, and   the map $H_t=H(\cdot,t)$ is strictly increasing on $I$ for each $t\in I$. It follows that $H$ is an isotopy rel.\ 
 $\partial I=\{0,1\}$. We have  
 $H_0=\varphi$ and $H_1=\id_I$, and so
 $\varphi$ and $\id_I$ are isotopic rel.~$\partial I$ by the isotopy $H$.
 
 Let $X\sub S^2$ be a closed Jordan region. If  $h\: \partial X\times I\ra \partial X$ is  an isotopy   with $h(\cdot, 0)=\id_{\partial X}$,  
 then there exists an isotopy $H\: X\times I\ra X$ such that $H(\cdot, 0)=\id_X$ and $H(p,t)=h(p,t)$ for all $p\in \partial X$ and $t\in I$. 
So an  isotopy $h$ on the boundary of $X$ with $h_0=\id_{\partial X}$ can be extended to an isotopy $H$ on $X$
with $H_0=\id_X$.
To see this, we may assume that $X=\overline \D$. Then 
$H$ is obtained from $h$ by radial extension; more precisely, we define $$H(re^{\iu s},t)=rh(e^{\iu s},t)$$
 for all $r\in [0,1]$ and $s\in [0,2\pi]$. Then $H$ is well-defined and it is easy to see that $H$ is an isotopy with the desired properties.  By using the Sch\"onflies theorem and  a similar radial extension one can also show that if $X$ and $X'$ are closed Jordan regions in $S^2$, then every homeomorphism $\varphi\: \partial X\ra \partial
 X'$ extends to a homeomorphism $\Phi\: X\ra X'$.  
 
If $\varphi \:X\ra X$ is a homeomorphism with 
$\varphi|\partial X=\id_{\partial X}$, then $\varphi$ is isotopic to $\id_{X}$ rel.~$\partial X$.  Indeed, again we may assume that $X=\overline \D$. Then we obtain the desired isotopy by the ``Alexander trick'': for $z\in \overline \D$ and $t\in I$ we define 
$ H(z,t)=t\varphi(z/t) $ if  $|z|< t$, and $H(z,t)=z$ if $|z|\ge t$. It is easy to see that $H$ is  an isotopy rel.\ $\partial \D$ with $H_0=\id_X$ and $H_1=\varphi$.      

If $\varphi, \widetilde \varphi  \:X\ra X$ are  two  homeomorphisms with 
$\varphi|\partial X=\widetilde \varphi |\partial X$, then we can apply the previous remark to $\psi= \widetilde \varphi^{-1} \circ \varphi$  and conclude that 
$\varphi$  and $\widetilde \varphi$ are isotopic rel.\ $\partial X$.

We are now ready to state and prove a fact that allows us to construct homeo\-morphisms from 
cell complex isomorphisms.

 \begin{lemma} 
   \label{lem:isocellhomeo}
   \index{cell!complex!isomorphism} 
   \index{isomorphism!of cell complexes} 
   Let $\DD$ and $\widetilde \DD$ be isomorphic cell
   decompositions of $2$-spheres $S^2$ and $\widetilde S^2$,
   respectively, and let $\phi \: \DD \ra \widetilde \DD$ be an
   isomorphism. Then the following statements are true:
 
 \begin{enumerate}
 
 \item
   \label{item:isocellhomeo1}
  If $h\: S^2 \ra \widetilde S^2$ is a map such that $h|\tau$ is a homeomorphism of $\tau$ onto 
  $\phi(\tau)$ for each $\tau\in \DD$, then $h$ is a homeomorphism of $S^2$ onto $\widetilde S^2$. 
  
 \item
   \label{item:isocellhomeo2}
There exists a homeomorphism $h\: S^2 \ra \widetilde S^2$ such that $h(\tau)=\phi(\tau)$ for all 
$\tau \in \DD$. 

  \item
   \label{item:isocellhomeo3} Let ${\bf V}$ be the set of vertices of $\DD$. 
If $h_0,h_1\: S^2 \ra \widetilde S^2$ are two homeomorphisms with 
 $h_0(\tau)=\phi(\tau)=h_1(\tau)$ for all 
$\tau \in \DD$, then $h_0$ and $h_1$ are isotopic rel.\ ${\bf V}$.

  \end{enumerate}
\end{lemma}

If  $h$ is as in \ref{item:isocellhomeo2}, then we say that $h$
{\em realizes}\index{map!realizing!isomorphism}\index{realizing!isomorphism} 
the 
cell complex isomorphism $\phi$. So an isomorphism between cell decompositions   of $2$-spheres can always be realized by a homeomorphism $h$ and  by  \ref{item:isocellhomeo3} this homeomorphism is unique up to isotopy.  A similar fact is actually true in greater generality, but 
Lemma~\ref{lem:isocellhomeo} will be enough for our purposes.
  
\begin{proof} In the following we write $\widetilde \tau\coloneqq \phi(\tau)$ for $\tau \in \DD$.

\smallskip 
 \ref{item:isocellhomeo1} Let $h\: S^2 \ra \widetilde S^2$ be a
 map such that $h|\tau$ is a homeomorphism of $ \tau$ onto
 $\widetilde \tau$ for each $\tau\in \DD$. Then $h$ is
 continuous, because the restriction $h|\tau$ is continuous for
 each $\tau\in \DD$ and the cells $\tau \in \DD$ form a  finite
 cover of $S^2$ by closed sets. 
The image cells $\widetilde \tau=h(\tau)$ 
form the cell decomposition $\widetilde \DD$ of $\widetilde S^2$ and hence cover $\widetilde S^2$. So $h$ is also surjective. 
 
 In order to conclude that $h\: S^2 \ra \widetilde S^2$ is a homeomorphism, it suffices to show that $h$ is injective. To see this, let $x_1,x_2\in S^2$ and assume that $y\coloneqq h(x_1)=h(x_2)$. 
 Then there exist unique cells $\tau_1,\tau_2\in \DD$ such that
 $x_1\in \inte(\tau_1)$ and $x_2\in \inte(\tau_2)$. 
Since $h|\tau_i$ is a homeomorphism 
of $\tau_i$ onto $\widetilde \tau_i$ for $i=1,2$, we have $y\in \inte(\widetilde \tau_1)\cap \inte(\widetilde \tau_2)$. This implies that
 $\widetilde \tau_1=\widetilde \tau_2$. Since the map $\tau\in \DD\mapsto \widetilde \tau\in \widetilde \DD$ is an isomorphism, it follows  that  $\tau_1=\tau_2$. So $x_1$ and $x_2$ are contained in the same cell $\tau\coloneqq \tau_1=\tau_2\in \DD$. Since $h|\tau$ is a homeomorphism onto $\widetilde \tau$ and hence injective, we conclude that $x_1=x_2$ as desired.

\smallskip 
 \ref{item:isocellhomeo2} By \ref{item:isocellhomeo1} it suffices to find a map $h\: S^2 \ra \widetilde S^2$ such that $h|\tau$ is a homeomorphism of $\tau$ onto $\widetilde \tau$ for each 
 $\tau\in \DD$.  The existence of $h$ follows  from the well-known procedure of successive extensions to  the skeleta of the 
cell decomposition $\DD$. 

Indeed, if $v$ is vertex in $\DD$, then there exists a unique vertex $\widetilde v$ in $\widetilde \DD$ such that $\phi (\{v\})=\{ \widetilde v\}$. We define $h(v)=\widetilde v$. Then $h$ is a bijection of the set of vertices of $\DD$ onto the set of vertices in $\widetilde \DD$. To extend 
$h$ from the $0$-skeleton of $\DD$ to  
 the $1$-skeleton, let $e$ be an arbitrary edge in $\DD$ and  $u$
 and $v$ be  the vertices in $\DD$   that are the endpoints of
 $e$. Then $\widetilde u$ and $\widetilde v$ are the endpoints of
 $\widetilde e$.  So we can extend $h$ to $e$ by choosing a
 homeomorphism of $e$ onto $\widetilde e$ that agrees with $h$ on
 the endpoints of $e$. In this way 
we can continuously extend $h$ 
to the $1$-skeleton of $\DD$ 
so that $h|\tau$ 
is a homeomorphism of $\tau$ onto 
 $\widetilde \tau$, whenever $\tau$ is a cell in $\DD$ with $\dim(\tau)\le 1$. An argument as 
 in the proof of  \ref{item:isocellhomeo1} shows that $h$ is a homeomorphism of the $1$-skeleton 
 of $\DD$ onto the $1$-skeleton of $\widetilde \DD$.
  
If $X$ is an arbitrary tile in $\DD$, then $\partial X$ is a subset of the $1$-skeleton of $\DD$  and hence $h$ is already defined on $\partial X$. Then   
$h|\partial X$ is an injective and  continuous  mapping of $\partial X$ into the boundary $\partial \widetilde X$ of the tile $\widetilde X\in \widetilde \DD$.  Since  an  injective and continuous map of a   Jordan curve  into another Jordan curve is necessarily  surjective,
$h|\partial X$ is a homeomorphism of $\partial X$ onto $\partial \widetilde X$. Hence $h$ can be extended to a homeomorphism of $X$ onto $\widetilde X$. These extensions on different tiles
  paste together to a  map $h\:S^2\ra S^2$ with the desired property that   $h|\tau$ is a homeomorphism of $\tau$ onto $\widetilde \tau$ for each 
 $\tau\in \DD$. By  \ref{item:isocellhomeo1} the map $h$ is a homeomorphism of $S^2$ onto $\widetilde S^2$ with $h(\tau)=\widetilde \tau=\phi(\tau)$ for $\tau\in \DD$.
 
 \smallskip 
 \ref{item:isocellhomeo3} 
 Suppose $h_0,h_1\:S^2\ra \widetilde S^2$ are as in the statement. 
Then $\varphi\coloneqq h_1^{-1}\circ h_0$ is a homeomorphism on $S^2$ that maps each cell  $\tau\in \DD$ onto itself. In particular,   $\varphi$ is the identity on the set ${\bf V}$ of vertices of $\DD$. 

By  successive extensions to the $1$- and the $2$-skeleton  of $\DD$ we will show that $\varphi$ is actually isotopic to $\id_{S^2}$ rel.\ ${\bf V}$.  For this   we denote  the set of edges of $\DD$ by  $\E$ and by 
$E=\bigcup\{e: e\in \E\}$ the $1$-skeleton of $\DD$. Let $e\in \E$ be   an arbitrary edge in $\DD$. Since $\varphi(e)=e$ and $\varphi$ is the identity on ${\bf V}$, the map $\varphi|e$ is isotopic to $\id_e$ rel.~$\partial e$.  These isotopies on edges paste together to an isotopy of $\varphi|E$ to $\id_{E}$ rel.~${\bf V}$. If $X$ is a tile in $\DD$, then this isotopy is defined on $\partial X\sub E$, and we can extend it to an isotopy 
of a homeomorphism on $X$ that agrees with $\varphi|\partial X$ on $\partial X$    to $\id_X$. These extensions on tiles $X$  paste together 
to an isotopy $\Phi\: S^2 \times [0,1]\ra S^2$ rel.~${\bf V}$ such that 
$\psi |E=\varphi|E$, where $\psi\coloneqq \Phi(\cdot, 0),$ and $\Phi(\cdot, 1)=\id_{S^2}$.

For each tile $X\in \DD$ the maps 
 $\varphi|X$  and $\psi |X$ are homeomorphisms of $X$ onto itself that agree on $\partial X\sub E$. As we have seen in the discussion before the proof of the lemma, this implies that $\varphi|X$  and $\psi|X$ are isotopic rel.\ $\partial X$.  Again by pasting these isotopies on tiles together, we can find an isotopy $\Psi \: S^2 \times [0,1]\ra S^2$ rel.~$E$ with 
 $\Psi(\cdot, 0)=\varphi$ and $\Psi(\cdot, 1)=\psi$. 
 The concatenation of the isotopies $\Psi$ and $\Phi$   gives an isotopy 
 rel.\ $ {\bf V}$ between $\varphi=h_1^{-1}\circ h_0$ and $\id_{S^2}$.
 If we postcompose this isotopy with $h_1$, we get an isotopy between $h_0$ and $h_1$ rel.\ ${\bf V}$ as desired. 
 \end{proof}

 \section{Cell decompositions  induced by Thurston maps}\index{cell!decomposition!induced by Thurston map}
\label{sec:tiles}
 

Let $f\: S^2\ra S^2$ be a Thurston map, and $\CC\sub S^2$ be a
Jordan curve such that $\post(f)\sub \CC$. In this section we
will discuss how the pair $(f,\CC)$ induces natural cell
decompositions of $S^2$.
 
By the Sch\"onflies theorem there are two closed Jordan regions
$\XOb, \XOw\subset S^2$\index{X0@$\XOw,\XOb$} whose boundary is
$\CC$. Our notation for these regions is suggested by the fact
that we often think of $\XOb$ as being assigned or carrying the
color ``black'', represented by the symbol ${\tt b}$, and $\XOw$
as being colored ``white'' represented by ${\tt w}$.  We will
discuss this more precisely later in this section (see Lemma~\ref{lem:colortiles}).

The sets $ \XOb$ and $\XOw$ are topological cells of dimension
$2$. We call them \defn{tiles of level $0$} or $0$-{\em
  tiles}. The postcritical points of $f$ are on the boundary of
$\XOw$ and $\XOb$. We consider them as \defn{vertices} of $\XOw$
and $\XOb$, and the closed arcs of $\CC$ between vertices as the
\defn{edges} of the $0$-tiles. In this way, we think of $\XOw$
and $\XOb$ as topological $m$-gons where $m=\#\post(f)\ge 2$ (see
Corollary~\ref{cor:post012}). 
To emphasize that these edges and
vertices belong to $0$-tiles, we call them $0$-{\em edges} and
$0$-{\em vertices}.  

A $0$-{\em cell} is a $0$-tile, a $0$-edge,
or a set consisting of a $0$-vertex. Obviously, the $0$-cells
form a cell decomposition of $S^2$ that we denote by
$\DD^0=\DD^0(f,\CC)$. Roughly speaking, we can now obtain  cell decompositions
$\DD^n(f,\CC)$ of $S^2$  for each $n\in \N_0$ by taking preimages of
$\DD^0(f,\CC)$ under $f^n$.   This is based on 
the following lemma. 

\begin{lemma} 
  \label{lem:pullback} 
  Let $f\: S^2\ra S^2$ be a branched covering map,  and $\DD$ be a
  cell decomposition of $S^2$ such that every point in
  $f(\crit(f))$ is a vertex in $\DD$. Then there exists a unique
  cell decomposition $\DD'$ of $S^2$ such that $f$ is cellular
  for $(\DD', \DD)$.
\end{lemma}

In general, $\DD'$ will not be  a refinement of
$\DD$. As we will see in the proof, $\DD'$ consists precisely  of all cells
  $c\sub S^2$ such that $f(c)$ is a cell in $\DD$ and $f|c$ is a
  homeomorphism of $c$ onto $f(c)$. In particular, if ${\bf V}$ and ${\bf V}'$ denote the 
set of    vertices of $\DD$ and $\DD'$, respectively, then  ${\bf V'}=f^{-1}({\bf V})$. 


If, in the setting of Lem\-ma~\ref{lem:pullback}, we make the stronger assumption $\post(f)\sub {\bf V}$, then $f^n(\crit(f^n))\sub \post(f) \sub {\bf V}$ for all $n\in \N$ and we can apply the lemma to all iterates of $f$.

Before we prove Lemma~\ref{lem:pullback}, we record some immediate  
 consequences.

\begin{cor}
  \label{cor:pullback_D0}
  Let $f\colon S^2\to S^2$ be a Thurston map, $\CC\subset S^2$ be
  a Jordan curve with $\post(f) \subset \CC$, and $\DD^0(f,\CC)$
  be defined as above. Then there exists a unique sequence of cell decompositions
  $\DD^n=\DD^n(f,\CC)$, $n\in \N_0$,  such that $f$ is cellular
  for $(\DD^{n+1},\DD^n)$ for each $n\in \N_0$.
\end{cor}

\begin{proof}
  Since the  points in $\post(f)\supset f(\crit(f))$ form the vertices in  $\DD^0=\DD^0(f,\CC)$, we can
  apply Lem\-ma~\ref{lem:pullback} to obtain a  cell
  decomposition $\DD^1=\DD^1(f,\CC)$ such that $f$ is cellular
 for $(\DD^1, \DD^0)$. By the remark following Lemma~\ref{lem:pullback}  
  a point is a vertex in  $\DD^1$ precisely if its image under $f$ is a vertex of $\DD^0$. 
  So the  set of vertices of $\DD^1$ is given by   
  $f^{-1}(\post(f))\supset \post(f)\supset f(\crit(f))$.
   Hence we can apply
  Lemma~\ref{lem:pullback} again and obtain a cell decomposition
  $\DD^2=\DD^2(f,\CC)$ such that $f$ is cellular for
  $(\DD^2, \DD^1)$.  Continuing in this manner, we obtain cell
  decompositions $\DD^n=\DD^n(f,\CC)$ of $S^2$ for $n\in \N_0$
  such that $f$ is cellular for $(\DD^{n+1}, \DD^n)$ for all
  $n\in \N_0$.  
 
 The  last property uniquely   determines the cell decompositions $\DD^n=\DD^n(f,\CC)$ for all $n\in \N_0$ as follows from the uniqueness statement in Lemma~\ref{lem:pullback}. 
\end{proof}

The cell decompositions $\DD^n(f,\CC)$  will be used throughout this
work. 

\begin{definition}[Cell decompositions for $f$ and $\CC$]
  \index{d00@$\DD^n(f,\CC)$|textbf}
  \label{def:DDn}
  Given a Thurston map $f\colon S^2\to S^2$ and a Jordan curve
  $\CC\subset S^2$ with $\post(f) \subset \CC$, the cell
  decompositions $\DD^n(f,\CC)$ for $n\in \N_0$ are the ones provided by  Corollary~\ref{cor:pullback_D0}. 
\end{definition}

We call the elements in $\DD^n(f,\CC)$ the 
{\em $n$-cells}\index{n9@$n$-!cell}\index{for fC@for $(f,\CC)$}\index{n9@$n$-!cell!for $(f,\CC)$}
for $(f,\CC)$, or simply
$n$-cells if $f$ and $\CC$ are understood. We call $n$ the 
{\em level}\index{level of cell} 
of an $n$-cell. When we speak of
$n$-cells, then $n$ always refers to this level and not to the
dimension of the cell.  An $n$-cell of dimension $2$ is called an
{\em $n$-tile}\index{n9@$n$-!tile}, 
and an $n$-cell of dimension
$1$ an 
{\em $n$-edge}\index{n9@$n$-!edge}. 
An 
$n$-{\em vertex}\index{n9@$n$-!vertex} 
is a point $p\in S^2$ such that
$\{p\}$ is an $n$-cell of dimension $0$.  We denote the set of
all $n$-tiles, $n$-edges, and $n$-vertices for $(f,\CC)$ by
$\X^n(f,\CC)$,\index{Xaa@$\X^n$} $\E^n(f,\CC)$,\index{Eaa@$\E^n$}
and ${\bf V}^n(f,\CC)$,\index{Vaa@${\bf V}^n$} respectively. If
$f$ and $\CC$ are understood, we simply write $\X^n$ for
$\X^n(f,\CC)$, etc.

In Proposition~\ref{prop:celldecomp} we will record some properties 
of the cell decompositions $\DD^n(f,\CC)$ and also give a more explicit description of 
their cells. We first turn to the proof of Lemma~\ref{lem:pullback}. 
 We require a lemma.

\begin{lemma} \label{lem:continv} 
Let $X$ and $Y$ be metric spaces, and $f\: X\ra Y$ be a continuous map. Suppose $X$ is compact and $A\sub Y$ is closed. Then for each 
$\eps>0$ there exists $\delta>0$ such that 
$$ f^{-1}(\mathcal{N}_\delta(A))\sub \mathcal{N}_\eps(f^{-1}(A)).$$
\end{lemma}

Here  $\mathcal{N}_r(M)$ for $r>0$ denotes the open $r$-neighborhood of a set $M$ in a metric space. 

\begin{proof} We argue by contradiction and assume that for some $\eps>0$ the statement is not true. Then for each $n\in \N$ there exists
a point $x_n\in  f^{-1}(\mathcal{N}_{1/n}(A))$ with $x_n\not \in
 \mathcal{N}_\eps(f^{-1}(A))$. Since $X$ is compact, by passing to a subsequence if necessary, we may assume that $\{x_n\}$ converges, say $x_n\to x\in X$ as $n\to \infty. $
 Then $$\dist(x, f^{-1}(A))=\lim_{n\to \infty} \dist(x_n, f^{-1}(A))\ge \eps>0.$$ 
 On the other hand, $f(x_n)\in \mathcal{N}_{1/n}(A)$ for $n\in \N$ and $f(x_n)\to f(x)$ as $n\to \infty$.
This implies  that $f(x)\in \overline A=A$, and so $x\in f^{-1}(A)$.  This is a contradiction. 
  \end{proof}

\begin{proof}[Proof of Lemma~\ref{lem:pullback}]
  To show existence, we define $\DD'$ to be the set of all cells
  $c\sub S^2$ such that $f(c)$ is a cell in $\DD$ and $f|c$ is a
  homeomorphism of $c$ onto $f(c)$. It is clear that $\DD'$ does
  not contain cells of dimension $>2$.  As usual, we call the
  cells $c$ in $\DD'$ edges or tiles depending on whether $c$ has
  dimension $1$ or $2$, respectively. The vertices $p$ of $\DD'$
  are the points in $S^2$ such that $\{p\}$ is a cell in $\DD'$
  of dimension $0$.  It is clear that the set of vertices of
  $\DD'$ is equal to $f^{-1}({\bf V})$, where ${\bf V}$ is the
  set of vertices of $\DD$.
 
 In order to show that $\DD'$ is a cell decomposition of $S^2$, we first establish two claims.
 
 \smallskip 
 {\em Claim 1.} If $p\in S^2$ and    $q=f(p)\in \inte(X)$ for some tile $X\in \DD$, then there exists a unique tile $X'\in \DD'$ with $p\in X'$.  
\smallskip 
     
  In this case, let    $U=\inte(X)$. Then $U$  is an open and 
  simply connected set in the complement of ${\bf V}\supset f(\crit(f))$. Hence there exists a unique continuous map 
     $g\:U\ra U'\coloneqq g(U)\sub S^2$ (a ``branch of the inverse of $f^{-1}$'') with
  $f\circ g=\text{id}_U$ and $g(q)=p$. The map $g$ is a homeomorphism onto its image $U'$. Hence   
  $U'\sub S^2$ is  open and simply connected.  

  We equip $S^2$ with some base metric inducing the given topology. In the following, metric terms will refer to this metric. Then  it follows from Lemma~\ref{lem:continv} that $f$ has the following property:
  for all $w\in S^2$ and all $\eps>0$, there exists $\de>0$ such that
  \begin{equation} \label{prope1}
    f^{-1}(B(w,\delta)) \sub {\mathcal N}_\eps(f^{-1}(w)).
  \end{equation}
 
  We want to prove that $g$ has a continuous extension
  to $\overline U=X$. For this  it suffices to show that $\{g(w_i)\}$
  converges whenever $\{w_i\}$ 
  is a sequence in $U$ converging to a point $w\in \partial U$.  Since
  $g$ is a right inverse of $f$, it follows that the limit points of
  $\{g(w_n)\}$ are contained in $f^{-1} (w)$. Since $f$ is
  finite-to-one, 
  the point $w$ has finitely many preimages $z_1, \dots, z_m$ under
  $f$. 

  We can choose $\eps>0$ so small that the sets
  $B(z_i, \eps)$, $i=1, \dots, m$,  are pairwise disjoint.
  By \eqref{prope1} we can find $\de>0$ such that
  \begin{equation}\label{prope2}
   f^{-1}(B(w, \de))\sub \bigcup_{i=1}^m B(z_i, \eps).
  \end{equation}
  The set $\overline U=X$ is a closed Jordan region, and hence locally
  connected. So there exists an open connected set $V\sub U$ such
  that $\overline V$ is a neighborhood of $w$ in $\overline U$ and  
  $\overline V\sub B(w,\de)$. 
  Then $g(V)$ is a connected subset of $f^{-1}(B(w,\de))$. Since the union on the right hand side of 
  \eqref{prope2} is disjoint, the set 
  $g(V)$  must be contained in one of the sets of this union, say
  $g(V)\sub B(z_k, \eps)$. 
  Now $w_i \in V$ for sufficiently large $i$, and so
  all limit points of $\{g(w_i)\}$ are contained in
  $\overline {g(V)}\sub \overline B(z_k, \eps)$.
  On the other hand, the only possible limit points of $\{g(w_i)\}$ are
  $z_1, \dots, z_m$, and $z_k$ is the only one contained in 
  $\overline B(z_k, \eps)$. This implies 
  $g(w_i)\to z_k$ as $i\to \infty$. So  $g$ has indeed a continuous extension to
  $\overline U$, which is again denoted by $g$. 
  
   It is clear  that
  \begin{equation}\label{prope3}
    f \circ g=\text{id}_{\overline U}.
  \end{equation}
  This implies that $g$ is a homeomorphism of $\overline U=X$
  onto its image $X'\coloneqq g(\overline U)=\overline {g(U)}$.
  Then $X'$ is a closed Jordan region, and 
  by \eqref{prope3} the map $f|X'$ is a homeomorphism of $X'$ onto
  $\overline U=X$. Hence $X'$ is a tile in $\DD'$ with $p\in g(U)\sub X'$.  
  
  So a tile   $X'\in \DD'$ containing $p$ exists. We want to show that it is the only tile in $\DD'$ containing $p$. Indeed, suppose $Y'\in \DD'$  is another tile with $p\in Y'$. Then $f(Y')$ is a tile in $\DD$ containing the point $q=f(p)\in \inte(X)$. 
  Hence $f(Y')=X$, and so $f|Y'$ is a homeomorphism of $Y'$ onto $X$.  Let $h=(f|Y')^{-1}$.
  Then $g$ and $h$ are both inverse branches of $f$ 
  defined on the simply connected region $U$ with $g(q)=p=h(q)$. Hence $h$ and $g$ agree on $U$ (see Lemma~\ref{lem:liftsofcov}~\ref{item:liuniq}), and so by continuity also on $\overline U$. We conclude that  $X'=g(X)=
  h(X)=Y'$ as desired, and so Claim~1 follows. 
  
  \smallskip
  {\em Claim 2.}  If $p\in S^2$ and    $q=f(p)\in \inte(e)$ for some edge  $e\in \DD$, then there exists a unique edge  $e'\in \DD'$, and precisely two distinct tiles $X'$ and $Y'$ in $\DD'$ that contain $p$.  Moreover, $e'\sub \partial X'\cup \partial Y'$.  
\smallskip

By Lemma~\ref{lem:specprop}~\ref{item:prop_cell4} we know that there are precisely two distinct tiles $X,Y\in \DD$ that contain $e$ in their boundary, and  
 that $U=\inte(X)\cup \inte(e)\cup \inte(Y)$ 
 is   an open and simply connected region in the complement of the set ${\bf V}\supset f(\crit(f))$. 
 Hence there exists a unique continuous map $g\:U\ra S^2$ with $g(q)=p$ and $f\circ g=\id_{U}$.  Then $g$ is a homeomorphism of $U$ onto the open set $g(U)\sub S^2$. 
 As before one can show that the maps $g_1\coloneqq g|\inte(X)$ and $g_2\coloneqq g|\inte(Y)$ have continuous extensions to $X$ and $Y$, respectively.
  We use   the same notation   $g_1$ and $g_2$ for these extensions.  It is clear that $g_1|e=g_2|e$.
  Moreover, $g_1$ is a homeomorphism of $X$ onto a closed Jordan region $X'=g_1(X)$ with inverse map 
  $f|X'$. In particular, $X'$ is a tile in $\DD'$.  Similarly, $Y'= 
  g_2(Y)$ is a tile in $\DD'$.  
  The tiles $X'$ and $Y'$ are distinct, because $f$ maps them to different tiles in $\DD$. Moreover, $e'\coloneqq g_1(e)=g_2(e)$ is an edge in $\DD'$ with $p\in e'\sub \partial X'\cap \partial Y'$.

It remains to prove the uniqueness part. If $\widetilde e$ is another edge in $\DD'$ with $p\in \widetilde e$, then $f(\widetilde e)$ is an edge in $\DD$ and  $f|\widetilde e$  is a homeomorphism 
of $\widetilde e$ onto $f(\widetilde e)$. Hence $q=f(p)\in \inte(e)\cap f(\widetilde e)$ which implies that $f(\widetilde e)=e$. So $f|\widetilde e$ is actually a homeomorphism of $\widetilde e$ onto $e$. 
  Then $(f|\inte(e'))^{-1}$ and 
$(f|\inte(\widetilde e))^{-1}$ are right inverses of $f$ defined on the open arc $\inte(e)$ that both map $q$ to $p$. Hence these right inverses must agree on $\inte(e)$. By continuity this implies  $(f|e')^{-1}=(f|\widetilde e)^{-1}$ on $e$, and so $e'=(f|e')^{-1}(e)=(f|\widetilde e)^{-1}(e)=\widetilde e$.

If $Z'$ is another tile in $\DD'$ with $p\in Z'$, then $f$ maps $\partial Z'$ homeomorphically to the boundary $\partial f(Z')$ of the tile $f(Z')\in \DD$. Moreover,  $p\in \partial Z'$; for otherwise $f(p)$ would lie in the set $\inte(f(Z'))$ which is disjoint from 
$e$. 
It follows that  there is an edge in $\DD'$ that contains $p$ and is contained in the boundary of $\partial Z'$.  Since this edge 
in $\DD'$ is unique, as we have just seen, we know that $e'\sub \partial Z'$. 
Note that $p\in g(U) \sub X'\cup Y'$.  So 
$X'\cup Y'$ is a neighborhood of $p$, because $g(U)$ is open. Since $p \in e'\sub \partial Z'$, there exists a point $x\in \inte(Z')$ near $p$ with $x\in X'\cup Y'$, say $x\in X'$. 
Then $f(x)$ is contained in the interior of the  tile $f(Z')\in \DD$.   Since $x\in X'\cap Z'$ and $X'$ and $Z'$ are both tiles in $\DD'$,   we conclude $X'=Z'$ by Claim~1. This completes  the proof of Claim 2.  

\smallskip 
Now that we have established Claims~1~and~2, we can show that 
$\DD$ is a cell decomposition of $S^2$ by verifying conditions  \ref{item:def_cell1}--\ref{item:def_cell4} of  Definition~\ref{def:celldecomp}. 

\smallskip 
{\em Condition} \ref{item:def_cell1}: If $p\in S^2$ is arbitrary, then $f(p)$ is a vertex of $\DD$ or $f(p)$ lies in the interior of an edge or in the interior of a tile in $\DD$. In the first case $p$ is a vertex of $\DD'$, and in the other two cases $p$ lies in cells in $\DD'$ by Claim 1 and Claim 2. 
It follows that the cells  in $\DD'$ cover $S^2$.

\smallskip 
{\em Condition} \ref{item:def_cell2}: Let $\sigma, \tau $ be cells in $\DD'$ with $\inte(\sigma)\cap \inte(\tau)\ne \emptyset$. Then $f(\sigma)$ and $f(\tau)$ are cells in $\DD$ with $\inte(f(\sigma))\cap \inte(f(\tau))\ne \emptyset$. Hence $\lambda\coloneqq f(\sigma)=f(\tau)$ and $\lambda\in \DD$.
In particular, $\sigma$ and $\tau$ have the same dimension.

If $\sigma$ and $\tau$ are both tiles, then $\sigma=\tau$ by Claim 1, because every  point in  $\inte(\sigma)\cap \inte(\tau)\ne \emptyset $ has an image under $f$ in $\inte(\lambda)$. Similarly, if $\sigma$ and $\tau$ are edges, then $\sigma=\tau$ by Claim 2.

If $\sigma$ and $\tau$ consist of vertices in $\DD'$, then   the relation 
$\inte(\sigma)\cap \inte(\tau)\ne \emptyset$ trivially implies 
$\sigma=\tau$. 

\smallskip 
{\em Condition} \ref{item:def_cell3}: Let  $\tau'\in \DD'$ be arbitrary. Then $f|\tau'$ is a homeomorphism of $\tau'$ onto the cell $\tau=f(\tau')\in \DD$. Note that  $(f|\tau')^{-1}(\sigma)\in \DD'$ whenever $\sigma\in \DD$ and $\sigma\sub \tau$. Since 
$\partial \tau'= (f|\tau')^{-1}(\partial \tau)$ and $\partial \tau$ is a union of cells in $\DD$, it follows that $\partial \tau'$ is a union of cells in $\DD'$. 

\smallskip 
{\em Condition} \ref{item:def_cell4}: To establish the final property of a cell decomposition for $\DD'$, we will show that $\DD'$ consists of only finitely many 
cells. Indeed, let $N_i\in \N$ be the number of cells of dimension $i$ in $\DD$ for $i=0,1,2$. 
Since the vertices in $\DD'$ are the preimages of the vertices of $\DD$, we have at most $\deg(f) N_0$ vertices in $\DD'$. 
 
 Pick one  point in the interior of each edge in $\DD$. The set $M$  of  these points consists of $N_1$ elements. If $q\in M$, then $q\notin{\bf V}\supset f(\crit(f))$, and so $q$ is not  a critical value of $f$. Hence $\#f^{-1}(M)=N_1\deg(f)$. It follows from Claim 2 that each element of $f^{-1}(M)$ is contained in a unique 
 edge in $\DD'$,  and it follows from the definition of $\DD'$ that each edge in $\DD'$ 
contains a unique point in $f^{-1}(M)$. Hence the number of edges in $\DD'$ is equal to $\#f^{-1}(M)=N_1\deg(f)$. 

Similarly, pick a point in the interior of each tile in $\DD$ and let  $M$ be the set of these points. Then $\#f^{-1}(M)=N_2\deg(f)$ and by a similar  reasoning as above based on Claim 1, we see 
that the number of tiles in $\DD'$ is equal to $\#f^{-1}(M)=N_2\deg(f)$.

\smallskip 
We have shown that $\DD'$ is a cell decomposition of $S^2$. 
It follows immediately from the definition of $\DD'$ that $f$ is cellular for $(\DD', \DD)$. 

\smallskip 
To show uniqueness of $\DD'$,  suppose that $\widetilde {\DD}$ is another cell decomposition such that $f$ is cellular for $(\widetilde {\DD}, \DD)$. Then by definition of $\DD'$ every cell in $\widetilde {\DD}$ also lies in $\DD'$.
So we have $\widetilde {\DD}\sub \DD'$. If this inclusion were strict, then there would be a cell $\tau\in \DD'$ whose interior $\inte(\tau)\ne \emptyset$ is  disjoint from the interiors of all cells in 
$\widetilde {\DD}$. This is impossible, since these interiors form a cover of $S^2$. Hence $\widetilde {\DD}=\DD'$. 
\end{proof}

 
 

We now collect properties of the 
cell decompositions $\DD^n=\DD^n(f,\CC)$ from
Defi\-nition~\ref{def:DDn}. 
 
\begin{prop} 
  \label{prop:celldecomp}
 Let $k,n\in \N_0$, 
 $f\: S^2\ra S^2$ be a Thurston map,  $\CC\sub S^2$ be a Jordan curve with $\post(f)\sub \CC$, $\DD^n=\DD^n(f,\CC)$, and   $m=\#\post(f)$. 
 Then the following statements are true:
 
\smallskip
\begin{enumerate}

\item
  \label{item:fkcellular}
  The map  $f^k$ is cellular for $(\DD^{n+k}, \DD^n)$. In
  particular, if  $\tau$ is any $(n+k)$-cell, then $f^k(\tau)$ is an
  $n$-cell, and $f^k|\tau$ is a homeomorphism of $\tau$ onto
  $f^k(\tau)$.  

\item
  \label{item:fkunioncells}
  Let  $\sigma$ be  an $n$-cell. Then $f^{-k}(\sigma)$ is
  equal to the union of all $(n+k)$-cells $\tau$ with
  $f^k(\tau)=\sigma$.  

\item
  \label{item:skeletons}
  The  $0$-skeleton (i.e., the set of vertices) of
  the cell decomposition
  $\DD^n$ 
  is given by  
  ${\bf V}^n=f^{-n}(\post(f))$, and we have ${\bf V}^n \sub {\bf
    V}^{n+k}$.  The $1$-skeleton of $\DD^n$ is  equal to
  $f^{-n}(\CC)$.  

\item
  \label{item:noVEX}
  We have $\#{\bf V}^n\le m \deg(f)^n$,  $\#\E^n=m\deg(f)^n$,  and
  $\#\X^n=2\deg(f)^n$.  

\item
  \label{item:nedgesC}
  The $n$-edges are precisely the closures of the connected
  components of $f^{-n}(\CC)\setminus f^{-n}(\post(f))$. The $n$-tiles
  are precisely the closures of the connected components of
  $S^2\setminus f^{-n}(\CC)$.  

\item
  \label{item:tilesmgon}
  Every $n$-tile  is an $m$-gon, i.e., the number of $n$-edges and
  $n$-vertices contained in  its boundary is equal to $m$.   
  
  \item
  \label{item:celdecompiter} Let $F=f^k$ be an iterate of $f$ with $k\ge 1$. Then 
  $\DD^n(F,\CC)=\DD^{nk}$.
  \index{Thurston map!iterate of}
  \index{iterate of Thurston map}
  \index{F f@$F=f^n$}

\end{enumerate}
\end{prop}

In the proof we will use the following fact about open 
maps $g\: S^2\ra S^2$ (such as iterates of Thurston maps): if $A\sub S^2$ is arbitrary, then
 \begin{equation}\label{eq:openspec}
 g^{-1}(\overline {A})\sub \overline{g^{-1}(A)}.
 \end{equation} 
 Indeed, if $U$ is an open neighborhood of a point 
 $p\in g^{-1}(\overline {A})$, then $g(U)$ is an open neighborhood of $g(p)\in \overline A$. Hence there exists a point $p'\in U$ with $g(p')\in A$, and so $U\cap g^{-1}(A)\ne \emptyset$. The inclusion
 \eqref{eq:openspec} follows.  Note that the reverse inclusion in  \eqref{eq:openspec}  is true for all continuous maps $g$.

\begin{proof} We know that ${\bf V}^0=\post(f)\supset f(\crit(f))$ is the set of vertices of $\DD^0$ and that 
 $f$ is cellular for $(\DD^{n+1}, \DD^n)$ for each $n\in \N_0$. As we have seen in the proof of Lemma~\ref{lem:pullback}, this implies ${\bf V}^{n+1}=f^{-1}({\bf V}^n)$.
 It follows by induction that ${\bf V}^n=f^{-n}(\post(f))$ for $n\in \N_0$. After this preliminary remark, we now turn to the proofs of the statements.   

\smallskip
 \ref{item:fkcellular} This immediately follows from the facts that $f$ is cellular for  $(\DD^{n+1}, \DD^n)$ for each $n$, and that compositions of cellular maps are cellular (if, as in our case,  the obvious compatibility requirement for the cell decompositions involved  is satisfied). 

\smallskip{}
\ref{item:fkunioncells} Note that the set ${\bf V}^n =f^{-n}(\post(f))\supset \post(f)$ of vertices of $\DD^n$ contains the critical values $f^k(\crit(f^k))\sub \post(f)$ of $f^k$.
So we can apply Lemma~\ref{lem:pullback} and conclude from 
\ref{item:fkcellular}  
that $\DD^{n+k}$ is the unique cell decomposition of $S^2$ such that 
$f^k$ is cellular for  $(\DD^{n+k}, \DD^n)$. 
Moreover, recall from the proof of  
Lemma~\ref{lem:pullback} that a topological cell $c\sub S^2$ is an $(n+k)$-cell if and only if $f^k(c)$ is an $n$-cell and $f^k|c$ is a homeomorphism of $c$ onto $f^k(c)$. 

This immediately implies the statement if $\sigma=\{q\}$, where $q$ is an $n$-vertex.  

Suppose $\sigma$ is equal to an $n$-edge $e$. 
Let $M$ be the union of all $(n+k)$-edges $e'$ with $f^k(e')= e$. It is clear that $M\sub  f^{-k}(e)$. 

 To see the reverse inclusion, first note that because there are only finitely many 
 $(n+k)$-edges, the set $M$ is closed.  
 
Let $p\in f^{-k}(\inte(e))$ be arbitrary. Then from Claim 2 in the proof of Lemma~\ref{lem:pullback} it follows that there exists an $(n+k)$-edge $e'$ with $p\in e'$. Then $f^k(e')$ is an $n$-edge that contains $q=f^k(p)\in \inte(e)$. Hence $e=f^k(e')$, and so 
$f^{-k}(\inte(e))\sub M$. 
Since $f^k$ is an open and continuous map and $M$ is closed, it follows from 
\eqref{eq:openspec} that 
$$f^{-k}(e)=f^{-k}(\overline{\inte(e)})\sub \overline { f^{-k}(\inte(e))}\sub \overline M=M.$$
Hence   $M=f^{-k}(e)$ as desired. 

If $\sigma$ is  equal to an $n$-tile $X$, let $M$  be 
the union of all $(n+k)$-tiles $X'$ with $f^k(X')=X$.
Then $M\sub f^{-k}(X)$ and $M$ is closed. 

If $p\in f^{-k}(\inte(X))$, then by Claim 1 in the proof of Lemma~\ref{lem:pullback} there exists an $(n+k)$-tile with 
$p\in X'$. As above, we conclude $f^k(X')=X$, and so $p\in M$. Hence 
$f^{-k}(\inte(X))\sub M$. Now again by \eqref{eq:openspec}  we have 
$$ f^{-k}(X)=f^{-k}(\overline{\inte(X)})\sub\overline{ f^{-k}(\inte(X))}\sub \overline M=M. $$ 
We conclude that $M=f^{-k}(X)$ as desired. 

\smallskip{}
\ref{item:skeletons}
The $0$-skeleton of $\DD^n$ is the set ${\bf V}^n$ of all vertices of $\DD^n$. We have already seen that  ${\bf V}^n=
f^{-n}(\post(f))$. Moreover, 
$$f^{n+k}({\bf V}^n)= f^{n+k}(f^{-n}(\post(f)))\sub f^k(\post(f))\sub \post(f), $$
and so ${\bf V}^n \subset f^{-n-k}(\post(f))= \V^{n+k}$.

The $1$-skeleton of $\DD^n$ is equal to the  set consisting of
all $n$-vertices and the union of all $n$-edges. 
As follows from \ref{item:fkunioncells}, 
this set is equal to the preimage of the $1$-skeleton 
of $\DD^0$ under the map $f^n$. Since the $1$-skeleton 
of $\DD^0$ is equal  to $\CC$,  it follows that the 
$1$-skeleton of $\DD^n$ is equal to $f^{-n}(\CC)$. 

\smallskip{}
\ref{item:noVEX}
Note that $f^n$ is cellular for $(\DD^n, \DD^0)$. Moreover, $\deg(f^n)=\deg(f)^n$, $\#{\bf V}^0=m$, $\#\E^0=m$, and $\#\X^0=2$. The statements about ${\bf V}^n,
\E^n$, and $\X^n$, then follow from the corresponding statement established in the last part of the proof of Lemma~\ref{lem:pullback}. 

\smallskip{}
\ref{item:nedgesC}
This immediately follows from \ref{item:skeletons} and Lemma~\ref{lem:opencells}. 

\smallskip{}
\ref{item:tilesmgon}
If $X$ is an $n$-tile, then $f^n|X$ is a homeomorphism of $X$ onto the
$0$-tile $f^n(X)$. The $n$-vertices contained in $X$ are precisely the
preimages of the $0$-vertices contained in $f^n(X)$; hence $X$
contains exactly $m=\#\post(f)$ $n$-vertices, and hence also the same
number of $n$-edges
(Lemma~\ref{lem:specprop}~\ref{item:prop_cell3}). So every $n$-tile is
an $m$-gon.

\smallskip{} 
\ref{item:celdecompiter} We know that $F=f^k$ is a Thurston map with 
$\post(F)=\post(f)$ (see Section~\ref{sec:defin-thurst-maps}).  
It follows that $\DD^0(F,\CC)=\DD^0$ and that every point in $F^n(\crit(F^n))\sub \post(F)=\post(f)$ is a vertex of $\DD^0(F,\CC)$. By \ref{item:fkcellular} the map  $F^n=f^{nk}$ 
is cellular for $(\DD^{n}(F,\CC), \DD^0(F,\CC))$ and also cellular for  $(\DD^{nk}, \DD^0(F,\CC))$.
Hence $\DD^{n}(F,\CC)=\DD^{nk}$ by the uniqueness statement in Lemma~\ref{lem:pullback}. 
\end{proof}

Instead of an inequality for $\#{\bf V}^n$ as in \ref{item:noVEX} one
can easily give a precise  
formula for this number; namely, if we 
set $d=\deg(f)$ and 
$m=\#\post(f)$, 
then $\#\X^n=2d^n$ and $\E^n=md^n$. Moreover, by Euler's polyhedral formula we have 
$$ \#\X^n-\#\E^n+\#{\bf V}^n=2, $$
and so 
$$ \#{\bf V}^n=  (m-2) d^n+2. $$ 

We record another lemma that relates cells with the mapping properties of a given Thurston map.

\begin{lemma}
  \label{adhoc} 
  Let $k,n\in \N_0$, $f\: S^2 \ra S^2$ be a Thurston map, and
  $\CC\sub S^2$ be a Jordan curve with $\post(f)\sub \CC$.

 \begin{enumerate}
 
 \item
   \label{item:adhoc1}
   If $c\sub S^2$ is a topological cell such that $f^k|c$ is a
   homeomorphism onto its image and $f^k(c)$ is an $n$-cell, then
   $c$ is an $(n+k)$-cell.
  
 \item
   \label{item:adhoc2}
   If $X$ is an $n$-tile and $p\in S^2$ is a point with
   $f^k(p)\in \inte(X)$, then there exists a unique $(n+k)$-tile
   $X'$ with $p\in X'$ and $f^k(X')=X$.
  \end{enumerate} 
\end{lemma} 
 
Here it is understood that all $m$-cells, $m\in \N_0$, are for
$(f,\CC)$.  A priori this is not true for the cell $c$; the point
of \ref{item:adhoc1} is to give a criterion when $c$ is a cell
for $(f,\CC)$ (of appropriate level).
 
\begin{proof}  
  Let $\DD^m=\DD^m(f,\CC)$ for all $m\in \N_0$ according to
  Definition~\ref{def:DDn}.  Note that $f^k$ is cellular for
  $(\DD^{n+k}, \DD^{n})$, and the set
  $f^{-n}(\post(f))\supset \post(f)$ of vertices of $\DD^{n}$
  contains the set $f^k(\crit(f^k))\sub \post(f)$ (the last
  inclusion follows from \eqref{eq:critpfn2}). Hence we are in the
  situation of Lemma~\ref{lem:pullback} with $\DD=\DD^{n}$
  and $\DD'=\DD^{n+k}$.
  
  Then \ref{item:adhoc1} follows from the uniqueness statement of
  Lemma~\ref{lem:pullback} and the definition of $\DD'$ in the
  first paragraph of the proof of this lemma.

  Moreover, under the assumptions of \ref{item:adhoc2} it follows
  from Claim 1 in the proof of Lemma~\ref{lem:pullback} that
  there exists a unique $(n+k)$-tile $X'$ with $p\in X'$. Then
  $f^k(X')$ is an $n$-tile containing $f^k(p)\in \inte(X)$, and
  so $f^k(X')=X$.
\end{proof}

Thurston maps $f\: S^2\ra S^2$ with exactly two postcritical points and their associated cell decompositions $\DD^n(f,\CC)$ are  very special as the next lemma shows.  
Later we will see that   every Thurston map with $\#\post(f)=2$ is in fact  Thurston equivalent to the
map $z\mapsto z^n$ on $\CDach$, where $n\in
\Z\setminus\{-1,0,1\}$ (see Proposition~\ref{prop:post2}). 

\begin{lemma}
  \label{lem:postf-2}
  Let $f\colon S^2\to S^2$ be a Thurston map with precisely  two
  postcritical points $p,q\in S^2$. Let $\CC \subset S^2$ be a
  Jordan curve with $\post(f) = \{p,q\} \subset \CC$ and consider cells for $(f,\CC)$. Then 
  all  $n$-tiles and
  $n$-edges contain $p,q$, and 
  \begin{equation} \label{eq:pqvn2}
    \V^n =f^{-n}(\post(f))  = \post(f) = \{p,q\}
  \end{equation}
  for all $n\in \N_0$. 
\end{lemma}

\begin{proof}
  Let $\alpha_f\colon S^2\to \widehat{\N}$ be the ramification
  function of $f$, and $\mathcal{O}_f=(S^2,\alpha_f)$ be the
  orbifold associated with $f$ (see Definitions~\ref{def:weightf}
  and~\ref{def:orbifold_f}). By Corollary~\ref{cor:post012} we
  know that the signature of $\mathcal{O}_f$ is
  $(\infty,\infty)$. This means that $\alpha_f(p)=
  \alpha_f(q)=\infty$ and $\alpha_f(u)=1$ for all $u\in
  S^2\setminus\{p,q\}$ (see
  Proposition~\ref{prop:otherramprops}~\ref{item:rami_postf}).   

  In particular, $\mathcal{O}_f$ is parabolic. This in turn
  implies that $\alpha_f(u)= \infty$ for all
  $u\in f^{-1}(\post(f))$ (see
  Proposition~\ref{prop:parabolicOf}~\ref{item:Of_para3}). Therefore, 
  $f^{-1}(\post(f)) \subset \post(f)$. Since
  $f^{-1}(\post(f)) \supset \post(f)$ is true for every Thurston
  map (see
  Proposition~\ref{prop:celldecomp}~\ref{item:skeletons}), we
  conclude that $f^{-1}(\post(f)) = \post(f)$. Now
  \eqref{eq:pqvn2} follows by induction.
  
  If $X$ is an $n$-tile, then it   contains two  distinct
  $n$-vertices (see
  Proposition \ref{prop:celldecomp}~\ref{item:tilesmgon}), and so $p,q\in X$.  Similarly, every
  $n$-edge $e$ contains two distinct $n$-vertices (see
  Proposition~\ref{prop:celldecomp}~\ref{item:fkcellular}), and so  $p,q\in e$.   
\end{proof}



Tiles can be used to create connections between sets and points. To make this precise,  we will now introduce various notions of \emph{chains}\index{chain}. For an illustration of the following definitions see  
Figure~\ref{fig:chains}.

\begin{definition}[Chains]
  \label{def:chains}
  A {\em chain} $P$ in $S^2$ is a finite sequence
  $A_1,\dots, A_N$ of sets in $S^2$ such that
  $A_i\cap A_{i+1}\ne \emptyset$ for $i=1, \dots, N-1$. We 
  call $N=\length(P)$ the \emph{length} of the 
  chain.\index{chain!length of}\index{length!of chain} 
  It 
  {\em joins}\index{chain!joins} 
  two sets $A$ and $B$ in $S^2$, if
  $A\cap A_1\neq \emptyset$ and $B\cap A_N\neq
  \emptyset$.  Similarly, $P$  \emph{joins} the points $x,y\in S^2$ if
  $x\in A_1$ and $y\in A_N$ (so $P$ joins 
   $\{x\}$ and $\{y\}$).

    A 
  \emph{subchain}\index{subchain} 
  $P'$ of $P$ is
  a chain  $A_{i_1}, \dots , A_{i_M}$ with 
  $1\leq i_1 < i_2 < \dots <{i_M} \leq N$. We say that the chain $P$ joining the sets $A$ and $B$ 
  is {\em simple} if there is no
  proper subchain $P'$ of $P$ that joins $A$ and $B$. Clearly, if a chain $P$ joins the sets $A$ and $B$, then there is a simple subchain $P'$ of $P$ that
  joins $A$ and $B$. The same terminology   and a similar remark apply to chains joining  two points $x,y\in S^2$.

 We will often consider chains  $X_1,\dots ,X_N$,  where the sets $X_i$ are 
  tiles in  a given cell decomposition
   of $S^2$. In this case, the chain  is called a 
  \emph{chain of tiles}\index{chain!of tiles} 
  or a 
  \emph{tile chain}.\index{tile!chain} 
  
  If each set  $X_i$ of the chain is an  $n$-tile for a given Thurston map 
  $f\:S^2\ra S^2$ and 
   a Jordan curve  $\CC\subset S^2$ with
  $\post(f)\subset \CC$, then we call the chain (with $(f,\CC)$ understood)
  a
  \emph{chain of $n$-tiles}\index{chain!of $n$-tiles} 
  or an \emph{$n$-chain}.\index{n9@$n$-!chain}
  \end{definition}

\begin{definition}[$e$-chains]
  \label{def:e-chain}
 Let $\DD$ be a cell decomposition of $S^2$ and $P$ be a tile chain consisting  
  of the tiles $X_1,\dots,X_N$ in $\DD$. If $P$  has the additional  property that for
  each  $i=1,\dots, N-1$ we have $X_i\neq X_{i+1}$ and there is an
  edge $e_i$ in $\DD$ with
  $e_i\subset \partial X_i \cap \partial X_{i+1}$, then 
  $P$ 
  is called an 
  \emph{$e$-chain}.
  \index{e-chain@$e$-chain|textbf}
  \index{chain!$e$-|textbf}
\end{definition}

\begin{figure}
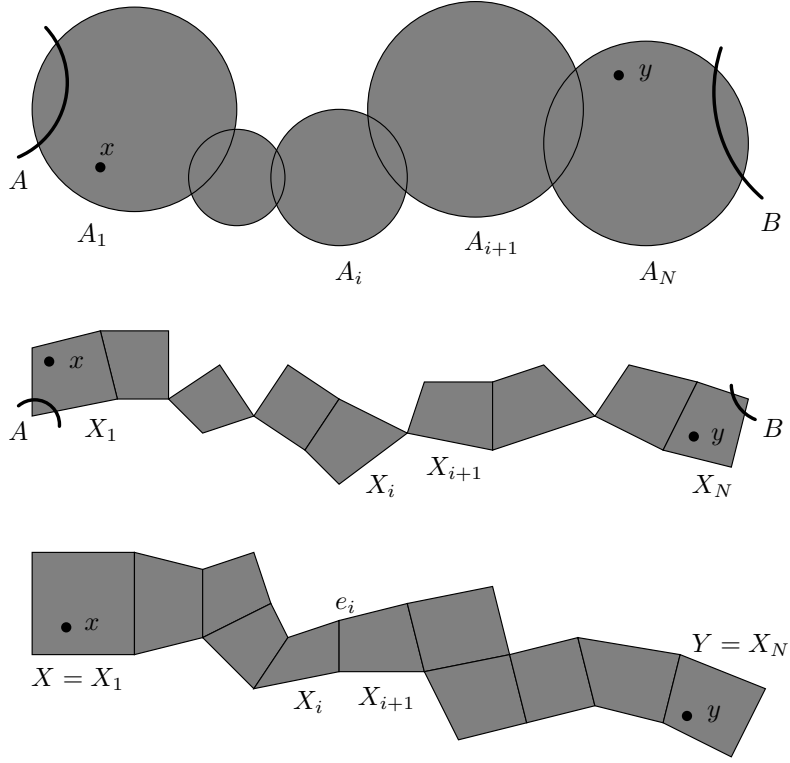

  \centering
  \begin{overpic}
    [width=10cm, tics=20,
    ]
    {chains3}
    \put(8,68){$A_1$}
    \put(82,63){$A_N$}
    \put(-1,75){$A$}
    \put(98,69.5){$B$}
    \put(11,79.5){$x$}
    \put(82,90){$y$}
    \put(42,63){$A_i$}
    \put(59,67){$A_{i+1}$}
    \put(9,42.4){$X_1$}
    \put(89,35){$X_N$}
    \put(46,35){$X_i$}
    \put(54,37.4){$X_{i+1}$}
    \put(-1,42){$A$}
    \put(98.3,42.4){$B$}
    \put(7,51.4){$x$}
    \put(91.5,42){$y$}
    \put(2,9.5){$X=X_1$}
    \put(89,14){$Y=X_N$}
    \put(36.5,6.5){$X_i$}
    \put(45,7){$X_{i+1}$}
    \put(42,19.7){$e_i$}
    \put(9,17){$x$}
    \put(91,5.5){$y$}
  \end{overpic}
  \caption{A chain, an $n$-chain, and an $e$-chain.}
\label{fig:chains}
\end{figure}

With the 
given vertices and edges the $1$-skeleton of $\DD$ can be considered as a  graph embedded in  $S^2$. 
 Then the {\em dual graph} has the set
of tiles as vertices, and two vertices as represented by tiles
are joined by an edge if the tiles both contain an edge
$e\in\DD$ in their boundaries. Then an $e$-chain in $\DD$ is
essentially a path in this dual graph. Having this
interpretation in mind, we say that the $e$-chain $P$ given by $X_1,\dots,X_N$
\emph{joins} the tiles $X=X_1$ and $Y=X_N$. Note that tiles
$X'\neq X$ and $Y'\neq Y$ are not joined by this $e$-chain
according to our definition. If $1\leq i \leq j\leq N$, then
$X_i, \dots, X_j$ is a subchain of $P$; it  is an
$e$-chain that  joins $X_i$ and $X_j$. 

An $e$-chain for a given 
   Thurston map  $f\:S^2\ra S^2$ and   a Jordan curve   
  $\CC\subset S^2$ with 
  $\post(f)\subset \CC$, is an $e$-chain in one of the cell decompositions $\DD=\DD^n(f,\CC)$, $n\in \N_0$. In particular, the tiles in such an $e$-chain  are of the same level $n$.

If $X$ is an arbitrary tile in a cell decomposition $\DD$ of $S^2$, then every tile $Y$ in
$\DD$ can be joined to $X$ by an $e$-chain. This follows from
the fact that the union of the tiles $Y$ that can be joined to
$X$ is equal to $S^2$; indeed, this union is a non-empty closed
set;  it is also open, as follows from
Lemma~\ref{lem:specprop} \ref{item:prop_cell4} and
\ref{item:prop_cell5}. Hence the union is all of $S^2$.
In general, we call a set $M$ of tiles 
{\em $e$-connected}
\index{e-connected@$e$-connected|textbf} 
if every two tiles in $M$ can be joined by an $e$-chain
consisting of tiles in $M$.

Let $f\: S^2\ra S^2$ be a Thurston map, and $\CC\sub S^2$ be a Jordan curve with  $\post(f)\sub \CC$. 
It is often useful, in particular in graphical representations, to assign to each tile in $\DD^n(f,\CC)$ 
one of the two 
colors ``black'' or ``white'' 
represented by the symbols 
${\tt b}$ and ${\tt w}$, respectively.   To formulate this, we denote 
by $\X^\infty$ the disjoint union of the sets $\X^n=\X^n(f,\CC)$, 
$n\in \N_0$. 
More informally, $\X^\infty$ is the set of all tiles for $(f,\CC)$. Note that 
in general   a set can be a tile  for different levels $n$, so  the same tile  may  be represented by multiple copies  in $\X^\infty$ distinguished 
by their  levels.

\index{colors of tiles}\index{tile!color of}\index{b@${\tt b,w}$}

\begin{lemma}[Colors of tiles]\label{lem:colortiles}
There exists a map $L\: \X^\infty\ra \{{\tt b}, {\tt w}\} $ with the following properties: 

\begin{enumerate}

\item
\label{item::colortiles1}
  $L(X^0_{\tt b})={\tt b}$ and $L(X^0_{\tt w})={\tt w}$.  

\item
\label{item::colortiles2}
  If $n,k\in \N_0$,  $X^{n+k}\in \X^{n+k}$,  and $X^n=f^k(X^{n+k})
  \in \X^n$, then $L(X^{n})=L(X^{n+k})$.

\item
\label{item::colortiles3}
  If $n\in \N_0$, and $X^n$ and $Y^n$ are two distinct $n$-tiles 
  that have an $n$-edge in common, then $L(X^{n})\ne L(Y^{n})$. 
\end{enumerate}
 
 Moreover, $L$ is uniquely determined by properties
 \ref{item::colortiles1} and \ref{item::colortiles2}.  
\end{lemma}

So with the normalization \ref{item::colortiles1} one can uniquely assign colors ``black'' or  ``white'' to the tiles so that all iterates of $f$ are color-preserving as in \ref{item::colortiles2}. 
By \ref{item::colortiles3} colors of distinct $n$-tiles are different if they share an $n$-edge. 

To find the number of white $n$-tiles for $n\in \N_0$,  pick a point  $p\in \inte(X^0_{\tt w})\sub S^2\setminus \post(f)$. Then $p$ is not a critical value of $f^n$, and so 
 $\#f^{-n}(p)=\deg(f)^n$. On the other hand, it follows from 
 Proposition~\ref{prop:celldecomp}~\ref{item:fkcellular} that each white $n$-tile $X^n$ contains a unique point $q\in f^{-n}(p)$. It  lies in the interior of $X^n$, and so for different $n$-tiles these points are
  distinct; moreover,   by Lemma~\ref{adhoc}~\ref{item:adhoc2}   each $q\in \#f^{-n}(p)$ is contained in a unique white $n$-tile. So we have a bijection between the set of white $n$-tiles and $f^{-n}(p)$.  Hence the number of white $n$-tiles is equal to $\#f^{-n}(p)=\deg(f)^n$. A similar argument shows  that the number of black $n$-tiles is also equal to 
   $\deg(f)^n$.

Our notion of colorings of tiles is related to the more general concept 
of a {\em labeling} of cells in a cell decomposition (see  Section~\ref{sec:labelings},  and  in particular Lemma~\ref{lem:labelexis}). 

\begin{proof}[Proof of Lemma~\ref{lem:colortiles}] 
  To define $L$ we assign colors to the two $0$-tiles
  $X^0_{\tt b}$ and $X^0_{\tt w}$ as in \ref{item::colortiles1}. If
  $Z^n$ is an $n$-tile 
  for some arbitrary level $n\ge 0$, then $f^n(Z^n)$ is a $0$-tile
  (Proposition~\ref{prop:celldecomp}~\ref{item:fkcellular}), and so it
  already has a 
  color assigned. We set $L(Z^n)\coloneqq L(f^n(Z^n))$.
 
  This defines a map $L\: \X^\infty\ra \{{\tt b}, {\tt w}\}$. By
  definition, $L$ has property \ref{item::colortiles1}. To show
  \ref{item::colortiles2}, assume that $n,k\in \N_0$ and $X^{n+k}\in
  \X^{n+k}$. Then by Proposition~\ref{prop:celldecomp}~\ref{item:fkcellular}, we have
  $X^n\coloneqq f^k(X^{n+k})\in \X^n$, and $f^{n+k}(X^{n+k}),f^n(X^n)\in
  \X^0$. So by definition of $L$ we have
  \begin{align*}
  L(X^n)&=L(f^n(X^n))=L(f^n(f^k(X^{n+k})))\\
   &=L(f^{n+k}(X^{n+k}))=L(X^{n+k})
   \end{align*} 
  as desired. 
  
  Let $X^n$ and $Y^n$ be as in \ref{item::colortiles3}. Then again by
  Proposition~\ref{prop:celldecomp}~\ref{item:fkcellular}, we have
  $f^n(X^n), 
  f^n(Y^n)\in \X^0$. There exists an $n$-edge $e$ such that  $e\sub \partial X^n\cap \partial Y^n$.  We orient $e$ so that $X^n$ lies to the left of $e$. The set of $n$-vertices is equal to $f^{-n}(\post(f))$ and disjoint from $\inte(e)$. It follows that no point in $\inte(e)$ is a critical point of $f^n$. In particular,  $f^n$ is a local homeomorphism near each point in $\inte(e)$
  which implies that $f^n(X^n)$ and $f^n(Y^n)$ are distinct.    
 We conclude that   $L(f^n(X^n))\ne L(f^n(Y^n))$, and so by definition of $L$ we have
  $$ L(X^n)=L(f^n(X^n))\ne L(f^n(Y^n))=L(Y^n)$$ 
  as desired. It follows that $L$ has the properties
  \ref{item::colortiles1}--\ref{item::colortiles3}. 
  
  It is clear that $L$ is uniquely determined by
  \ref{item::colortiles1} and \ref{item::colortiles2}.  
\end{proof} 

If the tiles in a cell decomposition $\DD$ of a $2$-sphere $S^2$
are assigned 
colors ``black'' or ``white'' so that 
two distinct
tiles sharing an edge have different colors, then we say the
cell decomposition is a 
{\em checkerboard tiling}\index{checkerboard tiling} 
of $S^2$. 
It is clear that for the existence of such a coloring the length of the cycle of each vertex in $\DD$ has to be even. In Lemma~\ref{lem:labelexis} we will see that this necessary condition is also sufficient. 

If there exists $m\in \N$, $m\ge 2$, such that each tile in $\DD$
is an $m$-gon, then we say that $\DD$ is a 
{\em tiling by $m$-gons}.\index{tiling by $m$-gons} 
With this terminology we can summarize some of the main results of this section by saying that the cell decompositions $\DD^n=\DD^n(f,\CC)$ (with the colorings given by the previous lemma) are checkerboard tilings by $m$-gons, where $m=\#\post(f)$.

 \section{Labelings}
\label{sec:labelings}
 Suppose $\DD^0$ and $\DD^1$ are cell decompositions of a $2$-sphere
$S^2$.  In Section~\ref{sec:mapsfromdecomp} we will see that under suitable conditions one can construct a Thurston
map that is cellular for $(\DD^1,\DD^0)$. If one wants to obtain a unique map up to Thurston equivalence,  one needs additional   data;  namely, for each cell in $\DD^1$ we have to assign  an  image in $\DD^0$. 
The necessary properties of such assignments can be abstracted in the notion of a {\em labeling}.

 \begin{definition}[Labelings]\label{def:labeldecomp}
  Let  $\DD^1$ and $\DD^0$ be cell complexes.   Then  a  {\em
    labeling}\index{labeling}  
  of $(\DD^1, \DD^0)$ is a map 
  $L\: \DD^1 \ra \DD^0$ satisfying the following conditions: 
  
  \begin{enumerate} 
  
  \item
    \label{item:labeldecomp1}
    $\dim (L(\tau))=\dim(\tau)$ for all $\tau\in \DD^1$.
  
  \item
    \label{item:labeldecomp2}
  If $\sigma, \tau\in \DD^1$ and $\sigma\sub \tau$, then $L(\sigma)\sub L(\tau)$.
  
  \item
    \label{item:labeldecomp3}
    If $\sigma, \tau,c\in \DD^1$, $\sigma,\tau\sub c$, and $L(\sigma)=L(\tau)$, then $\sigma=\tau$.
  \end{enumerate} 
\end{definition}

So a labeling is a map $L\: \DD^1\ra \DD^0$ that preserves inclusions
and dimensions of cells, and is ``injective on cells'' $c\in \DD^1$ in
the sense of \ref{item:labeldecomp3}.  
In particular, every  cell of dimension $0$ in $\DD^1$ is mapped
to  a cell of dimension $0$ in $\DD^0$. 
If $v$ is a vertex  
in $\DD^1$, i.e., if  $\{v\}$ is a cell of dimension $0$ in $\DD^1$,  then  we can write $L(\{v\})=\{w\}$, where 
$w$ is a vertex in $\DD^0$. We define $L(v)=w$. In the following, we always assume that a labeling $L\: \DD^1\ra \DD^0$ has been extended to the set of 
vertices of $\DD^1$ in this way; this will allow us to ignore the  distinction between vertices and cells of dimension $0$, i.e., sets consisting of one vertex.

 Let $S^2$ be an oriented $2$-sphere, and   
  $\DD$ be  a cell decomposition of $S^2$. Recall (see Section~\ref{sec:2spherecd}) that  a flag in $\DD$  is a triple $(c_0,c_1,c_2)$, where $c_i$ is a cell in $\DD$ of dimension  $i$ for $i=0,1,2$ and $c_0\sub c_1 \sub c_2$.   If $L\:\DD^1\ra \DD^0$ is a labeling of a pair $(\DD^1, \DD^0)$ of cell decompositions of $S^2$ and $(c_0,c_1,c_2)$  is a flag in 
  $\DD^1$, then $(L(c_0),L(c_1),L(c_2))$ is a flag in $\DD^0$. This   follows from the definition of a labeling. So a labeling maps 
  ``flags to flags''. We say that  the labeling is 
{\em orientation-preserving}\index{labeling!orientation-preserving}\index{orientation!preserving!labeling}  
  if it maps  flags in $\DD^1$ to 
  flags in $\DD^0$ of the same (positive or negative)  orientation. Here the orientation of a flag is determined by the given orientation of the underlying $2$-sphere $S^2$.

 If $f\:S^2\ra S^2$ is  cellular for $(\DD^1, \DD^0)$, then the
 map $L\:\DD^1\ra \DD^0$ given by  $L(\tau)=f(\tau)$ for  $\tau \in \DD^1$ is a labeling. It is called the 
 \emph{labeling induced by $f$}\index{labeling!induced by $f$}. 
 If a Thurston map $f\: S^2 \ra S^2$  is  cellular for $(\DD^1, \DD^0)$, then its induced labeling $L\:\DD^1\ra \DD^0$ is orientation-preserving. This follows from the fact that  
  on $S^2\setminus \crit(f)$ the map $f$ is an orientation-preserving local homeomorphism. So for each tile  $X\in \DD^1$ the homeomorphism $f|X$ must be  
orientation-preserving   in the sense that $f$ preserves the orientation of flags contained in $X$. 
 
 If a labeling $L\:\DD^1\ra \DD^0$ is given, then we say that a map
 $f\:S^2\ra S^2$ that is cellular for $(\DD^1, \DD^0)$ is {\em
   compatible}\index{labeling!compatible} with the labeling $L$ if
 $L(\tau)=f(\tau)$ for each $\tau \in \DD^1$, i.e., if the labeling
 induced by $f$ is equal to the given labeling.

 Let $X$ be a closed Jordan region in the oriented $2$-sphere $S^2$
 with $k\ge 3$ distinct points $v_0, \dots, v_{k-1}, v_{k}=v_0$ on $\partial X$. Here
 the indices are elements of $\Z_k=\{0, 1,\dots,k-1\}=\Z/k\Z$, the
 cyclic group with $k$ elements. Suppose further that the points $v_0,
 \dots, v_{k-1}$ are indexed such that if we start at $v_0$ and run
 through $\partial X$ with suitable orientation, then the points $v_0,
 \dots, v_{k-1}$ are traversed in successive order. If this is true
 and if with this orientation of $\partial X$ the region $X$ lies on
 the left, 
then we call the points $v_0, \dots, v_{k-1}$ 
in 
{\em cyclic} order\index{cyclic order} 
on $\partial X$, and otherwise, if $X$ lies on the right, in
 {\em anti-cyclic} order\index{anti-cyclic order} 
on $\partial X$.  If the points $v_0, \dots,
 v_{k-1}$ are in cyclic or anti-cyclic order on $\partial X$, then
 $\partial X$ is decomposed into unique arcs $e_0, \dots, e_{k-1}$;
 here $e_l$ for $l\in \Z_k$ is the unique subarc of $\partial X$ that
 has the endpoints $v_l$ and $v_{l+1}$, but does not contain any other
 of the points $v_i$, $i\in \Z_k\setminus\{l,l+1\}$. We say that the
 arcs $e_0, \dots, e_{k-1}$ are in {\em cyclic} or {\em anti-cyclic}
 order on $\partial X$, if this is true for the points $v_0, \dots,
 v_{k-1}$, respectively.

  If we have a labeling $L\: \DD^1\ra \DD^0$,  we should think of each element $\tau\in \DD^1$ as ``carrying'' the label $L(\tau)\in \DD^0$. In applications it is often 
 more intuitive and convenient to allow more general index sets $\mathcal{L}$ of the same cardinality  as $\DD^0$ as labeling sets for the elements in $\DD^1$. 
 In such situations we fix a bijection $\psi\: \DD^0\ra \mathcal{L}$ and call a map 
 $L'\: \DD^1 \ra \mathcal{L}$ a labeling if $\psi^{-1}\circ L'\: \DD^1\ra \DD^0$ is a labeling in the sense of Definition~\ref{def:labeldecomp}.

 We will discuss this  in a case that will be relevant for us
 later. Namely, suppose that the cell decomposition $ \DD^0$ of the
 oriented sphere $S^2$ has two tiles  $X^0_{\tt b}$ and $X^0_{\tt w}$ 
 with common boundary $\CC\coloneqq  \partial X^0_{\tt b}= \partial X^0_{\tt w}$. We represent 
$X^0_{\tt b}$ by the symbol  $\tt b$ for ``black'' and $X^0_{\tt w}$ by $\tt w$ for ``white''.
By Lemma~\ref{lem:specprop}~\ref{item:prop_cell3} the set  $\CC$ is a
Jordan curve containing $k\ge 2$ vertices and edges, and there are no
other edges and vertices in $\DD^0$. Let us assume that $k\ge 3$ and that  we have indexed  the vertices $v_0, \dots, v_{k-1}$ so that they are
  cyclically ordered on  $\partial X^0_\wt$. Then they are 
anti-cyclically ordered on $\partial
  X^0_\bt$. As above, we index the edges such that $e_l$ is the unique
  subarc on $\CC=\partial X^0_\wt$ with endpoints $v_l$ and
  $v_{l+1}$.
  
There exists a bijection of  $\DD^0$ with  the  set  
$\mathcal{L}$ consisting of the symbols 
  $\tt b$ and $\tt w$ (for the two tiles in $\DD^0$) and two copies of $\Z_k$, one for the edges and the other one for the vertices in $\DD^0$.  
 More explicitly, such a bijection  $\psi\colon \DD^0\to \mathcal{L}$ is  given 
 by
\begin{equation}
  \psi(X^0_\wt)= \wt, \  \psi(X^0_\bt)=\bt,\   \psi(v_l)= l \text{ and } \psi(e_l)=l
  \text{ for } l\in \Z_k.
\end{equation}

Now suppose that in this situation  $\DD^0$ is equal to 
the cell decomposition $\DD^0(f,\CC)$ (as defined in Section~\ref{sec:tiles}) for a Thurston map $f\: S^2\ra S^2$. In other words, $\post(f)\sub \CC$ and 
the vertex set of $\DD^0$ is equal to $\post(f)$. Let $\DD^1=\DD^1(f,\CC)$ be the cell decomposition given by cells of  level $1$,  and
$\X=\X^1$, $\E=\E^1$, and $\V=\V^1$ be the sets of $1$-tiles,
$1$-edges, and $1$-vertices, respectively. One can then define three maps 
 $L_{\X}\colon \X\to \{\wt, \bt\}$,
$L_{\E}\colon \E\to \Z_k$ and $L_{\V} \colon \V\to \Z_k$  
as \begin{equation*}
  L_{\X}(X) = \psi(f(X)), \quad 
  L_{\E}(e)= \psi(f(e)), \quad
  L_{\V}(v)= \psi(f(v))
\end{equation*}
for $X\in \X$, $e\in \E$, and $v\in \V$.  Accordingly, a $1$-tile $X$ is 
called ``white'' if $L_{\X}(X)= \wt$, and called ``black'' if
$L_{\X}(X)=\bt$.  It follows from
Proposition~\ref{prop:celldecomp}~\ref{item:fkcellular} that for each 
$X\in \X$ the map 
$f|X$ is an orientation-preserving  homeomorphism onto either
$X^0_\wt$ or $X^0_\bt$. This implies that if we use the map $L_{\bf V}\: {\bf V}\ra \Z_k$ to index $1$-vertices, then they are in cyclic order on the boundary $\partial X$ 
of a 
white $1$-tile $X$, and in 
anti-cyclic order 
on the boundary of a black $1$-tile. 
 Similarly, the  $1$-edges are in cyclic order
 on the boundary of white $1$-tiles, and in anti-cyclical order  on the
boundary of black $1$-tiles. 
The maps $L_{\X}$, $L_{\E}$, and $L_{\V}$ 
can be combined in  the obvious way to a map 
$L\: \DD^1\ra \mathcal{L}\cong \DD^0$ (so that $L|{\X}=L_{\X}$, etc.), and it follows 
easily from the previous discussion that $L$ is a labeling.

In the following lemma, we will turn this construction around and ask 
when a labeling with similar properties 
exists on a given cell decomposition $\DD=\DD^1$ of a $2$-sphere that is {\em a prori} not  related to a  Thurston map.     
This will later be useful when we want to construct Thurston maps. 

\begin{lemma}\label{lem:labelexis}
Let $\DD$ be a cell decomposition of $S^2$, and denote by ${\bf V}$
the set of vertices, by $\E$ the set of edges,   and by $\X$ the set of tiles in $\DD$. 
Suppose that the length of the cycle of every  vertex in $\DD$ is even and  that there exists $k\ge 3$ such that every tile in $\X$ is a $k$-gon.

Then for 
each 
positively-oriented flag $(c_0,c_1,c_2)$ in $\DD$
there are  maps 
$L_{{\bf V}}\: {\bf V}\ra \Z_k$, $L_\E\: \E\ra \Z_k$, and $L_\X\: \X\ra \{{\tt b}, {\tt w}\}$  with the following properties: 

\begin{enumerate}

\item
  \label{item:labelexis1}
  $L_{{\bf V}}(c_0)=0$,  $L_\E(c_1)=0$, and  $L_\X(c_2)={\tt w}$. 

\item
  \label{item:labelexis2}
  If $X, Y\in \X$ are two distinct tiles with a common edge on their
  boundaries, then $L_\X(X)\ne L_\X(Y)$. 

\item
  \label{item:labelexis3}
  If $X$ is an arbitrary tile in $\X$, then  $L_{\bf V}$ induces a bijection  of 
  the set of vertices in $\partial X$ with $\Z_k$ so that   the order of these vertices 
  is cyclic if $L_\X(X)={\tt w}$ and anti-cyclic if $L_\X(X)={\tt b}$. 

\item
  \label{item:labelexis4}
  If $e\in \E$  and $l=L_\E(e)$, then $L_{{\bf V}}(\partial e)=
  \{l,l+1\}$.     

\item
  \label{item:labelexis5}
  If $X$ is an arbitrary tile in $\X$, then $L_{\bf E}$ induces a bijection  of the set of edges 
  contained in $\partial X$ 
   so that the order of these edges  is
  cyclic if $L_\X(X)={\tt w}$ and anti-cyclic if $L_\X(X)={\tt b}$.  

\item
  \label{item:labelexis6}
 A flag $(\tau_0, \tau_1, \tau_2)$  in $\DD$  is
  positively-oriented if and only if there exists $l\in \Z_k$ such
  that $(L_{\bf V}(\tau_0), L_\E(\tau_1), L_\X(\tau_2))$ is equal to 
 $(l, l, \tt w)$ or $(l, l-1, \tt b)$.
  \end{enumerate}

The maps $L_{{\bf V}}$, $L_\E$, and $L_\X$ are uniquely determined by
the properties \ref{item:labelexis1}--\ref{item:labelexis4}.  
\end{lemma}

Here in \ref{item:labelexis1} and \ref{item:labelexis6} we again
ignored the distinction 
between vertices 
and $0$-dimensional cells in $\DD$ by setting 
$L_{\bf V}(c)=L_{\bf V}(v)$ if $c=\{v\}$ is a $0$-dimensional cell consisting of the vertex $v$. 
 
Recall (see the discussion after Lemma~\ref{lem:specprop}) that the
length of the cycle of a vertex $v$ is the number of edges as well as
the number of tiles that contain $v$. So instead of saying that the
length of each cycle of every vertex is even, we could have said that
every vertex is contained in an even number of tiles (equivalently
contained in an even number of edges).

Condition \ref{item:labelexis2} says that one of the two tiles
containing an edge is ``black'' and the other is ``white''. So if the
tiles have been labeled in this way, then $\DD$ becomes a checkerboard
tiling by $k$-gons. Condition~\ref{item:labelexis2} is equivalent to
the statement that the dual graph of the $1$-skeleton of $\DD$ is
bipartite. 

Condition \ref{item:labelexis3} says that for any white tile $X$,
the vertices on $\partial X$ are labeled cyclically; for any black
tile $Y$, the vertices on $\partial Y$ are labeled anti-cyclically. A more
precise formulation is as follows: for each $i\in \Z_k$ there is
exactly one vertex $v\in \partial X$ with $L_{\bf V}(v)=i$, and if we
write $v=v_i$ if $L_{\bf V}(v)=i$, then the vertices $v_0, \dots,
v_{k-1}\in \partial X$ are in cyclic order on $\partial X$ if $X$ is
white and in anti-cyclic order if $X$ is black.
Condition~\ref{item:labelexis5} has to be interpreted in a similar
way.

  By \ref{item:labelexis4} the label $L_\E(e)$ of an edge $e\in \E$ is  determined by the labels $L_{\bf V}(u)$ and 
$L_{\bf V}(v)$ of the two endpoints $u$ and $v$  of $e$ (here it is important that $k\ge 3$).



\begin{proof}[Proof of Lemma~\ref{lem:labelexis}] We first establish  the following fact. 

\smallskip 
{\em Claim.} Suppose that  $J\sub S^2$ is a Jordan curve that does not contain
any  vertex (in $\DD$) and has the property that for every edge $e$  
the intersection $e\cap J$ is either empty,  or $e$ meets both  components of $S^2\setminus J$ and $e\cap J$  consists of a single point. Then $J$ meets an even  number of edges.  

\smallskip
To see this, pick one of the complementary components $U$ of $S^2\setminus J$, and
let $v_1, \dots, v_n$ be the vertices contained in $U$, where $n\in \N_0$ (for $n=0$ we consider this as an empty list). For $i=1, \dots, n$ let $d_i$ be the length of the cycle of $v_i$, i.e., the number of edges containing $v_i$. 
We denote by $\E_J$ the set  of  all edges that meet $J$ and  by $\E_U$  the set  of all edges contained in $U$.  From  our assumption on the intersection property of $J$ with edges it follows that an edge is contained in $U$ if and only if its two endpoints are in $U$, and it meets $J$ if and only if one endpoint is in $U$ and the other in $S^2\setminus \overline{U}$. Hence 
$$d_1+\dots+d_n=\#\E_J+2\#\E_U,$$
because the sum on the left hand side counts every edge in
$\E_J$ once, and every edge in $\E_U$ twice.  Since all the
numbers $d_1, \dots, d_n$ are even by our assumptions, we conclude 
 that the number $\#\E_J$ of edges that $J$ meets is also
even. The claim follows.  

\smallskip
We now proceed to show existence and uniqueness of the map
$L_\X$. For every tile $Y$ there exists an $e$-chain 
$Y_0=c_2, \dots, Y_N=Y$ of tiles joining the ``base tile'' $c_2$
(from the given flag $(c_0,c_1,c_2)$) to $Y$. Recall from
Definition~\ref{def:e-chain} that such an $e$-chain is a finite
sequence $Y_0, \dots, Y_N$ of tiles such that $Y_i\ne Y_{i+1}$ and there is an edge $e_i
\subset \partial Y_i \cap \partial Y_{i+1}$ for  $i=0,\dots, N-1$ (note that in contrast to Definition~\ref{def:e-chain} it is convenient to start  the index at $i=0$ here). 
We put $L_\X(Y)={\tt w}$ or $L_\X(Y)={\tt b}$ depending on
whether  
$N$ is even or odd. It is clear that if this is well-defined, then it is the unique choice for $L_\X(Y)$. This follows from the normalization \ref{item:labelexis1} and the fact that by \ref{item:labelexis2} the labels of tiles have to alternate along an $e$-chain.

To see that $L_\X$ is well-defined, it is enough to show that if an  
$e$-chain $X_0, X_1, \dots, X_{N}$   forms  a cycle, i.e., if $X_0=
X_{N}$, then $N$ is even. To prove this, we may make the additional assumption that $N\ge 3$ and that the chain is {\em simple}, i.e., that the tiles $X_0, \dots, X_{N-1}$ are all distinct.   

Let $e_i$ be an  edge with $e_i\sub \partial X_{i} \cap \partial X_{i+1}$ for $i=0, \dots, N-1$.  Then the edges $e_0, \dots, e_{N-1}$ are all distinct. For otherwise, $e_i=e_j$ for some 
$0\le i<j\le N-1$. Then $e_i=e_j$ is contained in the boundary of the  tiles $X_{i}, X_{i+1}, X_{j}, X_{j+1}$ which is impossible, because three of these tiles must be distinct (note that $N\ge 3$).  

We now  construct a Jordan curve $J$ that ``follows'' our closed
$e$-chain. Formally, for each edge $e_i$ pick a point $x_i\in \inte(e_i)$. Moreover, for $i=0,\dots, 
N-1$, we can choose  an arc $\alpha_i\sub X_i$  with endpoints $x_{i}$ and $x_{i+1}$ such
that $\inte(\alpha_i)\sub \inte(X_i)$. Here $x_{N}\coloneqq x_0$. 
Then $J=\alpha_0\cup \dots \cup \alpha_{N-1}$ is a Jordan curve that has  properties as in the  claim above. The curve $J$ meets the edges  $e_0, \dots, e_{N-1}$ and no others. Hence $N$ is even. Thus 
$L_\X$ is well-defined, has  property
\ref{item:labelexis2}, and is  normalized as  in
\ref{item:labelexis1}. 

 \smallskip 
To show the existence of $L_{{\bf V}}$, it is useful to quickly recall some basic definitions from the homology and cohomology of chain complexes.
Denote by $\E_o$  the set of oriented edges in $\DD$. Let 
$C(\X)$ and  $C(\E_o)$ be the free modules   over $\Z_k$ 
generated by the sets $\X$ and  $\E_o$,  respectively.
So $C(\E_o)$, for example, is just the set of formal finite sums 
$\sum a_ie_i$, where $a_i\in \Z_k$ and $e_i\in \E_o$. Note that in contrast to other commonly used definitions of chain complexes we have  $e+\widetilde e\ne 0$ if $e$ and $\widetilde e$ are oriented edges with the same underlying set, but 
 opposite orientations.
 
There is a unique boundary operator $b\: C(\X)\ra C(\E_o)$ 
that is a module homomorphism and satisfies 
$$bX\coloneqq b(X)=\sum_{e\sub \partial X} e $$
for each tile $X$, 
where the sum is extended over all oriented edges $e\sub \partial X$ so that $X$ lies on the left of $e$. 

Let $e$ be an oriented edge and $X$ be the unique tile with $e\sub\partial X$ that is on the left of $e$. We put $\alpha(e)=1\in \Z_k$ or $\alpha(e)=-1\in \Z_k$ depending on whether $L_\X(X)={\tt w}$ ($X$ is a white tile) or $L_\X(X)={\tt b}$ ($X$ is a black  tile). If $e$ and $\widetilde e$ are oriented  edges with the same underlying set, but 
 opposite orientations, then $\alpha(e)+\alpha(\widetilde e)=0$ as follows from property \ref{item:labelexis2} of $L_\X$. 

The map $\alpha$ extends 
uniquely 
to a  homomorphism 
$\alpha\: C(\E_o)\ra \Z_k$.  In the language of cohomology it is a ``cochain''. This cochain $\alpha$ is a cocycle, i.e., 
\begin{equation}\label{abX=0}
\alpha(b X)=\sum_{e\sub\partial X}\alpha(e)=\pm k=0\in \Z_k
\end{equation}
 for every tile $X$,  
considered as one of the  generators of $C(\X)$. 
 Indeed, by our convention on the orientation of edges $e\sub \partial X$ in the above sum, for each such edge we get the same contribution $\alpha(e)$, and so, since $X$ has $k$ edges,  the  sum is equal to $\pm k=0\in \Z_k$.
 
 Consider an arbitrary closed edge path 
that consists
of the oriented edges
 $e_1, \dots, e_n$; so the terminal point of $e_i$ is the initial point of $e_{i+1}$ for $i=1, \dots, n$, where $e_{n+1}\coloneqq e_1$. 
 We claim that  
 \begin{equation}\label{alphacycle}
 \sum_{i=1}^n\alpha(e_i)=0.
 \end{equation}
 Essentially, this is a consequence of the fact that we have 
 $H^1(S^2, \Z_k)=0$ for the first cohomology group of $S^2$ with coefficients in $\Z_k$. This   implies that the cocycle $\alpha$ is a coboundary and gives \eqref{alphacycle}. 
 
 We will present a simple direct argument. To show \eqref{alphacycle}, it is clearly enough to establish this
for simple closed edge paths, i.e., for closed  edge paths  where the underlying sets of 
all edges are distinct and have 
 a Jordan curve $J\sub S^2$  as a union. In this case,  let $U$ be 
  the complementary component of $S^2\setminus J$ so that $U$ lies on the left if we traverse $J$ according to the orientation given by the edges $e_i$.  If $X_1, \dots, X_M$ are all the tiles contained in $\overline U$, then
 $$b(X_1+\dots+X_M)=\sum_{e\sub \overline U}e, $$
where the sum is extended over  oriented edges contained in $\overline U$.  Each  edge on $J$ is equal to one of the edges $e_i$ and it appears in the above sum exactly once and  with the same orientation   as $e_i$. All other edges  in $\overline U$ appear twice and with opposite orientations. 
Hence by \eqref{abX=0}, 
$$\sum_{i=1}^n \alpha(e_i)= \sum_{e\sub \overline U}\alpha(e)=
\sum_{i=1}^M\alpha(b X_i)=0. $$ 
 
 We now define $L_{{\bf V}}\: {\bf V}\ra \Z_k$ as follows. Suppose $c_0=\{p_0\}$ consists of the vertex $p_0$. 
 For $v\in {\bf V}$ we can pick an edge path consisting of the oriented edges $e_1, \dots, e_n$ that joins the base point $p_0$ to $v$ (this list of edges may be empty if $v=p_0$).
 The existence of such an edge path follows from the connectedness of the $1$-skeleton of $\DD$ (see Lemma~\ref{lem:specprop}~\ref{item:prop_cell6}). 
 Put 
 \begin{equation}\label{defphi}
 L_{{\bf V}}(v)=\sum_{i=1}^n\alpha(e_i). 
 \end{equation}
 This is well-defined, because 
 we have \eqref{alphacycle} for every closed edge path; we also  have the normalization $L_{{\bf V}}(c_0)=L_{{\bf V}}(p_0)=0$.

 The definition of $L_{{\bf V}}$ implies that if $e$ is an oriented edge, and $u$ is the initial and $v$ the terminal point of $e$, then
 \begin{equation}\label{incrdecr}
L_{\bf V}(v)=L_{{\bf V}}(u)+\alpha(e).
\end{equation} 
 This means that if we go from the initial point $u$ of $e$ to the terminal point $v$, then the value of $L_{{\bf V}}$ is increased by $1$ or decreased by $-1$ depending on whether  the tile on the left of $e$ is white or black. The desired property \ref{item:labelexis3} of $L_{{\bf V}}$ immediately follows from this. 
 
 This shows existence of $L_{{\bf V}}$. Conversely, every function
 $L_{{\bf V}}$ with property \ref{item:labelexis3} must satisfy
 \eqref{incrdecr}. Together with the normalization $L_{{\bf
     V}}(p_0)=0$ this implies that $L_{{\bf V}}$ is given by the
 formula \eqref{defphi}, and so we have uniqueness.

\smallskip  
  To define $L_\E$  note that if  
$e\in \E$, then by \ref{item:labelexis2} we can choose a unique orientation for $e$ such that the tile on the left is white, and the one on the right is black. If $u$ is the initial and $v$ the terminal point of $e$ according to this orientation, and $L_{{\bf V}}(u)=l\in \Z_k$, then $L_{{\bf V}}(v)=l+1$.
Now set $L_\E(e)\coloneqq  l$.  Then $L_\E$ has property \ref{item:labelexis4}. 
Moreover, we also have the normalization \ref{item:labelexis1} for $L_\E$; indeed, if $c_1$ is oriented so that $p_0$ is the initial point of $c_1$, then $c_2$ lies on the left of $c_1$, because the flag $(c_0,c_1,c_2)$ is positively-oriented. Since $L_\X(c_2)={\tt w}$, the tile  $c_2$ is white and so $L_\E(e)=L_{{\bf V}}(p_0)=0$. Uniqueness of $L_\E$ follows from \ref{item:labelexis3} and the uniqueness of $L_{{\bf V}}$. 

We have proved \ref{item:labelexis1}--\ref{item:labelexis4} and the uniqueness statement. It remains to establish \ref{item:labelexis5} and \ref{item:labelexis6}. 

\smallskip
To show \ref{item:labelexis5} let $X\in \X$ be arbitrary. Then by \ref{item:labelexis3} we can assume that the indexing of the $k$ vertices 
$v_0, \dots, v_{k-1}$ on $\partial X$ is such that $L_{\bf V}(v_i)=i$ for all $i\in \Z_k$, and that 
$v_0, \dots, v_{k-1}$ are met in successive order  if we traverse $\partial X$. This implies 
that for each $i\in  \Z_k$ there exists a unique edge  $e_i\sub \partial X$  in $\DD$ with endpoints $v_i$ and $v_{i+1}$. Hence by \ref{item:labelexis4} we have $L_{\E}(e_i)=i$. Moreover, by \ref{item:labelexis3} the edges $e_0, \dots , e_{k-1}$ are in cyclic or anti-cyclic order on $\partial X$ depending on whether $L_{\X}(X)=\tt w$ or 
$L_{\X}(X)=\tt b$. So \ref{item:labelexis5} holds. 

\smallskip
Finally, to see that \ref{item:labelexis6} is true, let $(\tau_0, \tau_1, \tau_2)$ be a  flag in 
$\DD$. Then  $\tau_0=\{u\}$ for some $u\in {\bf V}$. The vertex $u$ is the initial point of the oriented edge  $\tau_1$. Let $v\in {\bf V}$ 
be  the terminal point of $\tau_1$, and define  $l=L_{\bf V}(u)$.

Depending on whether  the flag is positively- or negatively-oriented,  the vertex  $v$ follows $u$ in cyclic  or anti-cyclic 
order on   $\partial \tau_2$. So if the flag is positively-oriented, 
 then  
by property \ref{item:labelexis3} we have  $L_{\bf V}(v)=l+1$ if $L_\X(\tau_2)=\tt w$ 
and $L_{\bf V}(v)=l-1$ if $L_\X(\tau_2)=\tt b$. Property
\ref{item:labelexis4} 
now
implies that 
$L_{\E}(\tau_1)=l$ if $L_\X(\tau_2)=\tt w$ and 
$L_{\E}(\tau_1)=l-1$ if $L_\X(\tau_2)=\tt b$. 

So if $(\tau_0, \tau_1, \tau_2)$ is positively-oriented, then the cells in 
this flag carry the labels $l$, $l$, $\tt w$, or $l$, $l-1$, $\tt b$, respectively.

Similarly, if $(\tau_0, \tau_1, \tau_2)$ is negatively-oriented, then
we get the labels $l$, $l-1$, $\tt w$, or $l$, $l$, $\tt b$ for the
cells in the flag. Statement \ref{item:labelexis6} follows from this. 
\end{proof}

\section{Thurston maps from cell decompositions}
\label{sec:mapsfromdecomp} 
In Section~\ref{sec:tiles} we have seen how to obtain cell
decompositions from Thurston maps.  In this section we reverse
this procedure and ask when a pair $(\DD^1, \DD^0)$ of cell
decompositions gives rise to a Thurston map $f$ that is cellular
for $(\DD^1, \DD^0)$.  In general, one cannot expect $f$ to be
determined just by the cell decompositions alone, but one
needs additional information on how $f$ is supposed to map the
cells in $\DD^1$ to cells in $\DD^0$.  This is given by an
orientation-preserving labeling as discussed in the previous
section (see Definition~\ref{def:labeldecomp} and the discussion
following this definition).

We start with a lemma that allows us to recognize branched covering maps. 

\begin{lemma} \label{lem:constrmaps}
Let $\DD'$ and $\DD$ be two cell decompositions of $S^2$, and 
$f\: S^2\ra S^2$  be a cellular map for $(\DD', \DD)$ such that $f|X$ is  orientation-preserving for each tile $X$ in $\DD'$.

\begin{enumerate}

\item
\label{item:constr_map1} 
  Then $f$ is a branched covering map  on $S^2$. Each critical point
  of $f$ is a vertex of $\DD'$.   

\item
\label{item:constr_map2} 
  If in addition each vertex in  $\DD$ is also a vertex in $\DD'$, then every point in $\post(f)$ is a vertex of $\DD$. In particular, $f$ is postcritically-finite, and hence a Thurston map 
if $f$ is not a homeomorphism.   
\end{enumerate}
\end{lemma}

The assumption that $f|X$ is orientation-preserving means that $f$ preserves  the orientation of flags contained in $X$. 

\begin{proof} \ref{item:constr_map1}  
  We will show that for each point $p\in S^2$, there
  exist  topological disks $W'$ and $W=f(W')$ in $S^2$ with $p\in W'$ and $q=f(p)\in W$,  as well as  orientation-preserving homeomorphisms $\varphi\: W'\ra \D$
  and $\psi\: W\ra \D$  such that
  $\varphi(p)=0$, $\psi(q)=0$, and
$$ (\psi\circ f\circ \varphi^{-1})(z)=z^k$$ 
for all $z\in \D$, where $k\in \N$. 
 The desired relation between the points and maps can be represented by the 
commutative diagram 
 \begin{equation}\label{eq:digr} 
    \xymatrix{
      p\in W' \ar[r]^{f} \ar[d]^{\varphi} &  q\in W \ar[d]^{\psi}
      \\
      0\in \D  \ar[r]^{z\mapsto z^k} & 0\in \D\rlap{.}
    }
  \end{equation}

We will use the fact that 
if $f$ is an  orientation-preserving local 
homeomorphism near $p$, then we can take $k=1$ in \eqref{eq:digr} and  always find  suitable topological disks and   homeomorphisms.    

Let $p\in S^2$ be arbitrary. 
Since $S^2$ is the disjoint union of the interiors of the cells in $\DD'$, the point $p$ is contained in the interior of a tile or an edge in $\DD'$, or is a vertex of $\DD'$. Accordingly, we consider three cases. 

\smallskip
{\em Case 1:} There exists a tile $X'\in \DD'$ with $p\in \inte(X')$. Then $W'\coloneqq \inte(X')$ is an open neighborhood of $p$, and $f|W'$ is an orientation-preserving  homeomorphism of $W'=\inte(X')$ onto $W\coloneqq \inte(X)$, where $X=f(X')\in \DD$.  Hence $f$ is an orientation-preserving local homeomorphism near $p$.

\smallskip 
{\em Case 2:} There exists an edge  $e'\in \DD'$ with $p\in \inte(e')$. By Lem\-ma~\ref{lem:specprop}~\ref{item:prop_cell4}
 there exist distinct tiles $X',Y'\in \DD'$ such that $e'\sub \partial
 X'\cap \partial Y'$. Then the set $W'=\inte(X')\cup \inte(e')\cup \inte(Y')$
 is an open neighborhood of $p$. Since $f$ is cellular, $X=f(X')$ and
 $Y=f(Y')$ are tiles in $\DD$ and $e=f(e')$ is an edge in
 $\DD$. Moreover, $e\sub \partial X\cap \partial Y$. 

We  orient $e'$ so that $X'$ lies to the left and $Y'$ 
to the right of $e'$.  Since $f$ is orientation-preserving if restricted to tiles  in $\DD'$, the tile  $X$ lies to the left, and $Y$ to the right of the image $e$ of $e'$. In particular, $X\ne Y$, and so the sets
$\inte(X),\inte(e),\inte(Y)$ are pairwise disjoint, and their
union is open. Since $f$ is cellular and hence a homeomorphism if
restricted to cells (and interior of cells), it follows that 
the map
$f|W'$ is a homeomorphism of $W'$ onto the open set
$W=\inte(X)\cup \inte(e)\cup \inte(Y)$. 
Moreover, it is clear that $f|W'$ is orientation-preserving. 
Since $W'$ is open and contains $p$, the map $f$ is an
orientation-preserving local  homeomorphism near $p$.  

\smallskip
{\em Case 3:} The point $p$ is a vertex of $\DD'$. Since we now already know that in the complement of the vertex set of $\DD'$  our  map $f$ is an orientation-preserving local homeomorphism, one can deduce the desired local representation \eqref{eq:digr}  of $f$ near $p$ from a general fact (see Lemma~\ref{lem:z^d}). We will provide  a direct argument for this that  will also  give us additional insight how the cells in the cycle of $p$ are mapped (this is summarized in Remark~\ref{rem:dd'} after the proof).

As in the proof of Lem\-ma~\ref{lem:specprop}~\ref{item:prop_cell5}, we can choose 
tiles $X'_j\in \DD'$ and edges $e'_j\in \DD'$ for $j\in \N$ that  contain $p$ and satisfy 
$X'_j\ne X'_{j+1}$, $e'_j\ne e'_{j+1}$, and $e'_j\sub \partial X'_j\cap \partial X'_{j+1}$ for all $j\in \N$. There exists $d'\in \N$ 
such that $X'_{d'+1}=X'_1$, the tiles $X'_1, \dots, X'_{d'}$ as
well as the edges $e'_1, \dots, e'_{d'}$ are all distinct, and 
 $$W'=\{p\}\cup \inte(X'_1)\cup \inte(e'_1)\cup \inte(X'_2)\cup \dots \cup \inte(e'_{d'})$$
 is an open neighborhood of $p$ (this type of neighborhood is closely related to the concept of a flower; see Section~\ref{sec:flowers}).  Moreover, by the remark following 
 the proof of Lemma~\ref{lem:specprop}, we know that $X'_j=X'_{d'+j}$ and $e'_j=e'_{d'+j}$ for all $j\in \N$.

 Define $X_j=f(X'_j)$ and $e_j=f(e'_j)$ for $j\in \N$.  Since $f$
 is cellular for $(\DD',\DD)$, the set $X_j$ is a tile and $e_j$
 an edge in $\DD$. Note that
 $e_j\sub \partial X_j\cap \partial X_{j+1}$ for $j\in \N$. Since
 $X'_j$ and $X'_{j+1}$ are distinct tiles containing the edge
 $e'_{j}$ in their boundaries, it follows by an argument as in
 Case~2 above that $X_j\ne X_{j+1}$ for $j\in \N$. Similarly,
 $e'_{j}$ and $e'_{j+1}$ are distinct edges in $\DD'$ contained
 in $X'_{j+1}$, and $f|X_{j+1}' $ is a homeomorphism; so 
 $e_{j}\ne e_{j+1}$ for $j\in \N$.
 
 As in the proof of Lem\-ma~\ref{lem:specprop}~\ref{item:prop_cell5} we see that there exists a number $d\in \N$, $d\ge 2$, 
  such that $X_{d+1}=X_1$, and such that the tiles $X_1, \dots, X_d$ and the edges $e_1, \dots, e_d$ are all distinct. Moreover, 
 $$W=\{q\}\cup \inte(X_1)\cup \inte(e_1)\cup \inte(X_2)\cup \dots \cup \inte(e_d)$$ is an open neighborhood of $q=f(p)$, and $X_j=X_{d+j}$ and  $e_j=e_{d+j}$ for all $j\in \N$.
 
 The periodicity properties of the indexing of the tiles $X'_j$ and $X_j$ imply that $d\le d'$ and that $d$ is a divisor of $d'$. Hence there exists $k\in \N$ such that $d'=kd$. 
 
 We now claim that after  suitable coordinate changes  
 near $p$ and $q$, the map $f$ can be given the form $z\mapsto z^k$. 
 
 For $N\in \N$, $N\ge 2$, and $j\in \N$ define half-open  line segments 
 $$ R_j^N=\{re^{2\pi \iu j/N}: 0\le r<1\}\sub \D $$ and sectors
 $$\Sigma_j^N=\{re^{\iu t}:  2\pi (j-1)/N\le t\le  2\pi j/N
 \text{ and } 0\le r<1\}\sub \D. $$
  
 We then  construct a homeomorphism  $\psi\: W \ra \D$ with $\psi(q)=0$ as follows.   For each $j=1, \dots, d$ 
we first  map the  half-open arc  $\{q \}\cup \inte(e_j)$ homeomorphically to 
the half-open line segment $R_j^d$. 
 Then $q$ is mapped to $0$; so these maps are 
consistently defined for $q$. Since $X_j$ is a Jordan region, we can extend the homeomorphisms on $\{q\}\cup \inte(e_{j-1}) \sub \partial X_j$ and on $\{q\}\cup 
\inte(e_j)\sub \partial X_j$ to a homeomorphism of 
$$\{q\}\cup \inte(e_{j-1})\cup\inte(e_j)\cup \inte(X_j)$$ onto the sector  $\Sigma_j^d$ for each $j=2, \dots, d+1$. 
Since the sets 
$$\{q\}, \inte(e_1), \dots, \inte(e_{d}), \inte(X_2), \dots, \inte(X_{d+1})=\inte(X_1)$$ are pairwise disjoint and have $W$ as a union, 
these homeomorphisms paste together to a well-defined homeomorphism $\psi$ of $W$ onto $\D$. 
Note that 
$\psi(q)=0$ 
as well as
$\psi(X_j\cap W)=\Sigma_j^d$ for each $j=1, \dots, d$. 

We now define a map $\widetilde \varphi\:   \D\ra W'$ as follows. If $z\in \D$ is arbitrary, then $z\in \Sigma^{d'}_j$ for some $j=1, \dots, d'$.
Hence $z^k\in \Sigma^{d}_j$, and so $\psi^{-1}(z^k)\in X_j\cap W$. 
Since $f$ is a homeomorphism of $X'_j\cap W'$  onto $X_j\cap W$, it follows that $(f|X'_j)^{-1}(\psi^{-1}(z^k))$ is defined and lies in $X'_j\cap W'$.  
 
We set 
$$\widetilde \varphi(z)=(f|X'_j)^{-1}(\psi^{-1}(z^k)).$$
It is straightforward to verify that $\widetilde \varphi$ is well-defined and  a homeomorphism   of $\D$ onto $W'$ with
$\widetilde \varphi(0)=p$. It follows from the definition of $\widetilde \varphi$ that  
$(\psi\circ f\circ \widetilde \varphi)(z)=z^k$ for $z\in \D$. So if we set $\varphi=\widetilde \varphi^{-1}$, then $\varphi$ is a homeomorphism of $W$ onto $\D$ with $\varphi(p)=0$ and   we have the diagram \eqref{eq:digr}.   

Since  the tiles $X'_j$ and the edges $e_j'$ are indexed as in the proof of Lem\-ma~\ref{lem:specprop}~\ref{item:prop_cell5},  each  flag $(\{p\},e'_j, X'_{j+1})$ is positively-oriented
(see the remark after the proof of Lem\-ma~\ref{lem:specprop}). Since $f|X'_j$ is orientation-preserving, this  implies that the flag $(\{q\},e_j, X_{j+1})$ is also positively-oriented. Thus $\psi$ is orientation-pre\-ser\-ving, since $\psi$
maps  the positively-oriented flag $(\{q\},e_j, X_{j+1})$ in $S^2$  to the flag 
$(\{0\}, \overline R^d_j, \Sigma^d_{j+1})$ in $\CDach$ which is also positively-oriented. 
As follows from its definition, the map $\widetilde \varphi$ is then also orientation-preserving. Hence 
$\psi$ and $\varphi=\widetilde \varphi^{-1}$ are orientation-preserving homeomorphisms as desired.

 We have shown that in all cases the map $f$ has a local behavior as claimed. It follows that $f$ is a branched covering map. Moreover, we have seen that $f$  near each point is a local homeomorphism unless $p$ is a vertex of $\DD'$. It follows that each critical point of  $f$ is a vertex of $\DD'$. 

 \smallskip{} 
 \ref{item:constr_map2} 
 Suppose in addition that every
 vertex of $\DD$ is also a vertex of $\DD'$. Let $p$ be a critical
 point of $f$.  Then by \ref{item:constr_map1} the point $p$ is a vertex of $\DD'$. Since
 $f$ is cellular for $(\DD',\DD)$, the point $f(p)$ is a vertex of
 $\DD$.  Hence $f(p)$ is also a vertex of $\DD'$, and we can apply the
 argument again, to conclude that $f^2(p)$ is a vertex of $\DD$,
 etc. It follows that $\post(f)$ is a subset of the set of vertices of
 $\DD$. In particular, $\post(f)$ is finite, and so $f$ is
 postcritically-finite.
\end{proof}
 
 \begin{rem}\label{rem:dd'} Let  the map $f\:S^2 \ra S^2$ and the cell decompositions $\DD'$ and $\DD$ 
 be  as in the previous lemma, and let  $p$ be a vertex in $\DD'$. Then $q=f(p)$ is a vertex in $\DD$. If $d'$  and $d$ are   the lengths of the cycles of $p$ in $\DD'$ and  $q$ in $\DD$, respectively, then $d'=kd$, where $k=\deg_f(p)$. Moreover,  if $X'_j$ for $j\in \N$ are  the  tiles in the cycle of 
 $p$ in $\DD'$
labeled so that $X'_{d'+j}=X'_j$ for $j\in \N$, then  $X_j=f(X'_j)$ 
are the  tiles in the cycle  of $q$ in $\DD$. In addition,
 $X_{d+j}=f(X'_{d+j})=f(X'_j)=X_j$ for $j\in \N$. This was established  in Case~3 of the proof of Lemma~\ref{lem:constrmaps}.
 
 So on a more intuitive level, if we follow the tiles in the cycle of $p$ in a cyclic order modulo $d'$, then under the map we follow the tiles in the cycle of $q=f(p)$ also in a cyclic order modulo $d=d'/k$. Here each  tile $X_j=f(X'_j)$ in the image cycle has precisely $k=\deg_f(p)$ distinct  preimage tiles, 
 namely $X'_j, X'_{d+j}, \dots, X'_{(k-1)d+j}$.  A  similar statement is true for edges. \end{rem}

We can now prove the following  fact  which allows us the construction of many Thurston maps  (see Section~\ref{sec:examples-two-tile} for specific  examples).   

\begin{prop}\label{prop:thurstonex} Let $\DD^0$ and $\DD^1$ be  cell decompositions 
 of an oriented $2$-sphere $S^2$, and $L\: \DD^1\ra \DD^0$ be  an 
 orientation-preserving
  labeling. Suppose that every vertex of $\DD^0$ is also a vertex of $\DD^1$. 
 Then there exists a  branched covering map $f\:S^2\ra S^2$ that is cellular for $(\DD^1, \DD^0)$  and is compatible with  the given labeling 
 $L$. 
  The map $f$ is a homeomorphism or a Thurston map. In the latter case, 
  $f$ is unique up to Thurston equivalence, and $\post(f)$ is
  contained in the set of vertices of $\DD^0$. 
\end{prop}

 Later we will  consider  triples $(\DD^1, \DD^0, L)$ as in the previous proposition related to Thurston maps $f$ with invariant curves $\CC$. These triples  satisfy additional and  somewhat technical conditions and lead to the notion  of a {\em two-tile subdivision rule} (see Definition~\ref{def:subdivcomb}).

\begin{proof}[Proof of Proposition~\ref{prop:thurstonex}] As in the proof of Lemma~\ref{lem:isocellhomeo}~\ref{item:isocellhomeo2}, 
 a map $f$ as desired is obtained from  successive extensions to  the skeleta of the 
cell decomposition $\DD^1$. Indeed, let $L\:\DD^1\ra \DD^0$ be an orien\-tation-preserving labeling. If $v\in S^2$ is a $1$-vertex (i.e., a vertex in $\DD^1$), then $L(v)$ is a $0$-vertex (i.e., a vertex in $\DD^0$). 
Set $f(v)=L(v)$. This defines $f$ on the $0$-skeleton of $\DD^1$. 
To extend this to the $1$-skeleton   of $\DD^1$, let $e$ be an arbitrary $1$-edge. Then $e'=L(e)$ is a $0$-edge. Moreover, if $u$ and $v$ are the $1$-vertices that are the endpoints of $e$, then $u'=f(u)=L(u)$ and 
$v'=f(v)=L(v)$ are distinct $0$-vertices contained in  $e'$. Hence they are the endpoints of $e'$. So we can extend $f$ to $e$ by choosing a homeomorphism of $e$ onto $e'$ that agrees with $f$ on the endpoints of $e$. In this way we can continuously extend $f$ to the $1$-skeleton of $\DD^1$ so that $f|\tau$ is a homeomorphism of $\tau$ onto $L(\tau)$ whenever 
$\tau\in \DD^1$ is a cell of dimension $\le 1$. 

If $X$ is an arbitrary $1$-tile, then $\partial X$ is a subset of the $1$-skeleton of $\DD^1$  and hence $f$ is already defined on $\partial X$. Then   
$f|\partial X$ is a continuous  mapping of $\partial X$ into the boundary $\partial X'$ of the $0$-tile $X'=L(X)$.  The map $f|\partial X$ is  injective. Indeed, suppose that $u,v\in \partial X$ and $f(u)=f(v)$. Then there exist unique $1$-cells $\sigma, \tau\sub \partial X$ of dimension $\le 1$ such that $u\in \inte(\sigma)$ and $v\in \inte(\tau)$.  Then 
$$f(u)=f(v)\in \inte(f(\sigma))\cap \inte(f(\tau))=\inte(L(\sigma))\cap \inte (L(\tau))$$ 
and so the $1$-cells  $L(\sigma) $ and $L(\tau)$ must be  the same.  Since $L$ is a labeling and $\sigma, \tau \sub X\in \DD^1$,  it follows that $\sigma=\tau$. 
As the map  $f$ restricted to the  $1$-cell $\sigma=\tau$  is injective, we conclude   $u=v$ as desired.

So  $f|\partial X$ is a continuous and injective map  of $\partial X$ into $\partial X'$, and hence a homeomorphism between these sets. 
 We conclude that  $f$ can be extended to a homeomorphism of $X$ onto $X'$. These extensions on different $1$-tiles  paste together to a continuous map $f\:S^2\ra S^2$ that is cellular and compatible with  the given 
labeling. Moreover, $f|X$ is orientation-preserving for each $1$-tile $X$ as follows from the fact that the labeling is orientation-preserving. 
By Lemma~\ref{lem:constrmaps}~\ref{item:constr_map1} and \ref{item:constr_map2} the map $f$ is a postcritically-finite branched covering map. In particular, $f$ is a homeomorphism or a Thurston map. 
This shows that a map with the stated properties exists.

Suppose $f$ is a Thurston map. To show uniqueness up to Thurston equivalence, let  $g\:S^2\ra S^2$ be another continuous map that  is cellular for $(\DD^1, \DD^0)$ and compatible with $L$.  
We will prove that $g$ is a Thurston map that is equivalent to $f$. 

First note that  for each cell $\tau\in \DD^1$, the maps $f|\tau$ and $g|\tau$ are homeomorphisms of $\tau$ onto $L(\tau)\in \DD^0$.
Hence $\varphi_\tau\coloneqq (g|\tau)^{-1}\circ (f|\tau)$ is a homeomorphism 
of $\tau$ onto itself. The family $\varphi_\tau$, $\tau \in \DD^1$, of these homeomorphism is  obviously compatible under inclusions: if $\sigma,\tau \in \DD^1$ and 
$\sigma\sub \tau$, then $\varphi_\tau(p)=\varphi_\sigma(p)$ for all $p\in \sigma$.  

Using this, we can define a map $\varphi\:S^2\ra S^2$ as follows. For $p\in S^2$ pick $\tau\in \DD^1$ with $p\in \tau$. Then set $\varphi(p)\coloneqq\varphi_\tau(p)$. The compatibility properties  of the  homeomorphisms $\varphi_\tau$ imply that $\varphi$ is well-defined.  Indeed, suppose that $\tau, \tau'$ are cells in $\DD^1$ with $p\in \tau\cap \tau'$. There exists a unique cell $\sigma\in \DD^1$ with $p\in \inte(\sigma)$. It follows from Lemma~\ref{lem:celldecompint}~\ref{item:cell_decomp2} that 
$\sigma\sub \tau\cap \tau'$. 
Hence 
$$\varphi_\tau(p)=\varphi_\sigma(p)=\varphi_{\tau'}(p). $$

It is clear that $g\circ \varphi =f$. Moreover, $\varphi|\tau=\varphi_\tau$ is a homeomorphism of $\tau$ onto itself whenever $\tau\in \DD^1$. 
By 
Lemma~\ref{lem:isocellhomeo}~\ref{item:isocellhomeo1} 
and~\ref{item:isocellhomeo3} this implies   that $\varphi$ is a homeomorphism of $S^2$ onto itself that is isotopic to $\id_{S^2}$ rel.\ ${\bf V}^1$, where ${\bf V}^1$ is the set of $1$-vertices.

The set of postcritical points 
of $f$ is  contained in the set of $0$-vertices and hence in ${\bf V}^1$. So if we use the facts that $g\circ \varphi =f$ and  that $\varphi$ is isotopic to $\text{id}_{S^2}$ rel.~${\bf V}^1$, then Lemma~\ref{lem:T-eq_crit_post} (with  $h_1=\varphi$ and $h_0=\id_{S^2}$) implies  that 
$g$ is a Thurston map that is Thurston equivalent to $f$.
\end{proof}

\section{Flowers}\label{sec:flowers}
 
Throughout this section $f\:S^2\ra S^2$ is a given Thurston
map, 
and $\CC\sub S^2$ is a Jordan curve with $\post(f)\sub \CC$. We
consider the cell decompositions $\DD^n=\DD^n(f,\CC)$ and use the
related terminology and notation as discussed in
Section~\ref{sec:tiles}

The results in this section are based on the following concept.

\begin{definition}[$n$-Flowers]
  \label{def:flower}
  Let $n\in \N_0$, and  $p\in S^2$ be  an $n$-vertex.  Then the
  $n$-\defn{flower}\index{n9@$n$-!flower}\index{flower}\index{W n@$W^n(p)$} 
of $p$ is defined as
  \begin{equation*}
    W^n(p)\coloneqq   \bigcup\{\inte(c): c\in \DD^n, \ p\in c\}.   
    \end{equation*}
\end{definition}
So  the $n$-flower $W^n(p)$ of the  $n$-vertex $p$ is the union of the
interiors of all cells in the cycle of $p$ in $\DD^n$ (see Figure
\ref{fig:cycle} as well as 
Lemma~\ref{lem:specprop}~\ref{item:prop_cell5} and the discussion after this lemma).  

The main reason why we introduce flowers is the following.
Consider a simply 
connected region  $U\subset S^2$ not containing a postcritical
point of $f$ and branches $g_n$ of $f^{-n}$ defined on $U$. Then it may happen that  the number of $n$-tiles
intersecting $g_n(U)$ is unbounded as $n\to \infty$, even if the diameter of $U$ (with respect to some base metric on $S^2$) is
 small. For example, this happens when $f$ has a  periodic critical point $p$ (see Section~\ref{sec:periodic}), and $U$ spirals around one of the points  in the cycle generated by $p$. However, 
if $\diam (U)$ is sufficiently  small, then  $g_n(U)$ is always
contained in one $n$-flower as we shall see. Similar issues that
are resolved by the use of flowers are addressed in
Lemma~\ref{lem:difflevel} and Lemma~\ref{lem:tileflower}. 
 
 We first prove some basic properties of flowers. 

\begin{lemma}
  \label{lem:flowerprop}
  Let $n\in \N_0$, and $p\in S^2$ be an $n$-vertex.  As in
  Lemma~\ref{lem:specprop} let $e_1, \dots, e_d$ be the $n$-edges and
  $X_1, \dots, X_d$ be the $n$-tiles of the cycle of $p$, where $d\in
  \N$, $d\ge 2$, is the length of the cycle.


   \begin{enumerate}
    
  \item 
    \label{item:flower_prop1}
    Then $d=2\deg({f^n},p)$ and  the set $W^n(p)$ is homeomorphic to $\D$,
    i.e., it is an open, connected, and simply
    connected neighborhood of $p$. It contains no other $n$-vertex,
    and we have
    \begin{align}
      \label{eq:flowerrep}
      W^n(p)
      &=\{p\}\cup \bigcup_{i=1}^d\inte(X_i) \cup
      \bigcup_{i=1}^d\inte(e_i)\\
      \notag
      &=
      S^2\setminus \bigcup\{c\in \DD^n : c\in \DD^n, \ p\notin c\}.
    \end{align}


  \item
    \label{item:flower_prop2} 
    We have 
    \begin{equation*}
      \overline {W^n(p)}=X_1\cup \dots \cup X_d. 
    \end{equation*}
   Moreover,  the set $\partial W^n(p)$ is the union of all
    $n$-cells $c$ with $p\notin c$ and $c\sub \partial X_i$ for
    some $i\in \{1, \dots, d\}$.

    
      
  \item
    \label{item:flower_prop3}
    If $c$ is an arbitrary $n$-cell, then either $p\in c$ and $c\sub
    \overline{W^n(p)}$, or $c\sub S^2\setminus W^n(p)$.   
  \end{enumerate}
\end{lemma}

Note that by \ref{item:flower_prop1} each $n$-vertex $p$ is contained in precisely $d=2\deg(f^n,p)$ distinct $n$-edges and in precisely $d$ distinct $n$-tiles. 

\begin{proof} \ref{item:flower_prop1}  By Remark~\ref{rem:dd'}  the length $d$ of the cycle of the  vertex $p$ (in the cell decomposition $\DD^n$) is a multiple $d=k\widetilde d$ of the length $\widetilde d$ of the cycle of the image point $q=f^n(p)$ (in the cell decomposition $\DD^0$), where  $k$ is the degree of $f^n$ at $p$. Since $\widetilde d=2$, we have $d=2\deg({f^n},p)$ as claimed. 

The first equality in \eqref{eq:flowerrep} follows from Lemma~\ref{lem:specprop}~\ref{item:prop_cell5}. Based on this, the argument in Case~3 of the proof of Lemma~\ref{lem:constrmaps} shows that  there is  a homeomorphism of the set $W^n(p)$ onto 
 $\D$. Hence $W^n(p)$ is open, connected,  and  simply connected, and it follows 
 from the first equality in  
 \eqref{eq:flowerrep} that $W^n(p)$ contains no other $n$-vertex than $p$.

Let $M=S^2\setminus \bigcup\{c\in \DD^n : c\in \DD^n, \ p\notin c\}$.   If $x\in W^n(p)$, then $x$ is an interior point in one of the cells $\tau$ forming the cycle of $p$. So if $c$ is any $n$-cell with $x\in c$, then $\tau \sub c$ 
by  Lemma~\ref{lem:celldecompint}~\ref{item:cell_decomp2}. This implies $p\in c$, and so   $x\in M$ by definition of $M$. Hence $ W^n(p)\sub M$.

Conversely, if $x\in M$, let $\tau$ be an $n$-cell of smallest
dimension that contains $x$. Obviously, $x\in \inte(\tau)$. On
the other hand, the definition of $M$ implies that $p\in
\tau$.
Hence $\tau$ is a cell in the cycle of $p$, and so $x\in W^n(p)$.
We conclude $M\sub W^n(p)$, and so $M=W^n(p)$ as desired.

\smallskip{} 
\ref{item:flower_prop2} 
Equation \eqref{eq:flowerrep} implies
$\overline {W^n(p)}= X_1\cup \dots \cup X_n$.
  
Every point $x\in \partial W^n(p)$ is contained in one of the
sets $\partial X_i$. Note that $e_{i-1}, e_i\subset \partial
X_i$. 
Here we assume $n$-edges and $n$-tiles in the cycle of $p$ are
labeled as in Lemma~\ref{lem:specprop}~\ref{item:prop_cell5} and
we set $e_0=e_d$ for $i=1$ for convenience. 
Since $W^n(p)$ is open, the point $x$ is not
contained in $\{p\}\cup \inte(e_{i-1})\cup\inte(e_i)\sub W^n(p)$
 and hence is contained in an
$n$-cell $c$ in the boundary of $X_i$ distinct from $e_{i-1}$,
$e_i$, and $\{p\}$. Then
$p\notin c$, since $e_{i-1}$, $e_i$, and $\{p\}$ are the only
$n$-cells contained in $\partial X_i$ containing $p$. Thus 
$x$ is contained in an $n$-cell with the desired
properties.
  
Conversely, if $c$ is an $n$-cell with $p\notin c$ and
$c\sub \partial X_i$, then $c\sub S^2\setminus W^n(p)$ by
\eqref{eq:flowerrep}, and $c\sub X_i\sub \overline{W^n(p)}$.
Hence $c\sub \partial W^n(p)$.
 
\smallskip{}
\ref{item:flower_prop3} This follows from \ref{item:flower_prop1} and \eqref{eq:flowerrep}. 
  \end{proof}

Note that if we color tiles as in Lemma~\ref{lem:colortiles}, then the
colors of the tiles $X_1, \dots, X_d$ associated with  an  $n$-flower as
in the previous lemma will alternate.

\begin{lemma}\label{lem:mapflowers}
Let $k,n\in \N_0$. Then the following statements are true:

\begin{enumerate}

\item
\label{item:mapflowers1}
If $p\in S^2$ is an $(n+k)$-vertex, then $f^k$ maps the edges and tiles in the cycle of $p$  to the edges and tiles in the cycle of the $n$-vertex $q\coloneqq f^k(p)$  in  cyclic order in an $m$-to-$1$ fashion, where $m\coloneqq \deg(f^k,p)$.

Moreover, we have   $f^k(W^{n+k}(p))=W^n(q)$ and  
there exist orientation-preserving homeo\-mor\-phisms $\varphi\:   W^{n+k}(p)\ra \D$ and $\psi \: W^n(q)\ra \D$ with $\varphi(p)=0$ and $\psi(q)=0$ such that 
$$ (\psi\circ f^k\circ \varphi^{-1})(z)=z^m$$ 
for $z\in \D$. 

\item
\label{item:mapflowers2}
  If $q\in S^2$ is an $n$-vertex, then the  connected components of $f^{-k}(W^n(q))$ are the  $(n+k)$-flowers $W^{n+k}(p)$,  $p\in f^{-k}(q)$. 

\item
\label{item:mapflowers3}
  A connected set $K\sub S^2$ is contained in an $(n+k)$-flower if and only if $f^k(K)$ is contained in an $n$-flower.

  \item
\label{item:mapflowers4}
  The set of all $n$-flowers $W^n(p)$, $p\in \V^n$, is  an open cover of $S^2$.  
\end{enumerate} 

\end{lemma}

In the proof we will explain the precise meaning of the first statement in \ref{item:mapflowers1}. 

\begin{proof} \ref{item:mapflowers1} It is clear that $q=f^k(p)$ is an $n$-vertex. 
Let $e'_i$ and $X'_i$ for $i\in \N$ be   the $(n+k)$-edges and 
 $(n+k)$-tiles in the cycle of $p$, respectively.  Here we can choose the indexing as in the proof of Lemma~\ref{lem:specprop}~\ref{item:prop_cell5} so that 
 it has precise period $d'=2\deg(f^{n+k}, p)$, i.e., $e'_{d'+i}=e'_i$ and $X'_{d'+i}=X'_i$ for $i\in\N$
 and $d'$ is the smallest possible number here, because it is the length of the cycle of $p$   (see  Lemma~\ref{lem:flowerprop}~\ref{item:flower_prop1}). 
 
 Define $e_i=f^k(e'_i)$ and $X_i=f^k(X'_i)$ for $i\in \N$. Since  the map $f^k$ is cellular for $(\DD^{n+k}, \DD^n)$,
 it follows from Remark~\ref{rem:dd'} that 
$e_i$ and $X_i$ for $i\in \N$ are the $n$-edges  
and $n$-tiles  in the cycle of $q$. Here $e_{d+i}=e_i$ and $X_{d+i}=X_i$ for $i\in\N$ with 
$d=2\deg(f^{n}, q)$ and again $d$ is the smallest number with this property. In this sense, $f^k$ maps the edges and tiles in the cycle of $p$  to the edges and tiles in the cycle of $q$ in  cyclic order. 

The map $f^k$ between these cycles is $m$-to-$1$ with $m=\deg(f^k,p)$, because 
 each edge or tile in the cycle of $q$ has precisely
$$ m=d'/d=\deg(f^{n+k}, p)/\deg(f^n,q)=\deg(f^k,p)$$ distinct preimages in the cycle of $p$.

Note that we also have $f^k(\inte(e'_i))=\inte(e_i)$ and $f^k(\inte(X'_i))=\inte(X_i)$ for  $i\in \N$. 
This and  \eqref{eq:flowerrep} imply 
that $f^k(W^{n+k}(p))=W^n(q)$.

Finally, the  last statement in \ref{item:mapflowers1} follows from the considerations 
in Case~3 of the proof of Lemma~\ref{lem:constrmaps} (applied to the map $f^k$ and the 
cell decompositions $\DD^{n+k}$ and $\DD^n$).

\smallskip 
\ref{item:mapflowers2}
If $p\in f^{-k}(q)$, then $p$ is an $(n+k)$-vertex. By
\ref{item:mapflowers1} the $(n+k)$-flower $W^{n+k}(p)$ is an open
and connected subset of $f^{-k}(W^n(q))$. Suppose that
$x\in \partial W^{n+k}(p)$. 
Then by
Lemma~\ref{lem:flowerprop}~\ref{item:flower_prop2} there exist
an $(n+k)$-tile $X'$ and an $(n+k)$-cell 
$c'$ with $p\in X'$,
$p\notin c'$, and $x\in c'\sub \partial X'$. Then $X=f^k(X')$ is
an $n$-tile, $c=f^k(c')$ is an $n$-cell, $q\in X$, and
$f(x)\in c\sub \partial X$. Since $f^k|X'$ is a homeomorphism of
$X'$ onto $X$, we also have $q\notin c$.
Lemma~\ref{lem:flowerprop}~\ref{item:flower_prop2} implies that
$f^k(x)\in \partial W^n(q)$, and so $f^k(x)\notin W^n(q)$,
because flowers are open sets.

We conclude that $x\in S^2\setminus f^{-k}(W^n(q))$, and so
$\partial W^{n+k}(p)\sub S^2\setminus f^{-k}(W^n(q))$. 
It now
follows from Lemma~\ref{lem:conncomp} that $W^{n+k}(p)$ is a
connected component of 
\linebreak
$f^{-k}(W^n(q))$.

Conversely, suppose that $U$ is a connected component of the set
$f^{-k}(W^n(q))$.  Then $U$ is an open set and so it meets the
interior $\inte(X')$ of some $(n+k)$-tile $X'$. Then $X=f^k(X')$
is an $n$-tile that meets $W^n(q)$. Hence $q\in X$, and so there
exists an $(n+k)$-vertex $p\in X'$ with $f^k(p)=q$.

Then by the first part of the proof, the set $W^{n+k}(p)$ is a
connected component of $f^{-k}(W^n(q))$.  Since $W^{n+k}(p)$
contains the set $\inte(X')$ and so meets $U$, we must have
$W^{n+k}(p)=U.$





\smallskip 
\ref{item:mapflowers3}
Suppose  $K$ is contained in the $(n+k)$-flower $W^{n+k}(p)$. Then by
\ref{item:mapflowers1} the set $f^k(W^{n+k}(p))=W^n(f^k(p))$ is an
$n$-flower and it contains $f^k(K)$.  

Conversely, if $f^k(K)$ is contained in the $n$-flower $W^n(q)$, then
$K$ is a connected set in $f^{-k}(W^n(q))$. Hence $K$ lies in a
connected component of $f^{-k}(W^n(q))$, and hence in an
$(n+k)$-flower by \ref{item:mapflowers2}.

\smallskip 
 \ref{item:mapflowers4} We know by 
 Lemma~\ref{lem:flowerprop}~\ref{item:flower_prop1}
 that  flowers are open sets. If $x\in S^2$ is arbitrary, then there exists an $n$-cell $c$ such that 
 $x\in \inte(c)$. We can find an $n$-vertex
 $p$ such that $p\in c$. Then $x\in \inte(c)\sub W^n(p)$. So the $n$-flowers form indeed an open cover of $S^2$. 
\end{proof}

Similar to the definition of an  $n$-flower for an $n$-vertex,
one can also define an  
{\em edge flower}
for an $n$-edge. These sets provide ``canonical'' neighborhoods for $n$-vertices and $n$-edges
defined in terms of $n$-cells.

\begin{definition}[Edge flowers]
  \label{def:edgeflower}
  Let $n\in \N_0$, and $e$ be  an $n$-edge.  Then the  
 \defn{edge flower}\index{edge!flower}  
  of $e$ is defined as
  \begin{equation*}
    W^n(e)\coloneqq   \bigcup\{\inte(c)  : c\in \DD^n,\ c\cap e\ne \emptyset\}.   
    \end{equation*}
\end{definition}
 
We list some properties of edge flowers. They  correspond to similar properties of $n$-flowers as 
in Lemma~\ref{lem:flowerprop}. Note that in contrast to an
$n$-flower, an edge flower $W^n(e)$ will not be simply connected
in general (for example,  
if there is another 
$n$-edge $e'$ with the 
same endpoints as $e$ and $\#\post(f)\ge 3$).  

\begin{lemma}\label{lem:edgeflower}
Let $e$ be an $n$-edge whose endpoints are the  $n$-vertices $u$ and $v$. 

\begin{enumerate}

\item
  \label{item:edgeflower1}
  Then $W^n(e)$ is an open set containing $e$, and 
  \begin{equation} \label{eq:eflowerrep}
    W^n(e)=W^n(u)\cup W^n(v)=S^2\setminus  \bigcup\{c : c\in \DD^n,\
    c\cap e=\emptyset\}.
  \end{equation}
  
\item
\label{item:edgeflower2}
  We have $ \overline{W^n(e)}  = \bigcup \{X\in \X^n : X\cap e \neq
  \emptyset\}$. Moreover,  
  \begin{align*} 
    \partial {W^n(e)}= \bigcup \{c \in \DD^n : 
    {}&c\cap e=\emptyset \text{ and there exists} 
    \\ 
    &\text{$X\in \X^n$ with $X\cap e \ne \emptyset $ and $c\sub \partial X$}\},
  \end{align*} 
  where each  $n$-cell $c$ in the last union  either consists of one $n$-vertex or is an   $n$-edge. 

\item
  \label{item:edgeflower3}
  If $c$ is an arbitrary $n$-cell, then either $c\cap e\ne \emptyset$ and $c\sub \overline{W^n(e)}$, or $c\sub S^2\setminus W^n(e)$. 

\end{enumerate}
\end{lemma}

\begin{proof} \ref{item:edgeflower1} 
It follows from Lemma~\ref{lem:celldecompint}~\ref{item:cell_decomp1} that an  $n$-cell $c$  meets $e$ if and only if it contains one of the endpoints $u$ and $v$ of $e$.  Hence $W^n(e)=W^n(u)\cup W^n(v)$ by the definition of flowers. By Lemma~\ref{lem:flowerprop}~\ref{item:flower_prop1} this implies that $W^n(e)$ is open, and, since $e$ is an edge in the cycles of $u$ and $v$, we also have 
$$e=\{u\}\cup \inte(e)\cup\{v\}\sub  W^n(u)\cup W^n(v)=W^n(e).$$

Let $M= S^2\setminus  \bigcup\{c : c\in \DD^n,\ c\cap
e=\emptyset\}$. If an $n$-cell $c$ does not meet $e$, then it
contains neither $u$ nor $v$. 
Hence 
by \eqref{eq:flowerrep} we have 
\begin{equation*}
  S^2\setminus M\sub (S^2\setminus W^n(u))\cap 
 (S^2\setminus W^n(v))=S^2\setminus W^n(e), 
\end{equation*}
and so $ W^n(e)\sub M$.

 Conversely, let $x\in M$ be arbitrary, and $c$ be the unique
 $n$-cell $c$ such that $x\in \inte(c)$. Then $c\cap e\ne
 \emptyset $ and 
 therefore
 $u\in c$ or $v\in c$. It follows  that $x\in
 W^n(u)\cup W^n(v) =W^n(e)$. 
 We conclude that $M\sub W^n(e)$, and so $M=W^n(e)$ as claimed. 
 
 \smallskip 
 \ref{item:edgeflower2}
 By Lemma~\ref{lem:celldecompint}~\ref{item:cell_decomp1} an $n$-tile $X$ meets $e$ if and only if $X$ contains $u$ or $v$.
 Hence by \ref{item:edgeflower1} and Lemma~\ref{lem:flowerprop}~\ref{item:flower_prop2} we have
 $$ \overline{W^n(e)}=\overline{W^n(u)}\cup \overline{W^n(v)}=\bigcup \{X\in \X^n : X\cap e \neq \emptyset\}$$ as desired. 
 
 For the second claim suppose that 
  $c$ is an $n$-cell and  $X$ an $n$-tile with  
 $c\cap e=\emptyset$, $X\cap e\ne \emptyset$, and 
 $c\sub \partial X$. Then $c\sub S^2\setminus W^n(e)$ and $c$ must be an $n$-edge or consist of an $n$-vertex. 
 Moreover,  $c\sub X\sub \overline {W^n(e)}$. It follows that $c\sub \partial W^n(e)$.
 
 Conversely, let $x$ be a  point in 
 $\partial W^n(e)$. Then by \ref{item:edgeflower1} the point 
 $x$ is also a boundary point of $W^n(u)$ or $W^n(v)$, say 
 $x\in \partial W^n(u)$.

 By Lemma~\ref{lem:flowerprop}~\ref{item:flower_prop2} 
there exist an 
$n$-cell $c'$ and an $n$-tile $X$ with $x\in c'$, 
 $u\in X$, $u\notin c'$, and $c'\sub\partial X$.
 If $x$ is an $n$-vertex, we let $c=\{x\}$. Then $c$ is an $n$-cell and we have $c\cap e=\emptyset$, because $W^n(e)$ is an open neighborhood of $e$ and $c$ lies in $\partial W^n(e)\sub S^2\setminus W^n(e)$. 
 Moreover, $X\cap e\ne \emptyset $ and $c\sub c'\sub \partial
 X$. So $c$ is an  $n$-cell with the desired properties
 containing $x$.  
 
 If $x$ is not a vertex we put $c=c'$. Again if
 $c\cap e=c'\cap e=\emptyset$, then $c$ is an $n$-cell with the
 desired properties containing $x$.
 
 The other case, where $c\cap e\ne \emptyset$, leads to a
 contradiction. Indeed, then we have $v\in c$. Moreover, since
 $x$ is not a vertex, it follows that $x\in \inte(c)$. Note that
 $c$ then is necessarily an $n$-edge. It follows that
 $x\in \inte(c)\sub W^n(v)\sub W^n(e)$ which is impossible,
 because $x\in \partial W^n(e)\sub S^2\setminus W^n(e)$.
 
 \smallskip  
 \ref{item:edgeflower3}
 If $c$ is an $n$-cell and $c\cap e=\emptyset$, then $c\sub S^2\setminus W^n(e)$. If $c\cap e\ne \emptyset$, then 
 $c$ contains $u$ or $v$, and so  $c\sub \overline{W^n(u)}\cup \overline{W^n(v)}=\overline {W^n(e)}$. 
\end{proof}

\section{Joining opposite sides} 
\label{sec:opp}
In this section $f\:S^2\ra S^2$ will again be a Thurston map, and $\CC\sub S^2$ be a Jordan curve with
$\post(f)\sub \CC$. In addition, we assume that 
$\#\post(f)\ge 3$.   
We fix a base metric $d$ on $S^2$ that
induces the given topology and consider the cell decompositions
$\DD^n=\DD^n(f,\CC)$ as discussed in Section~\ref{sec:tiles}.

We will define a
constant $\delta_0>0$ such that any connected set of diameter
$< \delta_0$ (with respect to the base metric $d$) is contained
in a single $0$-flower (as introduced in
Section~\ref{sec:flowers}).  
However, there is a slight difference 
between the cases 
$\#\post(f)=3$ and $\#\post(f)\ge 4$.  In order to treat
these two cases simultaneously, the following definition is
useful.

\begin{definition}[Joining opposite sides]
  \label{def:connectop}
  \index{joining opposite sides|textbf}
  A set $K\subset S^2$ \defn{joins opposite sides}
  of $\CC$ 
      if $\#\post(f)\geq 4$ and $K$ meets  two disjoint $0$-edges, or    
    if $\#\post(f)=3$ and $K$ meets  all  three $0$-edges. 
 \end{definition}
 
 The case  $\#\post(f)=2$
is excluded here, and so the concept of ``joining opposite sides" (as well as the constant 
$\delta_0$ below) 
remains undefined for such Thurston maps $f$. 
 
We will mostly use Definition~\ref{def:connectop} for connected sets $K$ (when the phrase ``joining'' 
really makes sense), but it is convenient to allow arbitrary sets here.

We now  define\index{d0@$\delta_0$} 
 \begin{align}\label{defdelta}
 \delta_0
   =
   \delta_0(f,\CC)=\inf\{\diam(K)
   : 
   {}&K\sub S^2\text  { is a set}
  \\  \notag
   &\text{joining  opposite sides of } \CC\}.
 \end{align}

Then  $\delta_0>0$. Indeed,  if $\#\post(f)=4$, then   
$$\delta_0= \min\{\dist(e,e'):  e \text{ and } e' \text{ are disjoint }
  0\text{-edges}\}>0. $$
  
  If $\#\post(f)=3$ and we had $\delta_0=0$, then it would follow from a simple limiting argument that the three $0$-edges had a common point. This is absurd.

  \begin{lemma}\label{lem:floweropp}
  A connected set $K\sub S^2$ joins opposite sides of $\CC$ if and only if $K$ is not contained in a single  $0$-flower.
  \end{lemma}

  \begin{proof} If $K$ is contained in a $0$-flower $W^0(p)$, 
  where $p\in \CC$ is a $0$-vertex, then  
  $K$ meets at most two $0$-edges, namely the ones that have the common  endpoint $p$. So  $K$ does not join opposite sides of $\CC$. 
  
  Conversely, suppose $K$ does not   join opposite sides of $\CC$. We have to show that $K$ is contained in some $0$-flower. Note that $K$ cannot meet three distinct $0$-edges. 
    
  If $K$ does not meet any $0$-edge, then $K$ does not meet $\CC$ and is hence contained in the interior of one of the two $0$-tiles. This implies that $K$ is actually  contained in every $0$-flower. 
  
  If $K$ meets only one $0$-edge $e$, then 
$K$ is contained in the $0$-flowers $W^0(u)$ and $W^0(v)$, 
where $u$ and $v$ are the endpoints of $e$. 

If $K$ meets two edges, then these edges share a common endpoint $v\in {\bf V}^0=\post(f)$. This is always true if $\#\post(f)=3$ and follows from the fact that $K$ does not join opposite sides of $\CC$ if  $\#\post(f)\ge 4$. Moreover, $K$ cannot meet a third $0$-edge which  implies that  
$K\sub W^0(v)$. 
 \end{proof}

 By the previous lemma  every connected set $K\sub S^2$ satisfying $\diam(K)<\delta_0$ is contained in a $0$-flower.

\begin{lemma}
  \label{lem:preimsmall}
  Let $n\in \N_0$, and $\delta_0>0$ be as in \eqref{defdelta}. 
 
 \begin{enumerate}
 
 \item
   \label{item:preimsmall1}
   If $K\sub S^2$ is a connected set with  $\diam (K)< \delta_0$, then  every connected set $K'\sub  f^{-n}(K)$ is contained in some  $n$-flower.
 
 \item
   \label{item:preimsmall2}
   If $\gamma\colon [0,1]\to S^2$ is  a path such
  that $\diam (\gamma)< \delta_0$, then each  lift  $\widetilde{\gamma}$ of
  $\gamma$ by $f^n$ has an image that is contained in some  $n$-flower.

\end{enumerate}
 \end{lemma}
 
Here by definition a {\em lift} of $\ga$ by $f^n$ is any path $\widetilde{\gamma}\:[0,1]\ra S^2$ with ${\gamma}=f^n\circ \widetilde\ga$.  

\begin{proof}
  \ref{item:preimsmall1}
  The set $K$ is contained in some $0$-flower 
$W^0(p)$, $p\in {\bf V}^0$, by Lemma~\ref{lem:floweropp} and the definition of $\delta_0$. So if $K'$ is a connected subset  of $f^{-n}(K)$, then $K'$ is contained in a component of $f^{-n}(W^0(p))$,  and hence in an $n$-flower by 
 Lemma~\ref{lem:mapflowers}~\ref{item:mapflowers2}. 
 
 \smallskip{}
 \ref{item:preimsmall2}
 The reasoning is exactly the same as in \ref{item:preimsmall1}. The
(image of the) path  $\ga$ is contained in some $0$-flower; by
 Lemma~\ref{lem:mapflowers}~\ref{item:mapflowers2} this implies that
 any  lift  $\widetilde \ga$  of $\ga$ by $f^n$  is contained in an $n$-flower.  
\end{proof}

We will often have to estimate how many tiles are needed to connect
certain points. If we have a condition that is formulated ``at the top
level'', i.e., for connecting points in $\CC$, then the map $f^n$ can
be used to translate this to $n$-tiles.

\begin{lemma}\label{lem:maptotop} Let $n\in \N_0$, and $K\sub S^2$ be a connected set. 
If there exist two disjoint $n$-cells  $\sigma$ and $\tau$  with $K\cap \sigma\ne \emptyset$ and 
    $K\cap \tau \ne \emptyset$,  then $f^n(K)$ joins 
    opposite sides of $\CC$.
    \end{lemma}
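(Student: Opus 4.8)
The statement to prove is: if $K\subset S^2$ is connected and meets two disjoint $n$-cells $\sigma$ and $\tau$, then $f^n(K)$ joins opposite sides of $\CC$. The natural strategy is to apply $f^n$ and use the cellular structure. First I would record that $\sigma':=f^n(\sigma)$ and $\tau':=f^n(\tau)$ are $0$-cells by Proposition~\ref{prop:celldecomp}~(i), and that $f^n(K)$ is connected and meets both $\sigma'$ and $\tau'$. The key point to establish is that \emph{disjoint $n$-cells must map to $0$-cells that are ``far apart'' in $\CC$}, in the sense relevant to Definition~\ref{def:connectop}. Since the $0$-cells are: the two $0$-tiles $\XOb,\XOw$, the $m=\#\post(f)$ $0$-edges, and the $m$ $0$-vertices, I would argue by contradiction: suppose $f^n(K)$ does \emph{not} join opposite sides of $\CC$. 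Then by Lemma~\ref{lem:floweropp}, $f^n(K)$ is contained in a single $0$-flower $W^0(v)$ for some $0$-vertex $v$.

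The heart of the argument is then to lift this containment back through $f^n$. Since $K$ is connected and $f^n(K)\subset W^0(v)$, the set $K$ lies in one component of $f^{-n}(W^0(v))$, which by Lemma~\ref{lem:mapflowers}~(ii) is an $n$-flower $W^n(p)$ for some $p\in f^{-n}(v)$. So $K\subset W^n(p)$. Now I invoke Lemma~\ref{lem:flowerprop}~(iii): every $n$-cell $c$ either contains $p$ and lies in $\overline{W^n(p)}$, or is contained in $S^2\setminus W^n(p)$. Since $K$ meets $\sigma$ and $\sigma\subset S^2$ while $K\subset W^n(p)$, the cell $\sigma$ must meet $W^n(p)$, hence $\sigma$ is \emph{not} contained in $S^2\setminus W^n(p)$, so by the dichotomy $p\in\sigma$ and likewise $p\in\tau$. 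But then $p\in\sigma\cap\tau$, contradicting the disjointness of $\sigma$ and $\tau$. This contradiction shows $f^n(K)$ joins opposite sides of $\CC$.

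\textbf{Main obstacle.} The only subtle step is the very first reduction, making sure Lemma~\ref{lem:mapflowers}~(ii) applies: it requires $v$ to be a $0$-vertex and $f^n(K)\subset W^0(v)$ with $K$ connected, so that a single component of the preimage captures all of $K$. One must be slightly careful that Lemma~\ref{lem:floweropp} is stated for $\CC=\DD^0$-level flowers, which is exactly our situation. A secondary point is the case $n=0$: then the statement is essentially the definition of joining opposite sides combined with Lemma~\ref{lem:floweropp}, and everything degenerates gracefully. Beyond this the proof is a short chain of citations; no real computation is needed. I would therefore write the proof as: apply $f^n$, suppose for contradiction $f^n(K)$ does not join opposite sides, use Lemma~\ref{lem:floweropp} to place $f^n(K)$ in a $0$-flower $W^0(v)$, pull back via Lemma~\ref{lem:mapflowers}~(ii) to get $K\subset W^n(p)$ with $f^n(p)=v$, then use Lemma~\ref{lem:flowerprop}~(iii) to force $p\in\sigma\cap\tau$, contradicting $\sigma\cap\tau=\emptyset$.
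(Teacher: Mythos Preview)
Your proof is correct and follows essentially the same approach as the paper: both argue by contradiction, use Lemma~\ref{lem:floweropp} to place $f^n(K)$ in a $0$-flower, pull back via Lemma~\ref{lem:mapflowers} to get $K$ inside an $n$-flower $W^n(p)$, and then derive $p\in\sigma\cap\tau$. Your version is slightly cleaner in that you invoke Lemma~\ref{lem:flowerprop}~(iii) once uniformly, whereas the paper carries out an explicit case analysis on the dimensions of $\sigma$ and $\tau$ to reach the same contradiction.
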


\begin{proof}  It suffices to show that $K$ is not contained in any $n$-flower, because then $f^n(K)$ is not contained in any $0$-flower (Lemma~\ref{lem:mapflowers}~\ref{item:mapflowers3}) and so $f^n(K)$ joins opposite sides of $\CC$ (Lemma~\ref{lem:floweropp}).  We consider several cases. 

\smallskip
{\em Case~1:} One of the cells is an $n$-vertex,  
say $\sigma=\{v\}$, where $v\in {\bf V}^n$. Then $v\in K$;  so the only $n$-flower that $K$ could possibly be contained in is $W^n(v)$, because no other $n$-flower contains the $n$-vertex $v$. But since $\sigma$ and $\tau$ are disjoint, we have $v\notin \tau$, and so $\tau \sub S^2\setminus W^n(v)$. 
Hence $K\cap (S^2\setminus W^n(v))\ne \emptyset$, and so 
$W^n(v)$ does not contain $K$.  

\smallskip
{\em Case~2:} Suppose one of the cells is an $n$-edge,
say $\sigma=e\in \E^n$.  
Then $e$ has two endpoints $u,v\in {\bf V}^n$. The only $n$-flowers that meet $e$ are $W^n(u)$ and $W^n(v)$; so these $n$-flowers are the only ones that could possibly contain $K$. But the set $W^n(e)=W^n(u)\cup W^n(v)$ does not contain
$K$, because $K$ meets the set $\tau$ which lies in the complement of $W^n(e)$.   

\smallskip
{\em Case~3:} 
One of the cells is 
an $n$-tile, say $\sigma\in \X^n$.    Then $K$ meets $\partial X$. Since $\partial X$ consists of $n$-edges, the set $K$ meets an $n$-edge disjoint from $\tau$.  So we are reduced to Case~2. 
\end{proof} 

For $n\in \N_0$ we denote by  $D_n(f,\CC)$ the  minimal number of $n$-tiles required to form a connected set  joining   opposite
sides of $\CC$; more precisely,   
\index{d0 n@$D_n$|textbf}
\begin{align} 
  \label{def:dk}
  D_n(f,\CC) =\min\big\{&N\in \N: 
  {}\text{there exist } X_1,\dots, X_N\in \X^n 
  \text{ such that } 
  \\ \notag
  &K=\bigcup_{j=1}^N X_j 
  \text{ is connected and joins opposite sides of }
  \CC \big\}. 
\end{align} 
We simply write $D_n$ for $D_n(f,\CC)$ if $f$ and $\CC$ are clear from the context (as in this section). 

From Lemma~\ref{lem:maptotop} we can immediately  derive the following consequence. 

\begin{lemma} \label{lem:flowerbds}
Let $n,k\in \N_0$. Every set of $(n+k)$-tiles whose   union is connected 
and  meets  two disjoint $n$-cells  contains at least $D_k$ elements.
 \end{lemma}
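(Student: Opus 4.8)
The plan is to push the configuration down by $f^n$ and invoke the definition of $D_k$. Let $X_1,\dots,X_N$ be $(n+k)$-tiles whose union $K=\bigcup_{j=1}^N X_j$ is connected and meets two disjoint $n$-cells $\sigma$ and $\tau$. First I would apply Lemma~\ref{lem:maptotop}: since $K$ is connected and meets the disjoint $n$-cells $\sigma$ and $\tau$, the image $f^n(K)$ joins opposite sides of $\CC$.

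Next I would record what $f^n$ does to the individual tiles. By Proposition~\ref{prop:celldecomp}~(i), applied with the roles of the levels being $k$ and $n+k$ (so $f^n$ is cellular for $(\DD^{n+k},\DD^k)$), each $f^n(X_j)$ is a $k$-tile. Hence $f^n(K)=\bigcup_{j=1}^N f^n(X_j)$ is a union of $k$-tiles, it is connected as the continuous image of the connected set $K$, and by the previous paragraph it joins opposite sides of $\CC$. Let $X'_1,\dots,X'_M$ be the \emph{distinct} tiles among $f^n(X_1),\dots,f^n(X_N)$; clearly $M\le N$, and $\bigcup_{i=1}^M X'_i=f^n(K)$ is still connected and joins opposite sides of $\CC$.

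Finally, by the very definition \eqref{def:dk} of $D_k$ as the minimal number of $k$-tiles whose union is connected and joins opposite sides of $\CC$, we get $M\ge D_k$, and therefore $N\ge M\ge D_k$, which is the assertion.

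There is no real obstacle here: the statement is essentially a corollary of Lemma~\ref{lem:maptotop} together with the definition of $D_k$. The only point requiring a word of care is that distinct $(n+k)$-tiles may be sent by $f^n$ to the same $k$-tile, so one should count distinct images and note this only decreases the count — which is harmless since we want a \emph{lower} bound on $N$.
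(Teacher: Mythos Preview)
Your proof is correct and follows essentially the same approach as the paper: push forward by $f^n$, use Lemma~\ref{lem:maptotop} to see that $f^n(K)$ joins opposite sides of $\CC$, note that the images of the $(n+k)$-tiles are $k$-tiles by Proposition~\ref{prop:celldecomp}~(i), and invoke the definition of $D_k$. Your explicit remark that distinct $(n+k)$-tiles may have coinciding images (so one should count distinct $k$-tiles and use $M\le N$) is a nice bit of care that the paper's proof handles more tersely.
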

 
 \begin{proof} Suppose $K$ is a union of $(n+k)$-tiles with the stated properties. Then the images of these tiles under $f^n$ are $k$-tiles and   $f^n(K)$ joins opposite sides of $\CC$ by Lemma~\ref{lem:maptotop}. 
 Hence there exist  at  least $D_k$  distinct $k$-tiles in the union forming $f^n(K)$ and hence at least $D_k$ distinct  $(n+k)$-tiles in $K$.  
 \end{proof} 

The following two lemmas give some  motivation why we introduced  
flowers. 
Namely, the number
of $(n-1)$-tiles or  the number of $(n+1)$-tiles
required to cover some $n$-tile $X$ may not be bounded by a
constant independent of $X$ and $n$. 
Similarly, in general there will  be no universal bound on the number of $n$-tiles defined with respect to a different Jordan curve $\widetilde \CC$ needed to cover $X$. 
Both issues  are resolved by
considering flowers instead of tiles. Note that in both lemmas we
allow $\#\post(f)=2$ for our given Thurston map $f$. 

\begin{lemma} \label{lem:difflevel} There exists $M\in \N$ with the following properties:

\begin{enumerate} 
\item
  \label{item:coverflower1}
  Each $n$-tile, $n\in \N$,  can be covered by  $M$ $(n-1)$-flowers.  

\item
  \label{item:coverflower2}
  Each $n$-tile, $n\in \N_0$,  can be covered by $M$ $(n+1)$-flowers.

\end{enumerate}
\end{lemma}

For easier  formulation of this lemma and  the subsequent proof, we assume for simplicity that a cover  by {\em at most} $M$ elements contains precisely $M$ elements. This can always be achieved by repetition of elements in the cover.

\begin{proof} 
  We first consider  the special  case when 
  $\#\post(f)=2$. Then there are exactly two $n$-vertices, 
  and hence
  exactly two $n$-flowers for each  $n\in \N_0$ (see
  Lemma~\ref{lem:postf-2}). These two $n$-flowers cover $S^2$
  (see Lemma~\ref{lem:mapflowers}~\ref{item:mapflowers4}). Thus
  both statements are true with $M=2$ in this case.
 
  Assume now that $\#\post(f)\geq 3$. 
  It suffices to consider the statements \ref{item:coverflower1}
  and \ref{item:coverflower2} separately and find a corresponding
  number $M$ for each of them.

 \smallskip
  \ref{item:coverflower1} 
  Let $\delta_0>0$ be as in \eqref{defdelta}. Then there exists
  $M\in \N$ such that each of the finitely many $1$-tiles $X$ is
  a union of $M$ connected sets $U\sub X$ with
  $\diam(U)<\delta_0$. If $Y$ is an arbitrary $n$-tile, $n\ge 1$,
  then $Z=f^{n-1}(Y)$ is a $1$-tile and $f^{n-1}|Y$ a
  homeomorphism of $Y$ onto $Z$.  Hence $Y$ is a union of $M$
  sets of the form $(f^{n-1}|Y)^{-1}(U)$, where $U\sub Z$ is
  connected and $\diam(U)<\delta_0$.  Each set
  $(f^{n-1}|Y)^{-1}(U)$ is connected and so by
  Lemma~\ref{lem:preimsmall}~\ref{item:preimsmall1} it lies in an
  $(n-1)$-flower.  Hence $Y$ can be covered by $M$
  $(n-1)$-flowers.

  \smallskip
  \ref{item:coverflower2} 
  There exists $M\in \N$ such that each of
  the two $0$-tiles $X$ can be covered by $M$ connected sets $U\sub X$
  with $\diam (f(U))<\delta_0$.  If $Y$ is an arbitrary $n$-tile, then
  $Z=f^n(Y)$ is a $0$-tile. By the same reasoning as above, the set
  $Y$ is a union of $M$ sets of the form $(f^{n}|Y)^{-1}(U)$, where
  $U\sub Z$ is connected and $\diam(f(U))<\delta_0$.

  Then $U'=(f^n|Y)^{-1}(U)$ is connected, and $f^{n+1}(U')=f(U)$ which
  implies $\diam(f^{n+1}(U'))<\delta_0$.  Hence by
  Lemma~\ref{lem:preimsmall}~\ref{item:preimsmall1} the set $U'$ is
  contained in some $(n+1)$-flower.  Since $M$ of the sets $U'$ cover
  $Y$, it follows that $Y$ can be covered by $M$
  $(n+1)$-flowers.
\end{proof}

\begin{lemma}\label{lem:tileflower} Let $\CC$ and $\widetilde
\CC$ be two Jordan curves in $S^2$ that both contain 
$\post(f)$. Then there exists a number $M$ such that 
each $n$-tile for $(f,\widetilde \CC)$, $n\in \N_0$, can be covered 
by $M$ $n$-flowers for $(f,\CC)$. 
\end{lemma}

\begin{proof} 
  The argument is very similar to the proof of
  Lemma~\ref{lem:difflevel}. Again the case $\#\post(f) =2$ is
  trivial;  so we may  assume $\#\post(f) \geq 3$.

  Let $\delta_0=\delta_0(f,\CC)>0$ be the number as defined in
  \eqref{defdelta}.  There exists a number $M$ such that each of the
  two $0$-tiles $X$ for $(f,\widetilde \CC)$ is a union of $M$
  connected sets $U\sub X$ with $\diam(U)<\delta_0$. If $Y$ is an
  arbitrary $n$-tile for $(f,\widetilde \CC)$, then $Z=f^{n}(Y)$ is a
  $0$-tile for $(f,\widetilde \CC)$ and $f^{n}|Y$ is a homeomorphism
  of $Y$ onto $Z$.  Hence $Y$ is a union of $M$ sets of the form
  $(f^{n}|Y)^{-1}(U)$, where $U\sub Z$ is connected and
  $\diam(U)<\delta_0$.  Each set $(f^{n}|Y)^{-1}(U)$ is connected and
  so by Lemma~\ref{lem:preimsmall}~\ref{item:preimsmall1} it lies in
  an $n$-flower for $(f,\CC)$.  Hence $Y$ can be covered by $M$ such
  $n$-flowers.
\end{proof}

\ifthenelse{\boolean{singlechapter}}{

%


\chapter{Expansion}
\index{expanding}
\index{Thurston map!expanding}
\label{cha:expansion}

 In this chapter we revisit the notion  of  expansion for 
 Thurston maps (see Definition~\ref{def:exp}) and study it in
 greater depth. We will establish basic properties of this
 concept.
 
In  Section~\ref{sec:defin-expans-revis} the main result  is
 Proposition~\ref{prop:expequivexp} which  gives several
 conditions that 
 are equivalent to our notion of expansion. 
 In particular,  one of these conditions (namely, 
 condition~\ref{item:expequivexp4} in 
 Proposition~\ref{prop:expequivexp}) can be formulated in terms of open covers 
 without  
 reference to a metric. This shows (as we remarked after Definition~\ref{def:exp})  that expansion is an 
entirely  topological property of a given Thurston map.   
 
 In Section~\ref{sec:further-results} we prove various other    results about
 expansion.  For example, in Lemma~\ref{lem:length_exp} we show that a Thurston map
 is expanding if it uniformly expands the length of paths with respect to an underlying  length 
 metric. This result was  already  used in our characterization
of rational expanding Thurston maps (see the proof of
Proposition~\ref{prop:rationalexpch}). It is not known if for every expanding Thurston map there is a
length metric with respect to which it is expanding.

In  Section~\ref{sec:Latttypeexp} we return to Latt\`{e}s-type
maps. We show that such a map is expanding if and only if each
eigenvalue of the  linear part $L_A$ of the affine  map  $A$  in 
Definition~\ref{def:Lattestype}  has absolute value $>1$ (see
Proposition~\ref{prop:expLattType}).

\section{Definition of expansion revisited}
\label{sec:defin-expans-revis}
 Let  $S^2$ be  a $2$-sphere. 
In the following, it is often convenient  to formulate some essentially
topological properties in metric terms. For this we 
 fix a base metric on  $S^2$ that induces the given 
topology. In this and the next section  notation for metric terms will refer to this base metric
unless otherwise indicated.

Let $f\: S^2\ra S^2$ be a Thurston map and 
  $\CC\subset S^2$ be   a Jordan curve with  $\post(f)\subset \CC$.  
 For $n\in \N_0$   we consider the cell decompositions
 $\DD^n=\DD^n(f,\CC)$ as given by Definition~\ref{def:DDn} 
with the 
corresponding set $\X^n=\X^n(f,\CC)$ of $n$-tiles. 
   Recall (from the beginning of Section~\ref{sec:expansion-1}) that 
   $\mesh(f,n, \CC)$ is defined as  the supremum of the diameters of the connected components of $f^{-n}(S^2\setminus \CC)=S^2\setminus f^{-n}(\CC)$. 
   We know that the   $n$-tiles for $(f,\CC)$ are precisely the closures of the connected
components of $S^2\setminus f^{-n}(\CC)$ (see 
Proposition~\ref{prop:celldecomp}~\ref{item:nedgesC}), and  
 so 
 \begin{equation*}
   \mesh(f,n, \CC)= \max_{X\in \X^n}\diam (X).   
 \end{equation*}
Thus a Thurston map $f$ is expanding (see Definition~\ref{def:exp}) if
and only if there is a Jordan curve $\CC\subset S^2$ with
$\post(f)\subset \CC$ such that
\index{mesh@$\mesh$}
\begin{equation}
  \label{eq:defexpXn}
  \max_{X\in \X^n}\diam (X)\to 0 \text{ as } n\to \infty,
\end{equation}
where the tiles are defined for  $(f,\CC)$. We record the
following immediate consequence.

\begin{lemma}
  \label{lem:no<3} 
  If $f\:S^2\ra S^2$ is an expanding Thurston map, then
  $\#\post(f)\ge 3$.
\end{lemma} 

\begin{proof} 
  By Corollary~\ref{cor:post012} we know that   $\#\post(f)
  \ge 2$.
  
  If $\#\post(f)=2$, then there exist two distinct points
  $p,q\in S^2$ with $\post(f)=\{p,q\}$. Let $\CC\sub S^2$ be an
  arbitrary Jordan curve with $\post(f)\sub \CC$, and consider
  the set $\X^n$ of $n$-tiles for $(f,\CC)$. Then every $n$-tile
  $X$ contains $p$ and $q$ (see Lemma~\ref{lem:postf-2}). Thus
  \begin{equation*}
      \max_{X\in \X^n}\diam (X) \geq d(p,q) > 0
  \end{equation*}
 for all $n\in \N_0$,  where $d$ denotes the fixed base metric on
  $S^2$. This means that $f$  cannot be  expanding. 
\end{proof} 

Due to this lemma, we can always assume that $\#\post(f) \geq 3$  when we consider expanding Thurston maps $f$.

Let us now convince ourselves that condition \eqref{eq:defexpXn}
is independent of the choice of the curve $\CC$.  

%

\begin{lemma}
  \label{lem:exp_ind_C} 
  \index{Thurston map!expanding}
  \index{expanding}
  Let $f\:S^2 \ra S^2$ be a Thurston map and
  $\CC,\widetilde{\CC}\subset S^2$ be Jordan curves with
  $\post(f)\subset \CC,\widetilde{\CC}$. Then 
    \begin{equation*}
      \lim_{n\to \infty} \mesh(f,n, \CC)=0
      \text{ if and only if } \lim_{n\to \infty}
    \mesh(f,n, \widetilde{\CC})=0. 
    \end{equation*}
%
%
\end{lemma}

\begin{proof}
  Let $\CC, \widetilde{\CC}\subset S^2$ be as in the statement of the
  lemma. Assume that $\lim_{n\to \infty} \mesh(f,n, \CC)=0$. This means
  that $f$ is expanding. Then 
  $$ \max_{X\in \X^n} \diam(X)=\mesh(f,n,\CC) \to 0$$ 
  as $n\to \infty$, where $\X^n$ is the set of  $n$-tiles for
  $(f,\CC)$.  Lemma~\ref{lem:flowerprop}~\ref{item:flower_prop2}
  implies that    
  \begin{equation}\label{diamflower}
    \diam(W^n(p))\le 2 \max_{X\in \X^n} \diam(X)
  \end{equation} for each $n$-flower $W^n(p)$ for $(f,\CC)$. 

  Now we consider  tiles for $(f,\widetilde \CC)$. By
  Lemma~\ref{lem:tileflower}  there exists a number $M\in \N$
  such that each $n$-tile for $(f, \widetilde
  \CC)$ can be  covered by $M$  $n$-flowers for $(f,\CC)$. 
 If a connected set is covered by a finite union of connected sets, then its diameter is bounded by the sum of the diameters of the sets in the union. 
  Combining this with \eqref{diamflower}, we conclude 
 that  
 \begin{align*} 
   \mesh(f,n,\widetilde \CC)&=
 \max \{\diam(\widetilde X): \widetilde X \text{ is an $n$-tile
                              for $(f, \widetilde \CC)$}\} 
\\ &\le M \max_{p\in {\bf V}^n} \diam(W^n(p)) 
\\ 
 &\le 
 2M\max_{X\in \X^n} \diam(X)
\\
&= 2M\mesh(f,n,\CC).
\end{align*}
Here $\mathbf{V}^n$ denotes  the set of $n$-vertices for $(f,\CC)$. 
The last  inequality implies that
  $\lim_{n\to\infty} \mesh(f,n,\widetilde \CC)= 0$  as desired.

  The other implication is obtained by reversing the roles of $\CC$ and
  $\widetilde\CC$. 
\end{proof}
  
The lemma shows that  a Thurston map $f\colon S^2 \to S^2$ is
expanding\index{expanding} if and only if 
$\mesh(f, n, \CC) \to 0$ as $n\to \infty$ for all Jordan curves 
 $\CC\subset S^2$ 
 with $\post(f)\subset
\CC$.   In particular,  expansion is
a property of the map $f$ alone and  independent of 
the choice of the  Jordan curve $\CC$.

Lemma~\ref{lem:tileflower}, which was used in the previous proof,
admits an improvement for expanding Thurston maps.

\begin{lemma} 
  \label{lem:tileflowerimprov}
  Let $f\: S^2\ra S^2$ be an expanding Thurston map. Suppose that
  $\CC$ and $\widetilde \CC$ are two Jordan curves in $S^2$ that
  both contain $\post(f)$. Then there exists a number $M\in \N$
  with the following property: if $n,k\in \N_0$, then every
  $(n+k)$-tile for $(f,\widetilde \CC)$ can be covered by $M$
  $n$-flowers for $(f,\CC)$. 
\end{lemma}

\begin{proof} 
  The argument is a small variation of the one that we used to
  establish Lemma~\ref{lem:tileflower}. Note that
  $\#\post(f)\geq 3$, since $f$ is expanding (see
  Lemma~\ref{lem:no<3}). 

  Let $\delta_0=\delta_0(f,\CC)>0$ be the number as defined in
  \eqref{defdelta}. Since $f$ is expanding, there exists a
  number $M\in \N$ such that {\em each} tile $X$ for
  $(f,\widetilde \CC)$ is a union of $M$ connected sets $U\sub X$
  with $\diam(U)<\delta_0$ (in the proof of
  Lemma~\ref{lem:tileflower} we could guarantee this only for the
  two $0$-tiles for $(f,\widetilde \CC)$). Indeed, since $f$ is
  expanding this is trivially true for all tiles $X$ of
  sufficiently high levels, because then
  $\diam(X)<\delta_0$. There are only finitely many tiles $X$ for
  $(f,\widetilde \CC)$ with $\diam(X)\ge\delta_0$. The existence
  of a suitable constant $M$ easily follows.
  
Now let  $n,k\in \N_0$ and suppose  $Y$ is   an
  arbitrary $(n+k)$-tile for $(f,\widetilde \CC)$. Then $Z=f^{n}(Y)$ is a
  $k$-tile for $(f,\widetilde \CC)$ and $f^{n}|Y$ is a homeomorphism
  of $Y$ onto $Z$.  Hence $Y$ is a union of $M$ sets of the form
  $(f^{n}|Y)^{-1}(U)$, where $U\sub Z$ is connected and
  $\diam(U)<\delta_0$.  Each set $(f^{n}|Y)^{-1}(U)$ is connected and
  so by Lemma~\ref{lem:preimsmall}~\ref{item:preimsmall1} it lies in
  an $n$-flower for $(f,\CC)$.  Hence $Y$ can be covered by $M$ such
  $n$-flowers.
\end{proof}

Our definition of expansion is somewhat {\em ad hoc}, but it has
the advantage that it relates to the geometry of tiles. As we
will see, equivalent and maybe more conceptual descriptions can
be given in terms of the behavior of open covers of $S^2$ under
pull-backs by the iterates of the map.  This shows that expansion is a
topological property of the map. Our definition was based on a
metric concept (namely the mesh size), but this was just for
convenience.

We start with some definitions. Let $\mathcal{U}$ be an open
cover of $S^2$. We define
$\mesh(\mathcal{U})$\index{mesh@$\mesh$} 
to be the supremum of
all diameters of connected components of sets in
$\mathcal{U}$. If $g\: S^2\ra S^2$ is a continuous map, then the
{\em pull-back of $\mathcal{U}$ by $g$} is defined as
$$ g^{-1}(\mathcal{U})=\{V : V \text{ is a connected component of }
g^{-1}(U), \text{ where } U \in \mathcal{U}\}.$$ 
Obviously, $g^{-1}(\mathcal{U})$ is also an open cover of
$S^2$. Similarly,  we denote by $g^{-n}(\mathcal{U})$ the pull-back of
$\mathcal{U}$ by $g^n$. 

\begin{prop}
  \label{prop:expequivexp}
  \index{expanding} 
  \index{Thurston map!expanding}
  Let $f\: S^2\ra S^2$ be a Thurston map. 
  Then the following conditions are equivalent:
 
\begin{enumerate}

\item 
  \label{item:expequivexp1}
  The map $f$ is expanding. 

\item 
  \label{item:expequivexp2}
  \index{d0@$\delta_0$} 
  There exists $\delta_0>0$ with the following property: if  $\mathcal{U}$ is a cover of $S^2$ by open and connected sets that satisfies  $\mesh(\mathcal {U})<\delta_0$, then
$$\lim_{n\to\infty} 
\mesh (f^{-n}(\mathcal {U}))=0.$$ 

\item
  \label{item:exp_cover}
  There exists an open cover $\mathcal{U}$ of $S^2$ with  
  $$\lim_{n\to\infty} 
  \mesh (f^{-n}(\mathcal {U}))=0. $$ 
  
\item 
  \label{item:expequivexp4}
  There exists an open cover $\mathcal{U}$ of $S^2$ with  
  the following property: for every open cover  $\mathcal{V}$  of $S^2$
there exists $N\in \N$ such that $f^{-n}(\mathcal{U})$ is finer than
$\mathcal{V}$ for every  $n\in \N$ with  $n>N$, i.e.,  
for every set $U'\in f^{-n}(\mathcal {U})$ there exists a
set  $V\in \mathcal{V}$ such that $U'\sub V$.  
\end{enumerate} 
\end{prop} 

Condition \ref{item:exp_cover} is the notion of expansion as defined
by Ha\"\i ssinsky-Pilgrim (see \cite[Section~2.2]{HP}). So our notion of
expansion agrees with the one in \cite{HP}.  Condition
\ref{item:expequivexp4} is essentially a reformulation of
\ref{item:exp_cover} in purely topological terms without reference to
the base metric on $S^2$ (which enters in the definition of the mesh
of an open cover). One can reformulate \ref{item:expequivexp2} in a
similar spirit. We will see in the proof below that the constant
$\delta_0$ in \ref{item:expequivexp2} can be chosen to be the
number from \eqref{defdelta}. 
If there exists a Jordan curve $\CC\sub S^2$ with
$\post(f)\sub \CC$ and $f(\CC)\sub \CC$, then expansion of the map $f$
can be characterized in yet another way (see
Lemma~\ref{lem:charexpint}).

\begin{proof} We will show
  \ref{item:expequivexp1} $\Rightarrow$~\ref{item:expequivexp2}
  $\Rightarrow$~\ref{item:exp_cover} $\Rightarrow$~\ref{item:expequivexp1} 
  and
  \ref{item:exp_cover} $\Rightarrow$~\ref{item:expequivexp4} $\Rightarrow$~\ref{item:exp_cover}.

\smallskip 
 \ref{item:expequivexp1} $\Rightarrow$~\ref{item:expequivexp2}
 Suppose 
 that 
 $f$ is expanding. Pick  a Jordan curve $\CC\sub S^2$
 with $\post(f)\sub \CC$, and let $\delta_0>0$ be as in
 \eqref{defdelta} (note that $\#\post(f)\ge 3$ by Lemma~\ref{lem:no<3}). Suppose $\mathcal{U}$ is a cover of $S^2$ by open and connected sets that satisfies  $\mesh(\mathcal {U})<\delta_0$. If $U\in \mathcal {U}$, 
then $U$ is connected and $\diam(U)<\delta_0$. So if $V$ is an arbitrary connected component of $f^{-n}(U)$, then by 
Lemma~\ref{lem:preimsmall}~\ref{item:preimsmall1} 
the set $V$ is contained in an $n$-flower for $(f,\CC)$. 
Hence 
$$ \diam(V)\le 2 \mesh(f,n,\CC), $$ which implies 
$$\mesh(f^{-n}(\mathcal {U}) )\le2 \mesh(f,n,\CC). $$ Since $f$
is an expanding Thurston map, we have $\mesh(f,n,\CC )\to 0$,
and
hence
$\mesh(f^{-n}(\mathcal {U})) \to 0$ as $n\to \infty$. 

\smallskip 
\ref{item:expequivexp2} $\Rightarrow$~\ref{item:exp_cover} This is obvious.

\smallskip 
\ref{item:exp_cover} $\Rightarrow$~\ref{item:expequivexp1} Suppose $\mathcal {U}$ is an open cover of $S^2$ as in \ref{item:exp_cover}.    
Pick  a Jordan curve $\CC\sub S^2$ with $\post(f)\sub \CC$, and let
$\delta>0$ be a
{\em Lebesgue number} 
\index{Lebesgue!number} 
for the cover 
$\mathcal {U}$, i.e., every set $K\sub S^2$ with $\diam(K)<\delta$ is contained in  
a set $U\in \mathcal {U}$. We can find a number $M\in \N$ such that each of the two $0$-tiles for $(f,\CC)$ can be written as a union of $M$  connected sets  $V$ with $\diam(V)<\delta$. Then each such set $V$ is contained in a set $U\in  \mathcal {U}$. 

Now if  $X$ is an arbitrary $n$-tile for $(f, \CC)$, then $Y=f^{n}(X)$ is a $0$-tile for $(f,\CC)$ and $f^{n}|X$ is  a homeomorphism of $X$ onto $Y$. 
Hence $X$  is a union of  $M$  connected sets  of the form $(f^{n}|Y)^{-1}(V)$, where $V\sub Y$ is connected and lies in a set $U\in \mathcal{U}$.  Then   
$(f^{n}|X)^{-1}(V)$ lies in a component of $f^{-n}(U)$, and so 
$$\diam ((f^{n}|X)^{-1}(V))\le \mesh(f^{-n}(\mathcal {U}) ). $$
This implies 
$$\diam (X)\le M \mesh(f^{-n}(\mathcal {U}) ).$$
Hence $$\mesh(f,n,\CC)\le M  \mesh(f^{-n}(\mathcal {U})). $$ Since $
\mesh(f^{-n}(\mathcal {U}))\to 0$, we also have $\mesh(f,n,\CC)\to 0$
as $n\to \infty$. 
It follows that  $f$ is expanding. 

\smallskip 
\ref{item:exp_cover} $\Rightarrow$~\ref{item:expequivexp4} Suppose $\mathcal {U}$ is an open cover of $S^2$ as in \ref{item:exp_cover}, and  $\mathcal{V}$ is  an arbitrary open cover of $S^2$.  Let $\delta>0$ be a  Lebesgue number for the cover 
$\mathcal {V}$, i.e., every set $K\sub S^2$ with $\diam(K)<\delta$ is contained in  
a set $V\in \mathcal {V}$. By \ref{item:exp_cover} we can find $N\in \N$ such that 
$\mesh(f^{-n}(\mathcal {U}))<\delta$ for $n>N$. If $n>N$ and $U'$ is a set in 
$f^{-n}(\mathcal {U})$, then $\diam(U')<\delta$ by definition of
$\mesh(f^{-n}(\mathcal {U}))$. Hence there exists $V\in \mathcal{V}$
such that $U'\sub V$.   

\smallskip 
\ref{item:expequivexp4}
$\Rightarrow$~\ref{item:exp_cover} Suppose $\mathcal{U}$ is an open
cover of $S^2$ as in \ref{item:expequivexp4}. Then $\mathcal{U}$ also satisfies condition
\ref{item:exp_cover}; indeed, let $\eps>0$ be arbitrary, and let
$\mathcal{V}$ be the 
open cover of $S^2$ consisting of all open balls of radius $\eps/2$.
Then $\diam(V)\le \eps$ for all $V\in \mathcal{V}$. Moreover, by
\ref{item:expequivexp4} 
there exists $N\in \N$ such that for $n>N$ every set in
$f^{-n}(\mathcal {U})$ is contained in a set in $\mathcal{V}$. In
particular, $\mesh(f^{-n}(\mathcal {U})) \le \eps $ for $n>N$.  This
shows that $\mathcal{U}$ satisfies condition \ref{item:exp_cover}.
\end{proof}

\section{Further results on expansion}
\label{sec:further-results}

In this section we collect various other useful results  related to
expansion. 

\begin{lemma}
  \label{lem:Thiterates} 
  \index{expanding} 
  \index{Thurston map!expanding}
  Let $f\: S^2\ra S^2$ be  a Thurston map, $n\in \N$, and
  $F=f^n$. Then $F$ is a Thurston map with $\post(F)=\post(f)$. 
   The map $f$ is expanding if and only if $F$ is expanding.
\end{lemma}

\begin{proof} Since $f$ is a Thurston map, the map  $F$ is a branched  covering map  on $S^2$ 
with $\post(F)=\post(f)$ (see Section~\ref{sec:defin-thurst-maps}) and
$\deg(F)=\deg(f)^n\ge 2$. Hence $F$ is also a Thurston map.

Fix a Jordan curve $\CC\sub S^2$ with $\post(f)=\post(F)\sub \CC$. 
It follows from the definitions that 
$$\mesh(F,k,\CC)=\mesh(f,nk,\CC) $$ for all $k\in \N_0$.
If $f$ is expanding, then by Lemma~\ref{lem:exp_ind_C} we have 
$$\lim_{k\to \infty} \mesh(f,k,\CC) =0 $$  which implies that 
$$\mesh(F,k,\CC)=\mesh(f,nk,\CC)\to 0$$ as $k\to \infty$. Hence $F$ is expanding.

Conversely, suppose that $F$ is expanding. Then we know that 
\begin{equation} \label{Fexp}
 \lim_{k\to \infty} \mesh(F,k,\CC)= \lim_{k\to \infty} \mesh(f,nk,\CC)=0. 
 \end{equation} 

Let the constant $M\ge 1$   be as in 
Lemma~\ref{lem:difflevel} for the map $f$ and the Jordan curve $\CC$. By an argument similar to the proof of 
Lemma~\ref{lem:exp_ind_C} one can show  that 
$$\mesh(f, l+1, \CC)\le 2M\mesh(f, l, \CC) $$
for all $l\in \N_0$. 
This implies  
$$ \mesh(f, l, \CC) \le (2M)^n \mesh(f, n \lfloor l/ n\rfloor, \CC)$$ for all $l\in \N_0$ and so by \eqref{Fexp} we have 
$ \mesh(f,l, \CC)\to 0$ as $l\to \infty$. This shows  that $f$ is expanding. 
\end{proof}

A map $f\colon S^2\to S^2$ is called \emph{eventually
  onto}\index{eventually onto|textbf}, if for each non-empty  open set $U\subset S^2$
there is an iterate $f^n$ such that $f^n(U)=S^2$. 

\begin{lemma}
  \label{lem:event_onto}
  Let $f\colon S^2\to S^2$ be an expanding Thurston map. Then $f$ is
  eventually onto.
\end{lemma}

As we will see (Example~\ref{ex:non-expanding-lattes}), there are
Thurston maps that are eventually onto, but not expanding.

\begin{proof}
  Let $f\colon S^2\to S^2$ be an expanding Thurston map. Pick a Jordan
  curve $\CC\sub S^2$ with $\post(f)\sub \CC$ as in
  Definition~\ref{def:exp}. We consider tiles for $(f,\CC)$. 
  As before, we denote the  black and white  $0$-tiles for $(f,\CC)$
  by  $\XOb$ and $\XOw$, respectively.



  Let $U\subset S^2$ be an arbitrary  
  non-empty open set, and   $B(a,\epsilon)$ with $a\in U$ and $\eps>0$  be an open ball contained in
  $U$. Since $f$ is expanding, there is a number  $n\in \N$ such that
  $\mesh(f,n, \CC)< \epsilon/4$. Then  each $n$-tile has
  diameter $<\epsilon/4$. Let $X$ be an $n$-tile containing the center $a\in U$ of $B(a,\eps)$, and  $Y$ an
  $n$-tile that shares an $n$-edge with $X$. Then  $X\cup Y \subset
  B(a,\epsilon)\subset U$, and so  $f^n(U) \supset
  f^n(X\cup Y) = \XOw\cup \XOb= S^2$. The claim follows. 
\end{proof}

A metric $d$ on a space $S$ is called a
\emph{length metric}\index{length!metric}\index{metric!length} 
or \emph{path metric}\index{path!metric}\index{metric!path} 
(see Section~\ref{sec:metrspterm})
 if
for any two points $x,y\in S$ we have  $d(x,y)= \inf_{\ga}  \length(\gamma)$,
where the infimum is taken over all paths $\gamma$ in $ S$ joining  $x$
to $y$. Using this concept,  one can formulate  a simple criterion when a Thurston map is expanding.

\begin{lemma}
  \label{lem:length_exp}
  Let   $d$ be   a length metric on $S^2$ that induces the given 
  topology on $S^2$, and  let $f\colon S^2\to S^2$ be a Thurston map. 
  If  $f$  uniformly expands the $d$-length
  of paths, i.e., if there is a number $\rho>1$ such that for every 
  path  $\gamma$ in $S^2$ we have 
  \begin{equation*}
    \length_d(f\circ \gamma) \geq \rho \length_d(\gamma),  
  \end{equation*}
  then $f$ is expanding. 
\end{lemma}

\begin{proof}
  Let $d$ be a length metric on $S^2$ such that the Thurston map
  $f\colon S^2\to S^2$ expands the $d$-length of  paths as in the
  statement of the lemma. In the following, all metric notions refer to this metric $d$.
  To prove that $f$ is expanding,  we will show
  that condition \ref{item:exp_cover} in 
  Proposition~\ref{prop:expequivexp} is satisfied for a suitable cover $\mathcal{U}$ of $S^2$.

We pick a Jordan curve $\CC\sub S^2$ with $\post(f)\sub \CC$, and consider cells  for 
$(f,\CC)$. Then  the corresponding $0$-flowers $W^0(p)$, $p\in \post(f)$, form a cover of $S^2$ (see Lemma~\ref{lem:mapflowers}~\ref{item:mapflowers4}). 
In order to obtain a cover $\mathcal{U}$ as in Proposition~\ref{prop:expequivexp}~\ref{item:exp_cover}, we want to shrink each $0$-flower $W^0(p)$ slightly to a new set 
$U$ so that we have good  control for the length of paths joining points in $U$ to $p$ inside  $W^0(p)$. 
Note that since $d$ is a length metric and $W^0(p)$ is open and connected, every point in $W^0(p)$ can be joined to $p$ by a path in $W^0(p)$ of finite length, but 
in general there will be no uniform upper bound for the length of these paths. 

In order to obtain such a bound, let $r>0$ and define $W_r^0(p)$ for $p\in \post(f)$  to be the set of all points $u\in W^0(p)$ such that $u$ and $p$  can be joined by a path $\ga$ in $W^0(p)$ with $\length(\ga)<r$. Then $W_r^0(p)$ is  open and $W_r^0(p)\sub W^0(p)$. 

\smallskip 
{\em Claim.} There exists $r>0$ such that each point in $S^2$ is contained in one of the sets $W_r^0(p)$, $p\in \post(f)$. 

\smallskip 
To prove  this, let $u\in S^2$ be arbitrary. Since the  $0$-flowers cover $S^2$,  there exists 
$p\in \post(f)$ such that $u\in W^0(p)$. Then  we can find  a path $\ga$ in $W^0(p)$ joining $u$ and $p$ with 
 $r_u\coloneqq \length(\ga)<\infty$. 
 
 We can choose $\delta_u>0$ such that $B_u\coloneqq B(u, \delta_u)\sub W^0(p)$. Since $d$ is a length metric,  every point $v$ in  $B_u$ can be joined with $u$ by a path in $B_u$ of length $<\delta_u$. If  we concatenate such a path with $\ga$, then we obtain a path  that has length $<r_u+\delta_u$ and stays inside $W^0(p)$.  In particular, we have uniform control for the length of such paths for all points in $B_u$. Since finitely many of the balls $B_u$, $u\in S^2$, cover $S^2$, the claim follows.

Now pick $r>0$ as in the claim, and consider the open cover $\mathcal{U}$ of $S^2$ given by the sets $W^0_r(p)$, $p\in \post(f)$.  Let $n\in \N_0$ and $p\in \post(f)$ be arbitrary, and consider a component $V$ of $f^{-n}(W^0_r(p))$. 
Then $V$ is  contained in a component of  $f^{-n}(W^0(p))$, and so there exists an $n$-flower $W^n(q)$ such that $V\sub W^n(q)$ (see Lemma~\ref{lem:mapflowers}~\ref{item:mapflowers2}). Here $q\in S^2$ is an $n$-vertex. Then $f^n(q)$ is a $0$-vertex contained in 
$W^0(p)$ which implies that $f^n(q)=p$. 

Let $v\in V$ be arbitrary, and $u\coloneqq f^n(v)\in W^0_r(p)$. Then
there exists a path $\ga$ in $W_r^0(p)\sub W^0(p)$ with
$\length(\ga)<r$ that joins $u$ and $p$.  By
Lemma~\ref{lem:liftsofpathsbranched} there exists a lift
$\alpha$ of $\ga$ by $f^n$ that starts at $v$. Then
$f^n\circ \alpha=\ga$ and $\alpha\sub V\sub W^n(q)$. One
endpoint of $\alpha$ is $v$, while the other endpoint of
$\alpha$ is a preimage of $p$ under $f^n$ and hence an
$n$-vertex. Since $q$ is the only $n$-vertex in $V\sub W^n(q)$,
it follows that $\alpha$ joins $v$ and $q$.

By using the fact that $f$ expands the $d$-length of  paths by
the factor $\rho$, we see  that
$$\length (\alpha)\leq \frac{1}{\rho^n}\length (f^n\circ \alpha)=\frac{1}{\rho^n}\length(\ga)<
\frac {r}{\rho^n}. $$ 
So every point in $V$ can be joined to $q$ by a path of length $<
r/\rho^n$. 
This implies that $\diam(V)\leq 2r/\rho^n$, 
and it follows that $\mesh(f^{-n}(\mathcal{U}))\leq  2r/\rho^n$. 
Since $\rho>1$, we conclude that $\mesh(f^{-n}(\mathcal{U}))\to 0$ as $n\to \infty$. By Proposition~\ref{prop:expequivexp}, the map $f$ is expanding.   \end{proof}

A Thurston map $f\colon S^2\to S^2$ is called a \defn{Thurston
  polynomial}\index{Thurston!polynomial} if there exists a point in $S^2$, denoted by 
$\infty$,   that is completely invariant, i.e.,
$f^{-1}(\infty)=\{\infty\}$.  

\begin{lemma}
  \label{lem:poly_not_exp}
No Thurston polynomial $f$ is  expanding.
\end{lemma}

\begin{proof}
  Let $f$ be a Thurston polynomial. We can choose a point    $\infty\in S^2$  that is completely invariant. Then  $ \deg_f(\infty)
  =\deg(f)\geq 2$ by \eqref{eq:sum_deg}, and so  $\infty$ is a critical 
  point of $f$. Since $\infty$ is a fixed point as well, it follows that
  $\infty\in \post(f)$.

 Let    $\CC\subset S^2$ be an arbitrary Jordan curve  with $\post(f)\subset \CC$. We consider tiles for $(f,\CC)$.  Each  $n$-tile $X^n$ is
  mapped by $f^n$ homeomorphically to a $0$-tile $X^0$ (see
  Proposition~\ref{prop:celldecomp}~\ref{item:fkcellular}). Since
  $$\infty\in \post(f)\sub \CC = \partial X^0\sub X^0,$$ it follows that $X^n$ contains a preimage of
  $\infty$ by $f^n$. Since $\infty$ is completely invariant, the only
  such preimage is $\infty$ itself, and so $\infty\in X^n$. Therefore,  each $n$-tile contains
  $\infty$. 

We pick a point   $p\in S^2$ distinct from
  $\infty$. Since for each $n\in \N$ the set of all  $n$-tiles forms a cover of $S^2$,
  there exists an $n$-tile $X^n$ containing $p$.  Then  
  $$ \mesh(f,n, \CC)\ge \diam (X^n) \ge  d(p,\infty)>0, $$ where 
  $d$ denotes the  base metric on $S^2$. It follows that  
 $\mesh(f,n, \CC)\not \to  0$ as $n\to \infty$.  This means  that $f$ is not
  expanding.   
\end{proof}

Let $f\colon S^2\to S^2$ be a Thurston map.  
A 
\emph{Levy cycle}\index{Levy cycle} 
for $f$ is a multicurve $\Gamma=
\{\gamma_1, \dots, 
\gamma_{n}\}$ (see
Definition~\ref{def:multicurve}~\ref{item:def_multicurve}) with the following property: 
for each $j=1, \dots, n$ the set $f^{-1}(\gamma_{j+1})$ contains a component
  $\widetilde{\gamma}_{j}$ that is
  isotopic to $\gamma_{j}$ rel.\ $\post(f)$ such that the map  
  $f|\widetilde{\gamma}_{j} \colon\widetilde{\gamma}_{j}\to \gamma_{j+1}$   is a
  homeomorphism (here we set $\ga_{n+1}=\ga_1$).

Since $f|\widetilde{\gamma}_{j} \colon\widetilde{\gamma}_{j}\to \gamma_{j+1}$ is  a covering map, the last condition is equivalent to the requirement that the (unsigned) degree of this map is equal to $1$. 

\begin{lemma}
  \label{lem:istop_gamma_pre}
  Let $f\colon S^2 \to S^2$ be a Thurston map and suppose that 
  $\gamma$ and  $\sigma$ are  Jordan curves in $S^2\setminus
  \post(f)$ that are isotopic rel.\ $\post(f)$. Let $\gamma_1,  \dots
  , \gamma_k$ with $k\in \N$ be the components of $f^{-1}(\gamma)$. Then $f^{-1}(\sigma)$  has also $k$ components. Moreover,   we can  label them 
 as $\sigma_1, \dots , \sigma_k$
  such that for $j=1, \dots, k$, 
  \begin{enumerate}
  
  \item the curves $\gamma_j$ and $\sigma_j$ are isotopic rel.\
    $\post(f)$, 
    
  \item the degrees of $f|\gamma_j  \colon \gamma_j \to \gamma$ and  $f| \sigma_j  \colon
    \sigma_j \to \sigma$ agree.
  \end{enumerate}
\end{lemma}

\begin{proof}  To see this, we   lift a suitable  isotopy  by
  $f$. 
So let
$H\colon S^2
  \times [0,1] \to S^2 $ be an isotopy rel.\ $\post(f)$ that deforms $\gamma$ to
  $\sigma$, i.e., $H_0= \id_{S^2}$ and $H_1(\gamma) =
  \sigma$. 
Then by Proposition~\ref{prop:isotoplift}  (that we will establish later) 
there is an isotopy $\widetilde{H}\colon S^2
  \times [0,1] \to S^2$ rel.\ $\post(f)$ with $\widetilde{H}_0=
  \id_{S^2}$ such that 
  \begin{equation}
    \label{eq:liftHbyf}
      (H_t\circ f)(p) = (f\circ\widetilde{H}_t)(p) 
  \end{equation}
  for all $p\in S^2$, $t\in [0,1]$.

 Define 
  $\sigma_j \coloneqq  \widetilde{H}_1(\gamma_j)$. Then  by
  definition
  $\gamma_j$ and $\sigma_j$ are isotopic rel.\ $\post(f)$ for $j=1, \dots, n$. Moreover, \eqref{eq:liftHbyf} implies  
  that $\sigma_1, \dots, \sigma_k$ are the components of
  $f^{-1}(\sigma)$ and that the degrees of $f|\gamma_j \colon \gamma_j \to \gamma$
  and  $f| \sigma_j\colon \sigma_j \to \sigma$ agree. \end{proof}

Now suppose that  $f$ is  a Thurston map that has a Levy cycle $\Gamma=\{\gamma_1,
\dots , \gamma_{n}\}$. We  consider the iterate $F= f^n$ and define
$\gamma\coloneqq \gamma_1$.  If we use the previous lemma repeatedly, 
then we see  that there is a component
$\widetilde{\gamma}$ of $F^{-1}(\gamma)$ that is isotopic to
$\gamma$ rel.\ $\post(f)$ such that  the degree of $F|\widetilde{\gamma} \colon
\widetilde{\gamma} \to \gamma$ is $1$. The existence of such an iterate $F=f^n$
and such  a (non-peripheral) Jordan curve $\gamma\subset S^2
\setminus \post(f)$  is in
fact equivalent to the existence of a Levy cycle, but we will not
prove this here. 

By using a lifting argument as in the proof of the previous
lemma (based on Proposition~\ref{prop:isotoplift}),  one can
easily show that Levy cycles 
persist under Thurston equivalence. So
if the Thurston maps $f\colon S^2\to S^2$ and $g\colon S^2 \to S^2$ are equivalent, then $f$ has a Levy cycle if and only if
$g$ has a Levy cycle.

If a Levy cycle $\Gamma$ is an {\em invariant} multicurve, then
it is clearly a Thurston obstruction, since the spectral radius
of the corresponding Thurston matrix $A(f,\Gamma)$ is $\ge 1$.
 If the Levy cycle $\Gamma$ is not invariant, then it is
not hard to show that there is an invariant multicurve
$\Gamma' \supset \Gamma$ (see \cite[Lemma~2.2]{Le92}).
 Then the spectral radius of
$A(f,\Gamma')$ is $\geq 1$, and so every Levy cycle is contained
in a Thurston obstruction. This means that  if a Thurston map has a Levy cycle
(and a hyperbolic orbifold), then it cannot be equivalent to a
rational map according to Thurston's theorem 
(Theorem~\ref{thm:Thurston}).

Our next lemma shows that a   Levy cycle is also an obstruction for  a Thurston map to be 
 expanding. 

\begin{lemma}
  \label{lem:levy_not_exp}
  Let $f\colon S^2 \to S^2$ be a Thurston map that has a Levy
  cycle. Then $f$ is not expanding. 
\end{lemma}

Of course, Levy cycles are not the only obstructions for expansion of a Thurston map. For example, by Lemma~\ref{lem:poly_not_exp} no holomorphic Thurston polynomial is expanding, but, as follows from  Thurston's theorem, such a polynomial cannot have Levy cycles either.

\begin{proof}
  Assume $f$ has a Levy cycle. Then there exists an iterate $F=f^n$
  and a non-peripheral Jordan curve $\gamma_1\subset S^2 \setminus
  \post(f)$ such that a component $\gamma_2$ of $F^{-1}(\gamma_1)$ is
  isotopic to $\gamma_1$ rel.\ $\post(f)$ and the degree of $F| \gamma_2\colon
  \gamma_2\to \gamma_1$ is equal to $1$. From Lemma~\ref{lem:istop_gamma_pre}
  it follows by induction that there is a sequence $\{\gamma_k\}_{k\in
    \N}$ of Jordan curves in $S^2\setminus \post(f)$ all of which are
  isotopic to $\gamma_1$ rel.\ $\post(f)$ such that $\gamma_{k+1}$ is a
  component of $F^{-1}(\gamma_k)$ and the degree of $F| \gamma_{k+1} \colon
  \gamma_{k+1} \to \gamma_{k}$ is equal to  $1$ for all $k\in \N$. Hence
  $\gamma_k$ is a component 
  of $F^{-k}(\gamma_1)$ and the degree of $F^k| \gamma_{k+1}\colon \gamma_{k+1} \to
  \gamma_1$ is equal to $1$. Moreover, each curve $\gamma_k$ is non-peripheral. 

  Since $\gamma_1$ is non-peripheral, we have 
  $\#\post(f) \geq 4$ (each component of
  $S^2\setminus \gamma_1$ contains at least two postcritical
  points).  We fix a Jordan curve $\CC\sub S^2$ with
  $ \post(f)=\post(F)\sub \CC$, and consider tiles and flowers
  for $(F,\CC)$.  Let $\delta_0$ be as in \eqref{defdelta} for
  the map $F$. Then we can decompose $\gamma_1$ into finitely
  many arcs $\alpha_1, \dots , \alpha_l$ such that
  $\diam(\alpha_j)< \delta_0$ for $j=1,\dots, l$. By
  Lemma~\ref{lem:preimsmall}~\ref{item:preimsmall1} every
  connected subset of $F^{-k}(\alpha_j)$ is contained in a
  $k$-flower $W^k_j$ for $j=1, \dots , l$. It follows that for
  each $k\in \N$ the curve $\gamma_k$ is contained in the union
  of $l$ $k$-flowers $W^k_1, \dots, W^k_l$.

 To reach a contradiction,  assume now that $f$, and hence also $F$, is expanding. Then
 we have   \begin{equation*}
    \diam(\gamma_k) \leq \sum_{j=1}^l \diam(W^k_j) \leq 2l \max_{X\in \X^k} \diam (X)
    \to 0
  \end{equation*}
  as $k\to \infty$. 
  
 On the other hand, we can find a finite open cover $\mathcal{U}$ of $S^2$ consisting of 
simply
  connected regions $U$  each of which contains at most
  one postcritical point of $f$ (for example, the $0$-flowers form such a cover). By what we have just seen,  we can find $k\in \N$ such
  that $\diam (\gamma_k)$ is smaller than the Lebesgue number of
  $\mathcal{U}$. Then  $\gamma_k$ is contained in a set
  $U\in \mathcal{U}$. Since $U$ is simply connected and contains at most one postcritical point of $f$,  the curve  $\gamma_k$ is peripheral. This is a contradiction showing  that
  $f$ is not expanding.   
\end{proof}

\begin{ex}
  \label{ex:no_per_crit_not_exp}
  We now present an example of a Thurston map $f$ with a Levy cycle. The map
$f$ will have no periodic critical points, and so it provides an example of a
Thurston map without periodic critical points that is not expanding,
contrasting Proposition~\ref{prop:rationalexpch}. Since Levy cycles
persist under Thurston equivalence,  $f$ is also not equivalent to any
expanding Thurston map. 
%

 For the construction we start with a topological sphere $S^2$
 that is a pillow (see Section~\ref{sec:expratThmaps}). Similar to Section~\ref{sec:Lattes}, the
 pillow is obtained by gluing two unit squares together
 along their boundaries. As before, we color one side (i.e., one
 square) of the pillow white, and the other black.

  The black side is divided horizontally into two rectangles, one of which is colored white and the other colored  black. 
  The white side of the pillow is subdivided into four quadrilaterals,
  two white and two black ones as indicated on  the left in 
  Figure~\ref{fig:Levy_cycle}. Here we have cut the pillow along three
  sides so that we obtain a rectangle as shown in the picture. The
  symbols in the picture indicate which sides have to be identified to
  recover the pillow. 
  
  \begin{figure}
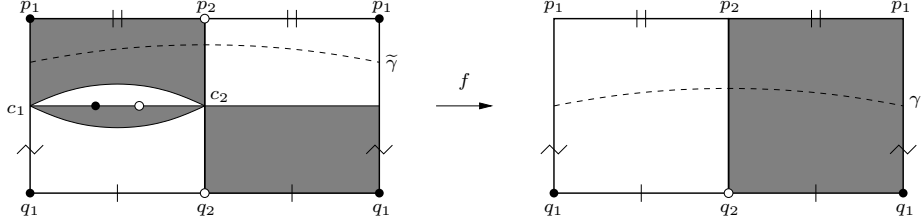

  \centering
  \begin{overpic}
    [width=12cm, tics=5,
    ]
    {Levy}
    \put(49,13){${\scriptstyle f}$}
    \put(99,11){${\scriptstyle \gamma}$}
    \put(41,15){${\scriptstyle \widetilde{\gamma}}$}
    \put(-1,10){${\scriptstyle c_1}$}
    \put(21.5,11.5){${\scriptstyle c_2}$}
    \put(0.5,21.5){${\scriptstyle p_1}$}
    \put(39.5,21.5){${\scriptstyle p_1}$}
    \put(0.5,-1){${\scriptstyle q_1}$}
    \put(39.5,-1){${\scriptstyle q_1}$}
    \put(20,21.5){${\scriptstyle p_2}$}
    \put(20,-1){${\scriptstyle q_2}$}
    \put(58.5,21.5){${\scriptstyle p_1}$}
    \put(97,21.5){${\scriptstyle p_1}$}
    \put(58.5,-1){${\scriptstyle q_1}$}
    \put(97.5,-1){${\scriptstyle q_1}$}
    \put(78,21.5){${\scriptstyle p_2}$}
    \put(78,-1){${\scriptstyle q_2}$}
  \end{overpic}
  \caption{A map with a Levy cycle.}
  \label{fig:Levy_cycle}
\end{figure}

Now $f$ is constructed by mapping  each white quadrilateral homeomorphically 
  to the white face, and each black quadrilateral to the black face of
  the pillow. Here $f$ maps vertices to vertices. 
  In Figure~\ref{fig:Levy_cycle} we have marked two
  vertices of each quadrilateral  (on the left), as well as two
  vertices of the pillow (on the right) by a black or white dot to indicate the correspondence of vertices under 
  $f$. Finally,  we require that $f$ agrees on sides shared by
  two quadrilaterals. The map $f$ thus defined is indeed a Thurston map (because it realizes a two-tile subdivision rule; see
  Chapter~\ref{cha:subdivisions} and in particular Proposition~\ref{prop:rulemapex}). The
  postcritical points correspond to the vertices of the pillow. The
  map $f$ has two critical points $c_1$ and $c_2$ and  the following ramification portrait:
    \begin{equation*}
    \xymatrix @R=1pt{
      c_1 \ar[r]^{3:1} & p_1  \ar[r] & q_1 \ar@(r,u)[]
    }
    \quad\qquad
    \xymatrix @R=1pt{
      c_2 \ar[r]^{3:1} & p_2  \ar[r] & q_2\rlap{.}  \ar@(r,u)[]
    }
  \end{equation*}
  Thus $f$ has no periodic critical points,   its 
  signature is $(3,3,3,3)$, and  $f$ has a hyperbolic orbifold. 

  We consider the Jordan curve $\gamma\subset S^2 \setminus \post(f)$ as indicated  on
  the right in  Figure~\ref{fig:Levy_cycle}. One of the components
  $\widetilde{\gamma}$ of $f^{-1}(\gamma)$ is shown on the left  (the
  other components of $f^{-1}(\gamma)$ are not shown). Clearly, 
  $\widetilde{\gamma}$ is isotopic rel.\ $\post(f)$ to
  $\gamma$. Furthermore,  the degree of $f\colon \widetilde{\gamma}\to
  \gamma$ is equal to $1$, and so  $\Gamma=\{\gamma\}$ is a Levy cycle. 

 Lemma~\ref{lem:levy_not_exp} implies that $f$ is not
  expanding, and,  by our earlier discussion, no Thurston map 
  equivalent to $f$ is  expanding. 
\end{ex}

\section{Latt\`es-type maps and expansion} 
\label{sec:Latttypeexp}

We know (see
Theorem~\ref{thm:Lattesstruc}~\ref{item:Lattessrucii} and (i')) that every Latt\`es map is expanding. This is not always true for Latt\`es-type maps, but  it is easy to decide when this is the case.  The relevant condition is based on the following   definition. 

Let $L\: \R^2 \ra \R^2$ be a linear map. We call it {\em expanding} if $|\lambda|>1$ for each of the two (possibly complex) eigenvalues $\lambda$ of $L$. 

\begin{prop} 
  \label{prop:expLattType} 
  Let $f\:S^2\ra S^2$ be a Latt\`es-type map and $L=L_A$ be the
  linear part of an affine map $A$ as in
  Definition~\ref{def:Lattestype}.  Then $f$ is expanding (as a
  Thurston map) if and only if $L$ is expanding (as a linear
  map). 
\end{prop}

For the proof we require two lemmas. 

\begin{lemma} \label{lem:explinmap} Suppose $L\: \R^2 \ra \R^2$ is an expanding linear map. Then there exist constants 
  $\rho>1$ and $n_0\in \N$ such that 
 \begin{equation}
  \label{eq:Lmetric_exp}
 |L^n(v)|\ge \rho^n |v| 
\end{equation} for all $v\in \R^2$ and all $n\in \N$ with $n\ge n_0$.
 \end{lemma}
 
 Here $|v|=\sqrt{x^2+y^2}$ denotes the usual Euclidean norm of $v=(x,y)\in \R^2$. 

\begin{proof} 
The eigenvalues of $L$ are the two (possibly identical) roots  $\lambda_1, \lambda_2\in \C$ of the characteristic polynomial  $P(\lambda)=\det(L-\lambda \id_{\R^2})$
of $L$.  
We may assume $|\lambda_1|\le |\lambda_2|$. Since $L$ is expanding we have $|\lambda_1|>1$. 
The polynomial  $P$ has real coefficients, and so  $\lambda_2=\overline{ \lambda_1}$ if $\lambda_1$ is not real.  

There exists a basis of $\R^2$ consisting of two linearly independent vectors $v_1,v_2\in \R^2$ such that 
the linear map $L$ has a matrix representation with respect to this basis of one of the following forms:
$$M_1=|\lambda_1| \left(\begin{array}{cc} \cos \theta  & -\sin \theta \\
\sin \theta  & \cos \theta
\end{array} \right), \text{where $\theta\in \R$}, $$
$$  M_2=\left(\begin{array}{cc} \lambda_1  & 0 \\
0 & \lambda_2
\end{array} \right), \text{ or } M_3=\left(\begin{array}{cc} \lambda_1  & 1 \\
0 & \lambda_1
\end{array} \right).  
$$

We can find an inner product on $\R^2$ such that $v_1$ and $v_2$ form an orthonormal basis of $\R^2$ with respect to this inner product. If $\Vert \cdot \Vert$ is  the norm induced by this product, then 
$$ \Vert av_1+bv_2\Vert =\sqrt {a^2+b^2}$$ 
for $a,b\in \R$. If $L$ has a matrix representation given by  $M_1$ or $M_2$, it is clear that 
$ \Vert L(v) \Vert \ge |\lambda_1|\cdot  \Vert v\Vert $ 
and so 
 \begin{equation}
  \Vert L^n(v) \Vert \ge |\lambda_1|^n \cdot \Vert v\Vert 
 \end{equation} 
 for all $v\in \R^2$ and $n\in \N$. If $L$ is represented by the matrix   $M_3$, then a similar  estimate is harder to obtain, but one can show that 
$$ \Vert L^n(v)\Vert \ge \frac{ |\lambda_1|^{n+1}} {\sqrt{2\lambda_1^2 +n^2}} \Vert v\Vert. $$ 
To see this, one bounds the operator norm of the matrix $M_3^{-n}$ by its
Hilbert-Schmidt norm. We leave the details to the reader.

Since all norms on $\R^2$ are comparable, it follows that
$$|L^n(v)| \ge \rho^{n} |v|$$ 
for all sufficiently large $n$ independent of $v\in \R^2$ with 
$\rho=|\lambda_1|^{1/2}>1$.
\end{proof}

Recall that if $G$ is a planar crystallographic group, then we
say that a continuous
map $\Theta\: \R^2\ra S^2$ is 
induced\index{map!induced by group action}\index{induced by group action}
by $G$ if $\Theta(u)=\Theta(v)$ for $u,v\in \R^2$ if and only if
there exists $g\in G$ such that $v=g(u)$ (see
Section~\ref{sec:appquotmaps}). In this case, $G$ is necessarily
of non-torus type (see Theorem~\ref{thm:G_signature}) and
$\Theta$ is essentially the quotient map
$\Theta\: \R^2 \ra \R^2/G\cong S^2$ (see the discussion after 
Proposition~\ref{prop:G_yields_T}). 

\begin{lemma} 
  \label{lem:unifcontTheta} 
  Let $G$ be a planar crystallographic group  and 
    $\Theta\: \R^2 \ra S^2$ be a continuous map induced by $G$. Suppose 
 $K_n\sub \R^2$ is a connected set for $n\in \N$. Then 
 $$\displaystyle \lim_{n\to \infty} \diam(K_n)=0 \text{ if and only if } 
  \displaystyle \lim_{n\to \infty} \diam(\Theta(K_n))=0. $$   
 \end{lemma}

Here $\diam (K_n)$ is  the Euclidean diameter of $K_n$, and 
$\diam(\Theta(K_n))$  the diameter of $\Theta(K_n)$ with respect to some base  metric $d$ on $S^2$. 

\begin{proof} ``$\Rightarrow$'' For this implication it is enough to show that 
$\Theta$ is uniformly continuous on $\R^2$. This follows from the fact that 
$\Theta$ is induced by  $G$ and that $G$ acts isometrically and  cocompactly on $\R^2$;
indeed, we can find a compact fundamental domain $F\sub \R^2$ for the action 
of $G$ on $\R^2$. Now suppose $x,y\in \R^2$ and $\delta\coloneqq |x-y|$ is small.
 Then there exists 
$g\in G$  such that $g(x)\in F$. If $\delta$ is small enough, then $g(x), g(y)\in U$, where $U$ is a compact neighborhood of $F$. Since $\Theta$ is uniformly continuous on $U$, and $|g(x)-g(y)|=|x-y|=\delta$, it follows that 
$$ d(\Theta(x), \Theta(y) )= d(\Theta(g(x)), \Theta(g(y) )$$ 
is small only depending on $\delta$. The uniform continuity of $\Theta$ follows. 

\smallskip
``$\Leftarrow$''
 We argue by contradiction and assume
  that the statement is false. Then
  there exist connected sets $K_n\sub \R^2$ 
   such $\diam(\Theta(K_n))\to 0$ as $n\to \infty$,  but  $\diam(K_n)\ge \eps_0$ for $n\in \N$, where  $\eps_0>0$.

  We pick a point $x_n\in K_n$ for $n\in \N$. If we replace each
  set $K_n$ with its image  $K'_n=g_n(K_n)$ for suitable
  $g_n\in G $ (note that $\diam(K'_n)=\diam(K_n)$ and
  $\Theta(K'_n)=\Theta(K_n)$), and pass to a subsequence if
  necessary, then we may assume that the sequence $\{x_n\}$
  converges, say $x_n\to x\in \R^2$ as $n\to \infty$.
 
 Let $p\coloneqq \Theta(x)$.  Then the set $\Theta^{-1}(p)$ is equal to the orbit $Gx$ of $x$ under $G$.  Since the action of $G$ on $\R^2$ is properly discontinuous, the set 
 $\Theta^{-1}(p)=Gx$ has no limit point in $\R^2$.
 Since $G$ also acts cocompactly on $\R^2$, this  implies that the distance of distinct points in $\Theta^{-1}(p)$ is bounded away from $0$; so there exists a constant $m>0$ such that $|u-v|\ge m$ whenever $u,v\in \Theta^{-1}(p)$ and $u\ne v$.  
 
 Pick a constant $c$ with $0<c<\min\{\eps_0/2, m\}$.  The set $K_n$ is connected, and has diameter $\diam(K_n)\ge \eps_0> 2c$. Hence $K_n$ cannot be contained in the disk $\{z\in \R^2: |z-x_n|<c\}$, and so  it meets the circle $\{z\in \R^2: |z-x_n|=c\}$. It follows  that  there exists a
 point $y_n \in K_n$ with $|x_n-y_n|=c$. By passing to another subsequence if necessary, we may assume that the sequence $\{y_n\}$ converges, say $y_n\to y\in \R^2$ as $n\to \infty$. 
 Then $|x-y|=c<m$. Note that $\Theta(x_n), \Theta(y_n)\in \Theta(K_n)$ for $n\in \N$, 
and $\diam(\Theta(K_n))\to 0$ as $n\to \infty$. 
So
$$ p=\Theta(x)=\lim_{n\to \infty} \Theta(x_n)=\lim_{n\to \infty} \Theta(y_n) =\Theta(y), $$ 
and  $x,y\in \Theta^{-1}(p)$. Since $|x-y|=c>0$, we have $x\ne y$. So $x$ and $y$ are two distinct 
points in $\Theta^{-1}(p)$ with $|x-y|=c<m$. This contradicts the choice of $m$, and the claim follows. 
 \end{proof}

\begin{proof}[Proof of Proposition~\ref{prop:expLattType}] 
In the proof  metric notions on $\R^2$ will  refer to
the Euclidean metric.  

Let $f\: S^2 \ra S^2$ be the given Latt\`es-type map, and $A$, $\Theta$, $G$ be as in
Definition~\ref{def:Lattestype}. Then $\Theta\: \R^2\ra S^2$ is a branched covering map induced by the  crystallographic group $G$. We know that here $G$ is not isomorphic to $\Z^2$, because the quotient space $\R^2/G\cong S^2$ is not a torus. So by Proposition~\ref{prop:G_yields_T} there exists a holomorphic branched covering map $\widetilde \Theta\: \R^2\cong \C\ra \CDach$   induced by $G$, and a   unique homeomorphism 
$\varphi \: S^2 \ra \CDach$ such that  $\widetilde \Theta= \varphi \circ \Theta$ (note that the roles of the maps $\widetilde \Theta$ and $\Theta$  are reversed in Proposition~\ref{prop:G_yields_T}). 

We now conjugate $f$ by $\varphi$ to obtain a Thurston map $\widetilde f\coloneqq \varphi\circ f \circ \varphi^{-1}$ defined on $\CDach$. Then $\widetilde f$ is a Latt\`es-type map with   the triple $A$, $\widetilde \Theta$, $G$  as in
Definition~\ref{def:Lattestype}. Note that the affine map $A$ has not changed here and that $f$ is expanding if and only if $\widetilde f$ is expanding. In other words,  in order to prove the statement, we 
can make the additional assumptions that 
 the Latt\`es-type map $f$ is defined on $\CDach$ and the map $\Theta\: \C \ra \CDach$ is holomorphic. 

Then $\Theta\: \C \ra
\CDach$  is  the universal orbifold covering map of the parabolic orbifold $\mathcal{O}_f=(\CDach,\alpha_f)$ of $f$ (see
Proposition~\ref{prop:noperparaLTM}, Corollary~\ref{cor:orbofLattty},  and Theorem~\ref{thm:orbunifparacas}). 

Let $\omega$ be the canonical orbifold
    metric of $\mathcal{O}_f$.\index{canonical orbifold!metric}\index{metric!canonical orbifold} 
Since $\mathcal{O}_f$ is parabolic, $\om$ 
 is essentially the
push-forward of the Euclidean metric  on $\R^2$ by $\Theta$
(see Section~\ref{sec:expratThmaps} and Section~\ref{sec:orbif-assoc-thurst}). The metric  $\om$ is a length metric that induces the standard   topology on $\CDach$ (so it can be used as a base metric on $\CDach$ as in Section~\ref{sec:defin-expans-revis} or Lemma~\ref{lem:unifcontTheta}) and  the map $\Theta\: \R^2\ra \CDach$ is a path isometry in the sense that 
$$ \length(\alpha) = \length_\omega(\Theta \circ \alpha)$$ for each path 
$\alpha$ in $\R^2$.

 If  $L=L_A$ is the linear part of $A$, then  the map $L^n$ is the linear part of $A^n$ for each $n\in \Z$. So $L^n$ and  $A^n$ only differ by a translation. Since translations are isometries, it follows that 
\begin{equation}\label{eq:lengthALalp}
\length( L^n\circ \alpha)=\length(A^n\circ \alpha) 
\end{equation} 
 for all  $n\in \Z$ whenever  $\alpha$ is  a path in $\R^2$. If
 $\gamma\coloneqq \Theta\circ \alpha$, then  
 \begin{equation*}
   f^n\circ \ga
   =  
   f^n\circ \Theta \circ \alpha 
   = 
   \Theta \circ A^n  \circ \alpha 
 \end{equation*}
for $n\in \N$.  
Since $\Theta$ is a path isometry, we conclude that 
\begin{equation*}
  \length_\omega(f^n\circ \ga) 
  = 
  \length (A^n\circ \alpha )
  =
  \length (L^n\circ \alpha )
\end{equation*}
for  $n\in \N$. 

Now suppose that $L$ is expanding. Then Lemma~\ref{lem:explinmap} implies  that there exist $N\in \N$  and a constant $\rho>1$ such that 
$$ \length(L^N \circ \alpha) \ge \rho \length (\alpha)$$ 
for all paths $\alpha$ in $\R^2$. If $\gamma$ is an arbitrary path in $\CDach$, then it has a lift by the branched covering map $\Theta$ (see Lemma~\ref{lem:liftsofpathsbranched}); so it can be written in the form $\ga=\Theta\circ \alpha$, where $\alpha$ is a path in $\R^2$.  Hence 
\begin{align*} 
 \length_\omega(f^N\circ \ga)
  &= \length (L^N\circ \alpha ) 
 \ge  \rho \length (\alpha)
  \\ 
  &=  \rho \length_\omega (\Theta\circ \alpha)
    =\rho \length_\om (\ga).
\end{align*} Lemma~\ref{lem:length_exp} implies that $f^N$ is an expanding Thurston map. Hence $f$ is expanding (see Lemma~\ref{lem:Thiterates}). 

 To prove the converse, we assume that $f$ is expanding, but $L$ is not. 
 Then one of the eigenvalues of $L$ has absolute value $\le 1$. 
 Since the product of these eigenvalues is equal to 
  $\det(L) = \deg(f)\ge 2$ (see Lemma~\ref{lem:deglatttype}), it follows 
  from  the considerations in the beginning  
 of the proof of Lemma~\ref{lem:explinmap} that both eigenvalues of $L$ are real.  So  $L$ has a real eigenvalue $\lambda$  with $ |\lambda|\le 1$. Note that $\lambda \ne 0$, because $L$ is invertible. 
 
 Then  there exists $u\in \R^2$ with $|u|=1$ such that 
 $L(u)=\lambda u$. Let  $\alpha$ be  the parametrized line segment joining $0$ and $u$,  and define $\alpha_n\coloneqq A^{-n}\circ \alpha$ for $n\in \N$.  Then 
 \begin{align}\label{eq:lowerdiambddan} 
 \diam(\alpha_n)&= \diam(A^{-n}\circ \alpha)= \length(A^{-n}\circ \alpha)\\& = \length(L^{-n}\circ \alpha)=\frac{1}{|\lambda|^n}
  \length(\alpha)\notag \\ &\ge \length(\alpha)=1\notag 
  \end{align} 
  for all $n\in \N$.

Suppose for  $n\in \N$ the path  $\ga_n$ is  a lift of  some  path $\ga$ in $\CDach$ by  $f^n$. Since $f$ is expanding,   we then 
have $\diam_\om(\ga_n)\to 0$ as $n\to \infty$. 
Indeed,  if $\ga$ is a path whose diameter is less than 
the Lebesgue number $\delta>0$ of an open cover $\mathcal{U}$ as in 
Proposition~\ref{prop:expequivexp}~\ref{item:exp_cover}, then there exists $U\in \mathcal{U}$ such that $\ga\sub U$. Then $\ga_n$ lies in a connected component of $f^{-n}(U)$ and so 
$$ \diam_\om(\ga_n) \le \mesh(f^{-n}(\mathcal{U}))\to 0$$ 
as $n\to \infty$.  The statement $\diam_\om(\ga_n)\to 0$ as $n\to \infty$
remains true for arbitrary paths $\ga$, because we can break $\ga$ up into finitely many paths of diameter $<\delta$. 
(We will later see that with respect to a {\em visual metric} for $f$ 
(see Chapter~\ref{cha:visual-metrics})   the diameters of lifts of any path by $f^n$ actually shrink to $0$ exponentially fast as $n\to \infty$
 (see Lemma~\ref{lem:liftpathshrinks})).
 
 We apply this to   $\ga\coloneqq \Theta\circ \alpha$, and 
$ \ga_n\coloneqq  \Theta\circ\alpha_n$ for $n\in \N$. The path $\ga_n$ is a lift of $\ga$ by $f^n$, because 
$$ f^n\circ \ga_n =   f^n \circ \Theta\circ A^{-n} \circ \alpha= \Theta \circ A^n \circ A^{-n}  \circ \alpha=  \Theta   \circ \alpha=\ga. $$ 
 
We now obtain a contradiction from  
 Lemma~\ref{lem:unifcontTheta}, because the sets   $\alpha_n$ 
 are connected and 
 $$\diam_\om(\ga_n)=\diam_\om (\Theta(\alpha_n))\to 0 $$ 
 as $n\to \infty$, but $\diam(\alpha_n)\ge 1$ for all $n\in \N$ by 
  \eqref{eq:lowerdiambddan}.
  
It follows that    $L$ is expanding if $f$ is. Together with the first part of the proof, we conclude that $f$ is expanding as a Thurston map if and only if $L=L_A$ is expanding as a linear map. 
 \end{proof}

We finish this chapter by giving an example of a Thurston map
that is eventually onto, 
but not expanding. The example is due to K.~Pilgrim.
 
 \begin{ex} 
   \label{ex:non-expanding-lattes}
   Let $G$ be the crystallographic group consisting of all maps
   $g$ of the form $$u\in \R^2 \mapsto g(u)=\pm u+\ga, $$  where
   $\ga \in \Z^2$. So $G$ is of type $(2222)$  (see
   Theorem~\ref{thm:G_signature}). 
Let $\Theta\: \R^2 \ra \R^2/G\cong S^2$ be the quotient map. 

We consider  the matrix 
\begin{equation*}
 A =  \left(
    \begin{array}{cc}
      4 & 2
      \\
      2 & 2
    \end{array}
  \right).
\end{equation*}
and the map $A\: \R^2\ra \R^2$, $u\in \R^2\mapsto Au$ given by left-multiplication of  $u\in \R^2$ (written as a column vector) by the matrix $A$. For simplicity, here (and also below)   we not not distinguish in our notation between a matrix and the linear map it induces on $\R^2$ by left-multiplication. 

The map $A$ has the form \eqref{eq:Lattestype2222}. The 
linear part $L=L_A$ of $A$ agrees with $A$. So 
 $L$ is also represented by the matrix $A$. Since
$\det(L_A)=\det(A)\ge 2$, 
we know that
the map $A$ induces a
Latt\`es-type map $f\: S^2\ra S^2$ on the quotient $S^2=\R^2/G$
according to Proposition~\ref{prop:2222}.
We claim that {\em $f$ is not expanding}, but \emph{eventually
  onto.} 
\index{eventually onto} 

Recall that the latter property means that for any
non-empty open set $U\sub S^2$ there is an iterate $f^n$ such that $f^n(U)=S^2$
(see also Lemma~\ref{lem:event_onto}). 

The map  $L=A$ has the eigenvalues $\lambda_1= 3-\sqrt{5}$ and  
  $\lambda_2=3+\sqrt{5}$. Since $|\lambda_1|<1$, the map $f$ is not expanding by Proposition~\ref{prop:expLattType}.     
  
Now consider the linear maps $B$ and $C$  given by left-multipli\-cation of $u\in \R^2$ with  the matrices  
  $$ B=
  \left(
    \begin{array}{cc}
      2 & 1
      \\
      1 & 1
    \end{array}
  \right)   \text{ and }     C=
  \left(
    \begin{array}{cc}
      2 & 0
      \\
      0 & 2
    \end{array}
  \right), 
  $$ respectively. Then $A=B\circ C=C\circ B$. The maps $B$ and $C$ again have the form \eqref{eq:Lattestype2222} and so descend to maps $g\: S^2\ra S^2$ and $h\: S^2 \ra S^2$ 
  respectively. Note that $\det( B)=1$, and so $g$ is a homeomorphism with an inverse induced by $B^{-1}$ (see Proposition~\ref{prop:2222}). 
  
  These maps satisfy 
  $f=g\circ h=h\circ g$. Indeed, note 
  that $f\circ \Theta = \Theta \circ A$, $g\circ
\Theta = \Theta \circ B$, and $h\circ \Theta = \Theta \circ
C$. Thus
\begin{equation*}
  f\circ \Theta 
  =  \Theta \circ A=
  \Theta \circ B \circ C 
  = 
  g\circ \Theta \circ C
  =
  g \circ h \circ \Theta.
\end{equation*}
Since $\Theta$ is surjective, it follows that $f= g\circ h$. A
similar  argument shows that $f=h\circ g$.
 Since $f=g\circ h=h\circ g$, we have  
  $ f^n = g^n\circ h^n$ for all $n\in \N$.  
  
  Now let $U\sub S^2$ be an arbitrary non-empty open set. 
  Then $V\coloneqq \Theta^{-1}(U)$ is a non-empty open set in $\R^2$. Since 
  $C^n(V)=2^n V$ this set  $C^n(V)$ will contain arbitrarily large disks if $n$ is sufficiently large. In particular, there exists $n\in \N$ such that 
  $C^n(V)$ contains a translate $\ga+R$ with $\ga\in \Z^2$ 
 of the  rectangle $R=[0,1]\times [0,1/2]$, which  is  a
 fundamental domain (see Section~\ref{sec:appquotmaps}) for the action of $G$. For  such $n$ we have 
  $$ S^2=\Theta(R)=\Theta(\ga+R)=\Theta(C^n(V))=h^n(\Theta(V))=h^n(U), $$ 
  and so, since $g$ is a homeomorphism,  
  $$ f^n(U)=(g^n\circ h^n)(U)=g^n(S^2)=S^2. $$ 
  This shows that $f$ is eventually onto. 
\end{ex}

\ifthenelse{\boolean{singlechapter}}{

\chapter{Thurston maps with two or three postcritical points}
\label{cha:thurston-maps-23}

In this chapter we investigate Thurston maps $f \: S^2\ra S^2$ with a postcritical set consisting of two or three elements (note that we always have $\#\post(f)\ge2$ by Corollary~\ref{cor:post012}).  The considerations here are not essential for our main story and may safely be skipped by the impatient reader. 

For starters it is easy to classify all
Thurs\-ton maps with two postcritical points up to Thurston
equivalence.

\begin{prop}
  \label{prop:post2} 
 A Thurston map   $f\:S^2\ra S^2$ 
  with
  $\#\post(f)=2$ is  Thurston equivalent to a power map
  $z\mapsto z^n$ on $\CDach$, where $n\in \Z\setminus\{-1,0,1\}$.
\end{prop} 
This will be proved in Section~\ref{sec:proofs-thurst-equiv}.
%
%
%
In  case $\#\post(f)=3$, the relation to rational Thurston maps 
is clarified by the following statement.

\begin{theorem}\label{thm:3postrat}
Let  $f\:S^2\ra S^2$ be  a Thurston map such that  $\#\post(f)=3$. Then the following statements are true:
  \begin{enumerate}
  \item 
    \label{item:3post_rat}
    $f$ is Thurston equivalent to a rational Thurston map.
    \index{Thurston map!rational}
  \item
    \label{item:3post_exp}
   If $f$ is expanding, then $f$ is topologically
    conjugate to a rational Thurston map if and only if $f$ has no
    periodic critical points.   
  \end{enumerate}
\end{theorem}
Part \ref{item:3post_rat} of this statement is essentially a trivial case of  Thurston's  characterization of rational maps
given in Theorem~\ref{thm:Thurston}.  
We will present a proof in
Section~\ref{sec:proofs-thurst-equiv}.  
Part~\ref{item:3post_exp} easily follows if this is combined with some of our other results.

In Chapter~\ref{cha:lattes-lattes-type} we considered  Latt\`{e}s and
 Latt\`{e}s-type maps.
These are  Thurston maps $f\: S^2 \ra S^2$  with a parabolic orbifold and no 
periodic critical points (see Proposition~\ref{prop:noperparaLTM}).  The case when  $f$ has a parabolic orbifold, but also periodic critical points, is very special. Then the  signature of $f$ must be  $(\infty,\infty)$ or  $(2,2,\infty)$ as follows from Propositions~\ref{prop:parabolicOf} and \ref{prop:otherramprops}. These maps can easily be classified up to Thurston equivalence.

\begin{theorem}
  \label{thm:para_Th_poly}
  Let   $f\colon S^2\to S^2$ be   a Thurston map. Then $f$ has
  signature 
  \index{Thurston map!parabolic}
  \index{orbifold!parabolic}
  \index{parabolic!orbifold}
  \begin{enumerate}
  \item 
    \label{item:Th_poly_inf_inf}
    $(\infty,\infty)$ if and only if  $f$ is Thurston equivalent to a power map $z\mapsto
    z^n$ on $\CDach$, where $n\in \Z\setminus \{-1,0,1\}$;
  \item 
    \label{item:Th_poly_22inf}
    $(2,2,\infty)$ if and only if $f$ is Thurston equivalent to $\chi$ or $-\chi$,
     where $\chi=\chi_n$ is a 
    \emph{Chebyshev polynomial}\index{Chebyshev polynomial}
    of degree $n\in \N\setminus\{1\}$.  
  \end{enumerate}
\end{theorem}

  The case of signature $(\infty,\infty)$ is essentially already
  covered by Proposition~\ref{prop:post2}.  The proof of
  Theorem~\ref{thm:para_Th_poly} is  given in
  Section~\ref{sec:parab-thurst-polyn}, where we will  also
  review the definition of  Chebyshev polynomials (see also
  \cite{Ri90}).

\section{Thurston equivalence to rational maps} 
\label{sec:proofs-thurst-equiv}

We begin by  looking at  Thurston maps $f$ with $\#\post(f)=3$.

\begin{proof}[Proof of Theorem~\ref{thm:3postrat}] 

  \ref{item:3post_rat} 
  Let $f\colon S^2\to S^2$ be a Thurston with three
  postcritical points, which we denote by $p_0,p_1,
  p_\infty$. Let $h_0\colon S^2 \to \CDach$ be an
  orientation-preserving homeomorphism. By postcomposing $h_0$ with a
  suitable M\"{o}bius transformation, we may assume that
  $h_0(p_0)=0$, $h_0(p_1) = 1$, and $h_0(p_\infty) =
  \infty$. We can think of the map $h_0$ as a global chart
  on  $S^2$; by this chart $S^2$  carries a  conformal structure.  

 If we pull-back this conformal structure by the map $f$, then 
 we obtain another  conformal structure on $S^2$. The map $f$ is then holomorphic with respect to
  these two conformal structures on $S^2$.  To be more precise, 
 Corollary~\ref{cor:brcovratup} gives 
  the existence of  an orientation-preserving homeomorphism $h_1\colon
  S^2\to \CDach$ and a rational map $R\colon \CDach\to \CDach$
  such that $h_0 \circ f = R \circ h_1$.
 On a more intuitive level, $R$  is the representation of the holomorphic map $f$ if  we use suitable global charts. 
  
   Again by postcomposing
   with a suitable M\"{o}bius transformation, we may assume
  that $h_1(p_0)= 0$, $h_1(p_1) = 1$, and $h_1(p_\infty) =
  \infty$. 

  Two orientation-preserving homeomorphisms on $S^2$ that agree
  on a set $P\subset S^2$ containing at most three points are
  isotopic rel.\ $P$ (see Lemma~\ref{lem:homeo}). Thus $h_0$ and
  $h_1$ are isotopic rel.\ $\post(f) = \{p_0,p_1,
  p_\infty\}$. It now follows from  Lemma~\ref{lem:T-eq_crit_post} 
  that $R$ is a Thurston map, and it is clear that $f$ and $R$ are Thurston equivalent.

\smallskip
\ref{item:3post_exp}
 Now suppose in addition that $f$ is expanding and has no periodic
critical points. Since the latter   condition is invariant under
Thurston equivalence, the rational Thurston  map $R$ constructed 
above will then not have periodic critical points either, and  is hence
expanding by Proposition~\ref{prop:rationalexpch}. Therefore, the maps
$f$ and $R$ are topologically conjugate by a general result that
will be proved later 
(see  Theorem~\ref{thm:exppromequiv}). 

Conversely, if $f$ is expanding and topologically conjugate to a rational 
 map $R$, then $R$ is an expanding Thurston map. Hence $R$ has no periodic critical points by Proposition~\ref{prop:rationalexpch}, which implies that $f$ cannot have periodic critical points either. 
 \end{proof}


  


Let us now consider the case $\#\post(f)=2$. 

\begin{proof}[Proof of Proposition~\ref{prop:post2}]
  Assume $f\colon S^2\to S^2$ is a 
  Thurston map with
  $\#\post(f)=2$. We want to show that $f$ is Thurston
  equivalent to the map $z\mapsto z^n$ on $\CDach$, where
  $n\in \Z\setminus\{-1,0,1\}$.

  The proof that $f$ is Thurston equivalent to a rational map
  $R\colon \CDach \to \CDach$ is identical to the proof of
  Theorem~\ref{thm:3postrat}~\ref{item:3post_rat}. The only
  adjustment is that the maps $h_0\colon S^2\to \CDach$ and
  $h_1\colon S^2 \to \CDach$ have to agree on the set
  $\post(f)$, which now contains just two points, instead of
  three. Again this is achieved by postcomposing with  suitable
  M\"{o}bius transformations. 

  The rational map $R$ has also  two postcritical points, and so
  we may assume that $\post(R)=\{0, \infty\}$ (by conjugating
  $R$ with a suitable M\"{o}bius transformation). Let
  $A= R^{-1}(0)$ be 
the set of zeros 
and $B= R^{-1}(\infty)$
  be the set of poles of $R$.  From Lemma~\ref{lem:postf-2}
  it follows that $A\cup B=\post(R) = \{0,\infty\}$. This
  implies that $R(z) = cz^n$ for $z\in \CDach$, where
  $c\in \C\setminus\{0\}$ and $n\in \Z \setminus \{0\}$. Here
  actually $n\in \Z \setminus \{-1, 0, 1\}$, because $R$ is a
  Thurston map and hence not a homeomorphism.  So in particular,
  $n\ne 1$, which implies that by conjugating $R$ with an
  auxiliary map of the form $z\mapsto \alpha z$,
  $\alpha\in \C\setminus\{0\}$, if necessary, we may assume that
  $c=1$. The claim follows.
  \end{proof}

\section{Thurston maps with signature $(\infty,\infty)$ or $(2,2,\infty)$}
\label{sec:parab-thurst-polyn}

In this section we consider Thurston maps $f$ with a parabolic
orbifold and periodic critical points. Equivalently, the
associated orbifold $\OC_f$ has signature $(\infty,\infty)$ or
$(2,2,\infty)$. These
maps together with Latt\`es and 
Latt\`{e}s-type maps considered
in Chapter~\ref{cha:lattes-lattes-type} cover all cases of
Thurston maps with a parabolic orbifold (see
Proposition~\ref{prop:noperparaLTM}). 

The case when the signature is $(\infty,\infty)$ has
already been treated  in Proposition~\ref{prop:post2} (see also
Lemma~\ref{lem:postf-2}). It remains to consider  the case of signature $(2,2,\infty)$. 
Our presentation  follows
\cite{Mi06}.

\begin{lemma}
  \label{lem:22infty}
  Let $f\colon S^2\to S^2$ be a Thurston map with signature
  $(2,2,\infty)$. Then $f$ is a Thurston polynomial.
\end{lemma}

\begin{proof}
  Suppose the  signature of  $f$ is  $(2,2,\infty)$ and let
  $p\in S^2$ be the unique point with $\alpha_f(p)=\infty$. 
  Proposition~\ref{prop:parabolicOf} implies that  
  $\alpha_f(q)=\infty$  for each point $q\in f^{-1}(p)$.  Thus, 
  $q=p$ and so $p$ is completely invariant. Therefore,  $f$ is a Thurston
  polynomial.
\end{proof}

We call a Thurston polynomial $f$ with a parabolic orbifold a
{\em parabolic Thurston polynomial}.\index{parabolic!Thurston polynomial}\index{Thurston!polynomial!parabolic} 
Then $f$ has signature
$(\infty, \infty)$ or $(2,2,\infty)$. Conversely, if $f$ is a
Thurston map with signature $(2,2,\infty)$, then $f$ is a
parabolic Thurston polynomial by Lemma~\ref{lem:22infty}. If $f$
has signature $(\infty, \infty)$, then $f$ or $f^2$ is a
parabolic Thurston polynomial as follows from
Proposition~\ref{prop:post2}. A 
classification of parabolic Thurston polynomials is obtained
from Theorem~\ref{thm:para_Th_poly} which we will prove below.
 
Lemma~\ref{lem:poly_not_exp} implies that 
 a parabolic Thurston polynomial  $f$ cannot be
expanding.    Moreover,  if $f$ is rational and
suitably normalized, then $f$ is a polynomial.

 The polynomials that appear here are very special, namely power maps $z\mapsto z^n$ or 
Chebyshev polynomials. By definition  the {\em Chebyshev
  polynomial}\index{Chebyshev polynomial|textbf}  $\chi_n$  for $n\in \N_0$ is the unique polynomial 
such that 
\begin{equation}\label{eq:functCh}
 \cos(n u) =\chi_n(\cos u), \quad u\in \C. 
 \end{equation} 
Thus, the first Chebyshev polynomials are $\chi_0(z)=1$,
$\chi_1(z)=z$, $\chi_2(z) = 2z^2 -1$, and $\chi_3(z) =
4z^3-3z$. They satisfy the recurrence relation 
\begin{equation*}
  \chi_{n+1}(z)=2z\chi_n(z) - \chi_{n-1}(z)
\end{equation*}
for  $n\in \N$. This implies  that
$\deg(\chi_n)=n$. Since 
\begin{align*}
  \chi_n(-\cos u) &= \chi_n(\cos(u + \pi)) = \cos(nu + n\pi)\\
  &=(-1)^n  \cos(nu)= (-1)^n  \chi_n(\cos u),
\end{align*}
it follows that $\chi_n$ is an even function when $n$ is even, and $\chi_n$
is an odd function when $n$ is odd.  

To find the critical points of $\chi_n$,  we differentiate  \eqref{eq:functCh} and  obtain 
$$- n\sin (n u) =- \chi'_n(\cos u) \sin u, \quad u\in \C. $$
For $n\ge 1$ the left  hand side has a simple zero whenever $nu\in \Z\pi$. In particular, if we consider $u_k=k\pi/n$ and  the corresponding points $z_k=\cos(k\pi /n)$ for $k=0, \dots, n$, we see that 
$\chi_n'$ has a simple zero at $z_k$ for $k=1, \dots, n-1$.
Moreover, $\chi'_n(z_0)=\chi'_n(1)\ne 0$ and  $\chi'_n(z_n)=\chi'_n(-1)\ne 0$,
because $\sin(u_0)=\sin(u_n)=0$. 
 Since $\chi_n$ has at most $n-1$ critical points and $u\mapsto \cos(u)$ is injective on 
$[0, \pi]$, the points   $z_k$ for $k=1, \dots, n-1$ are   the  distinct critical points of $\chi_n$ and  $\deg(\chi_n, z_k)=2$. 
By \eqref{eq:functCh} we have  $\chi_n(z_k)=(-1)^k$ 
for $k=1, \dots, n-1$,   $\chi_n(-1)=\chi_n(1)=1$ for $n$ even, and  $\chi_n(\pm1)=\pm1$ for $n$ odd. This implies  that $\chi=\chi_n$ for $n\ge 2$ is a postcritically-finite polynomial
with $\post(\chi)=\{-1, 1, \infty\}$. For the ramification function $\alpha_\chi$ of $\chi$ we conclude from this analysis that $\alpha_\chi(-1)=\alpha_\chi(1)=2$ and $\alpha_\chi(\infty)=\infty$. 

We will also have to consider  the polynomial $\chi=-\chi_n$ for $n\ge 2$. The same considerations  show that again $\post(\chi)=\{-1, 1, \infty\}$, 
$\alpha_\chi(-1)=\alpha_\chi(1)=2$, and $\alpha_\chi(\infty)=\infty$. 
It follows  that $\chi=\pm\chi_n$ for $n\ge 2$ is a postcritically-finite polynomial whose orbifold has signature $(2,2, \infty)$. 

The postcritical
points $-1$  and $1$ are mapped by $\chi=\pm\chi_n$
as indicated in the following diagrams: if  $\chi=\chi_n$, 

\begin{align}\label{eq:-11map1} 
  \xymatrix {
    -1 \ar[r] & 1 \ar@(r,u)[]
  }
 & \quad \ \text{ for } n \text{ even and }
   \xymatrix @R=1pt{
    -1 \ar@(r,u)[] & 1 \ar@(r,u)[] 
  } 
   \quad \ \text{for } n \text{ odd;}
\end{align}
and if  $\chi=-\chi_n,$

\begin{align}\label{eq:-11map2} 
  \xymatrix @R=1pt{
    1 \ar[r] & -1 \ar@(r,u)
  }
   \quad \ \text{ for } n \text{ even and }
    \xymatrix @R=1pt{
    -1 \ar@/^1pc/[r] & 1 \ar@/^1pc/[l] 
  } 
  & \ \quad \text{ for } n \text{ odd.}
\end{align}
The diagrams when $n$ is even are similar in both cases, because the maps 
involved are topologically conjugate. 
Indeed, suppose $n\in \N$ is even,  and let $\tau(z)=-z$ for $ z\in
\C$. Since $\chi_n$ is an even function, it follows that 
$$(\tau\circ \chi_n \circ\tau^{-1})(z)=-\chi_n(-z)=-\chi_n(z)$$ 
for $z\in \C$, 
which implies that $-\chi_n=\tau\circ \chi_n\circ \tau^{-1}$ for $n\in \N$ even. 

When $n\in \N$ is odd, the maps $\chi_n$ and $-\chi_n$  are not
topologically conjugate (or Thurston equivalent) as  the
above diagrams show: all postcritical points are fixed points for $\chi_n$, but not 
for $-\chi_n$. 

After this preliminary discussion, we can now prove Theorem~\ref{thm:para_Th_poly}. 

\begin{proof}[Proof of Theorem~\ref{thm:para_Th_poly}]
Statement~\ref{item:Th_poly_inf_inf} immediately follows from 
  Proposition~\ref{prop:post2} and 
  Proposition~\ref{prop:Thequivsamesig}.

   To prove \ref{item:Th_poly_22inf}, let $f\colon S^2\to
  S^2$ be a Thurston map with 
  signature $(2,2,\infty)$. Then $\#\post(f)=3$, and so by Theorem~\ref{thm:3postrat}~\ref{item:3post_rat} the map $f$ is Thurston equivalent to a rational map, necessarily with the same signature.
So  we may assume that $f$ is a rational map on $S^2=\CDach$ to begin with. Moreover, by conjugating  the map with a suitable M\"obius transformation, we may assume that 
$\post(f)=\{-1,1,\infty\}$ and that 
for the ramification function $\alpha_f\: \CDach\ra \widehat \N$ of $f$ we have 
$\alpha_f(-1)= \alpha_f(1)=2$ and
$\alpha_f(\infty)=\infty$. By the argument in the proof of  Lemma~\ref{lem:22infty} we see  that then $f$ is a polynomial.

The map $f$ has a parabolic orbifold, and so Proposition~\ref{prop:parabolicOf} implies that  \begin{equation}\label{eq:Chpara}
\alpha_f(z)\cdot\deg_f(z)=\alpha_f(f(z))
\end{equation}
for all $z\in \CDach$. 
Equation 
 \eqref{eq:Chpara}  shows  that we have $z\in f^{-1}(\{-1, 1\})$ if  and only if $z\ne \infty$ and 
 one of the factors on the left-hand side of \eqref{eq:Chpara} is different from $1$.   Then one 
  factor is  equal to $1$ and the other equal to $2$.   We conclude that 
 $$f^{-1}(\{-1,1\})= \{-1,1\}\cup \crit(f)\setminus\{\infty\}$$ with   
 $\deg_f(-1)=\deg_f(1)=1$ and $\deg_f(z)=2$ for $z\in \crit(f)\setminus\{\infty\}$.
 This implies that   the polynomials $1-f(z)^2$ and
 $(1-z^2)f'(z)^2$ have the same zeros of exactly 
the same orders.  
If we define $n=\deg(f)\ge 2$, then comparison of the highest-order coefficient gives
\begin{equation} \label{eq:ChODE1}
n^2(1-f(z)^2)=(1-z^2)f'(z)^2\end{equation}
for $z\in \C$. It is well known and easy to prove that then $f=\pm \chi_n$. 

Indeed, to see this, consider the even entire function $g$ defined as $g(u)=f(\cos u)$ for $u\in \C$. Then \eqref{eq:ChODE1} leads to 
$$ g'(u)^2=n^2(1-g(u)^2)$$
for $u\in \C$. If we differentiate this equation, then we obtain the
linear ordinary differential equation
\begin{equation*}
  g''(u)+n^2g(u)=0,\quad u\in \C. 
\end{equation*}
It has the general solution $g(u)=c_1\cos(n u) +c_2 \sin(n u)$, $c_1,c_2\in \C$. Since $g$ is even, we must have $c_2=0$; moreover, $c_1=g(0)=f(1)\in \{-1,1\}$. Hence $g(u)=\pm \cos(nu)=f(\cos u)$ for $u\in \C$. This implies  $f=\pm \chi_n$. 

For the converse direction suppose that the Thurston map $f\: S^2 \ra S^2$ is Thurs\-ton equivalent to  $\chi=\pm \chi_n$ with $n\in \N \setminus\{1\}$. We have seen earlier in this section that $\chi$ has signature $(2,2,\infty)$. Hence $f$ has the same signature by
 Proposition~\ref{prop:Thequivsamesig}.\end{proof}

As we have seen in Chapter~\ref{cha:lattes-lattes-type},   Latt\`{e}s maps are related to crystallographic  groups $G$ acting on $\C$. Here $G$ contains a  
subgroup 
$\Gtr$
of translations isomorphic to a rank-$2$ lattice. We will now discuss how 
the maps $z\mapsto z^n$ for $n\in
\Z\setminus \{-1,0,1\}$  and $z\mapsto \pm\chi_n(z)$ for $n\in \N\setminus \{1\}$ can  be described in a similar
fashion.  According to Theorem~\ref{thm:para_Th_poly},  every Thurston map with signature
$(\infty,\infty)$ or $(2,2,\infty)$ is Thurston equivalent to
such a map. Since  the following considerations are fairly elementary, we will skip some details. 

Recall from Section~\ref{sec:cryst-groups-latt} that $\Isom(\C)$
denotes the group of all orien\-ta\-tion-preserving isometries of
$\C$ (equipped with the Euclidean metric) and $\Aut(\C)$ the
group of holomorphic automorphisms of $\C$. For a group
$G\subset \Isom(\C)$, we denote by $\Gtr$ the subgroup of $G$
consisting of translations in $G$, i.e., $\Gtr$ consists of all
maps $g\in G$ of the form $z\mapsto g(z) = z + \gamma$ 
with $\gamma\in \C$. If we denote 
by   $\Gamma \subset \C$ the set of all such $\gamma\in
\C$, then $\Gtr = \{z\mapsto z + \gamma : \gamma\in\Gamma\}$. If the action of $G$ on $\C$ is properly discontinuous, then $\Gamma$ is necessarily a discrete set in $\C$ and so a lattice.

We now
focus on the case that $\Gamma$ is a  rank-$1$ lattice, meaning that
$\Gamma$ spans a $1$-dimensional subspace of $\R^2\cong \C$.
The following lemma is closely related to
Theorem~\ref{thm:G_signature}. 

\begin{lemma}
  \label{lem:frieze_gps}
  Let $G\subset \Isom(\C)$ be a group such that the action
  of $G$ on $\C$ is properly discontinuous, and $\Gamma\subset
  \C$ as defined above is of rank $1$. Then $G$ is conjugate to
  one of the following groups $\widetilde{G}$ consisting of all
  $g\in \Isom(\C)$ of the form
  \begin{enumerate}
  \item[\upshape$(\infty\infty)$]
    $\quad\quad\quad z\mapsto g(z) = z+ k$, where $k\in \Z$;
  \item[\upshape$(2 2\infty)$] 
    $\quad\quad\quad z\mapsto g(z) = \pm z+ k$, where $k\in
    \Z$. 
  \end{enumerate}
\end{lemma}
As in Chapter~\ref{cha:lattes-lattes-type}, we say that $G$ and $\widetilde{G}$ are {\em conjugate} 
if there exists  $h\in \Aut(\C)$ such that $\widetilde{G}= h \circ G \circ h^{-1}$. 
We do not provide the (well-known)
proof of Lemma~\ref{lem:frieze_gps} here;  it  can be found in \cite{Ar}.

If for a  
group $G$ as in Lemma~\ref{lem:frieze_gps} the conjugate group 
$\widetilde{G}$ has the form  $(\infty \infty)$ or $(22\infty)$, then
we say that $G$ is of {type} $(\infty \infty)$ or 
type $(22\infty)$, respectively. 
 Again we are using Conway's orbifold notation. Clearly a group
of type $(\infty \infty)$ is isomorphic (as a group) to $\Z$,
and a group of type $(22\infty)$ is isomorphic to the
\emph{infinite dihedral group} $D_\infty=\Z 
\rtimes \Z_2$.

If  $G=\widetilde{G}$ is one of the groups in Lemma~\ref{lem:frieze_gps}, then its
subgroup of translations $\Gtr$ consists of the maps   of the
form $z\in \C\mapsto g(z) = z+ k$, where $k\in \Z$. The quotient 
$\C/\Gtr$  is an  \emph{(infinite) cylinder} (see below for a geometric justification
of this terminology). 

Clearly,
$\exp(2\pi \iu z) = \exp(2\pi\iu w)$ for $z,w\in \C$ if and only
if $w=z+k$ for some $k\in \Z$. This means that the map  
$\pi \: \C\ra \C^{*}\coloneqq \C\setminus \{0\}$ given by $\pi(z)=\exp(2\pi \iu z) $ for $z\in \C$ is induced by $\Gtr$; so we can identify 
 the cylinder $\C/\Gtr$ with  $\C^{*}$ and consider $\pi\: \C\ra \C^*\cong \C/\Gtr$ as the quotient map (see Corollary~\ref{cor:groupquot}). The cylinder 
 $\C^*$ plays a similar role for the groups in Lemma~\ref{lem:frieze_gps} as tori 
 obtained as quotients $\C/\Gtr$ of crystallographic groups $G$.
 
Now let  $f(z)=z^n$ with $n\in \Z\setminus \{-1,0,1\}$. We know  that the orbifold 
$(\CDach, \alpha_f)$ of $f$ has signature $(\infty, \infty)$ and two {\em punctures} (i.e., points $p$ with $\alpha_f(p)=\infty$)  
at $0$ and $\infty$. So if we remove these punctures from
$\CDach$ 
we obtain the set $\CDach_0=\C \setminus \{0\}=\C^*$ 
(see Section~\ref{sec:orbifolds-coverings} for a related discussion). The (holomorphic) universal orbifold covering map $\Theta\:\C \ra \CDach_0=\C^*$ is given by   $\Theta(z) = \exp(2\pi \iu z)$ for $z\in \C$. It is induced by the group $G=\widetilde G$ of the form $(\infty\infty)$ in Lemma~\ref{lem:frieze_gps}. The map $A\: \C\ra \C$ given by   $A(z)=nz$  for $z\in \C$  is $G$-equivariant.  If  we define   $\overline A(z)=z^n$ for $z\in \C^*$ and $\overline \Theta=\id_{\C^*}$, then we obtain the following commutative diagram:
 \begin{equation}
  \label{eq:inf_inf_diagram}
  \xymatrix{
    \C \ar[r]^{A(z) =n z} \ar[d]_{\pi} 
    \ar@/_2pc/[dd]_{\Theta(z)= \exp(2\pi \iu z)} 
    & \C \ar[d]^{\pi}\ar@/^2pc/[dd]^{\Theta(z)= \exp(2\pi \iu z)}
    \\
    \C^* \ar[r]^{\overline{A}}\ar[d]_{\overline{\Theta}} 
    & \C^*\ar[d]^{\overline{\Theta}}
    \\
    \C^*\ar[r]^{z\mapsto z^n} & \C^*\rlap{.}
  }
\end{equation}

It   is the  analog of  the diagram in \eqref{eq:holom_qu_diagram} that we obtained for Latt\`es maps based on Theorem~\ref{thm:Lattesstruc}. 

It is elementary to check that a map $A\in \Aut(\C)$ is
$G$-equivariant (see~\eqref{eq:fequivariant2}) if and only if it
is of the form 
$A(z) = n z + \beta$ with  $n\in \Z\setminus \{0\}$ and
$\beta\in \C$; so the map $A$ in \eqref{eq:inf_inf_diagram}
corresponds to the case $\beta=0$. It is easy to see that for general $\beta\in \C$ the map obtained as a quotient of $A$ on $\CDach_0=\C^*$ as in \eqref{eq:inf_inf_diagram}
is conjugate to $f(z)=z^n$ (for $n\in \Z\setminus\{-1,0,1\})$.

The  
canonical orbifold metric\index{canonical orbifold!metric}\index{metric!canonical orbifold}\index{o@$\omega$}\index{orbifold!canonical metric} 
$\omega$ of
$f(z)=z^n$ (see Section~\ref{sec:expratThmaps} and in particular
the discussion before Proposition~\ref{prop:conforbsym}) is the
conformal metric on $\C^*$ with length element 
$|dz|/(2\pi |z|)$ 
(see Section~\ref{sec:metrspterm} for the terminology).   Equipped  with this metric, $\CDach_0=\C^*\cong \C/G$  is isometric to an  infinite cylinder.

We can describe the maps $\chi=\pm \chi_n$ with $n\in \N\setminus\{1\}$ obtained from Chebyshev polynomials in a similar vein. For these maps we have 
$\alpha_f(-1)=\alpha(1)=2$ and $\alpha_f(\infty)=\infty$. So the orbifold $(\CDach, \alpha_f)$ of $f$ has a puncture at $\infty$ and so $\CDach_0=\C$. The 
universal orbifold covering map $\Theta\: \C \ra \CDach_0=\C$ is given by 
$\Theta(z)=\cos(2\pi z)$ for $z\in \C$. To see this, note that  $\Theta\: \C \ra \C$ is a branched covering map with the critical values $-1$ and $1$ and that 
$\deg(\Theta, z)=2$ whenever $z\in \Theta^{-1}(\{-1,1\})$. 

Clearly, for $z,w\in \C$ we have 
\begin{align}
  \label{eq:cos_ind_Dinfty}
  \cos(2\pi z)&= \cos(2\pi w) 
  \text{ if and only if }\\
  \notag
  w&= \pm z + k \text{ for some $k\in \Z$}. 
\end{align}

So $\Theta$ is induced by the group  $G$ 
of isometries of the form $z\mapsto \pm z + k$ with  $k\in \Z$, i.e., $G=\widetilde G$
with $\widetilde G$  as in  case
$(22\infty)$ of Lemma~\ref{lem:frieze_gps}.  As before, we identify 
$\C/\Gtr$ with $\C^*$ and consider $\pi(z)=\exp(2\pi \iu z)$ for $z\in \C$ as the quotient map $\pi \: \C\ra \C^*$.  Let $\overline \Theta(z)=\frac12(z+1/z)$ for $z\in \C^*$. Then 
$\Theta=\overline \Theta\circ \pi$. 

A map $A\in \Aut(\C)$ is $G$-equivariant 
if and only if it is of the form
$A(z) = n z + l/2$ where $n\in \Z\setminus \{0\}$ and
$l\in \Z$ (we omit the elementary proof for this
fact). For  $n\in \N\setminus\{1\}$ and $l\in 
\{0,1\}$ the map $A$ descends to the map $\chi=\pm\chi_n$ on $\CDach_0=\C$ under the map $\Theta\: \C\ra \C$. Actually, if we define $\overline A(z)=(-1)^l z^n$
for $z\in \C^*$, then we have the commutative diagram 
\begin{equation}
  \label{eq:chin_comm-dia}
  \xymatrix{
    \C \ar[r]^{A} \ar[d]_{\pi} 
    \ar@/_2pc/[dd]_{\Theta(z)= \cos(2\pi z)} 
    & \C \ar[d]^{\pi}\ar@/^2pc/[dd]^{\Theta(z)= \cos(2\pi z)}
    \\
    \C^* \ar[r]^{\overline{A}}\ar[d]_{\overline{\Theta}} 
    & \C^*\ar[d]^{\overline{\Theta}}
    \\
    \C \ar[r]^{\chi} & \C_{\phantom{0}}\!\!\rlap{.}
  }
\end{equation}
This is again analogous to \eqref{eq:holom_qu_diagram} obtained for Latt\`es maps.

Based on the considerations in Section~\ref{sec:expratThmaps} it is not hard to see that 
the canonical orbifold metric $\om$ of $\chi$ is given by the length element
 $$\frac{|dz|}{2\pi |z-1|^{1/2} |z+1|^{1/2}}$$ on $\C$. We will describe a more geometric picture for $(\C, \om)$ 
that will lead to  models for the maps $\pm \chi_n$ similar to
the models for Latt\`{e}s maps as described in
Section~\ref{sec:Lattes} and Section~\ref{sec:examples-lattes-maps}.

Our universal orbifold covering map $z\mapsto \Theta(z)=\cos(2\pi z)$ is induced by the group $G=\widetilde{G}$ as in case $(22\infty)$ of 
Lemma~\ref{lem:frieze_gps}.  So we can identify 
the quotient space $\C/G$ with the target $\C$ of $\Theta\: \C\ra \C$ and consider $\Theta\: \C \ra \C\cong \C/G$ as the quotient map. If we denote by 
$[u]\in \C/G$  the equivalence class of a point $u\in \C$ under the equivalence relation  on $\C$ induced by  $G$, then this identification is more explicitly given 
by the well-defined map $[z]\in \C/G \mapsto \Theta(z)\in \C$.  

The 
canonical orbifold metric\index{canonical orbifold!metric}\index{metric!canonical orbifold}\index{o@$\omega$}\index{orbifold!canonical metric}\index{push-forward!of metric!by orbifold covering map} 
$\om$ is essentially the push-forward of the Euclidean metric under the quotient map.  Under our identification
 $\C\cong \C/G$ we have  
\begin{equation}
  \label{eq:def_d_cos}
  \om([x],[y])\coloneqq  \inf\{\abs{z-w} : z\in [x], w\in [y]\}
\end{equation}
for $[x],[y]\in \C/G$ (see Section~\ref{sec:expratThmaps} and  in particular
\eqref{eq:defcanorbmetr0}).  
%

The metric space $(\C/G, \om)$ is isometric to the space $\Delta$ obtained by gluing two copies of the half-strip $S=[0,1/2]\times [0,\infty)\sub \R^2\cong \C$ together along their boundaries.
Here the half-strips carry the Euclidean metric and $\Delta$ the induced path metric. 

To see this, note that the strip
$F= [0,1/2]\times \R \subset \R^2 \cong \C$ is a fundamental
domain of $G$ (see Section~\ref{sec:appquotmaps}).  Under the action of $G$ two points in the
boundary of $F$ are identified if they are complex conjugates of
each other. So the quotient $\C/G$ is obtained by folding the
strip $F$ along the real axis and gluing together corresponding
boundary parts of the 
half-strips
$S_\wt\coloneqq [0,1/2]\times [0,\infty)$ and
$S_\bt\coloneqq [0,1/2]\times (-\infty,0]$. The metric $\om$ is
a path metric and corresponds to the Euclidean metric on $S_\wt$
and $S_\bt$. So  $(\C/G, \om)$ and $\Delta$ are
indeed isometric. 
  
  In the following we will identify 
  these spaces. We will also consider the half-strips $S_\wt$ and $S_\bt$ as subsets and sides of $\Delta$. We color $S_\wt$  white, and 
   $S_\bt$ black. Then $\Delta$  is a locally Euclidean surface with two conical
singularities as indicated on the right in
Figure~\ref{fig:chebyshev}; the conical singularities 
are labeled by $1$ and $-1$, 
because they 
correspond to these points under the identification $ \C/G\cong\C$. Indeed, 
$ [0]\cong \Theta(0)=1$ 
and $[1/2] \cong \Theta(1/2)=-1$.    

\begin{figure}
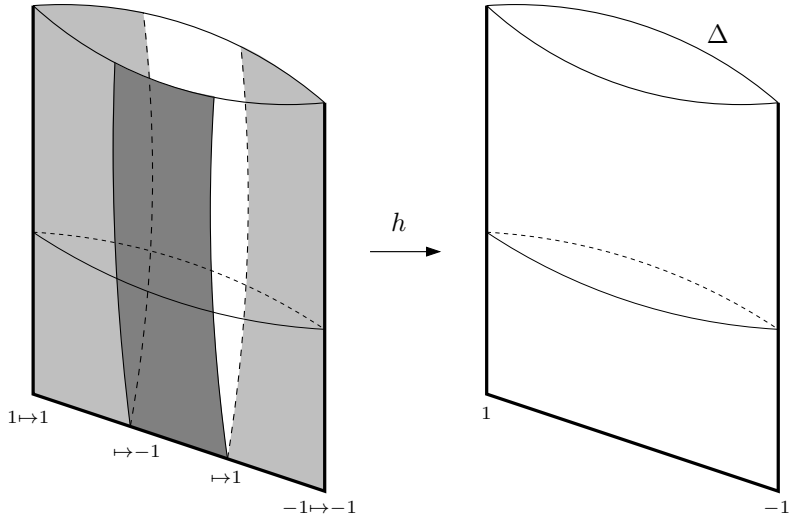

  \centering
  \begin{overpic}
    [width=10cm, tics=20,
    ]
    {chebychev}
    \put(48,35){$h$}
    \put(-3,9.5){${\scriptstyle 1\mapsto 1}$}
    \put(11,5){${\scriptstyle \mapsto -1}$}
    \put(24,1.5){${\scriptstyle \mapsto 1}$}
    \put(33.5,-2.5){${\scriptstyle -1\mapsto -1}$}
    \put(60,10){${\scriptstyle 1}$}
    \put(97.5,-2.5){${\scriptstyle -1}$}
    \put(90,60){$\Delta$}
  \end{overpic}
  \caption{Model for a Chebyshev polynomial.}
  \label{fig:chebyshev}
  \index{Chebyshev polynomial}
\end{figure}

%
%

We now fix  $n\in \N$ and divide  each side  of $\Delta$ into
$n$ (half-)strips $S'$ of equal size. Then  each strip $S'$ is
isometric to $[0, \frac{1}{2n}]\times[0,\infty)$,  and hence similar to $S$ by the  scaling factor
$n$. We color the strips $S'$ in a checkerboard fashion
black and white so that  strips sharing an edge have different colors. 

One can now define a 
map $h\colon \Delta \to \Delta$ as follows. We map
each white strip $S'$ to $S_\wt$ and  each black
strip $S'$  to $S_\bt$ by an orientation-preserving Euclidean similarity. Then  $h$ is
well-defined, because the definitions for $h$ match  on the edges where two strips intersect. An
example of such a map is indicated  in Figure~\ref{fig:chebyshev}.

These maps $h$ give a Euclidean model for the maps $\pm \chi_n$ due to the following fact.

\begin{prop}
  \label{prop:chebychev_model} Let $n\in \N$. 
 Then  every map $h\colon \Delta \to \Delta$ obtained from  the above  construction 
 is topologically conjugate to $\chi_n$
  or $-\chi_n$. Conversely, every polynomial $\chi_n$ and $-\chi_n$ is
  topologically conjugate to such a map $h$. 
\end{prop}

\begin{proof} Fix $n\in \N$. 
  Then the (half-)strips $S'$ of the form 
   $$\textstyle [\frac{k}{2n}, \frac{k+1}{2n}] 
  \times [0, \infty) \sub S_\wt\text{ and }[\frac{k}{2n}, \frac{k+1}{2n}]  \times (-\infty,0]\sub S_\bt$$ 
  for  $k=0, \dots,n-1$ divide the sides $S_\wt$ and $S_\bt$ of $\Delta$, respectively. 
  The checkerboard coloring of these strips $S'$ is uniquely determined if we specify the coloring 
  of  $S'_0\coloneqq [\frac{0}{2n}, \frac{1}{2n}] \times [0, \infty)$. 
  
  If $S_0'$ is colored white, then 
  the map $A(z)=nz$ passes to the quotient $\Delta=\C/G$ and sends white strips $S'$ to $S_\wt$ and black strips $S'$ to $S_\bt$ by a Euclidean similarity. In other words, $A$ induces the map 
  $h\: \Delta\ra \Delta$ 
  discussed above. On the other hand, by \eqref{eq:chin_comm-dia}  this map $A$  passes to the quotient $\chi_n$
  under the map $z\mapsto\Theta(z)=\cos(2\pi z)$. This implies that the induced homeomorphism 
  $\widetilde\Theta: \Delta=\C/G\ra \C$ defined as 
   $\widetilde \Theta([z])=\Theta(z)$ for $[z]\in \C/G$  gives the conjugacy $h= \widetilde\Theta^{-1} \circ \chi_n\circ 
  \widetilde\Theta$.  
  
  If  $S_0'$ is colored black, then $A(z)=nz+1/2$ passes to the quotient $\Delta=\C/G$ and a strip 
    $S'$ is sent to a strip $S_\wt$ or  $S_\bt$ of the same color  by a Euclidean similarity. 
  So again $A$ induces the map $h$ and by \eqref{eq:chin_comm-dia} we get a conjugacy 
  $h= \widetilde\Theta^{-1} \circ \chi\circ 
  \widetilde\Theta$, where $\chi=-\chi_n$.   
\end{proof}

\chapter{Visual Metrics}
\label{cha:visual-metrics}
\index{visual metric}
\index{metric!visual}
\index{r@$\varrho$}

In this chapter we construct a natural class of metrics for an
expanding Thurston map that we call \emph{visual metrics}.  We haven
chosen this name, because there is a close relation between these
metrics and visual metrics on the boundary at infinity of a Gromov
hyperbolic space. Indeed, for an expanding Thurston map $f\: S^2\ra
S^2$ one can define a Gromov hyperbolic \emph{tile graph} whose
boundary at infinity can naturally be identified with $S^2$.  By this
identification, a metric $\varrho$ on $S^2$ is visual in the sense of Gromov
hyperbolic spaces if and only if it is visual as it will be defined in
this chapter (see Chapter~\ref{cha:Gromov} and in particular
Theorem~\ref{thm:visualGrTh}). In general, a visual metric $\varrho$ is not a
length metric on $S^2$.  In Chapter~\ref{cha:geom-visu-sphere} we will investigate the resulting metric space $(S^2, \varrho)$ in more
detail.

We will  first give a quick overview of the
definition and the basic properties of visual metrics. In
Sections~\ref{sec:number-mx-y} and \ref{sec:exist-visu-metr} we will
then provide the technical details. We conclude this chapter with  
Section~\ref{sec:orbifold_visual} where we consider rational expanding
Thurston maps. In particular,  we show that the canonical orbifold
metric (see Section~\ref{sec:expratThmaps}) 
for such a map $f$
is a visual metric precisely if $f$ is a Latt\`{e}s map 
(see Proposition~\ref{prop:orbivispara}). 

Let $f\:S^2\ra S^2$ be an expanding Thurston map, and $\CC\subset S^2$ be
a Jordan curve with $\post(f)\subset \CC$. We consider the cell decompositions of $S^2$ 
for  $(f,\CC)$ as defined  in Section~\ref{sec:tiles}. One can 
think of the set of $n$-tiles as a discrete approximation of the
sphere $S^2$ and measure distances of points by a quantity related to combinatorics of $n$-tiles.

Indeed, let  $x,y\in S^2$ be   two distinct points, and  $X$ and  $Y$ be $n$-tiles
 with $x\in X$, $y\in Y$. Since $f$ is expanding,   $X$ and $Y$ must be disjoint if   $n$ is 
 sufficiently large (see
\eqref{eq:defexpXn} and Lemma~\ref{lem:exp_ind_C}). 
This leads to the following definition. 

\ifthenelse{\boolean{nofigures}}{}{
\begin{figure}
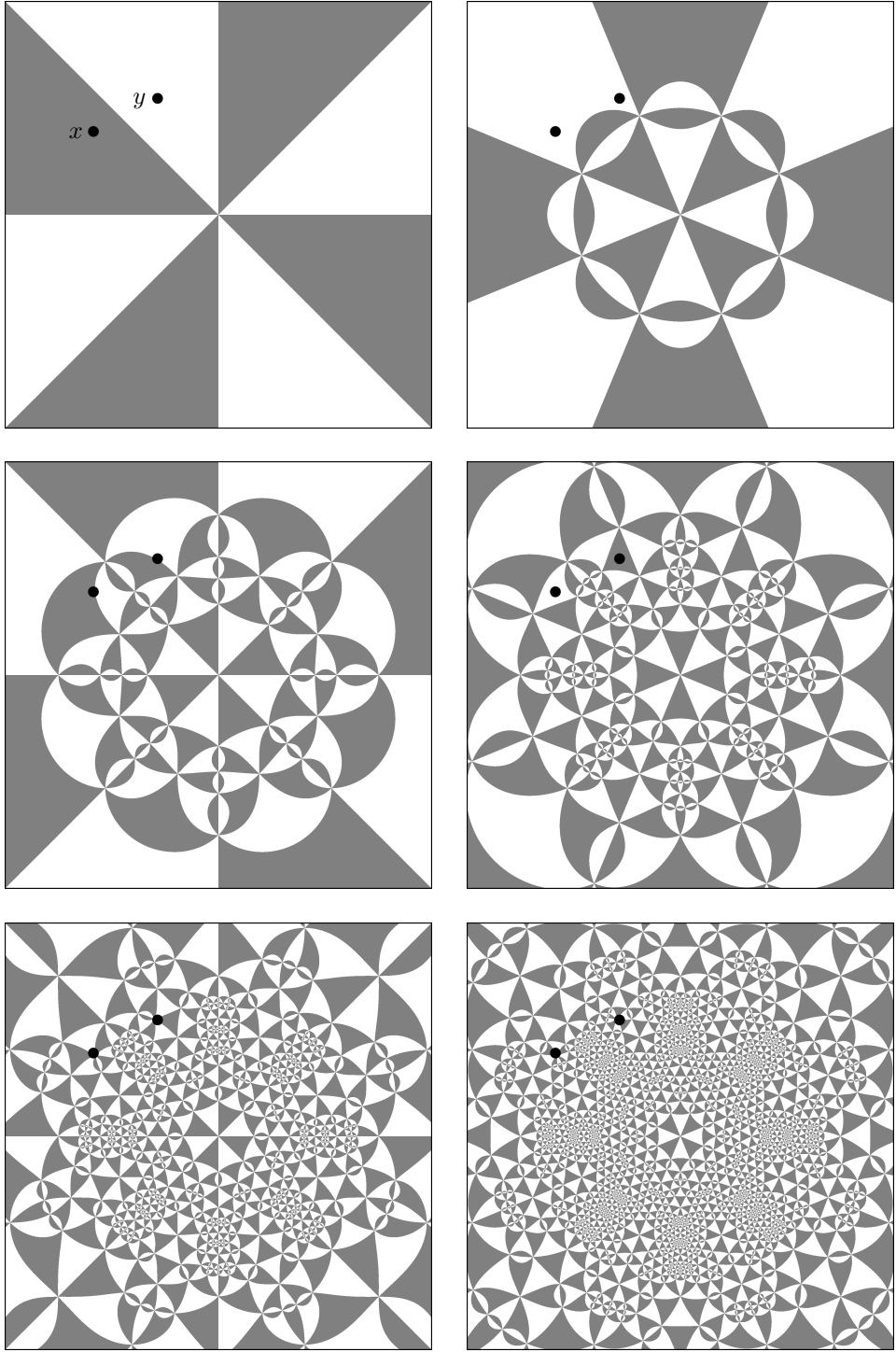

  \centering
  \begin{subfigure}{0.48\textwidth}
    \begin{overpic}
      [width=2.395in, 
      tics=20]{fno_inv1.eps}
      \put(15,68){$x$}
      \put(30,76){$y$}
    \end{overpic}
  \end{subfigure}
  \hspace*{\fill}
  \begin{subfigure}{0.48\textwidth}
    \begin{overpic}
      [width=2.395in, 
      tics=20]{fno_inv2.eps}
    \end{overpic}
  \end{subfigure}

  \vspace{0.04\textwidth}
  \begin{subfigure}{0.48\textwidth}
    \begin{overpic}
      [width=2.395in, 
      tics=20]{fno_inv3.eps}
    \end{overpic}
  \end{subfigure}
  \hspace*{\fill}
  \begin{subfigure}{0.48\textwidth}
    \begin{overpic}
      [width=2.395in, tics=20]{fno_inv4.eps}
    \end{overpic}
  \end{subfigure}
    
  \vspace{0.04\textwidth}
  \begin{subfigure}{0.48\textwidth}
    \begin{overpic}
      [width=2.395in, tics=20]{fno_inv5.eps}
    \end{overpic}
  \end{subfigure}
  \hspace*{\fill}
  \begin{subfigure}{0.48\textwidth}
    \begin{overpic}
      [width=2.395in, tics=20]{fno_inv6.eps}
    \end{overpic}
  \end{subfigure}
  \caption{Separating points by tiles.}
  \label{fig:def_visual_metric}
\end{figure}

}

\begin{definition}
  \label{def:mxy} 
  Let $f\: S^2\ra S^2$ be an expanding
  Thurston map,  $\CC\sub S^2$ be a Jordan curve with
  $\post(f)\sub \CC$, and $x,y\in S^2$.   
  For $x\ne y$ we define\index{m@$m_{f,\CC}$|textbf} 
 \begin{align*} 
   m_{f,\CC}(x,y)
   \coloneqq 
   \max\{n\in \N_0: {}&\text{there exist non-disjoint  $n$-tiles }
   \\ 
   &\text{$X$ and $Y$  for $(f,\CC)$ with $x\in X$, $y \in
     Y$}\}. 
 \end{align*} 
 If $x=y$  we define $m_{f,\CC}(x,x)\coloneqq \infty$. 
\end{definition}

Note that $m_{f,\CC}(x,y)\in \N_0$ 
if $x\ne y$. 
We usually drop both subscripts  in 
$m_{f,\CC}(x,y)$ if $f$ and $\CC$ are clear from the context. A
similar combinatorial quantity that is essentially  equivalent to
$m_{f,\CC}(x,y)$ (see Lemma~\ref{lem:mprops}~\ref{item:mprops5}) is  
\begin{align} 
  \label{eq:def_mprime}
  \index{m`@$m'_{f,\CC}$}
  m'_{f,\CC}(x,y)\coloneqq \min\{n\in \N_0: {}&\text{there exist disjoint
    $n$-tiles } 
  \\ \notag
  &\text{$X$ and $Y$  for $(f,\CC)$ with $x\in X$, $y \in Y$}\}
 \end{align} 
for $x\neq y$, and $m'_{f,\CC}(x,x)\coloneqq  \infty$.


These quantities are illustrated in  
Figure~\ref{fig:def_visual_metric}. Here we 
use 
the map
$f\: \CDach\ra \CDach$ given by 
\begin{equation*}
  f(z)= \iu \frac{z^4-\iu}{z^4+\iu}
\end{equation*}
for $z\in \CDach$ (we will consider this map again in
Example~\ref{ex:noinvCC}).  We have $\post(f)= \{1,\iu, -\iu\}$,
and so we can choose the unit circle $\CC=\partial \D$ as a
Jordan curve containing the postcritical set of $f$. In
Figure~\ref{fig:def_visual_metric} the $n$-tiles for $(f,\CC)$
are shown for $n=1, \dots, 6$. For the points $x$ and $y$ as
indicated in the figure, we have $m_{f,\CC}(x,y)= 3$ and
$m'_{f,\CC}(x,y)= 4$.

The number $m_{f,\CC}(x,y)$ is large if $x$ and $y$ are close together, i.e.,
if $n$-tiles of high level $n$ are needed to separate the points. 
This is the basis of the following definition. 

\begin{definition}[Visual metrics]\label{def:visual} 
  Let $f\: S^2\ra
  S^2$ be an expanding Thurston map. A metric $\varrho$ on $S^2$
  is called a 
{\em visual metric}\index{visual metric|textbf}\index{metric!visual|textbf}\index{r@$\varrho$|textbf}
  (for  $f$) if there exists a
  Jordan curve $\CC\sub S^2$ with $\post(f)\sub \CC$, and a constant
  $\Lambda>1$ such that 
\begin{equation}\label{visual} 
  \varrho(x,y)\asymp \Lambda^{-m(x,y)}
\end{equation}
for all $x,y\in S^2$, where $m(x,y)=m_{f,\CC}(x,y)$ and where the constant
$C(\asymp)$ is independent of $x$ and $y$. 
\end{definition}

Here we use the convention $\Lambda^{-\infty}=0$. The number $\Lambda$
is called the 
{\em expansion factor}\index{expansion factor|textbf}\index{L@$\Lambda$|textbf}
of the metric $\varrho$.  It is easy to
see that the expansion factor of each visual metric is uniquely
determined. Different visual metrics may have different expansion
factors. 

As mentioned above, it is possible to identify the sphere $S^2$ with the boundary at
infinity of a certain Gromov hyperbolic graph constructed from tiles. 
Under this identification, the numbers $m_{f,\CC}(x,y)$ and
$m'_{f,\CC}(x,y)$ are the Gromov product of $x$ and $y$, up to some
additive constants (see Section~\ref{sec:Grhyp},
Chapter~\ref{cha:Gromov}, and Lemma~\ref{lem:m_Gromov}).  

Obvious questions are  whether  visual metrics exist, and 
 how they depend on the chosen Jordan curve $\CC$ and 
the expansion factor $\Lambda$. This is answered by the following proposition.  


\begin{prop}
  \label{prop:visualsummary}
  \index{visual metric}
  \index{metric!visual}
 For an expanding Thurston map $f\:S^2\ra S^2$ the following
 statements are true: 

\begin{enumerate}
\item
  \label{item:exvisual}
  There exist  visual metrics for $f$.

\item 
  \label{item:vistop}
  Every visual metric induces the given   topology on $S^2$.

\item
  \label{item:visCC}
  Let $\varrho$ be  a visual metric for $f$ with expansion factor
  $\Lambda$,  $\widetilde \CC \subset S^2$ be a Jordan curve with
  $\post(f)\subset \widetilde \CC$, and $m=m_{f,\widetilde \CC}$ be
  defined as in Definition~\ref{def:mxy}. 
  Then a relation 
  as in \eqref{visual} is true with the same
  expansion factor $\Lambda$, where the constant 
  $C(\asymp)$ depends on
  $\widetilde\CC$. 
\item
  \label{item:visd1d2}
  Any two visual metrics are 
  snowflake equivalent,\index{snowflake equivalent} 
  and bi-Lip\-schitz
  equivalent if they 
  have the same expansion factor $\Lambda$.  
\item
  \label{item:vsfF}
  \index{Thurston map!iterate of}
  \index{iterate of Thurston map}
  \index{F f@$F=f^n$}
  A metric $\varrho$ is a visual metric for some iterate $F=f^n$ with $n\in \N$ if
  and only if 
  it is a visual metric for $f$. If $\Lambda>1$ is the expansion factor of $\varrho $ for $f$, then $\Lambda_F=\Lambda^n$  is the
  expansion factor of $\varrho$ for $F=f^n$. 
  
  \item
  \label{item:vsLip}
 If $\varrho$ is a visual metric for $f$, then  $f\: (S^2, \varrho)\ra  (S^2, \varrho)$ is a Lipschitz map.

\end{enumerate}

\end{prop}

The notions of snowflake and bi-Lipschitz equivalence were defined in
Section~\ref{sec:QCgeom}.  The proof of
Proposition~\ref{prop:visualsummary} will be provided  in 
Section~\ref{sec:exist-visu-metr}. Theorem~\ref{thm:visexpfactors1} gives a stronger result on the existence
of visual metrics.

In Section~\ref{sec:int-frac-sph} and Section~\ref{sec:snowballs} we introduced 
visual metrics on a more intuitive level, where we considered 
certain self-similar fractal spheres 
constructed as  limits of polyhedral surfaces $\mathcal{S}^n$. Each
surface $\mathcal{S}^n$ was built from tiles   whose  size was about  
$\Lambda^{-n}$ for some constant $\Lambda>1$. A similar statement
is true  in general  for  visual metrics and gives in fact a 
characterization of these metrics.  

\begin{prop}[Characterization of visual metric]
  \label{lem:expoexp} 
  Let $f\: S^2\ra S^2$ be an expanding Thurston map and $\varrho$
  be a metric on $S^2$. 
   Then $\varrho$ is a visual metric for
  $f$ with expansion factor $\Lambda>1$ if and only if the
  following two 
conditions hold for all $n\in \N_0$:
 
  \begin{enumerate} 
  
  \item
    \label{item:expoex1}
    $\dist_\varrho(\sigma, \tau)  \gtrsim \Lambda^{-n}$,  whenever
    $\sigma$ and $\tau$ are disjoint $n$-cells.  

    \item
    \label{item:expoex2}
      $\diam_\varrho(\tau) \asymp \Lambda^{-n}$ 
      for all $n$-edges and all $n$-tiles $\tau$.   
  \end{enumerate}
   Here cells are defined in terms of some Jordan curve $\CC\sub S^2$
  with $\post(f)\sub \CC$, and 
the constants 
$C(\gtrsim)$ and
  $C(\asymp)$   are 
independent of the cells and their level $n$.
\end{prop}
 We will prove
this proposition in Section~\ref{sec:exist-visu-metr}. 

 
For a rational expanding Thurston map $f\colon \CDach \to \CDach$
there are some other natural metrics on the Riemann sphere
$\CDach$ besides the visual metrics for $f$, in particular the
chordal metric $\sigma$ and the canonical orbifold metric
$\omega$ of $f$ as introduced in
Section~\ref{sec:orbif-assoc-thurst}. The chordal metric
$\sigma$ is never visual for $f$ (see
Lemma~\ref{lem:chord_not_vis}).  For the canonical orbifold
metric\index{canonical orbifold!metric}\index{metric!canonical orbifold}\index{o@$\omega$}\index{orbifold!canonical metric} we
will prove the following statement in
Section~\ref{sec:orbifold_visual}.
\begin{prop}[Canonical orbifold metric as visual metric] 
  \label{prop:orbivispara} 
  \index{visual metric}\index{metric!visual}\index{Latt\`{e}s map} Let $f\: \CDach\ra \CDach$ be a rational Thurston map
  without periodic critical points, and $\om$ be the canonical
  orbifold metric for $f$. Then $\om$ is a visual metric for $f$
  if and only if $f$ is a Latt\`{e}s map.
\end{prop}

\section{The number $m(x,y)$}
\label{sec:number-mx-y}

We now turn to a more detailed exposition of 
the  basic properties of the quantity $m(x,y)=m_{f,\CC}(x,y)$
as in Definition~\ref{def:mxy}. 
In the following, $f\: S^2\ra S^2$ will be an expanding Thurston map and $\CC\sub S^2$ be a Jordan curve with $\post(f)\sub \CC$. Note 
that  $\#\post(f)\ge 3$, because $f$ is expanding (see Lemma~\ref{lem:no<3}).

Recall the quantity $D_n=D_n(f,\CC)$ as  defined in \eqref{def:dk} that  measures distances  in terms of
lengths of tile chains. We consider a
slight variant here.

 We define 
$\widetilde D_n=\widetilde D_n(f,\CC)$ as the minimal number of tiles
of levels $k\geq n$ for $(f,\CC)$  required to join opposite sides of
$\CC$, i.e., the smallest number $N\in \N$ for which there are tiles
$X_i\in\bigcup_{k\geq n} \X^k$, $i=1,\dots, N$,  such that
$K=\bigcup_{i=1}^N X_i$ is connected and joins  opposite sides of
$\CC$ (see Definition~\ref{def:connectop}). 
   
While the sets $K$ used to define $D_n$ 
are unions of tiles of level $n$,  the sets $K$ in the definition of
$\widetilde D_n$ are unions of tiles of levels $k\ge n$; in particular,
$D_k\ge \widetilde D_n$ for $k\ge n$.

\begin{lemma} 
  \label{lem:Dtoinfty} 
  \index{d0 n@$D_n$} 
  Let $f\: S^2\ra S^2$ be an expanding Thurston map, and
  $\CC\sub S^2$ be a Jordan curve with $\post(f)\sub \CC$. Let
  $D_n=D_n(f,\CC)$ and $\widetilde D_n=\widetilde D_n(f,\CC)$ for
  $n\in \N_0$.

  Then $D_n\to \infty$ and $\widetilde D_n\to \infty$ as
  $n\to \infty$.
\end{lemma}

\begin{proof}
We know that  $D_k\ge \widetilde D_n$ whenever $k\ge n$. So it suffices to show $\widetilde D_n\to \infty$ as $n\to \infty$. 

Let $\delta_0>0$ be defined as in \eqref{defdelta} (for some base metric on $S^2$) and  
suppose  $K=X_1\cup \dots \cup X_N$ is a connected union of tiles of levels $\ge n$ that joins opposite sides of $\CC$.  Then 
\begin{align*}
  \delta_0 &\le  \diam (K)\,\le \, \sum_{i=1}^N\diam(X_i)\\
           &\le  N \max_{i=1, \dots, N} \diam (X_i) \\
           &\le  N \sup_{k\ge n} \mesh(f,k,\CC).
\end{align*}
Putting $c_n\coloneqq \sup_{k\ge n} \mesh(f,k,\CC)$, we conclude that $N\ge \delta_0/c_n$, and so $\widetilde D_n\ge  \delta_0/c_n$. 

Since $f$ is expanding we have $\mesh(f,n,\CC)\to 0$ and so also $c_n\to 0$ as $n\to \infty$. This implies that $\widetilde D_n
\to \infty$ as desired.
\end{proof}

If $f$ is expanding and $\CC$ is given, then in  view of the last lemma, we can find a number $k_0=k_0(f,\CC)\in \N$ such that 
\begin{equation}\label{def:k0}
\widetilde D_{k_0}=\widetilde D_{k_0}(f,\CC)\ge 10.
\end{equation} 
This inequality will be useful in the following.

 In the next lemma we collect some of the properties of the function $m_{f,\CC}$.

\begin{lemma}
  \label{lem:mprops} 
  \index{m@$m_{f,\CC}$}
  Let $f\: S^2\ra S^2$ be an expanding
  Thurston map,  $\CC\sub S^2$ be a Jordan curve with  
  $\post (f)\sub \CC$, and $m=m_{f,\CC}$. Then the following statements are true:

\begin{enumerate}

\item
  \label{item:mprops1} 
  There exists a number $k_1>0$ such that 
\begin{equation*}
 \min\{m(x,z), m(y,z)\}\le m(x,y)+k_1
 \end{equation*}
  for all 
$x,y,z\in S^2.$  

  \item
    \label{item:mprops2} 
    We have 
      $$m(f(x),f(y))\geq m(x,y)-1$$ for all $x,y\in S^2$.
   
  \item
    \label{item:mprops3} 
    Let $\widetilde \CC\sub S^2$ be another Jordan curve with $\post(f)\sub \widetilde \CC$. 
   Then there exists a constant $k_2>0$ such  that
    \begin{equation*}
      m(x,y)-k_2\leq m_{f,\widetilde\CC}(x,y)\leq m(x,y)+k_2
    \end{equation*}
    for all $x,y\in S^2$.

  \item
    \label{item:mprops4} 
    Let $F=f^n$ for $n\in \N$ be an iterate of $f$. Then there
    exists a constant $k_3>0$ such that     
    \begin{equation*}
      m(x,y)-k_3 \leq n\cdot m_{F,\CC}(x,y) \leq 
      m(x,y)
    \end{equation*}
    for all $x,y\in S^2$.
    
  \item
    \label{item:mprops5} 
    The quantities $m$ and  $m'=m'_{f,\CC}$ as defined in
    \eqref{eq:def_mprime} 
     are comparable in the following sense:  there exists a constant
    $k_4>0$ such that 
    \begin{equation*}
      m(x,y)-k_4 \leq m'_{f,\CC}(x,y) \leq m(x,y)+1 
    \end{equation*}
    for all $x,y\in S^2$. 
   \end{enumerate}
 \end{lemma}

In Chapter~\ref{cha:Gromov} we will prove that
$m=m_{f,\CC}$ can   essentially be interpreted as a Gromov product in a suitable
metric space (see Lem\-ma~\ref{lem:m_Gromov}). Property~\ref{item:mprops1} is then related to 
 the 
Gromov hyperbolicity of this space  (compare with \eqref{def:Grprod}).

\begin{proof} We fix $k_0=k_0(f,\CC)\in \N$ as in \eqref{def:k0}.
Let  $x,y\in S^2$  be arbitrary. In order to establish the desired inequalities we may always assume $x\ne y$. Unless otherwise stated, tiles will be for $(f,\CC)$. 

\smallskip
\ref{item:mprops1}  
 Let $m\coloneqq m(x,y)\in
  \N_0$ be as in Definition~\ref{def:mxy}.  We can pick
  $(m+1)$-tiles   $X_0$
  and $Y_0$ containing $x$ and $y$, respectively. Then
 $    X_0\cap Y_0=\emptyset$   by definition of $m$.

 Define $n\coloneqq  m+k_0$, and let $z\in S^2$ be an arbitrary
 point. We
 claim that
    $m(x,z)\le  n \text{ or } m(y,z)\le  n$.

 Otherwise,  $m(x,z)\ge  n+1$ and $m(y,z)\ge n+1$, and so  
  by Definition~\ref{def:mxy} there exist numbers $m_1,m_2\ge  n+1$ and
  $m_1$-tiles $X$ and $Z$ with $x\in X$, $z\in Z$ and $X\cap Z\ne \emptyset$, and  $m_2$-tiles $Y$ and $Z'$ with $y\in Y$, $z\in Z'$ and $X\cap Z'\ne \emptyset$. 
  
  Then the  set $K=X\cup Z\cup Z'\cup Y$ is connected and meets the disjoint 
  $(m+1)$-tiles $X_0$ and $Y_0$. Thus $f^{m+1}(K)$
  joins opposite sides of $\CC$ by Lemma~\ref{lem:maptotop}, and
  consists of  
four  tiles of levels
   $ \ge n-m=  k_0$. This contradicts \eqref{def:k0}, proving the claim.  

  So we have  $m(x,z)\leq m+k_0$ or $m(y,z)\leq m+k_0$. This  implies
  \ref{item:mprops1} with the constant $k_1=k_0$ which is independent
  of $x$ and $y$.

  \smallskip 
  \ref{item:mprops2}
  We may assume that 
  $m\coloneqq m(x,y)\ge 1$. There are non-disjoint
  $m$-tiles $X$ and $Y$ with $x\in X$ and $y\in Y$. 
  It follows that  
  $f(X)$ and $f(Y)$ are non-disjoint $(m-1)$-tiles with $f(x)\in f(X)$ and $f(y)\in f(Y)$. Hence  $m(f(x),f(y))\geq m-1$ as desired.  

  \smallskip 
  \ref{item:mprops3} Let $\widetilde m\coloneqq m_{f,\widetilde
    C}(x,y)\in \N_0$. Then there exist $\widetilde m$-tiles
  $\widetilde X$ and $\widetilde Y$ for $(f,\widetilde \CC)$ with
  $x\in \widetilde X$, $y\in \widetilde Y$, and $\widetilde X\cap
  \widetilde Y\ne \emptyset$.  By Lemma~\ref{lem:tileflower} the sets
  $\widetilde X$ and $\widetilde Y$ are each contained in $M$
  $\widetilde m$-flowers for $(f,\CC)$, where $M$ is independent of $
  \widetilde X$ and $\widetilde Y$. In particular, this implies that
  we can find a chain of at most $2M$ such $\widetilde m$-flowers
  joining $x$ and $y$ (recall that chains  were introduced in  Definition~\ref{def:chains}). 
Since any two tiles in the closure $\overline
  {W^n(v)}$ of an $n$-flower have the point $v$ in common, it follows
  that there exists a chain $X_1, \dots, X_N$ of $\widetilde m$-tiles
  for $(f,\CC)$ joining $x$ and $y$ with $N\le 4M$. Let $x_1\coloneqq x$,
  $x_N\coloneqq y$, and for $i=2, \dots, N-1$, pick a point $x_i\in X_i$.
  Then $m(x_i, x_{i+1}) \ge \widetilde m$ for $i=1, \dots, N-1$. Hence
  by repeated application of \ref{item:mprops1} we obtain
  \begin{align*}
    \widetilde m &\le \min\{m(x_i, x_{i+1}): i=1, \dots, N-1\}
    \\
    &\le 
    m(x_1,x_N)+Nk_1\,\le \,  m(x,y)+4Mk_1. 
  \end{align*}
    Since $4Mk_1$ is independent of $x$ and $y$, we get an upper
    bound as in \ref{item:mprops3}. A lower bound is obtained by
    the same argument if we reverse the roles of $\CC$ and
    $\widetilde \CC$.

    \smallskip \ref{item:mprops4} The map $F$ is also an
    expanding Thurston map, and we have $\post(f)=\post(F)$ (see
    Lemma~\ref{lem:Thiterates}); so the Jordan curve $\CC$
    contains the set of postcritical points of $F$ and
    $m_{F,\CC}$ is defined. 
 The $k$-tiles for $(F,\CC)$ are precisely the
    $(nk)$-tiles for $(f,\CC)$ (see 
    Proposition~\ref{prop:celldecomp}~\ref{item:celdecompiter}). In the ensuing proof we will only
    consider tiles for $(f, \CC)$.

  Let $m_F\coloneqq m_{F,\CC}(x,y)$ and $m \coloneqq m(x,y)$; then  there are non-disjoint $(nm_F)$-tiles $X$ and $Y$ with $x\in X$ and $y\in Y.$  So $m\ge nm_F$ which gives the desired upper bound. 
 
 We claim that on the other hand, we have 
 $m\le nm_F+k_3$, where $k_3=n+k_0-1$.  
 To see this, assume that $$m\ge nm_F+k_3+1=n(m_F+1)+k_0.$$ 
Then  we can find  non-disjoint $m$-tiles $X$ and $Y$ with $x\in X$, $y\in Y$. Moreover, we can pick   $n(m_F+1)$-tiles $X'$ and $Y'$ with $x\in X'$ and $y\in Y'$. By definition of $m_F$ we know that $X'\cap Y'=\emptyset$; so 
  $X'$ and $Y'$ are disjoint $n(m_F+1)$-tiles joined by the connected set $K=X\cup Y$. Hence by Lemma~\ref{lem:flowerbds} the set $K$ must consist of at least $$D_{m-n(m_F+1)}\ge \widetilde D_{k_0}\ge 10$$
  $m$-tiles; but $K$ consists of only two  $m$-tiles. This is a contradiction showing the desired claim. 
  
 \smallskip 
 \ref{item:mprops5}
 Let $m'\coloneqq m'_{f,\CC}(x,y)$ be defined as in
 \eqref{eq:def_mprime}. Then $m'\ge 1$, because the two $0$-tiles have
 non-empty intersection.  So  $m'-1\ge 0$, and there exist
 $(m'-1)$-tiles $X$ and $Y$ with $x\in X$ and $y\in Y$. Then $X\cap
 Y\ne \emptyset$ by definition of  
 $m'$, and so $m(x,y)\ge m'-1$. 
 
 Conversely, let $m \coloneqq m(x,y)$. Suppose $m'< m-k_0$. 
 Then there exist $m'$-tiles $X'$ and $Y'$ with $X'\cap Y'=\emptyset$, $m$-tiles 
 $X$ and $Y$ with $X\cap Y\ne \emptyset$, and $x\in X\cap X'$, $y\in Y\cap Y'$. 
 Hence   $K=X\cup Y$ is a union of two $m$-tiles joining  the disjoint $m'$-tiles   
 $X'$ and $Y'$; but such a union must consist of at least   
 $$D_{m-m'}\ge \widetilde D_{k_0}\ge 10$$ 
$m$-tiles by Lemma~\ref{lem:flowerbds}. This is a contradiction showing that $m-k_0\le m'$. So the claim is true with $k_4=k_0$.  
 \end{proof}

\section{Existence and basic properties of visual metrics}
\label{sec:exist-visu-metr}

We are now ready to prove 
Propositions~\ref{prop:visualsummary}  and~\ref{lem:expoexp}, and in particular  the existence of visual metrics. In this section we will also collect various other and somewhat more technical statements 
related to visual metrics that will be useful later on. 
 
\begin{proof}[Proof of Proposition~\ref{prop:visualsummary}] 
  \ref{item:exvisual} Fix a Jordan curve $\CC\sub S^2$ with
  $\post(f)\sub \CC$.  
  
  A function $q\colon S^2\times
  S^2\to [0,\infty)$ is called a 
  \defn{quasimetric}\index{quasimetric} 
  if it has the symmetry property $q(x,y)=q(y,x)$,
  satisfies the condition $q(x,y)=0\Leftrightarrow x=y$, and the
  inequality
  \begin{equation}
    \label{eq:qmetric}
    q(x,y)\leq K(q(x,z)+q(z,y)),
  \end{equation}
 holds  for a constant $K\geq 1$ and all $x,y,z\in S^2$.
  
 We now  define a quasimetric $q$ on $S^2$. For  this purpose, we fix     $\Lambda>1$
  and  set
  \begin{equation}
    \label{eq:defq}
    q(x,y)\coloneqq \Lambda^{-m(x,y)},
  \end{equation}
 for $x,y\in S^2$,  where $m(x,y)=m_{f,\CC}(x,y)\in \N_0\cup \{\infty\}$  is as in
   Definition~\ref{def:mxy}.
   
   Symmetry and the property $q(x,y)=0 \Leftrightarrow x=y$ are
  clear. The quasi-triangle inequality
  (\ref{eq:qmetric}) follows from Lemma~\ref{lem:mprops}~\ref{item:mprops1}.

  It is well known (see \cite[Proposition~14.5]{He}) that a
  sufficient ``snowflaking'' of a quasimetric leads to a distance
  function that is  comparable to a metric. This means 
there is $0<\epsilon<1$, and a
  metric $\varrho$ on $S^2$   such that $\varrho \asymp q^\epsilon
  $.  Then $\varrho$ is a visual metric for $f$ (with expansion
  factor $\Lambda^\eps$).  
  
  \smallskip
  \ref{item:vistop}
  Let $\varrho$ be a visual metric for $f$ satisfying
  \eqref{visual}, and $d$ a fixed  base  metric  on $S^2$ that
  induces the given   topology of $S^2$. We have to show that if
  $x\in S^2$ and  $\{x_i\}$ is a sequence in $S^2$, then
  $\varrho(x_i,x)\to 0$ if and only if $d(x_i, x)\to 0$ as $i\to
  \infty$.

  Assume first that $\varrho(x_i,x)\to 0$ as $i\to \infty$. 
  By \eqref{visual} this is obviously 
  equivalent to $m_i\coloneqq m_{f,\CC}(x_i,x)\to \infty$. 
 For each $i$ there are non-disjoint $m_i$-tiles $X_i$ and $Y_i$ with $x\in X,
  x_i \in Y$. Thus
  $$ d(x_i, x) \leq \diam_d (X_i) + \diam_d (Y_i) \le 2
  \mesh(f,m_i,\CC).$$ 
  Since $f$ is expanding and $m_i\to \infty$,  the latter expression approaches  $0$ as
  $i\to \infty$. Hence $d(x_i, x)\to 0$ as $i\to \infty$.  
  
   Conversely, suppose that $d(x_i, x)\to 0$ as $ i\to \infty$.
  Let $n\in \N_0$ be arbitrary. Then $x$ lies in some $n$-flower 
  $W^n(p)$ (see Lemma~\ref{lem:mapflowers}~\ref{item:mapflowers4}). Since flowers are open sets, we have
  $x_i\in W^n(p)$ for 
  sufficiently large $i$. For each of these $i$ we can find $n$-tiles
  $X$ and $Y$ with $x\in X$, $x_i\in Y$, and $p\in X\cap Y$. This
  implies $m_i\ge n$. Therefore $m_i\to \infty$, and hence $\varrho(x_i,
  x)\to \infty$ as desired. 
 
  \smallskip
  \ref{item:visCC}
  This  follows from Lemma~\ref{lem:mprops}~\ref{item:mprops3}.
 
  \smallskip
  \ref{item:visd1d2}
  This  follows from \ref{item:visCC} and the definition of a visual
  metric.  
 
  \smallskip
  \ref{item:vsfF}
  This follows from \ref{item:visCC} and
  Lemma~\ref{lem:mprops}~\ref{item:mprops4}.  
  
   \smallskip
  \ref{item:vsLip} This follows from Lemma~\ref{lem:mprops}~\ref{item:mprops2}.
 \end{proof}

If two  expanding  Thurston maps are topologically conjugate, then
their visual metrics are closely related. 

\begin{prop} 
  \label{prop:conjisom} 
  \index{snowflake equivalent}
  Let $f\:S^2\ra S^2$ and $g\:\widehat S^2\ra \widehat S^2$ be
  expanding Thurs\-ton maps that are topologically conjugate.  Then
  $S^2$ equipped with any visual metric for  $f$ is
  snowflake equivalent to $\widehat S^2$ equipped with any
  visual metric for  $g$.  Every homeomorphism
  $h\: S^2\ra \widehat S^2$ satisfying $h\circ f=g\circ h$ is a
  snowflake equivalence.
\end{prop}

\begin{proof} By our assumptions    there exists a
  topological conjugacy between $f$ and $g$, i.e., a
  homeomorphism $h\: S^2\ra \widehat S^2$ such that
  $h\circ f=g\circ h$.  Let $\varrho$ be a visual metric on
  $S^2$ for  $f$, and $\widehat{\varrho}$ be a visual
  metric on $\widehat S^2$ for  $g$.  Let $\Lambda>1$
  and $\widehat \Lambda>1$ be the expansion factors of $\varrho$
  and $\widehat{\varrho}$, respectively.  It suffices to show
  that $h\: (S^2, \varrho)\ra (\widehat S^2, \widehat{\varrho})$
  is a snowflake equivalence.

  To see this, pick a Jordan curve $\CC\sub S^2$ with
  $\post (f)\sub \CC$. Then $\widehat \CC=h(\CC)$ is a Jordan
  curve in $\widehat S^2$ with
  $\post(g)=h(\post(f))\sub \widehat \CC$ (see Lemma~\ref{lem:T-eq_crit_post}).  Since $h$ conjugates
  $f$ and $g$, it follows from
  Proposition~\ref{prop:celldecomp}~\ref{item:skeletons} and
  \ref{item:nedgesC} or, alternatively, from the uniqueness
  statement in Lemma~\ref{lem:pullback} that for each
  $n\in \N_0$ the images of the cells in the cell decomposition
  $\DD^n\coloneqq \DD^n(f,\CC)$ of $S^2$ under the map $h$ are precisely
  the cells in the cell decomposition
  $\widehat \DD^n\coloneqq \DD^n(g, \widehat \CC)$ of $\widehat S^2$; so
  we have
\begin{equation}  \label{eq:CChCC}
\widehat \DD^n=\{h(c): c\in \DD^n\}
\end{equation} for all $n\in \N_0$. 
This implies that  
$$\widehat m(h(x),h(y))=m(x,y)$$ for all $x,y\in S^2$, where 
$\widehat m=m_{g, \widehat \CC}$ and $m=m_{f, \CC}$ (recall
Definition~\ref{def:mxy}). Combining this with
Proposition~\ref{prop:visualsummary}~\ref{item:visCC} we see
that
\begin{equation*}
  \widehat{\varrho}(h(x), h(y))
  \asymp 
  \widehat \Lambda^{-\widehat m(h(x),h(y))}
  = 
  \widehat\Lambda^{-m(x, y)}
  = 
  \Lambda^{-\alpha m(x, y)}\asymp \varrho(x,y)^\alpha 
\end{equation*}
for all $x,y\in S^2$, where
$\alpha=\log(\widehat \Lambda)/\log(\Lambda)$ and the implicit
multiplicative constants do not depend on $x$ and $y$. It
follows that $h$ is a snowflake equivalence.
\end{proof}

We now prove the geometric characterization of visual metrics.

\begin{proof}[Proof of Proposition~\ref{lem:expoexp}]
  Let $f\colon S^2 \to S^2$ be 
an expanding Thurston map, and
  $\CC\subset S^2$ be a Jordan curve with $\post(f)\subset \CC$.

We first show that a visual metric $\varrho$ for $f$ has the properties \ref{item:expoex1} and 
\ref{item:expoex2}. 
  By Proposition~\ref{prop:visualsummary}~\ref{item:visCC} we may assume that
  $\varrho$ satisfies \eqref{visual} for $m=m_{f,\CC}$ and a constant
  $C=C(\asymp)$.   

  \smallskip
  \ref{item:expoex1}
  Let $k_0\in \N$ be defined as in \eqref{def:k0}, and  let $\sigma$
  and $\tau$ be disjoint $n$-cells. If $x\in \sigma$ and $y\in \tau$
  are arbitrary, then $m(x,y)<n+k_0$. Indeed, if this were   not the
  case, then we could find $(n+k)$-tiles $X$ and $Y$ with $x\in X$,
  $y\in Y$, $X\cap Y\ne \emptyset$, and $k\ge 
k_0$.  Then $K=X\cup Y$ is a connected set  
meeting disjoint $n$-cells. Hence by Lemma~\ref{lem:flowerbds} the
number of $(n+k)$-tiles in 
$K$ should be   $\ge D_k\ge \widetilde D_{k_0}\ge 10$.  
  On the other hand, $K$ consists of  two $(n+k)$-tiles. 
   This is impossible.  

Thus
$\varrho(x,y) \ge (1/C)\Lambda^{-n-k_0}$, and so 
we get the desired bound $\dist_\varrho(\sigma, \tau)\ge (1/C')\Lambda^{-n}$ with  the constant $C'=C\Lambda^{k_0}$ that is independent of $n$, $\sigma$,  and $\tau$. 

\smallskip
\ref{item:expoex2}
If $x,y$ are points in some $n$-tile $X$, then $m(x,y)\ge n$. Since every $n$-edge is contained in an $n$-tile, this inequality is still true if $x$ and $y$ are contained in an $n$-edge.
Hence $\varrho(x,y)\le C\Lambda^{-m(x,y)}\le  C \Lambda^{-n}$, and so  
$\diam_\varrho(\tau)\le C \Lambda^{-n}$ whenever $\tau$ is an $n$-tile or $n$-edge, where $C=C(\asymp)$ is the constant from \eqref{visual}.

A similar lower bound for the diameter of an $n$-edge or $n$-tile $\tau$ follows from \ref{item:expoex1} and the fact that every $n$-edge or $n$-tile contains two distinct $n$-vertices. 

To prove the converse implication, suppose  that $\varrho$ is a metric
on $S^2$ with the  properties \ref{item:expoex1} and
\ref{item:expoex2} as in the statement. We want to show that $\varrho$ is visual for $f$. Let $x,y\in
S^2$, $x\ne y$, be arbitrary, and $m=m_{f,\CC}(x,y)$.  

Then we can find $m$-tiles $X$ and $Y$ with $x\in X$, 
$y\in Y$, and
$X\cap Y\ne \emptyset$. By \ref{item:expoex2} we have 
$$ \varrho(x,y)\le \diam_\varrho(X)+\diam_\varrho(Y) \lesssim \Lambda^{-m}. $$ 
We can also find $(m+1)$-tiles $X'$ and $Y'$ with $x\in X'$, $y\in
Y'$. By definition of $m$ we then have $X'\cap
Y'=\emptyset$. Hence by \ref{item:expoex1} 
$$\varrho(x,y)\ge \dist_\varrho(X',Y')\gtrsim \Lambda^{-m}. $$
Since the implicit multiplicative constants in the previous
inequalities are independent of $x$ and $y$, it follows that $\varrho$
is a visual metric for $f$.  
\end{proof}

It is possible to establish the phenomenon of  ``exponential
shrinking'' as in Proposition~\ref{lem:expoexp}~\ref{item:expoex2}  also for
other types of sets. For example, we have  
\begin{equation}\label{eq:flowerexpsht}
\diam_\varrho(W^n(p)) \le C \Lambda^{-n}
\end{equation}
for every
$n$-flower for $(f,\CC)$ where the constant $C$ is independent of $n$ and $p$.  Of particular importance will be exponential shrinking for lifts of paths. 

\begin{lemma}
  \label{lem:liftpathshrinks} Let $f\:S^2\ra S^2$ be an expanding Thurston map, and  
  $\varrho$ be  a visual metric for $f$ with expansion factor $\Lambda>1$. Then for every path
    $\gamma\colon [0,1]\to S^2$ there exists a
    constant $A>0$
    with the following property: if $n\in \N$ and  $\widetilde \gamma$  is any lift  of
    $\gamma$ by $f^n$,
    then
      \begin{equation*}
    \diam_\varrho (\widetilde \gamma) \le  A \Lambda^{-n}.
  \end{equation*}
\end{lemma}

Note that lifts of $\ga $ by  $f^n$ exist according to  Lemma~\ref{lem:liftsofpathsbranched}.

\begin{proof} 
  Pick a Jordan curve $\CC\sub S^2$ with $\post(f)\sub \CC$, and let
  $\delta_0>0$ be as in \eqref{defdelta} with $\varrho$ as the base metric on $S^2$.  Then 
  we can break up
  $\gamma$ into a finite number of paths $\gamma_i$, $i=1, \dots, N$,
  traversed in successive order such that $\diam_\varrho(\gamma_i)<\delta_0$
  for  $i=1,\dots, N$. By Lemma~\ref{lem:preimsmall}~\ref{item:preimsmall2} each lift
  of a piece $\ga_i$ is contained in one $n$-flower, and so the
  whole lift $\widetilde \ga$ in $N$ $n$-flowers. Hence by  
  \eqref{eq:flowerexpsht} we have $\diam_\varrho
  (\widetilde\gamma) \leq C N 
  \Lambda^{-n}$, where $C>0$ is independent of $n$ and $\ga$. The statement follows with 
  $A=CN$. 
\end{proof}

In  general, the constant $A$ in the last lemma will depend on $\ga$, but the proof shows that we can take the same constant $A$  for a family of paths if  there exists $N\in \N$ such that each 
path can be broken up into at most $N$ subpaths of diameter 
$<\delta_0$.

Let $f$ be an expanding Thurston map, and $\CC\sub S^2$ be a Jordan curve
with $\post(f)\sub \CC$. 
It is useful    to define neighborhoods of points by using the cells in our decompositions
 $\DD^n=\DD^n(f, \CC)$. To do this, let $x\in S^2$, $n\in \N_0$, and set  
\begin{align}
  \label{eq:defUk}
  U^n(x) 
  = \bigcup\{Y\in \X^n: &\text{ there exists an $n$-tile $X$ with}  
  \\ \notag 
  \null &\text{ $x\in X$ and }X\cap Y\neq\emptyset\}.  
\end{align}
It is convenient to define $U^n(x)$ also for negative integers
$n$. We set $U^n(x)=U^0(x)=S^2$ for $n<0$. 

The sets $U^n(x)$
resemble  
metric balls defined in terms of a visual metric very closely. 
\begin{lemma}
  \index{visual metric}
  \index{metric!visual}
  \label{lem:UmB} Let $\varrho$ be a visual metric for $f$ with
  expansion factor $\Lambda>1$. Then there are constants $K\ge 1$ and
  $n_0\in \N_0$  with the following properties. 
 
    \begin{enumerate}
  \item
    \label{item:UmB1}
    For all $x\in S^2$ and all $n\in \Z$,
    \begin{equation*}
      B_\varrho(x, r/K)\subset U^n(x)\subset B_\varrho(x, Kr), 
    \end{equation*}
    where  $r=\Lambda^{-n}$.
  \item
    \label{item:UmB2}
    For all $x\in S^2$ and all $r>0$,
  \begin{equation*}
     U^{n+n_0}(x)\subset B_\varrho(x,r)\subset U^{n-n_0}(x),
  \end{equation*}
  where $n=\left\lceil-\log r/\log\Lambda\right\rceil$.
  \end{enumerate}
\end{lemma}

\begin{proof} 
  \ref{item:UmB1}
  Let $m=m_{f,\CC}$. If $y\in U^n(x)$, then $m(x,y)\ge
  n$, and so $\varrho(x,y)\lesssim \Lambda^{-n}=r$.  This gives the
  inclusion $U^n(x)\sub B_\varrho(x,Kr)$ for a suitable constant $K$
  independent of $x$ and $n$.  

Conversely, suppose that  $y\notin U^n(x)$. Then $n\ge 1$. If we 
pick any $n$-tiles $X$ and $Y$ with $x\in X$ and $y\in Y$, then $X\cap
Y=\emptyset $ by definition of $U^n(x)$. So  by
Proposition~\ref{lem:expoexp}~\ref{item:expoex1} we have  
\begin{equation*}
  \varrho(x,y)\ge \dist_\varrho(X,Y) \gtrsim \Lambda^{-n}= r. 
\end{equation*}
 Hence $B_\varrho(x,r/K)\sub U^n(x)$ if $K$ is 
 suitably large independent of $x$ and $n$. 

\smallskip
\ref{item:UmB2}
Choose $n_0=\left\lceil\log K/\log \Lambda\right\rceil+1$, where $K$ is as in \ref{item:UmB1}. 
Then $\Lambda^{-n_0}\le1/( \Lambda K)$. Moreover, 
 $\Lambda^{-n}\le r\le \Lambda \Lambda^{-n}$,  and  so 
$$K\Lambda^{-n-n_0}\le r\le 
(1/K)\Lambda^{-n+n_0}. $$
The desired inclusion then follows from \ref{item:UmB1}. 
  \end{proof}

We next show that when $S^2$ is equipped with a visual metric, 
then tiles are 
``quasi-round''. 
In particular,
every tile contains points that
are ``deep inside'' the tile.

\begin{lemma} \label{lem:quasiball}
Let $f\: S^2\ra S^2$ be an expanding Thurston map, $\CC\sub S^2$ be a
Jordan curve with $\post(f)\sub \CC$, and $\varrho$ be a visual metric
for $f$ with expansion factor $\Lambda>1$.  Then there exists a
constant $C\ge 1$ with the following property:  
for every $n$-tile $X$ for $(f,\CC)$ there exists a point $p\in X$ such that 
$$B_\varrho(p, (1/C)\Lambda^{-n})\sub X \sub B_\varrho(p, C\Lambda^{-n}).$$ 
\end{lemma}

\begin{proof} With a suitable constant $C$ independent of $n$, an inclusion of the form 
$$X\sub B_\varrho(p, C\Lambda^{-n})$$  holds for every $n$-tile  $X$ and
every point $p\in X$ as follows from Proposition~\ref{lem:expoexp}~\ref{item:expoex2}. 

The main difficulty for an inclusion in the opposite direction is to find an  appropriate  point $p$. For this purpose, let $k_0\in \N$ be the number defined in \eqref{def:k0}, and $X$ be an arbitrary $n$-tile. 
Since $f$ is an expanding  Thurston map, we have $\#\post(f)\ge 3$ (see Lemma~\ref{lem:no<3}), and so 
$\partial X$ contains at least three distinct $n$-vertices 
$v_1,v_2,v_3$. Using these vertices, we can find three arcs
$\alpha_1, \alpha_2,\alpha_3\sub \partial X$ with pairwise disjoint interior such that    
$\partial X= \alpha_1\cup \alpha_2\cup \alpha_3$ and such 
that $\alpha_i$ has the endpoints $v_i$ and $v_{i+1}$ for $i=1,2,3$, where $v_4=v_1$.    In general, $\alpha_i$ will not be an $n$-edge, but since  it lies on $\partial X$, and its endpoints are $n$-vertices, it is the union of all the $n$-edges that it contains. 

We now define 
$$A_i=\bigcup_{x\in \alpha_i} U^{n+k_0}(x) $$
for $i=1,2,3$, where $U^{n+k_0}(x)$ is given  as in \eqref{eq:defUk}.
Then  the set $A_i$ is the union of all $(n+k_0)$-tiles that meet an $(n+k_0)$-tile that has non-empty  intersection with $\alpha_i$. In particular, $A_i$ is a closed set that contains $\alpha_i$. 

We claim that the sets $A_1$, $A_2$, $A_3$ do not form a cover of $X$. To reach a contradiction, suppose that 
$X\sub A_1\cup A_2\cup A_3$. We can regard $X$ as a topological simplex with the sides $\alpha_i$, $i=1,2,3$. 
Then the  closed sets $A_1, A_2, A_3$ form a cover of $X$ such that each set $A_i$ contains the side $\alpha_i$ of the simplex for $i=1,2,3$. A well-known result due to Sperner \cite[p.~378]{AH} then implies that $A_1\cap A_2\cap A_3\ne \emptyset$.

Pick a point $x\in A_1\cap A_2\cap A_3$.  Then by definition of $A_i$, there exist $(n+k_0)$-tiles $X_i$ and $Y_i$ with 
$X_i\cap \alpha_i\ne \emptyset$, $x\in Y_i$, and $X_i\cap Y_i\ne \emptyset$, where $i=1,2,3$. Then the set 
$$K=\bigcup_{i=1}^3(X_i\cup Y_i)$$
consists of at most six  $(n+k_0)$-tiles, is connected,  and meets each of the arcs $\alpha_1,\alpha_2, \alpha_3$.
Hence $K'=f^n(K)$ is a connected set that consists of at most 
six  $k_0$-tiles, and meets each of the arcs $\beta_i=f^n(\alpha_i)$,
$i=1,2,3$. Note that  each arc  $\beta_i$ is the union of all $0$-edges that it contains. Hence for $i=1,2,3$ there exists a  $0$-edge $e_i\sub \beta_i$ with $e_i\cap K' \ne \emptyset$. Since the arcs $\beta_1,\beta_2,\beta_3$  have pairwise disjoint interior, it follows that the $0$-edges $e_1,e_2,e_3$ are all distinct. So $K'$ is a connected set that meets three distinct $0$-edges. Hence it joins opposite sides of $\CC$. So $K'$ should contain at least $D_{k_0}\ge \widetilde D_{k_0}\ge 10$ tiles of level $k_0$.
This is a contradiction, because $K'$ is a union of at most six  $k_0$-tiles. 

This proves the claim that the sets $A_1,A_2, A_3$ do not cover $X$, and we conclude that we can find a point 
$$p\in X\setminus (A_1\cup A_2\cup A_3).$$

We claim that $U^{n+k_0}(p)\sub X$. Otherwise,  
there is a point    
$y\in U^{n+k_0}(p)\setminus X$, and  $(n+k_0)$-tiles 
$U$ and $V$ with $p\in U$, $y\in V$, and $U\cap V\ne \emptyset$. 
Then the connected set $U\cup V$ must meet $\partial X$, and hence one of the arcs $\alpha_i$; but then $p\in A_i$
by definition of $A_i$. This is a contradiction showing 
the desired inclusion $U^{n+k_0}(p)\sub X$.

From  Lemma~\ref{lem:UmB}~\ref{item:UmB1} it now follows   that 
 $B_\varrho(p,(1/C)\Lambda^{-n})\sub X$, where $C\ge 1$ is a constant
 independent of $n$ and $X$.  
 \end{proof}

 \section{The canonical orbifold metric as a visual metric}
\label{sec:orbifold_visual}


If $f\: S^2\ra S^2$ is an expanding Thurston map $f$, one can ask whether 
other standard metrics on $S^2$ are visual metrics. To have some natural 
metrics available, we restrict ourselves here to rational expanding
Thurston maps $f$ defined on $\CDach$. Recall that a rational
Thurston map is expanding if and only if it 
does not have periodic critical points 
(see Proposition~\ref{prop:rationalexpch}). 

\begin{lemma}
  \label{lem:chord_not_vis}
  \index{metric!chordal}
  \index{chordal metric}
  Let $f\colon \CDach\to \CDach$ be a rational expanding Thurston
  map. Then the 
  chordal metric $\sigma$ on $\CDach$ is not a visual metric for $f$. 
\end{lemma}

\begin{proof}
  We argue by contradiction and assume that $\sigma$ is a visual metric for  $f$ with expansion factor $\Lambda>1$. We pick a  critical point $c\in
  \CDach$ of  $f$  and set 
  $d\coloneqq \deg(f,c)\geq 2$. 
  
  Consider tiles for $(f,\CC)$, 
  where  
 $\CC\subset \CDach$ is a fixed Jordan curve with
 $\post(f)\subset \CC$ as usual. 
 For
  each $n\in \N$ let $X^n$ be an $n$-tile that contains $c$. Then   $\diam_\sigma(X^n) \asymp \Lambda^{-n}$  by
  Proposition~\ref{lem:expoexp}~\ref{item:expoex2}.  Since in suitable 
  local   conformal coordinates the map $f$ near $c$ behaves like $z\mapsto z^d$ near $0$, for $Y^n\coloneqq f(X^{n+1})$ we have
   \begin{align}\label{eq:uppbddd}
   \diam_\sigma(Y^n) &=  \diam_\sigma(f(X^{n+1}))  \asymp
   \diam_\sigma(X^{n+1})^d  \\
   & \asymp \Lambda^{-(n+1)d} \asymp \Lambda^{-nd}\nonumber
  \end{align}
  for $n\in \N_0$, where $C(\asymp)$ is independent of
  $n$. On the other hand, $Y^n$  is an $n$-tile 
 and so $\diam_\sigma(Y^n)\asymp \Lambda^{-n}$ by
 Proposition~\ref{lem:expoexp}~\ref{item:expoex2}, where
 $C(\asymp)$ is independent of  $n$. Since $d\ge 2$ this is irreconcilable with \eqref{eq:uppbddd} for large $n$ and so we reach a contradiction. 
\end{proof}

We now turn to the canonical
  orbifold metric $\omega=\omega_f$  of  a given rational expanding Thurston map $f$ (see Section~\ref{sec:orbif-assoc-thurst})   and its relation to visual metrics. We will reformulate and prove   the  two implications in Proposition~\ref{prop:orbivispara} separately. 
  First we prove the ``only if'' statement.

\begin{lemma}
  \label{lem:fhyp_om_not_visual}
  Let $f\colon \CDach \to \CDach$ be a rational expanding Thurston map
  such that its canonical orbifold metric
  $\omega=\om_f$ is a visual metric for $f$. Then $f$ is a Latt\`{e}s map. 
\end{lemma}

\begin{proof} The metric $\om$ is the canonical orbifold metric
  of the associated orbifold $\mathcal{O}_f=(\CDach,
  \alpha_f)$.
  Note that $\alpha_f(u)<\infty$ for $u\in \CDach$ by
  Proposition~\ref{prop:otherramprops}~\ref{item:rami_infty},
  because $f$ is a rational expanding Thurston map and so it does
  not have periodic critical points.

  Suppose $\om$  is a visual metric for $f$ with 
  expansion factor $\Lambda>1$. 
  Let $p\in \CDach$ 
and $q\in f^{-1}(p)$ be arbitrary, and set $d\coloneqq \deg_f(q)$.  
 
 We consider tiles for $(f,\CC)$, where  
 $\CC\subset \CDach$ is a fixed Jordan curve with $\post(f)\subset \CC$. 
 For each $n\in \N_0$ we pick an  $n$-tile $X^n$  with $q\in X^n$. Then $Y^n\coloneqq f(X^{n+1})$ is an $n$-tile containing $p=f(q)$. 
  
  As in the proof of Lemma~\ref{lem:chord_not_vis}, we have 
  $$ \diam_\sigma(Y^n)\asymp \diam_\sigma(X^{n+1})^d. $$ 
  On the other hand, one can relate $\om$ and the chordal metric $\sigma$. Namely, one can show (see \eqref{eq:om_chordal})
  that if $U$ is a sufficiently small neighborhood of $p$, then 
  $$ \om( p, u) \asymp\sigma(p, u)^{1/\alpha_f(p)}$$ 
  for all $u\in U$.
  This implies that 
  $$ \diam_\om (Y^n) \asymp  \diam_\sigma(Y^n)^{1/\alpha_f(p)} $$ 
  for large $n$. 
  Similarly, 
  $$  \diam_\om (X^n) \asymp  \diam_\sigma(X^n)^{1/\alpha_f(q)}$$
  for large $n$. 
  
  Since $\omega$ is a visual metric for $f$ with expansion factor $\Lambda$, we also have 
  $$ \diam_\omega(X^n) \asymp \diam_\omega(Y^n) \asymp \Lambda^{-n}$$ for 
  $n\in \N_0$ by Proposition~\ref{lem:expoexp}~\ref{item:expoex2}. 
  
  If we combine all these  estimates, we arrive at 
  \begin{align*}  
    \Lambda^{-\alpha_f(p)n}
    & \asymp  
      \diam_\omega(Y^n)^{\alpha_f(p)}
   \asymp  
      \diam_\sigma(Y^n)
    \\ 
    & \asymp 
      \diam_\sigma(X^{n+1})^d
      \asymp   
      \diam_\omega(X^{n+1})^{d\alpha_f(q)}
    \\ 
    & \asymp
      \Lambda^{-d\alpha_f(q)(n+1)}
      \asymp
      \Lambda^{-d\alpha_f(q)n}
  \end{align*} 
  for all large $n$, where all the implicit constants $C(\asymp)$ are independent of $n$.  This is only possible if $\alpha_f(p)=d  \alpha_f(q)=\deg_f(q) \alpha_f(q)$.
  
We conclude that 
  $\alpha_f$ satisfies  condition~\ref{item:Of_para3} in  Proposition~\ref{prop:parabolicOf}, and so 
  $\mathcal{O}_f$ is parabolic. Hence $f$ is a Latt\`es map by 
  Theorem~\ref{thm:Lattesstruc}. 
  \end{proof}

\index{canonical orbifold!metric}\index{metric!canonical orbifold}\index{o@$\omega$}\index{orbifold!canonical metric} 
We now prove the ``if'' implication of
Proposition~\ref{prop:orbivispara}.
\begin{prop} 
  \label{prop:paravisorbmet}
  \index{Latt\`{e}s map} 
  Let $f\: \CDach\ra \CDach$ be a Latt\`es map,
  and $\om=\om_f$ be the canonical orbifold metric of $f$ on $\CDach$.  Then
  $\om$ is a visual metric for $f$ with expansion factor $\Lambda=\deg
  (f)^{1/2}>1$.  Moreover, $\om$ is a geodesic metric on $\CDach$ and
  for all paths $\ga$ in $\CDach$ we have
\begin{equation}
  \label{eq:expparaorbmet} 
  \length_\om(f\circ \ga)=\Lambda\length_\om  (\ga).
\end{equation}
\end{prop}

We will see later (in Proposition~\ref{prop:macgrdn} and
Theorem~\ref{thm:visexpfactors1}) that for given degree of an
expanding Thurston map $f$ the number $\Lambda=\deg (f)^{1/2}$ is the
largest possible expansion factor of a visual metric. So Latt\`es maps
are special as they realize this maximal factor. This is closely
related to the characterization of Latt\`es maps among expanding
Thurston maps  given in Theorem~\ref{thm:Qianthm}.

\begin{proof} 
Let  $f\colon \CDach \to \CDach$ be a Latt\`{e}s map. Then $\om$ is the canonical 
orbifold metric of the parabolic orbifold $\mathcal{O}=(\Cdach, \alpha_f)$ associated with $f$.
Since $f$ has no periodic critical points, we have $\alpha_f(p)<\infty$ for all $p\in \CDach$ (Proposition~\ref{prop:otherramprops}~\ref{item:rami_infty}) and so  
$\mathcal{O}$ has no punctures. In particular, $\om$ is a
geodesic metric defined on the whole Riemann sphere $\CDach$ (see the discussion
after \eqref{eq:om_path_isom} in Section~\ref{sec:expratThmaps}).

Let $\Theta\: \C\ra \CDach$  and $A\: \C \ra \C$ be holomorphic maps for $f$ as 
in Theorem~\ref{thm:Lattesstruc}~\ref{item:Lattessruciii}.  Then 
$f\circ \Theta = \Theta \circ A$, and $A$ has the form  $A(z)= \alpha z+\beta$, where
$\alpha,\beta\in \C$ with $\deg(f)=\abs{\alpha}^2\ge 2$ (see Lemma~\ref{lem:deglatttype}). 
We  know that $\Theta$ is the universal orbifold covering map of $\mathcal{O}$ and
that the metric $\om$ is essentially the push-forward of the Euclidean metric on $\C$ 
by the map $\Theta$. More precisely, $\Theta$ is a path isometry in the sense that   
\begin{equation}\label{eq:piso8} \length(\ga)=
\length_\omega(\Theta\circ \ga),
\end{equation} whenever $\ga$ is a path in $\C$
(see \eqref{eq:om_path_isom}). Here and below metric notions on $\C$ (such as 
$\length(\gamma)$) refer to the Euclidean metric, while on $\CDach$ we use the metric $\om$ indicated by a subscript.  

Define $\Lambda = \abs{\alpha} = \deg(f)^{1/2}>1$ and let $\ga$ be an arbitrary path in $\CDach$. Then $\ga$ has a lift
by the branched covering map $\Theta$, and so there exists a path
$\widetilde {\gamma}$ in $\C$ such that $\ga =\Theta\circ
\widetilde {\gamma}$ (see Lemma~\ref{lem:liftsofpathsbranched}). Then
$f\circ \Theta = \Theta\circ A$ implies
$$ \Theta\circ A^n\circ \widetilde {\gamma} = f^n \circ \Theta \circ
\widetilde {\gamma} = f^n\circ \ga$$
for all $n\in \N$.

Note
that $A^n$ for $n\in \N$ is a Euclidean similarity on $\C$
scaling distances by
the factor $\abs{\alpha}^n =\Lambda^n$.
Since $\Theta$ is a path isometry,  we conclude that 
\begin{align}
  \label{eq:Lattscalfn}
  \length_\om(f^n\circ \ga)&=\length(A^n\circ \widetilde {\gamma})
  \\
  &=\Lambda^n\length (\widetilde {\gamma})=\Lambda^n\length_\om  (\ga). 
  \nonumber 
\end{align} 
If we take $n=1$ here, then  \eqref{eq:expparaorbmet} follows.

In order  to verify  that  $\omega$
is a visual metric for $f$ with expansion factor $\Lambda$, 
we will show that $\om$ satisfies the conditions in 
Proposition~\ref{lem:expoexp}.
For this, we  pick a Jordan curve $\CC\sub \CDach$ with $\post(f)\sub \CC$ and consider 
  $n$-cells for $(f,\CC)$.   
  
  Suppose $\sigma$ and $\tau$ are two disjoint $n$-cells, where $n\in \N_0$.
Since  $\omega$ is a geodesic metric, we can find a path $\ga$ in
$\CDach$ joining  
$\sigma$ and $\tau$
with 
\begin{equation}\label{eq:visLattscal0}
 \length_\om(\ga)= \dist_\om(\sigma, \tau). \end{equation} 
By Lemma~\ref{lem:maptotop} the path $f^n\circ \ga$ joins opposite sides of $\CC$, and so 
$$ \length_\om(f^n\circ \ga)\ge \diam_\om(f^n\circ \ga)\ge \delta_0, $$
where $\delta_0>0$ is defined as in \eqref{defdelta} for the base metric $\om$
on $\CDach$.

Combining  the last inequality with \eqref{eq:visLattscal0} and 
 \eqref{eq:Lattscalfn}, we arrive at the inequality 
 \begin{equation} 
   \label{eq:visLattscal1} 
   \dist_\om(\sigma, \tau)\gtrsim \Lambda^{-n}. 
 \end{equation}
 Here and in the following, the implicit multiplicative constant
 $C(\gtrsim)$ is independent of the cells and their level $n$. 
So $\om$ satisfies condition \ref{item:expoex1} in
Proposition~\ref{lem:expoexp}.

Since every $n$-edge or $n$-tile $\tau$   contains two distinct $n$-vertices, \eqref{eq:visLattscal1}
also implies that 
\begin{equation}\label{eq:visLattscal3}
 \diam_\om(\tau)\gtrsim \Lambda^{-n}. 
 \end{equation}  

To complete the proof, we have 
 to establish   an inequality in the opposite  direction.
  First note that by  Lemma~\ref{lem:unifcontTheta} we can 
find a  number $\delta_1>0$ with the following property: 
 if  $K\sub \C$ is a connected set with 
$\diam_{\om}(\Theta(K))<\delta_1$, then we have $\diam(K)< 1$.
 Since $f$ is expanding, we can choose $n_0\in \N$ such that 
$\diam_\om(X)<\delta_1$, whenever $X$ is an $n$-tile with level $n\ge n_0$.

Now suppose   $X$ is   an arbitrary $n$-tile with $n\ge n_0$. Define
$k=n-n_0\in \N_0$ and $U=\inte(X)$. Then $U\subset \CDach$ is a simply
connected region that does not contain any $n$-vertex and so 
$U\sub \CDach\setminus \post(f)$. Since $\Theta\: \C\setminus \Theta^{-1}(\post(f))\ra \CDach\setminus \post(f)$ is a covering map (see Lemma~\ref{lem:brcovcov}), the inclusion map $U\ra \CDach\setminus \post(f)$ lifts by $\Theta$ (see Lemma~\ref{lem:liftsofcov}). This  implies that there exists a region 
$V\sub \C$ such that $\Theta(V)=U$. Note that $\diam_\om( U)   
\le \diam( V)$, because distances do not increase under the map $\Theta$ as follows 
from \eqref{eq:piso8}.

Moreover, 
\begin{equation} \label{eq:visLattscal2}
\Theta(A^k(V))=f^k(\Theta(V))=f^k(U)\sub f^k(X). 
\end{equation} 
On the other hand, $f^k(X)$ is a tile of level $n-k=n_0$, and so 
$$\delta_1> \diam_\om(f^k(X))\ge \diam_\om(f^k(U)). $$
Since  $A^k(V)$ is connected, this inequality, the relation \eqref{eq:visLattscal2}, and the definition of 
$\delta_1$ imply that 
$$ \diam(A^k(V))\le 1.$$ 
It follows that 
\begin{align*} \diam_\om(X) &=\diam_\om(\overline U)=
\diam_\om( U)   
\le \diam( V)\\ &=\Lambda^{-k}\diam(A^k(V))  \le \Lambda^{-k}=
\Lambda^{n_0}\Lambda^{-n}\lesssim \Lambda^{-n}. \nonumber
\end{align*}

For $n$-tiles $X$ with $n\le n_0$ we trivially have $\diam_\om(X)\asymp 1\asymp \Lambda^{-n}$. Hence 
$\diam_\om(X)\lesssim \Lambda^{-n}$ for tiles $X$ on  any level $n\in \N_0$ if we choose a suitable constant $C(\lesssim)$.
Since every $n$-edge is contained in an $n$-tile,  we have 
$ \diam_\om(\tau)\lesssim \Lambda^{-n}$ whenever 
$\tau$ is an $n$-tile or $n$-edge, $n\in \N_0$. Here we actually have  
$ \diam_\om(\tau)\asymp \Lambda^{-n}$  by \eqref{eq:visLattscal3}. 
 
This shows that $\om$ also satisfies  condition~\ref{item:expoex2}  in Proposition~\ref{lem:expoexp}. It follows that
$\om$ is indeed  a visual metric for $f$ with expansion factor $\Lambda$. 
\end{proof}

Note that Proposition~\ref{prop:orbivispara} clearly follows from  
Proposition~\ref{prop:paravisorbmet} and
Lemma~\ref{lem:fhyp_om_not_visual}.

\ifthenelse{\boolean{singlechapter}}{

%


\chapter{Symbolic dynamics}
\label{cha:symdym}

\index{symbolic dynamics}

If one wants to understand a dynamical system $(X,f)$ given by the iteration  of a map $f$ on a space $X$, then often one tries to find a link to symbolic dynamics 
and the theory of  shift operators, in particular  to shifts of finite type.
These  operators serve as an  important paradigm in dynamics. In this chapter we study this for
expanding Thurston maps, but  we will exclusively be  concerned with topological aspects. One could also investigate measure-theoretic 
properties of expanding Thurston maps and their relation to (Bernoulli) shift operators, but we will not pursue this here (see \cite{HH} for a relevant paper in this context).   

 We will prove  the following statement.

  \begin{theorem} \label{thm:expThfactor}
Let $f\: S^2\ra S^2$ be an expanding Thurston map. Then $f$ is a factor 
of the left-shift $\Sigma\: J^\om\ra J^\om $ on the space $J^\om$ of all sequences in a finite set $J$ of cardinality $\#J=\deg(f)$. 
\end{theorem}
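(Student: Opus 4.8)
The plan is to encode points of $S^2$ by itineraries with respect to the tiles of a cell decomposition, and to identify the symbol set $J$ with the set of tiles at level $1$. Fix a Jordan curve $\CC\sub S^2$ with $\post(f)\sub\CC$; by Theorem~\ref{thm:main} we may, after replacing $f$ by an iterate (and since the visual metrics and the notion of expansion are unchanged under passing to iterates, this is harmless for the statement), assume $\CC$ is $f$-invariant. Then the cell decompositions $\DD^n=\DD^n(f,\CC)$ are nested: $\DD^{n+1}$ refines $\DD^n$, and $f$ maps each tile of $\DD^{n+1}$ homeomorphically onto a tile of $\DD^n$, each tile of $\DD^n$ onto a tile of $\DD^{n-1}$, and so on. Equip $S^2$ with a visual metric $d$ with expansion factor $\Lambda>1$; by Lemma~\ref{lem:expoexp} every tile in $\DD^n$ has diameter $\asymp\Lambda^{-n}$, so nested sequences of tiles shrink to points.

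The key combinatorial step is to build the coding map. Let $X^1_1,\dots,X^1_N$ be the tiles of $\DD^1$ (there are $N=2\deg(f)$ of them: $\deg(f)$ white and $\deg(f)$ black). Take $J$ with $\#J=\deg(f)$; the idea is that a tile of $\DD^{n+1}$ inside a given tile of $\DD^n$ of a fixed color is determined, via $f$, by one of the $\deg(f)$ tiles of $\DD^1$ of that color, hence by a symbol in $J$. Concretely, for a sequence $(j_0,j_1,\dots)\in J^\om$ one inductively selects a nested sequence of tiles $X^0\supset X^1\supset X^2\supset\cdots$ with $X^n\in\DD^n$: the color and combinatorial position alternate in a way dictated by the two-tile subdivision rule coming from $(\DD^1,\DD^0)$ (Corollary~\ref{cor:subdivnlarge} and the discussion of subdivision rules), and $j_n$ picks out which child tile of $X^n$ to descend into. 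By the shrinking property $\bigcap_n X^n$ is a single point; call it $\pi(j_0,j_1,\dots)$. One checks $\pi\colon J^\om\to S^2$ is well-defined, continuous (nearby sequences agree on a long initial block, hence land in a common small tile), and surjective (every point lies in some tile of each $\DD^n$, and one can choose these compatibly — points on the $1$-skeleton lie in several tiles, which is why $\pi$ is only a factor map, not a homeomorphism).

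Finally, the semiconjugacy. Because $f$ carries $\DD^{n+1}$-tiles to $\DD^n$-tiles and is compatible with the subdivision rule, applying $f$ to a nested sequence $X^0\supset X^1\supset\cdots$ coding $(j_0,j_1,\dots)$ yields the nested sequence $f(X^1)\supset f(X^2)\supset\cdots$ coding $(j_1,j_2,\dots)$ — after matching up $f(X^1)\in\DD^0$ as the ambient tile at level $0$; here one uses that $f$ restricted to each tile is a homeomorphism onto a tile, so it respects the indexing by $J$. This gives $f\circ\pi=\pi\circ\Sigma$, i.e.\ $f$ is a factor of the shift. Since $\deg(f^n)=\deg(f)^n$ would force the wrong cardinality, it is essential that we formulate the coding so that the alphabet stays $J$ with $\#J=\deg(f)$ even though we passed to an iterate; the clean way is to set this up directly for $f$ using $(\DD^1(f,\CC),\DD^0(f,\CC))$ once an invariant curve for $f$ itself is available, and to note that the nestedness of the $\DD^n$ is the only property used — one does not actually need the curve to be invariant for the \emph{same} $f$, only that some Markov partition exists, which holds after passing to an iterate without changing $\deg$ in the relevant sense. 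The main obstacle is precisely this bookkeeping: making the alphabet size exactly $\deg(f)$ (rather than $2\deg(f)$ or $\deg(f)^n$) while keeping $\pi$ surjective and equivariant, which forces one to use the two-color structure of the subdivision rule so that the color is determined by parity and only the "which child" data lives in $J$.
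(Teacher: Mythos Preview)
Your approach has a genuine gap, and it is exactly the one you yourself flag at the end without resolving. Passing to an iterate $F=f^n$ is \emph{not} harmless here: the statement demands an alphabet of size $\deg(f)$, whereas any tile-based coding for $F$ naturally produces an alphabet of size $2\deg(F)=2\deg(f)^n$ (the $1$-tiles for $F$), or at best a subshift of finite type on that alphabet via Proposition~\ref{prop:subshift}. Your suggestion that ``the color is determined by parity'' so that only $\deg(f)$ symbols are needed is incorrect: a white $n$-tile contains $w_1$ white and $b_1$ black $(n{+}1)$-tiles, while a black $n$-tile contains $w_1'$ white and $b_1'$ black $(n{+}1)$-tiles, and in general $w_1\ne w_1'$. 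So the child-labeling depends on the parent's color in a way that cannot be absorbed into ``parity''. Your fallback --- ``set this up directly for $f$ once an invariant curve for $f$ itself is available'' --- is blocked by Example~\ref{ex:noinvCC}, which exhibits an expanding Thurston map with no $f$-invariant Jordan curve through $\post(f)$.

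The paper's proof avoids all of this by abandoning tiles entirely. It fixes a basepoint $p\in S^2\setminus\post(f)$, so that $f^{-1}(p)=\{q_1,\dots,q_k\}$ with $k=\deg(f)$ exactly, chooses paths $\alpha_i$ from $p$ to $q_i$ in $S^2\setminus\post(f)$, and codes points in $f^{-n}(p)$ by words of length $n$ via iterated path-lifting. The factor map $\varphi\colon J^\omega\to S^2$ is then defined as the limit $\varphi(s)=\lim_n\psi([s]_n)$, which exists because lifts of the paths $\alpha_i$ under $f^n$ shrink like $\Lambda^{-n}$ in a visual metric (Lemma~\ref{lem:liftpathshrinks}). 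This construction works for $f$ itself --- no invariant curve, no iterate --- and the alphabet size is $\deg(f)$ by design. The moral: the right ``Markov structure'' here is the tree of preimages of a single point, not the tiling.
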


The notation and terminology  will be explained below.

Theorem~\ref{thm:expThfactor} is essentially  due to Kameyama (see \cite[Theorem~3.4]{Ka03a}).
The basic idea seems to go back to \cite{Jo} (see also \cite {Prz}).  
Ka\-me\-ya\-ma's notion of an expanding Thurston map is  different from ours, but his  proof carries over to 
our setting with only minor modifications.

%
%
%
%

It is a standard  fact in complex dynamics that the repelling periodic points 
of a rational map on $\CDach$ are dense in its Julia set. 
The following statement is an analog of this for expanding Thurston maps.
As we will see, it easily 
follows from the proof of Theorem~\ref{thm:expThfactor}. 

\begin{cor}\label{cor:perdense}
Let $f\:S^2\ra S^2$ be an expanding Thurston map.
Then the periodic points of $f$ are dense in $S^2$. 
\end{cor}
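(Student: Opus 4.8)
The plan is to deduce the corollary directly from the construction in the proof of Theorem~\ref{thm:expThfactor}, using the coding map $\varphi\colon J^\om\to S^2$ and the key exponential-shrinking estimate \eqref{eq:ssnclose}. The point is that periodic sequences are dense in $J^\om$, and $\varphi$ intertwines the shift $\Sigma$ with $f$, so periodic sequences map to points that are ``almost'' periodic under $f$; the difficulty is that $\varphi$ need not be injective, so the image of a $\Sigma$-periodic sequence is not automatically an honest $f$-periodic point. This is exactly the main obstacle, and I would handle it by a separate, self-contained argument producing genuine periodic points rather than trying to push them through $\varphi$.

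Here is the approach in detail. Fix a visual metric $d$ for $f$ with expansion factor $\Lambda>1$, and a Jordan curve $\CC\supset\post(f)$; work with the cell decompositions $\DD^n=\DD^n(f,\CC)$ and the colored tiles of Lemma~\ref{lem:colortiles}. The first step is to observe that for every $n\in\N$ and every white $0$-tile image, each white $n$-tile $X$ is mapped homeomorphically onto $\XOw$ by $f^n$; hence the branch $(f^n|X)^{-1}\colon\XOw\to X$ is a well-defined homeomorphism. If $X\subset\XOw$ (i.e.\ $X$ is a white $n$-tile contained in the white $0$-tile), then this branch maps $\XOw$ into itself, and since $\XOw$ is homeomorphic to a closed disk, Brouwer's fixed point theorem gives a point $p\in X\subset\XOw$ with $(f^n|X)^{-1}(p)=p$, i.e.\ $f^n(p)=p$: a periodic point of $f$ lying inside $X$. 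The second step is a density argument: given $x\in S^2$ and $\eps>0$, pick $n$ so large that $2\,\mesh(f,n,\CC)<\eps$ (possible since $f$ is expanding) and also large enough via Lemma~\ref{lem:expoexp} that every $n$-tile has $d$-diameter $<\eps$. The tiles of order $n$ cover $S^2$, so $x$ lies in some $n$-tile $X$; if $X$ is white I am essentially done once I arrange $X\subset\XOw$, and if $X$ is black it shares an $n$-edge with a white $n$-tile, so in either case there is a white $n$-tile within distance $\eps$ of $x$.

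The one gap to close is that an arbitrary white $n$-tile $X$ need not be contained in $\XOw$, so $(f^n|X)^{-1}$ need not map $\XOw$ into itself. To fix this, I would instead use the following refinement: since $f$ is expanding and white $0$-tiles ``subdivide'' under pullback, for $n$ large there is a white $n$-tile $X_0$ contained in $\inte(\XOw)$ and $d$-close to any prescribed point of $\XOw$; more robustly, one iterates: choose $m$ large so that $\XOw$ contains a white $m$-tile $X'$ deep inside it, then for the given target tile $X$ of order $n$ contained in $\XOw$ consider the composition of branches $(f^n|X)^{-1}\circ(\text{something})$ — but cleanest is simply: every white tile $X$ of some order $k$ satisfies $f^k(X)=\XOw\supset X$ whenever $X\subset\XOw$, and the collection of white tiles contained in $\XOw$ has diameters tending to $0$ and covers a dense subset of $\XOw$ (indeed a full-measure or at least dense subset, because the union of all white $k$-tiles inside $\XOw$, over all $k$, is dense). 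Thus for $x\in\XOw$ and $\eps>0$ pick a white $k$-tile $X\subset\XOw$ with $\diam_d(X)<\eps$ and $X$ within $\eps$ of $x$; then Brouwer applied to $(f^k|X)^{-1}\colon\XOw\to X\subset\XOw$ yields a fixed point $p$ of $f^k$ with $p\in X$, hence $d(x,p)<2\eps$. For $x\in\XOb$ one runs the same argument with black tiles and the branch $f^k|X\colon X\to\XOb$, noting $X\subset\XOb$ forces $(f^k|X)^{-1}(\XOb)=X\subset\XOb$. Since $\XOw\cup\XOb=S^2$, this shows periodic points are dense. I expect the Brouwer step and the verification that white (resp.\ black) subtiles inside $\XOw$ (resp.\ $\XOb$) are dense — which follows from $\mesh(f,k,\CC)\to0$ together with the fact that every point of $\inte(\XOw)$ lies in arbitrarily small $k$-tiles, at least one of which (by the coloring rule, possibly after passing to a neighbor and then a further subdivision) is white and contained in $\XOw$ — to be the only slightly delicate point; everything else is immediate from the cell-decomposition machinery of Sections~\ref{sec:tiles}–\ref{sec:expansion}.
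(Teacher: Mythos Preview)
Your Brouwer argument is essentially correct and does produce genuine periodic points, but the motivation you give for abandoning the coding map is a misconception. You write that ``the image of a $\Sigma$-periodic sequence is not automatically an honest $f$-periodic point'' because $\varphi$ need not be injective. This is false: if $\Sigma^n(s)=s$, then the semiconjugacy $f\circ\varphi=\varphi\circ\Sigma$ immediately gives
\[
f^n(\varphi(s))=\varphi(\Sigma^n(s))=\varphi(s),
\]
so $\varphi(s)$ \emph{is} a periodic point of $f$, regardless of injectivity. Injectivity would only matter if you wanted distinct periodic sequences to give distinct periodic points, which is irrelevant here.

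The paper exploits exactly this. Given $x\in S^2$ and $n\in\N$, one picks $y\in f^{-n}(p)$ with $d(x,y)\lesssim\Lambda^{-n}$ (Claim~1 in the proof of Theorem~\ref{thm:expThfactor}), finds the word $w\in J^n$ with $\psi(w)=y$ (Claim~3), forms the periodic sequence $s=www\ldots$ so that $[s]_n=w$ and $\Sigma^n(s)=s$, and sets $z=\varphi(s)$. Then $f^n(z)=z$ by the semiconjugacy, and $d(y,z)=d(\psi([s]_n),\varphi(s))\lesssim\Lambda^{-n}$ by \eqref{eq:ssnclose}, hence $d(x,z)\lesssim\Lambda^{-n}$. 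This is a two-line argument once Theorem~\ref{thm:expThfactor} is in hand, and it avoids the density-of-monochromatic-subtiles verification you flagged as delicate.

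That said, your Brouwer route is a legitimate alternative and has the virtue of being self-contained: it uses only the cell decompositions and the fact that $f^n$ maps each white $n$-tile homeomorphically onto $\XOw$, without any appeal to the coding map. The density step (white $n$-tiles contained in $\XOw$ accumulate on every point of $\XOw$) does go through as you indicate, via shrinking of $n$-tile diameters together with the checkerboard adjacency of colors, handling points on $\CC$ by approximation from the interior of a $0$-tile.
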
 
\index{periodic!point}

Before we  supply the proofs of these  results, we will first   review some basic definitions related to shift operators. 

 Let $J$ be a finite non-empty set.  We consider $J$ as an {\em alphabet} and 
its elements as {\em letters} in this alphabet.  A {\em word}  is a finite sequence $w=i_1i_2\dots i_n$, where $n\in \N_0$ and $i_1, i_2,\dots, i_n\in J$. For $n=0$ we interpret this as the {\em empty word} $\emptyset$. The number $n$ is  called the {\em length} of the word 
$w=i_1i_2\dots i_n$.  The words of length $n$ can be identified 
with $n$-tuples in $J$ and are  elements in  the Cartesian power $J^n$. The letters, i.e., the elements in $J$, are precisely the words of length $1$. If $w=i_1i_2\dots i_n$ and $w'=j_1j_2\dots j_m$, then   
we denote by $ww'=i_1i_2\dots i_nj_1j_2\dots j_m$ the word obtained by concatenating $w$ and $w'$.  

Let $J^*$ be the set of all words (including the empty word) in the alphabet $J$. 
The {\em (left-)shift}\index{shift}\index{left-shift}
$\Sigma\: J^*\setminus \{\emptyset\}\ra J^*$ is defined by setting $\Sigma(i_1i_2\dots i_n)=i_2\dots i_n$ 
for a word $w=i_1i_2\dots i_n\in J^*\setminus \{\emptyset\}$. We denote by $J^\om$ 
 the set of all sequences $\{i_k\}=\{i_k\}_{k\in \N}$ 
 with $i_k\in J$ for  $k\in \N$. More informally, we consider a
 sequence $s=\{i_k\}\in J^\om$ as a word of infinite length 
and write $s=i_1i_2\dots $\,. 

If 
$s=\{i_k\}\in J^\om$ and $n\in \N_0$, then we denote by $[s]_n\in J^*$ the
word $s_n=i_1i_2\dots i_n$ consisting of the first $n$ elements of the sequence $s$.  
The {\em \mbox{(left-)}shift} $\Sigma\: J^\om \ra J^ \om$ is the map that assigns to each sequence $\{i_k\}\in J^\om $ the sequence $\{j_k\}\in J^\om$ with $j_k=i_{k+1}$ for  $k\in\N$.  In our notation we do not distinguish the shifts on $J^*\setminus\{\emptyset\}$ and $J^\om$ and denote both maps by $\Sigma$.
Note that $\Sigma([s]_{n}) =[\Sigma(s)]_{n-1}$ for $s\in J^\om$ and $n\in \N$; indeed,
if $s=i_1i_2\dots$, then we have 
$$\Sigma([s]_{n})=\Sigma(i_1i_2\dots i_{n})=i_2\dots i_{n}=[i_2i_3\dots]_{n-1} = [\Sigma(s)]_{n-1}. $$

If we equip 
$J$ with the discrete topology, then $J^\om$ carries a natural metrizable product topology. This topology is induced by the ultrametric $d$ given by
$d(s,s')=2^{-N}$ for $s=\{i_k\}\in J^\om $ and $s'=\{j_k\} \in J^\om$, $s\ne s'$, where $N=\min\{k\in \N:i_k\ne j_k\}$. In particular, two elements $s,s'\in J^\om$ are close if and only if $s_k=s'_k$ for all $k=1, \dots, n$, where  $n$ is large.  Equipped with this topology, the space $J^\om$ is compact.

Suppose that $X$ and $\widetilde X$ are topological spaces, and $f\:
X\ra X$ and $\widetilde f\: \widetilde X\ra  \widetilde X$ are
continuous maps. We say that the dynamical system $(X,f)$ is a {\em
  factor} of the dynamical system\index{factor of dynamical system}
 $(\widetilde X, \widetilde f)$ if there exists a surjective continuous map 
$\varphi\: \widetilde X\ra X$ such that $\varphi\circ \widetilde f=f\circ \varphi$.

We are now ready for the proofs.

\begin{proof}[Proof of Theorem~\ref{thm:expThfactor}] Let $f\:S^2\ra S^2$ be an expanding Thur\-ston map, and  $k\coloneqq \deg(f)\ge 2$.  Fix a visual metric $\varrho$ for $f$, and let $\Lambda>1$ be its expansion factor. In the following, metric concepts refer to $\varrho$.   

We fix a Jordan curve $\CC\sub S^2$ with $\post(f)\sub \CC$ and consider tiles for $(f,\CC)$. We  color them black and white as in Lemma~\ref{lem:colortiles}. Let    $p\in S^2\setminus \post(f)$ 
be a basepoint in the interior of the white $0$-tile $\XOw$.

\smallskip
\emph{Claim 1.}
For all $n\in \N_0$ the estimate
\begin{equation*}
  \sup_{x\in S^2} \dist(x, f^{-n}(p))
  \lesssim 
  \Lambda^{-n} 
\end{equation*}
is true,
where $C(\lesssim)$ is independent of $n$.

\smallskip  In other words, the set $f^{-n}(p)$ forms a very dense net in $S^2$ if 
$n$ is large.  To see this, let $x\in S^2$   be arbitrary. Then $x$ lies in some $n$-tile $X^n$. If $X^n$ is white, then $X^n$ contains  
 a point in $f^{-n}(p)$ and so $\dist(x, f^{-n}(p))\le \diam(X^n)$.
If $X^n$ is black, then $X^n$ shares an edge with a white  $n$-tile $Y^n$.
Then $Y^n$  contains a  point in $f^{-n}(p)$, and so  
$\dist(x, f^{-n}(p))\le \diam(X^n)+\diam(Y^n)$.

From the inequalities in  both cases and
Proposition~\ref{lem:expoexp} we 
conclude
$$ \dist(x, f^{-n}(p))\lesssim \Lambda^{-n}, $$
where $C(\lesssim)$ is independent of $x$ and $n$.
Claim~1 follows. 

\smallskip

None of the points in $S^2\setminus \post(f)$ is a critical value for any of the iterates $f^n$ of $f$. Moreover, each iterate $f^n$ is a covering 
map  $f^n\: S^2\setminus f^{-n}(\post(f))\ra S^2\setminus
\post(f)$ 
(see Lemma~\ref{lem:brcovcov}). 
Since   $p\in S^2\setminus \post(f)$, we have $f^{-n}(p)\sub S^2\setminus f^{-n}(\post(f))$  and 
\begin{equation}\label{eq:preimcard} 
\#f^{-n}(p)=\deg(f^n)=\deg(f)^n=k^n
\end{equation} 
for $n\in \N$. 
In particular, $$f^{-1}(p)\sub S^2\setminus f^{-1}(\post(f))\sub S^2\setminus \post(f),$$ and $\#f^{-1}(p)=k$. 
Let $q_1, \dots, q_k\in S^2\setminus \post(f)$  be the points in $f^{-1}(p)$. 
For each $i=1, \dots, k$ we pick a path $\alpha_i\:[0,1]\ra S^2\setminus  \post(f)$  
 with $\alpha_i(0)=p$ and $\alpha_i(1)=q_i$. 
 
 Let $J\coloneqq \{1,\dots, k\}$, and consider the shift $\Sigma\: J^\om \ra J^\om 
 $. We want to show that $f$ is a factor of $\Sigma$, i.e., that there exists a continuous and surjective map $\varphi\: J^\om \ra S^2$ with 
 $f\circ \varphi=\varphi\circ \Sigma$.  In order to define $\varphi$, we first construct a  map $\psi$ that assigns to each word in $J^*$ a point in $S^2$.

 \smallskip
 {\em Definition of $\psi$}:  The  map $\psi\: J^*\ra S^2$ will be defined inductively such that 
$$ \psi(w)\in f^{-n}(p), $$ whenever $n\in \N_0$ and $w\in J^n\sub J^*$ is a 
word of length $n$. 
For the empty word $\emptyset$ we set $\psi(\emptyset)=p$, and for the  
word consisting of the single letter $i\in J$ we set $\psi(i)\coloneqq q_i\in f^{-1}(p)$.

Now suppose that $\psi$ has been defined for all words of length $\le n$, where 
$n\in \N$. 
Let $w$ be an arbitrary word of length $n+1$. Then $w=w'i$, where 
$w'\in J^*$ is a word of length $n$ and $i\in J$. So  $\psi(w')\in f^{-n}(p)$ is already defined. Since $f^n(\psi(w'))=p$ and $f^n\: S^2\setminus 
f^{-n}(\post(f))\ra S^2\setminus \post(f)$ is a covering map, the path 
$\alpha_i$ has a unique lift with initial point $\psi(w')$, i.e., there exists 
a unique path $\widetilde \alpha_i\: [0,1]\ra S^2$
with $\widetilde \alpha_i(0)=\psi(w')$ and $f^n\circ \widetilde \alpha_i=
\alpha_i$ (see Lemma~\ref{lem:liftsofcov}). We now define 
$ \psi(w)\coloneqq \widetilde \alpha_i(1)$. 
Note that then
$$f^{n+1}( \psi(w))=f^{n+1}(\widetilde \alpha_i(1))=f( \alpha_i(1))=f(q_i)=p.$$
Hence $\psi(w)\in f^{-(n+1)}(p)$.
This shows that a map $\psi\: J^*\ra S^2$ with the desired properties
exists.  

\smallskip
{\em Claim 2.} $f(\psi(w))=\psi(\Sigma(w))$ for all non-empty words $w\in J^*$.
\smallskip 
 
We prove this by induction on the length of the word $w$. If $w=i\in J$, then
$$f(\psi(w))=f(\psi(i))=f(q_i)=p=\psi(\emptyset)=\psi(\Sigma(i))=
\psi(\Sigma(w)).$$
So the claim is true for words of length $1$.

Suppose the claim is true for words of length $\le n$, where $n\in\N$.
Let $w$ be a word of length $n+1$. Then $w=w'i$, where $w'$ is a word of 
length $n$ and $i\in J$. 
Let $\widetilde \alpha_i$ be the  path as above, used in the definition of 
$\psi(w)$. Define $\widetilde \beta_i\coloneqq f\circ \widetilde \alpha_i$. Then 
$\widetilde \beta_i$ is a lift of $\alpha_i$ by $f^{n-1}$. By induction hypothesis its initial point is 
$$\widetilde \beta_i(0)=f(\widetilde \alpha_i(0))=f(\psi(w'))=\psi(\Sigma(w')). $$
In other words, $\widetilde \beta_i$ is  the unique 
 path as in the definition of $\psi$ used to determine  $\psi(\Sigma(w')i)$ from $\psi(\Sigma(w'))$, 
and so  $\psi(\Sigma(w')i)=\widetilde \beta_i(1)$.
 Hence 
 $$\psi(\Sigma(w))=\psi(\Sigma(w')i)=\widetilde\beta_i(1)=
 f(\widetilde\alpha_i(1))=f(\psi(w'i))=f(\psi(w))$$
 as desired, and Claim 2 follows. 

\smallskip
{\em Claim 3.} For each $n\in \N$ the map $\psi|J^n\: J^n\ra f^{-n}(p)$
is a bijection.

\smallskip 
In other words, the map $\psi$ provides a coding of the points in $f^{-n}(p)$ by words of length $n$. 
Again we prove this by induction on $n$. By definition of $\psi$ it is true for 
$n=1$. 

Suppose it is true for some $n\in \N$. 
Then it is enough  
to show 
that the map $\psi|J^{n+1}\: J^{n+1} \ra f^{-(n+1)}(p)$ is surjective, since both sets $J^{n+1}$ and $f^{-(n+1)}(p)$ have the same cardinality
$k^{n+1}$. So let  $x\in f^{-(n+1)}(p)$ be arbitrary. Then 
$f^n(x)\in f^{-1}(p)$, and so there exists $i\in J$ with $f^n(x)=q_i$.
Since $$x\in f^{-(n+1)}(p)\sub S^2\setminus f^{-(n+1)}(\post(f))\sub
S^2\setminus f^{-n}(\post(f)), $$ and $f^n 
\: S^2\setminus
f^{-n}(\post(f))\ra S^2\setminus \post(f)$  is a covering map, we can
lift the path $\alpha_i$ by $f^n$ to a path $\widetilde
\alpha_i\:[0,1]\ra S^2$ whose terminal point is $x$ (to see this, 
lift   $\alpha_i$, traversed  in opposite direction, so that the initial point of the lift is $x$). 
Then $f^n(\widetilde \alpha_i(0))=\alpha_i(0)=p$, and so 
$\widetilde \alpha_i(0)\in f^{-n}(p)$. By induction hypothesis there exists a 
word $w'\in J^n$ with $\psi(w')=\widetilde \alpha_i(0)$. 
Then $\widetilde \alpha_i$ is a path as used to determine $\psi(w'i)$ from 
$\psi(w')$. So if we set $w\coloneqq w'i\in J^{n+1}$, then 
$$ \psi(w)=\psi(w'i)=\widetilde \alpha_i(1)=x.$$
This shows that $\psi|J^{n+1}\: J^{n+1} \ra f^{-(n+1)}(p)$ is surjective.   Claim~3 follows.


\smallskip
\emph{Claim 4.} If $s\in J^\om $, then the points $\psi([s]_n)$,
$n\in \N$, form a Cauchy sequence in $S^2$ (recall that $[s]_n$
is  the word consisting of the first $n$ elements of the sequence
$s$).

\smallskip
Indeed, by definition of $\psi$ the points $\psi([s]_n)$ and $\psi([s]_{n+1})$
are joined by a lift of one of the paths $\alpha_1, \dots, \alpha_k$ by 
$f^n$. So  by Lemma~\ref{lem:liftpathshrinks} we have 
\begin{equation} \label{eq:Cauest}
\varrho(\psi([s]_n), \psi([s]_{n+1}))\lesssim \Lambda^{-n}, 
\end{equation}
where $C(\lesssim)$ is independent of $n$ and $s$. 
Hence  $\{\psi([s]_n)\}$ is a Cauchy sequence, proving Claim~4.

\smallskip 
{\em Definition of $\varphi$}:  If  $s\in J^\om$, then by Claim~4 the limit
 $$\varphi(s)\coloneqq  \lim_{n\to \infty} \psi([s]_n) $$
 exists. This defines a map $\varphi\: J^\om \ra S^2$. 
 
\smallskip
\emph{Claim 5.} $f\circ \varphi = \varphi\circ \Sigma$.
\smallskip

 To see this, let $s\in J^\om $ be arbitrary. Note that 
 $\Sigma([s]_n)=[\Sigma(s)]_{n-1}$ 
 for $n\in \N$. Hence by Claim 2 and the continuity of $f$  we have 
 \begin{align*}
 f(\varphi(s))&=\lim_{n\to \infty} f(\psi([s]_n))=\lim_{n\to \infty}
 \psi(\Sigma([s]_n))\\&=\lim_{n\to \infty}\psi([\Sigma(s)]_{n-1})=\varphi(\Sigma(s)).
 \end{align*}
Claim~5 follows. 
 
 \smallskip
{\em Claim 6.} The map $\varphi\: J^\om \ra  S^2$ is continuous 
 and surjective.
 \smallskip
  
 Let $s\in J^\om $ and $n\in \N$. Then \eqref{eq:Cauest}
 shows that 
 \begin{equation}\label{eq:ssnclose}
 \varrho(\varphi(s), \psi([s]_n))\lesssim  \sum_{l=n}^\infty \Lambda^{-l}\lesssim 
 \Lambda^{-n}, 
 \end{equation}
 where $C(\lesssim)$ is independent of $n$ and $s$.
 Hence if $s,s'\in J^\omega$
 and $[s]_n=[s']_n$, then 
 \begin{equation*}
   \varrho(\varphi(s), \varphi(s'))
   \lesssim 
   \Lambda^{-n}, 
 \end{equation*}
  where $C(\lesssim)$ is independent of $n$,  $s$, and $s'$.
 The continuity of $\varphi$ follows from this; indeed, if $s$ and $s'$ are close in $J^\om$, then $[s]_n=[s']_n$ for some large $n$, and so the image points $\varphi(s)$ and $\varphi(s')$ are close in $S^2$. 
 
Since $J^\om $ is compact, the continuity of $\varphi$ implies that 
the image $\varphi(J^\om )$ is also compact and hence closed in $S^2$. The surjectivity of  $\varphi$ will follow, if we can show that $\varphi$ has a dense image in $S^2$.

To see this, let $x\in S^2$ and $n\in \N$ be arbitrary.
Then by Claim~1 we can find a point $y\in f^{-n}(p)$ with 
$\varrho(x,y)\lesssim \Lambda^{-n},   $ where $C(\lesssim)$ is independent of $x$ and $n$. 
Moreover, by Claim 3 there exists a word $w\in J^n$ with 
$\psi(w)=y$. Pick $s\in J^\om$ such that $[s]_n=w$. Then by
\eqref{eq:ssnclose} we have 
$$\varrho(x,\varphi( s))\le \varrho(x,y)+\varrho(y, \varphi(s))=\varrho(x,y)+\varrho(\psi([s]_n),\varphi(s))\lesssim \Lambda^{-n}, $$
where $C(\lesssim)$ is independent  of the choices.
Hence 
$$\sup_{x\in S^2}\dist(x, \varphi(J^\om))\lesssim \Lambda^{-n}$$
for all $n$, where $C(\lesssim)$ is independent of $n$. This shows that  
  $\varphi(J^\om)$ is dense in $S^2$. Claim~6 follows. 
  
  \smallskip 
  The theorem now follows from Claim 5 and Claim 6.
 \end{proof}

The  procedure that we employed  to code 
the elements in $f^{-n}(p)$ by words of length $n$ and the points in $S^2$ by infinite words  is well known (see \cite[Section~5.2]{Ne}, for example).  Note that an expanding  Thurston map may have 
 periodic critical points.
Then there are
points in $S^2$ that are coded by an uncountable number of
sequences in $J^\omega$. 

\begin{proof}[Proof of Corollary~\ref{cor:perdense}] We use the notation and  setup of  the proof of Theorem~\ref{thm:expThfactor}.  

It suffices to show that if $x\in S^2$ and $n\in \N$ are arbitrary, then there exists a point $z\in S^2$ with $f^n(z)=z$ and 
$\varrho(x,z)\lesssim \Lambda^{-n}$. Here and in the following,  $C(\lesssim)$ 
is  independent of $x$ and $n$. 

 To find such a point $z$, we  apply Claim~1 in the proof of 
Theorem~\ref{thm:expThfactor} and  conclude that  there exists $y\in f^{-n}(p)$ with 
$\varphi(x,y)\lesssim \Lambda^{-n}$. By Claim~3 in this proof there exists a word 
$w\in J^*$ of length $n$ such that $\psi(w)=y$. Let $s$ be the unique sequence obtained by periodic repetition of the letters in $w$, i.e., $s\in J^\om$ is the unique sequence with $[s]_n=w$ and $\Sigma^n(s)=s$. 
Put $z\coloneqq \varphi(s)$. Then Claim~5 in the proof of Theorem~\ref{thm:expThfactor} implies 
$$ f^n(z)=f^n(\varphi(s))=\varphi(\Sigma^n(s))=\varphi(s)=z.$$
Moreover, by \eqref{eq:ssnclose} we have 
$$\varrho(y,z)=\varrho(\psi(w), \varphi(s))=\varrho(\psi([s]_n),  \varphi(s))\lesssim \Lambda^{-n}, $$
and so 
$$\varrho(x,z)\le \varrho(x,y)+\varrho(y,z)\lesssim \Lambda^{-n}. $$
The statement follows. \end{proof}

If in the previous argument we choose a constant sequence $s\in J^\om$ and set $z=\varphi(s)$, then $\Sigma(s)=s$, and so 
$$ f(z) =f(\varphi(s))=\varphi(\Sigma(s))=\varphi(s)=z. $$
This shows that every expanding Thurston map has a fixed point. 
 A more systematic investigation of fixed points and periodic points of expanding Thurston maps can be found in \cite{Li1}. 

\ifthenelse{\boolean{singlechapter}}{


%

\chapter{Tile graphs } \label{cha:Gromov}
 
An interesting feature of expanding Thurston maps is that they are linked to negatively curved spaces. Namely, if $f\: S^2\ra S^2$ is such a  map and $\CC\sub S^2$ is a Jordan curve 
with $\post(f)\sub \CC$, then one can use the associated cell decompositions 
to define an  infinite graph $\G=\G(f,\CC)$. The set of vertices of this graph is given by the  collection  of  tiles on all levels, where it is convenient to add $X^{-1}\coloneqq S^2$ as a tile of level $-1$ and basepoint of the graph. One connects two vertices 
by an edge if the corresponding tiles have non-empty intersection and their levels differ by at most $1$. 
  We will study the properties of this {\em tile graph}  in the
  present chapter. The main results are based on work by Q.~Yin
  (see \cite{Qian15b}).

  \begin{theorem} \label{thm:tileGrom} Let $f\: S^2\ra S^2$ be an expanding Thurston map, and $\CC\sub S^2$ be a Jordan curve with $\post(f)\sub \CC$. Then the  associated tile graph $\G(f,\CC)$ is Gromov hyperbolic. 
  \end{theorem} 
\index{Gromov!hyperbolic}

For the boundary at infinity $\partial_\infty \G$ we have a natural 
identification $\partial_\infty \G \cong S^2$.  By this identification the class of visual metrics in the sense of Thurston maps (see Chapter~\ref{cha:visual-metrics})  and in the sense of Gromov hyperbolic spaces (see Section~\ref{sec:Grhyp})  are the same. 

\begin{theorem} \label{thm:visualGrTh} Let $f\: S^2\ra S^2$ be an
  expanding Thurston map, $\CC\sub S^2$ a Jordan curve with
  $\post(f)\sub \CC$, and $\G=\G(f,\CC)$ be the associated tile
  graph. Then $\partial_\infty \G$ can naturally be identified with
  $S^2$. Under this identification, a metric  on
  $\partial_\infty \G\cong S^2$ is visual in the sense of Gromov
  hyperbolic spaces if and only if it is visual in the sense of
  expanding Thurston maps.
\end{theorem} 
\index{visual metric}
\index{visual metric!on $\geo X$}  

Under the identification of $S^2$ with $\partial_\infty\G$ as in the
previous theorem, the number $m_{f,\CC}$ (see
Definition~\ref{def:mxy}) is the Gromov product $(x\cdot y)$ (with basepoint $X^{-1}$) up to
some additive constant.

\index{Gromov!product}
\index{m@$m_{f,\CC}$} 
\begin{lemma}
  \label{lem:m_Gromov}
  In the setting of Theorem~\ref{thm:visualGrTh} there is a constant
  $c\geq0$ such that
  \begin{equation*} 
    m_{f,\CC}(x,y)-c \le (x\cdot y)\le m_{f,\CC}(x,y)+c, 
  \end{equation*}
  for all $x,y\in S^2$. 
\end{lemma}

In \cite{HP}  Ha\"{i}ssinsky and Pilgrim also considered 
 the sphere $S^2$ as the boundary at infinity of a suitable Gromov hyperbolic space 
very similar to the setting  of Theorem~\ref{thm:visualGrTh}. 

 An obvious question is how the graphs $\G(f, \CC)$ and $\G(f,\widetilde  \CC)$ are related for different Jordan curves $\CC,\widetilde  \CC\sub S^2$ containing $\post(f)$. For a Cayley graph of a group a change of the generating set leads to quasi-isometric Cayley graphs. So in the context of expanding Thurston maps one may expect a similar result. Actually, a stronger statement is true: the graphs  $\G(f, \CC)$ and $\G(f, \widetilde \CC)$ are even rough-isometric (see Section~\ref{sec:Grhyp}).

 \begin{theorem} \label{thm:roughisom} Let $f\: S^2\ra S^2$ be an expanding Thurston map, and $\CC, \CC'\sub S^2$ be  Jordan curves with $\post(f)\sub \CC, \CC'$.
 Then the graphs $\G(f,\CC)$ and $\G(f,\CC')$ are rough-isometric. 
   \end{theorem}

Throughout this chapter,  $f\: S^2\ra S^2$ will be an expanding Thurston map, and $\CC\sub S^2$ a Jordan curve with  $\post(f)\sub \CC$. We consider tiles in the cell decompositions $\DD^n(f,\CC)$, $n\in \N_0$, and add $X^{-1}\coloneqq S^2$ as a tile of  level $-1$. Let $\X'$ be the collection of  tiles on all levels $n\in \N_0\cup\{-1\}$. In $\X'$ we  consider tiles as different if 
 their levels are different even if the underlying sets of the tiles are the same. If $X\in 
 \X'$, we denote by 
 \index{laaa@$\ell(X)$}
 \begin{equation}
   \label{eq:def_lX}
   \ell(X)\in \N_0\cup\{-1\}
 \end{equation}
the level of the tile $X$; so $X$ is an $\ell(X)$-tile. 
 
As discussed above,  we define the 
{\em tile graph}\index{tile!graph}\index{G(f,c)@$\G(f,\CC)$}
$\G=\G(f,\CC)$ of $f$ with respect to $\CC$ as follows. The set of vertices of $\G$ is equal to  the set 
 $\X'$ of all tiles. Moreover, two distinct vertices given by a $k$-tile $X^k$ 
 and an $n$-tile $X^n$ are joined by an edge precisely if 
 \begin{equation}
   \label{eq:tileedgerel}
   \abs{k-n}\le 1 \text { and } X^k\cap X^n\ne \emptyset. 
 \end{equation}
 So we join two vertices if the corresponding tiles intersect and
 their levels differ by at most $1$. The graph $\G$ is a
 $1$-dimensional cell complex where each edge is identified with
 an interval of length $1$. Then the graph $\G$ is connected.
 
 Indeed, each point contained in an edge of  $\G$ can be joined to a vertex  of  $\G$,  given by an $n$-tile $X^n$. We can join  $X^n$  to $X^{-1}=S^2\in \G$ as follows. Pick a point $p\in X^n$, and
 for $i=0, \dots, n-1$ let $X^i$ be an $i$-tile with $p\in X^i$. Then
 in the vertex sequence $X^n, X^{n-1}, \dots , X^{-1}$ two consecutive
 elements are joined by an edge, because the levels of the tiles
 differ by $1$ and all tiles contain $p$ and hence have non-empty
 intersection.  So there exists a path joining $X^n$ and $X^{-1}$ in
 $\G$ as desired.
 
Since $\G$ is connected,  this graph carries a unique path metric so that each edge is isometric to the unit interval.  If $X$ and $Y$ are vertices in 
 $\G$, i.e., tiles in $\X'$, we denote by $\abs{X-Y}$ the distance
 of $X$ and $Y$ in $\G$, and call this quantity   the {\em combinatorial distance} of the tiles. 
 By definition of the metric in 
 $\G$ it is clear that $\abs{X-Y}$ is equal to the minimal number $n\in \N_0$ such that there exist tiles $X_0=X, X_1, \dots, X_n=Y$ in $\X'$ satisfying 
 \begin{equation}
   \label{eq:combdist}
   \abs{\ell(X_{i-1})-\ell(X_{i})} \le 1 
   \text{ and } 
   X_{i-1}\cap X_i\ne \emptyset
 \end{equation}
 for $i=1, \dots, n$. 
 
 Note that 
 \begin{equation*}
    \abs{X-Y}\ge \abs{\ell(X)-\ell(Y)}.
 \end{equation*}
 Moreover, if $X\cap Y\ne \emptyset$, then 
 a simple argument similar to  the one we have just used to show connectedness of  $\G$ gives that 
 $$ |X-Y|\le |\ell (X)-\ell(Y)|+1. $$
We pick $X^{-1}=S^2$ as the basepoint in $\G$. Note that the
combinatorial distance of a vertex $X\in \G$ to $X^{-1}$ is
\begin{equation*}
  \abs{X-X^{-1}}= \ell(X)+1. 
\end{equation*}
We denote by $(X\cdot Y)$ the Gromov product (see
\eqref{eq:def_Grpr}) of two vertices $X,Y\in
 \G$ with respect to the basepoint $X^{-1}$; 
 so\index{Gromov!product} 
 \begin{align}
   \label{eq:gromov_tileG}
   (X\cdot Y)&=\frac12 (\abs{X-X^{-1}}+\abs{Y-X^{-1}}-\abs{X-Y})
   \\
   \notag
             &=1+\frac 12 (\ell(X)+\ell(Y)-|X-Y|). 
 \end{align}

 We want to  show that the graph $\G$ equipped with its path metric is a Gromov hyperbolic space. Since every point in $\G$
 has distance $\le 1/2$ to a vertex, the set $ \X'$ of vertices in 
 $\G$ is cobounded in $\G$. Hence it is enough to  consider 
 the space of vertices $ \X'$ equipped with the metric given by the combinatorial 
 distance of vertices. 
 The key for proving Gromov hyperbolicity of  $\G$ is to relate
 the Gromov product to visual metrics for $f$ as discussed in
 Chapter~\ref{cha:visual-metrics}. The basic idea goes back to a
 similar argument in \cite{BP}. Our presentation mostly follows
 \cite{Qian15b}.

 For the rest of this chapter we  pick a fixed visual metric $\varrho$ for $f$ on $S^2$. Metric notions on $S^2$ will refer to $\varrho$ unless otherwise stated. If $\Lambda>1$ is the expansion factor of $\varrho$, then by Proposition~\ref{lem:expoexp} 
  \begin{equation}\label{eq:tilesandmetric}
 \diam(X) \asymp \Lambda^{-\ell (X)}
 \end{equation}
 for all $X\in \X'$, and 
 \begin{equation}\label{eq:tilesandmetric2}
 \dist(X,Y) \gtrsim  \Lambda^{-\ell (X)}
 \end{equation}
 for all $X,Y\in \X'$ with $X\cap Y=\emptyset$ and $\ell(X)=\ell(Y)$. 
In both inequalities   the implicit constants are independent of the tiles involved. 
  
We may view tiles in two different ways: as vertices in the graph
$\G$, or as subsets of the sphere $S^2$. The following lemma
provides the key for  relating  these two viewpoints.
 
\begin{lemma}\label{lem:basicGromov}
  For all tiles $X,Y\in
  \X'\sub \G$ we have  
   $$\Lambda^{-(X\cdot Y)} \asymp  \diam(X\cup Y),$$
   where $C(\asymp)$ is independent of $X$ and $Y$. 
  \end{lemma}
  
  See \cite[Lemma~2.2]{BP} for a similar statement in a different (but
  related) context.  
  
  \begin{proof} Let $X,Y\in  \X'$ be arbitrary, and set $n=|X-Y|\in \N_0$. 
    To prove the upper bound for $\diam(X\cup Y)$, we pick a tile chain (see
    Definition~\ref{def:chains}) 
    \begin{equation*}
      X_0=X, X_1, \dots, X_n=Y 
    \end{equation*}
    satisfying \eqref{eq:combdist} and realizing the
    combinatorial distance $n$ between $X$ and $Y$. Note that for the levels of the tiles in this chain we have 
    \begin{equation*}
      \ell(X_i)
      \ge 
      \max \{\ell(X)-i,\ell(Y)-(n-i)\}
    \end{equation*}
  for $i=0, \dots, n.$ 
  The minimum of the right hand side occurs for
  \begin{equation*}
    i=l
    \coloneqq 
    \lfloor(\ell(X)-\ell(Y)+n)/2\rfloor\in [0,n].
  \end{equation*}
  Using \eqref{eq:gromov_tileG} we have the estimates
  \begin{align*}
    &l-\ell(X) 
    \leq 
    \frac{1}{2}(n-\ell(X) - \ell(Y) ) 
    =
    -(X\cdot Y) +1 \text{ and}
    \\
    &  n - l -1-\ell(Y)  
    \leq 
    -(X\cdot Y) +1.
  \end{align*}
 So by \eqref{eq:tilesandmetric} 
  we have
%
  \begin{align*}
  \diam(X\cup Y)&\le  \sum_{i=0}^n\diam(X_i) \lesssim  
  \sum_{i=0}^n\Lambda^{-\ell(X_i)}
    \\
  &\lesssim \sum_{i=0}^l\Lambda^{i-\ell(X)}
    +\sum_{i=l+1}^n\Lambda^{(n-i)-\ell(Y)}
    \\
  &\lesssim  \Lambda^{-(X\cdot Y)} 
  \end{align*}
with  implicit constants  independent of $X$ and $Y$.  


To establish  the lower bound for $\diam(X\cup Y)$, let 
$m$ be the maxi\-mal 
 integer with $$-1\le  m\le \min\{\ell(X), \ell(Y)\}$$  such that  there exist $m$-tiles $X^m$ and $Y^m$ with 
 $X\cap X^m\ne \emptyset$, $Y\cap  Y^m\ne \emptyset$, and $ X^m\cap Y^m\ne \emptyset. $
 Then 
 $$|X-X^m|\le \ell(X)-m+1, $$ $$|Y-Y^m|\le \ell(Y)-m+1,$$   $$|X^m-Y^m|\le 1.$$
 This implies 
 \begin{align*}
  |X-Y|&\le  |X-X^m|+|X^m-Y^m|+|Y^m-Y|\\
  &\le \ell(X)+\ell(Y)-2m+3,
 \end{align*}
and so by \eqref{eq:gromov_tileG},
$$(X\cdot Y)=1+\frac12(\ell(X)+\ell(Y)-|X-Y|)\ge m-1/2.$$

Now if $m=\min\{\ell(X),\ell(Y)\}$, then by \eqref{eq:tilesandmetric} we have 
\begin{align*}
  \diam(X\cup Y)&\ge \max\{\diam(X), \diam(Y)\}\\ & \gtrsim  \max\{
\Lambda^{-\ell(X)}, \Lambda^{-\ell(Y)}\}  \\ &=
 \Lambda^{-m}\gtrsim  \Lambda^{-(X\cdot Y)}
 \end{align*}
with  implicit constants  independent of $X$ and $Y$. This gives the desired lower bound in this case. 

In the other case, where $m<\min\{\ell(X),\ell(Y)\}$, we pick
$(m+1)$-tiles $X^{m+1}$ and $Y^{m+1}$ with $X\cap X^{m+1}\ne
\emptyset$ and  $Y\cap Y^{m+1}\ne \emptyset$. 
Then there are  points 
$x\in X\cap X^{m+1}$ and $y\in Y\cap Y^{m+1}$. Moreover, by definition of $m$ we  have $X^{m+1}\cap Y^{m+1}=\emptyset$. Hence 
by \eqref{eq:tilesandmetric2} we have  
\begin{align*}
  \diam(X\cup Y)&\ge \varrho(x,y)\,\ge  \,\dist(X^{m+1}, Y^{m+1}) 
  \\ &\gtrsim \Lambda^{-(m+1)}\gtrsim \Lambda^{-(X \cdot Y)}
\end{align*}
with  implicit constants  independent of $X$ and $Y$.
So we get the desired lower bound also in this case. 
\end{proof} 

The following consequence of the previous lemma relates sequences
converging to infinity in $\G$ with points in the sphere
$S^2$. 

\pagebreak

\index{convergence to infinity}
\begin{lemma}
  \label{lem:seq_tiles_Gromov}
  Let $\{X_i\}$ be a sequence of points (i.e., tiles) in $\X'$. 
  Then  the following statements are true: 
  \begin{enumerate}
  \item 
\label{item:Xiinfty_Xip}
    $\{X_i\}$ converges to infinity in $\G$ if and only if
    there is a unique point $p\in S^2$ such that $X_i\to \{p\}$ as $i\to \infty$ in the sense of  Hausdorff convergence  on 
     $S^2$.
  \item 
\label{item:Xi_equiv_Yi}
    Another sequence $\{Y_i\}$ in $\X'$ that converges to
    infinity in $\G$ is equivalent to $\{X_i\}$ if and only if
  the sequences  Hausdorff  converge to the same 
    singleton set $\{p\}\sub S^2$.
  \end{enumerate}
\end{lemma}
 
For the definition of  Hausdorff convergence see the end of Section~\ref{sec:QCgeom}. Recall from \eqref{eq:defxi_infty} that a sequence $\{X_i\}$ in $\X'$
converges to infinity if and only if
\begin{equation}  \label{eq:limXX}
  \lim_{i,j\to \infty}(X_i\cdot X_j)=\infty
\end{equation} 
and from \eqref{eq:defxy_eqiv} that a sequence $\{Y_i\}$ in $\X'$ that
converges to infinity is equivalent to $\{X_i\}$ if (and only if)
\begin{equation*}
  \lim_{i\to \infty}(X_i\cdot Y_i)=\infty.
\end{equation*}

\begin{proof}
  Note that \eqref{eq:limXX} is equivalent to 
\begin{equation}  \label{eq:diamXX}
\lim_{i,j\to \infty}\diam (X_i\cup  X_j)=0 
\end{equation}
by Lemma~\ref{lem:basicGromov}.
This is equivalent to $\diam(X_i)\to 0$ as $i\to \infty$ and that 
$\{X_i\}$  is a Cauchy sequence with respect to Hausdorff distance 
on $S^2$. This in turn happens if and only if there exists a unique
point $p\in S^2$ such that $X_i\to \{p\}$ as $i\to \infty$ in the
sense of Hausdorff convergence. Thus \ref{item:Xiinfty_Xip} holds.

\smallskip{}
Let $\{Y_i\}$ be another sequence in $\X'$ 
that converges to infinity. From Lemma~\ref{lem:basicGromov} we see
that $\{Y_i\}$ is equivalent to $\{X_i\}$ if and only if $\lim_{i\to
  \infty}\diam (X_i\cup  Y_i)=0$. 
This happens if and only if
$X_i$ and $Y_i$ Hausdorff converge (in $S^2$)  
to the same
singleton $\{p\}$ as $i\to \infty$. Thus \ref{item:Xi_equiv_Yi} also holds. 
\end{proof}

 \begin{proof}[Proof of Theorem~\ref{thm:tileGrom}]  We use notation as before. Since the set of vertices $ \X'$ is cobounded in $\G$, it suffices to show that 
 $ \X'$ equipped with the combinatorial distance is Gromov hyperbolic. 
 
 Now if $X,Y,Z\in  \X'$ are arbitrary, then 
 \begin{align*}
   \diam(X\cup Y)&\le \diam(X\cup Z)+\diam(Z\cup Y) 
   \\ 
                 &\le   2 \max\{ \diam(X\cup Z), \diam(Z\cup Y)\}.  
\end{align*}
 Invoking Lem\-ma~\ref{lem:basicGromov} and taking logarithms with base $\Lambda$ in the last  inequality, we obtain  
$$ (X \cdot Y)\ge \min \{ (X\cdot  Z), (Z\cdot Y)\} -\delta, $$
where $\delta\ge 0$ is a suitable constant independent of $X$,
$Y$, $Z$. Thus $\mathcal{G}$ is Gromov hyperbolic
(see~\eqref{def:Grprod}).  
\end{proof}

We are now ready to prove that our notion of visual metric on $S^2$
agrees with the standard one on $\partial_\infty\G$ under a
suitable identification. Recall from Section~\ref{sec:Grhyp} that
$\partial_\infty\G$ is defined to be the set of all equivalence
classes of sequences in $\G$ converging to infinity.
 
\begin{proof}[Proof of Theorem~\ref{thm:visualGrTh}] 
  The identification will be given by a bijection between
  $\partial_\infty\G$ and $S^2$. 
  Since $  \X'$ is cobounded in 
  $\G$, every point $x\in\partial_\infty \G$
  can be represented by a sequence of tiles $\{X_i\}$ in $\X'$ converging 
  to infinity. By
  Lemma~\ref{lem:seq_tiles_Gromov}~\ref{item:Xiinfty_Xip} there is a
  unique point $p\in S^2$ such that $X_i\to  \{p\}$ as $i\to \infty$ in the sense of  Hausdorff convergence. Any two sequences $\{X_i\},\{Y_i\}$
  representing $x$ converge to the same singleton $\{p\}$ by
  Lemma~\ref{lem:seq_tiles_Gromov}~\ref{item:Xi_equiv_Yi}. Thus the map
  \begin{equation*}
    \varphi\: \partial_\infty \G\to S^2 
    \;\text{ given by }\;
    \varphi(x) \coloneqq  p 
\end{equation*}
is well-defined. 
  
 The  map $\varphi$ is surjective. Indeed, let  $p\in S^2$  be arbitrary. For each $i\in \N$ we pick an $i$-tile $X_i\in   \X'$ such that $p\in X_i$. 
 Since $f$ is expanding, we have $\diam(X_i)\to 0$ as $i\to
 \infty$. Thus $X_i \to \{p\}$ as $i\to \infty$ (in the sense of Hausdorff convergence on $S^2$).  By
 Lemma~\ref{lem:seq_tiles_Gromov}~\ref{item:Xiinfty_Xip} the sequence 
 $\{X_i\}$ converges to infinity and the point
 $x\in \partial_\infty\G$ represented by $\{X_i\}$ is mapped to
 $p$ by $\varphi$. 

To show that $\varphi$ is injective, consider two points
$x,y\in \partial_\infty \G$ that are represented by sequences
$\{X_i\}$ and $\{Y_i\}$ in  $  \X'$ converging to infinity. By
Lemma~\ref{lem:seq_tiles_Gromov}~\ref{item:Xi_equiv_Yi}
they converge to the same singleton set if and only if they are
equivalent. Thus $\varphi(x)= \varphi(y)$ if and only if $x=y$. Thus
$\varphi$ is injective.  
 
Having proved that $\varphi$ is bijective, we ignore the
original distinction between points in $\partial_\infty \G$ and
in $S^2$, and identify $\partial_\infty \G\cong S^2$ by the map
$\varphi$. 

To prove  the second part of the statement, we first consider the visual metric 
$\varrho$ (in the sense of Thurston maps)  for $f$  fixed earlier. 
Let $x$ and $y$ be arbitrary points in $S^2$ and 
let $\{X_i\}$ and $\{Y_i\}$ be two sequences in  $ \X'$
representing them in $\partial_\infty \G$,  respectively. As we have seen, 
 this means 
$X_i\to\{x\}$ and $Y_i\to \{y\}$ as $i\to \infty$ in the sense of Hausdorff convergence on $S^2$. Thus 
\begin{equation*}
   \diam(X_i\cup Y_i)\to \varrho(x,y)
\end{equation*}
as $i\to \infty $.

Recall the definition of the Gromov
product $(x\cdot y)$ for $x,y\in \partial_\infty\G\cong S^2$ as given in  \eqref{def:Grprodinfty}.  Here  we choose $X^{-1}=S^2$ as 
the basepoint in $\G$.
By \eqref{def:Grprodinfty2} there exists a constant $k\ge 0$
independent of $x$ and $y$, and of the choice of the sequences
$\{X_i\}$ and $\{Y_i\}$ such that 
$$\liminf_{i\to \infty} (X_i\cdot Y_i)-k \le (x\cdot y)\le 
\liminf_{i\to \infty} (X_i\cdot Y_i). $$ 
If we combine the previous two estimates with Lemma~\ref{lem:basicGromov}, then we  conclude that 
$ \varrho(x,y)\asymp \Lambda^{-(x\cdot y)}$. 

Since $\varrho$ is a visual metric for $f$, by Proposition~\ref{prop:visualsummary}~\ref{item:visCC} we  have $\varrho(x,y)\asymp \Lambda^{-m(x,y)}$,
where $m=m_{f,\CC}$ is as in Definition~\ref{def:mxy}. It follows  that 
\begin{equation} 
  \label{eq:m=Grprod} 
  m(x,y)-c \le (x\cdot y)\le m(x,y)+c, 
\end{equation} 
where $c \ge 0$ is independent of $ x$ and $y$. 

In the definition of
visual metrics in the sense of Gromov hyperbolic spaces 
\eqref{vismetric} we may choose any basepoint for the Gromov product (up to an adjustment of
the multiplicative constant). Similarly, by
Proposition~\ref{prop:visualsummary}~\ref{item:visCC} it is not a restriction to
use our  given curve $\CC$ in Definition~\ref{def:visual} of visual metrics in the sense
of expanding Thurston maps. 

Hence  \eqref{eq:m=Grprod} shows that any  metric $\widetilde{\varrho}$ on $\partial_\infty
\G\cong S^2$ is a visual metric in the sense of Gromov hyperbolic
spaces if and only if $\widetilde{\varrho}$ is a visual metric in the
sense of expanding Thurston maps. 
\end{proof}

Note that in \eqref{eq:m=Grprod} we have proved
Lemma~\ref{lem:m_Gromov}.

\begin{proof}[Proof of Theorem~\ref{thm:roughisom}] It suffices to
  find a rough-isometry between the set of vertices in $\G=\G(f,\CC)$
  and $\widetilde{\G}=\G(f,\widetilde \CC)$. As before, we denote the
  set of tiles for $(f,\CC)$ by $\X'$, and use the notation
  $\widetilde \X'$ for the set of tiles for $(f, \widetilde \CC)$
  (including $\widetilde X^{-1}=X^{-1}=S^2$).
   
   For each  tile $X\in  \X'$ we pick a tile $\widetilde X\in \widetilde \CC$ of the same level (i.e., $\ell(X)=\ell(\widetilde X)$) with $X\cap \widetilde X\ne \emptyset$. 
   This  assignment $X \mapsto \widetilde X$ gives a level-preserving map 
   $\psi\: \X'\ra \widetilde \X'$. We claim that $\psi$ is a rough-isometry between $\X'$ and $\widetilde \X'$, where the  spaces are equipped with their respective combinatorial distances.
   
   To see this, let $X,Y\in \X'$ be arbitrary, and consider $\widetilde X\coloneqq \psi(X)$ and 
   $\widetilde Y\coloneqq \psi(Y)$. Since $\psi$ preserves levels of tiles, we have
   $$ \diam(X)\asymp \Lambda^{-\ell(X)} =\Lambda^{-\ell(\widetilde X)}   
   \asymp  \diam(\widetilde X),$$
   and similarly $\diam(Y)\asymp \diam(\widetilde Y)$.
   Hence 
   \begin{align*}
   \diam(X\cup Y)&\le \diam (X) + \diam (\widetilde X \cup \widetilde Y) +\diam(Y)\\
   &\lesssim   \diam (\widetilde X \cup \widetilde Y), 
   \end{align*} 
   and the same argument gives  $\diam (\widetilde X \cup \widetilde Y)\lesssim  \diam(X\cup Y)$. 
   In all the previous relations the implicit multiplicative constants are independent of 
   $X$ and $Y$. 
   Lemma~\ref{lem:basicGromov} implies that there exists a constant $c\ge 0$ independent of $X$ and $Y$ such that 
   $$ (X\cdot Y)-c\le (\widetilde X\cdot \widetilde Y)\le (X\cdot Y)+c.$$
   Here Gromov products are in $\X'$ and $\widetilde \X'$, respectively,  with respect to the basepoint $X^{-1}=\widetilde{X}^{-1}=S^2$. Since $\ell(X)=\ell(\widetilde X)$ and $\ell(Y)=
   \ell(\widetilde Y)$, based on   \eqref{eq:gromov_tileG} we deduce the inequality 
   \begin{equation*}
        \abs{X- Y}-k 
        \le 
        \abs{\widetilde X-\widetilde Y}
        \le 
        \abs{X-Y}+k 
   \end{equation*}
   for combinatorial distances.  
   Here $k\coloneqq 2c$ is independent of $X$ and $Y$.    
   
  This is the first condition \eqref{eq:qisom1} (with $\lambda=1$) for $\psi$ to be a rough-isometry. It remains to show that 
  $\psi(\X')$ is cobounded in $\widetilde \X'$. To verify this, let $\widetilde Y\in \widetilde \X'$ be arbitrary. Pick a tile $X\in \X'$ with  $\ell(X)=\ell(\widetilde Y)$ and $X\cap \widetilde Y\ne \emptyset$. Define $\widetilde X\coloneqq \psi(X)$. It suffices to produce a uniform upper bound for the combinatorial distance $|\widetilde X- \widetilde Y|$
  of $\widetilde X$ and $\widetilde Y$ in $\widetilde \X'$ independent  of $\widetilde Y$. 
  Now  $$ \diam (X) \asymp \diam (\widetilde X)\asymp  \diam(\widetilde Y)\asymp \Lambda^{-\ell(\widetilde Y)},$$ and so 
    \begin{equation*}
   \diam(\widetilde X\cup \widetilde Y)\le  \diam (\widetilde X) + \diam (X) +
   \diam(\widetilde Y) \lesssim \Lambda^{-\ell(\widetilde Y)}, 
   \end{equation*} 
where again all implicit multiplicative constants are independent of the choice of the tiles. So by Lemma~\ref{lem:basicGromov} we have 
$$ (\widetilde X\cdot \widetilde Y)\ge \ell(\widetilde Y)-c',$$ 
where $c'\ge 0$ is independent of the choices. 
Since $\ell(\widetilde X)=\ell( \widetilde Y)$, we conclude
$$ |\widetilde X- \widetilde Y|=2+2\ell(\tilde Y)-2(\widetilde X\cdot \widetilde Y)\le k'\coloneqq 2+2c', $$
which gives the desired uniform bound. 
\end{proof}

\begin{rem}\label{rem:natrouiso} 
  The rough-isometry $\psi$ between the graphs $ \G$ and
  $\widetilde{\G}$ constructed in the previous proof is compatible
  with the identifications $\geo \G\cong S^2$ and $\geo
  \widetilde{\G}\cong S^2$.  Indeed, let $p\in S^2$ be
  arbitrary. Viewed as an element 
  in $\partial_\infty\G$, the point $p$ is represented by a sequence
  $\{X_i\}$ in $\X'$ such that $X_i\to \{p\}$ as $i\to \infty$ in the sense of 
  Hausdorff convergence  (see
  Lemma~\ref{lem:seq_tiles_Gromov}). If $\widetilde X_i\coloneqq \psi(X_i)$
  for $i\in \N$, then $\{\widetilde X_i\}$ is a sequence of tiles in
  $\widetilde \X'$.  By definition of $\psi$ the levels of $X_i$ and
  $\widetilde X_i$ are the same, and so  $\diam(\widetilde{X}_i)\to 0$ as
  $i\to \infty$. In addition,  $X_i\cap \widetilde X_i\ne \emptyset$ for
  $i\in \N$. This implies that $\widetilde X_i\to \{p\}$ as $i\to
  \infty$.  So if $\{X_i\}$ represents the point $p\in S^2$ under the
  identification $\geo \G\cong S^2$, then the image sequence $\{\widetilde
  X_i\}$ under $\psi$ also represents the point $p$ under the
  identification $\geo \widetilde{\G}\cong S^2$.
\end{rem}
\ifthenelse{\boolean{singlechapter}}{

%
%


\chapter{Isotopies} 
\label{cha:iso}
 
In this chapter we consider various questions related to isotopies. We first revisit the notion of Thurston equivalence,
which is defined in terms of certain isotopies. Then we investigate   when two Jordan curves in $S^2$ are isotopic
relative to a finite set of points. This is  in
preparation for  results about the existence of invariant Jordan curves  for expanding Thurston maps (see
Chapter~\ref{cha:constructc}).

Recall that two Thurston maps $f\: S^2\ra S^2$ and
$g\: \widehat S^2\ra \widehat S^2$ on $2$-spheres $S^2$ and
$\widehat S^2$ are (Thurston) equivalent (see
Definition~\ref{def:Thequiv}) if there exist homeomorphisms
$h_0,h_1\:S^2\ra \widehat S^2 $ that are isotopic rel.\
$\post(f)$ 
and satisfy  
$ h_0\circ f = g\circ h_1$.  We then
have the commutative diagram:
\begin{equation}\label{Thequiv2}
  \xymatrix{
    S^2 \ar[r]^{h_1} \ar[d]_f & \widehat S^2 \ar[d]^g
    \\
    S^2 \ar[r]^{h_0} & \widehat S^2\rlap{.}
  }
\end{equation}
The maps $f$ and $g$ are topologically conjugate if there exists
a homeomorphism $h\: S^2\ra \widehat S^2 $ such that
$h\circ f = g\circ h$.

Obviously, Thurston equivalence is a weaker notion than
topological conjugacy. However, two expanding Thurston maps $f$ and $g$ are
Thurston equivalent if and only if they are topologically
conjugate.

\begin{theorem}
  [Thurston equivalence and topological conjugacy]
  \label{thm:exppromequiv}
  \index{Thurston!equivalent} 
  Suppose $f\: S^2\to S^2$ and $g\:\widehat S^2\to \widehat S^2$
  are expanding Thurs\-ton maps that are Thurston equivalent. 
  Then they are
  topologically  conjugate.\index{topologically conjugate} 
  
  More precisely,     
  if we have a Thurston equivalence between $f$ and $g$ as in
  \eqref{Thequiv2}, then  there exists a homeomorphism  $h\:S^2\ra
  \widehat S^2$ such that $h$ is isotopic to $h_1$
  rel.~$f^{-1}(\post(f))$ and satisfies $h\circ f=g\circ h$. 
\end{theorem}
Since $h_0$ and $h_1$ are
isotopic rel.\ $\post(f)$ and $\post(f)\sub f^{-1}(\post(f))$,   this implies that $h$ is also isotopic
to $h_0$ rel.\ $\post(f)$.

A statement very similar to the theorem above was
proved by Kameyama \cite{Ka03a}. Since his notion of
``expanding'' is different from ours, we will present the details
of the proof.

Theorem~\ref{thm:exppromequiv} will be shown in
Section~\ref{sec:topol-interl-1}. To this end, we will repeatedly
lift the isotopy between $h_0$ and $h_1$ in \eqref{Thequiv2}. The
relevant result about the existence of such lifts is established in 
Proposition~\ref{prop:isotoplift}. Since the maps  $f$ and $g$ in
the above statement are expanding, the ``tracks'' of the lifted
isotopies under the $n$-th iterates will shrink exponentially with respect to a visual
metric as $n\to \infty$ (see Lemma~\ref{lem:exp_shrink}). We will
obtain the desired conjugacy between $f$ and $g$ essentially by concatenating these lifts. The basic idea  of this argument is well known in dynamics (see \cite{Sh69}, for example).

In Sections~\ref{sec:isotopy-rel.-postf} 
and~\ref{sec:graphs} we present some technical results on 
isotopies of Jordan curves. The most important result obtained here is
Lemma~\ref{lem:isorelP}, which gives a criterion when a Jordan curve can be isotoped into the $1$-skeleton of a given cell decomposition of $S^2$.  This  will be  a crucial ingredient  in the
proof of Theorem~\ref{thm:main}. 

Before we go into the details, we first fix some notation and  terminology related to homotopies and isotopies that will be used 
throughout this chapter (for the basic definitions see Section~\ref{sec:thurston-equivalence}).  We denote by $I\coloneqq [0,1]$ the unit interval. 
If  $X$ and $Y$ are topological spaces, and   $H\: X\times I \ra
Y$ is a 
homotopy\index{homotopy} 
between  $X$ and $Y$, then, as usual,   
$H_t\coloneqq H(\cdot, t)\: X\ra Y$ for $t\in I$ denotes  the time-$t$ map of the homotopy.

Conversely, when we say that a family $H_t$ 
of continuous maps from $X$ into $Y$ is a homotopy 
between $X$ and $Y$, it is understood that $t$ is a variable in $I$ and that the map $(x,t)\in X\times I\mapsto H_t(x)$ is a
homotopy. 
This is a slightly imprecise, but convenient way of
expression. Such a family $H_t$ is an 
isotopy\index{isotopy}
between $X$ and $Y$ if each map $H_t$ is a homeomorphism between $X$ and $Y$.

\section[Equivalent expanding maps are conjugate]{Equivalent expanding Thurston maps are conjugate}
\label{sec:topol-interl-1}

In preparation for  the proof of Theorem~\ref{thm:exppromequiv}, we
first   record a simple lemma about preimages of sets.  
\begin{lemma}
  \label{lem:lifts_inverses} 
  Let $f\:X\ra X$ and $g\:Y\ra  Y$ be maps 
   defined on some sets $X$ and $Y$, and $h,\widetilde h\:X\ra Y$  be  bijections 
  with  $g\circ \widetilde h=h\circ f $.   Then for every  set $A\subset X$ we have 
  \begin{equation*}
    g^{-1}(h(A))= \widetilde h (f^{-1}(A)). 
  \end{equation*}
   
\end{lemma}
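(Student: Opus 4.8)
This is a purely set-theoretic identity, so the plan is a straightforward element-chase using the hypothesis $g\circ h_1 = h_0\circ f$ together with the fact that $h_0$ and $h_1$ are bijections. First I would observe that since $h_0$ and $h_1$ are bijections, the relation $g\circ h_1 = h_0\circ f$ can be rewritten as $g = h_0\circ f\circ h_1^{-1}$, which already suggests how preimages under $g$ should transform.

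The cleanest route is to prove the two inclusions separately, or better, to prove the equivalence ``$y\in g^{-1}(h_0(A))$ if and only if $y\in h_1(f^{-1}(A))$'' for an arbitrary $y\in Y$. For the forward direction: suppose $y\in g^{-1}(h_0(A))$, i.e., $g(y)\in h_0(A)$, so $g(y) = h_0(a)$ for some $a\in A$. Since $h_1$ is a bijection, write $y = h_1(x)$ for a unique $x\in X$. Then $h_0(f(x)) = g(h_1(x)) = g(y) = h_0(a)$, and injectivity of $h_0$ gives $f(x) = a\in A$, so $x\in f^{-1}(A)$ and hence $y = h_1(x)\in h_1(f^{-1}(A))$. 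For the reverse direction, essentially the same computation run backwards: if $y = h_1(x)$ with $x\in f^{-1}(A)$, then $g(y) = g(h_1(x)) = h_0(f(x))\in h_0(A)$, so $y\in g^{-1}(h_0(A))$. This gives both inclusions and hence equality.

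There is no real obstacle here; the only thing to be careful about is that both $h_0$ and $h_1$ being bijective is genuinely used — injectivity of $h_0$ for the forward inclusion, and surjectivity of $h_1$ (to write $y = h_1(x)$) also for the forward inclusion, while the reverse inclusion needs essentially nothing beyond the intertwining relation. So the statement would remain true even if $h_0$ were merely injective and $h_1$ merely surjective, but since the hypothesis gives bijections we need not worry about optimality. I would present the proof as the two-line element chase above, noting explicitly where bijectivity enters.

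\begin{proof}
Let $y\in Y$. Suppose first that $y\in g^{-1}(h_0(A))$, i.e., $g(y)\in h_0(A)$. Since $h_1$ is a bijection, there is a unique $x\in X$ with $y=h_1(x)$. Then
\begin{equation*}
h_0(f(x))=g(h_1(x))=g(y)\in h_0(A),
\end{equation*}
and since $h_0$ is injective it follows that $f(x)\in A$, i.e., $x\in f^{-1}(A)$. Hence $y=h_1(x)\in h_1(f^{-1}(A))$. This proves $g^{-1}(h_0(A))\sub h_1(f^{-1}(A))$.

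Conversely, suppose $y\in h_1(f^{-1}(A))$, so $y=h_1(x)$ for some $x\in f^{-1}(A)$, i.e., $f(x)\in A$. Then
\begin{equation*}
g(y)=g(h_1(x))=h_0(f(x))\in h_0(A),
\end{equation*}
so $y\in g^{-1}(h_0(A))$. This proves the reverse inclusion, and hence $g^{-1}(h_0(A))= h_1(f^{-1}(A))$.
\end{proof}
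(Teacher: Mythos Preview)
Your proof is correct and follows essentially the same element-chase as the paper's own argument: both prove the two inclusions by writing $y=h_1(x)$ via surjectivity of $h_1$ and then using injectivity of $h_0$ to pull back through the intertwining relation. Your additional remark that only injectivity of $h_0$ and surjectivity of $h_1$ are actually needed is a nice observation not made explicit in the paper.
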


\begin{proof} 
  Since $h, \widetilde h$ are bijections, $g\circ \widetilde h= h \circ f$
  implies $h^{-1}\circ g = f \circ \widetilde h^{-1}$. Thus
  \begin{equation*}
    g^{-1}(h(A))
    = 
    \left(h^{-1}\circ g\right)^{-1}(A)    
    = 
    \big(f\circ \widetilde h^{-1}\big )^{-1}(A) 
    = 
    \widetilde h(f^{-1}(A)),
  \end{equation*}
  as desired.
\end{proof}

We now turn to  lifts of isotopies by Thurston maps (see 
\cite[Lemma~4.3]{Ka03a} for a similar statement).

\begin{prop}[Lifts of isotopies by Thurston maps]\label{prop:isotoplift}\index{isotopy!lift}\index{Thurston map!lift of isotopy by}
  Suppose  $f\: S^2\to S^2$ and $g\:\widehat S^2\ra \widehat S^2$ are   Thurston maps, and $h_0,\widetilde h_0\colon S^2\to\widehat S^2$ are  homeomorphisms  
  such that  $h_0|\post(f)=\widetilde h_0|\post(f)$ and    $ g\circ \widetilde h_0= h_0\circ f $. Let  $H\:S^2\times I\ra \widehat  S^2$ be  an isotopy 
   rel.\   $\post(f)$ with $H_0=h_0$. 
   
   Then the isotopy $H$ uniquely lifts to an isotopy $\widetilde{H}\:S^2\times I\ra \widehat S^2$ rel.\  $f^{-1}(\post(f))$ such that $\widetilde{H}_0=\widetilde h_0$ and $g\circ \widetilde{H}_t=H_t\circ f $ for all 
   $t\in I$. \end{prop}
   
 So if we  set ${h}_1\coloneqq {H}_{1}$ and $\widetilde{h}_1\coloneqq \widetilde{H}_{1}$, then  we obtain the following commutative diagram:
    \begin{equation*}
      \xymatrix{
        S^2 \ar[rr]^{\widetilde{H}\colon\widetilde{h}_0\simeq \widetilde{h}_1} \ar[dd]_f
        & & \widehat S^2 \ar[dd]^g
        \\ & & &
        \\
        S^2 \ar[rr]^{ H\colon h_0\simeq h_1}
        & &\widehat  S^2\rlap{.}
      }
    \end{equation*}
Here $H\: h_0\simeq h_1$, for example, indicates that $H$ is an isotopy with 
$H_0=h_0$ and $H_1=h_1$.

\begin{proof} We have 
   \begin{equation} 
    \label{postfg2}
    h_0(\post(f))=\widetilde h_0(\post(f))=\post(g) 
  \end{equation} as follows from the remark after Lemma~\ref{lem:T-eq_crit_post}.
  This implies that $$H_t(\post(f))=\post(g)$$ for all $t\in I$.
Therefore,
  $H_t|S^2\setminus \post(f)$ is an isotopy between
  $S^2\setminus \post(f)$ and $\widehat S^2\setminus \post(g)$.

Moreover, it follows from Lemma~\ref{lem:lifts_inverses} and \eqref{postfg2} that  
$$ \widetilde h_0(f^{-1}(\post(f)))=g^{-1} (h_0(\post(f)))=g^{-1}(\post(g)).$$ So  the map 
$\widetilde h_0|S^2\setminus f^{-1}(\post(f))$     can be considered as a lift 
of $$(H_0\circ f)|S^2\setminus f^{-1}(\post(f))=(h_0\circ f)|S^2\setminus f^{-1}(\post(f)) $$ by the 
(unbranched)  covering map (see Lemma~\ref{lem:brcovcov}) $$g\: \widehat S^2\setminus g^{-1}(\post(g))\ra \widehat S^2\setminus \post(g).$$
    By the usual homotopy lifting theorem for covering maps (see \cite[Proposition~1.30, p.~60]{Ha})   
the homotopy $(H_t\circ f)|S^2\setminus f^{-1}(\post(f))$ lifts to a unique  homotopy 
$\widetilde{H}\: (S^2\setminus f^{-1}(\post(f)))\times I \ra \widehat S^2\setminus g^{-1}(\post(g))$  such that
$$\widetilde{H}_0=\widetilde h_0| S^2\setminus f^{-1}(\post(f))$$ and 
$g\circ \widetilde{H}_t=H_t\circ f$ on $S^2\setminus f^{-1}(\post(f))$
for all $t\in I$.   

We claim that $\widetilde{H}$ has a   unique extension to a homotopy 
between $S^2$ and $\widehat S^2$. To see this,  let $q\in  f^{-1}(\post(f))$ be arbitrary, and set $p\coloneqq 
f(q)\in \post(f)$. Then there exists $\widehat p\in \post(g)$
such that  $H_t(p)=\widehat p$ for all $t\in I$. Since $g$ is a
branched covering map, we can find a small topological disk
$\widehat V\subset \widehat{S}^2$ containing 
$\widehat p$ such that each of the components of $g^{-1}(\widehat V)$ contains precisely one point in $g^{-1}(\widehat p)$. Since $H(\{p\}\times I)=\{\widehat p\}$ and  $H$ is uniformly continuous on
$S^2\times I$, we can choose a small neighborhood $V\subset S^2$ of $p$ such that 
$H(V\times I)\sub \widehat V$. Finally, we can find a small
topological disk $U \subset S^2$ containing
$q$ such that $f(U)\sub V$ and 
$$U'\coloneqq U\setminus\{q\}\sub S^2\setminus 
f^{-1}(\post(f)).$$ Then the set 
$U'\times I\sub (S^2\setminus 
f^{-1}(\post(f))) \times I $ is connected; so $\widetilde H(U'\times I)$ is also 
connected. Moreover,
 \begin{align*} g(\widetilde H(U'\times I))
&= \{ g(\widetilde H_t(u)): u\in U',\, t\in I\}   \\
&=  \{  H_t(f(u)): u\in U',\, t\in I \}    \sub H(V\times I) \sub \widehat  V.
\end{align*}    
Hence the connected set $\widetilde{H}(U'\times I)$ is contained in a unique component $\widehat U$ 
of $g^{-1}(\widehat V)$. By choice of $\widehat V$, this component contains a unique point $\widehat q\in g^{-1}(\widehat p)$. By making $\widehat V$  smaller if necessary, we can guarantee  that  the corresponding component 
$\widehat U$ of $g^{-1}(\widehat V)$ containing $\widehat q$ lies in an arbitrarily  small neighborhood of $\widehat q$
(this easily follows from the fact that  $g$ is a branched covering map).  Since  $
\widetilde H(U'\times I)\sub \widehat U$ as we have just seen, this implies that we can continuously extend $\widetilde H$ to $\{q\}\times I$ by setting 
$ \widetilde{H}(q,t)=\widehat q$ for $t\in I$. Since $q$ was an arbitrary element of  the finite set $f^{-1}(\post(f))$, we see that $\widetilde H$ has indeed an extension
to a homotopy between $S^2$ and $\widehat S^2$, also called  $\widetilde H$.
This extension is  unique, because $S^2\setminus f^{-1}(\post(f))$ is dense in $S^2$.
The previous argument also  shows that the extension  $\widetilde H$
is a homotopy rel.\ $f^{-1}(\post(f))$.  Moreover, again by density of $S^2\setminus f^{-1}(\post(f))$ in $S^2$ it is clear that on $S^2$ we have $\widetilde H_0=\widetilde h_0$ and  $g\circ \widetilde{H}_t=H_t\circ f$ for $t\in I$. 
We conclude that the isotopy $H$ can be lifted to a unique {\em homotopy} 
$\widetilde H$ with the desired properties.

To show that $\widetilde H$ is actually an {\em isotopy} between $S^2$ and 
$\widehat S^2$, we first note that the roles of $f$ and $g$ in the previous argument  can be reversed. So by lifting the isotopy $H^{-1}_t$,  we can find a unique  homotopy $\widetilde K_t$ between $\widehat S^2$ and $S^2$  such that $\widetilde K_0=\widetilde h^{-1}_0$
and  $f\circ \widetilde{K}_t=H^{-1}_t\circ g$ for $t\in I$. 
Then $ \widetilde K_0\circ  \widetilde H_0= \widetilde h_0^{-1}\circ  \widetilde h_0=\id_{S^2}$ and 
$$f\circ \widetilde K_t\circ  \widetilde H_t =  H^{-1}_t\circ g \circ  \widetilde H_t = H^{-1}_t\circ   H_t \circ f=f. $$ This implies that for each $p\in S^2$ the (continuous)  path $t\in I\mapsto  \widetilde K_t(  \widetilde H_t(p))$ starts at $p$ for $t=0$ and is contained in the finite set $f^{-1}(f(p))$.
Hence  $\widetilde K_t( \widetilde  H_t(p))=p$ for all $t\in I$ and $p\in S^2$, or equivalently,
 $ \widetilde K_t \circ   \widetilde H_t =\id_{S^2}$ for $t\in I$. A similar argument shows that 
 $ \widetilde H_t \circ  \widetilde K_t =\id_{\widehat S^2}$ for $t\in I$. It follows that for each $t\in I$, the map $ \widetilde H_t$ is a homeomorphism from $S^2$ 
 onto $\widehat S^2$ with the inverse $\widetilde K_t$.  So $\widetilde H_t$ is indeed the unique  isotopy with the  desired properties. 
\end{proof}

Note that if in the previous proposition $H$ is an isotopy relative to a set  $M\sub S^2$
 with $\post(f)\sub M$, then the lift 
$\widetilde H$ is an isotopy rel.\  $f^{-1}(M)$.
Indeed, if $p\in f^{-1}(M)$, then $f(p)\in M$ and so  
$$g(\widetilde H_t(p))=H_t(f(p))=h_0(f(p))\eqqcolon\widehat{q}$$
for all $t\in I$. Thus $t\mapsto \widetilde H_t(p)$ is a path 
contained in the finite set $g^{-1}(\widehat{q})$ and hence a
constant path.

If  the Thurston map $g$  in Proposition~\ref{prop:isotoplift} is  expanding, then repeated lifts are shrinking. This is made precise in the following lemma,
which will be of crucial importance in the proof of
Theorem~\ref{thm:exppromequiv}. 

\begin{lemma}[Exponential shrinking of  tracks of isotopies]
  \label{lem:exp_shrink}
Let $f\: S^2\to S^2$ and $g\:\widehat S^2\ra \widehat S^2$ be 
  Thurston maps, and  $H^n\:S^2\times I\ra \widehat S^2$ be   
  isotopies rel.\ $\post(f)$ satisfying
  $g\circ H^{n+1}_t=H^n_t\circ f $ for $n\in \N_0$ and $t\in I$.
 
  If $g$ is expanding and $\widehat S^2$ is equipped with a
  visual metric for $g$, then the tracks of the isotopies $H^n$
  shrink exponentially as $n\to \infty$. More precisely, if
  $\varrho$ is a visual metric for $g$ with expansion factor
  $\Lambda>1$, then there exists a constant $C\ge 1$ such
  that \begin{equation}\label{eq:expshrink} \sup_{x\in
      S^2}\diam_{\varrho}( \{H^n_t(x):t\in I\})\leq C \Lambda^{-n}
  \end{equation}
  for all $n\in \N_0$. 
\end{lemma}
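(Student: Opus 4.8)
The plan is to reduce the claim to the exponential shrinking of lifts of paths, which is Lemma~\ref{lem:liftpathshrinks}. First I would fix a point $x\in S^2$ and consider the path $\gamma^0_x\colon I\to \widehat S^2$ given by $\gamma^0_x(t)=H^0_t(x)$; this is the track of $x$ under the bottom-level isotopy $H^0$. The relation $g\circ H^{n+1}_t=H^n_t\circ f$, applied inductively, shows that for each $n$ the track $\gamma^n_x(t)=H^n_t(x)$ satisfies $g^n\circ\gamma^n_x = \gamma^0_{f^n(x)}$, i.e.\ $\gamma^n_x$ is a lift under $g^n$ of the path $\gamma^0_{f^n(x)}=H^0_\bullet(f^n(x))$. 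So each track at level $n$ is a lift under $g^n$ of a track at level $0$.

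Next I would control the level-$0$ tracks uniformly. The map $H^0\colon S^2\times I\to\widehat S^2$ is continuous on the compact space $S^2\times I$, hence uniformly continuous; this gives, for the number $\delta_0=\delta_0(g,\widehat\CC)>0$ of \eqref{defdelta} associated to some fixed Jordan curve $\widehat\CC\supset\post(g)$, a partition $0=t_0<t_1<\dots<t_N=1$ of $I$ such that $\diam_d\big(H^0([\{y\}\times[t_{j-1},t_j]])\big)<\delta_0$ for \emph{every} $y\in S^2$ and every $j$. Thus every level-$0$ track $\gamma^0_y$ can be broken into at most $N$ subpaths of $d$-diameter $<\delta_0$, with $N$ independent of $y$. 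By Lemma~\ref{lem:preimsmall}~(ii), each lift under $g^n$ of such a subpath has image contained in a single $n$-flower for $(g,\widehat\CC)$, so the lift $\gamma^n_x$ of $\gamma^0_{f^n(x)}$ has image covered by at most $N$ $n$-flowers. By the exponential shrinking of flowers (the estimate $\diam_d(W^n(v))\le C\Lambda^{-n}$ noted after Lemma~\ref{lem:expoexp}, itself an immediate consequence of Lemma~\ref{lem:expoexp}~(ii)), a connected set covered by $N$ $n$-flowers has diameter $\le NC\Lambda^{-n}$. Hence
\[
\diam_d\big(\{H^n_t(x):t\in I\}\big)=\diam_d(\gamma^n_x)\le NC\Lambda^{-n},
\]
which is \eqref{eq:expshrink} with the constant $NC$, manifestly independent of $n$ and of $x$.

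This is essentially the same argument as the proof of Lemma~\ref{lem:liftpathshrinks}, the only new point being that one must choose the breakpoints $t_j$ of $I$ \emph{simultaneously for all base points} $x$, which is exactly what uniform continuity of $H^0$ on $S^2\times I$ provides; the remark following Lemma~\ref{lem:liftpathshrinks} already anticipates this uniformity. The main (and really only) obstacle is bookkeeping: one must be slightly careful that $\gamma^n_x$ is literally a lift of $\gamma^0_{f^n(x)}$ under $g^n$ rather than of $\gamma^0_x$ under $g^n$ — this follows by an easy induction on $n$ from $g\circ H^{n+1}_t = H^n_t\circ f$, using $f^n(x)$ in place of $x$ at each step — and that the index set of base points for the level-$0$ tracks still ranges over all of $S^2$ (it does, since $f^n(S^2)=S^2$), so the uniform bound on $N$ applies. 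No appeal to expansion of $f$ itself or to the conjugacy results is needed; only that $g$ is expanding.
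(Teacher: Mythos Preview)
Your proof is correct and follows essentially the same approach as the paper: one observes that $g^n\circ H^n_t=H^0_t\circ f^n$, so each level-$n$ track is a lift under $g^n$ of a level-$0$ track, and then uses uniform continuity of $H^0$ to get a uniform bound $N$ on the number of pieces needed in Lemma~\ref{lem:liftpathshrinks}. You have in fact spelled out more detail than the paper does (in particular the inductive derivation of the lifting relation and the careful remark that the base point becomes $f^n(x)$), but the argument is the same.
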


\begin{proof}
  For all $n\in \N_0$ and $t\in I$ we have
  $g^n\circ H^n_t=H^0_t\circ f^n$; so for fixed $x\in S^2$ and
  $n\in \N_0$ the path $t\mapsto H^n_t(x)$ in $\widehat S^2$ is
  a lift of the path $t\mapsto H^0_t(f^n(x))$ by the map
  $g^n$. Recall that in the proof of Lemma
  \ref{lem:liftpathshrinks} we had to break up the path $\gamma$
  into $N$ pieces $\gamma_j$ so that $\diam_\varrho (\gamma_j)<\delta_0$
  (see also (\ref{defdelta})).  Since $H^0$ is uniformly
  continuous, we can choose the number $N$ uniformly for all the
  paths $t\mapsto H^0_t(y)$, $y\in S^2$. Since $g$ is expanding,
  Lemma~ \ref{lem:liftpathshrinks} then implies that
  \begin{equation*}
    \sup_{x\in S^2}\diam_{\varrho}( \{H^n_t(x):t\in I\})\lesssim
    \Lambda^{-n} 
  \end{equation*}
  for all $n\in \N$, where $C(\lesssim)$ is independent of $n$.
\end{proof}

After these preparations, we are ready for the proof of the main result in
this section. 
 
\begin{proof}[Proof of Theorem~\ref{thm:exppromequiv}] 
  Let $f\colon S^2\to S^2$ and $g\colon \widehat{S}^2\to
  \widehat{S}^2$ be two expanding Thurston maps that are equivalent. We
  want to prove that they are in fact topologically conjugate.
  The main idea of the proof is to lift a suitable initial
  isotopy repeatedly and use the fact that by Lemma
  \ref{lem:exp_shrink} the tracks of the isotopies shrink
  exponentially fast. The desired conjugacy is then obtained as a
  limit.
 
  By assumption there exists an isotopy $H^0_t$ between $S^2$ and
  $\widehat S^2$ rel.\ $\post(f)$ such that
  $h_0\circ f=g\circ h_1$, where $h_0=H^0_0$ and $h_1=H^0_1$.  By
  Proposition~\ref{prop:isotoplift} we can lift the isotopy
  $H_t^0$ between $h_0$ and $h_1$ to an isotopy $H^1_t$ rel.\
  $f^{-1}(\post(f))\supset \post(f)$ between $h_1$ and
  $h_2\coloneqq H^1_1$.  Note that the map $h_1$ plays two roles
  here: it is the endpoint $H^0_1$ of the initial isotopy
  $H_t^0$, and also a lift of $h_0$.
   
   Repeating this argument, we get homeomorphisms $h_n$ 
   and isotopies  $H^n_t$ between $S^2$ and $\widehat S^2$ rel.\
   $ \post(f)$ 
such that
$H_t^{n}\circ f =g \circ  H_t^{n+1}$,  
   $H^n_0=h_{n} $, and $H^n_1=h_{n+1} $ 
for all $n\in \N_0$ and $t\in I$. It follows from induction on $n$ and the remark after the proof of Proposition~\ref{prop:isotoplift} that $H^n_t$ is actually an isotopy rel.\ $f^{-n}(\post(f))$. 
   
   This yields an ``infinite  tower'' of isotopies as  in Figure~\ref{fig:tower_iso}.
   \begin{figure}
     \centering
  \begin{equation*}
  \xymatrix{
    {}\ar[d] & \overset{\vdots}{\phantom{X}} & {}\ar[d]
    \\ S^2 \ar[rr]^{H^2\colon h_2\simeq h_3} \ar[dd]_f
    & & \widehat S^2 \ar[dd]^g
    \\ & &
    \\ S^2 \ar[rr]^{H^1\colon h_1\simeq h_2} \ar[dd]_f
    & & \widehat S^2 \ar[dd]^g
    \\ & &
    \\ S^2 \ar[rr]^{H^0\colon h_0\simeq h_1}
    & & \widehat S^2\rlap{.}
  }
\end{equation*}
\caption{Tower of isotopies.}
\label{fig:tower_iso}
\end{figure}
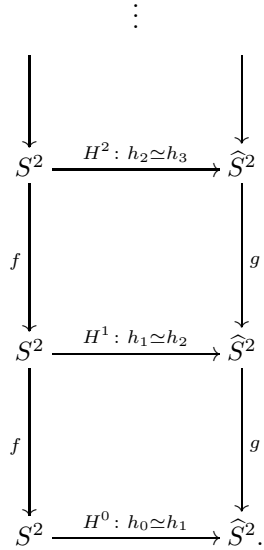
We want to show that for $n\to \infty$ the maps  $h_n$ converge to a homeomorphism 
$h_\infty$ that gives the  desired topological conjugacy  between $f$ and $g$. 

To see this, fix a visual metric $\varrho$ on $\widehat S^2$, and
assume that it has the expansion factor $\Lambda>1$.  Metric
concepts on $\widehat S^2$ will refer to this metric in the
following.  Since $g$ is expanding, Lemma~ \ref{lem:exp_shrink}
implies that
\begin{equation}\label{diamtracks}
  \sup_{x\in S^2}\diam( \{H^n_t(x):t\in I\})\lesssim \Lambda^{-n}
\end{equation}
for all $n\in \N$, where $C(\lesssim)$ is independent of $n$.
In particular, 
\begin{equation*}
  \dist(h_n,h_{n+1})
  \coloneqq 
  \sup_{x\in S^2} \varrho(h_n(x), h_{n+1}(x))
  \lesssim 
  \Lambda^{-n}
\end{equation*}
for all $n\in \N_0$, and so there is a continuous map 
$h_\infty\: S^2\ra \widehat S^2$ such that $h_n\to h_\infty$ uniformly on $S^2$ as $n\to \infty$.  Since 
$h_{n-1}\circ f= g\circ h_n$, we have
$h_{\infty}\circ f=g\circ h_\infty$. 

The map $h_\infty$ is a homeomorphism. To prove this,  we repeat the argument where we interchange the roles of $f$ and $g$. More precisely, we consider the isotopy $(H_t^0)^{-1}$ between $h_0^{-1}$ and $h_1^{-1}$.  The corresponding tower of repeated lifts of this initial isotopy is given by the isotopies $(H^n_t)^{-1}$ between 
 $h_{n}^{-1}$
and $h_{n+1}^{-1}$.  By the argument in the first part of the proof we see that  the maps $h_n^{-1}$ converge to a continuous map 
$k_\infty\: \widehat S^2\ra S^2$ uniformly on $\widehat S^2$ as $n\to \infty$. 
By uniform convergence we have 
$(k_\infty\circ h_\infty)(x)=\lim_{n\to \infty}( h_n^{-1}\circ h_n)(x)=x$ 
for all $x\in S^2$. Hence $k_\infty\circ h_\infty=\text{id}_{S^2}$. Similarly,
$h_\infty\circ k_\infty=\text{id}_{\widehat S^2}$, and so $k_\infty$ is a continuous inverse 
of $h_\infty$. Hence $h_\infty$ is a homeomorphism.

The conjugating map $h=h_\infty$ is isotopic to 
$h_1$ rel.~$f^{-1}(\post(f))$. To see this, we will define an isotopy rel.\  $f^{-1}(\post(f))$
that is obtained by concatenating (with suitable time change) the isotopies $H^1, H^2, \dots$ 
and take $h=h_\infty$ as the endpoint at time $t=1$. The precise
definition is as follows.
We break up the unit interval into intervals
\begin{equation*}
  I=[0,1]= \left[0,\tfrac{1}{2}\right]\cup 
  \left[\tfrac{1}{2},\tfrac{3}{4}\right] \cup \dots \cup 
  \left[1-2^{-n},1 -2^{-n-1}\right]
  \cup \dots \cup \{1\}. 
\end{equation*}
The $n$-th interval in this union is denoted by $I^n=[1-2^{-n},1
-2^{-n-1}]$. Let $s_n\colon I^n \to I$, $s_n(t)= 2^{n+1}(t- (1
-2^{-n}))$, for $n\in \N_0$. We define  
$H\: S^2\times I\ra \widehat S^2$ by 

\begin{align*}
  H(x,t) \coloneqq  H^{n+1}(x,s_n(t))
\end{align*}
 if  $x\in S^2$ and $t\in I^n$  for some  $n\in \N_0,$
and $H(x,t)=h(x)$ for $x\in S^2$ and $t=1$. 
We claim that $H$ is indeed an isotopy between $h_1$ and $h$ rel.~$f^{-1}(\post(f))$. 

Note that $H$ is well-defined, $H_1=h$, and  $H_{1-1/2^n}=h_{n+1}$ for $n\in \N_0$.  Moreover, $H_t$ is a homeomorphism for each $t\in I$, and $H_t|f^{-1}(\post(f))$ does not depend on $t$. 
To establish our claim,  it remains to verify  that $H$ is continuous. It is clear that 
$H$ is continuous at each point $(x,t)\in S^2\times [0,1)$. 

Moreover, as follows from the uniform convergence $h_n\to h$ as $n\to \infty$ and inequality \eqref{diamtracks}, we have $H_t\to H_1$ uniformly on $S^2$ as $t\to 1$.   This together with the continuity of $h=H_1$ implies the continuity of $H$ at points $(x,t)\in S^2\times I$ with $t=1$. 
\end{proof}

\begin{rem}
  \label{rem:betterandbetter} 
  The previous proof gives a procedure for approximating the
  conjugating map $h=h_\infty$. Indeed, we know that $H^n_t$ is an isotopy rel.\ $f^{-n}(\post(f))$ and so 
  the map $H^n_t$ is constant in $t$ on $f^{-n}(\post(f))$ for
each    $n\in \N_0$. This implies that
  $h_n=h_{n+1}=\dots =h_\infty$ on the set $f^{-n}(\post(f))$,
  and so the map $h_n$ sends the points in $f^{-n}(\post(f))$ to
  the ``right'' points in $g^{-n}(\post(g))$.  The isotopy
  $H_t^n$ then deforms $h_n$ to a map $h_{n+1}$ such that the
  points in $f^{-(n+1)}(\post(f))$ have the correct images in
  $g^{-(n+1)}(\post(g))$ as well, etc.  Since by expansion the
  union of the sets
  $$ \post(f)\sub f^{-1}(\post(f))\sub  f^{-2}(\post(f))\sub
  \dots$$ 
  is dense in $S^2$, this gives better and better approximations
  of the limit map $h_\infty$.
\end{rem}

The following fact, already mentioned in 
Section~\ref{sec:thurston-equivalence}, is an immediate consequence of the considerations in the proof of 
Theorem~\ref{thm:exppromequiv}.

\begin{cor}
  \label{cor:fg_eq_fngn_eq}
  \index{Thurston map!iterate of}
  \index{iterate of Thurston map}
  \index{F f@$F=f^n$}
  \index{Thurston!equivalent}
  Let $f\colon S^2\to S^2$ and $g\colon \widehat{S}^2 \to
  \widehat{S}^2$ be  Thurston maps. If $f$ and $g$ are (Thurston) equivalent, then 
  $f^n$ and $g^n$ are  equivalent for each $n\in \N$. 
\end{cor}

In general, it is not true that $f$ and $g$ are equivalent if  $f^n$ and $g^n$ are equivalent for some $n\ge 2$. 

\begin{proof} 
  We use the same notation as in the proof of
  Theorem~\ref{thm:exppromequiv}.  For the construction of the
  infinite tower of isotopies the assumption that $f$ and $g$ are
  expanding was not needed; so we also obtain such a tower under
  our given assumption that $f$ and $g$ are equivalent Thurston
  maps.

Let $n\in \N$ be arbitrary.
Then  $h_n=H^{n-1}_1$ is a homeomorphism such that 
$$  g^n\circ h_n =g^{n-1}\circ h_{n-1}\circ f= \dots= h_0\circ f^n. $$
Moreover, the homeomorphisms $h_0$ and $h_n$ are isotopic 
rel.\ $\post(f)$, because a suitable isotopy can be obtained by concatenating   the isotopies $H^0, \dots, H^{n-1}$. 
Hence $f^n$ and $g^n$ are equivalent as desired. \end{proof}

\section{Isotopies of Jordan curves}
\label{sec:isotopy-rel.-postf}\index{isotopy!of Jordan curve}

Let $X$ be a topological space, and $A,B,C\sub X$. We say that
$B$ is 
{\em isotopic to $C$ rel.\ $A$},\index{isotopy!rel.\ $A$} 
or {\em $B$ can be isotoped (or  deformed)  into $C$ rel.\ $A$}, if there
exists an isotopy $H\: X\times I\ra X$ rel.\ $A$ with $H_0=\id_X$
and $H_1(B)=C$ (see Section~\ref{sec:thurston-equivalence}).  This notion depends on the ambient
space $X$ containing the sets $A$, $B$, $C$.

In the following,  the ambient space for all isotopies will be a fixed  $2$-sphere $S^2$ 
equipped with a base metric. We will  study the problem 
 when two Jordan curves $J$ and
$K$ on $S^2$   passing through a given finite set $P$ of points in the same order can be deformed into each other by an isotopy of $\Sp$ rel.\ $P$. 
If $\#P\le 3$ this is always the case (see
Lemma~\ref{lem:deform<4} below). 

For $\#P\ge 4$ this is not always true as the example in Figure
\ref{fig:isotopcounter} shows. Here $K=S^1$ is the unit circle
and $P=\{1,\iu, -1,-\iu\}\subset S^1$. The Jordan curve $J$
(which contains $P$) is drawn with a thick line. The curves
$K=S^1$ and $J$ are not isotopic rel.\ $P$. In fact, $J$ can be
obtained from $S^1$ by a ``Dehn twist'' about a Jordan curve that separates the points $-\iu$ and $1$ from $\iu$ and
$-1$. Note that in this example we can make the Hausdorff
distance (see \eqref{eq:def_Hausdorffd}) between $J$ and $S^1$
arbitrarily small.

\ifthenelse{\boolean{nofigures}}{}{ 
\begin{figure}
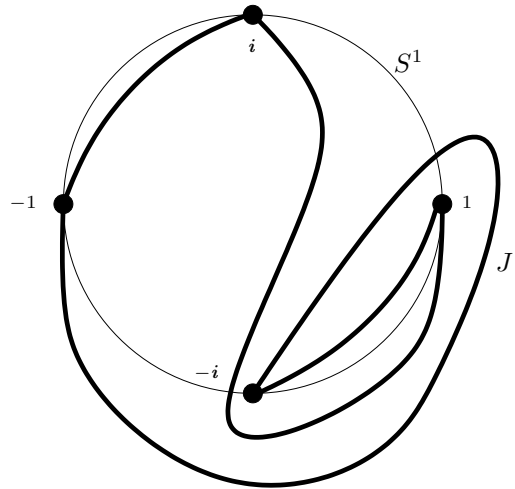

 \centering
 \begin{overpic}
   [width=6cm, 
   tics=20]{isotoprelncounter.eps}
   \put(70,86){$S^1$}
   \put(91,45){$J$}
   \put(40,90){$\scriptstyle{\iu}$}
   \put(-9,58){$\scriptstyle{-1}$}
   \put(29,23){$\scriptstyle{-\iu}$}
   \put(84,58){$\scriptstyle{1}$}
 \end{overpic}
 \caption{$J$ is not isotopic to $S^1$ rel.\ $\{1,\iu, -1, -\iu\}$.}
   \label{fig:isotopcounter}
\end{figure}
}

We will  need  the following statement. 

\begin{prop} \label{prop:isotreln}
 Suppose $J$ is a  Jordan curve in $\Sp$ 
 and $P\sub J$  a set consisting of  $n\ge 3$  distinct points $p_1, \dots, p_n, 
p_{n+1}=p_1$ in cyclic order on $J$. For $i=1,\dots, n$ let 
$\alpha_i$ be the unique  arc on  $J$ with endpoints $p_i$ and $p_{i+1}$
such that $\inte(\alpha_i)\sub J\setminus P$. Then there exists $\delta>0$ with the following property:

  Let  $K$ be  another Jordan curve in $\Sp$ 
passing through the  points $p_1, \dots, p_n$ in cyclic order, and let $\beta_i$ for $i=1, \dots, n$ be the  arc  with endpoints $p_i$ and $p_{i+1}$ such that $\inte(\beta_i)\sub J\setminus P$. 
If 
  \begin{equation*}
    \beta_i \subset \mathcal{N}_\delta(\alpha_i) 
  \end{equation*}
  for all $i=1, \dots, n$, then there exists an isotopy $H_t$ on 
  $\Sp$ rel.\ $P$ such that $H_0=\id_{\Sp}$ and 
  $H_1(J)=K$. 
\end{prop}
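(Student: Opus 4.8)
The plan is to reduce the statement to the fact that homeomorphisms of a disk fixing the boundary pointwise (or more precisely, fixing a finite boundary set and being isotopic to the identity on the boundary) are isotopic to the identity rel.\ that set, and then to paste such isotopies together over the two closed Jordan regions bounded by $J$. First I would choose $\delta>0$ small enough that the $\delta$-neighborhoods $\mathcal{N}^\delta(\alpha_i)$ of the arcs $\alpha_i$ are ``thin'' in a controlled way: concretely, I want the closure of $\bigcup_i \mathcal{N}^\delta(\alpha_i)$ to be an annular neighborhood $N$ of $J$ (a closed topological annulus with $J$ as a core curve) that contains no point of $P$ in a ``bad'' position, and such that $N$ together with the two points-sets is contained in a single region where we have good control. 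The existence of such a $\delta$ is a standard tubular-neighborhood statement for Jordan curves on $S^2$: since $J$ is a Jordan curve, it has arbitrarily small closed annular neighborhoods, and by uniform continuity one can find $\delta$ so that $\bigcup_i \mathcal{N}^\delta(\alpha_i)$ is contained in such an annulus $N$ and each point $p_i$ is in the ``interior'' of $N$ relative to $J$.

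Next I would build the isotopy in two stages. In the first stage, working inside the annulus $N$, I would construct an isotopy $H^1_t$ of $S^2$, supported in $N$ and rel.\ $P$, carrying $J$ onto $K$; this is possible because $K$ lies in $N$ by the hypothesis $\beta_i\subset\mathcal{N}^\delta(\alpha_i)\subset N$, and within an annulus two core-type Jordan curves passing through the same cyclically ordered finite point set and each arc staying in the corresponding ``band'' can be isotoped onto one another rel.\ those points — this uses that the arcs $\alpha_i$ and $\beta_i$ have the same endpoints and are both essentially ``horizontal'' in a product structure $N\cong S^1\times[-1,1]$ that one can arrange on $N$. The key point making this work, as opposed to the Dehn-twist counterexample of Figure~\ref{fig:isotopcounter}, is precisely that $\beta_i$ stays $\delta$-close to $\alpha_i$ and hence does not wind around; after the coordinate identification $N\cong S^1\times[-1,1]$ with $J=S^1\times\{0\}$, each $\beta_i$ is the graph-like image of a path from $(p_i,0)$ to $(p_{i+1},0)$ confined to a thin band, and a fiber-preserving isotopy straightens it.

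For the second stage there is essentially nothing more to do: once $H^1_1(J)=K$, we take $H=H^1$ and we are done, since $H^1$ was constructed rel.\ $P$ with $H^1_0=\id_{S^2}$. The main obstacle — and the step I would spend the most care on — is the first stage, namely making rigorous the claim that inside the annulus $N$ one can isotope $J$ to $K$ rel.\ $P$ using only the $\delta$-closeness of corresponding arcs. The clean way to do this is: (a) fix a homeomorphism $\Phi\colon N\to S^1\times[-1,1]$ with $\Phi(J)=S^1\times\{0\}$ and $\Phi(p_i)=(\zeta_i,0)$ for appropriate $\zeta_i\in S^1$; (b) observe that, for $\delta$ small, $\Phi(K)$ is a Jordan curve in $S^1\times[-1,1]$ which meets each fiber $\{\zeta\}\times[-1,1]$ in exactly one point for $\zeta$ in a neighborhood of each $\zeta_i$, and whose arc between consecutive $(\zeta_i,0)$ stays in a thin band $\mathcal{N}(\Phi(\alpha_i))$; (c) check that such a curve is the graph of a continuous function $u\colon S^1\to[-1,1]$ with $u(\zeta_i)=0$ — this is where the no-winding consequence of $\delta$-closeness is genuinely used, and it requires a small argument that $\Phi(K)$ cannot ``double back'' in a fiber, which follows from choosing $\delta$ small relative to the modulus of continuity of $\Phi^{-1}$; (d) then the linear homotopy $(\zeta,s,t)\mapsto(\zeta,(1-t)s + t\,\text{[}s\text{ scaled by }u(\zeta)\text{]})$ — more carefully, the isotopy sending $S^1\times\{0\}$ to the graph of $u$ by a fiberwise affine map that fixes the endpoints $s=\pm1$ and the points with $u(\zeta)=0$ — gives an isotopy of $S^1\times[-1,1]$ rel.\ $\{(\zeta_i,0)\}$ and rel.\ boundary, which we transport back by $\Phi$ and extend by the identity outside $N$ to get $H^1_t$ on all of $S^2$. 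Extending by the identity is legitimate because the isotopy on $N$ is the identity on $\partial N$. Assembling these pieces yields the desired $H_t$ with $H_0=\id_{S^2}$, $H_t$ rel.\ $P$, and $H_1(J)=K$.
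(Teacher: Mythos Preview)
Your outline has a real gap at step (c). The claim that $\Phi(K)$ is the graph of a function $u\colon S^1\to[-1,1]$ is false in general, and the justification you give (``choosing $\delta$ small relative to the modulus of continuity of $\Phi^{-1}$'') does not repair it. Making $\delta$ small forces each $\beta_i$ to stay in a thin band around $\alpha_i$, i.e.\ controls the deviation in the \emph{fiber} direction of your product structure $S^1\times[-1,1]$. It does nothing to prevent $\beta_i$ from folding back in the \emph{base} direction: an embedded arc from $(\zeta_i,0)$ to $(\zeta_{i+1},0)$ can go forward, then backward, then forward again in the $S^1$-coordinate while staying arbitrarily close to $S^1\times\{0\}$. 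Such a $\beta_i$ lies in $\mathcal N^\delta(\alpha_i)$ for any $\delta>0$ but meets many fibers more than once, so $\Phi(K)$ is not a graph and your fiberwise affine isotopy in (d) is undefined. The same objection already bites in (b): there is no reason $\Phi(K)$ should meet fibers near $\zeta_i$ exactly once.

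What your $\delta$-condition \emph{does} buy is that each $\beta_i$ lies, together with $\alpha_i$, in a simply connected region $\Omega_i$ containing no point of $P$ other than $p_i,p_{i+1}$ (take $\Omega_i$ to be a slightly fattened $\mathcal N^\delta(\alpha_i)$). In a disk, any two arcs with the same endpoints are isotopic rel.\ those endpoints and rel.\ the complement of the disk; this is the statement of Lemma~\ref{twoarcs}. So each $\beta_i$ is isotopic to $\alpha_i$ rel.\ $P$. The remaining (and genuinely nontrivial) step is to assemble these $n$ separate arc-isotopies into a single ambient isotopy of $S^2$ rel.\ $P$ taking $J$ to $K$; this is exactly Lemma~\ref{isotopylem}. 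That two-lemma route is how the paper proceeds, and it is what your argument collapses to once the graph claim is abandoned.
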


In other words, if the arcs $\beta_i$ of the Jordan curve $K$ are
contained in sufficiently small neighborhoods of the
corresponding arcs $\alpha_i$ of $J$, then one can deform $J$
into $K$ by an isotopy of $\Sp$ that keeps the points in $P$
fixed.  Even though this statement seems ``obvious'', a complete
proof is surprisingly difficult and involved. We will derive it from two
lemmas in \cite{Bu}.

\begin{lemma} \label{twoarcs}
Let $\Om\sub \Sp$ be a simply connected  region, $p,q\in \Om$ distinct points,
and $\alpha$ and  $\beta$ arcs in $\Om$ with endpoints $p$ and $q$. Then $\alpha$ is isotopic to  $\beta$ rel.\ $\{p,q\} \cup
\Sp\setminus\Om$. 
\end{lemma}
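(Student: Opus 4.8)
The plan is to reduce the statement to a standard fact about the mapping class group of the twice-punctured disk, or more honestly, to cite the corresponding results from \cite{Bu}. First I would observe that since $\Omega$ is simply connected and $\Omega \neq S^2$ (it misses at least the complement, which is nonempty because a simply connected proper subregion of $S^2$ is a disk), the Riemann mapping theorem together with the Sch\"onflies theorem lets us assume $\Omega = \D$, with $p$ and $q$ two distinct interior points; conjugating by this homeomorphism is harmless since it does not move $S^2 \setminus \Omega$ and the conclusion is invariant under conjugation. So it suffices to prove: any two arcs $\alpha, \beta$ in $\D$ joining $p$ to $q$ are isotopic rel.\ $\{p,q\} \cup \partial \D$ via an isotopy of $S^2$ (equivalently, of the closed disk $\Dbar$ fixing $\partial \D$ pointwise).

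Next I would set up the key reduction: an arc from $p$ to $q$ in $\D$ whose interior avoids $\{p,q\}$ is, up to isotopy rel.\ endpoints, determined by its homotopy class, and in the disk all such arcs are homotopic rel.\ endpoints (the disk is simply connected, so $\alpha$ and $\beta$, being paths with the same endpoints, are path-homotopic). The subtlety is upgrading a homotopy of paths to an \emph{ambient isotopy} that additionally fixes $\partial \D$. This is precisely the content of the two lemmas from \cite{Bu} referenced just before the statement; I would invoke them directly. Concretely, one first isotopes $\alpha$ to a ``standard'' arc (say a simple arc, e.g.\ a straight segment after a further coordinate change moving $p,q$ to $\pm\tfrac12$) rel.\ $\{p,q\}\cup \partial\D$ — a simple closed-curve / arc straightening argument — and likewise $\beta$; then composes the two ambient isotopies. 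The one genuinely new point beyond ``disk is simply connected'' is that there is no mapping-class obstruction here: the mapping class group of the disk with two marked interior points, fixing the boundary, acts trivially on the isotopy class of an arc joining the two marked points once we also demand the isotopy fix the boundary — this is why the $n\ge 4$ counterexample of Figure~\ref{fig:isotopcounter} does not contradict anything, since there the relevant surface is the disk with \emph{four} punctures.

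The main obstacle, and where care is needed, is exactly this passage from homotopy to ambient isotopy while controlling the boundary: a naive ``push along the homotopy'' does not automatically give homeomorphisms at each time, and one must rule out that the homotopy drags the arc ``around'' a puncture or the boundary in a way that cannot be realized ambiently. The clean way to handle this is to first make both arcs \emph{simple} (embedded) by a general-position / innermost-disk argument within $\D$ rel.\ endpoints and boundary, reducing to the case of two simple arcs with the same endpoints in a disk; for two simple arcs the region between them (or the standard innermost bigon surgery) produces an explicit ambient isotopy supported away from $\partial\D$. Since this is precisely what the cited lemmas of \cite{Bu} provide, my proof would be short: reduce to $\Omega=\D$ by Riemann mapping plus Sch\"onflies, then quote the two lemmas of \cite{Bu} to straighten $\alpha$ and $\beta$ to a common simple arc rel.\ $\{p,q\}\cup\partial\D$, and concatenate the resulting isotopies (reparametrizing in $t$) to get an isotopy $H_t$ of $S^2$ rel.\ $\{p,q\}\cup (S^2\setminus\Omega)$ with $H_0=\id$ and $H_1(\alpha)=\beta$.
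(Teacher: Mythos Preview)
Your approach is correct and essentially the same as the paper's: the paper simply cites \cite[p.~413, A.6 Thm.~(ii)]{Bu} directly, with no further argument. Your reduction to the disk via Riemann mapping and Sch\"onflies is correct but unnecessary, since Buser's theorem already applies to arcs in a general simply connected region; note also that only one result from \cite{Bu} is needed for this lemma (the other, A.5~Thm., is what underlies Lemma~\ref{isotopylem}), and you should mention the trivial case $\Omega = S^2$ separately rather than implicitly assuming $\Omega$ is proper.
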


So arcs in a simply connected  region with the same endpoints can be deformed 
into each other so that the endpoints and the complement of the  region stay fixed. The lemma follows from   \cite[ A.6 Theorem~(ii), p.~413]{Bu}.

\begin{lemma}  \label{isotopylem}
 Suppose we have two Jordan curves $J$ and $K$ 
as in Proposition~\ref{prop:isotreln} such that for each $i=1, \dots, n$
the arc $\alpha_i$ is isotopic to  $\beta_i$ rel.\ $P$. 
Then $J$ is isotopic to $K$  rel.\ $P$.   
\end{lemma}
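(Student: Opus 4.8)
The plan is to build the isotopy from $J$ to $K$ by working arc by arc, using Lemma~\ref{twoarcs} to straighten one arc at a time while keeping everything already done fixed. The key structural observation is that the complement of an arc $\alpha_i$ together with its endpoints in $S^2$ is an open disk (a simply connected region), so Lemma~\ref{twoarcs} applies with $\Omega$ equal to a suitably chosen simply connected region containing both $\alpha_i$ and $\beta_i$ but avoiding the other vertices and the portions of the curve already matched up.

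First I would fix $i$ and suppose inductively that after some isotopy rel.\ $P$ we have already arranged that $\alpha_1=\beta_1,\dots,\alpha_{i-1}=\beta_{i-1}$ (for $i=1$ there is nothing to assume). Set $A=\beta_1\cup\dots\cup\beta_{i-1}\cup\{p_1,\dots,p_n\}$; this is a closed connected set (an arc or a union of arcs sharing endpoints, together with $P$) whose complement $S^2\setminus A$ is a disjoint union of open disks, because $A$ is either a single arc, a Jordan curve, or contained in a tree-like configuration of arcs — in any case $S^2\setminus A$ has simply connected components. The arcs $\alpha_i$ and $\beta_i$ both have interior disjoint from $P$ and, by the hypothesis that $\beta_i\subset\mathcal N^\delta(\alpha_i)$ for $\delta$ small, their interiors lie in the same component $\Omega$ of $S^2\setminus A$ (here is where the choice of $\delta$ in the \emph{existence} statement of Proposition~\ref{prop:isotreln} enters, but for the present lemma we are \emph{given} that $\alpha_i$ is isotopic to $\beta_i$ rel.\ $P$, which already tells us they live in a common isotopy situation). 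Applying Lemma~\ref{twoarcs} to $\Omega$, with $p=p_i$, $q=p_{i+1}$, and the two arcs $\alpha_i,\beta_i$, we obtain an isotopy rel.\ $\{p_i,p_{i+1}\}\cup (S^2\setminus\Omega)$ carrying $\alpha_i$ to $\beta_i$. Since $P\subset\{p_i,p_{i+1}\}\cup(S^2\setminus\Omega)$ and $\beta_1,\dots,\beta_{i-1}\subset S^2\setminus\Omega$, this isotopy fixes $P$ and fixes all previously-matched arcs, so it matches $\alpha_i$ to $\beta_i$ without disturbing what was already done.

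Next I would iterate this for $i=1,\dots,n-1$; after the $(n-1)$-st step the isotopy has carried $\alpha_1\cup\dots\cup\alpha_{n-1}$ onto $\beta_1\cup\dots\cup\beta_{n-1}$. The last arc requires a small separate argument: once $\alpha_1,\dots,\alpha_{n-1}$ agree with $\beta_1,\dots,\beta_{n-1}$, the set $B=\beta_1\cup\dots\cup\beta_{n-1}$ is an arc from $p_1$ to $p_n$ (passing through $p_2,\dots,p_{n-1}$), and $S^2\setminus B$ is a single open disk $\Omega'$; both $\alpha_n$ and $\beta_n$ are arcs in $\overline{\Omega'}$ from $p_n$ to $p_1$ with interiors in $\Omega'$. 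A final application of Lemma~\ref{twoarcs} with $\Omega=\Omega'$, $p=p_n$, $q=p_1$ gives an isotopy rel.\ $\{p_n,p_1\}\cup(S^2\setminus\Omega')=\{p_n,p_1\}\cup B$ matching $\alpha_n$ to $\beta_n$, hence fixing $P$ and $B$. Composing (with appropriate reparametrization of the time interval) all $n$ isotopies yields an isotopy $H_t$ on $S^2$ rel.\ $P$ with $H_0=\id_{S^2}$ and $H_1(J)=H_1(\alpha_1\cup\dots\cup\alpha_n)=\beta_1\cup\dots\cup\beta_n=K$.

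The main obstacle I anticipate is verifying carefully at each stage that $S^2\setminus A$ (and later $S^2\setminus B$) really has simply connected components and that $\alpha_i$, $\beta_i$ genuinely land in the \emph{same} component — this is where the hypothesis ``$\alpha_i$ isotopic to $\beta_i$ rel.\ $P$'' must be invoked properly, since an isotopy rel.\ $P$ preserves the homotopy class of $\alpha_i$ rel.\ endpoints in the complement of the other fixed data, forcing $\beta_i$ into the same complementary disk. One must also be a little careful that the arcs $\beta_i$ may a priori cross the already-fixed $\beta_j$ ($j<i$) at points other than shared endpoints; but the hypothesis that $K$ is a Jordan curve through $p_1,\dots,p_n$ in the same cyclic order, combined with the inductive matching, rules this out, since $\inte(\beta_i)\subset K\setminus P$ is disjoint from $\beta_j$ for $j\ne i$. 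Handling these point-set topology details cleanly — really just repeated use of the Sch\"onflies theorem to identify complementary regions as disks — is the bulk of the work, but each individual step is routine once the configuration is set up correctly.
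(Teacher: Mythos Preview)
The paper gives no proof of this lemma; it simply cites \cite[p.~411, A.5 Thm.]{Bu}. Your arc-by-arc induction is a natural strategy (in the spirit of the Alexander method for systems of arcs on surfaces), but as written it has a genuine gap.

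Your claim that ``$S^2\setminus A$ is a disjoint union of open disks'' is false. For $2\le i\le n-1$ the arc $\beta_1\cup\dots\cup\beta_{i-1}$ already contains $p_1,\dots,p_i$, so $A=\beta_1\cup\dots\cup\beta_{i-1}\cup P$ is that arc together with the \emph{isolated} points $p_{i+1},\dots,p_n$; hence $S^2\setminus A$ is a single open disk with $n-i$ punctures --- connected but not simply connected --- and Lemma~\ref{twoarcs} does not apply. (For $i=1$ one has $A=P$ and the complement is an $n$-punctured sphere, which is worse still; also note that $p_i,p_{i+1}\in A$, so they do not lie in your $\Omega$ as Lemma~\ref{twoarcs} requires.) The underlying difficulty is that after the first $i-1$ steps you only know the current $\alpha_i'$ is isotopic to $\beta_i$ rel.\ $P$, whereas continuing the induction requires isotopy rel.\ $P\cup\beta_1\cup\dots\cup\beta_{i-1}$. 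Promoting the former to the latter --- i.e., showing the remaining marked points $p_{i+2},\dots,p_n$ create no homotopical obstruction inside the disk $S^2\setminus(\beta_1\cup\dots\cup\beta_{i-1})$ --- is precisely the nontrivial content of the result, and it does not reduce to ``routine Sch\"onflies'' as your last paragraph suggests.
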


This is essentially \cite[A.5 Theorem, p.~411]{Bu}.

\begin{proof}[Proof of Proposition~\ref{prop:isotreln}]
For each arc $\alpha_i$ there exists a simply connected  region $\Om_i$ that contains 
$\alpha_i$ but does not contain any  element of  $P$ different from the endpoints of $\alpha_i$. There exists $\delta>0$ such that $\mathcal{N}_\delta(\alpha_i)\sub \Om_i$ for all
$i=1, \dots, n$. 
 Then by Lemma~\ref{twoarcs} every arc $\beta_i $ in $\mathcal{N}_\delta(\alpha_i)$ with the same endpoints as $\alpha_i$ can be isotoped  to
  $\alpha_i$ rel.\ $P$. 
 The proposition  now follows from Lemma~\ref{isotopylem}.
 \end{proof}
 
If $\#P\le 3$ in Proposition~\ref{prop:isotreln}, then $J$ can always be  isotoped to $K$ rel.\ $P$.
 
\begin{lemma} 
  \label{lem:deform<4} 
  Suppose $J$ and $K$ are Jordan curves in $S^2$ and
  $P\sub J\cap K$ is a set with $\#P\le 3$. Then $J$ is isotopic
  to $K$ rel.\ $P$.  
\end{lemma}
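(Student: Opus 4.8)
The plan is to reduce the statement to the case $\#P = 3$ handled by a reference to standard facts about homeomorphisms of the sphere, and then to fill in the smaller cases by hand. First I would dispose of the trivial cases: if $\#P \le 1$, then any homeomorphism of $S^2$ is isotopic to the identity, and one may first isotope $K$ onto $J$ by an arbitrary ambient homeomorphism and then isotope that homeomorphism to $\id_{S^2}$ keeping the (at most one) point of $P$ fixed; a point-pushing argument handles the dependence on the marked point. If $\#P = 2$, the same idea works once we observe that an orientation-preserving homeomorphism of $S^2$ fixing two points is isotopic to the identity rel.\ those two points (this is the two-point analog of Lemma~\ref{lem:homeo}, which the paper already invokes for $\CDach$ with the points $0,\infty$).

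The main case is $\#P = 3$. Here I would first choose an orientation-preserving homeomorphism $\phi\colon S^2\to S^2$ with $\phi(J) = K$; by precomposing with a reflection if necessary we may assume $\phi$ carries the cyclic order on $J\cap P$ given by $J$ to the one given by $K$, but since $P$ consists of only three points there is a unique cyclic order, so after possibly postcomposing with a homeomorphism of $S^2$ fixing $K$ setwise we can arrange that $\phi$ fixes each of the three points of $P$. The key input is then the classical fact that an orientation-preserving homeomorphism of $S^2$ fixing three given points pointwise is isotopic to $\id_{S^2}$ rel.\ those three points; equivalently, the mapping class group of the sphere with three marked points is trivial. Applying such an isotopy $\psi_t$ with $\psi_0 = \phi$, $\psi_1 = \id_{S^2}$, and running it in reverse gives an isotopy $H_t := \psi_{1-t}\circ\phi^{-1}$ — more precisely one takes $H_t$ with $H_0 = \id_{S^2}$ and $H_1 = \psi_1\circ\psi_0^{-1} = \phi^{-1}$, wait; cleaner is: set $H_t = \psi_t\circ\phi^{-1}$ is not quite it either. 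The clean statement: since $\phi$ is isotopic to $\id_{S^2}$ rel.\ $P$ there is an isotopy $G_t$ rel.\ $P$ with $G_0 = \id_{S^2}$, $G_1 = \phi$, and then $G_t$ satisfies $G_1(J) = \phi(J) = K$, as desired.

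I would organize the write-up so that the $\#P=3$ case is the substantive one and the cases $\#P \in \{0,1,2\}$ are remarked on quickly, noting that they follow a fortiori by ignoring marked points or by the two-point version already used in the proof of Proposition~\ref{prop:post2}. To make the triviality-of-the-mapping-class-group input rigorous without an external citation, one can give the standard elementary argument: normalize via the Sch\"onflies theorem so that $K = \widehat{\mathbb{R}} = \R\cup\{\infty\}\subset\CDach$ and $P = \{0,1,\infty\}$; then $\phi$ may first be isotoped rel.\ $P$ so that it is the identity on a neighborhood of $P$, next so that it maps each of the two complementary Jordan regions of $K$ to itself (using that it preserves the cyclic order hence the two sides), and finally, on each closed Jordan region, apply the Alexander trick relative to the boundary (which is fixed pointwise after the previous normalizations) to isotope it to the identity; these isotopies are all rel.\ $P$ and patch together. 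The main obstacle is precisely this last point — making the Alexander-trick argument on each disk genuinely rel.\ $P$ and checking the pieces glue to a continuous ambient isotopy — but it is a standard plane-topology argument in the spirit of Lemma~\ref{twoarcs} and Lemma~\ref{isotopylem}, and no new idea beyond what is already used in this section is required.
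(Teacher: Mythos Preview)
Your overall strategy---construct an orientation-preserving homeomorphism $\phi$ with $\phi(J)=K$ and $\phi|_P=\id$, then invoke the triviality of the mapping class group of the thrice-punctured sphere---is a legitimate alternative route. The paper instead argues directly on the arcs: for $\#P=3$ it observes that each subarc $\alpha_i\sub J$ and the corresponding subarc $\beta_i\sub K$ (between consecutive points $p_i,p_{i+1}$ of $P$) both lie in the simply connected region $\Om_i=S^2\setminus\{p_{i+2}\}$, applies Lemma~\ref{twoarcs} to get $\alpha_i$ isotopic to $\beta_i$ rel.\ $P$, and then invokes Lemma~\ref{isotopylem} to assemble these three arc isotopies into an isotopy of $J$ onto $K$. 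The cases $\#P\le 2$ are handled by reducing both $J$ and $K$ to round circles.

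Your proof, however, has a genuine gap. In the paper's logical order the triviality of the mapping class group (Lemma~\ref{lem:homeo}) is \emph{deduced from} Lemma~\ref{lem:deform<4}, so citing it here is circular unless you supply an independent argument. The elementary argument you sketch does not do this: after normalizing $K=\widehat{\R}$, your map $\phi$ still carries $J$ to $K$, not $K$ to $K$, so $\phi$ neither preserves $K$ nor fixes the boundary of either complementary disk, and the Alexander trick does not apply. The step ``isotope $\phi$ so that it maps each of the two complementary Jordan regions of $K$ to itself'' would require first isotoping $\phi(K)$ to $K$ rel.\ $P$---but that is exactly the statement of the lemma (with $\phi(K)$ playing the role of $J$). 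If you want to keep your approach you must prove the mapping class group fact by other means; otherwise the paper's arc-by-arc argument via Lemmas~\ref{twoarcs} and~\ref{isotopylem} is the clean way to proceed with the tools already in hand.
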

 
\begin{proof} 
  Suppose first that $P$ consists of  three distinct
  points $p_1$, $p_2$, $p_3$. Define the arcs $\alpha_i$ and
  $\beta_i$ as in Proposition~\ref{prop:isotreln}. Then for each
  $i=1, 2,3$ the arcs $\alpha_i$ and $\beta_i$ have the same
  endpoints 
$p_i$ and $p_{i+1}$, and are contained 
in the simply
  connected region $\Om_i=S^2\setminus\{p_{i+2}\}$, where indices
  are understood modulo $3$.  Hence by Lemma~\ref{twoarcs} each
  arc $\alpha_i$ is isotopic to $\beta_i$ rel.\ $P$.  Again
  Lemma~\ref{isotopylem} implies that $J$ is isotopic to $K$
  rel.\ $P$.

If  $\#P\le 2$, we may assume that $S^2=\CDach$. Then by applying the first part of the proof (by adding auxiliary points to $P$) one   sees that both $J$ and $K$ are isotopic to circles in $\CDach$ rel.\ $P$. Hence $J$ is isotopic to  $K$ rel.\ $P$.    \end{proof}

\begin{lemma} 
  \label{lem:homeo} 
  Let $S^2$ and $\widehat S^2$ be oriented $2$-spheres, and
  $P\sub S^2$ be a set with $\#P\le 3$. If
  $h_0\: S^2\ra\widehat S^2$ and $h_1\: S^2\ra\widehat S^2$
  are orientation-preserving homeomorphisms with
  $h_0|P=h_1|P$, then $h_0$ and $h_1$ are isotopic
  rel.\ $P$.
\end{lemma}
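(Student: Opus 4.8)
\textbf{Proof proposal for Lemma~\ref{lem:homeo}.}

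The plan is to reduce everything to the case $\widehat S^2 = S^2 = \CDach$ and then to known facts about the mapping class group of the sphere with at most three marked points. First I would replace $\beta$ by $\id_{S^2}$: since $\alpha$ and $\beta$ are both orientation-preserving homeomorphisms $S^2\ra \widehat S^2$, the composition $\gamma := \beta^{-1}\circ \alpha\: S^2\ra S^2$ is an orientation-preserving homeomorphism with $\gamma|P = \id_P$. It suffices to show that $\gamma$ is isotopic to $\id_{S^2}$ rel.\ $P$; composing such an isotopy with $\beta$ then gives an isotopy rel.\ $P$ from $\alpha$ to $\beta$ (here one uses that $\beta|P=\alpha|P$ so the composed isotopy is indeed constant on $P$). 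Next I would transfer to $\CDach$ by fixing an orientation-preserving homeomorphism $\varphi\: S^2\ra \CDach$ and considering $\widetilde\gamma := \varphi\circ\gamma\circ\varphi^{-1}$, which is an orientation-preserving homeomorphism of $\CDach$ fixing the finite set $\varphi(P)$ pointwise; an isotopy rel.\ $\varphi(P)$ from $\widetilde\gamma$ to $\id_{\CDach}$ conjugates back to one rel.\ $P$ from $\gamma$ to $\id_{S^2}$.

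So we are reduced to: an orientation-preserving homeomorphism $g\:\CDach\ra\CDach$ fixing a set $Q$ with $\#Q\le 3$ pointwise is isotopic to $\id_{\CDach}$ rel.\ $Q$. The key steps: (1) First handle $\#Q\le 2$ by an elementary normalization. After a preliminary isotopy of $g$ (moving the at most two points of $Q$ to $0$ and $\infty$, say, via rotations of the sphere --- which are isotopic to the identity, though now they move $Q$, so more carefully one argues directly) one shows any orientation-preserving homeomorphism of $\CDach$ fixing $0$ and $\infty$ is isotopic rel.\ $\{0,\infty\}$ to the identity; this is exactly the statement quoted as Lemma~\ref{lem:homeo} is used in the proof of Proposition~\ref{prop:post2}, and can be proved by the standard ``coning'' / Alexander-trick-type argument in the annulus $\CDach\setminus\{0,\infty\}$ combined with straightening the image of a ray. (2) For $\#Q=3$, normalize so that $Q=\{0,1,\infty\}$. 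One uses the Schönflies theorem and Lemma~\ref{twoarcs}: consider a Jordan curve, e.g.\ $\RDach$, through $0,1,\infty$; then $g(\RDach)$ is another Jordan curve through $0,1,\infty$ in the same cyclic order (using that $g$ is orientation-preserving), so by Lemma~\ref{lem:deform<4} there is an isotopy $H_t$ rel.\ $Q$ with $H_0=\id$ and $H_1(\RDach)=g(\RDach)$. Replacing $g$ by $H_1^{-1}\circ g$ we may assume $g(\RDach)=\RDach$. Then $g$ maps each of the three arcs of $\RDach\setminus Q$ and each of the two disks of $\CDach\setminus\RDach$ to itself (orientation-preservingly); on each closed arc $g$ is a homeomorphism fixing both endpoints, hence isotopic to the identity rel.\ endpoints, and these isotopies can be chosen compatibly and then extended into the two closed disks by the Alexander trick (coning from an interior point), which pastes to a global isotopy rel.\ $Q$ from $g$ to $\id_{\CDach}$.

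I expect the main obstacle to be the careful execution of step (2): making the Alexander-trick extensions on the two complementary disks agree along their common boundary $\RDach$ after one has already isotoped $g|\RDach$ to the identity, and checking that nothing moves the points of $Q$. This is the kind of ``intuitively obvious but fiddly'' point-set argument that the paper elsewhere (e.g.\ after Proposition~\ref{prop:isotreln}) explicitly flags as surprisingly delicate; the clean way to organize it is to first straighten $g$ on the $1$-skeleton $\RDach$ rel.\ $Q$ (three applications of Lemma~\ref{twoarcs}, or rather its isotopy-ambient version underlying Lemma~\ref{lem:deform<4}), reducing to $g = \id$ on $\RDach$, and only then cone off over each of the two Jordan regions, where fixing the entire boundary pointwise makes the coning construction straightforward and manifestly rel.\ $Q$. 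An alternative, if one wants to avoid re-deriving these facts, is simply to cite the same source (\cite{Bu}, Appendix) already invoked for Lemmas~\ref{twoarcs} and \ref{isotopylem}, from which the case $\#Q\le 3$ of the statement follows directly.
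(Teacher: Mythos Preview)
Your proposal is correct and follows essentially the same approach as the paper: reduce to showing an orientation-preserving self-homeomorphism fixing $P$ is isotopic to the identity rel.\ $P$; for $\#P=3$ pick a Jordan curve $K$ through $P$, use Lemma~\ref{lem:deform<4} to arrange $g(K)=K$, straighten $g$ on the three subarcs of $K$ rel.\ endpoints, and then cone off over the two complementary disks via the Alexander trick. The only organizational difference is that the paper handles $\#P\le 2$ by enlarging $P$ to a three-point set $P'$ (isotoping $\alpha$ rel.\ $P$ via M\"obius transformations so that it also fixes the added points) and then invoking the $\#P=3$ case, whereas you sketch a direct argument for $\#P\le 2$; both routes work.
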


\begin{proof} 
  The statement is essentially well known. For the sake of
  completeness we will give a proof, but will leave some of the
  details to the reader. These details can easily be filled in
  along the lines of the proof of
  Lemma~\ref{lem:isocellhomeo}~\ref{item:isocellhomeo3}. 
   
  By considering $h\coloneqq h_1^{-1}  \circ  h_0$ one can reduce
  the lemma to the case where $S^2=\widehat S^2$ and
  $h_1=\id_{S^2}$. Then $h$ is an orientation-preserving 
  homeomorphism on $S^2$ fixing  the points in $P$, and we
  have to show that $h$ is isotopic to $\id_{S^2}$ rel.\
  $P$.  We first assume that $\#P=3$.
   
  Pick a Jordan curve $K\sub S^2$ with $P\sub K$, and let
  $J=h(K)$. Then $P\sub J\cap K$, and so by
  Lemma~\ref{lem:deform<4} the Jordan curve $J$ can be isotoped
  into $K$ rel.\ $P$. This implies that $h$ is isotopic
  rel.\ $P$ to an orientation-preserving  homeomorphism $\varphi_1$ on $S^2$ with
  $\varphi_1(K)=K$ and $\varphi_1|P=\id_P$. Since $\varphi_1$ is orientation-preserving and fixes the points in $P$, it preserves the orientation of $K$ represented by some cyclic order of the points in $P$. This implies that $\varphi_1$ sends each of the two Jordan regions bounded by $K$ to itself.

   Let $e$ be one of the three subarcs of $K$
  determined by $P$.  Since $\varphi_1$ fixes the three points in
  $P$, this map restricts to a homeomorphism of $e$ that does not
  move the endpoints of $e$. Hence on $e$ the map $\varphi_1$ is
  isotopic to the identity on $e$ rel.\ $\partial e$.  
  
  By pasting the isotopies on these arcs together, 
we can construct
  an isotopy $H\: K\times I\ra K$ rel.\ $P$ such that
  $H_0=\id_{K}$ and $H_1=\varphi_1|K$.  One can extend $H$ to
  each of the two Jordan regions bounded by $K$ to obtain an
  isotopy $\overline{H}\: S^2\times I\ra S^2$ rel.\ $P$ such that
  $\overline{H}_0=\id_{S^2} $ and $\overline{H}(p,t)=H(p,t)$ for
  all $p\in K$ and $t\in I$.  Then
  $\varphi_2\coloneqq \overline{H}_1$ is a homeomorphism on $S^2$
  that is isotopic to $\id_{S^2}$ rel.\ $P$ such that
  $\varphi_1|K=\varphi_2|K$.  This implies that $\varphi_1$ and
  $\varphi_2$ are isotopic rel.\ $K\supset P$ (here it is
  important that $\varphi_1$ and $\varphi_2$ do not interchange
  the two Jordan regions bounded by $K$).  If $\sim$ indicates
  that two homeomorphisms on $S^2$ are isotopic rel.\ $P$, then
  we have $h\sim \varphi_1 \sim \varphi_2\sim \id_{S^2}$, and so
  $h\sim \id_{S^2}$ as desired.

  %
   
  If $\#P \le 2$, then we pick a set $P'\sub S^2$ with $\#P'=3$
  and $P'\supset P$.  By the first part of the proof it suffices
  to find an isotopy rel.\ $P$ of the given map $h$ to a
  homeomorphism $h'$ that fixes the points in $P'$. It is
  clear that such an isotopy can always be found; for an explicit
  construction one can assume that $S^2=\CDach$ and obtain the
  desired isotopy by postcomposing $h$ with  a suitable
  continuous family of M\"obius transformations, for example.
\end{proof}

The following lemma will be  crucial for the proof of the
uniqueness statement 
for invariant Jordan curves.  
In its proof we will use the following topological fact: if $D$ is a $2$-dimensional cell and  $\varphi\: D
\ra S^2$ is a continuous map such that $\varphi| \partial D$ is injective, then the set 
$\varphi(\inte(D))$
contains one of the two complementary components of the Jordan curve $\varphi(\partial D)$. 
Indeed, by applying the Sch\"onflies theorem and using auxiliary homeomorphisms
we can  reduce to the case where $D= \overline \D$, $S^2=\CDach$,  $\varphi|\partial \D=\id_{\partial \D}$, and $\infty\notin \varphi(D)$. 
Then $\D\sub \varphi(\D)$.  This    follows from a simple degree argument and the statement can be generalized to higher dimensions; for an elementary exposition of this and related facts 
in dimension $2$ see \cite{Bur}, in particular \cite[Corollary~3.5]{Bur}.   

\begin{lemma} \label{lem:isoJcin1ske}  Let $\DD$ be a cell decomposition of $S^2$ with $1$-skeleton $E$ and vertex set ${\bf V}$, and suppose  that every tile in $\DD$ contains at least three vertices on its boundary. If  $J$ and  $K$ are Jordan curves that are both contained in $E$ and are isotopic rel.\  ${\bf V}$,   then $J=K$.  \end{lemma}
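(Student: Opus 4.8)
The goal is to show that two Jordan curves $J$ and $K$ lying in the $1$-skeleton $E$ of a cell decomposition $\DD$ and isotopic rel.\ ${\bf V}$ must coincide. My plan is to suppose $J\ne K$, extract an edge $e$ of $\DD$ that lies in $J$ but not entirely in $K$ (or vice versa), and derive a contradiction with the isotopy. First I would record the structural input: since $J\sub E$ and $J$ is a Jordan curve, $J$ is a union of edges and vertices of $\DD$; the same holds for $K$. Assuming $J\ne K$, after possibly swapping the roles of $J$ and $K$, there is an edge $e\in \DD$ with $\inte(e)\sub J$ but $\inte(e)\not\sub K$; since distinct edges have disjoint interiors and $K$ is a union of edges, in fact $\inte(e)\cap K=\emptyset$. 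Let $H\:S^2\times I\ra S^2$ be an isotopy rel.\ ${\bf V}$ with $H_0=\id_{S^2}$ and $H_1(J)=K$.

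The heart of the argument is a degree/winding obstruction. The edge $e$ has endpoints $u,v\in{\bf V}$ (or $e$ is a closed edge, i.e.\ a loop at one vertex, which I would handle as a minor separate case). Consider one of the two tiles $X\in\DD$ with $e\sub\partial X$. I want to use the topological fact highlighted just before the statement: if $D$ is a $2$-cell and $\varphi\:D\ra S^2$ is continuous with $\varphi|\partial D$ injective, then $\varphi(\inte(D))$ contains one of the two complementary components of the Jordan curve $\varphi(\partial D)$. The plan is to build from the isotopy a singular disk whose boundary traces a loop through $u$ and $v$ that is fixed (because $H$ is rel.\ ${\bf V}$), namely the ``track annulus'': the map $(x,t)\mapsto H_t(x)$ restricted to $e\times I$ gives a homotopy in $S^2$, rel.\ endpoints $u,v$, from $e=H_0(e)$ to $H_1(e)\sub K$. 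So $e$ and $H_1(e)$ are two arcs with the same endpoints $u,v$ that are homotopic rel.\ $\{u,v\}$ in $S^2$; since $H_1(e)\sub K$ and $\inte(e)\cap K=\emptyset$, the arc $\gamma:=e\ast \overline{H_1(e)}$ (concatenation of $e$ with the reverse of its image) is a closed loop whose only intersection with $K$ is contained in $\partial\gamma$, and which is null-homotopic rel.\ $\{u,v\}$, hence bounds a singular disk in $S^2$ meeting $K$ only at $\{u,v\}$.

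From here I would argue as follows. The loop $\gamma$ together with the arcs of $K$ joining $u$ to $v$ divides $S^2$ into regions; since $e\sub\partial X$ and $\inte(X)$ meets neither $K$ nor $e$, the open tile $\inte(X)$ lies entirely on one side of $\gamma$. The null-homotopy of $\gamma$ rel.\ $\{u,v\}$ inside $S^2\setminus(\inte(X)\cup(K\setminus\{u,v\}))$ is the contradiction I am after: $e$ is a ``free'' arc of $J$ whose interior is disjoint from $K$, so pushing $J$ to $K$ by an isotopy fixing ${\bf V}$ would have to sweep $\inte(e)$ across the tile $X$, which is impossible because the isotopy is a homeomorphism at each time and the image curve $H_t(J)$ would have to remain a Jordan curve while $\inte(e)$ cannot reach $K$ without $H_t(J)$ developing a self-intersection or leaving the side of $\gamma$ on which $\inte(X)$ sits. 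To make this rigorous I would use the highlighted $2$-cell fact applied to the singular disk bounded by $\gamma$: $\varphi(\inte(D))$ contains a complementary component of the Jordan curve formed by $e$ and an arc of $K$; but $\varphi(D)$ is disjoint from $\inte(X)$, and $\inte(X)$ lies in that complementary component, a contradiction.

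\textbf{Main obstacle.} The delicate point is organizing the combinatorics of how $e$, its image under $H_1$, and the rest of $K$ fit together into a genuine Jordan curve $e\cup(\text{arc of }K)$ to which the $2$-cell lemma applies, and checking that the relevant complementary component really does contain $\inte(X)$ — this requires knowing that $\inte(e)$ and $\inte(X)$ are on a prescribed side, which is where the hypothesis that every tile contains at least three vertices in its boundary enters (it guarantees $K$, passing through $u,v$, also uses a third vertex of $\partial X$ so that an arc of $K$ joining $u$ and $v$ together with $e$ genuinely separates $\inte(X)$ from part of $K$, rather than degenerate situations where $K$ coincides with $\partial X$ near $e$). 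I expect the bulk of the work to be in setting up this separation statement carefully and reducing the general case to it; the isotopy input itself is used only through the elementary fact that $H_t$ is a homeomorphism for each $t$ and that $H$ fixes ${\bf V}$, giving the homotopy of arcs rel.\ endpoints.
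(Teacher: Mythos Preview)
Your outline has the right architecture --- form the track of the isotopy on an edge $e\subset J$, apply the $2$-cell fact to the resulting singular disk, and obtain a contradiction --- but the contradiction you aim for is the wrong one, and a preliminary structural step is missing.

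First, the missing step: you need that $e':=H_1(e)$ is itself an \emph{edge} of $\DD$, not just an arc in $K$. This follows because $H$ fixes ${\bf V}$, so $H_1(\inte(e))$ is a connected subset of $E\setminus{\bf V}$, hence lies in $\inte(e')$ for a single edge $e'$; the endpoints then force $H_1(e)=e'$. Once you know $e'$ is an edge distinct from $e$ with the same endpoints $u,v$, the curve $\alpha:=e\cup e'$ is genuinely a Jordan curve (distinct edges have disjoint interiors), and it contains exactly the two vertices $u,v$. Without this, your ``$\gamma=e*\overline{H_1(e)}$'' need not be embedded.

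Second, and more seriously: your contradiction target is that the swept disk $\varphi(D)=\bigcup_{t}H_t(e)$ is disjoint from $\inte(X)$ for some tile $X$ with $e\subset\partial X$. There is no reason for this --- the isotopy $H_t$ can and typically will push $\inte(e)$ through open tiles. What the isotopy \emph{cannot} do is push $\inte(e)$ through a vertex, since $H_t$ is a homeomorphism fixing ${\bf V}$ pointwise and $\inte(e)\cap{\bf V}=\emptyset$. So the correct obstruction is: $\varphi(\inte(D))\cap{\bf V}=\emptyset$. The $2$-cell fact says $\varphi(\inte(D))$ contains one of the two complementary components $\Om_1,\Om_2$ of $\alpha$. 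The three-vertex hypothesis enters here, not where you placed it: each $\Om_i$ contains the interior of some tile $X$, hence $X\subset\overline{\Om_i}$ contains at least three vertices, of which at most two lie on $\alpha$; so each $\Om_i$ contains a vertex. This contradicts $\varphi(\inte(D))\cap{\bf V}=\emptyset$. Your attempt to route the three-vertex hypothesis through ``$K$ uses a third vertex of $\partial X$'' and a separation argument involving $K$ and $\inte(X)$ is both more complicated and, as stated, does not close.
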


\begin{proof} Let $H\: S^2\times I\ra S^2$ be an isotopy rel.\ ${\bf V}$ 
such that $H_0=\id_{S^2}$ and $H_1(J)=K$. 

Note that if $M\sub S^2$ is a set disjoint from ${\bf V}$, then it remains disjoint from ${\bf V}$ during the isotopy, i.e., if $M\cap {\bf V}=\emptyset$, then $H_t(M)\cap {\bf V}=\emptyset$ for all 
$t\in I$. This follows from the fact that each map $H_t$, $t\in [0,1]$,  is a homeomorphism on $S^2$ with $H_t|{\bf V}=\id_{{\bf V}}$.

Let $e$ be an edge in $\DD$. We claim that if $H_1(e)\sub E$, then $H_1(e)=e$. 
First note that $H_1(e)$ is an edge in $\DD$. 
Indeed, since $\partial e\sub {\bf V}$ and the isotopy $H$ does not move vertices,
 the arc $H_1(e)$ has the same endpoints as $e$.  Moreover,  $\inte(e)\cap {\bf V}=\emptyset$, and so $H_1(\inte(e))\cap {\bf V}=\emptyset$  by what we have just seen. So $H_1(\inte(e))$ is a connected set in the $1$-skeleton $E$ of $\DD$ disjoint from the $0$-skeleton ${\bf V}$.  By  Lemma~\ref{lem:opencells}  there exists an edge 
$e'$ in $\DD$  with $H_1(\inte(e))\sub \inte(e')$.  Since the endpoints of $H_1(e)$ lie in 
${\bf V}$, this implies that $e'=H_1(e)$. 

To show that $e'=e$ we argue by contradiction and assume that $e\ne e'$. Then  $e$ and $e'$ have the same  endpoints, but no other points in common. Therefore $\alpha=e\cup e'$ is a Jordan curve that contains two vertices, namely the endpoints of $e$ and $e'$, but no other vertices.  Let $\Om_1$ and $\Om_2$ be the two open Jordan regions that form the complementary components of $\alpha$.  Then both regions $\Om_1$ and $\Om_2$ contain vertices. 

To see this, note that the interior of every tile $X$ is a connected set disjoint from the $1$-skeleton $E$, and hence also disjoint from $\alpha$. Therefore $\inte(X)$ is contained in $\Om_1$ 
or $\Om_2$. Moreover, since the union of the interiors of tiles is dense in $S^2$, both regions $\Om_1$ and $\Om_2$ must contain the interior of at least one tile.

Now consider $\Om_1$, for example, and pick a tile $X$ with $\inte(X)\sub \Om_1$.
Then by our hypotheses the set $X\sub \overline{\Om}_1=\Om_1\cup\alpha$ contains at least three vertices. Since only two of them can lie on $\alpha$, the set $\Om_1$ must contain a vertex.  Similarly, $\Om_2$ must contain at last one vertex. 

A contradiction can now be obtained from  the fact that during the isotopy $H$ the set $\inte(e)$
remains  disjoint from the set of vertices, but on the other hand it has to sweep out one of the regions $\Om_1$ or $\Om_2$ and hence it meets a vertex. 

To make this rigorous, we apply the topological fact mentioned
before the statement of the lemma. Let $u$ and $v$ be the
endpoints of $e$. We collapse $\{u\}\times I$ and $\{v\}\times I$
in $e\times I$ to obtain a set $D$. 
Formally $D$
is the quotient of $e\times I $ obtained by identifying all
points $(u,t)$, $t\in I$, and by identifying all points $(v,t)$,
$t\in I$. Then
$D$ is a $2$-dimensional cell.  Since the isotopy $H$ does not
move the points $u$ and $v$, the map $(p, t)\mapsto H_t(p)$ on
$e\times I$ induces a continuous map $\varphi\: D \ra S^2$.
Moreover, $\varphi | \partial D$ is a homeomorphism of
$\partial D$ onto $\alpha$.  Hence $\Om_1$ or $\Om_2$ is
contained in the set
$$\varphi(\inte(D))=\bigcup_{t\in (0,1)} H_t(\inte(e)). $$
In particular, the set $\varphi(\inte(D))$ contains a vertex. This is a contradiction, because we know that 
no set $H_t(\inte(e))$, $t\in I$,  meets ${\bf V}$.  Thus $H_1(e)=e$ as desired.

Having verified the statement about edges, we can now easily   
   show  that $J=K$. 
    Indeed,  $J$ is a union of edges in $\DD$; to see this, consider the components of the set
    $J\setminus {\bf V}$. If  $\ga$ is such a component, then $\overline \ga \setminus \ga\sub 
    {\bf V}$. Moreover,  $\ga$ is contained in the $1$-skeleton $E$, and does not meet the $0$-skeleton ${\bf V}$.  Again by   Lemma~\ref{lem:opencells}  the set $\ga$ must  be contained in the interior $\inte(e)$ 
    of some edge $e$. This is only possible if $\ga=\inte(e)$. Hence $\overline \ga=e$. Since $J$ is the union of the closures of these components $\ga$, it follows that 
   $J$ is the union of edges $e$. For each such edge $e$ we have $H_1(e)\sub K\sub E$ and so $H_1(e)=e$ by the first part of the proof. This implies $J\sub K$. Since $J$ and $K$ are Jordan curves,  the desired identity $J=K$ follows. 
    \end{proof}

\section{Isotopies and cell decompositions}
\label{sec:graphs}

The main result in this section is Lemma~\ref{lem:isorelP} which gives a criterion when a 
Jordan curve $\CC$ in a $2$-sphere $S^2$ can   be isotoped relative to a finite  set $P\sub \CC$ into the  $1$-skeleton of a given cell decomposition $\DD$ of $S^2$.   We first  discuss some  facts about graphs that are 
needed in the proof.
 Since all the graphs we consider will be embedded  in a $2$-sphere, we base the concept of a graph on a topological definition rather than  a combinatorial one as usual (in  Chapter~\ref{cha:latt-maps-comb}
it will be more convenient to adopt the combinatorial viewpoint).

A  {\em (finite) graph}\index{graph}  is a compact Hausdorff space  $G$
equipped with a fixed cell decomposition $\DD$ such that $\dim(c)\le
1$ for all $c\in \DD$. The cells $c$ in $\DD$ of dimension $1$ are
called the {\em edges} of the graph, and the points $v\in G$ such that
$\{v\}$ is a $0$-dimensional cell in $\DD$ the {\em vertices} of the
graph.  Note that we do allow multiple edges, i.e., two or more edges
with the same endpoints $v,w$. Loops however, meaning edges where the
two endpoints agree, are not allowed according to our definition.

 An {\em oriented edge} $e$ in a graph is an edge, where one of the vertices  in $\partial e$ has been chosen as the {\em initial point} and the other 
 vertex as the {\em terminal point} of $e$. An {\em edge
   path}\index{edge!path} in $G$
 is a finite sequence $\alpha$   of oriented edges $e_1, \dots, e_N$
 such that  
 the terminal point of $e_i$ is the initial point of $e_{i+1}$ for $i=1, \dots, N-1$.
 We denote by $|\alpha|=e_1\cup \dots \cup e_N$ the underlying set of the edge path.  
 The edge path  $\alpha$ {\em joins} the vertices $a,b\in G$  if the initial point of $ e_1$ is $a$ and the terminal point of $e_N$ is $b$. The number $N$ is called the {\em length} of the edge path. The edge path is called {\em simple} if 
 $e_i$ and $e_j$ are disjoint for 
$1\le i<j\le N$ and $j-i\ge2$, and  $e_i\cap e_j$ consists  of  precisely one point (the terminal point of $e_i$ and initial point of $e_j$) when $j=i+1$. 
If the edge path $\alpha$ is simple, then $|\alpha|$ 
is an arc.  The edge path is called a {\em loop} if the terminal point of $e_N$ is the initial point of $e_1$. 

A graph is connected (as a topological space) if and only if any two vertices $a,b\in G$, $a\ne b$, can be joined by an edge path. 
 The {\em combinatorial distance} of two vertices $a$ and $b$ in a connected graph $G$ is defined as the minimal length of all edge paths joining  the points (interpreted as $0$ if  $a=b$).  The vertices $a,b\in G$ are called {\em neighbors} if their combinatorial distance is equal to $1$, i.e., if there exists an edge $e$ in $G$ whose endpoints are $a$ and $b$.  A vertex  $q\in G$ is called a {\em cut point} 
of  $G$ if $G\setminus\{q\}$ is not connected.  A vertex  $q\in G$ is not a cut point  if and only if all vertices $a,b\in G\setminus \{q\}$, $a\ne b$, can be joined by an edge path $\alpha$ with $q\notin |\alpha|$.

\begin{lemma}
  \label{lem:simple}
  Let $G$ be a connected graph without cut points. Then for all vertices 
  $a,b,p\in G$ with $a\ne b$ there exists a simple edge path $\ga$ in $G$ with $p\in |\ga|$ that joins $a$ and $b$.  \end{lemma}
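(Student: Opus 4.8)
The plan is to reduce everything to two standard facts about connected graphs without cut points: first, that between any two distinct vertices one can find two edge-disjoint edge paths (a ``theta-graph'' type statement); and second, that a single simple edge path through a prescribed vertex can be extracted from a suitable connected subgraph. I would first handle the case where $p \in \{a,b\}$, say $p = a$ (the case $p = b$ is symmetric, and if $p \notin \{a,b\}$ we deal with it below). In that case we simply want a simple edge path joining $a$ and $b$, and this exists because $G$ is connected: take any edge path joining $a$ and $b$ of minimal length, and observe that a minimal-length edge path is automatically simple (if some vertex were repeated, one could excise the loop in between and shorten the path, and if two non-consecutive edges met at an interior point one could likewise shortcut). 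So the substantive case is $p \notin \{a,b\}$.

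\textbf{Key steps for the case $p \notin \{a,b\}$.} Since $G$ has no cut point, $G \setminus \{p\}$ is connected, so there is an edge path $\beta$ in $G$ joining $a$ and $b$ with $p \notin |\beta|$; as above we may take $\beta$ simple, so $|\beta|$ is an arc $\alpha_0$ from $a$ to $b$ avoiding $p$. Again using that $p$ is not a cut point, pick an edge path $\gamma$ joining $p$ to $a$, which we may take simple, so $|\gamma|$ is an arc from $p$ to some first point $q$ where it meets $\alpha_0 = |\beta|$ — more precisely, truncate $\gamma$ at the first vertex $q$ lying on $|\beta|$, giving a simple arc $\alpha_1$ from $p$ to $q$ with $\alpha_1 \cap \alpha_0 = \{q\}$. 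Now $\alpha_0 \cup \alpha_1$ is a connected subgraph, and $q$ divides $\alpha_0$ into two subarcs, one from $a$ to $q$ and one from $q$ to $b$. If $q \ne a$ and $q \ne b$, then concatenating: (arc of $\alpha_0$ from $a$ to $q$), then $\alpha_1$ reversed — wait, that does not reach $b$. The correct concatenation is: go from $a$ along $\alpha_0$ to $q$; this is simple and passes through neither $p$ nor (the interior of) the other half; but we still need to pass through $p$. The right move is instead: from $a$ along $\alpha_0$ to $q$, then along $\alpha_1$ to $p$, then back along $\alpha_1$ to $q$ — this is not simple. So I would refine: I want two internally-disjoint arcs, and the cleaner route is to use that $G \setminus \{a\}$ and $G \setminus \{b\}$ are also connected. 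Concretely: let $\delta$ be a simple edge path from $a$ to $p$ avoiding $b$, truncate it at its last vertex before it would leave, and let $\epsilon$ be a simple edge path from $p$ to $b$ avoiding $a$; the union $|\delta| \cup |\epsilon|$ is a connected set containing $a$, $p$, $b$. Within this set one extracts, by taking shortest routes in the finite graph it defines, a simple edge path from $a$ to $b$: a shortest edge path from $a$ to $b$ inside the graph $|\delta| \cup |\epsilon|$ is simple, and it must pass through $p$ because $p$ is a cut point of the subgraph $|\delta| \cup |\epsilon|$ (removing $p$ from $|\delta| \cup |\epsilon|$ disconnects $a$ from $b$, since $|\delta| \setminus \{p\}$ is an arc containing $a$ but not $b$, and $|\epsilon| \setminus \{p\}$ is an arc containing $b$ but not $a$, and these can only meet away from $a,b$ at vertices which still don't reconnect $a$ to $b$) — hmm, I need to be a little careful that $|\delta|$ and $|\epsilon|$ might reintersect, but then I just re-truncate to make $|\delta| \cap |\epsilon| = \{p\}$, which is possible by cutting each at the first point of intersection with the other.

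\textbf{Main obstacle.} The heart of the matter, and the step I expect to require the most care, is producing two arcs $\delta$ from $a$ to $p$ and $\epsilon$ from $p$ to $b$ with $|\delta| \cap |\epsilon| = \{p\}$, so that their concatenation is genuinely a simple edge path; the naive concatenation of arbitrary simple paths can self-intersect, and the no-cut-point hypothesis is exactly what lets me route around any bad intersection. The clean way to organize this is: first get a simple path from $a$ to $b$ avoiding $p$ (arc $\alpha_0$, using $p$ not a cut point), then get a simple path from $p$ into $\alpha_0$ (arc $\alpha_1$ meeting $\alpha_0$ only at its endpoint $q$, by truncating at the first hit), and then — this is where no-cut-point of $G$ is used a second time — get a simple path from $p$ to $\alpha_0$ landing at a \emph{different} vertex $q' \ne q$, by running a path from $p$ to $a$ inside $G \setminus \{q\}$ (connected since $q$ is not a cut point, unless $q \in \{a\}$, a case handled directly) and truncating at its first hit $q'$ of $\alpha_0$; then $q, q'$ split $\alpha_0$ into two subarcs, and (picking the subarc between $q$ and $q'$ not containing — or if it contains an endpoint, adjusting) we splice $\alpha_1$, the $q$--$q'$ subarc of $\alpha_0$, and $\alpha_2$ to form a simple loop through $p$; finally we attach the remaining subarcs of $\alpha_0$ reaching out to $a$ and $b$. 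All the pieces are simple arcs meeting only at shared endpoints, so the concatenation is a simple edge path joining $a$ and $b$ and passing through $p$, as required. I would present this last assembly carefully as the one genuinely delicate combinatorial bookkeeping step.
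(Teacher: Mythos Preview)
Your construction has a genuine gap at the splicing step. You build $\alpha_0$ (a simple $a$--$b$ path avoiding $p$), then $\alpha_1$ from $p$ to $\alpha_0$ meeting it only at $q$, and then $\alpha_2$ from $p$ to $\alpha_0$ inside $G\setminus\{q\}$, landing at some $q'\ne q$. You then assert that $\alpha_1$, $\alpha_0[q,q']$, and $\alpha_2$ splice into a simple loop through $p$, because ``all the pieces are simple arcs meeting only at shared endpoints.'' But nothing in your construction forces $\alpha_1\cap\alpha_2=\{p\}$: you only arranged that $\alpha_2$ avoids the single vertex $q$, not that it avoids the rest of $\alpha_1$. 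If $\alpha_1$ and $\alpha_2$ share a vertex $v\ne p$ (for instance, if $p$ has only two neighbors $v$ and $q$, then $\alpha_2$ must begin with the edge $pv$ just as $\alpha_1$ does), the unique simple path from $q$ to $q'$ inside $\alpha_1\cup\alpha_2$ passes through $v$ but \emph{not} through $p$, and your splice produces a simple $a$--$b$ path that misses $p$ entirely. The earlier $\delta$/$\epsilon$ attempt has the same defect: your proposed ``re-truncation to make $|\delta|\cap|\epsilon|=\{p\}$'' actually truncates away from $p$, not toward it. The no-cut-point hypothesis only lets you delete one vertex at a time, so you cannot simply route $\alpha_2$ through $G$ minus all interior vertices of $\alpha_1$.

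The paper avoids this bookkeeping altogether with a minimality argument: among all simple edge paths joining $a$ and $b$, choose one containing a vertex $q$ of minimal combinatorial distance to $p$. If that distance is zero you are done; otherwise take a neighbor $q'$ of $q$ strictly closer to $p$, use that $q$ is not a cut point to run a path $\beta$ from $q'$ back to the original path while avoiding $q$, and splice to obtain a new simple $a$--$b$ path through $q'$, contradicting minimality. This uses the hypothesis exactly once per step and never requires two auxiliary arcs to be mutually disjoint. Your direct-construction approach can be rescued (it is essentially the fan version of Menger's theorem for $2$-connected graphs: there exist two $p$--$\{a,b\}$ paths sharing only $p$), but proving that disjointness is precisely the work you skipped.
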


\begin{proof}
  Since $G$ is connected, there exist edge paths in $G$ joining  $a$ and $b$. By removing loops from such a path if necessary, we can also obtain such an  edge path in $G$ that is simple.
  Among all such simple 
  paths, there is one  that contains a vertex with minimal  combinatorial distance  to $p$.
 More precisely,  there exists a simple edge  path $\alpha$ in $G$ with endpoints $a$ and $b$, and a vertex  $q\in |\alpha|$ such that the combinatorial distance $k\in \N_0$ of $q$ and $p$ is minimal among all combinatorial distances between $p$ and vertices  on simple paths joining   $a$ and $b$. If $k=0$ then  $q=p$ and we can take $\gamma=\alpha$.
  
 We will show that the alternative case $k\ge 1$ leads to a contradiction. 
 By definition of  combinatorial distance, there exists an edge path joining 
 $q$ to $p$ consisting of $k\ge 1$ edges. The second  vertex 
 $q'$ on this path as traveling from $q$ to $p$ is a neighbor of $q$ 
 whose combinatorial distance to $p$ is $k-1$ and hence strictly smaller than the combinatorial distance of $q$ to $p$. In particular, $q'\notin  
 |\alpha|$ 
 by choice of $q$ and $\alpha$. 
 We will obtain the desired contradiction if we can show that there exists a simple edge  path $\sigma$ in $G$ that joins  $a$ and $b$ and passes through $q'$.

\ifthenelse{\boolean{nofigures}}{}{ 
 \begin{figure}
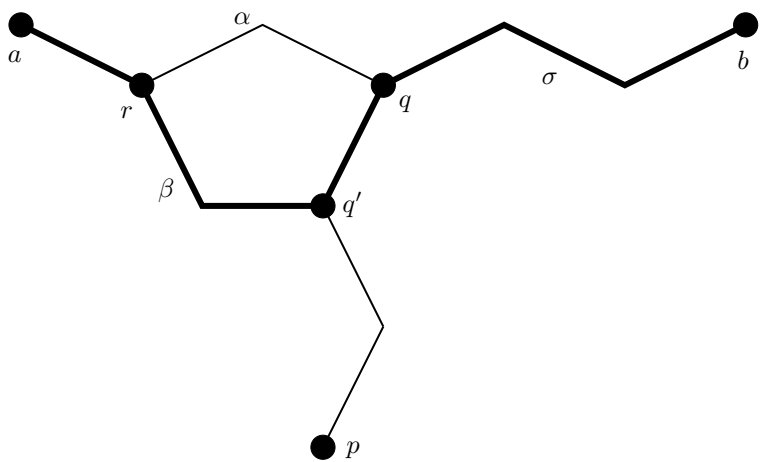

  \centering
  \begin{overpic}
    [width=10cm, 
    tics=20]{abpLemma.eps}
    \put(0,53){$a$}
    \put(15,45.5){$r$}
    \put(45,1){$p$}
    \put(52,47){$q$}
    \put(44.5,33){$q'$}
    \put(97,52){$b$}
    \put(30,58){$\alpha$}
    \put(71,50){$\sigma$}
    \put(20,35){$\beta$}
  \end{overpic}
  \caption{Constructing a path through $a,b,p$.}
  \label{fig:abp_path}
\end{figure}
}

 For the construction of $\sigma$  
 we apply our assumption that $G$ has no cut points; in particular, $q$ is no cut point and hence there exists 
 an edge  path $\beta$ with $q\notin |\beta|$ that joins  
 $q'$ to a vertex in the (non-empty) set $A=|\alpha|\setminus \{q\}$. We may assume 
 that $\beta$ is simple and that the  endpoint $r\ne q'$ of $\beta$ is the only point in  $|\beta|\cap A$. 
 
Moreover, we may assume that $r$ lies between $a$ and $q$ on the path $\alpha$ 
(the argument in  the other case where  $r$ lies between $q$ and $b$ is similar). Now let $\sigma$ be the edge  path obtained by traveling from $a$ to $r$ along $\alpha$, then from $r$ to $q'$ along $\beta$, then from $q'$ to $q$ along an edge
(this is possible since $q$ and $q'$ are neighbors), and finally from $q$ 
to $b$ along $\alpha$. See the  illustration 
in Figure~\ref{fig:abp_path}.  Then $\sigma$ is a simple edge path in $G$ that 
passes through $q'$ and has the endpoints $a$ and $b$.      This gives the desired contradiction. 
    \end{proof}

Now let $S^2$ be a $2$-sphere, and $\DD$ be a cell decomposition of $S^2$. We denote the set of tiles, edges, and vertices in $\DD$ by $\X$, $\E$, and ${\bf V}$, respectively.  In the following  the terms cell, tile, etc., refer to elements of these
sets. 

Let $M\sub \X$ be a set of tiles. We denote by $|M|$ its underlying set; so
$$ |M|=\bigcup_{X\in M} X. $$
  The  set  
  \begin{equation}
    \label{eq:def_GM}
    G_M\coloneqq \bigcup_{X\in M} \partial X   
  \end{equation}
  admits a natural cell
  decomposition consisting of all cells contained in $G_M$.
  Obviously, no such cell can be a tile, so with this cell
  decomposition $G_M$ is  a graph. 

  Recall from Definition~\ref{def:e-chain} that a sequence $X=X_1, \dots, X_N=Y$ of
  tiles  is an 
{\em $e$-chain}\index{e-chain@$e$-chain}\index{chain!$e$-} 
if $X_i\ne X_{i+1}$ 
and  there exists
  an edge $e_i$ with
  $e_i\sub \partial X_{i}\cap \partial X_{i+1}$ for $i=1, \dots, N-1$. It \emph{joins}
  the tiles $X$ and $Y$. A set $M$ of tiles is {\em
    $e$-connected}\index{e-connected@$e$-connected} if every two
  tiles in $M$ can be joined by an $e$-chain consisting of tiles
  in $M$.

\begin{lemma}
  \label{lem:nocut}
  Let $M\sub \X$ be a set of tiles  that   
  is  $e$-connected. Then the graph 
 $G_M$ is connected and has no cut points.  
 \end{lemma}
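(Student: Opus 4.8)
The statement has two parts: $G_M$ is connected, and $G_M$ has no cut points. I will handle connectedness first, since it is essentially immediate from the definitions, and then spend most of the effort on the no-cut-point claim, which I expect to be the real obstacle.

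\emph{Connectedness.} First I would observe that if $X, Y \in M$ are joined by an $e$-chain $X = X_1, \dots, X_N = Y$ with edges $e_i \subset \partial X_{i-1} \cap \partial X_i$, then each $\partial X_i$ is a connected subset of $G_M$ (it is a Jordan curve), consecutive boundaries $\partial X_{i}$ and $\partial X_{i+1}$ share the edge $e_{i+1}$ and hence a point, so $\partial X_1 \cup \dots \cup \partial X_N$ is a connected subset of $G_M$ containing any prescribed vertex of $\partial X$ and any prescribed vertex of $\partial Y$. Since $M$ is $e$-connected and $G_M = \bigcup_{X \in M} \partial X$, any two vertices of $G_M$ lie on boundaries of tiles in $M$ that can be $e$-chained together, so they can be joined by a path in $G_M$; thus $G_M$ is connected.

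\emph{No cut points.} Fix a vertex $q$ of $G_M$; I must show $G_M \setminus \{q\}$ is connected, equivalently (since $G_M$ is a graph) that any two vertices $a, b \in G_M \setminus \{q\}$, $a \ne b$, are joined by an edge path in $G_M$ avoiding $q$. The key geometric input is Lemma~\ref{lem:specprop}~(v): the cells of $\DD$ meeting $q$ form a cycle $\{q\}, e_1, \dots, e_d, X_1, \dots, X_d$ with $d \ge 2$, and the $n$-flower-type neighborhood $W = \{q\} \cup \bigcup_i \inte(X_i) \cup \bigcup_i \inte(e_i)$ is open. The plan is: given $a, b \ne q$, first replace them by ``nearby'' vertices not on any edge of the cycle of $q$ — more precisely, if $a$ is an endpoint of some edge $e_i$ in the cycle of $q$, move along $e_i$ (or rather pick the other endpoint $a'$ of $e_i$, which is $\ne q$) and note $a$ and $a'$ are joined by the single edge $e_i$, which does not pass through $q$ as an interior point; but $e_i$ does contain $q$, so this does not immediately avoid $q$. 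Instead I would argue as follows. Since $a \ne q$, $a$ lies on the boundary of some tile $X_a \in M$; similarly $b \in \partial X_b$. Because $M$ is $e$-connected, there is an $e$-chain from $X_a$ to $X_b$, and I want to route a path through the union of these boundaries while detouring around $q$. The local picture at $q$ is what lets one detour: the tiles $X_1, \dots, X_d$ containing $q$ are cyclically arranged, and for each tile $X_i$ through $q$ its boundary $\partial X_i$ is a Jordan curve containing $q$, so $\partial X_i \setminus \{q\}$ is an arc — one can always travel from one ``side'' of $q$ on $\partial X_i$ to the other along $\partial X_i \setminus \{q\}$.

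The cleanest route is probably to reduce to Lemma~\ref{lem:simple}: I would show directly that $G_M$ is a connected graph with no cut point by verifying the cut-point criterion, using the cycle structure. Concretely: let $a,b,q$ be distinct vertices of $G_M$. If $q$ is not a vertex of any tile in the cycle of $q$ meeting $G_M$ in a way that separates... — actually the honest statement is: every vertex $q$ of $G_M$ has the property that the tiles of $M$ whose boundary contains $q$, together with the $e$-connectivity of $M$, allow one to build an edge path from $a$ to $b$ in $G_M$ missing $q$. I would carry this out by: (1) joining $X_a$ to $X_b$ by an $e$-chain $X_a = Y_1, \dots, Y_k = X_b$ in $M$; (2) for each consecutive pair sharing edge $e$, noting $\partial Y_j \cup \partial Y_{j+1}$ is connected; (3) if $q \notin \partial Y_j$ for all $j$, we are done by walking along $\bigcup_j \partial Y_j$; (4) if $q \in \partial Y_j$ for some $j$, replacing the passage through $\partial Y_j$ by a passage through $\partial Y_j \setminus \{q\}$, which is still an arc connecting the relevant shared edges (these shared edges $e_{j-1}, e_j$ lie on $\partial Y_j$ and at most one can have $q$ as an endpoint... and even if $q$ is an interior... $q$ is a vertex, hence an endpoint of any edge containing it, so $q \in e_{j-1}$ or $q \in e_j$ means $q$ is an endpoint). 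One must check the two shared edges are not ``separated'' by $q$ on $\partial Y_j$ — but since $\partial Y_j \setminus \{q\}$ is a single arc, any two points (hence any two edges not containing $q$ in their interior, i.e. not having $q$ as an endpoint — wait, the shared edges might have $q$ as an endpoint) ... This is the delicate point. I expect the main obstacle to be exactly this local surgery: when $q$ is a common endpoint of the two edges by which the $e$-chain enters and leaves a tile $Y_j$ through $q$, one cannot simply walk around $q$ on $\partial Y_j$; one must instead use a \emph{different} tile from the cycle of $q$ (another $X_i$, $i \in \{1,\dots,d\}$, with $d \ge 2$) — guaranteed by $M$ being $e$-connected and by the cycle having length $\ge 2$ — to reroute. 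Making this rerouting precise, and verifying it always terminates, is the heart of the proof; I would organize it as an induction on the number of ``bad'' tiles (tiles of $M$ in the chain whose boundary contains $q$), at each step using the cyclic adjacency of the $X_i$'s around $q$ to bypass one bad tile, invoking Lemma~\ref{lem:specprop}~(iv),(v) for the local combinatorics and the $e$-connectedness of $M$ for the global routing, and finally appealing to Lemma~\ref{lem:simple} to extract a simple edge path if desired.
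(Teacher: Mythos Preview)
Your connectedness argument is fine and matches the paper's.

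For the no-cut-point part you are circling the right idea but miss the one clean observation that makes the whole thing easy, and your proposed fix has a real problem. The paper does not use the cycle structure at $q$ at all. Instead: since consecutive tiles $Y_j, Y_{j+1}$ in the $e$-chain share an \emph{edge}, they share at least \emph{two} common vertices (the endpoints of that edge), so one can always choose a common vertex $p_{j+1} \ne q$. Then $\partial Y_j$ is a Jordan curve, so one of its two arcs from $p_j$ to $p_{j+1}$ avoids $q$; route along that arc. Concatenating these arcs gives an edge path in $G_M$ from $a$ to $b$ missing $q$. Your ``delicate point'' --- where $q$ happens to be an endpoint of both shared edges $e_{j-1}$ and $e_j$ --- is therefore not an obstacle: each of those edges has a \emph{second} endpoint $\ne q$, and those second endpoints are the $p_j$'s one routes through.

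Your proposed rerouting through ``a different tile from the cycle of $q$'' is not only unnecessary but potentially wrong: the tiles $X_1,\dots,X_d$ in the cycle of $q$ are tiles of the ambient cell decomposition $\DD$, and nothing guarantees they belong to $M$, so their boundaries need not lie in $G_M$. The $e$-connectedness of $M$ does not force $M$ to contain more than one tile of the cycle of $q$ (e.g.\ $M$ could be a single tile). So the inductive surgery you sketch would in general take the path outside $G_M$.
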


\begin{proof}
Let   
  $a,b\in G_M$ be arbitrary vertices with $a\ne b$. We can  pick
  tiles $X$ and $Y$ in $M$ such that  $a$ is a vertex in  $X$ and
  $b$ is a vertex in  $Y$.  
By assumption there exists an $e$-chain $X_1, \dots, X_N$ 
in $M$ with 
 $X_1=X$ and $X_N=Y$. The vertices of a tile $X_i$ lie in 
 $G_M$; they subdivide the Jordan curve $\partial X_i$ such that successive vertices on $\partial X_i$ are connected by an edge
 and are  hence neighbors in $G_M$. An edge  path 
 $\alpha$ in $G_M$ joining  $a$ and $b$ can now be obtained as
 follows: starting from 
$a\in \partial X_1$,  
use edges  on the 
boundary of $X_1$ 
to find an edge  path in $G_M$ that joins 
$p_1=a$ to a vertex  $p_2$ of $X_2$. 
This is possible, since 
$X_1$ and $X_2$ have a common edge 
and
hence at least two common vertices. Then run from 
$p_2$ along edges on $\partial X_2$ to a vertex $p_3$ of $X_3$, 
and so on.  Once we arrived at a vertex $p_N$ of $X_N$, we can
reach $b$ by running from $p_N$ to $p_{N+1}\coloneqq b$ along 
edges on $\partial X_N$. 
 In this way  we obtain an edge  path $\alpha$ in $G_M$ that joins  $a$ and $b$. 
 
 A slight refinement of this argument also shows that we can construct the 
 path $\alpha$ so that it avoids any given vertex  $q$ in $G_M$ distinct 
 from $a$ and $b$. Indeed, choose 
$p_1=a$ as before. Since $X_1$ and $X_2$ 
have at least two vertices in common, we can pick a 
common vertex $p_2$ of $X_1$ and $X_2$ 
that is distinct from $q$. 
 There exists an 
arc on $\partial X_1$ 
(possibly degenerate) that does not  contain  $q$ and 
joins $p_1$ and $p_2$. 
This arc (if non-degenerate) consists of edges  and if  we follow
these  edges, we obtain an edge  path in $G_M$ that does not
contain $q$ and 
joins  $p_1$ and $p_2$. 
In the same way we can find an edge path 
 in $G_M$ that avoids $q$ and joins  
$p_2$ to a vertex $p_3\in \partial X_2\cap \partial X_3$, 
and so on. Concatenating  all these edge paths we get a path $\alpha$ as desired. 
 
 This shows that $G_M$ is connected and has no cut points. 
 \end{proof}

\begin{lemma}
  \label{lem:paththrough}
  Let $M\sub \X$ be a set of  tiles that is $e$-connected, 
  and let $a,b,p\in |M|$ be distinct vertices.  Then there 
  exists a simple edge path  $\alpha$ in $G_M$ with $p\in |\alpha|$
 that  joins $a$ and $b$.  \end{lemma}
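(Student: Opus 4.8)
The plan is to combine Lemma~\ref{lem:nocut} and Lemma~\ref{lem:simple}, using the hypotheses on $M$ to ensure $a,b,p$ all occur as vertices of the graph $G_M$. First I would observe that since $M$ is $e$-connected, Lemma~\ref{lem:nocut} applies and tells us that $G_M$ is a connected graph without cut points. This is exactly the hypothesis needed to invoke Lemma~\ref{lem:simple}, which produces, for any three vertices $a,b,p$ of a connected graph without cut points with $a\ne b$, a simple edge path $\alpha$ in the graph with $p\in|\alpha|$ joining $a$ and $b$.

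The only gap between the two lemmas is that Lemma~\ref{lem:simple} requires $a$, $b$, and $p$ to be vertices \emph{of the graph $G_M$}, whereas our hypothesis only says they are distinct vertices lying in $|M|$. So the key step is to check that every vertex of $\DD$ contained in $|M|$ is in fact a vertex of the graph $G_M$. This follows from the definition: if $v$ is a vertex of $\DD$ with $v\in |M|=\bigcup_{X\in M}X$, then $v\in X$ for some tile $X\in M$; since $\inte(X)$ contains no vertices (a vertex is a $0$-cell and the interior of a tile meets no other cell), we have $v\in\partial X\subset G_M$, and $\{v\}$ is a $0$-cell of $\DD$ contained in $G_M$, hence $\{v\}$ is a cell of the natural cell decomposition of $G_M$, i.e.\ $v$ is a vertex of $G_M$. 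Thus $a,b,p$ are all vertices of $G_M$.

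Having established this, the proof is immediate: apply Lemma~\ref{lem:simple} to the connected, cut-point-free graph $G_M$ and the three vertices $a,b,p$ with $a\ne b$ to obtain a simple edge path $\alpha$ in $G_M$ with $p\in|\alpha|$ joining $a$ and $b$. Since $\alpha$ is simple, $|\alpha|$ is an arc (as remarked before Lemma~\ref{lem:simple}), so $\alpha$ is exactly the required path.

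I do not anticipate any real obstacle here; the statement is essentially a corollary of the two preceding lemmas, and the only thing requiring a line of justification is the identification of the vertices in $|M|$ with vertices of $G_M$. One minor point to be careful about is that the notions of ``edge path,'' ``simple,'' and ``vertex'' are the graph-theoretic ones introduced at the start of the section applied to $G_M$ with its natural cell decomposition; once that bookkeeping is in place the argument is a two-line invocation.
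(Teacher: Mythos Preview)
Your proof is correct and is exactly the paper's approach: the paper's proof consists of the single sentence ``This follows from Lemma~\ref{lem:nocut} and Lemma~\ref{lem:simple}.'' The additional verification you give (that a vertex of $\DD$ lying in $|M|$ is automatically a vertex of $G_M$) is a reasonable detail to spell out and is left implicit in the paper.
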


In particular, this applies  if $M$ consists of a single $e$-chain.

\begin{proof} This follows from   Lemma~\ref{lem:nocut} and 
Lemma~\ref{lem:simple}.  \end{proof}

 \begin{lemma}
  \label{lem:echain}  Let $\ga \:J\ra S^2$ be  a path in $S^2$ defined on an   interval $J\sub \R$ and 
  $M=M(\ga)$  be the set of tiles having non-empty intersection with
  $\ga$. Then $M$ is $e$-connected.  
  \end{lemma} 
 
 \begin{proof}  We first prove the following claim.
 If $[a,b]\sub \R$,  
  $\alpha\:[a,b]\ra S^2$ is  a path, and $X$ and $Y$ are  tiles with $\alpha(a)\in X$ and $\alpha(b)\in Y$,  then there exists  an $e$-chain $X_1=X, X_2,\dots, X_N=Y$  such that 
  $X_i\cap \alpha \ne \emptyset $ for all $i=1, \dots, N$.

 In the proof of this claim,  we call an $e$-chain $X_1, \dots, X_N$ 
{\em admissible} if $X_1=X$ and  $X_i\cap \alpha \ne \emptyset $ for all $i=1, \dots, N$. So we want to find an admissible $e$-chain whose last tile is $Y$. 

 Let $T\sub [a,b]$ be the set of all points $t\in [a,b]$ for which there exists an 
 admissible $e$-chain $X_1, \dots, X_N$ with $\alpha(t)\in X_N$. 
We first want  to show that $b\in T$.

Note that the set $T$ is closed. Indeed, suppose that $\{t_k\}$ is a sequence in $T$ 
with $t_k\to t_\infty \in [a,b]$ as $k\to \infty$. Then for each $k\in \N$ there 
exists an admissible $e$-chain $X^k_1, \dots , X^k_{N_k}$ with 
$\alpha(t_k)\in X^k_{N_k}$. Define $Z_k=X^k_{N_k}$ to be the last 
tile in this chain. Since there are only finitely many tiles, there exists 
one tile, say $Z$, among the tiles $Z_1$, $Z_2$, $Z_3$, $\dots$
that appears infinitely often in this sequence. 
Then we have $\alpha(t_k) \in Z$ for infinitely many $k$. Since tiles are closed,  we conclude that $\alpha(t_\infty) =\lim_{k\to\infty} \alpha(t_k)\in Z$. 
By definition of $Z$ there exists an admissible $e$-chain $X_1, \dots, X_N$ with $X_N=Z$. Then $\alpha(t_\infty)\in Z=X_N$, and so $t_\infty\in T$. 

Obviously,  $a\in T$ and so $T$ is non-empty. Since  $T$ is also closed, the set $T$ has a maximum, say
$m\in [a,b]$.   We have to show   that $m=b$; 
we will see  that the assumption $m<b$ leads to a contradiction. 

We consider $p\coloneqq \alpha(m)$. Then  there exists an admissible $e$-chain $X_1, \dots, X_N$ with $p\in Z\coloneqq X_N$. 

If $p\in \inte(Z)$, then $\alpha(t)\in Z$ and so $t\in T$ for $t\in(m,b]$  close to $m$. This is impossible by definition of $m$.

If $p$ does not belong to $\inte(Z)$, then  $p$ must be a boundary point
of $Z$. Suppose first that $p$   is in the interior of an edge 
$e\sub \partial Z$. By Lemma~\ref{lem:specprop}~\ref{item:prop_cell4} there exists precisely one tile $Z'$ distinct  from $Z$ such that  $e\sub \partial Z'$. 
Moreover,  $Z\cup Z'$ is a neighborhood of $p$, and so  
points  $\alpha(t)$ with  $t\in (m,b]$ close to $m$ belong to $Z$ or $Z'$.
Since $Z'$ contains $p$ and hence meets $\alpha$, and $Z$ and $Z'$ share an edge, $X_1, \dots, X_N=Z, Z'$ is an admissible $e$-chain. It follows that 
$t\in T$ for $t\in (m,b]$ close to $m$.  Again this is impossible by definition of 
$m$.

If $p$ is  a boundary point
of $Z$, but not  in the interior of an edge, then $p$ is a vertex. 
The tiles  in the cycle of $p$ form a  neighborhood of $p$,
and so a point $\alpha(t)$ for some $ t\in (m,b]$ close to $m$ will belong to a tile $Z'$ 
in  the cycle of $p$.   It follows from  Lemma~\ref{lem:specprop}~\ref{item:prop_cell5} that any two tiles in the cycle 
of a vertex can be joined  by an $e$-chain consisting of tiles in the cycle. 
Hence there exists an $e$-chain $Z=Z_1, \dots, Z_K=Z'$ such that 
$p\in Z_j$ for $j=1, \dots, K$. 
In particular, $\alpha\cap Z_j\ne \emptyset $ for $j=1, \dots, K$, and so  $X_1, \dots, X_N=Z=Z_1, \dots, Z_K=Y'$ is an admissible $e$-chain. 
Since $\alpha(t)\in Z'=Z_K$, we have $t\in T$, again a contradiction.

We have exhausted all possibilities proving that $b\in T$ as
desired. This implies that  there exists an admissible $e$-chain $X_1=X, \dots,
X_N$ with $\alpha(b)\in X_N$. If $X_N=Y$, then we are done. If
$X_N\ne Y$, then $\alpha(b)\in \partial X_N\cap \partial Y$, and
so $\alpha(b)$ is an interior point of an edge $e$ with
$e\sub \partial X_N\cap \partial Y$, or $\alpha(b)$ is a vertex.
As in the first part of the proof, one can then extend the admissible $e$-chain $X_1=X, \dots, X_N$  to obtain an admissible $e$-chain whose last tile is $Y$.  The claim made in the beginning of the proof  follows. 

This claim now easily implies the statement of the lemma. Indeed, let $X,Y\in M=M(\ga)$ be arbitrary. Then there exist $a,b \in J$ with 
  $\ga(a)\in X$ and $\ga (b)\in Y$. If $a\le b$, then we apply the  claim  to the path   $\alpha=\ga|[a,b]$, and if 
  $b\le a$ to the path $\alpha=\ga|[b,a]$. This shows that  we can  
  find an $e$-chain in $M$ that joins   $X$ and $Y$. 
  \end{proof}

 For the formulation of the next statement,  we need a slight extension   of Definition~\ref{def:connectop}.
Let $\CC\sub S^2$  be a Jordan curve,  and $P\subset \CC$ be a finite set with  
   $\#P\geq 3$.   The points in $P$ divide $\CC$ into subarcs 
  that have endpoints in $P$, but whose interiors 
   are disjoint from $P$.  
   We say  that a (not necessarily connected) set $K\sub S^2$ {\em joins opposite sides of $(\CC,P)$}\index{joining opposite sides}
   if $\#P\ge 4$ and $K$ meets two of these  arcs    that are non-adjacent (i.e., disjoint), or if $\#P=3$ and $K$ meets all of these 
    arcs (in this case there are three arcs).   
    
In the following lemma and its proof, metric notions refer to some fixed base metric on $S^2$.

\begin{lemma}
  \label{lem:isorelP}
  Let $\CC\sub S^2$ be a Jordan curve, and $P\subset \CC$ be  a
  finite set with $k\coloneqq \#P\geq 3$.  Then there exists
  $\epsilon_0 >0$ satisfying the following condition:
  
  Suppose that $\DD$ is a cell decomposition of $S^2$ with vertex
  set $\mathbf{V}$ and $1$-skeleton $E$. If
  $P\subset \mathbf{V}$ and
  \begin{equation*}
     \max_{c\in \DD} \diam (c)< \epsilon_0, 
  \end{equation*}
  then there exists a Jordan curve $\CC'\sub E$ that is isotopic
  to $\CC$ rel.\ $P$, and has the property that no tile in $\DD$
  joins opposite sides of $(\CC', P)$.
\end{lemma}

\begin{proof}  We fix an orientation of $\CC$ and let $p_1, \dots, p_k$ be the points in $P$ in cyclic order on $\CC$. The
 points in $P$ divide $\CC$ into subarcs $\CC_1, \dots, \CC_k$ such that for $i=1, \dots, k$ the arc $\CC_i$ has the endpoints $p_i$ and $p_{i+1}$ and has interior disjoint from $P$. Here and in the following 
the index $i$ is understood modulo $k$, i.e., $p_{k+1}=p_1$, etc. 
Note that  $\CC_i\cap \CC_{i+1}=\{p_{i+1}\}$ for $i=1, \dots, k$. 
There exists a number $\delta_0>0$ such that no set $K\sub S^2$ with $\diam(K) <\delta_0$ joins opposite sides of $(\CC,P)$ (this can be seen as in the discussion after \eqref{defdelta}).  

Now choose $\delta>0$ as in Proposition~\ref{prop:isotreln} for $J=\CC$ (and $n=k$). We may assume that $3\delta<\delta_0$. 
We  break up $\CC$ into
 subarcs 
\begin{equation}
\label{eq:alph_gamma}
  \alpha_1, \gamma_1, \alpha_2, \gamma_2, \dots, \alpha_{k}, \gamma_k,
  \alpha_1,
\end{equation}
arranged in cyclic order on $\CC$, such that $p_i$ is an interior
point of $\alpha_i$ and we have $\alpha_i\sub B(p_i, \delta/2)$
for each $i=1,\dots, k$. The arcs in (\ref{eq:alph_gamma}) have
disjoint interiors, and two arcs have an endpoint in common if
and only if they are adjacent in this cyclic order in which case
they share one endpoint. So each ``middle piece'' $\gamma_i$ does
not contain any point from $P$ and is contained in the interior
of $\CC_i$.

We choose $0<\eps_0<\delta/4$ so small that the distance
between non-adjacent arcs in (\ref{eq:alph_gamma}) 
is $\ge 10 \eps_0$ and so that 
$$ \dist(p_i, \gamma_{i-1}\cup \gamma_i)\ge 10 \eps_0$$
for $i=1, \dots, k$.

 Now suppose we  have a cell  decomposition $\DD$ of $S^2$ such that $P$ is contained in the vertex set $\mathbf{V}$ of $\DD$  and 
 $$\max_{c\in \DD} \diam (c)  < \epsilon_0.  
$$

Our
goal is to find a Jordan curve $\CC'\sub S^2$ consisting  of  arcs
$\CC_i'$  that are unions of edges, have endpoints $p_i$ and
$p_{i+1}$,  and satisfy 
$$\CC'_i\sub \mathcal{N}_\delta(\CC_i) $$  for
$i=1, \dots, k$.

Let $\mathbf{A}_i$ be the set of all tiles intersecting $\alpha_i$ and
$\mathbf{C}_i$ be the set of all tiles intersecting $\gamma_i$ for
 $i=1,\dots, k$. 
Recall that for a given  set of tiles $M$, we denote by $\abs{M}$ the
union of tiles in $M$. Let $A_i\coloneqq  \abs{\mathbf{A}_i}$ and $C_i
\coloneqq
\abs{\mathbf{C}_i}$. 

Note that
\begin{equation*}
  A_i \subset \mathcal{N}_{\epsilon_0}(\alpha_i) \quad 
 \text{and} 
  \quad 
  C_i \subset \mathcal{N}_{\epsilon_0}(\gamma_i). 
\end{equation*}

Moreover, 
\begin{align}
  \label{eq:ABCDinNd}
  A_i\cup C_i\cup A_{i+1}&\subset
  \mathcal{N}_{\epsilon_0}(\alpha_i)
  \cup \mathcal{N}_{\epsilon_0}(\gamma_i)
  \cup \mathcal{N}_{\epsilon_0}(\alpha_{i+1}) 
 \\ \notag
  &\subset 
  B(p_i, \delta) \cup \mathcal{N}_{\epsilon_0}(\gamma_i) \cup
  B(p_{i+1},\delta)  
  \\ \notag
  &\subset \mathcal{N}_\delta(\CC_i),   
\end{align}
and the natural cyclic order of these sets is 
\begin{equation}\label{setlist}
A_1, C_1, A_2, C_2,\dots,A_k,  C_k, A_1. 
\end{equation}
By choice of $\eps_0$ we know that if two of the sets in (\ref{setlist})
are not adjacent in the cyclic order, then their distance  is $\ge
8\eps_0$ and so their intersection is empty.  Moreover, for $i=1,
\dots, k$ the only one of these sets that contains $p_i$ is $A_i$. 

The
construction that now follows is illustrated in
Figure~\ref{fig:pf_des_Grauens}. Here the two large dots represent two
points $p_i,p_{i+1}$ and the thick line the curve $\CC$.


 
\ifthenelse{\boolean{nofigures}}{}{
  \begin{figure}
    \centering
    \vspace{0.3cm}
    \begin{overpic}
      [width=12cm, 
      tics=20]{pf_des_Grauens.eps}
      \put(14,33.5){${\scriptstyle B(p_i, \delta/2)}$}
      \put(84,33.5){${\scriptstyle B(p_{i+1}, \delta/2)}$}
      \put(8,15){${\scriptstyle p_i}$}
      \put(89,15){${\scriptstyle p_{i+1}}$}
      \put(50,33){${\scriptstyle \CC_i}$}
      \put(29,22){${\scriptstyle v_i}$}
      \put(68,23){${\scriptstyle v_i'}$}
      \put(19.5,2){${\scriptstyle v_{i-1}'}$}
      \put(80.5,0){${\scriptstyle v_{i+1}}$}
      \put(9,25){${\scriptstyle A_i}$}
      \put(89,24){${\scriptstyle A_{i+1}}$}
      \put(22,21.6){${\scriptstyle a_i}$}
      \put(74,21.4){${\scriptstyle a_{i+1}}$}
      \put(46,15){${\scriptstyle c_i}$}
      \put(42,23){${\scriptstyle C_i}$}
    \end{overpic}
    \caption{Construction of the curve $\CC'$.}
    \label{fig:pf_des_Grauens}
  \end{figure}
}

For $i=1, \dots, k$ we consider the graphs $G_{\mathbf{A}_i}$ and  $G_{\mathbf{C}_i}$ associated with the tile sets $\mathbf{A}_i$ and $\mathbf{C}_i$, respectively, as in  \eqref{eq:def_GM}. Each of these graphs is a union of edges in $\DD$. Moreover, $G_{\mathbf{A}_i}\sub A_i$ and 
$G_{\mathbf{C}_i}\sub C_i$. 
 Note that there is at least one tile contained
in both $\mathbf{A}_i$ and $\mathbf{C}_i$, namely any tile
containing the
common endpoint of  $\alpha_i$ and $\gamma_i$. Hence 
$G_{\mathbf{C}_i}$ and  $G_{\mathbf{A}_i}$ have a common vertex
contained in $A_i$; similarly $G_{\mathbf{C}_i}$ and
$G_{\mathbf{A}_{i+1}}$ have a common vertex contained in
$A_{i+1}$. 
 
 It follows from  Lemmas~\ref{lem:echain} and~\ref{lem:paththrough}
 that $G_{\mathbf{C}_i}$ is connected. 
Hence we can find  a simple edge path $c'_i$ in $G_{\mathbf{C}_i}$ joining a vertex $v_i\in  A_i$ as the initial point to a vertex  
$v'_{i} \in A_{i+1}$ as the terminal point. Let $c_i\coloneqq |c'_i|\sub C_i$
be the underlying arc. By deleting edges from 
$c'_i$ if necessary,  
we may assume that $v_i$ is the only vertex in $c_i \cap A_i $ 
and $v'_i$ is the only vertex in  $ c_i \cap A_{i+1}$. 
Then $c_i$  has no  other points in common
with $A_i$  or  $A_{i+1}$. 

To see this, suppose 
that there exists a point $x\ne v_i, v'_i$ with 
$x\in c_i\cap (A_i\cup A_{i+1})$,
say  $x\in c_i\cap A_i$. Then $x$ is contained in an edge $e$ of the edge path  $c'_i$. The point $x\ne v_i$ cannot be a vertex, because $v_i$ is the only vertex in $c_i\cap A_i$. So $x\in\inte(e)\cap A_i$ which implies that 
$e\sub A_i$; but then both endpoints of $e$ are vertices in $c_i\cap A_i$, which is impossible by our choice of $c'_i$.

Note that $v'_{i-1}\in C_{i-1}$ and $v_i\in C_i$ are distinct vertices in $A_i$, 
and   
recall  that $p_i\in A_i$.
Then Lemmas~\ref{lem:echain} and~\ref{lem:paththrough} imply that 
 there exists an arc $a_i\sub A_i$ with
$p_i\in a_i$ that  
consists of edges and has the endpoints  $v'_{i-1}$ and $v_i$.
Since $p_i\notin C_{i-1}\cup  C_i$, we  have $v'_{i-1}, v_i\ne  p_i$, and so $p_i\in \inte(a_i)$. 

If we arrange the arcs $a_i$ and $c_i$ in cyclic order 
$$ a_1, c_1, a_2,  c_2\dots, a_k, c_k, a_1,$$
then two of these arcs have non-empty intersection
if and only if they are adjacent in this order. If two arcs are
adjacent, then their intersection consists of a common endpoint.   
Therefore,  the set 
$$\CC'\coloneqq a_1\cup c_1 \cup a_2\cup c_2 \cup\dots \cup a_k \cup c_k$$ 
is a Jordan curve that passes through the
points $p_1, \dots, p_k$.  Moreover, $\CC'$ consists of edges and is hence contained in the $1$-skeleton $E$ of $\DD$. 

By construction each vertex  $p_i$ is an interior point of the arc
$a_i$. Thus it divides $a_i$ into two  subarcs $a_i^-$ and $ a_i^+$
consisting of edges such that $p_i$ is a common endpoint of $a_i^-$ and $ a_i^+$,   and such that  $a^-_i$ shares an endpoint  with $c_{i-1}$ and $ a_i^+$ one with $c_i$. Then $$\CC'_i\coloneqq a^+_i\cup c_i\cup a^-_{i+1}$$ for $i=1,
\dots, k$ is an arc that consists of edges and has  endpoints $p_i$ and
$p_{i+1}$. The arcs $\CC'_1, \dots,  \CC'_k$ have pairwise disjoint interior.  Moreover, 
$${\CC}'=\CC'_1 \cup \dots \cup \CC'_k.  $$ The arc $\CC'_i$ 
has the endpoints $p_i,p_{i+1}\in P$, but contains no other points in $P$. So $\inte(\CC'_i)\sub \CC'\setminus P$, 
and by \eqref{eq:ABCDinNd} we have 
$$  \CC'_i\sub A_{i}\cup C_i \cup A_{i+1}
   \sub  
  \mathcal{N}_{\delta}(\CC_i). $$
Hence by Proposition~\ref{prop:isotreln} and choice of $\delta$, the curve $\CC'$ is isotopic to $\CC$ rel.\ $P$. 

It remains to show that no tile in $\DD$ joins opposite sides of  $(\CC', P)$.
To see this, we argue by contradiction. Suppose that there exists a tile 
 $X$ in $\DD$ that joins opposite 
sides of $(\CC',P)$. Then $K\coloneqq  \mathcal{N}_\delta(X)$  joins opposite sides of $(\CC,P)$, since $\CC'_i\sub \mathcal{N}_\delta(\CC_i)$ for all $i=1, \dots, k$.     
By choice of $\delta_0$ we then have 
$$ \delta_0\le \diam(K)\le  2\delta+\diam(X)\le 2\delta+\eps_0<3\delta<\delta_0,$$
which is impossible.
\end{proof} 

\ifthenelse{\boolean{singlechapter}}{

%
%


\chapter{Subdivisions}
\label{cha:subdivisions}

In complex dynamics the iteration of polynomials is much better
understood than the iteration of general  rational maps.  One of the reasons 
 is that for
polynomials  powerful combinatorial methods  are available 
such as   external rays, Hubbard trees, or Yoccoz puzzles (see
\cite{DH84}). It is desirable to develop similar concepts for other
classes of maps as well. For Thurston maps we will introduce the  notion of a {\em two-tile subdivision rule} in this chapter. 
 It provides a useful combinatorial tool for their investigation. 

This concept can be extracted from  various previous examples 
(see Sections~\ref{sec:Lattes} and~\ref{sec:int-frac-sph}, or 
 Examples~\ref{ex:tringflP} and~\ref{ex:no_per_crit_not_exp}), where we have described Thurston maps by a
subdivision procedure. In these examples  we consider a topological $2$-sphere 
 obtained as a pillow (see Section~\ref{sec:expratThmaps})
by gluing two
$k$-gons together  along their boundaries. Then the two faces of the pillow (the $0$-tiles) are  subdivided
into $k$-gons (the $1$-tiles) and it is specified how  the map sends  a $1$-tile 
to  one of the $0$-tiles. The equator of the pillow is a  Jordan  curve that is invariant under the map and contains its postcritical points.

More generally, let $f\:S^2 \ra S^2$ be a Thurston map with
$\# \post(f)\ge 3$, and $\CC\sub S^2$ be a Jordan curve with $\post(f)\sub
\CC$.  
If $\CC$ is $f$-invariant (i.e., $f(\CC)\sub \CC$), then the
cell decompositions $\DD^n=\DD^n(f,\CC)$ 
(see Definition~\ref{def:DDn})
have nice compatibility properties given by 
Proposition~\ref{prop:invmarkov}. In particular, $\DD^{n+k}$ is a
refinement of $\DD^n$, whenever $n,k\in \N_0$. Intuitively, this means that each cell  $\DD^n$ is ``subdivided'' by the cells in
$\DD^{n+k}$. A cell  $c\in \DD^n$ is actually  subdivided by the cells in  
$\DD^{n+k}$ ``in the same way'' as the cell $f^n(c)\in \DD^0$ by the cells in $\DD^k$ (see Proposition~\ref{prop:invmarkov}~\ref{item:invmarkov5} 
for a precise statement).  
 This implies  that the 
``combinatorics'' 
of the sequence $\DD^0, \DD^1,\DD^2,\dots$ 
is uniquely determined by the pair
$(\DD^1, \DD^0)$ and the map $\tau \in \DD^1\ra
f(\tau) \in \DD^0$, i.e., the labeling $L\: \DD^1\ra \DD^0$
induced by $f$ (see Section~\ref{sec:labelings}).
For more discussion see Remark~\ref{rem:two-tile_fV0}~(ii) and the related Proposition~\ref{prop:subandD^n}. 

The triples $(\DD^1, \DD^0,
L)$ arising in this way lead to the following definition (see the
beginning of Section~\ref{sec:subdivisions} for more motivation).

\begin{definition}[Two-tile subdivision
  rules]\label{def:subdivcomb} 
Let 
$S^2$ be a $2$-sphere. 
A {\em two-tile subdivision rule}\index{two-tile subdivision rule|textbf}\index{subdivision}
  for  $S^2$ is a triple 
 $(\DD^1, \DD^0, L)$ of cell decompositions $\DD^0$ and $\DD^1$ of  $S^2$
 and an orientation-preserving  labeling $L\: \DD^1\ra \DD^0$. We assume that the cell decompositions 
satisfy the following conditions:
 
  \begin{enumerate}
  
  \item
    \label{item:subdivcomb1} 
    $\DD^0$ contains precisely two tiles.    
 
  \item
    \label{item:subdivcomb2} 
    $\DD^1$ is a refinement of $\DD^0$, and $\DD^1$ contains more than
    two tiles.   
  
  \item
    \label{item:subdivcomb3} 
    If $k$ is the number of vertices in $\DD^0$, then $k\ge 3$  and every
    tile in $\DD^1$ is a $k$-gon.   
   
  \item
    \label{item:subdivcomb4} 
     Every vertex in $\DD^1$ is contained in an even number of tiles in $\DD^1$.
  \end{enumerate}  
\end{definition}

If  $\DD^0$ is  a cell decomposition of $S^2$ with precisely two tiles
$X$ and $Y$, then necessarily $\partial X= \partial Y$. The set
$\CC\coloneqq \partial X= \partial Y$ is a Jordan curve which we
call 
{\em the Jordan curve of $\DD^0$}.\index{Jordan curve!of $\DD^0$}
Then $\CC$ is the $1$-skeleton of $\DD^0$ and all vertices and edges of $\DD^0$ lie on $\CC$.
If  $k$ is the number of these vertices on $\CC$ and $\DD^1$ is another cell decomposition of $S^2$,  
then   
a Thurston map $f$ that is cellular 
for $(\DD^1, \DD^0)$  can only exist if each   tile in $\DD^1$ is a  $k$-gon, 
i.e., it contains exactly $k$ vertices and edges in its boundary. Since $f$ is not a homeomorphism,   $\DD^1$ contains more than two tiles. The number of tiles in $\DD^1$ that contain a given vertex $v$ in $\DD^1$ is equal to the length of  the cycle of $v$ in $\DD^1$.
This number has to be  
even, because it must be an integer multiple of the length 
of a vertex cycle in $\DD^0$ which is always equal to $2$.  This
motivated the requirements   \ref{item:subdivcomb2}--\ref{item:subdivcomb4}
in Definition~\ref{def:subdivcomb}.

We say that a continuous  map $f\: S^2\ra S^2$ 
{\em realizes}\index{two-tile subdivision rule!realization of}\index{map!realizing!subdivision}\index{realizing!subdivision}
 the two-tile subdivision rule  $(\DD^1, \DD^0, L)$ if $f$ is cellular for $(\DD^1, \DD^0)$ and  $f(\tau)=L(\tau)$ for each $\tau\in \DD^1$. Note that in this case $(\DD^1, \DD^0)$ is a cellular 
 Markov partition for $f$ (see Definition~\ref{def:cellular}).  


Two-tile  subdivision rules arise from Thurston maps with
invariant curves, as the following proposition shows. 

\index{f-invariant@$f$-invariant!Jordan curve}\index{Jordan curve!f-invariant@$f$-invariant}\index{invariant!Jordan curve}
\begin{prop}[Two-tile subdivision rules via Thurston maps]
  \label{prop:ThmapSub} Suppose   $f\:S^2 \ra S^2$ is     a Thurston map with $\#\post(f)\ge 3$, and   $\CC\sub S^2$  is an $f$-invariant  Jordan curve with $\post(f)\sub \CC$.
If we define $\DD^0=\DD^0(f,\CC)$, $\DD^1=\DD^1(f,\CC)$, and $L\: \DD^1\ra \DD^0$ by setting $L(\tau)=f(\tau)$ for $\tau \in \DD^1$, then $(\DD^1, \DD^0, L)$ is a  two-tile subdivision rule realized by $f$. 
\end{prop}

  Theorem~\ref{thm:main} implies that every expanding Thurston map $f$  has an iterate $F=f^n$ that realizes  a two-tile subdivision rule.

Conversely, a two-tile subdivision rule gives rise to a Thurston map with an invariant curve and this map is unique up to Thurston equivalence.

\begin{prop}
  [Thurston maps via two-tile subdivision rules]
   \label{prop:rulemapex} 
Suppose $(\DD^1,\DD^0, L)$ is a two-tile subdivision rule on
$S^2$.
Then there exists a  Thurston map $f\:S^2\ra S^2$ that realizes $(\DD^1,\DD^0, L)$.  The map $f$ is unique up to Thurston equivalence. Moreover, the Jordan curve $\CC$ of $\DD^0$ is 
 $f$-invariant and contains the set $\post(f)$.   
 \end{prop}
 
When we constructed or described certain
Thurston maps, we used these  propositions informally several times
before
(see, for example, Figures~\ref{fig:mapg}, \ref{fig:1flap_both},
\ref{fig:triangle_flap0}, \ref{fig:obstr_map},
\ref{fig:lattes244a}, \ref{fig:Lattes333}, \ref{fig:lattes236}, \ref{fig:Levy_cycle}, and \ref{fig:chebyshev}). 
In these examples a geometric picture  represented the
cell decompositions and the labeling of a two-tile subdivision
rule. Formally, we obtained the corresponding map from
Proposition~\ref{prop:rulemapex}.

Our concept of a two-tile subdivision rule is inspired
by the more general concept of  a \defn{subdivision rule} as
introduced by Cannon, Floyd, and Parry (see \cite{CFP01, CFP06,
  CFKP}, and also  \cite{BoSt, Me02}). In their definition an 
  explicit map from the $1$-cells to
$0$-cells is specified (corresponding to the map $f$ in our case); in contrast,  our definition  is purely combinatorial. 

The reason for the name {\em two-tile} subdivision rule is that 
the data given by $(\DD^1,\DD^0)$ determines how the {\em two} $0$-{\em tiles} are subdivided by the cells in $\DD^1$,  and this together with the labeling $L$ can be used to create  a sequence of cell decompositions $\DD^n$  where each cell $\tau\in \DD^1$ is subdivided by the cells in $\DD^2$ in the same way
as the cell $L(\tau)\in \DD^0$ is subdivided by the $1$-cells, etc. 
Our  definition is tailored to generate Thurston maps, so a more accurate term would have been a ``two-tile subdivision rule generating a Thurston map'', but we chose the shorter term for brevity.

Since we are mostly interested in expanding Thurston maps, we want to
find  a combinatorial condition on a two-tile subdivision rule  that ensures that it can be  realized by an expanding Thurston map. 
  To motivate  a relevant definition, suppose that    $f\: S^2 \ra S^2$ is   a  Thurston map and $\CC \sub S^2$ is a Jordan curve with $\post(f)\sub \CC$. We consider the quantities  $D_n=D_n(f,\CC)$ defined  in \eqref{def:dk}. 
      If the Jordan curve $\CC$ here is  $f$-invariant,  then it is easy to see that the numbers $D_n$ are non-decreasing as $n\to \infty$. 
We will show that we actually have an exponential increase  
 under the additional assumption that there exists $n_0\in \N$ with $D_{n_0}\ge 2$ (see Lemma~\ref{lem:submultexp}). 
  This  will turn out to be a key 
condition related to expansion of Thurston maps realizing two-tile subdivision rules. 


\index{Thurston map!combinatorially expanding|textbf}
\index{combinatorially expanding|textbf}
\index{expanding!combinatorially|textbf}      
\index{d0 n@$D_n$}  
\begin{definition}[Combinatorial expansion]
  \label{def:combexp}
  Let $f\:S^2\ra S^2$ be a Thurston map.  We call $f$
  \defn{combinatorially expanding} if $\#\post(f)\ge 3$, and if
  there exists a Jordan curve $\CC\sub S^2$ that is
  $f$-invariant, satisfies $\post(f)\sub \CC$, and for which
  there is a number $n_0\in \N$ such that $D_{n_0}(f,\CC)\ge 2$.
\end{definition} 
The condition $D_{n_0}(f,\CC)\ge 2$  means that no single $n_0$-tile for $(f,\CC)$ joins opposite sides of 
$\CC$.  

If $f$ and $\CC$ are  as in the previous definition, then we say that 
$f$ is {\em  combinatorially expanding for $\CC$}. This   condition  is indeed  combinatorial in nature, because it can be verified just by knowing the combinatorics  of the cell decompositions 
 $\DD^n=\DD^n(f,\CC)$, $n\in \N_0$.   This  in turn is  determined by the combinatorics  of  
the pair $(\DD^1, \DD^0)$ and the labeling  $\tau\in \DD^1\mapsto
f(\tau)\in \DD^0$ induced by $f$ (see
Remark~\ref{rem:two-tile_fV0}~(ii) and Proposition~\ref{prop:subandD^n}).

If a  Thurston map $f\: S^2\ra S^2$  is expanding and $\CC\sub S^2$ is an $f$-invariant Jordan curve with $\post(f)\sub \CC$, then $f$  is also combinatorially expanding for $\CC$, because in this case    $D_n(f,\CC)\to \infty$  as $n\to \infty$ (see Lemma~\ref{lem:Dtoinfty}). The converse is not true in general,  as  a combinatorially expanding Thurston map need not be expanding
(see Example~\ref{ex:barycentric}). However,  in Chapter~\ref{cha:combexp} we will see  that each combinatorially expanding Thurston map is  equivalent to 
an expanding Thurston map with an invariant curve 
(Theorem~\ref{thm:combexp1}). 

Let $(\DD^1, \DD^0, L)$ be a two-tile subdivision rule and $\CC$ be the Jordan curve of $\DD^0$. We will show that if a Thurston map realizing 
 this subdivision rule is combinatorially expanding for  $\CC$, then this is true for every Thurston map realizing the subdivision rule (Lemma~\ref{lem:invrealize}).
In this case, we say that the subdivision rule is 
{\em combinatorially expanding}  
(see Definition~\ref{def:combexprule}).\index{combinatorially expanding!two-tile subdivision rule}\index{two-tile subdivision rule!combinatorially expanding}\index{expanding!combinatorially}
We will later prove that under an additional hypothesis a two-tile subdivision rule is 
combinatorially expanding if and only if it can be realized by an expanding Thurston map (Theorem~\ref{thm:combexp2}). 

 Our
definition of combinatorial expansion is  set up to be compatible with  the description of a Thurston
map by a two-tile subdivision rule. It 
 is clearly  not 
invariant under Thurston equivalence, because  we require the
existence of an invariant Jordan curve. For  Thurston maps $f$ with an
invariant curve,  combinatorial expansion is sufficient for
$f$ to be equivalent to an expanding Thurston map 
(Theorem~\ref{thm:combexp1}). However, this condition  not necessary
(Example~\ref{ex:exp_notcexp}). We are not aware of a necessary and
sufficient condition that is easy to check in practice (see \cite[Theorem~1.4]{HP_alg_exp} for an algebraic condition for  Thurston maps without 
periodic critical points).

This chapter is organized as follows. In
Section~\ref{sec:Thurtoncurves} we summarize facts related to
Thurston maps with invariant curves and the associated cell
decompositions (Proposition~\ref{prop:invmarkov}). Then we give
yet another characterization when $f$ is expanding
(Lemma~\ref{lem:charexpint}). We show that the quantities
$D_n(f,\CC)$ are supermultiplicative (Lemma~\ref{lem:submult}).

 In Section~\ref{sec:subdivisions} we discuss two-tile subdivision rules and prove Propositions~\ref{prop:ThmapSub} and ~\ref{prop:rulemapex}.  For two-tile subdivision rules the information given by an orienta\-tion-preserving labeling  
 $L\: \DD^1\ra \DD^0$ can be further compressed: for example, it is  uniquely determined if one knows the image of one flag in $\DD^1$  (see Lemma~\ref{lem:labeluniq} which is based on Lemma~\ref{lem:labelexis}). 
We will also discuss facts related to 
combinatorial expansion 
 (such as Lemma~\ref{lem:invrealize}) and conclude  the section    with a precise version of the  statement that the combinatorics of the sequence of cell decompositions $\DD^n$
 of a Thurston map realizing a subdivision rule  is determined by the subdivision rule alone (Proposition~\ref{prop:subandD^n}). 

Our results  pave the way for a convenient construction of Thurston maps from a combinatorial perspective. Section~\ref{sec:examples-two-tile} is devoted to this. We will exhibit several Thurston maps arising  from two-tile subdivision rules.

\section{Thurston maps with invariant curves}
\label{sec:Thurtoncurves}

 In the following, $f\: S^2\ra S^2$ is  a Thurston map, $\CC\sub S^2$ is  a Jordan curve with  $\post(f)\sub \CC$, and 
$\DD^n=\DD^n(f,\CC)$ for $n\in \N_0$ is  the cell decomposition
of $S^2$ given by the $n$-cells for $(f,\CC)$ according to
Definition~\ref{def:DDn}. 

As usual, a set  $M\sub S^2$ is called 
{\em $f$-invariant}
\index{invariant!set} 
\index{f-invariant@$f$-invariant}
(or simply {\em invariant} if $f$ is understood) if 
  \begin{equation}
  \label{eq:deffinv}
  f(M)\subset M\quad  \text{or equivalently} \quad M \sub f^{-1}(M). 
\end{equation}
We will mostly be interested in the case when $M=\CC$ is a Jordan
curve with $\post(f)\subset \CC$. The reason for this is that then the   cell
decomposition $\DD^n(f,\CC)$ induced by such an 
$f$-invariant Jordan curve\index{f-invariant@$f$-invariant!Jordan curve|textbf}\index{Jordan curve!f-invariant@$f$-invariant|textbf}\index{invariant!Jordan curve|textbf}
$\CC$ is refined by each cell decomposition $\DD^{m}(f,\CC)$ of higher levels  
$m\ge n$ 
(see Proposition~\ref{prop:invmarkov} below). 

Since the set $\post(f)$ is $f$-invariant, we  have  
\begin{equation} \label {1postinv}
\post(f)\sub f^{-1}(\post(f))\sub f^{-2}(\post(f))\sub \dots\,. 
\end{equation}
 
We know by  Proposition~\ref{prop:celldecomp}~\ref{item:skeletons}
that $${\bf V}^n={\bf V}^n(f,\CC)=f^{-n}(\post(f))$$ for $n\in \N_0$, and so \eqref{1postinv} is equivalent to the inclusions  
$${\bf V}^0\sub {\bf V}^1\sub {\bf V}^2\sub \dots $$
for the vertex sets of the cell decompositions $\DD^n$. 

In general,   a similar inclusion chain will not hold  for the $1$-skeleta $E^n\coloneqq f^{-n}(\CC)$ of $\DD^n$, but if $\CC$ is $f$-invariant, then it follows by induction that  
$$\CC=E^0\sub E^1\sub E^2\sub \dots\,.$$

The following proposition summarizes the properties of 
the cell decompositions 
$\DD^n(f,\CC)$ if $\CC$ is $f$-invariant. 

\begin{prop}
  \label{prop:invmarkov} 
  Let $k,n\in \N_0$, $f\: S^2\ra S^2$ be a Thurston map, and
  $\CC\sub S^2$ be an $f$-invariant Jordan curve with
  $\post(f)\sub \CC$.  Then we have:
  \index{f-invariant@$f$-invariant!Jordan curve}\index{Jordan curve!f-invariant@$f$-invariant}\index{invariant!Jordan curve}

\begin{enumerate}

\item
  \label{item:invmarkov1} 
  $\DD^{n+k}$ is a refinement of $\DD^k$,
  and 
  $(\DD^{n+k}, \DD^k)$ is a cellular Mar\-kov partition for $f^n$.  

\item
  \label{item:invmarkov2}
  Every
  $(n+k)$-tile $X^{n+k}$ is contained in a unique $k$-tile $X^k$. 

\item
  \label{item:invmarkov3}
  Every $k$-tile $X^k$ is equal to 
  the union of all $(n+k)$-tiles $X^{n+k}$
satisfying $X^{n+k}\sub X^k$.

\item
  \label{item:invmarkov4}
  Every $k$-edge  $e^k$ is equal to 
  the union of all $(n+k)$-edges 
 $e^{n+k}$
satsifying
$e^{n+k}\sub e^k$.

\item
  \label{item:invmarkov5} Let $c'\sub S^2$ be an $n$-cell  and
  $c\coloneqq f^n(c')$. Define 
  \begin{align*}
    \mathcal{M}'
    &\coloneqq
    \{ \tau': \tau' \text{ is an $(n+k)$-cell with $\tau'\sub c'$}
      \} 
      \text{ and}
    \\
    \mathcal{M}
    &\coloneqq
    \{ \tau: \tau \text{ is a $k$-cell with $\tau\sub c$} \}.     
  \end{align*}
Then $\mathcal{M}'$ and $\mathcal{M}$ are cell decompositions of $c'$ and $c$, respectively, and 
the map $\tau'\in \mathcal{M}'  \mapsto f^n(\tau')$ is an isomorphism of the cell complexes $\mathcal{M}'$ and $\mathcal{M}$.
\end{enumerate}

\end{prop}

If $f$ and $\CC$ are as in this proposition, then, in particular,
the pair $(\DD^1, \DD^0)$ is a cellular Markov partition for $f$
by statement \ref{item:invmarkov1}.
If $X^n $ is any $n$-tile, then by \ref{item:invmarkov2} there exist unique
$i$-tiles $X^i$ for $i=0, \dots, n-1$ such that 
$$X^n\sub X^{n-1}\sub \dots\sub X^0. $$ 

We refer to the statements \ref{item:invmarkov3} and
\ref{item:invmarkov4} informally by saying that  
 tiles and edges  are ``subdivided'' by tiles and edges of higher levels.
By statement  \ref{item:invmarkov5} each $n$-cell $c'$ is subdivided 
 by the $(n+k)$-cells contained in $c'$ ``in the same way'' as the corresponding $0$-cell $c=f^n(c')$ is subdivided by the $k$-cells  
 contained in $c$. 
 
\begin{proof}
  \ref{item:invmarkov1} 
  We know that the map $f^n$ is cellular for
  $(\DD^{n+k}, \DD^n)$
  (Proposition~\ref{prop:celldecomp}~\ref{item:fkcellular}); so we
  have 
  to show that $\DD^{n+k}$ is a refinement of $\DD^n$ (see
  Definition~\ref{def:ref}).  Since  $\CC$ is $f$-invariant, we have
  $E^{n+k}=f^{-(n+k)}(\CC)\supset E^k=f^{-k}(\CC)$, and so
  $S^2\setminus E^{n+k}\sub S^2\setminus E^k$.

To establish the first property of a refinement, we will show  
   that every $(n+k)$-cell is contained in some $k$-tile. 

 Let $\sigma$ be an arbitrary $(n+k)$-cell. If $\sigma$ is an
  $(n+k)$-tile, then $\inte(\sigma)$ is a connected set in 
$S^2\setminus E^{n+k}\sub S^2\setminus E^k$
and hence contained in the interior of a  $k$-tile $\tau$ (see
Proposition~\ref{prop:celldecomp}~\ref{item:nedgesC}).  
It follows that  $\sigma=\overline {\inte(\sigma)}\sub \tau$. 

 If $\sigma $ is an $(n+k)$-edge or an $(n+k)$-vertex, then it is contained in an
   $(n+k)$-tile (Lemma~\ref{lem:specprop}~\ref{item:prop_cell4}
   and~\ref{item:prop_cell5}), and hence in some  
   $k$-tile  by what we have just seen.

To establish the second property of a refinement,  let $\tau$ be an arbitrary $k$-cell.  We have  to show  that 
the $(n+k)$-cells $\sigma$ contained in $\tau$  cover $\tau$.  

If $\tau$ consists of a $k$-vertex $p$,  then $p$ is also an $(n+k)$-vertex, and the statement is trivial.

 If $\tau$ is a $k$-edge, consider the points in ${\bf V}^{n+k}$ that lie on $\tau$.
 Note that this includes the elements of $\partial \tau \sub {\bf V}^k\sub {\bf V}^{n+k}$. 
 By using these points to partition $\tau$, we can find  finitely many arcs $\alpha_1, \dots, \alpha_N$ such that
$\tau=\alpha_1\cup\dots\cup  \alpha_N$ and such that each arc $\alpha_i$ has endpoints in ${\bf V}^{n+k}\supset {\bf V}^k$  and  interior $\inte(\alpha_i)$ disjoint from 
${\bf V}^{n+k}$. Then  for each $i=1, \dots, N$ the set $\inte(\alpha_i)$ is a connected set in 
$E^k\setminus {\bf V}^{n+k}\sub E^{n+k}\setminus {\bf V}^{n+k}$. It follows that $\inte(\alpha_i)$ and hence also  $\alpha_i$ is contained in some  $(n+k)$-edge $\sigma_i$ (Proposition~\ref{prop:celldecomp}~\ref{item:nedgesC}).  
Since the endpoints of $\alpha_i$ lie in ${\bf V}^{n+k}$, they cannot lie in $\inte(\sigma_i)$, and so they are also endpoints of $\sigma_i$. This implies that $\alpha_i=\sigma_i$. In particular, the $(n+k)$-edges $\sigma_1, \dots ,\sigma_N$ are contained in $\tau$ and form a cover of $\tau$. The statement follows in this case. 
 
 Finally, let  $\tau$ be a $k$-tile. If $p\in\inte(\tau)$ is arbitrary, then $p$ is contained in an $(n+k)$-tile $\sigma$. 
 By the first part of the proof, $\sigma$ is contained in a $k$-tile. Since $\tau$ is the only $k$-cell that contains $p$, we must have $\sigma= \tau$. 
 This implies that the union of the $(n+k)$-tiles  contained in $\tau$ covers $\inte(\tau)$. On the other hand, this union consists of finitely many tiles and is hence a closed set. 
 It follows that the union also contains
 $\overline{\inte(\tau)}=\tau$.

 \smallskip 
 \ref{item:invmarkov2}
 We have just seen  that every $(n+k)$-tile $X^{n+k}$ is
 contained in a $k$-tile $X^k$. This tile is unique. To see this,
 suppose  that $\widetilde X^k$ is another $k$-tile with $X^{n+k}\sub \widetilde X^k$.
Then
$$ \emptyset\ne \inte(X^{n+k})\sub \inte(X^k)\cap \inte(\widetilde X^k),$$
and so $X^k$ and $\widetilde X^k$ have common interior points. This implies 
$X^k=\widetilde X^k$. 

\smallskip
\ref{item:invmarkov3}--\ref{item:invmarkov4} Both statements were established in the proof of \ref{item:invmarkov1}. 

\smallskip
\ref{item:invmarkov5} Note that under the given assumptions it  follows from  \ref{item:invmarkov1} that 
$c=f^n(c')$ is a $0$-cell. Moreover, again by  \ref{item:invmarkov1} 
the cell decomposition $\DD^{n+k}$ is a refinement of $\DD^n$, and $\DD^k$ is a refinement of $\DD^0$. This implies that  $\mathcal{M}'$ and $\mathcal{M}$ are cell decompositions of  $c'$ and $c$, respectively. 

It also follows from  \ref{item:invmarkov1}  that  
$f^n(\tau')\in \mathcal{M}$ whenever $\tau'\in \mathcal{M}'$. So we can define a map  $\varphi\: 
\mathcal{M}'\ra \mathcal{M}$ by setting $ \varphi(\tau')=f^n(\tau)$ for 
$\tau'\in \mathcal{M}'$. 
We have to show that this  map   $\varphi$ is an isomorphism of cell 
complexes (see Definition~\ref{def:compiso}). 

The map   $f^n|c'$ is a homeomorphism of the $n$-cell $c'$ onto the $0$-cell $c$. This implies that $\varphi$ is injective and that it satisfies the   
 conditions~\ref{item:compiso1} and~\ref{item:compiso2} in 
 Definition~\ref{def:compiso}.

 It remains to show that $\varphi$ is also surjective. To see this, let $\tau\in   \mathcal{M}$ be an arbitrary $k$-cell with $\tau\sub c$. Since $f^n|c'$ is a homeomorphism 
of $c'$ onto $c$, the set $\tau'\coloneqq (f^n|c')^{-1}(\tau)\sub c'$ is a topological cell. Moreover, $f^n|\tau'$ is a homeomorphism of $\tau'$ onto the $k$-cell $\tau$. Lemma~\ref{adhoc}~\ref{item:adhoc1} now implies that $\tau'$ is an $(n+k)$-cell, and so $\tau'\in 
  \mathcal{M}'$. Then $\varphi(\tau')=f^n(\tau')=\tau$, and so $\varphi$ is indeed surjective.  
\end{proof} 

\ifthenelse{\boolean{nofigures}}{}{
\begin{figure}
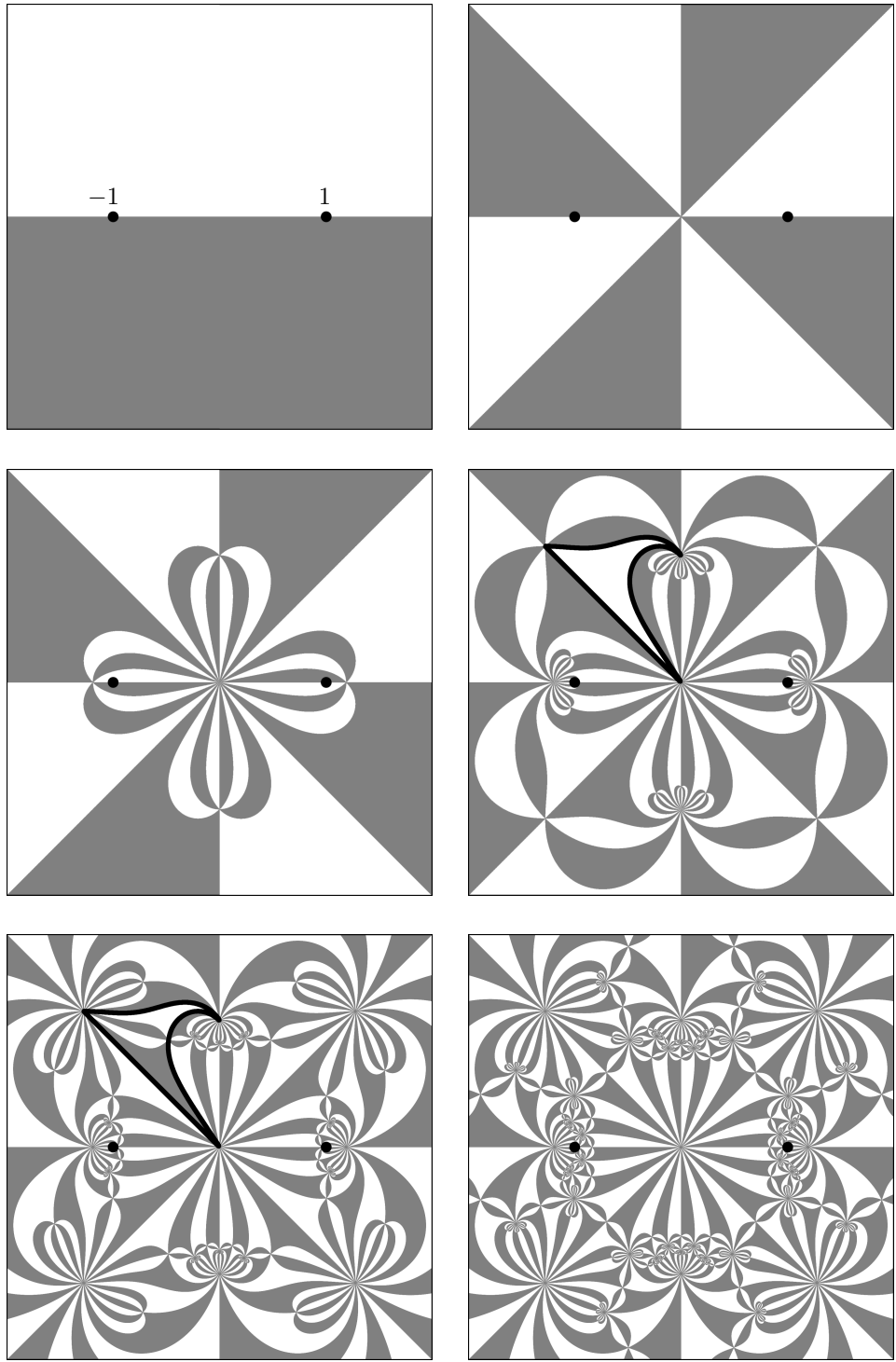

  \centering
  \begin{subfigure}{0.48\textwidth}
    \begin{overpic}
      [width=2.395in, 
      tics=20]{2Flap0}
      \put(19,53){$-1$}
      \put(73,53){$1$}
    \end{overpic}
  \end{subfigure}
  \hspace*{\fill}
  \begin{subfigure}{0.48\textwidth}
    \begin{overpic}
      [width=2.395in, 
      tics=20]{2Flap1}
    \end{overpic}
  \end{subfigure}

  \vspace{0.04\textwidth}
  \begin{subfigure}{0.48\textwidth}
    \begin{overpic}
      [width=2.395in, 
      tics=20]{2Flap2}
    \end{overpic}
  \end{subfigure}
  \hspace*{\fill}
  \begin{subfigure}{0.48\textwidth}
    \begin{overpic}
      [width=2.395in, tics=20]{2Flap3}
    \end{overpic}
  \end{subfigure}

  \vspace{0.04\textwidth}
  \begin{subfigure}{0.48\textwidth}
    \begin{overpic}
      [width=2.395in, tics=20]{2Flap4}
    \end{overpic}
  \end{subfigure}
  \hspace*{\fill}
  \begin{subfigure}{0.48\textwidth}
    \begin{overpic}
      [width=2.395in, tics=20]{2Flap5}
    \end{overpic}
  \end{subfigure}
  \caption{Subdividing tiles.}
  \label{fig:sub_div_tiles}
\end{figure}

}

We illustrate the previous proposition by an example that shows  how cells are subdivided. 

\begin{ex}
  \label{ex:f2flaps}
  In Figure~\ref{fig:sub_div_tiles} we indicate the cell decompositions 
  generated  by a Thurston map with an invariant 
  Jordan curve and the corresponding subdivision of cells. 
  Here we use 
  the map $f\colon \CDach \to \CDach$ given by $f(z) =
  1-2/z^4$. The ramification portrait of $f$ is
  \begin{equation*}
    \xymatrix @R=1pt{
      0 \ar[r]^{4:1} 
      & 
      \infty  \ar[r]^{4:1} 
      & 
      1 \ar[r] 
      & 
      -1 \rlap{.}\ar@(r,u)[]
    }
  \end{equation*}
  Thus $\post(f)= \{-1,1,\infty\}$. Let
  $\CC\coloneqq \widehat{\R}$ be the extended real line. Clearly,
  $\post(f) \subset \CC$ and $f(\CC) \subset \CC$, since the
  coefficients of $f$ are real (indeed $f(\CC) = [-\infty,1]$).    
  
  We let the closure of the upper half-plane be
  the white $0$-tile, and the closure of the lower half-plane be
  the black $0$-tile. The intersections of the resulting tiles of levels $0$ to $5$  with the square $[-2,2]^2\subset \R^2\cong \C$ 
  are shown in Figure~\ref{fig:sub_div_tiles}. 
  
 The upper and lower half-planes (i.e., the two
  $0$-tiles $\XOw$ and $\XOb$  shown on the top left) 
  are both subdivided into  four  $1$-tiles (shown on the top
  right). Similarly, each $n$-tile is subdivided into four 
  $(n+1)$-tiles. For illustration we have marked the boundary of one
  white $3$-tile (shown on the middle right) and the four
  $4$-tiles into which it is subdivided (shown on the bottom
  left). 

  Similarly, the $0$-edge $[-1,1] \subset \CC= \widehat{\R}$ is
  subdivided into the two $1$-edges $[-1,0]$ and $[0,1]$. Note
  that the $0$-edges $[-\infty,-1]$ and $[1,\infty]$ are also 
  $1$-edges. So these $0$-edges are each replaced 
  with  one $1$-edge. In the same way, each $n$-edge is replaced
  with one or two $(n+1)$-edges. 

  Note that it is possible to obtain the $(n+1)$-tiles from
  the $n$-tiles in the following way. Given an $n$-tile $X$, let
  $X^0$ be the $0$-tile of the same color.  Then there exists a unique homeomorphism
 $\varphi$ from $X^0$ (the upper or lower half-plane)
  onto $X$ that is a conformal map between the interiors of these tiles and 
  sends the points $-1,1,\infty$ (the $0$-vertices) to the corresponding vertices of $X$.  The map  $\varphi$ sends the four $1$-tiles  that
  subdivide $X^0$ to $X$.  The images of these $1$-tiles are 
 the
  $(n+1)$-tiles into which $X$ is subdivided. Similarly, $\varphi$ gives a bijection 
  between the 
  $1$-edges and $1$-vertices contained in $X$ and 
   the
  $(n+1)$-edges and $(n+1)$-vertices  contained in
  $X$. Note that  $\varphi$ is determined uniquely by $X$, once 
   we know the
  color of $X$, and the correspondence 
  between the vertices of $X$ and the $0$-vertices.
  We introduced the concept of a labeling (see Section~\ref{sec:labelings}) to keep track of such information. 
\end{ex}

As before, let $f\: S^2\ra S^2$ be a Thurston map, and $\CC\sub S^2$ be an
$f$-invariant Jordan curve with $\post(f)\sub \CC$. We consider cells for $(f,\CC)$.  By 
 $\mathcal{S}=\mathcal{S}(f,\CC)$ 
\index{S..@$\mathcal{S}(f,\CC)$}
 we denote the set of all sequences $\{X^n\}$, where  $X^n$ is an
$n$-tile for  $n\in \N_0$ and 
\begin{equation}
  \label{eq:Xseq} 
  X^0\supset X^1\supset X^2\supset \dots\,.
\end{equation}
  Since tiles are subdivided by tiles of higher level,  for each point 
$p\in S^2$ we can find a sequence $\{X^n\}\in \mathcal{S}$ such that 
$p\in \bigcap_n X^n$.  Here it is understood that the intersection is  taken over  all $n\in \N_0$. In the following,  we use 
a similar convention for intersections of sets labeled by some index $n$, $k$, etc.,  if the range of the indices   is clear from the context. 

 In general, a  sequence $\{X^n\}\in \mathcal{S}$ that contains a given point $p\in S^2$ is not  unique. Moreover,  the intersection $\bigcap_n X^n$ may  contain more than
one point. It turns out that this gives a criterion when $f$ is expanding.

\begin{lemma} 
  \label{lem:charexpint}
  \index{expanding} 
  \index{Thurston map!expanding} 
  Let $f\: S^2\ra
  S^2$ be a Thurston map, and $\CC\sub S^2$ be an $f$-invariant Jordan
  curve with $\post(f)\sub \CC$. Then the map $f$ is expanding if and
  only if for each sequence $\{X^n\}\in \mathcal{S}(f,\CC)$ the
  intersection $\bigcap_n X^n$ consists of precisely one point.
\end{lemma}

\begin{proof} Fix a metric on $ S^2$ that induces the  given topology on $S^2$. If $f$ is expanding and
$\{X^n\}\in \mathcal{S}=\mathcal{S}(f,\CC)$, then 
$\diam(X^n)\to 0$ as $n\to \infty$. Hence $\bigcap_n X^n$ cannot contain more than one point. On the other hand, this set is an intersection of 
a nested sequence of 
non-empty compact sets and hence non-empty.
So the set $\bigcap_n X^n$ contains precisely  one point. 
 
For the converse direction suppose that $\bigcap_n X^n$ is a singleton set for each sequence $\{X^n\}\in \mathcal{S}$. 
To establish that 
$ f$ is expanding we have to show that 
$$\lim_{n\to\infty} \max\,\{ \diam ( X): X \text{ is an $n$-tile} \}=0.$$ 
We argue by contradiction and assume that this is not the case. Then there exists $\delta>0$ 
such that $\diam( X )\ge \delta$ for some tiles $X$   of arbitrarily high level.

We  now define a descending sequence of tiles 
$X^0\supset X^1\supset X^2\supset\dots$ as follows.  Let $X^0$ be a $0$-tile such that 
$X^0$ contains  tiles $X$ of arbitrarily high levels
with $\diam( X )\ge \delta$. Since 
 every tile is contained in one of the two  $0$-tiles, there exists such a $0$-tile $X^0$. Note that then 
$\diam( X^0 )\ge \delta$. Moreover, among the finitely many $1$-tiles into which  $X^0$ is subdivided there must be a $1$-tile $X^1\sub X^0$ such that $X^1$ 
contains tiles $X$ of arbitrarily high levels
with $\diam(X )\ge \delta$. Again this implies that $\diam( X^1 )\ge \delta$.
Repeating this procedure, we obtain a sequence $\{X^n\}\in \mathcal{S}$ such that  
$\diam( X^n )\ge \delta$ for all $n\in \N_0$.
It is easy to see that this implies that the set $\bigcap_n X^n$ also has 
diameter $\ge \delta>0$, and so it contains at least two points. This is a contradiction showing that $f$ is expanding. 
\end{proof} 

The main idea in the previous proof is essentially K\H{o}nig's
infinity lemma from graph 
theory\index{Konig's infinity lemma@K\H{o}nig's infinity lemma} (see for 
example \cite[Lemma~9.1.3]{Di});   it says that a (locally finite) simplicial  tree with arbitrarily long branches has an infinite branch.


%



 Recall the definition of the numbers $D_n=D_n(f,\CC)$ in \eqref{def:dk}. 
We know that $D_n\to \infty$ if $f$ is expanding (see Lemma~\ref{lem:Dtoinfty}). If $f$ is not necessarily expanding, but $\#\post(f)\ge 3$ and  the Jordan curve $\CC$ used in the definition of $D_n$ is  invariant,  then  the numbers $D_n$ are non-decreasing, i.e., $ D_{n+1}\ge D_n$ for  $n\in \N_0$.

To see this, we consider tiles for $(f,\CC)$.  Let   $n\in \N_0$ be arbitrary.  Then by definition of $D_{n+1}$, there exist 
$(n+1)$-tiles $X_1, \dots, X_N$ with $N=D_{n+1}$ whose union is a connected set  joining opposite sides of $\CC$. The tile $X_i$ is contained in an $n$-tile $Y_i$ for $i=1, \dots, N$. Then the union of the $n$-tiles $Y_1, \dots, Y_N$  is a connected set joining opposites sides of $\CC$. It follows that $D_n\le N=D_{n+1}$ as desired.

 The following lemma establishes 
the much deeper fact that these quantities are actually supermultiplicative (in an appropriate  sense). This  implies  that the numbers $D_n$ increase exponentially fast under the additional assumption that there exists $n_0\in \N$ with $D_{n_0}\ge 2$ (see Lemma~\ref{lem:submultexp}). 

\index{d0 n@$D_n$} 
\begin{lemma} 
  \label{lem:submult}
  Let $f\: S^2\ra S^2$ be a Thurston map with $\#\post(f)\ge 3$,    $\CC\sub S^2$ be a Jordan curve with $\post(f)\sub \CC$, and  $D_n=D_n(f,\CC)$ for $n\in \N_0$. 
 Suppose that  $\CC$ is $f$-invariant.  Then for all $n,k\in \N_0$ we have 
  $$ D_{n+k}\ge D_nD_k$$  
   if $\#\post(f)\ge 4$, and 
  \begin{equation} 
    \label{weaksuper} 
    D_{n+k}\ge D_n(D_k-1)+1
  \end{equation} 
  if  $\#\post(f)=3$. 
\end{lemma}
  
In the proof of this lemma we will use $n$-chains. Recall from
Definition~\ref{def:chains} that such an $n$-chain is a finite
sequence of $n$-tiles $X_1,\dots, 
X_N$ with  $X_i\cap X_{i+1}\neq \emptyset$ for $i=1, \dots,
N-1$.
\index{n9@$n$-!chain}



\ifthenelse{\boolean{nofigures}}{}{
\begin{figure}
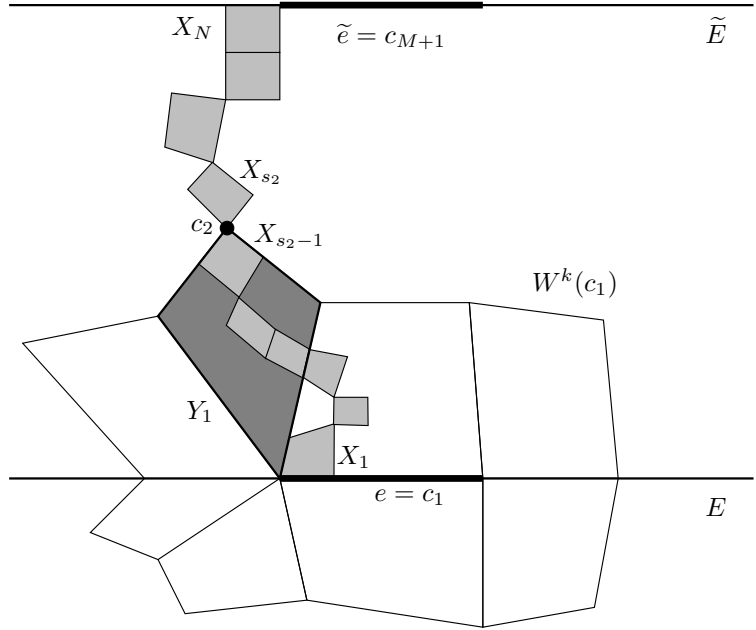

  \centering
  \begin{overpic}    
    [width=10cm, 
    tics=20]{proofDk.eps}
    \put(93,15){$E$}
    \put(49,17){$e=c_1$}
    \put(44,22){${X_1}$}
    \put(93,78){$\widetilde{E}$}
    \put(44,78){$\widetilde{e}=c_{M+1}$}
    \put(70,45){$W^k(c_1)$}
    \put(24.6,53){$c_2$}
    \put(33,51.5){$X_{s_2-1}$}
    \put(31,60){$X_{s_2}$}
    \put(24,28){$Y_1$}
    \put(22,79){$X_N$}
  \end{overpic}
  \caption{The proof of Lemma \ref{lem:submult}.}  
  \label{fig:proofDk}
\end{figure}
}
 
\begin{proof}[Proof of Lemma \ref{lem:submult}]
    
 {\em Case 1:} $\#\post(f)\geq 4$.  Let  $X_1, \dots, X_N$ be  a set of 
  $(n+k)$-tiles  whose union is connected and joins opposite sides of
  $\CC$. We may assume that these tiles form a chain joining disjoint
  $0$-edges $E$  and $\widetilde{E}$. To prove the desired inequality,
  we will break this chain into $M$ subchains $X_{s_i}, \dots,
  X_{s_{i+1}-1}$, where $M\in \N$, $i=1, \dots, M$, and $s_1=1<s_2<\dots<s_{M+1}=N+1$. 
    The length of each subchain (i.e., the number 
  $s_{i+1}-s_i$) will be at least $D_n$. The number $M$ of 
  subchains will be at least $D_k$. Thus $N\geq D_n D_k$, and since the minimum over all  $N$ is equal to $D_{n+k}$, the desired inequality follows. 
  
  To guarantee  the desired lower bound on the length, we will ensure  that
   each subchain $X_{s_i}, \dots, X_{s_{i+1}-1}$  
      joins disjoint $k$-cells.
   Then  the length of such a subchain is at least $D_n$ by 
   Lemma~\ref{lem:flowerbds}. 

  To control the number of  subchains, we will associate with each one 
  a $k$-tile $Y_i$. These $k$-tiles $Y_1,\dots, Y_M$ will
  form a $k$-chain joining $E$ and $\widetilde E$, and hence opposite sides of $\CC$.   Thus $M$, which is 
  the number of $k$-tiles in this chain, as well as the number of subchains, is at
  least $D_k$ (by definition of this quantity; see \eqref{def:dk}).

  We now provide the details of the construction, which is illustrated
  in Figure \ref{fig:proofDk}. 
  We will use auxiliary $k$-cells 
  $c_1,c_2, \dots$ of dimension $\le 1$.  
  If $c_i$ is  $0$-dimensional, then $c_i$ consists of a 
 $k$-vertex $p_i$, and we let $W^k(c_i)\coloneqq W^k(p_i)$ (see Definition~\ref{def:flower}). 
If $c_i$ is $1$-dimensional, then $c_i$ is a $k$-edge and    
 $W^k(c_i)$ is the edge flower of
 $c_i$ as in Definition~\ref{def:edgeflower}.

Since $\CC$ is $f$-invariant, the cell decomposition $\DD^{k}$ is a refinement of $\DD^0$.  
Hence there exist disjoint 
$k$-edges $e\sub E$ and $\widetilde e\sub \widetilde E$ with 
$X_1\cap e\ne \emptyset$ and $X_N\cap \widetilde e\ne \emptyset$. 

For some  number $M\in \N$ we will now inductively define  $k$-cells  $c_1, \dots, c_{M+1}$ of dimension $\le 1$, 
$k$-tiles $Y_1, \dots,  Y_M$, and indices $s_1=1< s_2<\dots <s_{M+1}=N+1$  with the following properties: 

\begin{enumerate} 
     
\item
  \label{item:k_tile_chain1}
  $c_1=e$,  $c_{M+1}=\widetilde e$, 
  and $c_{i}\cap c_{i+1}=\emptyset $ for $i=1, \dots, M$. 
      
\item 
  \label{item:k_tile_chain2}
  $c_{i}\cap Y_i\ne \emptyset$ for $i=1, \dots , M$, $c_{i+1}\sub \partial Y_i$
  for $i=1, \dots , M-1$, and $\widetilde e\cap Y_M \ne \emptyset$. 
  
\item 
  \label{item:k_tile_chain3}
  $X_{s_{i}}, \dots,  X_{s_{i+1}-1}$ is an $(n+k)$-chain
  joining $c_{i}$ and $c_{i+1}$ for $i=1, \dots, M$.  
     
\end{enumerate}

     Note that   \ref{item:k_tile_chain1} and \ref{item:k_tile_chain2} imply  that $E\cap Y_1\supset  e\cap Y_1\ne \emptyset$, 
     $\widetilde E\cap Y_M\supset \widetilde e\cap Y_M\ne \emptyset$, and $Y_i\cap Y_{i+1}\supset c_{i+1}\cap Y_{i+1}\ne \emptyset$
     for $i=1, \dots, M-1$. Hence $Y_1, \dots, Y_M$ will be  a $k$-chain joining the  
  $0$-edges $E$ and $\widetilde E$ as desired.

      Let $s_1=1$ and $c_1=e$.
 Suppose first that $\widetilde e$ meets $\overline{W^k(c_1)}$.
   Since $\widetilde e$ is disjoint from $e=c_1$ and hence from 
   $W^k(c_1)$, the points in $\widetilde e\cap 
   \overline{W^k(c_1)}$  lie in  $\partial W^k(c_1)$. By  Lemma~\ref{lem:edgeflower}~\ref{item:edgeflower2} there 
  exists   a $k$-tile $Y_1$ that meets both $c_1$ and 
   $ \widetilde e\supset \widetilde e\cap 
   \overline{W^k(c_1)}$. 
     We let  $M=1$, set $c_2=\widetilde e$,  and stop the construction. We have all the desired properties   \ref{item:k_tile_chain1}--\ref{item:k_tile_chain3}.

 In the other case, where $\widetilde e\cap
 \overline{W^k(c_1)}=\emptyset$,  not all 
the $(n+k)$-tiles  $X_1, \dots, X_N$ are contained in $\overline{W^k(c_1)}$.  So there exists
    a smallest index $s_2\ge  1$ such that $X_{s_2}$ meets 
    $S^2\setminus \overline{W^k(c_1)}$. Then $s_2>s_1=1$, because $X_1$ meets $e=c_1$ and is hence contained in $\overline{W^k(c_1)}$. To see this, we use Lemma~\ref{lem:edgeflower}~\ref{item:edgeflower3} and the fact that $X_1$ is contained in some $k$-tile. Moreover,  for a similar reason we have  $X_{s_2}\sub S^2
    \setminus W^k(c_1)$.  By definition of $s_2$ the set 
    $X_{s_{2}-1}$ is contained in  $\overline{W^k(c_1)}$. Hence
    every point in the non-empty intersection  $X_{s_{2}-1}\cap X_{s_2}$  lies in  $\partial W^k(c_1)$. Note that the $(n+k)$-tiles $X_{s_{2}-1}$ and $ X_{s_2}$ do not necessarily meet in a $k$-vertex; but  by Lemma~\ref{lem:edgeflower}~\ref{item:edgeflower2} there exists a $k$-cell  $c_2\sub \partial W^k(c_1)$ of dimension $\le 1$  that has common points with both $X_{s_{2}-1}$ and $X_{s_2}$,  and a  $k$-tile $Y_1\sub \overline {W^k(c_1)}$ with $ c_1\cap Y_1\ne \emptyset$ and 
   $c_2\sub \partial Y_1$.  Then  $c_1\cap c_2=\emptyset=c_2\cap \widetilde e$, and  the chain $X_{s_1}=X_1, \dots, X_{s_2-1}$ joins $c_1$ and $c_2$. 
    
   We can now repeat the construction as in the first step by using
   the chain $X_{s_2}, \dots, X_N$ that joins the disjoint $k$-cells
   $c_2$ and $\widetilde e$, etc.  If in the process one of the cells
   $c_i$ has dimension $0$, we invoke
   Lemma~\ref{lem:flowerprop}~\ref{item:flower_prop2} and
   \ref{item:flower_prop3} instead of
   Lemma~\ref{lem:edgeflower}~\ref{item:edgeflower2} and
   \ref{item:edgeflower3} in the above construction.  The construction
   eventually stops, and it is clear that we obtain cells and indices
   with the desired properties. The statement follows in this case.

  \smallskip
   {\em Case~2:} $\#\post(f)=3$. Let $E_1,E_2,E_3$ be the three $0$-edges. 
   Consider a connected union $K$ of $(n+k)$-tiles joining  opposite
   sides of $\CC$ with  $N=D_{n+k}$ elements. Then  $K$ meets
   $k$-edges contained in the $0$-edges, say $k$-edges $e_i\sub E_i$
   for $i=1,2,3$. From $K$  we can extract a simple $(n+k)$-chain
   joining $e_1$  and $e_2$ as well as another simple chain that
   joins $e_3$ to one tile $X$ in the chain  joining $e_1$  and
   $e_2$.  Starting from this ``center tile'' $X$, we can find three
   simple $(n+k)$-chains that join $X$ to the edges $e_1,e_2,e_3$,
   respectively, and have only the tile $X$ in common. 
    
   More precisely, for $i=1,2,3$ we can find    
a number
$N_i\in \N_0$ 
 and $(n+k)$-chains 
 $X, X^i_1, \dots, X^i_{N_i}$  that join $X$ and
   $e_i$.  Here the first tile $X$ is the same in all chains and it
   is understood that the chain consists only of  $X$ if $N_i=0$.
   Moreover, all the $(n+k)$-tiles  
   $$X, X^1_1, \dots, X^1_{N_1}, X^2_1, \dots, X^2_{N_2}, X^3_1, \dots, X^3_{N_3}$$
   are pairwise distinct tiles from $K$. Thus their number is bounded 
   by  the number of $(n+k)$-tiles in $K$. Since they still form a
   connected set joining opposite sides of $\CC$, we have 
   $N_1+N_2+N_3+1= N=D_{n+k}$. 
   
  Let $Y$ be the unique $k$-tile with $X\sub Y$, and consider the chain $X,
   X^1_1, \dots , X^1_{N_1}$. 
   Suppose that $Y\cap e_1=\emptyset$. Since $X\sub Y$, we have
   $N_1\ge 1$, and the chain  $ X_1, \dots, X_{N_1}$ joins $Y$ and
   $e_1$. Hence this chain  or a subchain  must also join a  $k$-edge
   $e \sub \partial Y$ and $e_1$.  
   Then $e\cap e_1=\emptyset$. As in the first part of
   the proof, we can find $k$-tiles  $Y_1, \dots , Y_{M_1}$ joining
   $e$ and $e_1$, where $M_1\in \N$  and  $N_1\ge M_1
   D_n$.   
   
   If $Y\cap e_1\neq\emptyset$, we set $M_1=0$ and do not define new
   $k$-tiles. In any case we have that $Y, Y^1_1, \dots, Y^1_{M_1}$
   is a chain joining $Y$ and $e_1$ (again we use the convention that
   this chain consists only of $Y$ if $M_1=0$). We also have  $N_1\ge
   M_1 D_n$ (which is trivial if $M_1=0$).  
   
   Using  a similar construction for the other indices  $i=2,3$,   we
   obtain numbers $M_i\in \N_0$  for each $i=1, 2,3$ that satisfy $N_i\ge
   M_iD_n$,  and chains  
   $Y, Y^i_1, \dots, Y^i_{M_i}$ of $k$-tiles that join $Y$ and $e_i$. 
   The union of these $k$-tiles is a connected set joining opposite sides
   of $\CC$. Therefore, it contains at least $D_k$ distinct elements. On the other hand, the number of distinct $k$-tiles in the union is   
   at most $M_1+M_2+M_3+1$ (note that the
   three chains may have other $k$-tiles in common apart from $Y$). Hence  $D_k\le M_1+M_2+M_3+1$, 
   and it follows that 
   \begin{align*}
   D_n(D_k-1)+1&\le D_n(M_1+M_2+M_3)+1\\ &\le N_1+N_2+N_3+1\le D_{n+k}, 
   \end{align*} 
   which is the desired inequality (\ref{weaksuper}).
   \end{proof}

 \begin{lemma} \label{lem:submultexp}
  Let $f\: S^2\ra S^2$ be a Thurston map with $\#\post(f)\ge 3$,    $\CC\sub S^2$ be a Jordan curve with $\post(f)\sub \CC$, and  $D_n=D_n(f,\CC)$ for $n\in \N_0$. 
 Suppose that  $\CC$ is $f$-invariant.   Then the limit 
$$\Lambda_0\coloneqq\lim_{n\to \infty}D_n^{1/n} $$ 
\index{d0 n@$D_n$}
\index{L0@$\Lambda_0$}
exists 
and $\Lambda_0= \displaystyle \sup_{n\in \N}D_n^{1/n} \le
 \deg(f)$.
  
  If in addition there exists $n_0\in \N$ with $D_{n_0}\ge 2$, then $\Lambda_0>1$  and   $D_n\to \infty$ as $n\to \infty$. 
  \end{lemma}
    The lemma shows that if $f$ is combinatorially expanding for $\CC$,   then 
   $\Lambda_0>1$. So if $\Lambda\in (1,\Lambda_0)$ is arbitrary, then 
 $D_n\gtrsim \Lambda^n$ for all $n$.  
 
 We will see later (Proposition~\ref{prop:exp}) that if $f$ is
 expanding, but $\CC$  is not necessarily $f$-invariant, then the
 limit $\Lambda_0=\lim_{n\to \infty} D_n(f,\CC)^{1/n}$ still
 exists and is independent of $\CC$. Moreover, the improved
 estimate $\Lambda_0\leq \deg(f)^{1/2}$ holds (see
 Proposition~\ref{prop:macgrdn}). 
 
 \begin{proof} We use Lemma~\ref{lem:submult}. 
 First note that inequality \eqref{weaksuper} is also true if $\post(f)=4$. The statements now essentially follow from Fekete's lemma (see \cite[Proposition~9.6.4]{KH}). We provide the 
 details for the convenience of the reader.
 
  A simple induction argument using \eqref{weaksuper} shows that if 
$D_{N}\ge 2$ for some $N\in \N$, then 
\begin{equation} \label{eq:supmult5} 
D_{kN}\ge D_N^{k-1}+1 
\end{equation}  for all $k\in \N$. 
For such $N$ let $k(n)=\lfloor  n/N\rfloor$. Noting that the sequence $\{D_n\}$ is non-decreasing and using \eqref{eq:supmult5},  we obtain 
\begin{align*} 
  \liminf_{n\to \infty} \frac1n \log (D_n)&\ge 
\liminf_{n\to \infty} \frac1n \log (D_{k(n)N})
  \\ &\ge  \liminf_{n\to \infty} \frac{k(n)-1}n \log (D_N)
       = \frac1N \log (D_N). 
\end{align*}
This inequality is trivially true if $D_N=1$, and so
 $$\liminf_{n\to \infty} \frac1n \log (D_n)\ge \sup_{n\in \N}\frac1n \log (D_n). $$
 On the other hand, 
   $$\limsup_{n\to \infty} \frac1n \log (D_n)\le \sup_{n\in \N} 
 \frac1n \log (D_n),  $$
 and so the limit $$\displaystyle \alpha\coloneqq  \lim_{n\to\infty}\frac1n \log (D_n)\quad \text{exists and}\quad
 \alpha=\sup_{n\in \N} \frac1n \log (D_n). $$ 
Note that $D_n\le \#\X^n(f,\CC)\le 2\deg(f)^n$ which implies $\alpha\le \log(\deg(f))$. The first part of the statement now follows by taking exponentials.  

For the last part suppose that  there exists $n_0\in \N$ with $D_{n_0}\ge 2$.   Then
$$\Lambda_0=\sup_{n\in \N}D_n^{1/n} \ge  D_{n_0}^{1/n_0}>1, $$ and it is clear from the definition of $\Lambda_0$ as a limit that $D_n\to \infty$ as $n\to\infty$.
 \end{proof}

 We conclude this section with  a lemma that shows that if a Thurston map is combinatorially expanding, then there are   points that lie ``deep inside'' a given 
  edge or tile. It is related to Lemma~\ref{lem:quasiball}  for expanding Thurston maps.
   
\begin{lemma} \label{lem:vinint}
Let $f\:S^2\ra S^2$ be a Thurston map that satisfies
$\#\post(f)\ge 3$, and let $\CC\sub S^2$ be an $f$-invariant Jordan curve with $\post(f)\sub \CC$. Suppose  that  
  $D_{n_0}(f,\CC)\ge 2$ for some   $n_0\in \N$. 
  
  \index{combinatorially expanding}
  \index{expanding!combinatorially}
  \index{Thurston map!combinatorially expanding}

 \begin{enumerate}
 
  \item
    \label{item:vinint1}
    If $n\in \N_0$ and  $e$ is an $n$-edge, then there exists an $(n+n_0)$-vertex $p$ with $p\in \inte(e)$. 
 
  \item
    \label{item:vinint2}
    If $n\in \N_0$ and $X$ is an $n$-tile, then there exists an
    $(n+n_0)$-edge with $\inte(e)\sub \inte(X)$, and an
    $(n+2n_0)$-vertex $p$ with $p\in \inte(X)$.  
  \end{enumerate} 
  \end{lemma} 

\begin{proof} In the previous statements and the ensuing proof  it  is understood that the term $k$-cell for $k\in \N_0$ refers to a cell in 
$\DD^k=\DD^k(f,\CC)$.

\smallskip
\ref{item:vinint1} 
Suppose $e$ is an $n$-edge that does not contain $(n+n_0)$-vertices in its interior.
By Proposition~\ref{prop:invmarkov}~\ref{item:invmarkov4} the $n$-edge $e$ is equal to the union of all $(n+n_0)$-edges contained in $e$. Thus  $e$ must be an $(n+n_0)$-edge
itself. Let $u$ and $v$ be the endpoints of $e$, and $X$ be an $(n+n_0)$-tile 
containing $e$ in its boundary. Then $K=X$ meets the two disjoint $n$-cells $\{u\}$ and $\{v\}$. Hence by Lemma~\ref{lem:flowerbds} the set $K$ should consist of at least $D_{n_0}=D_{n_0}(f,\CC)\ge 2$ $(n+n_0)$-tiles.   This is a contradiction proving the statement.

\smallskip
\ref{item:vinint2}
Let $X$ be an $n$-tile. By  
Proposition~\ref{prop:invmarkov}~\ref{item:invmarkov3} we know that $X$ is the union of all $(n+n_0)$-tiles contained in $X$. In particular, there exists an $(n+n_0)$-tile $Y$ 
with $Y\sub X$. We claim that there exists an $(n+n_0)$-edge in the boundary of $Y$ that meets $\inte(X)$. Otherwise, $\partial Y\cap \inte(X)=\emptyset$, and as $Y\sub X$, we must have $\partial Y\sub \partial X$. Since both sets $\partial Y$ and $\partial X$ are Jordan curves, this is only possible if $\partial Y=\partial X$. Then $Y$ meets all $n$-vertices contained in $\partial X$, and two distinct $n$-vertices in particular. As in the proof of \ref{item:vinint1}, this leads to a contradiction. 

Hence there exists an $(n+n_0)$-edge $e$ with $e\cap \inte(X)\ne \emptyset$. 
Since $\inte(X)$ is an open subset of $S^2$, we then also have $\inte(e)\cap \inte(X)\ne \emptyset$. 
Since $\DD^{n+n_0}$ is a refinement of $\DD^n$, by Lemma~\ref{lem:mincell} 
we know that there is a unique cell $\tau$ in $\DD^n$ with $\inte(e)\sub \inte(\tau)$. Then
$\inte(\tau)\cap \inte(X)\ne \emptyset$, and so $X=\tau$ by
condition~\ref{item:def_cell2}
in Definition~\ref{def:celldecomp}. 
Hence $\inte(e)\sub \inte(X)$ as desired. 

By \ref{item:vinint1} there exists an $(n+2n_0)$-vertex $p$ with $p\in \inte(e)$. Then we also have $p\in \inte(X)$ as desired. 
\end{proof}

  \section{Two-tile subdivision rules}
\label{sec:subdivisions}

 In this section we consider a $2$-sphere $S^2$ and two given cell decompositions $\DD^0$ and $\DD^1$ of $S^2$. We call  the cells in $\DD^0$ 
 the $0$-cells and the cells in $\DD^1$  the $1$-cells. Similarly, we refer to the tiles in $\DD^0$ as the $0$-tiles, the edges in $\DD^1$ as the $1$-edges, etc.
We know by Proposition~\ref{prop:thurstonex} that under suitable additional assumptions for such a pair $(\DD^1, \DD^0)$ there exists a Thurston  map $f\: S^2\ra S^2$   that is cellular for 
 $(\DD^1, \DD^0)$.  The map $f$ is unique
up to Thurston equivalence if additional data is provided, namely a labeling
 $L\: \DD^1\ra \DD^0$.  The following simple 
 example illustrates  that  the pair  $(\DD^1, \DD^0)$ alone without the labeling is in general not enough to determine the map $f$.

  \begin{figure}
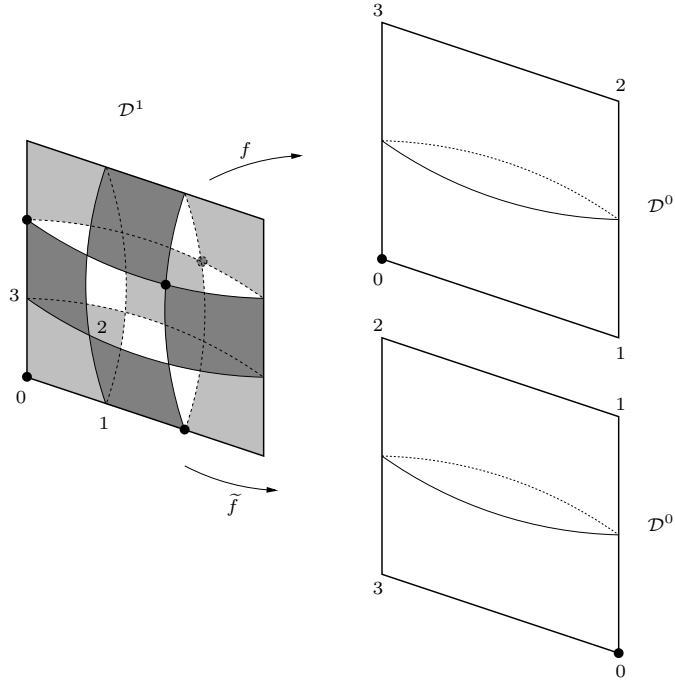

  \centering
  \begin{overpic}
    [width=8cm, tics=10,
    ]{2_2tilesubdiv}
    \put(32,23){$\scriptstyle{\widetilde{f}}$}
    \put(34,79){$\scriptstyle{f}$}
    \put(55,58.5){$\scriptstyle{0}$}
    \put(93,47){$\scriptstyle 1$}
    \put(93,89){$\scriptstyle 2$}
    \put(55,100.7){$\scriptstyle 3$}
    \put(55,10){$\scriptstyle{3}$}
    \put(93,-3){$\scriptstyle 0$}
    \put(93,39){$\scriptstyle 1$}
    \put(55,51.4){$\scriptstyle 2$}
    \put(-1,40){$\scriptstyle{0}$}
    \put(12,36){$\scriptstyle 1$}
    \put(11.7,51){$\scriptstyle 2$}
    \put(-2,56){$\scriptstyle 3$}
    \put(15,85){$\scriptstyle{\DD^1}$}
    \put(98,70){$\scriptstyle \DD^0$}    
    \put(98,20){$\scriptstyle {\DD}^0$}
  \end{overpic}
  \caption{Two 
    subdivision rules.}
  \label{fig:2_2tilesubdiv}
\end{figure}

\begin{ex}
  \label{ex:subdiv_diff_labels}
  Figure~\ref{fig:2_2tilesubdiv}
  shows
   two-tile subdivision
  rules $(\DD^1, \DD^0, L)$ and
  $({\DD}^1,{\DD}^0, \widetilde{L})$ on a sphere $S^2$ given as
  a pillow obtained by gluing two copies of a square together
  along their boundaries. The cell decomposition $\DD^0$ is
  shown  twice on the right in  the figure with the two
  squares as the $0$-tiles and their common sides and corners as
  $0$-edges and $0$-vertices, respectively. Each of the two
  $0$-tiles is subdivided into nine squares of equal size. From
  this one obtains a refinement $\DD^1$ of $\DD^0$ as indicated
  on the left in  the figure. We color the tiles black and white
  so that $\DD^1$ and $\DD^0$ are checkerboard tilings. Here the
  top square of the pillow is white and the colors of the
  $1$-tiles are as indicated on the left in 
  Figure~\ref{fig:2_2tilesubdiv}.
    
There are unique orientation-preserving labelings  $L$ and $\widetilde L$ that send $1$-tiles to 
  $0$-tiles of the same colors and the $1$-vertices marked
  as a black dot on the left  to the one $0$-vertex marked in
  the same way in the two representations of $\DD^0$  on the right
  (one can easily see this directly or derive it as a special case of 
   Lemma~\ref{lem:labeluniq}~\ref{item:labeluniq2} below). 
 In the figure we also show  additional markings of some corresponding  vertices for better illustration. 
 
 In this way we get two subdivision rules 
  $(\DD^1, \DD^0, L)$ and $({\DD}^1, {\DD}^0,
  \widetilde{L})$ realized by Thurston maps $f$ and $\widetilde f$ as indicated.
  The maps are uniquely determined if we require in addition that they are piecewise scaling maps on $1$-tiles. 
 The map $\widetilde f$ assigns the same  $0$-tile  to  each  $1$-tile  as $f$, followed by an additional rotation.


  The maps $f$ and $\widetilde{f}$ 
  are not Thurston equivalent. In fact, every
  postcritical point of  $f$ is a fixed point, whereas no postcritical
  point of $\widetilde{f}$ is a fixed point (each postcritical point
  of  $\widetilde{f}$ is periodic with period $4$). Note that both  maps  are (Thurston equivalent to) Latt\`{e}s maps.
\end{ex}

As we know from Section~\ref{sec:tiles},  every Thurston map
$f\:S^2\ra S^2$ arises as a cellular map for a pair of cell
decompositions $\DD^1$ and  $\DD^0$ of $S^2$.  This  gives a
useful description of a Thurston map in combinatorial terms. If
one wants to study the dynamics of $f$, one is interested in the
cell decompositions $\DD^n$ obtained from    pulling back
$\DD^0$ by $f^n$ as in Lemma~\ref{lem:pullback}. In general, in
order to determine the combinatorics of the whole sequence
$\DD^n$, $n\in \N_0$ (i.e., the inclusion and intersection
patterns of cells of possibly distinct 
levels),  is not enough to just  know  the pair $(\DD^1, \DD^0)$ and the labeling $\tau\in \DD^1\mapsto f(\tau)\in \DD^0$, but one also needs specific information on the {\em pointwise}   mapping behavior of $f$ on the cells in $\DD^1$.  Indeed,  suppose   $g$ is another map that is cellular for $(\DD^1, \DD^0)$ and
induces the same labeling as $f$,  i.e., $f(\tau)=g(\tau)$ for all $\tau\in \DD^1$. 
Let $\widetilde \DD^n$ be the cell decomposition of $S^2$  obtained from $\DD^0$ by pulling back by $g^n$. Then one can show (by an argument very similar to the considerations in  the proof of  Lemma~\ref{lem:cexp_Cinv} below) that $\DD^n$ and $\widetilde \DD^n$ are isomorphic cell complexes (see Definition~\ref{def:compiso}) for {\em fixed} $n\in \N_0$.   In contrast,  the intersection 
patterns of corresponding cells $\sigma\in \DD^n$, $\tau\in
\DD^m$ and $\widetilde{\sigma}\in \widetilde \DD^n$,
$\widetilde{\tau}\in \widetilde{\DD}^m$, for different 
levels $n,m\in 
\N_0$,
may not be the same.

The situation changes  if $(\DD^1, \DD^0)$ is a cellular Markov partition  for $f$, because then 
the combinatorics of the sequence $\DD^0,\DD^1,\DD^2, \dots$ is completely determined by 
$(\DD^1, \DD^0)$ and the combinatorial data given by the labeling
$\tau\in \DD^1\mapsto f(\tau)\in \DD^0$ (see
Remark~\ref{rem:two-tile_fV0}~(ii) and Proposition~\ref{prop:subandD^n}).
This suggests that if one wants to study Thurston maps as given by 
Proposition~\ref{prop:thurstonex} from a purely combinatorial point of view, then one should add the additional assumption that $\DD^1$ is a refinement of $\DD^0$. If we restrict 
  ourselves to the case 
where $\DD^0$ contains only two tiles, then we are led to the concept of a 
two-tile subdivision rule as defined in the introduction of this chapter. 

The proofs of Propositions~~\ref{prop:ThmapSub} 
and~\ref{prop:rulemapex} are easy consequences of our previous considerations. 

\begin{proof}[Proof of Proposition~\ref{prop:ThmapSub}] Suppose $f$, $\CC$, $\DD^0$, $\DD^1$, and $L$ are as in the statement. Then $L$ is an orientation-preserving labeling (see Section~\ref{sec:labelings}) and $\DD^1$ is a refinement 
of $\DD^0$  (see  
Proposition~\ref{prop:invmarkov}~\ref{item:invmarkov1}). It now  follows from 
Proposition~\ref{prop:celldecomp} and 
 the discussion after Definition~\ref{def:subdivcomb} that 
$(\DD^0, \DD^1, L)$ is a two-tile subdivision rule realized by $f$.   \end{proof}

 \begin{proof}[Proof of Proposition~\ref{prop:rulemapex}] 
   The first part is just a special case of
   Proposition~\ref{prop:thurstonex}. Note that a map $f$ realizing the given two-tile subdivision rule cannot be a homeomorphism and so must be  a Thurston
   map; indeed, the number of $1$-tiles is equal to $2\deg(f)$,
   and also $>2$ by
   Definition~\ref{def:subdivcomb}~\ref{item:subdivcomb2}. So
   $\deg(f)\ge 2$.
 
   Since $\DD^1$ is a refinement of $\DD^0$, the $1$-skeleton
   $\CC$ of $\DD^0$ is contained in the $1$-skeleton of
   $\DD^1$. Moreover, since $f$ is cellular for $(\DD^1,\DD^0)$,
   this map sends the $1$-skeleton of $\DD^1$ into the
   $1$-skeleton of $\DD^0$. Hence $f(\CC)\sub \CC$, and so $\CC$
   is $f$-invariant.  Each postcritical point of $f$ is a vertex
   of $\DD^0$ and hence contained in $\CC$.
\end{proof}
 
\begin{rem}  
  \label{rem:two-tile_fV0}  
  Let $(\DD^1,\DD^0, L)$ be a two-tile subdivision rule on
  $S^2$, and $f\colon S^2\to S^2$ be a Thurston map that realizes
  $(\DD^1, \DD^0, L)$ according to
  Proposition~\ref{prop:rulemapex}.
 
 \smallskip 
 (i)  If
${\bf V}^0$ denotes the set of vertices of $\DD^0$ and
${\bf V}^1$ the set of vertices of $\DD^1$, 
then we  have
$\crit(f)\sub {\bf V}^1$ and $\post(f)\sub {\bf V}^0$ (see
Lemma~\ref{lem:constrmaps}). In general, $\post(f)\ne  {\bf V}^0$
(see Example~\ref{ex:extra_vertex_D0}).

Since the length of each cycle in
$\DD^0$ is $2$, a vertex $v$ in ${\bf V}^1$ is a critical point
of $f$ if and only if the length of the cycle of $v$ in $\DD^1$
is $\ge 4$ (see Remark~\ref{rem:dd'}).  Hence if $f$ and $g$ both
realize the subdivision rule, then
$\crit(f)=\crit(g)\sub {\bf V}^1$. Moreover, since the orbit of
any point in ${\bf V}^1$ is completely determined by the
labeling, we then also have $\post(f)=\post(g)\sub {\bf V}^0$.
  
Let $\CC$ be the Jordan curve of $\DD^0$, (i.e., the $1$-skeleton of 
$\DD^0$), and  $\DD^n(f,\CC)$ for
  $n\in \N_0$ be the cell decomposition defined according to
  Definition~\ref{def:DDn}. Then $\DD^0 = \DD^0(f,\CC)$
  if and only if 
  $ \V^0=\post(f)$.  If this is not true, then   the  points in
  $\V^0 \setminus \post(f)$ are additional points that are not distinguished 
  by the dynamics  of the
  map $f$. In view of this,  it seems to make sense to include a  requirement that forces  $ \V^0=\post(f)$ in the definition 
  of a two-tile subdivision rule, but we chose not to do so  in order to keep the definition a little simpler.   

\smallskip 
(ii)    Let us  now assume   that $\V^0=\post(f)$. In this case, $\DD^0
  =\DD^0(f,\CC)$, and also  $\DD^1=\DD^1(f,\CC)$ as follows from the uniqueness statement in 
  Lem\-ma~\ref{lem:pullback}. 

 By Proposition~\ref{prop:rulemapex} we know that $\CC$ is
  $f$-invariant and $\post(f) \subset \CC$. This means
  that the cell decompositions $\DD^n\coloneqq \DD^n(f,\CC)$, $n\in \N_0$,  satisfy the properties
  listed in Proposition~\ref{prop:invmarkov}. In particular, 
  $\DD^{n+k}$ is a refinement of $\DD^n$ for  $k,n\in \N_0$. 

 By Proposition~\ref{prop:invmarkov}~\ref{item:invmarkov5} each $1$-cell  $c'$ is subdivided into  $(n+1)$-cells ``in the
  same way''  as the corresponding $0$-cell  $f(c')=L(c')$ is subdivided into $n$-cells. From this   it is intuitively   clear that the ``combinatorics''
of the cells in the sequence $\DD^n$, $n\in \N_0$, that is, their inclusion and intersection patterns,  can be determined inductively from  
the subdivision rule $(\DD^1, \DD^0, L)$  independently of the map realizing it. 

To  make this more precise, suppose $g\: S^2\ra S^2$ is another
Thurston map realizing the subdivision rule. We denote by
$\DD^\infty$ the disjoint union of the  cell decompositions
$\DD^n$, $n\in \N_0$, i.e., 
$\DD^\infty$ is the set of all cells for $(f,\CC)$, 
where cells with the same underlying sets, but of different levels, are considered distinct. Similarly, let $\widetilde \DD^\infty$ be the set of all cells for $(g,\CC)$. Then there exists a bijection 
$\varphi \:\DD^\infty \ra  \widetilde \DD^\infty$ that preserves the level and the dimension  of each cell, and all inclusion patterns (i.e., $\sigma\sub \tau$ for
 $\sigma, \tau \in \DD^\infty$ if and only if $\varphi(\sigma)\sub \varphi(\tau)$).  This  last property of $\varphi$ implies that also all intersection patterns are preserved (i.e., $\sigma\cap \tau\ne \emptyset $ for
 $\sigma, \tau \in \DD^\infty$ if and only if $\varphi(\sigma)\cap \varphi(\sigma)
 \ne \emptyset$).  In this sense, $\DD^\infty$ and  $\widetilde \DD^\infty$
 have  exactly the same 
 combinatorics.
 
 This will not be proved here, but we refer to
 Proposition~\ref{prop:subandD^n} for a related
 statement. \end{rem}

We will later need a more general version of the uniqueness part
of Proposition~\ref{prop:rulemapex}. To formulate this, we first
introduce a suitable notion of an {\em isomorphism} between
two-tile subdivision rules.

\index{two-tile subdivision rule!isomorphism of}
\index{subdivision!isomorphic}
\index{isomorphism!of two-tile subdivision rules}

Let $(\DD^1,\DD^0,L)$ and  $(\widetilde \DD^1,\widetilde \DD^0,\widetilde L)$ 
be two-tile subdivision rules on $2$-spheres $S^2$ and $\widetilde S^2$, respectively. We say that these subdivision rules are {\em isomorphic} 
if there exist cell complex isomorphisms $\phi_i\: \DD^i\ra \widetilde \DD^i$ for $i=0,1$ such that $\widetilde L(\phi_1(\tau))=\phi_0(L(\tau))$ for $\tau\in \DD^1$ and such that $\sigma\sub \tau$ for $\sigma\in \DD^1$, $\tau \in\DD^0$ if and only if $\phi_1(\sigma)\sub \phi_0(\tau)$.

If, by abuse of notation, we denote the image of a cell $\tau\in \DD^i$ under $\phi_i$  by $\widetilde \tau$ for $i=0,1$, then the last two conditions require that $\widetilde L(\widetilde \tau)=\widetilde {L(\tau)}$ for $\tau\in \DD^1$ and that $\sigma\sub \tau$ for $\sigma\in \DD^1$, $\tau \in\DD^0$ if and only if 
$\widetilde \sigma\sub \widetilde\tau$. So $(\DD^1,\DD^0,L)$ and  $(\widetilde \DD^1,\widetilde \DD^0,\widetilde L)$ are isomorphic if the combinatorics of cells under the correspondence 
$\tau \leftrightarrow \widetilde \tau$ is the same and if this correspondence is also compatible with the labelings.

\begin{lemma}
  \label{lem:isotwotequiv} 
  \index{Thurston!equivalent}
  Let $(\DD^1,\DD^0,L)$ and  $(\widetilde \DD^1,\widetilde \DD^0,\widetilde L)$ 
  be isomorphic two-tile subdivision rules on $2$-spheres $S^2$ and $\widetilde
  S^2$, respectively. Suppose the Thurston map $f\: S^2\ra S^2$
  realizes $(\DD^1,\DD^0,L)$ and the Thurston map $\widetilde f\:
  \widetilde S^2\ra \widetilde S^2$ realizes $(\widetilde
  \DD^1,\widetilde \DD^0,\widetilde L)$. Then $f$ and $\widetilde f$
  are Thurston equi\-valent.
\end{lemma} 

\begin{proof} 
  The proof uses very similar ideas as the proof of the
  uniqueness part of Proposition~\ref{prop:thurstonex}.

  By assumption, there exist cell complex isomorphisms
  $\phi_i\: \DD^i\ra \widetilde \DD^i$ for $i=0,1$ with the 
  following properties: $\widetilde L(\widetilde \tau)=\widetilde {L(\tau)}$ 
  for each  $\tau\in \DD^1$,  and  for
  $\sigma\in \DD^1$, $\tau \in\DD^0$ we have the inclusion
  $\sigma\sub \tau$ if and only if
  $\widetilde \sigma\sub \widetilde\tau$. Here we denote the
  image of a cell $\tau\in \DD^i$ under $\phi_i$ by
  $\widetilde \tau$ for $i=0,1$.

By Lemma~\ref{lem:isocellhomeo}~\ref{item:isocellhomeo2} there exists a homeomorphism $h_0\: S^2\ra \widetilde S^2$ such that $h_0(\tau)=\widetilde \tau$ for each $\tau \in \DD^0$.

The map $f$ realizes the subdivision rule $(\DD^1,\DD^0,L)$. So if  $\tau\in \DD^1$, then 
$f(\tau)=L(\tau)\in \DD^0$. Since $\widetilde f$ realizes $(\widetilde \DD^1,\widetilde \DD^0, \widetilde L)$, 
we have 
$$ \widetilde f( \widetilde \tau)=  \widetilde L( \widetilde \tau)= \widetilde{L(\tau)}= \widetilde{f(\tau)}=
h_0(f(\tau)) \in  \widetilde \DD^0$$
for $\tau\in \DD^1$. The map $f$ is cellular for $(\DD^1, \DD^0)$ and so 
for each $\tau \in \DD^1$ the map
 $f|\tau$ is a homeomorphism of $\tau$ onto $f(\tau)$.  Similarly,  the map  
$\widetilde  f|\widetilde \tau$ is a homeomorphism of $\widetilde \tau$  onto $ \widetilde f( \widetilde \tau)={h_0(f(\tau))}$. Hence for each $\tau\in \DD^1$ the map
$$\varphi_{\tau}\coloneqq (\widetilde f|\widetilde \tau)^{-1}\circ h_0 \circ (f|\tau)$$ is well-defined and a homeomorphism from $\tau$ onto $\widetilde \tau$.
If $x\in \tau$, then $y=\varphi_\tau(x)$ is the unique point $y\in \widetilde \tau $ with $\widetilde f(y)=h_0(f(x))$. 

If $\sigma, \tau \in \DD^1$ and 
$\sigma\sub \tau$, then 
$$ \varphi_\tau|\sigma=\varphi_\sigma. $$
Indeed, if $x\in \sigma$, then $y=\varphi_\sigma(x)
\in \widetilde \sigma\sub \widetilde\tau$ and 
$\widetilde f(y)=h_0(f(x))$. Hence
$\varphi_\sigma(x)=y=\varphi_\tau(x)$ by the uniqueness property of $\varphi_\tau(x)$. 

If a point $x\in S^2$ lies in two cells $\tau,\tau'\in \DD^1$, then
$\varphi_\tau(x)=\varphi_{\tau'}(x)$. Indeed, there exists a unique
cell $\sigma\in \DD^1$ with $x\in \inte(\sigma)$. Then $\sigma\sub \tau
\cap \tau'$ by Lemma~\ref{lem:celldecompint}~\ref{item:cell_decomp2},
and so, by what we have just seen, we conclude
$$ \varphi_\tau(x)=\varphi_\sigma(x)= \varphi_{\tau'}(x). $$

This allows us to define a map $h_1\: S^2\ra \widetilde S^2$ as follows. If $x\in S^2$, pick a  cell $\tau\in \DD^1$ with $x\in \tau$, and set 
$$h_1(x)=\varphi_\tau(x).$$ Then $h_1$ is well-defined. 

The  definitions of $h_1$ and $\varphi_\tau$ imply that $h_0\circ f=\widetilde f\circ h_1$. 
Moreover,  $h_1|\tau=\varphi_\tau$ is a homeomorphism of $\tau$ onto 
$\widetilde \tau$ for each $\tau \in  \DD^1$. So by 
Lemma~\ref{lem:isocellhomeo}~\ref{item:isocellhomeo1}  the map $h_1$ is a 
homeomorphism of $S^2$ onto $\widetilde S^2$.  

We claim that $h_1(\tau)=\widetilde \tau$ not only for $\tau\in \DD^1$, but also for 
$\tau\in \DD^0$. Indeed, let $\tau\in \DD^0$ and $x\in \tau$  be arbitrary. Since 
$\DD^1$ refines $\DD^0$, there exists $\sigma \in \DD^1$ such that $x\in \sigma\sub \tau$. 
Then $\widetilde \sigma\sub \widetilde \tau$ and $h_1(x)\in h_1(\sigma)=\widetilde \sigma \sub \widetilde \tau$. We conclude that $h_1(\tau)\sub \widetilde \tau$.  Conversely, if $y\in \widetilde 
\tau$,  then there exists a cell in $\widetilde \DD^1$ that contains $y$ and is contained in 
$\widetilde \tau$. This cell has the form $\widetilde \sigma$ with $\sigma\sub \tau$. Hence $y\in \widetilde \sigma=h_1(\sigma)\sub h_1(\tau)$. This shows that $h_1(\tau)=\widetilde \tau$
as desired. 

It follows that the homeomorphism $h_1^{-1}\circ h_0\: S^2\ra S^2$ satisfies
$(h_1^{-1}\circ h_0)(\tau)=h_1^{-1}(\widetilde \tau) =\tau$ for each $\tau \in \DD^0$. 
So by Lemma~\ref{lem:isocellhomeo}~\ref{item:isocellhomeo3} the homeomorphism $h_1^{-1}\circ h_0$ is isotopic to $\id_{S^2}$ rel.\ ${\bf V}^0$, where ${\bf V}^0$ is the set of vertices of $\DD^0$. If we postcompose an  isotopy rel.\ ${\bf V}^0$ between $h_1^{-1}\circ h_0$  and $\id_{S^2}$ with 
 $h_1$, we see  that $h_0$ and $h_1$ are isotopic rel.\ ${\bf V}^0$, and hence also isotopic rel.\ $\post(f)$, because $\post(f)\sub {\bf V}^0$. Since $h_0\circ f=\widetilde f\circ h_1$, the maps $f$ and $\widetilde f$ are Thurston equivalent. 
\end{proof} 

\begin{rem} 
  \label{rem:isomsub} 
 Suppose the setup is as in the previous lemma and its proof. We
 record some observations that will be important later in the proof of Theorem~\ref{thm:rat_finitelyC}. 

\smallskip
(i) Let $\CC$ be the Jordan curve of 
$\DD^0$ and $\widetilde \CC$ be the Jordan curve of $\widetilde \DD^0$. Then $\CC$ and $\widetilde \CC$ are the $1$-skeletons of $\DD^0$ and $\widetilde \DD^0$, respectively. The homeomorphisms 
$h_0$ and $h_1$ constructed in the previous proof have the property that 
they  send each cell $\tau\in \DD^0$ to the corresponding cell $\widetilde \tau \in  \widetilde \DD^0$. Since the bijection $\tau \mapsto \widetilde \tau$ is an isomorphism of the cell complexes 
$\DD^0$ and $\widetilde \DD^0$,  it preserves the dimension of a cell. This implies that 
the maps $h_0$ and $h_1$ send the $1$-skeleton of $\DD^0$ to the $1$-skeleton of $\widetilde \DD^0$, and so $h_0(\CC)=\widetilde \CC=h_1(\CC)$.  

\smallskip
(ii) The cell complex isomorphisms $\phi_i\: \DD^i\ra \widetilde \DD^i$, $i=0,1$, as in the definition of an isomorphism between $(\DD^1,\DD^0,L)$ and  $(\widetilde \DD^1,\widetilde \DD^0,\widetilde L)$  send flags in $\DD^i$ to flags
in $\widetilde \DD^i$. In  the definition of an  isomorphism of two-tile subdivision rules one can make  the stronger  additional requirement that positively-oriented flags are sent to positively-oriented flags. If one assumes this stronger notion of isomorphism in the previous lemma, then the maps 
$h_0$ and $h_1$ will be orientation-preserving. 

\smallskip (iii)  For a fixed number of cells in $\DD^1$, there is
only a finite number of two-tile subdivision rules
$(\DD^1, \DD^0,L)$ up to isomorphism. This implies that  if we consider  Thurston
maps $f\colon S^2\to S^2$ of fixed  degree and a fixed number of
postcritical points, then up to isomorphism there is only a finite number of
two-tile subdivision rules given by such a map $f$ and an
$f$-invariant Jordan curve $\CC\subset S^2$ according to
Proposition~\ref{prop:ThmapSub}. These statements remain true 
if one uses the strong notion of isomorphism 
between  two-tile subdivision rules  as in (ii).
 
Based on these observations one can show 
 that a rational expanding Thurston map $f\: \CDach \ra \CDach$ with hyperbolic
orbifold has at most finitely many $f$-invariant Jordan curves
$\CC\subset \Cdach$ with $\post(f) \subset \CC$ (see
Theorem~\ref{thm:rat_finitelyC}).

\end{rem} 

If one wants to discuss specific examples of Thurston maps that realize a given two-tile subdivision rule $(\DD^1, \DD^0, L)$, then it is convenient to represent the relevant data
in a compressed form. The  information on the labeling  $L$ is completely determined by a pair of corresponding  positively-oriented flags in 
$\DD^1$ and $\DD^0$.

\begin{lemma} 
  \label{lem:labeluniq}
  \index{labeling!subdivisions}
  Let $(\DD^1, \DD^0)$ be a pair of cell decompositions of  $S^2$
  satisfying conditions
  \ref{item:subdivcomb1}--\ref{item:subdivcomb4} in 
  Definition~\ref{def:subdivcomb}.
  \begin{enumerate}
    
\item
  \label{item:labeluniq1}
Let $(c'_0,c'_1,c'_2)$ and  $(c_0,c_1,c_2)$  be 
     positively-oriented flags 
     in $\DD^1$ and $\DD^0$, respectively. 
     Then there 
     exists 
     a unique  orientation-preserving labeling $L\:\DD^1\ra \DD^0$ with $(L(c'_0), L(c'_1), L(c'_2))=(c_0,c_1,c_2)$.

\item
  \label{item:labeluniq2}
 Let   $v'$ be a vertex and $X'$ be a  tile in $\DD^1$, and $v$ be a vertex and $X$ be a tile in $\DD^0$. If $v'\in X'$, then there exists a unique orientation-preserving labeling $L\:\DD^1\ra \DD^0$ such that $L(v')=v$ and $L(X')=X$. 
\end{enumerate}
\end{lemma}
  
 So in both cases, $(\DD^1, \DD^0, L)$ is a two-tile subdivision rule. In \ref{item:labeluniq2}
 we automatically have $v\in X$, because every vertex in  $\DD^0$ is contained in each of the two 
 tiles in $\DD^0$. 
Note that  $L(v')$ in \ref{item:labeluniq2} is defined,
since for labelings we do not distinguish between $0$-dimensional cells and vertices of a cell decomposition.
 
  Statement  \ref{item:labeluniq1} easily follows from  \ref{item:labeluniq2}.
We formulated \ref{item:labeluniq1} explicitly, because this version  puts the statement in a more conceptual setting and because in the proof we  use 
Lemma~\ref{lem:labelexis} to first establish 
 \ref{item:labeluniq1}   and then derive 
 \ref{item:labeluniq2}.
      
 \begin{proof} For $i=0,1$    denote by ${\bf V}^i, \E^i, \X^i $  the set of vertices, edges, and tiles of $\DD^i$, respectively. Every tile in $\DD^0$  or $\DD^1$ is a $k$-gon for fixed  $k\ge 3$.  
 
 \smallskip 
\ref{item:labeluniq1} To describe the labeling  for  $(\DD^1, \DD^0)$,  we proceed 
in the manner discussed  after Definition~\ref{def:labeldecomp} and choose a particular index set $\mathcal{L}$ for the labeling of the elements 
in $\DD^0$ and  $\DD^1$.

 We let $\mathcal{L}$ be the set that consists of two disjoint copies of $\Z_k$ (one will be for the vertices, and one for the edges), and the set $\{\tt b, \tt w\}$, where again we think of $\tt w$ representing ``white'' and $\tt b$ representing  ``black''. 
 
 We assign to $c_2\in \X^0$ the color ``white'', and ``black'' to the other tile in $\X^0$. We assign $0\in \Z_k$  to the $0$-vertex 
 $v_0\in c_0$. Then there is a unique way to assign labels in $\Z_k$ to the other vertices on $\CC\coloneqq \partial c_2$ (and the corresponding cells of dimension $0$) such that if $v_0, v_1, \dots, v_{k-1}$ are the vertices  indexed by their label, then they are in cyclic order on $\CC$ as considered as the boundary of the white $0$-tile and in anti-cyclic order for  the black $0$-tile.  
  Each $0$-edge $e$ 
is an arc on $\CC$ with endpoints $v_l$ and $v_{l+1}$ for a unique $l\in\Z_k$. We  label $e$ by $l$ (where we think of $l$ as belonging to 
the second copy of $\Z_k$).  Since $(c_0, c_1, c_2)$ is a positively-oriented flag, and $v_0$ is the initial point of $c_1$,  the edge $c_1$ has the label $0$.  All this is just a special case of  Lemma~\ref{lem:labelexis}.
If in this way we assign  to each element in $\DD^0$  a label in $\mathcal{L}$,  we get a bijection  $\psi\: \DD^0\ra \mathcal{L}$.   Note that if $(\tau_0, \tau_1,\tau_2)$ is any positively-oriented flag in $\DD^0$, then its image under $\psi$ 
has the form $(l,l,\tt w)$ or $(l,l-1,\tt b)$ for some $l\in \Z_k$ 
(see Lemma~\ref{lem:labelexis}~\ref{item:labelexis6}). 

For $\DD^1$ we invoke Lemma~\ref{lem:labelexis} directly to set up a suitable map 
$\varphi\:\DD^1\ra \mathcal{L}$. Since $\DD^1$ satisfies the conditions of Lemma~\ref{lem:labelexis},  we can find maps $L_{\bf V}\:{\bf V}^1\ra \Z_k$, $L_\E\: \E^1\ra \Z_k$,  and 
$L_\X\: \X^1\ra\{ \tt b, \tt w\}$ with the properties \ref{item:labelexis2}--\ref{item:labelexis4} stated in the lemma and the normalizations $L_{\bf V}(c'_0)=0$, $L_\E(c'_1)=0$, and $L_\X(c'_2)=\tt w$. 
 The maps $L_{\bf V}$, $L_\E$, $L_\X$ induce a unique  map $\varphi\:  \DD^1\to \mathcal{L}$ such that $\varphi(c)=L_\X(c)$ if $c$ is a $1$-tile,  $\varphi(c)=L_\E(c)$ if $c$ is a $1$-edge, and 
   $\varphi(c)=L_{\bf V}(v)$ if $c=\{v\}$ consists of a $1$-vertex $v$.  Here it is understood that  edges and vertices  in $\DD^1$ map to different copies of $\Z_k$ in  $\mathcal{L}$.

Now define  $L\coloneqq \psi^{-1}\circ \varphi:\ \DD^1\ra \DD^0$. The map $L$ assigns to each  $1$-cell $c$ 
the unique $0$-cell that has the same dimension as $c$  and  carries the same label in $\mathcal{L}$ as $c$.
 
It follows immediately from the properties of the maps 
$\psi$ and $\varphi$ that $L$ preserves dimensions, respects inclusions, and is injective on cells. Hence $L$ is a labeling 
 according to Definition~\ref{def:labeldecomp}. 
 By our normalizations  the map  $L$ sends  the flag $(c'_0,c'_1,c'_2)$ to 
  $(c_0,c_1,c_2)$. 
  
Moreover,  $L$ is
   orientation-preserving. Indeed, $\varphi$ maps the cells $\tau_0, \tau_1, \tau_2$ in a positively-oriented flag in $\DD^1$ to  
  $l$, $l$, $\tt w$, or to  $l$, $l-1$, $\tt b$,  respectively, where $l\in \Z_k$.  These triples correspond to 
  positively-oriented flags in 
  $\DD^0$. 
  It follows that $L$ has the desired properties. 

  To show uniqueness, we reverse the process. Given a labeling
  $L\colon \DD^1\to \DD^0$ with  the stated properties, we use the same 
  map $\psi\: \DD^0\ra \mathcal{L}$ as above and define maps 
  $L_{\bf V}\: {\bf V}^1\ra \Z_k$, $L_\E\:\E^1\ra \Z_k$, $L_\X\: \X^1\ra 
  \{\tt b, \tt w\}$ such that $L_\X(c)=(\psi\circ L)(c)$ if $c$ is a $1$-tile,  $L_\E(c)=(\psi\circ L)(c)$ if $c$ is a $1$-edge, and 
   $L_{\bf V}(v)=(\psi\circ L)(c)$ if $c=\{v\}$ consists of a $1$-vertex $v$.
   
 Then we have   normalizations $L_{\bf V}(c'_0)=0$, $L_\E(c'_1)=0$,  and  
  $L_\X(c'_2)=\tt w$ as in Lemma~\ref{lem:labelexis}~\ref{item:labelexis1}. If we can show that $L_{\bf V}$, $L_\E$,
  $L_\X$ have the properties \ref{item:labelexis2}--\ref{item:labelexis4} in Lemma~\ref{lem:labelexis}, then the uniqueness of $L$ will follow from the corresponding uniqueness statement in this lemma.
    
  To see this, let $e\in \DD^1$ be arbitrary, and $X,Y\in \DD^1$ be the two tiles that contain $e$ in its  boundary. Let $u,v\in {\bf V}^1$ be the two endpoints of $e$. We may assume that notation is chosen so that the flag $(\{u\},e,X)$ is 
  positively-oriented. Then 
  $(\{v\},e,Y)$ is also positively-oriented. It follows that 
  the images of these flags  under $L$ are 
  positively-oriented. Since $L$ is injective on cells, and so $L(u)\ne L(v)$, this   implies that 
  $L(X)\ne L(Y)$. So $L(X)$ and $L(Y)$ carry different colors (given by  $\psi$) which implies that $X$ and $Y$ also carry 
  different colors by  definition of $L_\X$. Hence $L_\X$ has
  property \ref{item:labelexis2} in Lemma~\ref{lem:labelexis}. By
  switching the notation 
for $u$ and $v$ as well as $X$ and $Y$ 
if necessary, we may assume that $X$ and $L(X)$ are white tiles. Since the flag $(\{L(u)\}, L(e), L(X))$ 
  is positively-oriented, and $L(X)$ is white, it follows that  for some $l\in \Z_k$ we have $\psi(L(u))=l$ and $\psi(L(e))=l$. Hence $L_{\bf V}(u)=l$ and $L_\E(e)=l$. Similarly, using that $L(Y)$ is black and that $(\{L(v)\}, L(e), L(Y))$ is positively-oriented, we see that  $L_{\bf V}(v)=l+1$. 
  
  In other words, if we run along an oriented edge $e$ in $\DD^1$ so that a white tile lies on the left  of $e$, then the label  of the endpoint of $e$ (given by $L_{\bf V}$) is increased by one, and decreased by one if a black tile lies on the left. Hence $L_{\bf V}$ has the property \ref{item:labelexis3} in Lemma~\ref{lem:labelexis}.  
  Moreover, we also see that the label $L_\E(e)$ is related  to  the labels of its endpoints as in statement \ref{item:labelexis4} of  Lemma~\ref{lem:labelexis}. The uniqueness of $L$ follows. 
  
  \smallskip 
\ref{item:labeluniq2} If  $v'\in {\bf V}^1$ and $v'\in X'$, then  we have $v'\in \partial X'$. There are precisely two edges in ${\bf E}^1$ that are contained in $\partial X'$ and have $v'$ as one of their endpoints. For precisely one of these edges $e'$, the triple $(\{v'\}, e', X')$ is the unique  positively-oriented flag in $\DD'$ that includes $\{v'\}$ and $X'$.  
 
  Similarly, there exists a unique 
edge $e\in {\bf E}^0$ 
such that 
     $(\{v\}, e, X)$ is a positively-oriented flag in
     $\DD^0$. Thus by \ref{item:labeluniq1}, there exists an   orientation-preserving labeling $L\: \DD^1 \ra \DD^0$ that sends $(\{v'\}, e', X')$ to $(\{v\}, e, X)$. In particular, $L(v')=v$ and $L(X')=X$. This shows existence of a labeling as desired. 
     
To prove uniqueness, suppose that 
 $L\: \DD^1 \ra \DD^0$ is an orientation-preserving labeling with 
 $L(v')=v$ and $L(X')=X$. Then the image of $(\{v'\}, e', X')$ under $L$ 
 is a positively-oriented flag of the form $(\{v\}, L(e'), X)$. Since 
 $(\{v\}, e, X)$ is the unique  positively-oriented flag in $\DD^0$ that includes $\{v\}$ and $X$, we have $L(e')=e$. Uniqueness of $L$ now follows from \ref{item:labeluniq1}. 
      \end{proof}

 If we are given cell decompositions $\DD^1$ and $\DD^0$ as in the last lemma, then by part \ref{item:labeluniq2} we can specify a unique labeling so that $(\DD^1,\DD^0, L)$ becomes a  two-tile subdivision rule  in a very condensed form: all we need to know is the image $0$-tile 
 $X$ of one $1$-tile $X'$, and the image $0$-vertex $v\in X$ of one $1$-vertex $v'\in X'$. In specific examples (see Section~\ref{sec:examples-two-tile}), one usually wants to include more information on the labeling to get a better understanding of the mapping properties of the Thurston map that realizes the subdivision rule.

 Let $f$ be a map realizing  a two-tile subdivision rule $(\DD^1,\DD^0,L)$.   We want to show next that  the property of  $f$ being combinatorially expanding for the Jordan curve $\CC$ of $\DD^0$ 
    is independent of the realization. In contrast,  this   is not true for 
 expansion of the map  (see Example~\ref{ex:barycentric}). 
 We require a lemma.

 \begin{lemma} 
   \label{lem:cexp_Cinv}
   \index{combinatorially expanding}
   \index{expanding!combinatorially} 
   \index{Thurston map!combinatorially expanding}

   Let $f\: S^2\to S^2$ and
   $g\:\widetilde S^2\ra \widetilde S^2$ be Thurston maps. 
   Suppose
   $\# \post(f)\ge 3$, $\CC\subset S^2$ is an $f$-invariant
   Jordan curve with $\post(f)\subset \CC$, and 
   $h_0,h_1\: S^2 \ra \widetilde S^2$ are orientation-preserving
   homeomorphisms satisfying $h_0|\post(f)=h_1|\post(f)$,
   $h_0\circ f=g\circ h_1$, and $h_0(\CC)=h_1(\CC)$.
  
   Then $f$ is combinatorially expanding for $\CC$ if and only if
   $g$ is combinatorially expanding for
   $\widetilde \CC\coloneqq h_0(\CC)=h_1(\CC)$.
 \end{lemma}

As we will see momentarily, the conditions of the lemma imply that $\widetilde{\CC}$
is $g$-invariant. 
 
 \begin{proof} We have  $\post(g)=h_0(\post(f))=h_1(\post(f))$
 (see the remark after Lem\-ma~\ref{lem:T-eq_crit_post}).  Hence $\#\post(g)=\#\post(f)\ge 3$. Moreover, $\widetilde \CC\sub \widetilde S^2$ is a Jordan curve 
 with $\post(g)\sub \widetilde \CC$. This curve is $g$-invariant, since 
 $$g(\widetilde \CC)=g(h_1(\CC))=h_0(f(\CC))\sub h_0(\CC)=\widetilde \CC. $$
So the statement that $g$ is combinatorially expanding for  
$\widetilde \CC$ is meaningful (see Defi\-nition~\ref{def:combexp}). 

Pick  an orientation of $\CC$. By our assumptions the map $\varphi\coloneqq h_1^{-1}\circ h_0$ fixes the elements of $\post(f)$ pointwise and the Jordan curve $\CC$ setwise. Since $\#\post(f)\ge 3$ and 
$\post(f)\sub \CC$,  this implies that $\varphi$ preserves the orientation of $\CC$. Since $\varphi$ is an orientation-preserving homeomorphism on $S^2$, the map   $\varphi$ sends each of the 
complementary components of $\CC$ to itself. Thus, $\varphi$ 
is cellular for $(\DD^0, \DD^0)$, where $\DD^0=\DD^0(f,\CC)$, and we have $\varphi(c)=c$ for each cell $c\in \DD^0$. As in the proof of Lemma~\ref{lem:isocellhomeo}~\ref{item:isocellhomeo3}, this implies that $\varphi$ is isotopic to 
$\id_{S^2}$ rel.\ $\post(f)$.  Hence  $h_0=h_1\circ \varphi$ is isotopic 
to $h_1=h_1\circ \id_{S^2}$ rel.\ $\post(f)$, and so there exists an isotopy $H^0\: S^2\times I\ra \widetilde S^2$ rel.\ $\post(f)$ with 
$H^0_0=h_0$ and $H^0_1=h_1$.

As in the proof of Theorem~\ref{thm:exppromequiv},  we can  repeatedly   lift  the initial isotopy  $H^0$ based on Proposition
  \ref{prop:isotoplift}. In this way  we can find isotopies $H^n\: S^2\times I\ra \widetilde S^2$  rel.\ $\post(f)$  such that 
$H^n_t\circ f=g\circ H^{n+1}_t$ and $H^{n+1}_0=H^n_1$ for all $n\in \N_0$ and $t\in I$. Note that $H^n$ for $n\ge 1$ is actually an isotopy rel.\ $f^{-1}(\post(f))\supset \post(f)$.

Define homeomorphisms  $h_n\coloneqq  H^n_0$ for 
  $n\in\N_0$ (note that  for $n=0$ and $n=1$ these maps agree with our given maps $h_0$ and $h_1$). Then 
  $h_n\circ f=g\circ h_{n+1}$, and so 
  \begin{equation} \label{eq:fngn}
  h_0\circ f^n=g^n\circ h_n
  \end{equation}  for all $n\in \N_0$.
  
We have $h_n|\post(f)=h_0|\post (f)$ which implies  
 \begin{equation}\label{eq:hnsame}
 h_n(\post(f))=\post(g)
  \end{equation}
  for all $n\in \N_0$. Moreover,
  $h_n|f^{-1}(\post(f))=h_1|f^{-1}(\post(f))$ and so 
 \begin{equation} \label{eq:hnfgpre}
  h_n(f^{-1}(\post(f)))=g^{-1}(\post(g))
  \end{equation}  for
  $n\in \N$  as follows from 
  Lemma~\ref{lem:lifts_inverses}.

Our hypotheses  imply that    if $c$ is a cell in $\DD^0(f,\CC)$, then $h_0(c)$ is a cell in 
  $\DD^0(g,\widetilde \CC)$.  Since the set 
  $$\widetilde \DD^n\coloneqq  \{ h_n(c): c\in \DD^n(f,\CC)\}$$ is a cell decomposition of $\widetilde S^2$, it follows from this and   \eqref{eq:fngn}  
  that $g^n$ is cellular for $(\widetilde \DD^n, \DD^0(g, \widetilde \CC))$.  Since $g^n$ is also cellular for the pair $(\DD^n(g,\widetilde \CC), \DD^0(g, \widetilde \CC))$,   the uniqueness statement in Lemma~
  \ref{lem:pullback} implies that $\widetilde \DD^n=\DD^n(g,\CC)$ for all 
  $n\in \N_0$.  In other words, the $n$-cells for $(g,\widetilde \CC)$ are precisely the images of the $n$-cells for $(f,\CC)$ under the homeomorphism $h_n$.

  We also have 
\begin{equation}\label{eq:varccc}
h_n(\CC)=\widetilde \CC 
\end{equation} 
for each $n\in \N_0$. This can be seen by induction on $n$ as follows. 
The statement is true for $n=0$ and $n=1$ by our assumptions and by the definition of $\widetilde \CC$. Assume that $h_n(\CC)=\widetilde \CC$ for some $n\in \N$. Then by   
 Lemma \ref{lem:lifts_inverses} and the induction hypotheses  we have 
  $$J\coloneqq h_{n+1}(\CC)\sub h_{n+1}(f^{-1}(\CC))= g^{-1}( h_n(\CC))=g^{-1}(\widetilde \CC). $$ The identity   \eqref{eq:hnfgpre} implies that  
 $(H^{n}_t)\circ  h_{n}^{-1}$ is an isotopy on $\widetilde S^2$ rel.\ $g^{-1}(\post(g))$. It   isotopes $\widetilde \CC=h_n(\CC)\sub g^{-1}(\widetilde \CC)$ into $J=h_{n+1}(\CC)$ rel.\ $g^{-1}(\post(g))$. 
So  $\widetilde{\CC}$ and $J$ are Jordan curves contained in the $1$-skeleton  $g^{-1}(\widetilde \CC)$  of $\DD^1(g,\widetilde \CC)$ that 
are isotopic relative to the set $g^{-1}(\post(g))$ of vertices of 
 $\DD^1(g,\widetilde \CC)$.  Lemma~\ref{lem:isoJcin1ske} implies that $J=\widetilde \CC$, and 
\eqref{eq:varccc} follows. 

Now  \eqref{eq:varccc} and \eqref{eq:hnsame}  imply that a chain of $n$-tiles for $(f,\CC)$ joins opposite sides of 
$\CC$ if and only if their  images under $h_n$  form a chain joining opposite sides of $\widetilde \CC$. 
Since  the images of the $n$-tiles for $(f,\CC)$ under $h_n$ are precisely the $n$-tiles 
for $(g, \widetilde \CC)$, we have $D_n(f, \CC)=D_n(g, \widetilde \CC)$  for each $n\in \N_0$. The statement   follows.   \end{proof} 
 
  Now we can show the desired  independence of 
  combinatorial expansion from the realization of a two-tile subdivision rule.

 \begin{lemma} 
   \label{lem:invrealize}
   \index{combinatorially expanding}
   \index{expanding!combinatorially}
   \index{Thurston map!combinatorially expanding}

   Let $(\DD^1,\DD^0,L)$ be a two-tile subdivision rule on $S^2$ and
   $\CC$ be the Jordan curve of $\DD^0$.  Suppose that  the maps
   $f\:S^2\ra S^2$ and $g\:S^2\ra S^2$ both realize the subdivision
   rule and that $\#\post(f)=\#\post(g)\ge 3$.  
   Then $f$ is combinatorially expanding for $\CC$ if and only if  $g$
   is combinatorially expanding for $\CC$.  
 \end{lemma} 
 


\begin{proof}  Let ${\bf V}^0$ and 
${\bf V}^1$ be the set of vertices of $\DD^0$ and $\DD^1$,
respectively.   Then $P\coloneqq \post(f)=\post(g)\sub   {\bf V}^0 \sub {\bf V}^1$.

It follows from the proof of the uniqueness part of Proposition~\ref{prop:thurstonex} 
that there exists a homeomorphism $h_1\:S^2\ra S^2$ isotopic to $\id_{S^2}$ rel.\ ${\bf V}^1\supset \post(f)=\post(g)$  that satisfies $f=g\circ h_1$.  Moreover,  $h_1(e)=e$ for each 
edge $e$ in $\DD^1$.  Since $\DD^1$ is a refinement of $\DD^0$ and so the $1$-skeleton $\CC$ of $\DD^0$ is contained in the $1$-skeleton of $\DD^1$, this   implies $h_1(\CC)=\CC$. Define $h_0=\id_{S^2}$.  Since $h_1$ is isotopic to $\id_{S^2}$ rel.\ $P$ we have  $h_1|P=\id_{S^2}|P=h_0|P$.    
Moreover, $h_0\circ f=g\circ h_1$,  $h_1(\CC)=\CC=h_0(\CC)$,  and both $h_0$ and $h_1$ are orientation-preserving homeomorphisms on $S^2$.
This shows that the hypotheses of Lemma~\ref{lem:cexp_Cinv} are satisfied (with $\widetilde S^2=S^2$), and so $f$ is combinatorially expanding for $\CC$ if and only if $g$ is combinatorially expanding for $\widetilde \CC=h_0(\CC)=h_1(\CC)=\CC$.  
\end{proof}

The previous lemma motivates   the following definition.

\begin{definition} 
  [Combinatorially expanding two-tile subdivision rules] 
  \label{def:combexprule}   
  \index{two-tile subdivision rule!combinatorially expanding|textbf}\index{combinatorially expanding!two-tile subdivision rule|textbf}
  Let $(\DD^0,\DD^1,L)$ be a two-tile subdivision rule, and $\CC$ be
  the Jordan curve of $\DD^0$. We call $(\DD^0,\DD^1,L)$ {\em
    combinatorially expanding} if every Thurston map $f$ realizing
  $(\DD^0,\DD^1,L)$ is combinatorially expanding for $\CC$.
\end{definition}
We know that if this  condition is true for {\em one} Thurston map
realizing the subdivision rule, then  it is  true for {\em all} such
maps by Lem\-ma~\ref{lem:invrealize}. 
We will see that under  an additional mild  technical assumption a two-tile subdivision rule 
can be realized by an expanding Thurston map if and only if  
the subdivision rule is combinatorially expanding 
(see Theorem~\ref{thm:combexp2}).  

  We conclude this section by proving  a statement related to Remark~\ref{rem:two-tile_fV0}~(ii).

\begin{prop} 
  \label{prop:subandD^n}
  \index{two-tile subdivision rule}\index{subdivision}
   \index{cell!complex!isomorphism}
   \index{isomorphism!of cell complexes}
 Let $(\DD^1,\DD^0,L)$ be a two-tile subdivision rule on $S^2$ and
  let $\CC$ be the Jordan curve of $\DD^0$.  Suppose that  the maps
   $f\:S^2\ra S^2$ and $g\:S^2\ra S^2$ both realize the subdivision
   rule. Let   ${\bf V}^0$ be the vertex set of $\DD^0$, and  assume 
   that  $\post(f)=\post(g)={\bf V}^0$.

  Then there exist homeomorphisms $h_n\: S^2 \ra  S^2$ for $n\in \N_0$ 
  with the following properties: for all $k,n\in \N_0$ with $ n\ge k$ we have 
  $h_n(c)\in\widetilde \DD^k\coloneqq  \DD^k(g , \CC)$ whenever $c\in 
 \DD^k\coloneqq  \DD^k(f, \CC)$. Moreover,  the map $$c\in \DD^k \mapsto h_n(c)\in 
 \widetilde \DD^k$$ is an isomorphism between the cell decompositions $\DD^k$ and $\widetilde \DD^k$ that does not depend on $n\ge k$.   
 \end{prop}
 
 We added the assumption $ \post(f)=\post(g)={\bf V}^0$ for convenience,
  because then one does not have to worry about vertices in $\DD^0$ without dynamical relevance (see Remark~\ref{rem:two-tile_fV0}~(i)).

We know that under the given assumptions $\CC$ is invariant under $f$ and $g$. So 
     in the sequence $\DD^0, \DD^1, \DD^2, \dots  $ each cell decomposition is a refinement of the previous one. Similarly, 
  $\widetilde \DD^0, \widetilde \DD^1,  \widetilde \DD^2,  \dots  $ 
  forms a sequence of finer and finer 
     cell decompositions. 
      By the proposition there exists a homeomorphism  $h_n$ that  induces  isomorphisms of the cell decompositions in the first  sequence with the corresponding cell decompositions in the second one up to level $n$.
  Moreover,  the isomorphism
 between $\DD^k$ and $\widetilde \DD^k$  given by 
 $h_n$  actually does not depend on $n\ge k$.  
 Based on this, one can easily show  the combinatorics of the sequences $\DD^n$ and $\widetilde \DD^n$, $n\in \N_0$, are exactly the same (as formulated more precisely in   
 Remark~\ref{rem:two-tile_fV0}~(ii)). 
 
 Note that in general there is no single homeomorphism $h$ on $S^2$ that induces an isomorphism between $\DD^k$ and $\widetilde \DD^k$ for {\em all} 
 levels $k\in \N_0$.  For example, no such $h$ can exist if one of the Thurston maps  is expanding, but the other one is not (for a specific case, see Example~\ref{ex:barycentric}).

\begin{proof} [Proof of Proposition~\ref{prop:subandD^n}]  
As in the statement, we use the notation $\DD^i\coloneqq \DD^i(f,\CC)$ and $\widetilde \DD^i\coloneqq \DD^i(g,\CC)$ for $i\in \N_0$. For $i=0,1$  the first definition is consistent with our notation $\DD^0$ and $\DD^1$ for the cell decompositions of the subdivision rule, because  
under our assumptions we have $\DD^0=\DD^0(f,\CC)=\DD^0(g, \CC)=\widetilde \DD^0$ and $\DD^1=\DD^1(f,\CC)=\DD^1(g,\CC)=\widetilde \DD^1$. 
Note that $\#\V^0\ge 3$ by our definition of a two-tile subdivision rule. 

By the proof of 
Lemma~\ref{lem:invrealize}, there are
orientation-preserving homeo\-mor\-phisms 
$h_0=\id_{S^2}$ and $h_1\: S^2  \ra S^2$ with 
$\CC=h_0(\CC)=h_1(\CC)$ satisfying the hypotheses 
of Lemma~\ref{lem:cexp_Cinv} (with $\widetilde S^2=S^2$).
Moreover, both $h_0$ and $h_1$ fix the points in $ \post(f)=\post(g)={\bf V}^0$. It follows from the considerations  in the proof of Lemma~\ref{lem:cexp_Cinv} that we obtain  orientation-preserving  homeomorphisms $h_n\: S^2\ra S^2$ for all  $n\in \N_0$  that fix the curve $\CC$ as a set and each point  in $\V^0$. 
This implies that each $h_n$ fixes all cells in $\DD^0$ as sets.
  
 As we have seen in the proof of Lemma~\ref{lem:cexp_Cinv}, these homeomorphisms  also satisfy 
\begin{equation}\label{eq:hnfgn+1}
h_n\circ f=g\circ h_{n+1}
\end{equation}
for $n\in \N_0$. 

Now let  $n,k\in \N_0$ with $k\le n$ be arbitrary, and consider the cell decomposition 
$$h_n(\DD^k)\coloneqq \{h_n(c'): c'\in \DD^k\}. $$
It follows from repeated application  of \eqref{eq:hnfgn+1}  
that  $g^k=h_{n-k}\circ f^k\circ h_n^{-1}$. 
This implies that if $c'\in \DD^k$, then $g^k$ is a homeomorphism 
of $h_n(c')\in h_n(\DD^k) $ onto $h_{n-k}(f^k(c'))$. Now $c\coloneqq f^k(c')$ is a cell in $\DD^0=\widetilde \DD^0$ and so $h_{n-k}(c)=c$. In other words, $g^k$ is cellular 
for $(h_n(\DD^k), \widetilde \DD^0)$. Since $g^k$ is also cellular for 
$(\widetilde \DD^k, \widetilde \DD^0)$, the uniqueness statement
in Lemma~\ref{lem:pullback} implies that $\widetilde
\DD^k=h_n(\DD^k)$. 
The first part of the statement follows.

It remains to show that the isomorphism between $\DD^k$ and $\widetilde \DD^k$ induced by $h_n$ does not depend on $n\ge k$. Since we will not use this statement in the following, we provide only a sketch of the proof leaving some details to the reader. 

It is enough to show that 
\begin{equation}\label{eq:fixxDDk}
(h_{n+1}^{-1}\circ h_n)(c')=c',
\end{equation}  whenever 
$c'\in \DD^k$ and $n\ge k$. Since both $h_n$ and $h_{n+1}$ induce 
isomorphisms of $\DD^k$ and $\widetilde \DD^k$, we know that  
$(h_{n+1}^{-1}\circ h_n)(c')\in \DD^k$ for each $c'\in \DD^k$.  

Now \eqref{eq:fixxDDk}  is true if $c'$ consists of a vertex
in $\DD^k$, i.e., a point in $f^{-k}(\post(f))$. This follows from the fact that 
$h_{n+1}$ and $h_n$ are actually isotopic rel.\ $f^{-n}(\post(f))$, and so 
$h_{n+1}^{-1}\circ h_n$ fixes each point in $f^{-n}(\post(f))\supset 
f^{-k}(\post(f))$. 

Since $h_{n+1}^{-1}\circ h_n$ is isotopic to $\id_{S^2}$ rel.\
$f^{-n}(\post(f))\supset f^{-k}(\post(f))$, the argument in the proof of Lemma~\ref{lem:isoJcin1ske}
shows that \eqref{eq:fixxDDk} is also valid for each  edge $c'$  in 
$\DD^k$. This in turn  implies that  if $X\in \DD^k$ is a tile,
then  $(h_{n+1}^{-1}\circ h_n)(X)$ is a tile in $\DD^k$ with the
same boundary as $X$.  Since $h_{n+1}^{-1}\circ h_n$ is orientation-preserving and fixes the vertices  on $\partial X$, we conclude that  $(h_{n+1}^{-1}\circ h_n)(X)=X$. Equation~\eqref{eq:fixxDDk} follows. 
\end{proof}

  \section{Examples of two-tile subdivision rules}
\label{sec:examples-two-tile}

In this section we present some examples of two-tile subdivision rules 
$(\DD^1,\DD^0, L)$ and maps that realize them.  This is based 
on Proposition~\ref{prop:rulemapex}.  We have already used this method for constructing and describing 
Thurston maps before (see the remark after Proposition~\ref{prop:rulemapex}).

In most of our examples we will represent the underlying sphere
$S^2$ as a pillow $P$ (see Section~\ref{sec:expratThmaps})
obtained by gluing together two  isometric copies $X^0_{\tt w}$ and $X^0_{\tt b}$  of a (simple) Euclidean  polygon $X\sub \C$. This gives us a natural cell decomposition 
$\DD^0$ of $S^2$, where $X^0_{\tt w}$ and $X^0_{\tt b}$  are the $0$-tiles, and the sides and corners on the common boundary of the polygons the $0$-edges and $0$-vertices. We assign the color ``white'' to  $X^0_{\tt w}$, and ``black'' to $X^0_{\tt b}$. In our figures the top polygon of the pillow will  be the white $0$-tile. 
The  pillow $P$ carries a natural orientation (as discussed in 
Section~\ref{sec:expratThmaps}). 
This in turn determines an orientation of  the equator $\CC\coloneqq \partial 
  X^0_{\tt w}=\partial 
  X^0_{\tt b}$ of $P$ so that $X^0_{\tt w}$ lies on the left and 
  $X^0_{\tt b}$ on the right of $\CC$
  (see Section~\ref{sec:orient}).  
  
The description of the cell decomposition $\DD^1$ is usually more complicated and depends on the specific case.

We know that in order to uniquely specify the labeling $L\: \DD^1\ra \DD^0$ it is enough to know the image of a pair $(v', X')$, where $X'$ is a $1$-tile  
and $v'\in X'$ a $1$-vertex 
(see Lemma~\ref{lem:labeluniq}~\ref{item:labeluniq2}). In general,  we will include more information on the labeling $L$ for a better illustration of the 
behavior of the map $f$ realizing the subdivision rule. We will
assign 
the colors ``black'' or ``white'' 
to the $1$-tiles indicating to which of the $0$-tiles 
they are sent by $L$ (and $f$). With these labels the cell decomposition $\DD^1$ will be a checkerboard tiling of $k$-gons, where $k$ is the number of vertices in $\DD^0$.

 Sometimes we will introduce markings for  the $0$-vertices and some  $1$-vertices,  often suggested by a natural identification 
 of the underlying sphere $S^2$ with the Riemann sphere $\CDach$. 
For a $1$-vertex marked $a$, we indicate by ``$a\mapsto b$'' that
the labeling $L$ sends it to the $0$-vertex marked $b$.
Similarly, ``$\mapsto b$'' indicates a $1$-vertex without
additional marking that is sent by $L$ to the $0$-vertex marked
$b$.

  After these preliminaries we now proceed to discussing the examples.

\ifthenelse{\boolean{nofigures}}{}{
\begin{figure}
  \centering
  \begin{picture}(10,10)
    \put(258,62){$\scriptstyle{0}$}
    \put(210,62){$\scriptstyle{-1}$}
    \put(264,107){$\scriptstyle{\infty}$}
    \put(86,58){$\scriptstyle{0\mapsto -1}$}
    \put(126,58){$\scriptstyle{1\mapsto 0}$}
    \put(88,107){$\scriptstyle{\infty\mapsto \infty}$}
    \put(36,44){$\scriptstyle{-1\mapsto 0}$}
    \put(167,78){$\scriptstyle{f}_1$}
  \end{picture}
  \includegraphics[width=11cm]{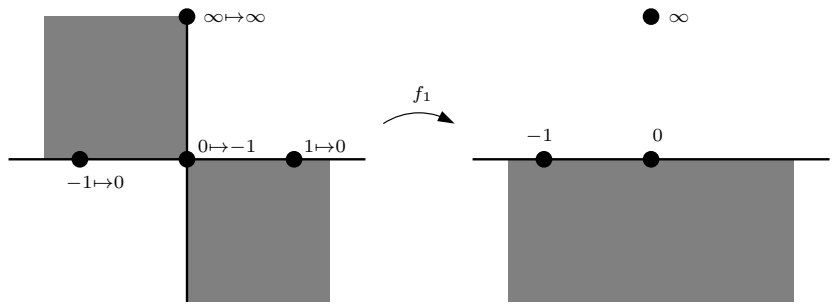}
  \caption{The two-tile subdivision rule for $z^2-1$.}
  \label{fig:subdivz2-1}
\end{figure}

}

\begin{ex}
\label{ex:z2-1}
 Here the white $0$-tile is the (closure
  of the) upper half-plane, and the black $0$-tile is the (closure of the)
  lower half-plane in $\CDach$. The $0$-vertices are the points
  $-1,0,\infty$. Thus the $0$-edges are $[-\infty, -1], [-1,0],
  [0,\infty]\sub \widehat \R$. The cell decomposition $\DD^0$ is indicated on the
  right in  Figure \ref{fig:subdivz2-1}.

  The white $1$-tiles are the first and third quadrants, and the black
  $1$-tiles are the second and forth quadrants. The $1$-vertices and their labelings  are as
  follows: the point $\infty$ is the only $1$-vertex labeled $\infty$,
  the $1$-vertices $-1$ and $1$ are labeled $0$, the $1$-vertex $0$ is
  labeled $-1$. The cell decomposition is indicated on  the left in
  Figure \ref{fig:subdivz2-1}. 

 This defines  an orientation-preserving labeling $L$. Then
$(\DD^1,\DD^0, L)$ is a two-tile subdivision rule.
 It is straightforward to check that  the map $f_1(z)=z^2-1$ realizes the subdivision rule   $(\DD^1,\DD^0, L)$. 
  
\begin{figure}
  \centering
  \includegraphics[width=12cm]{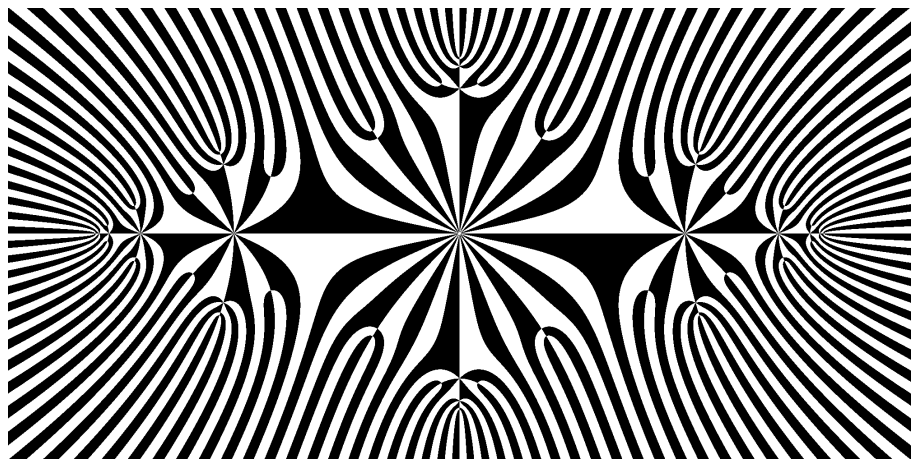}
  \caption{Tiles of  level $7$ for  Example \ref{ex:z2-1}.}
  \label{fig:tiles8_z2-1}
\end{figure}

 Since $f_1$ is a Thurston polynomial, it cannot be expanding by 
  Lemma~\ref{lem:poly_not_exp}.
  In fact, $[-1,0]$ is an $n$-edge for each  $n\in \N_0$ and so there exist two $n$-tiles that contain all postcritical points
  $-1,0,\infty$. 
  Figure \ref{fig:tiles8_z2-1} shows the tiles of  level $7$.  
\end{ex}

\begin{ex}[The barycentric subdivision rule] 
  \label{ex:barycentric}
  \index{barycentric subdivision}\index{subdivision!barycentric}
  We glue two equilateral triangles together along their
  boundaries to form a pillow  $S^2$. It  is a polyhedral surface and so 
  conformally equivalent to $\CDach$.  The two triangles are the
  $0$-tiles. We can find a conformal equivalence of $S^2$ with
  $\CDach$ such that the triangles correspond to the upper and
  lower half-planes, and the vertices to the points
  $-1,1,\infty$. For convenience we identify the vertices with
  $-1,1,\infty$; they are the $0$-vertices. The $0$-edges are the
  three edges of the triangles. The bisectors divide each
  triangle (each $0$-tile) into six smaller triangles. These $12$
  small triangles are the $1$-tiles. The labeling of the
  $1$-vertices is indicated in Figure \ref{fig:barycentric}.
  Again we obtain a two-tile subdivision rule.  We can realize
  this subdivision rule by a map $f_2$ that {\em conformally}
  maps $1$-tiles to the $0$-tiles. Under the indicated
  identification of $S^2$ with $\CDach$, the map is given by
    \begin{equation*}
    f_2(z)=1- \frac{54 (z^2-1)^2}{(z^2+3)^3}
  \end{equation*}
  (see \cite[Example 4.6]{CFKP}). The subdivision rule is
  combinatorially expanding, but the map $f_2$ is not
  expanding. This follows from
  Proposition~\ref{prop:rationalexpch}, because the point $1$ is
  both a critical and a fixed point of $f_2$. In
  Figure~\ref{fig:tiles_bary} the tiles of levels $1$--$6$ are
  shown. The tiles intersecting the borders of a picture frame  are actually unbounded; 
  these tiles form  the (closures of the) flowers
  at  $\infty$. 
 One can show that for a fixed $n$-vertex $v$, the intersection
  of the $m$-flowers $W^m(v)$, for $m\geq n$, is not a single
  point, but in fact the closure of the Fatou component of $f_2$ 
  containing $v$. 
 The Julia set of $f_2$ is a Sierpi\'{n}ski carpet, i.e., a set
  homeomorphic to the standard Sierpi\'{n}ski carpet fractal. 

\ifthenelse{\boolean{nofigures}}{}{
\begin{figure}
  \centering
  \includegraphics[width=11cm]{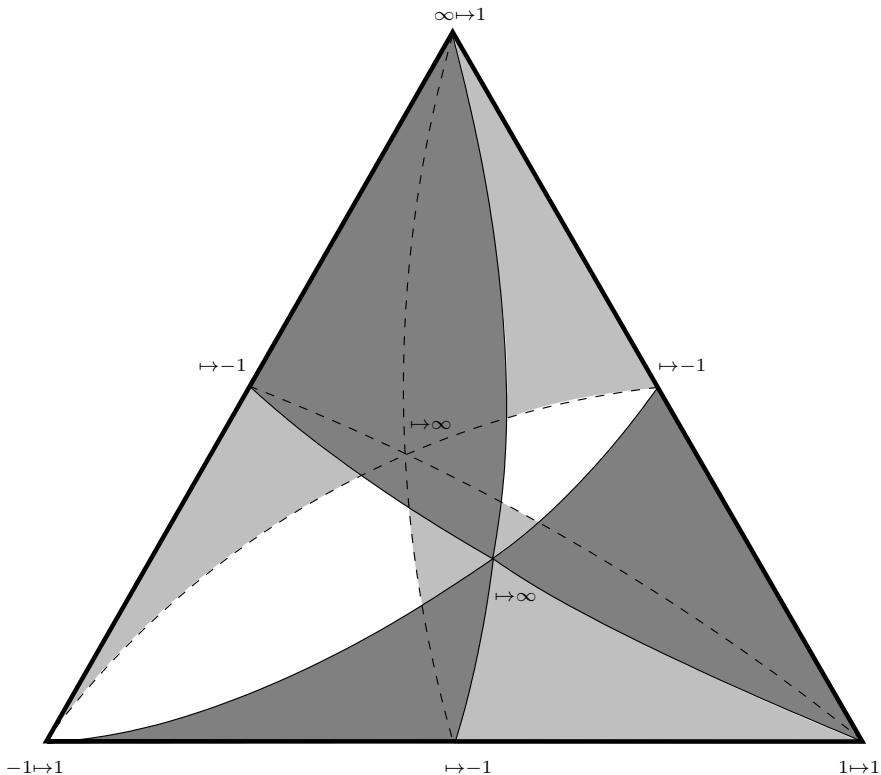}
  \begin{picture}(10,10)
    \put(-330,-10){$\scriptstyle{-1\mapsto 1}$}
    \put(-164,-10){$\scriptstyle{\mapsto -1}$}
    \put(-15,-10){$\scriptstyle{1\mapsto 1}$}
    \put(-168,275){$\scriptstyle{\infty\mapsto 1}$}
    \put(-257,142){$\scriptstyle{\mapsto -1}$}
    \put(-83,142){$\scriptstyle{\mapsto -1}$}
    \put(-177,120){$\scriptstyle{\mapsto \infty}$}
    \put(-145,55){$\scriptstyle{\mapsto \infty}$}
  \end{picture}
  \caption{The barycentric subdivision rule.}
  \label{fig:barycentric}
\end{figure}
}

  It is possible to choose a different realization of the
  two-tile subdivision rule indicated in Figure \ref{fig:barycentric}
  by a map $\widetilde{f}_2$ that is expanding.  Namely, we can use
  {\em affine} maps to map the $1$-tiles (the small triangles in
  the barycentric subdivision  of the equilateral triangles)
  to the $0$-tiles.  In this case, the $n$-tiles are Euclidean
  triangles for each $n\in \N_0$. The collection of all $n$-tiles
  is obtained from the $(n-1)$-tiles as the $1$-tiles
  were constructed from the $0$-tiles: one subdivides each
  Euclidean triangle representing an $(n-1)$-tile by its
  bisectors. It is not difficult to see that the diameters
  of $n$-tiles tend to 
  $0$ as $n\to \infty$. Hence $\widetilde{f}_2$ is expanding, and
  so this map is an example of an expanding Thurston map with
  periodic critical points.

\ifthenelse{\boolean{nofigures}}{}{
\begin{figure}
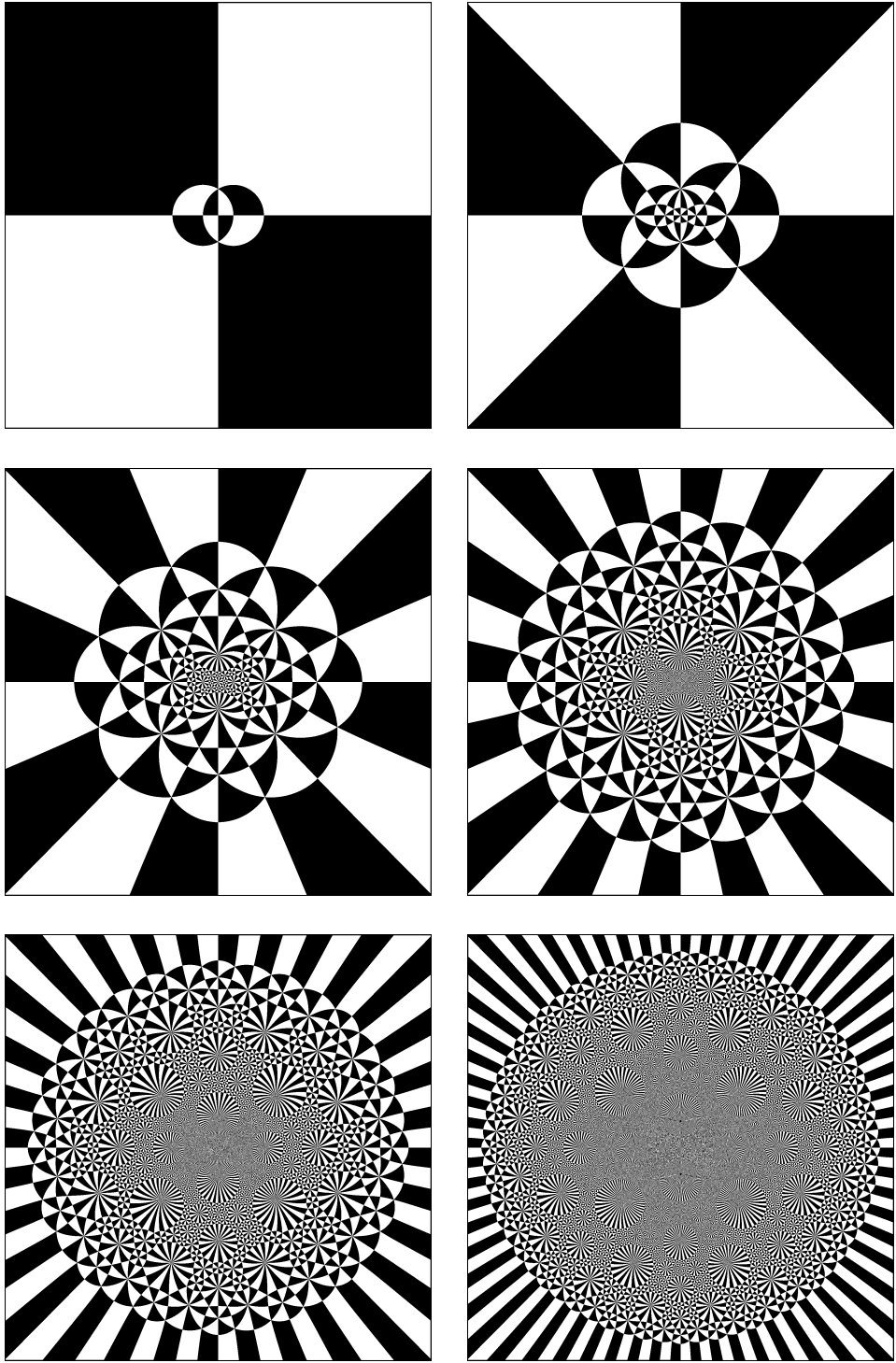

  \centering
  \begin{subfigure}{0.48\textwidth}
    \begin{overpic}
      [width=2.395in, 
      tics=20]{f_bary_high1_BW-frame}
    \end{overpic}
  \end{subfigure}
  \hspace*{\fill}
  \begin{subfigure}{0.48\textwidth}
    \begin{overpic}
      [width=2.395in, 
      tics=20]{f_bary_high2_BW-frame}
    \end{overpic}
  \end{subfigure}
  
  \vspace{0.04\textwidth}  
  \begin{subfigure}{0.48\textwidth}
    \begin{overpic}
      [width=2.395in, 
      tics=20]{f_bary_high3_BW-frame}
    \end{overpic}
  \end{subfigure}
  \hspace*{\fill}
  \begin{subfigure}{0.48\textwidth}
    \begin{overpic}
      [width=2.395in, tics=20]{f_bary_high4_BW-frame}
    \end{overpic}
  \end{subfigure}
  
  \vspace{0.04\textwidth}
  \begin{subfigure}{0.48\textwidth}
    \begin{overpic}
      [width=2.395in, tics=20]{f_bary_high5_BW-frame}
      \end{overpic}
    \end{subfigure}
    \hspace*{\fill}
  \begin{subfigure}{0.48\textwidth}
    \begin{overpic}
      [width=2.395in, tics=20]{f_bary_high6_BW-frame}
    \end{overpic}
  \end{subfigure}
  \caption{Tiles of levels 1--6 for the barycentric subdivision map $f_2$.}
  \label{fig:tiles_bary}
\end{figure}
}

  In Chapter~\ref{cha:combexp} we will present a general procedure how to obtain an
  expanding Thurston map from a combinatorially expanding one.
 Roughly speaking, we define an equivalence relation on the underlying sphere 
 that collapses sets where the map fails to be
  expanding to a point. For this example $f_2$,  these  equivalence classes are given by  the sets
of the form  $\bigcap_{m\geq n} W^m(v)$, where
$v$   is an $n$-vertex.  
\end{ex}

\begin{ex}
  \label{ex:obstructed_map}
  The map $f_3=h$ constructed in Section~\ref{sec:int-frac-sph}
  realizes the two-tile subdivision rule shown in
  Figure~\ref{fig:1flap_both}. It is obvious that this subdivision rule 
  is combinatorially expanding.  We  revisited the 
  map $f_3$ in Example~\ref{ex:obstructedThmap}, where we saw that
  it  is not Thurston equivalent to a rational map.

This map is related  to the  Latt\`{e}s map $g$
considered 
in Section~\ref{sec:Lattes}. Namely,  we cut the sphere along one
$1$-edge of $g$, and glued in two small squares that are mapped 
to the $0$-tiles. This increased the
degree of the map by $1$. A similar construction is possible in
greater generality. This was introduced by Pilgrim and Tan Lei, who called 
this operation ``blowing up an arc'' (see \cite{PT98}). 
\end{ex}

\begin{ex}
  [The 2-by-3 subdivision rule]
  \label{ex:2x3}
  We present another example of an expanding Thurston map $f_4$
  that is not (Thurston) equivalent to a rational map. In a
  sense, 
  this is the easiest example of this type, 
  but it has a parabolic
  orbifold in contrast to the previous one.

  The map $f_4$ is a Latt\`{e}s-type map (see
  Definition~\ref{def:Lattestype}) with signature $(2,2,2,2)$
as provided  by Proposition~\ref{prop:2222}. Here the map
  $A\: \C\ra \C$ in \eqref{eq:Lattestype2222} is given by
  $A(x+ y \iu ) = 2x+ 3 y \iu $ for $x,y\in \R$ and the associated crystallographic group  $G$ consists 
   of all isometries on $\R^2\cong \C$ of the form
  $g(u)= \pm u +\gamma$, where $\gamma\in \Z^2$. Then $A$
  descends to 
  the map
  $f_4\:S^2\ra S^2$ 
  on the quotient
  $S^2=\R^2/G$. As discussed in Example~\ref{ex:lattes_type}, we
  can represent the quotient space by a pillow $P$ obtained by
  gluing two squares of side length $1/2$ together along their
  boundaries. We have seen in Section~\ref{sec:Lattes} that this
  pillow can naturally be identified with $\CDach$ via a map that is essentially 
a   Weierstra\ss\ $\wp$-function (see 
  Section~\ref{sec:examples-lattes-maps}). This explains the
  markings of the four $0$-vertices in Figure~\ref{fig:2x3} which
  represents the two-tile subdivision rule realized by $f_4$.

  \ifthenelse{\boolean{nofigures}}{}{  
\begin{figure}
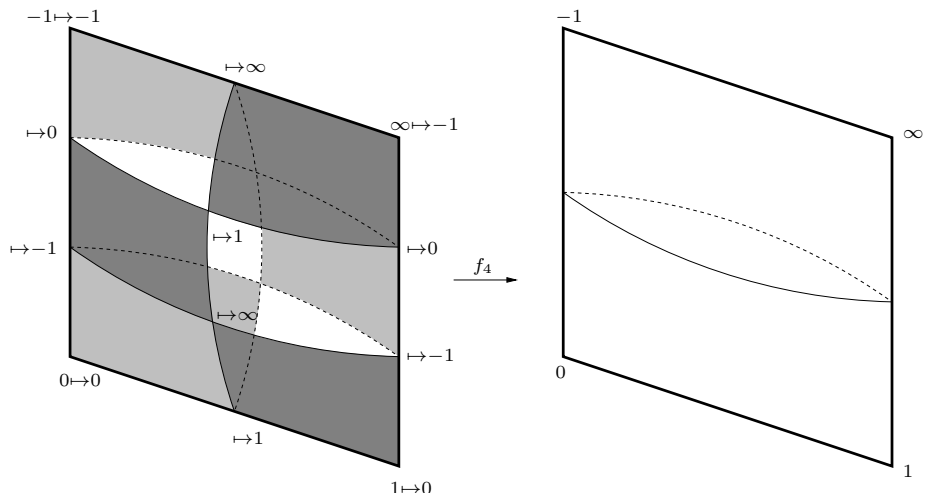

  \centering
  \begin{overpic}
    [width=11cm,  
    tics=20]{lattes2x3.eps}
    \put(59,11){$\scriptstyle{0}$}
    \put(101,-1){$\scriptstyle{1}$}
    \put(101,40){$\scriptstyle{\infty}$}
    \put(59,54){$\scriptstyle{-1}$}
    \put(39,-3){$\scriptstyle{1\mapsto 0}$}
    \put(20,3){$\scriptstyle{\mapsto 1}$}
    \put(-1,10){$\scriptstyle{0\mapsto 0}$}
    \put(-7,26){$\scriptstyle{\mapsto -1}$}
    \put(-5,40){$\scriptstyle{\mapsto 0}$}
    \put(-5,54){$\scriptstyle{-1\mapsto -1}$}
    \put(41,13){$\scriptstyle{\mapsto -1}$} 
    \put(41,26){$\scriptstyle{\mapsto 0}$}
    \put(39,41){$\scriptstyle{\infty \mapsto -1}$}
    \put(19,48){$\scriptstyle{\mapsto \infty}$}
    \put(18,18){$\scriptstyle{\mapsto \infty}$}
    \put(17.5,27.5){$\scriptstyle{\mapsto 1}$}
    \put(49,24){$\scriptstyle{f_4}$}
  \end{overpic}
  \caption{The 2-by-3 subdivision rule.}
  \label{fig:2x3}
\end{figure}
}

Each of the two faces (i.e., squares) of the pillow is divided
into six  rectangles as shown in Figure~\ref{fig:2x3}. These $12$
rectangles are the $1$-tiles. Their sides and vertices are the
$1$-edges and $1$-vertices. The coloring of $1$-tiles and 
the labeling of the $1$-vertices is indicated on the left in 
Figure~\ref{fig:2x3}.  The map $f_4$ sends each of the $12$
rectangles affinely to one of the two squares forming the faces
of the pillow.  This implies that each $n$-tile is a rectangle
with side lengths 
$\frac{1}{2}2^{-n}$ and $\frac{1}{2}3^{-n}$. 
In particular, $f_4$ is an
expanding Thurston map.
  
The fact that $f_4$ is not equivalent to a rational map can be derived from  Theorem~\ref{thm:Thurston_para}. In the following, we will
sketch a different  argument for this which is more in line with 
 our general  framework.  In our outline, we will rely on some results and concepts
that will be discussed later on.

To reach a contradiction, suppose that $f_4$ is equivalent to a rational map
$R\: \CDach\ra \CDach$. Then $R$ is a Thurston map with no
periodic critical points. Hence $R$ is expanding (Proposition~\ref{prop:rationalexpch}). 
So by Theorem~\ref{thm:exppromequiv} the maps $f_4$ and $R$ are topologically conjugate. This  implies by Theorem~\ref{thm:S2vsf}~\ref{item:S2qsphere} that if our  pillow $P$ is equipped with a visual metric $\varrho$
for $f_4$, then $(P, \varrho)$ is quasisymmetrically equivalent to the standard $2$-sphere, i.e.,  $\CDach$ equipped with the chordal metric. 
In particular, if $X^0$ is a $0$-tile  (i.e., 
 one of the  faces of the pillow $P$) equipped with a visual metric $\varrho$, 
 then it can be embedded  into  $\CDach$ by a quasisymmetry. 

  Now there are visual metrics for $f_4$ with expansion factor 
  $\Lambda=2$.  It is not hard to see this directly; it also follows from the general argument in the proof of Theorem~\ref{thm:visexpfactors1}  based on  \eqref{extraonL}.  Indeed, if $\CC$ is the equator of the pillow (which is $f_4$-invariant), then we  have 
  $D_1=D_1(f_4,\CC)=2$ in \eqref{extraonL}, which guarantees the existence of the desired visual metric. 
   
   If $\varrho$ is such a metric, then  $(X^0, \varrho)$ is bi-Lipschitz equivalent 
  to a Rickman's rug
  $R_\alpha$. Here by definition the 
  {\em Rickman's rug}\index{Rickman's rug} $R_\alpha$ for $0<\alpha<1$ is the unit square
  $[0,1]^2\sub \R^2$ equipped with the  metric $d_\alpha$ 
  given by 
  $$d_\alpha((x_1,y_1), (x_2,y_2)) =
  \abs{x_1-x_2} + \abs{y_1-y_2}^\alpha$$ for $(x_1,y_1), (x_2,y_2)\in 
  [0,1]^2$.  In our 
  case,  $(X^0,\varrho)$ is bi-Lipschitz equivalent to $R_\alpha$ with $\alpha= \log 2/\log 3$. It is well known that
  no quasisymmetry  can lower the
  Hausdorff dimension 
  $$\dim_H(R_\alpha)=1+ \log 3/\log 2>2$$ of $R_\alpha$ 
  (see \cite[Theorem
  15.10]{He}); in particular, $R_\alpha$ and hence also $(X^0, \varrho)$,  cannot be embedded into $\CDach$ by a quasisymmetry. 
  This is a contradiction showing that $f_4$ is not Thurston equivalent to a rational map. 
\end{ex}

\begin{ex}
  \label{ex:R_mario3}
  We now present a whole class of
  examples. In fact, the Latt\`{e}s map $z\mapsto 1-2/z^2$ from
  Example~\ref{ex:Lattes244}, the map from
  Example~\ref{ex:tringflP}, and the one from
  Example~\ref{ex:f2flaps} are all members of this family. The
  construction of these maps is illustrated in
  Figure~\ref{fig:lattes_1_2_flaps}. 

  The starting point is the Latt\`{e}s map
  $f_5(z)\coloneqq1-2/z^2$, which is the map in our family
  of lowest degree. We briefly recall the geometric description of this map
  as indicated in   Figure~\ref{fig:lattes244a}. 
For this   let $T$ be  the right-angled
  isosceles Euclidean triangle whose hypotenuse has length $1$;
  its  angles are $\pi/2, \pi/4,\pi/4$. We also consider a  smaller triangle $T'$ 
  similar to $T$  by the scaling factor $\sqrt{2}$.  We obtain 
a  pillow $\Delta$ by gluing two isometric copies
  $T_\wt$ and $T_{\bt}$ of $T$ together along their
  boundaries. The pillow carries a natural cell decomposition 
  $\DD^0$ whose $0$-tiles are $T_\wt$ and $T_{\bt}$, with 
  the common corners and sides of these triangles as $0$-vertices and $0$-edges.  A cell decomposition 
  $\DD^1$ of $\Delta$ is now obtained by subdividing   $T_\wt$ and  $T_{\bt}$  
  by the bisectors perpendicular to their hypotenuses  into two triangles each. Then $\DD^1$ contains four $1$-tiles    
  isometric to $T'$. If we choose a labeling as indicated 
 at the top in  Figure~\ref{fig:lattes_1_2_flaps} (corresponding to 
 Figure~\ref{fig:lattes244a}), then we obtain a two-tile subdivision rule $(\DD^1,\DD^0, L)$. It can be realized by a Thurston map
 $g=g_5\colon \Delta\to \Delta$ that sends each of the four small triangles 
 to $T_\wt$ or   $T_{\bt}$ by a suitable similarity.  

  Since $\Delta$ is a polyhedral surface, it can naturally be
  viewed as a Riemann surface. By the uniformization 
  theorem there is a conformal map 
  $\varphi\colon \Delta \to \CDach$. It can be chosen so that its sends the $0$-vertices of $\Delta$ (i.e., the common corners of 
  $T_\wt$ and  $T_{\bt}$) to the points $-1$, $1$, $\infty$. 
If we conjugate $g_5$  by this map 
  $\varphi$, then we obtain   the Latt\`{e}s map 
  $f_5=\varphi \circ g\circ \varphi^{-1}$  (see
  Example~\ref{ex:Lattes244} for more details). 
The homeomorphism $\varphi$ can be used in an obvious way to transfer  
$(\DD^0, \DD^1,  L)$ to  an isomorphic two-tile subdivision rule 
$(\widetilde{\DD}^1, \widetilde{\DD}^0, \widetilde{L})$ on
  $\CDach$. It is realized by $f_5$. 
  

  \begin{figure}
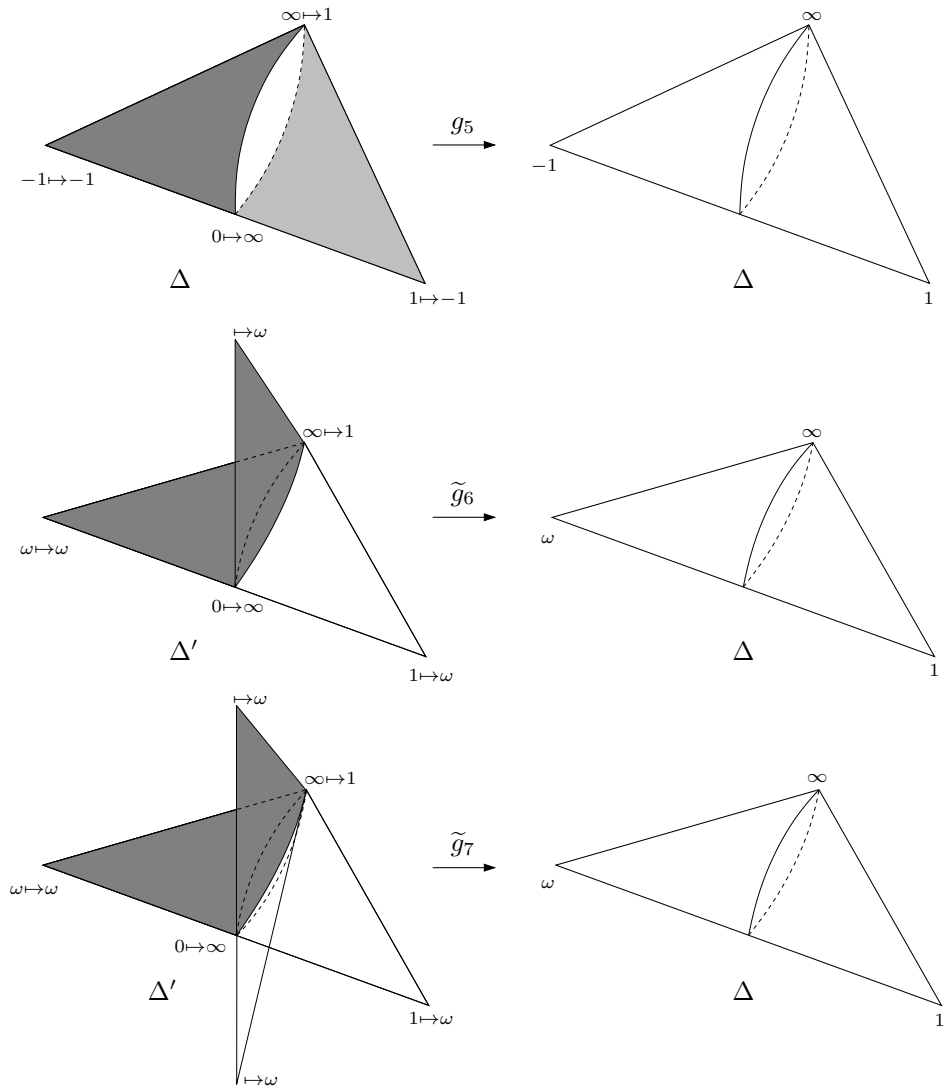

    \centering
    \begin{overpic}
      [width=12cm,tics=10,
      ]{lattes1_2_flap}
      %
      \put(46,86){$\scriptstyle{-1}$}
      \put(71.5,100.2){$\scriptstyle{\infty}$}
      \put(83,73.5){$\scriptstyle{1}$}
      \put(65,75){${\Delta}$}
      %
      \put(47,51){$\scriptstyle{\omega}$}
      \put(71.5,61){$\scriptstyle{\infty}$}
      \put(83.5,38.5){$\scriptstyle{1}$}
      \put(65,40){${\Delta}$}
      %
      \put(47,18.5){$\scriptstyle{\omega}$}
      \put(72,28.5){$\scriptstyle{\infty}$}
      \put(84,5.5){$\scriptstyle{1}$}
      \put(65,8){${\Delta}$}
      %
      \put(-2,84.7){$\scriptstyle{-1\mapsto -1}$}
      \put(22.5,100.2){$\scriptstyle{\infty\mapsto 1}$}
      \put(16,79.4){$\scriptstyle{0\mapsto \infty}$}
      \put(34.5,73.5){$\scriptstyle{1\mapsto -1}$}
      \put(12,75){${\Delta}$}
      %
      \put(-2,50){$\scriptstyle{\omega \mapsto \omega}$}
      \put(18,70.4){$\scriptstyle{\mapsto \omega}$}
      \put(16,44.5){$\scriptstyle{0\mapsto \infty}$}
      \put(24.5,61){$\scriptstyle{\infty\mapsto 1}$}
      \put(34.5,38){$\scriptstyle{1\mapsto \omega}$}
      \put(12,40){${\Delta'}$}
      %
      \put(-3,18){$\scriptstyle{\omega \mapsto \omega}$}
      \put(18,35.9){$\scriptstyle{\mapsto \omega}$}
      \put(12.5,12.5){$\scriptstyle{0\mapsto \infty}$}
      \put(24.7,28.3){$\scriptstyle{\infty\mapsto 1}$}
      \put(34.5,5.7){$\scriptstyle{1\mapsto \omega}$}
      \put(19,0){$\scriptstyle{\mapsto \omega}$}
      \put(10,8){${\Delta'}$}
      \put(38.5,90){${g_5}$}
      \put(38.5,55){${\widetilde{g}_6}$}
      \put(38.5,22){${\widetilde{g}_7}$}
    \end{overpic}
    \caption{Adding flaps.}
    \label{fig:lattes_1_2_flaps}
  \end{figure}
  
Similarly to Example~\ref{ex:obstructed_map}, we can modify  the map $g_5$ as follows.  Namely, we take the pillow $\Delta$ as above, but now label the vertices of
  $\Delta$ by  $\omega,1,\infty$ as shown on the
  middle right in  Figure~\ref{fig:lattes_1_2_flaps}. 
  Each side of $\Delta$ is subdivided into two triangles isometric to $T'$ as before.  We
   cut $T_\wt$ along the perpendicular bisector of its hypotenuse and glue in
  two isometric copies of $T'$. Informally, we refer to this
  procedure as ``adding a flap''.  This results in a surface
  $\Delta'$ homeomorphic to $\Delta$ that is built from six isometric copies $T_1,\dots,T_6$ of
  $T'$.  There is a map
  $\widetilde{g}_6\colon \Delta'\to \Delta$ that sends each $T_j$
  by a similarity to $T_\wt$ or $T_\bt$ as indicated in the
  picture. We can identify $\Delta'$ and $\Delta$ by a homeomorphism $\psi\: \Delta'\ra \Delta$  
  that respects the correspondence 
of the common three corners of
  $T_\wt$ and $T_\bt$ 
(labeled $\om, 1, \infty$ 
in both  $\Delta'$ and $\Delta$),  
  sends  the top of $\Delta'$ (consisting of four small triangles)
  to $T_\wt$, and the bottom of $\Delta'$ (consisting of two small triangles)
 to $T_\bt$. Then
  $g_6\coloneqq \widetilde{g}_6 \circ \psi^{-1}$ is a Thurston
  map with the three postcritical points $\om, 1, \infty$.  
  
This map was already considered in  
  Example~\ref{ex:tringflP}. There we saw that it is equivalent 
  to  the rational
  Thurston map 
  $f_6(z) = 1 + (\omega-1)/z^3$ with $\omega= e^{4\pi\iu/3}$. 
%

  It is possible to generalize this construction. For example, instead
  of adding just one flap on  the top face 
  $T_\wt$ of $\Delta$, we may add one flap on $T_\wt$ and
  $T_\bt$ each. This is illustrated at the bottom in 
  Figure~\ref{fig:lattes_1_2_flaps}.

  Moreover, instead of adding just one flap to the $1$-edge bisecting
  $T_\wt$, we can add $n\in \N_0$ flaps. Similarly, we can glue in $m\in \N_0$
  flaps at the $1$-edge bisecting $T_\bt$. This again results in
  a polyhedral surface $\Delta'$ consisting of $2n+2$ triangles
  isometric to $T'$ on the top, and $2m+2$ small triangles 
  isometric to $T'$ on  the bottom of $\Delta'$. 
  We label the vertices of $\Delta$ by
  $\omega,1,\infty$ as in the middle of
  Figure~\ref{fig:lattes_1_2_flaps}, and 
consider them as vertices 
of $\Delta'$ as well. There is a unique small triangle $\widetilde T$ with $\om \in \widetilde T$ that is contained in the top part of $\Delta'$. We color the tiles of $\Delta'$ in checkerboard fashion so that  $\widetilde T$ is black (as indicated in 
 Figure~\ref{fig:lattes_1_2_flaps}). 
 Then with a proper choice of an orientation on $\Delta'$ there  is a unique branched covering  map
  \begin{equation}\label{eq:wtg}
  \widetilde{g}\colon \Delta'\to \Delta
  \end{equation} 
   that sends each of the
  small triangles in $\Delta'$ to either $T_\wt$ or $T_\bt$ by a
  similarity, fixes the vertex $\omega$, and respects the coloring of tiles. 
  Note that $\widetilde{g}$ is not a Thurston map, because the domain $\Delta'$ and the  range $\Delta$ of $\widetilde{g}$ are different sets. To obtain a Thurston map,  
   we consider, as before, 
 an identification of $\Delta'$ and $\Delta$ by an 
 orientation-preserving   homeomorphism $\psi\colon \Delta' \ra
 \Delta$ 
that respects the correspondence 
of the points labeled $\omega,1,\infty$, sends the
  top of $\Delta'$ to
  $T_\wt$, and the  bottom   of $\Delta'$ to $T_\bt$.
Then  
\begin{equation} \label{eq:geng}
g\coloneqq \widetilde{g}\circ \psi^{-1}\colon \Delta\ra \Delta
\end{equation}
 is a Thurston map with
  $\post(g) =\{\omega, 1, \infty\}$. Moreover,  if $\CC$ is the common boundary of $T_\wt$  and $T_\bt$, then 
  $\CC$ is a $g$-invariant Jordan curve with 
 $\post(g)\sub \CC$. 

 Figure~\ref{fig:triag2flaps} illustrates  a special case of this construction. 
 It shows the subdivision rule realized by the Thurston map   $g=g_7$ 
 that arises according to Proposition~\ref{prop:ThmapSub} if we add one flap at the top face and one at the bottom face of 
 $\Delta$ (corresponding to  $n=m=1$).  
   In the figure   we have to identify   edges  as indicated 
   in order to obtain a
  $2$-sphere.

\ifthenelse{\boolean{nofigures}}{}{  
  \begin{figure}
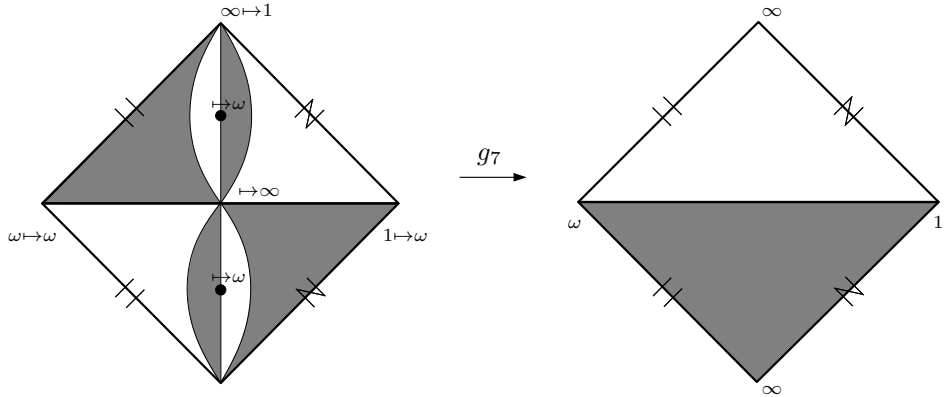

    \centering
    \begin{overpic}
      [width=12cm, tics=10, 
      ]{triag2flaps2.eps}
      %
      \put(-3.5,16){$\scriptstyle{\omega\mapsto \omega}$}
      \put(22,21){$\scriptstyle{\mapsto \infty}$}
      \put(38,16){$\scriptstyle{1\mapsto \omega}$}
      \put(20,41){$\scriptstyle{\infty\mapsto 1}$}
      \put(19,30.7){$\scriptstyle{\mapsto \omega}$}
      \put(19,11.5){$\scriptstyle{\mapsto \omega}$}
      %
      \put(58.5,17.5){$\scriptstyle{\omega}$}
      \put(99,17.5){$\scriptstyle{1}$}
      \put(80,41){$\scriptstyle{\infty}$}
      \put(80,-1){$\scriptstyle{\infty}$}
      \put(48.5,25){${g_7}$}
    \end{overpic}
    \caption{Two-tile subdivision rule realized by $g_7$.}
    \label{fig:triag2flaps}
  \end{figure}
}

  Since the Thurston map $g$ in \eqref{eq:geng} has three postcritical points, it
  follows from Theorem~\ref{thm:3postrat} that $g$ is
  Thurston equivalent to a rational map $f\colon \CDach\to
  \CDach$. In fact, we may choose $f$ as
  \begin{equation}
    \label{eq:def_fmn_flaps}
    f(z) = 1+ \frac{\omega -1}{z^d}, 
  \end{equation}
  where $d=n+m+2$ and $\omega= e^{2\pi\iu (n+1)/d}$.
   The proof
  that $g$ is Thurston equivalent to $f$ is similar to the one
  given in Example~\ref{ex:tringflP};  we omit the details. 
   Note that the maps $f_5$, $f_6$, and  $f_7(z) = 1-
  2/z^4$ (which was considered in
  Example~\ref{ex:f2flaps}) are special cases of \eqref{eq:def_fmn_flaps}. 
  
  The map $f$ in  \eqref{eq:def_fmn_flaps} can be  obtained  more explicitly from
  $\widetilde{g}$ in \eqref{eq:wtg} as follows. Since $\Delta'$ and $\Delta$ are
  polyhedral surfaces, they are Riemann
  surfaces. By the  uniformization theorem there are
  conformal maps $\varphi\colon \Delta \to \CDach$ and
  $\widetilde \varphi\colon \Delta'\to \CDach$. We  can normalize them so that 
  $\widetilde \varphi(\omega) = \varphi(\omega)=\omega$,
  $\widetilde \varphi(1) = \varphi(1)=1$,
  $\widetilde\varphi(\infty) = \varphi(\infty)=\infty$. 
%
  Then one can show that
    $f= \varphi \circ \widetilde{g} \circ \widetilde  \varphi^{-1}$ is
  exactly the map given in \eqref{eq:def_fmn_flaps}.

  Let $\CC\subset \Delta$ be the $g$-invariant 
  Jordan curve as
  before. Then  $g$ is combinatorially
  expanding for $\CC$. It follows from  Theorem~\ref{thm:combexp1}
  that by possibly choosing a different identification 
  $\psi\: \Delta'\ra \Delta$ in the definition of $g$, we may assume that  $g$ is expanding. 
 Since $f$ is also expanding as follows from Proposition~\ref{prop:rationalexpch}, 
  the maps $g$ and $f$ are 
  topologically conjugate  by Theorem~\ref{thm:exppromequiv}.  So there is a homeomorphism
  $h\colon \Delta\to \CDach$ such that $f = h \circ g \circ
  h^{-1}$. Then  $\widetilde{\CC}\coloneqq h(\CC)\subset
  \CDach$ is an $f$-invariant Jordan curve with 
$\post(f) \subset
  \widetilde{\CC}$. This argument is closely related to
  the general construction of invariant Jordan curves in Chapter~\ref{cha:constructc}. In
  Figure~\ref{fig:invC_constr} the invariant Jordan curve 
  $\widetilde{\CC}$ is shown for the map $f_6$. 

  Let $(\DD^1,\DD^0,L)$ be the two-tile subdivision rule given by $g$
  and the $g$-invariant Jordan curve $\CC$ according to
  Proposition~\ref{prop:ThmapSub}, and
  $(\widetilde{\DD}_1, \widetilde{\DD}_0, \widetilde{L})$ be the
  one  
  given by $f$ and
  the $f$-invariant Jordan curve $\widetilde{\CC}$. 
  Then these 
  two-tile subdivision rules are isomorphic (the isomorphism is
  naturally induced by $h$). In this sense, $g$ and $f$ realize
  the same two-tile subdivision rule.  
\end{ex}

\begin{ex}[Subdivision rules from tilings]
  \label{ex:maps_from_tilings} 
We now describe a general method for  obtaining subdivisions
rules from tilings of the Euclidean or hyperbolic plane. This can
be used to find Thurston maps 
with arbitrarily large sets 
of postcritical points. 

First, we consider the unit square $[0,1]^2$ in $\R^2$ and its translates under the lattice 
$\Z^2\sub \R^2$. These squares form a tiling of $\R^2$. More precisely, they are 
 $2$-dimensional cells or 
tiles  of a cell decomposition $\DD$ of $\R^2$ whose vertex set is $\Z^2$ and  whose  $1$-skeleton is the ``square grid'' $S=(\Z\times\R)\cup (\R\times \Z)$.
Let $\CC$ be a  Jordan
  curve contained in $S$, and  $X\subset \R^2$ be the closed
  Jordan region  with boundary $\CC$. Then $X$ is a union of tiles in $\DD$.
  We assume $X$ consists of at least two such tiles. 
 We  take  two identical
  copies $X_\wt$ and $X_\bt$ of $X$ (which we call $0$-tiles),
  and glue  them together along their boundaries to form a pillow
  $\Delta$. Note that this common boundary can be identified with $\CC$.
   Among the vertices of $\DD$ (i.e., the lattice points $\Z^2$) contained in $\CC\sub \Delta$  we fix
  four distinct ones and  consider them as the $0$-vertices of $\Delta$.  The four arcs 
    into which they divide  $\CC$ are the  $0$-edges. These $0$-tiles, $0$-edges,
  and $0$-vertices 
  form a cell decomposition $\DD^0$ of
  $\Delta\cong S^2$.

 Our tiling of $\R^2$ by copies of $[0,1]^2$ gives a natural subdivision of 
 $X$, and hence also of  $X_\wt$ and $X_\bt$,  into unit squares. Let $\DD^1$ be the cell
  decomposition of
  $\Delta\cong S^2$ that is given by these squares as $1$-tiles, their sides as
   $1$-edges, and their corners 
  as $1$-vertices. Clearly,  $\DD^1$ is a refinement of $\DD^0$ and
  every tile in $\DD^1$ is a $4$-gon.  Every $1$-vertex
  $v\notin \CC$ is contained in four $1$-tiles, and every
  $1$-vertex $v\in \CC$ is contained in the same number of  
   $1$-tiles in $X_\wt$ and in $X_\bt$. 
  It follows that every $1$-vertex is contained in an even number of
  $1$-tiles. We conclude  that the pair  $(\DD^1,\DD^0)$ satisfies the
  conditions \ref{item:subdivcomb1}--\ref{item:subdivcomb4} in
  Definition~\ref{def:subdivcomb}.

 To define a corresponding labeling,  we  fix a $1$-tile $X'$, a $1$-vertex $v'\in X'$, and 
  a $0$-vertex $v$. Since $v\in X_\wt$, there is a unique
orientation-preserving   labeling $L\colon \DD^1\to \DD^0$ such that $L(X')= X_\wt$ and
  $L(v')=v$ by
  Lemma~\ref{lem:labeluniq}~\ref{item:labeluniq2}. 
  Then $(\DD^1, \DD^0, L)$ is a two-tile subdivision rule. By Proposition~\ref{prop:rulemapex}
   it can be realized by a Thurston map 
$f\colon \Delta\to \Delta $. Roughly speaking, $f$ is
  constructed by mapping $X'$ to $X_\wt$, normalized such that
  $f(v')=v$, 
and by extending to all 
of $\Delta$ ``by reflection''. 

  Note that the Latt\`{e}s map from Section~\ref{sec:Lattes} and
  the Latt\`{e}s-type map from Example~\ref{ex:2x3} may be viewed
  as examples of this procedure. In \cite{HP_Sierp} Ha\"{i}ssinsky and Pilgrim constructed
   certain rational maps with Sierpi\'{n}ski
  carpet Julia sets in this way.

  Instead of square tilings,  
one can also use other tilings of $\R^2$ 
  for this construction.  The  Latt\`{e}s maps in
  Examples~\ref{ex:Lattes244},~\ref{ex:Lattes333},~\ref{ex:lattes236}, 
  and the map in 
  Example~\ref{ex:barycentric} are of this form. See also
  \cite{Me02} and \cite{HM}. 

  Finally, we can use 
tilings of  the hyperbolic plane
  instead. For example, if $n\ge 5$ is fixed, then one can  tile the hyperbolic plane $\mathbb{H}^2$ with
   right-angled $n$-gons.  This gives a cell decomposition
    $\DD$ of  $\mathbb{H}^2$  so that  four  $n$-gons intersect
  at each vertex. 
Again we consider a (hyperbolic) pillow 
$\Delta$ obtained by gluing together 
 two copies of a  closed Jordan region $X\sub \mathbb{H}^2$ whose boundary is contained in the
  $1$-skeleton of $\DD$ and encloses at least two  $n$-gons of the tiling. 
   If we define a Thurston map $f\: \Delta \ra \Delta$ 
  by the analog of the above construction, then  (under some mild additional assumptions) $f$ will have  $n$
  postcritical points.  
\end{ex}

More examples of maps constructed from subdivision rules can be found
in \cite{CFKP}, and more examples of subdivisions in
\cite{CFP06b}.    


Figures \ref{fig:tiles8_z2-1}, \ref{fig:tiles_bary}, and
\ref{fig:invC_constr} show \emph{symmetric conformal
  tilings} of $\CDach$. This means that if two tiles share an edge, then  they are
conformal reflections of each other along this edge. The  
tiling can be produced by successive reflections, and so each
individual tile encodes the information for the whole tiling
(more on this subject can be found in \cite{BowSt17}).

\ifthenelse{\boolean{singlechapter}}{ 

%


\chapter{Quotients of Thurston maps}
\label{cha:quotiens}

In this chapter we study the general problem when a Thurston map $f\: S^2\ra S^2$ passes to another Thurston map on a
quotient of $S^2$ induced by an equivalence relation $\Sim$
(see 
Section~\ref{sec:appquotmaps} for some basic facts about equivalence relations and quotient spaces).
Here  the quotient space $\widetilde S^2 \coloneqq S^2/\Sim$
equipped with the quotient topology has to be a
$2$-sphere itself. Well-known sufficient conditions for this  to be the case are due to Moore. We
call an equivalence relation $\sim$ on $S^2$ that satisfies these
conditions to be of {\em Moore-type} (see
Definition~\ref{def:eq_Moore_type}). So if $\sim $ is of
Moore-type, then the quotient space $\widetilde S^2$ is a
$2$-sphere (see Theorem~\ref{thm:moore}).
 
 We denote by  $[x]\coloneqq \{ y\in S^2: y\sim x\}$  the equivalence class of a point  $x\in S^2$, and by $\pi\: S^2 \ra \widetilde S^2=S^2/\Sim$ the quotient map defined as 
$\pi(x)=[x]\in S^2/\Sim$ for $x\in S^2$. The general question
when $f\: S^2\ra S^2$ 
{\em descends}\index{map!descending to quotient}\index{quotient!map descending to}\index{descending!to quotient} 
to the quotient
$\widetilde{S}^2$, 
i.e., when there exists a map
$\widetilde{f}\colon \widetilde{S}^2 \to \widetilde{S}^2$ such
that  $\pi\circ f = \widetilde{f} \circ \pi$, is very easy to
answer. Namely,  $\sim$ needs to be  
{\em $f$-invariant}
in the sense that we have the implication 
\index{equivalence relation!f-invariant@$f$-invariant}\index{f-invariant@$f$-invariant!equivalence relation}\index{invariant!equivalence relation}
\begin{equation}
  \label{eq:eq_f_inv}
  x\sim y \Rightarrow f(x) \sim f(y)
\end{equation}
for all $x,y\in S^2$  (see Lemma~\ref{lem:f_descends}).
This is equivalent to the
requirement that \begin{equation}
  \label{eq:eq_f_inv2}
  f([x]) \subset [f(x)]
\end{equation}
for each $x\in S^2$. 
 
If $\sim$ is $f$-invariant and the map $f\:S^2\ra S^2$ descends to a map
$\widetilde{f}\colon \widetilde{S}^2 \to \widetilde{S}^2$, then we have  the following
commutative diagram: 
\begin{equation}
  \label{eq:sim_descends}
  \xymatrix{
    S^2 \ar[r]^f \ar[d]_\pi & S^2 \ar[d]^\pi
    \\
    \widetilde{S}^2 \ar[r]^{\widetilde{f}} & \widetilde{S}^2\rlap{.}
  }
\end{equation}
We call   $\widetilde f$ the {\em quotient map} of $f$ on $ \widetilde{S}^2$. It is uniquely determined (by $\sim $ and $f$)  and continuous.

Even if  $f$ is a Thurston map and $\sim$ is an $f$-invariant 
equivalence relation of Moore-type, it is not guaranteed that the map 
$\widetilde f$ as in \eqref{eq:sim_descends} defined on  the $2$-sphere $\widetilde S^2$ 
is a Thurston map or even a branched covering map
 (see Example~\ref{ex:ftilde_not_Thurston}).
 For this we need a stronger condition on $\sim$. 
\begin{definition}[Strongly invariant equivalence relations]
  \label{def:sim_strongly-inv}
  \index{equivalence relation!f-invariant@$f$-invariant!strongly}
  \index{strongly f-invariant@strongly $f$-invariant equivalence relation}
\index{f-invariant@$f$-invariant!equivalence relation!strongly}\index{invariant!equivalence relation!strongly}
  Let $f\colon S^2\to S^2$ be a branched covering map. Then an 
  equivalence 
  relation $\sim$ on $S^2$ is called \emph{strongly
    $f$-invariant} if the image of each equivalence class is an
  equivalence 
    class, or equivalently, if 
    \begin{equation*}
      f([x]) = [f(x)]
    \end{equation*}
    for each $x\in S^2$.
\end{definition}
Clearly, each  strongly $f$-invariant equivalence relation is
$f$-invariant. 
Based on this concept,
we can  state the main result of this chapter. 
\begin{theorem}[Quotients of branched covering maps]
  \label{thm:f_descends_branched_cover}
  \index{branched covering map!descending to quotient}\index{quotient!of branched covering map}\index{descending!to quotient}
  Suppose   $f\colon S^2\to S^2$ is  a branched covering map,  $\sim$
is    an 
  $f$-invariant equivalence relation of Moore-type on
  $S^2$, $\pi \: S^2\ra  \widetilde{S}^2\coloneqq S^2/\Sim$ is  the quotient map,
  and $ \widetilde{f}\: \widetilde{S}^2 \ra  \widetilde{S}^2$
 is the induced map as in \eqref{eq:sim_descends}. Then 
    $\widetilde{f}$ is a branched covering map if and only
      if  $\sim$  is strongly $f$-invariant.
      
 Moreover, in this case  the following statements are true: 
  \begin{enumerate}
  \item 
    \label{item:f_descends_deg}
    $\deg(\widetilde{f}) = \deg(f)$.
  \item 
    \label{item:f_descends_locdeg}
    $     \crit(\widetilde{f})=  \pi(\crit(f))$ and $ \post(\widetilde{f}) =\pi(\post(f))$.
     \item 
    \label{item:f_descends_critpre}
     
    If $x\in S^2$ and the equivalence class $[x]$ contains the (distinct) 
    critical points $c_1,\dots, c_n\in S^2$ of $f$, then the
    local degree of $\widetilde{f}$ at $[x]\in \widetilde S^2$ is given by
    \begin{equation}
      \label{eq:loc_deg_ftilde}
      \deg(\widetilde{f}, [x])= 1 + \sum_{i=1}^n (\deg(f,c_i) -1). 
    \end{equation}
  \end{enumerate}
\end{theorem}

We obtain the following immediate consequence.
\begin{cor}[Quotients of Thurston maps]
  \label{cor:f_descends_Thurston}
  \index{Thurston map!descending to quotient}\index{quotient!of Thurston map}\index{descending!to quotient}
  Suppose  $f\colon S^2\to S^2$ is  a Thurston map,  $\sim$ is  a strongly 
  $f$-invariant equivalence relation of Moore-type on
  $S^2$, and  $ \widetilde{f}\: \widetilde{S}^2 \ra  \widetilde{S}^2$
 the induced map on the quotient 
  $\widetilde{S}^2=S^2/\Sim$ as in \eqref{eq:sim_descends}. Then  $\widetilde{f}$ is a Thurston map. 
 
 Moreover,  $\deg(\widetilde{f}) = \deg(f)$, and, if $\pi \: S^2\ra  \widetilde{S}^2= S^2/\Sim$ is  the quotient map,  $\crit(\widetilde{f}) = \pi(\crit(f))$, and $\post(\widetilde{f}) = \pi(\post(f))$.  
\end{cor}

\begin{proof} 
  It follows from Theorem~\ref{thm:f_descends_branched_cover}
  that $\widetilde{f}$ is a branched covering map on the
  $2$-sphere $\widetilde S^2$ with the properties specified in
  the second part of the statement. Since $f$ is a Thurston map, we
  have  
$$\#\post(\widetilde{f})=\#\pi(\post(f)) \le \#\post(f)<\infty$$ and 
$\deg (\widetilde{f})=\deg(f)\ge 2$. Hence $\widetilde f$ is also a Thurston map.
\end{proof}

 Quotients of rational maps (not necessarily
postcritically-finite rational maps)   were  considered by McMullen 
 in a somewhat different setting  (see \cite[Appendix B]{McM}).

This chapter is organized as follows. In Section~\ref{sec:clos-equiv-relat} we review some facts about
equivalence relations relevant for the statement of Moore's theorem.
 In particular, we discuss the important concept of a {\em closed equivalence relation}
(see Definition~\ref{def:closed_eq}) and state various conditions that characterize 
closed equivalence relations. 

In Section~\ref{sec:toppropbrcov} we   prove two facts about branched covering maps that are relevant for the proof of Theorem~\ref{thm:f_descends_branched_cover} (see Lemma~\ref{lem:pre_conn_comp} and Lemma~\ref{lem:Jordan_comp}). 

Finally, in Section~\ref{sec:quot-thurst-maps} we discuss some properties of strongly invariant equivalence relations (see Lemma~\ref{lem:eq_strong_inv}) and establish 
a fact needed in the proof of Theorem~\ref{thm:f_descends_branched_cover} (see Lemma~\ref{lem:sim_str_inv_map_eqclass}).  The proof of this theorem concludes  the section and the chapter.

\section{Closed equivalence relations and Moore's theorem}
\label{sec:clos-equiv-relat}

We first take a closer look at the situation when a topological space $\widetilde{X}$ is obtained
as the quotient of some other topological space ${X}$
by an  equivalence relation. Often $\widetilde{X}$ is then called
a \emph{decomposition space} (for a standard reference see \cite{Da}).  

Let $\sim$ be an equivalence relation on a set
${X}$. As before,  we denote by $[x]$ the equivalence
class of a point $x\in{X}$, by ${X}/\Sim \coloneqq
\{[x] : x\in {X}\}$ the quotient space,  and by
$\pi\colon {X} \to{X}/\Sim$ the 
quotient map given by $\pi(x) = [x]$ for $x\in
{X}$. If $X$ is a topological space, then we equip ${X}/\Sim$ with the quotient topology. 
See Section~\ref{sec:appquotmaps} for more details. 

A set $A\sub X$ is called 
\emph{saturated}\index{saturated!set}
if $x\in A$ and $x\sim y$ imply $y\in A$ for all $x,y\in X$, or equivalently, 
if $A$ is a union of equivalence classes. 

We define the 
\emph{saturated interior}\index{saturated!interior}
of a set
$U\subset{X}$ as
\begin{align}
  \label{eq:def_sat_int}
  U_s \coloneqq&\; \bigcup\{[x] : x\in X, \, [x]\subset U\}
       \\ \notag
     =&\;\bigcup\{A\subset {X} : 
       A \text{ is saturated and } A\subset U\}. 
\end{align}
The saturated interior of $U$ is the largest saturated set
contained in $U$. 

\begin{lemma}[Closed equivalence relations]
  \label{lem:closed_eq}
  Let $X$ be a compact metric space and $\sim$ be an equivalence
  relation on $X$. Then the following conditions are equivalent:
  \begin{enumerate}
  \item 
    \label{item:closed_eq1}
    The set $\{(x,y) : x,y\in X,\, x\sim y \} \subset X\times X$ is
    closed.
  \item 
    \label{item:closed_eq2}
    Let $\{x_n\}_{n\in \N}$ and  $\{y_n\}_{n\in \N}$ be convergent
    sequences in $X$. Then
    \begin{equation*}
      x_n\sim y_n
      \text{ for all } n\in \N  \text{ implies } \lim_{n\to \infty}  x_n\sim \lim_{n\to \infty} y_n. 
    \end{equation*}
  \item 
    \label{item:def_usc}
    For each $x\in X$  and each  neighborhood $U\sub X$ of
    $[x]$ there 
    is a neighborhood $V\subset U$ of $[x]$ such that 
    \begin{equation*}
      [y]\cap V\neq \emptyset \Rightarrow [y] \subset U \text{ for all $y\in X$}.
    \end{equation*}
  \item 
    \label{item:def_closed_sat_int}
    For each open set $U\subset X$ the saturated interior
    $U_s$ is open. 
  \end{enumerate}
\end{lemma}  

In the first condition $X\times X$ is equipped with the product
topology. Each of  these four equivalent conditions implies
that equivalence classes  are closed, and hence compact subsets
 of $X$.

\begin{definition}[Closed equivalence relations]
  \label{def:closed_eq}
  \index{equivalence relation!closed}
  \index{closed equivalence relation}
  An equivalence relation $\sim$ on a compact metric space $X$
  is called \emph{closed} if it satisfies one of the conditions (and hence every condition) 
  in Lemma~\ref{lem:closed_eq}.  
\end{definition}

\index{equivalence relation!upper semi-continuous}
\index{upper semi-continuous equivalence relation}
In the literature closed equivalence relations are often called
``upper semi-continuous'' instead. One may define upper
semi-continuous equivalence relations in any topological space
(usually by condition \ref{item:def_usc} in 
Lemma~\ref{lem:closed_eq}, together with the requirement that
each equivalence class is compact). To simplify the discussion,
we chose to restrict ourselves to compact metric spaces.

 In \cite{MeyPet} several
other   characterizations of closed equi\-valence relations  can be found.

\begin{proof}[Proof of Lemma~\ref{lem:closed_eq}]

  The equivalence of \ref{item:closed_eq1} and
  \ref{item:closed_eq2}  immediately follows from  the definition of the
  product topology. 

   \smallskip
  \ref{item:closed_eq2} $\Rightarrow$ \ref{item:def_usc}
   We first note that \ref{item:closed_eq2} implies that each
  equivalence class $[x]$, $x\in X$,  is closed, and hence compact. Indeed, if we 
  choose $x_n=x\in X$ to be a constant sequence,  then
  \ref{item:closed_eq2} shows that the limit of any convergent
  sequence $\{y_n\}$ in  $[x]$ is contained in $[x]$.

 We now argue by contradiction and  assume
  that \ref{item:closed_eq2} is satisfied, but
  \ref{item:def_usc} is not. Let $[x]$ be an equivalence class
  and $U$ be a neighborhood of $[x]$ that violates
  \ref{item:def_usc}. Since $[x]$ is compact, it follows that
  for each  sufficiently large $n\in \N$ the $1/n$-neighborhood of
  $[x]$ with respect to the underlying metric on $X$ satisfies 
   $\mathcal{N}_{1/n}([x]) \subset U$. By
  assumption, for each large enough $n\in \N$ there exists $x_n \in \mathcal{N}_{1/n}([x])$ such
  that $[x_n] \not\subset U$. This means
  there exists  $y_n \in [x_n]$ with $y_n\notin U$. 
Taking subsequences, 
we may  assume that $\{x_n\}$ and $\{y_n\}$ are
  convergent. Then  $y \coloneqq \lim_{n\to \infty}  y_n$ is contained in the closure
  of $X\setminus U$, and so  $y\notin [x]$. On the other
  hand,  $x'\coloneqq \lim_{n\to \infty} x_n \in [x]$,  and so  $x'\not \sim y$.
   This is a contradiction to  \ref{item:closed_eq2},  proving  the
  claim.

  \smallskip
  \ref{item:def_usc} $\Rightarrow$ \ref{item:def_closed_sat_int}
  Let $U\subset X$ be an open set, and $x\in U_s$ be arbitrary. Then $[x]\sub U$,
  and so   we can find a neighborhood  $V\subset U$
   of $[x]$ that satisfies the condition in 
  \ref{item:def_usc}. This condition implies that  $V\subset U_s$, and so $U_s$ 
  is a neighborhood of  $x$. 
Since $x\in U_s$ 
  was
  arbitrary, it follows that $U_s$ is open.

  \smallskip
  \ref{item:def_closed_sat_int} $\Rightarrow$
  \ref{item:closed_eq2} 
 We  first note that
  \ref{item:def_closed_sat_int} implies that every equivalence
  class  is closed. Indeed, let $x\in X$ be arbitrary and 
  consider the open set $U=X\setminus\{x\}$. Then $U_s=U\setminus[x]$.
  By \ref{item:def_closed_sat_int} the set $U_s$ is open, and so 
  $[x]=X\setminus U_s$ is closed.

Now let    $\{x_n\}$ and  $\{y_n\}$ be convergent sequences in $X$ 
  with $x_n \sim y_n$ for all $n\in
  \N$. Define     $x=\lim_{n\to \infty}  x_n $ and $y=\lim_{n\to \infty}  y_n$. 
 For  $\eps>0$ let $U=\mathcal{N}_\epsilon([x])$. Then by 
 \ref{item:def_closed_sat_int} the set $U_s$ is open, and it contains $x\in [x]\sub U_s$. Hence 
 $x_n\in U_s$ and so $y_n\in U_s$ for sufficiently large $n$. This implies that 
 $y\in \overline{U}_{\!s} \sub \overline{U}$, and so $\dist(y, [x])\le \eps$. Since 
 $\eps>0$ was arbitrary and $[x]$ is closed, we have $y\in [x]$ and so $x\sim y$ as desired. 
  \end{proof}

An equivalence relation $\sim$ on a compact metric space $X$ is
called 
\emph{monotone},\index{monotone equivalence relation}\index{equivalence relation!monotone}
if every equivalence class  of $\sim$ is connected. The following statement
is well known (see \cite[Proposition~1.4.1, p.~18]{Da}).  We
provide a proof for the convenience of the reader. 

\begin{lemma}
  \label{lem:pre_monotone}
  Let $\sim$ be a closed and monotone equivalence relation on a 
  compact metric space $X$, let $\pi \: X\ra \widetilde{X}\coloneqq  X/\Sim$ be  the quotient map, and 
   $\widetilde{K}\subset \widetilde{X}$ be a
  connected set. Then $K \coloneqq \pi^{-1}(\widetilde{K})
  \subset X$ is connected.  
\end{lemma}

Recall that  the quotient space $ \widetilde{X}= X/\Sim$ is equipped
with the quotient topology. 

\begin{proof}
  Assume the statement is not true. Then there are disjoint open 
  sets $U,V\subset X$ such that $K_U\coloneqq K\cap U$ and
  $K_V\coloneqq K \cap V$ are non-empty and $K\subset U\cup
  V$. Each equivalence class $[x] \subset K=\pi^{-1}(\widetilde{K})$
  is either contained in $K_U$ or in $K_V$, because $[x]$ is connected by our hypotheses. This implies that 
  $K_U$ and $K_V$ are saturated sets.

  Let $U_s$ and $V_s$ be the saturated interiors of $U$ and $V$, 
  respectively.  Then $U_s$ and $V_s$ are disjoint, 
  $K_U\sub U_s$ and $K_V\sub V_s$. Since $\sim$ is closed, $U_s$ and $V_s$  are open sets by condition~\ref{item:def_closed_sat_int} in 
  Lemma~\ref{lem:closed_eq}. Thus
  $\widetilde{U}_s \coloneqq  \pi(U_s)$ and
  $\widetilde{V}_s\coloneqq  \pi(V_s)$ are disjoint 
  open  sets in $\widetilde{X}$. Moreover, 
       $\widetilde{U}_s \cap \widetilde{K}  =\pi(K_U) \neq \emptyset$, 
       $ \widetilde{V}_s \cap \widetilde{K} =\pi(K_V) \neq \emptyset$, and 
      $\widetilde{K}\sub 
     \widetilde{U}_s\cup \widetilde{V}_s$.
  This contradicts our assumption that $\widetilde{K}$ is
  connected. \end{proof}

After these general considerations, we now turn to equivalence relations on a $2$-sphere $S^2$. 

\begin{definition}[Moore-type equivalence relations]
  \label{def:eq_Moore_type}
  \index{equivalence relation!Moore-type}
  \index{Moore-type equivalence relation}
  An equivalence relation $\sim$ on $S^2$ is said to be of
  \emph{Moore-type}, if the following conditions are satisfied:
  \begin{enumerate}
  \item 
    \label{item:Moore1}
    The  equivalence relation $\sim$ is closed.
  \item 
    \label{item:Moore2} 
     The  equivalence relation $\sim$ is monotone.  
  \item
    \label{item:Moore3}   
    No equivalence class of $\sim$ {\em separates} $S^2$, i.e., 
  $S^2\setminus [x]$ is connected for each $x\in S^2$.
  \item 
    \label{item:Moore4}  
    The equivalence relation $\sim$ is {\em non-trivial}, i.e., 
    there are at least two distinct equivalence classes.
  \end{enumerate}
\end{definition}

The reason for our terminology is the following important
theorem due to Moore. See \cite{Moo} for the
original proof, \cite[Theorem~25.1, p.~187]{Da} for a stronger
statement, and \cite[Supplement~1]{Ca} for a general discussion
on the $2$-sphere recognition problem.  

\begin{theorem}[Moore] 
  \label{thm:moore}
  \index{Moore's theorem}  
  Let $\sim$ be an equivalence relation on $S^2$ of
  Moore-type. Then the quotient space $S^2/\Sim$ is homeomorphic
  to $S^2$. 
\end{theorem}

As before, it is understood that   $S^2/\Sim$ is equipped with the
quotient topology. 
There is an equivalent description of 
Moore-type equivalence relations. 
To discuss this, we first require a definition. 

\begin{definition}[Pseudo-isotopies]
  \label{def:pseudo-isot}
  \index{pseudo-isotopy}\index{isotopy!pseudo}
  Let ${X}$ and ${Y}$ be topological spaces. A
  homotopy $H\colon {X}\times [0,1] \to {Y}$ is a
  \emph{pseudo-isotopy} of ${X}$ to ${Y}$ if for
  each $t\in [0,1)$ the map 
  $H(\cdot, t)\colon {X}\to {Y}$ is a
  homeomorphism.  
\end{definition}
So a pseudo-isotopy can fail to be a homeomorphism only at time
$t=1$.
An equivalence relation $\sim$ on a topological space
${X}$ is \emph{realized} or 
\emph{induced by a pseudo-isotopy}\index{equivalence relation!induced by!pseudo-isotopy}\index{pseudo-isotopy!equiv.\ relation induced by}
 if there is a pseudo-isotopy $H\colon {X}
\times [0,1] \to {X}$ with $H_0= H(\cdot, 0) =
\id_{{X}}$ such that
\begin{equation}
  \label{eq:def_eq_end_pseudo}
  x\sim y \Leftrightarrow H(x,1) = H(y,1)
\end{equation}
for all $x, y\in{X}$. 

Conversely, if a pseudo-isotopy $H\colon {X} \times
[0,1] \to {X}$ with $H_0= \id_{{X}}$ is given,
then \eqref{eq:def_eq_end_pseudo} defines an equivalence relation  $\sim$ on ${X}$ induced by $H$.

\begin{theorem}
  \label{thm:Moore_pseudo}
  Let $\sim$ be an equivalence relation on $S^2$. Then $\sim$ is
  of Moore-type if and only if $\sim$ is induced by a
  pseudo-isotopy 
  $H\colon S^2\times [0,1] \to S^2$. 
\end{theorem}

The ``if''-direction is the easy implication in this statement. Its proof  can be found in
\cite[Lemma~2.4]{Me14}; the proof of the 
``only if''-implication 
is quite
involved and can be found in \cite[Theorem~25.1 and
Theorem~13.4]{Da}.

 \begin{cor}\label{cor:psisoconq} Let  $\sim$ be  an 
  equivalence relation of Moore-type on
  $S^2$,  and  $\pi \: S^2\ra  \widetilde{S}^2\coloneqq S^2/\Sim$
  be the quotient map. 
Then the induced map on singular homology groups $\pi_*\: H_2(S^2)\ra H_2(\widetilde S^2)$ is an isomorphism.
\end{cor} 
   
  Recall that we always assume that $S^2$ is oriented. The given orientation 
  on $S^2$ can be represented  by a generator $[S^2]$ of $H_2( S^2)\cong \Z$, called the fundamental class of $S^2$  (see Section~\ref{sec:orient}). By the corollary we may choose an orientation on the $2$-sphere $\widetilde S^2$ such that  
   $\pi_*([S^2])$ is the fundamental class on $\widetilde S^2$. With these choices we then have $\deg(\pi)=1$ for the degree of $\pi$ (as defined in Section~\ref{sec:orient} in terms of the induced map on homology).
   
   \begin{proof} We know by Theorem~\ref{thm:Moore_pseudo} that $\sim$ is induced by a pseudo-isotopy 
   $H\: S^2\times [0,1]\ra S^2$. Let $h\coloneqq H_1$ be the time-$1$ map. Then $h$ is a map homotopic to 
   $H_0=\id_{S^2}$. Since the degree of a map is a homotopy invariant (see \cite[p.~134]{Ha}), we conclude that 
   $  \deg(h)=\deg(\id_{S^2})=1$. In particular, $h$ is surjective (it is a standard fact that a non-surjective continuous map on $S^2$ is null-homotopic and so has vanishing degree; see 
   \cite[p.~134]{Ha}). 
   
Since $\sim$ is induced by $H$, we know that $x\sim y$ if and
only if $h(x)=h(y)$ 
for all $x,y\in S^2$. 
This allows us to define a map $\varphi\: \widetilde S^2\ra S^2$ as follows.
If $x\in S^2$ we set 
$\varphi([x])\coloneqq    h(x)$. This map is well-defined, and a homeomorphism of  $\widetilde S^2=S^2/\Sim$ onto $S^2$ (see  
Lemma~\ref{lem:XsimY}~\ref{item:XsimY_cpt_T2}). 

Note that $h=\varphi \circ \pi$. Since $\varphi$ is a homeomorphism, we can choose a fundamental class $[\widetilde S^2]$ on the $2$-sphere 
$\widetilde S^2$, i.e., a  generator $[\widetilde S^2]$ of $H_2(\widetilde S^2)\cong \Z$, such that $\varphi_*([\widetilde S^2])=[S^2]$.  With this choice $\deg(\varphi)=1$ and so
$$\deg(\pi) =\deg(\varphi)\cdot \deg(\pi)=\deg(\varphi\circ \pi)=\deg(h)=1.$$
This implies that $\pi_*([S^2])=[\widetilde S^2]$ and so $\pi_*$ is an isomorphism. 
  \end{proof} 
  
  In the previous proof we saw explicitly how to find a homeomorphism $\varphi$ between the quotient space $\widetilde S^2=S^2/\Sim$ and the $2$-sphere $S^2$ if the equivalence relation $\sim$ is induced by a pseudo-isotopy. This shows that  
Theorem~\ref{thm:Moore_pseudo} implies Theorem~\ref{thm:moore}, and so 
 Theorem~\ref{thm:Moore_pseudo} can be regarded as a stronger version of Moore's theorem.

In Chapter~\ref{cha:combexp} we will also require  a $1$-di\-men\-sional version of Moore's
theorem. It can easily be derived from the topological
characterization of arcs and topological circles (in equivalent
form this is stated in \cite[Section~9.1, (1.1), p.~165] {Why} or as two exercises in \cite[Exercise~4.2 and
4.3, p.~21]{Da}).

\begin{prop}
  \label{lem:1dimMoore} 
  Let $J$ be an arc or a topological circle, and $\sim$ be an
  equivalence relation on $J$.  Suppose that
  \begin{enumerate}
   
  \item
    each equivalence class of $\sim$ is a compact and  connected
    subset of $J$,  
  
  \item
    there are at least two distinct equivalence classes. 
\end{enumerate}
Then the quotient space $\widetilde J=J/\Sim$ is an arc or a
topological circle, respectively.
\end{prop}

\section{Branched covering maps and continua}
\label{sec:toppropbrcov}

In this section we 
establish two
 topological properties of branched covering maps formulated in
 Lemma~\ref{lem:pre_conn_comp} and
 Lemma~\ref{lem:Jordan_comp}. They are needed for  the proof of
 Theorem~\ref{thm:f_descends_branched_cover}.\index{branched covering map!and continua} 

\begin{lemma}
  \label{lem:pre_conn_comp}
  Let $X$ and $Y$ be compact metric spaces,  
$f\colon X \ra Y$ be an open and continuous map,  and
  $K\subset Y$ 
  be a compact connected set. Then each component $C$ of
  $f^{-1}(K)$ satisfies $f(C) = K$.
 \end{lemma}
 
 In particular, this applies to the situation where $X=Y$ is a $2$-sphere $S^2$ and $f$ is a branched covering map $f$ on  $S^2$. In this context,  a similar statement is also true for open connected sets: if $V\sub S^2$ is a region and $U\sub S^2$ a component of $f^{-1}(V)$, then $f(U)=V$ (see 
 Lemma~\ref{lem:proper}~\ref{item:proper2}).  
 
  Lemma~\ref{lem:pre_conn_comp}   was proved in 
  \cite[Theorem~7.5, p.~148]{Why}. Since it is a bit hard to see the main ideas  of the argument  in this reference, we decided to include a proof. 
    We need    the
following fact. 

\begin{theorem}[\v{S}ura-Bura]
  \label{thm:Sura-Bura}
  \index{Sura-Bura theorem@\v{S}ura-Bura theorem}
  Let  $X$  be a compact metric space. 
  Then every component $C$ of $X$ is the intersection of  all
  clopen subsets of $X$ that contain $C$.
\end{theorem}

Here a 
{\em clopen}\index{clopen} 
subset of a topological space  is a
set that is both open and closed.
Proofs of Theorem~\ref{thm:Sura-Bura} can be found in
\cite[Corollary~1.34]{Bur0}
and \cite[Appendix to Chapter~14]{Rem}. 
The  theorem is in fact still true for locally compact
Hausdorff spaces. 
We will use a slight variant of the \v{S}ura-Bura theorem.

\begin{cor}
  \label{cor:Sura-Bura}
  Let $X$ be a compact metric space and
  $C$ be a component of $X$. Then there is a nested sequence
  $A_1\supset A_2 \supset \dots $ of clopen subsets of $X$ such
  that $C= \bigcap_n A_n$.
\end{cor}

\begin{proof}  Let $C$ be a component of $X$.  We  define 
  \begin{align*}
    \mathcal{A} \coloneqq&\; \{A \subset X: 
    C\subset A,   
    \text{ $A$  clopen in } X\}, \text{ and}
    \\ 
    \mathcal{B} \coloneqq&\; \{B\subset X : 
    B \cap C = \emptyset, 
    \text{ $B$ clopen in } X\}
    \\
    =&\; \{X\setminus A : A\in \mathcal{A}\}.
  \end{align*}
Then   $\bigcap_{A\in  \mathcal{A}}A =
  C$  by the \v{S}ura-Bura theorem, which is equivalent to   $\bigcup_{B\in \mathcal{B}}B = X \setminus C$.

  Now for each $n\in \N$ we  consider the set $$K_n\coloneqq  \{ x\in X : \dist(x, C)\geq
  1/n\}.$$  This is a compact subset of $X$
  that is disjoint from $C$.  Thus it is covered by finitely many
  sets in $\mathcal{B}$. Since  $\mathcal{B}$ is stable under taking finite unions of sets in $\mathcal{B}$, there exists {\em one} set $B_n \in \mathcal{B}$ with $K_n\sub B_n$. 
 
 Now define $A_n= X\setminus (B_1\cup \dots \cup B_n)$ for $n\in \N$. Then $A_n$ is clopen, $C\sub A_n$,  and $A_n \supset A_{n+1} $ for $n\in \N$. 
 Moreover, we have 
     \begin{equation*}
    C\subset \bigcap_n A_n 
    \subset 
    \bigcap_n (X\setminus B_n)   \subset 
    \bigcap_n (X\setminus K_n) =C.  
  \end{equation*}
  Here the last equality follows from the fact that $C$ is closed. We conclude that 
   $\bigcap_n A_n=C$ as desired.   
\end{proof}

\begin{proof}[Proof of Lemma~\ref{lem:pre_conn_comp}]
  Under the given assumptions,  consider the set 
  $Z\coloneqq
  f^{-1}(K)$, and  let $C$ be a component of $Z$. Then $Z$ equipped with the
 restriction of the  metric on $X$  is a compact metric
 space itself, and we can  apply
  Corollary~\ref{cor:Sura-Bura} to $Z$. 
  
  Let $A\subset Z$ be  clopen  in $Z$. Then $f(A) \subset K$ is
  clopen in $K$. Indeed,  since $A$ is open in $Z$,   there exists 
   an open set
  $U\subset X$ such that $A= U\cap Z$.  Then  $f(A) = f(U)\cap K$, 
  because $y\in f(U)\cap K$ if and only if there exists $x\in U\cap f^{-1}(K)=A$ with $f(x)=y$. 
 Since $f$ is an open map, $f(U)$ is open. This means
  that $f(A)=f(U)\cap K$ is open in $K$.

  Similarly, $A$ is closed in $Z$, and hence a compact subset of $X$. 
 Therefore, $f(A)\sub K$ is compact, and hence closed in $K$.

 In particular,  if $A\subset Z$ is non-empty and clopen in $Z$, then $f(A)$ is non-empty and clopen in $K$. This implies that 
  $f(A) = K$, since $K$ is connected. 

  Now let $A_1\supset A_2 \dots $ be a decreasing sequence of
  clopen sets in $Z$ with $\bigcap_n A_n = C$, as in
  Corollary~\ref{cor:Sura-Bura}. Then  $f(A_n) = K$ for each 
  $n\in \N$ by what we have just seen. 
  
   Now let $p\in K$ be arbitrary. Then for each $n\in \N$ there 
   exists $q_n\in A_n$ such that $f(q_n)=p$. Since $X$ is compact, by passing to a subsequence if necessary, we may assume that the sequence $\{q_n\}$ converges, say $q_n \to q\in X$ as $n\to \infty$.
   Then $f(q)=\lim_{n\to \infty} f(q_n)=p$ by continuity of $f$. 
   
 Since the sets $A_n$ are decreasing, we have $q_k\in A_n$, whenever $k\ge n$. Since $A_n$ is closed in $Z$ and hence also in $X$, this implies that $q\in A_n$ for each $n\in \N$.  Then
  $q\in \bigcap_n A_n= C$, and so   $p=f(q)\in f(C)$. Since $p\in
  K$ was arbitrary, 
  we conclude that
  $f(C) =K$ as desired. 
\end{proof}

Before we formulate the next lemma, we discuss some simple facts about branched covering maps between regions in $S^2$. 
Suppose $V\sub S^2$ is a {\em finitely connected region}, i.e., a region with finitely many 
complementary components.  Then the 
{\em Euler characteristic}\index{Euler characteristic} 
$\chi(V)$ of $V$ is given by  
$\chi(V)=2-k_V$, where $k_V\in \N_0$ is the number of complementary components of $V$. Note that $k_V$ is also equal to the number of components of $\partial V$. 

Let $f\: S^2\ra S^2$ be a branched covering map and $U$ be a connected component of $f^{-1}(V)$. Then $f(U)=V$ and the map $f|U\: U \ra V$ is proper 
(see Lemma~\ref{lem:proper}~\ref{item:proper2}). 
  Moreover, 
  each point $q\in V$ has the same number of preimages under $f$ in $U$ if we count multiplicity given by the local degree of $f$ at a preimage point.
This number is called the {\em degree of $f$ on $U$} and denoted by 
$\deg(f|U)$. So
$$ \deg(f|U)=\sum_{p\in U\cap f^{-1}(q)}\deg_f(p)$$
for each $q\in V$.
By a  variant of the 
Riemann-Hurwitz formula\index{Riemann-Hurwitz formula}
we have
\begin{equation}\label{eq:RiemHurVar} 
\deg(f|U)\cdot \chi(V)= \chi(U)+\sum_{c\in U\cap \crit(f)}(\deg_f(c)-1). 
\end{equation}
Implicitly  this includes the statement that  $U$ is also a finitely-connected region.

All  of this is  well known if $f$ is a rational map on the
Riemann sphere (see, for example, \cite[Section 5.4]{Be}); these
facts are also true for a  general branched covering map $f$,
since we can reduce to   rational maps by Corollary \ref{cor:brcovratup}. 

In particular, if $f|U\: U \ra V$ is a covering map,  then 
$$\deg(f|U)\cdot \chi(V)=\chi(U). $$ 
If here $k_V=2$, then $V$ is called a  {\em ring domain}. In this case, $\chi(V)=0$, which implies that $\chi(U)=0$. In other words, a  finite cover of a ring domain under $f$   is also a ring domain.

\begin{lemma}
  \label{lem:Jordan_comp}
  Let $f\colon S^2\to S^2$ be a branched covering map, 
  $K\subset S^2$ 
  be a compact connected set, $C\sub S^2$ be a component of 
  $f^{-1}(K)$, and $V\sub S^2$ be a  Jordan region with 
  $K\sub V$ such that $\overline V\setminus K$ contains no critical value of $f$. 
    Suppose that neither $K$ nor  $C$ separates $S^2$. 
    
    Then the unique 
    component $U$ of $f^{-1}(V)$ that contains  $C$ is a Jordan region that contains no other component of  $f^{-1}(K)$. 
    Moreover, for the degree of the proper map $f\: U\ra V$ we have  
    \begin{equation}\label{eq:degfUCcr} 
    \deg(f|U) =1+ \sum_{c\in C\cap \crit(f)} (\deg_f(c)-1). 
    \end{equation}  
 \end{lemma}
 
\begin{proof} Note that $C$ is a connected set in $f^{-1}(K)\sub
  f^{-1}(V)$, and so is contained in a unique component $U$ of  $f^{-1}(V)$. In the ensuing argument, 
we will actually define $U$ in a different, less direct way.
 This  will make it easier to establish our claims.

We start by considering the open set  $R\coloneqq V\setminus K$. It  has two complementary components, namely $K$ and the closed Jordan region $S^2\setminus V$. Since neither $K$ nor $S^2\setminus V$ separate $S^2$, their union $S^2\setminus R$ does not separate $S^2$ either 
(this follows from Janiszewski's lemma; see Lemma~\ref{lem:Janiszewski}). So $R$ is connected and hence a ring domain.

We can find a connected component  $R'$ of $f^{-1}(R)$ such that $C\cap \partial R' \ne \emptyset$. To see this, we run along some path in $V$ from a point in 
$ R\sub V$ towards $K$ until we first hit $K$. In this way, we can find a path $\ga$ in $V$ 
whose endpoint $y$ lies in $K$, but that has no other points with $K$ in common. 
By Lemma~\ref{lem:pre_conn_comp} we have $f(C)=K$ and so we can find 
a point $x\in C$ with $f(x)=y$. We can lift the path $\ga$ by   $f$ to a path $\alpha$ 
that ends in $x$ (see Lemma~\ref{lem:liftsofpathsbranched}).  Then 
$$f(\alpha\setminus\{x\})= \ga\setminus\{y\}\sub V\setminus K=R, $$
and so the connected set $\alpha\setminus\{x\}$ must lie in a component $R'$ of $f^{-1}(R)$. Then $x\in C\cap \partial R'$. 

By a  similar path lifting argument one can also see that there exists a component $J$ of $f^{-1}(\partial V)$ such that $J\cap \partial R'\ne \emptyset$.

Since $\partial V$ is a Jordan curve that does not contain any critical values of $f$,
all components of $f^{-1}(\partial V)$, and in particular $J$,  are Jordan curves. The sets $J$, 
$R'$, and $C$, are all disjoint, because $f$ maps them to the disjoint sets 
$\partial V$, $R$, and $K$, respectively. In particular, the connected sets $R'$ and 
$C$ must each  be contained in one of the two complementary components  of $J$. 
Let $U$ be the complementary component of $J$ that contains $R'$. Then 
$U$ is a Jordan region and we also have  $C\sub U$, as follows from $ C \cap\partial R'\ne \emptyset$.

We know that    $f(R')=R$, 
since the set
$R'$ is a component of 
$f^{-1}(R)$ (see Lemma~\ref{lem:proper}~\ref{item:proper2}).  Moreover,  $R'$ contains no critical points of $f$, since $R=f(R')=V\setminus K$ contains no critical value of $f$. This implies that  $f|R'$ is a covering map of $R'$ onto $R$. 

To see this, let $q\in R$ be arbitrary. We have to find a neighborhood of $q$ that is evenly covered by the map $f|R'$. For this  
we choose a small topological disk $D\sub S^2$ with $q\in D\sub R$ that is evenly covered by the branched covering map $f$ as in  Definition~\ref{def:brcovmap} (see 
Lemma~\ref{lem:evennei}). If $D'$ is a component of $f^{-1}(D)$ and $p'$ the unique point in $D'$ with $f(p')=q$, then either $D'\cap R'=\emptyset$ or $D'\sub R'$ and $\deg_f(p')=1$.
So in the latter case $f$ is a homeomorphism of $D'$ onto $D$. It easily follows that $D$ is 
 evenly 
covered by the map $f|R'\: R'\ra R$ 
in the sense of (unbranched) covering maps.

Since $f|R'\: R'\ra R$ is a covering map and $R$ is a ring domain, we conclude that $R'$ is a ring domain as well (see the discussion before the statement of the lemma). This implies that the boundary $\partial R'$ of $R'$ has precisely two connected components 
$B_1$ and $B_2$.  It follows from 
Lemma~\ref{lem:proper}~\ref{item:proper2}   that 
\begin{equation*}
  f(B_1)\cup f(B_2) 
  =
  f(B_1\cup B_2)
  =
  f(\partial R')
  \sub 
  \partial R
  \sub 
  K \cup \partial V.  
\end{equation*}
The sets $K$ and $\partial V$ are compact and disjoint, and the
sets $f(B_1)$ and $f(B_2)$ are connected. Hence each of the sets  $f(B_1)$ and $f(B_2)$ is completely contained in one of the sets $K$ or  $\partial V$. 

Since $C\cap \partial R'\ne \emptyset$, one of the sets $B_1$ or $B_2$ must meet $C$, say $C\cap B_1\ne \emptyset $. Since $f(C)=K$, this forces $f(B_1)\sub K$ by 
what we have just seen. Then $C\cup B_1$ is a connected subset of $f^{-1}(K)$. Since $C$ is a component of  $f^{-1}(K)$, it follows that $B_1\sub C$.

We also know that $J\cap \partial R'\ne \emptyset$, and so one of the sets $B_1$ or $B_2$ must meet $J$. Since $B_1\sub C$, we necessarily have $B_2\cap J\ne 
\emptyset$. This forces $f(B_2)\sub \partial V$, and by a similar argument as for 
$B_1$ we see that $B_2\sub J$. 

We now consider the set $U\setminus C$. This is a ring domain, because its complement has the two connected components $C$ and $S^2\setminus U$, whose union does not separate $S^2$ by Janiszewski's lemma. 
We know that $R'$ is open and that $R'\sub U\setminus C$. Moreover, $R'$ is  relatively closed in 
$U\setminus C$, because 
$$ \partial R'=B_1\cup B_2\sub C\cup J= C \cup \partial U, $$
and so $R'$ has no boundary points in $U\setminus C$. Since $U\setminus C$ is connected, we conclude that $R'=U\setminus C$. 

In particular, $U=R'\cup C$ and so 
$$ f(U) \sub f(R')\cup f(C)\sub V. $$ 
This implies $U\sub f^{-1}(V)$. Now  $U$ is a Jordan region and hence connected.
Moreover, it is a maximal connected set in  $f^{-1}(V)$, because any point 
in  $f^{-1}(V)$ not in $U$ is separated from $U$ by the Jordan curve $J\sub f^{-1}(\partial V)$   
that lies in the complement of $f^{-1}(V)$. Hence $U$ is a component 
of $f^{-1}(V)$. Moreover, $C$ is the only component of $f^{-1}(K)$ contained in $U$, because $U=R'\cup C$ and $f(R')=R=V\setminus K$, which implies that 
$R'$ is  disjoint from 
$f^{-1}(K)$. 

Finally, \eqref{eq:degfUCcr} follows from the Riemann-Hurwitz formula \eqref{eq:RiemHurVar}. Indeed, 
$U$ and $V$ are Jordan regions and so $\chi(U)=\chi(V)=1$; moreover,  the only 
critical points of $f$ in $U=R'\cup C$ are those contained in $C$, because $R'$ does not contain any. 
\end{proof}

\section{Strongly invariant equivalence relations}
\label{sec:quot-thurst-maps}

We now consider the question when a given Thurston map
 descends to a quotient map
 that
is itself a Thurston map. More precisely,  the setting is as
follows. Let $\sim$ be an equivalence relation on a $2$-sphere $S^2$. As before, 
we denote by $[x]$ the equivalence class of a point $x\in S^2$. 
 Let $\widetilde{S}^2\coloneqq S^2/\Sim$ be the quotient
space equipped with the quotient topology and  
$\pi \colon S^2\to \widetilde{S}^2$ be the  quotient map 
given by $ \pi(x)= [x]\in \widetilde{S}^2$ for 
$x\in S^2$. In this section, $\sim$ will often be of Moore-type, in which case the quotient space 
$\widetilde{S}^2$ is also a $2$-sphere.

Condition \eqref{eq:eq_f_inv} is necessary for $f$ to descend
to a Thurston map $\widetilde{f}$. However, this condition is
not sufficient even when $\sim$ is of Moore-type, as the following example shows. 

\begin{ex}
  \label{ex:ftilde_not_Thurston}
  Let $f\colon \CDach \to \CDach$ be the Thurston map given by
  $f(z)=z^2$. Let $\sim$ be the equivalence relation on $\CDach$
  that is obtained by collapsing the positive real line
  $[0,\infty] \subset \CDach$ to a point, meaning that
 $$ 
    x\sim y  \ :\Leftrightarrow \    x,y\in [0,\infty]
   \text{ or } x=y
  $$ 
  for  $x,y\in \CDach$. Clearly, this is an equivalence
  relation that is $f$-invariant and of Moore-type. Thus by
  Theorem~\ref{thm:moore} (Moore's theorem) the
  quotient $\widetilde{S}^2\coloneqq\CDach/\Sim$ is a $2$-sphere,
  and by Lemma~\ref{lem:f_descends} there 
  is a continuous map $\widetilde{f}\colon \widetilde{S}^2\to
  \widetilde{S}^2$ as in  \eqref{eq:sim_descends}.   However, the  map $\widetilde{f}$
   is not a branched covering map, and hence not a Thurston 
  map. Indeed,  note that in $\widetilde{S}^2$ all points $[x]$
  with $x\in 
  (-\infty,0]$ are distinct, but $\widetilde{f}$ maps each such
  point to $[x^2]=[0]\in \widetilde{S}^2$. Thus the point $[0]\in
  \widetilde{S}^2$ has infinitely many preimages under
  $\widetilde{f}$, which is
  impossible for a branched covering map on  the $2$-sphere $\widetilde{S}^2$.

  The map $\widetilde{f}$ can be described as follows. First
  $\widetilde{f}$ collapses
  an equator of the sphere $\widetilde{S}^2$ to a point. This
  results in two 
  topological $2$-spheres that are connected at one point. Then
  $\widetilde{f}$ maps each of these two spheres to the sphere
  $\widetilde{S}^2$ by
  orientation-preserving homeomorphisms.  
\end{ex}

In contrast, it can happen that a Thurston map descends to a Thurston map on a $2$-sphere quotient $S^2/{\sim}$ even though  $\sim$ is not of 
Moore-type.

\begin{ex}
  \label{ex:quotient_not_Moore}
  We again consider the rational Thurston
  map  $f\colon \CDach \to \CDach$   given by $f(z)=z^2$. Let $\sim$ be the equivalence relation
  on $\CDach$ defined as  $z\sim w$ if and only if $w=\pm
  z$ for $z,w\in \CDach$. Clearly,
   all equivalence classes except $[0]$ and $[\infty]$
  are disconnected and  so $\sim$ is not of Moore-type.
  

Since the equivalence relation $\sim$ is $f$-invariant,  
we know that
 $f$ descends to a map $\widetilde{f}\colon \CDach/{\sim} \to
  \CDach/{\sim}$  as in \eqref{eq:sim_descends}.
  We claim that $\CDach/{\sim}$ is a $2$-sphere and 
  $\widetilde{f}$ is topologically conjugate to $f$. 
  In particular, $\widetilde{f}$ is also 
  a Thurston map. 

Indeed,  if   $ \overline{\Halb} = \{z\in \CDach : \imag(z) \geq
  0\}$ denotes  the closed upper half-plane,  then $\CDach/{\sim} =
 \overline{\Halb}   /{\sim}$ 
 and  $\sim$ identifies the points $x$ and $-x$ for $x\in (0,\infty)\sub \partial  \Halb $ and no other points in $ \overline{\Halb} $. This implies  that  
   $\CDach/{\sim}$ is  a  $2$-sphere. An
  explicit homeomorphism $h\colon\CDach \to \CDach/{\sim}$ is
  given by $h(z) \coloneqq [\sqrt{z}\,]\in \CDach/{\sim}$ for 
  $z\in \CDach$, as can easily be verified.

  Let $\pi\colon \CDach \to \CDach/{\sim}$ be the quotient
  map. Then 
   $\widetilde{f}\colon \CDach/{\sim} \to \CDach/{\sim}$
  is given by
  $\widetilde{f}([z])= \widetilde{f}(\pi(z)) = \pi(f(z))= [z^2]$
for $z\in \CDach$.  
  Thus 
  $(h\circ f)(z) = [\sqrt{z^2}\,] = [(\sqrt{z})^2] = (\widetilde{f}
  \circ h)(z)$ for $z\in \CDach$.
  This means that $f$ and $\widetilde{f}$ are topologically conjugate 
  by the homeomorphism $h$.
\end{ex}

Our general criterion for obtaining branched covering  maps $\widetilde f$ on quotients $S^2/{\sim}$ as formulated  in 
Theorem~\ref{thm:f_descends_branched_cover}
uses the notion of  a strongly $f$-invariant equivalence relation 
 as in Definition~\ref{def:sim_strongly-inv}. The following
 statement puts this
 condition into perspective. 
\begin{lemma}[Strongly invariant equivalence relations]
  \label{lem:eq_strong_inv}
  \index{equivalence relation!f-invariant@$f$-invariant!strongly}
  \index{strongly f-invariant@strongly $f$-invariant equivalence relation}
\index{f-invariant@$f$-invariant!equivalence relation!strongly}\index{invariant!equivalence relation!strongly}
  Suppose  $f\colon S^2\to S^2$ is a branched covering map, and
  $\sim$ is an 
  $f$-invariant equivalence relation of Moore-type on
  $S^2$. Then the following conditions are equivalent: 
  \begin{enumerate}
  \item 
    \label{item:eq_strong_inv1}
    The equivalence relation $\sim$ is strongly $f$-invariant. 
   \item 
    \label{item:eq_strong_inv2}
   If $y\in S^2$, then  each  component
    of $f^{-1}([y])$ is a single equivalence class.
  \item 
    \label{item:eq_strong_inv3}
    If   $y\in S^2$, then   $f^{-1}([y])$ is a union of finitely many equivalence
    classes. 
  \item 
    \label{item:eq_strong_inv4}
    The induced map $\widetilde{f}$  in
    \eqref{eq:sim_descends} is discrete. 
   \end{enumerate}
\end{lemma}

Recall that $\widetilde{f}$ is \emph{discrete}\index{discrete}\index{map!discrete} means that
$\widetilde{f}^{-1}([y])$ is a discrete set in $S^2/{\sim}$ for
all $[y]\in S^2/{\sim}$. 
\begin{proof}
  \ref{item:eq_strong_inv1} $\Rightarrow$
  \ref{item:eq_strong_inv2} 
  Let $y\in S^2$ be arbitrary and $C$ be a component of
  $f^{-1}([y])$. If $x\in C$, then $[x]$ is connected, since $\sim$ is monotone,
   and $f([x])\sub [y]$, since $\sim$ is $f$-invariant. So $[x] \sub C$, showing 
    that $C$ is a  union
  of equivalence classes. Each such equivalence class
  $[x]\subset C$ is mapped by assumption
  \ref{item:eq_strong_inv1} to $[y]$, i.e., $f([x]) = [y]$. Thus
  $[x]$ contains a point from the finite set $f^{-1}(y)$, and so  $C$ consists of finitely many equivalence classes. Since $\sim$ is  of
  Moore-type, each equivalence class is a compact connected 
  set. A finite union of 
two or more such sets 
is disconnected. This implies  that  $C$ consists of a single equivalence class as
  desired. 

  \smallskip
  \ref{item:eq_strong_inv2} $\Rightarrow$
  \ref{item:eq_strong_inv3}
  Let $y\in S^2$ be arbitrary. By our assumption
  \ref{item:eq_strong_inv2}, each component of $f^{-1}([y])$ is
  an equivalence class $[x]$. Then  $f([x]) = [y]$ by Lemma~\ref{lem:pre_conn_comp}. 
  Thus $[x]$ contains a point from the finite set
  $f^{-1}(y)$. Hence there are only finitely many such
  equivalence classes, or components of $f^{-1}([y])$. 

  \smallskip
  \ref{item:eq_strong_inv3} $\Rightarrow$
  \ref{item:eq_strong_inv4} 
 Since  $\sim$ is $f$-invariant, there exists a well-defined continuous map  
  $\widetilde{f}\colon \widetilde{S}^2\to \widetilde{S}^2$ on $\widetilde S^2 = S^2/\Sim$ as in 
  \eqref{eq:sim_descends} (see
  Lemma~\ref{lem:f_descends}). 

Now consider an arbitrary point in $\widetilde{S}^2$ as given by an equivalence class   $[y]\in \widetilde{S}^2$, $y\in S^2$. Then by \eqref{eq:sim_descends} we have 
  $[x]\in \widetilde{f}^{-1}([y])$ for $x\in S^2$ if and only if
  $f([x]) \subset [y]$, or equivalently  $[x]\sub f^{-1}([y])$.   By assumption
  \ref{item:eq_strong_inv3}, there are only finitely many such
  equivalence classes $[x]$. Thus, $[y]$ has only  finitely many
  preimages under $\widetilde{f}$, and so  $\widetilde{f}$ is
  discrete.

  \smallskip
  \ref{item:eq_strong_inv4} $\Rightarrow$
  \ref{item:eq_strong_inv1} Let   $x\in S^2$ be arbitrary,  $y=f(x)$, and $C$ be the component of $f^{-1}([y])$ that contains $x$. Then $f(C)=[y]$  by
  Lemma~\ref{lem:pre_conn_comp}. 
  
   If $x'\in C$ is arbitrary, then $f(x')\in f(C)=[y]$, and so $f(x')\sim y$. The  $f$-invariance of $\sim$ implies
  $f([x'])\sub [f(x')]=[y]$, or equivalently $[x']\sub f^{-1}([y])$. Now $\sim$ is monotone, and so 
  $[x']$ is connected. We conclude that $[x']\sub C$. So $C$ is saturated, i.e., a union of equivalence classes. Each of these equivalence classes is mapped into 
  $[y]$, and hence a preimage of $[y]\in \widetilde{S}^2=S^2/\Sim $ under 
  $\widetilde{f}$.

Since   $\sim$ is of Moore-type, 
  the quotient space $\widetilde{S}^2=S^2/\Sim $ is a $2$-sphere. Moreover, since 
  $\widetilde{f}$ is  discrete by our assumption \ref{item:eq_strong_inv4}, it must be finite-to-one. So 
  $[y]$ can have only finitely many preimages under
  $\widetilde{f}$. By what we have seen, this implies that $C$
  consists of finitely many equivalence classes. Since $\sim$ is closed and monotone, each of these equivalence classes is compact and connected. So 
  $C$ can only be connected if it consists of a single equivalence class, i.e., $C=[x]$. Hence $f([x])=f(C)=[y]=[f(x)]$, and  \ref{item:eq_strong_inv1}  follows. 
  \end{proof}

To prove Theorem~\ref{thm:f_descends_branched_cover}, we first
consider the mapping behavior of $f$ near an  individual equivalence class.

\begin{lemma}
  \label{lem:sim_str_inv_map_eqclass}
  Suppose  $f\colon S^2\to S^2$ is  a branched covering map, and
   $\sim$ is  an equivalence relation of Moore-type on $S^2$
  that is strongly $f$-invariant.   
   Let $x\in S^2$  
  and $U'\subset S^2$ be a neighborhood of 
  $[x]$. Then there exists a neighborhood $U\subset U'$ of
  $[x]$ with the following properties:  
  \begin{enumerate}
  \item 
    \label{item:sim_str_inv_map_eqclass1}
    $U$ is a Jordan region.
  \item 
    \label{item:sim_str_inv_map_eqclass2}
    $U\setminus [x]$ does not contain any critical point of
    $f$.   
  \item 
    \label{item:sim_str_inv_map_eqclass3}
    The restriction $f|U\colon U\to f(U)$ is a proper map.
  \item 
    \label{item:sim_str_inv_map_eqclass4}
    If  $c_1,\dots, c_n$ are  the (distinct) critical points of $f$ contained
    in $[x]$, then the degree of $f$on $U$  is given by
    \begin{equation*}
      \deg(f|U) = 1 + \sum_{i=1}^n (\deg(f,c_i) -1). 
    \end{equation*}
    In particular, if $[x]$ does not contain any  critical point of $f$, then  the
map $f|U\colon U\to f(U)$ is a homeomorphism.
  \end{enumerate}
\end{lemma}

Recall that $\deg(f|U)$ was defined as the (constant) number of preimages of a point $q\in f(U)$ counting multiplicities (see the discussion before Lemma~\ref{lem:Jordan_comp}).

Note that under our 
given assumptions on $\sim$ and $f$, we immediately obtain the following implication  from 
Lemma~\ref{lem:sim_str_inv_map_eqclass}~\ref{item:sim_str_inv_map_eqclass4}:    
\begin{align}
  \label{eq:x_y_homeo}
  [x] \text{ does not} &\text{ contain a critical point of $f$} \Rightarrow
  \\ \notag
                       &\text  {$f$ is a homeomorphism of $[x]$ onto $f([x])$.} 
\end{align}

\begin{proof}
  Let $\sim$, $f$, $x\in S^2$, and $U'\subset S^2$ be as in the
  statement of the lemma. Moreover, let $c_1,\dots, c_n$ be
  the critical points of $f$  contained in $[x]$, and $y\coloneqq f(x)$.

 Since $\sim$ is strongly $f$-invariant, 
  we know that $f([x]) = [y]$.   By statements~\ref{item:eq_strong_inv2} and 
  \ref{item:eq_strong_inv3} in Lemma~\ref{lem:eq_strong_inv} the components 
  of $f^{-1}([y])$ are given by finitely many distinct equivalences classes, say $[x_1],\dots, [x_k]$. Here $[x]$ is one of these classes, and so we may assume $x_1=x$. 
  
   If we equip $S^2$ with a base metric that
  induces the  given  topo\-logy, then we can 
 choose $\eps>0$ so small that the neighborhoods $$\mathcal{N}_\eps([x_1]), \dots,
 \mathcal{N}_\eps([x_k])$$ are all disjoint and $  \mathcal{N}_\eps([x_1])= \mathcal{N}_\eps([x])\sub U'$. We can find a corresponding $\delta>0$ such  
  that $f^{-1}(\mathcal{N}_\delta([y]))\sub \mathcal{N}_\eps(f^{-1}([y]))$
  (see Lemma~\ref{lem:continv}). 
    
  We now choose  a Jordan region  $V\sub S^2$ such that   
  $[y]\sub V\sub \overline V\sub \mathcal{N}_\delta([y])$    and $\overline V\setminus [y]$ does not contain any critical value of $f$.   It is clear that  such a region $V$ exists if  $[y]$ is the  singleton set $\{y\}$.  If $[y]$ contains at least two points, then
  $[y]$ is a non-degenerate  continuum. The existence of  $V$ is then most easily 
  established  by identifying $S^2$ with
  $\CDach$ under some homeomorphism. Then by the Riemann mapping theorem there exists a conformal 
  map  $\varphi \colon \D \to \CDach \setminus [y]$.
 If we now define    
  $V= \CDach \setminus \varphi( \overline 
  B_\C(0, r))$ with $r\in (0,1)$ sufficiently close to $1$, then $V$ has the desired properties. 
  
Since $\sim$ is of Moore-type, neither $[y]$ nor any of the components 
$[x_1],\dots, [x_n]$ of 
$f^{-1}([y])$ separates $S^2$. If $U$ is the unique component  of $f^{-1}(V)$  that contains the component $[x]=[x_1]$ of $f^{-1}([y])$, then $U$ is a Jordan region by 
  Lemma~\ref{lem:Jordan_comp}. 
Moreover, by choice of $V$, we have $U\sub \mathcal{N}_\eps(f^{-1}([y]))$. Since   the connected set $U$ can only meet one of   the disjoint open  sets 
$\mathcal{N}_\eps([x_1]), \dots,
 \mathcal{N}_\eps([x_k])$, whose union is equal to   $\mathcal{N}_\eps(f^{-1}([y]))$, it follows that $U\sub \mathcal{N}_\eps([x])\sub U'$.

Now $U$ has clearly  properties 
  \ref{item:sim_str_inv_map_eqclass1} and  
  \ref{item:sim_str_inv_map_eqclass2}  as in the statement. 
 By  Lemma~\ref{lem:proper}~\ref{item:proper2} the map $f|U\: U\ra V$ is proper and $f(U)=V$.  Property
  \ref{item:sim_str_inv_map_eqclass3} follows. 
  
The identity for $\deg(f|U)$ follows from 
\eqref{eq:degfUCcr} in Lemma~\ref{lem:Jordan_comp}. If $[x]$ does not contain critical points, then $\deg(f|U)=1$ and so 
$f$ is a homeomorphism of $U$ onto $V=f(U)$.  
 Statement  \ref{item:sim_str_inv_map_eqclass4} follows. 
\end{proof}

We can now prove the main result of this chapter. 
\begin{proof}
  [Proof of Theorem~\ref{thm:f_descends_branched_cover}]
  Let $f\colon S^2\to S^2$ be a branched covering map, and
  $\sim$ be an 
  $f$-invariant equivalence relation on $S^2$ of
  Moore-type. 

  Assume first that $\widetilde{f}$ is a branched covering
  map. Then $\widetilde{f}$ is discrete, and thus $\sim$ is
  strongly $f$-invariant by Lemma~\ref{lem:eq_strong_inv}.

Conversely, suppose  that $\sim$ is strongly $f$-invariant.  In order to see that $\widetilde f$ is a branched covering map, we want to apply the criterion provided by 
Corollary~\ref{cor:brcovcrit}. 

First, the map $\widetilde f$ is continuous, and discrete
by condition \ref{item:eq_strong_inv4} in 
Lemma~\ref{lem:eq_strong_inv}.  To
see that $\widetilde{f}$ is also an open map, consider an
arbitrary open set $\widetilde{U}\subset \widetilde{S}^2$. Then
$U\coloneqq \pi^{-1}(\widetilde{U})\subset S^2$ is open and
saturated. Since $\sim$ is strongly $f$-invariant, 
 $f$ maps any saturated set to a saturated set. Since $f$ is
open, it follows that $V\coloneqq f(U)\subset S^2$ is open and
saturated, and so $\pi(V)\sub \widetilde S^2$ is open.  Now by \eqref{eq:sim_descends} we have
that 
\begin{equation*} 
  \widetilde{f}(\widetilde{U}) =
  \widetilde{f}(\pi(U)) = \pi(f(U)) =\pi(V), 
\end{equation*}
which implies that $\widetilde{f}(\widetilde{U})$ is open. Thus
$\widetilde{f}$ is an open map.

 With a suitable choice of a fundamental class on the $2$-sphere $\widetilde S^2$ we have  $\deg(\pi)=1$ (see Corollary~\ref{cor:psisoconq} and the subsequent discussion). Then 
 \begin{align*}
  \deg( \widetilde{f})&=\deg( \widetilde{f})\cdot \deg(\pi)=\deg(\widetilde{f}\circ \pi)\\
  &=\deg(\pi\circ f)= \deg(\pi)\cdot \deg(f)=\deg(f)>0. 
  \end{align*} 
In particular,  statement \ref{item:f_descends_deg} is true and $\widetilde{f}$ has positive degree.

In order to apply Corollary~\ref{cor:brcovcrit} and to conclude
that $\widetilde f$ is indeed a branched covering map, it remains to
show that $\widetilde f$ is a local homeomorphism in the
complement of some finite subset of $\widetilde S^2$.  We will do
this by an argument that will also establish formula
\eqref{eq:loc_deg_ftilde}.
  
Let $x\in S^2$ be arbitrary and $[x]$ the corresponding
equivalence class.  Let $U\subset S^2$ be a neighborhood of $[x]$
as in Lemma~\ref{lem:sim_str_inv_map_eqclass}. We set
$y\coloneqq f(x)$.  Then $[y]= f([x])$, since $\sim$ is strongly
$f$-invariant. Let $ U_s$ be the saturated interior of $U$.  Then
$[x]\sub U_s$ and by condition \ref{item:def_closed_sat_int} in 
Lemma~\ref{lem:closed_eq} the set
$U_s$ is open. Moreover, since $f$ is strongly invariant, the set
$V'\coloneqq f(U_s)$ is open and saturated. Let
$\widetilde U\coloneqq \pi(U_s)$ and
$ \widetilde V\coloneqq \pi(V')$.  Then $[x]\in \widetilde U$, $[y]=f([x])\in \widetilde V$,
the sets $\widetilde U$ and $ \widetilde V$ are open, and
$\widetilde f|\widetilde U\: \widetilde U\ra \widetilde V$ is
a continuous, open, and surjective map.

Let us consider a point in $\widetilde V$ distinct from $[y]$. It
is represented by an equivalence class $[y']\ne [y]$, where
$y'\in V'\sub f(U)$. Suppose that $x_1, \dots, x_k\in U$ are the
distinct preimage points of $y'$ under $f$ that lie in $U$; so
$\{x_1, \dots, x_k\}= U\cap f^{-1}(y')$.  Then none of these
points is a critical point of $f$ by choice of $U$ and so it
follows from
Lemma~\ref{lem:sim_str_inv_map_eqclass}~\ref{item:sim_str_inv_map_eqclass4}
that $$k= d_x \coloneqq 1 + \sum_{i=1}^n (\deg(f,c_i)-1), $$
where $c_1,\dots, c_n$ are the critical points of $f$ contained
in $[x]$.
  
Consider $i\in \{1, \dots, k\}$.  Since $\sim$ is strongly
$f$-invariant, we have $f([x_i])=[y']$.  This implies that
$[x_i]$ is contained in $U$. Indeed, otherwise the connected set
$[x_i]$ must meet the boundary of $U$ and so $U\cap [x_i]$ is not
relatively compact in $U$. On the other hand, $U\cap [x_i]$ is
contained in the subset $U\cap f^{-1}([y'])=(f|U)^{-1}([y'])$ of
$U$ which is compact, because $f|U\: U\ra f(U)$ is a proper
map. This is a contradiction showing that indeed $[x_i]\sub
U$.
This implies that actually $[x_i]\sub U_s$ and so
$[x_i]\in \widetilde U$.  Note that
  $$ \widetilde f([x_i])=(\widetilde f\circ \pi)(x_i)=(\pi\circ f)(x_i)= [y']. $$
  So the points $[x_1], \dots,  [x_k]\in \widetilde U$ are preimages of $[y']$ under 
  $\widetilde f$. It is clear that  $[y']$ cannot have other preimages in $\widetilde U$; indeed,
   suppose such a preimage is  represented by an equivalence class $[x']\sub  U_s$ distinct from 
  $[x_1], \dots,  [x_k]$. Then $f([x'])=[y']$ by strong $f$-invariance of $\sim$ and so there would be another preimage $x''\in [x']\sub U_s\sub U$ of $y'$ in $U$ distinct from the points $x_1, \dots, x_k$. 
  
The points $[x_1], \dots,  [x_k]\in \widetilde U$ are distinct; indeed,  
these equivalence classes are distinct from $[x]$ and so do not contain any critical points 
of $f$. So by   \eqref{eq:x_y_homeo} the map  $f$ is a homeomorphism of each of the
  equivalence classes $[x_1],\dots, [x_k]$ onto $[y]$. In particular, in each of these equivalences classes the point $y'$ has exactly one preimage which implies that the 
  equivalence classes $[x_1],\dots, [x_k]$ are distinct, because the points $x_1, \dots, x_k$ are. We conclude that $[y']$ has precisely $k=d_x$ preimages under $\widetilde f$ that lie in $\widetilde U$. 
  
  Now suppose in addition that $[x]$ does not contain critical points of $f$. Then $d_x=1$.
  Actually, then $f$ is a homeomorphism of $U$ onto $f(U)$ and our argument shows that 
  each point in $\widetilde V$ (including $[y]$) has precisely one preimage under $\widetilde f$ in $\widetilde U$. In this case, the map $\widetilde f|\widetilde U\: \widetilde U\ra \widetilde V$ is a continuous open bijection and hence a homeomorphism. 
  In particular, $\widetilde f$ is a local homeomorphism  near each point in $\widetilde S^2\setminus C$, where $C\coloneqq \pi(\crit(f))$ is a finite set. Corollary~\ref{cor:brcovcrit} now implies that $\widetilde f$
  is indeed a branched covering map on the $2$-sphere $\widetilde S^2$.
  
  Now that we know that $\widetilde S^2$ is a branched covering map, we return to the general case where we allow critical points of $f$ in $[x]$. By what we have seen,
  each point $[y']\ne [y]=\widetilde f([x])$ in $\widetilde V$ has precisely $k=d_x$ preimages 
  under $\widetilde f$ in $\widetilde U$. Here $\widetilde U$ can be chosen to 
  be contained in any  given neighborhood of $[x]$, because the 
  Jordan region $U$ that led to the definition 
  of $\widetilde U$ can be chosen to lie in an arbitrary neighborhood of $[x]$. In other words, each point $[y']\ne [y]$ close to $[y]$ has precisely $k=d_x$ preimages 
  under $\widetilde f$ close to $[x]$. Formula \eqref{eq:loc_deg_ftilde} for the local degree of 
  $\widetilde f$ at  $[x]$ follows.

  Now  \eqref{eq:loc_deg_ftilde}  immediately  implies that 
  $ \crit(\widetilde{f})=\pi(\crit(f))$. This, in combination with the identity  $\pi \circ f^n =
  \widetilde{f}^n \circ \pi$ for all $n\in \N$, gives that $
  \post(\widetilde{f})=\pi(\post(f))$. 
\end{proof}

\ifthenelse{\boolean{singlechapter}}{ 

%


\chapter{Combinatorially expanding Thurston maps}
\label{cha:combexp}

In Chapter~\ref{cha:subdivisions} we have  constructed  Thurston maps in a
geometric way from  two-tile subdivision
rules.  We want to know when the Thurston map 
realizing a subdivision rule  can be chosen to be  expanding. The key concept for an answer is the
notion of  combinatorial expansion. 

\begin{theorem}[Subdivision rules and expansion]
  \label{thm:combexp2}
  \index{combinatorially expanding}
  \index{expanding!combinatorially}
  \index{expanding}
  \index{Thurston map!expanding}
  \index{two-tile subdivision rule}
  \index{two-tile subdivision rule!combinatorially expanding}
  \index{subdivision}
  \index{two-tile subdivision rule!realization of}
  \index{map!realizing!subdivision}
  \index{realizing!subdivision}
  Let $(\DD^1, \DD^0,L )$ be a two-tile subdivision rule on a
  $2$-sphere $S^2$ that can be realized by a Thurston map $f\: S^2\ra
  S^2$ with $\post(f)={\bf V}^0$, where ${\bf V}^0$ is the vertex set
  of $\DD^0$.  Then $(\DD^1, \DD^0, L)$ can be realized by an
  expanding Thurston map if and only if $(\DD^1, \DD^0,L)$ is
  combinatorially expanding.
\end{theorem}

Recall that combinatorial expansion (see
Definition~\ref{def:combexprule}) for a two-tile subdivision rule means that  every  Thurston
map $f\: S^2\ra S^2$ realizing the subdivision rule is combinatorially expanding for the Jordan curve $\CC$ of $\DD^0$.  In this case, $\CC$ is $f$-invariant, 
 $\#\post(f)\ge 3$, $\post(f)\sub \CC$, and   there exists $n_0\in \N$
such that no $n_0$-tile for $ (f,\CC)$ joins opposite sides of
$\CC$ (see Definition~\ref{def:combexp}).

In general, one only has $\post(f)\sub {\bf V}^0$ for a Thurston
map $f$ realizing a subdivision rule as in
Theorem~\ref{thm:combexp2}. The stronger condition
$\post(f)= {\bf V}^0$ prevents the existence of additional
vertices in ${\bf V}^0$ that have no dynamical relevance and
force an additional normalization on the Thurston map. Without
the condition $\post(f)= {\bf V}^0$, Theorem~\ref{thm:combexp2} is
not true in general, as we will see  in
Example~\ref{ex:extra_vertex_D0}.

If a Thurston map $f\colon S^2\to S^2$ has an $f$-invariant
Jordan curve $\CC\subset S^2$ with $\post(f)\subset \CC$, then
for $f$ to be expanding it is necessary that $f$ is
combinatorially expanding for $\CC$ (this follows from
Lemma~\ref{lem:Dtoinfty}; see also Lemma~\ref{lem:no<3}).
The converse is not true in general: a combinatorially expanding
Thurston map need not be expanding. One still obtains a converse
if one allows a change of the map by a suitable isotopy.

\begin{theorem}[Expansion and combinatorial expansion]
\label{thm:combexp1}
\index{Thurston map!combinatorially expanding}
\index{combinatorially expanding}
\index{expanding!combinatorially}      
Let $f\:S^2\ra S^2$ be a Thurston map
that has an invariant Jordan curve $\CC\sub S^2$ with $\post(f)\sub \CC$. If $f$ is combinatorially   expanding for $\CC$,  then there is an orientation-preserving 
 homeomorphism $\phi\:S^2\ra S^2$ with  $\phi(\CC)=\CC$ that is isotopic to the
 identity on $S^2$  rel.~$\post(f)$ such  that 
   $g=\phi\circ f$ and  $\widetilde{g} = f\circ \phi$
  are expanding Thurston maps.  
\end{theorem}

Clearly, $g$ and $\widetilde{g}$ are both Thurston
 equivalent to $f$.  Note that
$\widetilde{g}= \phi^{-1} \circ g \circ \phi$; so  $g$ and $\widetilde{g}$ are topologically
conjugate. 
Moreover, $\post(g)= \post(\widetilde{g})=\post(f)$ as follows from Lemma~\ref{lem:T-eq_crit_post}. We also   have 
$g(\CC)\sub \CC$ and $\widetilde{g}(\CC)\sub \CC$
 (actually, it is not hard to see that even $g(\CC)=\widetilde{g}(\CC)=f(\CC)\sub \CC$). So the theorem says that if a
Thurston map $f$ is combinatorially expanding for an invariant
Jordan curve $\CC \subset S^2$ with $\post(f) \subset \CC$,
 then by
 ``correcting'' the map by post- or precomposing with  a suitable
 homeomorphism,  we can obtain an expanding Thurston map 
 with the same invariant curve and the same set of postcritical
 points.

The previous two theorems are  easy consequences of the following slightly more
technical result.

\begin{prop}
  \label{prop:combexp} 
  \index{combinatorially expanding}
  \index{expanding!combinatorially}
  \index{Thurston map!combinatorially expanding}
  \index{expanding}
  \index{Thurston map!expanding}
  Let $f\:S^2\ra S^2$ be a Thurston map that has an invariant Jordan
  curve $\CC\sub S^2$ with $\post(f)\sub \CC$. If $f$ is
  combinatorially expanding  for $\CC$,  then there exists  an expanding Thurston map $\widetilde f\: \widetilde S^2\ra \widetilde S^2$ that is Thurston equivalent to $f$ and has an 
$\widetilde f$-invariant Jordan curve $\widetilde \CC\sub \widetilde S^2$ with $ \post(\widetilde f)\sub  \widetilde \CC$.
Moreover, there exist    homeo\-morphisms
$h_0,h_1\: S^2\ra \widetilde S^2$ that are isotopic
rel.~$\post(f)$ and satisfy $h_0\circ f=\widetilde f\circ h_1$ 
as well as  $h_0(\CC)=\widetilde \CC= h_1(\CC)$. 
\end{prop}
 
So up to Thurston equivalence  every  {\em combinatorially expanding} Thurs\-ton map with an invariant Jordan curve can be promoted  to an {\em expanding} Thurston map with an invariant curve.

Proposition \ref{prop:combexp} shows that for a Thurston map with
an invariant Jordan curve
combinatorial expansion is \emph{sufficient} for the existence of an
 equivalent map that is expanding. One may ask whether 
combinatorial expansion is \emph{necessary} for this as well. The answer
 is negative, as we will see in Example~\ref{ex:exp_notcexp}.  The Thurston map  $f$ in this example  has an  invariant Jordan curve $\CC$ containing all its postcritical
points. It  is not combinatorially expanding for $\CC$ (and hence not
expanding), yet is equivalent to an expanding Thurston map $g$.

 On an intuitive level the assertion of Proposition~\ref{prop:combexp}  seems quite plausible. Namely, based on Proposition~\ref{prop:thurstonex} a map 
 $\widetilde f$ as 
 in Proposition~\ref{prop:combexp} can easily be constructed 
 if one can  find cell decompositions  with the same combinatorics as $\DD^n(f,\CC)$ where the cells are small in dia\-meter (with respect to a given background metric) when $n$ is large. Since $f$ is combinatorially expanding, Lemma~\ref{lem:submult} implies  that $D_n(f,\CC)\to \infty$ as 
$n\to \infty$. So if the level $n$ increases, one needs more and more tiles to form a connected set joining opposite sides of $\CC$, or more generally, to join  any two disjoint $k$-cells. 
Therefore,  it seems   evident that one should be able  to make  the
cells small while keeping their combinatorics the same. If one wants to
implement this idea, then one faces serious difficulties that make it
hard to  convert this  into a valid proof (in
\cite[Theorem~2.3]{CFP01} the authors claim a more general statement with
an argument  along these lines). 
 
For this reason our approach to the proof of
Proposition~\ref{prop:combexp} is different. For the map $f\: S^2\ra
S^2 $ in this proposition to be expanding, the intersection $\bigcap_n
X^n$ of any nested sequence $\{X^n\}$ of $n$-tiles should consist of
only one point (see Lemma~\ref{lem:charexpint}). In order to enforce
this condition, we introduce a suitable equivalence relation $\sim$ on
the sphere $S^2$ that collapses these intersections $\bigcap_n X_n$ to
points. We then use Moore's theorem (Theorem~\ref{thm:moore}) to show
that the quotient space $S^2/\Sim$ is also a $2$-sphere.  The map
$\widetilde f$ will  be the induced map on $S^2/\Sim$. It encodes the same combinatorial information as the original map $f$, because $f$ and $\widetilde f$ realize isomorphic two-tile subdivision rules (see Corollary~\ref{cor:isosubdic}). In particular, these maps  are Thurston equivalent as can be deduced from  Lemma~\ref{lem:isotwotequiv} (we will actually give a different and more direct argument for this). While
our  approach is quite natural, it is somewhat lengthy to carry out
and will occupy the whole chapter.

%
%

In the following, $f\: S^2\ra S^2$ is  a Thurston map
and $\CC\sub S^2$  an $f$-invariant Jordan curve with $
\post(f)\sub \CC$ for which $f$ is combinatorially expanding.
We consider the cell decompositions $\DD^n=\DD^n(f,\CC)$
for $n\in \N_0$ as given by Definition~\ref{def:DDn}.  
 As before, we denote 
  by $\X^n$, $\E^n$ and ${\bf V}^n$ the set of $n$-tiles, $n$-edges, and $n$-vertices for $(f,\CC)$, respectively. 
  A subset $\tau \sub S^2$ is called a {\em tile} if it is an $n$-tile for some $n\in \N_0$. We use the terms {\em edge}, {\em vertex}, and {\em cell} in a similar way.  
  In particular, in this chapter  the term ``cell'' will always be used with  this specific meaning. We will use the term {\em topological cell}  to refer to the more general notion of cells as defined in
  Section~\ref{s:celldecomp}. 
    
  Since $\CC$ is $f$-invariant, $\DD^{n+k}$ is a refinement of $\DD^n$ for  $n,k\in \N_0$.  For each $X\in \X^{n+k}$ there exists a unique $Y\in \X^n$ with $X\sub Y$. Conversely, each $n$-tile $Y$ is equal to the union of all $(n+k)$-tiles  contained in $Y$, and similarly each $n$-edge $e$ is  equal to the union of all $(n+k)$-edges contained in $e$ (all this was proved in 
  Proposition~\ref{prop:invmarkov}). We will use this fact that cells are subdivided by cells of the same dimension and higher levels repeatedly in the following. 


\subsection*{The equivalence relation on $S^2$}
\label{sec:equiv-relat-s2}
 As before (see \eqref{eq:Xseq}), we denote by  $\mathcal{S}=\mathcal{S}(f,\CC)$  the set of all sequences $\{X^n\}$ with $X^n\in \X^n$  for  $n\in \N_0$ and 
\begin{equation*}
  X^0\supset X^1\supset X^2\supset \dots\,. 
\end{equation*}
We know (see Lemma~\ref{lem:charexpint})  that expansion of a Thurston
map with an invariant curve is characterized by the condition  that  
$\bigcap_n X^n$ is always a singleton set if $\{X^n\}\in
\mathcal{S}$. This may not be the case for our given map  $f$, and so
we want to identify all points in such an intersection $\bigcap_n
X^n$. This  will not lead to  an equivalence relation, since
transitivity may fail.  As we will see, this issue is resolved if we
define the relation as follows.

\begin{definition} \label{def:erel} Let $x,y\in S^2$ be arbitrary. We write 
   $ x\sim y$ if and only if for all  $\{X^n\}, \{Y^n\}\in \mathcal{S}$ 
      with $x\in \bigcap_n X^n$  and  $ y\in
      \bigcap_n Y^n$  we have   $X^n\cap Y^n\neq \emptyset$  for
        all  $n\in \N_0$.
     \end{definition}

Recall from \eqref{def:dk} that $D_n=D_n(f,\CC)$ denotes the minimal number 
of $n$-tiles required to form a connected  set $K^n$ joining opposite sides of $\CC$. 
 Since $f$ is combinatorially expanding for $\CC$ 
(see Definition~\ref{def:combexp}), we have $\#\post(f)\ge 3$ and so the term
``joining opposite sides''  is meaningful (see  Definition~\ref{def:connectop}).
Moreover, there exists $n_0\in \N$ such that $D_{n_0}(f,\CC)\ge 2$, and  so
by Lemma~\ref{lem:submultexp} we have $D_n=D_n(f,\CC)\to \infty$
as $n\to \infty$.  
In combination with Lemma~\ref{lem:flowerbds} this implies that
if $\tau, \sigma$ are disjoint $k$-cells, $K^n$ is a connected
set of $n$-tiles with $\sigma\cap K^n\ne \emptyset$, and
$\tau\cap K^n\ne \emptyset$, then the number of tiles in $K^n$
tends to infinity and thus cannot stay bounded as $n\to \infty$.
We will  use this fact  in the proof of the following lemma. 

\begin{lemma} \label{lem:aeq}
The relation $\sim$ is an equivalence relation on $S^2$.
\end{lemma}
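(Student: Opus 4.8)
We must verify that $\sim$ as defined in Definition~\ref{def:erel} is reflexive, symmetric, and transitive. Reflexivity is immediate: if $x\in\bigcap_n X^n$ and $x\in\bigcap_n Y^n$ for two sequences $\{X^n\},\{Y^n\}\in\mathcal S$, then $x\in X^n\cap Y^n$ for every $n$, so $X^n\cap Y^n\neq\emptyset$. Symmetry is equally trivial from the symmetric form of the defining condition. So the entire content of the lemma is \emph{transitivity}, which is also where the combinatorial expansion hypothesis gets used — this is the main obstacle and the only step requiring real work.

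To prove transitivity, suppose $x\sim y$ and $y\sim z$; I must show $x\sim z$. So fix arbitrary $\{X^n\},\{Z^n\}\in\mathcal S$ with $x\in\bigcap_n X^n$ and $z\in\bigcap_n Z^n$, and I need $X^n\cap Z^n\neq\emptyset$ for all $n$. The natural idea is to interpose a sequence through $y$: choose some $\{Y^n\}\in\mathcal S$ with $y\in\bigcap_n Y^n$ (such a sequence exists because tiles are subdivided by tiles of higher order, as recalled before Lemma~\ref{lem:charexpint}). Then $x\sim y$ gives $X^n\cap Y^n\neq\emptyset$ and $y\sim z$ gives $Y^n\cap Z^n\neq\emptyset$ for all $n$. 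Hence for each $n$ the union $X^n\cup Y^n\cup Z^n$ is a connected set consisting of at most three $n$-tiles. If $X^n\cap Z^n=\emptyset$ for some fixed $n$, the goal is to derive a contradiction by pulling this picture down to arbitrarily high levels and violating the growth $D_m\to\infty$.

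The key mechanism: suppose $X^N\cap Z^N=\emptyset$ for some $N$. For every $m\ge N$, pick $\{Y^{n}\}$-type sequences appropriately — more precisely, for each $m$ choose any $\{Y'^{k}\}\in\mathcal S$ through $y$; since $x\sim y$ and $y\sim z$ hold for \emph{all} such sequences, we get $X^m\cap Y'^m\neq\emptyset$ and $Y'^m\cap Z^m\neq\emptyset$, so $K^m:=X^m\cup Y'^m\cup Z^m$ is a connected union of at most three $m$-tiles joining $X^m$ and $Z^m$. But $X^m\subset X^N$ and $Z^m\subset Z^N$ (tiles are nested within the fixed sequences), so $K^m$ is a connected union of at most three $m$-tiles meeting the two disjoint $N$-cells $X^N$ and $Z^N$. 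By Lemma~\ref{lem:flowerbds}, $K^m$ must then contain at least $D_{m-N}=D_{m-N}(F,\CC)$ tiles. Since $F$ is combinatorially expanding for $\CC$, Lemma~\ref{lem:submult} gives $D_{m-N}\to\infty$ as $m\to\infty$, contradicting the bound of three tiles. Therefore $X^n\cap Z^n\neq\emptyset$ for every $n$, which is exactly $x\sim z$. One should double-check the edge case where $x$, $y$, or $z$ lies on $\CC$ or is a vertex — but Lemma~\ref{lem:flowerbds} is stated for arbitrary disjoint $n$-cells and arbitrary connected unions of tiles, so no special treatment is needed. I would write this up in roughly half a page, with the three-sentence reflexivity/symmetry verification first, then the transitivity argument as above.
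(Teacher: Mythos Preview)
Your proof is correct and follows essentially the same approach as the paper's: contradict the assumption $X^N\cap Z^N=\emptyset$ by producing, at each higher level $m$, a connected union of boundedly many $m$-tiles meeting the disjoint $N$-tiles $X^N$ and $Z^N$, and then invoking Lemma~\ref{lem:flowerbds} together with $D_{m-N}\to\infty$. The only difference is cosmetic: the paper observes that the single tile $Y^m$ already meets both $X^{n_0}$ and $Z^{n_0}$ (since $X^m\subset X^{n_0}$ and $Z^m\subset Z^{n_0}$), so one tile suffices rather than three, and there is no need to re-choose $\{Y'^k\}$ for each $m$---a single fixed sequence $\{Y^n\}$ through $y$ works throughout.
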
 

\begin{proof} Reflexivity and symmetry of the relation $\sim$ are clear.
To show transitivity, let $x,y,z\in S^2$ be arbitrary and assume that 
$x\sim y$ and $y\sim z$. Let $\{X^n\}, \{Z^n\}\in \mathcal{S}$ 
      with $x\in \bigcap_n X^n$  and  $ z\in
      \bigcap_n Z^n$  be arbitrary. We have to show that $X^n\cap Z^n\ne \emptyset$ for all $n\in \N_0$.

If this is not the case, then there exists $n_0\in \N_0$ such that $X^{n_0}\cap Z^{n_0}= \emptyset$. To reach a contradiction,  pick a sequence  $\{Y^n\}\in \mathcal{S}$
      with $y\in \bigcap_n Y^n$. Since $x\sim y$ and $y\sim z$, we have 
      $X^n\cap Y^n\ne \emptyset$ and $Y^n \cap Z^n\ne \emptyset$ for all $n\in \N_0$. Then $X^{n_0}\cap Y^n\supset X^{n}\cap Y^n\ne \emptyset$
      and $Z^{n_0}\cap Y^n\supset Z^{n}\cap Y^n\ne \emptyset$
      for all $n\ge n_0$. 
         So the $n$-tile $Y^n$ connects the disjoint $n_0$-tiles 
      $X^{n_0}$ and $Y^{n_0}$ for all $n\ge n_0$. As we discussed, this is impossible by Lemma~\ref{lem:flowerbds}. 
\end{proof}

The following lemma gives  convenient characterizations when two points are equivalent.

\begin{lemma} \label{lem:erel}
Let $x,y\in S^2$ be arbitrary. Then the following conditions 
are equivalent:
\begin{enumerate} 
\item
  \label{item:erel1}
  $x\sim y$.

\item
  \label{item:erel2}
  There exist 
  sequences
  $\{X^n\}, \{Y^n\}\in \mathcal{S}$ with $x\in \bigcap_n
  X^n$, $ y\in \bigcap_n Y^n$, and $X^n\cap Y^n\neq \emptyset$ for all
  $n\in \N_0$.
        
\item
  \label{item:erel3}
  For all cells $\sigma,\tau \sub S^2 $ with $x\in \sigma$, $y\in
  \tau$, we have $\sigma\cap \tau \ne \emptyset$.
\end{enumerate}
\end{lemma} 

\begin{proof} The implication \ref{item:erel1}
  $\Rightarrow$~\ref{item:erel2} is clear. 

  To show the reverse implication \ref{item:erel2} $\Rightarrow$
  \ref{item:erel1}, we assume that there exist 
  sequences 
  $\{X^n\}, \{Y^n\}\in
  \mathcal{S}$ with $x\in \bigcap_n X^n$, $ y\in \bigcap_n Y^n$, and
  $X^n\cap Y^n\neq \emptyset$ for all $n\in \N_0$. We claim that if
  $\{U^n\}, \{V^n\}\in \mathcal{S}$ are two other sequences with $x\in
  \bigcap_n U^n$ and $y\in \bigcap_n V^n$, then $U^n\cap V^n\ne
  \emptyset $ for all $n\in \N_0$.  To reach a contradiction, assume
  that $U^{n_0}\cap V^{n_0} = \emptyset$ for some $n_0\in \N_0$. We
  then have
        $$U^{n_0}\cap X^{n}\supset \{x\} \ne \emptyset \quad \text{ and }  \quad V^{n_0}\cap Y^{n}\supset \{y\} \ne \emptyset  $$
        for all $n\in \N$. Moreover, $X^n\cap Y^n\ne \emptyset$, and
        so for each $n\in \N_0$, the set $K^n\coloneqq X^n\cup Y^n$ is connected,
        consists of two $n$-tiles, and meets the disjoint $n_0$-tiles
        $U^{n_0}$ and $V^{n_0}$.  As before, this contradicts
        Lemma~\ref{lem:flowerbds}.  Hence $x\sim y$ as desired.
     
        The implication \ref{item:erel3} $\Rightarrow$
        \ref{item:erel1} is again clear. To prove \ref{item:erel1}
        $\Rightarrow$ \ref{item:erel3}, suppose that $x\sim y$. We
        argue by contradiction and assume that there exist cells
        $\sigma$ and $\tau$ with $x\in \sigma$, $y\in \tau$, and $\sigma\cap
        \tau=\emptyset$. By subdividing the cells if necessary,  we may assume that 
  $\sigma$  and $\tau$  are cells  of the same level $n_0$.  
  
  There are sequences  $\{X^n\}, \{Y^n\}\in \mathcal{S}$ 
      with $x\in \bigcap_n X^n$ and  $ y\in
      \bigcap_n Y^{n}$. 
      Since $x\sim y$, we have $X^n\cap Y^n\ne \emptyset$ for all $n$. 
      
    This implies that   for $n\in \N_0$ the set $K^n=X^n\cup Y^n$ is connected and  consists of at most two $n$-tiles. Moreover,
   $$K^n\cap \sigma \supset X^n\cap \sigma\supset\{x\}\ne \emptyset,$$
      and similarly, $K^n\cap \tau\ne \emptyset$. Hence $K^n$ connects the disjoint $n_0$-cells $\sigma$ and $\tau$. Since $f$ is combinatorially expanding, this is again impossible by Lemma~\ref{lem:flowerbds} for large $n$. This gives the desired contradiction.  
        \end{proof}

The previous lemma implies that all points in an intersection
$\bigcap_n X^n$ with $\{X^n\}\in \mathcal{S}$ are equivalent.  It  is
clear that $\sim$ is the ``smallest'' equivalence relation with this
property. 
  
If $x\in S^2$ we denote by $[x]\sub S^2$ the equivalence class of $x$ 
with respect to the equivalence relation $\sim$, and
by $$\widetilde S^2=S^2/\Sim =\{[x]:x\in S^2\}$$  the quotient space of $S^2$ under $\sim$.  So  $\widetilde S^2$ consists of all equivalence classes of $\sim$.  Such an equivalence class is both a point in
 $\widetilde S^2$ and a subset of $S^2$.  We equip $\widetilde S^2$ with the quotient topology. Then the quotient map $\pi\: S^2\ra \widetilde S^2$, $x\in S^2\mapsto\pi(x)\coloneqq  [x]$,  is continuous. 

In order to prove that $\widetilde{S}^2$ is in fact a topological
$2$-sphere,  we want to show that $\sim$ is of Moore-type (see 
Definition~\ref{def:eq_Moore_type}) and apply
Theorem~\ref{thm:moore} (Moore's theorem). To verify the relevant conditions for 
$\sim$, we need a good geometric description of the equivalence
classes. To set this up,  consider a point  $x\in S^2$
and let $n\in \N_0$ be arbitrary. We define
\begin{equation}
  \label{eq:def_Omn}
  \Omega^n =\Omega^n(x)= \bigcup_{x\in c^n} \inte (c^n),
\end{equation}
where the union is taken over all $n$-cells $c^n$  that contain
$x$. Recall that $\inte(c^n)=c^n$ if $c^n$ has  dimension $0$. 

Note that
\begin{equation}
  \label{eq:closure_Omn}
  \overline{\Omega}^n= \bigcup_{x\in X^n} X^n,
\end{equation}
where the union is taken over all $n$-tiles $X^n$ that contain $x$. 
Indeed, every cell $c^n$ in the union in  \eqref{eq:def_Omn} is contained
in an $n$-tile $X^n$ that contains $x$. Therefore, 
$\overline{\Omega}^n\subset \bigcup_{x\in X^n} X^n$, since the set on the right
hand side  is closed.  
On the other hand,  for each $n$-tile $X^n$ containing
$x$ we have $\inte(X^n)\subset \Omega^n$. Thus $\bigcup_{x\in X^n}  X^n\subset
\overline{\Omega}^n$, and  \eqref{eq:closure_Omn} follows. 

\begin{lemma}
  \label{lem:Omn_simply_conn}
  The set $\,\Omega^n\subset S^2$ is a simply connected region.
\end{lemma}

\begin{proof}
  We have to consider three cases.  When $x=v$ is an $n$-vertex then
  $\Omega^n(v)=W^n(v)$ is the $n$-flower of $v$ by
  Definition~\ref{def:flower}. Recall from
  Lemma~\ref{lem:flowerprop}~\ref{item:flower_prop1} that such a
  vertex flower is a simply connected region.

 Suppose that $x$ is not an $n$-vertex, but $x$ is contained in an 
  $n$-edge $e^n$. Then $x$ is necessarily contained in the interior $\inte(e^n)$  of
  $e^n$, and in no other $n$-edge.  There are precisely two distinct $n$-tiles $X^n$ and $Y^n$
  that contain $e^n$ in their boundaries. These are all  the
  $n$-tiles that contain $x$. Thus
  $$\Om^n=\inte(X^n)\cup \inte(e^n) \cup \inte  (Y^n). $$ 
  Then $\Om^n$ is  a simply
  connected region (see Lemma~\ref{lem:specprop}~\ref{item:prop_cell4}). 
  
  Finally,  suppose  that $x$ is not contained in any $n$-edge. Then there
  is a unique $n$-tile $X^n$ that contains $x$. Then $\Omega^n=\inte
  (X^n)$ is an open Jordan region, and so simply connected. 
\end{proof}

Let $M\subset S^2$ be an equivalence class with respect to
$\sim$. We select a point $x\in M$ as follows.
\begin{enumerate}
\item[\emph{Case 1:}]
  \label{item:Omn_v}
  If $M$ contains a vertex $v$,  then $x\coloneqq v$.
\item[\emph{Case 2:}] 
  \label{item:Omn_e}
  If $M$ contains no vertex, but intersects an edge $e$, then we choose  a point in $M\cap e$ for $x$.
\item[\emph{Case 3:}]  
  \label{item:Omn_X}
  If $M$ contains no vertex and does not intersect any edge,  then 
  we choose 
   an arbitrary point in $M$ for $x$. 
\end{enumerate}
We say that $M$ is of 
\emph{vertex-type}\index{vertex-type} in the
first, of 
\emph{edge-type}\index{edge-type} in the second, and of
\emph{tile-type}\index{tile-type}\index{type of equivalence classes}
in the last case. We call the point $x$ a \emph{center} 
\index{center of equivalence class} 
of the equivalence class $M$.  By
Lemma~\ref{lem:erel} an equivalence class cannot contain two distinct
vertices. So if $M$ is of vertex-type, then its center is unique, but
this may not be true in the other two cases.

With such a choice of a center $x$ for given $M$, we define
$\Omega^n=\Omega^n(x)$ as in \eqref{eq:def_Omn} for  $n\in
\N_0$. Note that every $(n+1)$-cell $c^{n+1}$  with $x\in c^{n+1}$ is
contained in an $n$-cell $c^n$ with $x\in c^n$ and 
 $\inte (c^{n+1})\subset \inte (c^n)$ (see
Lemma~\ref{lem:mincell}). Thus $\{\Omega^n\}$ is a decreasing sequence
of sets, i.e.,
\begin{equation}
  \label{eq:Om0Om1}
  \Omega^0 \supset \Omega^1 \supset \Omega^2 \supset \dots\,. 
\end{equation}
The different types of equivalence classes are illustrated in
Figure~\ref{fig:equiv_types}. 

\begin{lemma}
  \label{lem:eclass}
  Let $M\sub S^2$ be an arbitrary equivalence class with respect to
  $\sim$, and  $\Omega^n$ be defined as above for  $n\in \N_0$. Then 
  \begin{equation} 
    \label{eq:Mbigcap}
    M=\bigcap_{n}\Om^n =\bigcap_{n}\overline{\Omega}^n.
 \end{equation}
\end{lemma} 

\begin{figure}
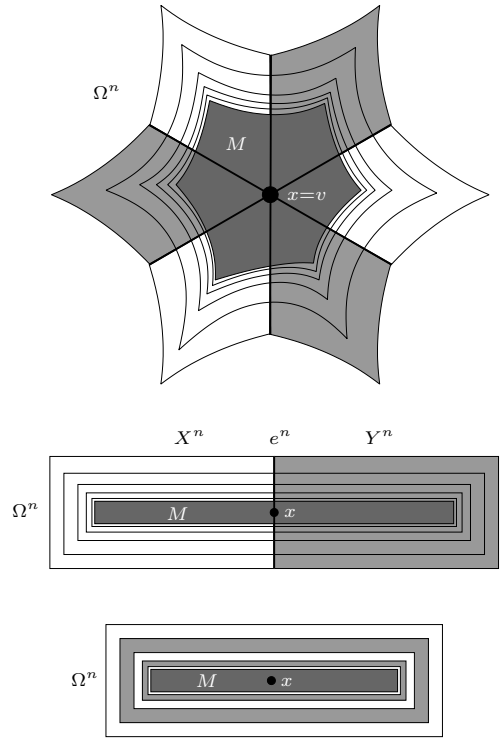

  \centering
  \begin{overpic}
    [width=6cm, tics=10,
    ]{equiv_types}
    \put(6,87){$\scriptstyle \Omega^n$}
    {\color{white}
    \put(32.4,73.3){$\scriptstyle x=v$}
    \put(24,80){$\scriptstyle M$}
    }
    \put(-5,30){$\scriptstyle \Omega^n$}
    \put(17,40){$\scriptstyle X^n$}
    \put(43,40){$\scriptstyle Y^n$}
    \put(30,40){$\scriptstyle e^n$}
    {\color{white}
    \put(32,30){$\scriptstyle x$}
    \put(16,29.6){$\scriptstyle M$}
    }
    \put(3,7){$\scriptstyle \Omega^n$}
    {\color{white}
      \put(31.6,7.1){$\scriptstyle x$}
      \put(20, 6.8){$\scriptstyle M$}
    }
  \end{overpic}
  \caption{Equivalence classes of vertex-, edge-, and tile-type.}
  \label{fig:equiv_types}
\end{figure}

\begin{proof}
 Let $x\in M$ be a center of $M$. In order to establish the inclusion 
   \begin{equation}\label{eq:barOmsubM}
    \bigcap_n\overline{\Omega}^n\subset M, 
  \end{equation}
let $y\in \bigcap_n
  \overline{\Omega}^n$ be arbitrary. We have to show  that $x\sim y$ (which implies $y\in M$). If this is not the case, then $x\not\sim y$, and so there exist 
 sequences   $\{X^n\},\{Y^n\}\in \mathcal{S}$ with  
  $x\in \bigcap_{n}X^n$ and $y\in \bigcap_{n}Y^n$, and $n_0\in \N_0$ such that 
  $X^{n_0}\cap Y^{n_0}=\emptyset$. On the other hand, for each $n\in \N_0$ we have 
  $y\in \overline \Om^n$, and so  by 
  \eqref{eq:closure_Omn} we can find an $n$-tile $Z^n$ with $x,y\in Z^n$. Then 
  $X^{n_0}\cap Z^n\supset \{x\}\ne \emptyset$ and $Y^{n_0}\cap Z^n
  \supset \{y\}\ne \emptyset$, i.e., for all $n\in
  \N_0$ the tile $Z^n$
  intersects the disjoint tiles $X^{n_0}$ and $Y^{n_0}$.  This is impossible, since $f$ is combinatorially expanding
  for $\CC$ (see Lemma~\ref{lem:flowerbds}). We obtain a contradiction
  and \eqref{eq:barOmsubM} follows.

  To finish the proof, it is enough  to show that $M\subset \bigcap_n
\Omega^n$, or equivalently, that if 
$$y\in S^2\setminus \bigcap_n
\Omega^n=\bigcup_n(S^2\setminus \Om^n)$$ is arbitrary, then $y\not \sim x$ (and so $y\notin M$). Note that  the sets $S^2\setminus \Om^n$ for $n\in \N_0$  form  an increasing  sequence, and so 
 $y\notin \Om^n $ for all sufficiently large $n$.
In order to show  $y\not \sim x$,  
we  now consider 
three cases according to the type of $M$. 

\smallskip{}
\emph{Case 1:} $M$ is of vertex-type. 
In this case,  $x$ is a vertex, say an $n_0$-vertex, where $n_0\in \N_0$. 
Then $x$ is also an $n$-vertex for all
$n\geq n_0$. Fix $n\geq n_0$ such that $y\notin  \Om^n$. Then there
exists a unique $n$-cell  $\tau$ with  $y\in \inte(\tau)$. Since $y\notin  \Om^n$, we have $x\notin
\tau$. Now  $x$ is an $n$-vertex and so  $\{x\}$ is an $n$-cell, and the $n$-cells  $\tau\supset \{y\}$ and $\{x\}$
are disjoint. By Lemma~\ref{lem:erel} this implies $y\not\sim x$  as desired. 

 \smallskip{} 
 \emph{Case 2:} $M$ is of edge-type.   Then $x$ is contained in an edge, say 
 an $n_0$-edge $e^{n_0}$, where $n_0\in \N_0$.  By successive subdivisions 
 (see Proposition~\ref{prop:invmarkov}~\ref{item:invmarkov4}) we can find  $n$-edges $e^n$ for $n\geq n_0$
 that contain $x$  and that satisfy
 \begin{equation*}
   e^{n_0}\supset e^{n_0+1} \supset \dots\,.
 \end{equation*}

 Fix  $k\ge n_0$ such that $y\notin
 \Om^{k}$. Then there exists a unique $k$-cell  $\tau$ with $y\in \inte(\tau)$. Since 
 $y\notin \Omega^k$, we have $x\notin \tau$ and so $ e^k\not \sub \tau$.  
 Hence $ \tau \cap \inte(e^k)=\emptyset$ by
  Lemma~\ref{lem:celldecompint}~\ref{item:cell_decomp2}.  Let $u$ and $v$ be the endpoints of $e^{k}$.  Since
 these points are vertices, they do not belong to $M$ by
 assumption. So the set
  $$\bigcap_{n\geq n_0} e^n \sub \bigcap_n \overline {\Om}^{{\null}_{\scriptstyle n}}\sub M$$ 
  does not contain   $u$ or $v$ either. It follows that there exists $m\ge k$ such that $u,v\notin e^{m}$. Then $y\in \tau$,  $x\in e^m$, and  $\tau \cap e^m=\emptyset$, because
  $$\tau \cap e^{m}\sub \tau \cap (e^{k}\setminus \{u,v\})=\tau \cap \inte(e^k) =\emptyset. $$ By Lemma~\ref{lem:erel} this  implies  $y\not\sim x$  as desired. 

  \smallskip{} 
  \emph{Case 3:} $M$ is of tile-type. We pick  sequences $\{X^n\},\{Y^n\}\in \mathcal{S}$ with 
  $x\in \bigcap_nX^n$ and $y\in \bigcap_nY^n$. Since $x\in M$ and  $M$ does not meet any edges, for each $n\in \N_0$ the tile  $X^n$ is the unique $n$-tile with $x\in X^n$. Then  
  $x\in  \inte(X^n)$ and  $\Omega^n=\inte(X^n)$. In order to show that 
  $y\not \sim x$, we argue by contradiction and assume $y\sim x$. 
  Then  $K^n\coloneqq X^n\cap Y^n\ne \emptyset$ for each $n\in \N_0$ (see
  Definition \ref{def:erel}). The
  sets $K^n$, $n\in \N_0$, are non-empty nested compact sets.  Hence
  there exists a point $z\in \bigcap_n K^n$. Then $x\sim z$ and so
  $z\in M$.
  
  On the other hand, if $n_0\in \N_0$ is large enough, then $y\notin \Om^{n_0}=\inte(X^{n_0})$. Since $\partial X^{n_0}=X^{n_0}\setminus \inte(X^{n_0})$ consists of $n_0$-edges, $y\sim x$ and so $y\in M$, and $M$ does not meet edges, we then actually have $y\notin X^{n_0}$. Thus  $X^{n_0}\ne Y^{n_0}$. This means that  
 the intersection $K^{n_0}=X^{n_0}\cap Y^{n_0}$ consists of $n_0$-cells
  on the boundary of $X^{n_0}$ and of $Y^{n_0}$, and is hence
  contained in a union of $n_0$-edges. Since $z\in M\cap K_{n_0}$,  this
  implies  that $M$ meets an edge, contradicting our assumption in this case.
  So indeed $y\not \sim x$ as desired. 
  \end{proof}

The following consequence of the previous lemma will be one of the essential ingredients in the proof that
$\widetilde{S}^2$ is  a topological $2$-sphere. 

\begin{cor}
  \label{lem:erelcor}  
  Each equivalence class $M$ of $\sim$ is a compact connected set with
  connected complement  $S^2\setminus M$.  
\end{cor}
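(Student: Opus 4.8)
The plan is to derive Corollary~\ref{lem:erelcor} directly from Lemma~\ref{lem:eclass}, which provides, for each equivalence class $M$ of $\sim$, a nested sequence of simply connected regions $\Om^n$ whose closures $\overline{\Om}^{{\null}_{\scriptstyle n}}$ consist of $n$-tiles and satisfy $M=\bigcap_n\Om^n=\bigcap_n\overline{\Om}^{{\null}_{\scriptstyle n}}$.

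First I would prove that $M$ is compact. Since each $\overline{\Om}^{{\null}_{\scriptstyle n}}$ is a finite union of $n$-tiles, it is a closed subset of $S^2$; hence $M=\bigcap_n\overline{\Om}^{{\null}_{\scriptstyle n}}$ is an intersection of closed sets and therefore closed in the compact space $S^2$, so compact. Next, connectedness of $M$: each $\overline{\Om}^{{\null}_{\scriptstyle n}}$ is connected. Indeed, $\Om^n$ is a (nonempty) connected set by hypothesis (it is a region, i.e. a connected open set), and $\overline{\Om}^{{\null}_{\scriptstyle n}}=\overline{\Om^n}$ (this equality is part of the statement of Lemma~\ref{lem:eclass}, or follows from the explicit descriptions in its proof), so $\overline{\Om}^{{\null}_{\scriptstyle n}}$, being the closure of a connected set, is connected. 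Since $\{\overline{\Om}^{{\null}_{\scriptstyle n}}\}_{n}$ is a decreasing sequence of nonempty compact connected subsets of the Hausdorff space $S^2$, its intersection $M$ is again nonempty, compact, and connected — this is the standard fact that a nested intersection of compact connected sets in a Hausdorff space is connected. So $M$ is a compact connected set.

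For the complement, I would show $S^2\setminus M$ is connected by writing it as an increasing union of connected sets. We have $S^2\setminus M=S^2\setminus\bigcap_n\Om^n=\bigcup_n(S^2\setminus\Om^n)$. Since $\Om^{n+1}\sub\Om^n$, the sets $S^2\setminus\Om^n$ increase with $n$, and they share the nonempty intersection $\bigcap_n(S^2\setminus\Om^n)=S^2\setminus\bigcup_n\Om^n\ne\emptyset$ (at least if there are two distinct equivalence classes, which holds since $S^2$ is infinite; alternatively one argues using the complements of the tiles directly). Thus it suffices to show each $S^2\setminus\Om^n$ is connected. Now $\Om^n$ is a simply connected region on the $2$-sphere, so by the Sch\"onflies theorem $S^2\setminus\Om^n$ is a closed Jordan region (in particular connected) when $\partial\Om^n$ is a Jordan curve; in general one notes that the closure $\overline{\Om}^{{\null}_{\scriptstyle n}}$ is a finite union of tiles and $S^2\setminus\Om^n=\overline{S^2\setminus\overline{\Om}^{{\null}_{\scriptstyle n}}}\cup\partial\Om^n$, which is connected because $S^2$ minus a finite union of tiles forming a region with connected complement is connected. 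The cleanest route is: a simply connected open proper subset $U$ of $S^2$ has connected complement (this is a standard consequence of Alexander duality / the fact that $U$ is homeomorphic to a disk's interior when $U$ is also, e.g., a region bounded by finitely many edges), so $S^2\setminus\Om^n$ is connected, and therefore $S^2\setminus M$ is an increasing union of connected sets with common point, hence connected.

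The main obstacle I anticipate is the bookkeeping around the complement: one must be slightly careful that $\Om^n$ is genuinely simply connected (not merely connected) so that $S^2\setminus\Om^n$ is connected, and that the increasing union argument is valid, i.e. that the sets $S^2\setminus\Om^n$ really do have a common point so their union is connected. Both points are handled by the explicit structure given in the proof of Lemma~\ref{lem:eclass} — in each of the three cases there, $\Om^n$ is either an $n$-flower $W^n(v)$, or a set $\inte(X^n)\cup\inte(e^n)\cup\inte(Y^n)$, or the interior of a single $n$-tile, all of which are manifestly simply connected and have complements that are finite unions of tiles, hence connected. So the proof is essentially an assembly of Lemma~\ref{lem:eclass} with two soft topological facts (nested intersections of compact connected sets are connected; increasing unions of connected sets with a common point are connected), and no genuinely hard step remains.
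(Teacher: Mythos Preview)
Your proposal is correct and follows essentially the same approach as the paper: use Lemma~\ref{lem:eclass} to write $M$ as a nested intersection of compact connected sets (giving compactness and connectedness of $M$), and $S^2\setminus M$ as an increasing union of the connected sets $S^2\setminus\Om^n$ (giving connectedness of the complement). The paper's proof is more terse, simply invoking the fact that the complement of an open simply connected proper subset of $S^2$ is connected, whereas you spend some effort justifying this via the explicit case analysis from Lemma~\ref{lem:eclass}; also, your worry about a ``common point'' for the increasing union is unnecessary, since for a nested increasing sequence $A_1\sub A_2\sub\dots$ of nonempty connected sets the union is automatically connected.
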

  
\begin{proof} 
  Let $M$ be an arbitrary equivalence class of $\sim$. Then 
  Lem\-ma~\ref{lem:eclass} and \eqref{eq:Om0Om1} imply that the
  set $M$ is the intersection of the nested 
  sequence of the  compact sets $\overline{\Omega}^n$, $n\in \N_0$.  It follows  from 
  \eqref{eq:closure_Omn}  that  each set $\overline{\Omega}^n$ is
  connected. Hence $M$ is also compact
  and connected.

  The complement $S^2\setminus \Om^n$ of the  open simply connected set $\Om^n$ (see Lemma~\ref{lem:Omn_simply_conn})  is
  connected.  So Lemma~\ref{lem:eclass} shows that the complement
  $S^2\setminus M$ of $M$ is the union of the  increasing sequence of
 the  connected sets $S^2\setminus \Om^n$. Hence $S^2\setminus M$ is connected.
\end{proof}

\subsection*{The quotient  space $\widetilde {S}^2$ is a topological $2$-sphere}
\label{sec:quot-space-widet}
After these preparations we are ready to show that $\widetilde {S}^2$
is a topological $2$-sphere.

\begin{lemma}
  \label{lem:simusc}   Let $\sim$ be the equivalence relation on $S^2$ as in Definition~\ref{def:erel}. 
  Then $\sim $ is of Moore-type and the quotient space $\widetilde S^2=S^2/\Sim$  is homeomorphic to $S^2$. 
  \end{lemma}

\begin{proof} 
  By Lemma~\ref{lem:aeq} our relation $\sim$ is indeed an equivalence
  relation. It remains to verify the conditions
  \ref{item:Moore1}--\ref{item:Moore4} in Definition~\ref{def:eq_Moore_type}. Then  
 $\widetilde S^2$ is a $2$-sphere by Theorem~\ref{thm:moore}
 (Moore's theorem).

  \smallskip
  {\em Conditions} \ref{item:Moore2} and \ref{item:Moore3} were already
  proved in Corollary~\ref{lem:erelcor}.   

  \smallskip 
  {\em Condition} \ref{item:Moore4}: There are at least two equivalence
  classes, because no two distinct vertices are equivalent by
  Lemma~\ref{lem:erel}, and each postcritical point of $f$  is a vertex
  (there are
  at least three such points).

  \smallskip 
  {\em Condition} \ref{item:Moore1}: Let $\{x_n\}$ and
  $\{y_n\}$ be convergent sequences in $S^2$ with $x_n\ra x$ and
  $y_n\ra y$ as $n\to \infty$, and suppose that $x_n\sim y_n$ for all
  $n\in \N$.  We have to show that $x\sim y$. Suppose this is not the
  case.  Then the equivalence classes $[x]$ and $[y]$ are disjoint.
  By Lemma~\ref{lem:Omn_simply_conn} and Lemma~\ref{lem:eclass} there
  exist simply connected nested 
  regions $\Omega^n_x$ and $\Omega^n_y$ for $n\in \N_0$ such that
$$[x]=\bigcap_n \Omega^n_x=\bigcap_n \overline{\Omega}^{n}_x \text{ and }
[y]=\bigcap_n \Omega^n_y=\bigcap_n \overline{\Omega}^{n}_y. $$ 
Since
$[x]$ and $[y]$ are disjoint, the sets $\overline{\Omega}^n_x$ and
$\overline{\Omega}^n_y$ will also be disjoint for sufficiently large
$n$, say $\overline{\Omega}^{n_0}_x\cap\overline{\Omega}^{n_0}_y
=\emptyset$. On the other hand, since $\Omega^{n_0}_x\supset[x]$ and
$\Omega^{n_0}_y\supset[y]$ are open, there exists $n_1\in \N$ such that
$x_{n_1}\in \Omega^{n_0}_x$ and $y_{n_1}\in \Omega^{n_0}_y$.  Since $\overline
\Omega^{{}_{\scriptstyle n_0}}_x$ and $\overline\Omega^{{}_{\scriptstyle n_0}}_y$
consist of $n_0$-tiles and are disjoint, this means that there exist
$n_0$-tiles $\sigma$ and $\tau$ with $x_{n_1}\in \sigma$, $y_{n_1}\in
\tau$, and $\sigma\cap \tau =\emptyset$. Hence $x_{n_1}\not \sim
y_{n_1}$ by Lemma~\ref{lem:erel}. This is a contradiction. It follows
that $\sim$ is closed. \end{proof}


\subsection*{Quotients of cells and the induced cell
  decompositions on $\widetilde S^2$}
\label{sec:quot-cells-induc}
We now study what happens
to our cells under the quotient map $\pi\: S^2 \ra \widetilde S^2$.
If $A\sub S^2$ is an arbitrary set, we denote by $\widetilde A$ its
image under  $\pi$.  So $\widetilde
A=\pi(A)=\{[x]:x\in A\}\subset \widetilde{S}^2$. We will see that if
$\sigma$ is an arbitrary cell (i.e., an element of $\DD^n$ for some
$n\in \N_0$), then $\widetilde \sigma$ is a topological cell of the
same dimension (Lemma~\ref{lem:qcells}).  Moreover, the images
$\widetilde \sigma$ of the $n$-cells $\sigma\in \DD^n$ form a cell
decomposition of $\widetilde S^2$ (Lemma~\ref{lem:cdcd}).

\begin{lemma}
  \label{lem:eq_intersections}
  Let $M$ be an arbitrary equivalence class with center $x\in M$. 
  If $\tau$ is an arbitrary cell, then
  \begin{equation*}
    \tau \cap M\ne \emptyset  
    \quad\text{if and only if}\quad 
    x\in \tau.
  \end{equation*}
\end{lemma}

\begin{proof}
  The ``if''-implication is obvious. 

  To show the other implication, assume that $\tau$ is a cell of level
  $n$ and $x\notin \tau$. Consider an $n$-cell $\sigma\subset
  \tau$. Then $x\notin \sigma$ and so $\inte (\sigma)$ is disjoint from
  $\Omega^n=\Omega^n(x)$ by \eqref{eq:def_Omn}, because  distinct
  $n$-cells have disjoint interiors. Recall from
  Lemma~\ref{lem:uniondisjint} that $\tau$ is the disjoint union of
  the interiors of all $n$-cells $\sigma\subset \tau$. Thus $\tau\cap
  \Omega^n=\emptyset$, and so  $\tau\cap M=\emptyset$ by
  Lemma~\ref{lem:eclass}. 
\end{proof}

The following lemma states that if we pass to the quotient space
$\widetilde S^2=S^2/\Sim$, then intersection and inclusion relations of cells 
are preserved. In particular, we do not create ``new''
intersections or inclusions between cells.

\begin{lemma}
  \label{lem:nonewint} 
  If $\sigma$ and $\tau$ are cells, then $\widetilde \sigma \cap
  \widetilde \tau =\widetilde {\sigma \cap \tau}$.  Moreover, we have
  $\widetilde \sigma\sub \widetilde \tau$ if and only if $\sigma\sub
  \tau$.
\end{lemma} 
 
\begin{proof} 
The inclusion $\widetilde {\sigma \cap \tau}\sub \widetilde \sigma
\cap \widetilde \tau$ is trivial. 
 
For the other inclusion consider an arbitrary point $[x]\in
\widetilde{\sigma}\cap \widetilde{\tau}\subset \widetilde{S}^2$. We
can assume that $x$ is a  center of $M=[x]\subset S^2$. 
Then $M$ meets
both cells $\sigma$ and $\tau$, and so $x\in \sigma \cap \tau$  by Lemma~\ref{lem:eq_intersections}. 
 Thus $[x]\in
\widetilde{\sigma\cap\tau}$. We have proved
$\widetilde{\sigma}\cap\widetilde{\tau} \subset \widetilde{\sigma \cap
  \tau}$ as desired.  

 In the second statement the implication $\sigma\sub \tau \Rightarrow 
 \widetilde \sigma\sub \widetilde \tau$ is trivial. For the other implication assume that  $\widetilde \sigma\sub \widetilde \tau$. Let  $k$ and $n$ be the levels  of $\sigma$ and $\tau$, respectively. For the moment, we make the additional assumption that $k\ge n$. 
 
  By Lemma~\ref{lem:vinint} there exists a vertex $v$  such that $v\in \inte(\sigma)$ (note that this is trivial if $\sigma$ is a $0$-dimensional cell).
 Then $[v]\in \widetilde \sigma \sub \widetilde \tau$. This   means that there exists a point $x\in \tau$ such that 
 $[v]=[x]$, or $v\sim x$. 
Condition~~\ref{item:erel3} in Lemma~\ref{lem:erel} implies that $\{v\}\cap
\tau\neq \emptyset$ or  $v\in \tau$. Thus
 \begin{equation}\label{eq:stint}
 \inte(\sigma)\cap \tau \ne \emptyset. 
 \end{equation}
 Since $k\ge n$, the cell decomposition $\DD^k$  containing  $\sigma$ is a refinement of the 
 cell decomposition $\DD^n$ containing  $\tau$. 
 Therefore, as we have seen in the first part of the proof of Lemma~\ref{lem:mincell}, the relation \eqref{eq:stint}  forces the inclusion
 $\sigma\sub \tau$. 
 
 If $k<n$, we subdivide $\sigma$ into cells of level $n$. By the previous argument, $\tau$ will contain each of these cells, and so we always have $\sigma\sub \tau$ as desired.  
 \end{proof}

 \begin{lemma}
   \label{lem:edgeinter} 
   Let $M\sub S^2$ be an equivalence class and $E\sub S^2$ be a finite
   union of edges. Then $E\cap M$ is connected. 
\end{lemma} 
   
\begin{proof} 
  Let $x$ be a center of $M$. By subdividing the edges in $E$, we can
  assume that $E$ consists of $n_0$-edges, where $n_0\in \N_0$ is large enough. For  $n\geq n_0$ let
  \begin{equation*}
    E^n\coloneqq  \bigcup\{e\in \E^n : e\subset E, x\in e\}.
  \end{equation*}
  Clearly, each set $E^n$ is compact and connected.
  We claim that these  sets form a decreasing sequence, i.e., $E^{n+1}\sub E^n$  for $n\ge n_0$. To see this, suppose $e$ is one of the $(n+1)$-edges forming the union $E^{n+1}$, where $n\ge n_0$. 
   Then $x\in e\sub E$. In particular, $e$ is contained in the union of the $n_0$-edges forming $E$. By subdividing these edges into $n$-edges, we see that $e$ is covered  by  $n$-edges contained in $E$. This implies that  $e$ is contained in one of these $n$-edges $e'$ (this again  follows from  considerations as in the first part of the proof of  Lemma~\ref{lem:mincell}). Hence   $x\in e\sub e'\sub E$, and so $e\sub e'\sub E^n$. Since the $(n+1)$-edge $e\sub E^{n+1}$  was arbitrary,  we conclude  $E^{n+1}\sub E^n$ as desired. 
   
The set $C\coloneqq \bigcap_{n\ge n_0}E^n \sub E$ is an intersection of a decreasing sequence of compact and connected sets, and so it is also compact and connected.
 
 We claim that $C= E \cap M$. Indeed, $E^n\sub \overline \Om^n$, where  
  $\Om^n=\Om^n(x)$  and $n\ge n_0$,  
 as follows from \eqref{eq:closure_Omn}. Hence 
 $C\sub E\cap \bigcap_{n\ge n_0}\overline \Om^n=E\cap M$ by 
 Lemma~\ref{lem:eclass}.

 For the other inclusion, let  $y\in E \cap M$ be arbitrary. Then for each $n\ge n_0$ the point $y$ is contained in an $n$-edge $e\sub E$. Since $y\sim x$, we have $e\cap M\ne \emptyset$, and so $x\in e$ by Lemma~\ref{lem:eq_intersections}. Hence $e\sub E^n$ and so $y\in e\sub E^n$. It follows that $y\in  \bigcap_{n\ge n_0}E^n=C$. This shows the other inclusion $E\cap M\sub C$. We conclude  that $E\cap M=C$ is connected. 
  \end{proof}


For the proof of the next lemma we need the $1$-di\-men\-sional version of Moore's theorem as provided by Proposition~\ref{lem:1dimMoore}. 

\begin{lemma}
  \label{lem:qcells} 
  Let $\tau$ be an edge or a tile. Then $\widetilde \tau$ is an arc or
  a closed Jordan region, respectively. Moreover, $\partial \widetilde
  \tau= \widetilde{\partial \tau}$.
\end{lemma}
 
Here $\partial \tau $ (and similarly $\partial \widetilde \tau$)
refers as usual to the boundary of a cell $\tau$ as defined in
Section~\ref{s:celldecomp}. So $\partial \tau $ is the topological
boundary of $\tau$ in $S^2$ if $\tau$ is a tile, and equal to the set
consisting of the two endpoints of $\tau$ if $\tau$ is an edge. If
$\tau$ is a $0$-dimensional cell, i.e., a singleton set consisting of a
vertex, then $\partial \tau=\emptyset$, and the statement in the lemma
is trivially also true.  So the lemma can be formulated in an equivalent
form by saying that if $\tau\sub S^2$ is a cell (in one of the cell
decompositions $\DD^n$), then $\widetilde \tau\sub \widetilde S^2$ is a
cell (in the general topological sense) of the same dimension, and the
boundary of $\widetilde \tau$ is the image of the boundary of $\tau$
under the quotient map.
 
\begin{proof} 
  Suppose first that $\tau$ is an edge.  Then our equivalence relation
  $\sim$ on $S^2$ restricts to an equivalence relation on $\tau$ whose
  quotient space can be identified with the subset $\widetilde \tau$
  of $\widetilde S^2$.  The equivalence classes on $\tau$ have the
  form $\tau\cap M$, where $M\sub S^2$ is an equivalence class with
  respect to $\sim$.
 
  Each set $\tau \cap M$ is compact, as $\sim$ is closed, and
  con\-nected by Lem\-ma~\ref{lem:edgeinter}. Moreover, $\tau$ meets at
  least two distinct equivalence classes, as its endpoints are
  distinct vertices and hence not
  equivalent. Proposition~\ref{lem:1dimMoore} implies that $\widetilde
  \tau\sub \widetilde S^2$ is indeed an arc.

  Let $u$ and $v$ be the two endpoints of $\tau$.  Then $[u]\cap \tau$
  is a compact connected subset of $\tau$ containing $u$. Hence this
  set is a subarc of $\tau$ with one endpoint equal to $u$. This
  implies that the set $\tau\setminus [u]$ is connected, and so the
  set $\pi(\tau\setminus [u])=\widetilde \tau\setminus \{\pi(u)\}$ is
  also connected.  Therefore, $\pi(u)$ is an endpoint of $\widetilde
  \tau$. By the same reasoning we see that
  $\pi(v)$ is also an endpoint of $\widetilde \tau$. Since $u$ and $v$
  are distinct vertices, we have $u\not\sim v$ and so $\pi(u)\ne
  \pi(v)$. Hence $\partial \widetilde \tau =\{\pi(u), \pi(v)\}=
  \pi(\{u,v\})= \widetilde {\partial \tau}$.

If $\tau$ is a tile, say an $n$-tile, then $\tau$ is a closed Jordan region whose boundary  $J=\partial  \tau$ is a topological circle consisting of finitely many edges. By the Sch\"{o}nflies theorem we can write $ 
S^2$ as a disjoint union 
$ S^2=U_1\cup  J \cup U_2$, 
where  $U_1$ and $U_2$ are open Jordan regions bounded by $J$.  Then  $\tau$ coincides with one of the sets $\overline U_1$ or $\overline U_2$, say $\tau=\overline U_1$.

The set $\widetilde J\sub \widetilde S^2$ is also a topological circle as follows from the fact that $\sim$ is closed, Lemma~\ref{lem:edgeinter}, and  Proposition~\ref{lem:1dimMoore}. So we can also write 
$\widetilde S^2$ as a disjoint union 
$\widetilde 
S^2=D_1\cup \widetilde J \cup D_2$, where $D_1$ and $D_2$ are open Jordan regions in $\widetilde S^2$ bounded by $\widetilde J$.
If we take preimages under the quotient map $\pi\:S^2\ra \widetilde S^2$, we get the disjoint union
$S^2=\pi^{-1}(D_1)\cup \pi^{-1}(\widetilde J)\cup 
\pi^{-1}(D_2)$. 
Recall from Lemma~\ref{lem:simusc} that $\sim$ is monotone,
meaning that each equivalence class is connected. 
Therefore, preimages of connected sets under $\pi$ 
are connected (see Lemma~\ref{lem:pre_monotone}).
So the sets 
$\pi^{-1}(D_1)$ and $\pi^{-1} (D_2)$ are  connected open sets disjoint from $ \pi^{-1}(\widetilde J)\supset J$.  It follows that each of the  sets $\pi^{-1}(D_1)$ and $\pi^{-1} (D_2)$  is contained in one of the 
regions $U_1$ and $U_2$. 

These sets  cannot be contained in the same region $U_i$. 
Indeed, if for example $\pi^{-1}(D_1)\cup \pi^{-1} (D_2)\sub
U_1$,
 then $U_2 \sub \pi^{-1}(\widetilde J)$, and so $\pi(U_2)\sub \widetilde J$. This means that every point in $U_2$ is equivalent to a point 
in $J$. This is impossible, because $U_2$ contains the interior
of an $n$-tile, and hence a $k$-vertex for some $k>n$ (see
Lemma~\ref{lem:vinint}~\ref{item:vinint2}). Such a vertex is not
equivalent to any point in $J$ by condition \ref{item:erel3} in  Lemma~\ref{lem:erel}. 

By what we have just  seen, we may assume that indices are chosen   such that $\pi^{-1}(D_1)\sub U_1$ and 
$\pi^{-1}(D_2)\sub U_2$. Since $\pi$ is surjective, it follows that 
 $$ \overline D_1=D_1\cup \widetilde J= \pi(\pi^{-1}(D_1))\cup \pi(J)\sub 
 \pi (U_1\cup J)=\pi(\overline U_1).$$ On the other hand,
 $\overline U_1\cap \pi^{-1}(D_2)\sub  \overline U_1 \cap U_2=\emptyset$, and so no point in $\overline U_1$ is sent to $D_2=\widetilde S^2\setminus  \overline D_1$ by $\pi$. Hence 
 $\pi(\overline U_1)\sub  \overline D_1$. 
 It follows that  $\overline 
D_1=\pi(\overline  U_1)= \widetilde \tau$, and so $\widetilde \tau$ is indeed  a closed Jordan region.
Moreover, $\partial \widetilde \tau=\partial D_1=\widetilde J=
\widetilde{\partial \tau}$. 
\end{proof}

We now come to the main result of this subsection, which says
that $\pi$ maps the cell decompositions $\DD^n$ to cell
decompositions $\widetilde{\DD}^n$ such that ``all combinatorial
properties are preserved''.

\begin{lemma} 
  \label{lem:cdcd} 
  Let $n,k\in \N_0$. 
  Then the following statements are true:
  \begin{enumerate}
  \item
    \label{item:cdcd1} 
    For each $\tau\in \DD^n$ the set $\widetilde \tau$ is a
    topological cell in $\widetilde S^2$ of the same dimension as
    $\tau$. 
  \item 
    \label{item:cdcd1a}
    For each cell $\tau\in \DD^n$ we have $\widetilde{\partial \tau}
    = \partial \widetilde{\tau}$. 
  \item
    \label{item:cdcd2} 
    For  $\sigma, \tau\in \DD^n$, we have  $\widetilde
    \sigma=\widetilde \tau$ if and only if $\sigma=\tau$.  
  \item
    \label{item:cdcd3} 
    $\widetilde \DD^n\coloneqq \{\widetilde \tau: \tau \in \DD^n\}$ is a cell
    decomposition of $\widetilde S^2$.  
  \item
    \label{item:cdcd4} 
    The map $\tau \in \DD^n\mapsto  \widetilde \tau\in \widetilde \DD^n$
    is an isomorphism between the cell complexes $\DD^n$ and $\widetilde
    \DD^n$.  
  \item
    \label{item:cdcd5} 
    $\widetilde \DD^{n+k}$  is a refinement of $\widetilde
    \DD^n$. Moreover,  for all $\sigma\in \DD^{n+k}$ and $\tau\in
    \DD^{n}$ we have    
      $\sigma\subset \tau$ 
      if and only if 
     $ \widetilde{\sigma}\subset \widetilde{\tau}$.
  \end{enumerate}
\end{lemma}

\begin{proof} 
  \ref{item:cdcd1} and \ref{item:cdcd1a} follow from
  Lemma~\ref{lem:qcells}.  

\smallskip
\ref{item:cdcd2}
\mbox{}
Let $\sigma$ and $\tau$ be arbitrary 
$n$-cells, and suppose that $\inte(\widetilde \sigma)\cap
\inte(\widetilde \tau)\ne \emptyset$. Pick a point $p\in
\inte(\widetilde \sigma)\cap \inte(\widetilde \tau)$. Then $p\in
\widetilde \sigma \cap \widetilde \tau=\widetilde {\sigma \cap \tau}$
(see Lemma~\ref{lem:nonewint}), and so 
there exists $x\in \sigma \cap \tau$ with $\pi(x)=p$. 
Then $x\in \inte(\sigma)$, for otherwise 
$x\in \partial \sigma$ and so $p=\pi(x)\in \partial 
\widetilde \sigma$ by \ref{item:cdcd1a},  
contradicting  the choice of $p$. Similarly, $x\in \inte(\tau)$. So $x\in \inte(\sigma)\cap \inte(\tau)$ which implies that $\sigma=\tau$. Statement \ref{item:cdcd2} follows. 

\smallskip{}
\ref{item:cdcd3}
From what we have seen, it follows that the topological cells
$\widetilde \tau$ for $\tau \in \DD^n$ are all distinct, and no two
have a common interior point. Moreover, there are finitely many of
these cells, and they cover $\widetilde S^2$, because the cells in
$\DD^n$ cover $S^2$. Finally, for a cell $\widetilde \tau$ we have
$\partial \widetilde \tau=\widetilde {\partial \tau}$ by
\ref{item:cdcd1a}. Since $\partial \tau$ is a union of cells in
$\DD^n$, the set  $\partial \widetilde \tau$ is a union of
cells in $\widetilde \DD^n$.  This shows that $\widetilde \DD^n$ is a
cell decomposition of $\widetilde S^2$. 

\smallskip
\ref{item:cdcd4} By \ref{item:cdcd1} and \ref{item:cdcd2} the map $\tau \in \DD^n\mapsto  \widetilde \tau\in \widetilde \DD^n$ is a bijection between $\DD^n$ and $\widetilde \DD^n$ that preserves dimensions of cells. By Lemma~\ref{lem:nonewint}  the map also satisfies condition \ref{item:compiso2} 
in Definition~\ref{def:compiso}. Hence it  is an isomorphism between
the cell complexes $\DD^n$ and $\widetilde \DD^n$. 

\smallskip 
\ref{item:cdcd5} It follows immediately from the definitions and the
fact that $\DD^{n+k}$ is a refinement of $\DD^n$ that
$\widetilde{\DD}^{n+k}$ is a refinement of $\widetilde{\DD}^{n}$. The
second statement was proved in Lemma~\ref{lem:nonewint}.  
\end{proof}

\subsection*{The induced map $\widetilde f$ on $\widetilde S^2$}
\label{sec:induc-map-widet}
We will now show that $f$ induces a map $\widetilde f$ on the
sphere $\widetilde S^2$. 

\begin{lemma} 
  \label{lem:Find} 
  The equivalence relation $\sim$ is $f$-invariant. 
\end{lemma} 

\begin{proof} Let $x,y\in S^2$ with $x\sim y$ be arbitrary. We have to show that 
then $f(x)\sim f(y)$ (see \eqref{eq:eq_f_inv}).  

 Pick $\{X^n\}, \{Y^n\}\in \mathcal{S}$ with $x\in \bigcap X^n$ and $y\in \bigcap_n Y^n$.  Define $U^n=f(X^{n+1})$  and $V^n=f(Y^{n+1})$ for $n\in \N_0$. Then $U^n$ and $V^n$ are $n$-tiles, and so 
$\{U^n\}, \{V^n\}\in \mathcal{S}$. Moreover,  $f(x)\in \bigcap_n U^n$ 
and $f(y)\in \bigcap_n V^n$. Since $x\sim y$ we have $X^n\cap Y^n\ne\emptyset$ 
for all $n\in \N$. Hence
$$U^n\cap V^n=f(X^{n+1})\cap f(Y^{n+1})\supset f(X^{n+1}\cap Y^{n+1})\ne \emptyset$$
for all $n\in \N_0$. Lemma~\ref{lem:erel} now implies that $f(x)\sim f(y)$ as desired.
\end{proof}
 
By the previous lemma the map $\widetilde f\: \widetilde S^2\ra
\widetilde S^2$ given by
$$ \widetilde f([x])=[f(x)]  \text{ for  $x\in S^2$}$$ 
is well-defined. Then $ \widetilde f\circ \pi=\pi\circ f$, and it
follows from the properties of the quotient topology that $\widetilde
f$ is continuous (see Lemma~\ref{lem:f_descends}).

\begin{rem}\label{rem:stronginv}
With some  additional effort, one can actually establish  that $\sim$ is strongly $f$-invariant (see Definition~\ref{def:sim_strongly-inv}). For this one first shows 
(using Lemmas ~\ref{lem:erel} and~\ref{lem:eq_intersections}) that each 
equivalence class $M$ of $\sim$ can be represented in the form 
  \begin{equation}\label{eq:eqclrep}
    M= \bigcup \bigg\{\bigcap_n X^n:  
      \{X^n\} \in \mathcal{S}, c\in \bigcap_n X^n\bigg\},\end{equation}
      where $c$ is a center of $M$. For given $x\in S^2$ one then chooses a center  $c$ of 
      $M=[f(x)]$  and shows that it has a preimage $c'\in [x]$ under $f$. By analyzing the different types for $M$ and  invoking \eqref{eq:eqclrep} in combination with   Lemma~\ref{lem:eclass}, one can then prove  that $f([x])=[f(x)]$. This  implies that $\sim$ is indeed   strongly $f$-invariant.

{}From this  one can conclude that 
 $\widetilde f$ is a Thurston map based on 
 Corollary~\ref{cor:f_descends_Thurston}. We will actually provide a direct simple  argument for this that will also show that $f$ and $\widetilde{f}$ are  Thurston equivalent.  \end{rem}

In the following, $\widetilde \DD^n=\{\widetilde \tau: \tau \in
\DD^n\}$ for $n\in \N_0$ will denote the cell decomposition of $\widetilde S^2$ as
provided by Lemma~\ref{lem:cdcd}~\ref{item:cdcd3}. 
As the next lemma shows, the map ${\widetilde f}^n$ has injectivity
properties  similar to  $ f^n$.  

\begin{lemma}\label{lem:cellinj}
Let $\tau$ be an $n$-cell, $n\in \N$. Then 
${\widetilde f}^n$ is a homeomorphism of $\widetilde \tau$ onto $\widetilde \sigma$, where $\sigma=f^n(\tau)$.  In particular,  $\widetilde f^n$ is cellular for $(\widetilde \DD^n, \widetilde \DD^0)$.
\end{lemma}

\begin{proof} Since ${\widetilde f}$ is continuous,  $\widetilde f^n$ is also continuous. 
 Note that $f^n$ is a homeomorphism of $\tau$ onto $\sigma$. Hence    
$$\widetilde f^n(\widetilde \tau)= (\widetilde f^n\circ \pi)(\tau)=(\pi\circ f^n)(\tau)=\widetilde \sigma$$ 
showing that ${\widetilde f}^n$ maps $\widetilde \tau$ onto $\widetilde \sigma$.

So it remains to show the injectivity of ${\widetilde f}^n$ on
$\widetilde \tau$,  or equivalently, that if $x,y\in \tau$ and
$f^n(x)\sim f^n(y)$, then $x\sim y$.  Since every $n$-vertex 
and every $n$-edge is contained in an $n$-tile, we may also assume that $\tau$ is an $n$-tile. 

If   $x,y\in \tau$, then  
we can pick sequences $\{X^k\}$ and $\{Y^k\}$ in $ \mathcal{S}$ such that $X^n=Y^n=\tau$ and $x\in \bigcap_kX^k$, $y\in \bigcap_k Y^k$. 
Then $f^n(X^{k+n})$ and $f^n(Y^{k+n})$ are $k$-tiles for $k\in \N_0$. Moreover, the sequences  $\{f^n(X^{k+n})\}$ and 
$\{f^n(Y^{k+n})\}$ are in $\mathcal{S}$, and 
$f^n(x)\in  \bigcap_k f^n(X^{k+n})$ and $f^n(y)\in  \bigcap_k f^n(Y^{k+n})$.  Since $f^n(x)\sim f^n(y)$, 
this implies that $f^n(X^{k+n})\cap f^n(Y^{k+n})\ne \emptyset $ for all $k\in \N_0$. Since $X^{k+n}, Y^{k+n}\sub \tau$ for $k\ge 0$ and $f^n|\tau$ is injective, we conclude that $X^{k+n}\cap Y^{k+n}\ne \emptyset$ for $k\ge 0$. Since $X^n=Y^n=\tau$, we also have 
$X^k=Y^k$ for $k=0, \dots , n-1$. Hence $X^k\cap Y^k\ne \emptyset$ for all $k\ge 0$.  Lemma~\ref{lem:erel} then shows that $x\sim y$ as desired. 

 The fact that $\widetilde f^n$ is cellular for $(\widetilde \DD^n, \widetilde \DD^0)$ follows from the first part of the proof and the fact that $f^n$ is cellular for $(\DD^n, \DD^0)$.  
\end{proof}

\subsection*{The auxiliary homeomorphisms $h_0$ and $h_1$}
\label{sec:auxil-home-h_0}
To prove that $\widetilde{f}$ is a Thurston map equivalent to $f$, we need
to define homeomorphisms  
$h_0,h_1\: S^2\ra \widetilde S^2$ that make the  diagram 
\begin{equation}
  \xymatrix{
    S^2 \ar[r]^{h_1} \ar[d]_f & \widetilde S^2 \ar[d]^{\widetilde f}
    \\
    S^2 \ar[r]^{h_0} & \widetilde S^2
  }
\end{equation}
commutative and are isotopic rel. $\V^0=\post(f)$.  The construction
of these maps follows ideas in the proof of
Lemma~\ref{lem:isotwotequiv}.
  
For the definition of $h_0$ recall that $S^2$ is the union of two
$0$-tiles $X^0_{\texttt{b}}$ and $X^0_{\texttt{w}}$ with common
boundary $\mathcal{C}$.  The Jordan curve $\mathcal{C}$ is further
decomposed into $k=\#\V^0\ge 3$ $0$-edges and $0$-vertices.  
The cell decomposition $\widetilde \DD^0$ of $\widetilde S^2$ contains
two tiles $\widetilde X^0_{\texttt{b}}$ and $\widetilde
X^0_{\wt}$. Lemma~\ref{lem:cdcd}~\ref{item:cdcd1a} and
Lemma~\ref{lem:qcells} show that the common boundary of
$\widetilde{X}^0_{\bt}$ and $\widetilde{X}^0_\wt$ is $\widetilde
\CC=\pi(\CC)$, which is a Jordan curve. There are $k$ distinct vertices
and edges on $\widetilde \CC$. There are no other cells in $\widetilde
\DD^0$.

We know by Lemma~\ref{lem:cdcd}~\ref{item:cdcd4} that the map $\tau\in \DD^0\mapsto \widetilde \tau\in \widetilde \DD^0$ is an isomorphism between the cell complexes $\DD^0$ and $\widetilde \DD^0$. So 
Lemma~\ref{lem:isocellhomeo}~\ref{item:isocellhomeo2} implies that there exists a homeomorphism $h_0\: S^2\ra \widetilde S^2$ such that $h_0(\tau)=\widetilde \tau$ for all cells $\tau\in \DD^0$. 

Now let  $\tau\in \DD^1$ be arbitrary. Then $f(\tau)\in \DD^0$, and by Lemma~\ref{lem:cellinj}
the map $\widetilde f|\widetilde \tau$ is a homeomorphism of $\widetilde \tau$  onto $\widetilde {f(\tau)}={h_0(f(\tau))}$. Hence the map
$$\varphi_{\tau}\coloneqq (\widetilde f|\widetilde \tau)^{-1}\circ h_0 \circ (f|\tau)$$ is well-defined and a homeomorphism from $\tau$ onto $\widetilde \tau$.
If $x\in \tau$, then $y=\varphi_\tau(x)$ is the unique point $y\in \widetilde \tau $ with $\widetilde f(y)=h_0(f(x))$. As in the proof of Lemma~\ref{lem:isotwotequiv}, this uniqueness property implies that if  $\sigma,\tau\in \DD^1$ and   $\sigma\sub \tau$, then 
$ \varphi_\tau|\sigma=\varphi_\sigma. $ From this in turn one can deduce that 
if  a point $x\in S^2$ lies in two cells $\tau,\tau'\in \DD^1$, then
$\varphi_\tau(x)=\varphi_{\tau'}(x)$. 
This allows us to define a map $h_1\: S^2\ra \widetilde S^2$ as follows. If $x\in S^2$, we pick   $\tau\in \DD^1$ with $x\in \tau$ and set 
$h_1(x)\coloneqq \varphi_\tau(x).$ Then $h_1\:S^2 \ra \widetilde S^2$ is well-defined.

\begin{lemma}\label{lem:phi}
The map $h_1\: S^2\ra\widetilde S^2$ is a
 homeomorphism of $S^2$ onto $\widetilde S^2$ satisfying $h_0\circ f=\widetilde f\circ h_1$. Moreover, we have $h_0(\CC)=\widetilde \CC=h_1(\CC)$ and the homeomorphisms 
$h_0$ and $h_1$ are isotopic rel.\ $\V^0=\post(f)$. 
\end{lemma} 

\begin{proof} We have $h_1|\tau=\varphi_\tau$ for each cell $\tau\in \DD^1$. So the  definitions of $h_1$ and $\varphi_\tau$ show that $h_0\circ f=\widetilde f\circ h_1$ and that 
$h_1(\tau)=\varphi_\tau(\tau)=\widetilde \tau$ for each $\tau\in \DD^1$. Since $\tau\in \DD^1\mapsto \widetilde \tau \in \widetilde 
\DD^1$ is an isomorphism of cell complexes by 
Lemma~\ref{lem:cdcd}~\ref{item:cdcd4}, the last statement implies that $h_1$ is a homeomorphism of $S^2$ onto $\widetilde S^2$ (Lemma~\ref{lem:isocellhomeo}~\ref{item:isocellhomeo1}). 

Note that $h_1(\tau)=\widetilde \tau$ also for each $\tau\in \DD^0$. Indeed, suppose  
 $\tau\in \DD^0$ is   arbitrary. Since $\DD^1$ is a refinement of $\DD^0$, for each $x\in \tau$ there exists $\sigma\in \DD^1$ such that $x\in \sigma\sub \tau$. Then 
 $h_1(x)\in h_1(\sigma)=\widetilde \sigma\sub \widetilde \tau$. So $h_1(\tau)\sub \widetilde \tau$. Conversely, let  $y\in \widetilde \tau$ be arbitrary. Since $\widetilde \DD^1$ is a refinement of $\widetilde \DD^0$ 
 (Lemma~\ref{lem:cdcd}~\ref{item:cdcd5}), there exists a cell $\sigma\in \DD^1$ such that 
 $y\in \widetilde \sigma\sub \widetilde \tau$. By Lemma~\ref{lem:nonewint} we then have $\sigma\sub \tau$, and so $y\in \widetilde \sigma=h_1(\sigma)\sub h_1(\tau)$. We conclude that 
 $h_1(\tau)=\widetilde \tau$ for each $\tau\in \DD^0$ as claimed.   

The Jordan curve $\mathcal{C}$ is  the $1$-skeleton of $\DD^0$ and thus equal to the  union of all edges  $e\in \DD^0$. We know that  
$h_0(e)=h_1(e)=\widetilde e=\pi(e)$ for each such edge $e$. Hence 
  $h_0(\mathcal{C}) =h_1(\CC)=\pi(\mathcal{C})=
\widetilde {\mathcal{C}}$.

If $\tau\in \DD^0$, then $h_0(\tau)= \widetilde \tau =h_1(\tau)$.
  So by
Lemma~\ref{lem:isocellhomeo}~\ref{item:isocellhomeo3} (applied to the isomorphism $\tau \in \DD^0 \mapsto \widetilde \tau \in \widetilde {\DD}^0$) the
homeomorphisms $h_0$ and $h_1$ are isotopic rel.\ $\V^0=\post(f)$.
\end{proof}

\begin{lemma}
  \label{lem:widefThurston}
  The map $\widetilde f \: \widetilde S^2 \ra \widetilde S^2$ is
  a Thurston map. It is Thurston equivalent to $f$ and satisfies
  $\post (\widetilde f)=\pi(\post(f))$. Moreover, if
  $\widetilde \CC=\pi(\CC)\sub \widetilde S^2$, then
  $\widetilde \CC$ is an $\widetilde f$-invariant Jordan curve
  with $\post(\widetilde f)\sub \widetilde \CC$.
\end{lemma} 

\begin{proof} 
  As we have 
  already
seen in Lemma~\ref{lem:phi}, there exist 
 homeo\-mor\-phisms $h_0, h_1\: S^2\ra \widetilde S^2$ that are isotopic rel.\ $\post(f)$ and satisfy 
 $h_0\circ f= \widetilde f\circ h_1$. Then $\widetilde f$ is a Thurston map 
 with  $\post(\widetilde{f})= h_0(\post(f))= \pi(\post(f))$  by Lemma~\ref{lem:T-eq_crit_post}, and it is clear that $f$ and $\widetilde{f}$ are  Thurston equivalent.
 
We know that   $\widetilde {\mathcal{C}}=\pi(\mathcal{C})\sub 
\widetilde S^2$ is a Jordan curve. It  satisfies 
$$\post (\widetilde f)=\pi(\post(f))\sub \pi(\mathcal{C})=\widetilde {\mathcal{C}}. $$
Since $f(\mathcal C)\sub \mathcal C$, we also have  
$$ \widetilde f(\widetilde {\mathcal{C}})=
 (\widetilde f\circ \pi)(\mathcal{C})=(\pi\circ f)
 (\mathcal{C})\sub \pi(\mathcal{C})=
 \widetilde {\mathcal{C}}. $$
 This shows that $\widetilde {\mathcal{C}}$ is 
 $ \widetilde f$-invariant 
   and contains the set of postcritical points of 
 $ \widetilde f$.
 \end{proof}

Let $L\: \DD^1\ra \DD^0$ be the labeling induced by $f$.  It is
given by $L(\tau)=f(\tau)\in \DD^0$ for $\tau\in \DD^1$ (see
Section~\ref{sec:labelings}). Since the Jordan curve
$\CC\subset S^2$ with $\post(f) \subset\CC$ that was used to
define $\DD^0=\DD^0(f,\CC)$ and $\DD^1=\DD^1(f,\CC)$ is
$f$-invariant,  $(\DD^1, \DD^0, L)$ is a two-tile subdivision
rule realized by $f$ (see Proposition~\ref{prop:ThmapSub}).

We   consider the associated cell decompositions $\widetilde \DD^0$ and $\widetilde \DD^1$ of the $2$-sphere $\widetilde S^2$ as given by Lemma~\ref{lem:cdcd}~\ref{item:cdcd3}. By Lemma~\ref{lem:cdcd}~\ref{item:cdcd4} each cell in $\widetilde \DD^1$ can be represented as $\widetilde \tau$ for a unique $\tau\in \DD^1$. This implies that if we set $\widetilde L(\widetilde \tau)\coloneqq\widetilde {L(\tau)}=\widetilde {f(\tau)}\in \widetilde \DD^0$ for 
$\tau\in \DD^1$, then we obtain a well-defined map   $\widetilde L\: 
 \widetilde \DD^1\ra \widetilde \DD^0$. 
  
\begin{cor}
  \label{cor:isosubdic} 
  \index{two-tile subdivision rule!isomorphism of}
  \index{subdivision!isomorphic}
  \index{isomorphism!of two-tile subdivision rules}
  The map $\widetilde L\: 
 \widetilde \DD^1\ra \widetilde \DD^0$ is a   labeling.  Moreover, 
  $(\widetilde \DD^1, \widetilde \DD^0, \widetilde L)$ is a two-tile subdivision rule isomorphic to $(\DD^1,  \DD^0,  L)$. It is realized by the Thurston map $\widetilde f$. 
\end{cor}  

Essentially, this corollary says that all the combinatorial information encoded in $f$ and its associated two-tile subdivision rule $(\DD^1,  \DD^0,  L)$ is preserved if we pass to the quotient space $\widetilde S^2$.

\begin{proof} It is clear  that $\widetilde \DD^0=\DD^0(\widetilde 
f, \widetilde \CC)$, where, as before, $\widetilde \CC=\pi(\CC)$. Since the map $\widetilde f$ is cellular for $(\widetilde \DD^1, \widetilde \DD^0)$ by Lemma~\ref{lem:cellinj},  the uniqueness statement in  Lemma~\ref{lem:pullback} implies that 
$\widetilde \DD^1=\DD^1( \widetilde f, \widetilde \CC)
$. 

 Note that 
$\widetilde L$ is the labeling induced by  the Thurston map   $\widetilde f$.
Indeed,  
each cell in  $\widetilde \DD^1$ has a 
representation of the form 
$\widetilde \tau$ with a unique $\tau \in \DD^1$. As we have seen in the proof of Lemma~\ref{lem:phi},  we have 
$h_1(\tau)=\widetilde \tau$. Moreover, $f(\tau)\in \DD^0$ and so
 $h_0(f(\tau))=\widetilde {f(\tau)}= \widetilde L(\widetilde \tau)$ by the definitions of $h_0$ and $\widetilde L$. This leads to the  desired relation 
$$ \widetilde f(\widetilde \tau)=(\widetilde f\circ h_1)(\tau)=(h_0\circ f)(\tau)=
h_0(f(\tau))=\widetilde L(\widetilde \tau). $$
 
 Since 
 $\widetilde \CC$ is an $\widetilde f$-invariant Jordan curve with $\post(\widetilde f)\sub \widetilde \CC$, 
 Proposition~\ref{prop:ThmapSub} shows  that $(\widetilde \DD^1, \widetilde \DD^0, \widetilde L) 
 =(\DD^1( \widetilde f, \widetilde \CC), \DD^0( \widetilde f, \widetilde \CC), \widetilde L) $ 
 is a two-tile subdivision rule realized by $\widetilde f$.
 
  Finally, by using the cell complex isomorphisms 
  $\tau \in \DD^i\mapsto \widetilde \tau \in \widetilde \DD^i$ for $i=0,1$  is easy to see that 
  $(\DD^1,  \DD^0,  L)$ and $(\widetilde \DD^1, \widetilde \DD^0, \widetilde L)$ are isomorphic. 
\end{proof}

We are now ready to prove the main results of this chapter. 

\begin{proof}[Proof of Proposition~\ref{prop:combexp}] 
  Let $f$ be a Thurston map that is combinatorially expanding for
  a Jordan curve $\CC$ as in the statement.  
 Then  all of our previous considerations  apply.

We consider the $2$-sphere $\widetilde S^2=S^2/{\sim}$,  the 
 quotient map $\pi\: S^2\ra  \widetilde S^2$,  the Thurston   map $\widetilde f\: \widetilde S^2\ra \widetilde S^2$, the Jordan curve $\widetilde \CC=\pi(\CC)$, and the homeomorphisms $h_0$ and $h_1$ defined earlier. 
   Then it  follows from Lemmas~\ref{lem:phi} and~\ref{lem:widefThurston} 
 that we have all the desired properties, but it remains to show that 
 $\widetilde f$ is expanding.  Since $\widetilde \CC
\sub \widetilde S^2$ is an $\widetilde f$-invariant Jordan curve with 
$\post (\widetilde f)\sub \widetilde \CC$, we can do this by verifying
the condition in Lemma~\ref{lem:charexpint} for $\widetilde\CC$. 

We have  $\DD^0(\widetilde f, \widetilde \CC)=\widetilde \DD^0$. Moreover, since the map $\widetilde f^n$ is cellular for $(\widetilde \DD^n, \widetilde \DD^0)$,  it follows from the uniqueness statement in  Lemma~\ref{lem:pullback} that $\DD^n( \widetilde f, \widetilde \CC)=
\widetilde \DD^n$ for all  $n\in \N_0$.   
So  the $n$-tiles for  $(\widetilde f,\widetilde \CC)$ are precisely 
 the sets $\widetilde X=\pi(X)$, where $X$ is an $n$-tile 
 on $S^2$ for $(f,\CC)$. 

Let $\widetilde{X}^0 \supset \widetilde{X}^1 \supset \widetilde{X}^2
\supset \dots$ be a nested sequence of $n$-tiles for $(\widetilde{f},
\widetilde{\CC})$. Clearly, 
$\bigcap_n\widetilde{X}^n$ is non-empty. We have to show that this intersection  does
not contain more than one point.  From
Lemma~\ref{lem:cdcd}~\ref{item:cdcd5} it follows that the
corresponding sequence $\{X^n\}$ of $n$-tiles for $(f,
\CC)$ is nested, and so $\{X^n\}\in \mathcal{S}=\mathcal{S}(f,\CC)$.  
 To see that $\bigcap_n\widetilde X^n$ consists of precisely one point, we argue by contradiction and assume that  $\bigcap_n \widetilde X^n$ contains more than one point, or equivalently, that there exist two distinct (and hence disjoint) equivalence classes $M$ and $N$ with respect to $\sim$ such that $M^n\coloneqq M\cap X^n\ne \emptyset$ and $N^n\coloneqq N\cap X^n \ne \emptyset$ for all $n\in \N_0$.  
 Since equivalence classes and tiles are compact, in this way we get descending sequences $M^0\supset M^1\supset \dots$ and $N^0\supset N^1\supset\dots$ of non-empty and compact sets. Hence 
 the sets $\bigcap_n M^n=M\cap \bigcap_n X^n$ and 
 $\bigcap_n N^n=N\cap \bigcap_n X^n$ are non-empty. So there exist points 
 $x\in M\cap \bigcap_n X^n$ and 
 $y\in N\cap \bigcap_n X^n$. Since $x$ and $y$ lie in different equivalence classes, they are not equivalent. On the other hand, we have $x,y\in 
 \bigcap_n X^n$ and $\{X^n\}\in \mathcal{S}$. Hence $x\sim y$ by Lemma~\ref{lem:erel}. This is a contradiction and we conclude that $\widetilde f$ is indeed expanding.
\end{proof} 


Our previous considerations immediately give the proofs of Theorems~\ref{thm:combexp1} and ~\ref{thm:combexp2}.

\begin{proof}
  [Proof of Theorem~\ref{thm:combexp1}] 
 We  use the same notation as in Pro\-po\-si\-tion~\ref{prop:combexp} 
  and define a homeomorphism 
  $\phi=h_1^{-1}\circ h_0$.  Then $\phi(\CC)=\CC$, 
  and $\phi$ is isotopic to the identity
  on $S^2$ rel.~$\post(f)$. This implies that $\phi$ is orientation-preserving. Moreover,
  $$g=\phi\circ f=h_1^{-1}\circ h_0 \circ f =h_1^{-1} \circ \widetilde f \circ h_1, $$ and so $g$ is
  topologically conjugate to the expanding Thurston map $\widetilde
  f$, and hence itself an expanding Thurston map.

  Similarly, the map 
  \begin{equation*}
    \widetilde{g} = f\circ \phi = f\circ h_1^{-1} \circ h_0 =
    h_0^{-1} \circ \widetilde{f} \circ h_0. 
  \end{equation*}
  is topologically
 conjugate to $\widetilde{f}$, and hence an expanding Thurston map. 
\end{proof}

\begin{proof}[Proof of Theorem~\ref{thm:combexp2}] 
  Let $(\DD^1, \DD^0, L)$ be a two-tile subdivision rule on $S^2$ as in the statement, $\CC$ be the Jordan curve  and ${\bf V}^0$ be the vertex set of $\DD^0$. 
  
  If the subdivision rule $(\DD^1,\DD^0,L)$ 
  can be realized by an expanding Thurston map $f\: S^2\ra S^2$, then  $\CC$ is $f$-invariant,  $\post(f)\sub \CC$, and $\#\post(f)\ge 3$. So Lemma~\ref{lem:Dtoinfty} implies that 
  $f$ is 
  combinatorially expanding for $\CC$, and so $(\DD^1, \DD^0, L)$ is combinatorially expanding 
  according to  Definition~\ref{def:combexprule} and the discussion following this definition. 

  Conversely, suppose $(\DD^1, \DD^0,L)$ is combinatorially expanding. Then this subdivision rule can be realized 
 by a  Thurston map $f\colon S^2\to S^2$ that is combinatorially
 expanding for $\CC$. From our assumptions 
 it follows  that $\post(f)={\bf V}^0$ (see Remark~\ref{rem:two-tile_fV0}~(i)).  Note that then $\DD^0=\DD^0(f,\CC)$. Moreover, 
 $f$ is cellular for $(\DD^1,\DD^0)$ and so necessarily
 $\DD^1=\DD^1(f,\CC)$ (see Lemma~\ref{lem:pullback}).  
 
 We again use the notation of
 Proposition~\ref{prop:combexp}.
We define $\phi= h_1^{-1} \circ h_0$. Then $\phi$ is a
orientation-preserving  homeomorphism on $S^2$ that is isotopic to $\id_{S^2}$  rel.\
 $\V^0=\post(f)$.  As in the proof of
 Theorem~\ref{thm:combexp1}, let $g=\phi\circ f = h_1^{-1} \circ
 \widetilde{f} \circ h_1$. Then $g\colon S^2\to S^2$ is an expanding
 Thurston map.

  Since $\phi(\CC)=\CC$ and $\phi$ is orientation-preserving and the identity on
  $\V^0$, we have $\phi(c)=c$ for each $c\in \DD^0$. Since $f$ is
  cellular for $(\DD^1,\DD^0)$, the map $g=\phi 
  \circ f$ is cellular for $(\DD^1, \DD^0)$ and we have $g(c)=f(c)$
  for each cell $c\in \DD^1$. Since $f$ realizes the given 
  two-tile subdivision rule,  this shows that $g$ is also a
  realization. Now $g$ is expanding, and so the claim follows.
\end{proof}   

We end this chapter with two examples. The first one illustrates why we need 
the condition $\post(f)= \V^0$
in Theorem~\ref{thm:combexp2}. 

\begin{figure}
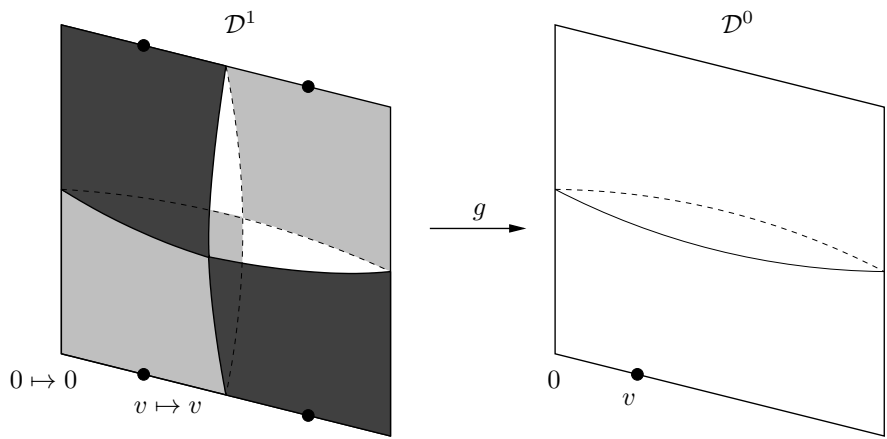

  \centering
  \begin{overpic}
    [width=11cm, tics=10,
    ]{mapg_extra_pt.eps}
    %
    \put(20,49){$\DD^1$}
    \put(9,3){$v\mapsto v$}
    \put(-6,6){$0\mapsto 0$}
    %
    \put(80,49){$\DD^0$}
    \put(68,4){$v$}
    \put(59,6){$0$}
    \put(50,27){$g$}
  \end{overpic}
  \caption{A two-tile subdivision rule realized by a map $g$ with $\post(g)\ne \V^0$.}
  \label{fig:extra_vertex_D0}
\end{figure}

\begin{ex}
  \label{ex:extra_vertex_D0}
  Consider the two-tile subdivision rule $(\DD^1,\DD^0,L)$ indicated 
  in Figure~\ref{fig:extra_vertex_D0}. It is almost the same as
  the one in Figure~\ref{fig:mapg}
   realized by the Latt\`{e}s map discussed in 
  Section~\ref{sec:Lattes}.   However,  there is
  one difference: $\DD^0$ contains an additional vertex $v$; 
  so $\DD^0$ contains five  vertices (and  five  edges).  Compared to Figure~\ref{fig:mapg},
   the cell decomposition $\DD^1$  of the subdivision rule  $(\DD^1,\DD^0,L)$ 
 contains four  additional vertices (represented  by black dots in Figure~\ref{fig:extra_vertex_D0}), one of which agrees
  with $v$. 

  Let $g\colon S^2\to S^2$ be an arbitrary Thurston map that
  realizes this two-tile subdivision rule. Then  the set $\post(g)$ consists of the four corners of the 
  squares forming the pillow, and so $\post(g)\ne \V^0$. Moreover,  $g$ is
  combinatorially expanding for the Jordan curve $\CC$ of $\DD^0$
  (which is the equator of the pillow), because no tile in $\DD^1$ joins opposite sides of 
  $\CC$.  Note that for combinatorial expansion of $g$ for $\CC$ only the points in $\post(g)$ are relevant, and so the extra point $v$ plays no role here.

 Let  $e$ be the edge in  $\DD^0$  whose endpoints are  the vertices labeled by
  $0$ and $v$. Then  $e$ is also an edge in $\DD^1$ and
  $g(e)=e$. This implies that $e$ is  an $n$-edge for $(g,\CC)$ for each $n\in \N_0$
  and so  $g$ cannot be expanding.  
\end{ex}


By  Theorem~\ref{thm:combexp1}  combinatorial
expansion is a sufficient condition for a Thurston map to be
equivalent to an expanding Thurston map. Our last example in this
chapter shows that this condition is not necessary.

\begin{ex}
  \label{ex:exp_notcexp}
 We consider  the map $f\colon S^2\to S^2$  represented by the top part of
  Figure \ref{fig:exp_notcexp}. Here we identify $S^2$ with a
  pillow that is obtained by gluing  two squares together along their boundaries. The
  map $f$ has four postcritical points, which are the vertices of
  the pillow shown on the top right. Its front is the white
  $0$-tile, and its back the black $0$-tile. The subdivision
  of the $0$-tiles is indicated on the top left in  the
  figure. Here we have cut the pillow along three $0$-edges and folded the back of the pillow up so that we see two adjacent squares. The
  left one shows the subdivision of the white $0$-tile, and the
  right one the subdivision of the black $0$-tile. One
  postcritical point (which is a vertex
  of the pillow) is marked by a large black  dot.  On the left we indicated
  its preimages, meaning the $1$-vertices that are labeled by
  this $0$-vertex.  The other $1$-vertices are shown as small
 black  dots.
  
  The equator $\CC$ of the pillow $S^2$  is an $f$-invariant Jordan curve containing
  $\post(f)$. The map $f$ is not combinatorially expanding for
  $\CC$: for each $n\in \N_0$ there is a white
  $n$-tile  (contained in the white $0$-tile) that joins the
  $0$-edges given by  the left and the  right side 
   of the  white $0$-tile.

\ifthenelse{\boolean{nofigures}}{}{
\begin{figure}
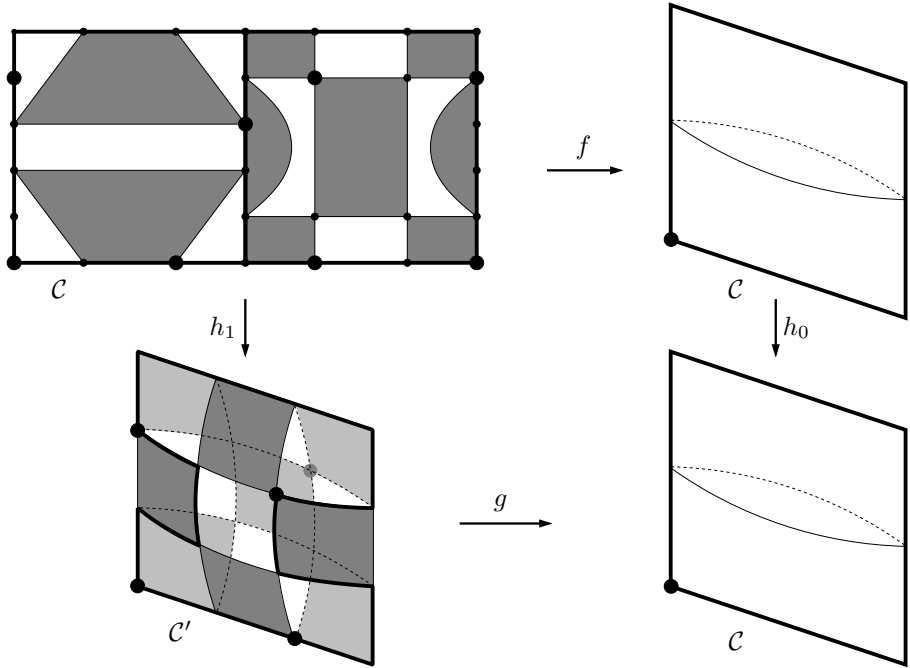

  \centering
  \begin{overpic}
    [width=12cm, 
    tics=20]{exp_notcexp2.eps}
    \put(63,57){$f$}
    \put(54,18){$g$}
    \put(5,41){$\CC$}
    \put(18,3){$\CC'$}
    \put(80,41){$\CC$}
    \put(22.5,37){$h_1$}
    \put(86,37){$h_0$}
    \put(80,2){$\CC$}
  \end{overpic}
  \caption{The map $f$ is not combinatorially expan\-ding, but equivalent to the expanding map $g$.}
  \label{fig:exp_notcexp}
\end{figure}
}

 We want to show that 
the map
 $f$ is equivalent to an expanding Thurston map
  $g\colon {S}^2\to {S}^2$ defined on the same pillow
  as $f$. The map $g$  is indicated  at   the bottom in 
  Figure~\ref{fig:exp_notcexp}. If we identify the pillow ${S}^2$
  with $\CDach$ in the same way as in Section~\ref{sec:Lattes},
  then $g$ is a Latt\`{e}s map.  It is obtained according to
  Theorem~\ref{thm:Lattesstruc}~\ref{item:Lattessruciii} as a
  quotient of $A \colon \C \to \C$, $z\mapsto A(z)\coloneqq 3z$, by the
  crystallographic group of type $(2222)$  in
  \eqref{eq:specisom}.  Since $g$ is a Latt\`{e}s map, it is
  expanding.

  Let $h_0\coloneqq \id_{S^2}$. We consider 
  $\DD^1= \DD^1(f,\CC)$
  and $\widetilde{\DD}^1= \DD^1(g,{\CC})$ as given by
  Definition~\ref{def:DDn}. These are the cell decompositions of $S^2$
   shown on the left in  Figure~\ref{fig:exp_notcexp}. Note
  that $\DD^1$ and $\widetilde{\DD}^1$ are in fact isomorphic.
 More precisely,  there is a bijection
  $\phi\colon \DD^1 \to \widetilde{\DD}^1$ as in 
  Definition~\ref{def:compiso} that preserves the color of tiles and sends 
  the  $1$-vertices on the top left marked by a large black dot to the $1$-vertices on
  the bottom left marked in the same way. 
  From this one can deduce that  $g(\phi(\tau))=f(\tau)=h_0(f(\tau))$ for each $\tau \in  \DD^1$ (this is closely related to 
  Lemma~\ref{lem:labeluniq}~\ref{item:labeluniq2}). 
  
  This  in turn allows us to define a map $h_1\colon S^2\to S^2$ by
  setting 
  $$h_1(x) \coloneqq \big((g|\phi(\tau))^{-1}\circ h_0 \circ (f|\tau)\big)(x), $$ whenever $x\in S^2$ and $x\in \tau \in \DD^1$. 
 As in the proof of
  Lemma~\ref{lem:isotwotequiv}, one can show that $h_1$ is well-defined. Then $h_1(\tau)=\phi(\tau)$ for each $\tau \in \DD^1$.   
 Lemma~\ref{lem:isocellhomeo}~\ref{item:isocellhomeo1}  implies   that $h_1$ is a homeomorphism on $S^2$.  From the definition of $h_1$ it is also clear that $g\circ h_1=h_0\circ f$.
 
 So in order to conclude that $f$ and $g$ are Thurston
 equivalent, it remains to show that $h_1$ and $h_0=\id_{S^2}$ are isotopic rel.\ $\post(f)$. First note that  the definition of $h_1$ implies that  this map fixes the points in $\post(f)=\post(g)$, i.e., the $0$-vertices.

 Let $\CC'\coloneqq h_1(\CC)\sub S^2$. This is the  Jordan curve 
 drawn on  the bottom left in    Figure~\ref{fig:exp_notcexp}
  with a thick line.  It is intuitively clear and not hard to prove  that $\CC'$ can be deformed into 
  $\CC$ by an isotopy rel.\ $\post(f)$. By using such an isotopy one can show that $h_1$ is isotopic rel.\ $\post(f)$ to a homeomorphism $\widetilde h_0$ on $S^2$ that 
  preserves all $0$-cells as sets (i.e., all cells in $\DD^0(f,\CC)=\DD^0(g,\CC)$).  
  Lemma~\ref{lem:isocellhomeo}~\ref{item:isocellhomeo3}  then implies that  $\widetilde h_0$, and hence also $h_1$, is isotopic to $h_0=\id_{S^2}$ rel.\ $\post(f)$.
  The Thurston equivalence of $f$ and $g$ follows. 
 \end{ex}

\ifthenelse{\boolean{singlechapter}}{

%


\chapter{Invariant  curves}
\label{cha:constructc}

This chapter is central for this work. We will prove existence
and uniqueness results for invariant curves $\CC$ of an expanding
Thurston map $f$.  We will also show that if an invariant curve
exists, then it can be obtained from an iterative procedure, and
that it is a quasicircle.  We always require that $\CC$ is a
Jordan curve and that $\post(f)\sub \CC$, but in the following
discussion we will often refer to such a curve $\CC$ simply as an
invariant curve for brevity.

One of our  main results can be formulated as follows. 

%

\begin{theorem}
  [High iterates have invariant curves]
  \label{thm:main}
  \index{f-invariant@$f$-invariant!Jordan curve}
  \index{Jordan curve!f-invariant@$f$-invariant}
  \index{invariant!Jordan curve}
  Let $f\colon S^2\to S^2$ be an expanding Thurston map, and $\CC\sub S^2$ be a Jordan curve with $\post(f)\sub \CC$.  Then for 
  each sufficiently large $n\in \N$ there exists a Jordan curve $\widetilde{\CC}\sub S^2 $ that is invariant for $f^n$ and isotopic to $\CC$ rel.\  $\post(f)$.
\end{theorem}

This existence result has  the following important implication.

\begin{cor}
  [Thurston maps and subdivision rules]
  \label{cor:subdivnlarge} 
  \index{two-tile subdivision rule!realization of}
  \index{subdivision}
  \index{Thurston map!iterate of}
  \index{iterate of Thurston map}
  \index{F f@$F=f^n$}
  Let $f\: S^2 \ra S^2$ be an expanding Thurston map.
Then for each sufficiently large 
$n\in \N$  there exists a two-tile subdivision rule that is realized by $F=f^n$. 
\end{cor} 

This  justifies  our  approach of studying expanding Thurston maps from a combinatorial perspective based on cellular Markov partitions.

Invariant curves are  quasicircles if  the underlying metric is visual.

 \begin{theorem}[Invariant curves are quasicircles]
  \label{thm:Cquasicircle} 
  \index{f-invariant@$f$-invariant!Jordan curve}
  \index{Jordan curve!f-invariant@$f$-invariant}
  \index{invariant!Jordan curve}
  \index{quasicircle}
  Let $f\: S^2\ra S^2$ be an expanding Thurston map, and  $\CC\sub S^2$ be a Jordan curve with $\post(f)\sub \CC$. If  $\CC$ is $f$-invariant, then $\CC$ equipped with (the restriction of) a visual metric for $f$  is a quasicircle. 
\end{theorem} 

This also applies  to invariant curves of  iterates, because if $f\: S^2\ra S^2$ is an expanding Thurston map, then the same is true for each iterate $F=f^n$, $n\in \N$. 

If one studies rational Thurston maps $f$ that are expanding,
then the underlying $2$-sphere is the Riemann sphere $\CDach$,
and it is natural to equip it with the chordal metric $\sigma$.
Then an invariant $\CC$ curve as in the previous theorem  is also
a quasicircle with respect to $\sigma$.
This  can be deduced from Theorem~\ref{thm:Cquasicircle} once we know that 
for such maps  the chordal metric is quasisymmetrically
equivalent to each visual metric. This will be proved in
Chapter~\ref{cha:geom-visu-sphere}; see in particular
Corollary~\ref{cor:visualqsmetric}.

As we discussed in Section~\ref{sec:Thurtoncurves},  if $\CC$ an $f$-invariant Jordan curve with $\post(f)\sub\CC$, then we get a sequence of cell decompositions $\DD^n=\DD^n(f,\CC)$, $n\in \N_0$, so that each 
 cell decomposition is refined by the  cell decompositions  of higher levels. We will see that 
Theorem~\ref{thm:Cquasicircle} implies that we have   good control for the geometry of edges and tiles in these cell decompositions. Namely,  the family of edges
consists of {\em uniform quasiarcs} and the boundary of tiles are {\em uniform quasicircles}.
See Section~\ref{sec:cc-quasicircle} for an explanation of this terminology and Proposition~\ref{prop:arc} for a precise statement.

In Theorem~\ref{thm:main} it is necessary to pass to an iterate of the map to guarantee existence of an invariant curve, because there are examples of maps for which an invariant curve does not exist (see Example~\ref{ex:noinvCC}). One can formulate a necessary and sufficient 
 criterion for  the existence of invariant curves.

 \begin{theorem}[Existence of invariant curves]
  \label{thm:exinvcurvef}
  \index{f-invariant@$f$-invariant!Jordan curve}
  \index{Jordan curve!f-invariant@$f$-invariant}
  \index{invariant!Jordan curve}
  Let $f\colon S^2\to S^2$ be an expanding Thurston map. 
  Then the following conditions are equivalent:
  
  \begin{enumerate}
  
  \item 
    \label{item:ex_invC1}
    There exists a Jordan curve
     $ \widetilde \CC\sub
    S^2$  with    $\post(f)\sub\widetilde  \CC$ that is $f$-invariant.   
 
  \item 
    \label{item:ex_invC2}
    There exist Jordan curves $\CC,  \CC'\sub S^2$ with 
     $\post(f)\sub \CC, \CC'$ and $ \CC'\sub f^{-1}(\CC)$,  and  an isotopy  $H\colon S^2\times I\to S^2$  rel.\ $\post(f)$ 
   with  $H_0 = \id_{S^2}$ and  $H_1(\CC)= \CC'$ such  that
    the map
    \begin{equation*}
      \widehat{f}\coloneqq  H_1 \circ f \text{ is combinatorially expanding
        for } \CC'.
    \end{equation*}
  \end{enumerate}
Moreover, if \ref{item:ex_invC2} is true, then there exists an $f$-invariant Jordan curve $\widetilde \CC\sub S^2$ with $\post(f)\sub \widetilde \CC$ that is isotopic to $\CC$ rel.\ $\post(f)$ and isotopic to $ \CC'$ rel.\ 
  $f^{-1}(\post(f))$. 
\end{theorem}

The first condition in \ref{item:ex_invC2} says that there exists a Jordan curve $\CC$ 
with $\post(f)\sub \CC$ that can be isotoped rel.\ $\post(f)$ 
into its preimage under $f$. This condition alone ensures that an associated
$f$-invariant set $\widetilde \CC$ with $\post(f)\sub \widetilde \CC$ exists, but in general $\widetilde \CC$ will  not be a Jordan curve (see Lemma~\ref{lem:CCnprop}~\ref{CCprop7} and Example~\ref{ex:Cinv_notcexp}). If, in addition, the map $\widehat f$  is combinatorially expanding as stipulated in \ref{item:ex_invC2}, then one obtains a Jordan curve 
$\widetilde \CC$. 

Invariant curves can be constructed by an iterative procedure
that will be described in Section~\ref{subsec:ittproc}. 
In the situation of Theorem~\ref{thm:exinvcurvef}, one lifts 
the isotopy
$H$ by the map $f$ repeatedly to obtain a sequence of isotopies
$H^n\: S^2\times I\ra S^2$, $n\in \N_0$, with $H^0\coloneqq H$
such that $H^n_0=H^n(\cdot, 0)=\id_{S^2}$.  One sets
$\CC^0\coloneqq \CC$ and defines inductively
$\CC^{n+1}\coloneqq H_1^n(\CC^n) $ for $n\in \N_0$. It can then
be shown that the sequence $\{\CC^n\}$ Hausdorff converges to the
desired invariant curve $\widetilde \CC$ (see
Proposition~\ref{prop:invCit}). An explicit knowledge of the
isotopies is not really necessary, because one can interpret this
as an edge replacement procedure (see
Remark~\ref{rem:C_arc_replace}).  In Section~\ref{subsec:ittproc}
we will discuss this and several examples that illustrate various
phenomena in this context.

Theorem~\ref{thm:main}, which is our basic existence result for
invariant curves, is complemented by the following uniqueness
statement.

\begin{theorem}[Uniqueness of invariant curves] 
  \label{thm:uniqc}     
  \index{f-invariant@$f$-invariant!Jordan curve}
  \index{Jordan curve!f-invariant@$f$-invariant}
  \index{invariant!Jordan curve}
  Let
  $f\: S^2\ra S^2$ be an expanding Thurston map, and $\CC, \CC'\sub S^2$
  be $f$-invariant Jordan curves  that both contain the set
  $\post(f)$.  Then $\CC=\CC'$ if and only if $\CC$ and $\CC'$ are
  isotopic rel.\ $f^{-1}(\post(f))$.
\end{theorem}

This implies that in a given isotopy class rel.\ $\post(f)$ there
are only finitely many invariant curves $\CC$
(Corollary~\ref{cor:finiterelP}). It follows that an expanding
Thurston map $f$ with $\#\post(f)=3$ can have only finitely many
invariant curves $\CC$ (Corollary~\ref{cor:finitepost3}).

The situation changes if one does not restrict the isotopy class
of $\CC$.  An expanding Thurston map $f$ may have 
infinitely many invariant curves $\CC$ in general (see
Example~\ref{ex:infty_C}). If, in addition, the map is rational
and has a hyperbolic orbifold, then this cannot happen and $f$
can have only finitely many invariant curves $\CC$ (see
Theorem~\ref{thm:rat_finitelyC}).

The chapter is organized as follows. Section~\ref{subsec:exuniq} is devoted to existence and uniqueness results, where we provide proofs for the statements discussed above. The iterative procedure for the construction of invariant curves is explained in Section~\ref{subsec:ittproc}. In Section~\ref{sec:cc-quasicircle} we discuss the quasiconformal geometry of invariant curves. Here we prove Theorem~\ref{thm:Cquasicircle} and related results. 

Much of our discussion in this chapter is quite technical. Before we go into the details, we  look at a specific example that will illustrate some of the main ideas.

\ifthenelse{\boolean{nofigures}}{}{
\begin{figure}
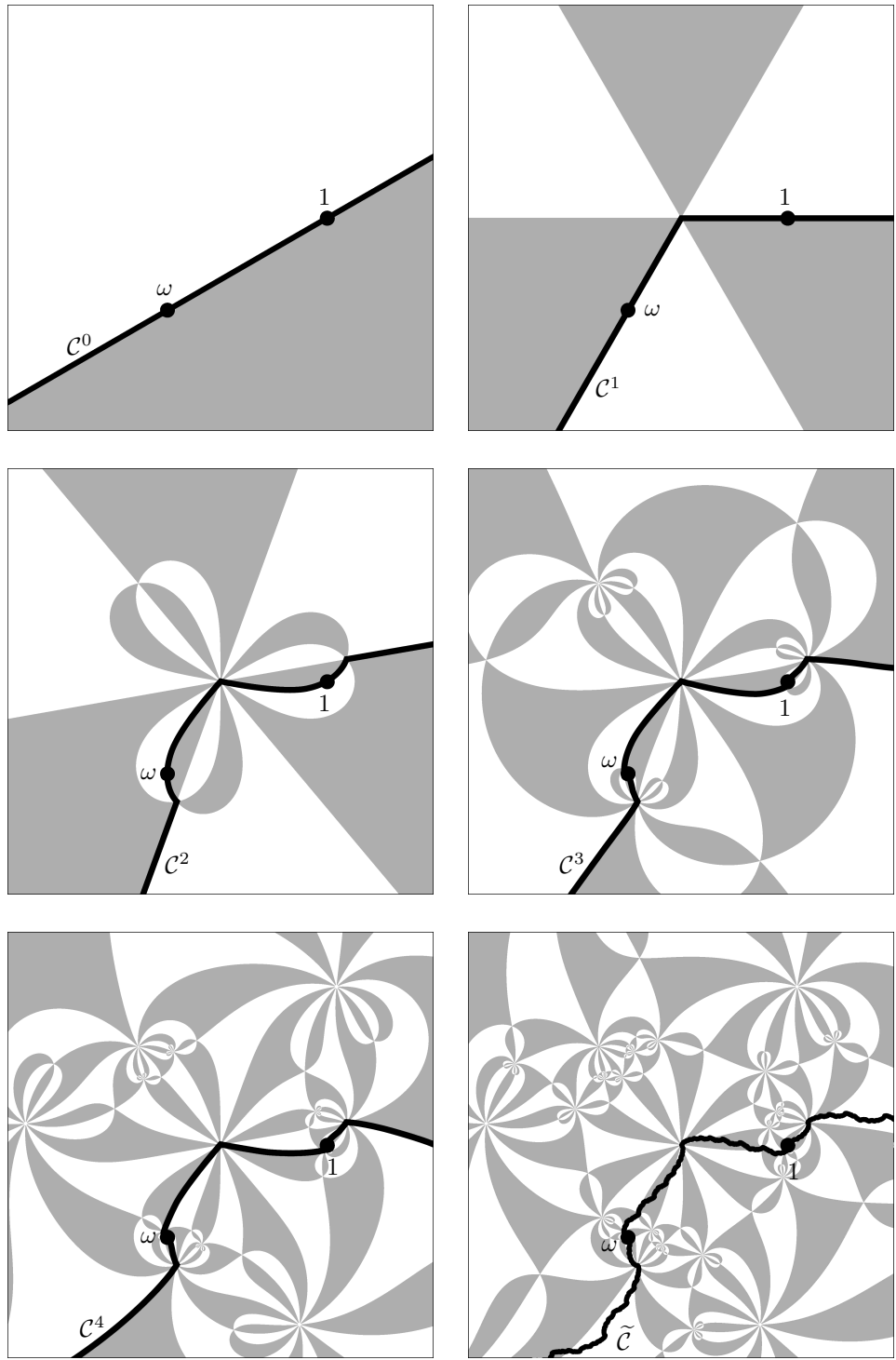

  \centering
  \begin{subfigure}{0.48\textwidth}
    \begin{overpic}
      [width=2.395in, 
      tics=20]{invC0.eps}
      \put(35,32){$\omega$}
      \put(73,53){$1$}
      \put(14,18){$\CC^0$}
    \end{overpic}
  \end{subfigure}
  \hspace*{\fill}
  \begin{subfigure}{0.48\textwidth}  
  \begin{overpic}
      [width=2.395in, 
      tics=20]{invC1.eps}
      \put(41.5,27){$\omega$}
      \put(73,53){$1$}
      \put(30,8){$\CC^1$}
    \end{overpic}
  \end{subfigure}
  
  \vspace{0.04\textwidth}
  \begin{subfigure}{0.48\textwidth}
    \begin{overpic}
      [width=2.395in, 
      tics=20]{invC2.eps}
      \put(31.3,27){$\omega$}
      \put(73,43){$1$}
      \put(37,5){$\CC^2$}
    \end{overpic}
  \end{subfigure}
  \hspace*{\fill}
  \begin{subfigure}{0.48\textwidth}
    \begin{overpic}
      [width=2.395in, tics=20]{invC3.eps}
      \put(31.3,30){$\omega$}
      \put(73,42){$1$}
      \put(21.5,5){$\CC^3$}
    \end{overpic}
  \end{subfigure}
  
  \vspace{0.04\textwidth}
  \begin{subfigure}{0.48\textwidth}
    \begin{overpic}
      [width=2.395in, tics=20]{invC4.eps}
      \put(31.3,27){$\omega$}
      \put(75,43){$1$}
      \put(17,5){$\CC^4$}
    \end{overpic}
    \end{subfigure}
    \hspace*{\fill}
  \begin{subfigure}{0.48\textwidth}
    \begin{overpic}
      [width=2.395in, tics=20]{invC5.eps}
      \put(31.3,25){$\omega$}
      \put(75,42){$1$}   
      \put(35,2){$\widetilde{\CC}$}
    \end{overpic}
  \end{subfigure}
  \caption{The invariant curve for Example \ref{ex:invC}.}
  \label{fig:invC_constr} 
\end{figure}


}

\begin{ex}
  \label{ex:invC}
  Let  $f\: \CDach \ra \CDach$ be the map defined by 
  $$f(z)= 1+ (\omega-1)/z^3$$ for $z\in \CDach$, where $\omega= e^{4\pi
    \iu/3}$. This map was already considered in
  Example~\ref{ex:tringflP} and Example~\ref{ex:R_mario3}. It realizes
  the two-tile subdivision rule shown in
  Figure~\ref{fig:triangle_flap0}. 

  Note that $f(z)= \tau(z^3)$, where
  $\tau(w)= 1+ (\omega-1)/w$ is a M\"{o}bius transformation that maps
  the upper half-plane to the half-plane above the line through the points 
  $\omega$ and $1$ (indeed,  $\tau$ maps $0,1,\infty$ to $\infty,
  \omega,1$, respectively).  
We have $\crit(f)=\{0,\infty\}$ and  $\post(f)=\{\omega, 1, \infty\}$. 
 
One can obtain an $f$-invariant Jordan curve $\widetilde \CC\sub \CDach$ with $\post(f)\sub \widetilde \CC$ as follows.  We first pick a  Jordan
  curve $\CC^0\sub \CDach$ containing all postcritical points of $f$. More specifically, 
  let  
  $\CC^0$  be the (extended) line through $\omega$ and $1$ (i.e., the circle on $\CDach$ 
  through $\omega,1, \infty$). 

Now   consider  $f^{-1}(\CC^0)= \bigcup_{k=0,\dots, 5} R_k$, where
  $$R_k=\{re^{\iu k\pi/3} : 0\leq r\leq \infty\}$$ is the   ray  from $0$ through 
  the sixth root of unity $e^{\iu k\pi /3}$; see the top right in Figure
  \ref{fig:invC_constr}. We choose a Jordan curve $\CC^1\sub \CDach$ such that
 $$    \CC^1\subset f^{-1}(\CC^0),\ 
     \post(f)\subset \CC^1, \text{ and }
   \CC^1 \text{ is isotopic to $\CC^0$ rel.\ $\post(f)$.} 
 $$
  For general Thurston maps a similar choice is not always possible, but in our specific case 
   there is a unique Jordan  curve $\CC^1\subset f^{-1}(\CC^0)$ with  
     $\post(f)\subset \CC^1$, namely $\CC^1=R_0\cup R_4$, the
  union of the two rays through $\omega$ and through  $1$. 
Since $\#\post(f)= 3$,  the requirement that $\CC^1$ is isotopic  to $\CC^0$ rel.\ $\post(f)$ is automatic for   our specific map  $f$ by
  Lemma \ref{lem:deform<4}. 
  Let $H\colon \CDach\times I\to \CDach$ be an  isotopy rel.\ $\post(f)$
  that deforms $\CC^0$ to $\CC^1$, i.e., $H_0=\id_{\CDach}$  and  $H_1(\CC^0)=
  \CC^1$. 

  Given the data $\CC^0$, $\CC^1$, and $H$, there are two (essentially equivalent)   ways to obtain  an 
  $f$-invariant Jordan curve isotopic  to $\CC^1$ rel.\
  $f^{-1}(\post(f))$ and hence also isotopic to $\CC^0$ rel.\ $\post(f)$.

 For the first approach we consider the Thurston map
  $\widehat{f}\coloneqq  H_1\circ f$. Since $\CC^1\sub f^{-1}(\CC^0)$ we have $f(\CC^1)\sub \CC^0$, and so  
  $$\widehat{f}(\CC^1)= (H_1\circ
  f)(\CC^1) \subset H_1(\CC^0)=\CC^1. $$ Thus $\CC^1$ is
  $\widehat f$-invariant. The two-tile subdivision rule given by
  $\DD^1=\DD^1(\widehat{f}, \CC^1)$,
  $\DD^0=\DD^0(\widehat{f},\CC^1)$, and the labeling induced by
  $\widehat f$ is as in Figure~\ref{fig:triangle_flap0}. 
The map  $\widehat{f}$ is combinatorially
expanding for $\CC^1$; indeed,  no $2$-tile for $(\widehat f, \CC^1)$ joins opposite sides of $\CC^1$.  Thus by
  Theorem~\ref{thm:combexp1} there is a homeomorphism $\phi\colon
  \CDach\to \CDach$ isotopic to the identity on 
  $\CDach$ rel.\ $\post(\widehat f)=
  \post( f)$ such that
  $\phi(\CC^1)= \CC^1$ 
and $g\coloneqq \phi\circ \widehat{f}$ is an expanding Thurston map.
Since $f$ is also expanding (as follows from Proposition~\ref{prop:rationalexpch}) and  $g$ is Thurston equivalent to $f$, there is
  a homeomorphism $h\colon \CDach\to \CDach$ such that $h\circ f= g\circ
  h$ (Theorem~\ref{thm:exppromequiv}). Then $\widetilde{\CC}\coloneqq  h^{-1}(\CC^1)$ is an $f$-invariant
  Jordan curve containing $\post(f)$. The general existence result for invariant curves given by  Theorem \ref{thm:exinvcurvef}
  is proved in the same
  way.

For the second approach, we use  
Proposition \ref{prop:isotoplift} to lift 
  $H=H^0$ by the map $f$ to an isotopy  $H^1$ with $H^1_0=\id_{\CDach}$. Then we lift $H^1$ to an isotopy $H^2$ with $H^2_0=\id_{\CDach}$, etc. In this way,   we  find a sequence of isotopies $H^n$ and 
  inductively define $\CC^{n+1}\coloneqq  H_1^n(\CC^n)$. We will see in Proposition
  \ref{prop:invCit} that the sequence $\{\CC^n\}$ of
  Jordan curves Hausdorff  converges  to an $f$-invariant Jordan
  curve $\widetilde{\CC}$ containing all postcritical points of $f$ as
  desired. This is illustrated in Figure \ref{fig:invC_constr};
  indeed, the invariant curve $\widetilde \CC$ in this  picture was obtained by approximating it by the curves $\CC^n$  (as were the invariant curves in 
  Figures~\ref{fig:Cit},~\ref{fig:Cinv_notexp}, and~\ref{fig:not_rect_invC}).

In our example the $f$-invariant Jordan
  curve $\widetilde \CC\sub \CDach$ with $\post(f)\subset\widetilde{\CC}$ is in fact {\em unique}. 
 To see this, note that  since $\#\post(f)=3$, every such curve $\widetilde \CC$ is isotopic  rel.\ $\post(f)$ to the curve $\CC^0$ chosen above. Thus we can find an isotopy $K\: \CDach\times I\ra \CDach$ rel.\ $\post(f)$ with $K_0=\id_{\CDach}$ and $K_1(\widetilde \CC)=\CC^0$.  By Proposition \ref{prop:isotoplift} we can lift $K$ to an  isotopy $\widetilde K\: \CDach\times I\ra \CDach$ rel.\ $f^{-1}(\post(f))$ with $\widetilde K_0=\id_{\CDach}$ and 
  $K_t\circ f=f\circ \widetilde K_t$ for $t\in I$. Then by Lemma~\ref{lem:lifts_inverses} we have 
  $$\CC'\coloneqq  \widetilde K_1(\widetilde \CC)\sub \widetilde K_1(f^{-1}(\widetilde \CC))=f^{-1}(K_1(\widetilde \CC))=f^{-1}(\CC^0).$$
  So $\CC'$ is a Jordan curve in $\CDach$ with $\CC'\sub
  f^{-1}(\CC^0)$ and $\post(f)\sub \CC'$. Since in this
  particular example $\CC^1$ is the unique such curve, we conclude $\CC'=\widetilde K_1(\widetilde\CC)=\CC^1$. In particular, $\widetilde \CC$ is isotopic to $\CC^1$ rel.\ $f^{-1}(\post(f))$ by the isotopy $\widetilde K$. 
  So every $f$-invariant Jordan curve $\widetilde \CC$ with $\post(f)\sub \widetilde \CC$ lies in the same isotopy class rel.\ $f^{-1}(\post(f))$ as 
  $\CC^1$. Hence by  
   Theorem \ref{thm:uniqc} (which we will prove momentarily) there is  at most one such Jordan curve  $\widetilde \CC$. The uniqueness of $\widetilde \CC$ follows.  
\end{ex}

In Example~\ref{ex:Cit} the reader can find another illustration
for the construction of an invariant curve (see
Figure~\ref{fig:Cit}).

\section{Existence and uniqueness of invariant curves} 
\label{subsec:exuniq} 
We now turn to general expanding Thurston maps and establish
existence and uniqueness results for invariant curves. We start
with uniqueness results.

\begin{proof} [Proof of Theorem~\ref{thm:uniqc}.] Suppose  $f\:S^2\ra S^2$ is  an expanding Thur\-ston map, and    $\CC$
and $\CC'$  are   $f$-invariant Jordan curves in $S^2$ that both contain the set  $\post(f)$ and  are isotopic rel.\ $f^{-1}(\post(f))$. We have to show that $\CC=\CC'$.

 Under the given assumptions,  there exists an isotopy $H^0\: S^2\times 
I\ra S^2$ rel.\ $f^{-1}(\post(f))$ with $H^0_0=\id_{S^2}$ and $H^0_1(\CC)=\CC'$. Since $\post(f)\sub f^{-1}(\post(f))$, the map $H^0$ is also an isotopy  rel.\ $\post(f)$. 
Hence by Proposition~\ref{prop:isotoplift} we can find an isotopy 
$H^1\: S^2\times 
I\ra S^2$ rel.\ $f^{-1}(\post(f))$ with $H^1_0=\id_{S^2}$ and $f\circ H^1_t=H^0_t\circ f$ for  $t\in I$.  Repeating this argument, we obtain isotopies  $H^n\: S^2\times 
I\ra S^2$ rel.\ $f^{-1}(\post(f))$ with $H^n_0=\id_{S^2}$ and $f\circ H^{n+1}_t=H^n_t\circ f$ for $t\in I$ and $n\in \N_0$.  

\smallskip
{\em Claim.} $H^n_1(\CC)=\CC'$ for  $n\in \N_0$. 
\smallskip

To see this, we use induction on $n$. 
For $n=0$ the claim is true by choice of $H^0$. 

Suppose that $H^n_1(\CC)=\CC'$ for some $n\in \N_0$. 
Then Lemma~\ref{lem:lifts_inverses}
and the identity  $f\circ H^{n+1}_1=H^n_1\circ f$ imply  that  
$$H^{n+1}_1(f^{-1}(\CC))=f^{-1}(H^n_1(\CC))=f^{-1}(\CC').$$ Since $\CC$ and $\CC'$ are   $f$-invariant, we have the inclusions  
$\CC\sub f^{-1}(\CC)$  and $\CC'\sub f^{-1}(\CC')$. In particular, 
$$\widetilde \CC\coloneqq  H^{n+1}_1(\CC)\sub
H^{n+1}_1(f^{-1}(\CC))=f^{-1}(\CC')$$ is a Jordan curve contained
in $f^{-1}(\CC')$. Moreover, 
the curves
$\CC$ and 
$\widetilde \CC $ are isotopic rel.\  $f^{-1}(\post(f))$ (by the isotopy $H^{n+1}$). Since $\CC$ and $\CC'$ are isotopic rel.\ $f^{-1}(\post(f))$ by our hypotheses,   it follows that $\CC'$ and $\widetilde \CC$ are also isotopic rel.\ $f^{-1}(\post(f))$.  Both sets are contained in  $f^{-1}(\CC')$. 

Now $f^{-1}(\CC')$ is the $1$-skeleton of the cell decomposition $\DD^1(f, \CC')$. This cell decomposition has the vertex set $f^{-1}(\post(f))$. Moreover, since $f$ is expanding,  $\#\post(f)\ge 3$, and so every tile in $\DD^1(f,\CC')$ has at least three vertices. So the hypotheses of
Lemma~\ref{lem:isoJcin1ske} are satisfied and we conclude that
$\CC'=\widetilde \CC=H^{n+1}_1(\CC)$. The claim above follows. 

\smallskip
 Now fix a visual metric on $S^2$. Then  the tracks  of the isotopies $H^n$ shrink at an exponential rate  as $n\to \infty$ (Lemma~\ref{lem:exp_shrink}). Since $H^n_0=\id_{S^2}$, it follows that $H^n_1\to \id_{S^2}$ uniformly as $n\to \infty$. 
 Since $H^n_1(\CC)=\CC'$ for all $n\in \N_0$ by the claim, we conclude
 $\CC=\CC'$ as desired.
\end{proof}

\begin{cor}[Invariant curves rel.\ $\post(f)$]  
\label{cor:finiterelP}
  \index{f-invariant@$f$-invariant!Jordan curve}
  \index{Jordan curve!f-invariant@$f$-invariant}
  \index{invariant!Jordan curve}
Let $f\: S^2\ra S^2$ be an expanding Thurston map, and 
$\CC\sub S^2$ be a Jordan  curve with $\post(f)\sub \CC$. Then there are at most finitely many $f$-invariant Jordan curves $\widetilde \CC\sub S^2 $ with $\post(f)\sub \widetilde \CC$  that are isotopic to $\CC$ rel.\ $\post(f)$. 
\end{cor}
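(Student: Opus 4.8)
The plan is to derive Corollary~\ref{cor:finiterelP} from the uniqueness statement of Theorem~\ref{thm:uniqc} by a counting argument. The key point is that Theorem~\ref{thm:uniqc} says two $f$-invariant Jordan curves containing $\post(f)$ that are isotopic rel.\ $f^{-1}(\post(f))$ must coincide. So to bound the number of $f$-invariant curves isotopic to $\CC$ rel.\ $\post(f)$, it suffices to bound the number of isotopy classes rel.\ $f^{-1}(\post(f))$ that they can fall into. Thus the heart of the matter is a purely topological fact: inside a fixed isotopy class of Jordan curves rel.\ a finite set $P = \post(f)$, there are only finitely many isotopy classes rel.\ the larger finite set $P' = f^{-1}(\post(f))$ that can be represented by a curve lying in the $1$-skeleton of a suitable cell decomposition.

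First I would make the following reduction. Suppose $\widetilde\CC$ is an $f$-invariant Jordan curve with $\post(f) \subset \widetilde\CC$. Since $\widetilde\CC$ is $f$-invariant, $\widetilde\CC \subset f^{-1}(\widetilde\CC)$, and since $\widetilde\CC$ is isotopic to $\CC$ rel.\ $\post(f)$, one can push $\widetilde\CC$ into a ``standard position'': by Lemma~\ref{prop:isotopicpath} applied to $(\CC, \post(f))$, for $n$ large enough the $1$-skeleton $f^{-n}(\CC)$ of $\DD^n = \DD^n(f,\CC)$ is fine enough (because $f$ is expanding, the diameters of its cells tend to $0$ by Lemma~\ref{lem:expoexp}) that there is a Jordan curve $\CC^* \subset f^{-n}(\CC)$ isotopic to $\CC$ rel.\ $\post(f)$. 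Actually the cleaner route: given $\widetilde\CC$, apply an isotopy rel.\ $\post(f)$ taking $\CC$ to $\widetilde\CC$, then by the same lemma and expansion isotope $\widetilde\CC$ rel.\ $\post(f)$ into the $1$-skeleton $E^n := f^{-n}(\CC)$ of $\DD^n$ for some fixed large $n$ (the relevant $\varepsilon_0$ depends only on $\CC$ and $\post(f)$, not on $\widetilde\CC$, so a single $n$ works for all of them). Hence every $f$-invariant curve $\widetilde\CC$ in the given isotopy class rel.\ $\post(f)$ is isotopic rel.\ $\post(f)$ to some Jordan curve $\CC_{\widetilde\CC}^* \subset E^n$.

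Next, I would observe two things. (a) There are only finitely many Jordan curves contained in $E^n$, since $E^n$ is a finite graph (it consists of finitely many $n$-edges, as $\#\E^n = m\deg(f)^n$ by Proposition~\ref{prop:celldecomp}(iv)), and any Jordan curve in it is a finite union of $n$-edges. (b) If two $f$-invariant Jordan curves $\widetilde\CC_1, \widetilde\CC_2$ (both containing $\post(f)$, both isotopic to $\CC$ rel.\ $\post(f)$) give curves $\CC_1^*, \CC_2^* \subset E^n$ with $\CC_1^* = \CC_2^*$, then $\widetilde\CC_1$ is isotopic to $\widetilde\CC_2$ rel.\ $\post(f)$. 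I must then upgrade this to an isotopy rel.\ $f^{-1}(\post(f))$, because that is what Theorem~\ref{thm:uniqc} needs. This is where I would use that $\CC_1^* = \CC_2^*$ lies in $E^n$: since $f^{-1}(\post(f))$ is the vertex set of $\DD^1$, and the isotopy rel.\ $\post(f)$ between $\widetilde\CC_1$ and $\widetilde\CC_2$ can be arranged to pass through the common curve $\CC_1^*=\CC_2^*$, one gets that $\widetilde\CC_1$ and $\widetilde\CC_2$ are isotopic rel.\ $\post(f)$ via curves; but I actually want the stronger relation. The honest way: having isotoped each $\widetilde\CC_i$ rel.\ $\post(f)$ to the \emph{same} curve $\CC^* \subset E^n$, I get that $\widetilde\CC_1$ and $\widetilde\CC_2$ are isotopic rel.\ $\post(f)$. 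For the rel.\ $f^{-1}(\post(f))$ statement, I would instead work with $E^1 = f^{-1}(\CC)$ and Lemma~\ref{lem:isoJcin1ske}: the number of isotopy classes rel.\ $f^{-1}(\post(f))$ of Jordan curves in $E^1$ is at most the (finite) number of Jordan curves in the finite graph $E^1$, and by Lemma~\ref{lem:isoJcin1ske} distinct curves in $E^1$ are in distinct isotopy classes rel.\ the vertex set $f^{-1}(\post(f))$. So: the set of $f$-invariant Jordan curves $\widetilde\CC \supset \post(f)$ isotopic to $\CC$ rel.\ $\post(f)$ maps, via ``isotope rel.\ $f^{-1}(\post(f))$ into $E^1$'' — possible for $n=1$ if $E^1$ is fine enough, and in general after replacing $f$ by an iterate or arguing as in the proof of Theorem~\ref{thm:main} — injectively (by Theorem~\ref{thm:uniqc}) into the finite set of Jordan curves in a finite graph.

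Let me record the argument I would actually write down, which avoids the iterate issue: the set $S$ of $f$-invariant Jordan curves containing $\post(f)$ and isotopic to $\CC$ rel.\ $\post(f)$ is partitioned by the equivalence relation ``isotopic rel.\ $f^{-1}(\post(f))$''; by Theorem~\ref{thm:uniqc} each class is a singleton, so $\#S$ equals the number of such classes. Each $\widetilde\CC \in S$ satisfies $\widetilde\CC \subset f^{-1}(\widetilde\CC)$; since $\widetilde\CC$ is a Jordan curve with $\post(f) \subset \widetilde\CC$, consider $\DD^1(f,\widetilde\CC)$. Its $1$-skeleton is $f^{-1}(\widetilde\CC)$ and its vertex set is $f^{-1}(\post(f))$. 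The curve $\widetilde\CC$ itself lies in this $1$-skeleton. Now apply Lemma~\ref{prop:isotopicpath} with the pair $(\CC, \post(f))$ and the expansion of $f$: for $n$ large depending only on $\CC$ and $\post(f)$, inside $\DD^n(f,\CC)$ (diameters of cells below $\varepsilon_0$ by Lemma~\ref{lem:expoexp}) there is a curve $\CC' \subset f^{-n}(\CC)$ isotopic to $\CC$ rel.\ $\post(f)$, hence isotopic to $\widetilde\CC$ rel.\ $\post(f)$. The subtle bookkeeping — matching the rel.\ $\post(f)$ isotopy to a rel.\ $f^{-1}(\post(f))$ invariant — is the main obstacle; I expect to resolve it exactly as in Example~\ref{ex:invC}'s uniqueness discussion: lift the rel.\ $\post(f)$ isotopy by $f$ using Proposition~\ref{prop:isotoplift} and Lemma~\ref{lem:lifts_inverses} to see that any two invariant curves in $S$ are carried into the \emph{same} finite collection of positions in $f^{-1}(\CC)$ (up to isotopy rel.\ $f^{-1}(\post(f))$), then invoke Lemma~\ref{lem:isoJcin1ske} to conclude there are only finitely many resulting isotopy classes rel.\ $f^{-1}(\post(f))$. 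Combined with Theorem~\ref{thm:uniqc}, finiteness of $S$ follows.
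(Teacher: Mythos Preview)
Your overall plan is right and, in its final paragraph, matches the paper's proof: partition the set $S$ of such invariant curves by isotopy rel.\ $f^{-1}(\post(f))$, use Theorem~\ref{thm:uniqc} to make each class a singleton, and bound the number of classes by finitely many Jordan curves in the finite graph $f^{-1}(\CC)$. The paper does exactly this.

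But most of your write-up is a detour that does not work, and you correctly sense this (``the subtle bookkeeping \dots\ is the main obstacle''). The attempts via Lemma~\ref{prop:isotopicpath} and $E^n = f^{-n}(\CC)$ only produce isotopies rel.\ $\post(f)$; they give you no control rel.\ $f^{-1}(\post(f))$, so they cannot feed into Theorem~\ref{thm:uniqc}. The paragraph where you look at $\DD^1(f,\widetilde\CC)$ also stalls because its $1$-skeleton is $f^{-1}(\widetilde\CC)$, which varies with $\widetilde\CC$ --- you need a single fixed finite graph.

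The clean execution (which you allude to but don't write) is short: take an isotopy $H$ rel.\ $\post(f)$ with $H_0=\id_{S^2}$ and $H_1(\widetilde\CC)=\CC$; lift it by $f$ via Proposition~\ref{prop:isotoplift} to an isotopy $\widetilde H$ rel.\ $f^{-1}(\post(f))$ with $\widetilde H_0=\id_{S^2}$ and $H_t\circ f = f\circ\widetilde H_t$. Now use the single fact you keep circling around but never quite invoke at the right moment: since $\widetilde\CC$ is $f$-invariant, $\widetilde\CC \subset f^{-1}(\widetilde\CC)$, so by Lemma~\ref{lem:lifts_inverses}
\[
\widetilde H_1(\widetilde\CC)\ \subset\ \widetilde H_1\bigl(f^{-1}(\widetilde\CC)\bigr)\ =\ f^{-1}\bigl(H_1(\widetilde\CC)\bigr)\ =\ f^{-1}(\CC).
\]
Thus every $\widetilde\CC\in S$ is isotopic rel.\ $f^{-1}(\post(f))$ to some Jordan curve in the fixed finite graph $f^{-1}(\CC)$, and you are done. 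Drop the $E^n$ material entirely.
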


\begin{proof}   Let $\widetilde \CC$ be such an $f$-invariant Jordan curve. 
Then there exists an isotopy $H\:S^2\times I\ra S^2$ rel.\ $\post(f)$ with $H_0=\id_{S^2}$ and $H_1(\widetilde \CC)=\CC$. Lifting $H$ we get an isotopy 
$\widetilde H\: S^2\times I\ra S^2$ rel.\ $f^{-1}(\post(f))$ such that 
$\widetilde H_0=\id_{S^2}$ and $f\circ \widetilde H_t=H_t\circ f$ for  $t\in I$.  Since $\widetilde \CC$ is $f$-invariant, we have $\widetilde \CC\sub f^{-1}(\widetilde \CC)$. So Lemma~\ref{lem:lifts_inverses} implies that 
$$\widetilde H_1(\widetilde \CC)\sub \widetilde H_1(f^{-1}(\widetilde \CC))= f^{-1}(H_1(\widetilde \CC))=f^{-1}(\CC).$$  Hence    $\widetilde \CC$ is isotopic rel.\ $f^{-1}(\post(f))$ to the  Jordan curve 
$\widetilde H_1(\widetilde \CC)$ that is contained in $f^{-1}(\CC)$. 
Any such Jordan curve is a union of edges in the cell decomposition $\DD^1(f, \CC)$ (see the last part of the proof of Lemma~\ref{lem:isoJcin1ske}).  In particular, there are only finitely many distinct Jordan curves contained in $f^{-1}(\CC)$. This implies that there are only finitely many isotopy classes rel.\ $f^{-1}(\post(f))$ represented by curves $\widetilde \CC$ satisfying the assumptions of the corollary. Since an $f$-invariant Jordan curve $\widetilde \CC\sub S^2$ with $\post(f)\sub\widetilde  \CC$ is unique in its isotopy class rel.\ $f^{-1}(\post(f))$ by Theorem~\ref{thm:uniqc}, the  statement follows. 
\end{proof}

\begin{cor}\label{cor:finitepost3}
 Suppose $f\: S^2\ra S^2$ is an expanding Thurston map with 
 $\#\post(f)=3$. 
Then there are at most finitely many $f$-invariant Jordan curves   
$\widetilde \CC\sub S^2$ with  $\post(f)\sub \widetilde \CC$. 
\end{cor}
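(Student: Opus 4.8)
The plan is to deduce Corollary~\ref{cor:finitepost3} from Corollary~\ref{cor:finiterelP} together with Lemma~\ref{lem:deform<4}. The point is that when $\#\post(f)=3$, any two Jordan curves in $S^2$ that both contain $\post(f)$ are automatically isotopic rel.\ $\post(f)$, so there is really only one isotopy class rel.\ $\post(f)$ to worry about, and Corollary~\ref{cor:finiterelP} already bounds the number of invariant curves in a single such class.

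First I would fix an arbitrary Jordan curve $\CC\sub S^2$ with $\post(f)\sub \CC$; for instance one could take $\CC$ to be any Jordan curve through the three points of $\post(f)$. Then I would let $\widetilde\CC\sub S^2$ be an arbitrary $f$-invariant Jordan curve with $\post(f)\sub\widetilde\CC$. Since $\#\post(f)=3$, we have $P:=\post(f)\sub \CC\cap\widetilde\CC$ with $\#P=3\le 3$, so Lemma~\ref{lem:deform<4} applies and shows that $\widetilde\CC$ is isotopic to $\CC$ rel.\ $P=\post(f)$. Thus \emph{every} $f$-invariant Jordan curve containing $\post(f)$ lies in the (single) isotopy class of $\CC$ rel.\ $\post(f)$.

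Now Corollary~\ref{cor:finiterelP} states exactly that there are at most finitely many $f$-invariant Jordan curves $\widetilde\CC\sub S^2$ with $\post(f)\sub\widetilde\CC$ that are isotopic to $\CC$ rel.\ $\post(f)$. By the previous paragraph, this finite collection already exhausts all $f$-invariant Jordan curves containing $\post(f)$. Hence there are at most finitely many such curves, which is the assertion of the corollary.

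There is essentially no obstacle here: the entire content has been isolated in the two results being invoked. The only thing to be slightly careful about is checking that the hypotheses of Lemma~\ref{lem:deform<4} and Corollary~\ref{cor:finiterelP} are genuinely met---in particular that $P\sub\CC\cap\widetilde\CC$ with $\#P\le 3$ for the former, and that $f$ is expanding (given) and $\widetilde\CC$ is $f$-invariant with $\post(f)\sub\widetilde\CC$ for the latter---all of which are immediate. So the proof is a short two-step deduction, with the bulk of the work having been done earlier in establishing Theorem~\ref{thm:uniqc} and hence Corollary~\ref{cor:finiterelP}.
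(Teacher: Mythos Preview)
Your proof is correct and follows essentially the same approach as the paper: fix a Jordan curve $\CC\supset\post(f)$, use Lemma~\ref{lem:deform<4} (since $\#\post(f)=3$) to conclude that every Jordan curve containing $\post(f)$ is isotopic to $\CC$ rel.\ $\post(f)$, and then apply Corollary~\ref{cor:finiterelP}.
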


\begin{proof} Pick a  Jordan curve $\CC\sub S^2$ with $\post(f)\sub \CC$.
Since we have $\#\post(f)=3$, by Lemma~\ref{lem:deform<4} every   Jordan curve $\widetilde \CC\sub S^2$ with $\post(f)\sub \widetilde\CC$ is isotopic to $\CC$ rel.\ $\post(f)$. 
The statement now  follows from Corollary~\ref{cor:finiterelP}.
\end{proof}

In contrast to the case $\#\post(f)=3$,  expanding Thurston maps $f$ with $\#\post(f)\ge 4$ can have infinitely many distinct
invariant  curves.

\ifthenelse{\boolean{nofigures}}{}{
\begin{figure}
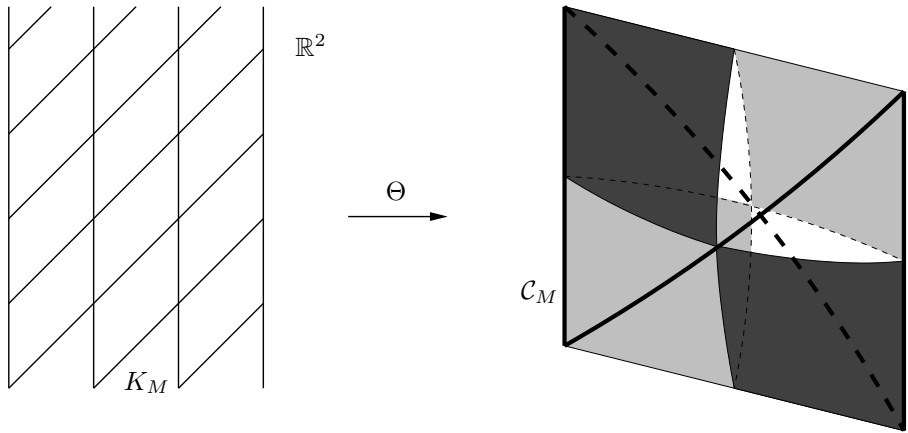

  \centering
  \begin{overpic}
    [width=12cm, 
    tics=20]{inf_invC.eps}
    \put(42,26){$\Theta$}
    \put(32,42){$\R^2$}
    \put(13,5){$K_M$}
    \put(57,15){$\CC_M$}
  \end{overpic}
  \caption{Invariant curves for the Latt\`{e}s map $g$.}
  \label{fig:inf_invC}
\end{figure}
}

\begin{ex}[Infinitely many invariant curves]
  \label{ex:infty_C}
  Let $f$ be the  Latt\`{e}s map  from Section
  \ref{sec:Lattes} (there called $g$). In the following, it is advantageous to use real notation as in Example~\ref{ex:lattes_type} and consider the maps $A$ and $\Theta$ used in the definition of $f$ as in  \eqref{eq:Lattes} as maps on $\R^2$. Then 
    $A(u)= 2u$ for $u\in \R^2$. Let $G$ be the crystallographic group consisting of all maps 
    $u\in \R^2\mapsto g(u)=\pm u+\ga$, where  $\ga \in \Gamma\coloneqq \Z^2$. Then $\Theta$ is induced by $G$ and so   for $u_1,u_2\in \R^2$ we have
    $\Theta(u_1)=\Theta(u_2)$ if and only if  there exists $g\in G$ with $u_2=g(u_1)$. 
    
    Let $S=[0,1/2]^2\sub \R^2$.  
   Recall that the extended real line $\widehat{\R}=\Theta(\partial S)$ (which is the equator
  of the pillow) is $f$-invariant and 
  contains $\{-1,0,1,\infty\}=
  \post(f)=\Theta(\frac12 \Gamma)$. 
  
 Consider the square grid $K$ given as the union of the horizontal and vertical lines in $\R^2$ 
  that pass through a  point in $\frac{1}{2}\Gamma=\frac 12\Z^2$. Note that $K=\bigcup_{g\in G} g(\partial S)$. 
   The map  
   $\Theta|\partial S$ is injective and $\Theta(\partial S)=\Theta(K)=\widehat \R$. So the $f$-invariant curve $\widehat \R$ is   the image of   $K$  under $\Theta$. 
   One can obtain other  $f$-invariant Jordan curves by mapping 
    other grids by $\Theta$. 
   
  To explain  this, we consider a $(2\times 2)$-matrix  $M\in \text{SL}_2(\Z)$  
  (here $\text{SL}_2(\Z)$ 
denotes the set 
of
  $(2\times2)$-matrices with integer entries and determinant
  $1$).  We identify $M$ with the linear map $u\mapsto M u$ on
  $\R^2$ induced by left-multiplication of $u\in \R^2$
  (considered as a column vector) by the matrix $M$. Then $M\circ
  G\circ  M^{-1}=G$, i.e., 
$G$ is invariant under conjugation by $M$.
 
  Now let  $S_M\coloneqq M(S)$, and define the  corresponding
  grid $K_M\coloneqq M(K)=\bigcup_{g\in G}g(\partial S_M)$. 
Since conjugation by $M$ preserves $G$,
we see that  
$\Theta(M(u_1))=\Theta(M(u_2))$
  for $u_1,u_2\in \R^2$  if and only if there exists $g\in G$ with $u_2=g(u_1)$. This implies that $\Theta|\partial S_M$ is injective, and so $\CC_M\coloneqq \Theta(\partial S_M)\sub \CDach$ is a Jordan curve. 
 Moreover, $\Theta(K_M)=\Theta(\partial S_M)$. Since $A\circ M=M\circ A$, 
 we have 
 $$A(K_M)=A(M(K))=M(A(K))\sub M(K)=K_M,$$
 and so 
 $$ f(\CC_M)=f(\Theta(K_M))=\Theta(A(K_M))\sub \Theta(K_M)=\CC_M. $$
 Hence  $\CC_M$ is $f$-invariant. Since $\frac12 \Gamma=M(\frac 12\Gamma)\sub K_M$, we also have $\post(f)=\Theta(\frac12 \Gamma)\sub \Theta(K_M)=\CC_M$. 
 So $\CC_M$ is an $f$-invariant Jordan curve that contains the set 
 $\post(f)$.  An example of  this  construction is indicated in Figure \ref{fig:inf_invC}. The curve   $\CC_M$ is drawn 
 in thick  on the right.    
 
 The curve $\CC_M$ determines the grid $K_M$ uniquely; indeed, 
 one obtains generating vectors of the two lines in  $K_M$ through 
 $0$ by locally lifting  $\CC_M$ near  $\Theta(0)=0\in \post(f)\sub \CC_M$ 
 to $0$ by the map $\Theta$. The whole grid $K_M$ is obtained by translating these two lines by vectors in $\frac12 \Gamma$. 
 
This implies that the map $M \in \text{SL}_2(\Z)\mapsto \CC_M$ is four-to-one; indeed, if $M, N\in \text{SL}_2(\Z)$, then, as we have seen, $\CC_M=\CC_N$ if and only if  $K_M=K_N$. On the other hand,
$K_M=K_N$ if and only if  $M^{-1}\circ N\in \text{SL}_2(\Z)$ is one of the four rotations
around $0$ (by integer multiples of $\pi/2$) that preserve the grid $K$. In particular, there exist infinitely many $f$-invariant Jordan curves 
 $\widetilde \CC\sub \CDach$ with $\post(f)\sub \widetilde
 \CC$.  
 
By 
Proposition~\ref{prop:2222} the map $M$ 
descends to an orientation-preserving 
homeomorphism $h\: \Cdach\ra \Cdach$ such that $h\circ \Theta=\Theta\circ M$. It easily follows
from the above considerations  that $h$ is an {\em automorphism}
of $f$ in the sense that $f\circ h=h\circ f$. So our map $f$ (as each flexible Latt\`es map) has a large associated group formed by  these automorphisms. 
 This is the deeper underlying reason why infinitely many $f$-invariant  Jordan curves exist.     
\end{ex} 

The map $f$ in the previous example is very
special, since it is a flexible Latt\`{e}s map.
In contrast, we have the following
result (pointed out to us by K.~Pilgrim).  

\begin{theorem}
  \label{thm:rat_finitelyC}
  \index{f-invariant@$f$-invariant!Jordan curve}
  \index{Jordan curve!f-invariant@$f$-invariant}
  \index{invariant!Jordan curve}
  \index{Thurston map!rational}
  Let $f\colon \CDach\to \CDach$ be a rational  Thurston map. Suppose that $f$ is expanding and has a hyperbolic orbifold.  Then there are at most finitely many $f$-invariant Jordan curves
  $\CC\subset \CDach$ with $\post(f)\subset \CC$.  
\end{theorem}

\begin{proof} 
  If $\CC\subset \CDach$ is an $f$-invariant Jordan curve with
  $\post(f)\subset \CC$, then we have an associated two-tile
  subdivision rule $(\DD^1, \DD^0, L)$ according to
  Proposition~\ref{prop:ThmapSub}. 
  Recall that this means that
  $\DD^1=\DD^1(f,\CC)$, $\DD^0=\DD^0(f, \CC)$, 
and $L\: \DD^1\ra \DD^0$ is the labeling induced by $f$ (i.e.,
  $L(\tau)=f(\tau)\in \DD^0$ for $\tau \in \DD^1$). 
  
  The number of
  cells in $\DD^1$  is bounded by a constant only
  depending on $\deg(f)$ and $\#\post(f)$. In particular, we have
  a uniform bound independent of $\CC$. This implies that among
  the two-tile subdivision rules obtained in such a way from
  $f$-invariant curves $\CC$, there are only finitely many up to
  isomorphism (see  the discussion before
  Lemma~\ref{lem:isotwotequiv} and  
  Remark~\ref{rem:isomsub} (iii)). 
  Here we use the strong notion
  of 
  isomorphism where we require that the cell complex isomorphisms
  as in the definition of an isomorphism between two-tile
  subdivision rules send positively-oriented flags to
  positively-oriented flags (see Remark~\ref{rem:isomsub}~(ii)).

 
  This allows us to  pick a finite  family $\mathcal{F}$    of 
such curves $ \CC$ 
such that the associated two-tile subdivision rule of any $f$-invariant Jordan curve 
$ \widetilde\CC\subset \CDach$  with $\post(f)\subset \widetilde \CC$ is isomorphic to one
 associated with a curve in  $\mathcal{F}$.
 
 Let $\mathcal{G}$ be the family of all M\"obius transformations $\varphi\: \CDach \ra \CDach$ 
 with $\varphi\circ f =f\circ \varphi$. If $\varphi\in \mathcal{G}$, then 
 $\varphi(\post(f))=\post(f)$. Now $f$ is expanding and so
 $\#\post(f)\ge 3$. 
Since M\"obius transformations are 
uniquely determined by images of three distinct points in $\CDach$,  this implies  that 
 each $\varphi\in \mathcal{G}$ is uniquely determined by the bijection it induces on $\post(f)$. Since there are only finitely many such bijections, $\mathcal{G}$ consists of finitely many elements.

 Now let $\widetilde \CC\sub \CDach$ be an arbitrary $f$-invariant Jordan curve with $\post(f)\sub \widetilde \CC$. We claim that $\widetilde \CC$ is isotopic rel.\ $\post(f)$ to one of the finitely many Jordan curves $\varphi( \CC)$, where $ \CC \in \mathcal{F}$ and $\varphi\in \mathcal{G}$. Since each isotopy class rel.\ $\post(f)$ contains only finitely many $f$-invariant Jordan curves $\CC\sub \CDach$ with 
 $\post(f)\sub \CC$ (Corollary~\ref{cor:finiterelP}), this claim implies  the theorem. 
 
To prove  the claim, we use the fact that the two-tile subdivision rule associated with 
$\widetilde \CC$ is isomorphic to one of the two-tile subdivision rules associated with a curve $ \CC \in \mathcal{F}$. So by Lemma~\ref{lem:isotwotequiv} and 
Remark~\ref{rem:isomsub} there exist orientation-preserving homeomorphisms $h_0,h_1\: \CDach \ra \CDach$ that are isotopic rel.\ $\post(f)$ such $h_0\circ f=f \circ h_1$ and $h_0( \CC)=h_1( 
\CC)=\widetilde \CC$. By Thurston's uniqueness theorem (Theorem~\ref{thm:Thurstonuniqness}) there exists a M\"obius transformation 
$\varphi\:\CDach \ra \CDach$  that is isotopic to $h_0$ rel.\ $\post(f)$ such that $\varphi\circ f=f \circ \varphi$. Then $\varphi\in \mathcal{G}$,  and $\varphi(\CC)$ is isotopic to $h_0(\CC)=\widetilde \CC$
rel.\ $\post(f)$  as desired.  
\end{proof}

Using similar arguments as in the previous proof together with
Theorem~\ref{thm:exppromequiv}, one can   show that
for  an expanding Thurston map $f\colon S^2\to S^2$ with
infinitely many invariant curves there are infinitely many
homeomorphisms $h\colon S^2\to S^2$ with $h\circ f = f\circ h$. 

We now turn our attention to existence results. As the following example shows, for
an expanding   Thurston map $f\:S^2\ra S^2$  an $f$-invariant Jordan curve
$\CC\sub S^2$ with $\post(f)\sub \CC$ need not exist.  

\begin{ex}\label{ex:noinvCC}
Consider  the map $f\: \CDach \ra \CDach$ defined by 
$$f(z)=\iu\frac{z^4-\iu}{z^4+\iu}$$ for $z\in \CDach$.
The
critical points  of $f$ are $0$ and $\infty$, and the map has the following 
ramification portrait:
\begin{equation}
  \label{eq:ramification_h}
  \xymatrix @R=1pt{
    0 \ar[r]^{4 : 1} & -\iu \ar[dr]  &
    \\
    &  & 1\rlap{.}  \ar@(r,u)[]
    \\
    \infty \ar[r]^{4 : 1} & \iu \ar[ur] & 
  }
\end{equation}
So 
the set of postcritical 
points  of $f$ is given by $\post(f)=\{-\iu,1,\iu \}$, and $f$ is a Thurston
map. This map is also expanding as follows from
Proposition~\ref{prop:rationalexpch}.  
\end{ex}

\begin{lemma}
  \label{lem:g1noinvC} Let $f$ be the map from Example \ref{ex:noinvCC}.
  Then there is no $f$-invariant Jordan curve $\widetilde{\CC}\sub
  \CDach $ with $\post(f)\sub \widetilde{\CC}$. 
 \end{lemma}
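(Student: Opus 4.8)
The strategy is to argue by contradiction: assume an $f$-invariant Jordan curve $\widetilde\CC$ with $\post(f)=\{-\iu,1,\iu\}\sub\widetilde\CC$ exists, and derive a contradiction from the combinatorial structure of $f$. Since $\#\post(f)=3$, any Jordan curve through $\post(f)$ is divided by the three postcritical points into three arcs, and all such curves are isotopic rel.\ $\post(f)$ (Lemma~\ref{lem:deform<4}); so the isotopy class rel.\ $\post(f)$ carries no information, and by Theorem~\ref{thm:uniqc} there could be at most one invariant curve. The real content must come from looking at the preimage $f^{-1}(\widetilde\CC)$, which would be the $1$-skeleton of $\DD^1(f,\widetilde\CC)$, a cell decomposition refining $\DD^0(f,\widetilde\CC)$ since $\widetilde\CC$ is invariant. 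The plan is to examine how $f$ acts on the cell decomposition and show that the combinatorics forced by the ramification portrait \eqref{eq:ramification_h} are incompatible with invariance.

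First I would analyze the local picture. The map $f$ has two critical points $0$ and $\infty$, each of local degree $4$, with $f(0)=-\iu$, $f(\infty)=\iu$, and then $-\iu\mapsto 1$, $\iu\mapsto 1$, $1\mapsto 1$. So $1$ is a fixed point, $-\iu$ and $\iu$ are preperiodic with $f(-\iu)=f(\iu)=1$. Assuming $\widetilde\CC$ is invariant, $\widetilde\CC$ splits $S^2$ into the two $0$-tiles $X^0_{\tt w},X^0_{\tt b}$, and $\DD^1$ subdivides each into $\deg(f)=\#f^{-1}(\text{a point})$ ... more precisely $\#\X^1=2\deg(f)=16$ (since $f(z)$ is a degree-$8$ rational map — here $\deg(f)=8$). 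The key structural fact is that near each critical point the $1$-tiles form a cycle of length $2\deg_f(c)=8$, and each critical point is a $1$-vertex mapping to a $0$-vertex. Now $0\mapsto -\iu$ and $\infty\mapsto\iu$, so both $0$ and $\infty$ must be $1$-vertices, i.e.\ elements of $f^{-1}(\post(f))$, which is automatic. The obstruction I expect: trace the ``flower'' $W^1(0)$ around the critical point $0$; its eight $1$-tiles map onto the two $0$-tiles meeting at $-\iu$, wrapping around four times. Following the combinatorics of which $1$-edges emanate from $0$ and where they go, one should find that no arc configuration of $f^{-1}(\widetilde\CC)$ can contain a Jordan subcurve through $\post(f)$ that is simultaneously forward-invariant — equivalently, that the two-tile subdivision rule one would have to realize does not exist because the parity/cycle-length condition (Definition~\ref{def:subdivcomb}(iv)) or the refinement condition fails.

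The cleanest route is probably this. Suppose $\widetilde\CC$ is $f$-invariant. Consider $f^{-1}(1)$: since $1$ is not a critical value (its preimages are the fourth roots related to $z^4+\iu=\ldots$, none of which are $0$ or $\infty$), $1$ has exactly $8$ preimages, all $1$-vertices. The fixed point $1\in\widetilde\CC$ is a $0$-vertex, hence a $1$-vertex, so $1\in f^{-1}(1)$. Now $-\iu$ and $\iu$ are $0$-vertices lying on $\widetilde\CC$; $f^{-1}(\widetilde\CC)\supset\widetilde\CC$ and $f(\{0,\infty\})=\{-\iu,\iu\}\sub\widetilde\CC$, so $0,\infty\in f^{-1}(\widetilde\CC)$. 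The contradiction should emerge from counting: the arc of $\widetilde\CC$ between $-\iu$ and $\iu$ not containing $1$ pulls back under $f$; because $0\mapsto-\iu$ and $\infty\mapsto\iu$ with local degree $4$, the component of $f^{-1}$ of that arc near $0$ consists of $4$ arcs emanating from $0$, and near $\infty$ of $4$ arcs emanating from $\infty$, and I would show these cannot be assembled (together with the rest of $f^{-1}(\widetilde\CC)$) into a cell decomposition in which $\widetilde\CC$ sits as a union of edges while also being forward-invariant — the orientations/colorings clash. \textbf{The main obstacle} is making this combinatorial incompatibility rigorous without an explicit picture: I expect one must actually compute $f^{-1}(\RDach)$ or a similar reference curve explicitly (the preimage of the real line under $f$), identify all $1$-tiles and their boundary edges, and then check by inspection of the finitely many Jordan subcurves of $f^{-1}(\widetilde\CC)$ through $\post(f)$ that none is invariant. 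Alternatively, and perhaps more elegantly, one shows that if such $\widetilde\CC$ existed then $(\DD^1(f,\widetilde\CC),\DD^0(f,\widetilde\CC))$ would give a two-tile subdivision rule, and a direct check of the vertex cycles (the point $1$ has local degree $1$ but lies in $f^{-1}(1)$ alongside other preimages, forcing a cycle-length count at $1$ in $\DD^1$ that is odd, violating Definition~\ref{def:subdivcomb}(iv)) yields the contradiction; pinning down exactly which of these parity obstructions bites is the crux of the argument.
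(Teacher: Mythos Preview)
Your proposal does not locate the actual obstruction, and both concrete routes you suggest fail. First a correction: $\deg(f)=4$, not $8$, since $f(z)=\varphi(z^4)$ with $\varphi$ M\"obius. More importantly, the parity idea is a dead end: whenever a Thurston map is cellular for $(\DD^1,\DD^0)$ with $\DD^0$ having two tiles, the cycle length at any $1$-vertex $p$ equals $2\deg_f(p)$ (Remark~\ref{rem:dd'}, since every $0$-vertex cycle has length $2$), so it is automatically even. Condition~(iv) of Definition~\ref{def:subdivcomb} can never provide a contradiction here.

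The paper's argument is a short explicit computation followed by an isotopy-lift reduction. Take the reference curve $\CC=\partial\D$. Since $\varphi^{-1}(\partial\D)=\widehat\R$, one has $f^{-1}(\partial\D)=\bigcup_{k=0}^{7}R_k$ where $R_k=\{re^{\iu k\pi/4}:0\le r\le\infty\}$. The three postcritical points $1,\iu,-\iu$ lie on three \emph{distinct} rays $R_0,R_2,R_6$, and any two distinct rays meet only at $\{0,\infty\}$. Hence no Jordan curve contained in $f^{-1}(\partial\D)$ passes through all of $\post(f)$. Now suppose an $f$-invariant $\widetilde\CC\supset\post(f)$ existed. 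Since $\#\post(f)=3$, isotope $\widetilde\CC$ to $\partial\D$ rel.\ $\post(f)$ (Lemma~\ref{lem:deform<4}), lift this isotopy by $f$ (Proposition~\ref{prop:isotoplift}), and use $\widetilde\CC\sub f^{-1}(\widetilde\CC)$ together with Lemma~\ref{lem:lifts_inverses} to conclude that the lifted time-$1$ map sends $\widetilde\CC$ to a Jordan curve inside $f^{-1}(\partial\D)$ still containing $\post(f)$ --- contradicting the explicit computation. This reduction to a \emph{single} concrete preimage $f^{-1}(\partial\D)$ is the missing step in your outline; inspecting subcurves of the unknown $f^{-1}(\widetilde\CC)$ or chasing cycle-length parities never reaches it.
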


\ifthenelse{\boolean{nofigures}}{}{
\begin{figure}
  \centering
  \begin{overpic}
    [width=7cm, 
    tics=20]{ex_no_invC.eps}
    \put(52,28){$1$}
    \put(-1,29){$0$}
    \put(99,29){$\infty$}
    \put(46,5){$-\iu$}
    \put(50,43){$\iu$}
    \put(22,19){$R_0$}
    \put(12,43){$R_2$}
    \put(12,7){$R_6$}
  \end{overpic}
  \caption{No invariant Jordan curve $\widetilde{\CC}\supset \post(f)$.}
  \label{fig:no_invC}
\end{figure}
}

\begin{proof}  
We have  $f(z)= \varphi(z^4)$ for $z\in \CDach$, where 
\begin{equation} \label{eq:mobius1}
\varphi(w)= \iu \frac{w-\iu}{w+\iu}, \quad w\in \CDach,
\end{equation}
 is
  a M\"{o}bius transformation that maps the upper half-plane to the
  unit disk (note that $\varphi$ maps $0,1,\infty$ to $-\iu, 1,
  \iu$, respectively). Let $\CC\coloneqq  \partial \D$ be the unit circle. Then
  \begin{equation*}
    f^{-1}(\CC)=\bigcup_{k=0, \dots, 7} R_k, 
    \text{ where } 
    R_k= \{re^{\iu k\pi/4} : 0\leq r\leq \infty\}.   
  \end{equation*}
  The postcritical points
  $-\iu,1,\iu$ lie on distinct rays $R_k$. Two such rays  have the  points $0$ and $\infty$ in common and no other points. Thus
  there is no Jordan curve in $f^{-1}(\CC)$ containing all postcritical
  points (see Figure \ref{fig:no_invC}).  It follows from the considerations in  
 Remark~\ref{rem:iterate_given_Ct} that   the existence of such a Jordan curve is a necessary condition for the existence of an $f$-invariant Jordan curve $\widetilde \CC\sub \CDach$ with $\post(f)\sub \widetilde \CC$ (in our specific  case, 
  where  
    $\#\post(f)=3$, the choice of $\CC$ does not matter, since all Jordan curves that contain $\post(f)$ are isotopic rel.\ $\post(f)$). Hence  there is  no $f$-invariant Jordan curve $\widetilde \CC\sub \CDach$ with $\post(f)\sub \widetilde \CC$. 
  One can also see this by  a simple argument directly. 
  
Indeed, 
  suppose that $\widetilde{\CC}\sub\CDach $ is a Jordan
  curve with $\post(f)\sub \widetilde{\CC}$ and
  $f(\widetilde{\CC})\sub \widetilde{\CC}$.  The unit circle 
  $\CC=\partial \D$  is also a Jordan curve 
  containing the set $\post(f)$. Hence by Lemma~\ref{lem:deform<4} there exists an isotopy $H\: \CDach \times I\ra
  \CDach$ rel.\ $\post(f)$ such that $H_0=\id_{\CDach} $ and
  $H_1(\widetilde{\CC})=\CC$.   
  
  By Proposition~\ref{prop:isotoplift} the isotopy $H$ can be lifted
  to an isotopy $\widetilde H\: \CDach \times I\ra \CDach$ rel.\
  $\post(f)$ 
  such that $\widetilde H_0=\id_{\CDach}$ and  $H_t\circ f=f\circ
  \widetilde H_t$ for  $t\in I$.

  Since $\widetilde{\CC}\sub
  f^{-1}(\widetilde{\CC})$, it follows from Lemma
  \ref{lem:lifts_inverses} that $$\widetilde{H}_1(\widetilde{\CC})
  \subset \widetilde{H}_1(f^{-1}(\widetilde{\CC})) =
  f^{-1}(H_1(\widetilde{\CC})) = f^{-1}(\CC).$$
  This means that
  the Jordan curve $\CC'\coloneqq  \widetilde{H}_1(\widetilde{\CC})$ is contained
  in $f^{-1}(\CC)$. Moreover,  it contains all postcritical points,
  since $\widetilde{\CC}$ does,  and the points in $\post(f)$ stay fixed under the isotopy $\widetilde{H}$. As we have seen above, no such Jordan curve exists and we get a contradiction as desired.
 \end{proof}

By a similar (though somewhat  lengthier) argument one can show that
the Latt\`{e}s map $f(z)= \frac{\iu}{2}(z + 1/z)$ 
does not have an 
$f$-invariant Jordan curve $\CC$ with $\post(f)\subset \CC$ (this map was
considered in Example~\ref{ex:Lattes_Milnor}). Another such
example can be found in \cite[Section~4]{CFP10}. 

We now turn to the proof of the  necessary and sufficient criterion for the existence   of an invariant Jordan curve as formulated in 
Theorem~\ref{thm:exinvcurvef}.  Note that in condition \ref{item:ex_invC2} of this theorem the requirement  on $\widehat{f}$ is
meaningful. 
Indeed, $H_1$ is isotopic to
$\id_{S^2}$  rel.~$\post(f)$. By 
Lemma~\ref{lem:T-eq_crit_post} this implies that  $\widehat f=H_1
\circ f$ 
is a Thurston map with 
$\post(\widehat
f)=\post(f)$. 

Furthermore, $ \CC'$ is a Jordan curve with
 $\post(\widehat f)=\post(f)\sub \CC' $. Since $ \CC'=H_1(\CC)\subset f^{-1}(\CC)$, we have that 
$\widehat{f}( \CC')= (H_1\circ f) ( \CC')\subset H_1(\CC)=  \CC'$. Hence 
$ \CC'$ is invariant with respect to $\widehat{f}$, and it makes sense
to require that $\widehat{f}$ is combinatorially expanding for
$\CC'$ (note that $\#\post(\widehat{f})=\#\post(f)\ge 3$, because $f$ is expanding).

\begin{proof} [Proof of Theorem~\ref{thm:exinvcurvef}]
\ref{item:ex_invC1} $\Rightarrow$~\ref{item:ex_invC2} This implication is trivial. Indeed, suppose $\widetilde \CC$ 
is as in \ref{item:ex_invC1}. Then in \ref{item:ex_invC2} we  let  $\CC=\CC'= \widetilde \CC$, and the isotopy $H$ be such that $H_t=\id_{S^2}$ for all $t\in I$. Then
$ \CC'= \widetilde \CC\sub f^{-1}(\widetilde \CC)=f^{-1}(\CC)$, and  $\widehat f=f$ is   combinatorially expanding for the invariant curve  $ \CC'= \widetilde \CC$, because  $f$ is expanding.

\ref{item:ex_invC2} $\Rightarrow$~\ref{item:ex_invC1}   Let $\CC$, $\CC'$, $H$, 
$\widehat f$ be as in \ref{item:ex_invC2}, and define $\chi=H_1$. 
As we have seen in the discussion before the proof,  $\widehat f$ is  a Thurston map
with $\post (\widehat f)=\post(f)$, and $ \CC'$ is an $\widehat f$-invariant Jordan curve containing the set  $\post (\widehat f)$.

Since $\widehat f$  is combinatorially expanding for 
$ \CC'$,  Theorem~\ref{thm:combexp1} implies that there exists a homeomorphism $\phi\:S^2\ra S^2$ that is isotopic to the identity on $S^2$ rel.~$\post(\widehat f)=\post(f)$ such that 
$\phi( \CC')= \CC'$ and $g=\phi\circ \widehat f$ is an expanding Thurston map.
Since $g=(\phi\circ \chi)\circ f$, and $\phi\circ \chi$ is isotopic to the identity on $S^2$ rel.~$\post(f)$, the expanding Thurston maps $f$ and $g$ are Thurston equivalent: 
if notation is as in \eqref{Thequiv1} (with $\widehat S^2=S^2$), then we can take $h_0=\phi\circ \chi$ and $h_1=\id_{S^2}$.
By Theorem~\ref{thm:exppromequiv} we can find a homeomorphism 
$h\:S^2\ra S^2$ that is isotopic to $h_1=\id_{S^2}$  rel.~$f^{-1}(\post(f))$ with $h\circ f=g\circ h$. Note that then the homeomorphism $h^{-1}$ is also isotopic 
to $\id_{S^2}$  rel.~$f^{-1}(\post(f))$ and we have $f\circ h^{-1}=
h^{-1}\circ g$.   

Let $ \widetilde \CC=h^{-1} (\CC')$. Then 
$ \widetilde \CC$ is  a Jordan curve in $S^2$ that is isotopic to $ \CC'$  rel.~$f^{-1}(\post(f))$, and hence isotopic to $\CC$ rel.\ $\post(f)$; in particular,  $ \widetilde \CC$ contains the set $\post(f)$. Moreover,  $\widetilde \CC$ is $f$-invariant, because we have 
\begin{align*}
f(\widetilde  \CC)&=f(h^{-1}(\CC'))= h^{-1}(g( \CC'))=h^{-1}(\phi(\widehat f( \CC')))\\
&\sub 
h^{-1}(\phi( \CC'))=h^{-1}( \CC')= \widetilde \CC. 
\end{align*} 
The proof is complete.  \end{proof}

\begin{rem}
  \label{rem:expcheck}
  (i) Combinatorial expansion in condition
  \ref{item:ex_invC2} of Theorem~\ref{thm:exinvcurvef} is easy to
  check explicitly. A simple sufficient criterion for this can be
  formulated as follows: if no $1$-tile for $(f,\CC)$ joins
  opposite sides of $ \CC'$, then $\widehat f$ is combinatorially
  expanding for $ \CC'$.

  To see this, note that
$$\widehat f^{-1}( \CC')=f^{-1}(H_1^{-1}( \CC'))= f^{-1}(\CC).$$
By Proposition~\ref{prop:celldecomp}~\ref{item:nedgesC} this implies that the
$1$-tiles for $(\widehat f, \CC')$ are precisely the $1$-tiles
for $(f,\CC)$.  Hence if no $1$-tile for $(f,\CC)$ joins
opposite sides of $ \CC'$, then $D_1(\widehat f, \CC')\ge 2$ and
so $\widehat f$ is combinatorially expanding for $ \CC'$. We
will later formulate a necessary and sufficient condition for
combinatorial expansion of $\widehat f$ (see
Proposition~\ref{prop:nscombexp}; Example~\ref{ex:Cinv_notcexp}
illustrates the situation when this condition is not satisfied).

\smallskip 
(ii) Combinatorial expansion in condition
\ref{item:ex_invC2} of Theorem~\ref{thm:exinvcurvef} is
independent of the chosen isotopy $H$. Indeed, let 
$H^1,H^2\: S^2\times I\ra S^2$ be  two isotopies rel.\ $\post(f)$
with
$H^1_0=H^2_0=\id_{S^2}$ and $H^1_1(\CC)=H^2_1(\CC)=\CC'$. Then
$\widehat f_1=H^1_1\circ f$ is combinatorially expanding for
$\CC'$ if and only if $\widehat f_2=H^2_1\circ f$ is
combinatorially expanding for $\CC'$. This follows immediately
from Lemma~\ref{lem:cexp_Cinv} (with $f=\widehat f_1$,
$g=\widehat f_2$, $h_0=H^2_1\circ (H^1_1)^{-1}$, $h_1=\id_{S^2}$,
and $\CC=\CC'$).

\smallskip
(iii) Theorem~\ref{thm:exinvcurvef} can be slightly modified to give necessary and sufficient conditions for the existence of an invariant curve in a given isotopy class rel.\ $\post(f)$ or rel.\ $f^{-1}(\post(f))$. An  existence statement for  a given  isotopy class rel.\ $f^{-1}(\post(f))$ is especially relevant in view of the complementary uniqueness statement given by  Theorem~\ref{thm:uniqc}. 

To formulate this precisely,   let $\widehat \CC\sub S^2$ be a given Jordan curve  with $\post(f)\sub  \widehat \CC$.  Then an $f$-invariant Jordan curve $\widetilde \CC\sub S^2$    isotopic  to $\widehat \CC$ rel.\ $\post(f)$ exists if and only if condition \ref{item:ex_invC2} in Theorem~\ref{thm:exinvcurvef} is true 
for  a Jordan curve $\CC$  isotopic to $\widehat \CC$ rel.\ 
$\post(f)$. This immediately follows from the proof of this theorem.

 Similarly,  an $f$-invariant Jordan curve $\widetilde \CC\sub S^2$  isotopic  to $\widehat \CC$ rel.\ $f^{-1}(\post(f))$ exists if and only if condition \ref{item:ex_invC2} in  Theorem~\ref{thm:exinvcurvef} is true with   the extra assumption  that  $\CC'$ is isotopic to $\widehat \CC$ rel.\ $f^{-1}(\post(f))$.  
 \end{rem}

 The proof of the implication \ref{item:ex_invC2}
 $\Rightarrow$ \ref{item:ex_invC1} in 
 Theorem~\ref{thm:exinvcurvef}  does not only give the existence of
 an $f$-invariant Jordan curve $\widetilde{\CC}$,
 but $\widetilde{\CC}$ is constructed quite explicitly from the given
 Jordan curves $\CC$ and $\CC'$. So one can actually say more 
 about the combinatorial  description of $f$ in  terms of $\widetilde{\CC}$, or,  more
 precisely, about the two-tile
 subdivision rule that is given by $\widetilde{\CC}$ according to
 Proposition~\ref{prop:ThmapSub}.  Namely, 
 the $1$-tiles for $(f,\CC)$ subdivide the two $0$-tiles defined by
 $(f,\CC')$ in the same way as the $1$-tiles
 for $(f,\widetilde{\CC})$ subdivide the $0$-tiles for
 $(f,\widetilde{\CC})$. This is made precise in the following statement. 

\begin{cor}
  \label{cor:CtCpC_2tile_sub}
  Let $\widetilde{\CC}$ be the $f$-invariant Jordan curve
  obtained  in the proof of Theorem~\ref{thm:exinvcurvef} from
  the Jordan curves $\CC, \CC'\subset S^2$ as in condition
  \ref{item:ex_invC2} of  Theorem~\ref{thm:exinvcurvef}. Then
  there is a homeomorphism $h\colon S^2\to S^2$ that is isotopic
  to $\id_{S^2}$ rel.\ $f^{-1}(\post(f))$ with the
  following properties:
   $h(\post(f))= \post(f)$, $h(\widetilde{\CC}) = \CC'$,
   and   $h$ maps the $1$-cells  for $(f,\widetilde{\CC})$ to the $1$-cells  for $(f, \CC)$.
 \end{cor}

Note that the statement implies that $h$ also maps the $0$-cells for
$(f,\widetilde \CC)$ 
to the $0$-cells for $(f,\CC')$. The map $h$ is in fact
 the map that
appears in the proof of Theorem~\ref{thm:exinvcurvef}. 
Recall that an $n$-cell is an $n$-tile, $n$-edge, or a singleton
set $\{v\}$ where $v$ is an $n$-vertex.

To illustrate the statement, let us consider the map $f$ from
Example~\ref{ex:invC}.  
In the top right image of Figure~\ref{fig:invC_constr} we can
see that  one of the two $0$-tiles for $\CC^1 \subset f^{-1}(\CC)$ is subdivided 
into four   $1$-tiles for
$(f,\CC)$, and the other $0$-tile into two $1$-tiles for $(f,\CC)$. Thus the corollary above shows that the
two  $0$-tiles for the (unique) $f$-invariant curve $\widetilde{\CC}$
are 
subdivided into four or into two  $1$-tiles for
$(f,\widetilde{\CC})$.   

\begin{proof}
  We will  use the notation from the proof of the 
  implication
  \ref{item:ex_invC2} $\Rightarrow$~\ref{item:ex_invC1} 
 in  Theorem~\ref{thm:exinvcurvef}. So 
  we have  homeomorphisms $\chi, \phi, h\colon
  S^2\to S^2$ that satisfy
  \begin{equation*}
    \chi(\CC) = \CC', 
    \quad 
    \phi(\CC')= \CC', 
    \quad
    h(\widetilde{\CC}) = \CC'. 
  \end{equation*}
  The map $h$ is isotopic to $\id_{S^2}$ rel.\  $f^{-1}(\post(f))$, and so it fixes the points 
   in $f^{-1}(\post(f))\supset \post(f)$.
 
  The map $g= \phi \circ \chi \circ f$ is a  Thurston
  map conjugate  to  $f$  so that 
  $h\circ f = g\circ h$.  
  Lemma~\ref{lem:lifts_inverses} implies that 
    $h(f^{-1}(A)) = g^{-1}(h(A))$ for 
  each set $A\sub S^2$. Hence 
  \begin{align*} 
  h(f^{-1}(\widetilde \CC))&= g^{-1}(h(\widetilde \CC))=g^{-1}
  (\CC')
 =f^{-1}(\chi^{-1}(\phi^{-1}(\CC')))\\ &= f^{-1}(\chi^{-1}(\CC'))
  =f^{-1}(\CC). 
  \end{align*}
So by  Proposition~\ref{prop:celldecomp}~\ref{item:skeletons} the homeomorphism 
 $h$ maps the $1$-skeletons
  of the cell 
  decompositions $\DD^1(f,\widetilde \CC)$ and  $\DD^1(f, \CC)$
  (see Definition~\ref{def:DDn}) onto each other.
  The vertices of both cell decompositions are the points in 
  $f^{-1}(\post(f))$ which are fixed by $h$. Since these  cell decompositions are  uniquely determined by their  $1$-skeletons and  vertices (see Proposition~\ref{prop:celldecomp}~\ref{item:nedgesC}),  $h$ map the cells in  $\DD^1(f,\widetilde \CC)$  to the cells in   $\DD^1(f, \CC)$.
 Hence  $h$ maps $1$-cells for 
  $(f,\widetilde \CC)$ to $1$-cells for $(f,\CC)$. 
   \end{proof}

For the proof of Theorem~\ref{thm:main} we require the
following auxiliary result.  

\begin{lemma}
  \label{lem:isotopicpath} 
  Let $f\: S^2 \ra S^2$ be an expanding Thurston map,
  and $\CC\sub S^2$ be a Jordan curve with   $\post(f)\sub \CC$.
  Then for all sufficiently large $n$ there exists a Jordan
  curve  $\CC'\sub f^{-n}(\CC)$ that   is isotopic to $\CC$
  rel.\ $\post(f)$. Moreover, $\CC'$ can be chosen so that no
  $n$-tile for $(f,\CC)$ joins  opposite sides of  
  $\CC'$. 
\end{lemma}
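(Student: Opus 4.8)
The plan is to use the $n$-cell decompositions $\DD^n=\DD^n(f,\CC)$ together with the isotopy criterion of Lemma~\ref{prop:isotopicpath}. Recall that $f$ being expanding means $\mesh(f,n,\CC)\to 0$ as $n\to\infty$, and by Proposition~\ref{prop:celldecomp}~(v) the $1$-skeleton of $\DD^n$ is exactly $f^{-n}(\CC)$, while the cells in $\DD^n$ are the $n$-cells for $(f,\CC)$. So the $1$-skeleton $f^{-n}(\CC)$ of $\DD^n$ forms a fine ``grid'' in $S^2$, with all cells of diameter less than any prescribed $\eps_0>0$ once $n$ is large enough.

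Here is how I would carry it out. First, apply Lemma~\ref{prop:isotopicpath} with the Jordan curve $\CC$ and the finite set $P=\post(f)$ (note $\#P\ge 3$ since $f$ is expanding, by Corollary~\ref{cor:no<3}); this produces a constant $\eps_0>0$ depending only on $\CC$ and $\post(f)$. Second, fix a base metric on $S^2$ inducing the standard topology. Since $f$ is expanding, $\mesh(f,n,\CC)\to 0$, and by Lemma~\ref{lem:flowerprop}~(ii) (or directly) the diameters of all $n$-cells — tiles, edges, and vertices — are controlled by $\max_{X\in\X^n}\diam(X)=\mesh(f,n,\CC)$; hence there exists $N\in\N$ so that for all $n\ge N$ we have $\max_{c\in\DD^n}\diam(c)<\eps_0$. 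Third, for such $n$ apply Lemma~\ref{prop:isotopicpath} to the cell decomposition $\DD=\DD^n(f,\CC)$: its vertex set is $\mathbf V^n=f^{-n}(\post(f))\supset \post(f)=P$ by Proposition~\ref{prop:celldecomp}~(iii), and its $1$-skeleton is $E^n=f^{-n}(\CC)$. The lemma then yields a Jordan curve $\CC'\subset E^n=f^{-n}(\CC)$ that is isotopic to $\CC$ rel.\ $P=\post(f)$ and such that no tile in $\DD^n$ — that is, no $n$-tile for $(f,\CC)$ — joins opposite sides of $(\CC',P)$. Since $P=\post(f)\subset\CC'$ and $\#\post(f)=\#P$, ``joining opposite sides of $(\CC',P)$'' is the same notion as ``joining opposite sides of $\CC'$'' in the sense of Definition~\ref{def:connectop} applied to the Jordan curve $\CC'$ with its postcritical points; so no $n$-tile for $(f,\CC)$ joins opposite sides of $\CC'$, as required.

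There is essentially no serious obstacle here: the statement is a fairly direct packaging of Lemma~\ref{prop:isotopicpath} once one knows that expansion gives arbitrarily fine cell decompositions whose $1$-skeleta are the curves $f^{-n}(\CC)$. The one point requiring a small amount of care is the translation between the two formulations of ``joining opposite sides'': Lemma~\ref{prop:isotopicpath} is stated for a pair $(\CC',P)$ with $P$ an arbitrary finite subset of $\CC'$ of cardinality $\ge 3$, whereas Definition~\ref{def:connectop} (and Definition~\ref{def:combexp}, which is the intended application in the proof of Theorem~\ref{thm:main} and Theorem~\ref{thm:exinvcurvef}) uses the full set $\post(f)\subset\CC'$ together with the $0$-edges of $\CC'$. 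Since we take $P=\post(f)$ exactly, the $0$-edges of the cell decomposition $\DD^0(f,\CC')$ are precisely the arcs of $\CC'$ cut out by $P$, so the two notions coincide verbatim, and the conclusion ``$D_n(f,\CC')\ge 2$'' that one wants downstream follows. I would also remark, for use elsewhere, that $\CC'$ automatically contains $\post(f)$ because the isotopy rel.\ $\post(f)$ fixes those points and they lie on $\CC$.
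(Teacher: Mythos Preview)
Your proof is correct and follows essentially the same approach as the paper: apply Lemma~\ref{prop:isotopicpath} with $P=\post(f)$ to the cell decomposition $\DD^n(f,\CC)$, using expansion to make all $n$-cells have diameter $<\eps_0$ and noting that the vertex set $f^{-n}(\post(f))$ contains $P$ while the $1$-skeleton is $f^{-n}(\CC)$. Your additional remarks on the equivalence of the two ``joining opposite sides'' notions are helpful clarifications but not logically required beyond what the paper does.
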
 
   
   \begin{proof}  We fix some  base metric on $S^2$.
    Let $P\coloneqq \post(f)$. Since $f$ is expanding, we have $k\coloneqq \#P=\#\post(f)\ge 3$
  by Lemma~\ref{lem:no<3}. Pick $\eps_0>0$ as in Lemma~\ref{lem:isorelP}. Since $f$ is expanding,  for large enough  $n$ we have
$$ \mesh(f,n,\CC)=\max_{c\in \DD^n(f,\CC)}\diam(c)<\eps_0.$$ 
For such $n$ consider the cell decomposition $\DD=\DD^n(f,\CC)$ of $S^2$. Its vertex set is the set $f^{-n}(\post(f))\supset  \post(f)=P$ of $n$-vertices  
and its $1$-skeleton is the set $f^{-n}(\CC)$.
Hence by Lemma~\ref{lem:isorelP} there exists a Jordan curve 
$\CC'\sub f^{-n}(\CC)$
that is  isotopic to $\CC$ rel.\ $P=\post(f)$ and so that no tile in $\DD$, i.e., 
no $n$-tile for $(f,\CC)$,  
joins opposite sides 
of $ \CC'$.
 \end{proof} 
   
   \begin{proof}[Proof of Theorem~\ref{thm:main}.]
Let $f$ and $\CC$ be as in the statement of the theorem.  By Lemma~\ref{lem:isotopicpath}  for sufficiently large  $n\in \N$ there exists  an 
isotopy $H\:S^2\times I\ra S^2$ rel.\ $\post(f)$ such that $H_0=\id_{S^2}$ and
$\CC'\coloneqq H_1(\CC)\sub f^{-n}(\CC)$ and   such that  no $n$-tile for $(f,\CC)$ joins opposite sides of $ \CC'$. 

If we define $F=f^n$ for such $n$, then the map $F$ is an expanding Thurston map with $\post(F)=\post(f)$. The sets  $\CC$ and  $\CC'$ are Jordan curves with $\post(F)\sub \CC,\CC'$,  and $H$ is an isotopy rel.\ $\post(F)$ that deforms $\CC$ into $\CC'\sub f^{-n}(\CC)=F^{-1}(\CC)$.  By  
Proposition~\ref{prop:celldecomp}~\ref{item:celdecompiter} the $1$-cells   
for $(F, \CC)$ are precisely the $n$-cells  for $(f,\CC)$.
So no $1$-tile for $(F,\CC)$ joins opposite side of 
$ \CC'$ and by Remark~\ref{rem:expcheck}~(i) the map $H_1\circ F$ is combinatorially expanding for $ \CC'$. 
This shows that condition \ref{item:ex_invC2} in 
Theorem~\ref{thm:exinvcurvef} is satisfied. Hence  there exists a Jordan curve $\widetilde \CC\sub S^2$ that is $F$-invariant and  isotopic to $\CC$ rel.\ $\post(F)=\post(f)$ as desired. 
 \end{proof}
 
 \begin{rem}\label{rem:ndep} In general, 
  the $f^n$-invariant Jordan curve $\widetilde{\CC}$  as in Theorem~\ref{thm:main} will 
  depend on $n$, and one cannot expect that $\widetilde{\CC}$ is invariant 
  for  {\em all}  sufficiently high iterates of $f$. To illustrate this, consider the map $f$ from 
  Example~\ref{ex:noinvCC} (see also Lemma~\ref{lem:g1noinvC}). Recall that $f(z)= \varphi(z^4)$ for $z\in \CDach$, where 
  $\varphi$ is as in \eqref{eq:mobius1}.
  
   The M\"{o}bius transformation
  $\varphi$ maps the extended real line $\widehat \R$ to  the unit circle $\partial \D$,  and $\partial \D$ to $\widehat \R$. 
  This implies that the unit circle
  $\widetilde \CC\coloneqq  \partial \D$ satisfies
  $f^{2n}(\widetilde \CC)\subset \widetilde \CC$ for every $n\in \N$. Note that $\post(f)=\{-\iu,1,\iu\}\subset
  \widetilde \CC$. Thus $\widetilde \CC$ is  a Jordan curve with
  $\post(f)\subset
  \widetilde \CC$   that is
  invariant for every \emph{even} iterate $f^{2n}$.

  On the other hand, for  $n\in \N_0$ we have  $f^{2n+1}(\partial \D)\subset \widehat \R$, and so we cannot 
  have  $f^{2n+1}(\partial \D)\sub \partial \D$ (for otherwise, $f^{2n+1}(\partial \D)\sub  \partial \D\cap \widehat \R=\{-1,1\}$). 
 Thus the unit circle $\partial \D=\widetilde \CC$ is not invariant for 
  any \emph{odd} iterate of $f$.  
\end{rem}

 \begin{proof}[Proof of Corollary~\ref{cor:subdivnlarge}] Let
   $f\:S^2\ra S^2$ be an expanding Thurston map. It follows from
   Theorem~\ref{thm:main} that for each sufficiently large $n\in \N$
   there exists an 
 $f^n$-invariant  Jordan curve $\widetilde \CC\sub S^2$ with
 $\post(f)=\post(f^n)\sub \widetilde \CC$. For such $n$ let
 $F=f^n$. By Proposition~\ref{prop:ThmapSub} there exists a two-tile subdivision rule that is realized by $F$.
 \end{proof}

\section{Iterative construction of  invariant curves}
\label{subsec:ittproc} 
 
Given data as in Theorem
\ref{thm:exinvcurvef} \ref{item:ex_invC2}, the  
$f$-invariant curve $\widetilde{\CC}$ can  be obtained by an iterative
procedure.  
To explain this,  let $f\: S^2\ra S^2$ be an arbitrary Thurston map, and assume as in Theorem~\ref{thm:exinvcurvef}~\ref{item:ex_invC2} that $\CC,\CC'\subset S^2$ are   Jordan curves with $\post(f)\subset \CC,\CC'$ and 
$\CC'\subset f^{-1}(\CC)$, and that $H\colon S^2\times
I\to S^2$ is an  isotopy rel.\ $\post(f)$ that deforms $\CC$ to
$\CC'$, i.e., $H_0=\id_{S^2}$ and  $H_1(\CC)=\CC'$. For the moment,  we
do \emph{not} assume that the map $f$ is expanding or that  $\widehat{f}= H_1\circ f$ is
combinatorially expanding for $\CC'$. 
 
Let $H^0\coloneqq  H$. 
By using   
Proposition~\ref{prop:isotoplift} repeatedly, we can find  isotopies  $H^n\colon S^2\times I\to S^2$   rel.\
$f^{-1}(\post(f))$ such that $H^n_0=\id_{S^2}$ and $f \circ H_t^{n+1} =
H_t^{n} \circ f$ for all $n\in \N_0$, $t\in I$. 
Now we define  Jordan curves  inductively by setting 
$\CC^0\coloneqq  \CC$, and  $\CC^{n+1}\coloneqq  H^n_1(\CC^n)$ for 
$n\in\N_0$.  Note that then $\CC^1=\CC'$. 

To summarize,  we start with  the following data for our given Thurston map $f$: 
\begin{enumerate}
  \item 
    \label{item:data_contr_Cinv1}
    A Jordan curve $\CC^0=\CC\sub S^2$ with  $\post(f)\subset \CC^0$.
 
\item 
    \label{item:data_contr_Cinv2}
  A  Jordan curve $\CC^1=\CC'\sub S^2$ isotopic to $\CC^0\sub S^2$ rel.\
    $\post(f)$ with $\CC^1\sub f^{-1}(\CC^0)$.
 
  \item
    \label{item:data_contr_Cinv3}
An  isotopy $H^0\:S^2\times I\ra S^2$ rel.\  $\post(f)$ such that
$H^0_0=\id_{S^2}$ and  $H^0_1(\CC^0)=\CC^1$.
\end{enumerate} 
 
 
  We then define inductively:
  \begin{enumerate}
  \item
    \label{item:constr_C_inv1}  
    Isotopies  $H^n\:S^2\times I\ra S^2$ such that $H^n_0=\id_{S^2}$ and $f \circ H_t^{n+1} =
H_t^{n} \circ f$ for all $n\in \N_0$, $t\in I$. 
  
  \item
    \label{item:constr_C_inv2}  
 Jordan curves   $ \CC^{n+1}\coloneqq    H^n_1(\CC^n)$  for $n\in \N_0$.
\end{enumerate}
  

Figure~\ref{fig:invC_constr} illustrates this procedure for 
 Example \ref{ex:invC}. Since this example is rather complicated and it is hard to grasp the  isotopies involved, we  present a simpler  example 
 for the construction. 
 
\ifthenelse{\boolean{nofigures}}{}{
  \begin{figure}
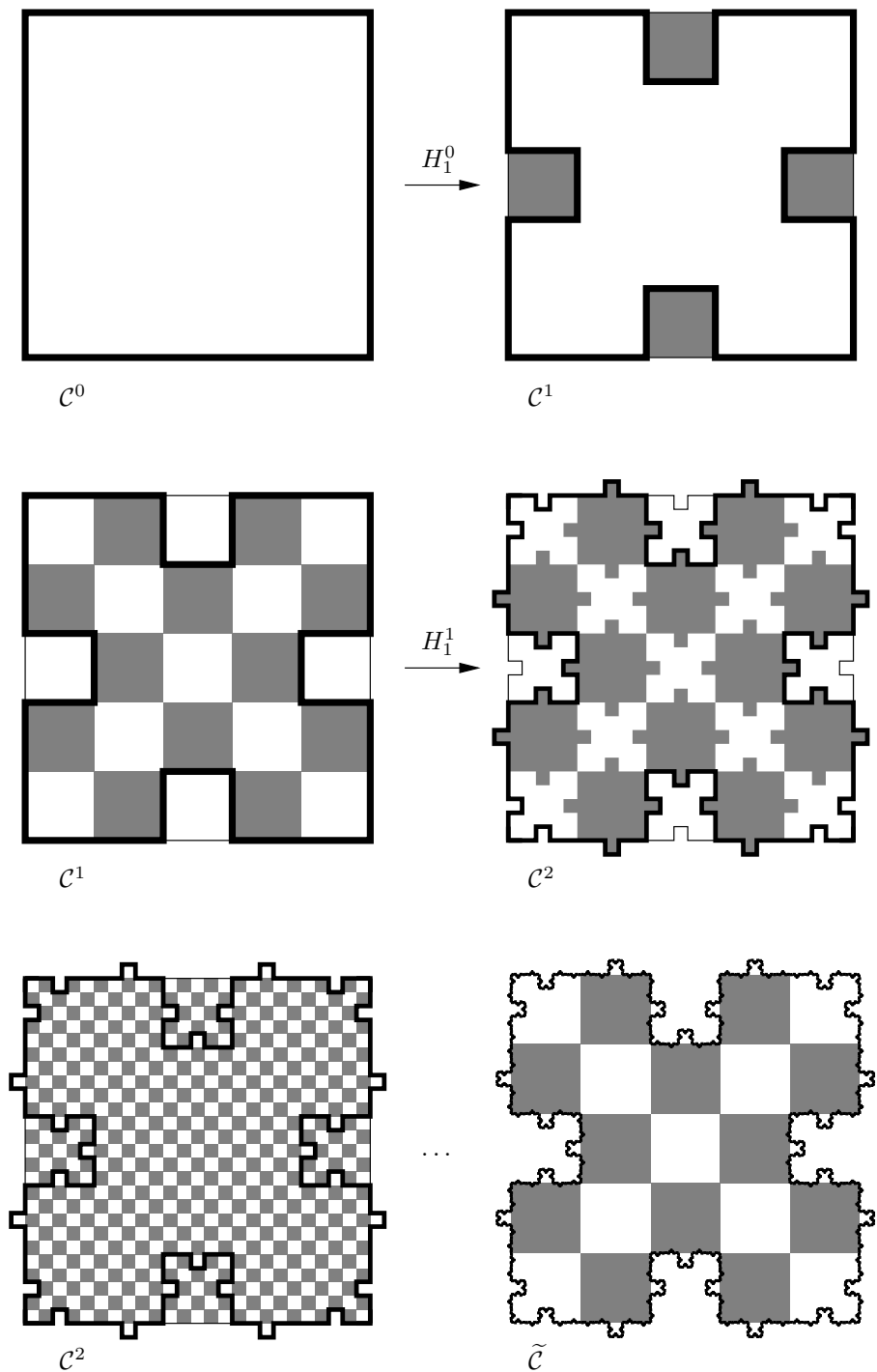

    \centering
    \begin{overpic}
    [width=12cm, 
    tics=20]{Citerate.eps}
    \put(31,87.8){$H^0_1$}
    \put(31,51.8){$H^1_1$}
    \put(31,14){$\dots$}
    \put(4,70){$\CC^0$}
    \put(39,70){$\CC^1$}
    \put(4,34){$\CC^1$}
    \put(39,34){$\CC^2$}
    \put(4,-2){$\CC^2$}
    \put(39,-2){$\widetilde{\CC}$}
    \end{overpic}
    \caption{Iterative construction of an invariant curve.}
    \label{fig:Cit}
  \end{figure}
}

\begin{ex}
  \label{ex:Cit} 
  Let $f\colon \CDach \to \CDach$ be a  Latt\`{e}s map 
 constructed as   the example $g$ in Section~\ref{sec:Lattes}, but with the
  map   
  \begin{equation*}
    A\colon \C\to \C, 
    \quad 
    u \mapsto A(u)\coloneqq 5u.
  \end{equation*}
  More precisely, $f$ is obtained according to 
  Theorem~\ref{thm:Lattesstruc}~\ref{item:Lattessruciii} as the
  quotient of $A$ by a crystallographic group of type $(2222)$ as
  in \eqref{eq:specisom}.
  %
It is straightforward  to check that the extended real line
$\CC\coloneqq \widehat{\R}=\R\cup\{\infty\}$ is $f$-invariant and contains all
postcritical points $0,1,\infty,-1$ of $f$. 

As in Figure \ref{fig:mapg}, we represent the sphere $\CDach$ as a
pillow, i.e., two squares glued together along their boundaries. The
equator  of the pillow  represents the curve $\CC$, and the two
squares represent the $0$-tiles, one of which is colored white, the
other black. 

The map $f$ can then be  described  as follows. Each
of the two sides of the pillow is divided into $5\times 5$ squares,
which are colored in a checkerboard fashion. The map $f$
 sends each small white square to the white side of the pillow, and each
small black square to the black side. 
The two sides of the
pillow are the $0$-tiles for  $(f,\CC)$;  the $4$ vertices of
the pillow are the postcritical points in this model. 
The small
squares are the $1$-tiles for $(f,\CC)$. The coloring  of the $0$-
and $1$-tiles corresponds to a labeling map $L_\X$  as in 
Lemma~\ref{lem:labelexis}.

There exist  $f$-invariant Jordan curves that are  isotopic to $\CC$ 
rel.\ $\post(f)$, but distinct from $\CC$. The construction of one
such curve is illustrated in Figure~\ref{fig:Cit}. Namely,  we set
$\CC^0\coloneqq  \CC$. The Jordan curve $\CC^1$ is shown on the top right, as
well as 
in the middle left picture. In the latter picture, we see that $\CC^1$
consists of $1$-edges, i.e., $\CC^1\subset f^{-1}(\CC^0)$. Moreover, there
exists an isotopy $H^0\colon \CDach\times I\to \CDach$ rel.\
$\post(f)$ that 
deforms $\CC^0$ to $\CC^1$ (i.e., $H^0_0= \id_{\CDach}$ and 
$H^0_1(\CC^0)=\CC^1$). We also see here
how the black and the  white $0$-tile are deformed by $H^0_1$;
namely, the four  
small black squares on the top right in 
Figure \ref{fig:Cit} are part of the image of the black $0$-tile
(which is at the back of the pillow) under $H^0_1$. 

The Jordan curve $\CC^2\coloneqq  H^1_1(\CC^1)$ consists of $2$-edges, i.e., 
$\CC^2\subset f^{-2}(\CC^0)$ (see the bottom left). 
The two pictures in the middle of Figure \ref{fig:Cit} indicate
how $H^1$ deforms $1$-tiles. Roughly speaking, $H^1$ deforms each
black or white $1$-tile ``in the same way'' as $H^0$ deforms the
black or white $0$-tiles.

The curves $\CC^n$
Hausdorff converge to $\widetilde{\CC}$, which is an $f$-invariant
Jordan curve with $\post(f)\subset \widetilde{\CC}$ (see
Lemma~\ref{lem:CCnprop}~\ref{CCprop7} and 
Proposition~\ref{prop:invCit}). 
\end{ex}

There is a conceptually different  way to obtain $\CC^{n+1}$ from
$\CC^n$, which will be explained in detail in Remark
\ref{rem:C_arc_replace}. Namely,   we replace each $n$-edge
$\alpha^n\subset \CC^n$ with  $(n+1)$-edges ``in the same way'' as
the $0$-edge $\alpha^0\coloneqq  f^n(\alpha^n)\subset \CC^0$ is replaced with 
an  arc $\beta^1\subset \CC^1$ that has  the same
endpoints  as $\alpha^0$ (which are postcritical points).  Note that
$\beta^1=H^0_1(\alpha^0)$, and that $\beta^1$ consists of  $1$-edges.

To prepare the proof that under suitable conditions our iteration
process has an invariant  curve as a  limit in the sense of Hausdorff convergence,  
we summarize some properties of the Jordan curves $\CC^n$.

\begin{lemma}  
  \label{lem:CCnprop}
  Let $f\colon S^2\ra  S^2$ be a Thurston map 
  that satisfies
  $\#\post(f)\ge 3$, 
  and let  the Jordan curves $\CC^n$ for $n\in
  \N_0$ be defined as above. Then the following statements are true:
  
  \begin{enumerate}
  
  \item    \label{CCprop1}
    $ \CC^{n+k}\subset f^{-k}(\CC^{n})$  for
    $n,k\in \N_0$.

  \item   \label{CCprop2}   
    $\CC^{n+k}$ is isotopic to $
    \CC^n$ rel.\ $f^{-n}(\post(f))$ for $n,k\in \N_0$.

  \item   \label{CCprop3}   
    $\CC^{n+k}\cap f^{-n}(\post(f))=
    \CC^n\cap f^{-n}(\post(f))$ for $n,k\in \N_0$.    
   
  \item   \label{CCprop4}   
    $\post(f)\sub \CC^n$ for $n\in \N_0$.  
    
  \item    \label{CCprop4a}
    For  $n,k\in \N_0$ the curve  $ \CC^{n+k}$ consists of $n$-edges
    for $(f, \CC^k)$.

  \item   \label{CCprop5}   
   For $n\in \N$ the curve $\CC^n$  is the unique Jordan curve in $S^2$ 
   with $\CC^n\sub f^{-1}(\CC^{n-1})$ that is isotopic to $\CC^1$ rel.\ 
   $f^{-1}(\post(f))$.   
   
  \item   \label{CCprop6}   
    The sequence $\CC^n$, $n\in \N_0$, only depends on $\CC^0$ and
    $\CC^1$ and not on  the  choice of the initial isotopy $H=H^0$ used
    in the definition of the sequence.    
    
  \item   \label{CCprop7}   Suppose in addition that $f$  is expanding. 
   Then, as $n\to \infty$,  the sets  $\CC^n$  Hausdorff converge     to a closed
    $f$-invariant set $\widetilde \CC\sub S^2$  with $\post(f)\sub \widetilde \CC$.     
  \end{enumerate}
\end{lemma}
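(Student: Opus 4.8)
The plan is to prove the seven statements \eqref{CCprop1}--\eqref{CCprop7} in order, since each relies on the previous ones. The key tools are Proposition~\ref{prop:isotoplift} (isotopy lifting, which produced the $H^n$ in the first place), Lemma~\ref{lem:lifts_inverses} (behavior of preimages under semiconjugacies), Lemma~\ref{lem:isoJcin1ske} (rigidity of Jordan curves inside a $1$-skeleton up to isotopy rel.\ the vertex set), and Lemma~\ref{lem:exp_shrink} (exponential shrinking of isotopy tracks, which forces Hausdorff convergence).

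First I would prove \eqref{CCprop1} by induction on $k$. For $k=0$ it is trivial, and $\CC^{n+1}=H^n_1(\CC^n)$; using $f\circ H^n_1=H^{n-1}_1\circ f$ together with Lemma~\ref{lem:lifts_inverses} (applied with $h_0=H^{n-1}_1$, $h_1=H^n_1$) gives $H^n_1(f^{-1}(A))=f^{-1}(H^{n-1}_1(A))$; iterating this identity down to $H^0_1$ and using $\CC^1=H^0_1(\CC^0)\subset f^{-1}(\CC^0)$, plus the inductive hypothesis $\CC^{n+k-1}\subset f^{-(k-1)}(\CC^n)$, yields $\CC^{n+k}\subset f^{-k}(\CC^n)$. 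Statement \eqref{CCprop2} again goes by induction on $k$: the composed isotopy $H^{n+k-1}\circ H^{n+k-2}\circ\cdots\circ H^n$ (suitably reparametrized, exactly as in the tower argument of Theorem~\ref{thm:exppromequiv}) is an isotopy rel.\ $f^{-n}(\post(f))$ from $\CC^n$ to $\CC^{n+k}$, because each $H^m$ for $m\ge 1$ is rel.\ $f^{-1}(\post(f))\supset f^{-n}(\post(f))$ for $m\ge n$ — here one must check $f^{-1}(\post(f))\supset f^{-n}(\post(f))$ fails in the wrong direction, so instead one observes each $H^m$ is rel.\ $f^{-1}(\post(f))$ and then lifts once more: more carefully, $H^m$ is actually rel.\ $f^{-m'}(\post(f))$ for every $m'\le$ the number of times it has been lifted, so for $m\ge n$ each $H^m$ is rel.\ $f^{-n}(\post(f))$. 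Statement \eqref{CCprop3} is an immediate consequence of \eqref{CCprop2} (an isotopy rel.\ a set fixes that set pointwise), and \eqref{CCprop4} follows from \eqref{CCprop3} with $n=0$ since $\post(f)\subset\CC^0$. For \eqref{CCprop4a}, combine \eqref{CCprop1} (so $\CC^{n+k}\subset f^{-n}(\CC^k)=$ the $1$-skeleton of $\DD^n(f,\CC^k)$) with \eqref{CCprop3}: $\CC^{n+k}$ meets $f^{-n}(\post(f))$, the $0$-skeleton of $\DD^n(f,\CC^k)$, exactly where $\CC^k$-related $n$-vertices sit, and the standard argument (as in the last paragraph of the proof of Lemma~\ref{lem:isoJcin1ske}) shows any Jordan curve in a $1$-skeleton is a union of edges.

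Next, \eqref{CCprop5}: suppose $J$ is another Jordan curve with $J\subset f^{-1}(\CC^{n-1})$ isotopic to $\CC^1$ rel.\ $f^{-1}(\post(f))$. Both $J$ and $\CC^n$ lie in $f^{-1}(\CC^{n-1})$; I claim they are isotopic rel.\ $f^{-1}(\post(f))$ to each other — $\CC^n$ is isotopic to $\CC^1$ rel.\ $f^{-1}(\post(f))$ by \eqref{CCprop2} (with $n=1$, $k=n-1$, noting $f^{-1}(\post(f))\supset f^{-1}(\post(f))$), hence to $J$. Now $f^{-1}(\CC^{n-1})$ is the $1$-skeleton of $\DD^1(f,\CC^{n-1})$ with vertex set $f^{-1}(\post(f))$, and since $f$ is expanding every tile there has $\ge 3$ vertices (Corollary~\ref{cor:no<3}), so Lemma~\ref{lem:isoJcin1ske} forces $J=\CC^n$. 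Statement \eqref{CCprop6} then follows by induction: $\CC^0,\CC^1$ are given, and \eqref{CCprop5} shows $\CC^n$ is uniquely determined by $\CC^{n-1}$ and $\CC^1$ (the isotopy class of $\CC^1$ rel.\ $f^{-1}(\post(f))$ is part of the given data, not of the choice of $H$), so the whole sequence depends only on $\CC^0$ and $\CC^1$.

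Finally, \eqref{CCprop7}. Fix a visual metric $d$ on $S^2$ with expansion factor $\Lambda>1$. The isotopies $H^n$ satisfy $f\circ H^{n+1}_t=H^n_t\circ f$, so by Lemma~\ref{lem:exp_shrink} (applied to the chain of isotopies $H^n$, with $\widehat S^2=S^2$, $g=f$) there is a constant $C\ge 1$ with $\sup_{x\in S^2}\diam_d(\{H^n_t(x):t\in I\})\le C\Lambda^{-n}$ for all $n$. In particular $\dist_d^H(\CC^{n+1},\CC^n)=\dist_d^H(H^n_1(\CC^n),\CC^n)\le C\Lambda^{-n}$, so $(\CC^n)$ is a Cauchy sequence in the Hausdorff metric on the (compact) space of closed subsets of $S^2$, hence Hausdorff converges to some closed set $\widetilde\CC$. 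Since $\post(f)\subset\CC^n$ for all $n$ by \eqref{CCprop4} and $\post(f)$ is finite, $\post(f)\subset\widetilde\CC$. For $f$-invariance: if $x\in\widetilde\CC$, pick $x_n\in\CC^n$ with $x_n\to x$; by \eqref{CCprop1} with $k=1$, $\CC^{n+1}\subset f^{-1}(\CC^n)$, i.e.\ $f(\CC^{n+1})\subset\CC^n$, so $f(x_{n+1})\in\CC^n$; by continuity $f(x_{n+1})\to f(x)$, and since $f(x_{n+1})\in\CC^n$ this forces $f(x)\in\widetilde\CC$ by the characterization of Hausdorff limits recalled just before the lemma. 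Thus $f(\widetilde\CC)\subset\widetilde\CC$.

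The main obstacle I anticipate is the bookkeeping in \eqref{CCprop2} and \eqref{CCprop5} about exactly which finite set each lifted isotopy $H^m$ is relative to — one must track that $H^m$ (the $m$-th lift of $H^0$) is an isotopy rel.\ $f^{-1}(\post(f))$ for $m\ge 1$ and use $f$-invariance of $\post(f)$ to get it rel.\ $f^{-n}(\post(f))$ for the appropriate range of indices, being careful about the direction of the inclusions $\post(f)\subset f^{-1}(\post(f))\subset f^{-2}(\post(f))\subset\cdots$. Everything else is a fairly mechanical assembly of the cited lemmas.
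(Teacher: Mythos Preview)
Your approach matches the paper's, and your identification of the ``main obstacle'' is spot-on. The clean resolution to the bookkeeping you worry about is exactly what the paper does: by the remark after Proposition~\ref{prop:isotoplift} (a lift of an isotopy rel.\ $M$ is rel.\ $f^{-1}(M)$), induction on $m$ gives that $H^m$ is an isotopy rel.\ $f^{-m}(\post(f))$, not merely rel.\ $f^{-1}(\post(f))$. Since $f^{-n}(\post(f))\subset f^{-m}(\post(f))$ for $m\ge n$, the concatenation $H^{n+k-1}\circ\cdots\circ H^n$ is rel.\ $f^{-n}(\post(f))$, giving \eqref{CCprop2} directly. Your induction for \eqref{CCprop1} is slightly tangled; the paper instead proves only the one-step inclusion $\CC^m\subset f^{-1}(\CC^{m-1})$ by induction on $m$ (the general statement then follows by iteration), which avoids the circularity in your $k$-induction.
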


Recall that Hausdorff convergence was discussed at the end of 
Section~\ref{sec:QCgeom}. 

\begin{proof} In the following, we use the isotopies $H^n$ as in the definition of the sequence $\CC^n$, and set $h_n=H^n_1$ for $n\in \N_0$. 

\smallskip
\ref{CCprop1}  
It suffices to show that $\CC^n \sub f^{-1}(\CC^{n-1})$ for $n\in \N$. We prove  this by induction on $n$; this   is clear for $n=1$. Assume that
  the statement holds for some $n\in \N$;  so  $\CC^n\subset
  f^{-1}(\CC^{n-1})$. Since  $h_{n}=H^n_1$  and $h_{n-1}=H^{n-1}_1$ are homeomorphisms with $f\circ h_{n}=h_{n-1}\circ f$, 
  we have  $h_n(f^{-1}(\CC^{n-1}))= f^{-1}(h_{n-1}(\CC^{n-1}))$
  by Lemma~\ref{lem:lifts_inverses}. 

\pagebreak
Thus 
  \begin{equation*}
    \CC^{n+1}= h_n(\CC^n) \subset h_n(f^{-1}(\CC^{n-1}))=
    f^{-1}(h_{n-1}(\CC^{n-1}))= f^{-1}(\CC^n), 
  \end{equation*}
  and \ref{CCprop1} follows.
 
 \smallskip
 \ref{CCprop2}--\ref{CCprop4}
  From  the definition of $H^n$, the remark after the proof of
  Proposition~\ref{prop:isotoplift}, and  
induction on $n$, we conclude 
that $H^n$ 
  is an isotopy rel.\ $f^{-n}(\post(f))$. Since $H^n_0=\id_{S^2}$ and $$f^{-n}(\post(f))\sub f^{-(n+k)}(\post(f)) $$ for $n,k\in \N_0$,  
 statements \ref{CCprop2} and \ref{CCprop3} immediately 
  follow from this by induction on $k$ for fixed $n$. Statement \ref{CCprop4} follows from \ref{CCprop3} (with $n=0$ and $k\in \N_0$ arbitrary) and the fact that $\post(f)\sub \CC^0$. 
  
  \smallskip 
  \ref{CCprop4a}
  By \ref{CCprop4} we have $\#(f^{-n}(\post(f))\cap \CC^{n+k})\ge
\#\post(f)\ge 3.$ In particular, the points in $f^{-n}(\post(f))$ that
lie on $\CC^{n+k}$ 
subdivide this curve into arcs whose endpoints lie in $f^{-n}(\post(f))$ and whose interiors are disjoint from $f^{-n}(\post(f))$.
Let $\alpha\sub \CC^{n+k}$ be one of these arcs. Then 
we have $\inte(\alpha)\sub f^{-n}(\CC^k)\setminus f^{-n}(\post(f))$ by \ref{CCprop1}, and 
$\partial \alpha \sub f^{-n}(\post(f))$. Since by Proposition~\ref{prop:celldecomp}~\ref{item:skeletons} the set $f^{-n}(\CC^k)$ is the $1$-skeleton and the 
set $f^{-n}(\post(f))$ the $0$-skeleton of the cell decomposition $\DD^n(f, \CC^k)$, we conclude from Lemmas~\ref{lem:conncomp} and \ref{lem:opencells} that $\alpha$ is an edge in  $\DD^n(f, \CC^k)$, i.e., an $n$-edge for $(f,\CC^k)$. Hence $\CC^{n+k}$ consists of $n$-edges for 
$(f,\CC^k)$.  


\smallskip   
\ref{CCprop5}
By \ref{CCprop1} and \ref{CCprop2} we know that $\CC^n$ for $n\in \N$ is a Jordan curve 
 with $\CC^n\sub f^{-1}(\CC^{n-1})$ that is  isotopic to $\CC^1$ rel.\ $f^{-1}(\post(f))$. Let  $\widehat  \CC\sub  f^{-1}(\CC^{n-1})$ be another Jordan curve isotopic to $\CC^1$ rel.\ $f^{-1}(\post(f))$. Then $\CC^n$ and $\widehat \CC$ are isotopic to each other rel.\ $f^{-1}(\post(f))$. Note that $f^{-1}(\CC^{n-1})$ is the $1$-skeleton of the cell decomposition $\DD^1(f, \CC^{n-1})$ and $f^{-1}(\post(f))$ is its set of vertices. Since each tile in $\DD^1(f, \CC^{n-1})$ has at least $\#\post(f)\ge 3$ vertices, we can apply
   Lemma~\ref{lem:isoJcin1ske} and conclude that $\widehat \CC=\CC^n$. The uniqueness statement for  $\CC^n$  follows. 

\smallskip
\ref{CCprop6}
It follows from \ref{CCprop5} and induction on $n$ that $\CC^n$ is uniquely determined  by  $\CC^0$ and $\CC^1$. 

\smallskip
\ref{CCprop7} Since $f$ is expanding, we can 
pick a visual metric $\varrho$ for $f$.  Let $\Lambda >1$ be
the expansion factor of $\varrho$. By Lemma~\ref{lem:exp_shrink} the
diameters of the tracks
of the isotopy $H^n$ are bounded by $C\Lambda^{-n}$, where $C$ is a
fixed constant. Since $H^n_0=\id_{S^2}$ and $\CC^{n+1}=H^n_1(\CC^n)$
for  
$n\in \N_0$, this implies that $\dist_\varrho^H(\CC^n,\CC^{n+1})\le
C\Lambda^{-n}$ for $n\in \N_0$. It follows that the 
sequence $\{\CC^n\}$
is a Cauchy sequence with respect to Hausdorff distance. 
Recall that the space
of all non-empty closed subsets of  a compact metric space is complete 
if it is equipped with the Hausdorff distance.
 Thus there exists a non-empty closed set $\widetilde \CC\sub S^2$ such that  $\CC^n\to \widetilde \CC$ as $n\to \infty$ in the sense of Hausdorff convergence. Since $\post(f)\sub \CC^n$ for all $n\in \N_0$ by \ref{CCprop4}, we have $\post(f)\sub \widetilde \CC$. 

It remains to show that $\widetilde \CC$ is $f$-invariant. To see this, let $p\in \widetilde \CC$ be arbitrary. Then there exists a sequence $\{p_n\}$
of points in $S^2$ such that $p_n\in \CC^n$ for $n\in \N_0$ and $p_n\to p$ as $n\to \infty$. By continuity of $f$ we have $f(p_n)\to f(p) $ as $n\to \infty$. Moreover, \ref{CCprop1} implies that $f(p_n)\in \CC^{n-1}$ for $n\in \N$. Hence $f(p)\in \widetilde\CC$, and so the set $\widetilde \CC$ is indeed $f$-invariant. 
\end{proof}

As an application of the preceding setup  we prove a statement 
 that gives a necessary and sufficient condition for the map $\widehat{f}$ 
 in Theorem~\ref{thm:exinvcurvef} to be combinatorially expanding.
 
 \begin{prop}\label{prop:nscombexp}
   Let $f\:S^2\ra S^2$ be a Thurston map with $\#\post(f)\ge 3$,
   and let the isotopy $H^0\:S^2\times I\ra S^2$ and Jordan curves $\CC^n$ for $n\in \N_0$ be defined as above. 
   
 Then $\widehat{f}=H^0_1 \circ f$ is
  combinatorially expanding for $\CC^1=\CC'$ if and only if there exists $n\in \N$ such
  that no $n$-tile for  $(f,\CC^0)$ joins  opposite sides
  of $\CC^n$. 
  \end{prop}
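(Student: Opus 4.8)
The plan is to relate the combinatorial expansion of $\widehat f$ for $\CC^1$ to combinatorial data living on the original curve $\CC^0$ via the isotopies $H^n$ and the identification of preimages of curves under these isotopies. The basic idea is that $\widehat f$ is combinatorially expanding for $\CC^1$ exactly when the cell decompositions $\DD^n(\widehat f, \CC^1)$ have the property that no $n$-tile joins opposite sides of $\CC^1$ for some $n$, and this condition can be transported to one about $n$-tiles for $(f, \CC^0)$ and the curve $\CC^n$.

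First I would unwind the definitions. Set $h_n = H^n_1$ for $n\in\N_0$, so $h_0 = H^0_1$ and $\widehat f = h_0\circ f$. Recall from the remarks following Theorem~\ref{thm:exinvcurvef} that $\widehat f$ is a Thurston map with $\post(\widehat f)=\post(f)$ and that $\CC^1$ is $\widehat f$-invariant. Using $f\circ h_n = h_{n-1}\circ f$ (the lifting relation defining $H^n$) together with Lemma~\ref{lem:lifts_inverses}, one shows by induction that $\widehat f{}^{\,n} = h_0\circ h_1\circ\cdots\circ h_{n-1}\circ f^n$ — more precisely, that $\widehat f{}^{\,n}$ factors through conjugates of $f^n$ — and hence, again by Lemma~\ref{lem:lifts_inverses} applied repeatedly, that $\widehat f{}^{\,-n}(\CC^1) = (h_0\circ\cdots\circ h_{n-1})(f^{-n}(\CC^0))$. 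Actually the cleaner route is: since $\widehat f{}^{\,n}$ is cellular for the pullback decomposition and the maps $h_j$ are homeomorphisms isotopic to the identity rel $f^{-j}(\post(f))$ with $h_j(\CC^j)=\CC^{j+1}$, the image under $h_0\circ\cdots\circ h_{n-1}$ of the cell decomposition $\DD^n(f,\CC^0)$ is a cell decomposition of $S^2$ for which $\widehat f{}^{\,n}$ is cellular, so by the uniqueness part of Lemma~\ref{lem:pullback} this image equals $\DD^n(\widehat f, \CC^1)$. Thus the $n$-tiles for $(\widehat f, \CC^1)$ are precisely the images of the $n$-tiles for $(f, \CC^0)$ under the homeomorphism $\Phi_n := h_0\circ h_1\circ\cdots\circ h_{n-1}$, and $\Phi_n(\CC^0) = \CC^n$ by the definition of the sequence.

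Given this, the translation is immediate. Since $\Phi_n$ is a homeomorphism carrying $\CC^0$ to $\CC^n$, carrying $\post(f)$ to itself (each $h_j$ fixes $\post(f)$ pointwise), and carrying each $n$-tile $X$ for $(f,\CC^0)$ to an $n$-tile $\Phi_n(X)$ for $(\widehat f, \CC^1)$, a connected union of $n$-tiles for $(f,\CC^0)$ joins opposite sides of $\CC^0$ (in the sense of Definition~\ref{def:connectop}, which only depends on which $0$-edges, i.e.\ which arcs of $\CC^0$ between postcritical points, the set meets) if and only if its image joins opposite sides of $\CC^n$. But joining opposite sides of $(\CC^n, \post(f))$ with respect to the subdivision into arcs by $\post(f)$ is exactly the notion "joins opposite sides of $\CC^n$" — here one needs $\#\post(f)\ge 3$, which holds by Corollary~\ref{cor:no<3}. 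Hence $D_n(\widehat f, \CC^1) = $ the minimal number of $n$-tiles for $(f,\CC^0)$ in a connected set joining opposite sides of $\CC^n$; in particular $D_n(\widehat f, \CC^1)\ge 2$ if and only if no single $n$-tile for $(f,\CC^0)$ joins opposite sides of $\CC^n$. By Definition~\ref{def:combexp} (combinatorial expansion of $\widehat f$ for the invariant curve $\CC^1$ means $D_{n_0}(\widehat f,\CC^1)\ge 2$ for some $n_0\in\N$), this is precisely the asserted equivalence.

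The main obstacle I anticipate is the careful bookkeeping in the inductive identification $\DD^n(\widehat f, \CC^1) = \{\Phi_n(c): c\in\DD^n(f,\CC^0)\}$ — in particular verifying that $\widehat f{}^{\,n}$ is genuinely cellular for the pair $(\{\Phi_n(c)\}, \DD^0(\widehat f,\CC^1))$, so that the uniqueness in Lemma~\ref{lem:pullback} applies. This requires keeping straight the commuting relations $f\circ h_n = h_{n-1}\circ f$ across all levels and checking that $\DD^0(\widehat f,\CC^1) = \{h_0(c): c\in \DD^0(f,\CC^0)\}$ to start the induction (which uses $h_0(\CC^0)=\CC^1$ and $h_0|\post(f) = \id$). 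Once that identification is in hand, everything else is a direct consequence of the homeomorphism $\Phi_n$ preserving the combinatorial structure, and the proof is short. An alternative, essentially equivalent packaging would be to invoke Lemma~\ref{lem:cexp_Cinv} directly with $F = \widehat f$, $g = \widehat g := f\circ h_0$ (or an appropriate conjugate), $h_0 = \Phi_1$ or similar, reducing the statement to the already-proved fact that $D_n$ is a conjugacy invariant under the hypotheses there; but the hands-on approach via $\Phi_n$ is more transparent here.
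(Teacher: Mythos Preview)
Your strategy is exactly the paper's: exhibit a homeomorphism that simultaneously carries the $n$-tiles for $(\widehat f,\CC^1)$ to the $n$-tiles for $(f,\CC^0)$ and carries $\CC^1$ to $\CC^n$, while fixing $\post(f)$ pointwise; then ``joins opposite sides'' transfers under this homeomorphism and the equivalence is immediate. But the bookkeeping you flagged as the main obstacle is indeed where your write-up goes wrong, and in a way that breaks the argument as stated.

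From $f\circ h_k = h_{k-1}\circ f$ one gets by induction
\[
\widehat f{}^{\,n} \;=\; h_0\circ f^n\circ h_{n-1}\circ\cdots\circ h_1,
\]
hence, using $h_0^{-1}(\CC^1)=\CC^0$,
\[
f^{-n}(\CC^0)\;=\;(h_{n-1}\circ\cdots\circ h_1)\bigl(\widehat f{}^{\,-n}(\CC^1)\bigr).
\]
So the correct connecting homeomorphism is $\Psi_n:=h_{n-1}\circ\cdots\circ h_1$ (no $h_0$, and composed in this order). It sends the $1$-skeleton $\widehat f{}^{\,-n}(\CC^1)$ of $\DD^n(\widehat f,\CC^1)$ to the $1$-skeleton $f^{-n}(\CC^0)$ of $\DD^n(f,\CC^0)$, hence (by Proposition~\ref{prop:celldecomp}\,(v), or your Lemma~\ref{lem:pullback} route) it sends $n$-tiles for $(\widehat f,\CC^1)$ to $n$-tiles for $(f,\CC^0)$. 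Moreover $\Psi_n(\CC^1)=\CC^n$ because $h_j(\CC^j)=\CC^{j+1}$. Your map $\Phi_n=h_0\circ h_1\circ\cdots\circ h_{n-1}$ does not satisfy $\Phi_n(\CC^0)=\CC^n$ (there is no reason $h_{n-1}$ should preserve $\CC^0$), and the direction of the tile correspondence is reversed from what you wrote. Once you replace $\Phi_n$ by $\Psi_n$ and run the argument from the $(\widehat f,\CC^1)$ side, everything you said afterward goes through verbatim and matches the paper's proof.
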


 \begin{proof}   Let $H^n$ for  $n\in \N_0$ be the isotopies used in the definition of the curves $\CC^n$. Set $h_n\coloneqq  H_1^n$. Then $\widehat f=
h_0\circ f$, $\CC^{n+1}=h_n(\CC^n)$,  and  $h_n\circ f = f
  \circ h_{n+1}$ for  $n\in \N_0$.  It follows by induction that for $n\in \N$ we have 
   $$
    \widehat{f}^n =h_0\circ f \circ \dots \circ h_0\circ f= h_0 \circ f^n \circ h_{n-1} \circ \dots \circ h_1, $$
    and so
    $$  h_0 \circ f^n= \widehat{f}^n\circ h_1^{-1}\circ \dots
    \circ h_{n-1}^{-1} . $$
    Hence 
  $$  f^{-n}(\CC^0)=f^{-n}(h_0^{-1}(\CC^1))=
    ( h_{n-1}\circ \dots \circ h_1) ( \widehat{f}^{-n}(\CC^1)). $$

Recall that the $n$-tiles for  $(f,\CC^0)$ are the closures of the
complementary components of $f^{-n}(\CC^0)$, and the $n$-tiles for $(\widehat f, \CC^1)$  the closures of the
complementary components of $\widehat {f}^{-n}(\CC^1)$ (Proposition \ref{prop:celldecomp}~\ref{item:nedgesC}).  So from the previous identity we conclude  that the
  $n$-tiles for  $(f,\CC^0)$ are precisely the images of
  the $n$-tiles  for  $(\widehat{f},\CC^1)$ under the
  homeomorphism $h_{n-1}\circ \dots \circ h_1$. Note that this
  homeomorphism is isotopic to $\id_{S^2}$ rel.  $\post(f)=\post(\widehat f)$ and maps $\CC^1$ to $\CC^n$. Thus  no  
  $n$-tile for  $(\widehat{f},\CC^1)$ joins   opposite
  sides of $\CC^1$ if and only if no  $n$-tile for $(f,\CC^0)$ joins   opposite sides of $\CC^n$. 

  Now $\widehat{f}$ is combinatorially expanding for $\CC^1$ if and only if 
  there exists $n\in \N$ such that  no $n$-tile for   $(\widehat{f}, \CC^1)$ joins  opposite sides of $\CC^1$. By what we have seen, this is the case if and only if there exists $n\in \N$ such that no $n$-tile for $(f,\CC^0)$
  joins  opposite sides of $\CC^n$. 
  \end{proof}

Let us now assume that our Thurston map $f$ is expanding. 
Then the curves $\CC^n$ Hausdorff converge to an $f$-invariant
closed set $\widetilde{\CC}$ by
Lem\-ma~\ref{lem:CCnprop}~\ref{CCprop7}. 
In general, $\widetilde \CC$ will not be a
Jordan curve (see Example~\ref{ex:Cinv_notcexp}). The following proposition shows that $\widetilde \CC$ is a Jordan curve if  the map $\widehat{f}=H^0_1 \circ f$ is combinatorially expanding for $\CC^1$.  Actually, one can  show that this condition is also  necessary for  $\widetilde \CC$ to be a Jordan curve, but we will not present the proof for this statement  as it is somewhat involved.

\begin{prop}[Iterative procedure for invariant curves]
  \label{prop:invCit}
   Let $f\:S^2\ra S^2$ be an expanding Thurston map,  and suppose the isotopy $H^0\:S^2\times I\ra S^2$ and Jordan curves $\CC^n$ for $n\in \N_0$ are  defined as above. 
   
   If $\widehat{f}=H^0_1 \circ f$ is combinatorially expanding for
      $\CC^1=\CC'$,  then 
$\CC^n$  Hausdorff converges   to a
       Jordan curve $\widetilde \CC\sub S^2$ as  $n\to \infty$. 
    In this case, the curve  $\widetilde \CC$ is $f$-invariant 
and $\post(f)\sub \widetilde \CC$. Moreover, $\widetilde{\CC}$ is
isotopic to $\CC^1$ rel.\ $f^{-1}(\post(f))$. 
\end{prop}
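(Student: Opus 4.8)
\textbf{Proof proposal for Proposition~\ref{prop:invCit}.}
The plan is to combine the iterative setup of this subsection with the already-established existence theorem (Theorem~\ref{thm:exinvcurvef}) and the conjugacy machinery (Theorem~\ref{thm:exppromequiv}). First, since $\widehat f=H^0_1\circ f$ is combinatorially expanding for $\CC^1$ and $\CC^1\subset f^{-1}(\CC^0)$, condition~(ii) of Theorem~\ref{thm:exinvcurvef} holds with $\CC=\CC^0$, $\CC'=\CC^1$, $H=H^0$. So there exists an $f$-invariant Jordan curve $\widetilde\CC\subset S^2$ with $\post(f)\subset\widetilde\CC$ that is isotopic to $\CC^0$ rel.\ $\post(f)$ and isotopic to $\CC^1$ rel.\ $f^{-1}(\post(f))$; this gives the ``furthermore'' clause for free, once we know the Hausdorff limit of the $\CC^n$ actually equals this $\widetilde\CC$. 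The substance of the proposition is the convergence $\CC^n\to\widetilde\CC$ in the Hausdorff sense and the fact that the limit is a Jordan curve.

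The key step is to track the curve $\widetilde\CC$ through the conjugacy and identify $\CC^n$ as its image under the natural approximating homeomorphisms. Following the proof of Theorem~\ref{thm:exinvcurvef}, by Corollary~\ref{cor:combexp1} there is a homeomorphism $\phi\colon S^2\to S^2$ isotopic to $\id_{S^2}$ rel.\ $\post(f)$ with $\phi(\CC^1)=\CC^1$ such that $g=\phi\circ\widehat f=\phi\circ H^0_1\circ f$ is an expanding Thurston map; note $g$ is Thurston equivalent to $f$ via $h_0=\phi\circ H^0_1$, $h_1=\id_{S^2}$, and $g$ has the $g$-invariant curve $\CC^1$. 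Since $f$ and $g$ are both expanding and Thurston equivalent, Theorem~\ref{thm:exppromequiv} gives a homeomorphism $h\colon S^2\to S^2$ isotopic to $\id_{S^2}$ rel.\ $f^{-1}(\post(f))$ with $f\circ h=h\circ g$, obtained as the limit $h=\lim_n h_n$ of the tower of lifted homeomorphisms $h_n$ built from an initial isotopy between $h_0$ and $h_1=\id_{S^2}$. The plan is then to show, by induction on $n$, that $h_n(\CC^1)=\CC^n$ — here one uses Lemma~\ref{lem:lifts_inverses}, the $g$-invariance of $\CC^1$ (so $\CC^1\subset g^{-1}(\CC^1)$), and the uniqueness statement Lemma~\ref{lem:CCnprop}~\eqref{CCprop5} to pin down each lift $h_n(\CC^1)$ as the unique Jordan curve in $f^{-1}(\CC^{n-1})$ isotopic to $\CC^1$ rel.\ $f^{-1}(\post(f))$, which is exactly $\CC^n$. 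Granting this, Lemma~\ref{lem:exp_shrink} (exponential shrinking of tracks of the tower of isotopies) forces $h_n\to h$ uniformly, hence $\CC^n=h_n(\CC^1)\to h(\CC^1)$ in the Hausdorff sense; setting $\widetilde\CC:=h(\CC^1)$ yields a Jordan curve. The $f$-invariance and $\post(f)\subset\widetilde\CC$ then follow either directly ($f(\widetilde\CC)=f(h(\CC^1))=h(g(\CC^1))\subset h(\CC^1)=\widetilde\CC$, and $\post(f)=h(\post(g))=h(\post(f))\subset h(\CC^1)=\widetilde\CC$) or from Lemma~\ref{lem:CCnprop}~\eqref{CCprop7}; and $\widetilde\CC$ is isotopic to $\CC^1$ rel.\ $f^{-1}(\post(f))$ because $h$ is.

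The main obstacle I expect is the inductive identification $h_n(\CC^1)=\CC^n$. The subtlety is that the homeomorphisms $h_n$ come from lifting one fixed initial isotopy (between $h_0$ and $\id_{S^2}$) for the pair $(f,g)$, whereas the curves $\CC^n$ are defined by lifting the \emph{different} initial isotopy $H^0$ for $f$ alone; these two towers of lifts are a priori unrelated. What saves the argument is that we do not need the homeomorphisms to agree, only the images of the curve: Lemma~\ref{lem:CCnprop}~\eqref{CCprop5} says $\CC^n$ is the \emph{unique} Jordan curve in $f^{-1}(\CC^{n-1})$ isotopic to $\CC^1$ rel.\ $f^{-1}(\post(f))$, and one checks that $h_n(\CC^1)$ satisfies both properties — the inclusion $h_n(\CC^1)\subset f^{-1}(h_{n-1}(\CC^1))=f^{-1}(\CC^{n-1})$ via Lemma~\ref{lem:lifts_inverses} and $\CC^1\subset g^{-1}(\CC^1)$, and the isotopy property because consecutive $h_n$ are isotopic rel.\ $f^{-1}(\post(f))$ (and $h_1(\CC^1)=\CC^1$, which needs the base case $h_1=\id_{S^2}$ or a small adjustment ensuring this). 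So the real work is verifying the base case and keeping careful track of which sets are invariant under $g$ versus $f$; the rest is a clean application of already-proved lemmas.
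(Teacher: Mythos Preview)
Your approach is correct and the inductive identification $h_n(\CC^1)=\CC^n$ goes through as you outline (once one checks, as you implicitly do, that $g^{-1}(\post(g))=f^{-1}(\post(f))$, so the tower isotopies are rel.\ the right set). But the paper's proof is considerably more direct and avoids the auxiliary map $g$ and the conjugacy tower altogether.

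The paper simply takes the invariant curve $\widetilde\CC$ from Theorem~\ref{thm:exinvcurvef} as a black box, chooses an isotopy $K^0$ rel.\ $\post(f)$ with $K^0_0=\id_{S^2}$ and $K^0_1(\widetilde\CC)=\CC^0$, and lifts it repeatedly by $f$ \emph{alone} (via Proposition~\ref{prop:isotoplift}) to get isotopies $K^n$ with $K^n_0=\id_{S^2}$ and $f\circ K^n_t=K^{n-1}_t\circ f$. The same uniqueness lemma you invoke (Lemma~\ref{lem:CCnprop}~\eqref{CCprop5}) then shows $K^n_1(\widetilde\CC)=\CC^n$: one has $K^n_1(\widetilde\CC)\subset K^n_1(f^{-1}(\widetilde\CC))=f^{-1}(K^{n-1}_1(\widetilde\CC))=f^{-1}(\CC^{n-1})$ by $f$-invariance of $\widetilde\CC$, and $K^n_1(\widetilde\CC)$ is isotopic to $\widetilde\CC$ (hence to $\CC^1$) rel.\ $f^{-1}(\post(f))$. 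Since $K^n_1\to\id_{S^2}$ uniformly by Lemma~\ref{lem:exp_shrink}, one gets $\CC^n=K^n_1(\widetilde\CC)\to\widetilde\CC$ immediately.

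What the paper's route buys is economy: no need to reconstruct $g$ via Corollary~\ref{cor:combexp1}, no two-map conjugacy tower, and the convergence is transparent because the approximating homeomorphisms tend to the identity rather than to a nontrivial conjugacy $h$. Your route essentially re-derives the content of Theorem~\ref{thm:exinvcurvef} inside the proof; it works, but the paper realizes that once $\widetilde\CC$ exists, a single-map lifting tower anchored at $\widetilde\CC$ suffices.
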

By Theorem~\ref{thm:uniqc} the curve $\widetilde{\CC}$ is  the unique Jordan curve with the given properties.

\begin{proof}   Suppose that $\widehat f=H^0_1\circ f$ is combinatorially expanding for $\CC^1$.  From Theorem
  \ref{thm:exinvcurvef} it follows that there exists an $f$-invariant
  Jordan curve $\widetilde{\CC}\sub S^2$ with $\post(f)\sub \widetilde \CC$ that is isotopic to $\CC^0$ rel.\ $\post(f)$ and isotopic to  $\CC^1$ rel.\
  $f^{-1}(\post(f))$. Let $K^0\colon S^2\times I\to S^2$ be an isotopy
  rel.\ $\post(f)$ that deforms $\widetilde{\CC}$ to $\CC^0$; 
  so $K^0_0=\id_{S^2}$ and  $K^0_1(\widetilde{\CC})= \CC^0$. Using 
  Proposition \ref{prop:isotoplift} repeatedly, we can find isotopies $K^n\: S^2\times I\ra S^2$ rel.\ $f^{-1}(\post(f))$ with $K^n_0=\id_{S^2}$ such that 
  $f\circ K^n_1=K^{n-1}_1\circ f$ for $n\in \N$. 

  \smallskip
  {\em Claim.} $ \widetilde \CC^n\coloneqq  K^n_1(\widetilde
  \CC)=\CC^n$ for all $n\in \N_0$.

\smallskip 
  We prove this claim by induction on $n$; it follows from the choice of $K^0$   for
  $n=0$. Assume that the statement is true  for some $n\in \N_0$. Then 
 $K_1^{n}(\widetilde \CC)= \CC^n$, and so by Lemma
  \ref{lem:lifts_inverses} we  have 
 $$\widetilde \CC^{n+1}=K_1^{n+1}(\widetilde \CC)\sub
 K_1^{n+1}(f^{-1}(\widetilde \CC))=f^{-1}( K_1^{n}(\widetilde \CC))=
 f^{-1}(\CC^n).$$
   Since $K^{n+1}$ is  an isotopy rel.\
  $f^{-1}(\post(f))$, the curve $\widetilde \CC^{n+1}$ is isotopic to
  $\widetilde \CC$ and hence to $\CC^1$ rel.\  $f^{-1}(\post(f))$. So
  Lemma \ref{lem:CCnprop}~\ref{CCprop5} implies that 
  $\widetilde \CC^{n+1}=\CC^{n+1}$. This proves the  claim. 

\smallskip
It follows from    Lemma \ref{lem:exp_shrink} that  the maps $K^n_1$ converge  uniformly to the
  identity on $S^2$ as $n\to \infty$ . Hence $\CC^n=K_1^n(\widetilde{\CC})$  Hausdorff
 converges to the Jordan curve  $\widetilde{\CC}$ as $n\to \infty$. 
 The  statement follows. 
\end{proof}

\begin{rem}
  \label{rem:iterate_given_Ct} If $f\: S^2\ra S^2$ is an expanding Thurston map, then 
  every $f$-invariant Jordan curve $\widetilde{\CC}$ with
  $\post(f)\subset \widetilde{\CC}$ can  be obtained by our  iterative
  procedure. Indeed, suppose  that  $\widetilde{\CC}$ is such a curve. 
Trivially,   we can then  take $\CC=\CC^0=\widetilde \CC$,  $\CC'=\CC^1=\widetilde \CC$, and $H^0_t=\id_{S^2}$ for $t\in I$. Then $\CC^n=\widetilde
  \CC$ for all $n\in \N_0$  and so  $\CC^n \to \widetilde \CC$ as $n\to \infty$. 
  
  Actually, a  much stronger statement is true. Namely, we can start 
  with {\em any} Jordan curve $\CC$ in the same isotopy class rel.\ 
  $\post(f)$ as $\widetilde \CC$. Suppose that   $\CC$ is   such a curve. First, we  claim that then there exists a  unique Jordan curve $\CC'\sub f^{-1}(\CC)$ that is isotopic to $\widetilde \CC$ rel.\ $f^{-1}(\post(f))$.  To see this, let $K^0\colon S^2\times I \ra 
  S^2$ be an isotopy rel.\ $\post(f)$ with $K^0_0=\id_{S^2}$ and 
   $K^0_1(\widetilde{\CC})=\CC$. 
 By  Proposition
  \ref{prop:isotoplift} we can  lift $K^0$ by $f$ to an isotopy $K^1$ rel.\ $f^{-1}(\post(f))$  with  $K^1_0=\id_{S^2}$ and $K^0_t
  \circ f = f\circ K^1_t$ for $t\in I$.   
Then the Jordan curve $\CC': = K^1_1(\widetilde{\CC})$ satisfies
  \begin{equation*}
    \CC'= K^1_1(\widetilde{\CC}) \subset
    K^1_1(f^{-1}(\widetilde{\CC})) = f^{-1}(K^0_1(\widetilde{\CC})) =
    f^{-1}(\CC). 
  \end{equation*}
  Here we used $\widetilde{\CC}\subset f^{-1}(\widetilde{\CC})$ and Lemma \ref{lem:lifts_inverses}.  This shows existence of a curve $\CC'$ with the desired properties. Uniqueness of $\CC'$ follows from
   Lemma~\ref{lem:isoJcin1ske} (applied to $\DD=\DD^1(f,\CC)$). 
   
  Define  $H\: S^2\times I\ra S^2$ by setting 
 $H_t=K^1_t\circ (K^0_t)^{-1}$ for $t\in I$. Then $H$ is an isotopy rel.\ $\post(f)$ that deforms $\CC^0\coloneqq \CC$ into 
 $\CC^1\coloneqq \CC'$. Indeed, we have  $ H_0=\id_{S^2}$ and 
 $$H_1(\CC^0)=K^1_1( (K^0_1)^{-1}(\CC))=K^1_1(\widetilde \CC)=\CC'=\CC^1. $$   
   Moreover,   
   $$\widehat{f}\coloneqq  H_1\circ f=
  K^1_1\circ (K^0_1)^{-1} \circ f = K^1_1 \circ f \circ
  (K^1_1)^{-1}. $$ Thus it follows from Lemma \ref{lem:cexp_Cinv} that
  $\widehat{f}$ is combinatorially expanding for $\CC^1=\CC'=K^1_1(\widetilde \CC)$. 

  Define the sequence $\{\CC^n\}$ starting from $\CC^0$ and $ \CC^1$ as
  before. From Proposition \ref{prop:invCit} it follows that as $n\to \infty$ the curves $\CC^n$ Hausdorff
  converge to an $f$-invariant Jordan curve that is isotopic to $\CC^1$, and 
  hence isotopic to $\widetilde{\CC}$,  rel.\
  $f^{-1}(\post(f))$. From Theorem \ref{thm:uniqc} it follows that the
  unique such curve is $\widetilde{\CC}$. Thus $\CC^n\to\widetilde{\CC}$ in the   Hausdorff sense as $n\to \infty$.  
\end{rem}

\begin{rem}\label{rem:C_arc_replace}
Let $f\: S^2\ra S^2$ be a Thurston map with $\post(f)\ge 3$. Then
in the inductive definition of 
  $\CC^{n+1}=H^n_1(\CC^n)$ one  
 can  construct $\CC^{n+1}$  from $\CC^n$ by an   {\em edge replacement
   procedure}  
without explicitly knowing the isotopy  
$H^n$. To explain this, suppose that $n\in \N$,  and that $\CC^n$ has
already been constructed (starting from given curves $\CC^0$ and
$\CC^1$). We know by Lemma~\ref{lem:CCnprop}~\ref{CCprop4a} 
that $\CC^n$ consists of $n$-edges $\alpha^n$  for $(f,\CC^0)$.  
  Then 
$\CC^{n+1}$ is obtained from $\CC^n$ by replacing 
each  $n$-edge $\alpha^n\sub \CC^n$ with  a certain arc  $\beta^{n+1}$
with the same endpoints as $\alpha^n$. 

Indeed, we can set  $\beta^{n+1}\coloneqq H^n_1(\alpha^n)\sub \CC^{n+1}$. Then the union of these arcs $\beta^{n+1}$ is equal to $\CC^{n+1}$. Moreover, since $H^n$ is an isotopy relative  to the set $f^{-n}(\post(f))$ of $n$-vertices,  and $\alpha^n$ is an $n$-edge for $(f,\CC^0)$ and so has $n$-vertices as endpoints, the arcs $\alpha^n$ and $\beta^{n+1}$ have the same endpoints.

Now the arc  $\beta^{n+1}$ is the unique arc in  $f^{-n}(\CC^1)$ that is isotopic 
  to $\alpha^n$ rel.\ $f^{-n}(\post(f))$. This property   often allows
  one to determine $\beta^{n+1}$  directly from $\alpha^n$.

To see that this  characterization of $\beta^{n+1}$ holds, note that by Lem\-ma~\ref{lem:CCnprop}~\ref{CCprop1} we have $\beta^{n+1}\sub \CC^{n+1}\sub f^{-n}(\CC^1)$. 
Moreover, $\beta^{n+1}=H^n_1(\alpha^n)$ is isotopic to $\alpha^n$ rel.\ $f^{-n}(\post(f))$.

Suppose $\widetilde \beta^{n+1}\sub f^{-n}(\CC^1)$ is another  arc
that is isotopic to $\alpha^n$ rel.\ $f^{-n}(\post(f))$.  
Then the arcs $\beta^{n+1}$ and  $\widetilde \beta^{n+1}$ have  endpoints in 
$f^{-n}(\post(f))$, but contain no other points in this set, since
this is true for $\alpha^n$. This and the inclusions
$\beta^{n+1},\widetilde \beta^{n+1}\sub f^{-n}(\CC^1)$ imply that  
$\beta^{n+1}$ and $\widetilde \beta^{n+1}$ are  $n$-edges for $(f, \CC^1)$
(see the argument in the proof of Lemma~\ref{lem:CCnprop}~\ref{CCprop4a}).   
Since 
 $\beta^{n+1}$ and $\widetilde \beta^{n+1}$ are isotopic relative to the set $f^{-n}(\post(f))$,  which is the $0$-skeleton of $\DD^n(f, \CC^1)$, it follows from the first part of the proof of Lemma~\ref{lem:isoJcin1ske} that $\beta^{n+1}=\widetilde 
 \beta^{n+1}$ as desired.

As we have just seen, $\beta^{n+1}$ is an $n$-edge for $(f,\CC^1)$.
Since $\beta^{n+1}$ has endpoints in the set 
$f^{-n}(\post(f))\sub f^{-(n+1)}(\post(f))$ and  $\beta^{n+1}\sub\CC^{n+1}\sub 
f^{-(n+1)}(\CC^0)$, a  similar argument also shows that $\beta^{n+1}$ consists of $(n+1)$-edges for $(f,\CC^0)$.

\ifthenelse{\boolean{nofigures}}{}{
  \begin{figure}
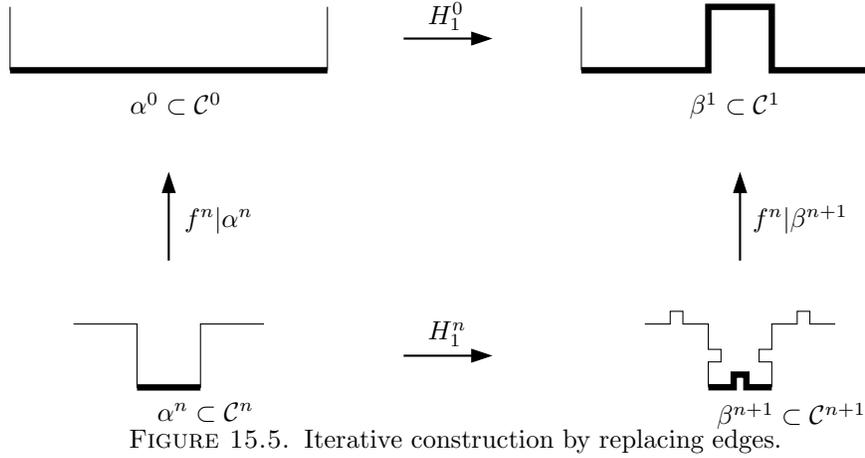

    \centering
    \begin{overpic}
      [width=12cm, 
      tics=20]{edge_replace.eps}
      \put(14,31){$\alpha^0\subset \CC^0$}   
      \put(47,41){$H^0_1$}   
      \put(47,6){$H^n_1$}
      \put(20,18.5){$f^n|\alpha^n$} 
      \put(17,-3){$\alpha^n\subset\CC^n$}
      \put(79,-3.4){$\beta^{n+1}\subset \CC^{n+1}$}
      \put(83,18.5){$f^n|\beta^{n+1}$}
      \put(76,31){$\beta^1\subset\CC^1$}                 
    \end{overpic} 
    \caption{Iterative construction by replacing edges.}
    \label{fig:replace_edge}
  \end{figure}
}

One can  look at the arc replacement procedure $\alpha^n \to \beta^{n+1}$ from yet another point of view. Since $\alpha^n$ is an $n$-edge for $(f,\CC^0)$, the map $f^n|\alpha^n$ is a homeomorphism of $\alpha^n$ 
onto the $0$-edge 
 $\alpha^0\coloneqq f^n(\alpha^n)\sub \CC^0$ for $(f, \CC^0)$
 (Proposition~\ref{prop:celldecomp}~\ref{item:fkcellular}). The endpoints of $\alpha^0$ lie in $\post(f)$.  Then  
 $\beta^1\coloneqq H^0_1(\alpha^0)$ is the unique subarc of $ \CC^1$ that has  
  the same endpoints as $\alpha^0$, but contains no other points in 
  $\post(f)$ (here it is important that $\#(\CC^1\cap\post(f))=\#\post(f)\ge 3)$.  Since 
  $H^0_1\circ f^n|\alpha^n$ is a homeomorphism of $\alpha^n$ onto 
$\beta^1$, $f^n\circ H^n_1=H^0_1\circ f^n$, and   $\beta^{n+1}=H^n_1(\alpha^n)$,  the map $f^n|\beta^{n+1}$ is  a homeomorphism of $\beta^{n+1}$ onto 
  $\beta^1$. 
Often, this information (together with the fact that $\alpha^n$ and $\beta^{n+1}$ share endpoints) is enough to determine 
  $\beta^{n+1}$ uniquely. 
We illustrate this procedure in Figure
  \ref{fig:replace_edge}. Here the map $f$ (as well as the curves
  $\CC^0, \CC^1,\dots$ and the isotopies $H^0,H^1,\dots$) are as in
  Example \ref{ex:Cit};  see also Figure \ref{fig:Cit}.

 For example, suppose that  $\beta^1$ lies in a {\em single}  $0$-tile
 $X^0$ for $(f, \CC^0)$, i.e., in one of the Jordan regions bounded by
 $\CC^0$. This is not always true, but in Example~\ref{ex:Cit} as well
 as the  Examples~\ref{ex:Cinv_notcexp} and \ref{ex:rect}  
  discussed below this is the case.   Then there exists a unique 
  $n$-tile  $X^n$ for $(f, \CC^0)$ with $\alpha^n\sub \partial X^n$ and $f^n(X^n)=X^0$; if we assign colors to tiles  for $(f, \CC^0)$  as  in Lemma~\ref{lem:colortiles}, 
then $X^n$ is the unique $n$-tile for $(f,\CC^0)$ that contains 
  $\alpha^n$ in its boundary and has the same color as $X^0$.  
  
   Consider the arc  $\widetilde \beta^{n+1}\coloneqq (f^n|X^n)^{-1}(\beta^1)\sub X^n$. Then 
 $\widetilde \beta^{n+1}$  has the same endpoints as 
 $(f^n|X^n)^{-1}(\alpha^0)=\alpha^n$ and is contained in $f^{-n}(\CC^1)$.
 Moreover, $\widetilde \beta^{n+1}$ is isotopic to $\alpha^n$ rel.\ $f^{-n}(\post(f))$; this easily follows from Lemma~\ref{twoarcs}, since our assumptions imply that one can find a suitable   simply connected region  $\Om\sub S^2$ that contains $\widetilde \beta^{n+1}$ and $\alpha^n$ and no point in $f^{-n}(\post(f))$ except the  endpoints of $\widetilde \beta^{n+1}$ and $\alpha^n$. 
By what we have seen above,  we conclude $\beta^{n+1}=\widetilde \beta^{n+1}$, and so 
 \begin{equation}\label{eq:arcreplacement}
 \beta^{n+1}=(f^n|X^n)^{-1}(\beta^1). 
  \end{equation} 
 
  In the special case under consideration, this leads to a very convenient edge replacement procedure that can be summarized as follows: 
  Suppose  the arc $\beta^1\sub \CC^1$ corresponding to
  $\alpha^0=f^n(\alpha^n)\sub \CC^0$ lies in a single  $0$-tile
  $X^0$, and let $X^n$ be the $n$-tile that contains $\alpha^n$
  in its boundary and has the same color as $X^0$ (so that   $f^n(X^n)=X^0)$. Then $\alpha^n$  is replaced with  the arc $\beta^{n+1}$ in $X^n$ that corresponds to 
  $\beta^1\sub X^0$ under the homeomorphism $f^n|X^n$ of $X^n$ onto $X^0$.    \end{rem}

The next example illustrates what happens if the map 
$\widehat{f}$ in Proposition~\ref{prop:invCit} is not combinatorially expanding. 

\begin{ex}
  \label{ex:Cinv_notcexp}
  Let $g\colon \CDach \to \CDach$ be the Latt\`{e}s map obtained
  according to
  Theorem~\ref{thm:Lattesstruc}~\ref{item:Lattessruciii}  as a
  quotient of the map 
  \begin{equation*}
    A\: \C\ra \C, \quad u\mapsto A(u)\coloneqq 3u,
  \end{equation*}
  by a crystallographic group of type $(2222)$ as in
  \eqref{eq:specisom}.
  The map $g$ was already considered in
  Example~\ref{ex:exp_notcexp} and is represented by  
the bottom part of   Figure~\ref{fig:exp_notcexp}.
 We can identify $\CDach$ with a
  pillow that is obtained by gluing two squares together so that   the set 
  $\post(g)$ consists of the four vertices of the
  pillow.

Let $\CC^0$ be
  the equator of the pillow. The curve
  $\CC^1\subset g^{-1}(\CC^0)$ is drawn with a thick line on the
  top left in  Figure \ref{fig:Cinv_notexp}. Clearly, 
 there is  an
  isotopy $H^0$ rel.\ $\post(g)$  that deforms $\CC^0$ to $\CC^1$. Note that
  $\widehat{g}=H^0_1\circ g$ is not combinatorially expanding for
  $\CC^1$ (see Figure \ref{fig:exp_notcexp}).  Starting with the
  data $\CC^0$, $\CC^1$, $H^0$, we can inductively define Jordan
  curves $\CC^n$ as described before.
  

Based on the discussion in Remark~\ref{rem:iterate_given_Ct}, one can obtain $\CC^{n+1}$ from $\CC^n$ by an edge  replacement procedure.  
It is  determined  by  how a $0$-edge is  replaced with an  arc consisting of $1$-edges in the transition from $\CC^0$ to $\CC^1$.
  In particular, each of the two $0$-edges drawn horizontally in Figure~\ref{fig:Cinv_notexp} is replaced with  itself. 
Since every
  horizontal $n$-edge for $(g, \CC^0)$ is mapped by $g^n$ to a horizontal $0$-edge,
  it is also replaced with  itself in the transition  of 
  $\CC^{n}$
  to  $\CC^{n+1}$; so  if in one step we obtain a horizontal
  edge, then it remains unchanged in subsequent steps.

It follows that $\CC^n\to \widetilde{\CC}$ as $n\to \infty$ in the sense
  of Hausdorff convergence, where the set $\widetilde{\CC}$ is as 
  indicated on the right in  Figure \ref{fig:Cinv_notexp}. The set  $\widetilde{\CC}$ is not a Jordan curve and
  $\CDach\setminus \widetilde{\CC}$ has three components.  For  more  general maps the ``self-intersections'' of such a  limit set
  $\widetilde{\CC}$ can of course be more complicated. 
\end{ex}

\ifthenelse{\boolean{nofigures}}{}{
\begin{figure}
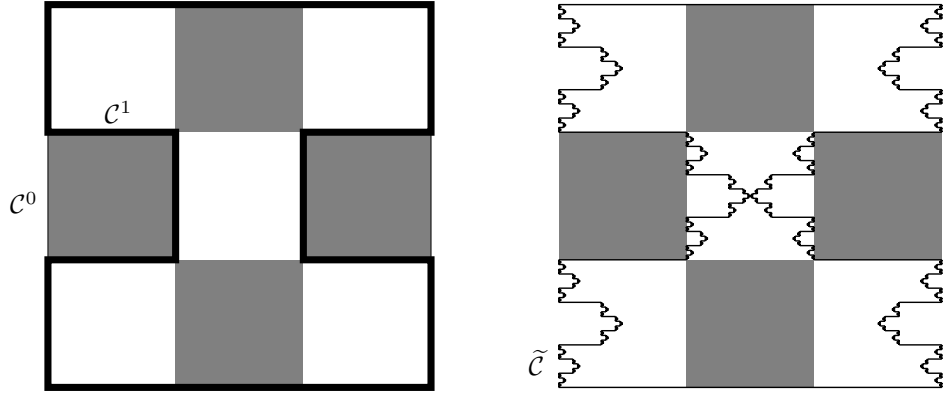

  \centering
  \begin{overpic}
    [width=12cm, 
    tics=20]{Cnot_exp.eps}
    \put(7,30){$\CC^1$}
    \put(-3.5,20){$\CC^0$}
    \put(54,2){$\widetilde{\CC}$}
  \end{overpic}
  \caption[Example where $\widetilde{\CC}$ is not a Jordan curve.] 
  {Since $\widehat{g}$ is not combinatorially expanding,
    $\widetilde{\CC}$ is not a Jordan curve.} 
  \label{fig:Cinv_notexp}
\end{figure}
}

\ifthenelse{\boolean{nofigures}}{}{
 \begin{figure}
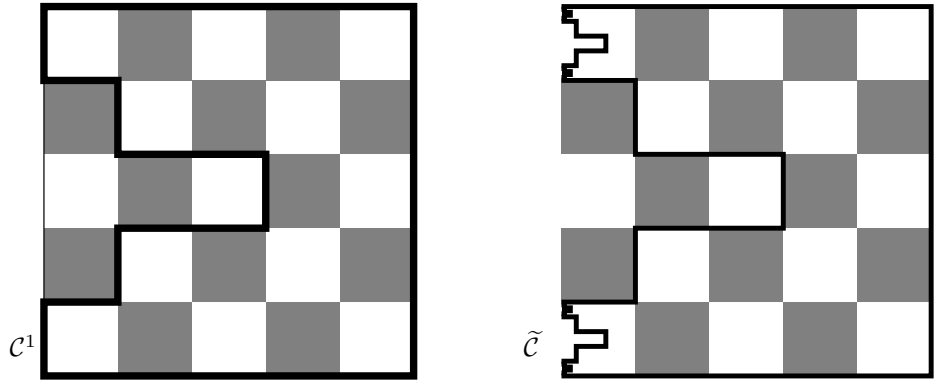

   \centering
   \begin{overpic} 
     [width=12cm, 
     tics=20]{5x5invC1.eps}
     \put(-3,3){$\CC^1$}
     \put(54,3){$\widetilde{\CC}$}
   \end{overpic}
   \caption{A non-trivial rectifiable invariant Jordan curve.}
   \label{fig:not_rect_invC}
 \end{figure}
}
  
We conclude  this section with one more example.
It shows a non-trivial invariant curve that is rectifiable.

\begin{ex}
  \label{ex:rect} Let $f$ be the map from Example
  \ref{ex:Cit}, i.e., the  Latt\`{e}s map  obtained as in
  (\ref{eq:Lattes}), where we choose     $A\colon \C\to \C$, $u
  \mapsto A(u)\coloneqq 5u$. 
  The curve $\CC=\CC^0$ is the equator of the pillow as before, and we consider cells for $(f,\CC)$. 
   On the pillow the map $f$
  sends the lower left $1$-tile to the white $0$-tile by the map
  $u\mapsto 5u$ and extends to other $1$-tiles by reflection.
 
   The  curve $\CC^1$ (which is isotopic to $\CC^0$ rel.\ $\post(f)$ by an isotopy
  $H^0$) is the thick curve indicated on the left in  Figure
  \ref{fig:not_rect_invC}. Note that no $1$-tile for $(f,\CC^0)$
joins opposite sides of $\CC^1$. Thus the sequence of curves
  $\{\CC^n\}$, defined  
  as before, Hausdorff converges to an $f$-invariant Jordan curve
  $\widetilde{\CC}$ by Proposition \ref{prop:nscombexp} and
  Proposition \ref{prop:invCit}. 

    Note that the three $0$-edges on the top,
  bottom, and right side of the pillow are deformed by $H^0$ to
  themselves. 
 This means that each  $n$-edge (for $(f,\CC^0)$) in $\CC^n$ that is  sent to one of these $0$-edges by $f^n$ remains unchanged in the passage from $\CC^n$ to $\CC^{n+1}$. 

  The resulting $f$-invariant Jordan curve $\widetilde{\CC}$ is shown
  on the right. It is not hard to see that  $\widetilde{\CC}$ is a  rectifiable curve on the pillow. Indeed, if as before we identify the top square of the pillow with $[0,1/2]^2$, then the $n$-edges have length 
$5^{-n}/2$. The curve  $\CC^n$ contains $2^n$ ``alive'' $n$-edges that will not remain unchanged in subsequent steps. In $\CC^{n+1}$ each of them is replaced with  $11$ edges of 
level $n+1$. A simple computation gives
$$\length(\CC^{n+1})= \length(\CC^{n})+\tfrac 35 (2/5)^{n}, $$  
which implies that indeed $\length(\widetilde \CC)<\infty$. 
\end{ex}

\section{Invariant curves are  quasicircles}
\label{sec:cc-quasicircle}
  Recall from Section~\ref{sec:QCgeom} that 
 a metric circle $(S,d)$ is called a  quasicircle
 if it
is quasisymmetrically equivalent to  the unit circle  in
$\R^2$ (equipped with the Euclidean  metric).  
This is the case if and only if 
$(S,d)$ is doubling and of bounded turning (see
Theorem~\ref{thm:TV}).  
  We will now verify that this is true for an invariant curve 
as in  Theorem~\ref{thm:Cquasicircle}.

\begin{proof}[Proof of Theorem \ref{thm:Cquasicircle}] Suppose $\CC$ is an $f$-invariant Jordan cur\-ve as in the statement, and  
let $\varrho$ be a visual metric on $S^2$ with  expansion factor  $\Lambda>1$. Metric notions will be for  this metric in the following. 

  In the ensuing proof, we will consider edges for $(f,\CC)$. 
Since $\CC$ is $f$-invariant, edges are subdivided by edges of higher levels (see Proposition~\ref{prop:invmarkov}~\ref{item:invmarkov4}). The Jordan curve  $\CC$ is the  union of all $0$-edges;  so this implies that $\CC$ is a union of $n$-edges for all 
$n\in \N_0$.  
If $n,k\in \N_0$ and $\widetilde e$ is an arbitrary $(n+k)$-edge with 
$\widetilde e\sub \CC$, then there exists a unique $n$-edge $e'$ with $\widetilde e\sub e'\sub \CC$.  


If $e'$ is an $n$-edge, then the number of $(n+k)$-edges
$\widetilde e$ contained in $e'$ is bounded by
$\#\post(f)\deg(f)^k$. Indeed, the map $f^n|e'$ is injective; so
the images of these $(n+k)$-edges $\widetilde e$ under the map
$f^n$ are distinct $k$-edges, and the number of $k$-edges is
equal to $\#\post(f)\deg(f)^k$ (see
Proposition~\ref{prop:celldecomp}~\ref{item:noVEX}).

After these preliminaries, we are ready to show that $\CC$ equipped with (the restriction of) $\varrho$ is a quasicircle. We first establish that 
$\CC$ is doubling.  Note that in contrast $(S^2, \varrho)$ is not doubling in general (see Theorem~\ref{thm:S2vsf}~\ref{item:S2doubling}).

Let  $x\in \CC$, and $0<r\le 2\diam(\CC)$. In order to show that $\CC$  is doubling, it suffices to cover $B(x,r)\cap \CC$ by a controlled number
of sets of diameter $<r/4$.  

It follows from Proposition~\ref{lem:expoexp} that we can find
$n\in \N_0$ depending on $r$, as well as constants $C(\asymp)>0$ and $k_0\in \N_0$  independent of $x$ and $r$  with
the following properties: 
\begin{enumerate}

\item
  \label{item:C_inv_doubling1}
  $r\asymp \Lambda^{-n}$.

\item
  \label{item:C_inv_doubling2}
  $\diam(e)< r/4$, whenever $e$ is an $(n+k_0)$-edge.

\item
  \label{item:C_inv_doubling3}
  $\dist(e,e')\ge r$, whenever  $n-k_0\ge 0$ and $e,e'$ are
  disjoint $(n-k_0)$-edges.   
\end{enumerate} 

Let $E$ be the set of all $(n+k_0)$-edges contained in $\CC$ that meet $B(x,r)$. Then the collection $E$ forms a cover of  $\CC\cap B(x,r)$ and consists of sets of diameter $<r/4$ by \ref{item:C_inv_doubling2}. 
Hence it suffices to find a uniform upper bound for  $\#E$. 
If $n< k_0$, then $\#E\le \#\post(f)\deg(f)^{2k_0}. $

Otherwise, $n-k_0\ge 0$. Then we can find an $(n-k_0)$-edge $e\sub \CC$ with $x\in e$. Let $\widetilde e$ be an arbitrary $(n+k_0)$-edge in $E$. Then  we can find  an $(n-k_0)$-edge $e'\sub \CC$ that contains $\widetilde e$. 

There exists a point $y\in \widetilde e\cap B(x,r)$.
Hence $\dist(e,e')\le \varrho(x,y)<r$. This implies $e\cap e'\ne \emptyset$ by \ref{item:C_inv_doubling3}.
So whatever $\widetilde e\in E$ is, the corresponding $(n-k_0)$-edge $e'\sub \CC$ meets the fixed $(n-k_0)$-edge $e$. This leaves at most three possibilities for $e'$, namely $e$, and the two ``neighbors`` of $e$ on $\CC$. So there are three or less
$(n-k_0)$-edges that contain all the edges
 in $E$.  Since each $(n-k_0)$-edge contains at most 
$\#\post(f)\deg(f)^{2k_0}$ edges of level $(n+k_0)$, it follows that 
$\#E\le 3\#\post(f)\deg(f)^{2k_0}$. In both  cases, we get an upper 
 bound for $\#E$ as desired.

  It remains to show that $\CC$ is of bounded turning. Let  $x,y\in
  \CC$ with $x\ne y$ be arbitrary. 
We want to 
establish the inequality $\diam(\ga)\lesssim \varrho(x,y)$ 
with a uniform constant $C(\lesssim)$ for one of the two subarcs  
  $\ga$ of $\CC$ with  endpoints $x$ and $y$.  
  For this let $n_0\ge 0$ be the smallest integer for which there exist $n_0$-edges $e_x\sub \CC$ and $e_y\sub \CC$ with $x\in e_x$,  $y\in e_y$,  
  and $e_x\cap e_y=\emptyset$.  Note that $n_0$ is well-defined, because $f$ is expanding and so the diameter of $n$-edges approaches $0$ uniformly as $n\to \infty$. 
  
  Then by  Proposition~\ref{lem:expoexp}~\ref{item:expoex1}, 
  $$\varrho(x,y)\gtrsim \Lambda^{-n_0}.$$ If $n_0=0$, then 
  $$\diam(\CC)\lesssim \varrho(x,y)$$ and there is nothing to prove.
  If $n_0\ge 1$, we can find $(n_0-1)$-edges $e'_x\sub \CC$
  and $e'_y\sub \CC$ with $x\in e'_x$, $y\in e'_y$, and $e'_x\cap e'_y\ne \emptyset$. Then $e'_x\cup e'_y$ must contain 
  one of the subarcs $\ga$ of $\CC$ with endpoints $x$ and $y$.
  Hence 
  \begin{equation*}
    \diam (\ga)
    \le 
    \diam(e'_x)+\diam(e'_y)
    \lesssim
    \Lambda^{-n_0}
    \lesssim 
    \varrho(x,y).  
  \end{equation*}
  Since the implicit
  multiplicative constants in the previous inequalities do not
  depend on $x$ and $y$, we get a bound as desired.  
\end{proof}

Recall that a metric is  visual for $f$ if and only only if it is
visual for any  iterate of $f$ (see
Proposition~\ref{prop:visualsummary}~\ref{item:vsfF}). Hence we
may apply Theorem~\ref{thm:Cquasicircle} to any 
Jordan curve $\CC\subset S^2$ with $\post(f)\subset \CC$ that is invariant
for an iterate of $f$.
In particular, the invariant Jordan curve in
Theorem~\ref{thm:main} is a 
quasicircle if equipped with a visual metric for $f$. 

A family of quasisymmetries (possibly defined on different spaces) is 
called  {\em uniformly 
quasisymmetric}\index{quasisymmetry!uniform}\index{uniformly!quasisymmetric}
if there exists a
homeomorphism $\eta\:[0,\infty)\ra [0,\infty)$ such that each map in
the family is an $\eta$-quasi\-symmetry.  Obviously, each finite
family of quasisymmetries is uniformly quasisymmetric.  If $h$ is an
$\eta$-quasisymmetry, then $h^{-1}$ is  an
$\widetilde\eta$-quasisymmetry, where $\widetilde \eta$ only depends
on $\eta$; actually, one can take $\widetilde \eta\: [0,\infty)\ra
[0,\infty)$ defined by $\widetilde \eta(0)=0$ and  $\widetilde
\eta(t)=1/\eta^{-1}(1/t)$ for $t>0$.  
This implies that if a family of maps is uniformly quasisymmetric, then the family of inverse maps is also uniformly quasisymmetric.

If $X,Y,Z$ are metric spaces, $h_1\: X\ra Y$ is $\eta_1$-quasisymmetric, and $h_2\: Y\ra Z$ is $\eta_2$-quasisymmetric, then 
$h_2\circ h_1$ is $\eta$-quasisymmetric, where $\eta=\eta_2\circ \eta_1$. Hence the family of all compatible compositions of maps in two 
uniformly quasisymmetric families  is again uniformly quasisymmetric. 

Recall (see Section~\ref{sec:QCgeom}) that an arc $\alpha$ equipped
with some metric $d$ is called a {\em quasiarc}
if there exists a quasisymmetry of the  unit interval $[0,1]$  onto $(\alpha, d)$. This is true if and only if $(\alpha, d)$ is doubling and there exists a constant $K\ge 1$ such that 
$\diam_d(\gamma)\le K d(x,y)$, whenever $x,y\in \alpha$   and $\gamma$ is the  subarc of $\alpha$ with endpoints  $x$ and $y$ (see Theorem~\ref{thm:TV}).

A family of arcs is said to consist of {\em uniform
  quasiarcs}\index{quasiarc!uniform}\index{uniform!quasiarcs}
 if there 
exists a homeomorphism $\eta\:[0,\infty)\ra [0,\infty)$ such that for
each arc $\alpha$ in the family there exists an $\eta$-quasisymmetry
$h\:[0,1]\ra \alpha$. Similarly, a family of quasicircles  is said to
consist of 
{\em uniform
 quasicircles}\index{quasicircle!uniform}\index{uniform!quasicircles} 
if there 
exists a homeomorphism $\eta\:[0,\infty)\ra [0,\infty)$ such that for each
quasicircle  $S$ in the family there exists an $\eta$-quasisymmetry
$h\:\partial \D \ra S$.  A family of quasicircles consists of uniform
quasicircles if and only if the geometric conditions characterizing
quasicircles, i.e.,  
the doubling condition and the bounded turning  condition, 
hold with uniform parameters. 
A similar statement is true for families of
quasiarcs (see \cite{TV}). 

We want to  show that if the assumptions are as in
Theorem~\ref{thm:Cquasicircle}, then all boundaries of  tiles for
$(f,\CC)$ are quasicircles and all edges for $(f,\CC)$ are
quasiarcs.  Actually, the family of all  boundaries of  tiles
consists of uniform quasicircles and the family of all edges
consists of uniform quasiarcs. One way to establish  this is to
repeat the proof of Theorem~\ref{thm:Cquasicircle} and show that the geometric conditions characterizing quasiarcs and quasicircles are true for the edges and boundaries of tiles with uniform constants.  We choose a different approach that is based on the following lemma which is of independent 
interest.

\begin{lemma}
  \label{lem:unifqs} 
  \index{uniformly!quasisymmetric}
  \index{quasisymmetry!uniform}
  \index{visual metric}\index{metric!visual}\index{r@$\varrho$}
  Let $f\:S^2\ra S^2$ be an expanding Thurston map,  and $\CC\sub S^2$ be an $f$-invariant Jordan curve with 
$\post(f)\sub \CC$. Suppose that $S^2$ is equipped with a visual metric $\varrho$ for $f$ with expansion factor $\Lambda>1$,
and   
denote by $\X^n$ for $n\in \N_0$ the set of $n$-tiles for $(f,\CC)$. Then there exists a constant $C\ge 1$ with the following property:

If $k,n\in \N_0$, $X^{n+k}\in \X^{n+k}$, and $x,y\in X^{n+k}$, then 
\begin{equation}\label{eq:unifsim}
 \frac1C \varrho(x,y)\le \frac{\varrho(f^n(x), f^n(y))}{\Lambda^{n}}\le C\varrho(x,y).
 \end{equation} 

In particular, the family 
$$\mathcal{F}=\{ f^n|X^{n+k}: k,n\in \N_0,\,   X^{n+k}\in \X^{n+k}\}$$ 
is uniformly quasisymmetric. 
\end{lemma}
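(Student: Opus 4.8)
The plan is to deduce the metric estimate \eqref{eq:unifsim} from a clean combinatorial identity for $m_{f,\CC}$, and then obtain the uniform quasisymmetry as a formal consequence. Fix $x\neq y$ in an $(n+k)$-tile $X^{n+k}$ and write $m:=m_{f,\CC}(x,y)$; since $x$ and $y$ both lie in the single $(n+k)$-tile $X^{n+k}$ we have $m\geq n+k$. Set $X^k:=f^n(X^{n+k})$; by Proposition~\ref{prop:celldecomp}~(i) this is a $k$-tile and $f^n|X^{n+k}$ is a homeomorphism of $X^{n+k}$ onto $X^k$ (so in particular $f^n(x)\neq f^n(y)$ and $m_{f,\CC}(f^n(x),f^n(y))$ is finite). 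For the upper bound I would simply iterate Lemma~\ref{lem:mprops}~(ii) to get $m_{f,\CC}(f^n(x),f^n(y))\geq m-n\geq 0$ and then feed this into the visual-metric inequality \eqref{visual} (with expansion factor $\Lambda$ and constant $A$): this gives $d(f^n(x),f^n(y))\leq A\Lambda^{-(m-n)}=A\Lambda^{n}\Lambda^{-m}\leq A^{2}\Lambda^{n}d(x,y)$, which is the right-hand inequality of \eqref{eq:unifsim}.

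The real content is the reverse inequality, for which it suffices to prove the combinatorial bound $m_{f,\CC}(f^n(x),f^n(y))\leq m-n$; the left-hand inequality of \eqref{eq:unifsim} then follows from \eqref{visual} exactly as above. Here I would exploit that $\CC$ is $f$-invariant. By Proposition~\ref{prop:invmarkov}~(iii) the tile $X^{n+k}$ is the union of all $(m+1)$-tiles contained in it (legitimate since $m+1>n+k$), so I can pick $(m+1)$-tiles $\hat A$ and $\hat B$ with $x\in\hat A\subseteq X^{n+k}$ and $y\in\hat B\subseteq X^{n+k}$. Because $m_{f,\CC}(x,y)=m<m+1$, no $(m+1)$-tile through $x$ can meet an $(m+1)$-tile through $y$ (otherwise $m_{f,\CC}(x,y)\geq m+1$ by Definition~\ref{def:mxy}); in particular $\hat A\cap\hat B=\emptyset$. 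Now $f^n$ is cellular for $(\DD^{m+1},\DD^{m+1-n})$ by Proposition~\ref{prop:celldecomp}~(i), so $f^n(\hat A)$ and $f^n(\hat B)$ are $(m+1-n)$-tiles containing $f^n(x)$ and $f^n(y)$ respectively, and they are disjoint because $f^n|X^{n+k}$ is injective and $\hat A,\hat B\subseteq X^{n+k}$, whence $f^n(\hat A)\cap f^n(\hat B)=f^n(\hat A\cap\hat B)=\emptyset$. Thus $f^n(x)$ and $f^n(y)$ lie in disjoint $(m+1-n)$-tiles, which forces $m_{f,\CC}(f^n(x),f^n(y))\leq m-n$.

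Finally, for the ``in particular'' statement I would argue abstractly: each $h=f^n|X^{n+k}$ in $\mathcal{F}$ is a homeomorphism onto the $k$-tile $X^k$ satisfying $C^{-1}d(a,b)\leq \Lambda^{-n}d(h(a),h(b))\leq C\,d(a,b)$ for all $a,b\in X^{n+k}$, i.e.\ $\Lambda^{-n}h$ is $C$-bi-Lipschitz; dividing the estimate for the pair $(u,v)$ by that for $(u,w)$ gives $d(h(u),h(v))/d(h(u),h(w))\leq C^{2}\,d(u,v)/d(u,w)$ whenever $u\neq w$, so every $h\in\mathcal{F}$ is $\eta$-quasisymmetric with the single distortion function $\eta(t)=C^{2}t$, and $\mathcal{F}$ is uniformly quasisymmetric. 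I expect the combinatorial lower bound to be the only delicate point. The subtlety it must overcome is that the tiles realizing $m_{f,\CC}$ at $x$ or $y$ need not lie inside $X^{n+k}$ — such a tile may ``poke out'' of $X^{n+k}$ across its boundary — and the remedy is precisely to drop one level and work with $(m+1)$-tiles \emph{contained in} $X^{n+k}$, which exist only because $\CC$ is invariant and hence $\DD^{m+1}$ refines $\DD^{n+k}$, together with the injectivity of $f^n$ on $X^{n+k}$. The remaining steps are routine applications of \eqref{visual}, Lemma~\ref{lem:mprops}~(ii), and the definitions.
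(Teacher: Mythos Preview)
Your approach is essentially identical to the paper's: both arguments handle the upper bound via Lemma~\ref{lem:mprops}~(ii), and for the lower bound both pick disjoint $(m+1)$-tiles $\hat A,\hat B\subseteq X^{n+k}$ through $x,y$ (using the invariance of $\CC$ and Proposition~\ref{prop:invmarkov}~(iii)), push them forward by $f^n$ to disjoint $(m+1-n)$-tiles, and read off the metric bound; the uniform quasisymmetry with $\eta(t)=C^2t$ is also argued the same way.

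There is one imprecise step, however. From ``$f^n(x)$ and $f^n(y)$ lie in disjoint $(m+1-n)$-tiles'' you conclude $m_{f,\CC}(f^n(x),f^n(y))\le m-n$. This does not follow: exhibiting \emph{one} disjoint pair of $(m+1-n)$-tiles only shows $m'_{f,\CC}(f^n(x),f^n(y))\le m+1-n$ in the notation of Lemma~\ref{lem:mprops}~(v), not that \emph{every} such pair is disjoint (the image points could lie on tile boundaries). The paper sidesteps this by applying Lemma~\ref{lem:expoexp}~(i) directly to the disjoint cells $f^n(\hat A),f^n(\hat B)$ to get $d(f^n(x),f^n(y))\ge\dist_d(f^n(\hat A),f^n(\hat B))\gtrsim\Lambda^{-(m+1-n)}$, never passing through $m_{f,\CC}$ on the image side. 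Your argument is easily repaired either this way, or by invoking Lemma~\ref{lem:mprops}~(v) to get $m_{f,\CC}(f^n(x),f^n(y))\le m-n+k_4+1$ for a fixed constant $k_4$, which suffices for \eqref{eq:unifsim} with a larger $C$.
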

The distortion estimate \eqref{eq:unifsim} is closely related to the concept of a 
{\em conformal elevator}\index{conformal elevator}  
as introduced by Ha\"\i ssinsky and Pilgrim \cite[Theorem~2.2]{HP}.  See also \eqref{simmetric} in Theorem~\ref{thm:visexpfactors1} for a related statement.

\begin{proof}  In the following, all cells will be for $(f,\CC)$. Let $m=m_{f,\CC}$ be as in Definition~\ref{def:mxy}. 
We know by Definition~\ref{def:visual} and by  Lem\-ma~\ref{lem:mprops}~\ref{item:mprops3}
that $\varrho(x,y)\asymp \Lambda^{-m(x,y)}$, whenever $x,y\in S^2$.  
If $n\in \N_0$, then Lemma~\ref{lem:mprops}~\ref{item:mprops2} implies that 
$$m(f^n(x), f^n(y))\ge m(x,y)-n, $$ and so
$$ \varrho(f^n(x), f^n(y))\lesssim \Lambda^n \varrho(x,y). $$
Here the implicit multiplicative constant is independent of $x$, $y$, and $n$.

To obtain an inequality in the other direction,  let  $x,y\in X^{n+k}\in 
\X^{n+k}$, where $n,k\in \N_0$. We may assume that  $x\ne y$. Then by definition of 
$m(x,y)$ we have $n+k\le m(x,y)<\infty$. Let   $l\coloneqq m(x,y)+1\in \N$. 
Since $l>n+k$,  the $(n+k)$-tile $X^{n+k}$ is subdivided by tiles of level 
$l$ (Proposition~\ref{prop:invmarkov}~\ref{item:invmarkov3}). Hence there exist $l$-tiles 
$X,Y\sub X^{n+k}$ with $x\in X$ and $y\in Y$. 
Then $X\cap Y=\emptyset$ by definition of $m(x,y)$. Let $X'\coloneqq f^n(X)$ and 
$Y'\coloneqq f^n(Y)$. Then by Proposition~\ref{prop:celldecomp}~\ref{item:fkcellular} the sets $X'$ and $Y'$ are $(l-n)$-tiles. Since $f^n|X^{n+k}$ is injective, these tiles are disjoint, and we have $f^n(x)\in X'$ and $f^n(y)\in Y'$. 
So from Proposition~\ref{lem:expoexp}~\ref{item:expoex1} we conclude that 
$$ \varrho(f^n(x), f^n(y))\ge \dist_\varrho(X',Y')\gtrsim \Lambda^{-(l-n)}\asymp \Lambda^{n}\Lambda^{-m(x,y)}\asymp \Lambda^{n} \varrho(x,y). $$
Here  the implicit multiplicative constants are again independent of $x$, $y$, and $n$. 
The  other desired inequality follows. 
 
Inequality~\eqref{eq:unifsim} immediately implies that the family $\mathcal{F}$  is uniformly quasisymmetric. To see this, let $k,n\in \N_0$ and $X^{n+k}\in \X^{n+k}$. Then $f^n|X^{n+k}$  is a homeomorphism onto its image (see Proposition~\ref{prop:celldecomp}~\ref{item:fkcellular}). Moreover, if $u,v,w\in X^{n+k}$, $u\ne w$, then by \eqref{eq:unifsim} we have 
$$ \frac{\varrho(f^n(u), f^n(v))}{\varrho(f^n(u), f^n(w))}\le C^2 \frac{\varrho(u,v)}{\varrho(u,w)}.$$ 
Hence $f^n|X^{n+k}$ is $\eta$-quasisymmetric, where $\eta(t)=C^2t$ for $t\ge 0$. Since $\eta$ is independent of the chosen map, 
the family $\mathcal{F}$ is uniformly quasisymmetric. 
 \end{proof}

\begin{prop} \label{prop:arc} Let $f\:S^2\ra S^2$ be an expanding Thurston map,  and $\CC\sub S^2$ be an $f$-invariant Jordan curve with 
$\post(f)\sub \CC$. Suppose that $S^2$ is equipped with a visual metric for $f$, and for $n\in \N_0$ denote by $\X^n$ the set of $n$-tiles and by  $\E^n$ the set of $n$-edges 
 for $(f,\CC)$. 

Then the family   $ \{\partial X:  n\in \N_0, \,  X\in \X^n\}$ consists of 
uniform  quasicircles and the family 
$\{e: n\in \N_0, \,  e\in \E^n\}$  of uniform quasiarcs.  
\end{prop}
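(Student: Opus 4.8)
The idea is to deduce the statement from Theorem~\ref{thm:Cquasicircle} together with the uniform quasisymmetry statement in Lemma~\ref{lem:unifqs}. By Theorem~\ref{thm:Cquasicircle}, the Jordan curve $\CC$ equipped with the restriction of the given visual metric is a quasicircle; hence there is an $\eta_0$-quasisymmetry $\varphi\:\partial\D\ra\CC$ for some distortion function $\eta_0$. Similarly, each $0$-edge $e^0\in\E^0$ is a quasiarc (being a subarc of the quasicircle $\CC$ with endpoints in $\post(f)$, one may invoke the standard fact that a subarc of a quasicircle cut off by two points, on the ``short'' side, is a uniform quasiarc; alternatively the family $\E^0$ is finite, so it automatically consists of uniform quasiarcs). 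Likewise the two $0$-tiles $\XOb,\XOw$ have boundary $\CC$, so $\{\partial X: X\in\X^0\}$ is a finite — hence uniform — family of quasicircles. So the base case $n=0$ is immediate.

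For the inductive step, fix $n\in\N_0$ and an $n$-tile $X^n\in\X^n$. By Proposition~\ref{prop:celldecomp}~(i) the map $f^n|X^n$ is a homeomorphism of $X^n$ onto a $0$-tile $X^0=f^n(X^n)$, and it carries $\partial X^n$ homeomorphically onto $\partial X^0=\CC$ and each $n$-edge $e^n\sub\partial X^n$ onto a $0$-edge. By Lemma~\ref{lem:unifqs} (applied with $k$ chosen so that $X^n$ is regarded as an $(n+k)$-tile with $k=0$; or directly from \eqref{eq:unifsim}), the family $\{f^n|X^n: n\in\N_0,\ X^n\in\X^n\}$ is uniformly quasisymmetric, say with distortion function $\eta_1$. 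Then $(f^n|\partial X^n)^{-1}$ is $\widetilde\eta_1$-quasisymmetric for a distortion function $\widetilde\eta_1$ depending only on $\eta_1$, and similarly for the restriction to each edge. Composing: for a boundary $\partial X^n$, the map $(f^n|\partial X^n)^{-1}\circ\varphi\:\partial\D\ra\partial X^n$ is a quasisymmetry with distortion function $\widetilde\eta_1\circ\eta_0$, which is independent of $n$ and of $X^n$. Hence $\{\partial X: X\in\X^n,\ n\in\N_0\}$ consists of uniform quasicircles. For an $n$-edge $e^n$: by Proposition~\ref{prop:invmarkov}~(iv) every $n$-edge is contained in a unique $0$-edge after applying $f^n$; more precisely $f^n|e^n$ is a homeomorphism onto a $0$-edge $e^0$, and composing a fixed quasisymmetric parametrization $[0,1]\ra e^0$ with $(f^n|e^n)^{-1}$ (which is $\widetilde\eta_1$-quasisymmetric by Lemma~\ref{lem:unifqs}) produces an $\eta$-quasisymmetry $[0,1]\ra e^n$ with $\eta$ independent of the choices. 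Thus $\{e: e\in\E^n,\ n\in\N_0\}$ consists of uniform quasiarcs.

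The only point requiring a little care is the base case for edges: one needs that each $0$-edge, as a metric space in the restricted visual metric, is a quasiarc. Since $\#\E^0=\#\post(f)<\infty$, it suffices to know each individual $0$-edge $e^0$ is a quasiarc, and this follows from the Tukia--V\"ais\"al\"a characterization exactly as in the proof of Theorem~\ref{thm:Cquasicircle}: $e^0\sub\CC$ inherits the doubling property from $(\CC,d)$, and the Ahlfors (arc-diameter) condition for $e^0$ follows from the one for $\CC$ by the same $n_0$-edge argument used there (given $x,y\in e^0$, the subarc of $e^0$ between them is one of the two subarcs of $\CC$ between them, and when the ``$\CC$-short'' subarc lies inside $e^0$ we are done directly, while in the remaining case a short estimate using that $x,y$ are then close to the endpoints of $e^0$ handles it). I expect this base-case verification for edges to be the main — though quite minor — obstacle; everything else is a formal consequence of Theorem~\ref{thm:Cquasicircle}, Lemma~\ref{lem:unifqs}, the stability of quasisymmetry under composition and inversion, and the finiteness of $\X^0$ and $\E^0$.
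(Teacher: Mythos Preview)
Your proposal is correct and follows essentially the same route as the paper: use Theorem~\ref{thm:Cquasicircle} for the base case on $\CC$, then pull back via the uniformly quasisymmetric family $\{f^n|X^n\}$ from Lemma~\ref{lem:unifqs}, composing with a fixed parametrization of $\CC$ (resp.\ of the finitely many $0$-edges). One simplification the paper makes for the $0$-edge base case: rather than re-verifying the Tukia--V\"ais\"al\"a conditions on each $e^0$, it observes that $e^0$ corresponds under the quasisymmetry $h\:\partial\D\ra\CC$ to a subarc of $\partial\D$, which is bi-Lipschitz equivalent to $[0,1]$; this avoids the case analysis you sketch at the end.
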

In particular,  edges for $(f,\CC)$ are quasiarcs and the boundaries   of all tiles are quasicircles. 

\begin{proof} By Theorem~\ref{thm:Cquasicircle} there exists a quasisymmetry $h\: \partial \D \ra \CC$. Let $X$ be an arbitrary tile for 
$(f,\CC)$, say an $n$-tile, where $n\in \N_0$. Then 
$f^n|X$ is a homeomorphism of $X$ onto the $0$-tile $f^n(X)$ (Proposition~\ref{prop:celldecomp}~\ref{item:fkcellular}), and so
$$ f^n(\partial X)=\partial f^n(X)=\CC. $$
By Lemma~\ref{lem:unifqs} the map $f^n|X$,  and hence also the map 
$(f^n|X)^{-1}$,   is a quasisymmetry. It follows that 
$(f^n|X)^{-1}\circ h$ is a quasisymmetric map from $\partial \D$ onto 
$\partial X$. Hence $\partial X$ is a quasicircle. 
Actually, the family of these quasicircles $\partial X$ is uniform, since the family of all  relevant  maps $(f^n|X)^{-1}\circ h$ is uniformly quasisymmetric as follows from Lemma~\ref{lem:unifqs}.

The proof that the family $\{e: n\in \N_0, \,  e\in \E^n\}$ consists  of uniform quasiarcs runs along the same lines. First note that each  $0$-edge is a subarc of $\CC$, and hence corresponds to a subarc  of $\partial \D$ under the quasisymmetry $h$. Since this subarc can be 
mapped to the unit interval $[0,1]$ by a bi-Lipschitz homeomorphism, 
each $0$-edge is quasisymmetrically equivalent to $[0,1]$ and hence a quasiarc. 

Now let $e$ be  an arbitrary edge for $(f,\CC)$, say an $n$-edge, where $n\in \N_0$. Then $f^n|e$ is a homeomorphism of $e$ onto the $0$-edge $f^n(e)$ (Proposition~\ref{prop:celldecomp}~\ref{item:fkcellular}). Moreover, there exists an $n$-tile $X$ with $e\sub X$. Then $f^n|e$ is the restriction of the map $f^n|X$ to $e$, and it follows from Lemma~\ref{lem:unifqs} that $f^n|e$ is a quasisymmetry. Hence $e$ is quasisymmetrically equivalent to a $0$-edge and hence a quasiarc. 

 Lemma~\ref{lem:unifqs} actually implies that the family consisting of all 
 maps $f^n|e$ with $n\in \N_0$  and $e\in \E^n$ is uniformly quasisymmetric. So each edge is quasisymmetrically equivalent to a $0$-edge by a quasisymmetry in a uniformly quasisymmetric family. Since there are only finitely many $0$-edges, this implies that the  family of all edges for $(f,\CC)$  consists  of uniform quasiarcs. 
\end{proof}  

A {\em quasidisk}\index{quasidisk}
is a closed topological disk (i.e., a topological cell of dimension $2$) that is quasisymmetrically equivalent to the closed unit disk $\overline \D$.
A family of closed topological disks is said to consist of 
{\em uniform quasidisks}\index{quasidisk!uniform} 
if each disk $X$ in the family can be mapped to $\overline \D$ by an $\eta$-quasisymmetry, where $\eta$ is independent of $X$. It is a natural question whether the family $\{X: n\in \N_0,\, X\in  {\bf X}^n\}$ of tiles obtained from an invariant curve as in the previous theorem actually consists of uniform quasidisks. This is true if and only if the expanding 
Thurston map $f$ is topologically conjugate to a rational map without periodic critical points. One direction easily follows from Theorem~\ref{thm:main01}~\ref{item:frat_Markov} and Corollary~\ref{cor:visualqsmetric} proved later. 

For the other direction suppose that $f$ and $\CC$ are as in Proposition~\ref{prop:arc} and that the  two $0$-tiles equipped with a visual metric $\varrho$ are quasidisks. Then one can show  that 
$(S^2,\varrho)$  is a quasisphere (this requires the solution of a so-called 
{\em welding problem}). So by Theorem~\ref{thm:S2vsf}~\ref{item:S2qsphere}
the map $f$ is topologically conjugate to a rational map without periodic critical points. We skip the details for this implication as we will not use the result.

\ifthenelse{\boolean{singlechapter}}{

%


\chapter{The combinatorial expansion factor}
\label{cha:combexpfac}

Suppose $f\: S^2 \ra S^2$ is a Thurston map with $\#\post(f)\ge
3$, 
and $\CC\sub S^2$ is a Jordan curve with $\post(f)\sub \CC$. In
Section~\ref{sec:opp} we introduced the quantity $D_n(f,\CC)$ as
the minimal number of $n$-tiles for $(f,\CC)$ required to form a
connected set that joins opposite sides of $\CC$ (see
Definition~\ref{def:connectop} and \eqref{def:dk}).  In this
chapter we study the asymptotic behavior of $D_n(f,\CC)$ as
$n\to \infty$. We will see that for an expanding Thurston map,
$D_n(f,\CC)$ grows at an exponential rate independent of $\CC$.

\index{d0 n@$D_n$}
\begin{prop} 
  \label{prop:exp} 
  Suppose $f\:S^2\ra S^2$ is an expanding Thurston map, and
  $\CC\sub S^2$ is a Jordan curve with $\post(f)\sub \CC$.  
  Then
  the limit
  \begin{equation*}
    \Lambda_0(f)
    \coloneqq 
    \lim_{n\to \infty} D_n(f,\CC)^{1/n}
  \end{equation*}
  exists. Moreover, this limit is independent of $\CC$ and we
  have $1<\Lambda_0(f)<\infty$.
\end{prop}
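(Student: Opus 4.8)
The plan is to prove the statement first for an $f^n$-invariant Jordan curve of a high iterate, where the submultiplicativity already packaged in Lemma~\ref{lem:submult} yields the limit along an arithmetic progression, and then to propagate it to all indices and all curves by comparison arguments built from the flower lemmas. For the first step, apply Theorem~\ref{thm:main} to $f$ and $\CC$ to get $n\in\N$ and an $f^n$-invariant Jordan curve $\CC_0\sub S^2$ with $\post(f^n)=\post(f)\sub\CC_0$. Put $F=f^n$; by Lemma~\ref{lem:Thiterates} the map $F$ is an expanding Thurston map with $\post(F)=\post(f)$, and $\#\post(F)\ge 3$ by Corollary~\ref{cor:no<3}. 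Since $F$ is expanding, Lemma~\ref{lem:Dtoinfty} gives $D_k(F,\CC_0)\to\infty$, so $D_{k_1}(F,\CC_0)\ge 2$ for some $k_1$, and Lemma~\ref{lem:submult} applied to $F$ and the $F$-invariant curve $\CC_0$ shows that
\[
\alpha:=\lim_{k\to\infty}\tfrac1k\log D_k(F,\CC_0)=\sup_{k\in\N}\tfrac1k\log D_k(F,\CC_0)
\]
exists with $0<\alpha\le\log\deg(F)=n\log\deg(f)<\infty$; thus $D_k(F,\CC_0)^{1/k}\to e^{\alpha}\in(1,\infty)$. Since the $k$-tiles for $(F,\CC_0)$ are exactly the $(nk)$-tiles for $(f,\CC_0)$ by Proposition~\ref{prop:celldecomp}~(v), and joining opposite sides of $\CC_0$ depends only on $\CC_0$ and $\post(f)=\post(F)$, we have $D_k(F,\CC_0)=D_{nk}(f,\CC_0)$, whence $D_{nk}(f,\CC_0)^{1/(nk)}\to e^{\alpha/n}$ as $k\to\infty$.

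The second step fills the gaps between multiples of $n$. For any Jordan curve $\CC\sub S^2$ with $\post(f)\sub\CC$, Lemma~\ref{lem:difflevel} together with Lemma~\ref{lem:flowpow} yields a constant $M=M(f,\CC)\ge 1$ with $(4M)^{-1}D_j(f,\CC)\le D_{j+1}(f,\CC)\le 4M\,D_j(f,\CC)$ for all $j$: a minimal connected union of $j$-tiles for $(f,\CC)$ joining opposite sides of $\CC$ is covered by $M\,D_j(f,\CC)$ flowers of order $j+1$, respectively of order $j-1$, and Lemma~\ref{lem:flowpow} converts this into the two-sided bound. Taking $\CC=\CC_0$ and writing an index as $j=nk+r$ with $0\le r<n$ gives $|\log D_j(f,\CC_0)-\log D_{nk}(f,\CC_0)|\le n\log(4M)$, so $\tfrac1j\log D_j(f,\CC_0)\to\alpha/n$ as $j\to\infty$. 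Hence the limit $\Lambda_0:=\lim_{j\to\infty}D_j(f,\CC_0)^{1/j}=e^{\alpha/n}$ exists and lies in $(1,\infty)$.

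The third step is the independence of the curve: for an arbitrary Jordan curve $\CC\sub S^2$ with $\post(f)\sub\CC$ I would show $D_k(f,\CC_0)\le C\,D_k(f,\CC)$ with $C$ independent of $k$ (and symmetrically $D_k(f,\CC)\le C\,D_k(f,\CC_0)$), which together with the second step forces $D_k(f,\CC)^{1/k}\to\Lambda_0$. Since $f$ is expanding, $\mesh(f,k_0,\CC_0)\to 0$ as $k_0\to\infty$ (Lemma~\ref{lem:exp_ind_C}), so fix $k_0$ with $2\,\mesh(f,k_0,\CC_0)<\delta_0(f,\CC)$, the constant in \eqref{defdelta}. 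Let $k\ge k_0$ and let $K$ be a connected union of $D_k(f,\CC)$ $k$-tiles for $(f,\CC)$ joining opposite sides of $\CC$, so $\diam K\ge\delta_0(f,\CC)$. Every $k_0$-flower for $(f,\CC_0)$ has diameter at most $2\,\mesh(f,k_0,\CC_0)<\diam K$ by Lemma~\ref{lem:flowerprop}~(ii), so $K$ lies in no $k_0$-flower for $(f,\CC_0)$; hence $f^{k_0}(K)$, which is connected, lies in no $0$-flower for $(f,\CC_0)$ by Lemma~\ref{lem:mapflowers}~(iii), i.e.\ $f^{k_0}(K)$ joins opposite sides of $\CC_0$ by Lemma~\ref{lem:floweropp}. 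On the other hand, by Lemma~\ref{lem:tileflower} each $k$-tile for $(f,\CC)$ is covered by at most $M'$ $k$-flowers for $(f,\CC_0)$, so $K$ is covered by $M'D_k(f,\CC)$ such flowers, and applying $f^{k_0}$ and Lemma~\ref{lem:mapflowers}~(i) shows $f^{k_0}(K)$ is covered by $M'D_k(f,\CC)$ flowers of order $k-k_0$ for $(f,\CC_0)$. Lemma~\ref{lem:flowpow} then gives $D_{k-k_0}(f,\CC_0)\le 4M'D_k(f,\CC)$, and iterating the two-sided bound of the second step for $\CC_0$ turns this into $D_k(f,\CC_0)\le (4M)^{k_0}4M'\,D_k(f,\CC)$, with the finitely many $k<k_0$ absorbed into the constant. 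Exchanging the roles of $\CC$ and $\CC_0$ gives the reverse inequality, so $D_k(f,\CC)\asymp D_k(f,\CC_0)$ and $\Lambda_0(f):=\lim_k D_k(f,\CC)^{1/k}=\Lambda_0\in(1,\infty)$ is independent of $\CC$.

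I expect the third step to be the main obstacle: a connected set realizing $D_k$ for one curve need not join opposite sides of the other curve, and the remedy — pushing the set forward by a fixed iterate $f^{k_0}$ chosen so that $2\,\mesh(f,k_0,\CC_0)<\delta_0(f,\CC)$, so that Lemma~\ref{lem:mapflowers} and Lemma~\ref{lem:floweropp} upgrade ``not contained in a small flower'' to ``joins opposite sides'' — is exactly where the combinatorics must be handled with care. The first two steps are routine once Theorem~\ref{thm:main}, Lemma~\ref{lem:Dtoinfty}, and Lemma~\ref{lem:submult} are available.
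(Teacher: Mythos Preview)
Your proof is correct and follows essentially the same approach as the paper's: both combine the step-by-step bound $D_{j+1}\asymp D_j$ from Lemma~\ref{lem:difflevel} and Lemma~\ref{lem:flowpow}, the curve-comparison $D_k(f,\CC)\asymp D_k(f,\CC_0)$ via a fixed pushforward $f^{k_0}$ together with Lemma~\ref{lem:tileflower} and Lemma~\ref{lem:flowpow}, and the existence of the limit along multiples of $n$ for an $f^n$-invariant curve from Theorem~\ref{thm:main} and Lemma~\ref{lem:submult}. The only differences are the order of presentation (the paper establishes the two comparison inequalities first, then reduces to the invariant curve; you do it in reverse) and that you reach ``$f^{k_0}(K)$ joins opposite sides'' via Lemma~\ref{lem:floweropp} and Lemma~\ref{lem:mapflowers}~(iii) where the paper uses Lemma~\ref{lem:maptotop}, which is an equivalent route.
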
 

We call $\Lambda_0(f)$
 the {\em combinatorial expansion factor}\index{combinatorial expansion factor|textbf}\index{L0@$\Lambda_0$|textbf}  
of $f$.   Later  we will see that $\Lambda_0(f)\le \deg(f)^{1/2}$
(Proposition~\ref{prop:macgrdn}). 

The combinatorial expansion factor is 
well-behaved under taking iterates  and invariant under topological conjugacy.

\begin{prop} \label{prop:expfacinv}
Let $f\: S^2\ra S^2$ be an expanding Thurston map. Then the following statements are true: 

\begin{enumerate} 

 \item 
   \label{item:comexpiterates1} 
   \index{Thurston map!iterate of}
  \index{iterate of Thurston map}
  \index{F f@$F=f^n$}
   $\Lambda_0(f^n)=\Lambda_0(f)^n$ for  $n\in \N$. 
 
  \item \label{item:comexpiterates2}
Suppose  
$g\:\widehat S^2\ra \widehat S^2$ is an   expanding Thurs\-ton map that is topologically conjugate to $f$. Then 
$\Lambda_0(g)=\Lambda_0(f)$. 
\end{enumerate} 
\end{prop}

The main result of this chapter relates the combinatorial
expansion factor to expansion factors of visual metrics.

\index{expansion factor}
\begin{theorem}[Visual metrics and their expansion factors] 
  \label{thm:visexpfactors1} 
  Let $f\:S^2\ra S^2$ be an expanding Thurston map, and
  $\Lambda_0(f)\in (1,\infty)$ be its combinatorial expansion
  factor. Then the following statements are true:
  \begin{enumerate}
    
  \item
   \label{item:visexpfactors1}
   If   $\Lambda$ is the expansion factor of a  visual metric 
 for $f$, then   $1<\Lambda\le \Lambda_0(f)$.
 
 \item
   \label{item:visexpfactors2}
   Conversely, if $1<\Lambda<\Lambda_0(f)$, then there exists a visual metric $\varrho$ for $f$ with expansion factor $\Lambda$. Moreover, the visual metric $\varrho$ can be chosen to have the following additional property:
 
\index{metric!visual}
\index{visual metric}
 For every $x\in S^2$ there exists a neighborhood $U_x$ of $x$ such that 
 \begin{equation} \label{simmetric}
 \varrho(f(x), f(y))=\Lambda  \varrho(x,y)\text{ for all } y\in U_x.
\end{equation} 
\end{enumerate}
\end{theorem} 

This statement shows  that if  $\Lambda$ is the expansion factor of a visual metric for $f$, then 
 $1<\Lambda\le \Lambda_0(f)$, but conversely, the existence of a visual metric with expansion 
 factor $\Lambda$ is only guaranteed for  $1<\Lambda< \Lambda_0(f)$.  This statement is optimal, since  a visual metric with expansion factor $\Lambda=\Lambda_0(f)$ need  not exist in general. We will discuss an example at the end of this chapter
(see Example~\ref{ex:notattained}). 
However, in the proof of
Theorem~\ref{thm:visexpfactors1}~\ref{item:visexpfactors2}
 we will establish  an existence statement that is somewhat  stronger: if $\CC\sub S^2$ is an $f$-invariant Jordan curve with $\post(f)\sub \CC$ and 
 $1<\Lambda \leq D_1(f,\CC)$, then there exists a visual metric $\varrho$ for $f$ with expansion factor $\Lambda$ (see \eqref{extraonL}).

We now proceed to supply the proofs. We require   some preparation and  start 
with some lemmas.

\begin{lemma}\label{lem:flowpow} Let $n\in \N_0$, $f\:S^2\ra S^2$ be a Thurston map with $\#\post(f)\ge 3$, and $\CC\sub S^2$ be a Jordan curve 
with $\post(f)\sub \CC$.  If there exists a connected set $K\sub S^2$ that joins 
opposite sides of $\CC$ and that can be covered by $M\in \N$ $n$-flowers
for $(f,\CC)$, then $D_n(f,\CC)\le 4M$.   \end{lemma}

\begin{proof} We first assume  that $\#\post(f)=3$. 
Let $K$ be  as in the statement. 
By picking a point from the intersection of $K$ with each of the three $0$-edges, we can find a set  $\{x,y,z\}\sub K$ such that 
$\{x,y,z\}$  joins  opposite sides of $\CC$.
Since $K$ is connected and can be covered by  $M$ $n$-flowers, we can find $n$-vertices $v_1, \dots, v_M\in S^2$ such that 
$x\in W^n(v_1)$, $y\in W^n(v_M)$,  and 
$W^n(v_i)\cap  W^n(v_{i+1})\ne \emptyset $ for $i=1, \dots, M-1$.
Then it follows from Lemma~\ref{lem:flowerprop}~\ref{item:flower_prop2} that there exists 
a chain of $n$-tiles $X_1, \dots, X_{2M}$  joining $x$ and $y$
(recall the terminology from Definition~\ref{def:chains}). 
Similarly,  there exists a chain $X_1', \dots, X_{2M}'$ of $n$-tiles joining $x$ and $z$. The union $K'$ of the $n$-tiles in these two chains is a connected set consisting of at most $4M$ $n$-tiles.
It contains the set $\{x,y,z\}$ and hence joins opposite sides of $\CC$. Thus  $D_n(f,\CC)\le 4M$. 

If $\#\post(f)\ge 4$, the proof is similar and easier. In this case we can find a set $\{x,y\}\sub K$ that joins opposite sides of $\CC$. 
By the same argument as before, we get  the bound $D_n(f,\CC)\le 2M$. \end{proof}

\begin{lemma}
  \label{lem:Dnprops} 
  Let $f\:S^2\ra S^2$ be an expanding Thurston map, and
  $\CC, \widetilde\CC\sub S^2$ be Jordan curves with
  $\post(f)\sub \CC, \widetilde{\CC}$. 
  Then
  \begin{equation}
    \label{DDk} 
    D_n(f,\CC)
    \asymp
    D_{n+1}(f,\CC)
  \end{equation}  
  and 
  \begin{equation}
    \label{DDtilde}
    D_n(f,\CC) 
    \asymp 
    D_n(f,\widetilde{\CC}) 
  \end{equation} 
  for all $n\in \N_0$, where $C(\asymp)$ is independent of $n$. 
\end{lemma}

\begin{proof}  Set $D_n=D_n(f,\CC)$ and $\widetilde D_n=D_n(f,\widetilde{\CC})$ for  $n\in \N_0$.   

To show \eqref{DDk},
we fix $n\in \N_0$ and pick a connected set $K$ joining  
opposite sides of $\CC$ that consists of $D_n$ $n$-tiles for
$(f,\CC)$. According to Lemma~\ref{lem:difflevel}~\ref{item:coverflower2} we can cover $K$ by $MD_n$ $(n+1)$-flowers, where $M\in \N$ is independent of $n$.  Hence by Lemma~\ref{lem:flowpow} we have 
$D_{n+1}\le C D_n$, where $C=4M$. An inequality in the opposite direction follows from a similar argument  based on 
Lemma~\ref{lem:difflevel}~\ref{item:coverflower1} and
Lemma~\ref{lem:flowpow}.  

To establish \eqref{DDtilde},  we consider 
 $\tilde \delta_0=\delta_0(f,\widetilde \CC)>0$   defined  as in \eqref{defdelta} for $f$, $\widetilde \CC$, and a base metric $d$ on $S^2$. Since $f$ is expanding, there exists  $n_0\in \N_0$ such that 
$\diam_d(X)<\tilde  \delta_0/2$, whenever $X$ is an
$n_0$-tile for $(f,\CC)$. 

We can find a compact connected set $\widetilde K$ joining opposite sides of $\widetilde{\CC}$ that consists of $\widetilde D_n$ $n$-tiles for $(f,\widetilde{\CC})$.  Then $\diam_d( \widetilde K)\ge \tilde \delta_0$ and so $\widetilde K$ contains two points $x$ and $y$  with $d(x,y)\ge \tilde\delta_0$. There  exist $n_0$-tiles $X$ and $Y$  for $(f,\CC)$ such that $x\in X$ and $y\in Y$. By choice of $n_0$ we have $X\cap Y=\emptyset$, and so  
 $\widetilde K$ joins  $n_0$-tiles for $(f,\CC)$  that are disjoint.  
 Hence $f^{n_0}(\widetilde K)$ joins opposite sides of 
 $\CC$ by Lemma~\ref{lem:maptotop}.  Every $n$-tile
for $(f,\widetilde {\CC})$ can be covered by $M$ $n$-flowers for $(f,\CC)$, where $M$ only depends on $\CC$ and $\widetilde{\CC}$ (Lemma~\ref{lem:tileflower}). 
This and Lemma~\ref{lem:mapflowers}~\ref{item:mapflowers3} imply  that if $n\ge n_0$, then we can cover $f^{n_0} (\widetilde K)$ by $M\widetilde D_n$ $(n-n_0)$-flowers for $(f,\CC)$. 

So  by Lemma~\ref{lem:flowpow} we have 
$$D_{n-n_0}\le 4M  \widetilde D_n,$$
and by the first part of the proof there is a constant
$C_1>0 $ such that
\begin{equation*}
  D_n
  \le 
  C_1^{n_0} D_{n-n_0}
  \le 
  4M  C_1^{n_0}\widetilde D_n.
\end{equation*}
If  $n< n_0$ we get a similar bound from the inequalities  $D_n\le 2\deg(f)^{n_0}$ and $\widetilde D_n\ge 1$.  It follows that  there exists a constant $C$ independent of $n$ such that 
$$ D_n\le C \widetilde {D}_n$$
for all $n\in \N_0$. An  inequality in the opposite direction is
obtained by reversing the roles of $\CC$ and $\widetilde {\CC}$
and using an estimate analogous to \eqref{DDk} for $\widetilde
D_n$.  
\end{proof}

\begin{proof}[Proof of Proposition~\ref{prop:exp}]
A consequence of \eqref{DDtilde} is that if $\CC, \widetilde {\CC}\sub S^2$ are Jordan curves with $\post(f)\sub \CC, \widetilde {\CC}$ and  the sequence
$ \{D_n(f,\CC)^{1/n}\}$ converges as $n\to \infty$, then  $ \{D_n(f,\widetilde \CC)^{1/n}\}$ also converges and has the same limit. 
So if the limit exists, then  it does not depend on $\CC$. 

To show existence, we may impose additional assumptions on $\CC$; namely by Theorem~\ref{thm:main}, we may assume that $\CC$ is invariant for some iterate $F=f^N$ of $f$.  Since $F$ is  also an expanding Thurston map (Lemma~\ref{lem:Thiterates}), it follows from
 Lemma~\ref{lem:Dtoinfty} and
 Lemma~\ref{lem:submultexp} that the limit  
$$\Lambda_0(F,\CC)\coloneqq \lim_{n\to\infty} D_n(F,\CC)^{1/n}$$ exists and that
$\Lambda_0(F,\CC)\in (1,\infty)$.

Since the $n$-tiles for $(F,\CC)$ are precisely the $(nN)$-tiles
for $(f,\CC)$ (see Proposition~\ref{prop:celldecomp}~\ref{item:celdecompiter}), we have $D_{nN}(f,\CC)=D_n(F,\CC)$ for all $n\in \N_0$, and so 
$$D_{nN}(f,\CC)^{1/(nN)}=D_n(F,\CC)^{1/(nN)}\to \Lambda_0(f)\coloneqq \Lambda_0(F,\CC)^{1/N}\in (1, \infty)$$
as $n\to \infty$. 
Combining this with \eqref{DDk}, we conclude that 
$D_n(f,\CC)^{1/n}\to \Lambda_0(f)$ as $n\to \infty$. The statement follows. 
\end{proof}

\begin{proof}[Proof of Proposition~\ref{prop:expfacinv}]
\ref{item:comexpiterates1} If $F=f^n$ is an iterate of $f$, then, as was pointed out in the previous proof, we have
$$D_k(F,\CC)=D_{nk}(f,\CC)$$ whenever $k\in \N_0$ and $\CC$ is a Jordan curve with $\post(f)\sub \CC$. 
This implies
\begin{equation*}
\Lambda_0(f^n)=\Lambda_0(F)=\lim_{k\to \infty} D_k(F,\CC)^{1/k}=\lim_{k\to \infty} D_{nk}(f,\CC)^{1/k}=\Lambda_0(f)^n. 
\end{equation*}

\smallskip
\ref{item:comexpiterates2} By assumption there  exists a homeomorphism $h\: S^2\ra \widehat S^2$ such that $h\circ f=g\circ h$. Pick a Jordan curve $\CC\sub S^2$ with $\post(f)\sub \CC$ and let $\widehat \CC= h(\CC)$. Then  $\widehat \CC$ is a Jordan curve with $\post(g)=h(\post(f))\sub \widehat \CC$, and, as in the  proof of Proposition~\ref{prop:conjisom}, we have
$$\DD^n(g, \widehat \CC)=\{h(c): c\in \DD^n(f,\CC)\}$$ for $n\in \N_0$. This  implies that $D_n(f,\CC)=D_n(g, \widehat \CC)$ for all $n\in \N_0$ and so
$$\Lambda_0(g)=
\lim_{n\to \infty} 
D_n(g,\widehat \CC)^{1/n}=
\lim_{n\to \infty} D_n(f,\CC)^{1/n}=\Lambda_0(f) $$
as desired. 
\end{proof}

We now proceed to prove Theorem~\ref{thm:visexpfactors1},  the  main result of this chapter.   
So let $f\: S^2\ra S^2$ be an expanding Thurston map. We fix a  Jordan
  curve $\CC\sub S^2$  with $\post(f)\sub \CC$, and let
  $D_k=D_k(f,\CC)$ for $k\in \N_0$. 
  In the following, cells will be
  for $(f,\CC)$.   In Proposition~\ref{prop:exp} the
  combinatorial expansion factor
  $\Lambda_0(f)$ was defined, and we proved that $1<
  \Lambda_0(f)<\infty$.  
  
  The proof of the first part of Theorem~\ref{thm:visexpfactors1} is easy.
  
  \begin{proof}[Proof of Theorem~\ref{thm:visexpfactors1}~ \ref{item:visexpfactors1}] 
   Suppose $\varrho$ is a visual metric for $f$ with expansion factor $\Lambda$. Then 
there exists a constant $C\ge 1$ such that 
$$ \diam(X)\le C \Lambda^{-k}$$ 
for all $k$-tiles (Proposition~\ref{lem:expoexp}~\ref{item:expoex2}). Let
$\delta_0=\delta_0(f,\CC)>0$ be  defined as in \eqref{defdelta} for
$f$, $\CC$, and the metric $\varrho$.  

For each  $k\in \N_0$  there exists a connected set $K\sub S^2$ joining
opposite sides of $\CC$ that consists of $D_k$ $k$-tiles. Hence 
\begin{equation}
  \label{eq:DkLged}
  \delta_0
  \le 
  \diam(K)
  \le 
  C D_k \Lambda^{-k}.
\end{equation}
Taking the $k$-th root here and letting $k\to \infty$, we
conclude that $\Lambda \le \Lambda_0(f)$ as desired.  
\end{proof}

It remains to prove part \ref{item:visexpfactors2}. For a given
expansion factor $\Lambda\in (1, \Lambda_0(f))$ we have to
construct a visual metric that satisfies \eqref{simmetric}.  We
have already encountered visual metrics with this local expansion
property; indeed, one can show that for the map $h$ in
Section~\ref{sec:int-frac-sph} the metric given by
\eqref{eq:def_visual_1} has the property \eqref{simmetric} with
$\Lambda=\Lambda_0(h)=2$.
 
The construction in the general case is much more 
difficult and  involved than the general construction of visual metrics in
Section~\ref{sec:exist-visu-metr}. We will first do this under
additional assumptions and then for the general case.

\subsection*{Construction of the metric under additional
  assumptions}
\label{sec:constr-metr-under}
Let $\CC\subset S^2$ be the Jordan curve with
$\post(f) \subset \CC$ used to define our cell decompositions
$\DD^n(f,\CC)$ and the quantities $D_n =D_n(f,\CC)$. We now assume in addition that $\CC$ is
$f$-invariant and that $\Lambda\in (1,\Lambda_0(f)]$ satisfies 
\begin{equation}\label{extraonL}
  \Lambda\le D_1=D_1(f,\CC).
\end{equation}

In this case, we will now construct a  visual metric $\varrho$ with expansion factor 
$\Lambda$ that 
satisfies \eqref{simmetric}. Note that $D_1 \le \Lambda_0(f)$ by Lemma~\ref{lem:submultexp} and that we do allow 
 $\Lambda=\Lambda_0(f)$ here if $D_1 = \Lambda_0(f)$. 
We first introduce some terminology.   

Recall from Definition~\ref{def:chains} that
a 
\defn{tile chain}\index{tile!chain}
$P$ is a finite sequence of tiles $X_1,\dots,
X_N$, where $X_j\cap X_{j+1}\neq \emptyset$ for $j=1, \dots, N-1$.
Here we do 
not 
require  the tiles to be  of the same levels. 

We define the {\em weight}  of a $k$-tile $X^k$ to be 
\begin{equation}
  w(X^k)\coloneqq \Lambda^{-k}, 
\end{equation}
 and the  \defn{$w$-length} of a tile  
 chain $P$ consisting of the tiles $X_1,\dots, X_N$ as  
\begin{equation*}
  \length_w(P)\coloneqq \sum_{j=1}^N w(X_j).
\end{equation*}

Now for  $x,y\in S^2$ we define
\begin{equation}
  \label{eq:defdF}
  \varrho(x,y)\coloneqq \inf_P\length_w(P),
\end{equation}
where the infimum is taken over all tile chains $P$ joining $x$ and
$y$.   Obviously,  such tile chains exist and the infimum can be taken over  simple tile
chains $P$.

\begin{lemma} \label{lem:C1}
The distance function $\varrho$ defined in \eqref{eq:defdF} is a visual metric for $f$  with expansion factor $\Lambda$. 
\end{lemma}

\begin{proof}
  Symmetry and the triangle inequality immediately follow  from the
  definition of $\varrho$. Obviously, we also  have  $\varrho(x,x)=0$ for $x\in S^2$.

  Let $x,y\in S^2$ with $x\ne y$ be arbitrary, and define 
  $m=m(x,y)=m_{f,\CC}(x,y)$ (see Definition~\ref{def:mxy}). Then there exist $m$-tiles $X$ and $Y$ with $x\in X$, $y\in Y$,  and $X\cap Y\ne \emptyset$. 
   So  $X,Y$ is a tile chain joining $x$
  and $y$, and thus 
  \begin{equation*}
    \varrho(x,y)\leq w(X) + w(Y)=2\Lambda^{-m}.
  \end{equation*} 
In order  to prove that $\varrho$ is a metric and is visual for $f$,    it remains to establish a lower bound $\varrho(x,y)\ge (1/C)\Lambda^{-m}$ 
 for a suitable constant $C$ independent of $x$ and $y$.  
  
  Pick $(m+1)$-tiles $X'$ and $Y'$ with $x\in X'$ and $y\in Y'$. Then 
  $X'\cap Y'= \emptyset$ by definition of $m$. Every tile chain joining 
  $x$ and $y$   
  contains a  simple tile chain  $P$ joining  $X'$ and $Y'$.  

Suppose  $P$  consists of the tiles $ X_1,\dots, X_N$. Let 
$k\in \N_0 $ be the largest level of any tile in $P$.    If $k\leq m+1$, then we get the favorable estimate
   \begin{equation}\label {favorable}
   \length_w(P)\ge \Lambda^{-k}\ge \Lambda^{-m-1}.
   \end{equation}

  Otherwise, $k>m+1$. We want to show that then  we can  replace the $k$-tiles 
  in $P$ with 
  $(k-1)$-tiles without increasing  the $w$-length of the tile
  chain (the construction is illustrated in Figure \ref{fig:pf_1_6}). 
  
  To see this, set $X_0=X'$, $X_{N+1}=Y'$, and  
  let  $X_i$,  where $1\le i\le N$,  be the first $k$-tile in
  $P$. Since $P$ is a simple tile
  chain joining $X'$ and $Y'$, the tile  $X_i$ is not contained 
  in $X_{i-1}$ and so it has to meet  $\partial X_{i-1}$. Since the level of $X_{i-1}$ is $<k$, we can find a $(k-1)$-edge $e\sub \partial X_{i-1}$ 
  with $e\cap X_i\ne \emptyset$. Here and below we use the fact that $\CC$ is $f$-invariant, and so cells of any level are subdivided by cells of higher  levels.  Every
  $(k-1)$-tile meets $e$ or is contained in the complement of the edge flower 
  $W^{k-1}(e)$ (see Lemma~\ref{lem:edgeflower}~\ref{item:flower_prop3}). Since  tiles of
  levels $\le k-1$ are subdivided into tiles of level  $k-1$, this
  implies also that every tile of level  $\le k-1$ meets $e$ or is contained in the complement of $W^{k-1}(e)$. 

  \ifthenelse{\boolean{nofigures}}{}{
    \begin{figure}
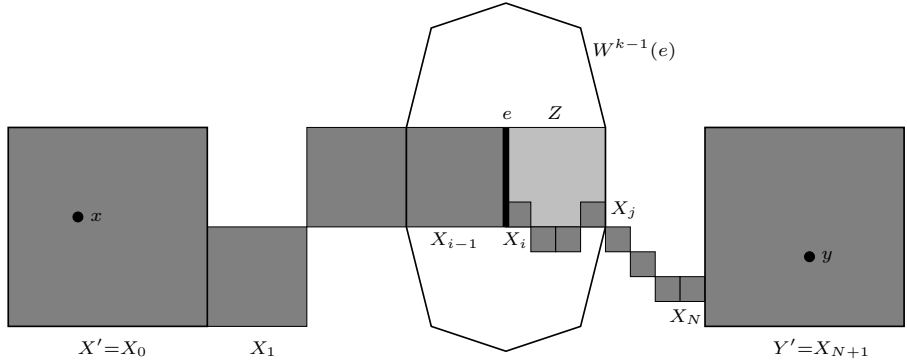

      \centering
      \begin{overpic}
        [width=12cm, 
        tics=20]{pf_thm1_6.eps}
        \put(9.4,14.7){$\scriptstyle{x}$}
        \put(90.4,10.5){$\scriptstyle{y}$}       
        \put(8,0){$\scriptstyle{X'=X_0}$}
        \put(27,0){$\scriptstyle{X_1}$}
        \put(47,12){$\scriptstyle{X_{i-1}}$}
        \put(55,12){$\scriptstyle{X_i}$}
        \put(55,26){$\scriptstyle{e}$}
        \put(60,26){$\scriptstyle{Z}$}
        \put(65,33){$\scriptstyle{W^{k-1}(e)}$}
        \put(67,15.5){$\scriptstyle{X_j}$}
        \put(73.5,3.5){$\scriptstyle{X_N}$}
        \put(85,0){$\scriptstyle{Y'=X_{N+1}}$}
      \end{overpic}
      \caption{Replacing $k$-tiles with  $(k-1)$-tiles.}
      \label{fig:pf_1_6}
    \end{figure}
  }

  Now  $P$ is simple and so no
  tile in the ``tail'' $X_{i+1},\dots, X_N, X_{N+1}$  meets 
  $e$.  Let $j'\in \N$ be the largest number such that $i\le j'\le N$ and all 
  tiles $X_i, \dots, X_{j'}$ are $k$-tiles. Then $X_{j'+1}$ has level 
  $\le k-1$.  Since this tile does not meet $e$, it is contained in $S^2\setminus W^{k-1}(e)$, and so the tiles $X_i, \dots, X_{j'}$ form a chain of $k$-tiles joining $e$ and $S^2\setminus W^{k-1}(e)$. Let $j\in \N$ be the smallest number with  $i\le j\le j'$ such that $X_j$ meets the complement of $W^{k-1}(e)$. Then $X_i, \dots,  X_j$ 
  is a chain $P^k$ of $k$-tiles joining $e$ and $S^2\setminus W^{k-1}(e)$. 
 In particular, $P^k$ joins two disjoint $(k-1)$-cells as follows from the definition of an edge flower (see Definition~\ref{def:edgeflower}). 
  Moreover,  $X_j$ is the only tile in the chain $P^k$  that meets the complement 
  of $W^{k-1}(e)$.

Since $P^k$ joins disjoint $(k-1)$-cells, it follows from  Lemma~\ref{lem:flowerbds} that $P^k$   has at least   $D_1$ elements, and so  by \eqref{extraonL},  
   \begin{equation*}
    \length_w(P^k)\geq D_1 \Lambda^{-k}\geq \Lambda^{-k+1}.     
  \end{equation*}
  Let $Z$ be  the unique $(k-1)$-tile with $Z\supset X_j$. Then 
  $Z\cap X_{j+1}\ne \emptyset$. We also have $Z\cap e \ne \emptyset$.  For otherwise, $X_j\sub Z\sub S^2\setminus W^{k-1}(e)$. Then $j>i$ and  $X_{j-1}$ meets $X_j$ and so the complement of  $W^{k-1}(e)$ contradicting the definition of $j$. 
 So $Z\cap X_{i-1}\supset Z\cap e\ne \emptyset$. 
  Thus we can replace the subchain  $P^k$ of  $P$ with  the single $(k-1)$-tile $Z$ to obtain  a chain
  $P'$ joining  $X'$ and $Y'$. It satisfies 
  \begin{equation*}
    \length_w(P')=\length_w(P)-\length_w(P^k) + w(Z) \leq \length_w(P).  
  \end{equation*}
 By passing to a subchain of $P'$ we can find a simple tile chain
 $P''$  
 joining $X'$ and $Y'$ that contains fewer $k$-tiles than $P$ and satisfies $\length_w(P'')\le \length_w(P)$. 
 
%


 Continuing this process, we can remove all $k$-tiles from the
 tile chain joining $X'$ and $Y'$ without increasing its
 $w$-length. If $k-1>m+1$, we can repeat the process and remove
 the $(k-1)$-tiles without increasing the $w$-length, etc.  In
 the end, we obtain a tile chain $\widetilde P$ joining $X'$ and
 $Y'$ that contains no tiles of levels $>m+1$ and satisfies
 $\length_w(\widetilde P)\le \length_w(P)$.  Thus
  \begin{equation*}
    \length_w(P)\geq \length_w (\widetilde P)\ge \Lambda^{-m-1}.
  \end{equation*} 
  
  This together with the previous estimate \eqref{favorable} implies 
  $$\varrho(x,y)\ge \Lambda^{-m-1}. $$
  This is an inequality as desired, and so $\varrho$ is indeed a visual
  metric with expansion factor $\Lambda$.  \end{proof}

 \begin{lemma}\label{lem:C2}  The visual metric $\varrho$ as defined in \eqref{eq:defdF}
  has the expansion property \eqref{simmetric}.  
  \end{lemma}
  
  \begin{proof} 
  Since $\varrho$ is a visual metric for $f$, 
  it induces the given topology on $S^2$ (see
  Proposition~\ref{prop:visualsummary}~\ref{item:vistop}). 
 So in  the ensuing proof, we can  rely on the usual 
 characterization of open subsets of $S^2$ in terms of 
  metric balls for $\varrho$. 
  
  We first show that 
\begin{equation}\label{eq:uppexp}
    \varrho(f(x),f(y)) \leq\Lambda \varrho(x,y),
  \end{equation}
  for all $x,y\in S^2$ with $\varrho(x,y)<1$. 
    
  Indeed, suppose $x,y\in S^2$ are arbitrary points with $\varrho(x,y)<1$.  
  Let $P$ be an arbitrary tile chain that joins $x$ and $y$ and suppose that it consists  of the tiles $X_1, \dots, X_N$.
 We may assume in addition that  $P$ satisfies $\length_w(P)<1$. Then $P$ does not contain $0$-tiles and hence $f(X_1), \dots, f(X_N)$ is a tile chain  joining $f(x)$ and $f(y)$. Denoting  the latter chain by $f(P)$, we have $$\length_w (f(P))=\Lambda \length_w(P). $$
 Taking the infimum over all such tile chains $P$, we obtain   the desired inequality \eqref{eq:uppexp}. 
  
For an inequality in the other direction
we now consider two cases for $x\in S^2$. 

\smallskip
{\em Case 1:} $x\notin \crit(f).$  Then we can find an open neighborhood $U$ of $x$ such that $f|U$ is a homeomorphism of $U$ onto $U'\coloneqq f(U)$.  Then $U'$ is an open set containing
$f(x)$.  We can choose $\eps>0$ and $\delta\in (0,1)$ such that
$B_\varrho(x,\delta)\sub U$, $B_\varrho(f(x),\eps)\sub U'$, and
$f(B_\varrho(x,\delta))\sub B_\varrho(f(x),\eps)$.

Define $U_x=B_\varrho(x,\delta)$, and let $y\in U_x$ be
arbitrary. 
Then
$\varrho(f(x), f(y))<\eps$. Consider a
tile chain $P'$ joining $f(x)$ and $f(y)$ whose $w$-length is
close enough to $\varrho(f(x), f(y))$ so that
$\length_w(P')<\eps$. By definition of the metric $\varrho$, for
every point $z$ that belongs to a tile in $P'$, we have
$\varrho(f(x),z)\le \length_w(P')<\eps$. Hence $P'$ lies in
$B_\varrho(f(x), \eps)\sub U'$.
    
It follows that $(f|U)^{-1}$ is defined on every tile $X'$ in
$P'$; so
by Lemma~\ref{adhoc}~\ref{item:adhoc1} the Jordan
region $X=(f|U)^{-1}(X')$ is a tile contained in $U$. If $k$ is
the level of $X'$, then $k+1$ is the level of $X$.  By
considering these images of tiles in $P'$ under $(f|U)^{-1}$, we
get a tile chain $P$ joining $x$ and $y$ with
$\length_w (P) = (1/\Lambda) \length_w(P')$. Taking the infimum
over such $P'$, we obtain
\begin{equation}\label{uppdbd}
  \varrho(x,y)\leq (1/\Lambda) \varrho(f(x),f(y)).
\end{equation}
   
  \smallskip
{\em Case 2:}    
$x\in\crit(f)$.
Then $x\in f^{-1}(\post(f))$, and so $x$  is a $1$-vertex. Consider the flower $U=W^1(x)$, and its image $U'=f(W^1(x))=W^0(f(x))$. These are open neighborhoods of $x$ and $f(x)$, respectively, and the map 
$f|U\setminus\{x\}$  is an (unbranched) covering map of $U\setminus\{x\}$ onto $U'\setminus \{f(x)\}$ (this follows from the last part of  Lemma~\ref{lem:mapflowers}~\ref{item:mapflowers1}).  
Again we can choose $\eps>0$ and $\delta\in(0,1)$ such that $B_\varrho(x,\delta)\sub U$, 
    $B_\varrho(f(x),\eps)\sub U'$,  and $f(B_\varrho(x,\delta))\sub B_\varrho(f(x),\eps)$.

   Define $U_x=B_\varrho(x,\delta)$, and  let $y\in U_x$ be arbitrary. In order to show \eqref{uppdbd}, we may assume $x\ne y$. Then 
   $\varrho(f(x), f(y))<\eps$ and $f(x)\ne f(y)$.  Consider a tile chain $P'$
    joining  $f(x)$ and $f(y)$ consisting of tiles $X'_1,\dots, X'_N$.  We can make the further assumptions that $X'_1$ is the only tile in this chain that contains $f(x)$ and that $\length_w(P')$ is close enough to  $\varrho(f(x), f(y))$ such that  $\length_w (P')<\eps$. As before, this implies that $P'$ lies in $U'$.  
    We now   choose a path $\ga\:[0,N]\ra U'$ with the following properties:

    \begin{enumerate} 
    
    \item
      $\ga(0)=f(x)$, $\ga(N)=f(y)$, and $\ga(t)\ne f(x)$ for $t\ne 0$.   
    
    \item
      $\ga([i-1,i])\sub X'_i$ for $i=1, \dots, N$.  
    
    \item
      $\ga(i-1/2)\in \inte(X'_i)$ for $i=1, \dots, N$.
    
    \end{enumerate}
 Since the tiles $X'_i$ are Jordan regions,  such a path $\ga$ can easily be obtained by first running in $X'_1$ from $f(x)$  to an interior point of $X'_1$, then in $X_1'$ to a point in $X'_1\cap X'_2$, then in $X_2'$ to an interior point  of $X'_2$, etc., and finally in $X'_N$  to $f(y)\ne f(x)$. Since $X'_1$ is the only tile in $P'$ containing $f(x)$, this can be done so that the path never meets $f(x)$ except in its initial point. 

There exists a lift $\alpha$ of this path by $f$  with endpoints $x$ and  $y$, i.e., a path $\alpha\:[0,N]\ra U$ with $\alpha(0)=x$, 
$\alpha(N)=y$, and $f\circ \alpha=\ga$. To obtain $\alpha$, lift
$\ga|(0,N]$ by  the covering map $f|U\setminus\{x\}$  such that the lift ends  at $y$ (see Lemma~\ref{lem:liftsofcov} and the subsequent discussion), and note that the lift  has a unique continuous extension to $[0,N]$ by choosing $x$ to be its initial point. 

Using this lift $\alpha$, we can construct a lift of our tile chain $P'$ as follows. Consider a tile $X'_i$ in $P'$ and 
let $k_i$ be its level. Set $p_i\coloneqq \alpha(i-1/2) $ and $p_i'\coloneqq \gamma(i-1/2)$.
Then $f(p_i)=p'_i\in \inte(X_i')$. By Lemma~\ref{adhoc}~\ref{item:adhoc2} there exists a unique $(k_i+1)$-tile $X_i$ with $p_i\in X_i$ and $f(X_i)=X_i'$. 

Note that 
$$\gamma((0,N])\sub U'\setminus\{f(x)\}=W^0(f(x))\setminus \{f(x)\}\sub S^2\setminus\post(f)$$ and that the map 
$$f\: S^2\setminus f^{-1}(\post(f)) \ra S^2\setminus \post(f)$$ is a
covering map (see Lemma~\ref{lem:brcovcov}). This implies that $\alpha|[i-1,i]$ is the unique lift of $\gamma|[i-1,i]$ with $\alpha(i-1/2)=p_i$ (see 
Lemma~\ref{lem:liftsofcov}~\ref{item:liuniq}). On the other hand, the path  $\beta_i=(f|X_i)^{-1}\circ (\ga|[i-1,i])$ is also a lift of $\ga|[i-1,i] $ by  $f$ with $\beta_i(i-1/2)=p_i$ by definition of $X_i$. 
 Hence  $\beta_i=\alpha|[i-1,i]$ and so 
$\alpha([i-1,i])\sub X_i$. It follows that $x=\alpha(0)\in X_1$, $y=\alpha(N)\in X_N$, and $X_i\cap X_{i+1}\supset \{\alpha(i)\}\ne \emptyset$
for $i=1, \dots, N-1$.

Therefore, the tiles $X_1, \dots, X_N$ form a tile chain $P$
joining $x$ and $y$. The level of each tile in $P$ exceeds the
level of the corresponding tile in $P'$ by exactly $1$. Hence
$\length_w (P) = (1/\Lambda) \length_w (P')$. Taking the infimum
over such $P'$, we again obtain inequality \eqref{uppdbd}

Combining \eqref{eq:uppexp} and  \eqref{uppdbd}, we see that every point $x\in S^2$ has a neighborhood $U_x$ such that \eqref{simmetric} holds. 
\end{proof}

This concludes the proof for the existence of   the  visual metric $\varrho$ with the desired properties as in Theorem~\ref{thm:visexpfactors1}~\ref{item:visexpfactors2} 
under the additional assumptions   that $\CC$ is $f$-invariant and that \eqref{extraonL} holds.
 We now consider the general case.
 
 \begin{proof}
   [Proof of
   Theorem~\ref{thm:visexpfactors1}~\ref{item:visexpfactors2}]
 Suppose that $1 < \Lambda < \Lambda_0(f)$. We can choose an
   iterate $F=f^n$ of $f$ such that $F$ has an $F$-invariant
   Jordan curve $\CC\sub S^2$ with $\post(f)=\post(F)\sub \CC$
   (Theorem~\ref{thm:main}). Note that
   $D_k(f,\CC)^{1/k}\to \Lambda_0(f)>\Lambda$ as $k\to \infty$  by
   Proposition~\ref{prop:exp}. Hence if $n$ is sufficiently
   large, which we may assume by passing to an iterate of $F$, we
   also have
$$ D_1(F,\CC) = D_n(f,\CC)\ge \Lambda^n. $$

This means that $F$ is an expanding Thurston map  that
satisfies  condition \eqref{extraonL} for the $F$-invariant
Jordan curve $\CC$. This  allows us to construct a metric for $F$ as discussed above. We call this metric $d$ in order to
distinguish it from the metric $\varrho$ that we are trying to
find for $f$. Then $d$ is a visual metric for $F$ with expansion
factor $\Lambda^n$, and for each $x\in S^2$ there exists an open
neighborhood $U_x$ of $x$ such that
\begin{equation}\label{rhoexp}
d(F(x), F(y))=d(f^n(x), f^n(y))=\Lambda^n d(x,y)
\end{equation}
for all $y\in U_x$. 

We now define $\varrho$ as 
\begin{equation}
  \label{eq:defdf}
 \varrho(x,y)=\frac{1}{n}\sum_{i=0}^{n-1}\Lambda^{-i} d(f^{i}(x), f^{i}(y))
\end{equation}
for $x,y\in S^2$. It is clear that $\varrho$ is a metric on $S^2$. 

Property  \eqref{simmetric} for the metric  $\varrho$ follows from the corresponding property 
\eqref{rhoexp} for $d$  with the same sets $U_x$, $x\in S^2$; indeed,  
if $x\in S^2$ and $y\in U_x$ then by \eqref{rhoexp}  we have
\begin{align*}
  \varrho(f(x),f(y)) &= \frac{1}{n} \sum_{i=0}^{n-1}\Lambda^{-i} d(f^{i+1}(x), f^{i+1}(y))
  \\
  & = \frac{1}{n}\biggl(\Lambda \sum_{i=0}^{n-2}\Lambda^{-(i+1)} d(f^{i+1}(x), f^{i+1}(y))+ \Lambda d(x,y)\biggr)
  \\
  & = \Lambda \frac{1}{n}  \sum_{i=0}^{n-1}\Lambda^{-i} d(f^{i}(x), f^{i}(y)) 
  \,  =\,  \Lambda \varrho(x,y).
\end{align*}

It remains to show that $\varrho$ is a visual metric for $f$ with expansion factor $\Lambda$. 
Let $m=m_{f,\CC}$ and $m_F=m_{F,\CC}$ be as in Definition~\ref{def:mxy}. Since 
$d$ is a visual metric for $F$ with expansion factor $\Lambda^n$, we have 
$$ d(x,y)\asymp \Lambda^{-nm_F(x,y)}\asymp \Lambda^{-m(x,y)}$$ for
all $x,y\in S^2$ by Lemma~\ref{lem:mprops}~\ref{item:mprops4}.
Hence 
$$\varrho(x,y)\ge 
     \frac{1}{n} d(x,y)\gtrsim \Lambda^{-m(x,y)} . $$
Moreover, by  Lemma~\ref{lem:mprops}~\ref{item:mprops2} we have 
$$ m(f^i(x), f^i(y))\ge m(x,y)-i$$
and so 
$$ d(f^i(x), f^i(y))\asymp \Lambda^{-m(f^i(x),f^i(y))}\le  
\Lambda^i\Lambda^{-m(x,y)}$$ 
for all $i\in \N_0$. Hence 
$$\varrho(x,y)\lesssim \frac 1n \sum_{i=0}^{n-1}\Lambda^{-m(x,y)}
= \Lambda^{-m(x,y)}. $$

It follows that $\varrho(x,y)\asymp \Lambda^{-m(x,y)}$ for all $x,y\in S^2$,
where $C(\asymp)$ is  independent of $x$ and $y$. This  shows that $\varrho$ is a visual metric for $f$ with expansion factor
$\Lambda$. 
\end{proof}

We conclude the chapter with an example showing that for an expanding Thurston map $f$ one can in general not expect 
 the existence of a visual metric with expansion factor 
  $\Lambda=\Lambda_0(f)$.  

\begin{ex}\label{ex:notattained} The example is a Latt\`es-type map as in  Example~\ref{ex:lattes_type}. 
 We consider the crystallographic group  
$G$ consisting of all isometries $g$ on $\R^2$ of the form $u\mapsto g(u)=\pm u+\gamma$, where $\gamma\in\Z^2$. 
Then the quotient space $S^2\coloneqq \R^2/G$ is a $2$-sphere.
Let $\Theta\: \R^2\ra S^2=\R^2/G$ be the quotient map. We know that $\Theta$ is induced by $G$ and so $\Theta(u_1)=\Theta(u_2)$ for $u_1,u_2\in \R^2$ if and only if there exists $g\in G$ such that $u_2=g(u_1)$.

 As in Example~\ref{ex:lattes_type}, 
one may view
 $S^2=\R^2/G$ as a pillow 
 obtained   
by folding the rectangle $R=[0,1]\times [0,1/2]$ along the line 
$\ell=\{(x,y)\in \R^2: x=1/2\}$ and identifying the boundaries of the squares 
$S=[0,1/2]\times [0,1/2]$   and $S' = [1/2,1]\times [0,1/2]$
under this operation. In particular, $\CC\coloneqq \Theta(\partial S)=\Theta(\partial S')$ is  a Jordan curve  containing the four critical
values of $\Theta$, which are  
the four vertices of the pillow.    

Let 
$$ A= \left(\begin{array}{cc} 2 & 2\\
0 & 2\end{array}\right).  $$ 
Then  
for $n\in \N_0$ we have 
\begin{equation} \label{eq:Apower}
A^n= \left(\begin{array}{cc} 2^n & n2^{n}\\
0 & 2^n\end{array}\right)  \text{ and }
A^{-n}= \left(\begin{array}{cc} 2^{-n} & -n2^{-n}\\
0 & 2^{-n}\end{array}\right).\end{equation}

We consider the map $A\: \R^2\ra \R^2$, $u\in \R^2\mapsto Au$,  
given by left-multiplication  of $u\in \R^2$ (considered as a column vector)
by the matrix $A$.  For simplicity we use the same notation for the matrix $A$ and this  linear map on $\R^2$. 

Then there exists a  unique Latt\`es-type map $f\: S^2 \ra S^2$  such that the diagram
\begin{equation*}
  \xymatrix{
    \R^2 \ar[r]^A \ar[d]_{\Theta} &
    \R^2 \ar[d]^{\Theta}
    \\
    S^2 \ar[r]^f & S^2
  }
\end{equation*}
commutes (see Proposition~\ref{prop:2222}).  The map $f$ has signature $(2,2,2,2)$ and  its four postcritical points are the critical values of $\Theta$. In particular,
 $\post(f)\sub \CC$. Proposition~\ref{prop:expLattType} implies that the Thurston map 
 $f$ is expanding.


Since $A$ induces a map on the quotient $\R^2/G$, it is $G$-equivariant and so we have $A\circ g\circ A^{-1}\in G$ whenever 
$g\in G$ (Lemma~\ref{lem:f_groupdescend}). This implies that if $n\in \N_0$ and  $\alpha\in \frac12 \Z^2$, then $\Theta$ is injective on 
the parallelogram  $A^{-n}(\alpha +S)$. Indeed, if $u_1,u_2\in  \alpha +S$ and 
$ \Theta(A^{-n}(u_1))=\Theta(A^{-n}(u_2))$, then there exists $g\in G$ such that 
$A^{-n}(u_2) =g(A^{-n}(u_1))$. Then  $u_2=h(u_1)$ with $h\coloneqq A^n\circ g\circ A^{-n}\in G$, and so $\Theta(u_2)=\Theta(u_1)$; since $\Theta$ is injective on the square $\alpha +S$, we conclude  that $u_2=u_1$. The injectivity of $\Theta$ on  $A^{-n}(\alpha +S)$ follows. 

This implies that  the set $X^n=\Theta(A^{-n}(\alpha +S))$ is a Jordan region whenever   $n\in \N_0$ and $\alpha\in \frac12 \Z^2$. The $n$-tiles for $(f,\CC)$ are precisely the sets $X^n$ of this form. This is clear for $n=0$. If $n\in \N_0$ is arbitrary, then $f^n\circ \Theta =\Theta\circ A^n$. 
 Since $\Theta|(\alpha+S)$ is a homeomorphism of $\alpha+S$ onto the $0$-tile $\Theta(\alpha+S)$, it follows that $f^n|X^n$ is a homeomorphism of the Jordan region $X^n$  onto a $0$-tile. 
So by Lemma~\ref{adhoc}~\ref{item:adhoc1}, the set $X^n$ is indeed an $n$-tile. Since 
these sets  $X^n$ cover $\Theta(\R^2)=S^2$, there are no other $n$-tiles.


It follows from \eqref{eq:Apower} that  each set  $A^{-n}(\alpha+S)\subset \R^2$ is a parallelogram congruent to  the parallelogram $P_n =\{ xu_n+y v_n: 0\le x,y\le 1\}$ 
spanned by the vectors  $u_n=\frac 1{2^{n+1}}(1,0)$ and 
$v_n=\tfrac 1{2^{n+1}} (-n, 1)$.   Thus $\diam(A^{-n}(\alpha+S))\asymp n 2^{-n}$, where $C(\asymp)$ is
independent of $n$. This implies that if we equip the pillow
$S^2$ with the locally Euclidean metric (obtained by pushing the
Euclidean metric forward by $\Theta$), then for each $n$-tile
$X^n$ we have $\diam(X^n) \asymp n2^{-n}$ (so 
by Proposition~\ref{lem:expoexp}~\ref{item:expoex2} 
this
metric on $S^2$ is {\em not} a visual metric for $f$).  We conclude
that $D_n\coloneqq D_n(f,\CC)\gtrsim 2^n/n$ for $n\in \N$.

\pagebreak
If $X_i\coloneqq \Theta(A^{-n}(\alpha_i+S))$ with  $\alpha_i=(0,-i/2)$  for $i=1, \dots , N\coloneqq 
\lceil 2^n/n\rceil$, then $X_1, \dots, X_N$ is a chain of $n$-tiles joining opposite sides of $\CC$. Hence $D_n\le N\lesssim 2^{-n}/n$ for $n\in \N$.  
It  follows that 
$D_n\asymp 2^n/n$,  and so 
$$\Lambda_0(f)=\lim_{n\to \infty} D_n^{1/n}=2. $$ 

If $\Lambda$ is the expansion factor of a visual metric, then for all $n\in \N$ we
must have $1\lesssim D_n\Lambda^{-n}\asymp 2^n\Lambda^{-n}/n
$ (see \eqref{eq:DkLged} in the proof
Theorem~\ref{thm:visexpfactors1}~\ref{item:visexpfactors1}).  
It follows that a visual metric for $f$ with expansion factor
$\Lambda =\Lambda_0(f)=2$ does not exist.
\end{ex}

\ifthenelse{\boolean{singlechapter}}{

%
%


\chapter{The measure of maximal entropy}
\label{cha:measure}

In this chapter  we investigate  the measure of maximal 
entropy  of an expanding  Thurston map. We will first review some definitions and the necessary background from measure-theoretic dynamics in Section~\ref{sec:reviewmdyn}. Our goal in Section~\ref{sec:meamaxemt} is then to prove  the following statement.

\begin{theorem} 
  \label{thm:maxentr0}
  \index{measure!of maximal entropy}
  \index{n@$\nu_f$|textbf} 
Let $f\: S^2 \ra S^2$ be an expanding Thurston map. Then there exists a unique measure $\nu_f$ 
of maximal entropy for $f$. The map $f$ is mixing for $\nu_f$. 
\end{theorem}
Since mixing implies ergodicity, it follows  that $f$ is ergodic with respect to $\nu_f$.

On our way to prove the previous theorem, we will be able to 
compute the topological entropy $h_{top}(f)$ of $f$. 

\begin{cor}\label{cor:topent} Let $f\: S^2 \ra S^2$ be an expanding Thurston map. Then $h_{top}(f)=\log(\deg(f))$. 
\end{cor}

A consequence of the uniqueness part in Theorem~\ref{thm:maxentr0} is that the measures of maximal entropy of an expanding Thurston map and any of its iterates agree.

\begin{cor}\label{cor:momeit}  
Let $f\: S^2 \ra S^2$ be an expanding Thurston map.  Then for each $n\in \N$ we have $\nu_f=\nu_{f^n}$ for the unique measures of maximal entropy of $f$ and $f^n$. 
\end{cor} 

The corresponding statements of Theorem~\ref{thm:maxentr0} and
Corollary~\ref{cor:topent} for rational maps (not necessarily
postcritically-finite) 
where proved by Lyubich in \cite{Ly83}. 
For expanding Thurston maps without periodic critical points,
Theorem~\ref{thm:maxentr0} can be derived from general results
due to Ha\"\i ssinsky and Pilgrim \cite{HP}. 

We will present a
different approach.
We consider an iterate  $F=f^n$  of our given expanding Thurston map 
$f$ 
that has an $F$-invariant Jordan curve $\CC$ 
with $\post(F)\sub \CC$. Then in the  cell decomposition $\DD^k(F,\CC)$, $k\in \N_0$,  generated by $F$ and $\CC$ each cell is subdivided by cells of higher levels. 
This will allow us to construct a specific $F$-invariant probability measure $\nu_F$ that assigns to each $k$-tile  mass proportional to $\deg(F)^{-k}$, where the proportionality factor only depends on the color of the tile
(see Proposition~\ref{prop:exmeasure}).  

By using coverings by tiles, it is also easy to obtain the estimate 
$h_{top}(F)\le \log(\deg(F))$ (see the proof of Lemma~\ref{lem:topent}). On the other hand, the $1$-tiles in $\DD^1(F,\CC)$ form a measurable partition of $S^2$ that generates (in a suitable sense) the Borel $\sigma$-algebra on $S^2$ (see Lem\-ma~\ref{lem:generator}). This  allows us to   explicitly compute  the  measure-theoretic entropy 
$h_{\nu_F}(F)$ of $F$ with respect to $\nu_F$  as $h_{\nu_F}(F)=\log(\deg(F))$ (see the proof of Proposition~\ref{prop:exmeasure}). It follows that $\nu_F$ is a measure of maximal entropy for $F$. 
One then shows that this measure $\nu_F$ is actually $f$-invariant and the unique measure of maximal entropy for $f$; this is formulated in  Theorem~\ref{thm:nuF} which immediately implies Theorem~\ref{thm:maxentr0}.

Our  main point here  is to give a rather concrete and elementary description of the measure of maximal entropy of an expanding Thurston map and to establish some of its basic properties.   We will not touch upon many interesting related questions such as equidistribution 
of preimages and periodic points or more general measures such as equilibrium measures for suitable potentials. These subjects are thoroughly investigated in \cite{Li1, Li2}.

\section{Review of measure-theoretic dynamics} \label{sec:reviewmdyn}

In this section we briefly discuss the necessary concepts related to topological and measure-theoretic entropy.  For more background  on these topics see \cite{KH,Wa}. 

In the following,  $(X,d)$ is  a compact metric space, and
$g\:X\ra X$ is a continuous map. 
For $n\in \N$  and $x,y\in X$ we define 
\begin{equation}\label{eq:defdng}
d_g^n(x,y)=\max\{d(g^k(x), g^k(y)): k=0, \dots, n-1\}.\end{equation} 
Then $d_g^n$ is a metric on $X$. Let $D(g,\eps,n)$ be the minimal cardinality  of a family of  
subsets of $X$  whose $d_g^n$-diameter is at most $\eps>0$ and whose union is equal to  $X$.
One can show that the limit
$$h(g,\eps)\coloneqq \lim_{n\to\infty}\frac 1n \log(D(g,\eps,n))$$ 
exists \cite[Lemma~3.1.5]{KH}. 
Obviously, the quantity
$h(g,\eps)$ is non-increasing in $\eps$. One defines the 
{\em topological 
entropy}\index{topological entropy}\index{entropy!topological}\index{h top@$h_{top}$}
of $g$ (see \cite[Section~3.1.b]{KH}) as  
$$h_{top}(g)\coloneqq \lim_{\eps\to 0} h(g,\eps)\in[0,\infty]. $$
If one uses another metric $d'$ on $X$, then one obtains the
same quantity for $h_{top}(g)$ if $d'$ induces the same topology
on $X$ as $d$ \cite[Proposition~3.1.2]{KH}. 
The topological entropy is also  well-behaved under iteration. Indeed, 
if  $n\in \N$, then $h_{top}(g^n)=n h_{top}(g) $ 
\cite[Proposition~3.1.7~(3)]{KH}.

We denote by ${\mathcal B}$  the $\sigma$-algebra of all Borel
sets on $X$.  A measure  on $X$ is understood to be a Borel
measure, i.e., one defined on  ${\mathcal B}$. We call a
measure 
$\mu$ on $X$ $g$-{\em invariant} if 
\index{invariant!measure}
\index{measure!invariant}    
\index{f-invariant@$f$-invariant!measure}
\begin{equation}
  \label{invariance}
  \mu(g^{-1}(A))=\mu(A)
\end{equation} for all $A\in {\mathcal B}$. Note that by continuity of
$g$, we have $g^{-1}(A)\in \mathcal{B}$ whenever $A\in
\mathcal{B}$. We denote by $\mathcal{M}(X,g)$\index{M@$\mathcal{M}(X,g)$} the set of all
$g$-invariant Borel probability measures on $X$.   
 
If $\mu$ is a probability measure on a compact metric space 
$X$, then it is 
{\em regular}.\index{measure!regular}\index{regular measure}
This means that for every $\eps>0$ and every  Borel set $A\sub X$ there exists a compact set $K\sub A$ with $\mu(A\setminus K)<\eps$  
({\em inner regularity})  and an open set $U\sub X$ with $A\sub U$ and $\mu(U\setminus A)<\eps$ ({\em outer regularity}). See \cite[Theorem~2.18]{Ru} for a more general result  that contains this statement as a special case. 

A {\em semi-algebra} ${\mathcal S}$  
is a family of subsets of $X$ 
satisfying the following conditions: (i) $\emptyset \in {\mathcal S}$, (ii) $A\cap B\in {\mathcal S}$, whenever $A,B\in {\mathcal S}$, 
and (iii)  $X\setminus A$ is a finite union of disjoint sets in ${\mathcal S}$, whenever $A\in  {\mathcal S}$. A semi-algebra ${\mathcal S}$ 
{\em generates} a $\sigma$-algebra ${\mathcal A}$ on $X$ if ${\mathcal A}$ is the smallest 
$\sigma$-algebra containing ${\mathcal S}$. 

Let $\mathcal{S}$ be a semi-algebra generating ${\mathcal
  B}$. If $\mu$ and $\nu$ are two probability measures on $X$ and
$\mu(A)=\nu(A)$ for all $A\in \mathcal{S}$, then
$\mu=\nu$. Similarly, in order to show that a probability measure $\mu$ is
$g$-invariant it is enough to   verify \eqref{invariance} for
all sets $A$ in ${\mathcal S}$ (see  \cite[proof of
Theorem~1.1, p.~20]{Wa}  for the simple argument on how to verify these statements).

Let  $\mu\in  \mathcal{M}(X,g)$. Then we say that $g$  is  {\em ergodic}\index{ergodic}\index{measure!ergodic}
for $\mu$ (or $\mu$ is {\em ergodic} for $g$) if  for each set
$A\in {\mathcal B}$ with $g^{-1}(A)=A$  we have  
$\mu(A)=0$ or $\mu(A)=1$. The map $g$ is called 
{\em mixing}\index{mixing}  
for $\mu$ if 
\begin{equation}\label{mixing}
\lim_{n\to \infty} \mu(g^{-n}(A)\cap B)=\mu(A)\mu(B)
\end{equation}
 for all $A,B\in {\mathcal B}$. It is easy to see that if  $g$ is mixing for $\mu$, then $g$ is also ergodic.  
 
 To establish mixing, one only has to verify \eqref{mixing} for
 sets $A$ and $B$ in a semi-algebra generating $\mathcal{B}$
 (\cite[Theorem~1.17~(iii)]{Wa}; 
note that the terminology in \cite{Wa} slightly differs from ours).  
 If $\mu,\nu\in  \mathcal{M}(X,g)$,  $g$ is ergodic for $\mu$,
  and $\nu$ is absolutely continuous with respect to $\mu$,
 then $\nu=\mu$ \cite[Remark~(1), p.~153]{Wa}. 

Our next goal is to define the measure-theoretic entropy of $g$ for a measure 
$\mu$.  We will follow \cite[Section~4.3]{KH} with slight differences 
in notation and terminology (see also \cite[Chapter~4]{Wa}).

Let $\mu\in  \mathcal{M}(X,g)$. A 
{\em measurable partition}\index{measurable partition}\index{partition!measurable}
$\xi$ for $(X,\mu)$ is a countable  collection $\xi=\{A_i:i\in
I\}$ of sets in $\mathcal{B}$ such that  
$\mu(A_i\cap A_j)=0$ for $i,j\in I$, $i\ne j$,  and 
$$ \mu\biggl(X\setminus \bigcup_{i\in I} A_i\biggr)=0. $$ Here $I$ is a countable (i.e., finite or countably infinite) index set. 
The {\em symmetric difference}\index{$\norm{D2}$@$\bigtriangleup$}\index{symmetric difference} of two sets $A,B\sub X$ is defined as 
$$A\bigtriangleup B=(A\setminus B)\cup (B\setminus A).$$ 
Two measurable partitions $\xi$ and $\eta$ for $(X,\mu)$ are called
{\em equivalent}\index{partition!equivalent}\index{equivalent partition}
if there exists a bijection between the sets of positive measure in $\xi$ and the sets of   positive measure in $\eta$ such that corresponding sets have 
a symmetric difference of  $\mu$-measure zero. Roughly speaking, this means that the partitions are the same up to sets of measure zero.

Let  $\xi=\{A_i:i\in I\}$ and $\eta=\{B_j:j\in J\}$ be  measurable partitions of $(X,\mu)$. 
Then
$$\xi \vee \eta\coloneqq \{ A_i\cap B_j: i\in I,\,  j\in J\} $$
is also a measurable partition,  called the {\em join} of $\xi$ and $\eta$. The join of finitely many measurable partitions is defined similarly. 

Let  
$$g^{-1}(\xi)\coloneqq \{g^{-1}(A_i): i\in I\} $$ 
and for $n\in \N$ define 
\begin{equation}
  \label{eq:def_xi_ng}
  \xi^{n}_g
  \coloneqq
  \xi\vee g^{-1}(\xi)\vee \dots \vee g^{-(n-1)}( \xi). 
\end{equation}

The {\em entropy}\index{entropy!of partition} 
of $\xi$ is
\begin{equation*}
  H_\mu(\xi)
  \coloneqq
  \sum_{i\in I} \mu(A_i)\log(1/\mu(A_i))\in [0,\infty].  
\end{equation*}
Here it is understood that the function  $\phi(x)=x\log(1/x)$ is
continuously extended to $0$ by setting  $\phi(0)=0$.   

One  can  show that if $H_\mu(\xi)<\infty$, then for a given map $g$ the quantities
$H_\mu(\xi^{n}_g)$, $n\in \N_0$, are 
{\em subadditive}\index{subadditive}
 in the sense  that  $$H_\mu(\xi^{n+k}_g)\le H_\mu(\xi^{n}_g)+H_\mu(\xi^{k}_g)$$ for all  $k,n\in \N_0$ 
\cite[Proposition~4.3.6]{KH}. 
This implies 
  that
$$h_\mu(g,\xi)\coloneqq \lim_{n\to\infty}\frac 1n
H_\mu(\xi^n_g)\in [0,\infty)$$ exists 
and
we have 
\begin{equation}\label{eq:linhginf}
h_\mu(g,\xi)=\inf_{n\in \N}\frac 1n H_\mu(\xi^n_g) 
\end{equation}
(\cite[Theorem~4.9]{Wa}; 
see also the proof of
Lemma~\ref{lem:submultexp}). 
The quantity $h_\mu(g, \xi)$ is called the {\em
  (measure-theoretic) entropy of $g$ relative to} $\xi$. The 
{\em (measure-theoretic) entropy}\index{measure-theoretic entropy|textbf}\index{entropy!measure-theoretic|textbf}\index{h mu@$h_\mu$|textbf}
 of $g$ for $\mu$ is defined as 
 \begin{align}
   h_\mu(g)=\sup\{ h_\mu(g,\xi): 
   {}&\xi \text{ is a measurable partition} 
   \\ \notag
   &\text{of } (X,\mu)
   \text{ with } H_\mu(\xi)<\infty\}.
 \end{align}
In this  definition it is actually enough to take the supremum over all finite measurable partitions $\xi$ (this easily follows from 
``Rokhlin's inequality'' 
\cite[Proposition~4.3.10 (4)] {KH}).

We call a finite  measurable partition $\xi$ a {\em generator}\index{generator}    
for $(g,\mu)$ if 
the following condition is true: Let $\mathcal{A}$ be the
smallest $\sigma$-algebra containing all sets in the partitions
$\xi_g^n$, $n\in \N$. Then we require that for each Borel set
$B\in \mathcal{B}$ there exists a set $A\in \mathcal{A}$ such
that 
$\mu(A\bigtriangleup B)=0$.

 If for every set $B\in \mathcal{B}$ and for  every $\eps>0$, there exists $n\in \N$
and a union $A$ of sets in  $\xi_g^n$ with $\mu(A\bigtriangleup B)<\eps$, then $\xi$ is a generator for $(g,\mu)$. 
If $\xi$ is a generator, then   $h_\mu(g)=h_\mu(g,\xi)$  by the
Kolmogorov-Sinai theorem 
\cite[Theorem.~4.17]{Wa}.   

If $\mu\in {\mathcal M}(g,X)$ and  $n\in \N$, then 
\cite[Proposition~4.3.16~(4)]{KH}
\begin{equation}
  \label{eq:hfhF}
  h_\mu(g^n)=n h_\mu(g).
\end{equation}

If $\alpha\in [0,1]$ and   $\nu\in {\mathcal M}(g,X)$ is another
measure,  then 
\cite[Theorem~8.1]{Wa}
$$ h_{\alpha \mu+(1-\alpha) \nu}(g)=\alpha h_{\mu}(g)+(1-\alpha)h_{\nu}(g). $$

The topological entropy is related to the measure-theoretic
entropy by the so-called 
variational principle.
It states that 
\cite[Theorem~8.6]{Wa}
\index{variational principle} 
\begin{equation}
  \label{eq:var_princ}
  h_{top}(g)=\sup \{h_\mu(g): \mu\in {\mathcal M}(g,X)\}.
\end{equation}
A measure $\mu\in  \mathcal{M}(g,X)$ for which 
$ h_{top}(g)=h_\mu(g)$ is called a {\em measure of maximal entropy}.\index{measure!of maximal entropy|textbf}  

Let $\widetilde X$ be another compact metric space.  If $\mu$ is a
measure on $X$ and
the map
  $\varphi\: X\ra  \widetilde X$ is continuous, then
the 
{\em
  push-forward}\index{measure!push-forward}\index{push-forward!of measure}\index{$f_*\mu$}
$\varphi_*\mu$ of $\mu$ by $\varphi$ is the
measure given by $\varphi_*\mu(A)\coloneqq \mu(\varphi^{-1}(A))$ for all Borel
sets $A\sub  \widetilde X$. Note that if $\widetilde X=X$, then $\mu$ is $\varphi$-invariant if and only if $\varphi_*\mu=\mu$. 

Suppose $\widetilde g\: \widetilde X\ra \widetilde X$ is a continuous map,  $\mu \in \mathcal{M}(X,g)$, and $\widetilde \mu \in \mathcal{M}(\widetilde X, \widetilde g)$. 
Then the dynamical system $(\widetilde X,  \widetilde g, 
\widetilde \mu)$ is  called a 
{\em (topological) factor}\index{factor of dynamical system}
 of 
$(X,   g, \mu)$ if there exists a continuous and surjective map $\varphi\: X \ra \widetilde X$ such that $\varphi_*\mu = \widetilde \mu$ and 
$ \widetilde g\circ \varphi=\varphi\circ g$. Then we have the following commutative diagram:
 \begin{equation*}
   \label{eq:factor}
   \xymatrix{
     (X,\mu) \ar[r]^{g} \ar[d]_{\varphi} & (X,\mu) \ar[d]^{\varphi}
     \\
     (\widetilde X, \widetilde \mu)
      \ar[r]^{\widetilde g} & (\widetilde X, \widetilde \mu)\rlap{.}
   }
 \end{equation*}
In this case,  
$h_{\widetilde \mu}(\widetilde g)\le h_\mu(g)$ 
\cite[Proposition~4.3.16]{KH}.  

If  $\mu$ and $\nu$ are (Borel) probability measures on a compact metric space $X$, then $\mu$ has a unique Lebesgue decomposition
with respect to $\nu$. More precisely, $\mu$ can uniquely be written as  
$\mu=\mu_a+\mu_s$,  where $\mu_a$ and $\mu_s$ are finite  measures on $X$   such that    
$\mu_a$ is absolutely continuous and $\mu_s$ is singular with respect
to $\nu$ (see \cite[Theorem 6.10]{Ru}). 

We require the following fact.

\begin{lemma} \label{lem:invdecomp} Let $X$ be a compact metric space, and $g\: X\ra X$ be a continuous map. Suppose $\mu$ and $\nu$ are probability measures on $X$, and $\mu=\mu_a+\mu_s$ is the Lebesgue decomposition of $\mu$ with respect to $\nu$. If $\mu$ and $\nu$ are $g$-invariant, then $\mu_a$ and $\mu_s$ are also $g$-invariant.  
\end{lemma}

\begin{proof} 
  We claim that the measure $g_*\mu_a$ is absolutely continuous
  with respect to $\nu$. 
For this it suffices 
to show that if
  $A\sub X$ is a Borel set with $\nu(A)=0$, then
  $g_*\mu_a(A)=0$. Now $\nu$ is $g$-invariant and so
  $\nu(g^{-1}(A))=\nu(A)=0$ for such a set $A$. Since $\mu_a$ is
  absolutely continuous with respect to $\nu$, this implies that
  $g_*\mu_a(A)=\mu_a(g^{-1}(A))=0$ as desired.

Similarly, we claim that $g_*\mu_s$ is singular with respect to $\nu$. 
Since $\mu_s$ is singular with respect to $\nu$,  there exists 
  a Borel set $B\sub X$ with $\nu (B)=0$ 
and $\mu_s(X\setminus B)=0$. Then
$\mu_a(B)=0$ and $\mu_a(g^{-1}(B))=g_*\mu_a(B)=0$. This combined with the  $g$-invariance of $\mu$ implies 
that 
$$ 
\mu_s(g^{-1}(B))=\mu(g^{-1}(B))=\mu(B)=\mu_s(B). $$ 
It follows that 
\begin{align*} 
g_*\mu_s(X\setminus B) & =\mu_s(X\setminus g^{-1}(B)) =
\mu_s(X)-\mu_s(g^{-1}(B))\\
&= \mu_s(X)-\mu_s(B)= \mu_s(X\setminus B) =0.  
\end{align*} 
This shows that  $B$ is a set of full $g_*\mu_s$-measure and has $\nu$-measure zero. We conclude  that  $g_*\mu_s$ is indeed singular with respect to 
$\nu$.

Now $\mu =g_*\mu=g_*\mu_a+g_*\mu_s$. By what we have seen, here 
 $g_*\mu_a$ is absolutely continuous and $g_*\mu_s$ singular with respect to $\nu$. The uniqueness of the Lebesgue decomposition of $\mu$  implies that $g_*\mu_a=\mu_a$ and $g_*\mu_s=\mu_s$.
The statement follows. 
 \end{proof} 

\section[Construction of the measure of maximal entropy]{Construction  of the measure of maximal
  entropy} 
\label{sec:meamaxemt}

In this section we fix an expanding Thurston map $f\: S^2\ra S^2$.  Our goal is to describe a measure of maximal entropy for $f$ and show its uniqueness. We will freely use the notation and the results discussed in the previous section. 

We pick  a base metric $d$ on $S^2$ that induces the given topology. Unless otherwise indicated, metric concepts are for this metric  $d$. By Theorem~\ref{thm:main} we can find   a sufficiently high iterate $F=f^n$ of $f$ that has an  $F$-invariant 
Jordan curve $\CC\sub S^2$ with $\post(f)=\post(F)\sub \CC$. Then $F$ is also an expanding Thurston map (Lemma~\ref{lem:Thiterates}). 
In the following,  we  consider the cell decompositions $\DD^k=\DD^k(F,\CC)$ for $k\in \N_0$. 
A {\em cell} is a cell in any of the cell decompositions 
$\DD^k$, $k\in \N_0$, and the terms {\em tiles}, {\em edges}, and {\em vertices} are used in a similar way. As usual, we  denote by ${\bf X}^k$
 and ${\bf E}^k$ the set of $k$-tiles and $k$-edges  for $(F,\CC)$, respectively. 
By Proposition~\ref{prop:invmarkov} the cell decomposition   $\DD^{m+k}$ is a refinement of $\DD^k$ for $m,k\in \N_0$, 
and so  cells are subdivided by cells of higher levels.

 We denote  by
$\XOw$ and $\XOb$  the two $0$-tiles, and color the tiles for $(F,\CC)$ as in Lemma~\ref{lem:colortiles}. In particular, $\XOw$ is colored white and  
$\XOb$ is colored black. 

 For $k\in \N_0$ let  $w_k$  be the number of white and $b_k$ be  the
 number of black $k$-tiles  contained in $\XOw$, and similarly let
 $w'_k$ and $b'_k $ be the number of white and black $k$-tiles
 contained in $\XOb$. Then it follows from the discussion after 
 Lemma~\ref{lem:colortiles} that  
  \begin{equation}\label{eq:wbdeg}
   w_k+w'_k= b_k+b'_k=\deg(F)^k.
   \end{equation}
   
   Note that $ b_1, w'_1\ne 0$.  Indeed, suppose that $b_1=0$, for
   example. Then $\XOw$ contains only white $1$-tiles. Let $X\sub \XOw$
   be such a $1$-tile, $e\sub X$ be a $1$-edge with $e\sub \partial
   X$, and $Y$ be the other $1$-tile containing $e$. Then $Y$ is black
   and so $Y\sub \XOb$. Hence  
$$e\sub X\cap Y\sub \XOw\cap \XOb= \partial \XOw. $$
 Since $\partial X$ is a union of $1$-edges, it follows that $\partial
 X\sub \partial \XOw$. As $\XOw$ and $X$ are Jordan regions and $X\sub
 \XOw$, this is only possible if $X=\XOw$.  
Hence $\XOw$ is a $1$-tile and $F|\XOw$ is a homeomorphism of $\XOw$
onto itself. Applying Lemma~\ref{adhoc}~\ref{item:adhoc1} repeatedly, we see 
that $\XOw$ is a 
$k$-tile for each $k\in \N_0$. This is impossible, because $F$ is
expanding and so the diameters of $k$-tiles approach $0$  as $k\to
\infty$.   

    Define 
\begin{equation}\label{def:wb}
  w\coloneqq \frac{b_1}{b_1+w'_1}, \; b\coloneqq \frac{w'_1}{b_1+w'_1}.
\end{equation}
Then $w,b>0$ and $w+b=1$. 
It follows from \eqref{eq:wbdeg} for $k=1$ that the matrix 
\begin{equation}\label{eq:matrixA}
A=\left(\begin{array}{cc} w_{1} & b_{1}\\
w'_{1} & b'_{1}\end{array}\right)
\end{equation} 
 has the eigenvalues $\lambda_1=\deg(F)$ and $\lambda_2=w_1-b_1$ with respective  eigenvectors
 $$v_1=
 \left(\!\!\begin{array}{c} w\\
             b\end{array}\!\!\right) \text{ and } v_2=
         \left(\!\!\!\begin{array}{cc} &1\\
                       -\!\!\!\!\!&1\end{array}\!\!\right). $$
 Here $|\lambda_2|=|w_1-b_1|<\lambda_1=\deg(F)$. Indeed, since $1\le b_1\le \deg(F)$ and $0\le w_1\le \deg(F)$, we otherwise have $w_1=0$ and $b_1=\deg(F)\ge 2$. Then the white $0$-tile contains only black $1$-tiles. Arguing as in the discussion above, we see that then there can be only one such tile, and so $b_1=1$.  This is a contradiction.

The existence of a largest  positive eigenvalue $\lambda_1$ for $A$ 
with  a corresponding eigenvector  with all positive 
coordinates is an instance of the  Perron-Frobenius theorem 
(\cite[Theorem~1.9.11]{KH}). 

Let $k,l,m\in \N_0$ with $m\ge l\ge k$ be arbitrary. The map $F^k$ preserves colors  of tiles, i.e., if  $X^{m}$ is an $m$-tile, then $F^k(X^{m})$ is an $(m-k)$-tile with the same color as $X^{m}$.  Moreover, if $Y^l$ is an $l$-tile, then it follows from  Lemma~\ref{adhoc}~\ref{item:adhoc1} that the map $F^k|Y^l$  induces a bijection $X^{m}\mapsto F^{k}(X^{m})$  between the $m$-tiles contained in $Y^l$ and the $(m-k)$-tiles contained in the 
$(l-k)$-tile 
 $Y^{l-k}\coloneqq F^k(Y^l)$.

If we use this for $m=k+1$ and $l=k$, then we see   that a white $k$-tile contains 
$w_1$ white and $b_1$ black $(k+1)$-tiles, and similarly each black 
$k$-tile contains $w'_1$ white and $b'_1$ black $(k+1)$-tiles.  
This leads to the identity
\begin{equation}\label{matrixrec}   \left(\begin{array}{cc} w_{k+1} & b_{k+1}\\
w'_{k+1} & b'_{k+1}
\end{array} \right)=
\left(\begin{array}{cc} w_{k} & b_{k}\\
w'_{k} & b'_{k}
\end{array} \right) \left(\begin{array}{cc} w_{1} & b_{1}\\
w'_{1} & b'_{1}
\end{array} \right), 
\end{equation} for $k\in \N_0$. This implies  
$$ A^k= \left(\begin{array}{cc} w_{k} & b_{k}\\
w'_{k} & b'_{k}
\end{array} \right)
$$ for $k\in \N_0$. 
The following lemma is another  consequence of \eqref{matrixrec}.

\begin{lemma} \label{lem:counttiles} For all $k\in \N_0$ we have  
\begin{align*} 
  w_k&=w\deg(F)^k+b(w_1-b_1)^k, 
  &b_k&=  w\deg(F)^k-w(w_1-b_1)^k, 
  \\
  w'_k&=b\deg(F)^k-b(w_1-b_1)^k,
  &b'_k&=  b\deg(F)^k+w(w_1-b_1)^k.
\end{align*} 
\end{lemma}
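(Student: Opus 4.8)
The plan is to read the recursion \eqref{matrixrec} as a statement about powers of a fixed matrix. Set
$$M_k := \begin{pmatrix} w_k & b_k\\ w'_k & b'_k\end{pmatrix}$$
for $k\in\N$. By definition of $A$ in \eqref{eq:matrixA} we have $M_1=A$, and \eqref{matrixrec} says exactly that $M_{k+1}=M_kA$. Hence by a one-line induction $M_k=A^k$ for all $k\in\N$, and the lemma is reduced to computing $A^k$, which I would do by diagonalization.

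For the diagonalization I would use the eigendata already recorded just before the statement of the lemma. Using $w_1+w'_1=b_1+b'_1=\deg(F)$ from \eqref{eq:wbdeg} one checks directly that $A$ has eigenvalues $\lambda_1=\deg(F)$ and $\lambda_2=w_1-b_1$, with right eigenvectors $v_1=\binom{w}{b}$ and $v_2=\binom{1}{-1}$, where $w,b$ are as in \eqref{def:wb}. Put
$$P=\begin{pmatrix} w & 1\\ b & -1\end{pmatrix},\qquad D=\begin{pmatrix}\lambda_1 & 0\\ 0 & \lambda_2\end{pmatrix},$$
so that $AP=PD$, i.e.\ $A=PDP^{-1}$. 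Since $w+b=1$ we get $\det P=-1$, so no division is needed: $P^{-1}=\begin{pmatrix}1 & 1\\ b & -w\end{pmatrix}$.

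The final step is simply to multiply out $A^k=PD^kP^{-1}$:
$$A^k=\begin{pmatrix} w & 1\\ b & -1\end{pmatrix}\begin{pmatrix}\lambda_1^k & 0\\ 0 & \lambda_2^k\end{pmatrix}\begin{pmatrix}1 & 1\\ b & -w\end{pmatrix}=\begin{pmatrix} w\lambda_1^k+b\lambda_2^k & w\lambda_1^k-w\lambda_2^k\\ b\lambda_1^k-b\lambda_2^k & b\lambda_1^k+w\lambda_2^k\end{pmatrix}.$$
Comparing entries with $M_k=A^k$ and substituting $\lambda_1=\deg(F)$, $\lambda_2=w_1-b_1$ gives precisely the four claimed identities.

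I do not expect a genuine obstacle here; the argument is routine linear algebra once \eqref{matrixrec} is recognized as $M_{k+1}=M_kA$. The only points needing a little care are getting the direction of the matrix multiplication in \eqref{matrixrec} right (so that one obtains $M_k=A^k$ and not, say, its transpose), keeping track of which matrix entry is $w_k$ versus $w'_k$, and verifying the eigenvector relations $Av_1=\lambda_1v_1$, $Av_2=\lambda_2v_2$. As an alternative, one could avoid matrices entirely and prove the four formulas by induction on $k$ directly: the base case $k=1$ is an algebraic identity in $w_1,b_1,w'_1,b'_1$ (using $w+b=1$ together with $w_1+w'_1=b_1+b'_1=\deg(F)$), and the inductive step follows by inserting the level-$k$ formulas into \eqref{matrixrec}. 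I would present the diagonalization version, since it is shorter and explains where the eigenvalues $\deg(F)$ and $w_1-b_1$ come from.
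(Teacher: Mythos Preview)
Your proof is correct and follows the same path as the paper, which simply says ``This follows from \eqref{matrixrec} by induction.'' Your diagonalization argument is a clean way to make that one-line proof explicit, and your alternative of verifying the four formulas directly by induction on $k$ is precisely what the paper has in mind; either way the content is the same routine linear algebra.
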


 Since $|w_1-b_1|<\deg(F)$, the terms with $\deg(F)^k$ in
  these identities  are the main terms for large $k$.

\begin{proof} This follows from  \eqref{eq:wbdeg}, \eqref{def:wb}, and \eqref{matrixrec} by induction. 
\end{proof}

The next lemma provides an important connection between cells for $(F,\CC)$ and general Borel sets.

  \begin{lemma}\label{lem:SgeneratesB}
 Let $\mathcal{S}$ be the set   consisting of the empty set and the interiors of all cells for $(F,\CC)$. Then $\mathcal{S}$ is a semi-algebra generating the Borel $\sigma$-algebra $\mathcal{B}$ on $S^2$.  
   \end{lemma}

 \begin{proof} We first verify conditions  (i)--(iii) of a semi-algebra for $\mathcal{S}$.
 
\smallskip   
{\em Condition} (i):  By definition of $\mathcal{S}$ we have $\emptyset\in \mathcal{S}$.
 
 \smallskip 
{\em Condition}  (ii): Let $A,B\in \mathcal{S}$. In order to show that $A\cap B\in \mathcal{S}$,  we may assume that $A=\inte(\sigma)$ and $B=\inte(\tau)$, where $\sigma$ is a $k$-cell,  $\tau$ is an $l$-cell,  and $k\ge l$. Let $p\in \inte(\tau)$ be arbitrary. Then by Lemma~\ref{lem:uniondisjint} there  exists a unique $k$-cell $c$ with $p\in \inte(c)$. Since $\DD^k$ is a refinement of $\DD^l$, there exists a unique $l$-cell $\tau'$ with $\inte(c)\sub \inte (\tau')$ 
 (see Lemma~\ref{lem:mincell}). Then $\tau$ and $\tau'$ are both $l$-cells containing the point $p$ in their interiors. This implies that  $\tau'=\tau$, and so  $\inte(c)\sub \inte(\tau)$.
 
 It follows that $\inte(\tau)$ can be written as a disjoint union of interiors of $k$-cells. This implies that  either $A\cap B=\inte(\sigma)$ or $A\cap B=\emptyset$. In both cases, $A\cap B\in \mathcal{S}$. 
 
 \smallskip 
{\em Condition} (iii):  Let $A\in \mathcal{S}$ be arbitrary. If
$A=\emptyset$, then $S^2\setminus A=S^2$, and so $S^2\setminus A$
is equal to the 
disjoint union 
of the interiors of the $0$-cells, 
meaning it is a finite disjoint union 
of elements in  $\mathcal{S}$.

If $A=\inte(\tau)$ where $\tau$ is a $k$-cell, then $S^2\setminus
A$ is the 
disjoint union 
of the interiors of all $k$-cells distinct from $\tau$. Again
$S^2\setminus  A$ is a 
finite disjoint union 
of sets in $\mathcal{S}$.

So $\mathcal{S}$ is indeed a semi-algebra.
 
\smallskip  
{\em $\mathcal{S}$ generates $\mathcal{B}$}:  Let $\mathcal{A}$ be the smallest $\sigma$-algebra on $S^2$ containing  $\mathcal{S}$. Since $\mathcal{S}$
consists of Borel sets, we have $\mathcal{A}\sub \mathcal{B}$. So
in order to show that $\mathcal{A}=\mathcal{B}$ 
it suffices to establish 
that $U\in \mathcal{A}$ 
for each non-empty open 
subset $U$ of $S^2$. 

Let $p\in U$ be arbitrary. Then for each $k\in \N_0$ the point $p$ is contained in the interior of some  $k$-cell. Since $F$ is expanding, the diameters of $k$-cells approach $0$ as $k\to \infty$. Hence there exists a cell $c$ with $p\in \inte(c)\sub U$. This implies that $U$ is a union of elements in $\mathcal{S}$. Since for each $k\in \N_0$ there are only finitely many 
$k$-cells, the set $\mathcal{S}$ is countable, and so $U$ is a countable union of elements in $\mathcal{S}$. Hence $U\in \mathcal{A}$ as desired. 
\end{proof}  

In the following, we set 
 $$E^\infty=\bigcup_{k\in \N_0}F^{-k}(\CC). $$
 Then $E^\infty$ is a Borel set. 
 Proposition~\ref{prop:celldecomp}~\ref{item:skeletons} (applied to the map $F$) implies that   $E^\infty$ is equal to the union of all edges.  Since every vertex is contained in an edge, the set $E^\infty$ also contains all vertices.
 Moreover, we have 
 \begin{equation}\label{eq:CinftyFinv}
  F^{-1}(E^\infty)=E^\infty.
  \end{equation}
  Indeed, note that $F^{-1}(\CC) \supset \CC$
  and so 
  \begin{align*}
    F^{-1}(E^\infty)
    &= 
    F^{-1}\biggl(\bigcup_{k\in
      \N_0}F^{-k}(\CC)\biggr) 
    =
    \bigcup_{k\in\N_0}F^{-(k+1)}(\CC)\\
    &=
    \bigcup_{k\in\N}F^{-k}(\CC)= \CC \cup  \bigcup_{k\in\N}F^{-k}(\CC) =E^\infty.
  \end{align*}

 \begin{lemma}\label{lem:generator}
 Let $\mu$ be an $F$-invariant probability measure on $S^2$ with $\mu(E^\infty)=0$. Then for each  $k\in \N$ the set $\X^k$ of $k$-tiles forms  a measurable  partition of $(S^2, \mu)$. It is equivalent to the partition $\xi^k_F$ where $\xi=\X^1$. 
 Moreover, $\xi=\X^1$ is a generator for $(F,\mu)$.  
  \end{lemma}
  
  \begin{proof} Note that $\mu(E^\infty)=0$ implies that all edges are sets of $\mu$-measure zero. Since every vertex is contained in an edge, we also have $\mu(\{v\})=0$ for all vertices $v$. The  $k$-tiles cover $S^2$, and two distinct $k$-tiles have only edges or vertices, i.e., a set of $\mu$-measure zero, in common. Hence $\X^k$ is a measurable partition of $(S^2, \mu)$. 
  
  Let $X\in \X^k$ be arbitrary. Then for $i=1, \dots, k$ there exist unique $i$-tiles 
  $X^i$ with $X=X^k\sub X^{k-1}\sub \dots \sub X^1$.
  Set $Y_i=F^{i-1}(X^i)$ for $i=1, \dots, k$.  Then $Y_1, \dots, Y_{k}$ are  $1$-tiles.  We claim that
  \begin{equation}
  \label{eq:Xxikg}
  X=Y_1\cap F^{-1}(Y_2)\cap \dots \cap   F^{-(k-1)}(Y_{k}).
  \end{equation}
To see  this,  denote the right hand side in this  equation by $\widetilde X$. Then it is clear that $X\sub \widetilde X$.  We verify   $X=\widetilde X$ by  inductively showing that  for any point $x\in \widetilde X$ we have 
  $x\in X^i$ for $i=1, \dots, k$, and so $x\in X^k=X$. 
  
  Indeed, since $\widetilde X \sub Y_1=X^1$ this is clear for $i=1$. Suppose $x\in X^{i-1}$ for some $i$ with $2\le i\le k$.  To complete the inductive step, we have to show $x\in X^i$. Note that $x\in \widetilde X\sub F^{-(i-1)}(Y_{i})$ and so $F^{i-1}(x)\in Y_{i}$. The map  
  $F^{i-1}|X^{i-1}$ is a homeomorphism of  $X^{i-1}$ onto the $0$-tile $F^{i-1}(X^{i-1})$. Moreover, $x\in X^{i-1}$, $X^{i}\sub X^{i-1}$, and $F^{i-1}(x)\in Y_{i}=F^{i-1}(X^{i})$. Hence by injectivity of $F^{i-1}$ on $X^{i-1}$ we have $x\in X^{i}$ as desired.

  Equation~\eqref{eq:Xxikg} shows that   every element in $\X^k$ belongs to $\xi^k_F$. This implies 
 that the measurable partitions $\X^k$ and  $\xi^k_F$ are equivalent ($\xi^k_F$
 may contain additional sets, but they have to be of measure zero).

 To establish  that $\xi={\bf X}^1$ is a generator, let $B\sub S^2$ be an arbitrary 
 Borel set and $\eps>0$. By what we have seen, it is enough  to
 show that there exists $k\in \N$ and a union $A$ of $k$-tiles
 such that  $\mu(A \bigtriangleup B)<\eps$.
 
 By regularity of $\mu$ 
 there exist a  
compact set $K\sub B$ 
and an open set $U\sub S^2$ 
with $K\sub B\sub U$ 
and $\mu(U\setminus K)<\eps$. 
Since the diameters of  tiles approach  $0$ uniformly as their  levels become larger, we can choose $k\in \N$ so large that every  $k$-tile that meets $K$ is contained in the open neighborhood $U$ of $K$.  Define 
$$A=\bigcup\{X\in \X^k: X\cap K\ne \emptyset\}.$$ Then
$K\sub A\sub U$. This implies $A\bigtriangleup B\sub U\setminus K$, and so  
$$\mu(A\bigtriangleup B)\le \mu(U\setminus K)<\eps$$ as desired.
 The proof is complete.  
   \end{proof}
   
The last lemma  allows us to easily compute the entropy of $F$-invariant measures $\mu$ once we know that  $\mu(E^\infty)=0$.  
   The following fact will be useful for verifying this. 

 \begin{lemma}\label{lem:coveredges}
 There exists  $1\le L<\deg(F)$ such that for all   $k,m\in \N_0$ and each  $m$-edge $e$ there exists a collection $M$ of $(m+k)$-tiles with  $\#M\le CL^{k}$ such that  $e$ is contained in the interior of the set $\bigcup_{X\in M} X$. 
 Here $C$ is independent of $k$.  
   \end{lemma}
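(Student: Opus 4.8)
The plan is to reduce to the case $m=0$ by means of $F^m$, and then to split the count of $(m+k)$-tiles meeting $e$ into an ``edge contribution'' and a ``vertex contribution'', each governed by a combinatorial growth rate strictly below $\deg(F)$. Concretely, fix an $m$-edge $e$ with endpoints $v_1,v_2$, and let $M$ be the set of all $(m+k)$-tiles meeting $e$. By Proposition~\ref{prop:invmarkov}~(ii) every tile in $M$ is contained in some $m$-tile, which then meets $e$; by Lemma~\ref{lem:edgeflower}~(ii) the $m$-tiles meeting $e$ are exactly those comprising $\overline{W^m(e)}=\overline{W^m(v_1)}\cup\overline{W^m(v_2)}$, so there are at most $2\deg_{F^m}(v_1)+2\deg_{F^m}(v_2)$ of them. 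On each such $m$-tile $X'$ the map $F^m$ is a homeomorphism onto a $0$-tile $Z$, carrying the $(m+k)$-tiles inside $X'$ bijectively onto the $k$-tiles inside $Z$, and those meeting $e$ onto the $k$-tiles of $Z$ meeting $F^m(e\cap X')$, which is either the $0$-edge $e_0:=F^m(e)$ (when $e\subseteq\partial X'$) or a single $0$-vertex (when $X'$ touches $e$ only at $v_1$ or $v_2$). Writing $S_k$ for the maximum over $0$-edges $e_0$ and $0$-tiles $Z$ of the number of $k$-tiles of $Z$ meeting $e_0$, and $P_k$ for the analogue with $e_0$ replaced by a $0$-vertex, we obtain $\#M\le 2S_k+C_m P_k$, with $C_m$ depending only on $m$ (take a maximum over the finitely many $m$-edges). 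Since $e\subseteq W^{m+k}(e)\subseteq\inte\big(\bigcup_{X\in M}X\big)$ by Lemma~\ref{lem:edgeflower}, it suffices to bound $S_k$ and $P_k$ by $CL^k$ with $1\le L<\deg(F)$ independent of $k$.

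For $P_k$ the point is a uniform bound on local degrees: there are $N_0\ge1$ and $1\le L_0<\deg(F)$ with $\deg_{F^k}(x)\le N_0 L_0^k$ for all $x\in S^2$ and $k\in\N_0$. Since $\deg_F(y)\le\deg(F)$ for every $y$, a periodic point $z$ of period $q$ satisfies $\deg_{F^q}(z)\le\deg(F)^q$, with equality forcing $\deg_F(F^j(z))=\deg(F)$ for all $j$, hence $\deg_{F^n}(z)=\deg(F)^n$ for all $n$; then $z$ is an $n$-vertex whose cycle in $\DD^n(F,\CC)$ has length $2\deg(F)^n=\#\X^n$, so \emph{every} $n$-tile contains $z$, and any point $x$ at positive base-distance $\delta$ from $z$ lies, for each $n$, in an $n$-tile also containing $z$, giving $\mesh(F,n,\CC)\ge\delta$ for all $n$ — contradicting that $F$ is expanding. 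Hence every periodic cycle has $\deg_{F^q}(z)<\deg(F)^q$; the cycles with $\deg_{F^q}>1$ contain a critical point, so there are finitely many, and setting $L_0$ to be the maximum of $1$ and of the finitely many numbers $\deg_{F^q}(z)^{1/q}$ gives $1\le L_0<\deg(F)$. The bound $\deg_{F^k}(x)\le N_0L_0^k$ then follows by splitting the orbit of $x$ into a transient part meeting $\crit(F)$ at most $\#\crit(F)$ times and an eventually periodic part. As $P_k\le 2\deg_{F^k}(p)$ for the relevant $0$-vertex $p$, we get $P_k\le 2N_0 L_0^k$.

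For $S_k$, a $k$-tile of $Z$ meeting a $0$-edge $e_0$ either contains a $k$-edge inside $e_0$ — and each such $k$-edge lies in exactly one $k$-tile of $Z$ — or meets $e_0$ only in a $k$-vertex $v$ of $e_0$, in which case it is one of the at most $2\deg_{F^k}(v)$ tiles of the cycle of $v$. Since the number of $k$-vertices on $e_0$ is $E_k(e_0)+1$, where $E_k(e_0)$ denotes the number of $k$-edges contained in $e_0$, the previous paragraph gives $S_k\lesssim L_0^k\,E_k(e_0)$. Finally $E_k$ obeys a linear recursion: subdividing shows $E_k(e_0)=\sum_{e_0'}b_{e_0,e_0'}\,E_{k-1}(e_0')$, where $b_{e_0,e_0'}$ is the number of $1$-edges contained in $e_0$ that map onto the $0$-edge $e_0'$ (here one uses that $\CC$ is $F$-invariant, so $1$-edges of $\CC$ map to $0$-edges). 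Thus $E_k(e_0)\le C\rho^{k}k^{\#\post(F)}$, where $\rho$ is the spectral radius of the non-negative integer matrix $B=(b_{e_0,e_0'})$, and therefore $S_k\lesssim (L_0\rho)^{k}k^{\#\post(F)}$.

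The main obstacle is the inequality $L_0\,\rho<\deg(F)$ — equivalently, that the number of $k$-tiles meeting $\CC$ grows at a rate strictly below $\deg(F)$; granting it, one picks $L$ with $L_0\rho<L<\deg(F)$, the polynomial factor is absorbed, $S_k,P_k\le CL^k$, and the lemma follows. I expect this to be proved, as with the degree estimate above, via expansion: if the growth rate of the number of $k$-tiles touching $\CC$ were $\deg(F)$, then a positive proportion of the $\asymp\deg(F)^k$ tiles of $\X^k$ would lie in the $\mesh(F,k,\CC)$-neighbourhood of $\CC$; since in a visual metric every $k$-tile is quasi-round of diameter $\asymp\Lambda^{-k}$ (Lemma~\ref{lem:quasiball}, Lemma~\ref{lem:expoexp}) and $\CC$ is a quasicircle (Theorem~\ref{thm:Cquasicircle}), comparing how many disjoint $\Lambda^{-k}$-balls fit into a shrinking neighbourhood of a quasicircle versus into $S^2$ should force the growth rate down, and the remaining work — making this quantitative, in particular when $F$ has periodic critical points so that $(S^2,d)$ need not be doubling — is the delicate part; alternatively one can attempt a direct Perron–Frobenius analysis of the tile-counting recursion, extracting from a growth rate equal to $\deg(F)$ a cell common to a fixed fraction of all $k$-tiles and thereby, once more, a configuration contradicting the uniform shrinking of tiles.
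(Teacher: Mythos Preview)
Your reduction is carried out carefully up to the inequality $L_0\rho<\deg(F)$, but this is where the real content lies and you do not prove it. The bound $S_k\lesssim L_0^k\,E_k$ arises by summing, over each $k$-vertex $v$ on $e_0$, the full cycle length $2\deg_{F^k}(v)\le 2N_0L_0^k$; this crudely multiplies the maximal local-degree growth rate by the edge-count growth rate. There is no evident mechanism forcing the product $L_0\rho$ below $\deg(F)$: once the lemma is known by other means one can read off $L_0<\deg(F)$ and $\rho<\deg(F)$ separately (look at the tiles meeting an endpoint, respectively count $k$-edges on $e_0$), but nothing better than $L_0\rho\le L^2$ for the product, and $L^2$ can certainly exceed $\deg(F)$. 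So your estimate may simply be too weak to close, and the completions you sketch---packing quasi-balls near a quasicircle, or a Perron--Frobenius analysis of tile incidences---are left vague exactly where the difficulty sits, in particular when $F$ has periodic critical points and $(S^2,d)$ fails to be doubling.

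The paper's argument is entirely different and bypasses all of this. It does not try to count the $(m+k)$-tiles meeting $e$; instead it \emph{constructs} a covering by iterated pruning. Expansion (via Lemma~\ref{lem:quasiball} and Lemma~\ref{lem:expoexp}) yields $k_0$ such that every $s$-tile contains in its interior one white and one black $(s+k_0)$-subtile. Starting with $T_0$ the set of $m$-tiles meeting $e$, one passes from $T_l$ to $T_{l+1}$ by subdividing each $U\in T_l$ into $(m+(l+1)k_0)$-tiles and discarding one white and one black subtile from $\inte(U)$. Since $e$ lies in the $1$-skeleton of $\DD^{m+lk_0}$ one has $e\cap\inte(U)=\emptyset$, so $e$ remains in the interior of $\bigcup T_{l+1}$. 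This gives $N_{l+1}\le(\deg(F)^{k_0}-2)\,N_l$ and hence $L=(\deg(F)^{k_0}-2)^{1/k_0}<\deg(F)$---no spectral estimates, no case analysis of critical orbits.
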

 
 The total number of $(m+k)$-tiles is $2\deg(F)^{m+k}$. So the lemma says that for large $k$, the $m$-edge $e$ can be covered by a substantially smaller number of $(m+k)$-tiles.
 
 \begin{proof} It follows from  Lemma~\ref{lem:quasiball} and Proposition~\ref{lem:expoexp}~\ref{item:expoex2} that we  can find $k_0\in \N$ such that for every $s$-tile $X$, $s\in \N_0$,  there exist two $(s+k_0)$-tiles $Y$ and $Z$, one white and one black, with 
 $Y\sub \inte(X)$ and $Z\sub \inte(X)$. 
  
Every white $s$-tile contains $w_{k_0}$ white and $b_{k_0}$ black
$(s+k_0)$-tiles, and every black  $s$-tile contains $w_{k_0}'$
white and $b_{k_0}'$ black $(s+k_0)$-tiles. 
By \eqref{eq:wbdeg} we also know  that
$w_{k_0}+w_{k_0}'=b_{k_0}+b_{k_0}'=\deg(F)^{k_0}$. 
The choice of $k_0$ ensures that $ w_{k_0},
w'_{k_0},b_{k_0},b'_{k_0}\ge 1$. By possibly choosing $k_0$
larger, we may also assume that $\deg(F)^{k_0}\ge 3$. 

 Now let $e$ be an arbitrary $m$-edge. For each 
 $l\in \N_0$ we will define certain collections $T_l$ of 
 $(m+lk_0)$-tiles  whose union contains $e$ in its interior. We  denote the number of  white tiles in  $T_l$ by $N^{\tt w}_l$, the number of  black tiles in $T_l$ by $N^{\tt b}_l$, and define  $N_l=\max\{N_l^{\tt w}, N^{\tt b}_l\}$. Then the number of tiles in $T_l$ is bounded by  $2N_l$.

  Let $T_0$ be the set of all $m$-tiles that meet $e$.  Then the union of  the tiles in 
  $T_0$ is the closure of the edge flower of $e$ and so it contains  $e$  in its interior.
    
  Suppose the collection $T_l$ has been constructed. 
  Then we   subdivide each of the tiles 
  $U$ in $T_l$  into $(m+(l+1)k_0)$-tiles and  remove one white and one black  
$(m+(l+1)k_0)$-tile 
contained in the interior of 
$U$. We define  $T_{l+1}$ as  the collection of all  tiles obtained in this way from tiles in $T_l$.  Since $\inte(U)\cap e=\emptyset$ for each $U\in T_l$, the  union of the tiles in $T_{l+1}$ still contains  $e$
in its interior. 
Then for  the number of white tiles in $T_{l+1}$ we have the estimate 
\begin{align*}
  N^{\tt w}_{l+1}&= N^{\tt w}_l(w_{k_0}-1)+N^{\tt b}_l(w_{k_0}'-1) 
  \\
  &\le  N_l (w_{k_0}+w_{k_0}'-2) =N_l(\deg(F)^{k_0}-2).  
\end{align*}  
Similarly, $$N^{\tt b}_{l+1}\le  N_l(\deg(F)^{k_0}-2), $$ and so 
$$N_{l+1}\le N_l(\deg(F)^{k_0}-2).$$ 
 
Let $$L\coloneqq (\deg(F)^{k_0}-2)^{1/k_0}.$$
Then $1\le L<\deg(F)$ and
$$\#T_l\le 2N_l\le 2N_0 L^{k_0l}$$ 
is a bound for  the total number of tiles in $T_l$. 

Now let $k\in \N_0$ be arbitrary. Then we can choose  $l\in \N_0$ such that $k
\leq lk_0 < k+ k_0$. For each  
$(m+lk_0)$-tile $U$ in $T_l$ we can pick an $(m+k)$-tile that contains $U$. Let $M$ be that collection of all $(m+k)$-tiles obtained in this way. Then the union of all tiles in $M$ contains  $e$ in its interior and we have 
$$ \#M\le \#T_l\le 2N_0 L^{k_0l}\le 
2N_0 L^{k_0}L^{k}=CL^k,$$
where $C=2N_0L^{k_0}$. The claim follows. 
    \end{proof}
 
 The constant $C$ in the previous lemma depends on $e$.
 If we require the weaker property that the collection $M$ of $(m+k)$-tiles  only covers $e$, then we can choose the collection so 
 that $\#M\le CL^k$ with a constant $C$ independent of $e$. Indeed, in this case,  we can choose $T_0$ to consist of the two $m$-tiles $X$ and $Y$, one white and one black, that contain $e$ in their  boundary. Then $N_0=1$ and this leads to an inequality of the desired type with a constant $C$ independent of $e$.  
 
 In the next lemma we obtain an upper bound for the  topological entropy of $f$.
 
 \begin{lemma}  \label{lem:topent} $h_{top}(f)\le\log (\deg(f))$. 
 \end{lemma}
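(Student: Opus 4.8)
The plan is to bound the topological entropy by exhibiting, for each $\epsilon>0$, an economical family of sets covering $S^2$ in the $d_f^n$-metric, where $d$ is a visual metric for $f$. Recall that the class of visual metrics is the same for $f$ and any iterate (Proposition~\ref{prop:visualsummary}~(v)), and that topological entropy is independent of the choice of metric inducing the given topology. Moreover $h_{top}(f^n)=n\,h_{top}(f)$, so it suffices to prove the bound for a convenient iterate $F=f^n$ as above, namely one possessing an $F$-invariant Jordan curve $\CC\supset\post(F)$; then $h_{top}(f)=\tfrac1n h_{top}(F)\le\tfrac1n\log(\deg(F))=\log(\deg(f))$.

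First I would fix a visual metric $d$ for $F$ with expansion factor $\Lambda>1$, and consider the cell decompositions $\DD^k=\DD^k(F,\CC)$. The key geometric input is Lemma~\ref{lem:expoexp}: there is a constant $C\ge1$ with $\diam_d(X)\le C\Lambda^{-k}$ for every $k$-tile $X$. Now the $d_F^m$-metric on a set $A$ is controlled by the ordinary $d$-diameters of $F^j(A)$ for $j=0,\dots,m-1$. Since $F$ is cellular for $(\DD^{k+1},\DD^k)$, an $(m+k)$-tile $X$ has the property that $F^j(X)$ is an $(m+k-j)$-tile for $j=0,\dots,m-1$; hence each $F^j(X)$ has $d$-diameter at most $C\Lambda^{-(k+1)}$ (using $m+k-j\ge k+1$), so $\diam_{d_F^m}(X)\le C\Lambda^{-(k+1)}$. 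Thus for a given $\epsilon>0$, choosing $k=k(\epsilon)$ so that $C\Lambda^{-(k+1)}\le\epsilon$, the collection of all $(m+k)$-tiles is a cover of $S^2$ by sets of $d_F^m$-diameter $\le\epsilon$; its cardinality is $\#\X^{m+k}=2\deg(F)^{m+k}$ by Proposition~\ref{prop:celldecomp}~(iv). Therefore $D(F,\epsilon,m)\le 2\deg(F)^{m+k}$, and
\begin{equation*}
h(F,\epsilon)=\lim_{m\to\infty}\frac1m\log D(F,\epsilon,m)\le\lim_{m\to\infty}\frac1m\log\bigl(2\deg(F)^{m+k}\bigr)=\log(\deg(F)).
\end{equation*}
Letting $\epsilon\to0$ gives $h_{top}(F)\le\log(\deg(F))$, and dividing by $n$ gives the claim.

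The steps in order: (1) reduce to an iterate $F=f^n$ with an invariant curve, invoking Theorem~\ref{thm:main}, Lemma~\ref{lem:Thiterates}, and the scaling $h_{top}(f^n)=n\,h_{top}(f)$ together with $\deg(f^n)=\deg(f)^n$; (2) fix a visual metric $d$ for $F$ and record the tile-diameter estimate from Lemma~\ref{lem:expoexp}; (3) observe that iterates of $F$ map $(m+k)$-tiles to lower-order tiles, so that $(m+k)$-tiles are small in the $d_F^m$-metric; (4) count tiles via Proposition~\ref{prop:celldecomp}~(iv) to bound $D(F,\epsilon,m)$; (5) take the two limits. There is no real obstacle here: the only point requiring a little care is the bookkeeping in step (3), namely that for an $(m+k)$-tile $X$ the orbit pieces $X,F(X),\dots,F^{m-1}(X)$ are all tiles of order at least $k+1$, which is precisely what makes $\epsilon$ depend only on $k$ and not on $m$; this is exactly the ``exponential shrinking of tiles'' that expansion provides. (The matching lower bound $h_{top}(f)\ge\log\deg(f)$, giving equality as asserted in Corollary~\ref{cor:topent}, would be proved separately, e.g.\ from the factor map of Theorem~\ref{thm:expThfactor} or from the measure of maximal entropy constructed below; it is not needed for the present lemma.)
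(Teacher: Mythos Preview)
Your proof is correct and is essentially the same argument as the paper's: reduce to the iterate $F$, cover $S^2$ by tiles of order $m+k$ (the paper writes $k+k_0$), observe that each such tile has small $d_F^m$-diameter because its forward images are tiles of order $\ge k+1$, and count tiles. The only cosmetic difference is that you work with a visual metric and invoke the explicit bound $\diam_d(X)\le C\Lambda^{-k}$ from Lemma~\ref{lem:expoexp}, whereas the paper uses an arbitrary base metric and appeals directly to expansion to find $k_0$ with $\diam(X)\le\eps$ for all tiles of order $\ge k_0$; neither the visual metric nor the $F$-invariance of $\CC$ is actually needed for this step.
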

 We will verify later that actually  $h_{top}(f)=\log (\deg(f))$ (see the proof of Corollary~\ref{cor:topent}).

 \begin{proof} Since $h_{top}(F)=nh_{top}(f)$ and $\deg(F)=\deg(f)^n$, it suffices to show that $h_{top}(F)\le \log(\deg(F))$.

 To see  that 
$h_{top}(F)\le \log(\deg(F))$,  let $\eps>0$ be arbitrary. 
Since $F$ is expanding, we can find $k_0\in \N_0$ such that 
$\diam(X)\le \eps$ whenever $X\in \X^k$ for $k> k_0$. 

Now if $k\in \N$ and  $X\in \X^{k+k_0}$ are  arbitrary, then 
$F^i(X)$ is a tile of level $k-i+k_0>k_0$ for $i=0,1,\dots, k-1$, and so 
$\diam(F^i(X))\le \eps$. This implies that the diameter of $X$ 
with respect to the metric $d^k_F$ derived from our base metric $d$  is $\le \eps$ (see \eqref{eq:defdng}). 
Since the number of $(k+k_0)$-tiles is equal to $2\deg(F)^{k+k_0}$ and these tiles form a cover of $S^2$, it follows that 
$D(F,\eps,k)\le 2\deg(F)^{k+k_0}$, and so 
$ h( F, \eps)\le \log(\deg(F))$. Letting $\eps\to 0$ we conclude 
$h_{top}(F)\le \log(\deg(F))$ as desired. 
\end{proof}
 
 Since the curve $\CC$ is $F$-invariant, we can restrict $F$ to $\CC$ to obtain  a map $F|\CC\: \CC\ra \CC$. The following lemma shows that the topological entropy of this restriction is strictly smaller than  $\log (\deg(F))$.
 
  \begin{lemma}  \label{lem:topents} $h_{top}(F|\CC)<\log (\deg(F))$. 
 \end{lemma}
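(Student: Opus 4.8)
\textbf{Proof plan for Lemma~\ref{lem:topents}.}
The idea is to count how many $k$-edges lie on $\CC$ and show this number grows at an exponential rate strictly smaller than $\deg(F)$; this directly bounds $h_{top}(F|\CC)$ from above. First I would observe that since $\CC$ is $F$-invariant, for each $k\in\N_0$ the curve $\CC$ is a union of $k$-edges, and $F|\CC$ maps the collection of $k$-edges contained in $\CC$ onto the collection of $(k-1)$-edges contained in $\CC$. So let $E_k$ denote the number of $k$-edges lying on $\CC$. Each $0$-edge $e^0\subset\CC$ is subdivided by $1$-edges on $\CC$; the number of such $1$-edges is the number of $1$-edges into which $e^0$ is subdivided and which happen to lie on $\CC$ (not in the interiors of the two $0$-tiles). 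Since $\CC$ is $F$-invariant, a $1$-edge $e^1\subset f^{-1}(\CC)$ lies on $\CC$ iff $f(e^1)\subset\CC$, and iterating this combinatorial rule via Lemma~\ref{lem:pullback} (and Proposition~\ref{prop:invmarkov}) shows that $\{E_k\}$ satisfies a linear recursion governed by a nonnegative integer matrix $B$ analogous to the matrix $A$ in \eqref{eq:matrixA}, but recording how each $0$-edge is subdivided by $1$-edges \emph{on $\CC$}.

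The key point is that the spectral radius $\rho(B)$ satisfies $\rho(B)<\deg(F)$. Here is where I would use Lemma~\ref{lem:coveredges}: that lemma (in the weaker form noted in the remark after its proof, giving a constant independent of $e$) says that every $m$-edge $e$ can be \emph{covered} by at most $CL^{k}$ tiles of order $m+k$, with $1\le L<\deg(F)$ independent of $e$. Applying this to the finitely many $0$-edges forming $\CC$, we see that $\CC$ itself can be covered by at most $C'L^{k}$ tiles of order $k$. Since every $k$-edge on $\CC$ lies in the boundary of some $k$-tile meeting $\CC$, and every $k$-tile has exactly $\#\post(F)$ edges on its boundary, it follows that $E_k\le C''L^k$ for a constant $C''$ independent of $k$. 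Hence $\limsup_{k\to\infty}\tfrac1k\log E_k\le\log L<\log(\deg(F))$.

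It remains to convert the bound on $E_k$ into a bound on the topological entropy of $g:=F|\CC$. I would fix a base metric $d$ on $S^2$ and, given $\eps>0$, choose $k_0$ so large that $\diam_d(e)<\eps$ for every edge $e$ of order $\ge k_0$ (possible since $F$ is expanding). Then for any $k\in\N_0$ and any $(k+k_0)$-edge $e\subset\CC$, the images $g^i(e)=F^i(e)$ for $i=0,\dots,k$ are edges of order $\ge k_0$ lying on $\CC$, hence have $d$-diameter $<\eps$; this means the $g$-dynamical $d^k_g$-diameter of $e$ is $<\eps$ (using \eqref{eq:defdng} with $g$ in place of $f$). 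Since the $(k+k_0)$-edges on $\CC$ cover $\CC$ and there are at most $C''L^{k+k_0}$ of them, we get $D(g,\eps,k)\le C''L^{k+k_0}$, so $h(g,\eps)\le\log L$, and letting $\eps\to0$ gives $h_{top}(F|\CC)\le\log L<\log(\deg(F))$.

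\textbf{Main obstacle.} The only slightly delicate point is making the passage from ``$\CC$ is covered by $C'L^k$ $k$-tiles'' to ``$E_k\le C''L^k$'' fully rigorous: one must check that a $k$-edge contained in $\CC$ is contained in the boundary of a $k$-tile that intersects $\CC$ (immediate from Lemma~\ref{lem:specprop}) and that distinct $k$-edges on $\CC$ arise from a bounded number of such tiles, so that the count of covering tiles controls the count of edges up to the factor $\#\post(F)$. Everything else is a routine application of Lemma~\ref{lem:coveredges} and the definition of topological entropy, so I do not expect real difficulty there; the substantive input is entirely Lemma~\ref{lem:coveredges}, whose proof already isolated the crucial sub-exponential factor $L<\deg(F)$.
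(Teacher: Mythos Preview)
Your proposal is correct and follows essentially the same route as the paper: use Lemma~\ref{lem:coveredges} applied to the finitely many $0$-edges forming $\CC$ to bound the number $E_k$ of $k$-edges on $\CC$ by $C''L^k$ with $L<\deg(F)$, and then convert this into an entropy bound by covering $\CC$ with $(k+k_0)$-edges of small $d^k_g$-diameter. The matrix $B$ you mention in your first paragraph is not needed---the paper bypasses it and invokes Lemma~\ref{lem:coveredges} directly, exactly as you do in your second paragraph; the ``main obstacle'' you flag is handled in the paper by the one-line observation that every $k$-edge contained in $\CC$ must lie on the boundary of some $k$-tile in the cover $M_k$.
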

 
 \begin{proof} 
The proof is very similar to the proof
 of Lem\-ma~\ref{lem:topent}.  Again let   $d$  be a base metric on $S^2$.  
 
 Since $\CC$ consists of finitely many  $0$-edges, by Lemma~\ref{lem:coveredges} we can cover $\CC$ by a collection $M_k$ of 
$k$-tiles, where $\#M_k\le CL^k$. Here $1\le L<\deg(F)$ and $C$ is independent of $k$. The $k$-edges in the boundaries  
of the $k$-tiles in $M_k$ then form a cover of $\CC$. 
It is clear that each $k$-edge contained in $\CC$ must belong to this collection. 
Hence if $E_k$ is the set of all $k$-edges contained in $\CC$, we have $\#E_k\le C'L^k$ with a constant $C'$ independent of $k$.

Now  let $\eps>0$ be arbitrary. 
Since $F$ is expanding, we can find $k_0\in \N_0$ such that 
$\diam(X)\le \eps$ whenever $X\in \X^k$ for $k> k_0$. 
Since every $k$-edge is contained in a $k$-tile, we also have $\diam(e)\le \eps$ whenever $e\in \E^k$ for $k> k_0$. 

If  $k\in \N$ and  $e\in E_{k+k_0}$ are  arbitrary, then 
$F^i(e)$ is an edge of level  $k-i+k_0>k_0$ 
for $i=0,1,\dots, k-1$, 
and so 
$\diam(F^i(e))\le \eps$. This implies that the diameter of $e$ 
with respect to the metric $d^k_F$  is $\le \eps$.

 It follows that 
$D(F|\CC,\eps,k)\le \#E_{k+k_0}\le C'L^{k_0+k}$, and therefore 
$ h(F|\CC, \eps)\le \log(L)$. Letting $\eps\to 0$ we conclude 
$h_{top}(F|\CC)\le \log(L)<\log(\deg(F))$ as desired. 
\end{proof} 

\begin{rem}\label{rem:topentFC} The topological entropy of $F|\CC$ can in fact be computed
explicitly. Since we do not need this in the following, we will only give an outline of the procedure without  proof.

 As described in Section~\ref{sec:labelings}, we can  label the $0$-edges  $e_1, \dots, e_{m}$ so that they are in cyclic order  on the boundary of the white $0$-tile
   $X^0_{\wt}$. Here $m=\#\post(F)$.  If $e^n$ is an $n$-edge, then $F^n$ maps it homeomorphically to a $0$-edge. We say that   $e^n$ is  of {\em type} $i$ if
   $F^n(e^n)= e_i$. 

 Each $0$-edge is subdivided into  $1$-edges. 
   Let $a_{ij}$ be the number of $1$-edges of type $j$ into which
   $e_i$ is subdivided, and  $A= (a_{ij})_{i,j=1, \dots, m}$ be the  
   $(m\times m)$-matrix with entries $a_{ij}$. It is easy to see that for $n\in \N$  the entry $a^n_{ij}$ of the $n$-th
   power $A^n=(a^n_{ij})$ of $A$ gives the number of $n$-edges of type 
   $j$ into which the $0$-edge $e_i$ is subdivided. Based on this one can show that for  the topological entropy of $F|\CC$ we have 
   $h_{top}(F|\CC)= \log(\rho(A))$, where $\rho(A)$ is the spectral radius of $A$.
   
   The spectral radius  $\rho(A)$ in turn  can easily be computed from any matrix norm. If for an $(m\times m)$-matrix $B=(b_{ij})$ we set 
   $$ \norm{B}\coloneqq \sum_{i,j=1}^m\abs{b_{ij}}$$
   for example, then $\displaystyle
   h_{top}(F|\CC)= \rho(A)=\lim_{n\to \infty} \norm{A^n}^{1/n}.$
   Note that for  this matrix norm, $\norm{A^n}$ is equal to the number 
   of $n$-edges contained in $\CC$.

   Recall from Chapter~\ref{cha:subdivisions} that the
   $F$-invariant Jordan curve $\CC$ gives a two-tile subdivision
   rule that is realized by $F$ (see
   Proposition~\ref{prop:ThmapSub}). The quantity
   $h_{top}(F|\CC)$ can be viewed as a measure of the complexity
   of this two-tile subdivision rule (see also \cite{MT88}). This
   is similar to the ``core-entropy'' of the Hubbard tree of a
   postcritically-finite polynomial (see \cite{Ti16}).
 \end{rem}

After these preparations we are now ready to construct a measure $\nu_F$ that will turn out to be the unique measure of maximal entropy for $F$ and $f$. 
 
\begin{prop}
  \label{prop:exmeasure} 
  \index{measure!of maximal entropy}
  \index{n@$\nu_f$} 
  There exists a unique probability measure $\nu_F$ on $S^2$  such that for each $X\in \X^k$, $k\in \N_0$, we have \begin{equation}\label{tilemass}
\nu_F(X)=
\left\{
\begin{array} {c}
 w \deg (F)^{-k} \\  b \deg (F)^{-k}
 \end{array}\right. \text {if}
 \begin{array} {c} \text{  $X$ is  white,}\\
\text{ $X$ is black.}
\end{array} 
\end{equation} 

Then  $\nu_F(E^\infty)=0$. Moreover, the measure $\nu_F$ is  $F$-invariant,   $F$ is mixing  for $\nu_F$, and
$h_{\nu_F}(F)=\log(\deg(F))$. 
\end{prop}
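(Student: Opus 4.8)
The plan is to realize $\mu_F$ as the Carathéodory extension of a finitely additive set function defined on the semi-algebra $\mathcal{S}$ of Lemma~\ref{lem:SgeneratesB} (all interiors of cells, together with $\emptyset$). First I would set $\nu(\emptyset)=0$, $\nu(\inte(X))=w\deg(F)^{-k}$ or $b\deg(F)^{-k}$ according as the $k$-tile $X$ is white or black, and $\nu(\inte(c))=0$ whenever $\dim(c)\le 1$. Since $\DD^{k+1}(F,\CC)$ refines $\DD^{k}(F,\CC)$, the interior of a $k$-cell is a finite disjoint union of interiors of $(k+1)$-cells, so the only point to check for well-definedness of the induced content on the algebra generated by $\mathcal{S}$ is compatibility across levels; as a white $k$-tile contains exactly $w_1$ white and $b_1$ black $(k+1)$-tiles and a black one $w_1'$ white and $b_1'$ black, this reduces, using $w_1+w_1'=b_1+b_1'=\deg(F)$ from \eqref{eq:wbdeg} and the definition \eqref{def:wb} of $w,b$, to the one-line identities $w_1w+b_1b=w\deg(F)$ and $w_1'w+b_1'b=b\deg(F)$. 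By \eqref{eq:wbdeg} again the total mass is $w+b=1$. To apply Carathéodory I then need $\nu$ to be a premeasure: I would verify inner regularity, noting that every closed tile lies in the algebra (its boundary is a finite union of closed edges, each a union of interiors of cells, with $\nu$-value $0$), so that for a $k$-tile $X$ the union of "deep" closed $(k+j)$-tiles inside it is a compact element of the algebra whose complement in $\inte(X)$ consists of the $(k+j)$-tiles meeting $\partial X$; by Lemma~\ref{lem:coveredges} these have total $\nu$-mass $\lesssim L^j\deg(F)^{-(k+j)}=(L/\deg(F))^j\deg(F)^{-k}\to 0$, since $L<\deg(F)$. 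This gives the premeasure property and hence a Borel probability measure $\mu_F$ with $\mu_F|_{\mathcal{S}}=\nu$.

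Next I would read off the remaining elementary facts about $\mu_F$. Since $S^2=\bigsqcup_{c\in\DD^k(F,\CC)}\inte(c)$ and $\sum_{X\in\X^k}\mu_F(\inte X)=\sum_{X\in\X^k}\nu(\inte X)=1=\mu_F(S^2)$, every $k$-edge interior and every $k$-vertex has $\mu_F$-measure zero; hence every edge is $\mu_F$-null, and $E^\infty$, being a countable union of edges, satisfies $\mu_F(E^\infty)=0$. Consequently $\mu_F(X)=\mu_F(\inte X)+\mu_F(\partial X)=\nu(\inte X)$ for every tile $X$, which is exactly \eqref{tilemass}. Uniqueness is then immediate: a Borel measure on the compact metric space $S^2$ is determined by its values on the generating semi-algebra $\mathcal{S}$, and those values are forced by \eqref{tilemass} together with $\mu_F(E^\infty)=0$.

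For $F$-invariance and mixing I would use the reduction (recalled in the excerpt) that both properties need only be checked on the generating semi-algebra $\mathcal{S}$. For a $k$-tile $X$, the set $\inte(X)$ misses $\post(F)$ and hence all critical values of every iterate of $F$, so $F^{-n}(\inte X)$ is a disjoint union of interiors of $(k+n)$-tiles $X'$ with $F^n(X')=X$, all of the same color as $X$ (Proposition~\ref{prop:celldecomp}~(ii) and Lemma~\ref{lem:colortiles}), exactly $\deg(F)^n$ of them; with $n=1$ this gives $\mu_F(F^{-1}(\inte X))=\deg(F)\cdot w\deg(F)^{-(k+1)}=\mu_F(\inte X)$ for $X$ white (symmetrically for $X$ black), and $\mu_F(F^{-1}(\inte e))=0=\mu_F(\inte e)$ for edges, so $\mu_F$ is $F$-invariant. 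For mixing, take $X\in\X^a$ white and $Y\in\X^b$; for $n>b$, and modulo the $\mu_F$-null set of edges, $\mu_F(F^{-n}(\inte X)\cap\inte Y)=\sum\{\mu_F(\inte X'):X'\in\X^{a+n},\ F^n(X')=X,\ X'\sub Y\}$, and counting via the homeomorphism $F^{b}|_Y$ onto the $0$-tile $F^{b}(Y)$ shows the number of such $X'$ equals the number of white $(n-b)$-tiles in $F^{b}(Y)$, i.e. $w_{n-b}$ or $w_{n-b}'$ according to the color of $Y$. Hence by Lemma~\ref{lem:counttiles} and $|w_1-b_1|<\deg(F)$,
\[
\mu_F(F^{-n}(\inte X)\cap\inte Y)=w_{n-b}\,w\deg(F)^{-(a+n)}\longrightarrow w^{2}\deg(F)^{-(a+b)}=\mu_F(\inte X)\,\mu_F(\inte Y)
\]
when $Y$ is white, with the analogous limit when $Y$ is black; the cases where $X$ is black, or $X$ or $Y$ has dimension $\le 1$, are symmetric or trivial. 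Thus $F$ is mixing for $\mu_F$.

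Finally, the entropy computation: since $\mu_F(E^\infty)=0$, Lemma~\ref{lem:generator} shows $\xi:=\X^1$ is a generator for $(F,\mu_F)$ and that $\xi^{n}_F$ is equivalent to $\X^{n}$, so by the Kolmogorov–Sinai theorem $h_{\mu_F}(F)=h_{\mu_F}(F,\xi)=\lim_n\frac1n H_{\mu_F}(\X^n)$. Using \eqref{tilemass} and $\sum_{\text{white }X\in\X^n}\mu_F(X)=w$, $\sum_{\text{black }X\in\X^n}\mu_F(X)=b$ (again from \eqref{eq:wbdeg}), one computes $H_{\mu_F}(\X^n)=w\log(1/w)+b\log(1/b)+n\log\deg(F)$, whence $h_{\mu_F}(F)=\log\deg(F)$. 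I expect the main obstacle to be the construction step, specifically verifying that $\nu$ is countably additive on the generated algebra so that Carathéodory applies; this is precisely where the sub-exponential estimate of Lemma~\ref{lem:coveredges} (with $L<\deg(F)$) is indispensable, the trivial tile count near an edge being too weak when $F$ has periodic critical points. Everything after the existence of $\mu_F$ is routine bookkeeping with the semi-algebra, the counting Lemma~\ref{lem:counttiles}, and the generator Lemma~\ref{lem:generator}.
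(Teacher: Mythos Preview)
Your overall strategy is correct and, for $F$-invariance, mixing, and the entropy computation, coincides with the paper's proof essentially line for line. The construction of $\mu_F$ is where you diverge: the paper sets $w(X)=w\deg(F)^{-k}$ or $b\deg(F)^{-k}$, defines a \emph{metric outer measure}
\[
\mu^*(A)=\inf_{\mathcal U}\sum_{X\in\mathcal U}w(X)
\]
over tile covers $\mathcal U$ of $A$, and then verifies $\mu^*(X)=w(X)$ on each tile by showing that on compact sets finite covers suffice (Lemma~\ref{lem:coveredges} enters exactly here, to enlarge a given tile cover to an open one of nearly the same total weight). Your Carath\'eodory route is a legitimate alternative driven by the same estimate; each approach makes one half of the work automatic and shifts the content to the other half (you must check countable additivity of the content, the paper must check that the outer measure assigns the correct mass to tiles).

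Two places in your write-up need tightening. First, your inner-regularity step requires a bound on the number of \emph{all} $(k+j)$-tiles $Y\sub X$ meeting $\partial X$, not merely a cover of $\partial X$ by few tiles; Lemma~\ref{lem:coveredges} as \emph{stated} gives only the latter. The bound you want does follow from its \emph{proof}: the collections $T_l$ constructed there in fact contain every $(m+lk_0)$-tile meeting the given edge $e$, since the tiles discarded at each stage lie in the interior of their parent and hence miss $e$ (this is precisely the observation ``$\inte(U)\cap e=\emptyset$'' used there). You should make this explicit. Second, your uniqueness argument has a small gap: you deduce $\mu_F(E^\infty)=0$ from $\sum_{X\in\X^k}\mu_F(\inte X)=1$, which relies on your particular construction (that $\mu_F$ extends $\nu$). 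For uniqueness you must show that \emph{any} probability measure $\mu$ satisfying \eqref{tilemass} has $\mu(e)=0$ for every edge, so that its values on $\mathcal S$ are determined. That follows directly from Lemma~\ref{lem:coveredges} applied to such a $\mu$ (cover an $m$-edge by $\lesssim L^j$ tiles of order $m+j$, each of $\mu$-mass $\asymp\deg(F)^{-(m+j)}$, and let $j\to\infty$), which is how the paper argues it.
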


Here $w$ and $b$ are as in \eqref{def:wb}. The proposition implies in
particular that edges and vertices are sets of $\nu_F$-measure zero,
and that $F$ is ergodic for $\nu_F$. 

\begin{proof} The proof proceeds in several steps.

\smallskip  {\em Construction of $\nu_F$ and $\nu_F(E^\infty)=0$}:
For each $k$-tile $X$, $k\in \N_0$,  set  $w(X)=w (\deg (F))^{-k}$ if $X$ is white and $w(X)=b (\deg (F))^{-k}$ if $X$ is black.

If $X\in \X^k$ is white, then  $w_1$ is  the number of white $(k+1)$-tiles contained in $X$, and $b_1$ the number of black $(k+1)$-tiles contained in $X$. Since $w_1+w'_1=\deg( F)$, we have 
  \begin{align*}
    \sum_{Y\in \X^{k+1}, Y\sub X} w(Y)&=
    \frac{w_1w + b_1 b}{\deg (F)^{k+1}}=\frac{w_1b_1 + b_1 w'_1}{(b_1 + w'_1)\deg (F)^{k+1}}
    \\
    &=\frac{b_1}{(b_1 +w'_1)\deg( F)^k}
    =w(X).
  \end{align*}
A similar equation is also true for black $k$-tiles. If we iterate these identities, we  get 
\begin{equation}\label {martingale}
 \sum_{Y\in \X^{k+m}, Y\sub X} w(Y)=w(X)
 \end{equation} 
for all $k,m\in \N_0$ and all $X\in \X^k$. 
 
For  $A\sub S^2$ we now define 
\begin{equation}\label{defmu*}
\nu^*(A)=\inf_{{\mathcal U}} \sum_{X\in {\mathcal U}} w(X),
\end{equation}
where the infimum is taken over all  covers ${\mathcal U}$ of $A$ by tiles (not necessarily of the same level). By  subdividing   tiles  into tiles of high level and using  \eqref{martingale}, one sees 
that  in the infimum in the definition of $\nu^*(A)$ it is enough
to only consider   covers by tiles whose levels exceed a given number $k$. Based on  this and the fact that $\max_{x\in \X^k}\diam(X)\to 0$ as $k\to \infty$, it is clear  that $\nu^*$ is a {\em metric outer measure}, i.e., $\nu^*$ is an outer measure and  if $A,B\sub S^2$ are sets with $\dist(A,B)>0$,  then 
$$\nu^*(A\cup B)=\nu^*(A)+\nu^*(B). $$
It is a known fact that the restriction of a metric outer measure to the $\sigma$-algebra of Borel sets is  a measure (see \cite[Theorem~1.2, p.~267]{StS}). We denote this restriction of $\nu^*$ by $\nu_F$. 

If $A\sub S^2$ is compact, then it is enough to consider only finite covers by tiles in \eqref{defmu*}. Indeed, suppose 
${\mathcal U}=\{X_i: i\in \N\}$ is an infinite cover of the compact set $A\sub S^2$ by tiles.  Let $\eps>0$ and 
$i\in \N$  be arbitrary. By covering the edges on the boundary of $X_i$ by tiles of high level  as in Lemma~\ref{lem:coveredges}  we  can find 
a finite collection ${\mathcal U}_i$ of tiles (including $X_i$) such that 
$ X_i\sub \inte(X'_i), $ where 
$$   X'_i=\bigcup_{X\in {\mathcal U}_i}X$$ and 
$$\sum_{X\in  {\mathcal U}_i}w(X)\le w(X_i)+\eps/2^i. $$
Finitely many  sets 
$\inte(X'_{i_1}), \dots, \inte(X'_{i_m})$ will cover 
$A$.  Then 
$$\mathcal{U}'= \mathcal{U}_{i_1}\cup \dots \cup
\mathcal{U}_{i_m}$$
is a finite collection of tiles that covers $A$, and we have
$$\sum_{X\in  {\mathcal U}'}w(X)\le 
\sum_{X\in  {\mathcal U}}w(X)+\eps. $$
Since $\eps>0$ was arbitrary, we conclude that for compact sets $A$ we get the same infimum in \eqref{defmu*} if we restrict ourselves to finite covers by tiles. 

A  consequence of this is that  $\nu_F(X)=w(X)$ for each $X\in \X^k$, $k\in \N_0$. Indeed, by definition of $\nu_F$ we obviously have $\nu_F(X)\le w(X)$. 

 For an inequality in the opposite direction, it is enough to  consider an arbitrary finite cover ${\mathcal U}$     of $X$ by tiles. By subdividing the tiles in  ${\mathcal U}$ if  necessary, we may also assume that they all have the same level $l$ and that $l\ge k$. Since ${\mathcal U}$ is a cover of $X$ and $l$-tiles have pairwise disjoint interiors, 
 this implies that $Y\in  {\mathcal U}$ whenever $Y\in \X^l$ and $Y\sub X$. Hence 
 $$w(X)=\sum_ {Y\in \X^{l}, Y\sub X} w(Y)\le  \sum_ {Y\in {\mathcal U}} w(Y). $$
 Taking the infimum over all ${\mathcal U}$ we get
 $  w(X)\le \nu_F(X)$ as desired. 

Since $\nu_F(X)=w(X)$ for all tiles $X$, we have  \eqref{tilemass}. 

It  follows from Lemma~\ref{lem:coveredges} and the definition of $\nu_F$, that if $e$ is an edge, then 
$\nu_F(e)=0$. Since $E^\infty$ is the (countable) union  of all edges, we have $\nu_F(E^\infty)=0$.  This also shows that 
$$\nu_F(S^2)=\sum_{X\in \X^0} \nu_F(X)=\sum_{X\in \X^0} w(X)=w+b=1, $$
and so $\nu_F$ is a probability measure.

\smallskip 
{\em Uniqueness of $\nu_F$}:
Suppose that $\nu$ is another  probability measure on $S^2$ satisfying  the analog of \eqref{tilemass}. Then from Lemma~\ref{lem:coveredges} it follows that each edge is a  set of $\nu$-measure zero.   Hence  $\nu(\inte(c))= \nu_F(\inte(c))$ whenever  $c$ is a cell for $(F,\CC)$. Since the empty set together with the  interiors of all cells for $(F,\CC)$ form a semi-algebra $\mathcal{S}$  generating the Borel $\sigma$-algebra on $S^2$ (see Lemma~\ref{lem:SgeneratesB}), we conclude that $\nu=\nu_F$.

\smallskip 
{\em  $\nu_F$ is $F$-invariant}:
To show that $\nu_F$ is $F$-invariant, it is enough to  verify that 
 \begin{equation}\label{eq:muFinvF}
 \nu_F(F^{-1}(A))=\nu_F(A)\end{equation}   for all sets $A$ in the semi-algebra $\mathcal{S}$.   This is true if $A=\emptyset$. 
 
 Edges  are sets of $\nu_F$-measure zero, and the preimage  of an edge is a finite union of edges (see Proposition~\ref{prop:celldecomp}~\ref{item:fkunioncells}). This implies that \eqref{eq:muFinvF} holds  if $A=\inte(e)$  for an  edge $e$,  or  if $A=\{v\}$ for a vertex  $v$ (since  every vertex is contained in an edge).
 
 Moreover, if $X$ is a tile, then $X\setminus \inte(X)$ is a union of edges, and so  we have  $\nu_F(F^{-1}(\inte(X)))=\nu_F(F^{-1}(X))$ and $\nu_F(\inte(X))=\nu_F(X)$. 
So in order  to establish \eqref{eq:muFinvF},
 it remains  to show that  
 $$\nu_F(F^{-1}(X))= \nu_F(X)$$ for all tiles $X$. 
To see this, note that 
if $X$ is a $k$-tile, then $F^{-1}(X)$ is a union of $\deg(F)$ $(k+1)$-tiles that have the same color as $X$.  Since the intersection of any two distinct  $(k+1)$-tiles is contained in a union of edges, and hence a set of $\nu_F$-measure zero,  it follows from \eqref{tilemass} that 
 \begin{equation*}
    \nu_F(F^{-1}(X))=\deg(F) \frac{\nu_F(X)}{\deg( F)}=\nu_F(X).
  \end{equation*}
  
  \smallskip 
{\em $F$ is mixing for  $\nu_F$}: It suffices to show that for all sets $A$ and $B$ in the semi-algebra $\mathcal{S}$ we have 
$$\nu_F(F^{-m}(A)\cap B)\to  \nu_F(A)\nu_F(B)$$ 
as $m\to \infty$. Based on the fact that edges are sets of $\nu_F$-measure zero and that the preimage of each edge under $F$ is a finite union of edges, for this it suffices to show that  for all tiles $X$ and $Y$ we  have 
$$\nu_F(F^{-m}(X)\cap Y)\to  \nu_F(X)\nu_F(Y)$$ 
as $m\to \infty$. 

 So let $k,l,m\in \N_0$,   $X=X^k\in \X^k$ and $Y=Y^l\in \X^l$  be arbitrary. We may assume that $m\ge l$. 
   Then $F^{-m}(X^k)$ is a union of $(m+k)$-tiles that have the same color as $X^k$. Since edges are sets of $\nu_F$-measure zero and the $(m+k)$-tiles subdivide the $l$-tile $Y^l$, it follows that 
    \begin{equation}
    \label{book1} \nu_F(F^{-m}(X^k)\cap Y^l)=\\ \frac{\nu_F(X^k)}{\deg(F)^m}\cdot \#M, 
    \end{equation}
    where 
    \begin{equation} \label{eq:defsetM} 
    M\coloneqq 
    \{Z^{m+k}\in \X^{m+k}: Z^{m+k}\sub Y^l, \, F^m(Z^{m+k})=X^k\}. 
 \end{equation}

    Let $X^0\in \X^0$ be the unique $0$-tile with $X^k\sub X^0$,
    and $Y^0\coloneqq F^l(Y^l)\in \X^0$.  We assume that $X^0$
    and $Y^0$ are both white; the other cases are similar. Then
    $Y=Y^l$ is also white.

    \begin{figure}
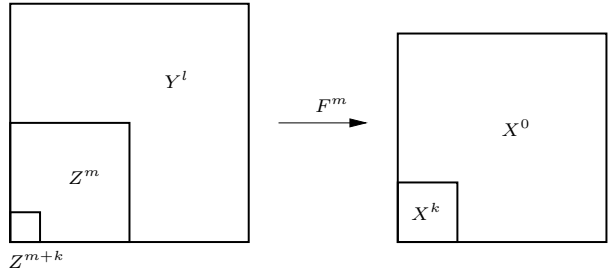

      \centering
      \begin{overpic}
        [width=8cm, tics=10,
        ]{bij_tiles}
        \put(0,-4){${\scriptstyle Z^{m+k}}$}
        \put(10,10){${\scriptstyle Z^{m}}$}
        \put(26,26){${\scriptstyle Y^{l}}$}
        \put(51,22){${\scriptstyle F^{m}}$}
        \put(67,4){${\scriptstyle X^{k}}$}
        \put(82,18){${\scriptstyle X^{0}}$}
      \end{overpic}
      \caption{Bijection of tiles.}
      \label{fig:bijection}
    \end{figure}
  
  Every $(m+k)$-tile $Z^{m+k}$ lies in a unique ``parent'' $m$-tile $Z^m$. Since $Y^l$ is an $l$-tile and $m\ge l$, we have $Z^{m+k}\sub Y^l$ if and only if $Z^m\sub Y^l$.  If $F^m(Z^{m+k})=X^k$, then $F^m(Z^m)$ is a $0$-tile containing $X^k$, and so 
  $F^m(Z^m)=X^0$. 
Conversely,   if $Z^m$ is an $m$-tile and $F^m(Z^m)=X^0$, then it follows from 
 Lemma~\ref{adhoc}~\ref{item:adhoc1} that $Z^{m+k}\coloneqq (F^m|Z^m)^{-1}(X^k)$ is the
 unique $(m+k)$-tile with $Z^{m+k}\sub Z^m$ and
 $F^m(Z^{m+k})=X^k$. The situation is illustrated in
 Figure~\ref{fig:bijection}. 
These statements  imply that the map 
  $Z^{m+k}\mapsto Z^m$ that assigns to each $(m+k)$-tile $Z^{m+k}$ its unique parent $m$-tile $Z^m$  induces a bijection between the set
  $M$ defined in  \eqref{eq:defsetM}  
 and 
  $$N\coloneqq \{Z^m\in \X^m: Z^m\sub Y^l,\, F^m(Z^m)=X^0\}. $$  
  Hence $\#M=\#N$.

%
  
  Since $X^0$ is white, $\#N$  is equal to the number
  of white $m$-tiles contained in $Y^l$. Applying the
  homeomorphism $F^l|Y^l$, we see that this number is equal to
  $w_{m-l}$, the number of white $(m-l)$-tiles contained in the
  white $0$-tile $Y^0=F^l(Y^l)$. It follows that $\#M=\#N= w_{m-l}$.
  So from 
  \eqref{book1} and Lemma~\ref{lem:counttiles}
  we conclude that 
  $$\nu_F(F^{-m}(X)\cap Y)=\\ \frac{\nu_F(X)}{\deg(F)^m}\cdot w_{m-l}\to  \frac{\nu_F(X)}{\deg(F)^l}\cdot w=\nu_F(X)\nu_F(Y)$$ as $m\to \infty$. 
   
  \smallskip
  {\em The identity $h_{\nu_F}(F)=\log(\deg(F))$}:  
According to Lemma~\ref{lem:generator}, 
the measurable partition $\xi=\X^1$ is a generator for $(F,\nu_F)$, and so  $h_{\nu_F}(F)=h_{\nu_F}(F,\xi)$. Moreover, for each $k\in \N$ the measurable partition $\xi_F^k$ is equivalent to the measurable partition $\X^k$ given by the $k$-tiles. 
Since the number of black and the number of white $k$-tiles are both equal to $\deg(F)^k$, it follows that   
\begin{align*}
  H_{\nu_F}(\xi^k_F)&=H_{\nu_F}(\X^k)
  =\sum_{X\in \X^k} \nu_F(X)\log(1/ \nu_F(X))
  \\
  &= w\log(\deg(F)^k/w)+b\log(\deg(F)^k/b)
  \\
  &= k\log(\deg(F))+ w\log(1/w)+b\log(1/b).  
\end{align*}

\pagebreak\noindent
This  implies  
$$h_{\nu_F}(F)=h_{\nu_F}(F,\xi)=\lim_{k\to \infty} \frac{1}{k} H_{\nu_F}(\xi^k_F)=\log(\deg(F)). 
$$
The proof is complete.
 \end{proof}

We can now identify the topological entropy of $f$.

\begin{proof}[Proof of Corollary~\ref{cor:topent}]  
  Lemma~\ref{lem:topent} (applied to $F$) implies  that 
  $h_{top}(F)$ $\le \log(\deg(F))$.   
  We also  have
  $h_{\nu_F}(F)=\log(\deg(F))$ by
  Proposition~\ref{prop:exmeasure}, and so
  $h_{top}(F)\ge \log(\deg(F))$ by the variational principle
  \eqref{eq:var_princ}. It follows that
  $h_{top}(F)= \log(\deg(F))$.  Since $F=f^n$ and so
  $\deg(F)=\deg(f)^n$ and $h_{top}(F)=n h_{top}(f)$, the claim
  follows.
\end{proof}
 
We know that $h_{\nu_F}(F)=\log(\deg(F))=h_{top}(F)$. So $\nu_F$ is a measure of maximal entropy for $F$. As we will see momentarily, it is the unique measure of maximal entropy for $F$ and also for $f$. 

\begin{theorem}  
  \label{thm:nuF}
  \index{measure!of maximal entropy}
  \index{n@$\nu_f$} 
  The measure $\nu_F$  is the unique measure of maximal entropy\index{measure!of maximal entropy}
 for $f$, i.e., the unique $f$-invariant probability measure $\nu_F$  with $h_{\nu_F}(f)=h_{top}(f)$.  Moreover, $f$ is mixing for $\nu_F$. 
\end{theorem}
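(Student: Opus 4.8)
The plan is to first show that $\mu_F$ is the \emph{unique} measure of maximal entropy for the iterate $F=f^n$, and then deduce the statement for $f$ by soft arguments. Recall that Proposition~\ref{prop:exmeasure} already gives that $\mu_F$ is $F$-invariant, mixing for $F$, satisfies $h_{\mu_F}(F)=\log(\deg F)$, has $\mu_F(E^\infty)=0$, and is the unique probability measure with the tile-mass property \eqref{tilemass}; and Corollary~\ref{cor:topent} gives $h_{top}(F)=n\log(\deg f)=\log(\deg F)$, so $\mu_F$ is already a measure of maximal entropy for $F$. Assume for the moment that $\mu_F$ is the only $F$-invariant probability measure with entropy $\log(\deg F)$. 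Then $f_*\mu_F$ is $F$-invariant (as $f$ commutes with $F=f^n$), and both $f\colon(S^2,F,\mu_F)\to(S^2,F,f_*\mu_F)$ and $f^{n-1}\colon(S^2,F,f_*\mu_F)\to(S^2,F,f^n_*\mu_F)=(S^2,F,\mu_F)$ are factor maps, so $h_{f_*\mu_F}(F)\le h_{\mu_F}(F)\le h_{f_*\mu_F}(F)$; hence $f_*\mu_F$ is a measure of maximal entropy for $F$, so $f_*\mu_F=\mu_F$ and $\mu_F$ is $f$-invariant. Consequently $h_{\mu_F}(f)=\tfrac1n h_{\mu_F}(f^n)=\tfrac1n\log(\deg F)=\log(\deg f)=h_{top}(f)$, and if $\mu$ is any $f$-invariant probability measure with $h_\mu(f)=\log(\deg f)$ then $h_\mu(F)=n\log(\deg f)=\log(\deg F)=h_{top}(F)$, whence $\mu=\mu_F$ by uniqueness for $F$. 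Mixing of $f$ for $\mu_F$ follows from mixing of $F=f^n$: writing $m=qn+r$ with $0\le r<n$ and using $f$-invariance of $\mu_F$,
\[
\mu_F(f^{-m}(A)\cap B)=\mu_F\big(F^{-q}(f^{-r}(A))\cap B\big)\longrightarrow \mu_F(f^{-r}(A))\mu_F(B)=\mu_F(A)\mu_F(B)
\]
as $q\to\infty$, uniformly in $r\in\{0,\dots,n-1\}$, hence as $m\to\infty$.

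\emph{Step 1 of the uniqueness for $F$: zero mass on $E^\infty$.} Let $\nu$ be $F$-invariant with $h_\nu(F)=\log(\deg F)$. Since $\CC$ is $F$-invariant, $E^\infty=\bigcup_k F^{-k}(\CC)$ is an increasing union, so $F$-invariance gives $\nu(E^\infty)=\lim_k\nu(F^{-k}(\CC))=\nu(\CC)$. Suppose $t:=\nu(E^\infty)>0$. Using $F^{-1}(E^\infty)=E^\infty$ from \eqref{eq:CinftyFinv}, the normalized restrictions $\nu'=\tfrac1t\nu|_{E^\infty}$ and (if $t<1$) $\nu''=\tfrac1{1-t}\nu|_{S^2\setminus E^\infty}$ are $F$-invariant probability measures with $\nu=t\nu'+(1-t)\nu''$; by affinity of entropy and $h_{\nu''}(F)\le h_{top}(F)=\log(\deg F)$ we get $h_{\nu'}(F)=\log(\deg F)$. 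But $\nu'(E^\infty)=1$ forces $\nu'(\CC)=1$ by the same increasing-union computation, and a measure supported on the $F$-invariant set $\CC$ has $h_{\nu'}(F)=h_{\nu'}(F|_\CC)\le h_{top}(F|_\CC)<\log(\deg F)$ by Lemma~\ref{lem:topents} --- a contradiction. Hence $\nu(E^\infty)=0$.

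\emph{Step 2: identifying $\nu$.} By Lemma~\ref{lem:generator}, $\xi:=\X^1$ is a generator for $(F,\nu)$ with $\xi^k_F$ equivalent $\bmod\,\nu$ to $\X^k$, so $\log(\deg F)=h_\nu(F)=h_\nu(F,\xi)=\lim_k H_\nu(\X^1\mid F^{-1}\X^k)$, a non-increasing limit. Conditioned on $F^{-1}(Z)$ for a white $k$-tile $Z\subset\XOw$, the partition $\X^1$ has at most $\deg F$ atoms of positive $\nu$-measure --- namely those sitting in the $\deg F$ white $1$-tiles (using \eqref{eq:wbdeg}) --- so $H_\nu(\X^1\mid F^{-1}\X^k)\le\log(\deg F)$ for all $k$, and hence every term of the non-increasing sequence equals $\log(\deg F)$. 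The equality case in the concavity bound then forces $\nu\big((F|_X)^{-1}(Z)\big)=\nu(Z)/\deg F$ for every white $1$-tile $X$, every $k$, and every white $k$-tile $Z$ with $\nu(Z)>0$ (similarly in the black case, with the $\nu(Z)=0$ case handled directly by $F$-invariance). Since each tile $Y$ of order $k\ge1$ lies in a unique $1$-tile $X$ of the same color and satisfies $Y=(F|_X)^{-1}(F(Y))$, we obtain $\nu(Y)=\nu(F(Y))/\deg F$, and iterating, $\nu(Y)=\nu(\XOw)\deg(F)^{-k}$ for a white $k$-tile and $\nu(Y)=\nu(\XOb)\deg(F)^{-k}$ for a black one. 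Summing over the $w_1$ white and $b_1$ black $1$-tiles subdividing $\XOw$ and using $w_1+w_1'=\deg F$ gives $w_1'\,\nu(\XOw)=b_1\,\nu(\XOb)$; together with $\nu(\XOw)+\nu(\XOb)=1$ (valid as $\nu(E^\infty)=0$) this yields $\nu(\XOw)=w$ and $\nu(\XOb)=b$ (cf.\ \eqref{def:wb}). Thus $\nu$ satisfies \eqref{tilemass}, so $\nu=\mu_F$ by the uniqueness part of Proposition~\ref{prop:exmeasure}.

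\emph{Main obstacle.} The first paragraph is routine; the real content is the uniqueness of the measure of maximal entropy for $F$, and the delicate point there is Step 2 --- converting the single scalar equality $h_\nu(F)=\log(\deg F)$ into the self-similarity relation $\nu(Y)=\nu(F(Y))/\deg F$. Both the monotonicity of $k\mapsto H_\nu(\X^1\mid F^{-1}\X^k)$ and the sharp bound $\le\log(\deg F)$ rely on $\nu$ charging only the union of tile interiors (Step 1), which is precisely where Lemma~\ref{lem:topents} is indispensable; so Steps 1 and 2 must be carried out in this order.
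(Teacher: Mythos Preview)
Your proof is correct, and Step~2 is genuinely different from the paper's argument. There is one terminological slip: when you write that a tile $Y$ of order $\ge 1$ lies in a $1$-tile $X$ ``of the same color'', this is not true in the paper's sense (the color of $Y$ is $L(F^{\text{ord}(Y)}(Y))$, unrelated to $L(X)$). What you actually need---and what holds---is that $F(Y)$ is contained in the $0$-tile $F(X)$, so the equality case you established (for $Z\subset\XOw$ and white $1$-tiles $X$, resp.\ $Z\subset\XOb$ and black $X$) applies with $Z=F(Y)$. Similarly, ``white $k$-tile $Z\subset\XOw$'' should just read ``$k$-tile $Z\subset\XOw$''. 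With this cosmetic fix the argument goes through. One small point you use implicitly: the sequence $H_\nu(\X^1\mid F^{-1}\X^k)$ in the standard formula starts at $k=1$, but you also need $k=0$ to handle $1$-tiles; this follows from $H_\nu(\X^1\mid F^{-1}\X^0)\ge H_\nu(\X^1\mid F^{-1}\X^1)=\log(\deg F)$ together with your upper bound.

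The paper takes a different route for the identification of $\nu$: it decomposes $\nu=\beta\nu_s+(1-\beta)\nu_a$ relative to $\mu_F$, uses ergodicity of $F$ for $\mu_F$ to get $\nu_a=\mu_F$, and then rules out $\beta>0$ by a covering argument---choosing for each $k$ a union $A_k$ of $k$-tiles with $\nu(A_k)\to1$ but $\mu_F(A_k)\to0$, and showing this forces $H_\nu(\X^k)$ to fall short of $k\log(\deg F)$. Your approach is more direct: from the equality case of the concavity bound in the conditional-entropy formula $h_\nu(F,\xi)=\lim_k H_\nu(\xi\mid F^{-1}\xi_F^k)$ you extract the self-similarity relation $\nu(Y)=\nu(F(Y))/\deg F$, which pins down $\nu$ on tiles immediately. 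Your method avoids both the Lebesgue decomposition and the ergodicity of $\mu_F$, and is in the spirit of the classical uniqueness proof for the Parry measure on subshifts of finite type; the paper's method, on the other hand, never computes conditional entropies and would adapt more readily to situations where one already has an ergodic maximal measure to compare against.
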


\begin{proof} We first show uniqueness.  
So let $\mu$ be a probability measure that is $f$-invariant and satisfies
 $h_\mu(f)=h_{top}(f)$. Then $\mu$ is $F$-invariant and satisfies
 \begin{equation}\label{comptop}
 h_\mu(F)=nh_\mu(f)=nh_{top}(f)=h_{top}(F)=\log(\deg(F)). 
 \end{equation}
 We will see that this implies  $\mu =\nu_F$. In particular, this will show that $\nu_F$ is the unique measure of maximal entropy for $F$.  
 The proof proceeds in several steps. 
 
 By using the   Lebesgue decomposition of $\mu$ with respect to
 $\nu_F$,   we can represent    $\mu$ as a convex combination
 $\mu=\beta \mu_a+(1-\beta)\mu_s$, where $\beta\in [0,1]$,
  $\mu_a$ is a probability measure that is absolutely continuous   and $\mu_s$ is a probability measure that is singular  with respect to $\nu_F$ (if $\beta=0$  or $\beta=1$,  the decomposition is trivial and only one of the measures $\mu_a$ or $\mu_s$ exists). 
 Since $\nu_F$ and $\mu$ are $F$-invariant,  Lemma~\ref{lem:invdecomp} implies that the 
 measures $\mu_a$ and $\mu_s$ are also $F$-invariant.  Since $F$ is ergodic for $\nu_F$,  and $\mu_a$ is $F$-invariant and absolutely continuous with respect to $\nu_F$, it follows that $\mu_a=\nu_F$. 

If $\beta=1$, then $\mu=\nu_F$ and we are done.
If  $\beta\in [0,1)$, then we can use the equation 
\begin{align*}
  \log(\deg(F))&= h_\mu(F)\,=\, \beta
                   h_{\nu_F}(F)+(1-\beta)h_{\mu_s}(F)
\\
               &=
                   \beta\log(\deg(F))+ (1-\beta) h_{\mu_s}(F)
\end{align*}
to conclude that 
$$h_{\mu_s}(F)=\log(\deg(F)). $$

We will show that this is impossible by proving  that for every $F$-invariant
probability measure $\mu$ that is singular with respect to 
$\nu_F$ we must have $$h_{\mu}(F)<\log(\deg(F)). $$
The uniqueness of $\nu_F$ will then follow. 

So let  $\mu$ be  such a measure and consider the union   $E^\infty$ of all edges. Assume first that $\mu(E^\infty)>0$. By \eqref{eq:CinftyFinv}
 we can then write  $\mu$ as a convex combination 
 $\mu=\alpha \mu_1+(1-\alpha)\mu_2$ of two $F$-invariant probability measures 
 $\mu_1$ and $\mu_2$, where $\alpha=\mu(E^\infty)$,  $\mu_1$ is concentrated on $E^\infty$,  and 
 $\mu_2$ on $S^2\setminus E^\infty$ (if $\alpha=1$, this decomposition is again trivial). 
 
 Since $\mu_1$ is $F$-invariant, we have $\mu_1(F^{-k}(\CC))=\mu_1(\CC)$ for all $k\in \N_0$. On the other hand, $\CC\sub  F^{-k}(\CC)$, and so 
 $\mu_1(F^{-k}(\CC)\setminus\CC)=0$. This implies that $\mu_1(E^\infty\setminus \CC)=0$. So  $\mu_1$ is actually  concentrated on $\CC$. 
 Therefore,  by the variational principle \eqref{eq:var_princ}
 and by Lemma~\ref{lem:topents} we have  
 $$h_{\mu_1}(F)=h_{\mu_1}(F|\CC)\le h_{top}(F|\CC)<\log(\deg(F)).$$
We also have $h_{\mu_2}(F) \le h_{top}(F)= \log(\deg(F))$, and so 
 $$h_\mu(F)=\alpha h_{\mu_1}(F)+(1-\alpha)h_{\mu_2}(F)<\log(\deg(F)). $$
 In this case we are done. 
 
 In the remaining case we have  $\mu (E^\infty)=0.$ Then by 
 Lemma~\ref{lem:generator} $\xi=\X^1$ is a generator for
 $(F,\mu)$, 
and so $h_\mu(F)=h_\mu(F,\xi)$.  
In particular, 
 $$h_\mu(F)=\lim_{k\to \infty}\frac 1k \sum_{X\in \X^k} \mu(X)\log(1/\mu(X)),  $$
and the limit is bounded from above by each of the sequence elements (see \eqref{eq:linhginf}). 

Since $\mu$ and $\nu_F$ are mutually singular, we can find a Borel set $A\sub S^2$ with $\mu(A)=1$ and  $\nu_F(A)=0$. Using inner regularity of $\mu$ and  outer regularity of $\nu_F$, for each $\eps>0$ we can find a compact set $K\sub S^2$ and an open set $U\sub S^2$ with $K\sub A\sub U$, $\mu(K)>1-\eps$, and $\nu_F(U)<\eps$.
If $k$ is sufficiently large, then we can cover the set $K$ by $k$-tiles contained in $U$. 

Using this for smaller and smaller $\eps>0$, we conclude  that
for each $k\in \N$ we can find a set $M_k\sub \X^k$ 
such that for $A_k\coloneqq \bigcup_{X\in M_k}X$
we have $\mu( A_k)\to 1$ and $\nu_F(A_k)\to 0$ as 
$k\to \infty$.  Note that $\nu_F(X)\ge c\deg(F)^{-k}$ for each $X\in \X^k$, where $c>0$ is independent of $k$ and $X$. Hence 
$$\#M_k\le \nu_F(A_k)\deg(F)^k/c. $$
We also have $\#\X^k=2\deg(F)^k$.

The function $x\mapsto \phi(x)= x \log(1/x)$ is concave and has a maximum equal to $1/e$ on $[0,1]$. This implies that
if $M\sub \X^k$ is arbitrary and $A=\bigcup_{X\in M}X$, then 
\begin{align*}
  \sum_{X\in M} \mu(X)\log(1/\mu(X)) 
  &\le \#M\cdot \phi\biggl(\frac1 {\#M} \sum_{X\in M}\mu(X)\biggr)
  \\&=\mu(A) \log(\#M/\mu(A))\le \mu(A) \log(\#M)+1/e. 
\end{align*} 
To reach a contradiction, let us assume that $h_\mu(F)=\log(\deg(F))$. Then
for each $k\in \N$ we have  
\begin{align*}
  k\log(\deg(F))&=kh_\mu(F)
  \\
  &\le \sum_{X\in \X^k} \mu(X)\log(1/\mu(X))
  \\
  &= \sum_{X\in M_k}\mu(X)\log(1/\mu(X)) \, +\, \sum_{X\in
    \X^k\setminus M_k} \mu(X)\log(1/\mu(X))
  \\
  &\le \mu(A_k) \log( \#M_k)+\mu(S^2\setminus A_k) \log(\#\X^k)+C_1
  \\
  &\le \mu (A_k)\log(\nu_F(A_k)) +\,(\mu(A_k) +
  \mu(S^2\setminus A_k))\log( \deg(F)^k)+C_2
  \\
  &= \mu (A_k)\log(\nu_F(A_k))+ k\log(\deg(F)) +C_2.
\end{align*}
Here the constants $C_1$ and $C_2$ do not depend on $k$. 
An inequality of this type is  impossible as $$\mu (A_k)\log(\nu_F(A_k))\to -\infty$$ for  $k\to \infty$.  

This shows that if there is a measure of maximal entropy for $f$, then it has to agree with $\nu_F$.  We have also  proved  that $\nu_F$ is the unique measure of maximal entropy for $F$. 

We now show that  $\nu_F$ is $f$-invariant and a measure of maximal entropy for $f$.  
  Indeed, the measure $f_*\nu_F$ is $F$-invariant and  the triple $(S^2, F, f_*\nu_F)$ 
 is a factor  of $(S^2, F,\nu_F)$ by the map $f$. It follows that
 $h_{f_*\nu_F}(F)\le h_{\nu_F}(F)$. Iterating this and noting that $f^n_*{\nu_F}=F_*\nu_F=\nu_F$ by $F$-invariance of $\nu_F$, we obtain 
 $$h_{\nu_F}(F)=h_{f^n_*\nu_F}(F)\le h_{f^{n-1}_*\nu_F}(F)\le \dots\le 
  h_{f_*{\nu_F}}(F)\le h_{\nu_F}(F). $$
 Hence $h_{f_*\nu_F}(F)= h_{\nu_F}(F)$, and so $f_*\nu_F$ is a measure of maximal entropy for $F$. By uniqueness of $\nu_F$ we have $f_*{\nu_F}=\nu_F$ showing that $\nu_F$ is $f$-invariant. Moreover,
 $$  h_{\nu_F}(f)=h_{\nu_F}(F)/n=\log(\deg(F))/n=\log(\deg(f))=h_{top}(f),$$ and  so $\nu_F$ is a measure of maximal entropy for $f$. 
 By the first part of the proof we know that it is the unique such measure.

It remains to show that $f$ is mixing for  $\nu_F$. Indeed, 
since $F$ is mixing for $\nu_F$ (Proposition~\ref{prop:exmeasure}) and $\nu_F$ is $f$-invariant, we have that for all   $m \in \{0, \dots, n-1\}$, and all Borel sets 
$A,B\sub S^2$,
\begin{align*} 
  \nu_F(f^{-(nl+m)}(A)\cap B)&=\nu_F(F^{-l}(f^{-m}(A))\cap B) \\
   &\to\nu_F(f^{-m}(A))\nu_F(B)=\nu_F(A)\nu_F(B)
\end{align*} 
as 
$l \to \infty$. This implies the desired  relation
$$ \nu_F(f^{-k}(A)\cap B) \to \nu_F(A)\nu_F(B)$$
as $k\to \infty$.  The proof is complete. 
 \end{proof}
 
 \begin{proof}[Proof of Theorem~\ref{thm:maxentr0}]
   The statement follows from 
   Theo\-rem~\ref{thm:nuF}.
 \end{proof} 
 
 We have seen that $\nu_f=\nu_F$ for the special iterate $F=f^n$ that was chosen at the beginning of this section. 
 As we will now see, this identity  for the measures of maximal entropy remains true for an arbitrary iterate of $f$ (this was formulated in Corollary~\ref{cor:momeit}). 
 
 \begin{proof} [Proof of Corollary~\ref{cor:momeit}]
 Let $f\:S^2\ra S^2$ be an expanding Thurston map, $F=f^n$ be an arbitrary iterate  of $f$, and $\nu=\nu_f$ be the unique measure of maximal entropy of $f$. Then $F$ is also an expanding Thurston map (Lemma~\ref{lem:Thiterates}).
 Since $\nu=\nu_f$ is $f$-invariant, this measure  is also $F$-invariant. Moreover, by \eqref{eq:hfhF} and Corollary~\ref{cor:topent}  we have   
 $$ h_\nu(F) =n h_\nu (f)= n h_{top}(f) =n \log (\deg(f))= \log(\deg(F))= h_{top}(F). $$   
  Hence $\nu$ is a measure of maximal entropy for $F$. Since the
  measure  of maximal entropy $\nu_F$ for $F$  
is unique by Theorem~\ref{thm:maxentr0}, 
we have  $\nu = \nu_F$ as desired. 
  \end{proof}

\ifthenelse{\boolean{singlechapter}}{

%

%
%

\chapter{The geometry of the visual sphere}
\label{cha:geom-visu-sphere}

When $f\colon S^2\to S^2$ is an expanding Thurston map and
$\varrho$ a visual metric for $f$, we call the metric space
$(S^2,\varrho)$ the \defn{visual sphere}\index{visual sphere} for
$f$. Of course, $(S^2,\varrho)$ depends on the choice of the
visual metric $\varrho$, but by
Proposition~\ref{prop:visualsummary}~\ref{item:visd1d2} any two
such choices for a given Thurston map $f$ produce snowflake
equivalent metric spaces. Accordingly, we think of the visual
sphere of a Thurston map as uniquely determined up to snowflake
equivalence.  In this chapter we investigate geometric features
of the visual sphere that are invariant under such equivalences
and relate them to the dynamics of $f$.

The following statement is one of the main results here.
\begin{theorem}  
  [Properties of $f$ and its associated visual sphere]
  \label{thm:S2vsf}
  \index{visual metric}
  \index{metric!visual}
  Suppose $f\colon S^2\to S^2$ is an expanding Thurston map and $\varrho$ is a
  visual metric for $f$. 
  Then the following statements are true:
  \begin{enumerate}

  \item 
    \label{item:S2doubling}
    $(S^2,\varrho)$ is 
    {doubling}\index{doubling} 
    if and only if $f$ has {no
      periodic critical points}. 
  
  \item 
    \label{item:S2qsphere}
    $(S^2,\varrho)$ is quasisymmetrically
    equivalent to $\CDach$ if and only if $f$ is
    topologically conjugate to a {rational map}.
    \index{Thurston map!rational}
  \item 
    \label{item:S2Lattes}
    $(S^2,\varrho)$ is 
    {snowflake equivalent}\index{snowflake equivalent}
    to $\CDach$ if and only if $f$ is topologically conjugate to
    a {Latt\`{e}s map}.\index{Latt\`{e}s map}   
  \end{enumerate}
\end{theorem}
Here  it is understood that $\CDach$ is equipped with the chordal metric. 
Statement~\ref{item:S2qsphere} characterizes the visual spheres that
are quasispheres.\index{quasisphere}
As we have already discussed, it provides an interesting analog of Cannon's conjecture (see Section~\ref{sec:Cannconj}).

We know that  two expanding Thurston
maps are Thurston equivalent if and only if they are topologically
conjugate (Theorem~\ref{thm:exppromequiv}). Thus \ref{item:S2qsphere} and \ref{item:S2Lattes} can be reformulated as follows:
\begin{enumerate}
\smallskip 
\item [(ii')] \textit{
  $(S^2,\varrho)$ is quasisymmetrically
    equivalent to $\CDach$   if and only if $f$ is Thurston
  equivalent to a {rational Thurston map} with no periodic critical points. }
  
\item[(iii')] \textit{
  $(S^2,\varrho)$ is {snowflake equivalent} 
  to $\CDach$ if and only if $f$ is Thurston equivalent to a
  {Latt\`{e}s map}. }
\end{enumerate}
 For the ``if'' implication in (ii') one has to assume that
the rational map has no periodic critical points (or impose an equivalent condition);
see Example~\ref{ex:ratnotenf}.

Much more can be said in case \ref{item:S2doubling} of
the previous theorem. 

\begin{prop}
  \label{prop:Ahlforsreg} 
  Let $f\: S^2\ra S^2$ be an expanding
  Thurston map without periodic critical points, $\varrho$ be a visual
  metric for $f$ with expansion factor $\Lambda>1$, and  $\nu_f$ be 
  the measure of maximal entropy of $f$.
   Then   the metric measure space    $(S^2, \varrho, \nu_f)$ is Ahlfors 
$Q$-regular\index{Ahlfors regular}\index{measure!of maximal entropy}\index{Hausdorff!dimension}\index{metric!visual}\index{visual metric}\index{H_Q@$\mathcal{H}^Q_d$}\index{Hausdorff!measure}
   with
    $$Q\coloneqq \frac{\log(\deg(f))}{\log(\Lambda)}.$$
 In particular,     $(S^2,\varrho)$ has Hausdorff dimension $Q$ and 
    $$      0<\mathcal{H}_\varrho^Q(S^2)<\infty. $$
  \end{prop}
Here  $\mathcal{H}_\varrho^Q$ is   $Q$-dimensional Hausdorff measure 
on $(S^2,\varrho)$.
For the definition of the measure of maximal entropy $\nu_f$ see 
Chapter~\ref{cha:measure}. The statement implies that under the given assumptions we have $\mathcal{H}_\varrho^Q(M)\asymp \nu_f(M)$ for each Borel set 
$M\sub S^2$ with $C(\asymp)$ independent of $M$.

Since the Hausdorff dimension of $(S^2, \varrho)$ must be $\ge 2$, it also follows that $\Lambda\le \deg(f)^{1/2}$. Combining this with 
Theorem~\ref{thm:visexpfactors1}~\ref{item:visexpfactors2}, we obtain the upper bound 
$\Lambda_0(f)\le \deg(f)^{1/2}$ for the combinatorial expansion factor 
of $f$. We will  later see  that 
this is true for every expanding Thurston map without the additional assumption that $f$ has no periodic critical points (see Proposition~\ref{prop:macgrdn}). 

 Recall from
Proposition~\ref{prop:visualsummary}~\ref{item:vsfF} that a
metric $\varrho$ on $S^2$ is a visual metric for an expanding
Thurston map $f\colon S^2\to S^2$ if and only if it is a visual
metric for an iterate $F=f^n$ (which is also an expanding Thurston map by 
Lemma~\ref{lem:Thiterates}). Hence
Theorem~\ref{thm:S2vsf} immediately gives  the following corollary. 

\begin{cor}
  \label{cha:fvsF_lattes_rational}
  \index{Thurston map!iterate of}
  \index{iterate of Thurston map}
  \index{F f@$F=f^n$}
  Let $f\colon S^2\to S^2$ be an expanding Thurston map, and
  $F=f^n$ with  $n\in \N$  be an iterate of $f$.  Then the 
following statements are true:
  \begin{enumerate}
  \item
    \label{item:fvsF_rational}
    The map $f$ is topologically conjugate to a rational map
    if and only if $F$ is topologically conjugate to a rational
    map.  
  \item 
    The map $f$ is topologically conjugate to a Latt\`{e}s map
    if and only if $F$ is topologically conjugate to a
    Latt\`{e}s map.
  \end{enumerate}
\end{cor}


We now record some of the consequences of our results for
rational Thurston maps explicitly.   

\begin{theorem}
  \label{thm:main01}
  \index{Thurston map!rational}
  Let $f\colon\CDach \ra \CDach$
  be a  rational Thurston map with no periodic critical points. Then the following statements are true:
    \begin{enumerate}
  \item
    \label{item:frat1_qc_ex}
    \index{quasicircle}
    For each sufficiently large 
    $n\in \N$ there exists  a quasicircle $\CC\sub
    \CDach$ with $\post(f) \sub\CC$  that is $f^n$-invariant  (i.e., $f^n(\CC)\sub \CC$). 
  \item 
    \label{item:frat_invC_qc}
    \index{f-invariant@$f$-invariant!Jordan curve}
    \index{Jordan curve!f-invariant@$f$-invariant}
    Each $f^n$-invariant Jordan curve $\CC\subset \CDach$ with
    $\post(f)\subset\CC$ is a quasicircle.
  \item 
    \label{item:frat_Markov} Let 
    $\CC$ be an $f$-invariant Jordan curve $\CC\subset \CDach$ with
    $\post(f)\subset\CC$,  ${\bf E}^n$ be the set of $n$-edges,  and ${\bf X}^n$ be  
    the set of 
    $n$-tiles for $(f,\CC)$. 
Then the family of all edges $\{e: n\in \N_0, \, e\in {\bf
  E}^n\}$ consists of uniform quasiarcs, and the family of all
tiles $\{X : n\in \N_0,\, X\in
    \X^n\}$ of uniform quasidisks.
    \index{uniform!quasiarcs}
    \index{uniform!quasidisks}
    \index{quasiarc!uniform}
    \index{quasidisk!uniform}
  \end{enumerate}
\end{theorem}
Here the underlying metric is again the chordal metric on $\CDach$. 
For the concepts of uniformity used here see Section~\ref{sec:cc-quasicircle}.

From \ref{item:frat_Markov} it follows that the family 
$\{\partial X : n\in \N_0,\, X\in
    \X^n\}$ consists 
of uniform quasicircles.\index{uniform!quasicircles}\index{quasicircle!uniform}
Note that this and the statement about the arcs in \ref{item:frat_Markov} do not {\em a priori} follow from Proposition~\ref{prop:arc}, because we use different underlying metrics.
 We will prove though that for a rational Thurston map $f$ without periodic critical points the chordal metric is quasisymmetrically equivalent 
   to each visual metric for $f$ (see Lemma~\ref{lem:sigqsvis}). Once we know this,
   Theorem~\ref{thm:main01}~\ref{item:frat_Markov} can easily be deduced from Proposition~\ref{prop:arc}. 
 
A consequence of Theorem~\ref{thm:main01} is that each sufficiently high iterate of a rational  expanding Thurston
map $f\colon \CDach \to \CDach$  has a particularly nice Markov
partition, where    the tiles are
quasidisks.  

Another important property of visual spheres from the viewpoint of quasiconformal geometry is that they are linearly locally connected.
This and related properties will be discussed in 
 Section~\ref{sec:line-local-conn}.
 
 Theorem~\ref{thm:S2vsf}~\ref{item:S2doubling} and 
Proposition~\ref{prop:Ahlforsreg} will be proved in 
 Section~\ref{sec:periodic}. In Section~\ref{sec:rational-maps} we will establish 
Theorem~\ref{thm:S2vsf}~\ref{item:S2qsphere}. 
 We postpone the proof of Theorem~\ref{thm:S2vsf}~\ref{item:S2Lattes} 
to the end of Section~\ref{sec:lyapunov-exponent-mu} (this part
was essentially proved in \cite{Me09a}).  

\section{Linear local connectedness}
\label{sec:line-local-conn}

Recall (see Section~\ref{sec:QCgeom}) that a metric space $(X,d)$ is 
said to be 
{\em linearly locally connected}\index{linearly locally connected (LLC)} 
(often abbreviated as $\LLC$) if there exists a constant $C\ge 1$ such
that  
the following two conditions are satisfied:

\begin{enumerate}

\item
  [(LLC1)]
  \label{item:LLC1}
  If $p\in X$, $r>0$, and $x,y\in B_d(p,r)$, 
  then there exists a continuum $E\sub X$ with $x,y\in E$ and $E\sub B_d(p,Cr).$

\item
  [(LLC2)]
  \label{item:LLC2}
  If $p\in X$, $r>0$, and $x,y\in X\setminus B_d(p,r)$, 
  then there exists a continuum $E\sub X$ with $x,y\in E$ and
  $E\sub X\setminus B_d(p,r/C).$
\end{enumerate}

It is easy to see that \mbox{LLC1} is satisfied if and
only if $X$ is of bounded turning (as defined in \eqref{eq:bddturn}).

The  space $(X,d)$ is 
called   {\em annularly linearly locally connected}\index{annularly linearly locally connected (ALLC)} 
(abbreviated as ALLC)
if there exists a constant $C\ge 1$ with the following property:
  if  $p\in X$, $r>0$, and $x,y\in \overline B_d(p,2r)\setminus B_d(p,r)$, then 
  there exists a path $\gamma$ in $X$ joining $x$ and $y$ with 
  \begin{equation*}
    \gamma\sub 
    \overline B_d(p,Cr)\setminus B_d(p,r/C).
    \end{equation*}

The following proposition shows that the visual 
sphere  of  an expanding Thurston map is linearly locally connected
and annularly linearly locally connected.

\begin{prop}
  \label{prop:annLLC}
  \index{visual sphere}
  \index{visual metric}
  \index{metric!visual}
Let $f\:S^2\ra S^2$ be an expanding Thurston map, and $\varrho$ be a visual 
metric  for $f$. Then the following statements are true:

\begin{enumerate}
\item
  \label{item:bt} 
  $(S^2, \varrho)$ is of bounded turning.
  \index{bounded turning}

\item
  \label{item:llc_ann}
  $(S^2,\varrho)$ is annularly linearly locally connected. 
 
\item 
  \label{item:visSphLLC} 
  $(S^2,\varrho)$ is linearly locally connected. 

\end{enumerate}
\end{prop}

The statements \ref{item:bt}--\ref{item:visSphLLC} are not
logically independent, but one can show the implications
\ref{item:llc_ann} $\Rightarrow$ \ref{item:visSphLLC} $\Rightarrow$
\ref{item:bt} for quite general spaces. The ensuing proof will
not rely on this directly.
  
\begin{proof}  Let $\Lambda>1$ be the expansion factor of $\varrho$.
Then for some Jordan curve $\CC\sub S^2$ with $\post(f)\sub \CC$ we have $\varrho(x,y)\asymp \Lambda^{-m(x,y)}$ for all $x,y\in S^2$, where $m(x,y)=m_{f,\CC}(x,y)$ (see Definitions~\ref{def:mxy} and \ref{def:visual}). In the following,  all  cells  will be  for $(f,\CC)$ and all metric concepts refer to $\varrho$. 

\smallskip 
\ref{item:bt}
Let $x,y\in S^2$ be arbitrary. If $x=y$ there nothing to
prove. So we may assume that $x\ne y$. Let $n=m(x,y)\in \N_0$.
Then there exist $n$-tiles $X$ and $Y$ with $x\in X$, $y\in Y$, and $X\cap Y\ne \emptyset$. Since $X$ and $Y$ are Jordan regions, we can find a path $\alpha$ in $X\cup Y$ that joins $x$ and $y$.
Then by Proposition~\ref{lem:expoexp}~\ref{item:expoex2} we have 
$$  \diam(\alpha)\le \diam (X)+\diam(Y)
  \lesssim 
  \Lambda^{-n}\asymp \varrho(x,y). $$
  In particular,
  \begin{equation}
  \label{eq:S2bd_turn}
     \diam(\alpha)          \leq K \varrho(x,y)
\end{equation}
with a constant $K\geq 1$  independent of
$x$ and $y$.
Statement \ref{item:bt} follows.

\smallskip 
\ref{item:llc_ann}
Let  $p\in S^2$, $r>0$, and $x,y\in \overline B(p,2r)\setminus
B(p,r)$.  In the following, all implicit multiplicative constants will
be independent  of these  initial choices of $p$, $r$, $x$, and $y$.  
We have to find a path $\gamma$  joining $x$ and $y$ such that 
\begin{equation}\label{eq:anncont}
\ga \sub \overline B(p,Cr)\setminus B(p, r/C),
\end{equation} 
where $C\ge 1$ is a suitable constant.
The ensuing construction of  $\gamma$ is illustrated in
Figure~\ref{fig:allc}. 

Define 
$$n\coloneqq \max\{m(p,x), m(p,y)\}+1.$$
Then 
$$\Lambda^{-n}\asymp \min\{\varrho(p,x), \varrho(p,y)\}\asymp r. $$
Let $X,Y,Z$ be $n$-tiles with $x\in X$, $y\in Y$, and $p\in Z$.
Then by definition of $n$ we have 
$X\cap Z=\emptyset$ and $Y\cap Z=\emptyset$.

Since $f$ is expanding, we can choose $k_0\in \N_0$ as in \eqref{def:k0}.
In particular,  every connected set of $k_0$-tiles joining opposite sides of $\CC$ must contain at least ten $k_0$-tiles. 

Consider the set $U^{n+k_0}(p)$ as defined in
\eqref{eq:defUk}. This is the set of all $(n+k_0)$-tiles that
intersect an $(n+k_0)$-tile containing $p$.  Then
$f^n(U^{n+k_0}(p))$ is connected, and consists of
$k_0$-tiles. This set cannot join opposite sides of $\CC$; for
otherwise, we could find a connected set 
consisting of at most six
$k_0$-tiles with
this property (see the proof Lemma~\ref{lem:quasiball} for a
similar reasoning). This is impossible by definition of $k_0$.
Hence $f^n(U^{n+k_0}(p))$ is contained in a $0$-flower
(Lemma~\ref{lem:floweropp}) which implies that $U^{n+k_0}(p)$ is
contained in an $n$-flower
(Lemma~\ref{lem:mapflowers}~\ref{item:mapflowers3}). So there
exists an $n$-vertex $v$ with $p\in U^{n+k_0}(p)\sub W^n(v)$.
Since $Z$ contains $p$, this tile must be one of the $n$-tiles
forming the cycle of $v$. So $v\in Z$, and $v\notin X,Y$. This in
turn implies that $X$ and $Y$ do not meet $W^n(v)$ (see
Lemma~\ref{lem:flowerprop}~\ref{item:flower_prop3}).

\begin{figure}
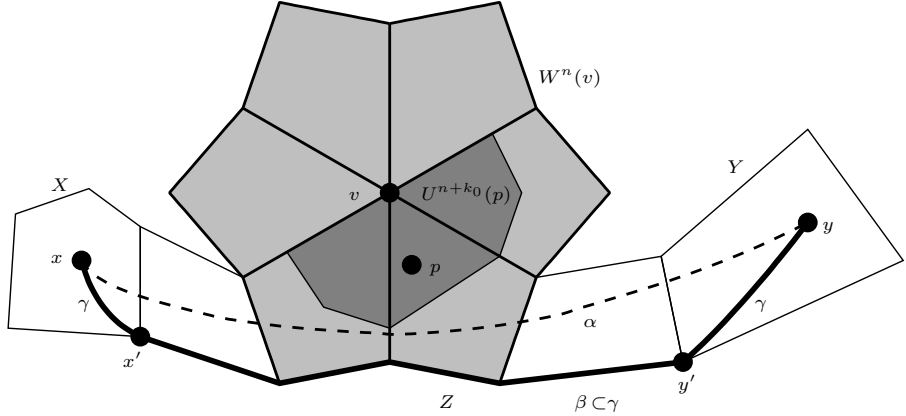

  \centering
  \begin{overpic}
    [width=12cm, tics=10,
    ]{allc}
    \put(5,14){${\scriptstyle x}$}
    \put(13,2.5){${\scriptstyle x'}$}
    \put(5,22){${\scriptstyle X}$}
    \put(90.5,17.7){${\scriptstyle y}$}
    \put(74.5,0){${\scriptstyle y'}$}
    \put(80,24){${\scriptstyle Y}$}
    \put(63,-2){${\scriptstyle \beta\,\subset \gamma}$}
    \put(83,9){${\scriptstyle \gamma}$}
    \put(8,9){${\scriptstyle \gamma}$}
    \put(64,7){${\scriptstyle \alpha}$}
    \put(38,21){${\scriptstyle v}$}
    \put(47,13){${\scriptstyle p}$}
    \put(48,-2){${\scriptstyle Z}$}
    \put(46,21){${\scriptstyle U^{n+k_0}(p)}$}
    \put(59,34){${\scriptstyle W^n(v)}$}    
  \end{overpic}
  \caption{Construction of $\gamma$.}
  \label{fig:allc}
\end{figure}

Pick a path $\alpha$ in $S^2$ that joins $x$ and $y$ and
satisfies \eqref{eq:S2bd_turn}. By Lemma~\ref{lem:echain} we can find a set $M$ of
$n$-tiles that forms an $e$-chain joining $X$ and $Y$ (see
Definition~\ref{def:e-chain}) so that
each tile in $M$ has non-empty intersection with $\alpha$. 
Pick $n$-vertices $x'\in  \partial X$, $y'\in \partial Y$. Since $X$ and $Y$ do not contain $v$, we have $x',y'\ne v$. 
Consider the graph $G_M= \bigcup_{U\in M}\partial U$ defined as
in \eqref{eq:def_GM} for the cell decomposition $\DD=\DD^n(f, \CC)$.  It consists of $n$-edges, is connected, has no cut points (Lemma~\ref{lem:nocut}), and contains $x'$ and $y'$ as vertices.  Hence 
there exists an edge path in $G_M$ joining $x'$ and $y'$ whose underlying 
set $\beta$ does not contain $v$. Then this edge path does not contain any 
edge in the cycle of $v$ and so $\beta\cap W^n(v)=\emptyset$. 
Let $\gamma$ be the path in $S^2$ that is obtained by running from $x$ to $x'$ along some path in $X$, then from $x'$ to $y'$ along $\beta$, and then from $y'$ to $y$ along some path in $Y$. Then $\gamma$ joins $x$ and $y$.

Since the sets $X,Y,\beta$ have empty intersection with $W^n(v)$ and hence with $U^{n+k_0}(p)$, it follows that $\gamma\cap U^{n+k_0}(p)=\emptyset$.
Thus  by Lemma~\ref{lem:UmB}~\ref{item:UmB1} we have
$$ \dist(p,\gamma)\gtrsim \Lambda^{-(n+k_0)}\asymp\Lambda^{-n}\asymp
r. $$
Hence there exists a constant $C_1\ge 1$ independent of the initial choices such that 
$$\gamma\cap B(p, r/C_1)=\emptyset. $$ 

The set $\gamma$ can be covered by $n$-tiles that meet $\alpha$.
Since 
$$ \diam (\alpha)\le K\varrho(x,y)\le 4Kr\lesssim r, $$ and 
$$\max\{\diam(U): U\text{ is an $n$-tile}\}\lesssim \Lambda^{-n}\asymp r, $$ we conclude that 
$$\diam (\gamma)\le \diam(\alpha)+2\max\{\diam(U): U\text{ is an $n$-tile}\}\lesssim r.$$
Since the initial point $x$ of $\gamma$ has distance $\le 2r$ from $p$, it follows that there exists a constant $C_2\ge 1$
independent of the initial choices such that 
$\gamma\sub B(p, C_2r). $ 
 If we set $C=\max\{C_1, C_2\}$, then  \eqref{eq:anncont} follows.
 
 \smallskip 
 \ref{item:visSphLLC} 
 To show that $(S^2, \varrho)$ is
 linearly locally connected, we verify the two relevant conditions
 \mbox{LLC1} and  \mbox{LLC2}; 
 here we can use possibly different constants $C$ in each of the
 conditions.
 
 Let $p\in S^2$, $r>0$, and $x,y\in B(p,r)$ 
be arbitrary. 
We  choose a path $\alpha$ joining $x$ and $y$ that satisfies \eqref{eq:S2bd_turn}. Define $E\coloneqq \alpha$ and $C=2K+1$. Then $x,y\in E$, and, since $\diam(\alpha)\le K\varrho(x,y)\le 2Kr$,
 we have 
 $$E\sub B(p, r+\diam(\alpha))\sub B(p, Cr). $$
 This shows that $(S^2, \varrho)$ satisfies  \mbox{LLC1}.
 
 In order to prove \mbox{LLC2}, let $p\in S^2$, $r>0$, and
 $x,y\in S^2\setminus B(p,r)$ 
 be arbitrary.  Let $\alpha$ be a path in $S^2$ joining $x$ and
 $y$. If $\alpha\cap B(p,r)=\emptyset$, define
 $E\coloneqq \alpha$.  Then $E$ is a continuum with $x,y\in E$
 and $E\sub S^2\setminus B(p,r)$.

 If $\alpha$ meets $B(p,r)$, then, as we travel from $x$ to $y$ along $\alpha$, there exists a first point with $x'\in \overline B(p,2r)$. Note that if $x\in \overline B(p,2r)$, then 
 $x'=x$, and $d(p,x')=2r$ otherwise. In any case,  $x'\in \overline B(p,2r)\setminus 
 B(p,r)$. Let $\alpha_x$ be the subpath of $\alpha$ obtained by traveling 
 along $\alpha$ starting from $x$ until we reach $x'$. 
 Then $\alpha_x\sub S^2\setminus B(p, r)$.  

Traveling along $\alpha$ in the opposite direction starting from $y$, we  similarly define a point $y'\in \overline B(p,2r)\setminus 
 B(p,r)$ and a subpath $\alpha_y\sub  S^2\setminus B(p, r)$ of $\alpha$ joining $y$ and $y'$. 
 Then $x',y'\in \overline B(p, 2r)\setminus B(p,r)$. Hence by \ref{item:llc_ann}
 there exists a path $\gamma$ in $S^2$ that joins $x'$ and $y'$ and satisfies $\gamma\sub S^2\setminus B(p,r/C)$.  Here $C\ge 1$ is a constant 
 independent of the initial choices.
Now define $E=\alpha_x\cup \gamma\cup \alpha_y$. Then $E$ is a continuum  with $x,y\in E$ and $E\sub S^2\setminus B(p,r/C)$.
It follows that \mbox{LLC2} is satisfied as well. 
 \end{proof}

\section{Doubling and Ahlfors regularity} \label{sec:periodic} 
 Here part \ref{item:S2doubling} of
Theorem~\ref{thm:S2vsf} and  Proposition~\ref{prop:Ahlforsreg}
are proved. We first need some preparation.

Let $f\: S^2\ra S^2$ be a branched covering map  on a $2$-sphere $S^2$.  Recall that 
a point $p\in S^2$ is called periodic if there exists $n\in \N$ such that $f^n(p)=p$ and that the smallest $n$ for which this is true
is called the period of the periodic point.

The following lemma is essentially well known.

\begin{lemma} \label{lem:cycle} 
Let $f\:S^2\ra S^2$ be a branched covering map. 
Then $f$ has no periodic critical points if and only if there exists $N\in \N$ such that 
$$ \deg(f^n,p)\le N$$ 
for all $p\in S^2$ and all $n\in \N$. 
\end{lemma}

\begin{proof}
Note that for $p\in S^2$ and $n\in \N$
we have 
\begin{equation}\label{eq:degfn}
  \deg(f^n,p)=\prod_{k=0}^{n-1}\deg(f, f^{k}(p)).
\end{equation}
So if $p$ is a periodic critical point of period $l$, say, and $d=\deg(f,p)\ge 2$, then 
$$\deg(f^n,p) \ge d^{ \lceil n/l \rceil} \ge 2^{n/l} \to \infty$$ as 
$n\to \infty$. Hence $\deg(f^n,p)$ is not uniformly bounded.

If $f$ has no periodic critical points, then the orbit $p$, $f(p)$, $f^2(p)$, $\dots$ of a  point $p\in S^2$ can contain each critical point at most once.
Hence by \eqref {eq:degfn}
we have 
$$ \deg(f^n,p)\le N\coloneqq \prod_{c\in \crit(f)} \deg(f,c). $$
Note that the last product is finite, because $f$ has only finitely
many critical points. 
\end{proof} 

 

\begin{cor}
  \label{cor:no_crit_flowers}
  Let $f\colon S^2\to S^2$ be a Thurston map with no periodic
  critical points. Then there is a constant $N\in \N$ with the following property: 
  if $\CC\sub S^2$ is a Jordan curve with $\post(f)\sub \CC$, then for each $n\in \N_0$ and each vertex $v$ in $\DD^n(f,\CC)$ the cycle of $v$ 
  has length at most $N$. 
\end{cor}

In other words, the closure of each $n$-flower $W^n(v)$ contains at most $N$ tiles of level $n$, where $N$ only depends on $f$. 

\begin{proof}
  By 
  Lemma~\ref{lem:cycle} there exists $N'\in \N$ such that
the inequa\-lity   $\deg(f^n,p)\le N'$ is valid for all $p\in S^2$ and $n\in \N_0$.  This
  implies that the cycle of each $n$-vertex (defined with respect to any 
  Jordan curve $\CC\sub S^2$ with $\post(f)\sub \CC$) has length at most 
    $N\coloneqq 2N'$ 
  (see Lemma~\ref{lem:flowerprop}~\ref{item:flower_prop1}). 
\end{proof}

We are now ready to prove the first part of  
Theorem~\ref{thm:S2vsf}. 

\begin{proof}[Proof of
  Theorem~\ref{thm:S2vsf}~\ref{item:S2doubling}]
 Assume first that $f$ has no periodic critical points.  By Theorem~\ref{thm:main}
  there exists an iterate $F=f^n$ and an  $F$-invariant Jordan curve $\CC\sub S^2$ with $\post(f)=\post(F)\sub \CC$.  Then $F$ is also an expanding Thurston map (Lemma~\ref{lem:Thiterates}) and it has no periodic critical points as easily follows from \eqref{eq:critpfn}. So by Corollary~\ref{cor:no_crit_flowers} there exists a number $N\in \N$ such that the cycle of  each vertex in $\DD^n(F,\CC)$, $n\in \N_0$,  has length $\le N$.

  It suffices to show that $S^2$ equipped  with a visual metric
  for $F$ is doubling, since the class of visual metrics for $f$ and $F$ agree (Proposition~\ref{prop:visualsummary}~\ref{item:vsfF}). 
  
  Fix such a visual metric $\varrho$  for $F$, and denote by $\Lambda>1$ its expansion factor. In the following, all  cells 
are   for $(F,\CC)$ and all metric notions refer to $\varrho$. 
  To establish that $S^2$ is doubling, we now proceed as in
the proof of Theorem~\ref{thm:Cquasicircle}.

Let $x\in S^2$ and $0<r\le 2 \diam (S^2)$ be arbitrary. 
We have to cover $B(x,r)$ by a controlled number of sets of diameter $<r/4$. 
Using Proposition~\ref{lem:expoexp}, we can find $n\in
\N_0$ depending on $r$, 
as well as  constants $C(\asymp)>0$ and  $k_0\in \N_0$ independent of $x$ 
and $r$  with the following
properties: 


\begin{enumerate}

\item
  \label{item:visual_cover1} 
  $r\asymp \Lambda^{-n}$.

\item
  \label{item:visual_cover2} 
  $\diam(X)< r/4$, whenever $X$ is an $(n+k_0)$-tile.

\item
  \label{item:visual_cover3} 
  $\dist(X,Y)\ge r$, whenever $n-k_0\ge 0$ and $X,Y$ are disjoint $(n-k_0)$-tiles. 

\end{enumerate} 

Let $T$ be the set of all $(n+k_0)$-tiles that meet $B(x,r)$. Then the collection $T$ forms a cover of  $B(x,r)$ and consists of sets of diameter $<r/4$ by \ref{item:visual_cover2}. 
Hence it suffices to find a uniform upper bound for $\#T$, independent of $x$ and $r$. 
If $n< k_0$, then $\#T\le 2\deg(F)^{2k_0} $ (see Proposition~\ref{prop:celldecomp}~\ref{item:noVEX}) and we have such a bound.

Otherwise, $n-k_0\ge 0$. Pick an  $(n-k_0)$-tile   $X$ with $x\in X$. 
If $Z$ is  an arbitrary $(n+k_0)$-tile  in $T$, then we    can find  a unique  $(n-k_0)$-tile $Y$ that contains $Z$ (here we use that $\CC$ is $F$-invariant and so each 
tile is subdivided by tiles of higher levels).  

There exists a point $y\in Z\cap B(x,r)$.
Hence $\dist(X,Y)\le \varrho(x,y)<r$. This implies $X\cap Y\ne \emptyset$ by \ref{item:visual_cover3}.
So whatever $Z\in T$ is, the corresponding $(n-k_0)$-tile $Y\supset Z$ meets the fixed $(n-k_0)$-tile $X$. Hence $Y$ must share an 
$(n-k_0)$-vertex $v$ with $X$ which implies  $Y\sub \overline{W^{n-k_0}(v)}$. Since by choice of $N$ the set $\overline{W^{n-k_0}(v)}$ contains at most $N$  tiles of level  $(n-k_0)$, and the number of  $(n-k_0)$-vertices in $X$ is equal to   $\#\post(F)$,   this leaves at most $N\#\post(F)$ possibilities for $Y$. 

Since every $(n-k_0)$-tile contains at most 
$2\deg(F)^{2k_0}$ tiles of level $(n+k_0)$, it follows that 
$\#T\le 2N\#\post(F)\deg(F)^{2k_0}$. 
So we get a uniform bound as desired, which shows that $(S^2,\varrho)$ is
doubling. 

  To show the reverse implication, we use
  the following fact about doubling spaces, which is easy to show: in every ball there cannot be too many pairwise disjoint smaller balls that all have the same radius. More precisely,  for every $\eta\in (0,1)$ there is a number $K$ such that  every open ball of radius $r$ contains at  most $K$    pairwise disjoint open balls of radius $\eta r$. 
  
  Now suppose $f\: S^2\ra S^2$ is an expanding Thurston map such that  $S^2$ equipped with some visual metric for $f$ is doubling. Pick a Jordan curve $\CC\sub S^2$ with $\post(f)\sub \CC$. In the following, cells will be for $(f,\CC)$.  Let $p\in S^2$ and $n\in\N$.  In order to show that $f$ has no periodic critical points, it suffices to give a uniform bound on $d=\deg(f^n,p)$ (see Lemma~\ref{lem:cycle}).
  For this we may assume that $\deg(f^n,p)\ge 2$. Then $p$ is an $n$-vertex and the closure of the $n$-flower $W^n(p)$ consists of precisely $2\deg(f^n,p)$ tiles
  of level $n$. These $n$-tiles have 
pairwise disjoint interiors 
and each interior  contains a ball of radius $r\asymp \Lambda^{-n}$ (see Lemma~\ref{lem:quasiball}). On the other hand, $\diam(\overline{W^n(p)})\lesssim \Lambda^{-n}$. Since $S^2$ is doubling,  it follows that the number of these tiles and hence $\deg(f^n,p)$ is uniformly bounded from above by a constant independent of $p$ and $n$.
  Hence $f$ has no periodic critical points. 
  \end{proof}

We now prove the Ahlfors regularity of $(S^2,\varrho)$  when $f$
has no periodic critical points.

\begin{proof}[Proof of Proposition~\ref{prop:Ahlforsreg}]  
  As in the statement, let $f\colon S^2\to S^2$ be  an expanding
  Thurston map without periodic critical points, $\varrho$ be a
  visual metric for $f$, and $\nu=\nu_f$ be its  measure of maximal
  entropy. Assume that $\Lambda>1$ is the expansion
  factor of $\varrho$.

  By Theorem~\ref{thm:main} we can fix an iterate $F=f^n$ and an
  $F$-invariant Jordan curve $\CC\sub S^2$ with
  $\post(f)\subset \CC$. The map $F$ is an expanding Thurston map by
  Lemma~\ref{lem:Thiterates}, and $\varrho$ is a visual metric for
  $F$ with expansion factor $\Lambda_F \coloneqq \Lambda^n$ by
  Proposition~\ref{prop:visualsummary}~\ref{item:vsfF}.  In the
  following, cells are defined for $(F,\CC)$ and metric notions
  refer to $\varrho$.  The measure $\nu$ is also the measure of
  maximal entropy $\nu_F$ for $F$ (see
  Proposition~\ref{prop:exmeasure} and Theorem~\ref{thm:nuF}).

Let  $\overline B(x,R)\sub S^2 $ be an  arbitrary closed ball, where
  $x\in S^2$ and $0<R\le \diam(S^2)$. We use the sets $U^m(x)$ as
  defined in \eqref{eq:defUk} for the map $F$.
  Since $F$ does not have periodic critical points, the length of
  the cycle of each vertex is uniformly bounded
  (Corollary~\ref{cor:no_crit_flowers}). This implies that for each $m\ge 0$ the 
  set $U^m(x)$ consists of a uniformly bounded number of 
  $m$-tiles (by definition $U^m(x)=S^2$ for $m<0$).
  
   Since the sets $U^m(x)$ are closed, Lemma~\ref{lem:UmB}~\ref{item:UmB2} (applied to $F$)
gives the inclusions 
  \begin{equation*}
    U^{m+n_0}(x)\subset \overline B(x,R) \subset U^{m-n_0}(x),
  \end{equation*}
  where $m=\left\lceil -\log (R)/\log(\Lambda_F )\right\rceil$ and
  $n_0\in \N_0$ is a constant independent of the ball. Noting that
  \begin{equation*}
    Q
    \coloneqq
    \frac{\log(\deg(f))}{\log(\Lambda)} 
    =
    \frac{\log(\deg(F))}{\log(\Lambda_F )}
  \end{equation*}
  and using Proposition~\ref{prop:exmeasure}, we conclude 
  \begin{align*} 
    \nu_F(U^{m+n_0}(x))
    &\asymp 
              \nu_F(U^{m-n_0}(x)) \asymp  \deg(F)^{-m}\\
    &\asymp 
             \exp\big(\log(R)\log(\deg(F))/\log(\Lambda_F )\big)=R^Q,  
  \end{align*}
  and so  $\nu(\overline{B}(x,R))=\nu_F(\overline B(x,R)) \asymp
  R^Q$. Here the constants $C(\asymp)$ are independent of the
  ball. The Ahlfors  $Q$-regularity of $(S^2,\varrho, \nu)$
  follows.   

  This in turn implies that
  $\nu(M)\asymp\mathcal{H}^Q_\varrho(M)$ for every Borel set
  $M\sub S^2$, where $C(\asymp)$ is independent of $M$. Since
  $\nu$ is a probability measure, we conclude that
  $0<\mathcal{H}^Q_\varrho(S^2)<\infty$. It also follows that the
  Hausdorff dimension of $(S^2,\varrho)$ is equal to $Q$.
\end{proof}

\section{Quasisymmetry and rational Thurston maps}
\label{sec:rational-maps}

In this section we prove Theorem~\ref{thm:S2vsf}~\ref{item:S2qsphere}. We will also 
derive Theorem~\ref{thm:main01} as a consequence of this and other previous results.   The proof of  Theorem~\ref{thm:S2vsf}~\ref{item:S2qsphere}
mostly follows  \cite{Me02} and \cite{Me08}. It was
independently established in \cite{HP} by a different method. 

The more difficult implication  in   Theorem~\ref{thm:S2vsf}~\ref{item:S2qsphere}
amounts to proving  that if a rational Thurston map $f\: \CDach \ra \CDach$ is expanding, then its visual sphere is a quasisphere. For this one wants to show that 
the chordal metric $\sigma$ on $\CDach$ is quasisymmetrically equivalent to each visual metric $\varrho$ for $f$. The key for this is to analyze the metric properties of the cell decompositions $\DD^n(f,\CC)$ with respect to the chordal metric
$\sigma$. 
This is of independent interest and we record the relevant facts
in a separate statement. A similar approach to prove
quasisymmetric equivalence can be found in
\cite[Theorem~3.4]{Ki14}. 

\begin{prop}[Tiles in the chordal metric]
  \label{prop:chordalm}
  \index{metric!chordal}
  \index{chordal metric}
  \index{tile}
  Let $f\colon \CDach\to \CDach$ be a rational  Thurston
  map without periodic critical points, and 
   $\CC\sub \CDach$ be a Jordan curve with 
  $\post(f)\sub \CC$. We equip $\CDach$ with the chordal metric $\sigma$ and 
   denote by ${\bf X}^n$ for $n\in \N_0$ the collection of 
  $n$-tiles for $(f,\CC)$. Then for all $n,k\in \N_0$ the following statements are true: 
   
  \begin{enumerate}
   \item 
    \label{item:diamXY}
    If  $X,Y\in \X^n$  and  $X\cap Y\neq \emptyset$,  then
    \begin{equation*}
      \diam (X) \asymp \diam (Y). 
    \end{equation*}
  \item 
    \label{item:distdiamX}
   If  $X,Y\in \X^n$  and  $X\cap Y= \emptyset$,  then
    \begin{equation*}
      \dist(X,Y)\gtrsim \diam (X).
    \end{equation*}
   \item 
    \label{item:diamXkXkm1}
    If $X\in {\bf X}^n$,  $Y\in {\bf X}^{n+k}$, and $X\cap Y\ne \emptyset$,  then
    \begin{equation*}
      \diam (X) \gtrsim \diam (Y). 
    \end{equation*}
     \item 
    \label{item:diamXkXkm2}
    If  $X\in {\bf X}^n$,  $Y\in {\bf X}^{n+k}$, and $X\cap Y\ne \emptyset$,  then
    \begin{equation*}
      \diam (X) \lesssim \diam (Y),
    \end{equation*}
    where $C(\lesssim)= C(k)$.
    \item
     \label{item:diamXxprime}
    Let $\widetilde{\CC}\subset\CDach$ be  another Jordan curve with
    $\post(f)\subset \widetilde{\CC}$. If $n\in \N_0$, $X$ is an $n$-tile for
    $(f,\CC)$,  $\widetilde{X}$ is  an $n$-tile for
    $(f,\widetilde{\CC})$, and $X\cap\widetilde{X}\neq
    \emptyset$, then
    \begin{equation*}
      \diam (X) \asymp \diam (\widetilde{X}).
    \end{equation*}  
     \item 
    \label{item:sxydiamX}
  If  $x,y\in \CDach$ and $x\ne y$, then
    \begin{equation*}
      \sigma(x,y)\asymp \diam (X),    \end{equation*}
   whenever $X\in {\bf X}^m$ contains $x$, where $m=m_{f,\CC}(x,y)$. 

  \end{enumerate}
  The implicit multiplicative constant  in
  \ref{item:diamXY}, \ref{item:distdiamX}, \ref{item:diamXkXkm1},  \ref{item:diamXxprime}
 is  independent of the tiles involved in the inequalities and their levels,  in \ref{item:sxydiamX} is independent of $x$, $y$, $X$, and in  \ref{item:diamXkXkm2} only depends on $k$.  
     \end{prop}

So in the previous inequalities  we can choose all  implicit
multiplicative 
constants only depending on $f$ and $\CC$ 
(and on $\widetilde \CC$ in \ref{item:diamXxprime})
with the exception of \ref{item:diamXkXkm2}, where we also have dependence on the level difference $k$ of the tiles (but not on the specific tiles). 
Statements \ref{item:diamXkXkm1} and \ref{item:diamXkXkm2} essentially 
 say that if  two tiles for $(f,\CC)$ have  non-empty intersection, then their 
  diameters are comparable in the following way:  the diameter
 of the lower-level tile   bounds the diameter of the higher-level tile up to a uniform constant, while in the converse inequality the constant depends only on the level difference.

The proof of the previous proposition will   be based on  
 two observations. First, the unions of two sets of tiles with the 
same combinatorics are conformally equivalent. Second,
there are only {finitely many}
different {combinatorial types}, because the local degrees of all iterates of the map are uniformly bounded. 
The statements can then be derived from Koebe 
distortion estimates for conformal maps. 

The distortion estimates we will need are formulated in   Lemma~\ref{lem:distest}. There they are stated in terms of the chordal metric and 
spherical derivatives of the conformal map. A subtlety here is that in order to get uniform estimates one has to assume that the image of the map is not too large. In 
Lemma~\ref{lem:distest} we stipulate  that the image of the map is contained in a hemisphere, i.e., a chordal disk of radius $\sqrt 2$.
So when we apply Lemma~\ref{lem:distest} we have to make sure that this hypothesis 
is true (see also the discussion after the proof of Theorem~\ref{thm:Koebe}).

We now  fill in the details of this outline. 
Let $\DD$ be a cell complex. A subset $\DD'\sub \DD$ is called a {\em
  subcomplex}\index{subcomplex} of $\DD$ if the following condition is true: if $\tau\in
\DD'$, $\sigma \in \DD$, and  $\sigma\sub \tau$, then $\sigma\in
\DD'$. If $\DD'$ is a subcomplex of $\DD$, then the cells in $\DD'$
form a cell decomposition of the underlying set 
$$ |\DD'|\coloneqq \bigcup\{c:c\in \DD'\}. $$

Let $\DD$, $\DD'$, $\widetilde \DD$  be cell complexes and suppose we have labelings  $L\: \DD \ra \widetilde \DD$ 
and $L'\: \DD'  \ra \widetilde \DD$ (see Definition~\ref{def:labeldecomp}). Then we say that an isomorphism 
$\phi\: \DD\ra \DD'$ of cell complexes (see Definition~\ref{def:compiso}) 
is 
{\em label-preserving}\index{label-preserving isomorphism}\index{isomorphism!of cell complexes!label-preserving} 
if $L(\tau)=L'(\phi(\tau))$ for each $\tau \in \DD$.

 Now suppose that $f\: \CDach \ra \CDach$ is a rational  Thurston map, and $\CC\sub \CDach$ is a Jordan curve with $\post(f)\sub \CC$.
We consider the cell decompositions $\DD^n=\DD^n(f,\CC)$ of 
$\CDach$ for $n\in \N_0$. If $\tau\in \DD^n$, then $f^n|\tau$ is a homeomorphism of 
the $n$-cell $\tau$ onto the $0$-cell $f^n(\tau)$. So the map
$\tau\mapsto f^n(\tau)$ induces a labeling $\DD^n\ra \DD^0$.
We call this the 
{\em natural labeling}\index{natural labeling}\index{cell!complex!natural labeling of}\index{labeling!natural} on $\DD^n$. Similarly, 
the map  $\tau\mapsto f^n(\tau)$ induces a natural labeling on every
subcomplex of $\DD^n$.  
  
\begin{lemma}\label{lem:confequiv}Let $n,m\in \N_0$,  $\DD$ be a subcomplex of $\DD^n$, and $\DD'$ be a subcomplex of $\DD^m$ both equipped with their natural labelings. If $\phi\: \DD\ra \DD'$ is a label-preserving isomorphism, then there exists 
a homeomorphism $h\: |\DD|\ra |\DD'|$ such that

\begin{enumerate}

\item
  \label{item:confeq1}
  $h(\tau)=\phi(\tau)$ for each $\tau \in \DD$, 

\item
  \label{item:confeq2}
  $h$  maps $\inte(|\DD|)$ conformally onto    $\inte(|\DD'|)$. 
\end{enumerate}
\end{lemma}

Here $\inte(|\DD|)$ and    $\inte(|\DD'|)$ denote the interiors of 
$|\DD|$ and    $|\DD'|$, respectively, as subsets of $\CDach$. 
Roughly speaking, the lemma says  that combinatorial equivalence of two subcomplexes $\DD$ and $\DD'$ 
gives conformal equivalence of their underlying sets.


\begin{proof} If $\tau\in \DD$, then $f^n(\tau)=f^m(\phi(\tau))$, because 
$\phi$ is label-preserving. Hence we can define a homeomorphism
$h_\tau\coloneqq (f^m|\phi(\tau))^{-1}\circ (f^n|\tau)$
 of $\tau $ onto $\phi(\tau)$.
It is clear that the maps $h_\tau$ are compatible under inclusions of cells: if $\sigma,\tau\in \DD$ and $\sigma\sub \tau$, then $h_\tau|\sigma=h_\sigma$.
We now define a map $h\: |\DD|\ra |\DD'|$ as follows. For  $p\in |\DD|$ pick $\tau\in \DD$ with $p\in \tau$. Set $h(p)\coloneqq h_\tau(p)$.  As in the proof of Proposition~\ref{prop:thurstonex} one sees that $h$ is well-defined and as in the proof of Lemma~\ref{lem:isocellhomeo}~\ref{item:isocellhomeo1} that $h$ is a homeomorphism of $|\DD|$ onto $|\DD'|$. Obviously, $h$ has property \ref{item:confeq1}.

To establish property \ref{item:confeq2}, first note that $h$ maps $\inte(|\DD|)$ homeomorphically onto  $\inte(|\DD'|)$ (this follows from  the 
``invariance of domain''; see for example \cite[Theorem 2B.3, p.~172]{Ha}).  So it 
 suffices to show that $h$
is holomorphic   on $U\coloneqq \inte(|\DD|) \sub \CDach$. 

The definition of $h$ implies that $f^n=f^m\circ h$ on $U$. 
So if $p\in U$ and $q=h(p)$ is not a critical point of $f^m$, then there exists on open neighborhood $V$ of $q$ where $f^m|V$ has a holomorphic inverse $(f^m|V)^{-1}$. Then $h=(f^m|V)^{-1} \circ f^n$ near $p$, which shows that $h$ is holomorphic 
near $p$. This implies that $h$ is holomorphic on $U\setminus h^{-1}(\crit(f^m))$.
The finitely many points in $h^{-1}(\crit(f^m))\cap U$ are removable singularities for $h$, because $h$ is continuous. It follows that $h$ is holomorphic on $U$ as desired. 
%
%
 \end{proof}

\begin{proof}
  [Proof of Proposition~\ref{prop:chordalm}] Unless otherwise stated, in the following all cells are for $(f,\CC)$. 
  
  \smallskip
  \ref{item:diamXY}   For $n\in \N_0$ and $X,Y\in \X^n$ with $X\cap Y \neq 
  \emptyset$, we  consider  the complex $\DD(X,Y)$, equipped with the
  natural labeling,  consisting of all $n$-cells $c$  for which there
  exists an $n$-tile $Z$ with $c\sub Z$ and $Z\cap (X\cup Y)\ne
  \emptyset$. 
  Obviously, 
  \begin{equation}\label{eq:defVkXY}
    |\DD(X,Y)|=\bigcup \{Z\in\X^n : Z\cap(X\cup Y)\neq \emptyset\}.  
  \end{equation}
  Let $\Om(X,Y)$ be the interior of $|\DD(X,Y)|$. Then $\Om(X,Y)$ is a
  region containing $X$ and $Y$.  
    
  Suppose that $X',Y'$  is a pair of  non-disjoint $m$-tiles, $m\in
  \N_0$. We call   $\DD(X,Y)$ and $\DD(X',Y')$ 
  equivalent if there 
  is
  a label-preserving isomorphism $\phi\:
  \DD(X,Y)\ra \DD(X',Y')$ with $\phi(X)=X'$ and $\phi(Y)=Y'$. 
  If  $\DD(X,Y)$ and $\DD(X',Y')$ are equivalent, then by
  Lemma~\ref{lem:confequiv} there exists a conformal map $h\:
  \Om(X,Y)\ra \Om(X',Y')$ with $h(X)=X'$ and $h(Y)=Y$. 

  Since $f$ has no periodic
  critical points, the length of the cycle of each vertex is uniformly bounded (Corollary~\ref{cor:no_crit_flowers}). So  each $n$-vertex  is contained in a uniformly bounded number of $n$-tiles independent of $n$. 
  This implies that the number of $n$-tiles, and hence the number of
  $n$-cells, in $\DD(X,Y)$ is uniformly bounded by a number
  independent of $X$, $Y$, and $n$. Therefore,  among the complexes
  $\DD(X,Y)$  there are only finitely many equivalence classes.
  Since $f$ is expanding, there are also  only finitely many
  complexes $\DD(X,Y)$ such that $\Om(X,Y)$ is not contained in a
  hemisphere. Hence we can find finitely many complexes $\DD(X_1,
  Y_1), \dots, \DD(X_N, Y_N)$ such that each complex $\DD(X,Y)$ not in
  this list is equivalent to one complex $\DD(X_i,Y_i)$ and such that
  $\Om(X,Y)$ is contained in a hemisphere. It follows from \eqref{di0}
  in Lemma~\ref{lem:distest} (applied to $A=X_i$, $B=Y_i$,
  $\Om=\Om(X_i,Y_i)$, and the conformal map $h\:\Om(X_i,Y_i)\ra
  \Om(X,Y)$ produced by Lemma~\ref{lem:confequiv}) that
  $\diam(X)\asymp \diam(Y)$ with $C(\asymp)$ independent of $X$, $Y$,
  and $n$.

  \smallskip 
  \ref{item:distdiamX}
  The argument is very similar to the previous one.
For $n\in \N_0$ and  $X\in \X^n$, 
  we consider the cell complex $\DD(X)$, equipped with the natural
  labeling, consisting of all $n$-cells $c$  for which there exists an
  $n$-tile $Z$ with $c\sub Z$ and $Z\cap X\ne \emptyset$. Then  
  \begin{equation} 
    |\DD(X)|=\bigcup \{Z\in\X^n : X\cap Z\neq \emptyset\}.   
  \end{equation}
  If we define $\Om(X)$ to be the interior 
  of $|\DD(X)|$, then $\Om(X)$ is a region that contains  $X$. Moreover, if $Y$ is an $n$-tile with $X\cap Y=\emptyset$, then  $\Om(X)$  is disjoint
  from $Y$. 
   
  If  $m\in \N_0$ and $X'$  is an $m$-tile, then we    call the
  complexes $\DD(X)$ and $\DD(X')$ equivalent if there exists a
  label-preserving isomorphism $\phi\: \DD(X)\ra \DD(X')$
  with $\phi(X)=X'$. 
  
  Again there are only finitely many equivalence classes of the complexes
  $\DD(X)$. Based on Lemma~\ref{lem:confequiv} and
   \eqref{eq:dhdiamhA}, \eqref{di1}
 in  Lemma~\ref{lem:distest}, we conclude that for each $n$-tile $Y$ with $X\cap Y=\emptyset$, we have 
  $$\dist(X,Y)\geq \dist(X, \partial\Om(X))\gtrsim \diam(X), $$ 
  where $C(\gtrsim)$ does not depend on $X$ and $Y$.

 \smallskip
  \ref{item:diamXkXkm1}--\ref{item:diamXkXkm2}  Let $k,n \in \N_0$, $X\in {\bf X}^n$, $Y\in {\bf X}^{n+k}$, and $X\cap Y\ne \emptyset$.   
  
 If $W$ is any $n$-flower, then any two $n$-tiles contained in $\overline 
 W$ have an $n$-vertex in common, and hence have comparable diameter by \ref{item:diamXY}. 
 This implies that $\diam(Z)\asymp \diam(W)$ whenever $Z$ is an $n$-tile with $Z\cap\overline W\ne \emptyset$. We also see that 
  $\diam(W)\asymp \diam(W')$, whenever  $W$ and $W'$ are 
   $n$-flowers with $W\cap W'\ne \emptyset$. 
 
Now by  Lemma~\ref{lem:tileflowerimprov} (applied for $\widetilde \CC=\CC$)
we can cover the $(n+k)$-tile 
  $Y$ with  $M$ $n$-flowers $W_1,\dots, W_M$, where $M$ is independent of 
     $Y$. Since $X$ and $Y$ have a point in common, we may 
     assume that $X\cap W_1\ne \emptyset$. Moreover,  
 since $Y$ is connected, we may assume that each  flower in the list meets one of the previous ones. Then
 $$ \diam(X)\asymp \diam(W_1)\asymp \diam(W_i) $$
 for $i=1, \dots, M$. This implies
 $$ \diam(Y)\le \sum_{i=1}^M\diam(W_i)\asymp\diam(W_1)\asymp\diam(X).$$
 Since in the previous inequalities all implicit multiplicative constants only depended on $f$ and $\CC$, claim \ref{item:diamXkXkm1} follows.

 For \ref{item:diamXkXkm2}  note that by choosing tiles that contain 
  a point in $X\cap Y$, we can find tiles $X^i$ of levels $i=n, \dots, n+k$ such that 
  $X=X^n$, $Y=X^{n+k}$, and  $X^{i}\cap X^{i+1}\ne \emptyset$ for $i=n, \dots, n+k-1$.
  If we can show that 
  $\diam(X^i)\lesssim \diam(X^{i+1})$ with a uniform constant $C(\lesssim)$ only depending on $f$ and $\CC$, then we conclude that $\diam(X)\lesssim  \diam(Y)$ with a constant
  $C(\lesssim)=C(k)$ as desired.
  
  In other words, we are reduced to proving the inequality $\diam(X)\lesssim \diam(Y)$ under the additional assumption that $Y\in  {\bf X}^{n+1}$, where
 the    implicit multiplicative constant is supposed to  depend only on $f$ and $\CC$. 

 By \ref{item:diamXY} and   Lemma~\ref{lem:difflevel}~\ref{item:coverflower2} the $n$-tile $X$  can be covered by a controlled number of $(n+1)$-flowers whose diameters  are comparable to $\diam(Y)$.  If one combines this with similar estimates as in the previous argument, then \ref{item:diamXkXkm2} immediately follows.

   \smallskip
  \ref{item:diamXxprime}
If  $X$ and $\widetilde{X}$
  are as  in the statement, then  by \ref{item:diamXY} and Lemma~\ref{lem:tileflower}   we can cover
  $\widetilde{X}$ by $M$ $n$-flowers $W_1, \dots,
  W_M$ for  $(f,\CC)$ with $\diam(W_i)\asymp \diam(X)$ for $i=1, \dots , M$.    Here $C(\asymp)$ and the number $M$ are
  independent of $n$, $X$, and $\widetilde{X}$.  By an estimate as in 
   the proof of    \ref{item:diamXkXkm1}, we conclude 
   $\diam(\widetilde X)\lesssim \diam(X)$.  For the other inequality we reverse the 
 roles of $X$ and $\widetilde{X}$ and use  an analog of 
  \ref{item:diamXY} for the curve $\widetilde \CC$.   
  
  \smallskip
  \ref{item:sxydiamX}
  Let $x,y\in \CDach$ with $x\ne y$  be arbitrary,  $m\coloneqq m_{f,\CC}(x,y)$, and $X$ be an $m$-tile with $x\in X$. By 
  Definition~\ref{def:mxy}  there are $m$-tiles $X^m$ and $Y^m$ with $x\in
  X^m$, $y\in Y^m$, and $X^m\cap Y^m\ne \emptyset$. Then 
   by \ref{item:diamXY} we have 
   $$\diam(X)\asymp \diam(X^m) \asymp \diam(Y^m),$$
   and so 
  \begin{equation*}
    \sigma(x,y)\leq \diam (X^m) + \diam(Y^m) \asymp \diam (X). 
  \end{equation*}
  
  On the other hand, pick  $(m+1)$-tiles
  $X^{m+1}$ and $Y^{m+1}$ with $x\in X^{m+1}$ and $y\in Y^{m+1}$. Then $X^{m+1}\cap Y^{m+1}=\emptyset$ by definition of $m$.  We have $x\in X\cap X^{m+1}$, and so 
  $\diam(X^{m+1})\asymp \diam(X)$ by \ref{item:diamXkXkm1} and 
  \ref{item:diamXkXkm2}. Then  \ref{item:distdiamX} 
  implies that 
  \begin{equation*}
    \sigma(x,y)\geq \dist(X^{m+1},Y^{m+1}) \gtrsim \diam (X^{m+1})\asymp \diam(X). 
  \end{equation*}
Since in  the previous inequalities all the implicit multiplicative constants can be chosen independently of $x$, $y$, and $X$, the statement follows. \end{proof}

%

We can now prove the following  crucial result.
 
\begin{lemma}\label{lem:sigqsvis}\label{cor:visualqsmetric}  Let $f\: \CDach\ra \CDach$ be a rational Thurston map with no periodic critical points. Then each visual metric $\varrho$ for $f$ is quasisymmetrically equivalent to the chordal metric $\sigma$.
\end{lemma} 

\begin{proof} 
The map $f$ is expanding by Proposition~\ref{prop:rationalexpch}. 
Let  $\varrho$ be a visual metric for $f$. We have to show  that  the identity map
 $    \id_{\CDach}\colon (\CDach,\varrho) \to (\CDach,\sigma)$ 
 is a quasisymmetry.   We will actually prove that this map  
  is weakly
    quasisymmetric:
     there 
 exists a constant $H\ge 1$ such that we have the implication
 \index{weak quasisymmetry}\index{quasisymmetry!weak}    
  \begin{equation}
    \label{eq:defweakqs}
    \varrho(x,z)\leq \varrho(x,y) \Rightarrow\sigma(x,z)\leq H \sigma(x,y)
  \end{equation}
  for all  $x,y,z\in \CDach$. A weak quasisymmetry between  connected
  doubling spaces is quasisymmetric (see
 Proposition~\ref{prop:wkqs}). We may apply this statement, because  $(\CDach,\sigma)$ is connected and doubling. Moreover, $(\CDach, \varrho)$ is connected, and it is also   doubling, because   $f$ has no periodic critical points 
 (see 
   Theorem~\ref{thm:S2vsf}~\ref{item:S2doubling}).
   
 We pick a Jordan curve $\CC\sub \CDach$ with $\post(f)\sub \CC$ and denote by $\Lambda>1$ the expansion factor of $\varrho$. In the following, we will consider tiles for $(f,\CC)$.
  
 Now suppose  that
  $x,y,z\in \CDach$ are points with 
 $\varrho(x,z)\le \varrho(x,y)$. 
 Define $n=m_{f,\CC}(x,y)$ and $l=m_{f,\CC}(x,z)$, where $m_{f,\CC}$ is as in Definition~\ref{def:mxy}.  Then 
 $$ \Lambda^{-l}\asymp \varrho(x,z)\le \varrho(x,y)\asymp  \Lambda^{-n}.$$
 Hence there exists a constant $k_0\in \N_0$  independent of 
 $x,y,z$ such that $n\le l+k_0$. 
  Now we pick an $n$-tile $X^n$, an $l$-tile  $X^l$, and an $(l+k_0)$-tile $X^{n+k_0}$ that all contain $x$. 
 Then  
 \begin{align*}
   \sigma(x,z) & \asymp \diam_\sigma(X^{l}) &&\text{ by Proposition~\ref{prop:chordalm} \ref{item:sxydiamX},} 
   \\
   & \asymp \diam_\sigma(X^{l+k_0})
   &&\text{ by 
   Proposition~\ref{prop:chordalm}~\ref{item:diamXkXkm1} 
   and~\ref{item:diamXkXkm2},}
   \\
   & \lesssim \diam_\sigma(X^n) \asymp \sigma(x,y) 
   &&\text{ by Proposition~\ref{prop:chordalm}~\ref{item:diamXkXkm1} and \ref{item:sxydiamX}.} 
 \end{align*}
In the previous estimates, 
all implicit  constants  can be chosen  independently  of $x,y,z$. Hence  the map 
 $\id_{\CDach}\colon (\CDach, \varrho)\ra (\CDach, \sigma)$ is indeed weakly
 quasisymmetric. The statement follows.
\end{proof}

We are now ready to prove  the second  part of Theorem~\ref{thm:S2vsf}.

\begin{proof}[Proof of Theorem~\ref{thm:S2vsf}~\ref{item:S2qsphere}]
  Let   $f\: S^2\ra S^2$ be an expanding Thurston map, and $\varrho$ be a visual metric for $f$. 
  
Suppose first that $f$ is
  topologically conjugate to a rational map.  Obviously, this 
    rational map is then 
  itself an expanding  Thurston map. Moreover, by 
Proposition~\ref{prop:conjisom} the conjugating homeomorphism  
 is  a snowflake equivalence with respect to visual
  metrics, and in particular a quasisymmetry.    
 This implies that in order to show that  the visual sphere 
 of $f$ is a quasisphere, we may actually assume that $f$ itself is a
rational Thurston map  that is expanding. 

Then $f\: \CDach\ra \CDach$ has no periodic critical points by Proposition~\ref{prop:rationalexpch}. So by Lemma~\ref{lem:sigqsvis} the visual metric $\varrho$ is quasisymmetrically equivalent to the chordal metric  $\sigma$. Hence the identity map $\id_{\CDach}\: (\CDach, \varrho)\ra (\CDach, \sigma)$ is a quasisymmetry, and
so  $(\CDach, \varrho)$ is a quasisphere. 
 This proves the first implication of the theorem. 




For the converse direction suppose that $f\: S^2\ra S^2$ is an
expanding Thurston map, $\varrho$ is a visual metric for $f$ on $S^2$, and
that there exists a quasisymmetry $h\colon (S^2, \varrho)\ra  (\CDach,
\sigma)$. Since all visual metrics are snowflake and hence also
quasisymmetrically equivalent, we may also assume that $\varrho$ is a visual
metric for $f$ satisfying \eqref{simmetric} in
Theorem~\ref{thm:visexpfactors1}.   
 
 The map  $h^{-1}$ is also a quasisymmetry; so  $h$ and $h^{-1}$  are $\eta$-quasi\-symmetric for some distortion function $\eta$. 
 We consider the conjugate $g=h\circ f\circ h^{-1}\colon \CDach \to \CDach$  of $f$ by $h$.

We claim that the family of iterates $\{g^n\:n\in \N\}$ is 
uniformly quasiregular,\index{quasiregular!uniformly}\index{uniformly!quasiregular} 
i.e., 
each map $g^n$ is   $K$-quasiregular with $K$ independent of $n$ (the definition of a quasiregular map was given near the end of Section~\ref{sec:QCgeom}).   This is true, because with the metric $\varrho$ satisfying \eqref{simmetric}, the map
    $f$ is locally ``conformal'', and  so  the dilatation of $g^n=h\circ f^n\circ h^{-1}$ can be bounded  by the dilatations of $h$ and $h^{-1}$,  and hence by a constant  independent of $n$.
    
  To be more precise, let $n\in \N$, $u\in \CDach$,  and for small $\eps>0$ consider points $v,w\in\CDach$ with $\sigma(u,v)=\sigma(u,w)=\epsilon$.  Define $x=h^{-1}(u)$, $y=h^{-1}(v)$,  $z=h^{-1}(w)$. By Theorem~\ref{thm:visexpfactors1}~\ref{item:visexpfactors2} we have that if $\epsilon>0$ is sufficiently small (depending on $u$ and $n$), then    
  \begin{align*}
    \frac{\sigma(g^n (u), g^n( v))}{\sigma(g^n(u), g^n(w))}
    &=
      \frac{\sigma(h(f^n (x)) , h(f^n (y)))}{\sigma(h(f^n (x)),
      h(f^n (z)))}
    \\
    & \leq \eta\left( \frac{\varrho(f^n (x),  f^n (y))}{\varrho(f^n(x) , f^n (z))}\right)
      =  \eta\left(\frac{\varrho(x,y)}{\varrho(x,z)}\right)
    \\
    & = \eta\left( \frac{\varrho(h^{-1}(u), h^{-1} (v))}{\varrho(h^{-1}(u),h^{-1}( w))}\right)\le H\coloneqq \eta(\eta(1)).
  \end{align*}
Hence 
\begin{align*}
  H(g^n,u) &\coloneqq
    \limsup_{\epsilon\to 0} \max
    \biggl\{ \frac{\sigma(g^n(u), g^n(v))}  
    {\sigma(  g^n (u),  g^n (w))}: 
    v,w\in \CDach,\, \sigma(u,v)=\sigma(u,w)=\eps\biggr\} 
  \\
   & \leq H 
  \end{align*}
  for all $u\in \CDach$ and $n\in \N$.
  This inequality implies that $g^n$ is locally
  $H$-quasiconformal on the set  $\CDach\setminus \crit(g^n)$ (according to the so-called ``metric'' definition of quasiconformality; see \cite[Section~34]{Va}).
 
 In particular, this shows that $g^n|\CDach\setminus \crit(g^n)$ is $K$-quasiregular with $K=K(H)$ independent of $n$. Since the finite set $\crit(g^n)$ is removable for quasiregularity (see \cite[Section 7.1]{Ri}), we conclude that $g^n$ is $K$-quasiregular with $K$ independent of $n$.  
 
 So the family of iterates $\{g^n\: n\in \N\}$ of $g$   is 
 uniformly quasiregular. 
  This implies that 
 $g$ is topologically conjugate to a rational map (see 
 \cite[Theorem~21.5.2]{IM}). 
Hence $f$ is also topologically conjugate to a rational map.  
\end{proof}
  
%

\begin{proof} [Proof of Theorem~\ref{thm:main01}]
By Proposition~\ref{prop:rationalexpch} the map $f$ is an expanding Thurston map (and hence also every iterate of $f$). So by   Theorem~\ref{thm:main}  for each large $n\in \N$ there exists a Jordan curve $\CC\sub \CDach$ with $\post(f)\sub \CC$ that is $f^n$-invariant. Equipped with the chordal metric, every such curve is a quasicircle as follows from Theorem~\ref{thm:Cquasicircle} and Lemma~\ref{lem:sigqsvis} (note that the class of visual metrics for $f$ and for any iterate of $f$ are the same). Statements 
 \ref{item:frat1_qc_ex} and \ref{item:frat_invC_qc} follow. 
 
Suppose the curve $\CC$ is actually $f$-invariant, and consider cells for $(f,\CC)$. Then by Proposition~\ref{prop:arc} the  family of all edges 
consists of uniform quasiarcs if the underlying metric on $\CDach$ 
is a visual metric for $f$. Again by  Lemma~\ref{lem:sigqsvis} we can switch to the chordal metric $\sigma$ in this statement. 

Similarly, the 
family of the boundaries  of all tiles consists of uniform quasicircles for $\sigma$.  Now it is a standard fact that a closed Jordan region $X\sub \CDach$ 
bounded by a quasicircle $\partial X$ is a quasidisk.
More precisely, if $h\: \partial \D\ra \partial X$ is an $\eta$-quasisymmetry, 
then it can be extended to an $\tilde\eta$-quasisymmetry $H\:\overline \D
\ra X$. Here $\tilde\eta$ depends not only on $\eta$, but also on a lower bound for the diameter of $\CDach\setminus X$ (see \cite[Proposition~5.3~(ii)]{Bo2}). 
In particular, if we have a family of such Jordan regions whose boundaries form a family of uniform quasicircles, then the family will consist of uniform 
quasidisks, if there is a positive  uniform lower bound for the  diameters of the complements of the regions. Since $f$ is expanding, there are only finitely many  tiles not contained in hemispheres, and so we have such a uniform lower bound for the family of all tiles for $(f,\CC)$. Statement~\ref{item:frat_Markov} follows.
  \end{proof}
  
   \begin{ex}\label{ex:ratnotenf} The ``if'' part of  Theorem~\ref{thm:S2vsf}~\ref{item:S2qsphere} is not true if one only requires that $f$ is Thurston equivalent to a rational map. 
   Indeed, in  Example~\ref{ex:barycentric} we considered two Thurston maps $f_2$
   and 
 $\widetilde{f}_2$ on a triangular pillow identified with the Riemann sphere 
  $\CDach$. Here  $f_2\: \CDach \ra \CDach$
is  rational and not expanding (it has a critical fixed point), while 
 $\widetilde{f}_2\: \CDach \ra \CDach$ is  an expanding (non-rational) Thurston map.
 Both maps realize the barycentric subdivision rule and are hence Thurston equivalent (Proposition~\ref{prop:rulemapex}). So 
  $\widetilde{f}_2$ is Thurston equivalent to a rational map. 
  
 Now  $\widetilde{f}_2$ also has a critical fixed point. So if 
 ${\varrho}$
  is  a visual metric for $\widetilde{f}_2$, then  $(\CDach,{\varrho})$
  is not doubling by Theorem~\ref{thm:S2vsf}~\ref{item:S2doubling}. On the other hand, each quasisphere is doubling, because $(\CDach, \sigma)$ is doubling and this condition is invariant under quasisymmetries. Hence 
  $(\CDach,{\varrho})$ cannot be a quasisphere. \end{ex}

\ifthenelse{\boolean{singlechapter}}{

%


\chapter[Rational expanding Thurston maps]{Rational Thurston maps and Lebesgue measure}

\label{cha:rati-thurst-maps-1}

 In this chapter we consider \emph{rational} expanding
Thurston maps  $f$ on the Riemann sphere $\CDach$. We will 
 investigate various measures that are associated with $f$. 
This will allow us to complete  the proof of Theorem~\ref{thm:S2vsf} by providing the missing justification 
for  part \ref{item:S2Lattes} of this theorem (see the end of Section~\ref{sec:lyapunov-exponent-mu}).
Theorem~\ref{thm:abscontimpLattes} below will be crucial for the characterization of 
Latt\`es maps 
given in Chapter~\ref{cha:latt-maps-comb}.

We equip the Riemann sphere  $\CDach$ with (normalized) Lebesgue measure $\leb_{\CDach}$ 
given by 
\index{LAA@$\leb$}
\index{Lebesgue!measure}
\index{measure!Lebesgue}
\begin{equation*}
   d\leb_{\CDach}(z)= \frac{1}{\pi (1+|z|^2)^2}\, d\leb_{\C}(z).  
\end{equation*}
The normalization means  that $\leb_{\CDach}(\CDach)=1$.
Mostly, we will drop the subscript and simply write $\leb=\leb_{\CDach}$ if  no ambiguity can arise. Throughout this chapter we will use ``Polish notation'' and denote by $|z-w|$ the chordal distance $\sigma(z,w)$ of two points $z,w\in \CDach$. All metric  notions will refer to the chordal metric unless otherwise 
mentioned.

A first link to measure-theoretic dynamics and ergodic theory is provided by the following statement.

\begin{theorem}
  \label{thm:ergodic_for_f}
  \index{Thurston map!rational}
  \index{ergodic}
  \index{measure!ergodic}
  Let $f\colon \CDach\to \CDach$ be a rational expanding
  Thurston map. Then Lebesgue measure $\leb_{\CDach}$ is ergodic for $f$. 
\end{theorem}
Note that $\leb_{\CDach}$ is essentially never $f$-invariant, but  ergodicity is interpreted as for $f$-invariant measures (see the discussion in Section~\ref{sec:reviewmdyn}): if $A\sub \CDach$ is a Borel set with $f^{-1}(A)=A$, then  
$\leb_{\CDach}(A)=0$ or  
 $\leb_{\CDach}(A)=1$.

Theorem~\ref{thm:ergodic_for_f} is well known. We will present a proof in Section~\ref{sec:ergod-lebesg-meas}, where  we follow the argument from
\cite[Theorem~3.9]{McM} closely.

In our context one can actually find an $f$-invariant measure that is absolutely continuous with respect to Lebesgue measure.

\begin{theorem}
  \label{thm:ex_inv_abs_L}
  \index{laa@$\lambda_f$}
  \index{Thurston map!rational}
  \index{invariant!measure}
  \index{measure!invariant}    
  \index{f-invariant@$f$-invariant!measure}
  \index{measure!absolutely continuous}
  \index{absolutely continuous measure}
  \index{ergodic}
  \index{measure!ergodic}
  \index{Lebesgue!measure}
  \index{measure!Lebesgue}
  Let $f\colon \CDach\to \CDach$ be a rational expanding
  Thurston map. Then there exists a unique $f$-invariant   (Borel) probability measure $\lambda_f$ on $\CDach$ that is absolutely
  continuous with respect to  Lebesgue measure $\leb_{\CDach}$. This measure has the form $d\lambda_f=\rho\, d\leb_{\CDach}$, where $\rho$ is a positive continuous function on $\CDach\setminus \post(f)$. Moreover, the 
   measure $\lambda_f$ is ergodic for $f$. 
\end{theorem}

It immediately follows from the second part of this  statement that 
$\lambda_f$ and $\leb_{\CDach}$ are mutually absolutely continuous and are hence in the same {\em measure class}, i.e., the two measures have precisely the same (Borel) null-sets.  The points    in $\post(f)$  are  singularities for 
the Radon-Nikodym derivative $\rho=d\lambda_f/d\leb_{\CDach}$. One can 
describe the asymptotic behavior near these points explicitly (see Proposition~\ref{rem:asymbevrho}).  

Theorem~\ref{thm:ex_inv_abs_L}  is again a well-known statement. 
 The  first part  is actually true for more general rational  maps
(see, for example, \cite[Theorem~3]{GPS}). 
We will present the proof of Theorem~\ref{thm:ex_inv_abs_L}  in Section~\ref{sec:Ruelle}. There we will reinterpret the existence problem for the measure $\lambda_f$ as a fixed point problem for a certain operator, the {\em Ruelle} or {\em transfer operator} acting on the space of continuous functions on $\CDach\setminus 
\post(f)$. 

There is  another natural measure for $f$  that is in the
same measure class as $\leb_{\CDach}$, namely  the 
\emph{canonical orbifold measure}\index{canonical orbifold!measure}\index{measure!canonical orbifold}\index{orbifold!canonical measure}\index{OAA@$\Omega,\Omega_f$}\index{push-forward!of measure!by orbifold covering map} 
$\Omega=\Omega_f$ that is associated 
with the orbifold $\OC_f=(\CDach, \alpha_f)$ of  $f$. To quickly
review its definition (see \eqref{eq:defOm} in Section~\ref{sec:expratThmaps} for more details), let $\Theta\colon X \to \CDach$ be the universal orbifold covering map of 
$\OC_f$.  Here  $X=\D$ or $X=\C$ depending on whether $\OC_f$ is hyperbolic or  parabolic. Roughly speaking, $\Omega$ is then the ``local'' push-forward of
the natural measure $\leb_X$ on $X$, namely  hyperbolic area in the hyperbolic and 
 Euclidean area in the parabolic case. More precisely, $\Omega$ is the unique measure on $\CDach$ such that for the Jacobian $J_{\Theta, \leb_X, \Om}$ of $\Theta$ with respect to 
$\leb_X$ and $\Om$ we have $J_{\Theta, \leb_X, \Om}= 1$ on $X$ (see Section~\ref{sec:jacobian-mu} for a general discussion of Jacobians). 
In the hyperbolic case, $\Om$  is independent of the choice of $\Theta$ and hence unique, but in the parabolic case $\Om$ is only unique up to a positive multiplicative constant.
One can enforce uniqueness in the parabolic case  by the normalization  $\Om(\CDach)=1$. 

A  Latt\`{e}s map $f$ has  a parabolic orbifold $\mathcal{O}_f$. In this case,  the  normalized  measure $\Om=\Om_f$ is equal to the measure of maximal entropy and the measure in Theorem~\ref{thm:ex_inv_abs_L}. 
  
\begin{theorem}
  \label{thm:Lattes_can_orb_meas}
  \index{Latt\`{e}s map}
  \index{measure!of maximal entropy}
  \index{n@$\nu_f$}
  \index{parabolic!orbifold}
  \index{orbifold!parabolic}
  \index{Thurston map!parabolic}
  \index{Lebesgue!measure}
  \index{measure!Lebesgue}
  \index{LAA@$\leb$}
  Let  $f\colon \CDach\to \CDach$ be a Latt\`{e}s map. Suppose $\nu_f$ is its measure of maximal entropy, $\lambda_f$ the unique $f$-invariant probability measure that is absolutely 
  continuous with respect to $\leb_{\CDach}$, and $\Om_f$ the canonical orbifold measure of $\OC_f$ normalized such that $\Om_f(\CDach)=1$. Then 
   \begin{equation*}
    \nu_f = \lambda_f=\Om_f. 
  \end{equation*}
\end{theorem}

An immediate consequence of the previous theorem is that for a Latt\`{e}s map
the measure of maximal entropy $\nu_f$ is absolutely continuous with respect to 
$\leb_{\CDach}$. This property actually characterizes these maps among rational expanding Thurston maps.

\begin{theorem}
  \label{thm:abscontimpLattes} 
  \index{Latt\`{e}s map} 
  \index{measure!of maximal entropy}
  \index{n@$\nu_f$} 
  \index{parabolic!orbifold}
  \index{orbifold!parabolic} 
  \index{Thurston map!parabolic} 
  \index{Lebesgue!measure}
  \index{measure!Lebesgue}
  \index{LAA@$\leb$}
  Let $f\: \CDach \ra \CDach$ be a rational expanding Thurston
  map.  Then its measure of maximal entropy $\nu_f$ is
  absolutely continuous with respect to Lebesgue measure
  $\leb_{\CDach}$ if and only if $f$ is a Latt\`es map.
\end{theorem} 

A much stronger version of this theorem is actually true. 
Namely, according to a result by A.\ Zdunik \cite{Zd}  Latt\`es maps are characterized 
by this property  among {\em all} rational maps, and not only among  
rational expanding Thurston maps.

This chapter is organized as follows. In Section~\ref{sec:jacobian-mu} we review some general facts about Jacobians. 
Section~\ref{sec:ergod-lebesg-meas} is devoted to the proof of Theorem~\ref{thm:ergodic_for_f} while
Theorems~\ref{thm:ex_inv_abs_L} and~\ref{thm:Lattes_can_orb_meas}
are established in Section~\ref{sec:Ruelle}. 
None of this material is new. We included it to make our presentation more self-contained. 
In
Section~\ref{sec:lyapunov-exponent-mu} we give a characterization of Latt\`es maps (see Theorem~\ref{thm:Zdunik}) that will lead to the proofs of 
Theorem~\ref{thm:abscontimpLattes} and  
Theorem~\ref{thm:S2vsf}~\ref{item:S2Lattes}.

\section{The Jacobian of a measurable  map}
\label{sec:jacobian-mu}

In this section we discuss some general facts about
Jacobians. We are mostly interested in Jacobians of holomorphic
maps on the Riemann sphere, but we will discuss the subject in greater generality.
For more background see
\cite[Section~2.9]{PU}.
 
Recall that a {\em measure space} is a triple
$(X,\mathcal{F}, \mu)$  consisting of  a set $X$, a $\sigma$-algebra
$\mathcal{F}$ on $X$, and a measure $\mu$ defined on the sets in
$\mathcal{F}$. Let $(X',\mathcal{F}', \mu')$ be another measure
space, and $T\: X\ra X'$ be a measurable map, i.e., $T$
satisfies $T^{-1}(A)\in \mathcal{F}$ whenever
$A\in \mathcal{F}'$.  In this context we call a set $A\sub X$
{\em admissible} if $A\in \mathcal{F}$, $T(A)\in \mathcal{F}'$,
and $T$ is injective on $A$.  Then a measurable function
$J_T\colon X\to [0,\infty]$ is called a
\emph{Jacobian}\index{Jacobian, $J_f$}
of $T$, if for each admissible
set $A\sub X$ we have
  \begin{equation*}
    \mu'(T(A))= \int_A J_T\,d\mu.
  \end{equation*}

We will sometimes write $J_T=J_{T,\mu,\mu'}$ if we 
also want to mention   the measures involved. Often 
our measure spaces will be identical, i.e., $(X,\mathcal{F},
\mu)=(X',\mathcal{F}', \mu')$. Then we  write
$J_T=J_\mu=J_{T,\mu}$ based  on which  dependence we want to
emphasize.

If $A\sub X$ is an admissible set, we can define a measure $\nu_A$ on $A$ given as 
$$\nu_A(M)=\mu'(T(M)), \text{ whenever $M\sub A$  is admissible.}$$
A   sufficient condition for the existence of a 
Jacobian $J_T$ is that there exists a countable partition  of $X$ into admissible sets
such that for each  set $A\sub X$ in the partition  
 the measure $\nu_A$ 
   is absolutely
continuous with respect to $\mu$. In this case,  $\nu_A$ is absolutely continuous 
with respect to $\mu$ for each admissible set $A\sub X$. Moreover,  
$J_T$ is uniquely determined 
 $\mu$-almost everywhere on $X$, because on each admissible set   
  $A$ it is equal to the  
Radon-Nikodym derivative $d\nu_A/d\mu$.

If $J_T$ is a Jacobian of $T$,  $A\sub X$ is an admissible set,   and
$\rho'\colon X'\to [0,\infty]$  a non-negative measurable function, then 
 \begin{equation}
  \label{eq:trafo_J}
  \int_{T(A)} \rho' \,d\mu'
  = 
  \int_{A} (\rho'\circ T) \cdot J_T\,d\mu. 
\end{equation}
A similar relation holds for all integrable functions $\rho'\in L^1(\mu')$.

Let $(X'', \mathcal{F}'', \mu'')$ be a third measure space, and
$S\colon X'\to X''$ be a measurable map. Then a chain rule for Jacobians is valid: 
if the Jacobians $J_T$ and $J_S$ exist,  
then a Jacobian of $S\circ T$ is given by  
\begin{equation*}
  J_{S\circ T}= (J_S\circ T)\cdot J_T. 
\end{equation*}

We will only  be  interested in the cases where the spaces 
 are open subsets of $\D,\C$, or $\CDach$ equipped 
 with their Borel $\sigma$-algebras,  and the measures are 
 absolutely continuous with respect to the corresponding  Lebesgue measures. In addition, 
$T=f$ will be a holomorphic
map. Then the existence of  Jacobians  follows
from the transformation formula for integrals. 

For example, let  $X=X'=\CDach$ and  $\leb=\leb_{\CDach}$ be  normalized Lebesgue measure on $\CDach$. Suppose $\mu$ and $\mu'$ are Borel measures on $\CDach$ that are absolutely continuous  with respect to $\leb$. Then 
 $d\mu= \kappa\, d\leb$ and $d\mu'= \kappa'\,d\leb$ for some non-negative 
 measurable functions 
 $\kappa$ and $\kappa'$ on $\CDach$. Let us assume in addition that $\kappa$ is positive 
  $\leb$-almost everywhere on $\CDach$. Then if $f\:\CDach\ra \CDach $ is a 
  rational  function, we have
\begin{equation}
  \label{eq:formula_J}
  J_{f,\mu, \mu'}(z)
  = 
  \frac{\kappa'(f(z))}{\kappa(z)}f^\sharp(z)^2
\end{equation}
for $\leb$-almost every $z\in \C$, where  
\index{spherical!derivative}
\index{$f^{\sharp}$}
\begin{equation*}
  f^\sharp(z)= \frac{1+|z|^2}{1+|f(z)|^2}|f'(z)|
\end{equation*}
is the spherical derivative of $f$. In particular, if $\mu=\mu'=\leb$, then 
\begin{equation}
  \label{eq:formula_J2}
  J_{f}=J_{f,\leb}=(f^\sharp)^2.
 \end{equation}

Now suppose in addition  that $f\colon \CDach \to \CDach$ is a rational Thurston map,  and  that $\mu=\mu'$ is an $f$-invariant
probability measure on $\CDach$.  We pick a 
 Jordan curve $\CC\subset \CDach$ with
$\post(f)\subset\CC$ such that $\leb(\CC)=\mu(\CC)=0$,  and
consider tiles for $(f,\CC)$. 
Since $f^n$ is a rational map, 
it preserves 
sets of $\leb$-measure zero. Hence $\leb(f^{-n}(\CC))=0$, which implies that $\mu(f^{-n}(\CC))=0$ for all $n\in \N_0$, because by our assumptions 
$\mu$ is absolutely continuous with respect to $\leb$. It follows that  all edges and hence all boundaries of tiles for $(f,\CC)$ are sets of $\mu$-measure zero. 

For each  $1$-tile $X\in
\X^1$ the map $f|X$ is a homeomorphism onto its image.   This shows that $1$-tiles are admissible and it follows that 
\begin{align}
  \label{eq:Jdmu}
  \int_{\CDach}  J_{f,\mu}\,d\mu&=\sum_{X\in\X^1}\int_X J_{f,\mu}\, d\mu= \sum_{X\in \X^1} \mu(f(X))\\
  &= \deg(f) (\mu(X^0_{\tt  b})+ \mu(X^0_{\tt  w})) = \deg(f). \notag
\end{align}

The measure-theoretic entropy $h_\mu(f) $ of $\mu$ for the given map $f$
can be
expressed using the Jacobian as 
\begin{equation}
  \label{eq:Rohlinsform}
  h_\mu(f)  = \int _{\CDach} \log (J_{f,\mu}) \,d\mu. 
\end{equation}
This is known as 
\emph{Rokhlin's formula}\index{Rokhlin's formula}\index{entropy!measure-theoretic}\index{measure-theoretic entropy}\index{h mu@$h_\mu$}
(see, for example, \cite[Theorem~2.9.7]{PU}).

If we combine \eqref{eq:Jdmu} and  \eqref{eq:Rohlinsform} with 
Jensen's inequality,\index{Jensen's inequality}
we recover the inequality  
$$ h_\mu(f) = \int_{\CDach} \log (J_{f,\mu}) \,d\mu\le \log \biggl (
 \int_{\CDach} J_{f,\mu} \,d\mu\biggr) =\log (\deg(f))$$
 that we derived in Chapter~\ref{cha:measure} for an arbitrary expanding Thurston map $f$.

\section{Ergodicity of Lebesgue measure}
\label{sec:ergod-lebesg-meas}

In this section we will prove Theorem~\ref{thm:ergodic_for_f}. 
To prepare the proof, we 
fix a rational expanding Thurston map $f\: \CDach \ra \CDach$
and define $\V^\infty= \bigcup_{n\in \N_0}f^{-n}(\post(f))$.
We require a lemma. 

\begin{lemma}
  \label{lem:points_away_post}
  There exists a neighborhood $U\subset \CDach$ of $\post(f)$ with the following property:
 if $z_0\in \CDach \setminus \V^\infty$ is arbitrary and if we define $z_n=f^n(z_0)$ for $n\in \N_0$, then there exists a subsequence $\{ z_{n_k}\} $ of $\{z_n\}$ such that  
  $z_{n_k}\in \CDach \setminus U$ for all $k\in \N$.
\end{lemma}

\begin{proof}
For the proof it  is convenient to use a visual metric $\varrho$ for $f$ as provided by
  Theorem~\ref{thm:visexpfactors1}~\ref{item:visexpfactors2}. Let $\Lambda>1$ be 
the  expansion factor of $\varrho$. If we pick   $k_0\in
  \N$ sufficiently large, then for each  $p\in \post(f)$ the ball
  $U_p\coloneqq B_{\varrho}(p, \Lambda^{-k_0})$ satisfies
\eqref{simmetric}, i.e.,
\begin{equation*}
  \varrho(f(x), f(p)) = \Lambda\varrho(x,p) 
  \text{ for all } x\in U_p,
 \end{equation*}
  and we also have
 \begin{equation*}\varrho(p,q) \geq 2\Lambda^{-k_0 + 1}
  \text{ for distinct points } p,q\in \post(f). 
\end{equation*}
In particular, the balls $U_p$ and $U_q$ are disjoint for
distinct $p,q\in \post(f)$. Now define 
$U= \bigcup_{p\in\post(f)}U_p$. This is an open neighborhood of the set $\post(f)$.

To see that $U$  has the property as in the statement,  
let $z_0\in\CDach \setminus
\V^{\infty}$ be arbitrary and consider  $z_n=f^n(z_0)$ for some $n\in
\N$. It suffices to show that  there exists  $m\geq n$ such that
$z_m=f^m(z_0)\in \CDach\setminus U$. 

If $z_n\in \CDach \setminus U$ we
are done. So assume  $z_n \in U$; then $z_n\in U_p$ for a point 
$p\in \post(f)$. Since $z_0\notin \V^\infty$, we have $z_n\notin \post(f)$ and so $z_n\ne p$. On the other hand, $z_n\in U_p=B_{\varrho}(p, \Lambda^{-k_0})$, and so there exists 
a number $k\in \N$, $k\ge k_0$ such that 
$\Lambda^{-k-1} \leq \varrho(z_n,p) < \Lambda^{-k}$. We now consider two cases.  

\smallskip
{\emph{Case 1:}} We have $k=k_0$ and so 
$\Lambda^{-k_0 -1} \leq \varrho(z_n,p) < \Lambda^{-k_0}$. Then $ \Lambda^{-k_0} \leq \varrho(f(z_n), f(p)) < \Lambda^{-k_0+1}$. 
So if  $q=f(p)$, then $q\in  \post(f)$ and $z_{n+1}= f(z_n)\notin U_q$. Moreover,
 for each  $p'\in \post(f)\setminus \{q\}$
we have 
\begin{equation*}
  \varrho(z_{n+1}, p') 
  \geq 
  \varrho(p', q) - \varrho(z_{n+1}, q) 
  \geq
  2\Lambda^{-k_0 +1} - \Lambda^{-k_0 +1} 
  \geq 
  \Lambda^{-k_0},
\end{equation*}
and so  $z_{n+1} \notin U_{p'}$. Thus $z_{n+1}\notin U$ as
desired.

\smallskip
{\emph{Case 2:}}
 $ \Lambda^{-k -1} \leq \varrho(z_n,p) < \Lambda^{-k}$ for some
 $k> k_0$. In this case,
$z_{n+k-k_0}=f^{k-k_0}(z_n)$ satisfies the
assumptions of Case~1 for the point 
$p'=f^{k-k_0}(p)\in \post(f)$, and so   $z_{n+k-k_0 +1} \in \CDach \setminus U$
as desired.
\end{proof}

In the proof of 
Theorem~\ref{thm:ergodic_for_f} we need 
the  
\emph{Lebesgue density theorem}.\index{Lebesgue!density theorem}\index{Lebesgue!measure}\index{measure!Lebesgue}\index{LAA@$\leb$}
For Lebesgue measure $\leb$ on $\CDach$ this theorem says that if $A\subset \CDach$ is a Borel
set, then $\leb$-almost every point $z_0\in A$
is a \emph{(Lebesgue) density point} of $A$, meaning that
\begin{equation}
  \label{eq:L_density}
  \lim_{\epsilon\to 0^+} 
  \frac{\leb(B(z_0,
    \epsilon)\cap A)}{\leb(B(z_0,\epsilon))} 
  =1. 
\end{equation}
Here and in the following balls are defined  with respect to the chordal metric on 
$\CDach$.  We need a variant  of \eqref{eq:L_density} where we allow 
``roundish'' sets with controlled ``eccentricity'' instead of metric balls.  

\begin{lemma}
  \label{lem:L_density_qballs}
  Let $A\subset \CDach$ be a Borel set and $z_0\in A$ be a
   density point of $A$. 
   Suppose that $V_n\sub \CDach$ is a Borel set for  $n\in \N$ such
   that  
  \begin{equation*}
    B(z_0, r_n/K) \subset V_n \subset B(z_0, r_n),
  \end{equation*}
  where $K\ge 1$, $r_n>0$,  and $r_n\to 0$ as $n\to \infty$.
  Then
  \begin{equation*}
    \lim_{n\to \infty} \frac{\leb(V_n\cap A)}{\leb(V_n)} =1. 
  \end{equation*}
\end{lemma}

\begin{proof}
  Note that \eqref{eq:L_density} is equivalent to
  \begin{equation*}
    \frac{\leb(B(z_0, \epsilon) \setminus A)}{\leb(B(z_0,
      \epsilon))} \to 0
    \text{ as } \epsilon\to 0^+.
  \end{equation*}
  Suppose the sets $V_n$, $n\in \N$, are as  in the statement. 
  Define  $B_n= B(z_0, r_n)$ and $B'_n=B(z_n, r_n/K)$ for $n\in \N$.  Then
  \begin{equation*}
    \leb(V_n\setminus A) \leq \leb(B_n\setminus A)
  \end{equation*}
 and   \begin{equation*}
    \leb(V_n) \geq \leb(B'_n) \gtrsim  \leb(B_n),
  \end{equation*}
  where $C(\gtrsim)$ is independent of $n$. Hence 
  \begin{equation*}
    \frac{\leb(V_n\setminus A)}{\leb(V_n)} \lesssim 
    \frac{\leb(B_n\setminus A)}{\leb(B_n)} \to 0
  \end{equation*}
  as $n\to \infty$, and the statement  follows. 
\end{proof}
After these preparations we are ready to prove the main result of this section. 

\begin{proof}[Proof of Theorem~\ref{thm:ergodic_for_f}]
Suppose  $A\subset \CDach$ is 
  a Borel set with $f^{-1}(A) =A$ and   $\leb(A)>0$.  We have to show that $A$ has full measure, i.e.,   $\leb(A)=1$. 
  Since $f$ is surjective, we also have $f(A)=A$, and so $A$ is {\em fully invariant} under 
$f$ in the sense that $f^{-1}(A) =A=f(A)$. 

Before we delve into the details, let  us give an outline of the argument.
By the  Lebesgue density theorem, we can find  a sequence of small
balls where $A$ has  density approaching $1$. We then use the dynamics and  blow up these balls by iterates of $f$ to find a sequence of balls of fixed  size with this property. Passing to a subsequential limit, we    obtain a ball $B$ where 
$A$ has density $1$. Since
$f$ is eventually onto and $A$ is fully invariant,  we conclude that 
$A$ must have  full measure. We now present the  details. 

\smallskip
{\em Claim 1.} There exists $\delta>0$,  and  balls
  $B_k=B(w_k,\delta) \sub \CDach$ for $k\in \N$ such that 
$\leb(B_k\setminus A) \to 0$   as  $k\to \infty$.

\smallskip 
The main point here is that the balls $B_k$ have a fixed radius.
To prove the claim, first note that   $\V^\infty= \bigcup_{n\in \N_0}f^{-n}(\post(f))$ is a countable set, $A$ has positive measure, and so we can find a  Lebesgue density point 
  $z_0\in A\setminus \V^\infty$ of $A$.
    Now let $U\subset \CDach$ be a neighborhood of $\post(f)$ as
  provided by  Lemma~\ref{lem:points_away_post}. Then there is a
  subsequence $\{z_{n_k}\}$ of the sequence $\{z_n\}$ given by  $z_n=
  f^n(z_0)$ for $n\in \N_0$ such that $z_{n_k} \in \CDach \setminus U$ for all
  $k\in \N$. 

  Let $\delta_0\coloneqq  \dist(\post(f), \CDach\setminus U)>0$. Then the disk 
  $B_k'\coloneqq B(z_{n_k}, \delta_0)$ is a simply
  connected region  contained in $\CDach \setminus \post(f)$. In particular, each iterate is a covering map over $B_k'$ and so there exists a conformal map  $g_k$ on $B_k'$
  that is an inverse branch 
  of $f^{-n_k}$
   and sends  $z_{n_k}$ to
  $z_0$. 
  
  Let $B_k\coloneqq  B(z_{n_k}, \delta_0/2)$ and $V_k\coloneqq 
  g_k(B_k)$. 
We note that the diameters of the sets $V_k'\coloneqq g_k(B'_k)
\supset V_k$ tend
to $0$  as $k\to
  \infty$. The quickest way to see this is to use the canonical orbifold
  metric $\omega$ of $f$. Namely, it follows from 
  Proposition~\ref{lem:R_orbimetric} that there  is a constant
  $\rho>1$ such that
  \begin{equation*}
    \diam_\omega(V_k') \lesssim \rho^{-n_k}
  \end{equation*}
  for  $k\in \N$ with $C(\lesssim)$ independent of $k$. 
  Since $\omega$ induces the standard topology 
  on $\CDach$, 
it follows that for the chordal metric we have $\diam_\sigma(V'_k) \to 0$ (and hence also  $\diam_\sigma(V_k) \to 0$) as $k\to \infty$.

 Koebe's  distortion theorem 
(see \eqref{eq:Koebe14} in Theorem~\ref{thm:Koebe}; note that $V'_k=g_k(B'_k)$ is contained in a hemisphere of $\CDach$ for large $k$)
 implies that  the sets $V_k$ satisfy the assumption of
  Lemma~\ref{lem:L_density_qballs} for 
the density  point $z_0=g_k(z_{n_k})\in V_k$ 
of $A$. Thus
  \begin{equation*}
    \frac{\leb(V_k\cap A)}{\leb(V_k)} \to 1
  \end{equation*}
  as $k\to \infty$. 
  Since $A$ is $f$-invariant, it again follows from Koebe's
  distortion theorem (more precisely, we apply  \eqref{eq:Koebe_dg} in Theorem~\ref{thm:Koebe}) that  
   \begin{align*}
    \leb(V_k \setminus A)
    &\asymp 
    \leb(B_k\setminus A) J_{g_k}(z_{n_k}) 
    \asymp
    \leb(B_k\setminus A)\frac{ \leb(V_k)}{\leb(B_k)}
     \asymp 
    \leb(B_k\setminus A) \leb(V_k).
  \end{align*}
  Here $J_{g_k}=(g_k^\sharp)^2$ is the Jacobian of $g_k$ with respect to $\leb$ (see \eqref{eq:formula_J2}) and the 
  constants $C(\asymp)$ are independent of $k$. Hence
  \begin{align*}
  \leb(B_k\setminus A) \asymp \frac{\leb(V_k \setminus A)}{\leb(V_k)}\to 0
  \end{align*}
  as $k\to \infty$.  Claim~1  follows (with $w_k=z_{n_k}$ and $\delta=\delta_0/2$).

  \smallskip
 {\em Claim 2.} There exists an open  ball $B\sub \CDach$ with
  $\leb(B\setminus A) = 0. $

 \smallskip 
  Indeed,  if $B_k=B(w_k, \delta)$, $k\in \N$, 
is a sequence of balls as in 
  Claim~1, 
then by compactness  $\{w_k\}$  has  a convergent subsequence 
  which we 
  still denote by $\{w_k\}$ for convenience. Let $w_\infty\coloneqq 
  \lim_{k\to \infty} w_{k}$ and $B\coloneqq  B(w_\infty,
  \delta/2)$. 
  
Then  $B\subset B_k$ for  sufficiently large $k$ and so 
  $\leb(B\setminus A) \leq \leb(B_k\setminus A)$.
    Since the right hand side tends  to $0$ as $k\to \infty$, we conclude 
  that $\leb(B \setminus A) = 0$. Claim~2 is proved.

\smallskip
  We know (see Lemma~\ref{lem:event_onto})  that $f$
  is eventually onto; this implies that there exists  $n\in \N$ such that
  $f^n(B) = \CDach$ if  $B$ is a ball as in the previous claim. Since $A$ is completely $f$-invariant, we then have  
  $\CDach \setminus A=f^n(B\setminus A)$. Now $f^n$, as a rational map, preserves sets of measure zero, and so $\leb(\CDach\setminus A)=0$. This implies that $\leb(A) =
 1$. Thus $\leb$  is ergodic for $f$.
\end{proof}

We record a quick consequence of Theorem~\ref{thm:ergodic_for_f}.

\begin{cor} \label{cor:ergod} Let $f\: \CDach\ra \CDach$ be a rational expanding Thurston map,  and $\mu$ an 
$f$-invariant probability measure that is absolutely continuous with respect to $\leb$.
Then $\mu$ is ergodic for $f$.
\end{cor}

\begin{proof} Let $A\sub \CDach$ be a Borel set with $f^{-1}(A)=A$ and $\mu(A)>0$. We have to show that $\mu(A)=1$.

Now $\mu(A)>0$ and so $\leb(A)>0$ since $\mu$ is absolutely continuous with respect to 
$\leb$. Hence $\leb(A)=1$ as follows from  Theorem~\ref{thm:ergodic_for_f}, or equivalently
 $\leb (\CDach\setminus 
A)= 0$. This implies $\mu(\CDach\setminus A)=0$ which gives  $\mu(A)=1$ as desired.  
\end{proof}

\section{The absolutely continuous  invariant measure}
\label{sec:Ruelle}
 
 In this section  $f\: \CDach\ra \CDach$ is again a rational
Thurston map. We assume that $f$ is expanding, or equivalently, that $f$ has  no periodic critical points (see  Proposition~\ref{prop:rationalexpch}). Our goal is to 
construct an $f$-invariant probability measure $\lambda=\lambda_f$ on
$\CDach$ that is absolutely continuous with respect to (normalized) Lebesgue
measure $\leb=\leb_{\CDach}$ on $\CDach$.

In order to motivate our approach, let us consider 
a finite  (Borel)  measure $\mu$ on
$\CDach$ that is absolutely continuous with respect to $\leb$. 
Then $d\mu=\rho \,d\leb$ for a suitable  integrable  function $\rho$ on $\CDach$. As we will see momentarily, then 
$f_*\mu$ is also absolutely continuous with respect to $\leb$ and
hence can be written in the form $d(f_*\mu)=\mathcal{R}(\rho)\, d \leb$, where
$\mathcal{R}(\rho)$ is  integrable.  The   requirement that $\mu$ is $f$-invariant, i.e., the relation $f_*\mu=\mu$, then translates into the condition  $ \mathcal{R}(\rho)=\rho$. 

Let us  assume that $\rho$ has some additional regularity, namely, that it is a non-negative  continuous function outside the  finite subset 
$\post(f)$ of $\CDach$. 
If $p$ is a point in $\CDach\setminus \post(f)$, then $p$ is not
 a critical value of $f$ and so it has precisely $d=\deg(f)$ preimage points $q_1, \dots, q_d$. Near each of these points $q_j$ the map $f$ is a conformal map. In particular, there exists 
an open neighborhood $V$ of $p$, and pairwise disjoint  open neighborhoods
$U_j$ of $q_j$ such that $f|U_j$ is a conformal map  of $U_j$ onto 
$V$ for $j=1, \dots, d$. 

Let $g_j\coloneqq (f|U_j)^{-1}$. We will now consider Jacobians of these 
maps with $\leb$ as the underlying 
measure. Recall that if $h$ is a holomorphic map  defined on a subset  of $\CDach$, then  
$ J_h=J_{h,\leb} =(h^\sharp)^2, $
where $ h^\sharp$  denotes the spherical derivative (see  \eqref{eq:formula_J2}).  
For the Jacobians 
of $g_j$ and $f$  we have the 
relation 
\begin{equation}
  \label{eq:Jgj_Jf}
  J_{g_j}(w) = J_{f}(g_j(w))^{-1}
\end{equation}
for $w\in V$
which follows from the chain rule.  

Now consider a Borel set  $B\sub V$. Then its preimage under $f$ decomposes into the Borel sets $A_j=g_j(B)$, $j=1, \dots, d$.
 It follows that 
\begin{align}\label{eq:abscontcomp}
 f_*\mu(B)
  &= \mu (f^{-1}(B))= \sum_{j=1}^d \mu (A_j) 
   =\sum_{j=1}^d \int_{A_j}\rho\, d\leb\\
 & =\sum_{j=1}^d \int_{B} (\rho \circ g_j)\cdot  J_{g_j}\, d\leb \nonumber \\
&=  \int_{B} \sum_{j=1}^d  \rho( g_j(w)) \cdot J_{f}( g_j(w))^{-1}\, d\leb
(w) \nonumber \\&
=\int_{B} \sum_{z\in f^{-1}(w)}\rho(z)J_f(z)^{-1}\, d\leb(w) .\nonumber 
\end{align}
The equality between the first and the last terms in this identity actually remains valid for arbitrary Borel sets $B\sub \CDach\setminus
 \post(f)$, because we  can split each such set $B$ into countably many disjoint pieces such that each of these pieces lies  in a suitable 
set $V$ as chosen above. 

It follows that $f_*\mu$ is also absolutely continuous with respect to 
$\leb$ and has a Radon-Nikodym derivative equal to 
\begin{equation}\label{eq:Ruelle0}
\mathcal {R}  (\rho)(w)\coloneqq \sum_{z\in f^{-1}(w)}\rho(z)J_f(z)^{-1}
\end{equation} 
for  $w\in \CDach\setminus \post(f)$. Note that if $w\in \CDach\setminus  \post(f)$ and $z\in f^{-1}(w)$, then $z\in \CDach\setminus (\crit(f)\cup \post(f))$.
In particular,   $J_f=(f^\#)^2$ is a non-zero continuous function near such a point $z$. In \eqref{eq:abscontcomp} 
we saw that with the given notation we have the  local representation 
\begin{equation}\label{eq:localrepRuelle}
\mathcal {R}  (\rho)(w)=\sum_{j=1}^d  \rho( g_j(w)) \cdot J_{f}( g_j(w))^{-1}.
\end{equation}
Hence $\mathcal {R}  (\rho)$ is a non-negative continuous function on $\CDach\setminus  \post(f)$. In particular,
$f_*\mu=\mu$ if and only if $\mathcal{R}(\rho)=\rho$ on 
$\CDach\setminus \post(f)$. 

The idea for the construction of our desired measure $\lambda$ is to turn this consideration around. In the following, we use the notation $$\CDach_P=\CDach\setminus \post(f)$$ for the 
Riemann sphere ``punctured'' at the points in $\post(f)$, and denote by  
$C(\CDach_P)$  the 
space of real-valued continuous functions 
on
$\CDach_P$. We  introduce an operator, 
the {\em Ruelle}
or {\em transfer operator}\index{Ruelle operator}\index{transfer operator}\index{RAA@$\mathcal{R}$}
$\mathcal{R}$ that maps  each function 
$\rho\in C(\CDach_P)$ to the function $\mathcal{R}(\rho)$ as defined in \eqref{eq:Ruelle0}.
  Finding our measure $\lambda$ then amounts to finding a suitable fixed point of this operator. This is a more tractable  problem.

We will only use  some very basic properties of the Ruelle
operator summarized in the next lemma. For a thorough treatment
in a more general setting see for example \cite[Chapter 5]{PU}.

\begin{lemma} The Ruelle operator has the following properties.
  \label{lem:Rprops}
   \begin{enumerate}
   
    \item      \label{item:Ruelle1} If $\rho \in C(\CDach_P)$, 
    then $\mathcal{R}(\rho) \in C(\CDach_P)$. If, in addition,
     $\rho>0$ on $\CDach_P$, then $\mathcal{R}(\rho)>0$ on $\CDach_P$.
     
    \item      \label{item:Ruelle1a}   
 If $\rho \in 
  C(\CDach_P)\cap   L^1(\CDach)$, then $\mathcal{R}(\rho) \in
   C(\CDach_P)\cap   L^1(\CDach)$ and 
     \begin{equation*}
      \int_{\CDach} \rho \,d\leb
      = 
      \int_{\CDach} \mathcal{R}(\rho)\,d\leb. 
    \end{equation*}
    
       \item      \label{item:Ruelle1b} The operator $\mathcal{R}\: C(\CDach_P) \ra 
       C(\CDach_P)$ is linear,  and continuous in the following sense:  if $\rho$ and $\rho_n$ for $n\in \N$ are functions in $C(\CDach_P)$ such that $\rho_n\to \rho$ locally uniformly on 
       $\CDach_P$, then $\mathcal{R} (\rho_n)\to \mathcal{R} (\rho)$ locally uniformly on $\CDach_P$. 
  
  \item 
    \label{item:Ruelle2}
    If $\mu$ is a Borel measure on $\CDach$  of the form $d\mu =
    \rho\, d\leb$ 
    with 
    a function
   $\rho\in C(\CDach_P) \cap L^1(\CDach), $
  then  $d(f_*\mu )=\mathcal{R}(\rho)\, d\leb$. 
  %
  In particular, $\mu$ is 
    $f$-invariant if and only if $\mathcal{R}(\rho)=\rho$. 
  \end{enumerate}
\end{lemma}
Here $L^1(\CDach)$ denotes 
the space of real-valued functions 
that are almost everywhere defined on $\CDach$ and are integrable  
with respect to $\leb$. Note that for questions of integrability it is irrelevant whether a function is 
only defined on $\CDach_P$ or the whole Riemann sphere.

\begin{proof}  \ref{item:Ruelle1} If $\rho \in C(\CDach_P)$, 
then  $ \mathcal{R} (\rho) \in C(\CDach_P)$ as follows from the local representation 
of the function $ \mathcal{R} (\rho)$  in \eqref{eq:localrepRuelle}. Note that here it is important that if $w\in 
 \CDach_P$, then near each point $z\in f^{-1}(w)$, the function $J_f=(f^\#)^2$ does not vanish and is continuous. 
 If $\rho$ is positive, then it is clear that $\mathcal{R} (\rho)$ is also positive. 
 
 \smallskip 
  \ref{item:Ruelle1a}+\ref{item:Ruelle2} Let  $\rho\in C(\CDach_P)\cap L^1(\CDach)$, If, in addition,  
  $\rho\ge 0$, then   we consider the measure $\mu$ on $\CDach$ 
with  $d\mu=\rho\, d\leb$.  The computation in \eqref{eq:abscontcomp} shows that 
the measure $f_*\mu$ is absolutely continuous with respect to $\leb$ and $d(f_*\mu)=
 \mathcal{R} (\rho)\, d\leb$. In particular,   
 \begin{equation*}
   \int_{\CDach}  \mathcal{R} (\rho)\, d \leb
   =
   (f_*\mu)(\CDach)
   =
   \mu(f^{-1}(\CDach))
   =
   \mu(\CDach)
   =
   \int_{\CDach}  \rho\, d \leb. 
 \end{equation*}
 Thus $\mathcal{R} (\rho)\in 
 C(\CDach_P)\cap  L^1(\CDach)$.
 
  If  $\rho\in C(\CDach_P)\cap L^1(\CDach)$ is arbitrary, then we
  can split $\rho$ as $\rho=\rho_{+}- \rho_{-}$, where 
  $\rho_+, \rho_-\in C(\CDach_P)\cap L^1(\CDach)$, and 
  $\rho_+, \rho_-\ge0$. Obviously, $\mathcal{R} (\rho)
  =\mathcal{R} (\rho_+)-\mathcal{R} (\rho_-)$. 
 Statements \ref{item:Ruelle1a} and~\ref{item:Ruelle2} then 
 immediately follow.   
 
 \smallskip
 \ref{item:Ruelle1b} By what we have seen in \ref{item:Ruelle1}, we can consider the Ruelle operator as a map 
 $\mathcal{R} \:C(\CDach_P) \ra C(\CDach_P)$. It is clear that 
 $\mathcal{R}$ is a linear map.  If we equip  $C(\CDach_P)$  with the topology of locally uniform convergence,
 then  the continuity of $\mathcal{R}$ 
immediately follows from the local representation formula \eqref{eq:localrepRuelle}. Note that the inverse branches $g_j$ of $f$ 
 map a compact neighborhood $K\sub \CDach_P$ of the point $p\in \CDach_P$ near which these branches were defined again to compact subsets of  $\CDach_P$. 
\end{proof} 

Our goal now is to construct a suitable fixed point of $\mathcal{R}$. This is based on an iteration and averaging procedure. In order to be able to pass to  limits, we want to apply the Arzel\`a-Ascoli theorem for a certain subfamily  of $C(\CDach_P)$.
For this we will introduce a control function $M$ that will allow
us to establish  the desired equicontinuity and uniform boundedness properties. 

For the definition of $M$ we consider the functions $\rho_n=\mathcal{R}^n(1)$ for $n\in \N_0$. Here $1$ represents the function with constant value $1$ on $\CDach_P$, and $\mathcal{R}^n$ is the $n$-th iterate 
of $\mathcal{R}$ considered as an operator on $C(\CDach_P)$. We 
use the convention that $\mathcal{R}^0$ is the identity map on $C(\CDach_P)$ and so
$\rho_0= 1$. 

The definition of the Ruelle operator and the chain rule for Jacobians imply that 
$$ \rho_n(z)=\sum_{z'\in f^{-n}(z)}J_{f^n}(z')^{-1}$$ 
for $n\in \N_0$ and $z\in \CDach_P$.  
It  follows from Lemma~\ref{lem:Rprops}~\ref{item:Ruelle1} and \ref{item:Ruelle1a} that for $n\in \N_0$ the function $\rho_n$ is a  positive continuous function on $\CDach_P$ that is  normalized such that
\begin{equation}\label{eq:rhonnormal}
\int_{\CDach} \rho_n\, d \leb = \int_{\CDach} \mathcal{R}^n(1)\, d \leb
= \int_{\CDach} 1\, d \leb =1.\end{equation}

For  $z,w\in  \CDach_P$ we now define 
\begin{equation}\label{eq:defMzw}
  {M}(z,w)
  = 
  \sup_{n\in \N_0}\frac{\rho_n(z)}{\rho_n(w)}. 
\end{equation}
A priori it is not clear that $M$ is everywhere finite, but we will verify this momentarily. 
It immediately follows from the definition of $M$ that 
\begin{equation}\label{eq:Mcontrolsrhon} \rho_n(z)\le M(z,w)\rho_n(w)
\end{equation}
for all $n\in \N_0$ and $z,w\in \CDach_P$. This combined with the properties of $M$ as formulated in the next lemma will give us the desired control for the behavior of the functions 
$\rho_n$ (see the proof of Lemma~\ref{lem:fixed-point-widet}).

\begin{lemma} 
  \label{lem:defcontrolM}
  Let ${M}$ be defined as in \eqref{eq:defMzw}. Then the following statements are true:
  
  \begin{enumerate}
  \item  
    \label{item:defcontrolM0} 
    The function $M$ is continuous  on 
    $\CDach_P\times   \CDach_P$ and takes values in 
    $[1,\infty)$. 
    
  \item 
    \label{item:Mzw0_bounded}
    For each $w_0\in\CDach_P$ 
    the map $z\mapsto M(z,w_0) $  is integrable (with respect
    to $\leb$).   
  \end{enumerate}
\end{lemma}
  
\begin{proof}  \ref{item:defcontrolM0}
 It is clear that 
  ${M}(z,w) \ge \rho_0(z)/\rho_0(w)=1$ and so ${M}(z,w)\in [1,\infty]$ 
   for $z,w\in\CDach_P$. We also have $M(z,z)=1$
   for $z\in \CDach_P$.

Let $w\in\CDach_P$ be arbitrary. We first show that $M(z,w)\to 1$ as 
$z\to w$. To see this,  we fix   $\delta>0$ such that 
  $B'\coloneqq B(w, \delta)\sub \CDach_P $. Then for each   $n\in \N$ the map  $f^n$ is a covering map
  over $B'$. In particular, $f^n$ has $d^n$ inverse branches $g_1, \dots, g_{d^n}$, where 
  $d=\deg(f)$. Each map $g_j$ is a conformal map of $B'$ onto its image. As in the proof 
  of Theorem~\ref{thm:ergodic_for_f}, one sees that the diameters of these images tend
  to $0$ as $n\to \infty$. So with 
at most finitely many exceptions, 
these inverse branches of  iterates $f^n$, $n\in \N$,  send $B'$ to a set contained in a hemisphere. 
  So we can apply  Koebe's distortion theorem (see \eqref{eq:Koebe_dg} in Theorem~\ref{thm:Koebe} and the discussion after the proof of Theorem~\ref{thm:Koebe}) and conclude that for $z\in B\coloneqq B(w, \frac12 \delta)$, we have 
  \begin{equation}\label{eq:Koebebr}
   \frac{J_{g_j}(z)}{J_{g_j}(w)}= \frac{g_j^\sharp(z)^{2}}
   {g_j^\sharp(w)^{2}}\asymp1,
   \end{equation}
   where $C(\asymp)$ is independent of $n$ and the choice of the inverse branch $g_j$. 
   In addition, 
  $C(\asymp) \to 1$ as $z\to w$. 
  Based on  \eqref{eq:Jgj_Jf}, it follows that 
  \begin{align*}
  \rho_n(z)&=\sum_{z'\in f^{-n}(z)}J_{f^n}(z')^{-1}=\sum_{j=1}^{d^n}
 J_{f^n}(g_j(z))^{-1}= \sum_{j=1}^{d^n} J_{g_j}(z)\\
  &\le C(z) \sum_{j=1}^{d^n} J_{g_j}(w)= C(z)\rho_n(w), 
  \end{align*} 
  where $C(z)\to 1$ as $z\to w$. In particular,
  $$1\le M(z,w)\le C(z) $$
  and  so $M(z,w)\to 1$ as $z\to w$. A similar estimate based on \eqref{eq:Koebebr} also shows that $M(w,z)\to 1$ as $z\to w$.
  
We conclude that for each point 
$w\in \CDach_P$ there exists a neighborhood 
  $B$ of $w$ such that $M(z,w)$ is finite for $z\in B$. 
  From this and  the (obvious) inequality
  \begin{equation}
    \label{eq:MuwMuvMvw}
    {M}(u,w)\le {M}(u,v){M}(v,w)
  \end{equation}
  for $u,v,w\in  \CDach_P$ in combination with a chaining argument, the finiteness of $M(z,w)$ follows for all $z,w\in  \CDach_P$. 

Inequality \eqref{eq:MuwMuvMvw} implies that  
   $$ \frac1 {M(z_0,z)M(w,w_0)} \le                 \frac{M(z,w)}{M(z_0,w_0)}\le M(z,z_0)M(w_0,w)$$ 
   for 
  $ z,z_0,w,w_0\in \CDach_P$. By what we have seen, the first and the third expression in this inequality approach $1$ as $z\to z_0$ and $w\to w_0$. The continuity of $M$ easily follows. 
  
  \smallskip
   \ref{item:Mzw0_bounded} By the first part of the proof we know that for fixed $w_0\in \CDach _P$ the function $z\to M(z,w_0)$ is continuous on $\CDach_P$. In order
   to show its integrability, we need an integrable  upper bound for this function 
 near each  of the singularities in $\post(f)$. The following claim provides such a bound.
   
   \smallskip
   {\em Claim.} If $w_0\in  \CDach _P$ and $p\in \post(f)$, then
   there exists $\alpha\in (0,2)$ such that 
   \begin{equation*}
     M(z,w_0) 
     \lesssim 
     |z-p|^{-\alpha} \text{ for $z$ near $p$. }
   \end{equation*}

   \smallskip
 Once we know that the claim is true, the integrability of $z\to M(z,w_0)$
  follows, because for $\alpha<2$ the function $z\mapsto |z-p|^{-\alpha}$ is locally integrable near $p$.
  
 One can say more about the exponent $\alpha$ here.  Indeed, for $p\in \CDach$ we define 
  \begin{equation} \label{eq:defbetaf}
  \beta_f(p)=\max\{\deg(f^n, q): n\in\N \text{ and } f^n(q)=p\}.
   \end{equation}
Since  $f$ has no periodic critical points, the local degrees $\deg(f^n, q)$ are  uniformly bounded by Lemma~\ref{lem:cycle}; in particular,  there exists 
$N\in \N$ such that $\beta_f(p)\le N$ for all $p\in \CDach$. Moreover, it is clear that $\beta_f(p)=1$ for $p\in 
\CDach_P=\CDach\setminus \post(f)$ and $\beta_f(p)>1$ for $p\in \post(f)$. 
 If $p\in \post(f)$, then, as we will see, the above claim is true with the exponent $\alpha=2-2/\beta_f(p)\in (0,2)$.

   To prove the claim, we fix $w_0 \in \CDach _P$ and $p\in \post(f)$.
   We pick a Jordan curve $\CC\sub \CDach$ with $\post(f)\sub \CC$, and consider cells for $(f,\CC)$. 
   
   Let $V=W^0(p)$ be the $0$-flower of $p$. This is a simply connected region
   whose complement contains more than two points (this complement contains the $0$-vertices distinct from $p$, of which there are $\#\post(f)-1\ge 2$). So  there exists a conformal map 
   $\psi\:  V\ra \D $ such that $\psi(p)=0$.  
   Let $n\in \N_0$ and consider a component  $U$ of the preimage $f^{-n}(V)$. By 
   Lemma~\ref{lem:mapflowers}~\ref{item:mapflowers2} we know that $U$  is an $n$-flower and so there exists an $n$-vertex  $q$
   such that $U_q\coloneqq W^n(q)=U$. Then necessarily $f^n(q)=p$, and 
   we have 
   $$ \bigcup_{q\in f^{-n}(p)} U_q=f^{-n}(V). $$ 
For such a component $U=U_q$ there also exists a conformal map $\varphi\:  U_q\ra \D  $ 
with $\varphi(q)=0$.  Since $U_q$ is a component of $f^{-n}(V)$, the map $f^n|U_q\: U_q \ra V$ is proper (see Lemma~\ref{lem:proper}~\ref{item:proper2}). It follows that the map $h\coloneqq  \psi\circ (f^n|U_q)\circ \varphi^{-1}\: \D \ra \D$ is also proper, and 
hence a finite Blaschke product 
(see \cite[Exercise~6.12]{Bur0}). 
Now $(f^n|U_q)^{-1}(p)=\{q\}$ which implies 
that $h^{-1}(0)=\{0\}$. It follows that  $h(z)=cz^k$ with $c\in \C$, $|c|=1$, and $k\in \N$.
Since we can postcompose $\varphi$  with a rotation if necessary, we may assume that 
$c=1$. We then obtain  the following commutative diagram: 
   \begin{equation}\label{eq:commdist}
    \xymatrix@C+2pc{
      U_q \ar[d]^{\varphi}   \ar[r]^{f^n} & V \ar[d]_{\psi}
      \\
      \D \ar[r]^{ h(u)=u^{k}} & \D 
      \rlap{.}
    }
  \end{equation}
 Here we have the uniform bound $k\le \beta_f(p)\leq N$, where
 $\beta_f(p)<\infty$ is defined as in \eqref{eq:defbetaf}.
 
In the commutative diagram   \eqref{eq:commdist} the map $\psi$
is fixed for the given point $p$ under consideration. On the
other hand,  
the map $\varphi$ (and  also $k$) depend 
on $n$ and the chosen preimage $q\in f^{-n}(p)$. In the ensuing 
argument it will be crucial that we obtain uniform estimates for all such maps $\varphi$ (as well as for the points under consideration).

We now fix a compact subset $K$ of $V$, say
$K= \psi^{-1}(\overline B_\C(0,1/2))\sub V$, and use
$w_1\coloneqq \psi^{-1}(1/2)\in K$ as a base point in $K$.  Note that $K$ is a
compact neighborhood of $p$. We want to estimate $M(z,w_1)$ for
$z\in K\setminus\{p\}$.  For this we take preimages of $z\in K$
under maps as in the diagram \eqref{eq:commdist} and compare the
values of Jacobians at such preimages with corresponding values
at preimages of the basepoint $w_1$.  Here it will be important
that the preimage of $K$ under a map
$f^n\circ \varphi^{-1}=\psi^{-1} \circ h\colon \D\to V$ as in \eqref{eq:commdist} lies
in the fixed compact subset
\begin{equation*}
  A
  \coloneqq 
  \overline B_\C(0, 2^{-1/N})
  \supset   \overline B_\C(0, 2^{-1/k}) =
  h^{-1}(\overline{B}_{\C}(0, 1/2)) =
  h^{-1}(\psi(K))
 \end{equation*}
of $\D$.

  So let $z\in K$ be arbitrary and suppose $w',z'\in U_q$ are points such
  that $f^n(z')=z$ and $f^n(w')=w_1$. Let $u'=\varphi(z')$
  and $v'=\varphi(w')$.  Finally, define
  $u=h(u')=(u')^k=\psi(z)$ and $v=h(v')=(v')^k=\psi(w_1)$.  Then $u'$
  and $v'$ lie in the fixed compact subset $A$ of $\D$. The
  situation is represented by the following commutative diagram:
\begin{equation*}
  \xymatrix@C+2pc{
    z',w'\in U_q  \ar[d]^{\varphi}\ar[r]^{f^n} & z,w_1 \in K \subset V  \ar[d]_{\psi}
    \\
    u',v'\in A\subset\D \ar[r]^{h}  
    & u,v\in \overline{B}_{\C}(0, 1/2)\subset \D
      \rlap{.}
    }
\end{equation*}

It follows from Koebe's distortion theorem  (as formulated in \eqref{eq:Koebe_dg} of Theorem~\ref{thm:Koebe}) that  
  \begin{equation}\label{eq:comp} 
   J_{\varphi^{-1}}(u')=\big((\varphi^{-1})^\sharp(u')\big)^2 \asymp \big((\varphi^{-1})^\sharp(v')\big)^2 =
   J_{\varphi^{-1}}(v')
   \end{equation} with $C(\asymp)$ independent of $u'$, $v'$, and $\varphi$.
  Note that the flower $U_q=\varphi^{-1}(\D)$ is not necessarily contained in a hemisphere and so the assumption in Theorem~\ref{thm:Koebe} may not always be true;  it is true 
  with possibly  finitely many exceptions $n\in \N_0$ and $q\in f^{-n}(p)$. Then 
  \eqref{eq:comp} still holds with a uniform constant if we adjust  the constant to account for the finitely many exceptional cases (see the discussion after the proof of Theorem~\ref{thm:Koebe}). 
  
  Since $u,v\in \psi(K)=\overline B_\C(0,1/2)$
  and $\psi\: V \ra \D$ is a fixed conformal map (for the given point  $p$), we also have 
$$ J_{\psi^{-1}}(u)\asymp J_{\psi^{-1}}(v)
\asymp 1$$
  with $C(\asymp)$ independent of $u$ and $v$. 
In addition,   $$ |z-p| = |\psi^{-1}(u)-\psi^{-1}(0)|\asymp |u|$$
  with $C(\asymp)$ independent of $z$.  Finally, note that  $$J_h(u')\asymp |u'|^{2k-2}$$ and 
  $$ J_h(v')\asymp 1$$
 with $C(\asymp)$ independent of the choices, because $k$ is uniformly bounded by $\beta_f(p)$.
  
  Putting this all together, we arrive at
  \begin{align*} 
   \frac {J_{f^n}(w')}{J_{f^n}(z')}&= \frac
   {J_{\psi^{-1}}(v) J_h(v') J_{\varphi^{-1}}(v')^{-1}} 
   { J_{\psi^{-1}}(u) J_h(u')  J_{\varphi^{-1}}(u')^{-1}}\\
&  \asymp   \frac
   {J_h(v')} {J_h(u')}\asymp{|u'|^{-2k+2}}=|u|^{-2+2/k}\\
   &\le |u|^{-2+2/\beta_f(p)} \asymp |z-p|^{-2+2/\beta_f(p)}.
    \end{align*} 
  It follows that  
  \begin{align*} \rho_n(z)&=\sum_{z'\in f^{-n}(z)} J_{f^n}(z')^{-1}
  =\sum_{q\in f^{-n}(p)} \sum_{z'\in U_q\cap  f^{-n}(z)} J_{f^n}(z')^{-1}\\
  &\lesssim   |z-p|^{-2+2/\beta_f(p)} \sum_{q\in f^{-n}(p)} \sum_{w'\in U_q\cap  f^{-n}(w_1)} J_{f^n}(w')^{-1}\\ & = |z-p|^{-\alpha} \rho_n(w_1), 
   \end{align*} 
  where $\alpha =2-2/\beta_f(p)\in (0,2)$. Here we used the previous estimate and also the fact that $w_1$ and $z$ have the same number of  preimages in 
  each flower $U_q$, namely $k=\deg(f^n, q)$ preimages. 
  Since the implicit constants here are independent of 
  $z\in K$ and $n\in \N_0$,
  we conclude that 
  $$ M(z,w_1)=\sup_{n\in \N_0} \frac{\rho_n(z)}{\rho_n(w_1)} \lesssim  |z-p|^{-\alpha}$$ 
  for $z$ near $p$. 
 Hence 
 \begin{equation}
   \label{eq:Mest}
   M(z,w_0)\le M(z,w_1)M(w_1, w_0)\lesssim M(z,w_1)\lesssim  |z-p|^{-\alpha} 
 \end{equation}
 for $z$ near $p$. 
The claim and  statement \ref{item:Mzw0_bounded} follow.  \end{proof}

\begin{lemma} [Fixed point of ${\mathcal{R}}$]
  \label{lem:fixed-point-widet}
  There exists a positive function $\rho\in C(\CDach_P)$ with 
  $\int_{\CDach}\rho\, d\leb=1$ such that 
  ${\mathcal{R}}(\rho)=\rho$. 
  \end{lemma}
  A fixed point $\rho$ of ${\mathcal{R}}$ with these properties is actually unique. We will not show this directly, but it will immediately follow from the uniqueness of the measure $\lambda_f$ in Theorem~\ref{thm:ex_inv_abs_L}. 

\begin{proof}
We  define 
    \begin{equation*}
    \widetilde{\rho}_n =  \frac{1}{n}\sum_{i=0}^{n-1} \rho_i
    =
    \frac{1}{n}\sum_{i=0}^{n-1} 
   {\mathcal{R}}^i(1)
  \end{equation*}
  for $n\in \N$. 
  Then each $\widetilde{\rho}_n$ is a positive function in $C(\CDach_P)$. 
  
  \smallskip 
  {\em Claim 1.} The functions  $\widetilde{\rho}_n$ are  locally
  uniformly bounded on $\CDach_P$ for $n\in \N$.

  
  \smallskip 
 To prove this  claim,  it suffices to show that the functions  
 $\rho_n={\mathcal{R}}^n(1)$, $n\in \N_0$, are  
  locally uniformly bounded
  on $\CDach_P$. 
 To see this,  pick   a point  $w_0\in \CDach_P$ and a 
   small $\delta>0$  such that the disk  $B\coloneqq \overline B(w_0,\delta)$ is contained in 
 $\CDach_P$. 
 We will first produce an upper bound for $\rho_n(w_0)$.  
 
 Let $M$ be the function defined in \eqref{eq:defMzw}.
  Lemma~\ref{lem:defcontrolM}~\ref{item:defcontrolM0} implies that the function $z\mapsto M(w_0, z)$ is uniformly bounded from above for $z\in B$, say by $C_0\ge 1$. Then for
  each  $n\in \N_0$ we have 
  \begin{align*}
 {\rho}_n(w_0)&\le \frac{1}{\leb(B)}\int_B M(w_0,z){\rho_n}(z) \, d\leb(z)\\ 
    &\leq  \frac{C_0}{\leb(B)}   \int_{\CDach} {\rho}_n \, d\leb=  \frac{C_0}{\leb(B)}=:C_1.
  \end{align*}
 Here we used \eqref{eq:Mcontrolsrhon} and the normalization~\eqref{eq:rhonnormal}.  Hence 
  $$ \rho_n(z)\le M(z,w_0)\rho_n(w_0)\le C_1 M(z,w_0)$$
  for $n\in \N_0$ and $z\in \CDach_P$. 
   
   Since $M$ is bounded on compact subsets of 
  $\CDach_P\times \CDach_P$ as 
follows 
from   Lemma \ref{lem:defcontrolM}~\ref{item:defcontrolM0}, this last inequality 
   implies  that the functions  $\rho_n$, $n\in \N_0$, are locally uniformly bounded on $\CDach_P$. Claim 1 follows.

\smallskip
{\em Claim 2.} At each point $z_0\in \CDach_P$ the functions 
$\widetilde \rho_n$, $n\in \N_0$, are equicontinuous.

\smallskip 
First note that \eqref{eq:Mcontrolsrhon} implies that 
\begin{equation}\label{eq:Mcontrolsrhotid}
\widetilde \rho_n(z)\le M(z,w) \widetilde \rho_n(w)
\end{equation} 
for all $n\in \N$ and $z,w\in \CDach_P$. 

Let  $z_0\in \CDach_P$ be arbitrary. Then it follows from \eqref{eq:Mcontrolsrhotid}  that  for $n\in \N$ 
 and $z\in \CDach_P$ we have 
$$ \biggl| \frac {\widetilde \rho_n(z)}{\widetilde \rho_n(z_0)}-1\biggr| \le \max\{ M(z,z_0)-1, 
|M(z_0,z)^{-1}-1|\}.
$$ 
By Claim~1 we know that there exists a constant $C>0$ such that $  \widetilde \rho_n(z_0)\le C$ for $n\in \N$. It follows that for each $z\in \CDach_P$ and $n\in \N$ we have
\begin{align*}
|\widetilde \rho_n(z)-\widetilde \rho_n(z_0)|&= |\widetilde \rho_n(z_0)|\cdot  \biggl| \frac {\widetilde \rho_n(z)}{\widetilde \rho_n(z_0)}-1\biggr|\\
&\le C \max\{ M(z,z_0)-1, 
|M(z_0,z)^{-1}-1|\}.
\end{align*}
Since $M(z,z_0), M(z_0, z)\to M(z_0,z_0)=1$ as $z\to z_0$, Claim~2 follows. 

\smallskip 
  By what we have seen, the functions $ \widetilde \rho_n$, $n\in \N$, form a locally uniformly bounded family of continuous functions on $ \CDach_P$ that is equicontinuous at each point in 
  $ \CDach_P$.   
So the  Arzel\`a-Ascoli theorem  implies 
that  there exists a  subsequence
  $\{\widetilde{\rho}_{n_k}\} $ that converges locally uniformly
  on $\CDach _P$ to a non-negative function ${\rho}\in C(\CDach _P)$.   
  
  \smallskip
  {\em Claim 3.}    ${\mathcal{R}}({\rho})={\rho}$.
  
  \smallskip 
 To see this, let 
  $z\in \CDach_P$. As we have shown in the proof of Claim~1,  the functions $\rho_n$ are uniformly bounded at $z$,
   say by the constant $C>0$. So  for $n\in \N$ we have 
  \begin{align*}
   |{\mathcal{R}}(\widetilde{\rho}_n)(z)-\widetilde{\rho}_n(z)|
    &= 
    \frac1n |{\mathcal{R}}^{n}(1)(z)-
   {\mathcal{R}}^{0}(1)(z)|   \\ &=  \frac1n |\rho_n(z)-\rho_0(z) |\leq \frac{2C}{n} \to 0 
    \text{ as $n\to \infty$}.
  \end{align*}
  Hence the continuity  of $\mathcal{R}$ (see Lemma~\ref{lem:Rprops}~\ref{item:Ruelle1b}) implies that 
  \begin{equation*}
    {\mathcal{R}}({\rho})(z)
    = 
    \lim_{k\to \infty} {\mathcal{R}}(\widetilde{\rho}_{n_k})(z)
    =
    \lim_{k\to \infty} \widetilde{\rho}_{n_k}(z)={\rho}(z).
  \end{equation*}
 Claim~3 follows.
  \smallskip 

  The proof will be complete if we establish the last claim. 

 \smallskip 
{\em Claim 4.} The function $\rho$ is positive and satisfies $\int_{\CDach} \rho\, d \leb=1$.

\smallskip
By the normalization \eqref{eq:rhonnormal} and the definition of $\widetilde \rho_n$ we have 
$$ \int_{\CDach} \widetilde \rho_n\, d\leb =1 $$
for $n\in \N$.
Pick  a point $w_0\in \CDach_P.$ Then by Claim 1 there exists $C>0$ such that $\widetilde \rho_n(w_0)\le C$ for $n\in \N$, and so by  \eqref{eq:Mcontrolsrhotid} we have 
$$  0\le \widetilde \rho_n(z)\le CM(z,w_0)$$
for $n\in \N$ and $z\in \CDach_P$. 
Hence  by Lemma~\ref{lem:defcontrolM}~\ref{item:Mzw0_bounded} the
functions $ \widetilde \rho_n$, $n\in \N$, are majorized by an
integrable function
and so by Lebesgue's dominated convergence theorem we conclude that 
$$ \int_{\CDach}  \rho\, d\leb= \lim_{k\to \infty} \int_{\CDach} \widetilde \rho_{n_k}\, d\leb=1. $$

\smallskip
We know that $\rho$ is non-negative on $\CDach_P$. To see that  
$\rho$ is actually positive, we argue by contradiction and assume that there exists a point
$w\in \CDach_P$ such that $\rho(w)=0$. In inequality \eqref{eq:Mcontrolsrhotid} we can pass to  sublimits and conclude that then 
$$ 0\le \rho(z)\le M(z,w) \rho(w)=0$$ 
 and so $\rho(z)=0$ for all $z\in 
\CDach_P$. This is impossible, since 
$\int_{\CDach} \rho\, d \leb=1$. 
  \end{proof}

We are now ready to prove the existence and uniqueness of an $f$-invariant probability measure  that 
is absolutely continuous with respect to $\leb$.

\begin{proof}
  [Proof of Theorem~\ref{thm:ex_inv_abs_L}]
 Let $\rho$ be a fixed point of the Ruelle operator as provided
 by Lemma~\ref{lem:fixed-point-widet} and 
let $\lambda$ be the measure 
with $d\lambda=\rho\, d\leb$. 
Then $\lambda_f\coloneqq\lambda$ is a 
probability measure that is absolutely continuous with respect to 
 $\leb$. Actually, since $\rho$ is a positive continuous function 
 on $\CDach$ outside the finite set $\post(f)$, the measures $\lambda$ and $\leb$ are mutually absolutely continuous. 
 By Lemma~\ref{lem:Rprops}~\ref{item:Ruelle2} the measure $\lambda$ is $f$-invariant. The existence of a measure with the desired properties follows.

By Corollary~\ref{cor:ergod}
 each $f$-invariant measure that is absolutely continuous with respect to $\leb$ is 
  ergodic for  $f$. If $\mu$ is another $f$-invariant probability measure that is
  absolutely continuous with respect to $\leb$, then it is also absolutely continuous with respect to $\lambda$. On the other hand, both measures are ergodic and so 
necessarily $\mu=\lambda$. The uniqueness of $\lambda$ follows. 
\end{proof}

We now take a closer look at the unique measure $\lambda_f$  
for a rational expanding Thurston map $f$ with a
parabolic orbifold $\mathcal{O}_f=(\CDach, \alpha_f)$. Since $f$ has no periodic critical points, this is the case precisely when $f$ is a Latt\`es map 
(see Theorem~\ref{thm:Lattesstruc}~\ref{item:Lattessrucii}). 
\index{parabolic!orbifold} 
\index{orbifold!parabolic} 
\index{Thurston map!parabolic}
\index{Latt\`{e}s map}

By Theorem~\ref{thm:Lattesstruc}~\ref{item:Lattessruciii} we know that there exists a map  
 $A\: \C \ra \C$ of the form $A(u)=\alpha u+\beta$ with $\alpha,\beta\in \C$, $\alpha\ne 0$, such that $f\circ \Theta= \Theta\circ A$, 
 where $\Theta\: \C \ra \CDach$ is the universal orbifold covering map of $\mathcal{O}_f$. Moreover, 
    $d\coloneqq \deg(f)=|\alpha|^2$ (see Lemma~\ref{lem:deglatttype}). 
    
 We first want to construct a measure $\Om$ on $\CDach$ so that $f$ has constant Jacobian with respect to $\Om$. Since the lift $A$ of $f$ by $\Theta$ has constant Jacobian $J_{A,\leb_{\C}}=|\alpha|^2=d$
 with respect to Lebesgue measure $\leb_{\C}$ on $\C$, we want to 
(locally) 
push forward\index{push-forward!of measure!by orbifold covering map}\index{canonical orbifold!measure}\index{measure!canonical orbifold}\index{orbifold!canonical measure}\index{OAA@$\Omega,\Omega_f$}\index{push-forward!of measure!by orbifold covering map} 
 $\leb_\C$ by $\Theta$ to define $\Om$. The measure obtained in this way is actually the canonical orbifold measure $\Om=\Om_f$
of $\mathcal{O}_f$  
  
We will  quickly review the  definition of  $\Om$, but   refer to 
Section~\ref{sec:expratThmaps} for more details.  Let $J_\Theta$ be the Jacobian of $\Theta$ with 
$ \leb_{\C}$ being the underlying measure on the source $\C$ and $\leb_{\CDach}$
being the measure on the target space $\CDach$ of $\Theta$.
Then 
$$ J_\Theta(u)= \frac{|\Theta'(u)|^2}{\pi(1+|\Theta(u)|^2)^2}$$ 
for $u\in \C$. 
Let  $\kappa\: \CDach_P \ra (0, \infty)$ be the function defined as 
$$\kappa (w)= J_\Theta(u)^{-1}$$ 
for $w\in \CDach_P$, where $u\in \Theta^{-1}(w)$. One can show that this function is well-defined,
positive and continuous on $\CDach_P$, and integrable with respect to 
$\leb_{\CDach}$. The map $\Theta$ is only unique up to a precomposition with a conformal automorphism of $\C$. Choosing this automorphism appropriately, we may assume 
that $\int_{\CDach}\kappa\, d \leb_{\CDach}=1.$
Now let  $\Om$ be the unique measure on 
$\CDach$ with Radon-Nikodym derivative $\kappa$ with respect to 
$\leb_{\CDach}$, meaning that
$d\Omega = \kappa \,d\leb_{\CDach}$. 
This measure  is  normalized so that $\Om(\CDach)=1$.

 By using the identity $f\circ \Theta= \Theta \circ A$ and the chain rule for Jacobians (where  $\leb_{\CDach}$ is  the measure on $\CDach$ and  $\leb_\C$ the measure on $\C$) we see that
 \begin{equation}\label{eq:Jacobform}
  J_f(\Theta(u))\cdot  J_\Theta(u)= |\alpha|^2\cdot J_\Theta (A(u))=d \cdot J_\Theta (A(u))
  \end{equation} 
 for $u\in \C$.
 Here $J_f=(f^\sharp)^2$ (see \eqref{eq:formula_J2}). 

Now suppose $w\in \CDach_P$ and $z\in f^{-1}(w)$. If we pick a point 
$u\in \C$ such that $\Theta(u)=z$, then for $v\coloneqq A(u)$ we have 
$$ \Theta(v)= (\Theta\circ A)(u)=(f\circ \Theta)(u)=f(z)=w. $$
It follows that 
$$ \kappa (z) = J_\Theta(u)^{-1} \text{ and }  \kappa(w)= J_\Theta(v)^{-1}, $$
and so by taking reciprocals in  \eqref{eq:Jacobform} we obtain 
\begin{equation}\label{eq:Jacobform2}
 \kappa (z)\cdot J_f(z)^{-1}= \frac 1d  \kappa (w)
\end{equation}
 whenever $w\in \CDach_P$ and $z\in f^{-1}(w)$.

From this and  \eqref{eq:formula_J} we conclude that if we  change the underlying measure on $\CDach$ from $\leb_{\CDach}$ 
to $\Om=\Om_f$, then the Jacobian of $f$  is 
given by 
\begin{equation}\label{eq:constJac} 
 J_{f,\Om}(z)= \frac{\kappa(f(z))}{\kappa(z)}J_f(z)=d=\deg(f)
 \end{equation} 
for $z\in \CDach\setminus f^{-1}(\post(f))$. So $J_{f,\Om}(z)=d$ for  $\Om$-almost every
$z\in \CDach$ and $J_{f,\Om}$ is indeed constant. 

Since each point $w\in \CDach_P$  has precisely $d$ preimages under $f$, it also  follows from \eqref{eq:Jacobform2} that
$$ \mathcal{R}(\kappa)(w) =\sum_{z\in f^{-1}(w)} \kappa (z) J_f(z)^{-1} =\kappa(w).
$$ 
This shows that $\kappa$ is a fixed point of the Ruelle operator with   properties as in 
Lemma~\ref{lem:fixed-point-widet}. Since $\kappa=d\Om/d\leb$ it follows from the uniqueness part of 
Theorem~\ref{thm:ex_inv_abs_L} that $\Om=\lambda_f$ whenever $f$ is a Latt\`es map.

The proof of Theorem~\ref{thm:Lattes_can_orb_meas} is now easy.

\begin{proof}[Proof of Theorem~\ref{thm:Lattes_can_orb_meas}] 
  Let $f\colon \CDach \to \CDach$ be a Latt\`{e}s map. As we have seen in the previous discussion, then  $\lambda_f=\Om_f$, where $\lambda_f$ is the $f$-invariant measure provided by 
  Theorem~\ref{thm:ex_inv_abs_L} and $\Om=\Om_f$ is the normalized canonical orbifold measure of $\mathcal{O}_f$.
   
  By Rokhlin's formula \eqref{eq:Rohlinsform} and \eqref{eq:constJac} the measure-theoretic entropy $h_\Om(f)$ of $\Om$ 
  is given by 
  $$  h_\Om(f)= \int_{\CDach} \log (J_{f,\Om})\, d\Om = \log(\deg(f))=h_{top}(f).$$
  So $\Om$ is a measure of maximal entropy. Since the measure of maximal entropy $\nu_f$ is uniquely determined for the  expanding Thurston map $f$
  (see Theorem~\ref{thm:maxentr0}), it follows that 
  $\nu_f=\Om_f=\lambda_f$ as desired. 
     \end{proof}

\section
[Latt\`{e}s maps, entropy, and Lebesgue measure]
{\for{toc}{Latt\`{e}s maps, entropy, and Lebesgue measure}\except{toc}{Latt\`{e}s maps, the measure of maximal entropy, and Lebesgue measure}}

\label{sec:lyapunov-exponent-mu}

 In this section we prove Theorem~\ref{thm:abscontimpLattes},
the special case of Zdunik's theorem. Let
$f\colon \CDach\to \CDach$ be a rational expanding Thurston map
and $\mu$ be an $f$-invariant probability measure on $\CDach$.
We know from Chapter~\ref{cha:measure} that the
(measure-theoretic) entropy $h_\mu(f)$ of $\mu$ satisfies
$h_\mu(f) \leq \log(\deg(f))$ (see Corollary~\ref{cor:topent} and
\eqref{eq:var_princ}). Moreover, here we have equality  precisely
for $\mu=\nu_f$, the measure of maximal entropy of $f$.

We first establish a characterization of Latt\`es maps that uses the measure  
provided by Theorem~\ref{thm:ex_inv_abs_L}.

\begin{theorem}
  \label{thm:Zdunik}
  \index{Latt\`{e}s map}
  \index{laa@$\lambda_f$}
  \index{entropy!measure-theoretic}
  \index{h mu@$h_\mu$}
  \index{measure!of maximal entropy}
  \index{measure-theoretic entropy}
  \index{n@$\nu_f$}
  Let $f\colon \CDach \to \CDach$ be a rational expanding
  Thurston map, and $\lambda=\lambda_f$ be the unique $f$-invariant probability
  measure that is absolutely continuous with respect to $\leb_{\CDach}$. Then for
  the entropy $h_\lambda(f)$ we have 
  \begin{equation*}
    h_\lambda(f) = \log (\deg(f)) 
    \text{ if and only if $f$ is a Latt\`{e}s map.} 
  \end{equation*}
\end{theorem}
In other words, $\lambda_f$ is equal to the measure of maximal entropy $\nu_f$ if and only if $f$ is a Latt\`es map. 

We require  the following lemma.
%

\begin{lemma}
  \label{lem:parabolic_characterization}
  \index{orbifold!parabolic} 
  \index{Thurston map!parabolic}
  \index{parabolic!orbifold}
  Let $f\colon S^2 \to S^2$ be a Thurston
  map. Then $f$ has a parabolic orbifold  if and only  if  
  \begin{equation}
    \label{eq:Lattes_characterization}
    \deg(f^n,q)= \deg(f^n,q'),
  \end{equation}
 whenever $p\in
  \post(f)$, $n\in \N$, and $q,q'\in f^{-n}(p)\setminus \post(f)$.
\end{lemma}

\begin{proof} If $f$ has a parabolic orbifold, then 
it follows from condition \ref{item:Of_para3} in 
  Proposition~\ref{prop:parabolicOf} by induction
  that 
  $$ \alpha_f(q)\deg(f^n,q)=\alpha_f (p),$$
  whenever $p\in S^2$, $n\in \N$, and $q\in f^{-n}(p)$. 
  If in addition $q\notin \post(f)$, then $ \alpha_f(q)=1$  and so 
   \begin{equation*}
   \deg(f^n, q)= \alpha_f(p). 
  \end{equation*}
  Relation  \eqref{eq:Lattes_characterization} follows. 

  To show the other implication, 
  assume  that \eqref{eq:Lattes_characterization} holds. 
 In order to show that $f$ has parabolic orbifold, we want to verify 
 condition \ref{item:Of_para3} in  Proposition~\ref{prop:parabolicOf}.
 We first establish several claims.

\smallskip 
  \emph{Claim 1.} Let  $p\in S^2$. Then  we have 
$    \deg(f^n, q)= \deg(f^m,q'), $ whenever $n,m\in \N$,  
  $q\in f^{-n}(p)\setminus \post(f)$, and $q'\in
  f^{-m}(p)\setminus \post(f)$.  
 
  \smallskip 

  Indeed, suppose $q$ and $q'$ are as in this statement  for a given point $p\in S^2$. If
  $p\in S^2\setminus \post(f)$, then
  $\deg(f^n, q)=1= \deg(f^m,q')$ and the claim follows. So we may
  assume $p\in \post(f)$.
 
 We then choose $k,l\in \N$ such that
  $n+k= m+l$, and  pick points $u\in f^{-k}(q)$ and $u'\in f^{-l}(q')$. Then 
  $\deg(f^k,u)=1$ (otherwise $q\in \post(f)$), and so
  \begin{equation*}
    \deg(f^{n+k},u)= \deg(f^n, q)\deg(f^k,u)= \deg(f^n,q).
  \end{equation*}
  Similarly,  $\deg(f^{m+l},u')= \deg(f^m,q')$. 
  
  Now $p\in \post(f)$ and $u,u'\in f^{-(n+k)}(p)\setminus \post(f)$. So
  \eqref{eq:Lattes_characterization} implies that 
  \begin{equation*}
    \deg(f^n,q)= \deg(f^{n+k}, u)= \deg(f^{m+l}, u')= \deg(f^m,q').
  \end{equation*}
  Claim~1 follows.  

\smallskip
{\em Claim 2.} If $p\in S^2$ and $\bigcup_{i\in \N}f^{-i}(p)\sub \post(f)$, then $p$ is contained in a critical cycle  of $f$. 

\smallskip
If $p\in S^2$ is as in this statement, then each 
preimage of $p$ under $f$ is in $\post(f)$. Since $\post(f)$ is
$f$-invariant, it follows that $p\in \post(f)$.  
Hence there exist $q\in \crit(f)$ and $n\in \N$ 
such that $f^n(q)=p$. To prove the claim, it suffices to show that $q$ is periodic, because then $p$ is contained in the critical cycle generated by $q$. 

To see that $q$ is periodic, let $q_1\coloneqq q$ and inductively choose points $q_k\in S^2$ such that $f(q_{k+1})=q_{k}$ for $k\in \N$. Each point  $q_k$ is a preimage of $p$ under some iterate of $f$. Hence $q_k\in \post(f)$.
Since  $\post(f)$ is a finite set, not all the points $q_k$, $k\in \N$,  can be distinct.
So there exist $k,l\in \N$ such that $q_{k+l}=q_k=f^l(q_{k+l})$. We see that  
$q_{k+l}$ is a periodic point of $f$. This implies that $q=q_1=f^{k+l-1}(q_{k+l})$ is also a periodic point of $f$ and Claim~2 follows.

  \smallskip
  \emph{Claim 3.} The ramification function of $f$ satisfies 
  \begin{equation}\label{eq:cl2Latt}
    \alpha_f(p) = \deg(f^n,q),
  \end{equation}
  whenever 
   $p\in S^2$,  $n\in \N$, and  $q\in f^{-n}(p)\setminus \post(f)$. 
  \smallskip 

To see this, let  $p\in S^2$ be arbitrary. We may assume that 
the set $\bigcup_{i\in \N} f^{-i}(p)$ is not contained in $\post(f)$, because otherwise there is nothing to prove. 
 
  We know that  $\alpha_f(p)$  is  the
  least  common multiple of all numbers $\deg(f^k,u)$, where
  $k\in \N$ and $u\in f^{-k}(p)$ (see Definition~\ref{def:weightf}). 
  Moreover, by Claim~1 the right hand side in \eqref{eq:cl2Latt} is independent of 
 the choices of   $n$ and $q$. So in order to prove \eqref{eq:cl2Latt}, it suffices 
 to show that if  $k\in \N$ and  $u\in f^{-k}(p)$, 
then there exist $n\in \N$ and   
$q\in f^{-n}(p)\setminus \post(f)$ such that  $\deg(f^{n},q)$ is a multiple of $\deg(f^k,u)$.

If $\bigcup_{i\in \N} f^{-i}(u)\sub \post(f)$, then by Claim~2 the point $u$ belongs to a critical cycle of $f$. This cycle then contains also 
$p$. Therefore, $p$ is a preimage of $u$ under some
iterate of $f$. This in turn gives 
 $$\bigcup_{i\in \N} f^{-i}(p)\sub 
\bigcup_{i\in \N} f^{-i}(u)\sub \post(f), $$  which is a contradiction to our 
additional assumption on  $p$. 

So $\bigcup_{i\in \N} f^{-i}(u)$ is not contained in $\post(f)$. 
Then  we can find  $l\in \N$ and  a point $q\in f^{-l}(u)$ that is not a postcritical point of $f$. Setting $n=k+l$,
  we have $q\in f^{-n}(p)\setminus \post(f)$. Moreover,    
   $\deg(f^{n},q)=
  \deg(f^k,u)\deg(f^l,q)$ is a multiple of $\deg(f^k,u)$. 
  Claim~3  follows.

\smallskip
 After these preparations we will now show that condition \ref{item:Of_para3} in  Proposition~\ref{prop:parabolicOf} is true. So  let $p,q\in \CDach$ with
  $f(q)=p$ be arbitrary. If $\bigcup_{n\in \N} f^{-n}(q)\sub \post(f)$,
  then by Claim~2 the point $q$, and hence also $p=f(q)$, belongs to a critical cycle of $f$. Then $\alpha_f(p)=\infty=\alpha_f(q)= \alpha_f(q) \deg(f,q)$ by Proposition~\ref{prop:otherramprops}~\ref{item:rami_infty}. 
  
If  $\bigcup_{n\in \N} f^{-n}(q)$ is not contained in $\post(f)$, then we 
 can find  $n\in \N$ and a point  $u\in
  f^{-n}(q)$ that is not a postcritical point. Then $f^n(u)= q$
  and $f^{n+1}(u)=p$. Thus $\alpha_f(q)= \deg(f^n,u)$ and
  $\alpha_f(p)= \deg(f^{n+1},u)$ by Claim 3. Therefore,
  \begin{equation*}
    \alpha_f(p)= \deg(f^{n+1}, u)= \deg(f^n, u)\deg(f, q)=
    \alpha_f(q)\deg(f,q). 
  \end{equation*}
  We see that  condition \ref{item:Of_para3} in   Proposition~\ref{prop:parabolicOf} is indeed  satisfied. This shows that $f$ has a parabolic orbifold. 
\end{proof}

\begin{proof}[Proof of Theorem~\ref{thm:Zdunik}]
  Let $f\colon \CDach \to \CDach$ be a rational expanding
  Thurston map, $\lambda=\lambda_f$ be the unique  measure
   on $\CDach$ given by
  Theorem~\ref{thm:ex_inv_abs_L}, and $h_\lambda(f)$ be the entropy of $\lambda$.   
  
  By Rokhlin's formula \eqref{eq:Rohlinsform} we have
  \begin{equation*}
    h_\lambda(f) = \int_{\CDach} \log (J_{f,\lambda}) \, d\lambda.
  \end{equation*}
  Combined with  \eqref{eq:Jdmu} and Jensen's inequality this gives 
  \begin{equation*}
    h_\lambda(f)  \leq \log\biggl( \int_{\CDach} J_{f,\lambda} \, d\lambda \biggr)= \log(\deg(f)),
  \end{equation*}
  where equality is achieved if and only if $J_\lambda= \deg(f)$ $\lambda$-almost everywhere and hence $\leb$-almost everywhere on $\CDach$.

If we  assume that  $f$ is a Latt\`{e}s map, then by 
Theorem~\ref{thm:Lattes_can_orb_meas} and by 
  \eqref{eq:constJac} we have $J_{f,\lambda}=\deg(f)$
  $\lambda$-almost everywhere. Thus  $h_\lambda(f)=\log(\deg(f))$. 
  
Conversely,  assume   that $h_\lambda(f) =\log(\deg(f))$ and so  $J_{f,\lambda}=
  h\coloneqq \log(\deg(f))$ $\lambda$-almost everywhere  on $\CDach$. Let  $\rho=d\lambda/d\leb$ be the Radon-Nikodym derivative of $\lambda$ with respect to Lebesgue measure $\leb$.  We know by Theorem~\ref{thm:ex_inv_abs_L} that this is a positive continuous function on $\CDach\setminus \post(f)$. Since the Jacobian is given by (see \eqref{eq:formula_J})
  \begin{equation*}
    J_{f,\lambda}(z)= f^\sharp(z)^2\frac {\rho( f(z))} {\rho(z)}= h,  
  \end{equation*}
  we conclude that 
  $$ 
\rho( f(z))=h \frac{ \rho (z)} {f^\sharp(z)^{2}} $$
  for $z\in \CDach \setminus f^{-1}(\post(f))$.

  If we iterate this relation and use the chain rule for the spherical derivative, we arrive at 
  \begin{equation}\label{eq:rhoiter}
  \rho( f^n(z))=h^n  \frac{ \rho (z)} {(f^n)^\sharp(z)^{2}} 
  \end{equation}
for $z\in \CDach \setminus f^{-n}(\post(f))$.

We will use this relation to derive the asymptotic behavior of $\rho$ near a point 
$p\in \post(f)$ in order to   verify the condition in Lemma~\ref{lem:parabolic_characterization}.
For this let $n\in \N$, and consider an arbitrary  point $q\in f^{-n}(p)\setminus \post(f)$.


 Let  $k=\deg(f^n, q)$. If   $z\in \CDach$ is a point near $q$,  then $w=f^n(z)$ is a point near $p$. By considering local power series expansions of $f^n$ in holomorphic coordinates, we see that 
 $$ |w-p|\asymp |z-q|^k$$ 
 and 
 $$ (f^n)^\sharp(z) \asymp |z-q|^{k-1} \asymp |w-p|^{1-1/k}, $$ 
  where  the constants $C(\asymp)$ are independent of $z$ near $q$
  (recall that  we use ``Polish notation'' $|u-v|$ for  the chordal distance between 
points $u,v\in \CDach$). 
 
Now we know  from the second part of Theorem~\ref{thm:ex_inv_abs_L} that $\rho(z) \asymp 1$ for  $z$ 
  near  $q$, because $q\not\in \post(f)$.  So from  relation \eqref{eq:rhoiter} we conclude that 
  $$\rho(w) \asymp \frac{\rho(z)}{(f^n)^\sharp(z)^2} \asymp |w-p|^{-2+2/k} $$
  for all $z$ near $q$, and hence for all $w$ near $p$.


 If $q'$ is another point with $q'\in f^{-n}(p)\setminus \post(f)$ and $k'\coloneqq \deg(f^n, q')$, then the same argument shows that
   \begin{equation*}
    \rho(w)\asymp \abs{w-p}^{-2+2/k'}
  \end{equation*}
  for all $w$ near $p$.
  These two estimates can only be valid if  $k=k'$. 
  
  We conclude that $\deg(f^n, q)=\deg(f^n, q')$ whenever $p\in \post(f)$, $n\in \N$, and $q,q'\in f^{-n}(p)\setminus \post(f)$.    So Lemma~\ref{lem:parabolic_characterization} implies that $f$ has a parabolic orbifold. 
Since $f$ is a rational expanding Thurston map, it has no periodic critical points (see  Proposition~\ref{prop:rationalexpch}). Hence  $f$ is a Latt\`{e}s map by 
  Theorem~\ref{thm:Lattesstruc}~\ref{item:Lattessrucii}.  
 \end{proof}
 
 Before we proceed to the proof of Theorem~\ref{thm:abscontimpLattes}, we record 
 a statement about the asymptotics of the Radon-Nikodym derivative $\rho=d \lambda_f/d\leb_{\CDach}$ near its  singularities that easily follows from our previous considerations.
For the formulation  we use the function 
$\beta_f\:\CDach\ra \N$ given by 
$$ \beta_f(p) =\max\{ \deg(f^n,q): n\in \N \text{ and } f^n(q)=p\} $$ 
for $p\in \CDach$ (see  
\eqref{eq:defbetaf}).

 \begin{prop} \label{rem:asymbevrho} Let $f\: \CDach\ra \CDach$
   be a rational expanding Thurston map and $\rho=d
   \lambda_f/d\leb_{\CDach}$ 
be the Radon-Nikodym 
derivative  
 of the measure given by Theorem~\ref{thm:ex_inv_abs_L}.   Then 
\begin{equation} \label{eq:rhoasymp}
\rho(w)\asymp |w-p|^{ -2+2/\beta_f(p)} 
\end{equation}
 for $w$ near $p\in \post(f)$. 
 \end{prop} 
 
 As we pointed out in the proof of Lemma~\ref{lem:defcontrolM}~\ref{item:Mzw0_bounded},
for a rational expanding Thurston map $f$ the function $\beta_f$ is bounded on $\CDach$ and we have $\beta_f(p)=1 $ for $p\in \CDach\setminus \post(f)$ and 
$\beta_f(p)\ge 2$  for $p\in \post(f)$.  Accordingly, the asymptotics \eqref{eq:rhoasymp}
also makes sense   near points in $\CDach \setminus \post(f)$, where  it should be interpreted as $\rho(w)\asymp 1$ for $w$ near $p\in \CDach \setminus \post(f)$. This is in accordance with the fact (see Theorem~\ref{thm:ex_inv_abs_L}) that $\rho$ is a positive continuous function on $\CDach \setminus \post(f)$. 

 \begin{proof} 
   Since $\rho$ is a fixed point of the Ruelle operator (see the proof of Theorem~\ref{thm:ex_inv_abs_L}), we 
 have 
 $$ \rho(w)=\sum_{z\in f^{-n}(w)} \rho(z)J_{f^n}(z)^{-1}$$
 for $w\in \CDach\setminus \post(f)$. 
 
 Fix $p\in\post(f)$. We can 
find  $n\in \N$ and $q\in f^{-n}(p)$ such that
$\deg(f^n,q) = \beta_f(p)$. Then clearly $q\notin \post(f)$, and so 
$\rho(z)\asymp 1$ for $z$ near $q$.

A point $w$ near $p$ has at least  one preimage  $z$ near
$q$ under $f^n$. As in the previous proof one sees that 
$$   J_{f^n}(z)=[(f^n)^\sharp(z)]^2 
  \asymp
  \abs{z-q}^{2(\beta_f(p)-1)}
  \asymp
  \abs{w-p}^{2-2/\beta_f(p)}.
$$ 
Thus we obtain the lower bound
\begin{equation*}
  \rho(w) \ge  \rho(z)  J_{f^n}(z)^{-1}\gtrsim \abs{w-p}^{-2+2/\beta_f(p)}.
\end{equation*}

For an inequality in the other direction, 
we note that by \eqref{eq:Mest} in the proof of 
 Lemma~\ref{lem:defcontrolM}~\ref{item:Mzw0_bounded}, 
we have 
$$ M(w,w_0)\lesssim  |w-p|^{ -2+2/\beta_f(p)}$$ for $w$ near $p$, where 
 $M$ is the function defined in \eqref{eq:defMzw} and $w_0\in \CDach\setminus \post (f)$ is a base point. Hence 
 $$ \rho(w) \lesssim   |w-p|^{ -2+2/\beta_f(p)} $$ as follows from the proof of 
 Lemma~\ref{lem:fixed-point-widet} (we have to pass to the sublimit $\rho$ in \eqref{eq:Mcontrolsrhotid}). The claim follows. 
 \end{proof} 

 Theorem~\ref{thm:abscontimpLattes}
is an easy consequence of Theorem~\ref{thm:Zdunik}.  

\begin{proof}[Proof of Theorem~\ref{thm:abscontimpLattes}]
  Let $f$ be a rational expanding Thurston map, $\nu_f$ be its
  measure of maximal entropy, and $\lambda_f$ be the unique $f$-invariant
 probability  measure that is absolutely continuous with respect to Lebesgue
  measure. 

 If $f$ is a Latt\`es map, then $\nu_f=\lambda_f$ by
  Theorem~\ref{thm:Lattes_can_orb_meas} and so $\nu_f$ is absolutely continuous with respect to $\leb$. 
  
 Conversely, suppose that  $\nu_f$ is absolutely continuous with respect to $\leb$. Since $\lambda_f$ and $\leb$ lie in the same measure class, $\nu_f$ is then also  absolutely continuous with respect to $\lambda_f$.  Now  $\nu_f$ and $\lambda_f$ are both ergodic $f$-invariant
  probability measures on $\CDach$. 
This implies that $\lambda_f=\nu_f$. Theorem~\ref{thm:Zdunik} 
then shows  
that $f$ is  a Latt\`es map.
  \end{proof}

We conclude this chapter  with  the proof of
Theorem~\ref{thm:S2vsf}~\ref{item:S2Lattes}.   
First we record  an
elementary lemma.

\begin{lemma}
  \label{lem:snow_Ahlfors}
  Let $\varphi\colon X\to X'$ be a snowflake homeomorphism
  between metric spaces $(X,d)$ and $(X',d')$ such that 
  \begin{equation*}
    d'(\varphi(x),\varphi(y)) \asymp d(x,y)^{\beta}
  \end{equation*}
  for all $x,y\in X$, where $\beta>0$ and $C(\asymp)$ are constants independent 
  of $x$ and $y$. Suppose that $Q>0$ and define $Q'= Q/\beta$.
  
 Then for the  corresponding Hausdorff measures we have 
\begin{equation}\label{eq:Hcompstr}
 \mathcal{H}^{Q'}_{d'}(\varphi(M))\asymp \mathcal{H}^Q_{d}(M)
 \end{equation} 
for each Borel set $M\sub X$ with $C(\asymp)$ independent of $M$.  Moreover, $(X,d)$ is Ahlfors $Q$-regular  if and only if $(X',d')$ is Ahlfors $Q'$-regular. \end{lemma}

\begin{proof} Relation~\eqref{eq:Hcompstr} follows from  
straightforward covering arguments; we skip the details. 

Suppose that $(X',d')$ is Ahlfors $Q'$-regular, and let $B$ be a closed ball in $X$ of radius 
$R\le \diam_d(X)$. Then there exist closed balls $B'$ and $B''$ in $X'$ with $B'\sub \varphi(B)\sub B''$ whose radii 
are comparable to $R'\coloneqq R^\beta\lesssim \diam_{d'}(X')$.  Then the Ahlfors $Q'$-regularity of $(X',d')$ implies that 
$$ \mathcal{H}^{Q'}_{d'}(\varphi(B))\asymp (R')^{Q'}\asymp R^Q,$$ and so
$$ \mathcal{H}^Q_d(B) \asymp  \mathcal{H}^{Q'}_{d'}(\varphi(B))\asymp R^Q.$$ This shows that 
$(X,d)$ is 
 Ahlfors $Q$-regular. 

The other implication is obtained by reversing the roles of $X$ and $X'$.
\end{proof}

\begin{proof}
  [Proof of Theorem~\ref{thm:S2vsf}~\ref{item:S2Lattes}] Let
  $\varrho$ be a visual metric for the expanding Thurston map
  $f\colon S^2\to S^2$. 

  Assume first that $f$ is topologically conjugate to a
  Latt\`{e}s map. Since such a topological conjugacy is in fact
  a snowflake homeomorphism with respect to visual metrics (see
  Proposition~\ref{prop:conjisom}), we can assume that
  $f\colon \CDach \to \CDach$ is a Latt\`{e}s map. Let $\sigma$
  be the chordal metric on $\CDach$, and $\omega$ be the
  canonical orbifold metric for $f$ (see Section~\ref{sec:expratThmaps} for the definition of $\omega$).\index{canonical orbifold!metric}\index{metric!canonical orbifold}\index{o@$\omega$}\index{orbifold!canonical metric}

  We know from  Proposition~\ref{prop:orbivispara} that $\omega$ is a visual metric for $f$. 
Since two visual metrics are snowflake equivalent
  according to
  Proposition~\ref{prop:visualsummary}~\ref{item:visd1d2}, we
 are  further reduced to the case  $\varrho=\omega$. Now for a Latt\`{e}s map the spaces 
  $(\CDach,
  \omega)$ and $(\CDach, \sigma)$  are bi-Lipschitz
  equivalent, because the orbifold of $f$ has no punctures (see
  Lemma~\ref{lem:om_chordal}~\ref{item:om_chordal_biLip}).
   It follows
  that $(S^2,\varrho)= (\CDach, \omega)$ is snowflake equivalent to $(\CDach,
  \sigma)$ as desired. 

   To prove the other implication, assume  that
  $(S^2, \varrho)$ is snowflake equivalent to $(\CDach,
  \sigma)$. In particular, these spaces are quasisymmetrically equivalent. Thus, by
  Theorem~\ref{thm:S2vsf}~\ref{item:S2qsphere}, the map $f$ is
  topologically conjugate to a rational map. So as before, we may 
  assume that $f\colon \CDach\to \CDach$ is in fact a rational
 expanding Thurston  map. 
Then $f$  does not have periodic critical points (Proposition~\ref{prop:rationalexpch}). In order to prove 
 that $f$ is a Latt\`es map, we will verify the condition in
   Theorem~\ref{thm:abscontimpLattes} and show that the measure
   of maximal entropy 
   $\nu_f$ of $f$ is absolutely continuous with respect to Lebesgue
   measure $\leb$ on $\CDach$.

 By our hypotheses,
there exists  a snowflake homeomorphism $\varphi \colon (\CDach,\varrho)\to
  (\CDach,\sigma)$. Then  
    \begin{equation*}
    \sigma(\varphi(x),\varphi(y))\asymp \varrho(x,y)^\beta
  \end{equation*}
  for all $x,y\in \CDach$, 
  where $\beta>0$ and  $C(\asymp)$ are 
   constants independent of $x$ and $y$. 
   
  Let $\mathcal{H}^2_\sigma$ denote $2$-dimensional Hausdorff measure on $\CDach$ with respect to the chordal metric $\sigma$, and $\mathcal{H}^{2\beta}_\varrho$
  denote $(2\beta)$-dimensional Hausdorff measure on $\CDach$ with respect to the  metric $\varrho$.
  
  Since $(\CDach, \sigma, \mathcal{H}^2_\sigma)$ is Ahlfors $2$-regular, 
  $(\CDach,  \varrho,  \mathcal{H}^{2\beta}_\varrho)$ is Ahlfors $(2\beta)$-regular 
  by Lemma~\ref{lem:snow_Ahlfors}.
   Moreover, 
   $$ \mathcal{H}^2_\sigma(\varphi(M))\asymp \mathcal{H}^{2\beta}_\varrho(M)$$
   for each Borel set $M\sub \CDach$.
   Since $\varrho$ is a visual metric and  $\mathcal{H}^{2\beta}_\varrho$ is Ahlfors regular, the measures $\nu_f$ and  $\mathcal{H}^{2\beta}_\varrho$ are comparable 
  (see the discussion after Proposition~\ref{prop:Ahlforsreg}). 
    
   Now  let $M\sub \CDach$ be an arbitrary  Borel set with $\leb(M)=0$, and define 
   $N=\varphi(M)$.  By Lemma~\ref{cor:visualqsmetric}  the map
  $\id_{\CDach}\colon (\CDach, \sigma) \to (\CDach,\varrho)$ is a
  quasisymmetry. This  implies that  the composition
  \begin{equation*}
    (\CDach,\sigma)
    \xrightarrow{\id} 
    (\CDach,\varrho) 
    \xrightarrow{\varphi} 
    (\CDach,\sigma),
  \end{equation*}
  i.e., the map $\varphi \: (\CDach, \sigma) \to (\CDach, \sigma)$,
  is also a quasisymmetry. Since quasisymmetries on $\CDach$ preserves sets of 
  (Lebesgue-) measure zero (see \cite[Theorem~3.4.1 and Theorem~3.1.2]{AIM}), we have  $\leb(N)=0$.  It follows that 
  $$ \nu_f(M)\asymp  \mathcal{H}^{2\beta}_\varrho(M)\asymp   \mathcal{H}^{2}_\sigma(\varphi(M))\asymp \leb(\varphi(M))=\leb(N)=0. $$
So $\nu_f$ is indeed absolutely continuous with respect to Lebesgue measure, and $f$ is a Latt\`es map by    Theorem~\ref{thm:abscontimpLattes}.    
\end{proof}

\ifthenelse{\boolean{singlechapter}}{ 
 
%


\chapter{A combinatorial characterization of Latt\`{e}s  maps}
\label{cha:latt-maps-comb}

In this chapter we characterize Latt\`es maps 
in terms of 
their combinatorial expansion behavior. This is based on results
by Qian Yin (see \cite{Qian15}). 

Let $f\: S^2\ra S^2$ be an expanding Thurston map, and $\CC\sub
S^2$ be a Jordan curve with $\post(f)\sub \CC$.  We defined the
quantity $D_n(f,\CC)$ for $n\in \N_0$ as the minimal cardinality
of a set of $n$-tiles for $(f,\CC)$ whose union is connected and
joins opposite sides of $\CC$ (see \eqref{def:dk} and
Definition~\ref{def:connectop}). The combinatorial expansion
factor $\Lambda_0(f)$\index{combinatorial expansion factor}
\index{L0@$\Lambda_0$} 
 of $f$ as discussed  in
Chapter~\ref{cha:combexpfac} is related to the growth rate of
$D_n(f,\CC)$, and given by
\begin{equation*}
  \Lambda_0(f)=\lim_{n\to \infty} D_n(f,\CC)^{1/n}. 
\end{equation*}
We have already seen that $\Lambda_0(f)\le \deg(f)^{1/2}$ if  $f$ has no periodic critical points; see the
discussion after Proposition~\ref{prop:Ahlforsreg}.  This
implies that $D_n(f,\CC)$ cannot grow much faster than
$\deg(f)^{n/2}$ as $n\to \infty$. As the following statement
shows, up to a multiplicative constant this is actually a
precise upper bound for the growth rate of $D_n(f,\CC)$.

\begin{prop}
  \label{prop:macgrdn}  
  \index{d0 n@$D_n$}
  \index{degf@$\deg(f)$}
  Suppose $f\: S^2\ra S^2$ is an expanding Thurston map,  and
  $\CC\sub S^2$ is a Jordan curve with $\post(f)\sub \CC$.
  Then there exists a constant $c>0$ such that
  \begin{equation}
    D_n(f,\CC)\le c \deg(f)^{n/2} 
  \end{equation} 
  for all $n\in \N$. Moreover, we have $\Lambda_0(f) \le
  \deg(f)^{1/2}$.  
\end{prop}
This will be proved in Section~\ref{sec:short-e-chains}. 
In view of Proposition~\ref{prop:macgrdn} one can ask whether
there are maps for which $D_n(f,\CC) $ actually grows as fast as
$\deg(f)^{n/2}$ as $n\to \infty$. It turns out that 
Latt\`es maps are essentially characterized by this property. This is the
main result of this chapter. 

\begin{theorem} 
  \label{thm:Qianthm} 
  Let $f\: S^2\ra S^2$ be an expanding Thurston map.  Then $f$
  is topologically conjugate to a 
  Latt\`es map\index{Latt\`{e}s map}
  if and only if the following conditions are
  true:
  
  \begin{enumerate}
  \item  
    \label{Qianthm1}  
    $f$ has no periodic critical points.
    
  \item   
    \label{Qianthm2}
    There exists $c>0$, and a Jordan curve $\CC\sub S^2$ with
    $\post(f)\sub \CC$ such that for all $n\in \N_0$ we have
    \begin{equation} 
      \label{eq:maxgrDn}
      D_n(f,\CC)\ge c \deg(f)^{n/2}.
    \end{equation} 
    \index{d0 n@$D_n$}
    \index{degf@$\deg(f)$}
  \end{enumerate} 
\end{theorem}
  
This theorem was proved by  Qian Yin as part of her thesis (the result  was later published in \cite{Qian15}).
The proof will occupy most of this
chapter. We will follow Yin's approach with some modifications
and simplifications.
  
If $f$ is a rational map, then we get a slightly stronger
statement.
    
\begin{cor} 
  \label{cor:Qianthm} 
  Let $f\: \CDach\ra \CDach$ be a rational expanding Thurston
  map. Then $f$ is a Latt\`es map if and only if condition
  \ref{Qianthm2} in Theorem~\ref{thm:Qianthm} is satisfied.
\end{cor}

We do not know whether a similar  improvement of Theorem~\ref{thm:Qianthm} is possible for arbitrary (not necessarily rational) expanding Thurston maps.    

If \eqref{eq:maxgrDn} is true for {\em some} Jordan curve $\CC$,
then the same condition holds for {\em each} Jordan curve
$\CC'\sub S^2$ with $\post(f)\sub \CC'$ (in general with a
different constant $c>0$ depending on $\CC'$). This follows
from the fact that $D_n(f,\CC)\asymp D_n(f,\CC')$ which was
shown in Lemma~\ref{lem:Dnprops} (see \eqref{DDtilde}).
  
By Proposition~\ref{prop:macgrdn} the inequality
$D_n(f,\CC)\lesssim \deg(f)^{n/2}$ is always true;  so
\eqref{eq:maxgrDn} says that $D_n(f,\CC) \asymp \deg(f)^{n/2}$
as $n\to \infty$. This asymptotic behavior of $D_n(f,\CC)$
implies that $\Lambda_0(f)= \deg(f)^{1/2}$. This equality is
slightly weaker than the requirement \eqref{eq:maxgrDn}. 

It
would be very interesting to characterize the expanding Thurston
maps whose combinatorial expansion factor is maximal in this
sense.  Besides for expanding Thurston that are topologically
conjugate to Latt\`es maps, it is also satisfied for certain
Latt\`es-type maps (similar to Example~\ref{ex:notattained}
where the associated linear map on $\R^2$ has a shear
component).

This chapter is organized as follows. In Section~\ref{sec:visual-metrics-that} we will formulate  criteria for an expanding Thurston map  to be topologically conjugate to a Latt\`es map in terms of the existence of  visual metrics with special properties (see
 Theorem~\ref{thm:visual_2Ahlfors_Lattes} and Corollary~\ref{cor:Lattes_L_degf}). Taking a technical lemma for granted (Lemma~\ref{lem:Nn_degf}), we will then derive Theorem~\ref{thm:Qianthm} and Corollary~\ref{cor:Qianthm} from this. 

The proof of Lemma~\ref{lem:Nn_degf} requires some preparation which is discussed in Section~\ref{sec:separating-sets-with}. The lemma is then proved in Section~\ref{sec:short-e-chains}, where we will also establish Proposition~\ref{prop:macgrdn}.   

\section{Visual metrics, $2$-regularity, and Latt\`{e}s maps}
\label{sec:visual-metrics-that}


The main implication (i.e., the sufficiency) in
Theorem~\ref{thm:Qianthm} 
will be a consequence of the
following statement.

\begin{theorem}
  \label{thm:visual_2Ahlfors_Lattes}
  \index{Latt\`{e}s map}
  \index{metric!visual}
  \index{visual metric}
  \index{Ahlfors regular}
  \index{$2$-regular}
  \index{r@$\varrho$}
  Let $f\colon S^2 \to S^2$ be an expanding Thurston map. Then
  $f$ is topologically conjugate to a Latt\`{e}s map if and only
  if there is a visual metric $\varrho$ for $f$ such that
  $(S^2, \varrho)$ is Ahlfors $2$-regular.
\end{theorem}

\begin{proof}
 First suppose that  $f$ is topologically conjugate to a
  Latt\`{e}s map. To see the  existence of a visual metric with the desired property, we can in fact assume that
  $f\colon \CDach \to\CDach$ is a Latt\`{e}s map. Then the
  canonical orbifold metric\index{canonical orbifold!metric}\index{metric!canonical orbifold}\index{o@$\omega$}\index{orbifold!canonical metric}  
$\varrho=\omega$ for $f$ is a visual metric
  with expansion factor $\Lambda = \deg(f)^{1/2}$ (see
  Proposition~\ref{prop:paravisorbmet}). In addition, $f$ has no
  periodic critical points (see
  Theorem~\ref{thm:Lattesstruc}~\ref{item:Lattessrucii}). By
  Proposition~\ref{prop:Ahlforsreg} the space $(\CDach, \varrho)$
  is Ahlfors $2$-regular.

  Conversely,   assume that $f\:S^2\ra S^2$ is an expanding Thurston map, 
  and $\varrho$ is a visual metric for $f$ such that $(S^2,\varrho)$ is
  Ahlfors $2$-regular.  Here the underlying measure is 
  $2$-dimensional Hausdorff measure $\mathcal{H}^2_{\varrho}$.
Since every Ahlfors regular space is
  doubling, it follows from
  Theorem~\ref{thm:S2vsf}~\ref{item:S2doubling} that $f$ has no
  periodic critical points.  
    
  The metric space $(S^2, \varrho)$ is linearly
  locally connected by
  Proposition~\ref{prop:annLLC}~\ref{item:visSphLLC}.  Hence
  Theorem~\ref{mainthm} applies and $(S^2, \varrho)$ is
  quasisymmetrically equivalent to the Riemann sphere $\CDach$
  equipped with the chordal metric $\sigma$. This in turn
  implies by Theorem~\ref{thm:S2vsf}~\ref{item:S2qsphere} that
  $f$ is topologically conjugate to a rational map. Since our
  assumptions are 
  preserved under such a conjugacy, we are reduced to the case
  where $f\colon \CDach \to \CDach$ is a {\em rational} expanding Thurston  map.

By Lemma~\ref{cor:visualqsmetric} the
  identity map $\id_{\CDach}\: (\CDach, \sigma)\ra (\CDach,
  \varrho)$ is a quasisymmetry.  Hence   Lebesgue measure $\leb$ on $\CDach$ and 
  $\mathcal{H}^2_\varrho$  are absolutely continuous with respect to
  each other by
  Proposition~\ref{prop:abscont2reg}. On the other hand, by
  Proposition~\ref{prop:Ahlforsreg} the measure of maximal
  entropy $\nu_f$ of $f$ and $\mathcal{H}^2_\varrho$ are
  comparable. Thus, $\nu_f$ is absolutely continuous with respect
  to $\leb$. Zdunik's theorem (i.e.,
  Theorem~\ref{thm:abscontimpLattes}) now implies that $f$ is a
  Latt\`es map.
\end{proof}

The previous theorem and its proof combined with  
Proposition~\ref{prop:Ahlforsreg} also give  the following statement.

\begin{cor}
  \label{cor:Lattes_L_degf}
  Let $f\colon S^2\to S^2$ be an expanding Thurston map. Then
  $f$ is topologically conjugate to a Latt\`{e}s map if and only
  if
  \begin{enumerate}
  \item $f$ has no periodic critical points, and
  
  \item there is a visual metric $\varrho$ for $f$ with
    expansion factor $\Lambda = \deg(f)^{1/2}$.  
  \end{enumerate}
\end{cor}

Thus in order to prove that conditions~\ref{Qianthm1} and \ref{Qianthm2}
in  Theorem~\ref{thm:Qianthm} imply that $f$ is topologically
conjugate to a Latt\`{e}s map, it is enough to construct a
visual metric 
$\varrho$ with expansion factor $\Lambda =\deg(f)^{1/2}$. 

We will construct such a metric from combinatorial data. To
this end, we fix a Jordan curve $\CC\sub S^2$ with $\post(f)\sub \CC$ and 
consider tiles for $(f,\CC)$. For $x,y\in S^2$  we then define 
\begin{equation}
  \label{eq:def_Nnxy}
  N_n(x,y)\coloneqq  \min\,\{  \length (P) : 
  P \text{ is a chain of $n$-tiles joining }
  x \text{ and } y\}.
\end{equation}
See Definition~\ref{def:chains} for our terminology. 
The following lemma will now provide the main step in the proof
of 
Theorem~\ref{thm:Qianthm}. 
\begin{lemma}
  \label{lem:Nn_degf}
  Let $f\colon S^2\to S^2$ be an expanding Thurston map that has
  an invariant Jordan curve $\CC\subset S^2$ with $\post(f)
  \subset \CC$. Suppose that  $\Lambda\coloneqq \deg(f)^{1/2}>2$ and that 
  condition \eqref{eq:maxgrDn} in  Theorem~\ref{thm:Qianthm} is
  satisfied for $\CC$. Then
  \begin{equation*}
    N_n(x,y) \asymp \Lambda^{n- m(x,y)} 
  \end{equation*}
   for all $x,y\in S^2$ and $n\in \N$ with 
  $n\geq m(x,y)+1$. Here the constant $C(\asymp)$ is independent of
  $x$, $y$, and  $n$. 
\end{lemma}
We use the notation  $m(x,y)=m_{f,\CC}(x,y)$, where 
$m_{f,\CC}$  is as  in
Definition~\ref{def:mxy}. Note  that 
$m(x,y)=\infty$ if $x=y$. In this case, the statement   is vacuous as there is no integer with   $n\geq m(x,y)=\infty$.

 The proof of this lemma will occupy the bulk of the next two
sections. The assumptions that $\CC$ is invariant and that
$\deg(f)^{1/2}>2$ are not essential  and  the lemma is
still true without these hypotheses, but  they will  simplify the proof.

The previous lemma gives us the following consequence.

\begin{lemma}
  \label{lem:Qian_cond_vis_ex}
  Let $f\colon S^2\to S^2$ be an expanding Thurston map that
  satisfies condition~\eqref{eq:maxgrDn} in 
  Theorem~\ref{thm:Qianthm}. Then there is a visual metric
  $\varrho$ for $f$ with expansion factor $\Lambda =
  \deg(f)^{1/2}$.
\end{lemma}

\begin{proof}
  Let $f\colon S^2\to S^2$ be an expanding Thurston map that
  satisfies~\eqref{eq:maxgrDn} for some Jordan curve $\CC\sub S^2$ with
  $\post(f) \subset \CC$. 
  
    We assume first that $\CC$ is $f$-invariant and
  $\Lambda \coloneqq   \deg(f)^{1/2}>2$. Then for $x,y\in S^2$ we define 
  \begin{equation}
    \label{eq:def_visual_Qian}
     \varrho(x,y)
     = 
     \limsup_{n\to\infty}  \Lambda^{-n} N_n(x,y). 
  \end{equation}
  Note that  $N_n(x,x)=1$ for  $x\in S^2$ and $n\in
  \N$, and so $\varrho(x,x)=0$. This together with  Lemma~\ref{lem:Nn_degf} shows that
  \begin{equation*}
    \varrho(x,y) \asymp \Lambda^{-m(x,y)},
  \end{equation*}
  where $m=m_{f,\CC}$ and $C(\asymp)$ is independent of $x$ and
  $y$. Note that in particular  $\varrho(x,y)\in [0,\infty)$ for $x,y\in S^2$.
    
So $\varrho$ will be a  visual
  metric for $f$ with expansion factor $\Lambda =
  \deg(f)^{1/2}$ if we can show that $\varrho$ is indeed a
  metric.  For $x,y\in S^2$ we obviously have
  $\varrho(x,y)=\varrho(y,x)$ and  
   $\varrho(x,y)=0$ if and only
  if $x=y$.  The triangle inequality for $\varrho$ follows
 immediately  from the inequality  
  \begin{equation*}
    N_n(x,z)\le N_n(x,y)+N_n(y,z)
  \end{equation*}
  valid for all $n\in \N_0$ and $x,y,z\in S^2$. Thus $\varrho$
  is a visual metric with expansion factor $\Lambda =
  \deg(f)^{1/2}$. 

 We now consider the general case without assuming that 
 $\CC$ is 
  $f$-invariant and $\deg(f)^{1/2}>2$. 
  By Theorem~\ref{thm:main} we can pick an iterate $F=f^k$ of
  $f$ that has an $F$-invariant Jordan curve $\CC\sub S^2$ with
  $\post(f)=\post(F)\sub \CC$. By picking $k$ large enough we may  also 
  assume that
  $\deg(F)^{1/2}>2$. 
 
  Recall that by \eqref{DDtilde} in Lemma~\ref{lem:Dnprops}
  condition 
  \eqref{eq:maxgrDn} is essentially independent of the chosen
  Jordan curve. So  we may assume that $f$
  satisfies this condition for
  the $F$-invariant curve $\CC$. Since the  $n$-tiles for
  $(F,\CC)$ are precisely  the $(nk)$-tiles for $(f,\CC)$ (see Proposition~\ref{prop:celldecomp}~\ref{item:celdecompiter}),
it
  follows that 
  \begin{equation*}
    D_{n}(F,\CC) 
    =
    D_{nk}(f,\CC) 
    \gtrsim 
    \deg(f)^{nk/2}=\deg(F)^{n/2}
  \end{equation*}
  for all $n\in \N$. So by the first part
  of the proof, there exists  a visual metric $\varrho$ for $F$ with
  expansion factor $\Lambda_F= \deg(F)^{1/2}$. 
  Proposition~\ref{prop:visualsummary}~\ref{item:vsfF} implies  
  that $\varrho$ is a visual metric for $f$ with
  expansion factor $\Lambda = \Lambda_F^{1/k} =
  \deg(F)^{1/(2k)}= \deg(f)^{1/2}$. 
\end{proof}

Assuming that Lemma~\ref{lem:Nn_degf} is valid, we can now  
finish the proof of Theorem~\ref{thm:Qianthm}. 

\begin{proof}
  [Proof of Theorem~\ref{thm:Qianthm}]
 The sufficiency part of  
  Theorem~\ref{thm:Qianthm} is now easy. Indeed,  let $f\colon S^2 \to
  S^2$ be an expanding Thurston map without periodic critical
  points,  
and $\CC\subset S^2$ be a Jordan curve 
with $\post(f)
  \subset \CC$ such that \eqref{eq:maxgrDn} is satisfied.
  By Lemma~\ref{lem:Qian_cond_vis_ex} there is a visual metric
  $\varrho$ for $f$ with expansion factor $\Lambda =
  \deg(f)^{1/2}$. Corollary~\ref{cor:Lattes_L_degf} shows that
  $f$ is topologically conjugate to a Latt\`{e}s map as desired.  

  To prove the reverse implication, let $f\colon
  S^2\to S^2$ be topologically conjugate to a Latt\`{e}s map. We
  want to show  that $f$ 
  satisfies the con\-di\-tions \ref{Qianthm1} and \ref{Qianthm2}
  in Theorem~\ref{thm:Qianthm}.  Since these conditions are
  invariant under topological conjugacy (in a suitable sense),
  we may assume that $f$ is a Latt\`es map on the Riemann sphere
  $\CDach$. Then $f$ has no periodic critical points
  (see Theorem~\ref{thm:Lattesstruc}~\ref{item:Lattessrucii}),
  and so \ref{Qianthm1} is true.
 
 We also know that $f$ has a parabolic orbifold. Let $\om$ be the
 canonical orbifold metric of $f$ on $\CDach$ (see
 Section~\ref{sec:expratThmaps}). Then by
 Proposition~\ref{prop:paravisorbmet} this metric is a visual
 metric for $f$ with expansion factor $\Lambda=\deg(f)^{1/2}$.


 Now we pick a Jordan curve $\CC\sub \CDach$ with $\post(f)\sub
 \CC$ and consider tiles for $(f,\CC)$. Fix $n\in \N$. Then,
 according to the definition of $D_n(f,\CC)$, we can find a
 connected union $K$ of $n$-tiles that joins opposite sides of
 $\CC$ and consists precisely of $D_n(f,\CC)$  tiles. Then
 $\diam_\om(K)\ge \delta_0$, where $\delta_0>0$ is the constant
 in \eqref{defdelta} for the underlying base metric $\omega$ on
 $\CDach$.  Among the $n$-tiles that form $K$ we can find a
 chain $X_1, \dots, X_N$ of distinct $n$-tiles that joins two
 points $x,y\in K$ with $\om(x,y)=\diam_\om(K)$.  Then $N\le
 D_n(f,\CC)$, and
$$ \sum_{i=1}^N \diam_\om(X_i)\ge \om(x,y)\ge \delta_0. $$ 
Since $\om$ is a visual metric for $f$ with expansion factor
$\Lambda=\deg(f)^{1/2}$,  by
Proposition~\ref{lem:expoexp}~\ref{item:expoex2}  we have $\diam_\om(X_i)\lesssim 
\Lambda^{-n}$ with  $C(\lesssim)$ independent $X_i$ and $n$. Hence
$N\deg(f)^{-n/2}\gtrsim  \delta_0$, and so 
$$ D_n(f,\CC) \ge  N\gtrsim \deg(f)^{n/2}, $$
where $C(\gtrsim)$ is independent of $n$. This shows that condition \ref{Qianthm2}
in Theorem~\ref{thm:Qianthm} is also true. 
\end{proof}

Corollary~\ref{cor:Qianthm} is an immediate consequence of Theorem~\ref{thm:Qianthm}.

\begin{proof}[Proof of Corollary~\ref{cor:Qianthm}] The ``only if" direction of the statement follows from Theorem~\ref{thm:Qianthm}. 

Conversely, suppose that $f$ is a rational expanding Thurston map 
satisfying condition \ref{Qianthm2} in Theorem~\ref{thm:Qianthm}.
Then $f$ has no periodic critical points (Proposition~\ref{prop:rationalexpch}), and so $f$ is topologically conjugate to a 
Latt\`es map by  Theorem~\ref{thm:Qianthm}. Then $f$ has a parabolic orbifold as follows from 
Proposition~\ref{prop:Thequivsamesig}. Hence $f$ is itself a Latt\`es map by Theorem~\ref{thm:Lattesstruc}~\ref{item:Lattessrucii}. \end{proof}

\section{Separating sets with tiles}
\label{sec:separating-sets-with}

In the previous section we have seen that in order to prove
Theorem~\ref{thm:Qianthm} we have  to establish Lemma~\ref{lem:Nn_degf}. This means that for given  $x,y\in S^2$ 
we have to estimate $N_n(x,y)$, the minimal number of $n$-tiles in a chain 
joining $x$ and $y$,  in terms of
$m_{f,\CC}(x,y)$ (see \eqref{eq:def_Nnxy}). Here we are of course
assuming that \eqref{eq:maxgrDn} holds.
The lower bound easily follows from \eqref{eq:maxgrDn}
together with Lemma~\ref{lem:flowerbds}.

\begin{lemma}
  \label{lem:Nn_lower_bd}
  Let $f\colon S^2\to S^2$ be an expanding Thurston map 
that has   an invariant Jordan curve $\CC\subset S^2$ with
  $\post(f)\subset \CC$. Suppose that  \eqref{eq:maxgrDn} holds and define 
  $\Lambda\coloneqq \deg(f)^{1/2}$. Then
  \begin{equation*}
    N_n(x,y) \gtrsim \Lambda^{n - m(x,y)},
  \end{equation*}
  for all  
   $x,y\in S^2$ and $n\in \N$ with 
   $n \geq
  m(x,y)+1$. 
  Here the constant $C(\gtrsim)$ is independent of $x$, $y$, and $n$.  
\end{lemma}

Here again $m(x,y)=m_{f,\CC}(x,y)$ and it is 
 understood that we use $n$-tiles for $(f,\CC)$ in the definition $N_n(x,y)$. 

\begin{proof} Let   $x,y\in S^2$ be  two distinct points and   
 $m\coloneqq m(x,y)$. We pick $(m+1)$-tiles $X^{m+1}$ and
  $Y^{m+1}$  (for $(f,\CC)$)    with $x\in X^{m+1}$ and $y\in
  Y^{m+1}$. Then $X^{m+1}\cap Y^{m+1}=\emptyset$ by definition
  of $m(x,y)$. 

  If $n\ge m+1$ and $P$ is an arbitrary chain of
  $n$-tiles joining $x$ and $y$, then $P$ also joins $X^{m+1}$
  and $Y^{m+1}$. Since these are two disjoint $(m+1)$-cells, it
  follows from Lemma~\ref{lem:flowerbds} that
  \begin{equation*}
    \length(P)\ge D_{n-m-1}(f,\CC).
  \end{equation*}
  So by using  assumption \eqref{eq:maxgrDn},
  we obtain
  \begin{equation*}
    N_n(x,y)\ge D_{n-m-1}(f,\CC) \gtrsim
    \Lambda^{n-m-1}\asymp \Lambda^{n-m}. \qedhere
  \end{equation*}
\end{proof}

We have to prove an inequality in the other
direction and  show that two points $x,y\in S^2$  can be joined by a
rather  short chain of $n$-tiles. For this we
use a duality argument that will give the existence of short
chains provided certain separating sets of tiles do not have too
small cardinality.  The key ingredient of this duality argument
is a well-known graph-theoretical statement, namely Menger's
theorem. Before we formulate this result, we first record some
definitions.
 
We consider a finite graph $G$. Here we take the combinatorial
point of view; so $G$ is just a pair $(V,E)$, where $V$ is a
finite set called the {\em set of vertices} of $G$ and $E\sub
V\times V$ is a subset of the set of pairs in $V$,
called the {\em set of edges} of $G$. We assume that $E$ is symmetric (i.e., $(x,y) \in E$ if and only if $(y,x)\in E$),  and disjoint from the diagonal $\{(x,x): x\in V\}$. 
If $(x,y)\in E$, we say that $x$ and $y$
are {\em joined by an edge} (in $G$).  Since  $E$ is symmetric,
we consider edges as non-oriented.
 
A {\em path} $P$ in $G$ is a finite sequence $x_1, \dots, x_n$ of
 vertices in $G$ so that successive vertices are distinct and 
joined by an edge. Such a path is said to \emph{join} $x_1$ and
$x_n$. 
The number $n\in \N$ is the {\em length} of $P$.  If a path
$P'$ 
can be obtained by deleting 
some of the vertices
of the sequence $x_1, \dots, x_n$, 
then it is called 
a {\em subpath} of $P$.   The path $P$ is {\em simple} if  all of its vertices are distinct.  If $A,B\sub V$, then a path in $G$ {\em joins} $A$ and
$B$, if the first vertex of the path lies in $A$, and its last
vertex lies in $B$. A path joining $A$ and $B$ in $G$ is called
an {\em $A$-$B$-path}.  A set $A\sub V$ is {\em connected} if for
all $a,a'\in A$ there exists a path in $A$ joining $a$ and
$a'$. A set $K\sub V$ {\em separates} $A$ and $B$ if every
$A$-$B$-path contains an element in $K$.  Given these
definitions, we have the following statement.

\begin{theorem}[Menger's theorem]
  \label{thm:Menger}
  \index{Menger's theorem}
  \index{max-flow min-cut theorem}
  Let $G$ be a finite graph with vertex set $V$, and $A,B\sub
  V$.  Then the minimal cardinality of a set separating $A$ and
  $B$ in $G$ is equal to the maximal number of pairwise disjoint
  $A$-$B$-paths in $G$.
\end{theorem}
   
For the proof  and more background see \cite[Section~3.3]{Di}.
This theorem can be seen as a special case of the max-flow min-cut theorem (see \cite[Section~6.2]{Di}). 

Suppose $M\in \N$ is a lower bound
for the cardinality of a set separating $A$ and $B$ in Theorem~\ref{thm:Menger}.  
Then, by  passing to subpaths if necessary,  we obtain at least
$M$ simple  $A$-$B$-paths  
that are pairwise disjoint. 
This implies that one of them must have length $\le \#V/M$. So a lower bound for the cardinality of a separating set leads to the existence of an 
$A$-$B$-path with controlled length.    
  
The graphs $G$ that we will consider in our context will have
sets of tiles as vertex sets. 
Then a tile  may be viewed in two ways: as a vertex in $G$, or as
a closed Jordan region in the underlying $2$-sphere.
In this situation we want to use
the topology of the sphere to obtain information
on separating sets in the graph. 
For this we will invoke some 
 well-known topological facts related to Janiszewski's lemma 
 (see Lemma~\ref{lem:Janiszewski}). Actually, we will require a more refined version stated in Lemma~\ref{lem:ABKsep}. Since its proof   is somewhat long and technical, and would distract from our present considerations, we included it in the appendix (see Section~\ref{sec:Janis}).

Let us  return to expanding Thurston maps. In the following, we fix such a map  $f\:S^2\ra S^2$ and 
 a Jordan curve $\CC\subset S^2$ with $\post(f)\subset
\CC$. All cells considered below  are  for $(f,\CC)$. 

Given $k\in \N_0$, we consider a simple $e$-chain of
$k$-tiles. Recall from Definition~\ref{def:e-chain} that this is
a sequence 
 $Z_1,\dots, Z_N$ of distinct $k$-tiles, where
$N\in \N$, and we suppose that there exist $k$-edges $E_1,
\dots, E_{N-1}$ with $E_i\sub Z_{i}\cap Z_{i+1}$ for $i=1, \dots ,
N-1$. 
We now consider  the ``interior'' of this $e$-chain, i.e., the set
\begin{equation} \label{defOmega}
  \Om\coloneqq  \bigcup_{i=1}^N \inte(Z_i)\cup \bigcup_{i=1}^{N-1}\inte(E_i).
\end{equation} 
Note that in general, $\Omega$ is not the interior of the underlying set
$\bigcup_{i=1}^N Z_i$; see Figure~\ref{fig:Om_AB} for an example. 

\begin{lemma}
  \label{lem:Om_simply_conn}
  \index{e-chain@$e$-chain}\index{chain!$e$-}
  The set   $\Omega$ defined  in \eqref{defOmega} is a simply connected region  in
  $S^2$. 
\end{lemma}


\begin{proof}
  For each $i=1, \dots, N-1$ the  set
 $$U_i\coloneqq \inte(Z_{i})\cup \inte(E_{i}) \cup \inte(Z_{i+1})$$
 is an open region (see Lemma~\ref{lem:specprop}~\ref{item:prop_cell4}). This
 implies that $\Om$ is open and connected, and hence a region.
 
 The additional statement that $\Om$ is simply connected follows
 from the fact that every loop $\ga$ in $\Om$ can be homotoped
 to a constant loop inside $\Om$.  To see this, define
 \begin{equation*}
   \Om_l\coloneqq  \bigcup_{i=1}^l \inte(Z_i)
   \cup 
   \bigcup_{i=1}^{l}\inte(E_{i})\sub \Om
 \end{equation*}
 for $l=1, \dots, N$, where we set $E_{N}=\emptyset$.  For
 each $i=1, \dots , N-1$ there exists a deformation retraction of
 the set $$\inte(E_{i})\cup \inte(Z_{i+1})\cup \inte(E_{i+1})$$ onto
 $\inte(E_{i})$ which implies that there exists a deformation
 retraction of $\Om_{i+1}$ onto $\Om_{i}$.  Here it is important
 that
 \begin{equation*}
   (\inte(Z_{i+1}) \cup\inte(E_{i+1}))\cap \Om_{i}=\emptyset
 \end{equation*}
 which follows from the facts that the $k$-tiles $Z_1, \dots,
 Z_N$ are all distinct and that the set $\inte(E_i)$ only meets
 the $k$-tiles $Z_{i}$ and $Z_{i+1}$ for $i=1, \dots, N-1$ (this
 follows from Lemma~\ref{lem:specprop}~\ref{item:prop_cell4}).

 By using these deformation retractions successively, every
 closed loop in $\Om=\Om_N$ can be homotoped inside $\Om$ to a
 loop in $\Om_1\sub \Om$. The set $\Om_1$ is the union of an
 open Jordan region  with an open arc on its  boundary  if $N>1$
 and an open Jordan region if $N=1$. Hence $\Om_1$ is
 contractible, and the simple connectivity of $\Om$ follows.
\end{proof}

We now assume in addition that the Jordan curve $\CC$ is
$f$-invariant. As before, $Z_1, \dots ,Z_N$ 
denotes a simple $e$-chain
of $k$-tiles. 

Fix  $n\in \N_0$ with $n\ge k$.
Since $\CC$ is $f$-invariant, the
$k$-cells are subdivided into $n$-cells. We form a graph $G$
whose vertex set $V$ consists of all $n$-tiles contained in any
of the $k$-tiles $Z_i$. Then  $V$ is the set of all
$n$-tiles $X$ with $\inte(X)\sub 
\Om$. In $G$ we join two distinct vertices in $V$ given by
$n$-tiles $X$ and $Y$ by an edge if there exists an $n$-edge
$e\sub X\cap Y$ with $\inte(e)\sub \Om$.
  
Note that if $X,Y\in V$, $X\ne Y$, and $X$ and $Y$ share an
$n$-edge $e$, then there are two possibilities. Namely, $X$ and
$Y$ may lie in the same $k$-tile $Z_i$. Then necessarily
$\inte(e)\sub \inte (Z_i)\sub \Om$ and so the vertices $X$ and
$Y$ are joined by an edge in $G$.  If $X$ and $Y$ lie in
different $k$-tiles $Z_i$, then the only situation where $X$ and
$Y$ are joined by an edge in $G$ is when $X$ and $Y$ lie in
consecutive $k$-tiles of the $e$-chain, say $X\sub Z_{i}$ and
$Y\sub Z_{i+1}$, and $\inte(e)\sub \inte(E_i)\sub \Om$.  So we
only join ``across'' the $k$-edges $E_1, \dots, E_{N-1}$, but not
across any other $k$-edge contained in $Z_1\cup \dots \cup Z_N$.
   
Given a set of $n$-tiles $M$ we use the notation $|M|$ for the
underlying point set, i.e.,
 $$ |M|\coloneqq  \bigcup_{X\in M}X. $$
The next lemma relates connectedness and separation properties of sets of $n$-tiles considered as sets of vertices in $G$ with the 
corresponding properties of the underlying sets.

\begin{lemma} 
  \label{lem:G_Om} 
  \index{e-chain@$e$-chain}
  \index{chain!$e$-}
  \index{e-connected@$e$-connected}
  With the given assumptions, the following statements are
  true: 
 
 \begin{enumerate}
 \item 
   \label{item:grsep1a} 
   Let $\ga$ be a path in $\Omega$ and
   $M\sub V$ be the set of all $n$-tiles $X$ with  $X\cap \ga\ne \emptyset$.  Then $M$ is connected in
   $G$, and $M'=|M| \cap \Om$ is a connected subset of $\Om$.
 \item 
   \label{item:grsep1b}
   Let $\ga$ be a path in the boundary $\partial Z_i$ of one of the $k$-tiles $Z_i$, 
    and $M\sub V$ be the set of all $n$-tiles $X$
   with $X\sub Z_i$ and $X\cap \ga\ne \emptyset$. Then $M$ is
   connected in $G$, and $M'=|M| \cap \Om$ is a connected subset of
   $\Om$.
 
 \item 
   \label{item:grsep2} 
   If $A,B,K\sub V$, then $K$ separates $A$ and $B$ in $G$ if
   and only if $K'=|K|\cap \Om$ separates $A'=|A|\cap \Om$ and
   $B'=|B|\cap \Om$ in $\Om$. 
\end{enumerate}
\end{lemma} 

Here we say that the set $K'$ {\em separates} $A'$ and $B'$ in $\Om$ if every path $\ga$ in $\Om$ 
joining $A'$ and $B'$ meets $K'$ (see Section~\ref{sec:Janis}
for a more detailed discussion). 


\begin{proof}
 The paths in $G$ correspond precisely to $e$-chains
consisting of
  $n$-tiles $X_1, \dots, X_m\in V$ for which there exist
  $n$-edges $e_1, \dots, e_{m-1}$ with $e_i\sub X_{i}\cap X_{i+1}$
  and $\inte(e_i)\sub \Om$ for $i=1, \dots, m-1$, where $m\in
  \N$. For the rest of the proof  we call
  such an $e$-chain of $n$-tiles simply an  {\em admissible $e$-chain}. If $M\sub V$ is a given set and  $X_1, \dots , X_m\in M$, then we say it is an  admissible $e$-chain
  {\em in} $M$. 
The admissible
  $e$-chain  $X_1, \dots,X_m$ 
\emph{joins} two $n$-tiles $X, Y$ if $X=X_1$ and $Y=X_m$ and
two points 
$p,q\in \overline \Om$ if
  $p,q\in \bigcup_{i=1}^m X_i$.

\smallskip 
  \ref{item:grsep1a}    The
  argument for this is along the lines of the proof of
  Lem\-ma~\ref{lem:echain} with small modifications.  We first  analyze connectivity properties of admissible $e$-chains near points in $\Om$.
 
  For fixed $p\in \Omega$, we define 
  \begin{equation*}
    \Om^n(p)= \bigcup\{\inte{(c)} 
    : c \text{ is an $n$-cell with } p\in c\}. 
  \end{equation*}
  Recall that by definition $\inte{(c)}=c$ if $c$ is a
  $0$-dimensional $n$-cell (i.e., $c$ is a singleton set
  consisting of an $n$-vertex).  We know that each point in $S^2$
  is contained in the interior of a unique $n$-cell (see
  Lemma~\ref{lem:uniondisjint}). Thus, there are three types of
  sets $\Om^n(p)$ depending on whether $p$ is contained in the
  interior of an $n$-tile $X_p$, the interior of an $n$-edge
  $e_p$, or is an $n$-vertex.

  In the first case, $\Om^n(p)= \inte (X_p)$. In the second case,
  $\Om^n(p)$ is the union of $\inte(e_p)$ with the interiors of
  the two $n$-tiles that contain $e_p$ (see
  Lemma~\ref{lem:specprop}~\ref{item:prop_cell4}). In the third
  case, $\Om^n(p)$ is the $n$-flower of $p$ (see
  Definition~\ref{def:flower} and
  Lemma~\ref{lem:flowerprop}~\ref{item:flower_prop1}). In particular, 
  $\Om^n(p)$ is always an open set.

\smallskip
 {\em Claim 1.} $\Om^n(p)\subset \Omega$.
 
\smallskip 
 Recall from Lemma~\ref{lem:mincell} that the interior of each 
  $n$-cell is contained in the interior of a unique
  $k$-cell. Since $\Omega$ is a union of interiors of $k$-cells, we have 
   $\inte(c)\subset \Omega$ for an $n$-cell $c$ if
  and only if $\inte(c)$ contains a point in $\Omega$.
 Now  every $n$-cell $c$ with $p\in c$ has points in
  its interior arbitrary close to $p$. Since $p\in \Om$ and $\Omega$ is open, we conclude that $\inte(c)\sub \Om$ for such a cell $c$.  Claim~1 follows.
 
\smallskip 
The same argument also shows that $\inte(X)
  \subset \Omega$   for every $n$-tile $X$ with $p\in X$;  in this case,   $X$ represents a vertex in the graph $G$.
  
   Now  let $M\sub V$ be the set associated to a path $\ga$ as in the
  statement.

   \smallskip 
  {\em Claim 2.} If $x,y\in \overline \Om^n(p)$ and $p\in \ga$, then there exists an
    admissible $e$-chain in $M$ joining $x$ and $y$.  
    
     \smallskip 
 Note that $\Om^n(p)\sub \Om$ and all $n$-tiles $X$ with $p\in X$ belong to $M$.  
 So if $\Om^n(p)$ is of the first type, then 
   $X_p$ forms an  admissible
  $e$-chain  in $M$  joining $x,y\in  \overline \Om^n(p)=X_p$. For the second
  type, the two $n$-tiles containing $e_p$ form such an
  $e$-chain. Finally, for the third type the $n$-tiles
  $X_1, \dots,X_d$ containing $p$ labeled cyclically around $p$
  form such an  $e$-chain (see
  Lemma~\ref{lem:specprop}~\ref{item:prop_cell5}). 
  Claim~2 follows.

\smallskip
     The proof that $M$ is connected in $G$ amounts to
  showing that every two $n$-tiles in $M$ can be joined by an
admissible   $e$-chain  in $M$.
 For this in turn it is enough to assume that $\ga\: [a,b]\ra \Om$ is defined on a compact interval $[a,b]\sub\R$ and 
   show that if $X,Y\in V$, and $\ga(a)\in X$, $\ga(b)\in Y$,
  then there is an admissible $e$-chain in $M$  joining $X$
  and $Y$. 
  
 In the following, we will show that there exists  an admissible
 $e$-chain $X_1, \dots, X_m$ in $M$ with $\ga(a)\in X_1$ 
and $\ga(b)\in X_m$. 
Then Claim~2  (applied to $\overline \Om^n(\ga(a))$ 
and points $x\in \inte(X)$, $y\in \inte(X_1)$)
implies 
   that there exists an admissible $e$-chain in $M$ joining $X$ and $X_1$. Similarly, we can find such a chain joining $X_m$ and $Y$. A suitable concatenation of chains will then produce 
  an admissible $e$-chain in $M$   that joins $X$ and $Y$.
  
 To find the required chain for  $\ga(a)$ and $\ga(b)$,    
 let  $I\subset [a,b]$ be the set of all numbers
  $s\in[a,b]$ such that there exists an admissible $e$-chain in $M$
  joining $\gamma(a)$ and $\gamma(s)$. Clearly $a\in I$,
  and so  $I$ is non-empty. As in the proof of
  Lemma~\ref{lem:echain}, one shows that $I$ is closed.
  On the other hand,  $I$ is open in $[a,b]$ as follows from
  Claim~2 together with the fact that $\Om^n(p)$ for each $p\in \Om$ is open. So 
  $I=[a,b]$, and it follows that $M$ is connected in $G$. 

    The connectivity of the set $M'=|M|\cap \Om$ can easily be
  derived from this. Indeed, to join  two points $p,q\in M'$,
  one chooses $n$-tiles $X,Y\in M$ with $p\in X$ and $q\in
  Y$. By what we have just seen,
one can find an
  admissible $e$-chain consisting of $n$-tiles $X_1=X, \dots, X_m=Y$
  in $M$.  There are $n$-edges $e_1, \dots, e_{m-1}$ with $e_i\sub
  X_{i}\cap X_{i+1}$ and $\inte(e_i)\sub \Om$ for $i=1, \dots,
  m-1$. 
  
  To find a path $\ga$ that stays in $|M|\cap \Om$ and joins
  $p$ and $q$, we travel from $p$ to a point in $\inte(e_1)$
  along an arc whose interior stays in $\inte(X_1)$. We cross
  over to the interior of the next tile $X_2$ and travel along
  an arc whose interior stays in $\inte(X_2)$ to a point in
  $\inte(e_2)$, etc., until we finally join a suitable point in
  $\inte(e_{m-1})$ to $q$ by an arc whose interior stays in
  $\inte(X_m)$. The concatenation of these arcs gives a path in
  $M'=|M|\cap \Om$ joining $p$ and $q$.
 
  \smallskip 
  \ref{item:grsep1b}  
  Let $\gamma$ be a path  in the boundary of one of the
  $n$-tiles $Z_i$.  Similar to the previous argument, for 
  $p\in \partial Z_i$ 
   we define
  \begin{equation*}
    \Om^n(p) = \bigcup\{ \inte(c) 
    : c \text{ is an $n$-cell with } p\in c 
    \text{ and }
    c\subset Z_i \}. 
  \end{equation*}
  If $p$ is contained in the interior of an $n$-edge $e_p\subset
  \partial Z_i$, then  $\Om^n(p)$ is the union of $\inte(
  e_p)$ with the interior of the unique $n$-tile that contains
  $e_p$ and is contained in $Z_i$. If $p$ is an $n$-vertex, there
  are exactly two
  $n$-edges belonging to the cycle of $p$ and contained in
  $\partial Z_i$.  This implies that the Jordan curve $\partial
  Z_i$ splits the $n$-tiles and the $n$-edges in the
  cycle of $p$ into two families,  
  namely those with interior contained in $Z_i$ and those with
  interior disjoint from $Z_i$. Moreover, the interiors of the
  $n$-tiles and $n$-edges in the first family are all contained
  in $\Om^n(p)$. 

  In any case,  $\Om^n(p)$ is again a relatively  open neighborhood of $p$ in
  $Z_i$. If $M$ is defined as in the statement and $p\in \ga\sub \partial Z_i$, then 
 any two points in $\overline \Om^n(p)\sub  Z_i$ can be
  connected by an admissible $e$-chain in $M$. The argument is now
  identical to the one in case \ref{item:grsep1a}. 

  \smallskip 
  \ref{item:grsep2} ``$\Leftarrow$'' 
  With the given setup suppose that $K'=|K|\cap \Om$ separates
  $A'=|A|\cap \Om $ and $B'=|B|\cap \Om$ in $\Om$. A path in $G$
  joining $A$ and $B$ corresponds to  an admissible $e$-chain $P$
  consisting of $n$-tiles $X_1, \dots, X_m\in V$ with $X_1\in A$
  and $X_m\in B$. Then there exist $n$-edges $e_1, \dots,
  e_{m-1}$ with $e_i\sub X_{i}\cap X_{i+1}$ and $\inte(e_i)\sub \Om$
  for $i=1, \dots, m-1 $.  We have to show that $P\cap K\ne
  \emptyset$.

  Similar to the last part of the proof of
  \ref{item:grsep1a}, one can find a path $\ga$ that lies in the
  set
  \begin{equation*}
    \bigcup_{i=1}^m \inte(X_i)
    \cup
    \bigcup_{i=1}^{m-1}\inte(e_i)\sub \Om,
  \end{equation*}
  and has one endpoint in $\inte(X_1)$ and one in $\inte(X_m)$.
  Then $\ga$ does not meet any tile in $V$ that does not belong
  to $P$. The path $\ga$ joins $A'$ and $B'$ in $\Om$ and hence
  meets $K'\sub |K|$ by our assumptions. By choice of $\ga$ we
  then must have $P\cap K\ne \emptyset$ as desired.
  
  \smallskip 
  \ref{item:grsep2} ``$\Rightarrow$''   
  Suppose $K$ separates $A$ and $B$ in $G$, and suppose $\ga$ is
  a path in $\Om$ joining $A'=|A|\cap \Om$ and $B'=|B|\cap
  \Om$. Let $M$ be the set of all $n$-tiles that meet
  $\ga$. Since $\ga\sub\Om$ we have $M\sub V$. By
  \ref{item:grsep1a} the set $M$ is connected in $G$, and so
  there exists a path $P$ in $G$ that starts in $A$ and ends in
  $B$ and consists of $n$-tiles in $M\sub V$. By our assumptions
  we have $P\cap K\ne \emptyset$. In particular, $M$ and $K$
  have a tile in common which implies that $\ga \cap K' \ne
  \emptyset$, where $K'=|K|\cap \Om$.  Hence $K'$ separates $A'$
  and $B'$ in $\Om$.
\end{proof}

The following lemma provides the main estimate of this section. We will use the 
quantity $D_n(f,\CC)$  to estimate the minimal number of $n$-tiles that are
required to separate certain sets.

The setting is still the same as in the previous lemma. So   we
are 
given a simple $e$-chain 
$Z_1, \dots, Z_N$ of $k$-tiles and
$\Omega$ is as in \eqref{defOmega}. The $k$-tiles $Z_i$ are
subdivided into $n$-tiles, and $G$ is the graph defined above with its  
vertex set $V$ given by 
the set of all $n$-tiles contained in one of the  tiles   $Z_i$.

\begin{lemma}
  \label{lem:size_separate}
  Let $A,B\sub V$, and suppose that each of the sets $|A|$ and
  $|B|$ contains at least two distinct $k$-vertices and that $A'=|A|\cap
  \Om$ and $B'=|B|\cap \Om$ are connected. Let $K\sub V$ be a
  set that separates $A$ and $B$ in
  $G$. 
  Then 
  \begin{equation}
    \label{eqCEL:KDn}\#K\ge D_{n-k}(f,\CC) . 
  \end{equation}
\end{lemma}

\begin{proof}  Let $K\sub V$ be a set of minimal cardinality that separates
   the given sets $A$ and $B$.
   The
  existence of such a set $K$ follows from the fact that the whole
  vertex set $V$ separates $A$ and $B$ according to our
  definition. The setup
  is
illustrated in Figure~\ref{fig:Om_AB}. Note that the dashed
edges are not part of the region $\Omega$. 
The main idea of the proof is now to show that  $|K|$ is a connected set
not contained in a $k$-flower. This will lead to the desired bound.  

\begin{figure}
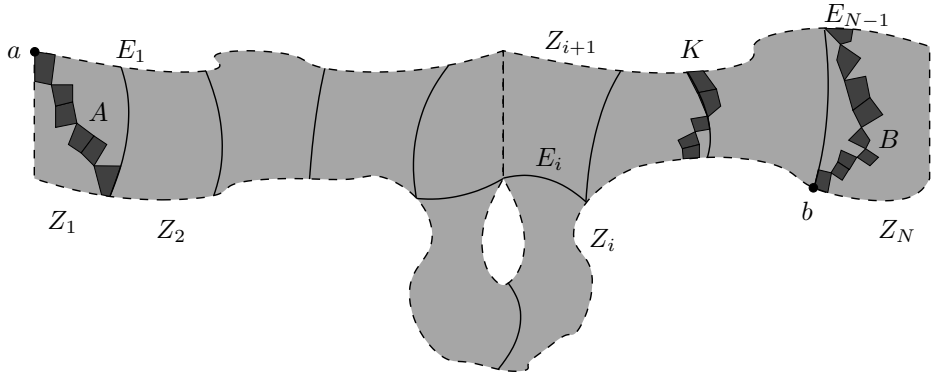
 
  \centering
  \begin{overpic} 
    [width=12cm, tics=10, 
    ]{OM_AB_2}
    \put(-2.5,35){$a$} 
    \put(6.5,28){$A$}
    \put(2,16){$Z_1$} 
    \put(14,15){$Z_2$} 
    \put(9.5,35){$E_1$}
    \put(57,36){$Z_{i+1}$} 
    \put(62,14){$Z_{i}$}
    \put(56,23){$E_i$}
    \put(72,35){$K$}
    \put(85.5,17){$b$} 
    \put(94,25){$B$}
    \put(94,15){$Z_N$}
    \put(88,39){$E_{N-1}$}
  \end{overpic}
  \caption{The setup in Lemma~\ref{lem:size_separate}.}
  \label{fig:Om_AB}
\end{figure}

By Lemma~\ref{lem:G_Om}~\ref{item:grsep2} the
  set $K'=|K|\cap \Om$ separates $A'=|A|\cap \Om$ and
  $B'=|B|\cap \Om$ in $\Om$. Since $\Omega$ is a simply
  connected region by Lemma~\ref{lem:Om_simply_conn}, we can invoke 
  Lemma~\ref{lem:ABKsep} to find a component of $K'$
  that separates $A'$ and $B'$ in $\Om$. Since the interiors of tiles
  in $V$ are connected subsets of $\Om$, this component of $K'$
  is of the form $L'=|L|\cap \Om$ with $L\sub K$. Using 
  Lemma~\ref{lem:G_Om}~\ref{item:grsep2} again, we see that the set $L$
  separates $A$ and $B$ in $G$. Hence $L=K$ by the minimality of
  $K$. 
It follows that 
$K'=L'=|K|\cap \Om$ is connected, and so the same is  true for the set 
  $|K|\sub \overline {K'}$.

   We will now show that $|K|$ is not contained in a $k$-flower. We
  argue by contradiction, and assume that $|K|\sub W^k(p)$ for
  some $k$-vertex $p$. Each of the sets $|A|$ and $|B|$ contains
  at least two distinct $k$-vertices.  So we can pick $k$-vertices $a\in
  |A|$ and $b\in |B|$ with $a,b\ne p$. There are corresponding
  $n$-tiles $X\in A$ and $Y\in B$ with $a\in X$ and $b\in
  Y$. Since $X,Y\in V$, each of these $n$-tiles must be
  contained in one of the $k$-tiles $Z_i$. To keep the notation
  in the following argument simple, let us assume that $X\sub
  Z_1$ and $Y\sub Z_N$ (the general case requires inessential
  modifications).  Then $a\in \partial Z_1$ and $b\in \partial
  Z_N$.

  By the same argument as in the proof of Lemma~\ref{lem:nocut},
  we can now find a path $\alpha$ in the (topological) graph
  $G'=\partial Z_1 \cup \dots \cup \partial Z_N$ that joins $a$
  and $b$ and avoids $p$.  Namely, we can join $a$ to one of the
  endpoints of the $k$-edge $E_1\sub Z_1\cap Z_2$ by a (possibly
  degenerate) path $\alpha_1\sub \partial Z_1$ consisting of
  $k$-edges that avoid $p$. Then we join the endpoint of
  $\alpha_1$ in $E_1$ to one of the endpoints of $E_2$ by a path
  $\alpha_2\sub \partial Z_2$ of $k$-edges that avoids $p$,
  etc. In this way, we obtain paths $\alpha_i\sub \partial Z_i$
  for $i=1, \dots, N$ whose concatenation gives a path $\alpha$
  of $k$-edges in $G'$ that joins $a$ and $b$ and does not
  contain $p$.  Then $\alpha$ consists of $k$-cells that do not
  contain $p$, and so $\alpha \cap W^k(p)=\emptyset$ (see Lemma~\ref{lem:flowerprop}~\ref{item:flower_prop3}). It follows
  that the compact sets $\alpha$ and $|K|\sub W^k(p)$ are
  disjoint, and so these sets have positive distance with
  respect to some base metric on $S^2$.
  
  By a small modification we can push $\alpha$ inside $\Om$ to
  obtain a path $\beta$ in $\Om$ that is disjoint from $|K|$ and
  joins $A'$ and $B'$. To do this, we slightly move the initial
  point $a$ of $\alpha_1$ into $\inte(X)\sub \inte(Z_1)\sub \Om$
  and the other endpoint of $\alpha_1$ to $\inte(E_1)\sub
  \Om$. We can join these new endpoints by a path $\beta_1$ that
  follows the original path $\alpha_1$ closely and stays inside
  $\inte(Z_1)\cup \inte(E_1)\sub \Om$. We slightly move the last
  point of $\alpha_2$ into $\inte(E_2)$, and join the endpoint
  of $\beta_1$ to this point by a path $\beta_2$ that follows
  $\alpha_2$ closely and stays inside $\inte(E_1)\cup
  \inte(Z_2)\cup \inte(E_2)\sub \Om$. Continuing in this way, we
  get a collection of paths $\beta_1, \dots, \beta_N$ whose
  concatenation is a path $\beta$ in $\Om$ that joins $A'$ and
  $B'$ and stays so close to the original path $\alpha$ that
  $\beta\cap|K|=\emptyset$.
 
  This contradicts the fact that $K'\sub |K| $ separates $A'$
  and $B'$ in $\Om$.  So $|K|$ is not contained in a $k$-flower.

  Inequality \eqref{eqCEL:KDn} now easily follows. Indeed,
  $L\coloneqq f^k(|K|)$ is a connected union of $(n-k)$-tiles. This set
  joins opposite sides of $\CC$. For otherwise, $L$ is contained
  in a $0$-flower (Lemma~\ref{lem:floweropp}) which in turn
  implies by Lemma~\ref{lem:mapflowers}~\ref{item:mapflowers3}
  that the connected set $|K|\sub f^{-k}(L)$ is contained in a
  $k$-flower; but we have just seen that this is not the
  case.
  
   Since $L$ joins opposite sides of $\CC$, the number of
  $(n-k)$-tiles in this set must be at least $D_{n-k}(f,\CC)$.
 Since  the set $K$ contains at least as many $n$-tiles as $L$
  contains $(n-k)$-tiles,  inequality \eqref{eqCEL:KDn} follows.
\end{proof}

\section{Short $e$-chains}
\label{sec:short-e-chains}
We can now prove 
Proposition~\ref{prop:macgrdn} and complete the proof 
 of Lemma~\ref{lem:Nn_degf} (and hence of  
 Theorem~\ref{thm:Qianthm}). In this section we  will  denote the length of an $e$-chain or a tile chain $P$ by $\#P$ instead of $\length(P)$.

\begin{proof}[Proof of Proposition~\ref{prop:macgrdn}]
 Since $f$ is an expanding Thurston map, we have $\#\post(f)\ge 3$
 (see Lemma~\ref{lem:no<3}).  We first assume  that $\CC$ is $f$-in\-vari\-ant. 
  
  Now we apply Lemma~\ref{lem:size_separate} in the
  following setting. Let $k=0$ and our $e$-chain of $0$-tiles
  consist of $Z_1=X^0_\mathtt {w}$ and $Z_2=X^0_\mathtt {b}$,
  i.e., the two $0$-tiles attached along  some $0$-edge $E_1$ 
  (which is necessarily on the boundary of both $Z_1$ and $Z_2$).  Then $\Omega$ is equal to $S^2$ with the union of the $0$-edges
  distinct from $E_1$ removed.  We can pick $0$-edges $E_0\sub
  Z_1$ and $E_2\sub Z_2$, so that $E_0$, $E_1$, $E_2$ are
  distinct. Moreover, if $\#\post(f)\ge 4$, we may assume that
  $E_0\cap E_2= \emptyset.$
 
Let $n\in \N_0$ and let $G$ be the graph as defined
  before Lemma~\ref{lem:G_Om}. In the present situation,
  the vertex set $V$  is equal to the set of all
  $n$-tiles. So $\#V=2 \deg(f)^n$. Let $A\sub V$ be the set of
  all $n$-tiles that are contained in $Z_1$ and meet $E_0$, and
  $B\sub V$ be the set of all $n$-tiles that are contained in
  $Z_2$ and meet $E_2$.  Both $|A|$ and $|B|$ contain two
  distinct $0$-vertices, namely the endpoints of $E_0$ and $E_2$,
  respectively. Moreover, $A'=|A|\cap \Om$ and $B'=|B|\cap \Om$
  are connected as follows from
  Lemma~\ref{lem:G_Om}~\ref{item:grsep1b} applied to the
  path $\ga$ given by parametrizations of the arcs $E_0$ and
  $E_2$, respectively.
 
  If $K\sub V$ separates $A$ and $B$ in $G$, then $\#K\ge
  D_n(f,\CC)$ by Lemma~\ref{lem:size_separate}. It 
  follows from  
  Theorem~\ref{thm:Menger} (see the discussion after this theorem) that there exists an  $A$-$B$-path in
  ${G}$ whose  length is bounded above by  $ \#V/
  D_n(f,\CC)$. This path gives an $e$-chain $P$ consisting of
  $n$-tiles $X_1, \dots, X_M$ with $X_1\in A$, $X_M\in B$, and
  \begin{equation*}
    \#P
    =M
    \le 
    \#V/ D_n(f,\CC)
    =2\deg(f)^n/ D_n(f,\CC). 
  \end{equation*}
  
  Then $X_1$ meets $E_0$ and $X_M$ meets $E_2$. If
  $\#\post(f)\ge 4$, then $E_0$ and $E_2$ are disjoint $0$-edges
  which implies that $|P|$ joins opposite sides of $\CC$.  Hence
  $M\ge D_n(f,\CC)$, and so
  $$ D_n(f,\CC)\le M\le  2\deg(f)^n/ D_n(f,\CC), $$
  which gives 
  \begin{equation} 
    \label{eq:dnuppbd}  
    D_n(f,\CC)\le \sqrt 2 \deg(f)^{n/2}. 
  \end{equation}  
  This is an upper bound for $D_n(f,\CC)$ as desired. 
 
 If   $\#\post(f)=3$, the argument  is along
  similar lines, but  slightly more subtle. In this case,
  $E_0,E_1,E_2$ are the three $0$-edges. The underlying set $|P|$
  of our $e$-chain $P$ meets $E_0$ and $E_2$. We want to show
  that it also meets $E_1$. To see this, we choose a path
  $\gamma$ that starts in an interior point of $X_1$, 
ends in an interior point of $X_M$, 
and stays inside $|P|\cap \Om$. This is
  possible, since $P$ forms an $e$-chain where successive tiles
  have a common $n$-edge whose interior is contained in
  $\Om$.

  Now $E_1$ splits our simply connected region $\Om$ into the
  complementary components $\inte(Z_1)$ and $\inte(Z_2)$. Since
  $\ga$ stays in $\Omega$, starts in $\inte(Z_1)$, and ends in
  $\inte(Z_2)$, it must meet $E_1$. Hence $|P|\supset \ga$ also
  meets $E_1$.  So $|P|$ meets all three $0$-edges, which means
  that this set joins opposite sides of $\CC$ (see
  Definition~\ref{def:connectop}). 
Again we
  have $M\ge D_n(f,\CC)$ and derive \eqref{eq:dnuppbd}. This
  completes the proof of the statement when $\CC$ is invariant.

  We now consider  the general case  when $\CC$ is not
  necessarily $f$-invariant. Then we can find an iterate $F=f^N$
  of $f$ that has an $F$-invariant Jordan curve $\CC'\sub S^2$
  with $\post(F)=\post(f) \subset\CC'$ (see
  Theorem~\ref{thm:main}).  Then by the first part of the proof, 
  \begin{equation*}
    D_k(F, \CC')\lesssim \deg(F)^{k/2}=\deg(f)^{kN/2}. 
  \end{equation*}
  Since the $k$-tiles for $(F,\CC')$ are the $(kN)$-tiles for
  $(f,\CC')$ (see Proposition~\ref{prop:celldecomp}~\ref{item:celdecompiter}), this means
  \begin{equation*}
    D_{kN}(f,\CC') \lesssim  \deg(f)^{kN/2}.   
  \end{equation*}
  If $n\in \N_0$ is arbitrary, we can write it as $n=kN+l$,
  where $k\in \N_0$ and $l\in \{0, \dots, N-1\}$.  Now we know
  by \eqref{DDk} in Lemma~\ref{lem:Dnprops} that
  \begin{equation*}
    D_{m+1}(f, \CC') \lesssim D_m(f,\CC') 
  \end{equation*}
  for $m\in \N_0$. 
  Applying this inequality at most $(N-1)$-times, we are   led
  to  
  \begin{align*}
    D_n(f, \CC')&=D_{kN+l}(f, \CC') \lesssim  D_{kN}(f,\CC') 
    \\  
    &\lesssim \deg(f)^{kN/2} \le  \deg(f)^{n/2}.
\end{align*} 
Moreover,  by \eqref{DDtilde} in Lemma~\ref{lem:Dnprops} we
know that
$$ D_n(f, \CC)\asymp D_{n}(f, \CC').$$ 
So 
$$D_n(f,\CC)\asymp  D_n(f,\CC') \lesssim \deg(f)^{n/2}. $$
Since in all these inequalities the implicit multiplicative
constants are independent of  $n$ (or of $k$ and $m$  as in some of
the previous inequalities), the statement follows.
\end{proof} 

In the following two lemmas we make the assumption that
$f\: S^2\ra S^2$ is an expanding Thurston map with an
$f$-invariant Jordan curve $\CC\sub S^2$ with $\post(f)\sub \CC$
that satisfies condition \eqref{eq:maxgrDn} in
Theorem~\ref{thm:Qianthm}. We allow the possibility that $f$ has
periodic critical points. Tiles in these statements will be for
$(f, \CC)$.
 
\begin{lemma} 
  \label{lem:lpd1}  
  Let $n,k\in \N$, $n\ge k$, and suppose that $X^k$ and $Y^k$
  are two $k$-tiles that are both contained in a  $(k-1)$-tile
  $U^{k-1}$.
 
  Then there exists an $e$-chain $P$ consisting of $n$-tiles
  that starts in an $n$-tile contained in $X^k$, ends in an
  $n$-tile contained in $Y^k$, and satisfies
  \begin{equation} 
    \label{eqCEL:lpd1} 
    \#P \le c' \deg(f)^{(n-k)/2},
 \end{equation} 
 where  $c'>0 $ only depends on $f$ and $\CC$.  
\end{lemma} 
 
The lemma essentially says that under our assumptions, $k$-tiles
with a common parent can be joined by an $e$-chain of $n$-tiles
with controlled length.

\begin{proof}   
  Since $f^{k-1}|U^{k-1}$ is a homeomorphism that maps $k$-tiles
  to $1$-tiles, the number of $k$-tiles contained in $U^{k-1}$
  is bounded by $N_0\coloneqq 2\deg(f)$, the number of $1$-tiles.
 
  We can pick a path $\ga\sub \inte(U^{k-1})$ that joins $X^k$
  and $Y^k$. Then $\ga$ only meets $k$-tiles contained in
  $U^{k-1}$. By Lemma~\ref{lem:G_Om}~\ref{item:grsep1a} this
  implies that there exists a simple $e$-chain consisting of
  $k$-tiles
  \begin{equation*}
    Z_1=X^k, Z_2, \dots, Z_N= Y^k 
  \end{equation*}
 with $N\le N_0$.
 
 Let $n\geq k$ be arbitrary, and  $G$ be the graph as defined before Lemma~\ref{lem:G_Om} 
 for $Z_1,\dots, Z_N$. Its vertex set  $V$
 is given by the set of  all $n$-tiles contained in any of the tiles $Z_i$.
  Let
 $A\sub V$ and $B\sub V$ 
 consist of all $n$-tiles contained in $Z_1=X^k$ and $Z_N=Y^k$,
 respectively. Then $|A|=X^k$ and $|B|=Y^k$ contain the
 $k$-vertices on the boundary of $X^k$ and $Y^k$, respectively,
 and hence both contain at least two distinct $k$-vertices.
 Moreover, if we define $\Om$ as in \eqref{defOmega}, and
 $A'=|A|\cap \Om$ and $B'=|B|\cap \Om$, then
 $\inte(X^k)\sub A'\sub X^k$ and $\inte(Y^k)\sub B'\sub Y^k$.
 This implies that $A'$ and $B'$ are connected.

  So Lemma~\ref{lem:size_separate} applies and we conclude from  
 (Menger's) Theorem~\ref{thm:Menger} that there are at least 
 $D_{n-k}(f, \CC)$  disjoint and simple  $A$-$B$-paths in $G$.  Let $P$ be a simple  $A$-$B$-path of minimal
  length. Note that $P$ forms an $e$-chain of $n$-tiles whose
  first $n$-tile is contained in $X^k$ and its last in
  $Y^k$. The number $\#V$ of vertices in $G$ is equal to the
  number of $n$-tiles contained in any of the $k$-tiles $Z_i$, and
  hence bounded by $2 N_0\deg(f)^{n-k}$. It
  follows that
  \begin{equation*}
    \#P \cdot D_{n-k}(f, \CC)\le  \#V\le 2N_0 \deg(f)^{n-k}. 
  \end{equation*}
  Using the lower bound \eqref{eq:maxgrDn} we conclude
  \begin{equation*}
    \# P \le c' \deg(f)^{(n-k)/2},
  \end{equation*}
  where $c'>0 $ only depends on $f$ and $\CC$. 
\end{proof}

\begin{lemma} 
  \label{lem:lpd2}
  Let $n,k\in \N_0$, $n\ge k$, $X^n$ and $Y^n$ be two $n$-tiles
 that are both contained in a  $k$-tile
  $U^{k}$.   Then there exists an $e$-chain $P$ of
  $n$-tiles that starts in $X^n$, ends in $Y^n$, and satisfies
  \begin{equation}
    \label{eqCEL:lpd2} 
    \#P \le c' \sum_{i=0}^{n-k} 2^{n-k-i} \deg(f)^{i/2},
  \end{equation}
  where  $c'\ge 1$ is a constant only depending on $f$ and
  $\CC$.  
\end{lemma} 
 
As we will see in the proof, we can take the same constant $c'$
in \eqref{eqCEL:lpd2} as in \eqref{eqCEL:lpd1} if $c'\ge 1$ as
we may assume.  
%
%

\begin{figure}
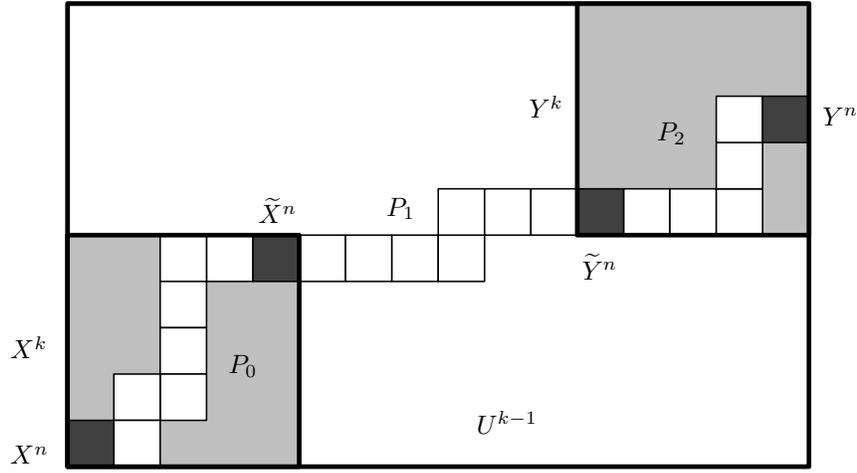

  \centering
  \begin{overpic}
    [width=10cm, tics=10,
    ]
    {Dn_le}
    \put(-7,15){$X^k$}
    \put(-7,1){$X^n$}
    \put(22,13){$P_0$}
    \put(26,33){$\widetilde{X}^n$}
    \put(43,34){$P_1$}
    \put(69,25.5){$\widetilde{Y}^n$}
    \put(79,44){$P_2$}
    \put(101,46){$Y^n$}
    \put(62,47){$Y^k$}
    %
    \put(55,5){$U^{k-1}$}
  \end{overpic}
  \caption{Connecting tiles by short $e$-chains.}
  \label{fig:tiles_short_echains}
\end{figure}

\begin{proof} 
  We prove this for fixed $n\in \N$ (and arbitrary tiles $X^n$
  and $Y^n$) by downward induction on $k=n, n-1, \dots, 0$.

  The statement is true for $k=n$. Indeed, in this case
  $X^n=Y^n=U^k$, and so the single tile $X^n=Y^n$ forms a
  suitable chain $P$.  We have $\#P\le 1$ which gives a bound as
  in \eqref{eqCEL:lpd2} for $n=k$ if $c'\ge 1$.

  Now we assume that the statement is true for some number
  $0< k\le n$. We need to show that it is also true for $k-1$.
  To see this, suppose that we have $n$-tiles $X^n$ and $Y^n$,
  and a $(k-1)$-tile $U^{k-1}$ with $X^n, Y^{n}\sub U^{k-1}$.
  Then there exist unique $k$-tiles $X^k$ and $Y^k$ with
  $X^n\sub X^k\sub U^{k-1}$ and $Y^n\sub Y^k\sub U^{k-1}$.  
  See Figure~\ref{fig:tiles_short_echains} for an illustration.

  By
  Lemma~\ref{lem:lpd1} there exists an $e$-chain $P_1$ of
  $n$-tiles with
  \begin{equation*}
    \#P_1 \le c' \deg(f)^{(n-k)/2}
  \end{equation*}
  that starts in an $n$-tile $\widetilde X^n\sub X^k$ and ends
  in an $n$-tile $\widetilde Y^n\sub Y^k$.  We can now apply the
  induction hypothesis to the $n$-tiles
  $X^n, \widetilde X^n\sub X^k$ to find an $e$-chain $P_0$ of
  $n$-tiles that starts in $X^n$, ends in $\widetilde X^n$, and
  satisfies
  \begin{equation*}
    \# P_0\le c' \sum_{i=0}^{n-k} 2^{n-k-i} \deg(f)^{i/2}.
  \end{equation*}
  Similarly, we can find an $e$-chain $P_2$ of $n$-tiles that
  starts in $\widetilde Y^n$, ends in $Y^n$, and satisfies
  \begin{equation*}
    \#P_2\le c' \sum_{i=0}^{n-k} 2^{n-k-i} \deg(f)^{i/2}.
  \end{equation*}
  Concatenating $P_0$, $P_1$, $P_2$, leads to an $e$-chain $P$
  of $n$-tiles that starts in $X^n$, ends in $Y^n$, and
  satisfies
  \begin{align*}
    \#P &\le  \#P_0 +\#P_1 +\# P_2 \\
        &\le 2 c'  \sum_{i=0}^{n-k} 2^{n-k-i} \deg(f)^{i/2} + c' \deg(f)^{(n-k)/2}\\
        &\le  c'\sum_{i=0}^{n-k+1} 2^{n-k+1-i} \deg(f)^{i/2}. 
  \end{align*} 
  The statement follows. 
\end{proof} 

If we assume $\deg(f)^{1/2}>2$, the estimate  \eqref{eqCEL:lpd2} can be simplified 
and gives an upper bound as needed for the proof of Lemma~\ref{lem:Nn_degf}.
This is the reason why this assumption was made in the lemma.

\begin{cor}
  \label{cor:length_echains} Suppose in  Lemma~\ref{lem:lpd2} we 
 make the additional assumption  that
  $\deg(f)^{1/2} >2$. Then the $e$-chain $P$ in 
  \eqref{eqCEL:lpd2} 
  satisfies 
  \begin{equation*}
    \# P \leq C \deg(f)^{(n-k)/2}, 
  \end{equation*}
  where $C$ is a constant depending only on $\CC$ and $f$.
\end{cor}

\begin{proof}
If $\deg(f)^{1/2} >2$,   then  the right hand side in  \eqref{eqCEL:lpd2} is a 
   geometric sum with terms that increase with $i$; so  up to a multiplicative constant this sum  is dominated by its last 
  term. 
\end{proof}

We are now ready  to prove
Lemma~\ref{lem:Nn_degf}. 

\begin{proof}[Proof of Lemma~\ref{lem:Nn_degf}.]
  Let $f\colon S^2\to S^2$ be an expanding Thurston map with
  $\Lambda\coloneqq  \deg(f)^{1/2}>2$. Suppose $f$  has an invariant Jordan curve $\CC\subset
  S^2$ with $\post(f) \subset \CC$ that satisfies
  \eqref{eq:maxgrDn}. 
  
  Let $x,y\in S^2$ be distinct. In Lemma~\ref{lem:Nn_lower_bd}
  we already saw that $N_n(x,y) \gtrsim \Lambda^{n-m(x,y)}$ for
   $n\geq m(x,y)+1$. This is the
  desired lower bound. 

   To obtain the upper bound, set $k\coloneqq m(x,y)\in\N_0$. Then
  by definition of $k=m(x,y)$ there exist $k$-tiles $X^k$ and
  $Y^k$ with $x\in X^k$, $y\in Y^k$, and
  $X^k\cap Y^k\ne \emptyset$. For each $n\ge k$ we can pick
  $n$-tiles $X^n, \widetilde X^n, \widetilde Y^n, Y^n$ with
  $X^n,\widetilde X^n\sub X^k$, and
  $Y^n,\widetilde Y^n\sub Y^k$, as well as
  \begin{equation*}
    x\in X^n, y\in Y^n,   
    \text{ and }
    \widetilde X^n \cap \widetilde Y^n \ne \emptyset. 
  \end{equation*}
  Then by
  Corollary~\ref{cor:length_echains} there
  exists an $e$-chain $P_1$ of $n$-tiles with
  $\# P_1\lesssim \Lambda^{n-k}$ whose first tile is $X^n$ and
  whose last tile is $\widetilde X^n$. Similarly, there exists
  an $e$-chain $P_2$ of $n$-tiles with
  $\#P_2\lesssim \Lambda^{n-k}$ whose first tile is
  $\widetilde Y^n$ and whose last tile is $Y^n$. Since
  $\widetilde X^n \cap \widetilde Y^n \ne \emptyset$, the union
  $P=P_1\cup P_2$ is a chain of $n$-tiles (not necessarily an
  $e$-chain) with $\#P\lesssim \Lambda^{n-k}$ whose first tile
  is $X^n$ and whose last tile is $Y^n$.
 
  This implies that
  $$ N_n(x,y) \le \#P \lesssim \Lambda^{n-k}= \Lambda^{n-m(x,y)}$$
  for $n\ge k$. This is the required upper bound, finishing the
  proof.
\end{proof}

By  establishing  Lemma~\ref{lem:Nn_degf} we have also completed the proof of 
Theorem~\ref{thm:Qianthm} (see the end of
Section~\ref{sec:visual-metrics-that}).

\ifthenelse{\boolean{singlechapter}}{


%


\chapter{Outlook and open problems}
\label{ch:outlook}

In this final chapter we give an outlook on  further results that are 
related to the major themes in  this book. We do not try to be
exhaustive, but rather intend  the discussion   as a first entry
point into  various other investigations. We combine this with a presentation of some
open problems that will hopefully stimulate future  research.

\subsection*{Markov partitions for Thurston maps}
The basis of our combinatorial approach to the study of an
expanding Thurston map $f$ is to consider the cell
decompositions induced by a Jordan curve containing the
postcritical points of $f$ as in Chapter~\ref{cha:celldecomp}. If  this Jordan curve is $f$-invariant,  we obtain a Markov
partition and can  describe our map in
a combinatorial fashion by  a two-tile subdivision rule (see 
Section~\ref{sec:subdivisions}). Recall that
Theorem~\ref{thm:main} ensures the existence of  an
$f^n$-invariant   curve for each sufficiently large $n\in
\N$.   

We know (see Example~\ref{ex:noinvCC})   that an expanding Thurston
 map $f$ itself   does not necessarily have an $f$-invariant 
 curve 
and that in general one has to    
pass to a suitable  iterate  in
order to obtain such a  curve. 
This naturally leads to the question whether one can bound the
order of this iterate in terms of some natural invariants  of
the map.

\begin{prob}
  \label{prob:estimate_n} 
  Let $f\: S^2\ra S^2$ be an expanding Thurston map. Is there a
  number $N_0\in \N$, depending on some natural data such as
  $\deg(f)$, $\#\post(f)$, and $\Lambda_0(f)$ such that for all
  $n\geq N_0$ there exists an $f^n$-invariant Jordan curve
  $\CC\sub S^2$ with $\post(f)\subset \CC$?
\end{prob}
Recall that $\Lambda_0(f)$  denotes the combinatorial expansion factor of $f$ (see Chapter~\ref{cha:combexpfac}).

If $f\: S^2\ra S^2$  is an  expanding Thurston map 
 and $\CC\subset S^2$ an $f^n$-invariant   Jordan curve with
$\post(f) \subset \CC$, then the  set $$\CC_n\coloneqq \CC \cup f^{-1}(\CC) \cup \dots
\cup f^{-n + 1}(\CC)$$ is easily seen to be $f$-invariant. 
This gives some type of  Markov partition for $f$, but it will not be cellular 
  as defined  
in Section~\ref{s:celldecomp}. In particular, $S^2\setminus
\CC_n$ may have  infinitely many components and in general   we have very little control over the 
geometric shapes of the ``tiles''  in this   partition.

\begin{prob}
  \label{prob:Markov_part}   
  Does every expanding Thurston map $f\:S^2\ra S^2$ have a
  (finite) cellular Markov partition?
\end{prob}

We conjecture that the answer should be affirmative. To prove
this statement, one essentially has to construct a finite
connected graph $G\subset S^2$ with $\post(f)\subset G$ that is
$f$-invariant.  A different way to phrase the problem is to ask
whether $f$ (and not some iterate $f^n$) can be described by a
(suitably defined) \emph{$k$-tile subdivision rule}. Here $k$
would be the number of components of $S^2\setminus G$.

An interesting special case is when the invariant graph
 is  a tree $T$ and so $S^2\setminus  T$ is connected (then we have a $1$-tile subdivision rule). 

 Cannon-Floyd-Parry \cite[Theorem 3.1]{CFP10} showed  that such an invariant tree exists for each 
sufficiently high iterate of a Latt\`{e}s map with signature
$(2,2,2,2)$.  They also observed 
\cite[Section~4]{CFP10} that it is
indeed necessary to take an iterate here, because they found examples of such Latt\`{e}s maps $f$ for which no
$f$-invariant tree $T$ with $\post(f)\subset T$ exists.

Farrell and Jones \cite{FJ79} constructed finite cellular Markov partitions  in a more general context, but they also had to pass to 
sufficiently high  iterates of the maps considered to guarantee existence of the Markov partition (their definition of this concept differs from ours; for yet another definition of a Markov partition,  see
 \cite[Definition~4.5.1] {PU}).

Recall from Chapter~\ref{cha:symdym} that each expanding
Thurston map $f$ can be described as a factor of the
left shift on the space of infinite words in an alphabet of  $d=\deg(f)$ elements. This 
can be used to obtain a Markov partition for $f$, but again we will have very little geometric control for the geometry of corresponding ``tiles''.

Markov partitions for Latt\`{e}s maps and how they behave under certain perturbations were discussed in \cite{Be94}.
Related results can also be found  in \cite{Re15}.

Another way to find Markov partitions  is  via the iterated monodromy
group  (the concept of a \emph{limit space} is relevant here; see \cite[Chapter~3]{Ne}) or 
by using invariant Peano
  curves (see the discussion below).
 These methods give  ``tiles'' with a complicated 
 geometric structure.

 In this work we mostly considered Thurston maps that
  are expanding. One may ask whether combinatorial descriptions
  as for these maps exist for more general types of maps.

\begin{prob}
  \label{prob:non-expanding}
  \index{Thurston map!iterate of}
  \index{iterate of Thurston map}
  \index{F f@$F=f^n$}
  Let $f\:S^2 \ra S^2 $ be a Thurston map (not necessarily
  expanding). Is there a Jordan curve $\CC\sub S^2$ with
  $\post(f)\subset \CC$ that is invariant for some iterate
  $f^n$? Are there other natural partitions of the sphere
  $S^2$ that are invariant (in a suitable sense) under   $f$ or
  some iterate $f^n$? 
\end{prob}

Related to this is a variant of Theorem~\ref{thm:main}  proved in
\cite{GHMZ}. Namely, let $f\colon \CDach \to \CDach$ be a
 rational Thurston map  whose  Julia set is a
Sierpi\'{n}ski carpet. Then for each sufficiently large $n\in \N$
there is a Jordan curve $\CC\subset \CDach$ with $\post(f)
\sub  \CC$ that is invariant for  $f^n$.

When $f$ is a polynomial, natural partitions of $\CDach$ can be  obtained from 
\emph{external rays} (see \cite{DH84}) and the related  
\emph{Yoccoz puzzles} (see \cite{Hu} and \cite{Mi00}). A lack
of similar combinatorial methods  for general  rational maps is one of the main
reasons why their study is harder than the study of polynomials.

Another question is whether and how our results extend to maps
that are not necessarily postcritically-finite.
The theory of {\em coarse expanding
  dynamical systems} developed in \cite{HP} should be relevant here. 

\begin{prob}
  \label{prob:invC_JS2}
  Let $f\colon \CDach\to \CDach$ be a rational map (not necessarily
  post\-crit\-i\-cally-finite) whose  Julia set is the
  whole Riemann sphere $\CDach$. Does there exist a natural
  combinatorial description of $f$ or some iterate $f^n$?
\end{prob}
The rational maps $f\: \CDach \ra \CDach$ of a given degree
$d\ge 2$ form a complex manifold $\mathcal{R}_d$ of dimension
$2d+1$.  Rees showed  \cite{Re} that the set of points in
$\mathcal{R}_d$ where the corresponding rational map $f$ has a
Julia set equal to the whole sphere has positive measure with
respect to the natural measure class on $\mathcal{R}_d$. Such points in $\mathcal{R}_d$ and the corresponding maps   can be  obtained by a slight perturbation of 
certain expanding Thurston maps. It would
be interesting to find combinatorial descriptions of expanding
Thurston maps that change under such perturbations in a
controlled manner.

Instead of asking whether good combinatorial models exist for
(expanding) Thurston maps, one may ask for good analytical
models. 
As we have seen, the dynamics of an expanding  Thurston map $f$ generates a 
class of visual metrics, and so a fractal geometry on the underlying $2$-sphere. This does not rule out that 
 the map $f$ can actually be described by a smooth model.   Li  showed  \cite{Li3} that no expanding Thurston map with  periodic critical points is conjugate to a smooth map, but the following problem is  still open. 

 \begin{prob}
   \label{prob3}  
   Is every  expanding Thurston map without   periodic 
   critical points  topologically conjugate to a smooth 
  expanding Thurston map on $\CDach$?       
\end{prob}
This question was raised by K.~Pilgrim.  We expect  this to be
true  for at least every  sufficiently high iterate of the
given map.

If we assume 
that  a  Thurston map is given by a two-tile subdivision rule or in some other combinatorial way, then 
one wants to know which   information about
the map can be extracted from this combinatorial
description.  This general question is a major theme in the study of the dynamics
of polynomials pioneered by Douady and Hubbard (see \cite{DH84}).


We know that  every two-tile subdivision rule is realized by a
Thurston map that is  unique up to Thurston equivalence. In contrast, a Thurston map may be realized by  combinatorially different two-tile subdivision rules.  

\begin{prob}
  \label{prob:diff_sub_same_f} Suppose two 
  Thurston
  maps  $f$ and $g$ realize different two-tile subdivision
  rules. How can one decide from combinatorial data  whether the
  maps  are 
  Thurston equivalent?
\end{prob}

Of course, there are several simple 
necessary conditions such as 
$\deg(f)=\deg (g)$ and $\#\post(f)= \#\post(g)$,  whose
validity can easily be verified  from the subdivision rules. In
addition, the maps  must have the same ramification
  portrait  (see Section~\ref{sec:defin-thurst-maps}). A related
question, namely when polynomials with the same ramification 
portrait are Thurston equivalent, was answered in \cite{BN}. The
major tool used there was the  iterated monodromy group as
discussed below. Closely related is the biset associated with a
Thurston map, which may also be used to study Thurston
equivalence (see in particular \cite{BD}). The questions of whether two  Thurston maps are equivalent or whether a Thurston map is equivalent to a
 rational map are  decidable (see
\cite{BBY12}).

\begin{prob}
  \label{prob:comb_exp}
  Let $f$ be an expanding Thurston map that  realizes a two-tile
  subdivision rule. Is there an effective way to compute the
  combinatorial expansion factor $\Lambda_0(f)$  from the combinatorial description? 
\end{prob}

Since   $\Lambda_0(f)$ is defined 
as a limit,  a priori  one cannot expect to find $\Lambda_0(f)$ by a finite 
procedure. However, if $f$ is a Latt\`es or Latt\`es-type map, then $\Lambda_0(f)$ is the smallest 
 absolute value of the two   eigenvalues   of the  matrix describing the underlying 
 torus endomorphism \cite{Qian15}. In general, one  may  speculate that if $f$ realizes a two-tile subdivision rule with underlying cell decompositions $\DD^1$ and $\DD^0$, then 
  $\Lambda_0(f)$ is related to  the spectral radius of a matrix  that is obtained from the incidence relations of the cells in $\DD^1$ and their images under $f$ in $\DD^0$.

\subsection*{Conformal dimension of the visual sphere}
Recall from Theorem~\ref{thm:S2vsf}~\ref{item:S2qsphere} 
 that if an expanding Thurston
map $f$ is topologically conjugate to a rational map, then 
its  visual sphere is a quasisphere. In other words, there  is a
quasisymmetry  $(S^2,\varrho) \to (\CDach,\sigma)$, where
$\varrho$ is a visual metric for $f$ and $\sigma$ is the chordal
metric on $\CDach$. In particular, $(S^2,\varrho)$ is then 
 quasisymmetrically equivalent to an  Ahlfors $2$-regular space, namely 
 the Riemann sphere  $(\CDach,\sigma)$ 
 
A closely related  question is how much the
Hausdorff dimension of a metric space $(X,d)$ can be lowered by a
quasisymmetric map. 
%
%
This can be  measured by  the infimum of the Hausdorff
dimensions of all metric spaces 
$(X', d')$ that are  quasisymmetrically
equivalent to $(X,d)$. Actually, often a more relevant quantity 
is   the \emph{(Ahlfors regular) conformal
  dimension} of $(X,d)$, where one takes the corresponding infimum only over 
   Ahlfors regular metric spaces $(X', d')$ (see \cite{MT10}). For metric spheres $(S^2,d)$ that are
not quasispheres  the conformal dimension measures   in a sense  by how much
$(S^2,d)$ fails to be a quasisphere. 

If $f\:S^2\ra S^2$ is an expanding Thurston map without periodic critical points, then $S^2$ equipped   with a visual metric $\varrho$ is Ahlfors regular 
  (Proposition~\ref{prop:Ahlforsreg}). Since all visual metrics for $f$ are quasisymmetrically equivalent,  the  conformal dimension of
the visual sphere  $(S^2, \varrho)$ only depends on $f$ and not on the choice of the visual metric $\varrho$. 
This is an important numerical invariant of the fractal geometry of $(S^2,\varrho)$.

\begin{prob} 
  \label{prob4} 
  Is it possible to determine the (Ahlfors regular) conformal dimension  of the visual 
  sphere of  an expanding
  Thurston map in terms of its dynamical data?
\end{prob}
  
Bonk-Geyer-Pilgrim \cite{Bo} stated a conjecture expressing this
conformal dimension in terms of eigenvalues of certain matrices
related to  the dynamics of the map.  One of the inequalities relating the quantities 
 in  this conjecture was
 established  by Ha\"\i ssinsky-Pilgrim \cite{HP08}.    

In general, the infimum defining the  conformal dimension of a metric space may not be attained as a minimum. 
For metric spheres that arise as boundaries at infinity of Gromov
hyperbolic groups the following related result was proved in
\cite[Theorem~1.1]{BK05}.
 
\begin{theorem}
  \label{thm:ahl_reg_conf_dim_Groups}
  Let $G$ be a Gromov hyperbolic group whose  boundary at infinity
  $\partial_\infty G$ is homeomorphic to a $2$-sphere. If the  conformal dimension $Q$ of $\partial_\infty G$ (equipped
  with a visual metric) is attained as a minimum, then $Q=2$ and $\partial_\infty G$ is a quasisphere. \end{theorem}
  
  As we discussed in Section~\ref{sec:Cannconj},   then there exists   an action of $G$ on hyperbolic 
  $3$-space $\Halb^3$ that is geometric, i.e.,  isometric, properly discontinuous, and   cocompact. 
A corresponding result for expanding Thurston
maps was established  in \cite{HP14}.

\begin{theorem}
  \label{thm:conf_dim_attained_Thurs}
  Let $f\colon S^2\to S^2$ be an expanding Thurston map without
  periodic critical points and suppose the conformal dimension
  $Q$ of its visual sphere is attained as a minimum. 
  
  Then either
  $Q=2$ and $f$ is topologically conjugate to a rational map,
  or $f$ 
  is a Latt\`{e}s-type map with signature $(2,2,2,2)$ and the linear part 
  $L_A$ of the affine map $A$ associated with $f$  has two distinct real eigenvalues
  $>1$ (in which case $Q>2$).
  \end{theorem}
  
   
 Ha\"{i}ssinsky and Pilgrim actually proved their result in greater generality 
 for 
\emph{topologically coarse expanding conformal maps}. These
 maps cannot have periodic critical points, but are not necessarily 
 post\-cri\-ti\-cally-finite.

\subsection*{Equivalence to a rational map}
\label{sec:equiv-rati-map}
As we already discussed in the introduction,
Theorem~\ref{thm:S2vsf}~\ref{item:S2qsphere} gives a criterion
for an expanding Thurston map to be topologically conjugate to a
rational map quite different from Thurston's theorem
(Theorem~\ref{thm:Thurston}). The latter theorem is proved by
methods fundamentally different from our combinatorial approach:
one considers a suitable Teichm\"uller space $\mathcal{T}$ and
studies a map $f^*\: \mathcal{T}\ra \mathcal{T} $ (the {\em
  Thurston pull-back map}) induced by the given Thurston map $f$
(with hyperbolic orbifold). Then $f$ is equivalent to a rational
map if and only if the induced map $f^*$ has a fixed point in
$\mathcal{T}$ \cite{DH}.  It is very desirable to reconcile these
points of view.

\begin{prob}
  \label{prob:comb_crit_f_rat}
 Is it possible  to give  a proof of Thurston's theorem  that   does not
  use Teich\-m\"{u}l\-ler theory and is  based on a 
  combinatorial description of the given map?
\end{prob}

As we know, such combinatorial descriptions are given, for example, 
by two-tile subdivision rules.

\begin{prob}
  \label{prob:f_rational}
Suppose an expanding Thurston map  $f$  realizes a two-tile
  subdivision rule. Is it possible to decide from the
  subdivision rule whether $f$ is Thurston equivalent to a
  rational map? 
\end{prob}

Of course, one should interpret this  as asking for a criterion 
that is easier to check in practice than the one provided by
Thurston's theorem.

An additional motivation for considering these problems  is that by 
Theorem~\ref{thm:S2vsf}~\ref{item:S2qsphere} they are closely related to the question of   characterizing quasispheres.
So their solution may give new ideas that could  also be used in the group setting and possibly  be applied for progress on  Cannon's
conjecture (see Section~\ref{sec:Cannconj}).


%
Thurston's theorem (Theorem~\ref{thm:Thurston}) is most useful as
a negative  criterion, 
 allowing  one to decide when a Thurston map is not equivalent
to a rational map by finding a Thurston obstruction.
For an obstructed Thurston map, i.e., a Thurston map that is
not equivalent to a rational map, in general there may 
be several different Thurston obstructions. Pilgrim showed that  in this case  one can single out  a ``canonical'' Thurston obstruction  \cite{Pi01}.

For a Thurston map for which  each cycle in the postcritical set
contains a critical point, a sufficient criterion for the map to
be equivalent to a rational map was established by Dylan Thurston
in joint work with Kahn and Pilgrim (see \cite{Th15} and
\cite{KPT}).

\subsection*{Special classes of maps}
\label{sec:nearly-eucl-maps}
 Since Latt\`{e}s maps form the best understood subclass of 
Thurston maps, it is natural to investigate    maps that are closely related to 
Latt\`{e}s maps. One such class, the nearly Euclidean 
  Thurston maps,  was  introduced and studied in \cite{CFPP12}. By definition 
a Thurston map $f\colon S^2\to S^2$ is called 
\emph{nearly Euclidean}\index{nearly Euclidean Thurston map}\index{Thurston map!nearly Euclidean} 
if it has exactly four 
postcritical points and every critical point has local degree
equal to $2$.  These maps can  
have hyperbolic orbifolds 
and  
periodic critical  points. 

 One can construct such maps as follows. 
Let 
 $g\colon S^2\to S^2$ be  a Latt\`{e}s-type map with signature 
$(2,2,2,2)$,  and 
$h\colon S^2\to S^2$ be    an orien\-tation-preserving homeomorphism such that 
 $h(\post(g)) \subset
g^{-1}(\post(g))$. 
Then $f= h\circ g$ is a nearly Euclidean
Thurston map. 

In general, it is difficult to use Thurston's theorem and  check whe\-ther a given Thurston map (with hyperbolic orbifold) is equivalent to a rational map, because 
the map may have infinitely many   invariant multicurves each of which could be a Thurston obstruction. So potentially one has  to verify infinitely many conditions. 
For nearly Euclidean Thurston maps, however,  it is possible to give an explicit algorithm that  decides whether the map is equivalent to a rational map. 

A 
\emph{Bely\u{\i} map}\index{Belyi map@Bely\u{\i} map}
is a holomorphic map $F\colon S\to
\CDach$, defined on a compact Riemann surface $S$, that is
ramified 
over three points. This means that $F$ has exactly three critical
values which  are usually taken to be $\{0,1,\infty\}$. In this case, the set 
$G\coloneqq F^{-1}([0,1])\subset S$ is a topological graph  with vertex
set $F^{-1}(\{0,1\})$. Its edges  are  the closures of the  components of
$F^{-1}((0,1))$.  The graph $G$ is bipartite if one distinguishes 
the vertices in $F^{-1}(0)$ and $F^{-1}(1)$. They are 
often marked by black and white dots, respectively. 
 The resulting diagram is called the
\emph{dessin d'enfant}\index{dessin d'enfant} 
of $F$ (introduced by Grothendieck in \cite{Gro97}). It determines $F$ up to pre- and
postcomposition with conformal maps. In particular, it defines $S$
up to conformal equivalence. Bely\u{\i}'s theorem says that each
non-singular algebraic curve defined over the field $\overline{\Q}$ of algebraic numbers
 can  be represented  by a Bely\u{\i} map. The \emph{absolute
  Galois group} $\operatorname{Gal}(\overline{\Q}/\Q)$ (i.e., the
group of field automorphisms of  $\overline{\Q}$ that fix
$\Q$) acts on these algebraic curves, and so on the set of
dessins d'enfants. See \cite{LanZvo} for an introduction to this
subject. 

Let  $f\colon \CDach \to \CDach$ be a Bely\u{\i} map
defined on $\CDach$ whose set of critical values is given by 
$\{0,1,\infty\}$. In general, $f$ is not a dynamical object, because the iterates of $f$ will not be  Bely\u{\i} maps in general. This is true if $\{0,1,\infty\} \subset f^{-1}(\{0,1,\infty\})$. Note that this can always be achieved by precomposing $f$ with a suitable M\"{o}bius
transformation. In this case, $f$ is a rational Thurston map with
$\post(f)= \{0,1,\infty\}$. The
representation of such a map $f$ by its dessin d'enfant is
closely related to  our description of $f$ by cell
decompositions. In fact, 
if we choose $\CC= \widehat{\R}$, then 
the $1$-skeleton $f^{-1}(\CC)=f^{-1}(\widehat{\R})\supseteq f^{-1}([0,1])$  of $\DD^1(f,\CC)$ contains the dessin
d'enfant of $f$.


The relation between Hubbard trees and dessins d'enfants for 
polynomials $P$ (with $\post(P) =\{0,1,\infty\}$ equal to 
the set of critical values of $P$) was investigated in \cite{Pi00}. 

\subsection*{Invariant Peano curves and mating of polynomials}
If we  denote by $\Sph^1=\partial \D$ the unit circle in $\C$ and
by $S^2$ a $2$-sphere (as before), then a {\em Peano curve} in $S^2$ is a continuous and surjective map
$\ga \: \Sph^1\ra S^2$. 
The following result was established  in
\cite{Me09b}.

\begin{theorem}
  \label{thm:inv_peano}
  \index{Thurston map!iterate of}
  \index{iterate of Thurston map}
  \index{F f@$F=f^n$}
  Let $f\: S^2\ra S^2$ be an expanding Thurston map. Then for each sufficiently
  high iterate $F=f^n$ there is a 
 {Peano curve} $\gamma\colon \Sph^1 \to S^2$  such that
  $F(\gamma(z))= \gamma(z^d)$ for all $z\in \Sph^1 $, where $d=\deg
  (F)$.
\end{theorem}
A  Peano curve $\ga$ as in this statement is called {\em $F$-invariant}. 
One can actually say more about $\ga$ 
here;
namely, if we identify $S^2$ and $\CDach$ so that $\Sph^1\sub \CDach \cong S^2$, then there 
exists a pseudo-isotopy  $H\: S^2\times I\ra S^2$ with 
 $H_0=\id_{S^2}$ and $H_1(z)=\ga(z)$ for 
 $z\in \Sph^1\sub S^2$.

Theorem~\ref{thm:inv_peano} says
 that 
  the following diagram commutes:
  \begin{equation*}
    \xymatrix{
      \Sph^1 \ar[r]^{z\mapsto z^d} \ar[d]_{\gamma}
      &
      \Sph^1 \ar[d]^{\gamma}
      \\
      S^2 \ar[r]_F & S^2\rlap{.}
    }
  \end{equation*}

On a more intuitive level, 
the theorem
can  be phrased as follows:  if we 
wrap $\Sph^1$  around itself $d$ times, then we obtain    the map $F$ through the  parametrization of $S^2$ by the Peano curve $\ga$.

The construction of the invariant Peano curve $\gamma$ in
Theorem~\ref{thm:inv_peano} is very similar  to the iterative
construction of  invariant Jordan curves in 
Section~\ref{subsec:ittproc}.

According to ``Sullivan's dictionary'' there is a close
correspondence between the dynamics of rational maps and of
Kleinian groups \cite{Suldic}. In
\cite{CanThu}  Cannon-Thurston constructed   Peano curves
related 
 to  the fundamental group of a
hyperbolic $3$-manifold $M^3$ that fibers over the
  circle. Theorem~\ref{thm:inv_peano} may be viewed as the
corresponding result in the case of rational maps. This  provides  
 another entry in Sullivan's dictionary.

There is a converse to Theorem~\ref{thm:inv_peano} (see
\cite{Me09b}); namely,  if for a Thurston map $f\colon S^2\to S^2$
there exists an iterate $F=f^n$ that has an $F$-invariant Peano
curve, then $f$ is expanding.  
\index{Thurston map!iterate of}
\index{iterate of Thurston map}
\index{F f@$F=f^n$}
\index{Thurston map!expanding}
\index{expanding}


In the early 1980s Douady and Hubbard observed  that
there are rational maps with Julia sets that ``contain'' the
Julia sets of some polynomials. This motivated  them to
introduce the notion of a \emph{mating of polynomials}. This 
operation combines two polynomials  geometrically,  often
giving  a rational map. In fact, Thurston's characterization  of
rational maps (Theorem~\ref{thm:Thurston}) was in part
motivated by the question when a map arising as a mating ``is''
a rational map. The notion of Thurston equivalence
also appears naturally  in this context. 

 There are many different
variants of matings. Here we will only define the one most relevant  
for us.  An  introduction to
matings can be found in \cite{Mi04} and  an overview of the different constructions  in \cite{MeyPet}.

Let   $P(z)=z^n+ a_{n-1}z^{n-1} + \dots +a_0$ be  a  monic  polynomial with complex coefficients, and $n=\deg(P)\ge 2$.  Then the \emph{filled Julia set} $\mathcal{K}= \mathcal{K}_P$ of $P$ is
the set of all points $z\in \C$ with bounded orbit
$\{P^n(z)\}_{n\in\N}$ in $\C$. We assume  that $\mathcal{K}$ is connected
and locally connected.  Then  there is a conformal  map $\phi\colon
\CDach\setminus \overline{\D}\to \CDach \setminus \mathcal{K}$
that satisfies $\phi(z^n)= P(\phi(z))$ 
for all $z\in \CDach\setminus \overline{\D}$ (this is {B\"{o}ttcher's theorem}; see
\cite[Section~9]{Mi} or \cite[Section II.4]{CG}). By {Carath\'{e}odory's
  theorem} (see, for example, \cite[Theorem~17.14]{Mi})  the map
$\phi$ extends to the unit circle $\Sph^1=\partial \D$. We call
the restriction $\sigma\colon
\Sph^1 \to \mathcal{K}$ of this extension to the unit circle   the \emph{Carath\'{e}odory loop}.  Then 
$\sigma(\Sph^1)= \partial\mathcal{K}= \mathcal{J}$ is the Julia set
of $P$. 

Since $\sigma$ is the extension of $\phi$, it follows
that $\sigma(z^d) = f(\sigma(z))$ for 
all $z\in \Sph^1$, i.e., the following diagram commutes:
\begin{equation}
  \label{eq:Cara_semi}
  \xymatrix{
    \Sph^1 \ar[r]^{z\mapsto z^d} \ar[d]_\sigma & \Sph^1 \ar[d]^\sigma
    \\
    \mathcal{J} \ar[r]^f & \mathcal{J}\rlap{.}
  }
\end{equation}

In general, the map $\sigma$ is not injective and so we only obtain 
 a {\em semi-conjugacy} here. 
 The existence of this  semi-conjugacy is one of the main reasons 
why  the dynamics of polynomials is much
better understood than the dynamics of arbitrary rational maps. In particular, it
can be used  to describe the dynamics of a polynomial on its  Julia set  in
combinatorial terms.

A \emph{(topological) mating}\index{mating} is now  defined as
follows. Let $P_\wt$ and $P_\bt$ be 
two monic  polynomials of the
same degree $d\ge 2$ (our use of the indices $\wt$ and $\bt$ is motivated 
by the close connection to the coloring of tiles as discussed in 
Section~\ref{sec:tiles}). 
We assume 
that their filled Julia sets   $\mathcal{K}_\wt$ and  $\mathcal{K}_\bt$ are connected and
locally connected (equivalently, one can impose these conditions on the Julia sets of the polynomials). Let $\sigma_\wt \colon \Sph^1 \to \mathcal{K}_\wt$
and $\sigma_\bt \colon \Sph^1\to \mathcal{K}_\bt$ be the corresponding 
Carath\'{e}odory loops. We now consider the (topological) disjoint union $\mathcal{K}_\wt
\sqcup \mathcal{K}_\bt$. Then a map  $P_\wt \sqcup P_\bt$ is naturally
defined on this set by letting it act on $\mathcal{K}_\wt$ as $P_\wt$ and on  $\mathcal{K}_\bt$ as $P_\bt$. 
Then  $P_\wt \sqcup P_\bt$  is clearly  a continuous map on  
$\mathcal{K}_\wt
\sqcup \mathcal{K}_\bt$. 

Let $\sim$ be  the equivalence relation  on
$\mathcal{K}_\wt \sqcup \mathcal{K}_\bt$ generated
by  (i.e., the smallest equivalence relation satisfying) the relation
\begin{equation}
  \label{eq:szszbar}
  \sigma_\wt(z)\sim \sigma_\bt(\bar{z})
\end{equation}
for  $z\in \Sph^1 =\partial \D$. Then the  \emph{mating} of
$\mathcal{K}_\wt$ and $\mathcal{K}_\bt$ is defined as $\mathcal{K}_\wt\mate
\mathcal{K}_\bt\coloneqq  \mathcal{K}_\wt \sqcup \mathcal{K}_\bt/\!\sim$.  
Moreover,  based on Lemma~\ref{lem:f_descends} 
 it follows from \eqref{eq:Cara_semi}
that the map
\begin{equation*}
  P_\wt \sqcup P_\bt\colon \mathcal{K}_\wt 
\sqcup \mathcal{K}_\bt \to \mathcal{K}_\wt \sqcup \mathcal{K}_\bt
\end{equation*}
descends to the quotient by $\sim$, i.e., to a map
\begin{equation*}
  P_\wt \mate P_\bt \colon \mathcal{K}_\wt \mate \mathcal{K}_\bt \to
  \mathcal{K}_\wt \mate \mathcal{K}_\bt.
\end{equation*}
This map is called the  \emph{(topological) mating} of $P_\wt$ and
$P_\bt$. 

The space $\mathcal{K}_\wt \mate \mathcal{K}_\bt$ may not be a
``nice'' topological space; in fact, it may not even be Hausdorff.
Surprisingly often though, the mating results in a map
$P_\wt\mate P_\bt$ that is topologically conjugate to a rational
map. This is particularly striking in cases when
$\mathcal{K}_\wt$ and $\mathcal{K}_\bt$ are \emph{dendrites} and
have no interior points.

The situation is best understood  for quadratic polynomials. 
The following statement is currently the best result  on the existence of
matings. 

\begin{theorem}
  \label{thm:mating_quadratic}
  Let $P_{\wt}(z)= z^2 + c_{\wt}$ and $P_{\bt}(z)= z^2 + c_{\bt}$ be
  post\-criti\-cally-finite quadratic polynomials such that
  $c_{\wt}$ and $c_{\bt}$ are not contained in conjugate limbs
  of the Mandelbrot set. Then the mating $P_{\wt} \mate P_{\bt}$
  is topologically conjugate to a (postcritically-finite)
  rational map. 
\end{theorem}

For the terminology and the proofs see \cite{Le92}, \cite{Re92},
and \cite{Sh00}. 
Instead of asking when two polynomials can be mated, one can also investigate  when  a rational 
 map $f$ arises as a mating of two polynomials
$P_{\wt}$ and $P_{\bt}$. If this is  the
case, we say that $f$ \emph{unmates} into $P_{\wt}$ and
$P_{\bt}$. The following result about unmatings was  established 
in \cite{Me09c}. An
overview of the construction, as well as several examples, can 
be found in \cite{Me14}.

\begin{theorem}
  \label{thm:exp_Th_matings}
  \index{Thurston map!iterate of}
  \index{iterate of Thurston map}
  \index{F f@$F=f^n$}
  Let $f\colon S^2\to S^2$ be an expanding Thurston map without
  periodic critical points. Then every sufficiently high iterate
  $F=f^n$ is topologically conjugate to the mating of two
 monic  polynomials $P_\wt$ and $P_\bt$. 
\end{theorem}

 Here the polynomials $P_\wt$ and $P_\bt$ are  postcritically-finite  and have  the same degree  as
$F$. Their  Julia sets  are dendrites. 

Theorem~\ref{thm:inv_peano} and Theorem~\ref{thm:exp_Th_matings}
are closely related. It is not hard to see that Theorem~\ref{thm:exp_Th_matings} implies
Theorem~\ref{thm:inv_peano}, but for  the proof one actually  first 
 establishes  the latter  theorem and then derives the former 
  as a consequence.

We also remark that there is a version of
Theorem~\ref{thm:exp_Th_matings} for expanding Thurston maps
that do have periodic critical points. 

In \cite{Me14} a sufficient criterion was given when an
expanding Thurston map $f$ unmates into two polynomials  and an algorithm was  provided 
    to determine
the  polynomials. However, the criterion  was not necessary.

\begin{prob}
  \label{prob:f_unmates}
  Is it possible to give   a necessary and sufficient condition when an expanding
  Thurston map unmates into two polynomials? Can the polynomials be determined  by an algorithm in this case? 
 \end{prob}

\subsection*{The iterated monodromy group}
\label{sec:iter-monodr-group}
An important question is how to decide when two given Thurston
maps are equivalent. A prominent example where this is relevant
and hard to decide is in the following situation. There are
exactly three distinct (up to conjugacy by a M\"obius
transformation) quadratic polynomials whose critical point is
periodic with period $3$; each such map has exactly four
postcritical points (including $\infty\in \CDach$). These maps
are known as the ``rabbit'', the ``anti-rabbit'', and the
``airplane''. If we postcompose such a map with a Dehn twist about a
Jordan curve separating two of the postcritical points from the
other two, then  this results in a Thurston map that is
(orientation-preserving) Thurston equivalent to one of these three
maps. Deciding whether the map is equivalent to the rabbit, the
anti-rabbit, or the airplane is known as the \emph{twisted rabbit
  problem}. It was solved by Bartholdi and Nekrashevych \cite{BN}
by using the concept of the {\em iterated monodromy group}.  This
is an important group associated with every Thurston map. It was
first considered by Kameyama \cite{Ka03a} and systematically
studied by Nekrashevych \cite{Ne} in a more general setting.

To define this group, we consider 
 a Thurston map $f\colon S^2\to
S^2$ and  a point $p\in S^2\setminus \post(f)$. For each $n\in \N_0$ let
$V_n\coloneqq  f^{-n}(p)$ be the preimage set of $p$ under $f^n$.   Then the  \emph{preimage tree} of $p$ with respect to
$f$ is the graph $T=(V,E)$, whose  set of vertices is the disjoint union
 $V= \bigsqcup_{n\in \N_0} V_n$. Moreover, if  $x\in V_n\sub V$  
with $n\geq 1$,  then we connect the  vertices $x$ and
$f(x)\in V_{n-1}$ by an edge and  all edges arise  in this way. It is clear that the graph 
$T$ is indeed a tree. 

Now  consider a loop $\gamma\subset S^2\setminus \post(f)$
starting (and ending) at $p$.  Then  $\gamma$ represents an element
$g=[\gamma]$ in the fundamental group  $G=\pi_1(S^2\setminus \post(f), p)$ of 
$S^2\setminus \post(f)$. If 
 $x\in f^{-n}(p) = V_n \subset V$,  then $\gamma$ can be
lifted by  $f^n$ to a path  $\gamma_x$ starting at $x$. The endpoint $y$
of $\gamma_x$ will also belong to  $V_n\subset V$. By the homotopy
lifting theorem (\cite[Proposition~1.30]{Ha}) $y$ depends only on the homotopy class $g=[\ga]$ of 
$\gamma$.  If we set
$g(x) =y$, then one can show  that  $g$ induces   an automorphism $\varphi(g)$ 
of the tree $T$. This defines  an action of $G$  on $T$,  and, if we denote by $ \Aut(T)$ 
 the automorphism group  of $T$, we obtain a group homomorphism 
$\varphi
\colon G \to \Aut(T)$. The
{\em iterated monodromy group}   $\img(f)$ of $f$   is defined as the quotient of $G$ that acts effectively on $T$, or more precisely, 
\begin{equation*}
  \img(f) 
  \coloneqq  
  G/\ker(\varphi) 
  \cong 
  \varphi(G), 
\end{equation*}
where $\ker(\varphi)$ denotes the kernel of $\varphi$.

The iterated monodromy group is invariant under Thurston equivalence in the sense that two equivalent  Thurston maps have the same iterated monodromy group up to isomorphism.  In fact, with some additional data, it is a
complete invariant for  Thurston equivalence  (see
\cite[Theorem~6.5.2]{Ne}). The solution of the twisted rabbit problem  by Bartholdi and 
Nekrashevych was based on this fact.



Iterated monodromy groups are  \emph{self-similar groups} (see
\cite{Ne}). They can be quite complicated, even  for very simple maps. 
In general, their algebraic properties (such  as torsion or amenability) are poorly 
understood.  Here we will only discuss one particularly interesting aspect of iterated monodromy groups in  more detail, namely their growth behavior.   We first have to recall 
 some definitions.

 Let $S$ be a finite and symmetric set of generators for a finitely-generated 
group $G$. For  $g\in G$ let $\ell(g)=\ell_{G,S}(g)$ be the
minimal length of a word in the alphabet  $S$ that represents $g$. This is  equal to 
the distance of $g$ from the neutral element $e$ of $G$ in the Cayley graph
$\mathcal{G}(G,S)$ (recall these concepts from  Section~\ref{sec:Cannconj}). 

Let $N= N_{G,S} \colon \N_0 \to \N$ be the {\em growth
function} of $G$  given by $$N(n) =  \# \{g\in G : \ell(g) \leq n\} \quad \text{for $n\in \N_0$.} $$ We
say that $G$ is of \emph{polynomial growth} if $N(n) $ is
bounded from above by a polynomial in $n$, and  of 
\emph{exponential growth}
if $N(n)$ is bounded from below by an exponential
function of the form $C\exp(\alpha n)$ with  $C,\alpha>0$.
If $G$ is neither of polynomial nor of exponential growth, then 
we say that it is of \emph{intermediate growth}. The growth behavior of $N$ 
is independent of the choice of the generating set $S$, and can
therefore be considered as a property of the group $G$. 
 
For example, free groups and fundamental groups of closed
hyperbolic manifolds are of exponential growth. A celebrated
theorem due to Gromov says that a group is of polynomial growth
if and only if it is {virtually nilpotent} (see \cite{Gr81} and \cite{Kl10}). This answered a question raised by Milnor
in  \cite{Mi68}. In the same note, Milnor asked whether groups of
intermediate growth actually exist. First examples of such
groups were later found by Grigorchuk \cite{Gr84}.
 
It is quite striking that iterated monodromy groups of very
simple rational maps can be groups of intermediate growth. For
example, this is the case for $\img(P)$ where 
$P(z)=z^2+\iu$ (see \cite{BP06}).  While intermediate growth of the iterated
monodromy group has been shown for some other polynomials,
at present no general sufficient condition for this to be true  is  known.


If  a postcritically-finite quadratic polynomial $P$ 
has two distinct Fatou components whose closures 
 intersect in a single point, then it is not hard
to show  that its iterated monodromy group is of exponential
growth. For example, this is true for $P(z)=z^2-1$.
 

Latt\`{e}s maps and Latt\`{e}s-type maps  have iterated monodromy groups that are virtually isomorphic to $\Z^2$. 
Apart from some  special cases such as the examples discussed, very little is known in general about the growth 
of iterated monodromy groups of postcritically-finite polynomials, and even 
 less   for Thurston maps that are non-polynomial (i.e., not Thurston polynomials; see Section~\ref{sec:further-results}). Some 
 examples of non-polynomial Thurston
maps that  have iterated monodromy groups of
exponential growth were found  in \cite{HM}. 
 
\begin{prob}
  \label{prob:growth_img}
  Are there non-polynomial Thurston maps with 
  iterated mono\-dromy groups  of intermediate growth? 
\end{prob}

\subsection*{Ergodic theory of expanding Thurston maps}
\label{sec:ergod-theory-expand}
The ergodic theory of expanding Thurston maps was 
developed further by Zhiqiang Li. In \cite{Li1} it was shown that
the measure of maximal entropy of an expanding Thurston map can
be obtained as a weak{$^*$}-limit of point masses at periodic
points, or at 
preimages of any point. Similar results for
rational maps had  been established   before  by Lyubich \cite{Ly83}. 

Li also investigated  \emph{equilibrium
  states} for 
expanding Thurston maps \cite{Li2}. These are measures obtained from 
H\"{o}lder continuous functions, called {\em potentials}.   For the precise definition, let $f\: S^2\ra S^2$
be an expanding Thurston map and $\phi\: S^2\ra \R$ be a H\"older continuous function. Here $S^2$ is equipped with a visual metric for $f$. 
We define the {\em topological pressure} of  $\phi$  with respect to $f$  as 
\begin{equation}\label{eq:toppre} 
 P(\phi, f) =\sup_{\mu}\, \biggl \{h_{\mu}(f) + \int \phi \,d\mu\biggr\}, 
\end{equation} 
where the supremum is taken over all $f$-invariant (Borel) probability measures on $S^2$ (recall that 
$h_\mu(f)$ denotes the measure-theoretic entropy of $f$ with respect to $\mu$; see Section~\ref{sec:reviewmdyn}).

 An $f$-invariant  measure $\mu_{\phi}$ for  which the supremum in \eqref{eq:toppre} is
attained  is called an \emph{equilibrium state}. Li showed  the
existence and uniqueness of equilibrium states for any
H\"{o}lder continuous potential $\phi$. These
measures can also be described as weak{$^*$}-limits of suitably
weighted point masses at  periodic points or preimage points of a given point.  

Bowen
\cite{Bo72}  introduced 
the concept of an \emph{$h$-expanding} map and Misiurewicz \cite{Mi76}  the weaker notion of  an \emph{asymptotically
  $h$-expanding} map. Roughly
speaking,  these notions mean that the map is  expanding in a
 strong 
sense except on a set of topological entropy $0$.
We will not give the precise definitions here, because they are
somewhat technical. Li showed \cite{Li4} that no expanding
Thurston map is $h$-expanding and that an expanding Thurston map
is asymptotically $h$-expanding if and only if it has no
periodic critical points. A comprehensive account of Li's work
on the ergodic theory of expanding Thurston maps can be found in
\cite{Li3}.

For a rational expanding Thurston map
$R\colon \CDach \to \CDach$ there exists a unique $R$-invariant (Borel) measure $\lambda_R$ that is absolutely continuous 
  with respect to Lebesgue measure 
 $\leb$ on $\CDach$ (see
Theorem~\ref{thm:ex_inv_abs_L} and
Section~\ref{sec:Ruelle}).  Suppose 
that an expanding Thurston map $f\colon S^2\to S^2$
is topologically conjugate to a rational map
$R\colon\CDach \to \CDach$, i.e., there is a homeomorphism
$h\colon S^2\to \CDach$ such that $f= h^{-1}\circ R\circ h$.
Then we can  pull back the measure $\lambda_R$ by $h$ to obtain a measure $\lambda_f$ on $S^2$. More explicitly, $\lambda_f$
is defined by setting $\lambda_f(A) = \lambda_R(h(A))$ for each  Borel set
$A\subset S^2$. One can show that  
the measure $\lambda_f$ only depends on $f$ and not on the choice
of the map $h$ that conjugates $f$ to a rational map 
(this can be derived from the uniqueness statement in
Theorem~\ref{thm:ex_inv_abs_L}). 

Often   $f$ is
known to be topologically conjugate to a rational map, even though we do not have an explicit 
conjugating map $h$. A simple example for this situation is  when
the expanding Thurston map  $f$ has precisely three  postcritical points, but no
periodic critical points (see
Theorem~\ref{thm:3postrat}~\ref{item:3post_exp}).

\begin{prob}
  \label{prob:lambda_intr}
  Assume an expanding Thurston map $f\colon S^2\to S^2$ is
  topologically conjugate to a rational map. Is it possible to
  construct   the measure $\lambda_f$ intrinsically? 
\end{prob}

In other words, we would like to obtain 
the measure  $\lambda_f$ without the use of the
conjugating map $h$.  If one can find a good characterization of the measure $\lambda_f$
if it exists, it might be possible to decide whether $f$ is
topologically conjugate to a rational map by measure-theoretic
methods. This is related to Thurston's theorem.

\ifthenelse{\boolean{singlechapter}}{ 

\appendix 
%
%

\begin{appendix}

\chapter{}
\label{Appendix}
In this appendix we collect various facts whose discussion would
have interrupted the main flow of our presentation. Among other
things we discuss branched covering maps 
(Section~\ref{sec:appbracovmap}) and orbifolds
(Sections~\ref{sec:orbifolds-coverings}
and~\ref{sec:expratThmaps}) in quite some detail, because it is
hard to find the statements relevant for us in the literature.

\section{Conformal metrics}
\label{sec:metrspterm}
 
 Here we summarize some standard metric space terminology and record facts  related to conformal metrics.
 
Let $(X,d)$ be a metric space.  
A {\em path} $\ga$ in $X$ is a continuous 
map $\ga\: [a,b]\ra X$ defined on some interval $I=[a,b]\sub \R$. Sometimes one considers also paths defined on half-open or open intervals $I\sub \R$. 
As is common, we also use the notation $\ga$ for the image set $\ga(I)\sub X$ of a path. 
 
A  path $\ga\:[a,b]\ra X$ {\em joins} two points 
$x,y\in X$ if $\ga(a)=x$ and $\ga(b)=y$.  
The {\em length} of  $\ga$ is given as 
$$ \length_d(\ga)\coloneqq  \sup \sum_{k=1}^n d(\ga(t_{k-1}),\ga(t_k))\in [0,\infty], $$
where the supremum is taken over all $n\in \N$ and all points $t_0=a<t_1<\dots<t_n=b$. The path $\ga$ is called {\em rectifiable} (with respect to $d$) if $L\coloneqq \length_d(\ga)<\infty$. 
In this case, we  define $L(t): =\length_d(\ga|[a,t])$ for $t\in [a,b]$. Then there is a unique path 
$\widetilde \ga\: [0,L]\ra X$, called the {\em arclength parametrization} of $\ga$,  such that 
$\widetilde \ga(L(t))= \ga(t)$ for  $t\in [a,b]$. 

If $\rho\: X\ra [0,\infty]$ is a Borel function, we define 
the path integral of $\rho$ along the rectifiable path $\ga$ as 
$$ \int_{\ga}\rho\, ds: = \int_0^L\rho(\widetilde  \ga(s))\, ds. $$
The metric   $d$ is called a 
{\em length  metric}\index{length!metric}\index{metric!length}
or {\em path  metric}\index{path!metric}\index{metric!path}
if 
$$
d(x,y)\coloneqq \inf_{\gamma} \length_d(\gamma)
$$
for  $x,y\in X$, 
where the infimum is taken over all  paths $\gamma$ in $X$ joining $x$ and $y$. The 
metric is a 
{\em geodesic metric}\index{geodesic metric}\index{metric!geodesic} if this  infimum is attained as a minimum. A path realizing 
this  infimum  is called a {\em geodesic segment} joining $x$ and $y$. 

Suppose $U$ is a region in the complex plane $\C$ equipped with the Euclidean metric, and $\rho\:U\ra (0,\infty)$ is a positive and continuous function on $U$.
Then we can define a metric on $U$ by setting 
\begin{equation}\label{eq:confmetr}
 d(u,v)= \inf_\ga  \int_{\ga}\rho(z)\, |dz|
\end{equation} 
for $u,v\in U$, where the infimum is taken over all rectifiable paths $\ga$  in $U$ joining 
$u$ and $v$ and $|dz|$ refers to integration with respect to Euclidean arclength. 
We say that $d$ is the 
{\em conformal metric}\index{conformal metric}\index{metric!conformal}
on $U$ with 
{\em length element}\index{length!element}
$ds=\rho(z)\, |dz|$ and call $\rho$ the {\em conformal factor} of $d$.

Let $\D\coloneqq \{z\in \C: |z|<1\}$ be the unit disk in the complex plane
 $\C$. Then the 
{\em hyperbolic metric}\index{hyperbolic!metric}\index{metric!hyperbolic} 
$d_0$ on $\D$ is defined as the conformal metric with length element 
\begin{equation}
  \label{eq:def_hyp}
  ds= \frac{2|dz|}{1-|z|^2}.  
\end{equation}
The space $(\D, d_0)$ is geodesic and a model of the hyperbolic
plane $\mathbb{H}^2$.  The conformal automorphisms of $\D$ are precisely the
orientation-preserving isometries of $(\D, d_0)$.

Similarly, if $\mathbb{H}=\{ z\in \C:\imag(z)>0\}$ is the upper half-plane in $\C$, then the 
hyperbolic metric on $\mathbb{H}$ is given by the length element 
\begin{equation}
  \label{eq:def_hyponH}
  ds= \frac{|dz|}{\imag(z)}.  
\end{equation}
If we equip $\mathbb{H}$ with this metric, then $\mathbb{H}$ and $(\D,d_0)$ are isometric spaces.

The Riemann sphere  $\CDach=\C\cup\{\infty\}$ can be equipped with two natural 
metrics that are essentially equivalent. The 
{\em spherical metric}\index{spherical!metric}\index{metric!spherical}
is a conformal  metric 
on $\Cdach$  given by the  length element 
 \begin{equation}\label{eq:sphlel} 
  d\sigma=\frac{2|dz|}{1+|z|^2}
  \end{equation} 
 (strictly speaking, this gives the restriction of the spherical metric to $\C$). 
 This is actually a geodesic metric on $\Cdach$. 
 
 One can  identify $\Cdach$  with the unit sphere in $\R^3$ via
stereographic projection. The \emph{chordal
  metric}\index{chordal metric|textbf}\index{metric!chordal|textbf}\index{S.@$\sigma$}
$\sigma$ on $\CDach$ is the metric that corresponds to the
Euclidean metric in $\R^3$ under  this identification. More explicitly, 
\begin{equation}
  \label{eq:def_chordal}
  \sigma(z,w)= \frac{2\abs{z-w}}{\sqrt{1+\abs{z}^2}\sqrt{1+\abs{w}^2}}
\end{equation}
for $z,w\in \C$, and 
$$\sigma(\infty,z)=\sigma(z,\infty)=  \lim_{w\to \infty}
\sigma(z,w)= \frac{2}{\sqrt{1+ \abs{z}^2}}$$ 
for $z\in \C$. 

The spherical and the chordal metrics on $\Cdach$ are comparable up to a uniform 
factor that approaches $1$ for small distances. Accordingly, the length of paths are the same for both metrics. Usually, we equip $\Cdach$ with the chordal metric $\sigma$ 
and consider the spherical metric as an ``infinitesimal version'' of $\sigma$.

Let $f\: U\ra \CDach$ be a holomorphic map
on a region $U\sub \CDach$.  Then 
its expansion with
respect to the chordal metric is measured by the 
{\em spherical derivative}\index{spherical!derivative}.
It is given by
\index{$f^{\sharp}$} 
\begin{equation}
  \label{eq:defsphericaldf}
  f^{\sharp}(z)
  \coloneqq\lim_{w\to z} \frac{\sigma(f(w), f(z))}{\sigma(w,z)}
  =  \frac{1+|z|^2}{1+|f(z)|^2}|f'(z)|
\end{equation}
for  $z\in \CDach$. If $z,f(z)\in \C$, 
then $f'(z)$ denotes 
the derivative of $f$ at $z$ as usual. If 
 $z=\infty$ or $f(z)=\infty$, then the  last
expression in \eqref{eq:defsphericaldf} has to be understood as a suitable limit.

Similarly, let  $f\: U\ra V$
be a holomorphic map 
between regions  $U,V\sub\C$. Suppose $U$ and $V$ 
are  equipped with length metrics 
$d$ and 
$\widetilde d$  
induced by conformal factors 
$\rho$ and  $\widetilde \rho$,
respectively.  Then the  distortion of these  metrics by $f$ at a point $z\in U$ is measured by    
\begin{equation}\label{eq:gennorm}
\Vert f'(z) \Vert_{\rho, \widetilde \rho} \coloneqq  \lim_{w\to z} \frac {{\widetilde d}(f(w), f(z))}{d(w,z)} = \frac {\widetilde \rho( f(z)) } {\rho(z)} |f'(z)|. 
\end{equation} 
Here we simply write $\Vert f'(z) \Vert$ if the metrics and their conformal factors are clear from the context. 
  
If $f\: U \ra \CDach$ is
differentiable at a point $p$ of a region $U\sub \CDach$, but not necessarily holomorphic, then  $Df(p)$ stands for the derivative of $f$ at $p$, considered as a linear map between the tangent spaces
at $p$ and $f(p)$.  We use  $\Vert Df(p)\Vert_\sigma$  to indicate  the
norm of $D(p)$ with respect to the spherical metric; if
$p, f(p)\in \C$ and $\Vert Df(p)\Vert$ denotes the Euclidean
norm, then \index{$\norm{Df}_\sigma$}
\begin{equation}
  \label{eq:sphder2} 
  \Vert Df(p)\Vert_\sigma
  = 
  \frac{(1+|p|^2)\Vert Df(p)\Vert}{1+|f(p)|^2}. 
\end{equation} 
In  case  $f$ is holomorphic, this agrees with the
spherical derivative of $f$ at $p$.

If $U\sub \CDach$ is a region and $\rho$ is a positive
continuous function on $U$, then one can define a conformal
metric with length element $ds=\rho\,d \sigma$ as in
\eqref{eq:confmetr}, where we use integration  with respect to 
spherical arclength $d\sigma$  as in \eqref{eq:sphlel}
  instead of $|dz|$. It is convenient 
   (see Sections~\ref{sec:expratThmaps}) to allow 
{\em singular conformal metrics}\index{conformal metric!singular}\index{singular conformal metric} 
with a  continuous conformal factor $\rho\colon U\setminus P \to(0,\infty)$, where $P\sub U$ is a discrete set in $U$ (i.e., it has no limit points in $U$) such that for each $p\in P$ we have $\rho(z) \to 0$ or $\rho(z) \to
\infty$ as $z\to p$. If
 in the latter case $\rho(z) \lesssim
\sigma(z,p)^{-\alpha}$ for $z$ near $p$ with  $\alpha<1$, then the conformal metric with length element $ds=\rho\, d\sigma$ on $U\setminus P$  extends to a
length metric on $U$.

\section{Koebe's distortion theorem}
\label{sec:Koebe}

In this section we discuss some distortion estimates for conformal maps that can be derived from the classical Koebe distortion theorem. Since the  conformal maps we are interested in  are usually  defined on subregions  of the Riemann sphere, 
it is most natural  to formulate the  estimates in terms of
spherical derivatives  and chordal distances (see Section~\ref{sec:metrspterm}).  This  mostly amounts to a straightforward translation of the corresponding classical distortion estimates for the Euclidean  metric with one {\em caveat}: to get uniform control 
for the constants, it is important to require that the image region of the 
conformal map  is not too large in the Riemann sphere. 
We will impose the condition  that  the image region of the map  is contained in a
 hemisphere of $\CDach$, i.e., a chordal disk of radius $\sqrt 2$. 

 \begin{theorem}[Spherical version of Koebe's distortion theorem]
   \label{thm:Koebe}
   \index{Koebe distortion}
   \index{distortion}
Suppose that
   $0<r<R< \diam_\sigma(\CDach)=2$, $z_0\in \CDach$,
   $B\coloneqq B_\sigma(z_0,R)\subset \CDach$, and  $g\colon B\to
   g(B)\subset \CDach$ is a conformal map such that its image
   $g(B)$ is 
   contained in a hemisphere of $\CDach$. Then for all $z,w\in  \widetilde B \coloneqq 
   B_\sigma(z_0,r)$ we have 
   \begin{align}
     \label{eq:Koebe_dg}
     g^\sharp(z) & \asymp g^\sharp(w) \quad \text{and}\\
     \label{eq:Koebedzw}
    {\sigma(g(z),g(w))} &\asymp g^\sharp(z) \sigma(z,w).
      \end{align} 
      Here $C(\asymp)= C(r/R)$, and $C(r/R)\to 1$ as $r/R\to
      0$.

      Moreover, there exist two  constants $c_1=c_1(r/R)>0$ and
      $c_2=c_2(r/R)>0$ such that for $w_0\coloneqq  g(z_0)$ we have 
   \begin{equation}
     \label{eq:Koebe14}
     c_1  B_\sigma(w_0, g^{\sharp}(z_0) r)
     \sub  
    g(B_\sigma(z_0,r))
    \sub 
    c_2 B_\sigma(w_0, g^\sharp(z_0 ) r).
   \end{equation}
   \end{theorem}
Here we use the notation
$\lambda B_{\sigma}(w_0, r)\coloneqq  B_\sigma(w_0, \lambda r)$ for $\lambda> 0$.

  If we use   the Euclidean metric and the usual derivative, then for conformal maps defined on Euclidean disks and with  images in $\C$ the statements \eqref{eq:Koebe_dg},  
 \eqref{eq:Koebedzw},   and \eqref{eq:Koebe14}   immediately follow 
 from  the classical\index{Koebe distortion}
 Koebe distortion theorem (see \cite[Theorem~1.3 and Corollary~1.4]{Po}).  
 
  By considering  $g(z)=nz$ with $n\in \N$  on the unit disk  $\D$ one can  see that statement \eqref{eq:Koebe_dg}, for example, is not true with a constant independent of 
  the map if one does not impose some restriction on its image.
 
 \begin{proof}
We will derive  the spherical versions of the distortion statements   from the Euclidean versions as follows. 
 
By pre- and postcomposing the map $g$ with auxiliary rotations of $\CDach$ (which can be realized by M\"obius transformations), we may assume that $z_0=0$ and $g(B)\sub \D$. 
Let  $r'$ and $R'$ be the Euclidean radii of the 
disks $\widetilde B=B_\sigma(0,r)$ and $B=B_\sigma(0,R)$,
respectively. Then it follows from \eqref{eq:def_chordal} that 
\begin{equation}
  \label{eq:rrRR}
  r'=r/\sqrt{4-r^2}\text{ and } R'=R/\sqrt{4-R^2}.  
\end{equation}
Since  $R\leq 2$, and so  $r\le 2r/R$, we obtain the estimate 
\begin{equation} \label{eq:Euclradcontr}
 r'\le \frac{r}{2\sqrt{1- (r/R)^2}} \leq \frac{1}{\sqrt{1-(r/R)^2}}.
 \end{equation}
 Thus on $\widetilde{B}$ the chordal and the Euclidean metrics
 differ by a multiplicative constant only depending on $r/R$.  On
 $g(B)\subset \D$ chordal and Euclidean metrics differ by a
 uniform multiplicative constant. 

 Note that \eqref{eq:rrRR} gives 
 \begin{equation} 
  \label{eq:ratiocontr}  
  r'/R'\le r/R.
\end{equation}  
Thus the statements \eqref{eq:Koebe_dg}--\eqref{eq:Koebe14}
follow from their Euclidean counterparts. 
Moreover, in  \eqref{eq:Koebe_dg} and  
\eqref{eq:Koebedzw}  we  indeed have 
$C(r/R)\to 1$ as $r/R\to 0$ which again easily follows from the
Euclidean counterpart of this statement.  
\end{proof} 
 
 Of course, for a single conformal map $g$ we always have an estimate as  in \eqref{eq:Koebe_dg}
 if we allow the constant also to depend on the map, because the spherical derivative of a conformal map is a positive continuous function.  In applications 
 we often consider families of maps where all maps have images
 contained in a hemisphere with possibly finitely many exceptions. Then one still obtains an estimate as in \eqref{eq:Koebe_dg}  with a uniform constant for the whole family if one uses the uniform constant in  \eqref{eq:Koebe_dg} and adjusts it so that the estimate remains valid also for the  finitely many exceptional maps.

We also require   a version of  Koebe's distortion theorem  
for conformal maps on multiply connected subregions of $\CDach$.   To get uniform distortion estimates, we again assume that     the image of the conformal map is  contained in a
 hemisphere.

\begin{lemma} 
  \label{lem:distest} 
   \index{Koebe distortion}
   Let $\Om\sub \CDach$ be a region, and $A,B\sub \Om$ be compact
   sets each consisting of at least two points. 
   Then for each conformal map
   $h\: \Om\ra \Om'\coloneqq h(\Om)\subset \CDach$ whose image
   $\Om'$ is contained in a hemisphere of $\CDach$ we have
\begin{align}
\label{eq:dhdiamhA}
   \diam_\sigma (h(A)) &\asymp h^\sharp (a)   \text{ and}  
  \\
  \label{di1}
  \dist_\sigma(h(A), \partial \Om')&\gtrsim   h^\sharp (a)  
   \text{ for each $a\in A$,  }\\
   \intertext{where $C(\asymp)=C(A,\Om)$ and $C(\gtrsim)=C(A,\Om)$. Moreover, }
  \label{di0}
    \diam_\sigma(h(A))& \asymp  \diam_\sigma(h(B)),     \end{align} 
  where $C(\asymp)=C(A, B, \Om)$.
\end{lemma}
 
The main point in Lemma~\ref{lem:distest} is that under the given assumptions the constants in the inequalities are independent of $h$.

 \begin{proof}   
In the following,  all metric notions refer to the chordal  metric $\sigma$.  If $D=B(z_0,r)$ is a  disk, we use the notation   $2D=B(z_0,2r)$
for the disk with the same center and twice the radius. Note that $\CDach\setminus \Om$ 
necessarily contains more than two points; so if $2D=B(z_0, 2r)\sub \Om$, then 
$2r<2=\diam(\CDach)$, and we can apply the distortion estimates of 
Theorem~\ref{thm:Koebe} for the disks $\widetilde B=D$ and $B=2D$.

A 
{\em Harnack chain}\index{chain!Harnack}\index{Harnack chain}
(in $\Om$) is a sequence $D_1, \dots, D_n$ of  disks  with $2D_i
\sub \Om$ for $i=1, \dots, n$ and   $D_{i}\cap D_{i+1}\ne \emptyset$ for $i=1, \dots, n-1$. We call $n$ the length of the Harnack chain, and say that it joins two points $u,v\in \Om$ if $u\in D_1$ and $v\in D_n$. 
Note that if $u$ and $v$ are two points in $\Om$ that can be joined by a Harnack chain of length 
$n$, then repeated application of \eqref{eq:Koebe_dg}  leads to $h^\sharp(u)\asymp h^\sharp(v)$ with 
$C(\asymp)=C_0^n$, where $C_0\ge 1$ is a universal constant. 

Now any two points in a compact subset $K$ of $\Om$ can be joined by a Harnack chain 
whose length is uniformly bounded only depending on $K$ and $\Om$.
This implies that 
\begin{equation}
  \label{eq:Koebe2_dh}
  h^\sharp(u) \asymp h^\sharp(v) \quad \text{for all $u,v\in A$,}
\end{equation}
 where $C(\asymp) = C(A,\Omega)$ is
independent of $u$, $v$, and  $h$.

Let  $a,u,v\in A$ be arbitrary. Then there exists a Harnack chain in $\Om$ that joins
$u$ and $v$ and has length uniformly bounded from above. Then \eqref{eq:Koebedzw}, the triangle inequality, and \eqref{eq:Koebe2_dh} 
give 
$$ \sigma(h(u),h(v))\lesssim h^\sharp(a)$$ 
with $C(\lesssim)=C(A,\Om)$.
Hence $\diam(h(A))\lesssim h^\sharp(a)$ with an implicit multiplicative constant only depending on $A$ and $\Om$. 
This gives one of the estimates  in  \eqref{eq:dhdiamhA}. 


To show the other estimate  in \eqref{eq:dhdiamhA}, we fix two distinct points $u_0,u_1\in A$ and a disk $D$ centered at $u_0$ with   $2D\sub \Om$ and $u_1\not\in D$. Then $h(u_1)\not\in h(D)$. So by the first inclusion in  
\eqref{eq:Koebe14} and by \eqref{eq:Koebe2_dh}, we have 
$$  \diam(h(A))\ge \sigma(h(u_0),h(u_1))\gtrsim h^\sharp(u_0)
\asymp h^\sharp(a)$$
for each $a\in A$ with implicit multiplicative constants only depending on $A$ and $\Om$.

To prove \eqref{di1}, we note that by  \eqref{eq:Koebe14} and \eqref{eq:Koebe2_dh} we have 
\begin{equation*}
  \dist(h(u), \partial \Omega')\gtrsim  h^\sharp(u) \dist(u,\partial \Omega)\asymp h^\sharp(a)
\end{equation*}
for all $u,a\in A$, where $C(\asymp)=C(A,\Om)$. Inequality  \eqref{di1} follows. 

Finally, in order to establish \eqref{di0} pick $a\in A$ and $b\in B$. 
 By similar  arguments as above one sees that 
$\diam(h(B))\asymp |h^\sharp(b)|$ with 
$C(\asymp)=C(B,\Om)$, and that $|h^\sharp(a)|\asymp |h^\sharp(b)|$ with $C(\asymp)=C(A,B,\Om)$.
Hence 
$$ \diam(h(A))\asymp |h^\sharp(a)|\asymp |h^\sharp(b)|\asymp
\diam(h(B))$$ 
with 
$C(\asymp)=C(A,B,\Om)$ as desired.  
\end{proof}

\section{Janiszewski's lemma}
\label{sec:Janis}

In this section we discuss some  topological facts related to separation of sets. We will  establish  Lemma~\ref{lem:ABKsep} that is required for the proof of Theorem~\ref{thm:Qianthm}.   

In the following, $S^2$ is  a
topological $2$-sphere.
If $\Om\sub S^2$ is a region, and $A,B\sub \Om $, then a set
$K\sub \Om $ {\em separates} $A$ and $B$  in $\Om$
if for every path $\ga$ in $\Om$ joining $A$ and $B$ (i.e.,
$\ga$ has one endpoint in $A$ and one in $B$), we have
$\ga \cap K \ne \emptyset$. Note that this is meaningful even if
$K$ is not disjoint from $A$ or $B$. We omit the phrase ``in $\Om$'' if
$\Om$ is understood.

 The set $K$ {\em
  separates} $x\in \Om$ and $y\in \Om$ (or $x\in \Om$ and a set
$B\sub \Om$) if $K$ separates $A=\{x\}$ and $B=\{y\}$ (or
$\{x\}$ and $B$) in $\Om$.   
 
The following fact is well known. 
 
\begin{lemma}[Janiszewski's lemma] 
  \label{lem:Janiszewski}
  \index{Janiszewski's lemma}
  Let $K,L\sub \R^2$ be two closed sets with
  $K\cap L=\emptyset$. If two points $x,y\in \R^2$ are separated
  by $K\cup L$, then they are separated by $K$ or by $L$.
\end{lemma}

A version of this can be found in \cite[Theorem~III.4.A]{Bi};
exactly the same statement is true if $\R^2$ is replaced with a
$2$-sphere $S^2$.

We need a  more sophisticated lemma in the same spirit. 

\begin{lemma}
  \label{lem:ABKsep} 
  Let $\Om\sub S^2$ be a simply connected region, 
$A,B\sub \Om$ be connected sets, and $K\sub \Om $ be a set 
that is relatively
  closed in $\Om$ and has finitely many connected components. If
  $K$ separates $A$ and $B$ in $\Om$, then one of the components
  of $K$ separates $A$ and $B$ in $\Om$.
\end{lemma} 
 
This is a rather straightforward consequence of
Jani\-szew\-ski's lemma if we make the additional assumption
that the sets $A$ and $B$ do not meet $K$. This assumption is
not convenient in our application of the lemma though (see the proof of 
Lemma~\ref{lem:size_separate}).
Possible crossings of the sets $A$ or $B$ with $K$ complicate
the situation and require a somewhat more involved argument.

 
\begin{proof} 
  If $\Om\ne S^2$, then $\Om $ is homeomorphic to $\R^2$, and,
  as we have a purely topological statement, we may assume
  $\Om=\R^2$.  We will first present the proof in this case, and
  comment on the minor changes necessary for $\Om=S^2$ after the
  argument.
  
  We first establish the statement under an additional
  hypothesis.
  
\smallskip 
{\em Special Case:} The set $B$ is a singleton set, i.e., $B=\{b\}$, where $b\in \R^2$. 

\smallskip
We run an induction on the number $n$ of components of $K$.  The
induction beginning $n=1$ is clear. For the induction step, we
assume that the statement is true (for given $A$ and $B$) for
sets $K$ with at most $n\in \N$ components. Now let $K$ be a
closed set in $\R^2$ with $n+1$ components that separates $A$
and $B$. Then we can decompose $K$ as $K=C\cup K'$, where $C$ is
a component of $K$, and $K'$ consists of the other $n$
components of $K$.  Since $K$ is closed, the sets $K'$ and $C$
are also closed.

Let $S\sub A$ be the set of all points $a\in A$ that are
separated from $B$ by $C$. Similarly, let $S'$ be the set of all
points $a\in A$ that are separated from $B$ by
$K'$. Janiszewski's lemma now implies  that $A=S\cup S'$. Here we
are using the assumption that $B$ is a singleton set.
 
If $S=A$, then $C$ separates $A$ and $B$, and we are
done. Assuming from now on that $S\neq A$, we will show in the
following that $S'=A$. Since $K'$ has $n$ components, our
induction hypothesis then applies. This means there exists a
component of $K'$, and hence also a component of $K$, that
separates $A$ and $B$. This will finish the argument in this
case.

If $S=\emptyset$, then $S'=A$, which is our desired statement.
So we are reduced to the case where $S\ne \emptyset$ and
$S\ne A$.  

\smallskip
{\em Claim~1.} The sets $S$ and $S'$ are relatively closed in
  $A$.

 \smallskip  
Assume first that $S$ is not relatively closed in $A$. Then
there is a sequence $\{a_n\}$ of points in $S$ with $a_n\to a$
as $n\to \infty$, where $a\in A\setminus S$. Then $C$ does not
separate $a$ and $B$, and so there is a path $\ga$ in $\R^2$
joining $a$ and $B$ that does not meet $C$. In particular,
$a\notin C$, and so there exists a small path-connected
neighborhood of $a$ in $\R^2$ disjoint from $C$; but then by
traveling first from $a_n$ to $a$ in this neighborhood and then
along $\ga$, for large $n$ we can join $a_n$ and $B$ by a path
that avoids $C$, contradicting our assumption that $a_n\in S$.
Thus $S$ is relatively closed in $A$. The argument that $S'$ is
relatively closed in $A$ is completely analogous. Claim~1 is
proved.

\smallskip
{\em Claim~2.} $S\cap S' \cap C\ne \emptyset$.

 \smallskip
Recall that $S\neq \emptyset$  and $S\neq A$. Since
$S\subset A$ is relatively closed in the connected set $A$,
the set $S$ cannot be relatively open in $A$.

Hence there exists a point $a\in S$ and a sequence $\{a_n\}$ of
points in $A\setminus S$ with $a_n\to a$ as $n\to \infty$. Thus
the sequence $\{a_n\}$ is contained in $S'$. Since $S'$ is
relatively closed in $A$, it follows that $a\in S'$.

Moreover,  $a\in C$; for otherwise, we can again find a small
path-connected neighborhood of $a$ in $\R^2$ disjoint from
$C$. Then for large $n$, we could travel from $a$ to $a_n$ in
this neighborhood, and then, since $a_n\in A\setminus S$, from
$a_n$ to $B$ along a path disjoint from $C$. This contradicts
the fact that $a\in S$. So $a\in S\cap S' \cap C$ and Claim~2 follows. 


\begin{figure}
  \centering
  \begin{overpic}
    [width=10cm, tics=20,
    ]
    {JaniszII}
    \put(16,14){$A$}
    \put(90,14){$B$}
    \put(16,41){$a$}
    \put(32,64){$c$}
    \put(4,71.5){$C$}
    \put(53,78){$\beta$}
    \put(19,60){$\alpha$}
    \put(50,2){$K'$}
    \put(33,34){$\gamma$}
    \put(11,60){$S$} 
  \end{overpic}
  \caption{Proof of Lemma~\ref{lem:ABKsep}.}
  \label{fig:ABK}
\end{figure}

\smallskip 
{\em Claim~3.} $K'$ separates $C$ and $B$.

\smallskip 
The ensuing argument is illustrated in Figure~\ref{fig:ABK}.
Suppose  this  claim  is not  true. Then   we can find a path $\beta$
that avoids $K'$ and joins a point $c\in C$ to $B$. Now $K'$ is
closed and so $\R^2\setminus K'$ is a union of open
regions. Since $C\sub \R^2\setminus K'$ is connected, the set
$C$ is contained in one of these regions. Since regions are
path-connected, we can find a  path $\alpha$ that joins $c\in C$ and a point $a\in S\cap S'\cap C$
(as provided by  Claim~2),   and  avoids
$K'$. Concatenating $\alpha$ and $\beta$, we obtain  a path in
$\R^2\setminus K'$ joining $a$ and $B$. This is impossible since
$a\in S'$, meaning that $K'$ separates $a$ and $B$. This
finishes the proof of Claim~3. 

\smallskip 
{\em Claim~4.} $K'$ separates $A$ and $B$.

\smallskip 
Indeed, suppose $\ga$ is a path joining $A$ and $B$.  We claim
that it meets $K'$. Since $K=K'\cup C$ separates $A$ and $B$, it
must meet $K'$ or $C$. If it meets $C$, then it also meets $K'$
as $K'$ separates $C$ and $B$ by Claim~3. So $\ga$ meets $K'$ in any case. 
 Claim~4 follows.

\smallskip
We can now apply the induction hypotheses to $K'$. Since 
$K'$ separates $A$ and $B$, and has only   $n$ components,    there exists a component of $K'$, and hence also  a component of $K$, that   separates  $A$ and $B$.

\smallskip 
{\em General Case:}  $A,B\sub \R^2$ are arbitrary 
connected sets. 

\smallskip
First note that the statement in the above special case (with
the roles of $A$ and $B$ reversed) gives the following version
of Janiszewski's lemma: Let $A$ be a singleton set in $\R^2$,
$B\sub \R^2$ be connected, and $K,L\sub \R^2$ be closed sets
with finitely many components. If $K\cap L =\emptyset$ and
$K\cup L$ separates $A$ and $B$, then $K$ or $L$ separates $A$
and $B$.

Indeed, by what we have seen, one of the components of $K\cup L$
separates $A$ and $B$,  which implies that $K$ or $L$ separates
$A$ and $B$.  

The proof in the general case is now a repetition of the proof
in the special case. The only difference is that we apply the
above version of Janiszewski's lemma instead of the original
version. We used it only once: to show that $A= S\cup S'$. 
In the general case, where  $B\subset \R^2$ is connected, but not
necessarily a singleton set,  each point $a\in A$ is separated from $B$
by $K= C \cup K'$. So by the modified version of Janiszewski's lemma
$a$ is separated from $B$ by $C$ or by $K'$. Thus $a\in S$ or $a\in S'$,
 and so again $A= S\cup S'$.
 The rest of the proof is concluded as before.

This completes  the proof if $\Om$ is homeomorphic to
$\R^2$. The proof in the case $\Om=S^2$ is the same. Here we apply the $S^2$-version of Janiszewski's
lemma mentioned after the formulation of the $\R^2$-case.
\end{proof}

\section{Orientations on surfaces}
\label{sec:orient}

Orientation is a subject that is easy to grasp on an intuitive level, but is notoriously difficult to discuss   rigorously without some sophisticated mathematical concepts or facts. 
We first  recall the fairly standard way of introducing orientation for surfaces  by using   homology groups (see \cite{Ha} for general 
  background), and then discuss an alternative and very intuitive approach to orientation based on the concept of a flag.

 Let $M$ be a compact and connected $n$-dimensional topological manifold (without boundary). If  the singular homology group $H_n(M)$ (with coefficients in $\Z$) is 
  isomorphic to $\Z$, then we call $M$ 
{\em orientable} (see \cite[Section~3.3]{Ha} for a more detailed discussion).\index{orientation} 
This is true, for example,  if 
 $M$ is a $2$-sphere or a $2$-dimensional torus (the only cases we are  interested in).
 
We say that $M$ is  {\em oriented} if one of the  two generators 
of  $H_n(M)\cong \Z$  has been chosen as  the 
{\em fundamental class}\index{fundamental class} 
$[M]$ of $M$. If    $f\: M\ra N$ is a  homeomorphism  between compact and connected 
oriented $n$-dimensional topological manifolds  $M$ and $N$,  then $f$  induces an  isomorphism  $f_*\: H_n(M)\ra H_n(N)$, and so 
$f_*([M])=[N]$ or $f_*([M])=-[N]$. In the first case we say that
$f$ is 
{\em orientation-preserving},\index{orientation!preserving}  
and in the second  that $f$ is {\em orientation-reversing}. 

In this framework we can also define
the {\em (topological) degree} of a continuous map. Namely, if $M$ and $N$ are
oriented $n$-dimensional topological manifolds with fundamental
classes $[M]$ and $[N]$, respectively, and $f\: M\ra N$ is a
continuous map, then its 
{\em degree}\index{degf@$\deg(f)$}
$\deg(f)\in \Z$ is the unique integer such that $f_*([M])=\deg(f) [N]$, where $f_*\: H_n(M)\ra H_n(N)$ is the map between homology groups induced by $f$. Note that the sign of $\deg(f)$ depends on the orientations chosen on $M$ and $N$. If $M=N$ and we choose the same orientation in source and target, then $\deg(f)$ is independent of this choice.

The degree is multiplicative in the following sense: if $f\: M\ra N$ and 
 $g\: N\ra K$ are continuous maps between oriented $n$-manifolds, then 
\begin{equation} \label{eq:degmult}
\deg(g\circ f)=\deg (g) \cdot \deg(f) . 
\end{equation} 
This immediately follows from the relation $(g\circ f)_*=g_*\circ f_*$ for the induced maps on homology (see \cite[p.~134]{Ha}).   

For open manifolds  or manifolds  with boundary one has to
resort to suitable relative homology groups to give precise
definitions for concepts related to orientation. This is
somewhat technical and we will discuss this only  in a simple
relevant case to give the general idea.    
 
Let $M$ be a {\em  surface}, i.e., a $2$-dimensional topological manifold. 
We assume that $M$ is compact, connected, and oriented.  Then the   orientation on $M$  induces an orientation on every Jordan region $X\sub M$ which in turn 
induces an orientation on $\partial X$ and  on  every arc $\alpha \sub \partial X$. 
These orientations are represented by generators in the homology groups 
$H_2(X, \partial X)$, $H_1(\partial X)$, and $H_1(\alpha, \partial \alpha)$, respectively. 

To see how to get canonical generators in these groups from    the fundamental class  of $M$, first note that we have natural isomorphisms   $$ H_2(M)\cong  H_2(M, M\setminus \inte(X)) \cong 
 H_2(X,\partial X)\cong \Z$$
induced by the inclusion map and excision (see \cite[Section~2.1]{Ha}  for the relevant terminology and facts). Hence we get an induced orientation on $X$ as represented by a generator of $H_2(X,\partial X)$ obtained as the image of $[M]$. 

Similarly, we have natural  isomorphisms $ H_2(X,\partial X)\cong H_1(\partial X)$
(from the long exact sequence of relative homology) and $H_1(\partial X)\cong
H_1(\alpha, \partial \alpha)$. They  give us canonical generators of the relevant 
homology groups once we have an orientation on $M$.   

On  a more intuitive level,  an orientation of an arc is just a
selection of one of the endpoints as the {\em initial point} and
the other endpoint as  the {\em terminal point}.  This  can
easily be reconciled with the homological viewpoint  if one uses
the 
 isomorphism 
$H_1(\alpha, \partial \alpha)\cong \widetilde H_0(\partial \alpha)$ for reduced homology. 	

The orientation of  a Jordan curve $J$ (such as $J=\partial X$) is
given by a choice of a generator in $H_1(J) \cong \Z$.   It induces a unique orientation on each arc $\alpha \sub J$ by the natural  isomorphism $H_1(J) \cong H_1(\alpha, \partial \alpha)$. 
One can think of an orientation of $J$  essentially as a direction or sense how to run through $J$ in some parametrization. It is uniquely determined  by the induced  orientation of any subarc $\alpha \sub J$. Another 
way to represent an orientation of $J$ is by a cyclic order of $k\ge 3$ points on $J$ (see Section~\ref{sec:labelings} for a related discussion).

Suppose  the Jordan region $X\sub M$ in    the oriented surface $M$ is equipped with the induced orientation.  If  $\alpha\sub \partial X$ is an arc with a given orientation, then we say that $X$ lies {\em to the left} or {\em to the right} of $\alpha$ depending 
on whether the orientation on $\alpha$ induced by the orientation of $X$ agrees with the given orientation on $\alpha$ or not. 
Similarly, we say that with a given orientation of $\partial X$ the Jordan region $X$ lies to the left or right of $\partial X$. 

Another way to introduce orientation is by using the notion of a
flag. We will outline this only  for surfaces. Let $M$ be a  connected  (possibly open) surface. By definition a {\em (topological) flag}\index{flag}    
on $M$ is a triple $(c_0,c_1, c_2)$, where $c_i\subset M$ is an $i$-dimensional cell for $i=0,1,2$ with   $c_0\sub \partial c_1$  and $c_1\sub \partial c_2$.  So a flag in $M$ is a closed Jordan region  $c_2$ with an arc $c_1$ contained   in its boundary, where the point in $c_0$ is one of the endpoints of  $c_1$. We  orient  the arc $c_1$  so that the 
point in $c_0$ is the initial point in $c_1$.  If we already have an orientation on $M$, then the  flag is called {\em positively-} or {\em negatively-oriented} (for the given orientation on $M$) depending on whether $c_2$ lies to the left or to the right of the oriented arc  $c_1$. 

This can be turned around to give an alternative definition of
orientation. 
Namely, we call two flags $(c_0,c_1, c_2)$ and  $(c'_0,c'_1, c'_2)$ in $M$ {\em equivalent} if there exists 
a homeomorphism $f\: M\ra M$ that is isotopic to $\id_M$ and satisfies 
$f(c_i)=c'_i$ for $i=0,1,2$.  On every connected surface $M$ there are at most two equivalence classes of flags. This can easily be derived from the fact that if $X,Y\sub M$ are Jordan regions, then there exists a  homeomorphism $f\: M\ra M$ that is isotopic to $\id_M$ and satisfies 
$f(X)=Y$. To get such a homeomorphism, one shrinks $X$ and $Y$ by isotopies on $M$  into small neighborhoods of points $x\in \inte(X)$ and $y\in \inte(Y)$, and moves the neighborhood of $x$ to the neighborhood of $y$ by an isotopy. In the shrinking process it is important that for every Jordan region $Z\sub M$ there is a Jordan region $Z'\sub M$ such that $Z\sub \inte(Z')$.  This
 easily follows from the fact that the topological circle $\partial Z$ is ``tame'' and so has a neighborhood that is homeomorphic to an annulus.  
 
 This outline of the argument also makes it obvious that  the homeomorphism
 $f$ on $M$ that is isotopic to $\id_M$ and satisfies  $f(X)=Y$ can be constructed so that it agrees with $\id_M$ outside a suitable compact subset of $M$.

We call $M$ {\em orientable} if there exist precisely two such equivalences classes of flags in $M$.  An {\em orientation} on $M$ is a choice of one of the 
equivalence classes as a family of distinguished flags. We say that the flags in this class are {\em 
positively-oriented} and the flags in the other class are {\em negatively-oriented}.

Any  positively-oriented  flag determines the orientation uniquely. So on orientable surfaces such as the plane $\C$ or a $2$-sphere we can think of an orientation just as a choice of some flag as positively-oriented.

 The standard orientation on $\C$ or on $\CDach$ is the one for which  the {\em standard flag} 
$(c_0, c_1, c_2)$ is positively-oriented, where 
$c_0=\{0\}$, $c_1=[0,1]\sub \R$, and 
$$c_2=\{z\in \C: 0\le \real(z)\le 1,\ 0\le 
\imag(z) \le \real(z)\}. $$

Let  $M$ be  an oriented surface,  and $\Om$ be a region in
$M$. Then $\Om$ is orientable. Essentially, this follows from the
fact that an isotopy on $\Om$ between flags in $\Om$ 
can be chosen so that it fixes points 
outside a sufficiently large 
compact subset of $\Om$. 
This allows one to extend the isotopy to $M$.  

We can represent the orientation on $M$ by a flag in $\Om$. This flag represents a unique 
orientation on $\Om$, called the {\em induced orientation} on $\Om$. 

An orientation on a not necessarily connected surface is a choice of an orientation on each of its connected components (if each of these components is orientable). If $U$ is an arbitrary open subset of an oriented surface $M$, then each of the components of $U$ is  contained in a component of $M$. We equip each of these components of $U$ with the induced orientation from the corresponding component of $M$. This defines the induced orientation on $U$.  

If $f\: M\ra N$ is a homeomorphism between connected and oriented surfaces $M$ and $N$, then either $f$ maps all positively-oriented flags in $M$ to positively-oriented flags in $N$, or   
all positively-oriented flags in $M$ to negatively-oriented flags in $N$. We say that $f$ is {\em orientation-preserving}  in the first case, and {\em orientation-reversing} in the second. 

A continuous map $f\: M\ra N$ is called a 
{\em local homeomorphism}\index{local homeomorphism}  
if each point $p\in M$ has an open neighborhood $U\sub M$ such that $f|U\: U \ra 
V\coloneqq f(U)$ is a homeomorphism of $U$ onto $V$. It follows from the 
``invariance of domain'' (see \cite[Theorem 2B.3, p.~172]{Ha}) that then $V$ is  an open subset of $N$. By shrinking $U$ if necessary,   we can always assume here that $U$ and $V$ are topological disks. If, in addition, 
these homeomorphisms $f|U$ preserve orientation, then we call
$f$ 
{\em orientation-preserving}.\index{orientation!preserving!local homeomorphism}\index{local homeomorphism!orientation-preserving} 
Roughly speaking, this means that $f$ sends a small  positively-oriented flag near a point in $M$ to  
a positively-oriented flag in $N$. 

\begin{lemma}\label{lem:23orient} Let $M$, $N$, $K$ be 
 connected and oriented  surfaces, and  
   $f\colon M\to K$, $g\colon N\to K$, and 
  $h\colon M\to N$ be local homeomorphisms  such that $f=g\circ h$. 
  If two of the maps $f$, $g$,  $h$ are orientation-preserving, then  
the third map is orientation-preserving as well. 
\end{lemma}

\begin{proof} We have to consider three cases depending on which two 
of the maps $f$, $g$,  $h$ are orientation-preserving. We will only consider the case when $f$ and $h$  are orientation-preserving, and show that then $g$ has the same property. The other two cases are very similar and we leave the details to the reader.

Since $N$ is connected, $g$ either preserves orientation near all  points in $N$, or reverses it. In order to decide this, it suffices to consider $g$ near one point $q\in N$ and verify that $g$ preserves the orientation of one positively-oriented flag $F$ contained in a small topological disk $V$ with $q\in V$ such that the map $g|V$ is a homeomorphism
onto its image. 

 We may assume that $q\in h(M)$. 
Then there exists $p\in M$ with
 $h(p)=q$, and we can find a topological disk $U\sub M$ with $p\in U$  
 such that $f|U$ and $h|U$ are orientation-preserving
 homeomorphisms onto their images. By replacing $V$ with  a
 smaller topological disk if necessary, we may assume that
 $h(U)=V$ and that there is a flag $F'\sub U$ with $h(F')=F$
 (here and below we use the obvious definition for image flags
 such as $h(F')$). 
Since $h|U$ is orientation-preserving and $F$ is positively-oriented, the flag $F'$ must also be 
  positively-oriented. Since $f|U$ is orientation-preserving, the image flag  $f(F')=g(h(F'))=g(F)$ of $F'$ under $f$, which agrees with the image of $F$ under $g$,  is  positively-oriented. Hence $g$ is orientation-preserving.
 \end{proof}

   \section{Covering maps}
\label{sec:covmaps} 

Before we turn to {\em branched} covering maps, we remind the reader of some well-known facts about covering maps. They are true in great generality, but we restrict ourselves mostly to covering maps between surfaces  (see 
\cite[Section~1.3]{Ha} and \cite[Chapter~1]{Fo81} for a more detailed discussion).
Here and also in the following section  a 
{\em surface}\index{surface} 
is a connected and orientable 
$2$-dimensional topological manifold. So in contrast to Section~\ref{sec:orient}  we use this term in a more restrictive sense. We assume that a surface is oriented by specifying an equivalence class of positively-oriented flags (as discussed in 
Section~\ref{sec:orient}). 

Let $X$ and $Y$ be (oriented) surfaces, and $\pi\: X\ra Y$ be a continuous and surjective map. Then 
$\pi$ is called a 
{\em covering map}\index{covering map}
if   every point $p\in Y$ has  an open  and connected 
 neighborhood $V\sub Y$ such that $\pi^{-1}(V)$ can be written as a disjoint union
 $$\pi^{-1}(V)=\bigcup_{i\in I} U_i$$ of open and connected
 sets  $U_i\sub X$ such that $\pi|U_i$ is an
 orientation-preserving homeomorphism of $U_i$ onto $V$  for
 each $i\in I$. Here $I$ is some index set. We say that a set
 $V$ as in this definition is 
 {\em evenly covered}\index{evenly covered} 
 by $\pi$. By possibly shrinking the set,
 one can always assume that $V$ is a topological disk. 
 
 Usually, one does not insist on the maps $\pi|U_i$  being orientation-preserving; this additional requirement is motivated by our definition of a branched covering map
  in the next section: without it a covering map would not necessarily be a branched covering map.  

 If $\pi\: X\ra Y$  is a covering map and we want to emphasize
 $Y$, then we say that $\pi$ is a 
{\em covering map over} $Y$.\index{covering map!over $Y$} 
 The covering map  $\pi\: X\ra Y$ is called {\em finite} if $\pi$ is {\em finite-to-one} in the sense that every point in $q\in Y$ has only finitely many preimages in $X$. In this case, the cardinality $\#\pi^{-1}(q)$ is constant and independent of $q\in Y$.

A covering map  is an orientation-preserving   local homeomorphism;   so for  every point $x\in X$ there exists  an
 open neighborhood $U$ such that $\pi|U$  is an orientation-preserving   homeomorphism of $U$ onto $\pi(U)$. 
 Conversely, if $X$ and $Y$ are compact, then   every orientation-preserving local  homeomorphism $\pi\: X\ra Y$ is a covering
  map. 
  
  If $\pi \:X \ra Y$ is a covering map, then a homeomorphism 
  $g\: X\ra X$ is called a 
  {\em deck transformation}\index{deck transformation} 
  of $\pi$ if $\pi=\pi\circ g$. These maps $g$ form a group $G$
  called the {\em deck transformation group} of $\pi$. If $X$ is
  simply connected, then $G$ is isomorphic to the fundamental
  group of $Y$ (see the discussion below and 
\cite[Theorem~5.6]{Fo81}). 
  
  Let $\pi\: X\ra Y$ be a covering map, $Z$ a topological space, and $f\: Z\ra Y$ be a continuous map.
  A continuous map $g\: Z\ra X$ is called a 
{\em  lift}\index{lift!of map}
 of $f$ (by $\pi$) if $\pi\circ g=f$. In this case, we have the commutative diagram:
  \begin{equation}\label{eq:basicliftdia}
  \xymatrix {
  & X  \ar[d]^{\pi}   \\
    Z  \ar[ur]^{g}  \ar[r]^{f}  &   Y\rlap{.}
  }
\end{equation}

The next lemma is a standard fact about existence and uniqueness of lifts
(for the terminology and the proofs see  
\cite[Section~1.3, Proposition~1.34, and Proposition~1.33]{Ha}; 
see also
\cite[Section~1.4 and Theorem~4.17] {Fo81}).

\begin{lemma}[Existence and uniqueness of lifts] \label{lem:liftsofcov}
Let $X$ and $Y$ be (oriented) surfaces,  $\pi\: X\ra Y$ be a covering map, and $Z$ be a 
path-connected and locally path-connected topological space.

 \begin{enumerate}
 
   \item
    \label{item:liuniq}
   Suppose $g_1, g_2\: Z\ra X$ 
   are two continuous  maps such that $\pi\circ g_1=\pi\circ g_2$. If there exists $z_0\in Z$ with  
 $g_1(z_0)=g_2(z_0)$, then $g_1=g_2$.  
   
  \item 
    \label{item:Liex}
    Suppose  $Z$ is simply connected,  $f\: Z\ra Y$ is a  continuous map, and  $z_0\in Z$ and $x_0\in X$ are points such that $f(z_0)=\pi(x_0)$. Then there exists a  continuous  map
    $g\: Z\ra X$ such that $g(z_0)=x_0$ and  $f=\pi\circ g$.

     \end{enumerate}
\end{lemma}  

In \ref{item:liuniq} the maps $g_1$ and $g_2$ are lifts of $f\coloneqq \pi\circ g_1=\pi\circ g_2$. 
So the statement says that lifts of maps are uniquely determined by the image of one point. 

Statement \ref{item:Liex} guarantees the existence of a lift $g$  of $f$ with $g(z_0)=x_0$. By \ref{item:liuniq} this lift $g$ of $f$ satisfying $g(z_0)=x_0$ is unique. 

A special and important case 
is if $Z=[0,1]$, and $z_0=0$. Then $f$ is a path in $Y$,  and the statement says that 
we can lift it to a unique path $g$ in $X$ if we prescribe any point in  the fiber $\pi^{-1}(f(0))$ as the initial point of the lift.

Let $X$ be a path-connected and locally path-connected topological space, and $x_0\in X$ be a basepoint in $X$.
Then   the {\em fundamental group} $\pi_1(X, x_0)$ of $X$ with respect to $x_0$ consists of all homotopy classes of  loops in $X$ based  (i.e., starting and ending) at $x_0$  (see \cite[Section~1.3]{Ha} for precise definitions). Simple connectivity of $X$ means that   
$\pi_1(X, x_0)$ is the trivial group only consisting of the unit element. 

Suppose $Y$ is another path-connected and locally path-connected topological space with basepoint 
$y_0$, and $f\: X\ra Y$ a continuous map that is 
basepoint-preserving in the sense that $f(x_0)=y_0$. If we  assign to each class $[\ga]\in \pi_1(X, x_0)$ represented by a loop $\ga$ in $X$ based at $x_0$, the class $[f\circ \ga]$ represented 
by the image loop $f\circ \ga$, then we get a well-defined induced group homomorphism 
$f_*\: \pi_1(X,x_0)\ra\pi_1(Y,y_0)$. 

Suppose $X$ and $Y$ are surfaces, $\pi\: X\ra Y$ is a covering map, and $X$ is simply connected. Then $\pi\: X\ra Y$
is a 
{\em universal covering map}:\index{covering map!universal} 
if $f\: Z\ra Y$ is another covering map from a surface $Z$, $x_0\in X$ and $z_0\in Z$ with 
$\pi(x_0)=f(z_0)$, then  there exists a covering map $g\: X\ra Z$ such that $\pi=  f \circ g$ and $g(x_0)=z_0$. A universal 
covering map $\pi\: X\ra Y$ exists for each surface  and  is unique up to equivalence: 
if $\widetilde \pi\: \widetilde X\ra Y$ is another universal 
covering map, then there exists a homeomorphism $\varphi\: X\ra \widetilde X$ such that $\pi =\widetilde \pi\circ \varphi$
(see \cite[Section~1.5]{Fo81}).

    \section{Branched covering maps}
\label{sec:appbracovmap} 

 In this section we discuss branched covering maps between surfaces.  Since it is difficult to find references for
this topic in the literature, our exposition is rather detailed and we provide 
 proofs for the statements discussed. We will sometimes skip details
 if they are straightforward to fill in.  
 
  We will 
first define the relevant terminology; in particular, we will give a precise 
definition of a \emph{branched covering map} and what it means for a neighborhood of a point to be \emph{evenly covered} in this context. 
Useful criteria for verifying the relevant conditions are   provided by Lemmas~\ref{lem:z^d} and~\ref{lem:evennei}.

Lemma~\ref{lem:pbackcostr} shows that a
conformal structure can be pulled back by a  branched covering map.
This implies that one can often reduce to the holomorphic case 
if one studies such  maps. In particular, every 
 branched covering map
on a $2$-sphere can be represented by a rational map on the Riemann sphere  up to suitable homeomorphic coordinate changes 
in  source and target (see Corollary~\ref{cor:brcovratup}).

The main difficulty in the proof 
of Lemma~\ref{lem:pbackcostr} is the behavior of the given map 
near  branch points; this is resolved by what can be 
viewed as a variant of Riemann's removability
theorem. 

 Lemma~\ref{lem:2_3_branched} is another useful criterion  if one wants to check whe\-ther a map is a branched covering map. It 
 essentially says that 
if three  continuous maps
$f$, $g$, and $h$ between  surfaces 
satisfy $f=g\circ h$, and if two of the maps are branched covering maps, then the third one is a  branched covering map as well.  A similar statement is true for 
holomorphicity of the maps $f$, $g$, and $h$. 

In the last part of this section we consider existence and
uniqueness statements for lifts by  branched covering maps (see
Lemma~\ref{lem:liftsofpathsbranched} and
Lemma~\ref{lem:liftsofbranched}).

As in the previous section, we again make the standing assumption
that each surface is connected and oriented.  Let $X$ and $Y$ be
compact surfaces, and $f \: X\ra Y$ be a continuous and
surjective map. Recall from Section~\ref{sec:branched-coverings}
that $f$ is a 
{\em branched covering map}\index{branched covering map}\index{map!branched covering} 
if for each point $p\in X$ there exists
$d\in \N$, topological disks $U\subset X$ and $V\subset Y$ with
$p\in U$, $q\coloneqq f(p)\in V$, and orientation-preserving
ho\-meo\-mor\-phisms $\varphi\: U \ra \D$ and $\psi\: V\ra \D$
with $\varphi(p)=0$ and $\psi(q)=0$ such that
\begin{equation}\label{eq:brzd}
  (\psi\circ f \circ \varphi^{-1})(z)=z^d
\end{equation}
for all $z\in \D$.  
 
So branched covering maps are modeled on non-constant holomorphic
maps between compact Riemann surfaces. Every such map is a
branched covering map.

For maps between surfaces that are not necessarily compact, one
has to adjust the definition of a branched covering map. Recall
that for (unbranched) covering maps we require that each point in
the target has a neighborhood that is evenly covered. For
branched covering maps we impose a similar condition. It is always true 
for compact surfaces (see the discussion
after the proof of Lemma~\ref{lem:evennei}). Accordingly, we make
the following definition.

\begin{definition}[Branched covering maps]
  \label{def:brcovmap}
  \index{branched covering map|textbf}
  \index{map!branched covering|textbf}
  Let $X$ and $Y$ be (connected and oriented) surfaces, and
  $f\: X\ra Y$ be a continuous map.  Then $f $ is a {\em branched
    covering map} if for each point $q\in Y$ there exists a
     topological disk  $V\subset Y$ with $q\in V$ that is {\em evenly
    covered}\index{evenly covered} by $f$ in the following sense:
  for some index set $I\ne \emptyset$ we can write $f^{-1}(V)$ as
  a disjoint union
 $$f^{-1}(V)=\bigcup_{i\in I} U_i$$ of open  sets  $U_i\sub X$ such that $U_i$ 
 contains precisely one point $p_i\in f^{-1}(q)$. Moreover, we require that for each $i\in I$  there 
 exists $d_i\in \N$, 
  and orientation-preserving homeomorphisms
 $\varphi_i\: U_i \ra \D$ and $\psi_i\: V\ra \D$ with $\varphi_i(p_i)=0$ and $\psi_i(q)=0$ such that 
 \begin{equation}\label{eq:z^dbrcov}
  (\psi_i\circ f \circ \varphi_i^{-1})(z)=z^{d_i}
  \end{equation}
 for all $z\in \D$. 
 \end{definition}
  Note that the sets  $U_i$, $i\in I$, are the connected components of $f^{-1}(V)$, and that each $U_i$ is a also 
  a topological disk.    Moreover, \eqref{eq:z^dbrcov} implies that $f(U_i)=V$ for  $i\in I$. 
 
 For
 given $f$ the number $d_i$ is uniquely determined by $p=p_i$ and
 called the 
 {\em local degree}\index{local degree|textbf}\index{deg@$\deg_f(p), \deg(f,z)$|textbf} 
 of $f$ at $p$, denoted by
 $\deg_f(p)$ or $\deg(f,p)$. Our definition allows different local degrees
 at points in the same fiber $f^{-1}(q)$. Note that  if $q'$ is a point close to, but distinct from $q= f(p)$, then 
  $\deg(f,p)$ is equal 
 to the number of distinct preimages of $q'$ under $f$ close to $p$.   In particular, 
 near $p$ the map $f$ is $d$-to-1, where $d= \deg(f,p)$.

 Every branched covering map $f\: X\ra Y$ is surjective, 
 {\em open}\index{open map}\index{map!open} 
(images of open sets are open), and 
{\em discrete}\index{discrete}\index{map!discrete}
 (the preimage set  of every point is {\em discrete in} $X$, i.e., it has no limit points in $X$). Every covering map is also a branched covering map.

 A 
 \emph{critical point}\index{critical!point} 
 of a branched covering map $f\colon X\to Y$
 is a point $p\in X$ with $\deg_f(p)\ge 2$. A 
 {\em critical value}\index{critical!value} 
 is a point $q\in Y$ such that the fiber $f^{-1}(q)$ contains a critical point of $f$. 
The set of critical points of $f$ is discrete in $X$; indeed,  if $p\in X$ is arbitrary, then there exists an open neighborhood $U$ of $p$ such that $f$ is a local homeomorphism on $U\setminus \{p\}$. So the set of critical points of $f$ cannot have a limit point in $X$. Similarly,  the set of critical values of $f$ is discrete in $Y$, because 
  if $V\subset X$ is an evenly covered neighborhood of a point $q\in Y$,
 then $q$ is the only possible critical value of $f$ in $V$. 
If 
$f\colon X\to Y$ is a branched 
covering map, then $f$ is an orientation-preserving local homeomorphism
near each point $p\in X$ that is not a critical point of $f$.

 A continuous map $f\: X\ra Y$ between surfaces is called 
 {\em proper}\index{proper!map}\index{map!proper} 
if $f^{-1}(K)$ is compact for every compact set $K\sub Y$. We record the following useful fact.

\begin{lemma} \label{lem:proper} Let $f\: X\ra Y$ be an open and continuous map between surfaces $X$ and $Y$.

\begin{enumerate}
 
   \item
    \label{item:proper1}
 If $f$ is proper, then $f(X)=Y$.    
    \smallskip
  \item 
    \label{item:proper2}
 Suppose $V\sub Y$ is a region  and  $U$   a connected component of $f^{-1}(V)$. If $f$ is proper or if $\overline U$ is compact, then $f|U\: U \ra V$ is a proper map,  $f(U)=V$, 
 and $f(\partial U) \sub \partial V$.          
   \end{enumerate}
\end{lemma} 
In particular, a proper, open, and continuous map $f\: X\ra Y$ between surfaces is surjective.

\begin{proof} \ref{item:proper1} It follows from  our hypotheses that $f(X)$ is a non-empty open set and  from our definition of a surface that $Y$ is connected. So it suffices to show that $f(X)$ is closed. To see this, 
 let $\{y_n\}$ be a
sequence in $f(X)$ and suppose that $y_n\to y\in Y$ as $n\to \infty$.
Then for each $n\in \N$ there exists $x_n\in X$ with $f(x_n)=y_n$. 

Now the set 
$K\coloneqq \{y\} \cup\{y_n:n\in \N\} \sub Y$ is compact. Since $f$ is proper, 
the set $f^{-1}(K)\sub X$ is also compact. Since $\{ x_n\}$ is a sequence in $f^{-1}(K)$,
it has a convergent subsequence. By passing to a subsequence if necessary, we may assume that $\{x_n\}$  itself converges, say $x_n\to x\in X$ as $n\to \infty$. 
Then by continuity of $f$ we have 
$$ y=\lim_{n\to \infty} y_n = \lim_{n\to \infty} f(x_n)=f(x). $$ 
Hence $y\in f(X)$ and so $f(X)$ is indeed closed.

\smallskip 
 \ref{item:proper2} Let $K\sub V$ be compact. To see  that $f|U$ is proper, 
 we have to show that $(f|U)^{-1} (K)=U\cap f^{-1}(K) $ is
 compact. To this end, let 
 $\{x_n\}$ be an arbitrary sequence in $U\cap f^{-1}(K)$. If $f$  is proper, then  $f^{-1}(K)$ is compact, and so  $\{x_n\}$ has a  convergent 
 subsequence. This is also true if $\overline U$ is compact.
By passing to a subsequence, we may assume that $\{x_n\}$ itself converges, say $x_n\to x \in \overline U$. We have to show that actually  $x\in U\cap f^{-1}(K)$.
 
 By continuity of $f$ the point $f(x)$ is the limit of the sequence 
 $\{f(x_n)\}$ which lies in $K$. Hence $f(x)\in K$. 
 So if $x\in U$, then $x\in U\cap f^{-1}(K)$ as desired. 
 
 The other alternative, $x\in \partial U$  is impossible. Indeed, since $f(x)\in K\sub V$,
 there exists a small connected neighborhood $N$ of $x$ with $f(N)\sub V$. Since 
 $x\in \partial U$, the set $N$ meets $U$ and so $N\cup U$ is a
 connected subset of  $f^{-1}(V)$. Since $U$ is a connected
 component of $f^{-1}(V)$, this implies $N\sub U$; but then $x$
 would be an 
interior point and not a boundary point of $U$.  
  
 The set  $U$ is also a region which implies that $f|U\: U \ra V$ is an open and continuous map between the surfaces $U$ and $V$. Since $f|U$ is also proper by what we have just seen, it follows from \ref{item:proper1}  that $f(U)=V$.
 
Finally, if $x\in \partial U$, then $f(x) \in \overline V$ by continuity of $f$.  The argument above shows that $f(x)\in V$ is impossible, and so $f(x)\in \partial V$. Hence $f(\partial U)\sub \partial V$ as desired.  
 \end{proof} 
 
 Suppose  a topological disk $V\sub Y$ is evenly covered by a continuous map 
 $f\: X\ra Y$ and $U_i$ is a component of $f^{-1}(V)$ as in Definition~\ref{def:brcovmap}. Then the map $f|U_i\: U_i\ra V$ is proper, because up to homeomorphic changes in source and target the map is given by a power map $z\mapsto z^d$ which is a proper map on $\D$.

The following statement provides a  convenient  criterion that allows us to verify the conditions  in  Definition~\ref{def:brcovmap}.
 
 \begin{lemma}\label{lem:z^d}  Let $U$ be a surface, $V$ be a topological disk, $p\in U$, $q\in V$, and $f\: U\ra V$ be a proper and continuous map. Suppose that $f^{-1}(q)=\{p\}$ 
 and  that $f$ is a local homeomorphism near each point in $U\setminus \{p\}$. 
 
 Then $U$ is also a topological disk, and 
 for each   homeomorphism  $\psi\: V\ra \D$ with 
 $\psi(q)=0$, there exists  a  homeomorphism $\varphi\:U\ra \D$ with 
 $\varphi(p)=0$ such that 
 \begin{equation}   \label{eq:z^dnonorrep}
 (\psi\circ f \circ \varphi^{-1})(z)=z^d
 \end{equation} 
 for all $z\in \D$, where $d\in \N$. 
 
 If, in addition, $\psi$ is orientation-preserving and $f$ is orientation-preserving near each  point in $U\setminus\{p\}$, then $\varphi$ is orientation-preserving as well.  
 \end{lemma}

\begin{proof} In this  proof it is convenient to  adopt a more general notion of a covering map, where we allow arbitrary, not necessarily orientation-preserving, homeo\-morphisms on components of preimages of evenly covered neighborhoods.

  Our assumptions imply 
that the restriction $f|U\setminus \{p\} $ is a proper, open, and continuous map of $U\setminus \{p\}$ into $V\setminus \{q\}$. Hence it is surjective by Lemma~\ref{lem:proper}~\ref{item:proper1}.
To see that this restriction is a covering map of  $U\setminus \{p\}$ onto $V\setminus \{q\}$, let $y\in V\setminus \{q\}$ be arbitrary. Since $f$ is proper, the set $f^{-1}(y) \sub  U\setminus \{p\}$ is  finite.

 Let $W\sub V\setminus \{q\}$ be a topological disk with $y\in W$. If $W'\sub U\setminus \{p\}$ is a connected component of $f^{-1}(W)$, then $f(W')=W$ by Lemma~\ref{lem:proper}~\ref{item:proper2}. In particular, $W'$ contains a point in 
$x\in f^{-1}(y)$. This implies that there can only be finitely many of these components $W'$ of $f^{-1}(W)$. By choosing $W$ sufficiently small, we can ensure that each component $W'$ contains precisely
one point $x\in f^{-1}(y)$. Moreover, since $f$ is a local homeomorphism near $x$, we may assume that $W'$ is a small enough neighborhood of $x$ such that $f|W'$ is a homeomorphism of $W'$ onto $W$ (this can easily be justified by an argument as for  Lemma~\ref{lem:continv}). This implies that $W$ is evenly covered by  $f|U\setminus \{p\}$ with finitely many components of $f^{-1}(W)$. Hence $f\:U\setminus \{p\}\ra V\setminus \{q\}$ is a finite covering map. 
  
 If $\psi$ is a homeomorphism as in the statement, then $\psi\circ f$ is a finite  covering map from 
 $U\setminus\{p\}$ onto $\D\setminus\{0\}$.
 Now it is a standard fact that up to  equivalence each  covering map  onto $\D\setminus\{0\}$ with finite fibers is a power map 
 $P_d(z)\coloneqq z^d$ 
 on $\D\setminus\{0\}$ for some $d\in \N$ (essentially, this is proved in 
 \cite[Theorem~5.10]{Fo81}). 
Here this means 
 that there exists a homeomorphism $\varphi\: U\setminus\{0\}\ra 
\D\setminus\{0\}$ such that $\psi \circ f = P_d\circ \varphi$ on $U\setminus \{p\}$. 
This equation implies that we get a homeomorphic extension $\varphi\: U \ra \D$ by setting $\varphi(p)=0$. Hence $U$ is a topological disk.
The first part of the statement follows.

Suppose, in addition,  that $\psi$ and  $f$ are orientation-preserving. Since $P_d$ is orientation-preserving on $\D\setminus \{0\}$, the relation  
 $\psi \circ f = P_d\circ \varphi$ on $U\setminus \{p\}$ implies that $\varphi$ must have this property as well (this follows from Lemma~\ref{lem:23orient}). 
\end{proof} 

\begin{lemma}\label{lem:evennei}
  Let  $X$ and $Y$ be surfaces, and $f\: X\ra Y$ be  an open and
  continuous map. Suppose $q\in Y$ and $V\subset Y$ is a
  topological disk  that is an  evenly covered neighborhood of
  $q$  as in  
Definition~\ref{def:brcovmap}. If $\widetilde V$ is a topological disk with 
$q\in \widetilde V\sub V$, then $\widetilde V$ is also evenly covered by $f$.
\end{lemma}

\begin{proof} Suppose we have a decomposition 
$$f^{-1}(V)=\bigcup_{i\in I}  U_i$$ into connected components
as in Definition~\ref{def:brcovmap}. Let  $\widetilde U\subset X$ be  a connected component 
of $f^{-1}(\widetilde V)$. Then there exists a unique $i\in I$ such that
 $\widetilde U\sub U_i$. We know that the map  $f|U_i\:U_i\ra V$ is proper, open, and continuous. Moreover, $\widetilde U$ is a  
component of $(f|U_i)^{-1}(\widetilde V)=U_i\cap f^{-1}(\widetilde V)$.  So by  Lemma~\ref{lem:proper}~\ref{item:proper2} the map $f|\widetilde U\: \widetilde U\ra \widetilde V$ is proper  and  we have  $f(\widetilde U)=\widetilde V$. This,  together 
  with $\widetilde U\sub U_i$, implies that $(f|\widetilde U)^{-1}(q)=\{p\}$, where 
  $p$ is the unique point in $U_i$ with $f(p)=q$. 
  
  Finally, since $V$ is evenly covered by $f$, the map  $f$ is an orientation-preserving homeomorphism near each point in 
  $\widetilde U\setminus\{q\}\sub U_i\setminus \{q\}$.
  It now follows from Lemma~\ref{lem:z^d} that up to orientation-preserving ho\-meo\-mor\-phic changes in source and target, the map
  $f|\widetilde U\: \widetilde U \ra \widetilde V$ can be represented by a power map 
  $z\mapsto z^d$, where $d\in \N$. Since this is true for each component  $\widetilde U$ of $f^{-1}(\widetilde V)$, the topological disk  $\widetilde V$ is evenly covered by $f$.
\end{proof}

The arguments in the previous lemma imply that the  Definition~\ref{def:brcovmap} of  a branched covering map $f\: X\ra Y$ is equivalent to  the definition given in Section~\ref{sec:branched-coverings} in case the surfaces $X$ and $Y$ are compact.
Indeed, if in this case $f\: X\ra Y$ is a branched covering map according to the definition given in Section~\ref{sec:branched-coverings}, then each point
$q\in Y$ has finitely many distinct 
preimages $p_1, \dots, p_n\in X$ under $f$. For each point $p_i$
there exist topological disks $U_i\subset X$ and $V_i\subset Y$ with $p_i\in U_i$ and 
$q\in V_i$ such that $f|U_i\: U_i\ra V_i$ can be represented by a power map $z\mapsto z^{d_i}$ with $d_i\in \N$ up to orientation-preserving   
homeomorphic changes in source and target. We can choose a 
topological disk $V\sub V_1\cap \dots \cap V_n$ with $q\in V$. Then 
it easily follows from the arguments in the proof of Lemma~\ref{lem:evennei}
that $V$ is a neighborhood of $q$ that is evenly  covered by the map $f$. 

Away from its critical values a branched covering map is actually a covering map. This is made precise in the following statement. 

\begin{lemma}\label{lem:brcovcov}
Let $X$ and $Y$ be surfaces, and $f\: X \ra Y$ be a branched covering map. Suppose $P\sub Y$ is a set with 
$f(\crit(f))\sub P$ that is discrete in $Y$. Then 
$f\: X\setminus f^{-1}(P) \ra Y \setminus P$ is a covering map. 
\end{lemma} 
Here and in the following, for simplicity we do not distinguish
in our notation between the original map $f$ and its restriction $f|X\setminus f^{-1}(P)$.

 \begin{proof} As  a branched covering map, the map  $f$ 
 is discrete. This implies that the preimage $f^{-1}(P)$ of  $P$ is discrete in $X$. In particular, 
$ X\setminus f^{-1}(P)$  and $Y \setminus P$ are connected and
hence surfaces (equipped 
with the orientations induced 
by $X$ and $Y$, respectively). 
Moreover, if $x\in X\setminus f^{-1}(P)$, then $f(x)\in Y\setminus P$. So we can consider $f$ 
as map between the surfaces  $X\setminus f^{-1}(P)$ and   $Y \setminus P$.

  Let $q\in Y \setminus P$ be arbitrary. Then there exists a topological disk $W\sub Y$ with $q\in W$ that is evenly covered by $f$ (as in the definition of a branched covering map). Since $ q\in  Y \setminus P$ and $P$ is discrete in $Y$, we can find a smaller topological 
 disk $\widetilde W\sub W$ with $q\in  \widetilde W \sub 
 (Y\setminus P)\cap W$. Then $\widetilde W$ is also 
 evenly covered by $f$ according to 
 Lemma~\ref{lem:evennei}. 
 
 Since $f(\crit(f))\sub P$, we have $\crit(f) \sub f^{-1}(P)$ and so no component $U$ of $f^{-1}(\widetilde W)\sub X\setminus f^{-1}(P)$ contains a critical point of $f$. This implies that $f$ is an orientation-preserving homeomorphism of $U$ onto $\widetilde W$. It follows that the neighborhood $ \widetilde W$ of $q$ is  evenly covered 
 by the map  $f|X\setminus f^{-1}(P)$ (as in the
 definition of a covering map). The statement follows.
 \end{proof}

Questions  about branched covering maps can often be reduced  to the holomorphic case due to the following statement. 

\begin{lemma}\label{lem:pbackcostr}
 Let $X$ and $Y$ be surfaces, and $f\: X \ra Y$ be a branched covering map. Then for each conformal structure on $Y$ there exists a conformal structure on $X$ such that with these conformal structures on $X$ and $Y$  the map $f\: X\ra Y$ is holomorphic. 
\end{lemma}

For the proof we will use some standard facts and terminology
from the theory of Riemann surfaces. We will follow
\cite[Section~1.1]{Fo81}.  
A {\em conformal structure} on a surface is represented by a {\em complex atlas}  of {\em holomorphically compatible (complex) charts}. Every (orientable) surface admits a complex  atlas and hence a conformal structure. 
A surface equipped with such a conformal structure is called a {\em Riemann surface}. 
\index{Riemann surface}

\begin{proof}  Let $\mathcal{A}$ be  a complex  atlas representing the given conformal structure on $Y$. 

Let $E\sub X$ be the set of points where $f$ is not a local homeomorphism.
This set is discrete in $X$. Near each point $X\setminus E$ the map $f$ is an orientation-preserving local homeomorphism. If $p\in X\setminus E$, then we obtain a chart defined near $p$ by composing $f$ restricted to a sufficiently small neighborhood of $p$ with  a chart in $\mathcal{A}$
defined near $f(p)$. These charts form an atlas $\mathcal{A}'$ on $X\setminus E$. The charts in $\mathcal{A}'$ are holomorphically compatible, because the charts in $\mathcal{A}$ are. The atlas $\mathcal{A}'$ defines a conformal structure on $X\setminus E$ such that 
the restriction $f|(X\setminus E)\: X\setminus E\ra Y$ is holomorphic.

It remains to find suitable charts defined near the points in $E$. 
Let $p\in E$ be arbitrary, and $q=f(p)$. Since $f$ is a branched covering map, 
we can find small topological disks $U\subset X$ and $V\subset Y$
with $p\in U$, $U\cap E=\{p\}$, and  $q\in V$ such that the map
$f\: U \setminus \{p\} \ra V\setminus \{q\}$ is a covering map
with finite fibers (up to orientation-preserving {\em
  homeomorphic}  
changes in source and target 
it is represented by a map 
of the form  $z\mapsto z^d$ on $\D\setminus\{0\}$ with $d\in \N$).

We may assume that $V$ is so small that with the given conformal structure on $Y$ the topological disk $V$ is conformally equivalent to $\D$ and hence the punctured disk $V\setminus\{q\}$ is conformally equivalent to  
$\D \setminus \{0\}$. Since we only have a  conformal structure defined 
on $X\setminus E\supset U\setminus \{p\}$, it is a priori not clear that 
$U\setminus \{p\}$ is also conformally equivalent to $\D \setminus \{0\}$;  
but  $U\setminus \{p\}$ is  a topological annulus and hence with the given conformal structure is  conformally equivalent to a Euclidean annulus of the form 
 $$A=\{z\in \C: r<\abs{z}<R\}, $$
 where $0\le r<R\le \infty$. Then from the finite covering map $f\: U \setminus \{p\} \ra V\setminus \{q\}$ we obtain by  a conformal change a finite holomorphic covering map $g\: A \ra \D \setminus \{0\}$ that satisfies $\abs{g(z)}\to 0$ as $\abs{z}\to r$ and $\abs{g(z)}\to 1$ as $\abs{z}\to R$. Here  we have $R<\infty$, because otherwise $\infty \in \CDach$ would be a removable 
 singularity for $g$ which is easily seen to be impossible. 
 
 We also have  $r=0$. This  
  follows from the fact that the {\em conformal modulus} 
(see \cite[Appendix~B]{Mi})
 $$\text{mod}(A)=\frac {1}{2\pi}  \log (R/r)\in (0,\infty]$$ of the annulus $A$ only changes by a finite multiplicative constant under a finite holomorphic covering  map. 
 
 We may assume that $R=1$. In particular, there exists a conformal map  $\varphi \: U \setminus \{p\}\ra
  \D \setminus \{0\}$ such that $\varphi(u)\to 0$ as $u\to p$. If we extend this map by setting 
  $\varphi(p)=0$, then $\varphi$ is an orientation-preserving  homeomorphism of $U$ onto $\D$. Moreover, this map gives  a chart on $X$ defined near $p$. It   is holomorphically compatible 
  with the charts in $\mathcal{A}'$, because the map $\varphi|U \setminus \{p\} $ is holomorphic. 
  
  If we add these charts $\varphi$ defined near points $p\in E$, then we obtain an atlas $\mathcal{A}''$  of holomorphically compatible charts on $ X$. This defines a conformal structure on $X$ and it is clear that 
  with the conformal structures represented by $\mathcal{A}''$ on $X$ and $\mathcal{A}$ on $Y$, the map $f\: X\ra Y$ is holomorphic. 
\end{proof} 

The previous statement can be applied to branched covering maps on a $2$-sphere $S^2$ and gives the following result. 

\begin{cor} \label{cor:brcovratup} 
  Suppose $f\:S^2\ra S^2$ is a
  branched covering map, and $\psi\: S^2\ra \CDach$ is an
  orientation-preserving homeomorphism. 
Then there 
exists an orientation-preserving homeomorphism $\varphi \: S^2 \ra \CDach$ such that $R\coloneqq \psi\circ f \circ \varphi^{-1}$ is a rational map on 
$\CDach$. 
\end{cor}

In particular, up to homeomorphic changes  in source and target, every branched covering map on a $2$-sphere $S^2$ can be represented by a rational map on the Riemann sphere $\CDach$.

\begin{proof} The given homeomorphism $\psi\: S^2\ra \CDach$ gives a natural conformal structure on $S^2$. It is represented by an atlas obtained by pulling back charts  in an atlas representing the conformal structure on $\CDach$. If we equip $S^2$ with this conformal structure, then $\psi\: S^2\ra \CDach$ is a biholomorphism. 

 By Lemma~\ref{lem:pbackcostr} there exists another  conformal structure on $S^2$ such that the map $f\: S^2\ra S^2$ is holomorphic with respect to these two conformal structures on source and target. By the uniformization theorem the sphere $S^2$ equipped with the 
source conformal structure is conformally  equivalent to $\CDach$ by a biholomorphism  $ \varphi \: S^2\ra \CDach$. In particular, $\varphi\: S^2\ra \CDach$ is a homeomorphism and  
$R =\psi \circ f \circ  \varphi^{-1}$ is a holomorphic map on $\CDach$. So $R$ is a rational map.  Based on   Lemma~\ref{lem:23orient} the last relation also implies that $\varphi$ is orientation-preserving,
because the maps $R$, $\psi$, and $f$ are.
\end{proof} 

The proof of Lemma~\ref{lem:pbackcostr} also leads to a criterion when a map on $S^2$ is a branched covering map.

\begin{cor} \label{cor:brcovcrit} Let $f\:S^2\ra S^2$ be a   continuous, open, and discrete map with  degree $\deg(f)>0$. Suppose that 
there exists a finite set $C\sub S^2$ such that $f$ is a local homeomorphism 
near each point in $S^2\setminus C$. Then $f$ is a branched covering map. 
\end{cor}

Recall that $\deg(f)\in \Z$ is the unique number such that 
$f_*([S^2])=\deg(f)  [S^2]$, where $[S^2]\in H_2(S^2)$ is the  fundamental class of $S^2$ (see Section~\ref{sec:orient}). 

A statement much stronger than
 Corollary~\ref{cor:brcovcrit} is actually true.
 
\begin{theorem} 
  \label{thm:Stoilow} 
  \index{branched covering map}
  \index{map!branched covering}
  Let $f\:S^2\ra S^2$ be a continuous, open, and light map    with  degree $\deg(f)>0$.  Then $f$ is a branched covering map. 
\end{theorem}

Here $f$ is called 
{\em light}\index{map!light}\index{light map} 
if $f^{-1}(p)$ is totally disconnected for each $p\in S^2$. 
Theorem~\ref{thm:Stoilow} follows from a  deep and more general characterization  theorem for  continuous, open,  and  light maps between surfaces (see \cite[Theorem~5.1, p.~198]{Why}; 
we are  grateful to P.~Ha\"{i}ssinsky for pointing out this
reference). Its  basic idea goes back to Sto\"\i low (see
\cite{St28} and \cite{LP17}). 

In particular, for  each 
continuous, open, and light map 
$f$  on $S^2$  we obtain the existence of local representations  as in \eqref{eq:brzd}, except that now the homeomorphisms 
$\psi$ and $\varphi$ are not necessarily orientation-preserving.
If $f$ has positive  degree in addition, 
then $f$ is actually  
a branched covering map according to our definition (as follows from the argument  below). For our purposes the weak form of Theorem~\ref{thm:Stoilow} as provided by Corollary~\ref{cor:brcovcrit} 
will suffice (it is used in the proof of 
Theorem~\ref{thm:f_descends_branched_cover}).

\begin{proof} [Proof of Corollary~\ref{cor:brcovcrit}] Let $p\in S^2$ be arbitrary and $q=f(p)$. The closed set  $f^{-1}(q)$ is discrete in $S^2$ and so necessarily finite. This and Lemma~\ref{lem:continv} imply that if we take a small enough topological disk 
$V$ with $q\in V$, then the unique component $U$ of $f^{-1}(V)$ that contains $p$ does not contain any other preimage of $q$ and no point 
in $C\setminus \{p\}$. 

By Lemma~\ref{lem:proper}~\ref{item:proper2} the map $f|U\: U\ra V$ is proper and it is clear by 
choice of $U$ and $V$ that the assumptions of Lemma~\ref{lem:z^d} are satisfied. So near $p$ the map $f$ has a representation as in \eqref{eq:z^dnonorrep}. In addition, the set of points where $f$ is not a local homeomorphism is finite. These assumptions are  enough  to argue  as in the proof of  Lemma~\ref{lem:pbackcostr} that  with suitable (and possibly different) conformal structures in source and target the map $f\: S^2\ra S^2$ is holomorphic. As  in the proof of Corollary~\ref{cor:brcovratup}, it follows   that there are homeomorphisms $\psi$ and $\varphi$ from  $S^2$ to $\CDach$ 
such that $R\coloneqq \psi\circ f\circ \varphi^{-1}$ is a rational map on $\CDach$. 

We may assume that $\psi$ is orientation-preserving, but without 
 additional assumptions we cannot guarantee that $\varphi$  has the same property.  Now if, as in our hypotheses,  we assume that  
 $\deg(f)>0$, then one can see that $\varphi$ is orientation-preserving 
 as follows. Let $[S^2]$ and $[\CDach]$ be the fundamental classes on
 $S^2$ and $\CDach$, respectively, and $\varphi_*$, $\psi_*$, and 
 $f_*$ the induced maps on homology of degree $2$. Then 
$R$ has positive   degree $\deg(R)>0$, because $R$ is rational, and $\psi_*([S^2])=[\Cdach]$, because $\psi$ is an orientation-preserving homeomorphism.
 Moreover, $\varphi_*([S^2])=\pm [\CDach]$ depending on whether 
 $\varphi$ preserves or reverses orientation. 
 Now  
 $$ \deg(f)[\Cdach]=\psi_* (f_*([S^2]))=R_*(\varphi_*([S^2]))=\pm \deg(R)
 [\Cdach]. $$
Since both $\deg(f)$ and $\deg(R)$ are positive, this implies that 
$\varphi_*([S^2])= [\CDach]$, and so $\varphi$ is indeed  orientation-preserving.

It is now clear that $f=\psi^{-1}\circ R\circ \varphi$ is a branched covering map. 
\end{proof}

\begin{lemma}[Compositions of branched covering maps]
  \label{lem:2_3_branched}
  \index{branched covering map}
  \index{map!branched covering}
  Let $X$, $Y$, and $Z$ be surfaces, and 
   $f\colon X\to Z$, $g\colon Y\to Z$, and 
  $h\colon X\to Y$ be continuous maps such that $f=g\circ h$. 
  \begin{enumerate}
  \item 
    \label{item:2_out3_1}
    If $g$ and $h$ are branched covering maps,  and $Y$ and $Z$ 
     are compact, then $f$ is also a branched covering map.
  \item 
    \label{item:2_out_3_2}
    If $f$ and $g$ are branched covering maps, then $h$ is a
    branched covering map. Similarly,  if $f$ and $h$ are branched covering maps, then $g$
    is a branched covering map.
  \end{enumerate}
  If, in addition, $X$, $Y$, $Z$ are Riemann surfaces and the two branched covering maps in the hypotheses of \ref{item:2_out3_1}
or \ref{item:2_out_3_2} are  holomorphic,  then the
  third map is also holomorphic.  
\end{lemma}
  

So in \ref{item:2_out_3_2} we can drop the assumption made in  \ref{item:2_out3_1} that $Y$ and  $Z$ are compact. On the other hand, in the last statement it is enough to assume that $g$ and $h$ are holomorphic and not necessarily branched covering maps in order to conclude that $f=g\circ h$ is holomorphic.

The surfaces and the maps in this statement  are related as in the following commutative diagram: 
\begin{equation*}
  \xymatrix{
    X \ar[r]^h \ar@(ur,ul)[rr]^f & Y \ar[r]^g & Z\rlap{.}
  }
\end{equation*}


\begin{proof}
\ref{item:2_out3_1}
We assume that $Y$ and $Z$ are compact, and both $g$ and $h$
are branched covering maps. 

This is the easiest case. First note that $g\: Y\ra Z$ is
finite-to-one, because $Y$ and $Z$ are compact. If $z_0\in Z$ is
arbitrary, we can find a small topological disk $W\subset Z$  that for the map $g$ is  an evenly covered neighborhood of 
 $z_0$  as in Definition~\ref{def:brcovmap}. Let $y_1, \dots,
 y_n\in Y$ be the preimage points of $z_0$ under $g$, and $V_1,
 \dots, V_n\subset Y$ be the components of 
$g^{-1}(W)$ with $y_i\in V_i$ for $i=1, \dots, n$. Each $V_i$ is a topological disk.  If we shrink $W$ to $z_0$, then each disk $V_i$ shrinks to $y_i$. So by Lemma~\ref{lem:evennei}  we may assume that $W$ is so small that for the map $h$ each  $V_i$ is an evenly covered neighborhood of $y_i$. Then, based on Lemma~\ref{lem:z^d}, one easily sees that $W$ is an evenly covered neighborhood of $z_0$ for the map
$f=g\circ h$. Hence $f$ is a branched covering map. 

If $X,Y$, and $Z$ are Riemann surfaces, and $g$ as well as $h$
are holomorphic, then $f$ is clearly holomorphic as well.  

\smallskip
(iia) We assume that $f$ and $g$ are branched covering maps. In contrast 
to case~\ref{item:2_out3_1} we do not assume that $Y$ and $Z$ are compact.  
 
 Let $y_0\in Y$ be arbitrary. In order to show that $h$ is a branched covering map, we have to find a neighborhood of $y_0$ that is evenly covered by $h$.
 
 For this we set $z_0=g(y_0)$. Since both $f$ and $g$ are
 branched covering maps, we can choose a small topological disk 
 $W\subset Z$ with  $z_0\in W$ that is evenly covered by 
 both $f$ and  $g$ (this follows from  Lemma~\ref{lem:evennei}). Then we have decompositions into connected components of the form 
 $$g^{-1}(W)=\bigcup_{j\in J}V_j\subset Y$$
and 
$$f^{-1}(W)=\bigcup_{i\in I}U_i \subset X$$
as in Definition~\ref{def:brcovmap}. Each set $V_j$ contains precisely one point 
$p_j\in g^{-1}(z_0)$, and each $U_i$ one point 
$q_i\in f^{-1}(z_0).$  

 We denote the connected component of $g^{-1}(W)$ that contains 
 $y_0\in g^{-1}(z_0)$ by  $V$; so $V=V_{j_0}$ and $y_0=p_{j_0}$  for some $j_0\in J$.  We claim 
that $V$ is either disjoint from the image $h(X)$  or is evenly covered by $h$.

To see this,  note that for each $i\in I$, the set 
$h(U_i)$ is connected and 
$$g(h(U_i))=f(U_i)=W;$$
so $h(U_i)\sub g^{-1}(W)$. Hence $h(U_i)$ has to lie in one of the connected components $V_j$ of $g^{-1}(W)$. Let $I_0$ be the possibly empty set of all $i\in I$ with $h(U_i)\sub V=V_{j_0}$. Then for each $i\in I\setminus I_0$ we have 
$h(U_i)\sub \bigcup_{j\in J\setminus\{j_0\}}V_j$, and so
$h(U_i)\cap V=\emptyset$.
This implies that 
$$ h^{-1}(V)=\bigcup_{i\in I_0}U_i.$$ 
So the sets $U_i$, $i\in I_0$, are the connected components of $h^{-1}(V)$. If $I_0=\emptyset$, then 
$h^{-1}(V)=\emptyset$, and so $V$ is disjoint from $h(X)$.

Now let us assume that $I_0\ne \emptyset$, and fix $i\in I_0$. We will verify the conditions in Lemma~\ref{lem:z^d} for the map $h\: U_i \ra V$. 
First note  that  
 $$h^{-1}(y_0)\cap U_i=
f^{-1}(z_0)\cap U_i= \{q_i\}. $$ The map $h|U_i\: U_i \ra V$ is proper. Indeed, if $K\sub V$ is compact, then 
$$ (h|U_i)^{-1}(K)=h^{-1}(K)\cap U_i\sub f^{-1}(g(K))\cap U_i= (f|U_i)^{-1}(g(K)). $$
Now $g(K)\sub W$ is compact and 
 $f|U_i\: U_i\ra W$ is proper. Hence 
 $(f|U_i)^{-1}(g(K))\sub U_i$ is compact.
 Since $\widetilde K\coloneqq (h|U_i)^{-1}(K)$ is relatively closed in $U_i$ and a subset of the compact set $(f|U_i)^{-1}(g(K))$,
 the set $\widetilde K$  is also compact. 
 
 Finally, if $q\in U_i\setminus \{q_i\}$, then $h$ is an orientation-preserving homeomorphism locally near $q$, because locally it is the composition of two such maps, namely, $f$ near $q$ followed by a local inverse of 
 $g$ near $p\coloneqq h(q)\ne y_0$ (note that this local  inverse of $g$ maps $f(q)=g(p)$ back to $p=h(q)$).  
 It now follows from Lemma~\ref{lem:z^d} that $V$ is evenly covered by $h$.
 
 We have seen that every point $y_0\in Y$ has a neighborhood $V$ that is evenly covered by $h$ or is disjoint from $h(X)$. This implies that $h(X)$ is a non-empty  open set with open complement. Since $Y$ is connected, it follows that  $h(X)=Y$. Therefore, actually every point in $Y$ has a neighborhood that is evenly covered by $h$. Hence $h$ is a branched covering map.     
 
 If, under the additional assumptions, $g$ and $f$ are holomorphic, then $h$ is holomorphic as well. Indeed, if $x_0\in X$ is not a critical point of $f$, then 
 $f$ is a local biholomorphism near $x_0$, and $g$ must be a local biholomorphism 
 near $y_0=h(x_0)$. Then $h$ is holomorphic near $x_0$, because near this point 
  $h$ is
 the composition of $f$ followed by a holomorphic  inverse branch $g^{-1}$ of $g$ with $g^{-1}(f(x_0))=y_0$.
 
  If $x_0$ is a  critical point of $f$, then there are no other critical points of $f$ nearby, and so $h$ is holomorphic in a punctured neighborhood of $x_0$. Since $h$  is continuous at $x_0$, this is a removable singularity for $h$
 and so $h$ is also holomorphic near  $x_0$. It follows that $h$ is holomorphic on $X$. 
 
 \smallskip 
(iib)
We assume that $f$ and $h$ are branched covering maps. Again we do not assume
that $Y$ and $Z$ are compact.  

 To see that $g$ is also a  branched covering map, let 
 $z_0\in Z$ be arbitrary. Then we can find a neighborhood $W$ of $z_0$ that is evenly covered by 
 $f$. Then again  we have a decomposition 
$$f^{-1}(W)=\bigcup_{i\in I}U_i\subset X$$
into connected components as in
 Definition~\ref{def:brcovmap}. Moreover, there exists a unique point $q_i\in U_i\cap f^{-1}(z_0)$ for each $i\in I$. 

Since $h$ is surjective, 
we have 
\begin{equation}\label{eq:ghsurj} 
 g^{-1}(W) = h(h^{-1} (g^{-1}(W)))=h(f^{-1}(W))=\bigcup_{i\in I} h(U_i). 
 \end{equation}  
Fix $i\in I$,   and consider the connected 
component $V\subset Y$ of the open set $g^{-1}(W)$ that contains $h(q_i)$. Since $h(q_i)\in h(U_i) \sub g^{-1}(W)$, it follows that $h(U_i)\sub V$. 
Note that the map $h|U_i\: U_i \ra V$ is proper. Indeed, if $K\sub V$ is compact, then 
$$(h|U_i)^{-1}(K) \sub (f|U_i)^{-1}(g(K)), $$
 and, by a similar reasoning as in the proof of (iia), we see that the set 
 $(h|U_i)^{-1}(K)$ is also compact. So we can apply 
 Lemma~\ref{lem:proper}~\ref{item:proper1} to the map $h|U_i\: U_i\ra V$ and it  follows that $V=h(U_i)$. 
 So for each $i\in I$ the set $h(U_i)$ is the connected component of $g^{-1}(W)$ that contains $h(q_i)$.
 Moreover, by \eqref{eq:ghsurj} each connected component $V$ of  $g^{-1}(W)$ is such an image $h(U_i)$ for some $i\in I$. 
 
We now claim that $W$ is evenly covered by $g$. 
To see this, let $V$ be a connected component of
 $g^{-1}(W)$. It suffices to verify the conditions of Lemma~\ref{lem:z^d} 
for the map $g|V\: V \ra W$. For this we pick  $U_i$ such that $h(U_i)=V$, and let $p_0=h(q_i)$. 

Then 
$$ g^{-1}(z_0)\cap V= h(f^{-1}(z_0)\cap U_i)=\{ h(q_i)\}=\{ p_0\}. $$
The map $g|V \: V \ra W$ is proper, because if $K\sub W$ is compact, 
then $f^{-1}(K)\cap U_i=(f|U_i)^{-1}(K)$ is compact, and so
$$ (g|V)^{-1}(K)=g^{-1}(K)\cap V=h(f^{-1}(K)\cap U_i)$$ is compact as well. 
 
  Finally, if $p\in V\setminus\{p_0\}$, then $g$ is an orientation-preserving homeomorphism near 
 $p$; again this is true, because near $p$ the map $g$ is the composition of two such maps. To see this, we pick 
 $q\in U_i$ with  $h(q)=p$. Then $q\ne q_i$, and so 
 $f$ is an orientation-preserving homeomorphism near 
 $q$. Since  $f=g\circ h$, the map $h$ is also locally injective near $q$, and hence  an orientation-preserving homeomorphism near $q$, because $h$ is a branched covering map. A local inverse $h^{-1}$ of  $h$ defined near $p$ with $h^{-1}(p)=q$ has the same property.
 Finally, 
 $g=f\circ h^{-1}$ near $p$ and we have a local representation of $g$ as desired. 
 It follows that $g$ is indeed a branched covering map. 
 
 If, under the additional assumptions, $f$ and $h$ are
 holomorphic, then $g$ is also holomorphic as can be verified by
 an argument very similar to the one in (iia): one first shows that $g$ is holomorphic away from the critical values of $h$ and then  that the critical values of $h$ are removable singularities for $g$. 
\end{proof}

As the following lemma shows, the local degree behaves in the expected way under compositions of branched covering maps.

\begin{lemma}
  \label{lem:locdeg}
  \index{local degree}\index{deg@$\deg_f(p), \deg(f,z)$} 
  Let $f\: X\ra Y$ and $g\: Y\ra Z$ be branched covering maps, where 
 $X$, $Y$, and $Z$ are  surfaces such that  $Y$ and $Z$ are compact.
  Then 
 \begin{equation}\label {eq:locdegcomp} 
  \deg(g\circ f, x)= \deg(g, f(x))\cdot \deg(f,x)
  \end{equation}
  for all $x\in X$. 
  \end{lemma}
  
  \begin{proof} Let $x\in X$ be arbitrary.
    We know by Lemma~\ref{lem:2_3_branched} that $g\circ f$ is a branched covering map, and so all terms in \eqref{eq:locdegcomp} are defined. 
    
 Let  $y=f(x)$ and 
  $z=g(y)=g(f(x))$. We consider  a point $z'\ne z$ 
   close to $z$. In order to determine $\deg(g\circ f, x)$, we have to count  the number of preimages of $z'$ under $g\circ f$ that are close to $x$. 
  
 Now if $z'\ne z$ is close to $z$, then it has 
  $k\coloneqq \deg(g, y)$ distinct preimages $y_1, \dots , y_k\ne y$ under $g$ near 
  $y$.  We may assume that $z'$ is so close to $z$ that 
  the points $y_1, \dots , y_k$ lie in a neighborhood of $y$ that is evenly covered by $f$. Then each of the points $y_i$, $i=1, \dots, k$,  has precisely 
  $l\coloneqq \deg(f,x)$  preimages under $f$ close to $x$. 
  
 So $z'$ has precisely $k\cdot l=\deg(g, f(x))\cdot \deg(f,x)$ preimages under $g\circ f$ that are close to $x$ and \eqref{eq:locdegcomp}
 follows. 
 \end{proof}

 We next discuss existence and uniqueness for lifts by branched covering maps.
 The basic diagram is again \eqref{eq:basicliftdia}, where $\pi$ is now a given branched 
 covering map. A map $g$ as in \eqref{eq:basicliftdia}, is called a {\em lift} of $f$ (by $\pi$). 
 We first record an important special case of this situation.

 \begin{lemma}[Lifting paths by branched covering maps] 
 \label{lem:liftsofpathsbranched}\index{branched covering map!lift of path by}\index{lift!of path by branched covering map}
Let $X$ and $Y$ be surfaces,  $\pi\: X\ra Y$ be a branched covering map,  $\ga\: [0,1]\ra Y$ be a path in $Y$, and $x_0\in \pi^{-1}(\ga(0))$. Then there exists a path $\alpha\: [0,1]\ra X$ with 
$\alpha(0)=x_0$ and $\pi\circ \alpha=\ga$.  
\end{lemma}  

So every path can be lifted by a branched covering map. Moreover, any point in the fiber over the initial point of the original path can be prescribed as the initial point of the lift. In general, the lift is not uniquely determined, because it can make various ``turns''
as it runs through  critical points of $\pi$.  For a  more general path-lifting statement see \cite[Theorem~2]{Fl50}. 

\begin{proof} We will only give an outline of the proof 
 and leave some  straightforward details to the reader. 

If we break up $\ga$ into small subpaths, we can  reduce to the situation where $\ga$ runs in an open subset of $Y$ that is evenly covered by $\pi$. Using local coordinates, we can further make the assumption  that  $X$ and $Y$ are equal  to the open unit disk $\D$ in $\C$, and $\pi(z)=z^n$ for $z\in X=\D$, where $n\in \N$.  

Then the  set $[0,1]\setminus \ga^{-1}(0)$ can be written as a disjoint union  
of  relatively open intervals  $I_j\sub [0,1]$, $j\in J $, where $J$ is a countable index set. 
Each path  $\ga|I_j$ runs in 
$\D\setminus\{0\}$ and hence has a (non-unique)  lift $\alpha_j$ by  the covering map $\pi\: \D\setminus\{0\}
\ra \D\setminus\{0\}$. Moreover, if $0\in I_j$ we can choose the lift $\alpha_j $ so that $\alpha_j(0)=x_0$ for a given point $x_0\in \pi^{-1}(\ga(0))$.  

 A continuous lift $\alpha\:[0,1]\ra \D$ of $\ga$ with $\alpha (0)=x_0$ is now defined  by setting $\alpha(t)=\alpha_j(t)$ if $t\in [0,1]$ lies in one of the intervals $I_j$, and setting $\alpha(t)=0$ if not. 
\end{proof} 

The following statement is the  analog of Lemma~\ref{lem:liftsofcov} for branched covering maps.

\begin{lemma}[Lifting  branched covering maps] 
\label{lem:liftsofbranched}
\index{lift!of map!by branched covering map}
\index{branched covering map!lift of map by}
Let $X$, $Y$, and $Z$ be surfaces,  and $\pi\: X\ra Y$ be a branched covering map. 

 \begin{enumerate}
 
   \item
    \label{item:Liftuniq}
   Suppose $g_1, g_2\: Z\ra X$ are two continuous and discrete maps such that $\pi\circ g_1=\pi\circ g_2$. If there exists a point $z_0\in Z$ such that 
 $g_1(z_0)=g_2(z_0)=:\!x_0$,  and $\pi(x_0)\in Y\setminus\pi(\crit(\pi))$, then $g_1=g_2$.  
   
    \smallskip
  \item 
    \label{item:Liftex}
    Suppose  $Z$ is simply connected,  $f\: Z\ra Y$ is a branched covering  map, and  $x_0\in X$ and   $z_0\in Z$  are points such that $\pi(x_0)=f(z_0)$. 
    
    If for all  $x\in X$ and  $z\in Z$ with $\pi(x)=f(z)$  we have $$\deg(\pi,x) |\deg(f,z), $$ then there exists a branched covering map
    $g\: Z\ra X$ such that $g(z_0)=x_0$ and  $f=\pi\circ g$.

     \end{enumerate}
\end{lemma}  
The situation is illustrated in the following commutative diagram:
\begin{equation*}
  \xymatrix{
    & X \ar[d]^\pi
    \\
    Z\ar[ur]^g \ar[r]^f & Y\rlap{.}
    }
\end{equation*}

If $\pi$ is a branched covering map as in the  statement, then we
call a fiber $\pi^{-1}(y_0)$, $y_0\in Y$, 
{\em clean}\index{clean fiber} 
if it does not contain critical points of $\pi$, or equivalently, if $y_0$ is not a critical value of $\pi$. 

The maps $g_1$ and $g_2$ in \ref{item:Liftuniq} are lifts of $f\coloneqq \pi\circ g_1=\pi\circ g_2$ by  $\pi$. Moreover, $y_0=\pi(x_0)\in Y\setminus 
\pi(\crit(\pi))$ is not a critical value of $\pi$ and  so $\pi^{-1}(y_0)\cap \crit(\pi)=\emptyset$. This means that  $x_0\in 
\pi^{-1}(y_0)$ lies in a clean fiber of $\pi$. So
\ref{item:Liftuniq} says that under our hypotheses  lifts are uniquely determined by the image of a point that maps 
into a clean fiber of $\pi$. 

\begin{proof}  \ref{item:Liftuniq} The maps $g_1$ and $g_2$ are lifts of $f =\pi\circ g_1=\pi\circ g_2$ by  the branched covering map $\pi\: X\ra Y$. We are claiming that with the given normalization  such  a lift is unique. 

To see this, let  $P_Y\coloneqq \pi(\crit(\pi))$, $P_X \coloneqq \pi^{-1}(P_Y)$,  
$P_Z \coloneqq f^{-1}(P_Y)=g_1^{-1}(P_X)=
g_2^{-1}(P_X)$. 
Note that these are discrete sets in  $Y$, $X$,  and $Z$, respectively. 
Let $X'=X\setminus P_X$, $Y'=Y\setminus P_Y$, and $Z'=Z\setminus P_Z$.
Then $x_0\in X'$, $z_0\in Z'$, $f(Z')\sub Y'=\pi (X')$, and
$g_1(Z')=g_2(Z')\sub X'$. Moreover, if we restrict $\pi$ to $X'$,  then we
obtain a covering map in the usual sense. Now by 
Lemma~\ref{lem:liftsofcov}~\ref{item:liuniq} a lift of  a continuous
map  on a connected surface by   a covering map is  uniquely determined
by the image of one point. Since $P_Z$ is a discrete set, $Z'$ is a connected surface. So $g_1(z_0)=x_0=g_2(z_0)$ implies that 
$g_1|Z'=g_2|Z'$. Moreover,   $Z'$ is dense in $Z$, and so  it follows that $g_1=g_2$ as desired. 

\smallskip 
\ref{item:Liftex} This is an existence statement for lifts by 
branched covering maps. It  will be  derived  from an analog of  the
classical monodromy  theorem in complex analysis 
(see \cite[Section~1.7]{Fo81}). We will only give an outline of the argument and leave some details to the reader. 

We choose a conformal structure on $Y$.
Then by Lemma~\ref{lem:pbackcostr} there exist conformal structures on  $X$ and $Z$ such that the maps $\pi$ and $f$ are holomorphic if we equip the surfaces with these conformal structures.

A {\em germ} (of a lift of $f$)  at a point $z\in Z$ is a holomorphic  map 
$g_z\: U_z \ra X$ defined on an open and connected neighborhood 
$U_z\sub Z$ of $z$ such that $f|U_z= \pi \circ g_z$. We consider two such germs at $z$ as equivalent  if they agree on a neighborhood of $z$. Actually, one should define a germ   as  an equivalence class of such local lifts at $z$, but for the sake of easier exposition it is  convenient to  ignore the distinction between a local lift and the equivalence class  that it represents.  

 The value $g_z(z)$ does not determine $g_z$ uniquely in general, because the fiber $\pi^{-1}(f(z))$ may not be clean; this is true for nearby points $z'$ and so  it follows from 
\ref{item:Liftuniq} that the value $g_z(z')$ for a point $z'\ne z$ sufficiently close to $z$ determines 
$g_z$ uniquely.

This fact allows one to define a notion of an {\em  analytic continuation}  of a germ 
$g_z$ 
along a path $\alpha\: [0,1]\ra  Z$ with $\alpha(0)=z$. Such a continuation is given by   germs 
$g_{\alpha(t)}$ at $\alpha(t)$ for $t\in [0,1]$ such that
$g_{\alpha(0)}$ 
is equivalent to
the given germ $g_z$. Moreover,  we require  that these germs  are  compatible in the following sense: 
if  $t_0\in [0,1]$ is arbitrary, then each  germ $g_{\alpha(t)}$ for  $t\in [0,1]$ close enough to $t_0$ 
is equivalent to a germ obtained by restricting $ g_{\alpha(t_0)}$ to a suitable neighborhood of $\alpha(t)$. 

As in the classical monodromy theorem, one can  show that if an analytic continuation of $g_z$ along a path  $\alpha\: [0,1]\ra  X$ exists, then up to equivalence 
the germ $g_{\alpha(0)}$ uniquely determines $g_{\alpha(1)}$. In this case we say that 
the (equivalence class of the) germ $g_{\alpha(1)}$ is obtained by 
analytic continuation of $g_{\alpha(0)}$ along $\alpha$.

Analytic continuation of a germ along homotopic paths with the same endpoints leads  to  equivalent  germs.
More precisely, let   $z,z'\in Z$,  $H\: [0,1]\times [0,1]\ra Z$ be  continuous, and assume that $z=H(s,0)$  and $z'=H(s,1)$
for all $s\in [0,1]$. Suppose that we have an analytic continuation of a germ $g_z$ at $z$ along every path $t\mapsto \alpha^s(t)\coloneqq H(s,t)$, $s\in [0,1]$. Let  $g^0_{z'}$ and $g^1_{z'}$ be the two germs at $z'$  obtained by analytic continuation of $g_z$ along $\alpha^0$ and $\alpha^1$, respectively. Then $g^0_{z'}$ and $g^1_{z'}$ are equivalent. 

Now suppose $x\in X$ and $z\in Z$ are points with 
$y\coloneqq \pi(x)=f(z)$.
Our hypotheses 
imply that  then  $d\coloneqq \deg(\pi,x)$ divides $k\coloneqq \deg(f,z)$. Let $m\coloneqq k/d\in \N$. If we choose suitable  local conformal  coordinates near   $x$, $y$, and $z$, then we can represent the map $\pi $ near $x$  and the map $f$ near $z$ by the power maps $P_d$ and $P_k$ near $0$, respectively. Here we use the notation $P_l(u)=u^l$ for $l\in \N$, $u\in \C$. 
Note that $P_k=P_d\circ P_m$.

We may assume that the conformal
coordinates near $x$ and $z$ are defined on topological disks $U$ and $V$, respectively. If $g$ is any holomorphic germ of a lift of $f$ defined on a subregion $V'\sub V$ and mapping into $U$, then in the 
given conformal coordinates we have $P_k=P_d\circ P_m= P_d\circ g$.
This implies that $g=cP_m$ on $V'$, where $c\in \C$ and $c^d=1$. Conversely, each map $g=cP_m$ of this form clearly satisfies $P_k=P_d\circ g$.
 In  particular, the given germ $g$ on $V'$ can actually be extended to a germ defined on the whole neighborhood 
$V$ of $z$. 

These considerations show that first of all we can find a germ $g_{z_0}$ defined near $z_0$ such that $g_{z_0}
(z_0)=x_0$. Moreover, the  analytic continuation of $g_{z_0}$ 
 along any path $\alpha\: [0,1]\ra X$ with $\alpha(0)=z_0$ exists, because we never encounter any singularities   along $\alpha$
 beyond which we cannot extend our local germs.

 The fact that $Z$ is simply connected and the homotopy invariance of analytic continuation as discussed 
 guarantees that up to equivalence the initial choice of the germ $g_{z_0}$ leads to a unique germ $g_z$ at every point $z\in Z$. These germs are locally compatible. So  if we define $g(z)=g_z(z)$ for $z\in Z$, then $g$  is a holomorphic map 
 from $Z$ to $X$ such that $g(z_0)=x_0$ and $f=\pi\circ g$. 
 Since $\pi$ and $f$ are branched covering maps, 
 Lemma~\ref{lem:2_3_branched}~\ref{item:2_out_3_2} implies that $g$ is a branched covering map as well. 
\end{proof}

\section{Quotient spaces and group actions}
\label{sec:appquotmaps} 
\index{quotient}

In this section we discuss some 
general facts about quotient spaces, group actions, and how we can pass maps to quotients.
All of this material is fairly standard. 

 Let $\sim$ be an equivalence relation on a set $X$. We denote by
$X/\Sim$ the 
{\em quotient space}\index{quotient!space}\index{Xsim@$X/\Sim$}
consisting of all equivalence
classes $[x]\coloneqq \{y\in X: x\sim y\}$ for $x\in X$, and by $\pi \colon X\to
X/\Sim$ the quotient map that sends each point $x\in X$ to its
equivalence class $[x]$. If $X$ is a topological space, then we equip
$X/\Sim$ with the 
{\em quotient topology}.\index{quotient!topology}
In this topology a set
$U\sub X/\Sim$ is open if and only if $\pi^{-1}(U)\subset X$ is
open. Then 
$\pi$ is a continuous map. Moreover, a map $f\:X/\Sim \ra Z$ into
another topological space $Z$ is continuous if and only if $f\circ \pi
\: X \ra Z$ is continuous. Actually, this functorial property
characterizes the quotient topology on $X/\Sim$.

Let  $\Theta\colon X \to Y$ be  a map between sets $X$
and $Y$. The equivalence relation on $X$ {\em induced} by
  $\Theta$\index{equivalence relation!induced by!map} is defined by
\begin{equation*}
  x\sim y \;:\Leftrightarrow \;\Theta(x) = \Theta(y) 
\end{equation*}
for $x,y\in X$. Clearly, this is indeed an equivalence
relation on $X$. The following well-known fact allows one to
identify the quotient $X/\Sim$ with $Y$ in the cases that are
relevant for us.

\begin{lemma}
  \label{lem:XsimY}
  Let $\Theta\colon X\to Y$ be a continuous and surjective map
  between topological spaces $X$ and $Y$. Let $\sim$ be the
  equivalence relation on $X$ induced by $\Theta$ and $\pi\colon
  X\to X/\Sim$ be the quotient map. Assume that
  \begin{enumerate}
  \item
    \label{item:XsimY_map_open}
    $\Theta$ is an open map, or
  \item 
    \label{item:XsimY_cpt_T2}
    $X/\Sim$ is compact and $Y$ is Hausdorff.
  \end{enumerate}
  Then there exists a unique homeomorphism $\varphi\colon
  X/\Sim \to Y$ such that $\Theta = \varphi \circ \pi$.
\end{lemma}
 
\begin{proof} 
  We define $\varphi([x])=\Theta(x)$ for $[x]\in X/\Sim$. Since
  $\sim $ is induced by $\Theta$, it is clear that
  $\varphi\: X/\Sim\ra Y$ is a well-defined map satisfying
  $\Theta=\varphi\circ \pi$. This last identity determines
  $\varphi$ uniquely. The map $\varphi$ is injective, as follows
  from the fact that $\sim$ is induced by $\Theta$. It is also
  surjective, because $\Theta$ is, and continuous by the
  functorial property of the quotient topology.  So $\varphi$ is
  a continuous bijection.
  
  Let us first assume that $\Theta$ is an open map. To show
  that $\varphi$ is a homeomorphism, it suffices to show that
  $\varphi$ is an open map as well. To see this, let  
  $U\sub X/\Sim$ be an arbitrary open set. Then $V\coloneqq
  \pi^{-1}(U)\sub X$ is also open. Since 
  $$  \varphi(U)=\varphi(\pi(V))=\Theta(V)$$ and 
  $\Theta$ is an open map, it follows that $\varphi(U)\subset Y$
  is open as desired.

 Suppose  as in (ii) that $X/\Sim$ is compact and $Y$
  is Hausdorff. Let $A\subset X/\Sim$ be an arbitrary closed,
  and hence compact, set. Then $\varphi(A) \subset Y$ is
  compact; so this set is closed since $Y$ is Hausdorff. Thus
  $\varphi$ is a closed bijection which implies that
  $\varphi^{-1}$ is continuous. The first part of the proof
  implies that $\varphi$ is a homeomorphism as desired.
\end{proof}

We now study when maps descend to quotients. Let $\sim$ be an
equivalence relation on a set $X$ and $\pi\: X\ra X/\Sim$ be
the quotient map. If $f\colon X\to X$ is a map, then we say
that $f$ 
{\em descends}\index{map!descending to quotient}\index{quotient!map descending to}\index{descending!to quotient}
to the quotient $X/\Sim$ if there
exists a map $\widetilde f\: X/\Sim \ra X/\Sim$ such that
$\widetilde f \circ \pi = \pi \circ f$. In this case, we have
the following commutative diagram:
\begin{equation*}
  \xymatrix{
    X \ar[r]^f \ar[d]_\pi & X \ar[d]^\pi
    \\
    X/\Sim \ar[r]^{\widetilde f} & X/\Sim\rlap{.}
  }
\end{equation*}
Here necessarily $\widetilde f([x])=[f(x)]$ for $x\in X$. This
shows that if $f$ descends to $X/\Sim$, then $\widetilde f$ is
uniquely determined. 

It is easy to characterize when a map $f$ descends. The relevant
condition is that $\sim$ should be {\em invariant} under
$f$ (or \emph{$f$-invariant}),\index{equivalence relation!f-invariant@$f$-invariant}\index{f-invariant@$f$-invariant!equivalence relation}\index{invariant!equivalence relation}
which means that the
implication  
\begin{equation*}
  x\sim y \Rightarrow f(x) \sim f(y)
\end{equation*}
is satisfied  for all $x,y\in X$. 

\begin{lemma}[Quotients of maps]
  \label{lem:f_descends}
  Let $\sim$ be an equivalence relation on a set $X$ with the quotient map $\pi\: X\ra 
  X/\Sim$. Suppose $f\colon X\to X$ is a map.
  \begin{enumerate}
  
  \smallskip
  \item 
\label{item:f_desc_sim_inv}
Then $f$ descends to a map $\widetilde f$ on $X/\Sim$ if and
only if $\sim$ is in\-variant under $f$.
    
  \smallskip
  \item 
    \label{item:f_desc_cont}
   Suppose   $X$ is a topological space, and  $f$ is a continuous map that descends to the map $\widetilde f\: X/\Sim\,\ra X/\Sim$. Then $\widetilde f$ is continuous.     
    
  \end{enumerate}
\end{lemma}

\begin{proof}
  \ref{item:f_desc_sim_inv} 
  Suppose $\sim$ is invariant under $f$. Then 
  $\widetilde f\colon X/\Sim \ra X/\Sim$ given by
  $\widetilde f([x])\coloneqq [f(x)]$ for $[x]\in X/\Sim$ is
  well-defined. Obviously,
  $\widetilde f \circ \pi = \pi \circ f$ which shows that $f$
  descends to $X/\Sim$.
  
  Conversely, suppose that $f$ descends to the map
  $\widetilde f$ on $X/\Sim$. Let $x,y\in X$ with $x\sim y$ be
  arbitrary. Then $[x]=[y]$, and so
  $$ [f(x)]=(\pi\circ f)(x)=(\widetilde f\circ \pi)(x)=\widetilde f([x])=\widetilde f([y])=[f(y)].$$ 
  Hence $f(x)\sim f(y)$, and we see that $\sim$ is invariant
  under $f$.
  
   \smallskip
  \ref{item:f_desc_cont}
  If
  the continuous map $f\: X \ra X$ descends to the map
  $\widetilde f\: X/\Sim \ra X/\Sim$, then
  $\widetilde f\circ \pi =\pi \circ f$ is continuous. 
  Hence
  $\widetilde f$ is continuous by the functorial property of the
  quotient topology.
\end{proof}

A more general continuity criterion related to the second part of the previous lemma can be formulated as follows. 

\begin{lemma} \label{lem:f_desc_cont} 
Let $X$ and $Y$ be topological spaces, and $A\: X\ra X$, $\Theta\: X\ra Y$, 
$f\: Y\ra Y$ be maps with $f\circ \Theta =\Theta \circ A$. Suppose $A$ is continuous, and $\Theta$ is continuous, surjective, and open. Then $f$ is continuous.
\end{lemma} 

\begin{proof} 
  Suppose $W\subset Y$ is open. We define
  $V\coloneqq f^{-1}(W)\subset Y$ and
  $U\coloneqq \Theta^{-1}(V)\sub X$. Then
  $\Theta\circ A= f\circ \Theta$ implies that
  \begin{equation*}
    U
    = 
    \Theta^{-1}(V)  
    =  
    \Theta^{-1}(f^{-1}(W))
    =
    A^{-1}(\Theta^{-1}(W)).
  \end{equation*}
  Since $A$ and $\Theta$ are continuous, it follows that $U$ is
  open. Now $\Theta(U) = V$, since $\Theta$ is
  surjective. Moreover, $\Theta$ is open which implies that
  $V=\Theta(U)=f^{-1}(W) $ is open. Hence $f$ is continuous as
  desired.
\end{proof}

We now consider equivalence relations that are induced by group
actions. We first  review some general   terminology.

Let $X$ be a topological space and $G$ be a group of homeomorphisms
acting on $G$. The 
equivalence relation $\sim$ on $X$ \emph{induced} by
  $G$\index{equivalence relation!induced by!group}
 is defined by
\begin{equation}
  \label{eq:eq_ind_G}
  x\sim y \;:\Leftrightarrow \;\text{there exists $g\in G$ such that $y=g(x)$}
\end{equation}
for $x,y\in X$. The equivalence class $[x]$ of a point $x\in X$ with
respect to $G$ is equal to its 
{\em $G$-orbit}\index{orbit!by group action}
$Gx\coloneqq \{g(x):g\in G\}$ under
$G$.  We denote the quotient space by 
$X/G$,\index{Xx@$X/G$}
and equip it with the
quotient topology. As before, we denote by $\pi$ the quotient map
$\pi\: X\ra X/G$ given by $\pi(x)=[x]=Gx$ for $x\in X$.

A 
{\em fundamental domain}\index{fundamental domain} 
for the action of $G$ is a closed set $F\sub X$
such that 
every orbit of $G$ has at least one point in $F$, and at most one
point in the interior $\inte(F)$. If $F$ is a fundamental domain for $G$, then, at
least on an intuitive level, one obtains the quotient space $X/G$ from
$F$ by identifying the points on the boundary of $F$ that lie in the
same orbit.

The 
{\em stabilizer}\index{stabilizer}
$G_x$ of a point $x\in X$ is the subgroup of $G$
consisting of all elements $g\in G$ with $g(x)=x$.  We call the action
{\em cocompact}\index{cocompact}\index{group action!cocompact}
if there exists a compact set $K\sub X$ such that
$X=\bigcup_{g\in G} g(K)$. So then the images  of $K$ under
the elements in  $G$
cover $X$, and $X/G=\pi(K)$. In particular, the quotient space $X/G$
is compact as it is the continuous image $\pi(K)$ of the compact set $K$.
 

 The action of $G$ on $X$ is
called 
{\em properly discontinuous}\index{group action!properly discontinuous}\index{properly discontinuous}  
if for each compact set $K\sub X$
the set $\{g\in G: g(K)\cap K\ne \emptyset\}$ is finite. 
Often one makes additional assumptions on $X$ here that ensure a supply of sufficiently many compact subsets $K\sub X$. This is the case, for example, if  $X$ is a metric space that is
 {\em proper},\index{proper!metric space}\index{metric!proper} 
 i.e., closed balls in $X$ are compact. 
We call the
action of $G$ on a metric space $X$ 
{\em geometric}\index{group action!geometric}\index{geometric group action}  
if it is cocompact and properly
discontinuous, and each element of $G$ acts as an isometry on
$X$.

Suppose $Y$ is another topological space and $\Theta\: X\ra Y$ is a
continuous  map. We say that $\Theta$ is 
{\em induced}\index{map!induced by group action}\index{induced by group action}
by the action of a group $G$ acting on $X$ if $\Theta(x)=\Theta(y)$
for $x,y\in X$ if and only if there exists $g\in G$ such
$y=g(x)$. In this case, the equivalence relation on $X$ induced
by $\Theta$ is the same as the equivalence relation induced by
$G$. 

The following statement  immediately follows  
from Lem\-ma~\ref{lem:XsimY}.
 
\begin{cor}
  \label{cor:groupquot} 
  Let $X$ and $Y$ be topological spaces, $G$ be a group acting
  on $X$, $\pi\: X\ra X/G$ be the quotient map, and
  $\Theta\: X \ra Y$ be a  continuous and surjective  map induced
  by $G$. Assume
  \begin{enumerate}
  \item
    \label{item:groupquot1}
    $\Theta$ is an open map, or 
  \item
    \label{item:groupquot2} 
    $G$ acts cocompactly on
  $X$ and $Y$ is Hausdorff.
  \end{enumerate}
  Then there exists a unique homeomorphism $\varphi \: X/G\ra Y$
  such that $\Theta=\varphi\circ \pi$.
\end{cor}

Note that in case \ref{item:groupquot2} the quotient space $X/G$
is compact and so Lemma~\ref{lem:XsimY} indeed applies.
  
Under the assumptions of  Corollary~\ref{cor:groupquot}   one can identify $Y$ with the quotient space $X/G$ by the homeomorphism $\varphi$,  and
$\Theta$ with the quotient map $\pi\: X \ra X/G$.
  



Let us now consider when a map $f\colon X\to X$ descends to the
quotient $X/G$.  
A relevant condition  
is that $f$ is 
{\em $G$-equivariant}.\index{equivariant|textbf}\index{group action!map equivariant under|textbf}\index{G-equivariant@$G$-equivariant|textbf}
This means  that for each $g\in G$ there exists $h\in G$ such that 
 \begin{equation}\label{eq:fequivariant}
 f\circ g=h\circ f. 
 \end{equation}
If $f\: X\ra X$ is a bijection, then obviously $f$ is   $G$-equivariant  if and only if 
 \begin{equation}\label{eq:fequivariant2}
 f\circ g \circ f^{-1}\in G \text{ for each $g\in G$}. 
 \end{equation}

 We now formulate a  criterion in  a special  case relevant  for us.

\begin{lemma}
  \label{lem:f_groupdescend} Let $X$ and $Y$ be surfaces, $f\: X\ra X$ be a homeomorphism, and $\Theta\: X\ra Y$ be a branched covering map induced by an action of a group $G$ on $X$. Then there exists a continuous map 
  $\widetilde f\: Y\ra Y $ such that $\Theta\circ f=\widetilde f\circ \Theta$ if and only if $f$ is $G$-equivariant.   
\end{lemma}

Note that $\Theta$ is an open map, and so by Corollary~\ref{cor:groupquot} we can identify $Y$ with the quotient space $X/G$.
Under this identification the lemma gives a condition for the homeomorphism  $f$ to descend to $X/G$. 

As we will see in the proof, the ``if'' implication in this statement is valid  in much greater generality. 

\begin{proof} Suppose first that $f$ is $G$-equivariant. 
Let $\sim$ be the equivalence relation on $X$ induced by $G$. Then by 
 Corollary~\ref{cor:groupquot} and Lemma~\ref{lem:f_descends} it suffices to show that 
 $\sim$ is invariant under $f$. So let  $x,y\in X$ and $x\sim y$. Then there exists $g\in G$ such that $y=g(x)$. Since $f$ is  $G$-equivariant, there exists $h\in G$ such that $f\circ g=h\circ f$. Hence 
$$f(y)=f(g(x))=h(f(x))\sim f(x)$$ 
as desired.

To prove the other implication, 
we assume that a continuous map $\widetilde f\: Y\ra Y$ with $\Theta\circ f=\widetilde f \circ \Theta$ exists. In order to show that $f$ is $G$-equivariant, let $g\in G$ be arbitrary. We can pick  a point $x_0\in X$ so that the homeomorphism $f\circ g$ maps $x_0$  
to  a clean fiber of $\Theta$, i.e.,  $(\Theta \circ f\circ g)(x_0)$ is not a critical value of $\Theta$. Then 
\begin{align*} 
(\Theta\circ f\circ g)(x_0)&= (\widetilde f \circ \Theta\circ g)(x_0)\\
&= (\widetilde f \circ \Theta)(x_0)= (\Theta\circ f)(x_0). \nonumber 
\end{align*} 
Since $\Theta$ is induced by $G$, it follows that there exists $h\in G$ such that 
\begin{equation} \label{eq:clx0} f (g(x_0))=h(f(x_0)). \end{equation}

Now 
$$ 
\Theta\circ f\circ g = \widetilde f \circ \Theta\circ g 
= \widetilde f \circ \Theta = \Theta\circ f= \Theta \circ h \circ f, 
$$
and so by \eqref{eq:clx0} the homeomorphisms $f\circ g$ and $h\circ f$ satisfy the conditions of Lemma~\ref{lem:liftsofbranched}~\ref{item:Liftuniq} for the branched covering map $\Theta$. Hence 
$f\circ g=h\circ f$. This shows that $f$ is $G$-equivariant.\end{proof}

    \section{Lattices and tori}
\label{sec:applifttorend}

In this section we review some facts about lattices and tori.

A {\em lattice}\index{lattice}  
$\Gamma \sub \R^2$ is a non-trivial discrete subgroup of $\R^2$ (considered as a group with vector addition). 
Often it is more convenient to consider a lattice as a discrete (additive)  subgroup of $\C\cong \R^2$. In the following,  we will freely switch back and forth between these different viewpoints and real and complex notation.  

The {\em rank}\index{rank of lattice}
 of a lattice is the dimension of the subspace of $\R^2$ 
(considered as a  real vector space) spanned by the elements in
$\Gamma$. The lattice can be a {\em rank-1 lattice}. Then, using   complex notation, we can write it in the form $\Gamma= \Z \omega$, where  $\omega\in \C\setminus \{0\}$.  
 The other case is that 
$\Gamma$ is a 
{\em rank-2 lattice}. Then there exist  generators 
$\omega_1,\omega_2\in \C\setminus \{0\}$ with $\imag(\omega_2/\omega_1)>0$ such that 
$$ \Gamma= \Z  \omega_1\oplus \Z  \omega_2\coloneqq  \{ m\omega_1+n \omega_2: m,n\in \Z\}. $$ 

 If $\Gamma \sub \R^2\cong \C$ is  a rank-$2$ lattice, then we have  an 
equivalence relation $\sim$ on $\R^2$  given by 
\begin{equation} \label{eq:latteq} 
x\sim y :\Leftrightarrow x-y\in \Gamma
\end{equation} 
for $x,y\in \R^2$. We denote the quotient space $\R^2/\Sim$ by $\R^2/\Gamma$,
 and by $\pi\: \R^2\ra \R^2/\Gamma$ the quotient map that sends a point $x\in \R^2$ to its equivalence class $[x]$. In complex notation,  we use $\C/\Gamma$ to denote the quotient space; the quotient map is then a map $\pi \: \C \ra \C /\Gamma$.
 
For each $\ga\in \Gamma$ we can define  an associated  translation 
 $$\tau_{\ga}\: \R^2\ra \R^2, \ u\in \R^2 \mapsto \tau_\ga(u)\coloneqq  u+\ga.$$ The translations  $\tau_\ga$, $\ga \in \Gamma$, form a group under composition that is isomorphic to $\Gamma$. The equivalence relation on $\R^2$ induced by the action of this group as in \eqref{eq:eq_ind_G} is the same as in  \eqref{eq:latteq}.

We equip $\R^2/\Gamma$ with the quotient topology. Then 
$T^2=\R^2/\Gamma$ is a $2$-dimensional 
torus,\index{torus}\index{Taaa@$T^2$}
and $\pi\: \R^2 \ra 
T^2=\R^2/\Gamma$ is a covering map; actually, since we insist on covering maps being orientation-preserving, we have to equip $T^2$ with a suitable  orientation here. 
The lattice translations $\tau_\ga$, $\ga \in \Gamma$, are deck transformations of 
the quotient map $\pi$ and so $\pi=\pi\circ \tau_{\ga}$ for $\ga\in \Gamma $.
 We will see momentarily that actually every deck transformation of $\pi$ has this form (Lemma~\ref{lem:torilifts}~\ref{item:tori1}).  

 The conformal structure on $\C$ induces a unique conformal
 structure on $\R^2/\Gamma= \C/\Gamma$.  It is represented by a
 complex atlas on $\C/\Gamma$ given by suitable local inverse
 branches of $\pi$.  Then $\T=\C/\Gamma$ is a 
 complex torus\index{complex torus}\index{torus!complex}\index{T@$\T$}
 and
 $\pi \: \C \ra \T$ a holomorphic map. Such a complex torus will
 always be denoted by $\T$, whereas $T^2$ will denote a torus
 that 
 is not equipped with a conformal structure, i.e., a topological
 torus.

Every topological torus 
can be represented in the form $\R^2/\Gamma$ 
up to orientation-preserving homeomorphisms. Actually,  we can  choose $\Gamma=\Z^2$ if this is convenient. Up to conformal equivalence every complex torus has a representation of  the form $\C/\Gamma$. 
Here two complex tori 
$\T=\C/\Gamma$ and $\T'=\C /\Gamma'$ obtained from rank-2 lattices $\Gamma, \Gamma'\sub \C$ are conformally equivalent if and only if there exists $\alpha\in \C\setminus \{0\}$ such that $\Gamma'=\alpha\Gamma$. 

In the following lemma we collect various statements that are used
in Chapter~\ref{cha:lattes-lattes-type}.  

\begin{lemma}\label{lem:torilifts}
Let $\Gamma\sub \R^2$ be a rank-2 lattice, $T^2=\R^2/\Gamma$, and 
$\pi\: \R^2\ra T^2=\R^2/\Gamma$ be the quotient map. 

 \begin{enumerate}
 
   \item
    \label{item:tori1}
 For a  continuous map  $\varphi\: \R^2 \ra \R^2$ we have 
 $\pi\circ \varphi=\pi$ if and only if  there exists $\ga \in \Gamma$ such that $\varphi=\tau_\ga$. 
   
    \item 
    \label{item:tori2}
    If $\overline  A\: T^2\ra T^2$ is a torus endomorphism, then  $\overline  A$ can be lifted to a homeomorphism on $\R^2$, i.e., there exists a homeomorphism $A\: \R^2\ra \R^2$ such that $\overline  A\circ \pi
    =\pi \circ A$. The  homeomorphism $A$ is orientation-preserving, and unique up to postcomposition with a translation $\tau_\ga$,  $\ga\in \Gamma$. 
    
      \item 
    \label{item:tori3} If $\overline  A\: T^2\ra T^2$ is a torus endomorphism, then there exists a unique 
    $\R$-linear map $L\: \R^2\ra \R^2$ with $L(\Gamma)\sub \Gamma$ such that for every lift  $  A$ as in \ref{item:tori2} we have 
    \begin{equation} \label{eq:mapL} 
A\circ \tau_\ga\circ A^{-1}= \tau_{L(\ga)}= L\circ \tau_\ga \circ L^{-1}
\end{equation}
for all $\ga\in \Gamma $.

  \item 
    \label{item:tori4} If $\overline  A\: T^2\ra T^2$ is a torus endomorphism and $L$
    the map as in  \ref{item:tori3}, then 
    $\deg(\overline A)=\det(L)$. 
 \end{enumerate}
\end{lemma}  

Statement~\ref{item:tori1} implies  that the deck transformations 
of  $\pi$ are precisely the lattice translations $\tau_\ga$, $\ga\in \Gamma$. 

For \ref{item:tori2} recall that a 
{\em torus endomorphism}\index{torus!endomorphism}
$\overline A\: T^2\ra T^2$ is an
orien\-tation-preserving local homeomorphism on a torus (see the introduction of Chapter~\ref{cha:lattes-lattes-type}). 
From the statement  we get a commutative diagram of the form:
   \begin{equation}\label{eq:lifttorusendoz}
    \xymatrix{
      \R^2 \ar[r]^A \ar[d]_\pi & \R^2 \ar[d]^\pi
      \\
      T^2 \ar[r]^{\overline {A}} & T^2\rlap{.}
    }
  \end{equation}
  
The map $L$ in  \ref{item:tori3} can be viewed as the map induced by $\overline A$ on the fundamental group of $T^2=\R^2/\Gamma$. Indeed, if $x_0\in T^2$ and $y_0\coloneqq \overline A(x_0)$, then $\overline A$ induces a map 
 $\overline A_*\: \pi_1(T^2, x_0)\ra \pi_1(T^2,y_0)$ (see the end of
 Section~\ref{sec:covmaps}). Moreover, we have a
 natural isomorphism $\pi_1(T^2, x_0)\cong \Gamma$. Namely, if $[\ell]\in \pi_1(T^2, x_0)$ is an element of the fundamental group represented by a loop $\ell$ based at $x_0$, then we can lift $\ell$ to a path $\tilde \ell$ on $\R^2$ by $\pi$. In general, $\tilde \ell$ is not a loop; if $u_0\in\R^2$ is the initial point of $\tilde \ell$ and $v_0\in \R^2$ the other endpoint of $\tilde \ell$, then $\ga\coloneqq v_0-u_0\in\Gamma$, and one can show that the map $[\ell]\mapsto \ga$ gives  a well-defined group isomorphism $\pi_1(T^2, x_0)\ra \Gamma$. Similarly, we have a natural isomorphism $\pi_1(T^2, y_0)\ra  \Gamma$.  Under these isomorphisms, the map $A_*\: \pi_1(T^2, x_0)\ra \pi_1(T^2,y_0)$ corresponds to the homomorphism $L\: \Gamma\ra\Gamma$. 

To see this, note that in our setting $A\circ \tilde \ell$ is a lift of the image 
loop $\overline A\circ \ell$ representing $A_*([\ell])$.  The path  
$ A\circ \tilde \ell$ has the endpoints $A(u_0)$ and $A(v_0)$. So  
under the isomorphism $\pi_1(T^2, x_0)\ra  \Gamma$ and $\pi_1(T^2, y_0)\ra  \Gamma$ the map $[\ell] \mapsto A_*([\ell])=[\overline A\circ \ell]$ corresponds to the map
\begin{align*} \ga\in \Gamma\mapsto A(v_0)-A(u_0) &=(A\circ \tau_\ga)(u_0)-A(u_0)\\
&= (\tau_{L(\ga)}\circ A)(u_0)-A(u_0)=L(\ga)
\end{align*}
by \eqref{eq:mapL}.

  \begin{proof}[Proof of Lemma~\ref{lem:torilifts}]  
  
  \smallskip 
  \ref{item:tori1} Let $\varphi\: \R^2 \ra \R^2$ be a  continuous
  map with \mbox{$\pi\circ \varphi=$} $\pi$. Then 
 we can consider  $\varphi$ as a lift of $\pi\: \R^2 \ra T^2$ by  the covering map $\pi\: \R^2 \ra T^2$, because we have  the commutative diagram 
  \begin{equation*}
  \xymatrix {
  & \R^2  \ar[d]^{\pi}   \\
    \R^2  \ar[ur]^{\varphi}  \ar[r]^{\pi}  &   T^2\rlap{.} 
  }
\end{equation*}

Fix   $u_0\in \R^2$,   and define $v_0=\varphi(u_0)$. Then $\pi(v_0)=\pi(\varphi(u_0))=\pi(u_0)$.  
 This means that $u_0\sim v_0$ are equivalent with respect to the equivalence relation $\sim$ induced by $\Gamma$. Thus, there exists $\ga\in \Gamma$ such that $v_0=u_0+\ga$. Consider the corresponding lattice translation $\tau_\ga$. Then $\pi\circ \tau_\ga=\pi$, and so $\tau_\ga$ is also a lift of 
 $\pi$ by  the covering map $\pi$. Since $\tau_\ga(u_0)=v_0=\varphi(u_0)$, Lemma~\ref{lem:liftsofcov}~\ref{item:liuniq} implies that 
 $\varphi=\tau_\ga$. 
  
  \smallskip 
  \ref{item:tori2} We can lift the map $\overline A \circ \pi\: \R^2 \ra T^2$ by  the covering map $\pi$ to get a continuous map $A\: \R^2\ra \R^2 $ satisfying 
$\pi \circ A=\overline A \circ \pi$ (Lemma~\ref{lem:liftsofbranched}~\ref{item:Liftex}).


In order to find an inverse $B$ of  $A$ we  want to  reverse the roles of $\pi$ and $\overline A \circ \pi$. 
 Lemma~\ref{lem:2_3_branched}~\ref{item:2_out3_1} 
implies that $\overline A \circ \pi$ is  a branched covering map. Since  $\overline A$ and  $\pi$ are local homeomorphisms, $\overline A \circ \pi$ has no critical points and so it is  a covering map. 

 Fix   $u_0\in \R^2$, and 
define  $v_0= A(u_0)\in \R^2$. 
Then  
$$\pi(v_0)=\pi(A(u_0))=\overline A(\pi(u_0)).$$  So    if we  lift $\pi\: \R^2 \ra T^2$ by  the  covering map  $\overline A \circ \pi$, then  we can find  a continuous map $B\: \R^2 \ra \R^2$ with $B(v_0)=u_0$ and $\pi =\overline A \circ \pi \circ B$; so we  obtain  the commutative diagram 
\begin{equation*}
  \xymatrix{
    u_0 \in \R^2 \ar[dr]_{\overline{A}\circ\pi}\ar@<0.5ex>[rr]^A & 
    & v_0 \in \R^2 \ar[dl]^{\pi} \ar@<0.5ex>[ll]^B
    \\
    & T^2\rlap{.}
    }
\end{equation*}
Then  $(A\circ B)(v_0)=A(u_0)=v_0$   and 
$$\pi \circ (A \circ B)= \overline A \circ \pi \circ B=\pi=\pi \circ \id_{\R^2}.$$
So we conclude  $A\circ B=\id_{\R^2}$  by 
Lemma~\ref{lem:liftsofcov}~\ref{item:liuniq} applied to the covering map $\pi$.
Similarly, $(B\circ A)(u_0)=B(v_0)=u_0$ and  
$$  \overline A \circ \pi \circ  B\circ A = \pi \circ A= \overline A \circ \pi= \overline A \circ \pi \circ \id_{\R^2}, $$   
which implies  $B\circ A=\id_{\R^2}$ by the same lemma applied to the covering map 
 $\overline A \circ \pi $.  It follows that $A$ is a homeomorphism on $\R^2$ with inverse $B$.  
  
Since $\pi$ and $\overline A$ are orientation-preserving local
homeomorphisms,   the relation  $\pi \circ A=\overline A \circ
\pi$ in 
combination with Lemma~\ref{lem:23orient} implies 
that  $A$
is also orientation-preserving.   
These considerations show that  $\overline A$ has  a lift $A$ as desired. 
  
  Suppose $A'\: \R^2\ra \R^2$ is another continuous map with 
  $\pi \circ  A'=\overline A \circ \pi$. Then 
  $$ \pi\circ (A'\circ A^{-1})= \pi\circ  A' \circ B= \overline A \circ \pi \circ B=\pi. $$
  By \ref{item:tori1} 
there exists $\gamma\in \Gamma$ 
such that $A'\circ A^{-1}=\tau_\ga$.
  Then $A'=\tau_\ga\circ A$. This shows that $A$ is unique up to
  postcomposition with a lattice translation. Note that clearly
  every map of the form $\tau_\ga\circ A$, 
$\gamma\in \Gamma$, is actually 
a lift 
  of $\overline A$.

  \smallskip
  \ref{item:tori3} Let $A$ be a homeomorphic lift  of $\overline A$ as in \ref{item:tori2}. If $\ga\in \Gamma$ is arbitrary, 
then
\begin{align*}  \pi \circ A \circ \tau_\ga  \circ A^{-1}&= \overline A \circ \pi  \circ \tau_\ga  \circ A^{-1} \\ & = 
\overline A \circ \pi   \circ A^{-1}=\pi \circ A   \circ A^{-1}=\pi. 
\end{align*} 
So $ A \circ \tau_\ga  \circ A^{-1}$ is a deck transformation of
the covering map $\pi$. By \ref{item:tori1} this implies that there exists a unique $\ga'\in \Gamma$ such that 
$$ A \circ \tau_\ga  \circ A^{-1}= \tau_{\ga'}. $$

Note that for $\gamma, \sigma\in \Gamma$ we have
\begin{align*}
A\circ \tau_{\gamma + \sigma} \circ A^{-1} &= A \circ \tau_\gamma \circ
\tau_\sigma \circ A^{-1} = \tau_{\gamma'} \circ A \circ \tau_\sigma \circ
A^{-1} \\ &= \tau_{\gamma'} \circ \tau_{\sigma'} = \tau_{\gamma' + \sigma'}.
\end{align*} 
Moreover, if 
$$\tau_{\ga'}=A\circ \tau_\gamma \circ A^{-1} =\tau_0=\id_{\R^2}, $$ then 
$$\tau_{\gamma}=A^{-1}\circ \id_{\R^2}\circ A=\id_{\R^2}, $$ and so $\ga=0$. 
This implies that  the map $\ga\mapsto \ga'$  gives an injective group homomorphism $L\: \Gamma \ra  \Gamma $ such that $A\circ \tau_\ga\circ A^{-1}= \tau_{L(\ga)}$ for all $\ga \in \Gamma$. 
We can uniquely extend $L$ to an invertible $\R$-linear map 
on $\R^2$, also denoted by  $L$. Then $L(\Gamma)\sub \Gamma$ and the first equality in \eqref{eq:mapL} is true by definition of $L$. The second equation in  \eqref{eq:mapL} is actually true for {\em every} invertible linear map $L$.  Uniqueness of $L$ is clear. 

  \smallskip
  \ref{item:tori4} Let $A\:\R^2\ra \R^2$  be a lift of $\overline A$ as in 
  \ref{item:tori2} and $L$ be the linear map as in \ref{item:tori3}.  
  We define a homotopy $H\: \R^2\times [0,1]\ra \R^2$ as 
$$H(u,t)= (1-t)A(u)+tL(u), \quad u\in \R^2, \, t\in [0,1]. $$  
 Let $H_t\coloneqq  H(\cdot, t)$ for $t\in [0,1]$. Then $H_0=A$ and $H_1=L$. 
For all $u\in \R^2$, $t\in [0,1]$,  and $\ga \in \Gamma$ we have 
\begin{align*} (H_t\circ \tau_\ga)(u)&= (1-t)A(\tau_\ga(u))+t L(\tau_\ga(u))\\
& =(1-t)\tau_{L(\ga)}(A(u))+ t 
\tau_{L(\ga)}(L(u))\\
&=(1-t)A(u)+tL(u)+L(\ga)=(\tau_{L(\ga)}\circ H_t)(u). 
\end{align*} 
Here we used \eqref{eq:mapL}. 
Hence $H_t\circ \tau_\ga= \tau_{L(\ga)}\circ H_t$ for all $t\in
[0,1]$ and $\ga \in \Gamma$. This implies that the equivalence
relation $\sim$ induced by $\Gamma$ is invariant under $H_t$, and so $H_t$ descends to a continuous map $\overline H_t$ on 
$T^2=\R^2/\Gamma$ (see Lemma~\ref{lem:f_descends}). Then
$\pi \circ H_t = \overline H_t \circ \pi$ for all $t\in [0,1]$. In particular,
$$ \overline A \circ \pi =\pi \circ A= \pi\circ H_0 =\overline H_0\circ \pi, $$  which implies $\overline H_0=\overline A$. 

Define $\overline H \: T^2 \times [0,1]\ra T^2$ as $\overline H(x,t)=\overline H_t(x)$
for $(x,t)\in T^2 \times [0,1]$.  Then $\overline H$ is continuous.  Indeed, if 
$\{(x_n,t_n)\}$ is a sequence in $T^2\times [0,1]$ with $(x_n,t_n)\ra (x,t)\in T^2\times 
[0,1]$ as $n\to \infty$, then by considering a local inverse of
$\pi$ near $(x,t)$, we can find points $u\in \R^2$ and $u_n\in
\R^2$ for $n\in \N$ 
with $\pi(u)=x$, $\pi(u_n)=x_n$, and $u_n\to u$ as $n\to \infty$. 
Then as $n\to \infty$, 
\begin{align*}
\overline H(x_n,t_n)= \overline H_{t_n}(\pi(u_n))&=\pi( H_{t_n}(u_n))=\pi(H(u_n,t_n))\ra \pi(H(u,t))\\&= \pi(H_t(u))=\overline H_{t}(\pi(u))=\overline H(x,t)
\end{align*}  by the continuity of $H$ and $\pi$. 

The map   $\overline H$ is a homotopy on $T^2$ with the time-$t$ maps $\overline H_t$. In particular, the maps  $\overline A=\overline H_0$ and $\overline L\coloneqq \overline H_1$ on $T^2$ are homotopic.  Since degrees of maps are invariant under homotopies, it follows that $ \deg(\overline A)=\deg(\overline L)$. 
Note that 
$ \overline L\circ \pi =\overline H_1\circ \pi= \pi \circ H_1= \pi  \circ L$. 
So  $\overline L$ is a  map on $T^2$ induced by the linear map $L$. 

It   is a standard fact that 
then $\deg(\overline L)= \det(L)$. 
One can see this as follows by  using some basic concepts from differential geometry. Namely, we can consider $\T= \R^2/\Gamma$ as a complex torus and hence as a smooth manifold.  Then $\overline L$ is a smooth map on $\T$.
There exists a unique $2$-form   $\alpha$  on $\T$ 
whose  pull-back $\pi^*(\alpha)$ 
 is equal to the standard volume form on $\R^2$ given by $\om=dx\wedge  dy$, where  $x$ and $y$ are the standard coordinates on $\R^2$.  If we pull $\omega$ back by the linear map $L$, we  get   
$L^*(\omega)=\det(L) \omega$. This implies that $\overline L^*(\alpha) =\det(L)\alpha$. On the other hand, 
$$ \int_\T\overline L^*(\alpha)=\deg (\overline L)\int_\T\alpha. $$ 
Hence 
$$ \det(L) \int_\T \alpha  =  \int_\T\overline L^*(\alpha)=\deg (\overline L)\int_\T \alpha. $$
Since $\displaystyle \int_\T\alpha \ne 0$ this implies that $\deg(\overline L)=\det(L)$. 
We conclude that 
$$ \deg(\overline A)= \deg(\overline L) =\det(L)$$ as desired. 
\end{proof}

%

In Lemma~\ref{lem:torilifts} we can consider a complex torus $\T=\C/\Gamma$.
Then the quotient map $\pi\: \C \ra \T=\C/\Gamma$ is holomorphic. Suppose 
 $\overline A\: \T\ra \T$ is a holomorphic torus endomorphism, and $A\:\C \ra \C$ is a homeomorphic lift of $\overline A$ as in  
 Lemma~\ref{lem:torilifts}~\ref{item:tori2}. Then $\pi\circ A=\overline A \circ \pi$. This shows that locally the map $A$ is given as a composition of the holomorphic map $\overline A \circ \pi$ with a branch of $\pi^{-1}$,  which is also holomorphic. We conclude that   $A\: \C \ra \C$ is holomorphic itself (we can also apply 
Lemma~\ref{lem:2_3_branched} here). Since this map is also a homeomorphism on $\C$  it  must be of the form $A(z)=\alpha z+\beta$
for $z\in \C$, where $\alpha,\beta\in \C$, $\alpha\ne 0$.   
The associated linear map $L\: \C \ra \C$ as in Lemma~\ref{lem:torilifts}~\ref{item:tori3} is then given by $L(z)=\alpha z$
for $z\in \C$. 
By part \ref{item:tori4} of this lemma  we  have $\deg(\overline A)=\det(L)=|\alpha|^2$ (note that here  we have to consider $L$ as an $\R$-linear map).



\section{Orbifolds and coverings}
\label{sec:orbifolds-coverings}

In this section we will discuss the relation of orbifolds and
branched covering maps. Recall (see Section~\ref{sec:orbif-assoc-thurst}) that 
 an 
{\em orbifold}\index{orbifold} is a pair $\mathcal{O} =(S,\alpha)$,
where $S$ is a surface and $\alpha\colon S\to \widehat{\N}=\N\cup\{\infty\}$ 
a ramification function on $S$. 

We will restrict ourselves to the  case
relevant for Thurston maps, where the underlying surface of the orbifold $\mathcal{O} =(S,\alpha)$
 is an oriented  topological $2$-sphere $S=S^2$.  Then   $\supp(\alpha)=\{ p\in S^2: \alpha(p)\ge 2\}$
    is a finite set. 
      A point $p\in \supp(\alpha)$ is called a  
      \emph{conical singularity}\index{conical singularity}\index{singularity!conical} or 
\emph{cone point}\index{cone!point}\index{orbifold!cone point
  of}  
if 
 $2\le\alpha(p)<\infty$,  and a 
{\em puncture}\index{puncture}\index{orbifold!puncture of}    
if
  $\alpha(p)=\infty$. 
  
  Often the underlying sphere $S^2$ of the orbifold 
  will be the Riemann sphere
$\CDach$.  In this case, the orbifold $(\CDach, \alpha)$
has an underlying  conformal structure,
whereas in general we consider $(S^2,\alpha)$ as a purely topological object.
  
  Recall (see Section~\ref{sec:orbif-assoc-thurst}) that 
the 
\emph{Euler characteristic}\index{orbifold!Euler characteristic|textbf}\index{Euler characteristic!of orbifold|textbf}
of an orbifold $\OC=(S^2,\alpha)$ is defined as 
\begin{equation}
  \label{eq:def_Euler_orbi}
  \chi(\OC)= 2 - \sum_{p\in S^2}\left(1- \frac{1}{\alpha(p)}\right),
\end{equation}
where  we use the convention $1/\infty=0$. The orbifold   $\OC$
is 
\emph{parabolic}\index{parabolic!orbifold}\index{orbifold!parabolic}
 if 
 $\chi(\OC)=0$ and 
\emph{hyperbolic}\index{hyperbolic!orbifold}\index{orbifold!hyperbolic} 
if $\chi(\OC)< 0$

One can give a geometric  interpretation of $\chi(\OC)$ also as follows.   Let $\DD$ be
a cell decomposition of $S^2$ (see Section~\ref{s:celldecomp}) such that
each point $p\in \supp(\alpha)$  is a vertex of $\DD$. 
We count each vertex  $p$  in $\DD$ with weight   $1/\alpha(p)$, and
 define
\begin{equation}
  \label{eq:orbi_V}
  \#V= \sum\frac{1}{\alpha(p)},
\end{equation}
 where the sum is taken over all vertices $p$ in $\DD$. Then it easily follows from Euler's polyhedral formula that 
\begin{equation*}
  \chi(\OC)= \#F-\#E+\#V,
\end{equation*}
where $\#F$ is the number of faces (i.e., $2$-cells) and $\#E$ is
the number of edges (i.e., $1$-cells) in  $\DD$. So with the slight
modification~\eqref{eq:orbi_V} the 
{Euler characteristic}
 of an orbifold
is given by the usual formula. 
Note that punctures do not contribute to
$\#V$ in \eqref{eq:orbi_V}.  

We will sometimes remove the punctures of  the orbifold 
$(S^2, \alpha)$ from the $2$-sphere. 
With the ramification function $\alpha$ understood, we set 
\index{S20@$S^2_0$}
\begin{equation}
\label{eq:rem_punctures}
  S^2_0=S^2\setminus\{p\in S^2 : \alpha(p)=\infty\}. 
\end{equation}
In particular, we use this notation if  $S^2=\CDach$ and 
so $\CDach_0$ denotes the Riemann sphere without 
 the punctures of the given orbifold.

The following  statement  relates orbifolds and branched covering maps. 

\begin{theorem}
  \label{thm:univ_orbi_cover}
  Let $\OC=(S^2,\alpha)$ be an orbifold that is parabolic or
  hyperbolic. Then the following statements are true:
  \begin{enumerate}
  \item 
    \label{item:ex_univ_orbi1}
    There exist a simply connected surface $X$ and a branched
    covering map $\Theta\colon X\to S^2_0$ such that
    \begin{equation*}
      \deg(\Theta, x)= \alpha(\Theta(x))
    \end{equation*}
    for each  $x\in X$. 
    
  \item \label{item:ex_univ_orbi2} If $S^2=\CDach $,   then in 
  \ref{item:ex_univ_orbi1} we may in addition assume that   $X=\C$ if 
  $\mathcal{O}$ is parabolic,  $X=\D$ if  $\mathcal{O}$ is hyperbolic,  and that the map $\Theta\: X\ra \CDach_0$ is holomorphic.
  \end{enumerate}
\end{theorem}

We will not provide a proof here. The statement follows  from
\cite[Theorem~1]{Bers}. 
See also  \cite[4.8.2]{Lam},
\cite[Proposition~13.2.4 and Theorem~13.3.6]{Th}, and (for 
a much more general situation)  \cite{BH}.

Part~\ref{item:ex_univ_orbi1} of the previous theorem 
implies that $\deg(\Theta,x) = \deg(\Theta,y)$ for
all $x,y\in X$ with $\Theta(x)=\Theta(y)$, i.e., the local degree of
$\Theta$ is constant in each fiber $\Theta^{-1}(p)$, $p\in S_0^2$. 
Note that $\Theta$ does not cover the punctures of $\OC$.

Theorem~\ref{thm:univ_orbi_cover}~\ref{item:ex_univ_orbi2}
explains why we  call orbifolds  parabolic or  hyperbolic. Here the additional conformal structures are important, because  $\C$ and $\D$ are topologically   indistinguishable.


\begin{definition}
  \label{def:univ_orbi_cover}
  For a  parabolic  or  hyperbolic orbifold $\OC=(S^2,\alpha)$ the map
  $\Theta\colon X\to S^2_0$ from Theorem~\ref{thm:univ_orbi_cover} is
  called the 
  \emph{universal orbifold covering map}\index{universal orbifold covering map}\index{orbifold!universal covering map} 
  of $\OC$. 
\end{definition}

We will momentarily see that up to equivalence the map $\Theta$ is uniquely determined. First we formulate the  universal 
 property of $\Theta$ (for  a closely related statement see  \cite[Proposition~13.2.4]{Th}). 

\begin{theorem}
  \label{thm:unv_orbi_prop}
  Let $\OC=(S^2,\alpha)$ be a parabolic or hyperbolic orbifold with 
  universal orbifold covering map $\Theta\: X\ra S^2_0$,  
$Z$ be a surface, and $f\: Z\ra S^2_0$ be a branched covering map 
such that $$\deg(f,z) |\alpha (f(z))$$ for each $z\in Z$.

Then for all  points  $x_0\in X$ and $z_0\in Z$  with $p_0\coloneqq \Theta(x_0)=f(z_0)$ 
there exists a branched covering map $\varphi\: X\ra Z$ such that $\varphi(x_0)=z_0$ and 
$\Theta = f\circ \varphi$. Moreover, if $\alpha(p_0)=1$, then the map $\varphi$ with these properties is unique. 
  \end{theorem}

So the following diagram commutes: 
\begin{equation*}
  \xymatrix{
    & X \ar[d]^\Theta \ar[dl]_\varphi
    \\
    Z  \ar[r]^f& S_0^2\rlap{.}
    }
\end{equation*}


%

\begin{proof} Note that if $x\in X$, $z\in Z$, and $p\coloneqq \Theta(x)=f(z)$, 
then 
$$ \deg(f,z)|\alpha(p)=\deg(\Theta, x). $$
It follows that we may lift $\Theta$ by $f$ to yield the desired
map $\varphi$ according to
Lemma \ref{lem:liftsofbranched}~\ref{item:Liftex} (note that $f$
plays the role of $\pi$ in this lemma).

If in addition $\alpha(p_0)=1$, then $\deg(f,z)=1$ for each point $z\in f^{-1}(p_0)$, and so the fiber $f^{-1}(p_0)$ is clean. Hence 
 Lemma~\ref{lem:liftsofbranched}~\ref{item:Liftuniq} implies that with the stated properties the map
 $\varphi$ is uniquely determined. 
\end{proof} 

\begin{cor}[Uniqueness of the universal orbifold cover]
  \label{cor:unique_univ_orbi}
  Let $\OC=(S^2,\alpha)$ be an orbifold that is parabolic 
or hyperbolic, and 
$\Theta\colon X\to S_0^2$ and 
  $\widetilde \Theta\colon\widetilde {X}\to S_0^2$ be universal orbifold
  covering maps. Then for all points $x_0\in X$ and $\widetilde{x}_0\in \widetilde X$
  with $p_0\coloneqq \Theta(x_0)=\widetilde \Theta(\widetilde{x}_0)$ there exists an orientation-preserving homeomorphism $A\: X \ra \widetilde X$ 
  with $A(x_0)=\widetilde{x}_0$ and $\Theta =\widetilde \Theta \circ
  A$. Moreover, if $\alpha(p_0)=1$, then $A$ is unique.
 \end{cor}
 
So  the universal orbifold covering map of $\OC$ is unique 
 up to precomposition with an orientation-preserving  homeomorphism.  

\begin{proof}
We  can apply  Theorem~\ref{thm:unv_orbi_prop}  to the universal 
covering map  $\Theta\colon X\to
  S^2_0$ and the branched covering map $\widetilde 
  \Theta\colon \widetilde X\to S_0^2$. This gives the existence of a branched covering map $A\: X\ra \widetilde X$ with $A(x_0)=\widetilde{x}_0$ and $\Theta=\widetilde \Theta\circ A$. 
 
 To show that $A$ is a homeomorphism, we have to construct an inverse for $A$. For this we want to reverse the roles of $\Theta$ and $\widetilde \Theta$, and apply the uniqueness statement in Theorem~\ref{thm:unv_orbi_prop}. Here a  complication is that the fibers of the maps above $p_0$ are not clean if $\alpha(p_0)\ge 2$. So we choose a point $p_1\in S^2$
 with $\alpha(p_1)=1$,   a point  
 $x_1\in \Theta^{-1}(p_1)$,  and set 
 $\widetilde x_1=A(x_1)$. Then  again by Theorem~\ref{thm:unv_orbi_prop}
 there exists a branched covering map $B\: \widetilde X \ra X$ with 
 $B(\widetilde{x}_1)= x_1$ and $ \widetilde \Theta=\Theta\circ B$. 
 
  Then  $\widetilde \Theta=\widetilde \Theta \circ A\circ B$ and 
   $\Theta= \Theta\circ B\circ A$. These relations and Lemma~\ref{lem:2_3_branched}~\ref{item:2_out_3_2} imply that $A\circ B\: \widetilde X \ra \widetilde X $ and $B\circ A\: X\ra X $ are branched covering maps.

   Since $(A\circ B)(\widetilde{x}_1)=\widetilde{x}_1$,  $(B\circ A)(x_1)=x_1$, and 
   $$ \alpha(\Theta(x_1))=\alpha(\widetilde \Theta(\widetilde{x}_1))=\alpha(p_1)=1,$$
   it follows from the uniqueness statement in Theorem~\ref{thm:unv_orbi_prop} that 
  $A\circ B= \id_{\widetilde X}$ and  $B\circ A=\id_{X}$. 
  Therefore, $A$ is a
  homeomorphism. As a branched covering map,  $A$ is necessarily orientation-preserving.  
  
  If $\alpha(p_0)=1$, then the uniqueness of $A$ again follows  from 
  Theorem~\ref{thm:unv_orbi_prop}.
\end{proof}

\begin{rem} \label{rem:holoorbset} Theorem~\ref{thm:unv_orbi_prop} remains valid in the holomorphic setting with the obvious 
 changes in the formulation. Namely, suppose that in addition to the hypotheses in the statement 
 $S^2=\CDach$, $X$ and $Z$ are Riemann surfaces, and $\Theta$ and $f$ are holomorphic.  
 Then the map $\varphi$ with $\Theta=f\circ \varphi$ is also holomorphic. This follows from 
 the last part of Lemma~\ref{lem:2_3_branched}.
 
 Similarly,  in the holomorphic setting in  Corollary~\ref{cor:unique_univ_orbi} the map $A$ will be holomorphic and hence a biholomorphism. In particular, if $\Theta$ is the  universal orbifold covering map as in  
 Theorem~\ref{thm:univ_orbi_cover}~\ref{item:ex_univ_orbi2}, then $\Theta$ is unique up to precomposition with a suitable biholomorphism. 
\end{rem}

Let $\OC=(S^2,\alpha)$ be a parabolic or hyperbolic orbifold,  
  and $\Theta\colon X\to S_0^2$  be the universal orbifold
  covering map. 
A  homeomorphism $g\colon X\to X$ is called a
\emph{deck transformation}\index{deck transformation}\index{universal orbifold covering map!deck transformation}\index{orbifold!universal covering map!deck transformation}
 of $\Theta$  if $ \Theta\circ g = \Theta$.
This relation and Lemma~\ref{lem:23orient} imply that  each deck transformation is orientation-preserving. 

The deck transformations of $\Theta$ 
form a group under composition, denoted by  $\pi_1(\OC)$ and called the
\emph{fundamental group}\index{orbifold!fundamental group}\index{fundamental group of orbifold}
  of the orbifold $\OC$. 
  

We  collect properties of the group of deck
transformations in the following statement. 

\begin{prop}[Deck transformations of the universal orbifold cover]
  \label{prop:prop_deck_trafo}
  Let $\OC=(S^2,\alpha)$ be an orbifold that is parabolic   or hyperbolic,
  and $\Theta\colon X\to \OC$ be the universal orbifold covering
  map. Then for the group of deck transformations $G= \pi_1(\OC)$ the following statements are true: 

  \begin{enumerate}
  \item 
    \label{item:pi1_O_1}
    The map $\Theta$ is induced by $G$, meaning that for all $x,y\in X$ we have 
  \begin{equation*}
          \Theta(x)=\Theta(y) \text{ if and only if there exists $g\in G$ with } y=g(x). 
    \end{equation*}
      \item 
    \label{item:pi1_O_2}
   For all $x\in X$ the stabilizer 
$G_x=\{g\in G: g(x)=x\}$ is a finite cyclic group of order 
 \begin{equation*}
    \#G_x=  \deg(\Theta,x)=\alpha(\Theta(x)). 
    \end{equation*}

    \item 
    \label{item:pi1_O_3}
    $G$ acts properly discontinuously on $X$.
    
     \item
    \label{item:pi1_O_4}
   If $\mathcal{O}$ has no punctures, then $G$ acts cocompactly on $X$.

    \setcounter{mylistnum}{\value{enumi}}
  \end{enumerate}
  
 Assume in addition that $S^2=\CDach$,  that 
   $X=\C$ or $X=\D$ depending on whether  $\mathcal{O}$ is parabolic or hyperbolic, and that $\Theta\: X\ra \CDach$ is holomorphic.
   \begin{enumerate}
    \setcounter{enumi}{\value{mylistnum}}
  \item 
    \label{item:pi1_O_5} Each $g\in G$ acts as a biholomorphism on $X$. Moreover, 
    if we equip $X=\C$ with the Euclidean metric and $X=\D$ with the hyperbolic metric, then each $g\in G$ acts on $X$ as an isometry.
  \setcounter{mylistnum}{\value{enumi}}
  \end{enumerate}
 \end{prop}
 
Note in particular that by \ref{item:pi1_O_1} the branched
covering map $\Theta$ is 
{\em regular}\index{branched covering map!regular}\index{regular branched covering map} 
in the sense that its group  $G$ of deck transformations  acts transitively on the fibers of $\Theta$.

\begin{proof} It is easiest to reduce  to the holomorphic case. For this 
we choose a conformal structure on $S^2$, represented by an atlas of holomorphically compatible charts. With this conformal structure, $S^2$ can be identified  with $\CDach$ and we can make the additional assumptions  stated before \ref{item:pi1_O_5} without loss of generality. In this reduction it is 
important that all the desired conclusions in 
\ref{item:pi1_O_1}--\ref{item:pi1_O_4} are independent of the choice the universal orbifold covering map $\Theta$ of $\mathcal{O}$. This  follows from the essential uniqueness of $\Theta$ as formulated in Corollary~\ref{cor:unique_univ_orbi} and allows us to switch to a holomorphic version of $\Theta$ defined on $X=\C$ or $X=\D$.  

If $g\in G$, then   $g$ is  an orientation-preserving homeomorphism on $X$. Since 
$\Theta=\Theta\circ g$, it follows from 
the last  part of Lemma~\ref{lem:2_3_branched} that 
 $g$  is holomorphic and hence a biholomorphism on $X=\C$ or $X=\D$. 

\smallskip 
\ref{item:pi1_O_1} This immediately follows from the definition of  a deck transformation and Corollary~\ref{cor:unique_univ_orbi}. 

In particular, the orbits of points under $G$ are precisely the fibers of the map $\Theta$. Since $\Theta$ is  a branched covering map, these fibers, and hence the orbits of points under $G$, are discrete sets in $X$. 

\smallskip
\ref{item:pi1_O_5} We know that if $g\in G$, then $g$ acts as a biholomorphism on $X$. So if $X=\D$, then $g$ is a M\"obius transformation with $g(\D)=\D$. 
Hence $g$ preserves the hyperbolic metric $d_0$ on $\D$ and  acts as an isometry on $X$.

If $X=\C$, then we can say that $g(z)=a z+ b$ for $z\in \C$, where $a, b\in \C$, $a \ne 0$. Here $|a|=1$, and so $g$ acts as an isometry on $\C$ equipped with the Euclidean metric; indeed, otherwise $g$ or $g^{-1}$ has an attracting fixed point and this would produce orbits under $G$ that are not discrete in $X$. This contradicts what we have seen in the proof of \ref{item:pi1_O_1}.

\smallskip
\ref{item:pi1_O_2} We may assume that $x=0$. Then an element $g\in G$ in the stabilizer $G_x$ is necessarily a rotation of $X=\C$ or $X=\D$ 
around $0$. Since $G$ has discrete orbits, this implies that $G_x$ is a 
finite cyclic group of rotations. 

Two points $u$ and $v$ near $x$ lie in the same orbit   of $G$ if and only if there exists $g\in G_x$ such that $v=g(u)$. Indeed, since orbits of $G$ are discrete in $X$, an element $g\in G\setminus G_x$ moves $x$ and hence also the nearby point $u$ a definite distance away from $x$. Hence $g$ cannot map $u$ to the point $v$ near $x$.

If $d=\deg(\Theta, x)=\alpha(\Theta(x))$, then $\Theta$ is $d$-to-$1$ near $x$; so for each 
point $x'\ne x$ near $x$ there are precisely $d$ points near $x$ that are mapped 
to the same image $\Theta(x')$, or, equivalently, lie in the $G$-orbit of $x'$. By what we have seen, these $d$ elements must be 
the points  in the  $G_x$-orbit of $x'$. Since $x'\ne x$, there are precisely $\#G_x$ such points, and we conclude 
$\#G_x =d$ as desired.

\smallskip
\ref{item:pi1_O_3} We argue by contradiction and assume that there exist a compact set $K\sub X$ and pairwise distinct elements $g_n\in G$ for $n\in \N$ with $g_n(K) \cap K\ne \emptyset$. Then we can find $x_n\in K$ such that $y_n=g_n(x_n)\in K$. By passing to subsequences if necessary, we may assume $x_n\to x$ and $y_n\to y$ as $n\to \infty$, where $x,y\in K$. Then $g_n(x)=y$ for all $n$ large. Indeed, if $g_n(x)\ne y$, then by the discreteness of orbits $g_n$ sends $x$ and hence nearby points a definite distance away from $y$. This is impossible  for large $n$, because $g_n$ sends the point $x_n$ near $x$ to the point $y_n$ near $y$. 

By  \ref{item:pi1_O_2} the stabilizer group  $G_x$ of $x$ is finite. 
On the other hand, by what we have just seen, if $n$ is large enough, $G_x$ contains the infinitely many distinct  elements $g_k^{-1}\circ g_n$, $k\ge n$. This is a contradiction.  

\smallskip
\ref{item:pi1_O_4} If $\mathcal{O}$ has no punctures, then $S_0^2=S^2=\CDach$ and so $\Theta\: X\ra 
\CDach$ is a branched covering map with target $\CDach$. In
particular, each point $p\in \CDach$ has an open neighborhood
$V\subset \CDach$
that is evenly covered by 
$\Theta$. By shrinking $V$ if necessary (see Lemma~\ref{lem:evennei}), we may assume that the connected components  of $\Theta^{-1}(V)$ are compactly contained in $X$.  In particular, we can find a set $U\sub X$ with $\Theta(U)=V$ such that $\overline U$ is compact. Finitely many of the sets $V$ will cover $\CDach$. If we let $K$ be the closure of the finitely many corresponding sets $U$, then $K\sub X $ is compact and 
$\Theta(K)=\CDach$.  This implies that each  fiber of $\Theta$ and hence 
also each orbit of $G$ contains a point in $K$. So the sets $g(K)$, $ g\in G$, cover $X$.  The cocompactness of $G$ follows. 
 \end{proof}

We now consider the orbifold $\OC_f=(S^2, \alpha_f)$ of a Thurston
map $f\colon S^2 \to S^2$. As the following statement shows, we  can lift   branches of $f^{-1}$
to the universal orbifold cover of $\OC_f$.

\begin{lemma}
  [Existence of lifts to the universal orbifold cover]
  \label{lem:lift_Thurst_orbi}\index{universal orbifold covering map!lift by}\index{lift!by universal orbifold covering map}\index{lift!inverse branch} 
  Let $f\: S^2 \ra S^2$ be a Thurston map,  $\OC_f=(S^2, \alpha_f)$ be
  the associated orbifold,  $\Theta\: X \ra S^2_0$ be the universal
   orbifold covering map, and   
  $z_0,w_0\in X$ be points with  $(f\circ
  \Theta)(w_0)=\Theta(z_0)$. 
  
  Then there exists a branched covering map $A\: X \ra X$ with $
    A(z_0)=w_0$ and 
    $$f\circ \Theta \circ A=\Theta.$$
 If $z_0\notin \crit(\Theta)$, then  the map $A$ is unique.
 
  If we assume that $S^2=\CDach$, $X$ is a Riemann surface, 
   and the maps  $f$ and $\Theta$ are holomorphic, then  $A$ is holomorphic as well.
\end{lemma}
So in this setting, we have the following commutative diagram:
\begin{equation}\label{eq:liftinverse}
  \xymatrix{
    w_0\in   X \ar[d]_\Theta & \ar[l]_A z_0\in  X \ar[d]^\Theta
    \\
    S^2 \ar[r]^{f} & S^2\rlap{.}
  }
\end{equation}
One should think of $A$ as a lift of a suitable inverse branch of $f^{-1}$ by 
the branched covering map $\Theta$. 

\begin{proof} 
  We want to apply Theorem~\ref{thm:unv_orbi_prop} for the
   map 
$f\circ \Theta\: X\ra S^2$, which is a branched covering map by
 Lemma~\ref{lem:2_3_branched}~\ref{item:2_out3_1}.    To see
  that the hypotheses of Theorem~\ref{thm:unv_orbi_prop} are satisfied, let $z, w\in X$
  with $\Theta(z)= (f\circ \Theta)(w)$ be arbitrary. Define
  $p=\Theta(z)$ and $q=\Theta(w)$. Then $p=f(q)$, and so
  $\deg(f,q)\alpha_f(q)$ divides $\alpha_f(p)$ (see
  Proposition~\ref{prop:weightf}~\ref{item:weight1}). Since 
  $\deg(\Theta,w)=\alpha_f(q)$ and $\deg(\Theta,z)=\alpha_f(p)$ by
  definition of the universal orbifold covering map $\Theta$, this implies that
  \begin{equation*}
    \deg(f,q)\alpha_f(q)=\deg(f,q)\deg(\Theta,w)=\deg(f\circ\Theta,w) 
  \end{equation*}
  divides $\deg(\Theta,z)=\alpha_f(p)$. So by 
  Theorem~\ref{thm:unv_orbi_prop} there exists a
  branched covering map $A\: X \ra X$ with $A(z_0)=w_0$ and $f\circ
  \Theta \circ A=\Theta$.
 
 If  $z_0\notin\crit(\Theta)$, then $\deg(\Theta, z_0)=\alpha(\Theta(z_0))=1$. The uniqueness of $A$ then follows from the uniqueness statement in Theorem~\ref{thm:unv_orbi_prop}.

Finally, if  $f$ and  $\Theta$ are holomorphic, then $f\circ \Theta$ is  a holomorphic branched covering map. Since $(f\circ \Theta)\circ A=f$,
the holomorphicity of $A$ follows from Lemma~\ref{lem:2_3_branched}.  
\end{proof}

\section{The canonical orbifold metric}
\label{sec:expratThmaps}

In this section we discuss how to obtain an associated geometric structure from  orbifold data
given by a ramification function on a Riemann surface $S$. This
geometric structure is represented by a metric on $S$, the {\em
  canonical orbifold metric}, and a measure, the {\em canonical
  orbifold measure}.  We will restrict ourselves to the only case
that is  important for us, namely when  $S$ is the Riemann sphere
$\CDach$. The underlying conformal structure on $\CDach$ is
important here, because in this case the universal orbifold cover
carries a 
natural metric and measure 
that we will push forward to the orbifold. 
We first  discuss some related geometric facts that will be relevant 
for understanding the local geometry of an orbifold. 
 
 A \emph{sector} is  a set of the form
\begin{equation*}
  \Sigma= \{ re^{\iu\theta} : 0\le r< r_0, \, 0 \leq \theta \leq \theta_0\},  
\end{equation*}
where  $r_0> 0$ and  $\theta_0 \in (0, 2\pi]$. If we identify
the points $r$ and $re^{\iu \theta_0}$ on the boundary of $\Sigma$
for $0\le r <r_0$,  we obtain a 
\emph{cone}\index{cone} 
$C$ 
of \emph{cone angle} 
$\theta_0$. The  point $0$ (or rather its image in the quotient $C$) is
called the {\em conical singularity} or 
\emph{cone point}\index{cone!point}\index{orbifold!cone point of}\index{conical singularity}\index{singularity!conical}
 of the cone. For the moment  we only allow cones  angles $0<\theta_0\le2\pi$ (see below for the general case).  

A  cone $C$ carries a natural length metric  induced by the Euclidean metric on $\Sigma$. If 
$r_0\le 1$, then $\Sigma\sub \D$ and one can equip $\Sigma$ also with the hyperbolic metric on $\D$, and 
push it to a length metric  on $C$. Depending on this choice of the metric, one calls 
$C$ a {\em Euclidean} or {\em hyperbolic cone}. If we denote this metric on $C$  by $\om$ in both cases
and by $d_0$ the Euclidean or hyperbolic metric on $\Sigma$, then 
the quotient map $(\Sigma,d_0)\ra (C,\om)$ is a {\em path isometry} in that it preserves lengths of paths. 
This property uniquely determines $\om$. 

If we set $\alpha=2\pi/\theta_0\ge 1$, then the map $w\mapsto w^{\alpha}$ (with the branch chosen to be positive on the positive real axis) induces a well-defined homeomorphism of $C$ onto the Euclidean disk $D=B_\C(0, r_0^{\alpha})$. This defines a chart  giving $C$ a Riemann surface structure. 

By using this  chart  one can 
push\index{push-forward!of metric!by orbifold covering map}  
the metric  on 
$C$ to $D$. Then an easy computation shows that $C$ is 
 isometric to the disk $D$ equipped with a (singular) conformal metric, where the length element on $D$ is 
\begin{equation}\label{eq:conemetr1} \frac{|dz|} {\alpha |z|^{1-1/\alpha}}
\end{equation}
or 
\begin{equation} \label{eq:conemetr2}
\frac{2 \, |dz|}  {\alpha  |z|^{1-1/\alpha} (1- |z|^{2/\alpha})} 
\end{equation} in the Euclidean or 
in the hyperbolic case, respectively.  
So the cone $C$ can be viewed in two different ways: as a Riemann surface biholomorphic to a disk  or as a smooth manifold  equipped with 
a  Riemannian metric as in \eqref{eq:conemetr1} or  \eqref{eq:conemetr2}  with a singularity at the cone point. 

Note that the conformal metrics in \eqref{eq:conemetr1} and 
\eqref{eq:conemetr2} are defined for each parameter $\alpha>0$. One can use 
this to define  Euclidean or hyperbolic cones $C$ for arbitrary cone angles $\theta_0>0$. Namely, such a cone $C$  is given by a disk $D$ as above equipped with the  conformal metric \eqref{eq:conemetr1}  
for Euclidean cones, and the conformal metric  \eqref{eq:conemetr2}   for   hyperbolic cones, where
 $\alpha=2\pi/\theta_0$.

In many   cases that are relevant for  us, the cone angle has 
 the form $\theta_0=
2\pi/n$ for some $n\in \N$.  Then one can obtain $C$ also as a quotient 
under a group action. For this we consider the cyclic  group $G$ 
of order $n$ generated by the rotation $z\in \C\mapsto  e^{2\pi \iu/n} z$ and acting on 
$X= \{z\in \C :|z|<r_0\}$. Then  we can identify $C$ and $X/G$. Moreover, if we again denote by 
$\om$ the metric on $C$ and by $d_0$
 the Euclidean metric or the hyperbolic metric on $X$ (assuming $r_0\le 1$ in the latter case), then under the identification $C\cong X/G$ we have 
 $$\om([z], [w])= \inf\{d_0(u,v): u\in Gz, v\in Gw\}$$
 for  $z,w\in X$, where  we denote the image of $u\in X$ in $C\cong X/G$ by $[u]$. 
 
 Suppose $S$ is a (connected and oriented) surface equipped with
 some path metric $d$. We call $S$ 
{\em locally Euclidean}\index{locally Euclidean} 
if each point $p\in S$ has a neighborhood $U$ isometric to a
Euclidean cone $C$ so that $p$ corresponds to the cone point of
$C$ under the isometry. Then the cone angle $\theta_0>0$ of $C$
is uniquely determined by $p$. The points $p$ with $\theta_0\ne
2\pi$ form a closed and discrete subset of  $S$ and are called the {\em conical
  singularities} or  
{\em cone points}\index{cone!point}\index{cone!point}\index{orbifold!cone point of}\index{conical singularity}\index{singularity!conical}
of the locally Euclidean  surface.

A {\em (simple) Euclidean polygon} is a closed Jordan region $X$  in $\C$ whose boundary consists of finitely many Euclidean line segments. These line segments are called the {\em edges} or {\em sides}, and their endpoints the {\em vertices} or {\em corners} of  
$X$.
 
A {\em (Euclidean) polyhedral surface}\index{polyhedral surface} 
$S$ is a surface obtained by gluing Euclidean polygons together along boundary edges by using isometries. More precisely, we require that $S$ is a locally Euclidean surface that carries a cell decomposition $\DD$ such that each tile $X$ in 
$\DD$   is isometric to a Euclidean polygon $X'$ (with respect to  the induced path metrics on $X$ and $X'$). Note that each  cone 
point of $S$ is necessarily a vertex of $\DD$.

A {\em pillow}\index{pillow} $P$ is a special case of a
polyhedral surface.  It is obtained by gluing two identical
copies $X_{\tt w}$ and $X_{\tt b}$ of a Euclidean polygon $X$
together so that corresponding points on $\partial X_{\tt w}$ and
$\partial X_{\tt b}$ are identified. Then $P$ is a topological
$2$-sphere. The Jordan curve
$ \partial X_{\tt w}=\partial X_{\tt b}\sub P$ is called the 
{\em equator}\index{equator}\index{pillow!equator of} 
of the pillow.

   Often it is useful to consider  $X_{\tt w}\sub P$ as the top face of $P$ colored ``white'' and 
  $X_{\tt b}\sub P$ as the bottom face  colored ``black''. The pillow 
  $P$ carries a natural cell decomposition with $X_{\tt w}$ and 
  $X_{\tt b}$ as tiles, and edges and vertices on the equator 
  $ \partial X_{\tt w}=\partial X_{\tt b}$ of $P$ that correspond to the edges and vertices of $X$. 
  
  The standard orientation on $\C$ induces a natural orientation
  on the polygon $X_{\tt w}$ under a fixed identification
  $X_{\tt w}\cong X$ (represented by a positively-oriented flag,
  for example; see Section~\ref{sec:orient}). We orient $P$ so
  that the induced orientation agrees with this given orientation
  on $X_{\tt w}\sub P$.\index{orientation! of pillow}\index{pillow!orientation of}
  Moreover, we equip $P$ with the unique
  path metric whose restriction agrees with the Euclidean path
  metrics on $X_{\tt w}$ and $X_{\tt b}$.  Then $P$ is a
  polyhedral surface. 
Each of its cone points corresponds 
to a
  vertex of $X$.
 
It is a standard fact that every polyhedral surface $S$ (such as a pillow) admits a natural Riemann surface
 structure (see, for example, \cite[Section~3.3]{Be84}).
One can see this along the following lines. Each point $p\in S$  has a neighborhood $U$ such that there exists an isometry
$\varphi_p\: U\ra D$ with $\varphi_p(p)=0$, where  $D\sub \C$ is a Euclidean disk centered a
 $0$ and equipped with a conformal metric as in \eqref{eq:conemetr1}. By postcomposing $\varphi_p$ with complex conjugation if necessary, we may in addition assume that $\varphi_p$ is 
 orientation-preserving.  One can then show 
 that  these  charts $\varphi_p$, $p\in S$, are holomorphically
 compatible.  So they form a complex atlas ${\mathcal A}$ on $S$. 
 The  analytic equivalence class of  ${\mathcal A}$ is independent of the choices of the isometries $\varphi_p$. Hence $S$ carries a conformal  structure.  If  $S$ is a topological $2$-sphere, then by the uniformization theorem $S$ (as a Riemann surface) is actually conformally equivalent to $\CDach$. 
 
 Let $f\: S\ra S' $ be a continuous map between polyhedral surfaces $S$ and $S'$ equipped with their natural conformal structures. Suppose there exists a set $P\sub S$ that is discrete in $S$ such that the restriction $f|S\setminus P$ is an  orientation-preserving  local similarity, i.e., for each 
 point $p\in  S\setminus P$ there exists a neighborhood $U\sub S\setminus P$ of $p$  such that $f$ maps $U$ onto a neighborhood of $f(p)$ by an orientation-preserving homeomorphism that scales all distances of points in $U$ by a fixed factor $\lambda>0$. Then $f$ is holomorphic. This follows from the definition of the conformal structures on $S$ and $S'$, and the fact that  each point in $P$ is a removable singularity.

Now let   $\mathcal{O}=(\CDach, \alpha)$ be   an  orbifold with the Riemann sphere as the underlying surface  and a ramification function $\alpha\: \CDach \ra \widehat \N$.  We assume that $\OC$ is parabolic or
hyperbolic. Let $\Theta$ be the holomorphic universal orbifold covering map of $\mathcal{O}$ defined on  $X= \C$ in the parabolic and 
 $X=\D$ in the
hyperbolic case (see 
Theorem~\ref{thm:univ_orbi_cover}~\ref{item:ex_univ_orbi2}). 

  In the first case, 
$X=\C$ is equipped with the Euclidean metric, and in the second case $X=\D$ is equipped with the
 hyperbolic metric  as in 
\eqref{eq:def_hyp}. To keep the notation simple,  we denote these metrics both by $d_0$. 
  We will remove the punctures of $\mathcal{O}$ from $\CDach$  
and  consider
\begin{equation*}
  \CDach_0\coloneqq  \CDach \setminus\{p\in \CDach : \alpha(p)=\infty\}.
\end{equation*}
The universal orbifold covering
map  is then a holomorphic branched covering map 
  $\Theta\colon X\to \CDach_0$. The
associated group of deck transformations $G=\pi_1(\OC)$ acts
properly discontinuously on $X$ and each element $g\in G$ is an orientation-preserving isometry on $X$ (see 
Proposition~\ref{prop:prop_deck_trafo}).

The {\em canonical orbifold metric} 
$\om$  on $\CDach_0$ is now defined as\index{canonical orbifold!metric|textbf}\index{metric!canonical orbifold|textbf}\index{o@$\omega$|textbf}\index{orbifold!canonical metric|textbf}
\begin{equation}\label{eq:defcanorbmetr0} 
\om(p,q)= \inf\{ d_0(z,w): z\in \Theta^{-1}(p),\, w\in \Theta^{-1}(q)\}  
\end{equation}
for $p,q\in \CDach_0$.  Note that here the fibers of $\Theta$ are precisely the orbits of $G$. So these are discrete sets in $X$. Since $G$ acts by isometries on   $(X, d_0)$ and
transitively on $\Theta^{-1}(p)$, for each $z_0\in \Theta^{-1}(p)$
there exists $w_0\in \Theta^{-1}(q)$ such that $\om(p,q)=d_0(z_0,
w_0)$. 

This implies that the infimum in \eqref{eq:defcanorbmetr0} is
attained as a minimum and that $\om$ satisfies the triangle inequality. 
It easily follows  that $\om$ is actually a metric on $\CDach_0$.

The uniqueness of the universal orbifold covering map $\Theta$
(Corollary~\ref{cor:unique_univ_orbi} and 
Remark~\ref{rem:holoorbset}) implies  that the metric
$\om$ is uniquely determined by $\OC$ if $\OC$ is hyperbolic and unique up to a
scaling factor if $\OC$ is parabolic. Therefore, in the following we
will refer to $\om$ as the {\em canonical orbifold metric} of $\OC$.

An equivalent way to view  this metric is as follows. The quotient space 
$X/G$ is homeomorphic to $\CDach_0=\Theta(X)$ by  a   homeomorphism $\varphi\colon X/G \to \CDach_0$ 
given  as 
\begin{equation*}
  [z]\mapsto \varphi([z])\coloneqq \Theta(z)
\end{equation*} for $[z]=Gz\in X/G$ 
(see Corollary~\ref{cor:groupquot}). If we define
\begin{equation*}
  \widetilde{\omega}([z],[w])= \inf\{ d_0(g(z), h(w)) : g,h\in G\} 
\end{equation*}
for  $[z],[w]\in X/G$, then $ \widetilde{\omega}$ is a metric on
the quotient $X/G$ 
such that  
the map
$\varphi\colon (X/G,
\widetilde{\omega}) \to (\CDach_0, \omega)$  is an isometry.

Let  $z\in X$ and  $p=\Theta(z)\in \CDach_0$. We consider a small 
ball  $B_z\subset X$ (with respect to $d_0$)
centered  at  $z$. We know  (see the proof of 
Proposition~\ref{prop:prop_deck_trafo}~\ref{item:pi1_O_2}) 
that if $w\ne z$ is close to $z$, then the only points in the orbit $Gw$ close to $z$ are the points in $G_zw$, where $G_z\sub G$ is the stabilizer  subgroup of $z$. So if $B_z$ is small enough, then $B_z/G=B_z/G_z$.

Now $G_z$ is a cyclic group of rotations 
fixing $z$. It follows that $B_z/G=B_z/G_z$ equipped with the metric $\widetilde \om$ is 
a {cone}
\index{cone}
 with cone angle 
 $$2\pi/\#G_z=2\pi/\deg(\Theta,z)=2\pi/\alpha(p). $$ This  cone
is  Euclidean or hyperbolic  depending on whether the orbifold is
parabolic or  hyperbolic. 

If $U_p=\varphi(B_z/G)$, then 
 $(U_p,\om)$  is a neighborhood of  $p$ that is 
 isometric to such a cone. The cone angle  
 $2\pi/\alpha(p)$ 
 of $(U_p,\om)$ is different from $2\pi$  precisely 
  if $2\leq \alpha(p)<\infty$. This is the reason why 
   such points are called the 
 cone points
of the orbifold $\OC=(\CDach, \alpha)$.
Near all other points,
$(\CDach_0, \om)$ is locally isometric to the model space $(X, d_0)$.

We have  $\om(\Theta(z), \Theta(w))\le d_0(z,w)$ for all $z,w\in X$.  A 
stronger condition  is true locally, namely $\Theta$ is a {\em  local radial isometry} in the following sense:   for each $z\in X$ there exists a neighborhood $B_z$ of $z$ in $X$ such that 
\begin{equation}\label{eq:radialisom} 
\om(\Theta(z), \Theta(w))= d_0(z,w)
\end{equation} for all $w\in B_z$. This easily follows from the definition of $\om$ and the fact that for  $w\ne z$ near $z$ the only points 
of $Gw=\Theta^{-1}(\Theta(w))$ near $z$ are the points in $G_zw$ all of which  have the same distance $d_0(z,w)$ to $z$.

 The relation \eqref{eq:radialisom} 
together with a simple covering argument implies that the map $\Theta$
is a {\em path isometry}:\index{path!isometry}\index{visual metric!path isometry} 
if $\ga$ is a path in $X$, then 
\begin{equation}
  \label{eq:om_path_isom}
  \length_\om(\Theta\circ \ga)= \length_{d_0} (\ga).
\end{equation}

The metric space $(\CDach_0, \om)$ is geodesic. Indeed, a geodesic segment joining two points 
$p,q\in \CDach_0$ can be obtained as follows. We can pick points $z\in \Theta^{-1}(p)$ and 
$w\in \Theta^{-1}(q)$  such that 
$\om(\Theta(z), \Theta(w))= d_0(z,w)$. Since $(X,d_0)$ is geodesic, we can find a geodesic segment $\ga$ joining $z$ and $w$. Since $\Theta$ is a path isometry, the path $\Theta\circ \ga$ must then be a geodesic segment  in $(\CDach_0, \om)$ joining $p$ and $q$.

In order to compare the canonical orbifold metric $\omega$ with the chordal
metric $\sigma$,   
fix  $p\in \CDach_0$ and $z\in  \Theta^{-1}(p)$. Then  $\deg(\Theta,z)=\alpha(p)$.  This implies that if $w\in X$ is near $z$ and $q=\Theta(w)$, then 
$$ \sigma(p,q)=\sigma(\Theta(z),\Theta(w))\asymp d_0(z, w)^{\alpha(p)}=
 \omega(p,q)^{\alpha(p)}. $$ 
It follows that there is a neighborhood
$U_p\subset \CDach_0$ of $p$ such that
\begin{equation}
  \label{eq:om_chordal}
  \omega(p,q) \asymp \sigma(p,q)^{1/\alpha(p)},
\end{equation}
for  $q\in U_p$, where $C(\asymp)= C(p)$. 

If  $p\in \CDach$ is a puncture of $\OC$ (i.e.,
if $\alpha(p)=\infty$),  let $\gamma\colon [0,1)\to \CDach_0$ be a path 
that  ends at
$p$ in the sense that $\lim_{t\to 1}\ga(t)=p$. If  $\widetilde{\gamma}$ 
is a lift of $\gamma$ by  $\Theta$ (see Lemma~\ref{lem:liftsofpathsbranched}), then $\widetilde{\gamma}(t)$ 
must leave any compact subset
of $X$ as $t\to 1$. Thus  $\widetilde{\gamma}$ has infinite $d_0$-length which  by  \eqref{eq:om_path_isom} (applied to $\widetilde{\gamma}$) implies that 
 $\gamma=\Theta\circ \widetilde{\gamma}$ has infinite length with respect to $\omega$. In other words,
 for  the metric $\om$ the punctures are infinitely far away from the points in $\CDach_0$. In particular, 
 the metric space $(\CDach_0, \om)$  
is unbounded if $\mathcal{O}$ has punctures.

 If $\mathcal{O}$ has no punctures, then \eqref{eq:om_chordal} and a covering argument implies that 
there exists a constant $C\ge 1$ such that
\begin{equation}\label{eq:compsigom}\frac {1}{C} \sigma(p,q)\le \om(p,q) \le C 
\sigma(p,q)^{\eps}\end{equation}
for $p,q\in \CDach$, where $\eps= \min\{ 1/\alpha(u): u\in \CDach\}$. 
In this case, $\omega$ induces the standard topology on
$\CDach$. The lower bound for $\om$ in
\eqref{eq:compsigom} is also true (on $\CDach_0$) if
$\mathcal{O}$ has punctures (this can  be shown by using the
estimates from Proposition~\ref{prop:conforbsym} below). In particular, the map $\id_{\CDach_0}\: (\CDach_0, \om)\ra (\CDach_0, \sigma)$ is always Lipschitz (but never bi-Lipschitz).  

One can also describe the  metric $\om$ as a singular conformal metric 
with a conformal factor that is smooth everywhere except at the points in 
$\supp(\alpha)$. To see this,  let $z\in X$. We define $\Vert \Theta'(z)\Vert$ as the norm of the derivative $\Theta'(z)$ with respect to the underlying metric $d_0$ on $X$ and the chordal metric $\sigma$ (or rather the spherical metric) on $\CDach$. More explicitly, 
 if $X=\C$, then 
 $$ \Vert \Theta'(z) \Vert= \frac{2|\Theta'(z)|}{1+|\Theta(z)|^2},$$ 
and if $X=\D$, then  
$$ \Vert \Theta'(z) \Vert= \frac{(1-|z|^2) |\Theta'(z)|}{1+|\Theta(z)|^2}.$$ 

 These expressions are essentially special cases of the general formula \eqref{eq:gennorm} and have to be understood as  suitable limits if $\Theta(z)=\infty$. 
 The function $z\mapsto \Vert \Theta'(z)\Vert$ is smooth and positive everywhere on $X\setminus \crit(\Theta)$. 
 
 Since $G=\pi_1(\mathcal{O})$ acts by isometries on $X$, we have 
 $$ \Vert \Theta'(z)\Vert=  \Vert \Theta'(g(z))\Vert$$ for $g\in G$ as follows from the chain rule. So if we set 
 \begin{equation}\label{eq:deforbrho} \rho(p)= \frac1{\Vert \Theta'(z)\Vert}  \end{equation}
  for 
 $p\in \CDach\setminus \supp (\alpha)$ and $z\in \Theta^{-1}(p)=Gz$, then $\rho$ is
 well-defined. Note that $\supp (\alpha)$ includes the punctures and the critical values  of $\Theta$. So on $\CDach\setminus \supp (\alpha)$ the function $\rho$ is smooth and positive. 
 
Now  suppose  $\beta$ is   a  path in $X$ with $\length_{d_0}(\beta)<\infty$ and define 
$\ga=\Theta\circ \beta$. Then  
$$ \length_\sigma(\ga)\lesssim \length_\om(\ga)=\length_{d_0} (\beta)<\infty.$$
If we denote by $ds$ integration with respect to $d_0$-arclength for $\beta$ and by $d\sigma$
integration with respect to $\sigma$-arclength for $\ga$, then    
\begin{align*}\int_{\ga} \rho\, d\sigma&= \int_\beta (\rho \circ \Theta)\Vert \Theta'\Vert\, ds=\int_\beta ds\\
&= \length_{d_0}(\beta) = \length_\om(\ga). 
\end{align*} 
Since $\om$ is a geodesic metric, this and a path lifting argument imply that for all 
$p,q\in \CDach_0$ we have 
$$ \om(p,q) =\inf_{\ga} \int_\ga \rho\, d\sigma, $$
where the infimum is taken over all $\sigma$-rectifiable paths in $\CDach_0$ joining $p$ and $q$. In other words,   $\om$ is the (singular) conformal metric on $\CDach$ with length element $\rho\, d\sigma$.

The local behavior of $\rho$ near  its singularities is described in the following statement.

\begin{prop} \label{prop:conforbsym}
\index{canonical orbifold!metric!density}\index{density of  canonical orbifold metric}\index{o@$\omega$}
Let $\rho$ be the 
conformal density of the canonical orbifold
metric $\om$ 
of a parabolic  or   hyperbolic  orbifold $\mathcal{O}=(\CDach, \alpha)$ as defined in \eqref{eq:deforbrho}. Let 
  $p\in \CDach$. If $\alpha(p)<\infty$, then
 \begin{equation} \label{eq:benesing}\rho(q)\asymp   \frac{1}{\sigma(p,q)^{1-1/\alpha(p)}} 
 \end{equation}  
for $q$ near $p$. 

If $\alpha(p)=\infty$, 
and $\mathcal{O}$ is parabolic, then
\begin{equation} \label{eq:benesing2}\rho(q)\asymp   \frac{1}{\sigma(p,q)}, \end{equation} 
and if $\mathcal{O}$ is hyperbolic, then
\begin{equation} \label{eq:benesing1}\rho(q)\asymp   \frac{1}{\sigma(p,q)\log(1/\sigma(p,q))},  \end{equation} 
 for $q$ near $p$. 

In all these inequalities $C(\asymp)=C(p)$. 
\end{prop} 

If $\alpha(p)=1$, then \eqref{eq:benesing} should be interpreted as $\rho(q)\asymp 1$ for $q$ near $p$. This  corresponds to the fact that $\rho$ is smooth and positive on $\CDach\setminus \supp (\alpha)$.

\begin{proof} As before, we denote by $\Theta\:X \ra \Cdach_0$ the holomorphic universal orbifold covering map of $\mathcal{O}$, where $X=\C$ if $\mathcal{O}$ is parabolic  and $X=\D$ if 
$\mathcal{O}$ is hyperbolic. 

\smallskip
{\em Case 1:}  $\alpha(p)<\infty$. Then there exists $z\in X$ with $\Theta(z)=p$. 
If $w\ne z$ is near $z$ and  $q=\Theta(w)$, then we have 
 $$ \sigma (p,q)\asymp d_0(z,w)^{\alpha(p)} \text{ and } 
 \Vert \Theta'(w)\Vert \asymp d_0(z,w)^{\alpha(p)-1}.$$
Since $\rho(q)=1/\Vert \Theta'(w)\Vert$, inequality  \eqref{eq:benesing} follows.

 The  asymptotics of $\rho$ near a puncture $p\in \CDach$ is much harder to analyze, because $p$ has no preimage in $X$. 
  Without loss of generality we may assume that $p=0$. Then near $p$ chordal and Euclidean metrics are comparable, and so we will state our estimates in terms of the Euclidean metric.

\smallskip
{\em Case 2:} $\alpha(p)=\infty$ and  $\mathcal{O}$ is parabolic.  Then the  signature 
 of $\mathcal{O}$ is $(\infty, \infty)$ or $(2,2,\infty)$ 
 (see the list of parabolic orbifold signatures in 
 Proposition~\ref{prop:parabolicOf}~\ref{item:Of_para2}). 
 
 If the signature is $(\infty, \infty)$, then we may assume that one of the  punctures of $\mathcal{O}$  is  at $p=0$,  the other at $\infty$, and 
 $\Theta(z)=\exp(2\pi\iu z)$.  
For this we match the punctures of the orbifold
with $0$ and $\infty$ by a M\"obius transformation. It changes
the chordal metric only by a factor $\asymp 1$ and so our desired
estimates are not affected. 

Then $\Theta$ maps the upper half-plane 
 $\mathbb{H}$ to the punctured neighborhood  $U_p=\D\setminus\{0\}$ 
 of $p=0$. 
 If  $w\in \mathbb{H}$ and $q=\Theta(w)$, then 
\begin{equation}\label{eq:thth'}
 |\Theta'(w)|\asymp |\exp(2\pi\iu w)| \asymp |\Theta(w)|\asymp  |q|. 
 \end{equation} 
Since $\rho(q)=1/\Vert \Theta'(w)\Vert\asymp 1/|\Theta'(w)|$, inequality \eqref{eq:benesing2} immediately follows. 

If the signature of $\mathcal{O}$ is $(2,2,\infty)$, then we may assume that 
the puncture is at $p=0$, the  two cone points of $\mathcal{O}$ are at $-1$ and $1$, and that 
$$\Theta(z)=1/\cos(2\pi z)= \frac{2  \exp(2\pi\iu z)}{1+ \exp(4\pi\iu z)}. $$ 
Then $\Theta$ maps the half-plane $H=\{ z\in \C:
 \imag (z)>C_0\}$ with $C_0>1$ large to a small punctured  neighborhood $U_p$ of $p=0$. For $w\in H$ and $q=\Theta(w)$ we again have inequalities as in 
 \eqref{eq:thth'}  and  \eqref{eq:benesing2} follows.

 \smallskip
 {\em Case 3:} $\alpha(p)=\infty $  and  $\mathcal{O}$ is hyperbolic. As before, we may assume that $p=0$ and can use the Euclidean rather than the chordal metric near $p$. 
 
This is by far the hardest case. As in Case~2,
 the point $p$ has no
 preimage under  $\Theta$, but in contrast to Case~2 we do not have an explicit expression for $\Theta$.  
To give some intuition how the asymptotics near $p$ arises, we first consider a simple related situation.

 \smallskip
 \emph{Model Case:}  Let  $\Theta_0\colon \Halb \to
 \D\setminus\{0\}$, $u\in \Halb \mapsto \Theta_0(u) \coloneqq  \exp(2\pi \iu u)$. 

 \smallskip 
 If  $q\coloneqq \Theta_0(u)$ for $u\in \Halb$, then $ \abs{\Theta_0'(u)}= 2\pi \abs{q}$ and 
$$\abs{q} = \exp(-2\pi \imag(u)) \text{ or equivalently  } \imag(u)=
   \frac{1}{2\pi}\log(1/\abs{q}).$$  So if 
  we equip $\Halb$ with the hyperbolic metric (given by the  length element \eqref{eq:def_hyponH}) and
 $\D\setminus\{0\}$ with the Euclidean metric,  we obtain
 \begin{equation*}
   \norm{\Theta_0'(u)} =  \imag(u) \abs{\Theta_0'(u)} =
   \abs{q}\log(1/\abs{q}). 
 \end{equation*}
 This means that 
 $1/\norm{\Theta_0'(u)}$ has an asymptotic behavior  similar to
 \eqref{eq:benesing1}, 
where
 $q=\Theta_0(u)\to p=0$.
 
  We will show that for our given universal
 orbifold covering map $\Theta$ we have a relation of  the form $\Theta_0=\Theta\circ \varphi$. We will then derive 
 good distortion bounds for $\varphi$ which will allow us 
 to 
deduce the desired behavior 
of   $\Vert \Theta'\Vert$ 
from the model case.

 Let $U_p=B_\C(0,\delta)\setminus \{0\}$ be a small punctured Euclidean disk around $p=0$. We may assume that $\delta>0$ is so small that $U_p$ does not contain any  point in 
 $\supp (\alpha)$. Let 
 $$ H=\{ z\in \C: \imag (z)>C_0\}$$ with $C_0=\frac1{2\pi}\log(1/\delta)$. 
 Then the  map $$\Theta_0\: H \ra U_p, \ u\in H \mapsto \Theta_0(u)=  \exp(2\pi\iu u)$$ is the universal 
 covering map of $U_p$. 

 Since $U_p\cap\supp (\alpha)=\emptyset$, the 
 map $\Theta$ restricted to any component of $\Theta^{-1}(U_p)$
 is also a covering map over $U_p$. This implies that there
 exists a holomorphic map $\varphi\: H\ra X$ such that
 \begin{equation}
   \label{eq:T0Tphi}
   \Theta_0=\Theta\circ \varphi
 \end{equation}
 on $H$.
 
 
 
We want to show an equivariance property of $\varphi$. For this,  fix $u_0\in H$. Then 
 $q_0\coloneqq \Theta_0(u_0+1)=\Theta_0(u_0)\in U_p$. So if we set $z_0=\varphi(u_0)\in X$ and $z_1=\varphi(u_0+1)\in X$, then 
 $\Theta(z_0)=\Theta(z_1)=q_0$. Hence there exists $g_0\in G=\pi_1(\mathcal{O})$ 
 such that $g_0(z_0)=z_1$. Then both $u\mapsto \varphi (u+1)$ and $g_0\circ \varphi$ are lifts of $\Theta_0$ by  $\Theta$ that send the point $u_0$ to $z_1$. By the uniqueness statement for lifts (Lemma~\ref{lem:liftsofcov}~\ref{item:liuniq}) this implies 
 \begin{equation}\label{eq:autvarphi}
  \varphi(u+1)=g_0(\varphi(u)) \text{ for $u\in H$}. 
  \end{equation}
 
 The map $g_0$ is a 
biholomorphism on $X=\D$ and hence a M\"obius transformation. It can be an elliptic element 
 of finite order (where we allow $g_0=\id_{\D}$), hyperbolic, or parabolic (see \cite[Section~4.3]{Bea} for this standard terminology).

 \smallskip 
{\em Claim.} $g_0$ is parabolic. 

\smallskip
To see this, we consider the cyclic subgroup $G_0$ of $G$ generated by $g_0$ and the quotient $X/G_0$.
 For  all three possible types of $g_0$, the quotient $X/G_0$ carries a natural Riemann surface structure and is conformally equivalent to a bounded region $\Omega\sub \C$. In all cases, an explicit
 biholomorphism $\psi\: X/G_0\ra \Om$ can easily be obtained from a 
 holomorphic branched  covering map $X\ra \Om$ induced by
 $G_0$ (see \cite[pp.~17--19]{Ne53} for a related  argument). Actually, one can think of $\psi$ as a single chart on $X/G_0$ defining the conformal structure on
  $X/G_0$.

 Indeed, if $g_0$ is elliptic, then up to conformal equivalence
 we may assume that $g_0(z)=e^{2\pi \iu/n}z$ with $n\in \N$. So
 $X/G_0=\D/G_0$ is a cone  and conformally equivalent to 
 $\Om=\D$.
 
 If $g_0$ is hyperbolic or parabolic, then, up to conformal equivalence, we may assume that $X$
 is the upper half-plane $\mathbb{H}$ and that 
 $g_0(z)=\lambda z$ with $\lambda>1$  in the hyperbolic and $g_0(z)=z+1$ in the parabolic case. 
 
 So if  $g_0$ is hyperbolic or parabolic, then 
  $X/G_0=
 \mathbb{H}/G_0$ is conformally equivalent to the annulus 
 $\Om=\{z\in \C :1<|z|<\exp(2\pi^2/\log \lambda)\}$
 or
 the punctured unit disk $\Om=\D\setminus\{0\}$, respectively. 
 
 We conclude  that  only in the parabolic case  the region $\Om\cong  X/G_0$ has an isolated boundary point.

To derive the claim from this, we  define a holomorphic map $f\: U_p\ra \Om$ as follows. If $q\in U_p$ is arbitrary, we pick $u\in H$ with $\Theta_0(u)=q$ and set $f(q)=\psi ([\varphi(u)])
\in \Om$.  Here $[z]\in X/G_0$ denotes the orbit of a point $z\in X$ under $G_0$.  
As follows from \eqref{eq:autvarphi}, the  map $f$ is well-defined and holomorphic. 
Since $\Theta=\Theta\circ g$ for all $g\in G_0$, we can also define a unique holomorphic map $\widetilde \Theta\: X/G_0\ra \CDach_0$ by setting $\widetilde \Theta ([z])=\Theta(z)$ for $z\in X$. 

Since $\Om$ is a bounded region, the map $f$ has a removable singularity 
at $p=0$ and hence a holomorphic extension to the disk $D_p=U_p\cup\{p\}$. 
Then $f(0)\in \overline \Om$. Here actually $f(0)\in  \partial \Om$. 
To see this, we pick a sequence $\{u_n\}$ in $H$ with $\text{Im}(u_n)\to +\infty$ as $n\to \infty$. 
Then $\Theta_0(u_n)\to 0$. The sequence $\{ [\varphi(u_n)]\}$ has no limit point in 
$X/G_0$.  Otherwise,  by passing to a subsequence if necessary, we may assume that
 $[\varphi(u_n)]\to [z_0]$, where $z_0\in X$. Then 
 \begin{align*}
  \Theta(z_0)&= \widetilde \Theta([z_0])= \lim_{n\to \infty} \widetilde \Theta([\varphi(u_n)])\\
  &= \lim_{n\to \infty} \Theta(\varphi(u_n)) =\lim_{n\to \infty}  \Theta_0(u_n)=0, 
  \end{align*} 
 contradicting the fact that $p=0$ is a puncture. 
 
 Since the sequence  $\{ [\varphi(u_n)]\}$ has no limit point in 
$X/G_0$ and $\psi$ is a biholomorphism of $X/G_0$ onto $\Om$, we have  
$\psi ([\varphi(u_n)])\to \partial \Om$ as $n\to \infty$.  
It follows  that 
$$ 
f(0)=\lim_{n\to \infty} f(\Theta_0(u_n))=  \lim_{n\to \infty} \psi([\varphi(u_n)])\in \partial \Om. $$ 

Since $f(0)\in \partial \Om$,  the open mapping theorem implies that 
 $f(0)$ must  be an isolated  point on $\partial \Om$.
In the elliptic and hyperbolic case, there are no such points on $\partial \Om$. This shows that $g_0$ is indeed parabolic. We also  see  that for the holomorphic extension of $f$ we have $f(0)=0$.
This finishes the proof of the claim. \smallskip

With the knowledge that $g_0$ is parabolic, we  switch to the  more convenient situation discussed above. Namely, we may assume that $X$ is the upper half-plane 
$\mathbb{H}$ equipped with the hyperbolic metric (given by the length element \eqref{eq:def_hyponH}) and that $g_0(z)=z+1$. This can always be achieved by precomposing 
the original map $\Theta$ with a suitable M\"obius transformation. 

Then in 
\eqref{eq:autvarphi} we have 
$$\varphi(u+1)=\varphi(u)+1$$ for $u\in H$. A 
 biholomorphism $\psi\: X/G_0\ra \Om= \D\setminus \{0\}$ 
is given  by $[z]\mapsto \psi([z])\coloneqq\exp(2\pi \iu z)$.  So it follows from the proof of the claim that there  is a holomorphic function  $f$ on the disk $D_p=U_p\cup\{p\}$ with $f(0)=0$ such that 
\begin{equation}\label{eq:fuvar}
  f(\exp(2\pi\iu u))=f(\Theta_0(u))=\psi( [\varphi(u)])=\exp( 2\pi\iu \varphi(u))  \end{equation} 
for $u\in H$. The function $f$ must be non-constant and so near $0$ it has a Taylor expansion of the form $f(q)=aq^n+\dots$, where
$n\in \N$ and $a\in \C\setminus \{0\}$.
Hence  if $q=\exp(2\pi\iu u)$ is near $0$, or equivalently if $\imag(u)$ is
large,  then 
$$ 
  \exp(-2\pi n \imag(u)) =
  \abs{q}^n  \asymp   
  \abs{f(q)} 
  = \exp (-2\pi \imag(\varphi(u))),   
$$
and so
\begin{equation}\label{eq:imvarphi} 
 \imag(\varphi(u))\asymp \imag(u).  
\end{equation} 
 If we differentiate in \eqref{eq:fuvar} with respect to $u$,  we
 also see that 
 \begin{equation*}
   \abs{q}^n 
   \asymp 
   2\pi \abs{f'(q)} \cdot |q|
   =
   2\pi\abs{\varphi'(u)}\cdot \abs{f(q)}
   \asymp 
   \abs{\varphi'(u)}\cdot \abs{q}^n,  
 \end{equation*}
 and so   
 \begin{equation} \label{eq:phiprimebd} 
  |\varphi'(u)|\asymp 1.
  \end{equation}
Recall from \eqref{eq:T0Tphi} 
that $\Theta_0=
  \Theta\circ \varphi$. Setting $w=\varphi(u)\in X$, we obtain  
  $$\Theta(w)=\Theta(\varphi(u))=\Theta_0(u)=q. $$ Moreover, 
   \eqref{eq:phiprimebd}  and the chain rule immediately give
  $$ \abs{\Theta_0'(u)} = \abs{\Theta'(w)} \cdot |\varphi'(u)| \asymp |\Theta'(w)|. $$

 So if $q=\Theta(w) =\exp(2\pi\iu u)$ is sufficiently close to 
 $0$ (with corresponding $u\in H$), then \eqref{eq:imvarphi}
 shows that 
 \begin{align*}
   \Vert \Theta' (w)\Vert 
   &= 
     \frac {2 \imag(w) |\Theta'(w)|} {1+|\Theta(w)|^2}
     \asymp
  {\imag(w) |\Theta'(w)|}\\  & 
     \asymp   \imag(\varphi(u) ) \abs{\Theta_0'(u)} 
   \asymp \imag(u)  |\Theta_0'(u)|  = \abs{q} \log(1/\abs{q}).
 \end{align*}
 Here the last equality was observed in the model case.  
 Hence 
 \begin{equation*}
   \rho(q)
   = 
   \frac{1}{\Vert \Theta'(w)\Vert}
   \asymp 
   \frac{1}{|q|\log(1/|q|)} 
 \end{equation*}
 for $q$ near $p=0$.  Inequality \eqref{eq:benesing1} follows. 
\end{proof}

We know that  in the absence of punctures the orbifold metric $\om$ is related to the chordal metric $\sigma$ by an inequality
as in \eqref{eq:compsigom}. The following statement further 
clarifies the  relation between these metrics.

\begin{lemma}
  \label{lem:om_chordal}
\index{canonical orbifold!metric}\index{metric!canonical orbifold}\index{o@$\omega$}\index{orbifold!canonical metric}
  Let $\OC=(\CDach, \alpha)$ be a parabolic or hyperbolic  orbifold without punctures. Then
  \begin{enumerate}
     \item
    \label{item:om_chordal_biLip}$(\CDach, \omega)$ and
    $(\CDach, \sigma)$  are bi-Lipschitz equivalent;
    
    \item
 \label{item:om_chordal_qs}
    $\id_{\CDach}\colon  (\CDach, \omega)  \to (\CDach, \sigma)$ is a quasisymmetry. 
   \end{enumerate}
\end{lemma}

It follows from the behavior of the conformal density $\rho$ of $\om$ near the cone points  of $\mathcal{O}$ that the 
bi-Lipschitz equivalence in  \ref{item:om_chordal_biLip} cannot be given by the identity map. 

If  $\OC$ has punctures, then 
$(\CDach_0, \sigma)$ and $(\CDach_0, \omega)$ cannot 
be  
quasisymmetrically equivalent, 
since the first metric space is bounded while the other one is not, and a quasisymmetry preserves boundedness of a space. 

\begin{proof}
 \ref{item:om_chordal_biLip}  We will obtain the desired bi-Lipschitz map $(\CDach, \omega) \ra 
    (\CDach, \sigma)$ from a quasiconformal map on $\CDach$ that behaves like a suitable 
 radial stretch near each point in $\supp (\alpha)$. 
 
 The 
 {\em radial stretch}\index{radial stretch} 
$R_\beta$ for exponent 
  $\beta>0$ is the quasiconformal homeomorphism  $R_\beta \: \C \ra \C$ defined as 
  $$R_\beta(re^{\iu \theta})=r^\beta e^{\iu \theta}$$ 
  for $r\ge 0$, $\theta\in [0,2\pi]$.  The map $R_\beta$ is smooth on $\C\setminus \{0\}$ 
  and we have 
  $$ \Vert DR_\beta(z)\Vert_\sigma \asymp |z|^{\beta-1} $$
  for $z$ near $0$ (see \eqref{eq:sphder2} for the notation used here).  
  Note that $R_\beta$ is the identity on $\partial \D$. This allows us to ``cut and paste''  
  radial stretches together to find a homeomorphism 
  $\varphi \: \CDach \ra \CDach$  such that in a small chordal  disk  $U_p$  centered at a point 
   $p\in \CDach$ with $\alpha(p)\ge 2$, the map $\varphi|U_p$ conjugates to the radial 
   stretch $R_\beta$ on $\D$ with $\beta=1/\alpha(p)$ under a suitable M\"obius transformation that sends $U_p$ onto $\D$ and $p$ to $0$. Moreover, we require that outside these neighborhoods $U_p$ the map $\varphi$ is the identity.   Then $\varphi$ is quasiconformal away from the union of the 
  boundaries $\partial U_p$. Since this union  is a set of   finite Hausdorff $1$-measure and such sets  are removable for quasiconformal maps
 (see \cite[Section~35]{Va}), the homeo\-morphism $\varphi$ 
  is quasiconformal on $\CDach$. By construction 
  we have
   \begin{equation}\label{eq:phispdist}
   \Vert D\varphi(q) \Vert_\sigma  \asymp \sigma(p,q)^{1/\alpha(p)-1}
   \end{equation} 
  for $q$ near $p\in \CDach$ with $\alpha (p)\ge 2$.
  We also have 
  $\Vert D\varphi(q) \Vert_\sigma \asymp 1$ for  almost every $q$  in the complement of a neighborhood of   $\supp (\alpha)$.
   
   In order to establish   that  $\varphi\colon (\CDach, \omega) \to
(\CDach, \sigma)$ is bi-Lipschitz, we will show that it is a map
of \emph{bounded length distortion}. This means that for each 
path $\ga$ in $\CDach$ we have 
$\length_\sigma(\varphi(\gamma)) \asymp \length_\omega(\gamma)$,
where $C(\asymp)$ is independent of $\gamma$. Since
$\omega$ is a  length metric  and $\sigma$ is comparable to a length 
metric (namely the spherical metric) on $\CDach$, this will imply  that
$\varphi$ is bi-Lipschitz as desired. Since the universal
orbifold covering map $\Theta \colon (X, d_0)\to (\CDach,\omega)$ is a
path isometry and every path $\ga$ in $\CDach$ has a lift by  $\Theta$
(see Lemma~\ref{lem:liftsofpathsbranched}), it suffices 
 to show that $\psi \coloneqq \varphi \circ \Theta$ is of bounded
length distortion.

The map $ \psi= \varphi\circ \Theta$ is  quasiregular.  Let $z\in X$ be arbitrary, and  consider a point   $w\in X$ with $w\ne z$  near $z$. If  we set $p\coloneqq \Theta(z)$ and $q\coloneqq \Theta(w)$, then 
  \begin{equation}\label{eq:ThetaDistqr}
    \norm{\Theta'(w)}
    \asymp d_0(z,w)^{\alpha(p)-1} 
    =
    \omega(p,q)^{\alpha(p)-1} 
    \asymp 
    \sigma(p,q)^{1-1/\alpha(p)}
  \end{equation}
  by \eqref{eq:radialisom} and \eqref{eq:om_chordal}. 
  We  denote by 
 $$ \Vert D\psi (w)\Vert\coloneqq \limsup_{w'\to w} \frac{\sigma(\psi(w'), \psi(w))}{d_0(w',w)}$$ the norm of the differential $D\psi(w)$ with respect to 
 the metric $d_0$ on $X$ and the chordal metric $\sigma$ on $\CDach$. If $\alpha(p)\ge 2$, then  \eqref{eq:phispdist} 
and  \eqref{eq:ThetaDistqr} imply  that   
  $$ \Vert D\psi (w)\Vert =  \Vert D\varphi  (q)\Vert_\sigma 
  \cdot \Vert \Theta'(w)\Vert \asymp 1. $$
  This is also true for almost every point $w$  in the complement 
  of a suitable neighborhood of  
  $\Theta^{-1}(\supp (\alpha))$.   
 We conclude  that for  each point $z\in X$ there exists an open  neighborhood $V_z$ of $z$ such 
 that  $\Vert D\psi(w) \Vert \asymp 1$ for almost every  $w\in V_z$. Since $\Vert D\psi(w) \Vert$ is invariant under precomposition with elements of the deck  transformation group of $\Theta$, which acts cocompactly on $X$ (see Proposition~\ref{prop:prop_deck_trafo}~\ref{item:pi1_O_4}), we conclude that $\Vert D\psi(w) \Vert \asymp 1$
 for almost every $w\in X$ with 
$C(\asymp)$ is independent of $w$. 
   
 Since $\psi$ is quasiregular with $\Vert D\psi \Vert \asymp 1$, this   map is of bounded length distortion
 (see \cite[Theorem 2.16] {MV88}), and it follows that   
 $\varphi\: (\CDach, \om )\ra 
  (\CDach, \sigma)$ is indeed a bi-Lipschitz map. 

    \smallskip
  \ref{item:om_chordal_qs}
  Since the homeomorphism  $\varphi$ on $\CDach$ constructed in  \ref{item:om_chordal_biLip}  is quasiconformal, it is a quasisymmetry on $(\CDach, \sigma)$. 
  Hence $\varphi^{-1}\: (\CDach, \sigma) \ra (\CDach, \sigma)$ is
  a quasisymmetry.
  
  This implies that the map $\id_{\CDach}\: (\CDach, \om)\ra (\CDach, \sigma)$ is a quasisymmetry, because it  is the composition of the bi-Lipschitz map 
  $\varphi\: (\CDach, \om)\ra (\CDach, \sigma)$ followed by the quasisymmetry 
  $\varphi^{-1}\: (\CDach, \sigma) \ra (\CDach, \sigma)$. \end{proof}

\index{canonical orbifold!measure|textbf}\index{measure!canonical orbifold|textbf}\index{orbifold!canonical measure|textbf}\index{OAA@$\Omega,\Omega_f$} \index{Lebesgue!measure}\index{measure!Lebesgue}\index{LAA@$\leb$}
 Associated with our orbifold $\mathcal{O}=(\Cdach, \alpha)$  is also  a natural Borel measure 
 $\Om$ on $\CDach$, the 
 {\em canonical orbifold measure}.
To define it, let $\leb_{\CDach}$ be Lebesgue measure (i.e., spherical measure) on $\CDach$. Here (in contrast to 
Chapter~\ref{cha:rati-thurst-maps-1}) we do not impose a normalization 
on $\leb_{\CDach}$
and so $\leb_{\CDach}(\CDach)=4\pi$. 
As before, let $\rho$ be the conformal factor of the orbifold metric $\om$  defined in 
\eqref{eq:deforbrho}. Then for a  Borel set $M\sub \CDach$   we set  
\begin{equation} \label{eq:defOm}
\Om(M)= \int_{M} \rho^2 \, d\leb_{\CDach}.
\end{equation}
We know that $\rho$ is a smooth positive function on $\Cdach\setminus \supp (\alpha)$ and so \eqref{eq:defOm} defines a measure on $\CDach$. Obviously, the measures $\Om$ and 
 $\leb_{\CDach}$ are mutually absolutely continuous. In particular, 
$\Om$ has no atoms even if $\mathcal{O}$ has punctures.  The measure
 $\Om$ is the natural (conformal) measure induced by the
 canonical orbifold metric $\om$ with length element $\rho\,d\sigma$. 
 
 In the hyperbolic case, $\Om$  is independent of the choice of
 $\Theta$ which underlies the definition of $\rho$ and is hence
 unique; 
in the parabolic case, $\Om$ is 
only unique up to a positive multiplicative constant. From the asymptotics of the conformal factor given in 
 Proposition~\ref{prop:conforbsym},  it follows 
 that in the hyperbolic case $\Om$  is always a finite
 measure. We mention without proof that  one
 can show that actually $\Om(\Cdach)=-2\pi \chi(\mathcal{O})$ if
 $\mathcal{O}$ is hyperbolic (this is essentially a special case
 of 
\cite[Theorem~10.4.3]{Bea}).
In the parabolic case  $\Om$    is  finite if and only
 if $\OC$ has no punctures. 
 
Let $\leb_X$ be the natural measure on the orbifold cover $X$, namely 
the  Euclidean area measure (i.e., Lebesgue measure) in the parabolic case when $X=\C$, and the hyperbolic area measure  in the hyperbolic case  when $X=\D$.
  Then 
 one can  consider $\Omega$ as  the ``local'' push-forward of
 $\leb_X$ by the universal orbifold covering map $\Theta$. This is made precise in the following statement. 
\index{push-forward!of measure!by orbifold covering map}

\begin{prop}  \label{lem:prop_cano_orb_meas} 
  The canonical orbifold measure $\Omega$ for a parabolic or hyperbolic  orbifold $\mathcal{O}=(\Cdach, \alpha)$ is the unique Borel measure on $\CDach$ without atoms and with the following property:
  if $M\sub X$ is a Borel set such that the holomorphic universal
  orbifold covering map $\Theta\: X \ra \CDach_0$ of $\mathcal{O}$ is injective on $M$, then 
  \begin{equation}\label{eq:univorbmeas}\leb_X(M)=\Om(\Theta(M)). 
  \end{equation}
  \end{prop}
  We will see in the proof that $\Theta(M)$ is also a Borel set.
  
 This proposition can be reformulated in terms of  Jacobians
 (see Section~\ref{sec:jacobian-mu} for a general discussion of Jacobians):  
  $\Omega$ is the unique measure on $\CDach$ that is absolutely continuous with respect to $\leb_{\Cdach}$ such that for the Jacobian $J_{\Theta, \leb_X, \Om}$ of $\Theta$ with respect to 
$\leb_X$ and $\Om$ we have $J_{\Theta, \leb_X, \Om}= 1$ 
$ \leb_X$-almost everywhere on $X$.

\begin{proof} It is clear that $\Om$ has no atoms, i.e., points $p\in \CDach$ with 
$\Om(\{p\})>0$, because $\Om$ is absolutely continuous with respect to $\leb_{\Cdach}$.

Let $M$ be a Borel set as in the statement.
The map $\Theta$ is a local biholomorphism on $X\setminus\crit(\Theta)$.
So for each point $z\in X\setminus\crit(\Theta)$ there exists a small open ball  
 $B$ centered at  $z$  such that $\Theta$ maps $B$ biholomorphically onto $\Theta(B)$.
 This implies that  $M\sub X$ is a countable disjoint  union of Borel sets each of which is contained in such a ball  $B$ and a countable set $C\sub\crit(\Theta)$. 
 
 It follows  that $\Theta(M)$ is a Borel set. The set $C\sub \crit(\Theta)$ is 
 irrelevant, because $\leb_X(C)=0$ and $\Om(\Theta(C))=0$. So in order to prove 
 \eqref{eq:univorbmeas}, we may assume that $M$ is contained in a ball $B$ on which $\Theta$ 
 is a biholomorphism. 
 Note that by definition of the conformal factor $\rho$ we have 
 $$ \rho(\Theta(z))= \frac1{\Vert \Theta'(z) \Vert }$$ for $z\in B$. 
So  the transformation formula implies 
 \begin{align*} 
 \Om(\Theta(M))&= \int_{\Theta(M)} \rho^2\, d\leb_{\Cdach}=
  \int_{M}(\rho\circ \Theta)^2 \Vert \Theta'\Vert^2\, d\leb_{X}\\
  &=\int_{M} \frac1{\Vert \Theta'(z) \Vert^2 }\Vert \Theta'(z)\Vert^2  \, d\leb_{X}(z)\\
 &=
 \int_M\, d\leb_X=\leb_X(M), 
 \end{align*} 
 and so \eqref{eq:univorbmeas} follows.
 
 With the stated properties the measure $\Om$ is unique. Indeed,  \eqref{eq:univorbmeas}
 uniquely determines $\Om(A)$ for each Borel 
 set $A$ 
 contained in an evenly covered neighborhood of a point $p\in   \CDach\setminus\supp (\alpha)$. If  $B$ is an arbitrary Borel set with  $B\sub  \CDach\setminus\supp (\alpha)$,
 then it  can be represented as a countable disjoint union of such sets $A$ and so $\Om(B)$ is uniquely determined. Finally, since we  have no atoms, $\Om(B)$ is uniquely determined  
 for all Borel sets $B\sub \CDach$. 
\end{proof}

Suppose  $f\: \CDach \ra \CDach$ is a rational Thurston map with
 ramification function $\alpha_f$. We know (see 
 Proposition~\ref{prop:chiO_f_nonneg}) that the orbifold $\mathcal{O}_f=(\CDach, \alpha_f)$ 
  is parabolic or hyperbolic.
 So by the  previous discussion we have  a canonical orbifold
 metric $\om=\om_f$ for $\mathcal{O}_f$ on $\CDach_0$, which we
 will call the 
{\em canonical orbifold metric}\index{canonical orbifold!metric}\index{metric!canonical orbifold}\index{o@$\omega$}\index{orbifold!canonical metric} 
of $f$. Similarly, the orbifold $\mathcal{O}_f$ gives  an associated Borel measure $\Om_f$ on $\CDach$ (as characterized by Proposition~\ref{lem:prop_cano_orb_meas}), called the {\em canonical orbifold measure} of $f$. The metric $\om_f$ and the measure $\Om_f$ are uniquely determined  in the hyperbolic case and unique up to a scaling factor in the parabolic case.  
 
 One of the most important properties of $\om_f$ is that the map $f$ is expanding with respect to this metric if $f$ has no periodic critical points, or equivalently, if $\mathcal{O}_f$ has no punctures (see 
 Proposition~\ref{prop:otherramprops}~\ref{item:rami_infty}).

 \begin{prop}
  \label{lem:R_orbimetric}
\index{canonical orbifold!metric}\index{metric!canonical orbifold}\index{o@$\omega$}\index{orbifold!canonical metric}
  Let $f\colon \CDach \to \CDach$ be a rational Thurston map without
  periodic critical points, and let $\om$ be the canonical orbifold metric of $f$. 
  Then there exists  a constant 
  $\lambda>1$ such that 
 \begin{equation}
  \label{eq:orbi_metric_exp}
  \length_\om(f\circ \gamma) \geq \lambda \length_\om(\gamma) 
\end{equation} for all paths $\ga$ in  $\CDach$.  
 \end{prop}
 
 This lemma is essentially  well known; see  \cite[Theorem~19.6]{Mi} 
or \cite[V.4.3.1]{CG}, for example. As we will see, if $f$  has  a hyperbolic orbifold, then the main idea for the proof is to lift 
inverse branches of $f^{-1}$ by  the universal orbifold covering map 
$\Theta$ to $\D$, and use the fact that 
holomorphic maps of $\D$ into itself are contracting with respect to  the hyperbolic metric. 

 An inequality as in \eqref {eq:orbi_metric_exp}  is actually  true  for arbitrary rational Thurston maps $f$ if we require that  $\ga$ lies  in a sufficiently small neighborhood of the Julia set of $f$.     
 
\begin{proof}   Let $\alpha=\alpha_f$ be the ramification function of $f$. If
  $\OC_f=(\CDach, \alpha)$ is parabolic, then $f$ is a Latt\`es map
  (see Theorem~\ref{thm:Lattesstruc}) and the statement follows from
  Proposition~\ref{prop:paravisorbmet}. So we may assume that
  $\OC_f$ is hyperbolic. 
  Let $\Theta\: X\ra \CDach$ be the holomorphic universal orbifold covering map defined on $X=\D$
  and $\om$ be the canonical orbifold metric of
  $\OC_f$. It is defined on $\CDach$, because $f$ has no periodic critical points and so $\mathcal{O}_f$ has no punctures.

We will show  
that there is a constant $\lambda>1$ such that 
\begin{equation} \label{infiniexp} 
  \Vert f'(q)\Vert_\om\coloneqq  \liminf_{q'\to q} \frac{\om(f(q'), f(q))}{\omega(q',q)} \ge \lambda
\end{equation} 
for all $q\in \CDach$. This expression is the norm 
of the derivative of $f$ with respect to the metric $\omega$.  
In order to establish this inequality, it is enough to show that for each point $q_0\in \CDach$ there exists an open neighborhood $N$ of $q_0$ and a constant 
$\lambda'>1$ such that  $\Vert f'(q)\Vert_\om> \lambda'$ for all $q\in N$. 
Then the estimate \eqref{infiniexp} follows by covering $\CDach$ with  finitely many such sets $N$.

So let $q_0\in \CDach$ be arbitrary, and set $p_0\coloneqq f(q_0)$. We can
find points $z_0,w_0\in \D$ with $\Theta(w_0)=q_0$ and
$\Theta(z_0)=p_0$. Then by Lemma~\ref{lem:lift_Thurst_orbi} we can
find a holomorphic map $A\:\D\ra \D$ such that $A(z_0)=w_0$ and
 \begin{equation}\label{eq:RTfT}
 f\circ  \Theta\circ A=  \Theta. 
 \end{equation}  

So in this setting, we have the following commutative diagram:
 \begin{equation}\label{eq:liftinverseD}
    \xymatrix{
    w_0\in   \D \ar[d]_\Theta & \ar[l]_A z_0\in  \D \ar[d]^\Theta
      \\
     q_0\in  \CDach \ar[r]^{f} & p_0\in \CDach\rlap{.}
    }
  \end{equation}
  One should think of $A$ as a lift of a suitable inverse branch of $f^{-1}$ by 
  the branched covering map $\Theta$.

   By the Schwarz-Pick lemma the derivative of $A$ with respect to the
   hyperbolic metric $d_0$ on $\D$ satisfies  
 $$\Vert A'(z) \Vert: = \frac{(1-|z|^2) |A'(z)|}{1-|A(z)|^2}=
 \lim_{z'\to z} \frac{d_0(A(z'), A(z))}{d_0(z',z)}  \le 1$$ for all
 $z\in \D$. Moreover, here $\Vert A'(z) \Vert=1$ for some point $z\in \D$ if and only if
 $A$ is an automorphism of $\D$. 

 Let us show that in fact $\Vert A'(z) \Vert < 1$ for all $z\in \D$.
 If not, then $A$ is an automorphism of $\D$. Let $u\in \CDach$ be
 arbitrary and $v=f(u)$. Pick  $w\in \Theta^{-1}(u)$ and let $z=A^{-1}(w)$. 
 Then by \eqref{eq:RTfT}  we have 
 $$ \Theta(z)=(f\circ \Theta \circ A)(z)=(f\circ \Theta) (w)=f(u)=v. $$ 
 Since
 $\Theta$ is the universal orbifold covering map of $\mathcal{O}_f$, it follows that 
  \begin{align*}
\alpha(f(u))&=   \alpha(v) = \deg(\Theta,z) = \deg(f\circ \Theta \circ A, z)\\
   & =
   \deg(f, u) \deg(\Theta,w) \underbrace{\deg(A,z)}_{=1}
   =\deg(f,u) \alpha(u).
 \end{align*}
 This implies that $\OC_f$ is parabolic by
 Proposition~\ref{prop:parabolicOf}, which is a
 contradiction.

So in particular, $\Vert A'(z_0) \Vert < 1$. Since the map
$z\mapsto  \Vert A'(z) \Vert$ is continuous,  we can find a
neighborhood $U$ of $z_0$ and a constant $k<1$ such that $\Vert
A'(z) \Vert \le k <1$ for all $z\in U$. The set
$N\coloneqq \Theta(A(U))$ is a neighborhood of $\Theta(A(z_0))=q_0$. If $q\in N$ is arbitrary, then
we can pick a point $w\in U$ with $\Theta(A(w))=q$. Moreover, if $\{q_n\}$ is any sequence contained in $N\setminus\{q\}$  with $q_n\to q$ as $n\to \infty$, then there exists a sequence $\{w_n\}$ in $U$ with $\Theta(A(w_n))=q_n$ for all $n\in \N$ and 
$w_n\to w$ as $n\to \infty$. Hence 
\begin{align*}
\liminf_{n\to \infty}  \frac{\om(f(q_n), f(q))}   {\om(q_n,q)}& =  
\liminf_{n\to \infty}  
\frac{\om((f\circ \Theta \circ A)(w_n), (f\circ \Theta \circ A)(w))}
{\om(\Theta(A(w_n)), \Theta(A(w)))}\\&=
\liminf_{n\to \infty}  
\frac{\om(\Theta(w_n), \Theta(w))}
{\om(\Theta(A(w_n)), \Theta(A(w)))}\\ 
&= \liminf_{n\to \infty}  
\frac{d_0(w_n, w)}
{d_0(A(w_n), A(w))}\\ &=\frac {1}{\Vert A'(w)\Vert}\ge 1/k>1.  
\end{align*}
In the third equality we used the fact that the map $\Theta$ is a local radial isometry (see \eqref{eq:radialisom}).  
We conclude that  $\Vert f'(q)\Vert_\om\ge \lambda'\coloneqq 1/k$ for all $q$ belonging to the neighborhood $N$ of $q_0$. Since $q_0$ was arbitrary,  inequality \eqref{infiniexp} follows. 

Now let  $\ga$ be a path in $\CDach$. Inequality \eqref{infiniexp} in combination with a  covering argument implies that 
$  \length_\om(f\circ \gamma) \geq\lambda \length_\om(\gamma)$.  
So $f$ expands the lengths  of paths with respect to the  metric $\om$ by the factor $\lambda>1$.  
\end{proof}

\end{appendix}





\backmatter

\printindex

\end{document}